\renewcommand{\sectionmark}[1]{\markright{A Geometric Approach to Noncommutative Principal Bundles}}
\newtheorem{theorem}{Theorem}[section]
\newtheorem{proposition}[theorem]{Proposition}
\newtheorem{corollary}[theorem]{Corollary}
\newtheorem{lemma}[theorem]{Lemma}
\newtheorem{theorem-example}[theorem]{Theorem $\backslash$ Example}
\renewenvironment{proof}{\begin{sloppypar}\noindent{\bf Proof}}{\hfill$\blacksquare$\end{sloppypar}}
\theoremstyle{definition}
\newtheorem{definition}[theorem]{Definition}
\newtheorem{example}[theorem]{Example}
\newtheorem{beispiel}[theorem]{Beispiel}
\newtheorem{bemerkung}[theorem]{Bemerkung}
\newtheorem{remark}[theorem]{Remark}
\newtheorem{convention}[theorem]{Convention}
\newtheorem{open problem}[theorem]{Open Problem}
\newtheorem{construction}[theorem]{Construction}
\newtheorem{notation}[theorem]{Notation}
\DeclareMathOperator{\Aut}{Aut}
\DeclareMathOperator{\Inn}{Inn}
\DeclareMathOperator{\Idem}{Idem}
\DeclareMathOperator{\im}{im}
\DeclareMathOperator{\id}{id}
\DeclareMathOperator{\ev}{ev}
\DeclareMathOperator{\GL}{GL}
\DeclareMathOperator{\M}{M}
\DeclareMathOperator{\per}{per}
\DeclareMathOperator{\Hom}{Hom}
\DeclareMathOperator{\Out}{Out}
\DeclareMathOperator{\Diff}{Diff}
\DeclareMathOperator{\Gau}{Gau}
\DeclareMathOperator{\End}{End}
\DeclareMathOperator{\SU}{SU}
\DeclareMathOperator{\spec}{spec}
\DeclareMathOperator{\Ad}{Ad}
\DeclareMathOperator{\ad}{ad}
\DeclareMathOperator{\pr}{pr}
\DeclareMathOperator{\sgn}{sgn}
\DeclareMathOperator{\Alt}{Alt}
\DeclareMathOperator{\Sym}{Sym}
\DeclareMathOperator{\Iso}{Iso}
\DeclareMathOperator{\rank}{rank}
\DeclareMathOperator{\Ext}{Ext}
\DeclareMathOperator{\Fr}{Fr}
\DeclareMathOperator{\Mult}{Mult}
\DeclareMathOperator{\der}{der}
\DeclareMathOperator{\Det}{Det}
\DeclareMathOperator{\supp}{supp}
\DeclareMathOperator{\Prim}{Prim}
\DeclareMathOperator{\Lin}{Lin}
\DeclareMathOperator{\Span}{span}
\DeclareMathOperator{\ver}{ver}
\DeclareMathOperator{\Bun}{Bun}
\DeclareMathOperator{\Vbun}{Vbun}
\date{06.05.2011}
\title{\bf{A Geometric Approach to Noncommutative Principal Bundles}}
\author{Stefan Wagner}
\begin{document}
\thispagestyle{empty}
\begin{center}
\Huge{\textbf{A Geometric Approach to Noncommutative Principal Bundles}}\\
$ $\\
$ $\\
\Huge{\textbf{Ein Geometrischer Zugang zu Nichtkommutativen Hauptfaserb\"undeln}}\\
$ $\\
$ $\\
\LARGE{Der Naturwissenschaftlichen Fakult\"at\\der Friedrich-Alexander-Universit\"at Erlangen-N\"urnberg\\zur\\Erlangung des Doktorgrades}
$ $\\
$ $\\
$ $\\
$ $\\
$ $\\
$ $\\
\LARGE{vorgelegt von\\ Dipl.-Math. Stefan Wagner}\footnote{
Department of Mathematics\\
FAU Erlangen-N\"urnberg\\
Bismarckstrasse 1 1/2, 91054 Erlangen\\ 
\url{wagners@mi.uni-erlangen.de}}

$ $\\
$ $\\
\LARGE{aus Offenbach am Main}
\end{center}
$ $\\
$ $\\

\newpage
\thispagestyle{empty}
$ $\\
$ $\\
$ $\\
$ $\\
\begin{center}
\LARGE{Als Dissertation genehmigt von der Naturwissen-\\schaftlichen Fakult\"at der Universit\"at Erlangen-N\"urnberg}
\end{center}
$ $\\
$ $\\
$ $\\
$ $\\
$ $\\
$ $\\
$ $\\
$ $\\
$ $\\
$ $\\
\begin{table}[h]
\begin{tabular}{ll}
{\LARGE Tag der m\"undlichen Pr\"ufung:} & {\LARGE 14. Juli 2011}\\
 & \\
{\LARGE Vorsitzender der } & \\ 
{\LARGE Promotionskommission:} & {\LARGE Prof. Dr. R. Fink}\\
 & \\
{\LARGE Erstberichterstatter:} & {\LARGE Prof. Dr. K.-H. Neeb}\\
 & \\
{\LARGE Zweitberichterstatter:} & {\LARGE Prof. Dr. S. Echterhoff}\\
\end{tabular}
\end{table}

\chapter*{Danksagung}

An dieser Stelle m\"ochte ich mich bei den Personen bedanken, die mich w\"ahrend meiner Zeit als Doktorand ma\ss geblich unterst\"utzt  haben.

Zuallererst m\"ochte ich meinem Doktorvater Prof. Dr. Karl-Hermann Neeb ganz herzlich f\"ur seine einzigartige und vorbildliche Betreuung sowie f\"ur seine inspirierende Art und Zusammen\-arbeit danken. Seit Beginn meines Studiums profitiere ich von seiner au\ss erordentlichen wissenschaftlichen Kompetenz und seiner praktisch st\"andigen Ansprechbarkeit. Au\ss erdem danke ich ihm daf\"ur, dass er mir beim Forschen einen gro\ss en Freiraum lie{\ss} und f\"ur die vielf\"altigen M\"oglichkeiten mich wissenschaftlich weiterzuentwickeln.


Desweiteren danke ich den Personen in meinem direkten Arbeitsumfeld, die zu einer gro\ss artigen Atmosph\"are und einem unvergleichbaren Miteinander beigetragen haben. Insbesondere danke ich Henrik Sepp\"anen f\"ur etliche Einblicke und spannende Diskussionen rund um die Mathematik und weit (teils sehr weit) dar\"uber hinaus. Ein besonders gro\ss er Dank gilt auch meinen Doktorbr\"udern Hasan G\"undo\u gan und Christoph Zellner f\"ur die gute Zusammenarbeit bei allen ($!$) auftauchenden Fragen sowie f\"ur das Korrekturlesen meiner Dissertation. Ich bewundere Hasan f\"ur seine st\"andige Hilfsbereitschaft und f\"ur seine Geduld gegen\"uber Christoph und mir. Ferner m\"ochte ich meinem Mitbewohner und Kollegen St\'ephane Merigon f\"ur die tolle gemeinsame Zeit in Erlangen danken. Erst durch ihn habe ich die Freude an einem guten Wein kennen und sch\"atzen gelernt. Gerlinde Gehring sei f\"ur ihr offenes Ohr und die Unterst\"utzung in administrativen Angelegenheiten gedankt.

Ein besonderer Dank gilt auch meinen Freunden, die mich in den vergangenen Jahren unabl\"assig unterst\"utzt haben und auch in schwierigen Zeiten immer f\"ur mich da waren. Vielen Dank Rachid und Katha, Nadiem, Phillipp (oder so), Clemens und Erika, Janni, Dimi, Bernd, Emanuel und Stephi.

Der Studienstiftung des deutschen Volkes danke ich f\"ur die Finanzierung meines Dissertationsprojekts und f\"ur viele unglaublich sch\"one Erinnerungen und Momente als Stipendiat. Die Promotionsf\"orderung durch die Studienstiftung hat mir erlaubt meine Arbeit zeitlich flexibel zu gestalten und mich voll und ganz auf meine Forschung zu konzentrieren. Gleichzeitig hat sie mir erlaubt meinen Hobbys nachzugehen und geliebten Menschen ein guter Freund zu sein, was unter anderen Umst\"anden kaum in dem selben Ma\ss e m\"oglich gewesen w\"are. Au\ss erdem m\"ochte ich meiner Vertrauensdozentin Prof. Dr. Barbara Drossel f\"ur die herzliche sowie kompetente Betreuung und so manches interessante Gespr\"ach danken.

Zu guter Letzt und von ganzem Herzen danke ich Nada f\"ur ihre endlose Geduld und Unterst\"utzung, ohne die die vergangenen Monate nicht vorstellbar gewesen w\"aren!

\newpage
\thispagestyle{empty}

\tableofcontents

\cleardoublepage


\pagenumbering{arabic}

\chapter{Introduction}

\section{Why Introducing Fibre Bundles ?}

The recognition of the domain of mathematics called fibre bundles took place in the period 1935-1940. The first general definitions were given by H. Withney. His work and that of H.\,Hopf and E. Stiefel demonstrated the importance of the subject for the applications of topology, differential geometry and theoretical physics. If $q:P\rightarrow M$ is a smooth map between manifolds, we often refer to the inverse images $q^{-1}(m)$ of points as the fibres of the map $q$. A fibre bundle is such a map $q:P\rightarrow M$ in which the fibres can all be identified with a fixed space $F$ in a systematic way. In most cases one additionally requires that the bundle looks locally like a product of an open subset $U\subseteq M$ and the fibre $F$, i.e.,
\[q^{-1}(U)\cong U\times F
\]holds locally. The theory depends on what sort of identifications we choose to allow; for example, if the fibre $F$ is a vector space, we want that the identifications are linear. Such a bundle is called a \emph{vector bundle}. A \emph{principal fibre bundle} is defined to be a fibre bundle whose fibre $F$ is a Lie group and the identifications are right equivariant. Among all fibre bundles, the principal fibre bundles and the vector bundles are of particular interest. 

One of the most popular examples for the importance of fibre bundles is general relativity, which is formulated in terms of manifolds and the curvature of vector bundles. The pioneering idea of A. Einstein was that any point and any coordinate system of the manifold should describe equal physical laws. This assumption leads to a theory which is invariant under diffeomorphisms by assumption. Thus general relativity may be viewed as a theory formulated in terms of manifolds $M$ and their tangent bundles $TM$, which has the Lie group $\Diff(M)$ as a symmetry group.

Moreover, among all field theories, \emph{classical gauge theory} is the most favoured tool used to probe elementary particles at a classical level. The theory of gauge fields and their associated fields, such as Yang--Mills--Higgs fields, was developed by physicists to explain and unify the fundamental forces of nature. The theory of connections in a principal fibre bundle was developed by mathematicians during approximately the same time period, but that they are closely related was not noticed for many years. Since then substantial progress has been made in understanding this relationship and in applying it successfully to problems in physics and mathematics. In physical applications one is usually interested in a fixed Lie group $G$ called the \emph{gauge group} (or structure group in the mathematical literature) which represents an internal or local symmetry of the field. The base manifold $M$ of the principal bundle is usually the space-time manifold or its Euclidean version, i.e., a Riemannian manifold of dimension 4. But in some physical applications, such as superspace, Kaluza--Klein and string theories, the base manifold can be an arbitrary manifold. Nevertheless, classical gauge theory is far away from describing the behaviour of elementary particles in a satisfying way. This is because a theory of elementary particles has to be a quantum theory. Nowadays the most favoured quantum theory for elementary particles is the so called quantum field theory. In view of the mathematical structure of the classical gauge theory it is quite natural to try to find a ``noncommutative fibre bundle" theory which is closely related to quantum field theory.

\section{A Short Survey of Noncommutative Geometry}

The correspondence between \emph{geometric spaces} and \emph{commutative algebras} is a familiar and basic idea of algebraic geometry. In fact, if $A$ is a complex algebra and
\[\Gamma_A:=\Hom_{\text{alg}}(A,\mathbb{C})\backslash\{0\}
\]
the corresponding set of characters of $A$, then a well-known theorem from algebraic geometry states that the functor $A\mapsto\Gamma_A$ is a (contravariant) equivalence from the category of affine complex algebras to the category of affine algebraic varieties.
The aim of \emph{Noncommutative Geometry} is to reformulate as much as possible the basic ideas of topology, measure theory and differential geometry in terms of algebraic concepts and then to generalize the corresponding classical results to the setting of noncommutative algebras. For example, if $X$ is a compact space, then it is well-known that the algebra $C(X)$ of continuous functions on $X$ carries the structure of a commutative unital $C^*$-algebra. The \emph{Noncommutative Topology}
started with the famous Gelfand--Naimark Theorems: 

\begin{theorem}\label{gelfand-naimark}{\bf(Gelfand--Naimark).}\index{Theorem!of Gelfand--Naimark}
\begin{itemize}
\item[\emph{(a)}]
Every commutative unital $C^*$-algebra is the algebra of continuous functions on a compact space $X$.
\item[\emph{(b)}]
Every $C^*$-algebra can be represented as an algebra of bounded operators on a Hilbert space.
\end{itemize}
\end{theorem}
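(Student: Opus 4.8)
The plan is to prove both parts by constructing, from an abstract (commutative) $C^*$-algebra, a concrete topological model and an isometric $*$-representation respectively. The two statements are proved by rather different routes, so I would treat them separately.

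For part (a), let $A$ be a commutative unital $C^*$-algebra. First I would introduce the \emph{Gelfand spectrum} $X:=\Gamma_A=\Hom_{\text{alg}}(A,\mathbb{C})\setminus\{0\}$, equipped with the weak-$*$ topology inherited from the dual space $A'$. The key preliminary step is to show that every nonzero algebra homomorphism $\chi\colon A\to\mathbb{C}$ is automatically continuous with $\|\chi\|=1$ and satisfies $\chi(a^*)=\overline{\chi(a)}$; this uses that in a unital Banach algebra the spectrum of any element is nonempty and contained in the closed disc of radius $\|a\|$, together with the fact that characters take values in the spectrum. Consequently $X$ is a weak-$*$ closed subset of the unit ball of $A'$, hence compact by Banach--Alaoglu. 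Next I would define the \emph{Gelfand transform} $\mathcal{G}\colon A\to C(X)$, $a\mapsto \hat a$, where $\hat a(\chi):=\chi(a)$; this is manifestly a unital $*$-homomorphism. The substantive point is that $\mathcal{G}$ is isometric and surjective. Isometry reduces, via the $C^*$-identity $\|a\|^2=\|a^*a\|$ and self-adjointness, to showing that for self-adjoint $a$ the spectral radius equals $\|a\|$ — which follows from the spectral radius formula $r(a)=\lim\|a^n\|^{1/n}$ together with $\|a^2\|=\|a\|^2$ for self-adjoint elements — and that the range of $\hat a$ is exactly $\spec(a)$. Surjectivity then follows from the Stone--Weierstrass theorem: $\mathcal{G}(A)$ is a closed (because $A$ is complete and $\mathcal{G}$ isometric) unital self-adjoint subalgebra of $C(X)$ that separates points of $X$ by the very definition of $X$, hence equals $C(X)$.

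For part (b), let $A$ be an arbitrary $C^*$-algebra; adjoining a unit if necessary, I may assume $A$ is unital. The plan is the Gelfand--Naimark--Segal (GNS) construction followed by taking a direct sum over all states. Given a state $\varphi$ (a positive normalized linear functional, whose existence in abundance comes from the Hahn--Banach theorem applied to the order structure), one forms the sesquilinear form $\langle a,b\rangle_\varphi:=\varphi(b^*a)$ on $A$, passes to the quotient by the left ideal $N_\varphi=\{a:\varphi(a^*a)=0\}$, completes to a Hilbert space $H_\varphi$, and checks that left multiplication descends to a bounded $*$-representation $\pi_\varphi$ of $A$ on $H_\varphi$ with a cyclic vector $\xi_\varphi$ satisfying $\varphi(a)=\langle\pi_\varphi(a)\xi_\varphi,\xi_\varphi\rangle$. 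Boundedness of $\pi_\varphi(a)$ uses the inequality $\varphi(b^*a^*ab)\le\|a\|^2\varphi(b^*b)$. Setting $\pi:=\bigoplus_{\varphi\in S(A)}\pi_\varphi$ on $H:=\bigoplus_\varphi H_\varphi$ gives a $*$-representation, and it is faithful — hence isometric, since an injective $*$-homomorphism between $C^*$-algebras is automatically isometric — because for each nonzero $a$ one can produce a state $\varphi$ with $\varphi(a^*a)\neq 0$, so that $\pi_\varphi(a)\neq 0$. The existence of such a state is where one invokes that $a^*a$ is a nonzero positive element whose norm is attained by some pure state on the commutative $C^*$-subalgebra it generates, extended to all of $A$.

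The main obstacle in both parts is the same conceptual nucleus: controlling the norm via the spectrum. Concretely, the crux is the identity $r(a)=\|a\|$ for self-adjoint $a$ in a $C^*$-algebra (equivalently, that the $C^*$-norm is determined by the algebraic structure), which underlies the isometry statement in (a) and the automatic-isometry fact used in (b). Everything else — compactness of $X$, Stone--Weierstrass, the GNS quotient-and-complete routine, the direct sum — is comparatively formal once this spectral input is in place. I would therefore isolate the self-adjoint spectral radius identity as a lemma and prove it first.
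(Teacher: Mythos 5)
Your sketch is correct, and it is the standard proof (Gelfand transform plus Stone--Weierstrass for (a), GNS construction plus direct sum over states for (b), with the self-adjoint spectral radius identity $r(a)=\|a\|$ as the common crux). The paper itself does not prove the theorem -- it simply cites Conway's \emph{A Course in Functional Analysis}, Chapter VIII, Theorems 2.1 and 5.14 -- so there is no alternative argument in the paper to compare against; your outline matches the approach found in that reference and in the standard literature.

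One small point worth tightening if you were to write this out in full: in part (b), when you say you may ``adjoin a unit if necessary,'' you should note that the unitization of a $C^*$-algebra carries a canonical $C^*$-norm (so the reduction is legitimate), and that a faithful representation of the unitization restricts to a faithful representation of $A$. Similarly, the claim that ``an injective $*$-homomorphism between $C^*$-algebras is automatically isometric'' is itself a nontrivial consequence of the spectral-radius identity you isolate as the key lemma -- worth flagging the logical dependence so the lemma really is proved first and used consistently in both parts.
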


\begin{proof}
\,\,\,For a proof of this statement we refer to [Conw90], Chapter VIII, Theorem 2.1 and Theorem 5.14.
\end{proof}

Thus, a noncommutative $C^*$-algebra can be viewed as ``the algebra of continuous functions vanishing at infinity'' on a ``quantum space''. If the space $X$  is not only compact and Hausdorff, but has some extra structure, then $X$ can sometimes be recovered from a smaller algebra. The algebra of smooth function on a manifold $M$ is a case in point. Moreover, if $G$ is a compact group, then $G$ can be recovered by the algebra $R(G,\mathbb{R})$ of real representative functions on $G$, i.e., those $\mathbb{R}$-valued functions $f$ on $G$ whose right translates 
\[f_h(g):=f(gh),\,\,\,g,h\in G,
\]generate a finite-dimensional subspace of $C(G,\mathbb{R})$. In fact, we have the following so-called Tannaka--Krein duality:

\begin{theorem}\label{tannaka-krein}{\bf(Tannaka--Krein).} 
\begin{itemize}
\item[\emph{(a)}]
If $G$ is a compact group, then the algebra $R(G,\mathbb{R})$ of real representative functions on $G$ carries the structure of a commutative Hopf algebra.
\item[\emph{(b)}]
Conversely, every real commutative Hopf algebra is the algebra of real representative functions on a compact group $G$.
\end{itemize}
\end{theorem}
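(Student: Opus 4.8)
The plan is to treat the two implications separately, (a) by a direct verification and (b) by a Gelfand-type reconstruction. For (a), I would start from the standard equivalent descriptions of $R(G,\mathbb{R})$: for $f\in C(G,\mathbb{R})$ the following are equivalent --- the right translates $f_h$ span a finite-dimensional subspace; the left translates span a finite-dimensional subspace; $f$ is a matrix coefficient of a finite-dimensional orthogonal representation of $G$. From the matrix-coefficient picture it is immediate that $R(G,\mathbb{R})$ is closed under pointwise sums and products (pass to the direct sum, resp.\ the tensor product, of the underlying representations), so it is a commutative unital subalgebra of $C(G,\mathbb{R})$. One then defines the comultiplication, counit and antipode as the pullbacks of the group multiplication $m\colon G\times G\to G$, the unit $e\in G$, and the inversion of $G$: $(\Delta f)(g,h)=f(gh)$, $\varepsilon(f)=f(e)$, $(Sf)(g)=f(g^{-1})$. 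Here $\Delta f$ lies in the algebraic tensor product $R(G,\mathbb{R})\otimes R(G,\mathbb{R})$ essentially by the definition of ``representative'', and since the natural maps $R(G,\mathbb{R})^{\otimes n}\to C(G^{n},\mathbb{R})$ are injective, the coassociativity, counit and antipode identities may be checked as identities of honest functions, where they follow at once from the associativity of $m$, the unit laws, and $m(g^{-1},g)=e=m(g,g^{-1})$; commutativity of the algebra is clear. It is convenient to also record the canonical isomorphism $R(G,\mathbb{R})\otimes R(G,\mathbb{R})\cong R(G\times G,\mathbb{R})$, proved by expanding in matrix coefficients.

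For (b), given such a Hopf algebra $A$, set $G:=\Hom_{\mathrm{alg}}(A,\mathbb{R})$. The comultiplication turns $G$ into a group under the convolution product $\chi\ast\psi:=(\chi\otimes\psi)\circ\Delta$, with neutral element $\varepsilon$ and inversion $\chi\mapsto\chi\circ S$, which is a formal dual of the computation in (a). Give $G$ the topology of pointwise convergence, i.e.\ the subspace topology from $\mathbb{R}^{A}$; then $G$ is a topological group. The candidate for the desired identification is the Gelfand map $\Phi\colon A\to R(G,\mathbb{R})$, $\Phi(a)(\chi):=\chi(a)$. Writing $\Delta a=\sum_i b_i\otimes c_i$ as a finite sum, one has $\Phi(a)(\chi\ast\psi)=\sum_i\Phi(b_i)(\chi)\,\Phi(c_i)(\psi)$, so the right translates of $\Phi(a)$ lie in the finite-dimensional span of the $\Phi(b_i)$; hence $\Phi(a)$ is genuinely representative, and $\Phi$ is a homomorphism of algebras intertwining comultiplications, counits and antipodes.

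The main obstacle is to show that $G$ is \emph{compact} and that $\Phi$ is \emph{bijective} --- this is exactly where a reality/positivity hypothesis on $A$ must enter, for without it the statement fails ($A=\mathbb{R}[x]$ with $\Delta x=x\otimes 1+1\otimes x$ yields the non-compact group $(\mathbb{R},+)$). For compactness one uses that every $a\in A$ lies in a finite-dimensional subcoalgebra (the fundamental theorem of coalgebras); on such a subcoalgebra the characters of $A$ restrict to matrix families which the positivity condition forces to be orthogonal, so that $\{(\chi(a))_{a\in A}:\chi\in G\}$ is contained in a product of compact orthogonal groups, and being moreover closed in $\mathbb{R}^{A}$, $G$ is compact. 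Injectivity of $\Phi$ amounts to the characters of $A$ separating its points, again a consequence of the structural hypotheses, and surjectivity --- that every representative function on the now-compact group $G$ arises from some $a\in A$ --- follows by applying the Peter--Weyl theorem to $G$ and matching finite-dimensional $G$-representations with finite-dimensional $A$-comodules; that $\Phi$ is then an isomorphism of Hopf algebras is routine. For a complete proof along these lines I would follow Hochschild's \emph{The Structure of Lie Groups} or Br\"ocker--tom Dieck's \emph{Representations of Compact Lie Groups}.
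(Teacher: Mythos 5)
The paper does not actually prove Theorem~\ref{tannaka-krein}; the ``proof'' is only a pointer to [GVF01], Chapter 1, Theorems 1.30 and 1.31. So there is no in-house argument to compare yours against, and your sketch has to be judged on its own merits.

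Your treatment of (a) is correct and standard. The three equivalent descriptions of representative functions, closure under sums and products via direct sums and tensor products of the underlying orthogonal representations, the Hopf operations as pullbacks of $m$, $e$ and inversion, and the verification of the coalgebra and antipode axioms through the injection of $R(G,\mathbb{R})^{\otimes n}$ into $C(G^{n},\mathbb{R})$ are all exactly the right steps; the isomorphism $R(G,\mathbb{R})\otimes R(G,\mathbb{R})\cong R(G\times G,\mathbb{R})$ is the clean way to make the codomain of $\Delta$ precise.

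More importantly, your discussion of (b) catches something the thesis glosses over. As you observe, part (b) as printed is false without an additional hypothesis: $A=\mathbb{R}[x]$ with $\Delta x=x\otimes 1+1\otimes x$ is a real commutative Hopf algebra whose character space $\Hom_{\text{alg}}(A,\mathbb{R})$ is the non-compact group $(\mathbb{R},+)$, and $A$ cannot be $R(G,\mathbb{R})$ for any compact $G$ since the spectrum of $R(G,\mathbb{R})$ is $G$. The precise formulation in the cited reference includes the missing positivity/reality hypothesis (equivalently: every finite-dimensional $A$-comodule carries an invariant inner product, so that the coordinate algebra is ``reduced'' and the reconstructed group sits inside a product of orthogonal groups), and the theorem should be read with that convention built in. Granted that hypothesis, your outline---build $G=\Hom_{\text{alg}}(A,\mathbb{R})$ with convolution product, topologize it inside $\mathbb{R}^{A}$, obtain compactness from the fundamental theorem of coalgebras plus the orthogonality forced by positivity, get injectivity of the Gelfand map $\Phi$ from separation by characters, and surjectivity by combining Peter--Weyl on $G$ with the correspondence between finite-dimensional $G$-representations and finite-dimensional $A$-comodules---is the standard and correct proof strategy, as in Hochschild or Br\"ocker--tom Dieck. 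The only step you leave genuinely soft is surjectivity of $\Phi$: one should make explicit that every irreducible $G$-representation arises as the dual of a simple subcomodule of $A$ and that its matrix coefficients therefore lie in $\Phi(A)$; but this is a routine finite-generation argument and does not affect the soundness of the plan.
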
\index{Theorem!of Tannaka--Krein}

\begin{proof}
\,\,\,A nice reference for this duality is [GVF01], Chapter 1, Theorem 1.30 and Theorem 1.31.
\end{proof}

In particular, the noncommutative geometry of (compact) groups is encoded in the theory of Hopf algebras. The study of von Neumann algebras is sometimes referred to as \emph{Noncommutative Measure Theory}. This is due to the following theorem:

\begin{theorem}\label{von-neumann}{\bf(Commutative von Neumann Algebras).}\index{Noncommutative!Measure Theory}
\begin{itemize}
\item[\emph{(a)}]
For every $\sigma$-finite measure space $(X,\Sigma,\mu)$, the algebra $L^{\infty}(X,\Sigma,\mu)$ of essentially bounded functions on $X$ carries the structure of a commutative von Neumann algebra.
\item[\emph{(b)}]
Conversely, every commutative von Neumann algebra $\mathcal{A}$ is isomorphic to $L^{\infty}(X,\Sigma,\mu)$ for some measure space $(X,\Sigma,\mu)$.
\end{itemize}
\end{theorem}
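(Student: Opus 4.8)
The plan is to prove part (a) directly and then reduce part (b) to a spectral-theoretic statement about a single self-adjoint operator, invoking the spectral theorem as the main engine. For part (a), the only real point is that $L^\infty(X,\Sigma,\mu)$, acting on $L^2(X,\Sigma,\mu)$ by multiplication operators $M_f\xi = f\xi$, is closed in the weak operator topology (equivalently, equals its own bicommutant). The commutativity is clear, as is the $C^*$-property. For the von Neumann property I would verify that the multiplication algebra is its own commutant: a bounded operator commuting with all $M_f$ is itself multiplication by the function $T\mathbf 1$ (using $\sigma$-finiteness to produce a cyclic-or-separating vector, or more carefully a sequence exhausting $X$), and hence lies in $L^\infty$. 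This gives $\mathcal M(X)' = \mathcal M(X)$, so $\mathcal M(X)'' = \mathcal M(X)$ and the algebra is a von Neumann algebra.

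For part (b), let $\mathcal A \subseteq B(H)$ be a commutative von Neumann algebra. First I would pass to a situation with a cyclic vector: by Zorn's lemma decompose $H = \bigoplus_i H_i$ into $\mathcal A$-cyclic subspaces $H_i = \overline{\mathcal A \xi_i}$. For a $\sigma$-finite (say, separable) $H$ this sum is countable, and rescaling the $\xi_i$ one can often combine them; in the separable case it suffices to treat the cyclic case and then assemble a measure space as a disjoint union. So assume $\mathcal A$ has a cyclic vector $\xi$. Since $\mathcal A$ is a commutative $C^*$-algebra, by Theorem \ref{gelfand-naimark}(a) we have $\mathcal A \cong C(\Omega)$ for a compact Hausdorff space $\Omega$ (its spectrum), and the state $a \mapsto \langle a\xi,\xi\rangle$ corresponds via Riesz representation to a regular Borel measure $\mu$ on $\Omega$. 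The map $C(\Omega)\to H$, $f \mapsto f\xi$, then extends to a unitary $U: L^2(\Omega,\mu) \xrightarrow{\ \sim\ } H$ intertwining multiplication on $L^2(\Omega,\mu)$ with the action of $\mathcal A$. The remaining task is to see that $U^* \mathcal A U$ is not merely $C(\Omega)$ but all of $L^\infty(\Omega,\mu)$: this follows because $\mathcal A$ is weakly closed and $C(\Omega)$ is weakly dense in $L^\infty(\Omega,\mu)$ acting on $L^2(\Omega,\mu)$ (approximate an $L^\infty$ function by continuous functions, e.g.\ via Lusin's theorem, boundedly and a.e.). Hence $\mathcal A \cong L^\infty(\Omega,\Sigma,\mu)$.

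I expect the main obstacle to be the bookkeeping in the non-cyclic case: reassembling the local cyclic pieces $(\Omega_i,\mu_i)$ into a single measure space $(X,\Sigma,\mu)$ with $\mathcal A \cong L^\infty(X,\mu)$, while honestly tracking $\sigma$-finiteness. In the separable Hilbert space setting one takes $X = \bigsqcup_i \Omega_i$ with $\mu = \sum_i \mu_i$ (after normalizing so the total mass is finite), and checks that the isomorphisms glue; in full generality one needs the notion of a localizable or "strictly localizable" measure space rather than a $\sigma$-finite one, which is why the hypothesis in the statement is phrased with $\sigma$-finiteness. Since the reference [GVF01] or a standard operator algebras text (e.g.\ Kadison--Ringrose) carries this through in detail, I would cite it for the gluing step rather than reproduce it, and concentrate the written proof on the cyclic case where the spectral theorem does all the work.
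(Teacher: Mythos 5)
The paper offers no proof of this theorem at all; it simply cites [Tak02], Chapter III, Theorem 1.18. Your sketch reconstructs, with the right emphasis, the standard argument that such references give: part (a) by showing the multiplication algebra is its own commutant (using $\sigma$-finiteness to exhaust $X$ and obtain a separating vector), and part (b) by decomposing $H$ into cyclic pieces, applying Gelfand--Naimark plus Riesz representation to the vector state in the cyclic case, and then using weak closure together with a cyclic-separating vector to pass from $C(\Omega)$ to $L^\infty(\Omega,\mu)$. The one place worth tightening is the last step of the cyclic case: rather than saying the weak closure of $M_{C(\Omega)}$ already gives what you need, it is cleaner to note that $\xi$ is cyclic for the commutative algebra $\mathcal A$, hence also separating, so $U^*\mathcal A U \subseteq (U^*\mathcal A U)' \subseteq M_{C(\Omega)}' = M_{L^\infty}$, while the reverse inclusion comes from weak closedness; this avoids any suggestion that $C(\Omega) \ne L^\infty(\Omega,\mu)$ on the operator level, when in fact they coincide here (the spectrum is hyperstonean). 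You are also right that the honest technical cost lives in reassembling the cyclic pieces and in the $\sigma$-finite versus localizable issue, which the statement quietly sweeps under the rug; since the paper punts entirely to Takesaki, your decision to cite that gluing step rather than reproduce it is entirely in keeping with the paper's own level of detail.
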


\begin{proof}
\,\,\,A proof of this statement can be found in [Tak02], Chapter III, Theorem 1.18.
\end{proof}

A very exiting problem is to describe the noncommutative geometry of manifolds. Alain Connes, the creator of modern Noncommutative Geometry, proposed that the definition of a noncommutative manifold is a so-called spectral triple $(A,D,\mathcal{H})$, consisting of a Hilbert space $\mathcal{H}$, an algebra $A$ of operators on $\mathcal{H}$ (usually closed under taking adjoints) and a densely defined self adjoint operator $D$ satisfying $\Vert[D,a]\Vert_{\text{op}}\leq\infty$ for any $a\in A$. Starting with a closed Riemannian spin manifold $(M,g)$, the canonical associated spectral triple is given by 
\[(C^{\infty}(M),L^2(M,S(M)),D),
\]where $L^2(M,S(M))$ denotes the Hilbert space of $L^2$-sections of the spinor bundle $S(M)$ over $M$ and $D$ is the corresponding Dirac Operator (cf. [GVF01], Chapter 11). Without being too precise, we have the following theorem:

\begin{theorem}\label{connes manifold reconstruction}{\bf(Spectral Characterization of Manifolds).}\index{Spectral Characterization of Manifolds}
The algebra $A$ of a commutative unital spectral triple $(A,D,\mathcal{H})$ satisfying several additional conditions is the algebra of smooth functions on a compact manifold.
\end{theorem}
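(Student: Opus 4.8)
The plan is to reconstruct the manifold from the algebra by manufacturing, out of the auxiliary data $(D,\mathcal{H})$, an explicit smooth atlas on the spectrum of $A$. First I would apply the Gelfand--Naimark theorem (Theorem~\ref{gelfand-naimark}) to the norm closure $\overline{A}$ inside $\mathcal{B}(\mathcal{H})$: this yields a compact Hausdorff space $X$ with $\overline{A}\cong C(X)$, and it remains to upgrade $X$ to a smooth manifold and to identify $A$ with $C^\infty(X)$. The regularity (smoothness) axiom --- that $a$ and $[D,a]$ lie in the domains of all iterates of the derivation $\delta=[|D|,\,\cdot\,]$ --- shows that $A$ is a pre-$C^*$-algebra stable under holomorphic functional calculus, so that $A$ and $C(X)$ have the same spectrum and the same $K$-theory; this is the bookkeeping that lets the later $K$-theoretic hypothesis (Poincar\'e duality) be read off from $X$ alone.

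The heart of the argument uses the orientation axiom. One is given a Hochschild $n$-cycle $c=\sum_j a^0_j\otimes a^1_j\otimes\cdots\otimes a^n_j$ whose representative $\pi_D(c)=\sum_j a^0_j\,[D,a^1_j]\cdots[D,a^n_j]$ equals the grading operator $\gamma$ in the even case, or $1$ in the odd case. Fixing a point $x\in X$, I would argue that among the finitely many functions $a^i_j$ there are $n$ of them, say $f_1,\dots,f_n$, whose ``differentials'' --- measured through the symbols of the order-one operators $[D,f_i]$ --- are linearly independent near $x$: the nonvanishing of $\pi_D(c)$ at $x$ forces this, and the first-order condition $[[D,a],b]=0$ is what makes $a\mapsto[D,a]$ behave like a derivation into a Clifford-type module, so that ``linearly independent symbols'' is a meaningful and checkable notion. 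Then $(f_1,\dots,f_n)$ should be a homeomorphism of a neighbourhood $U$ of $x$ onto an open subset of $\mathbb{R}^n$, producing a topological atlas; the finiteness axiom --- that $\mathcal{H}^\infty=\bigcap_k\operatorname{Dom}(D^k)$ is a finitely generated projective $A$-module --- together with the metric dimension axiom keeps the local dimension equal to $n$ everywhere and rules out lower-dimensional strata.

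To see that the transition maps are $C^\infty$, and simultaneously that $A\subseteq C^\infty(X)$, I would localize: in such a chart $D$ differs from a genuine Dirac-type operator written in the coordinates $f_i$ by a lower-order term, and the regularity axiom translates into the statement that every $a\in A$ lies in $\bigcap_s H^s$ of the chart, hence in $C^\infty$ by Sobolev embedding; applying the same estimate to the $f_i$ themselves gives smoothness of the coordinate changes. The reverse inclusion $C^\infty(X)\subseteq A$ follows once one has enough elements of $A$ to build a smooth partition of unity subordinate to the charts, which the orientation cycle again supplies; the real-structure and Poincar\'e-duality axioms enter at the end to exclude residual pathologies (a boundary, or the ``wrong'' $KO$-dimension).

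The main obstacle I expect is precisely this middle step: converting the purely algebraic nonvanishing of the Hochschild-cycle representative into the genuine differential-geometric statement that finitely many elements of $A$ form local coordinates with $C^\infty$ overlaps. This is where the interplay between the order-one structure of $D$ (an analytic input, via elliptic regularity and symbol calculus) and the Hochschild and first-order conditions (algebraic inputs) is most delicate, and where Connes' original proof does its real work; I would not expect a short argument here, which is why the precise hypotheses and their verification are best left to the cited literature.
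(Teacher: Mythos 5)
The thesis does not actually prove this theorem: the ``proof'' given in the paper consists solely of a reference to Connes [Co08] and Rennie--V\'arilly [ReVa08], so there is no argument in the paper against which to check yours line by line. What you have written is, instead, a reasonable high-level outline of the structure of Connes' own reconstruction theorem --- Gelfand--Naimark for the underlying compact Hausdorff space $X$, regularity and stability under holomorphic functional calculus to align the spectra and $K$-theory of $A$ and $C(X)$, the orientability Hochschild cycle together with the first-order and finiteness axioms to manufacture local coordinates, and elliptic regularity with Sobolev embedding to upgrade continuity to smoothness of both the elements of $A$ and the transition maps. You correctly identify the delicate middle step (turning nonvanishing of $\pi_D(c)$ at a point into $n$ elements of $A$ forming a genuine $C^\infty$ chart) as the place where the real analytic work happens, and you defer it, as the thesis itself does, to the cited literature. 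Since the paper and your proposal lean on exactly the same source at exactly the same point, there is no discrepancy to report; just be aware that your sketch is a summary of [Co08] rather than a self-contained proof, and that nothing in the thesis would let a reader check its details.
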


For a detailed background on spectral triples and a complete proof of this theorem we refer to the very beautiful paper [Co08] (cf. [ReVa08]). We summarize the previous discussion with the help of the following table:
$ $\\
\begin{center}
\begin{tabular}{|c|c|c|}
\hline
\begin{small}``Geometric object" \end{small} & \begin{small} ``Space of functions" \end{small} & \begin{small} Algebraic structure \end{small} \\
\hline
\begin{small} $X$ compact space\end{small} & \begin{small} $C(X)$ \end{small}& \begin{small} $C^{*}$-algebra \end{small}\\
\hline
\begin{small} $G$ compact (Lie) group\end{small} &  \begin{small} $R(G,\mathbb{R})$ \end{small} & \begin{small} Hopf algebra \end{small} \\
\hline
\begin{small} $(X,\Sigma,\mu)$ measure space \end{small} & $L^{\infty}(X,\Sigma,\mu)$ \begin{small} \end{small} & \begin{small} von Neumann algebra \end{small} \\
\hline
\begin{small} $(M,g)$ closed. Riem. spin mfd. \end{small} &  \begin{small} $\left(C^{\infty}(M),L^2(M,S(M)),D\right)$ \end{small} & \begin{small} Spectral triple \end{small} \\
\hline
\end{tabular}
\end{center}
$ $\\

Noncommutative Geometry may be seen as a main avenue from physics of the XXth century to physics of the XXIth century. The importance of this theory rests on two essential points:

i) There exist many natural objects and theories for which the classical set-theoretical tools of topology, measure theory and differential geometry lose their pertinence, but which correspond very naturally to a noncommutative algebra. For example, classical mechanics describes the motion of macroscopic objects, from billiard balls or footballs to rockets, planets, stars and galaxies. Nevertheless, this theory fails in the context of microscopic objects like atoms or photons. Indeed, classical mechanics cannot describe the line spectrum of hydrogen. At this point quantum mechanics enters the picture. The heart of quantum mechanics is its mathematical formalism, which abstractly describes physical systems with the help of vectors and operators of a Hilbert space. Thus, quantum mechanics is a noncommutative theory containing classical mechanics as an approximation for large systems. Another example is provided by the geometry of space-time. In fact, it is quite natural to use a noncommutative algebra to describe the geometry of space-time in a very economical way. For a more detailed discussion of the last statement we refer to the survey paper [Co06].

ii) In general the passage from a commutative theory to a corresponding noncommutative theory is highly non-trivial. Completely new phenomena appear in the noncommutative world. Therefore the noncommutative case leads to a better understanding of the classical phenomena and provides much more possibilities of modelling, for example to describe phenomena of quantum mechanics in a geometric setting.

\section{Noncommutative Fibre Bundles}

The question of whether there is a way to translate the geometric concept of fibre bundles to Noncommutative Geometry is quite natural and highly interesting. In the case of vector bundles the theorem of Serre and Swan gives the essential clue: 

\begin{theorem}{\bf(Serre--Swan).}\label{1}
The category of vector bundles over a compact space $X$ is equivalent to the category of finitely generated projective modules over $C(X)$.
\end{theorem}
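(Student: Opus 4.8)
The plan is to realise the equivalence explicitly through the global section functor. To a vector bundle $q\colon E\to X$ one associates the space $\Gamma(E)$ of continuous sections, a module over $C(X)$ under pointwise multiplication $(f\cdot s)(x):=f(x)s(x)$, and to a bundle morphism $E\to F$ the induced $C(X)$-linear map $\Gamma(E)\to\Gamma(F)$; it then suffices to show that $\Gamma$ takes values in finitely generated projective modules and is fully faithful and essentially surjective. For the first point the decisive geometric input, valid because $X$ is compact, is that $E$ is a direct summand of a trivial bundle: choosing a finite open cover trivialising $E$ together with a subordinate partition of unity, one builds a bundle monomorphism $E\hookrightarrow X\times\mathbb{C}^{n}$ with complemented image, say $E\oplus E'\cong X\times\mathbb{C}^{n}$. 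Since $\Gamma(X\times\mathbb{C}^{n})\cong C(X)^{n}$ and $\Gamma(E\oplus E')\cong\Gamma(E)\oplus\Gamma(E')$, the module $\Gamma(E)$ is a direct summand of a free module, hence finitely generated projective.

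For full faithfulness: faithfulness holds because a bundle map is determined fibrewise and, by the same partition-of-unity argument, every vector in a fibre $E_{x}$ equals $s(x)$ for a suitable global section $s$, so sections separate the relevant data. For fullness, given $\varphi\colon\Gamma(E)\to\Gamma(F)$ one defines $\Phi_{x}\colon E_{x}\to F_{x}$ by $\Phi_{x}(s(x)):=\varphi(s)(x)$; this is well defined since any section vanishing at $x$ lies in $\mathfrak{m}_{x}\Gamma(E)$, where $\mathfrak{m}_{x}=\{f\in C(X):f(x)=0\}$, and $\varphi$ is $C(X)$-linear; the maps $\Phi_{x}$ then patch to a continuous bundle morphism inducing $\varphi$.

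Finally, for essential surjectivity let $P$ be a finitely generated projective $C(X)$-module; being a direct summand of some $C(X)^{n}$, it is of the form $P\cong pC(X)^{n}$ for an idempotent $p=p^{2}\in M_{n}(C(X))$, that is, a continuous map $p\colon X\to M_{n}(\mathbb{C})$ with $p(x)^{2}=p(x)$. Setting $E:=\{(x,v)\in X\times\mathbb{C}^{n}:p(x)v=v\}$ yields a vector bundle: since idempotents in $M_{n}(\mathbb{C})$ near a given one are conjugate to it, $x\mapsto\rank p(x)$ is locally constant, and on a region of constant rank $v\mapsto p(x)v$ furnishes a local trivialisation; one then checks $\Gamma(E)\cong pC(X)^{n}\cong P$. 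I expect the main obstacle to lie precisely at the two places where compactness is essential: embedding an abstract bundle into a trivial one, and, conversely, verifying that an abstract idempotent matrix over $C(X)$ genuinely defines a locally trivial bundle rather than merely a field of vector spaces of jumping dimension.
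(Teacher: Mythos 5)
Your plan is correct and is the standard proof, and it coincides in structure with what the paper does for the \emph{smooth} analogue in Chapter~3 (Theorem~\ref{Serre-Swan}); for the continuous statement~\ref{1} itself the paper simply cites [GVF01] and gives no independent argument. The one place you genuinely diverge from the paper's smooth proof is the local-constancy-of-rank step in essential surjectivity: the paper notes that for an idempotent bundle map $\phi$ both $m\mapsto\rank\phi_m$ and $m\mapsto\rank(1_N-\phi_m)$ are lower-semicontinuous and sum to the constant $N$, hence $\rank\phi_m$ is continuous, and then outsources the subbundle structure to the vector subbundle criterion (Theorem~\ref{im ker}); you instead invoke that nearby idempotents in $M_n(\mathbb{C})$ are similar, which gives local constancy \emph{and} hands you the local trivialization at once via the conjugating family. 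Your route is thus more self-contained at this point, at the cost of appealing to a slightly less elementary fact; the paper's route separates the soft semicontinuity argument from the bundle-theoretic machinery. One small imprecision worth correcting in your write-up: ``on a region of constant rank $v\mapsto p(x)v$ furnishes a local trivialisation'' should be read as the restriction $p(x)\vert_{\mathrm{im}\,p(x_0)}\colon\mathrm{im}\,p(x_0)\to E_x$ (equivalently, $v\mapsto u(x)v$ where $p(x)=u(x)p(x_0)u(x)^{-1}$), since $p(x)$ on all of $\mathbb{C}^n$ is of course not injective.
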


\begin{proof}
\,\,\,A proof of this statement can be found in [GFV01], Chapter 2, Theorem 2.3.
\end{proof}

\noindent
This theorem does for vector bundles what the Gelfand-Naimark theorem does for compact spaces. Indeed, it invites us to extend the fruitful notions from the theory of vector bundles to the category of right modules over (noncommutative) algebras and vice versa (when dealing with projective modules it is convenient to consider right modules, because in this case endomorphisms act by left matrix multiplications). It is therefore reasonable to regard finitely generated projective modules over an arbitrary algebra $A$ as ``noncommutative vector bundles" with ``base space" $A$. An important remark in this context is that the theorem of Serre-Swan also holds in the smooth category, i.e., it is possible to obtain a Serre-Swan equivalence between the category of smooth vector bundles over a (not necessarily compact) manifold $M$ and finitely generated projective modules over $C^{\infty}(M)$. In fact, we give a proof of this statement in Section \ref{serre-swan smooth category}. Another hint for the quality of this definition of ``noncommutative vector bundles" is:

\begin{theorem}{\bf(Cuntz--Quillen).}\label{Cuntz-Quillen again}\index{Theorem!of Cuntz--Quillen}
A right module admits a universal connection if and only if it it is projective.
\end{theorem}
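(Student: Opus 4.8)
The plan is to work with the universal (noncommutative) connection built from the universal first-order differential calculus $\Omega^1 A$, i.e. the kernel of the multiplication map $m\colon A\otimes A\to A$, which fits into the short exact sequence $0\to\Omega^1 A\to A\otimes A\xrightarrow{m} A\to 0$ of $A$-bimodules. Recall that a connection on a right $A$-module $E$ (relative to this calculus) is a $\mathbb{C}$-linear map $\nabla\colon E\to E\otimes_A\Omega^1 A$ satisfying the Leibniz rule $\nabla(ea)=\nabla(e)a+e\otimes da$, where $da=1\otimes a-a\otimes 1$; the connection is called \emph{universal} in the sense that every connection on $E$ for any other calculus is obtained from it by pushing forward along the corresponding quotient map $\Omega^1 A\to\Omega^1_{\text{other}}A$.

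First I would prove the ``only if'' direction. Suppose $E$ carries a connection $\nabla$. Tensoring the defining sequence of $\Omega^1 A$ with $E$ over $A$ on the left gives an exact sequence $E\otimes_A\Omega^1 A\to E\otimes_A(A\otimes A)\to E\otimes_A A\to 0$, and one identifies $E\otimes_A(A\otimes A)\cong E\otimes_{\mathbb{C}}A$ and $E\otimes_A A\cong E$, so the map in question is $E\otimes A\to E$, $e\otimes a\mapsto ea$. The key observation is that $\nabla$ (after subtracting the tautological term $e\mapsto e\otimes 1$, which lands in $E\otimes A$) provides a $\mathbb{C}$-linear splitting of this surjection, and the Leibniz rule is exactly the statement that this splitting is right $A$-linear. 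An $A$-module that is a direct summand of a free module $E\otimes A$ is projective; more carefully, one shows $E$ is a direct summand of $E\otimes A$ (a free module if $E$ is, but here we only need that $E\otimes A$ is free \emph{relative to $E$}, which still forces projectivity since a retract of $E\otimes A$ together with $E\otimes A$ being relatively free yields the lifting property). The cleanest formulation: a right $A$-module is projective iff the multiplication map $E\otimes_{\mathbb C} A\to E$ admits a right $A$-linear splitting, and the connection furnishes exactly such a splitting.

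For the converse, assume $E$ is projective, so it is a direct summand of a free module $F=A^{(I)}$; let $p\colon F\to E$ and $i\colon E\to F$ with $p\circ i=\id_E$ be the idempotent data. The free module carries an obvious (Levi-Civita-type) connection $\nabla_0\colon F\to F\otimes_A\Omega^1 A$ defined componentwise by $\nabla_0((a_k)_k)=(da_k)_k$, which one checks satisfies the Leibniz rule. Then I would set $\nabla:=(p\otimes\id)\circ\nabla_0\circ i$ and verify directly, using $p\circ i=\id_E$ and $A$-linearity of $p$ and $i$, that $\nabla$ satisfies the Leibniz rule on $E$; this is a short computation. Finally one remarks that this $\nabla$ is universal because it is built from $\Omega^1 A$, and any connection for a quotient calculus is its image under the induced quotient map on one-forms. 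The main obstacle — and the only genuinely delicate point — is the ``only if'' direction: one must argue carefully that the existence of a $\mathbb{C}$-linear Leibniz splitting of $E\otimes_{\mathbb C}A\to E$ is equivalent to projectivity of $E$, i.e. that such a splitting is automatically (equivalent to the data of) an $A$-linear retraction exhibiting $E$ as a summand of a free module; everything else is bookkeeping with the universal calculus.
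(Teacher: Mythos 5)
Your proof is correct, and for the ``only if'' direction it takes essentially the paper's route: pushing $\nabla$ through the map $j:E\otimes_A\Omega^1 A\to E\otimes_{\mathbb C}A$ (which you use implicitly) and adding the tautological section $e\mapsto e\otimes 1$ produces a $\mathbb{C}$-linear section $f$ of $E\otimes_{\mathbb C}A\to E$, and the Leibniz rule is precisely the $A$-linearity of $f$. One sentence of yours wobbles, though: you write that $E\otimes A$ is ``a free module if $E$ is'' and then worry about ``relative freeness'' to patch this up. That worry is unfounded. Since we are over a field, $E\otimes_{\mathbb C}A$ is \emph{always} free as a right $A$-module --- any $\mathbb C$-vector space basis of $E$ furnishes a free $A$-module basis --- which is exactly the parenthetical observation the paper makes. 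Your ``cleanest formulation'' at the end is correct and already contains the right argument; the relative-freeness detour should simply be deleted.

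For the converse you take a genuinely different route from the paper. The paper's sketch settles both directions simultaneously by noting that $\nabla\mapsto f$ is a bijection between linear maps $E\to E\otimes_A\Omega^1 A$ and linear sections of the multiplication map (here injectivity of $j$ is used), under which the Leibniz rule corresponds to $A$-linearity; hence a projective module yields an $A$-linear section, which pulls back to a connection. You instead write down the Grassmann connection $\nabla_0$ on a free module and transport it to $E$ via $\nabla=(p\otimes\id)\circ\nabla_0\circ i$, verifying Leibniz directly from $p\circ i=\id_E$ and the $A$-linearity of $p$ and $i$. Both are standard; your construction is more explicit and avoids invoking the bijection, while the paper's argument is tighter because a single computation ($f(s.a)-f(s)a=j(\nabla(s.a)-(\nabla s)a-s\otimes d_Aa)$) disposes of both implications at once.
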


\begin{proof}
\,\,\,A sketch of this proof can be found in Chapter \ref{outlook}, Theorem \ref{Cuntz-Quillen}. 
\end{proof}

\noindent
This theorem can be viewed as a noncommutative analogue of the Narasimhan-Ramanan theorem on (commutative) universal connections on manifolds. It transfers the classical differential-geometric concept of connections on a vector bundle to ``noncommutative vector bundles" (cf. Section \ref{infinitesimal objects on noncommutative spaces}). 

The case of principal bundles is not treated in a satisfying way. From a geometrical point of view it is, so far, not really sufficiently well understood what should be a ``noncommutative principal bundle". Still, there is a well-developed purely algebraic approach using the theory of Hopf algebras. An important handicap of this approach is the ignorance of any topological and geometrical aspects. We report the basic elements of the theory of \emph{Hopf--Galois extensions} in Appendix \ref{appendix A}.

In view of the previous discussion we can extend our ``dictionary" of Noncommutative Geometry in the following way:
$ $\\
\begin{center}\label{table II}
\begin{tabular}{|c|c|c|}
\hline
\begin{small}``Geometric object" \end{small} & \begin{small} ``Space of functions" \end{small} & \begin{small} Algebraic structure \end{small} \\
\hline
\hline
\begin{small} $X$ compact space\end{small} & \begin{small} $C(X)$ \end{small}& \begin{small} $C^{*}$-algebra \end{small}\\
\hline
\begin{small} $G$ compact (Lie) group\end{small} &  \begin{small} $R(G,\mathbb{R})$ \end{small} & \begin{small} Hopf algebra \end{small} \\
\hline
\begin{small} $(X,\Sigma,\mu)$ measure space \end{small} & $L^{\infty}(X,\Sigma,\mu)$ \begin{small} \end{small} & \begin{small} von Neumann algebra \end{small} \\
\hline
\begin{small} $(M,g)$ closed. Riem. spin mfd. \end{small} &  \begin{small} $\left(C^{\infty}(M),L^2(M,SM),D\right)$ \end{small} & \begin{small} Spectral triple \end{small} \\
\hline
\begin{small} $(\mathbb{V},M,V,q)$ smooth. vect. bdl. \end{small} & $\Gamma\mathbb{V}$ \begin{small} \end{small} & \begin{small} Fin. gen. proj. module\end{small} \\
\hline
\begin{small} $(P,M,G,\sigma,q)$ prin. bdl.\end{small} & \begin{small} {\bf ?} \end{small} & \begin{small} {\bf ?} \end{small} \\
\hline
\end{tabular}
\end{center}

\section{Aim of this Thesis}

From a geometrical point of view it is, so far, not sufficiently well understood what should be a ``noncommutative principal bundle". Still, there is a well-developed abstract algebraic approach using the theory of Hopf algebras. An important handicap of this approach is the ignorance of any topological and geometrical aspects. The aim of this thesis is to develop a geometrically oriented approach to the noncommutative geometry of principal bundles based on dynamical systems and the representation theory of the corresponding transformation group. In fact, our approach starts with the following observation (cf. Proposition \ref{smoothness of the group action on the algebra of smooth functions}):

\begin{proposition}
If $\sigma:M\times G\rightarrow M$ is a smooth \emph{(}right-\emph{)} action of a Lie group $G$ on a finite-dimensional manifold $M$ \emph{(}possibly with boundary\emph{)} and $E$ is a locally convex space, then the induced \emph{(}left-\emph{)} action 
\[\alpha:G\times C^{\infty}(M,E)\rightarrow C^{\infty}(M,E),\,\,\,\alpha(g,f)(m):=(g.f)(m):=f(\sigma(m,g))
\]of $G$ on the locally convex space $C^{\infty}(M,E)$ is smooth.
\end{proposition}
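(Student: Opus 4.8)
The plan is to reduce smoothness of $\alpha$ to a statement about smoothness of a map into $C^\infty(M,E)$, and then to use the exponential law for spaces of smooth functions (i.e.\ the natural identification $C^\infty(G\times M,E)\cong C^\infty(G,C^\infty(M,E))$ together with the analogous statement for maps out of $G\times C^\infty(M,E)$). Concretely, a map $\alpha:G\times C^\infty(M,E)\to C^\infty(M,E)$ is smooth if and only if the associated map $G\times C^\infty(M,E)\times M\to E$, $(g,f,m)\mapsto f(\sigma(m,g))$, is smooth, because $M$ is finite-dimensional (so the exponential law applies without restriction) and $C^\infty(M,E)$ carries the usual compact-open $C^\infty$-topology. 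So the whole problem collapses to checking that the single evaluation-type map
\[
\Phi:G\times C^\infty(M,E)\times M\to E,\qquad \Phi(g,f,m)=f(\sigma(m,g))
\]
is smooth.

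First I would observe that $\Phi$ factors as the composition
\[
G\times C^\infty(M,E)\times M \xrightarrow{\ (g,f,m)\mapsto (f,\sigma(m,g))\ } C^\infty(M,E)\times M \xrightarrow{\ \ev\ } E,
\]
where $\ev(f,m)=f(m)$ is the evaluation map. The second factor, $\ev:C^\infty(M,E)\times M\to E$, is smooth: this is a standard property of the compact-open $C^\infty$-topology on spaces of smooth functions on a finite-dimensional manifold (its iterated differentials are given by $d^k\ev((f,m),\dots)$ expressed through the jet evaluations, which are continuous). The first factor is a product of the identity on $C^\infty(M,E)$ with the smooth map $(g,m)\mapsto\sigma(m,g)$, which is smooth by hypothesis (it is $\sigma$ composed with the flip); hence the first factor is smooth. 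A composition of smooth maps between locally convex spaces (or manifolds modelled on them) is smooth, so $\Phi$ is smooth, and therefore $\alpha$ is smooth.

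It remains to check that $\alpha$ actually lands in $C^\infty(M,E)$ and is a left action; the action property $\alpha(g_1g_2,f)=\alpha(g_1,\alpha(g_2,f))$ is a direct consequence of $\sigma$ being a right action, $\sigma(m,g_1g_2)=\sigma(\sigma(m,g_1),g_2)$, and $\alpha(e,f)=f$ is immediate. That $g.f\in C^\infty(M,E)$ follows because $g.f=f\circ\sigma(\cdot,g)$ is a composition of smooth maps. The main obstacle in this argument is purely a matter of having the right infrastructure available: one needs the exponential law and the smoothness of $\ev$ for the compact-open $C^\infty$-topology on $C^\infty(M,E)$ with $E$ an arbitrary locally convex space and $M$ a finite-dimensional manifold possibly with boundary. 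Once these are granted (they are standard in the Kriegl--Michor / convenient-calculus framework, and the finite-dimensionality of $M$ is exactly what makes the exponential law unconditional here), everything else is a formal composition argument; no delicate estimates are required.
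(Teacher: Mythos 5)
Your proof is correct and follows essentially the same route as the paper: both invoke the smooth exponential law (with $M$ as the finite-dimensional factor) to reduce smoothness of $\alpha$ to smoothness of the associated map into $E$, and both write that map as the composition of $\id_{C^{\infty}(M,E)}\times\sigma$ (up to a flip of factors) with the evaluation map $\ev_M:C^{\infty}(M,E)\times M\to E$, whose smoothness is the key input. The only difference is cosmetic (factor ordering and your extra, harmless remarks that $\alpha$ is well-defined and a left action).
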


The previous proposition invites us to consider (smooth) dynamical systems $(A,G,\alpha)$, i.e., triples $(A,G,\alpha)$ consisting of a (probably noncommutative) unital locally convex algebra $A$, a (Lie) topological group $G$ and a group homomorphism $\alpha:G\rightarrow\Aut(A)$, which induces a (smooth) continuous action of $G$ on $A$.

\begin{example}
Each principal bundle $(P,M,G,q,\sigma)$ (cf. Definition \ref{principal bundles I}) induces 
a smooth dynamical system $(C^{\infty}(P),G,\alpha)$, consisting of the Fr\'echet algebra of smooth functions on the total space $P$, the structure group $G$ and a group homomorphism $\alpha:G\rightarrow\Aut(C^{\infty}(P))$, induced by the smooth action $\sigma:P\times G\rightarrow P$ of $G$ on $P$. 
\end{example}

Next, given a manifold $P$ and a Lie group $G$, we note that each smooth dynamical system $(C^{\infty}(P),G,\alpha)$ induces a smooth action of $G$ on $P$ (cf. Proposition \ref{smoothness of the group action on the set of characters}):

\begin{proposition}\label{introduction II}
If $P$ is a manifold, $G$ a Lie group and $(C^{\infty}(P),G,\alpha)$ a smooth dynamical system, then the homomorphism $\alpha:G\rightarrow\Aut(C^{\infty}(P))$ induces a smooth \emph{(}right-\emph{)} action
\begin{align*}
\sigma:P\times G\rightarrow P,\,\,\,(\delta_p,g)\mapsto\delta_p\circ\alpha(g)
\end{align*}
of the Lie group $G$ on the manifold $P$. Here, we have identified $P$ with the set of characters.
\end{proposition}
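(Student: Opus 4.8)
The plan is to handle well-definedness, the group-action axioms, and smoothness in turn, the last being the only step with real substance. Recall that $P$ is identified with the set of characters of $C^{\infty}(P)$ via $p\mapsto\delta_{p}$, an identification resting on the facts that smooth functions separate the points of $P$ and that every non-zero algebra homomorphism $C^{\infty}(P)\to\mathbb{R}$ is a point evaluation. With this in hand I would first observe that for each $g\in G$ the map $\alpha(g)$ is a unital algebra automorphism, so $\delta_{p}\circ\alpha(g)$ is again a non-zero unital algebra homomorphism $C^{\infty}(P)\to\mathbb{R}$ and hence equals $\delta_{p'}$ for a unique $p'\in P$; set $\sigma(p,g):=p'$. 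Equivalently, $\sigma(p,g)$ is the unique point of $P$ satisfying
\[
\phi\bigl(\sigma(p,g)\bigr)=(\alpha(g)\phi)(p)\qquad\text{for all }\phi\in C^{\infty}(P),
\]
and I would isolate this identity from the start, since it is precisely the bridge between the ``geometric'' map $\sigma$ and the ``algebraic'' action $\alpha$.

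The action axioms are then immediate. Since $\alpha(e)=\id_{C^{\infty}(P)}$ we get $\sigma(p,e)=p$, and since $\alpha$ is a group homomorphism, $\delta_{p}\circ\alpha(gh)=\bigl(\delta_{p}\circ\alpha(g)\bigr)\circ\alpha(h)=\delta_{\sigma(p,g)}\circ\alpha(h)$, whence $\sigma(p,gh)=\sigma(\sigma(p,g),h)$. Thus $\sigma$ is a right action of $G$ on $P$.

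For the smoothness of $\sigma\colon P\times G\to P$ I would invoke the standard characterization that a map $f$ from a manifold $N$ into a finite-dimensional manifold $P$ is smooth if and only if $\phi\circ f\in C^{\infty}(N)$ for every $\phi\in C^{\infty}(P)$; the non-trivial implication is proved locally on $P$ by extending the coordinate functions of a chart to global smooth functions with a bump function, first extracting continuity of $f$ from the fact that such functions generate the topology of $P$. Applying this with $N=P\times G$ and $f=\sigma$, it remains to check $\phi\circ\sigma\in C^{\infty}(P\times G)$ for each fixed $\phi\in C^{\infty}(P)$. Using the displayed identity I would factor $\phi\circ\sigma$ as
\[
P\times G\;\longrightarrow\;C^{\infty}(P)\times P\;\xrightarrow{\ \ev\ }\;\mathbb{R},\qquad (p,g)\longmapsto(\alpha(g)\phi,\,p)\longmapsto(\alpha(g)\phi)(p),
\]
and verify smoothness of the two arrows separately: the evaluation map $\ev\colon C^{\infty}(P)\times P\to\mathbb{R}$ is smooth, while the first arrow is smooth because its $P$-component is a projection and its $C^{\infty}(P)$-component is the projection $P\times G\to G$ followed by the orbit map $g\mapsto\alpha(g)\phi$, which is smooth since the action $G\times C^{\infty}(P)\to C^{\infty}(P)$ underlying the smooth dynamical system $(C^{\infty}(P),G,\alpha)$ is smooth.

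I expect the only real obstacle to lie in having the two pieces of ``infrastructure'' for the smoothness step available, namely the characterization of smooth maps into a manifold by composition with its smooth functions, and the smoothness of the evaluation map on $C^{\infty}(P)$; everything else reduces to one-line computations with characters. A point worth a moment's care is that, when $G$ is infinite-dimensional, the first characterization must be applied to the possibly infinite-dimensional manifold $N=P\times G$ --- but this is harmless, since its proof is local on the finite-dimensional factor $P$ and uses only charts and bump functions there.
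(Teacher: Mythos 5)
Your proof is correct and follows essentially the same route as the paper: reduce the smoothness of $\sigma$ to the smoothness of each $\phi\circ\sigma$ via the characterization of smooth maps into a finite-dimensional manifold, then factor $\phi\circ\sigma$ through the orbit map $g\mapsto\alpha(g)\phi$ (smooth because the dynamical system is smooth) and the evaluation map $C^{\infty}(P)\times P\to\mathbb{R}$ (smooth by [NeWa07]). The only meaningful difference is in how continuity of $\sigma$ is secured: the paper states its Lemma \ref{characterization of smooth maps} with continuity as a \emph{hypothesis} and supplies it separately by invoking its general result (Proposition \ref{cont. action II}) that the induced action on the spectrum $\Gamma_A$ is continuous in the topology of pointwise convergence, which by Proposition \ref{spec of C(M) top} agrees with the manifold topology on $P$. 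You instead fold continuity into the proof of the characterization lemma itself, observing that if $\phi\circ f$ is continuous for all $\phi\in C^{\infty}(P)$ then $f$ is continuous by a bump-function argument (preimages of $\{\phi>1/2\}$ with $\phi$ supported in a prescribed neighbourhood of $f(n_0)$ give a neighbourhood base). This is a small but genuine simplification: it is self-contained and avoids the excursion through the equicontinuity/pointwise-convergence framework, at the cost of proving a mildly stronger version of the characterization lemma than the paper states. Your remark that the characterization may be applied with $N=P\times G$ possibly infinite-dimensional, because the local work happens only in the finite-dimensional target $P$, is the right thing to say and matches the generality the proposition requires. The explicit well-definedness and group-action-axiom checks are implicit in the paper's statement (via Lemma \ref{spec as manifold} and Corollary \ref{spec of C(M,K) set}); spelling them out as you do is harmless and clarifying.
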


\begin{remark} We recall that if the action $\sigma$ is free and proper, then we obtain a smooth principal bundle 
\[(P,P/G,G,\pr,\sigma),
\]where $\pr:P\rightarrow P/G$ denotes the canonical orbit map (cf. [To00], Kapitel VIII, Abschnitt 21).
\end{remark}

We are now going to formulate the single most important problem that this thesis deals with:
$ $\\
$ $\\
{\bf Main Problem:} \emph{Find a geometric approach to ``noncommutative principal bundles"}.
$ $\\

In order to treat this problem we will mainly be concerned with discussing the following three questions:
$ $\\
$ $\\
{\bf Question 1:} Let $P$ be a manifold and $G$ a Lie group. Do there exist natural \emph{algebraic conditions} on a smooth dynamical system $(C^{\infty}(P),G,\alpha)$ which ensure that the action $\sigma$ of Proposition \ref{introduction II} is free?

\noindent
{\bf Question 2:} If the action $\sigma$ is free and proper, what can be said about the \emph{structure} of the induced principal bundle 
\[(P,P/G,G,\pr,\sigma)?
\]

In classical differential geometry, the relation between locally and globally defined objects is important for many constructions and applications. In particular, we recall that the principal bundles in our context are automatically locally trivial. This leads to:
$ $\\
$ $\\
{\bf Question 3:} Does there exist a good \emph{localization method} in Noncommutative Geometry?
$ $\\

Our main idea for treating Question 1 is to study a (smooth) dynamical system $(A,G,\alpha)$ with the help of the representation theory of the ``structure group" $G$. In fact, given a dynamical system $(A,G,\alpha)$ we will provide conditions including representations of $G$ which ensure that the corresponding right action
\begin{align}
\sigma:\Gamma_A\times G\rightarrow\Gamma_A,\,\,\,(\chi,g)\mapsto\chi\circ\alpha(g)\label{action introduction}
\end{align}
of $G$ on the spectrum $\Gamma_A$ of $A$ is free. In particular, this ``new characterization of free group actions"  will lead in the case of a compact abelian Lie group to an answer of Question 2. To be a little bit more precise, we will see that if the isotypic components of such a dynamical system $(A,G,\alpha)$ contain invertible elements, then the action (\ref{action introduction}) becomes free. From a geometrical point of view, it will, astonishingly, turn out that this condition exactly characterizes the trivial principal $G$-bundles and therefore gives rise to a reasonable approach to so-called \emph{trivial noncommutative principal bundles} with compact abelian structure group. While in classical (commutative) differential geometry there exists up to isomorphy only one trivial principal $G$-bundle over a given manifold $M$, 
we will see that the situation completely changes in the noncommutative world. For $G=\mathbb{T}^n$, we will provide a complete classification of trivial noncommutative principal $\mathbb{T}^n$-bundles (up to completion) in terms of a suitable cohomology theory.

Inspired by localization methods from algebraic geometry, we will provide a convenient smooth localization method for non-commutative algebras (Question 3). In fact, we will show that, given a manifold $M$ and an open subset $U$ of $M$, then it is possible to reconstruct $C^{\infty}(U)$ out of data from $C^{\infty}(M)$. Moreover, we will see that this reconstruction also remains true in the context of sections of algebra bundles with possibly infinite-dimensional fibre.
Given a compact abelian Lie group $G$, the step from the trivial to the non-trivial case will then be carried out with the help of this localization method, i.e., by saying that a (smooth) dynamical system $(A,G,\alpha)$ is a \emph{non-trivial noncommutative principal $G$-bundle} if ``localization" around characters of the fixed point algebra of the action of $G$ on the center $C_A$ of $A$ are trivial noncommutative principal $G$-bundles. Finally, we will discuss various examples.

Of course, we will see that this is only a small part of the questions that arise from the goal to develop a geometrically oriented approach to the noncommutative geometry of principal bundles.

\section{Zusammenfassung}

Die nichtkommutative Geometrie von Hauptfaserb\"undeln ist bisher noch nicht hinreichend gut verstanden. Allerdings gibt es bereits einen rein algebraischen Ansatz, in dem Hopfalgebren eine wichtige Rolle spielen. Ein entscheidender Nachteil dieses Ansatzes ist jedoch die Vernachl\"assigung von topologischen und geometrischen Aspekten. Ein Hauptziel dieser Dissertation besteht nun darin, einen geometrisch fundierten Zugang zu der nichtkommutativen Geometrie von Hauptfaserb\"undeln zu finden. Unser Ansatz beginnt mit der folgenden Beobachtung (vgl. Proposition \ref{smoothness of the group action on the algebra of smooth functions}):

\begin{proposition}
Ist $\sigma:M\times G\rightarrow M$ eine glatte Rechtswirkung einer Lie--Gruppe $G$ auf einer endlichdimensionalen Mannigfaltigkeit $M$ und $E$ ein lokalkonvexer Raum, dann ist die induzierte Linkswirkung
\[\alpha:G\times C^{\infty}(M,E)\rightarrow C^{\infty}(M,E),\,\,\,\alpha(g,f)(m):=(g.f)(m):=f(\sigma(m,g))
\]von $G$ auf dem lokalkonvexen Raum $C^{\infty}(M,E)$ glatt.
\end{proposition}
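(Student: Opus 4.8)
The plan is to deduce the smoothness of $\alpha$ from the smoothness of its adjoint map via the exponential law for spaces of smooth mappings. Since $M$ is a finite-dimensional manifold (possibly with boundary), the space $C^{\infty}(M,E)$ carries its natural locally convex topology (the compact-open $C^{\infty}$-topology), and for any smooth manifold $N$ modelled on a locally convex space a map $h\colon N\rightarrow C^{\infty}(M,E)$ is smooth if and only if the associated map $h^{\wedge}\colon N\times M\rightarrow E$, $(n,m)\mapsto h(n)(m)$, is smooth. Applying this with $N:=G\times C^{\infty}(M,E)$, it suffices to show that the map
\[
\alpha^{\wedge}\colon\bigl(G\times C^{\infty}(M,E)\bigr)\times M\rightarrow E,\qquad \bigl((g,f),m\bigr)\mapsto f\bigl(\sigma(m,g)\bigr),
\]
is smooth.

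To verify this, I would first note that the exponential law, applied to $\id_{C^{\infty}(M,E)}$, already shows that the evaluation map $\ev\colon C^{\infty}(M,E)\times M\rightarrow E$, $(f,m)\mapsto f(m)$, is smooth. Next, consider
\[
\Phi\colon\bigl(G\times C^{\infty}(M,E)\bigr)\times M\rightarrow C^{\infty}(M,E)\times M,\qquad \bigl((g,f),m\bigr)\mapsto\bigl(f,\sigma(m,g)\bigr).
\]
Its first component is the projection onto the factor $C^{\infty}(M,E)$, and its second component is the composition of the projection $\bigl((g,f),m\bigr)\mapsto(g,m)$, the flip $G\times M\rightarrow M\times G$ and the smooth action $\sigma\colon M\times G\rightarrow M$; as a map into a product is smooth precisely when all its components are, $\Phi$ is smooth. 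Since $\alpha^{\wedge}=\ev\circ\Phi$ and compositions of smooth maps are smooth, $\alpha^{\wedge}$ is smooth, and therefore so is $\alpha$. A short final check then confirms, using that $\sigma$ is a right action, that $\alpha$ is indeed a group homomorphism into $\Aut(C^{\infty}(M,E))$, so that it is a left action in the required sense.

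I expect the only genuine difficulty to lie in the exponential law itself: one needs that $C^{\infty}(M,E)$, for a finite-dimensional manifold $M$ with boundary and an \emph{arbitrary} locally convex space $E$, is a locally convex space for which this Cartesian-closedness property holds, together with the smoothness of evaluation that it entails. This belongs to the calculus foundations set up in the preliminary sections and is used here as a black box; granting it, everything else is routine bookkeeping with smooth maps and projections. One could alternatively bypass the exponential law and verify directly that $\alpha$ is $C^{k}$ for every $k$ by computing the iterated directional derivatives explicitly from those of $\sigma$ and $\ev$ and invoking the chain rule, but this is considerably longer and yields no additional insight.
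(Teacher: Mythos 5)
Your proof is correct and follows essentially the same route as the paper's: reduce smoothness of $\alpha$ to smoothness of $\alpha^{\wedge}$ via the exponential law, then factor $\alpha^{\wedge}$ as the evaluation map composed with a map built from $\sigma$ and projections. The only cosmetic difference is that you derive the smoothness of the evaluation map from the exponential law applied to the identity, whereas the paper cites it directly from [NeWa07], Proposition I.2.
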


Die vorangehende Proposition l\"adt uns dazu ein, (glatte) dynamische Systeme $(A,G,\alpha)$ zu betrachten, d.h., Tripel $(A,G,\alpha)$
bestehend aus einer lokalkonvexen Algebra $A$, einer (Lie--Gruppe) topologischen Gruppe $G$ und einem Gruppenhomomorphismus $\alpha:G\rightarrow\Aut(A)$, der eine (glatte) stetige Wirkung von $G$ auf $A$ definiert.

\begin{beispiel}
Jedes Hauptfaserb\"undel $(P,M,G,q,\sigma)$ (vgl. Definition \ref{principal bundles I}) induziert ein glattes dynamisches System $(C^{\infty}(P),G,\alpha)$ bestehend aus der Fr\'echet Algebra der glatten Funktionen auf $P$, der Strukturgruppe $G$ des Hauptfaserb\"undels und dem Gruppenhomomorphismus $\alpha:G\rightarrow\Aut(C^{\infty}(P))$, definiert durch die Gruppenwirkung $\sigma:P\times G\rightarrow P$ von $G$ auf $P$. 
\end{beispiel}

Desweiteren bemerken wir, dass jedes glatte dynamische System der Form $(C^{\infty}(P),G,\alpha)$ eine glatte Wirkung von $G$ auf $P$ induziert (vgl. Proposition \ref{smoothness of the group action on the set of characters}):

\begin{proposition}\label{zusammenfassung II}
Ist $P$ eine Mannigfaltigkeit, $G$ eine Lie--Gruppe und $(C^{\infty}(P),G,\alpha)$ ein glattes dynamisches System, dann induziert der Gruppenhomomorphismus $\alpha:G\rightarrow\Aut(C^{\infty}(P))$ eine glatte Rechtswirkung
\begin{align*}
\sigma:P\times G\rightarrow P,\,\,\,(\delta_p,g)\mapsto\delta_p\circ\alpha(g)
\end{align*}
der Lie--Gruppe $G$ auf der Mannigfaltigkeit $P$. Hierbei identifizieren wir $P$ mit der zugeh\"origen Menge von Charakteren von $C^{\infty}(P)$.
\end{proposition}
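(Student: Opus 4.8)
The plan is to start from the (previously established) identification of $P$ with the character space $\Gamma_{C^\infty(P)}$, which realises every $p\in P$ as the evaluation character $\delta_p$ and, conversely, every character of the Fr\'echet algebra $C^\infty(P)$ as such an evaluation. Since $\alpha(g)$ is an algebra automorphism of $C^\infty(P)$ for each $g\in G$, the composition $\delta_p\circ\alpha(g)$ is again a character of $C^\infty(P)$; hence there is a unique point, call it $\sigma(p,g)\in P$, with $\delta_p\circ\alpha(g)=\delta_{\sigma(p,g)}$. This defines the map $\sigma:P\times G\to P$, and unwinding the definitions gives the pointwise identity
\[
f\bigl(\sigma(p,g)\bigr)=\bigl(\alpha(g)f\bigr)(p)\qquad\text{for all }f\in C^\infty(P),\ p\in P,\ g\in G,
\]
which will be the workhorse for everything that follows.

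The algebraic part, namely that $\sigma$ is a right action, is then immediate: from $\alpha(e)=\id$ we get $\delta_p\circ\alpha(e)=\delta_p$, i.e.\ $\sigma(p,e)=p$; and since $\alpha$ is a group homomorphism, $\delta_p\circ\alpha(gh)=\bigl(\delta_p\circ\alpha(g)\bigr)\circ\alpha(h)=\delta_{\sigma(p,g)}\circ\alpha(h)$, which yields $\sigma(p,gh)=\sigma\bigl(\sigma(p,g),h\bigr)$.

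For smoothness I would argue as follows. Fix $f\in C^\infty(P)$. The map $g\mapsto\alpha(g)f$ is smooth $G\to C^\infty(P)$, being a partial map of the smooth action $G\times C^\infty(P)\to C^\infty(P)$ attached to the smooth dynamical system; combined with the (known) smoothness of the evaluation map $\ev:P\times C^\infty(P)\to\mathbb{R}$, $(p,h)\mapsto h(p)$, the displayed identity exhibits $f\circ\sigma$ as the composition
\[
P\times G\ \xrightarrow{\ (p,g)\mapsto(p,\,\alpha(g)f)\ }\ P\times C^\infty(P)\ \xrightarrow{\ \ev\ }\ \mathbb{R},
\]
so $f\circ\sigma$ is smooth for every $f\in C^\infty(P)$. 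It remains to upgrade this to smoothness of $\sigma$ itself, and this is where the finite dimensionality of $P$ enters. Since $C^\infty(P,\mathbb{R})$ separates points of $P$ and generates its manifold topology (locally, coordinate functions of a chart may be multiplied by a bump function to yield global smooth functions), continuity of $\sigma$ follows. Given $(p_0,g_0)$, choose a chart $(U,\varphi=(x^1,\dots,x^k))$ of $P$ around $\sigma(p_0,g_0)$, a bump function $\chi\in C^\infty(P)$ with $\chi\equiv 1$ on a neighbourhood $V\subseteq U$ of $\sigma(p_0,g_0)$ and $\supp\chi\subseteq U$, and set $\tilde x^i:=\chi\cdot x^i\in C^\infty(P)$. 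On the open set $\sigma^{-1}(V)$ one has $\varphi\circ\sigma=(\tilde x^1\circ\sigma,\dots,\tilde x^k\circ\sigma)$, which is smooth by the previous step; hence $\sigma$ is smooth near $(p_0,g_0)$, and $(p_0,g_0)$ was arbitrary.

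The main obstacle --- really the only non-formal point --- is this last step, converting ``$f\circ\sigma$ smooth for all $f$'' into smoothness of $\sigma$. It rests on two facts about $P$ that should be isolated and, if not already available in the thesis, established beforehand: that evaluation $P\times C^\infty(P)\to\mathbb{R}$ is smooth for the natural ($C^\infty$ compact--open) topology on $C^\infty(P)$, and that global smooth functions obtained from charts via bump functions both recover the topology of $P$ and serve as local coordinates --- the latter requiring $P$ to admit bump functions, i.e.\ the standing convention that manifolds are finite-dimensional (hence paracompact here). Everything else is bookkeeping with characters and the homomorphism property of $\alpha$.
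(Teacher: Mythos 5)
Your proposal is correct and follows essentially the same route as the paper: you use the smoothness of the evaluation map $\ev_P\colon C^\infty(P)\times P\to\mathbb{K}$ together with the smoothness of the orbit maps $g\mapsto\alpha(g)f$ to see that each $f\circ\sigma$ is smooth, and then pass from this to smoothness of $\sigma$ itself via continuity plus a chart–and–bump–function argument. The paper packages the last step as a separate lemma (``a continuous map $f\colon M\to N$ is smooth iff $g\circ f$ is smooth for every $g\in C^\infty(N,\mathbb{R})$'') and gets continuity of $\sigma$ from the continuity of $\ev_P$ via a dedicated proposition on induced actions on spectra, but the underlying ideas — identification of $P$ with $\Gamma_{C^\infty(P)}$, the workhorse identity $f(\sigma(p,g))=(\alpha(g)f)(p)$, the factorisation $\sigma_f=\ev_P\circ(\id_P\times\alpha_f)$, and the local reduction to coordinate functions cut off by bump functions — are identical to yours.
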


\begin{bemerkung}
Ist die Wirkung $\sigma$ frei und eigentlich, dann erhalten wir ein Hauptfaserb\"undel
\[(P,P/G,G,\pr,\sigma),
\]wobei $\pr:P\rightarrow P/G$ die kanonische Orbitabbildung bezeichnet (cf. [To00], Kapitel VIII, Abschnitt 21).
\end{bemerkung}

Wir formulieren jetzt das Hauptproblem, mit dem sich diese Arbeit auseinandersetzt:
$ $\\
$ $\\
{\bf Hauptproblem:} \emph{Finde einen geometrisch fundierten Zugang zu der nichtkommutativen Geometrie von Hauptfaserb\"undeln}.
$ $\\

Um dieses Problem zu l\"osen, werden wir haupts\"achlich damit befasst sein die folgenden drei Fragen zu kl\"aren:
$ $\\
$ $\\
{\bf Frage 1:} Sei $P$ eine Mannigfaltigkeit und $G$ eine Lie--Gruppe. Existieren nat\"urliche \emph{algebraische Bedingungen} an ein glattes dynamisches System $(C^{\infty}(P),G,\alpha)$, welche die Freiheit der induzierten Wirkung $\sigma$ von Proposition \ref{zusammenfassung II} garantieren?

\noindent
{\bf Frage 2:} Angenommen die Wirkung $\sigma$ ist frei und eigentlich. Was kann \"uber die Struktur des induzierten Hauptfaserb\"undels  
\[(P,P/G,G,\pr,\sigma)
\]gesagt werden?

In der klassischen Differentialgeometrie ist die Beziehung zwischen lokal und global definierten Objekten eminent wichtig f\"ur viele Konstruktionen und Anwendungen. Insbesondere sind die Hauptfaserb\"undel in unserem Kontext automatisch lokal trivial. Dies f\"uhrt zu folgender Frage:
$ $\\
$ $\\
{\bf Frage 3:} Existiert eine gute \emph{Lokalisierungsmethode} in der Nichtkommutativen Geometrie?
$ $\\

Um Frage 1 zu behandeln, werden wir (glatte) dynamische Systeme $(A,G,\alpha)$ mit Hilfe der Darstellungstheorie der Symmetriegruppe $G$ studieren. Dieser Ansatz wird nat\"urliche Bedingungen f\"ur die Freiheit der zugeh\"origen Rechtswirkung
\begin{align}
\sigma:\Gamma_A\times G\rightarrow\Gamma_A,\,\,\,(\chi,g)\mapsto\chi\circ\alpha(g)\label{action zusammenfassung}
\end{align}
von $G$ auf dem Spektrum $\Gamma_A$ von $A$ liefern. Ist $G$ kompakt und abelsch, dann werden wir sehen, dass die induzierte Wirkung (\ref{action zusammenfassung}) frei ist, falls die isotypischen Komponenten des dynamischen Systems $(A,G,\alpha)$ invertierbare Elemente enthalten. Desweiteren werden wir sehen, dass diese Bedingung gerade die trivialen $G$-Hauptfaserb\"undel charakterisiert und daher zu einer vern\"unftigen Definition von \emph{trivialen nichtkommutativen $G$-Hauptfaserb\"undeln} f\"uhrt. 
W\"ahrend es in der klassischen Differentialgeometrie bis auf Isomorphie nur genau ein triviales $G$-Hauptfaserb\"undel \"uber einer gegebenen Mannigfaltigkeit $M$ existiert, werden wir sehen, dass sich die Situation in der nichtkommutativen Welt v\"ollig anders verh\"alt. Insbesondere werden wir alle trivialen nichtkommutativen $\mathbb{T}^n$-Hauptfaserb\"undel mit Hilfe einer geeigneten Kohomologietheorie klassifizieren.

Inspiriert durch Lokalisierungsmethoden aus der Algebraischen Geometrie, werden wir eine geeignete (glatte) Lokalisierungsmethode f\"ur nichtkommutative Algebren entwickeln (Frage 3). Ist $M$ eine Mannigfaltigkeit und $U\subseteq M$ eine offene Teilmenge, dann werden wir mit Hilfe dieser Lokalisierungsmethode zeigen, dass es m\"oglich ist $C^{\infty}(U)$ aus $C^{\infty}(M)$ zu rekonstruieren. Insbesondere werden wir sehen, dass diese Rekonstruktion auch im Kontext von Schnitten von Algebrenb\"undeln gilt. Der Schritt zu (nicht-trivialen) nichtkommutativen Hauptfaserb\"undeln mit kompakt abelscher Stukturgruppe erfolgt ebenfalls mit Hilfe dieser Lokalisierungsmethode: Wir werden ein (glattes) dynamisches System $(A,G,\alpha)$ ein \emph{nicht-triviales nichtkommutatives $G$-Hauptfaserb\"undel} nennen, falls die Lokalisierung um Charakter der Fixpunktalgebra der Wirkung von $G$ auf dem Zentrum $C_A$ von $A$ zu trivialen nichtkommutativen Hauptfaserb\"undeln f\"uhrt. Schlie\ss lich werden wir einige Beispiele diskutieren.

Nat\"urlich ist dies nur ein kleiner Teil der Fragen, die auftauchen, wenn man einen geometrisch fundierten Zugang zu der nichtkommutativen Geometrie von Hauptfaserb\"undeln entwickeln m\"ochte.

\section{Preliminaries and Notation} 

In order to simplify the notation a little we write ``NCP" for ``noncommutative principal". Given a unital algebra $A$, we write $\Aut(A)$\sindex[n]{$\Aut(A)$} for the corresponding automorphisms in the category of $A$. By a (smooth) dynamical system we mean a triple $(A,G,\alpha)$, consisting of a unital locally convex algebra $A$, a (Lie) topological group $G$ and a group homomorphism $\alpha:G\rightarrow\Aut(A)$, which induces a (smooth) continuous action of $G$ on $A$. Moreover, we write $C_A$ for the center of $A$. All manifolds appearing in this thesis are assumed to be finite-dimensional, paracompact, second countable and smooth if not mentioned otherwise. If $M$ is a manifold, we usually write $C^{\infty}(M)$ for the algebra of smooth $\mathbb{C}$-valued functions on $M$.

\section{Outline}

We now give a rough outline of the results that can be found in this thesis, without going too much into detail.

\subsection*{Chapter 2. Background on Differential Geometry of Fibre Bundles}

Chapter 2 contains the basic concepts of the theory of principal fibre bundles. Indeed, in the first section we present the basic definitions of principal bundles, trivial principal bundles (which play a key role in this thesis) and vector bundles. We further recall how to associate a vector bundle to a principal bundle. This construction is very important, since the sections of an associated vector bundle have a very nice description in terms of the principal bundle. In the second part of the chapter we recall the concept of connections on principal bundles; they are used in the development of curvature and hence are also important in the Chern--Weil theory of characteristic classes of principal bundles.

\subsection*{Chapter 3. Noncommutative Vector Bundles}
 
The goal of this chapter is to prove a slightly more general version of Theorem \ref{1}
in the smooth context, i.e., to establish that the categories of finitely generated projective modules over $C^{\infty}(M)$ and the category of smooth vector bundles are equivalent for arbitrary manifolds $M$ of finite dimension: 
\vspace*{0,2cm}

\noindent 
{\bf Theorem} {\bf(Serre--Swan).}
{\it Let $M$ be a manifold. Then the functor
\[\mathbb{V}\mapsto\Gamma(\mathbb{V})
\]defines an equivalence between the category of vector bundles over $M$ and the category of finitely generated projective modules over $C^{\infty}(M)$.}

\noindent
In particular one should consider finitely generated modules of noncommutative algebras as \emph{noncommutative vector bundles}. We have not found such a theorem discussed explicitly in the literature. We further present some results concerning the theory of locally convex modules of locally convex algebras.

\subsection*{Chapter 4. Smooth Localization in Noncommutative Geometry}

As is well-known from classical differential geometry, the relation between locally and globally defined objects is important for many constructions and applications. For example, a (non-trivial) principal bundle $(P,M,G,q,\sigma)$ can be considered as a geometric object that is glued together from local pieces which are trivial, i.e., which are of the form $U\times G$ for some open subset $U$ of $M$. Unfortunately, there is no obvious and straightforward localization method in Noncommutative Geometry. 
Inspired by methods from algebraic geometry, we present in Section \ref{SLNS} an appropriate method of localizing (possibly noncommutative) algebras in a smooth way:
\vspace*{0,2cm}

\noindent 
{\bf Definition} {\bf(Smooth localization).}
For a unital locally convex algebra $A$ we write
\[A\{t\}:=C^{\infty}(\mathbb{R},A)
\]for the unital locally convex algebra of smooth $A$-valued arcs. We further define for $a\in A$ a smooth $A$-valued function on $\mathbb{R}$ by $f_a:\mathbb{R}\rightarrow A,\,\,\,t\mapsto 1_A-ta$ and write $I_a:=\overline{\langle f_a\rangle}$ for the closure of the two-sided ideal generated by this function. Finally, we write
\[A_{\{a\}}:=A\{t\}/I_{a}
\]for the corresponding locally convex quotient algebra and call it the (\emph{smooth}) \emph{localization} of $A$ with respect to $a$.

In Section \ref{Localizing C(M)} we show that if $A$ is a unital Fr\'echet algebra, then the smooth localization of $C^{\infty}(M,A)$ with respect to a smooth nonzero function $f:M\rightarrow\mathbb{R}$ is isomorphic as a unital Fr\'echet algebra to $C^{\infty}(M_f,A)$, where
\[M_f:=\{m\in M:\,f(m)\neq 0\}.
\]More, specifically, we prove the following theorem:
\vspace*{0,2cm}

{\bf Theorem} {\bf(Smooth localization of sections of trivial algebra bundles).}
{\it Let $A$ be a unital Fr\'echet algebra. If $M$ is a manifold and $f:M\rightarrow\mathbb{R}$ a smooth function which is nonzero, then the map
\[\phi_f:C^{\infty}(M,A)_{\{f\}}\rightarrow C^{\infty}(M_f,A),\,\,\,[F]\mapsto F\circ\left(\frac{1}{f}\times\id_{M_f}\right)
\]is an isomorphism of unital Fr\'{e}chet algebras.}

\noindent
In particular, we conclude that it is possible to reconstruct $C^{\infty}(U)$ out of data from $C^{\infty}(M)$, since for each open subset $U$ of $M$ there exists a smooth nonzero function $f:M\rightarrow\mathbb{R}$ satisfying $U=f^{-1}(\mathbb{R}^{\times})$ (cf. Theorem \ref{whitneys theorem}).

Finally, in Section \ref{ACNCPT^nB} we present a method of localizing dynamical systems $(A,G,\alpha)$ with respect to elements of the commutative fixed point algebra of the induced action of $G$ on the center $C_A$ of $A$. This construction will be crucial for our approach to NCP bundles.

\subsection*{Chapter 5. Some Comments on Sections of Algebra Bundles}

In this short chapter we are dealing with algebra bundles $q:\mathbb{A}\rightarrow M$ with a possibly infinite-dimensional fibre $A$ over a finite-dimensional manifold $M$ and show how to endow the corresponding space $\Gamma\mathbb{A}$ of sections with a topology that turns it into a (unital) locally convex algebra. Moreover, we show that if $A$ is a Fr\'echet algebra, then the smooth localization of $\Gamma\mathbb{A}$ with respect to an element $f\in C^{\infty}(M,\mathbb{R})$ (cf. Definition \ref{sm. loc.}) is isomorphic (as a unital Fr\'echet algebra) to $\Gamma\mathbb{A}_{M_f}$, where 
\[M_f:=\{m\in M:\,f(m)\neq 0\}.
\]More, specifically, we prove the following theorem: 
\vspace*{0,2cm}

\noindent 
{\bf Theorem} {\bf(Smooth localization of sections of algebra bundles).}
{\it Let $A$ be a unital Fr\'echet algebra. If $(\mathbb{A},M,A,q)$ is an algebra bundle and $f:M\rightarrow\mathbb{R}$ a smooth function which is nonzero, then the map
\[\phi_f:\Gamma\mathbb{A}_{\{f\}}\rightarrow\Gamma\mathbb{A}_{M_f},\,\,\,[F]\mapsto F\circ\left(\frac{1}{f}\times\id_{M_f}\right)
\]is an isomorphism of unital Fr\'echet algebras.}

\subsection*{Chapter 6. Free Group Actions from the Viewpoint of Dynamical Systems}

Chapter 6 is dedicated to the foundations of our geometric approach to noncommutative principal bundles. Our main result is a new characterization of free group actions, involving dynamical systems and representations of the corresponding transformation group. In Section \ref{section:free dynamical systems} we introduce the concept of a \emph{free dynamical system}. Loosely speaking, we call a dynamical system $(A,G,\alpha)$ free, if the locally convex algebra $A$ is \emph{commutative} and the topological group $G$ admits ``sufficiently many" representations such that a certain family of maps defined on the corresponding modules associated to $A$ are surjective.
\vspace*{0,2cm}

\noindent 
{\bf Theorem} {\bf(Freeness of the induced action).}
{\it If $(A,G,\alpha)$ is a free dynamical system, then the induced action
\[\sigma:\Gamma_A\times G\rightarrow\Gamma_A,\,\,\,(\chi,g)\mapsto\chi\circ\alpha(g)
\]of $G$ on the spectrum $\Gamma_A$ of $A$ is free.}

In Section \ref{ANCFGACG} we apply the constructions of Section \ref{section:free dynamical systems} to dynamical systems arising from group actions in classical geometry. The astonishing result is a new characterization of free group actions:
\vspace*{0,2cm}

\noindent 
{\bf Theorem} {\bf(Characterization of free group actions).}
{\it Let $P$ be a manifold, $G$ a compact Lie group and $(C^{\infty}(P),G,\alpha)$ a smooth dynamical system. Then the following statements are equivalent:
\begin{itemize}
\item[\emph{(a)}]
The smooth dynamical system $(C^{\infty}(P),G,\alpha)$ is free.
\item[\emph{(b)}]
The induced smooth group action $\sigma:P\times G\rightarrow P$ is free.
\end{itemize}
In particular, in this situation the two concepts of freeness coincide.}
\vspace*{0,2cm}

Section \ref{FACAG} is devoted to discussing dynamical systems $(A,G,\alpha)$ with compact structure group $G$ with the help of the ``fine structure" of $A$ with respect to $G$, i.e., with the Fourier decomposition of the system $(A,G,\alpha)$. Our main result is the extendability of characters on fixed point algebras of CIAs. Indeed, we show that if $A$ is a complete commutative CIA, $G$ a compact group and $(A,G,\alpha)$ a dynamical system, then each character of $B:=A^G$ can be extended to a character of $A$. In particular, the natural map $\Gamma_A\rightarrow\Gamma_B$, $\chi\mapsto\chi_{\mid B}$ is surjective:
\vspace*{0,2cm}

\noindent 
{\bf Theorem} {\bf(Extendability of characters).}
{\it Let $A$ be a complete commutative CIA, $G$ a compact group and $(A,G,\alpha)$ a dynamical system. If $B$ is the corresponding fixed point algebra, then each character $\chi:B\rightarrow\mathbb{C}$ extends to a character $\widetilde{\chi}:A\rightarrow\mathbb{C}$.}

In the remaining part of this chapter we are concerned with rewriting and discussing the freeness condition for a dynamical system $(A,G,\alpha)$ with compact abelian structure group $G$. Here, the character group $\widehat{G}:=\Hom_{\text{grp}}(G,\mathbb{T})$ of $G$ plays an important role. In particular, we present natural conditions which ensure the freeness of such a dynamical system. These conditions may even be formulated if the algebra $A$ is noncommutative.
\vspace*{0,2cm}

\noindent
{\bf Theorem} {\bf(The freeness condition for compact abelian groups).}
{\it Let $A$ be a commutative unital locally convex algebra, $G$ a compact abelian group and $(A,G,\alpha)$ a dynamical system. Further, let
\[A_{\varphi}:=\{a\in A:\,(\forall g\in G)\,\,\alpha(g).a=\varphi(g)\cdot a\}
\]be the isotypic component corresponding to the character $\varphi\in\widehat{G}$. The dynamical system $(A,G,\alpha)$ is free if the map
\[\ev^{\varphi}_{\chi}: A_{\varphi}\rightarrow \mathbb{C},\,\,\,a\mapsto\chi(a)
\]is surjective for all $\varphi\in\widehat{G}$ and all $\chi\in\Gamma_A$.}
\vspace*{0,2cm}

\noindent
{\bf Theorem} {\bf(Invertible elements in isotypic components).}
{\it Let $A$ be a commutative unital locally convex algebra, $G$ a compact abelian group and $(A,G,\alpha)$ a dynamical system. If each isotypic component $A_{\varphi}$ contains an invertible element, then the dynamical system $(A,G,\alpha)$ is free.}

\subsection*{Chapter 7. Trivial NCP Torus Bundles}

This chapter is concerned with a new, geometrically oriented approach to the noncommutative geometry of trivial principal $\mathbb{T}^n$-bundles based on dynamical systems of the form $(A,\mathbb{T}^n,\alpha)$. In the first section we introduce the concept of trivial noncommutative principal $\mathbb{T}^n$-bundles.:
\vspace*{0,2cm}

\noindent 
{\bf Definition} {\bf(Trivial NCP $\mathbb{T}^n$-bundles).}
Let $A$ be a unital locally convex algebra. A (smooth) dynamical system $(A,\mathbb{T}^n,\alpha)$ is called a (\emph{smooth}) \emph{trivial NCP $\mathbb{T}^n$-bundle}, if each isotypic component contains an invertible element. 

\noindent
This definition is inspired by the following observation: A principal bundle $(P,M,\mathbb{T}^n,q,\sigma)$ is trivial if and only if it admits a trivialization map. Such a trivialization map consists basically of $n$ smooth functions $f_i:P\rightarrow\mathbb{T}$ satisfying $f_i(\sigma(p,z))=f_i(p)\cdot z_i$ for all $p\in P$ and $z\in\mathbb{T}^n$. From an algebraical  point of view this condition means that each isotypic component of the (naturally) induced dynamical system $(C^{\infty}(P),\mathbb{T}^n,\alpha)$ contains an invertible element. Conversely, we show that each trivial noncommutative principal $\mathbb{T}^n$-bundle of the form $(C^{\infty}(P),\mathbb{T}^n,\alpha)$ induces a trivial principal $\mathbb{T}^n$-bundle of the form $(P,P/\mathbb{T}^n,\mathbb{T}^n,\pr,\sigma)$. The crucial point here is to verify the freeness of the induced action of $\mathbb{T}^n$ on $P$.
\vspace*{0,2cm}

\noindent 
{\bf Theorem} {\bf(Trivial principal $\mathbb{T}^n$-bundles).}
{\it If $P$ is a manifold, then the following assertions hold:
\begin{itemize}
\item[\emph{(a)}]
If $(C^{\infty}(P),\mathbb{T}^n,\alpha)$ is a smooth trivial NCP $\mathbb{T}^n$-bundle, then the corresponding principal bundle $(P,P/\mathbb{T}^n,\mathbb{T}^n,\pr,\sigma)$ is trivial.
\item[\emph{(b)}]
Conversely, if $(P,M,\mathbb{T}^n,q,\sigma)$ is a trivial principal $\mathbb{T}^n$-bundle, then the corresponding smooth dynamical system $(C^{\infty}(P),\mathbb{T}^n,\alpha)$ is a smooth trivial NCP $\mathbb{T}^n$-bundle.
\end{itemize}}

In the second section we present various examples including noncommutative tori, topological dynamical systems and certain crossed product constructions. In particular, we show that the group $C^{*}$-algebra of the discrete Heisenberg group is a trivial NCP $\mathbb{T}^2$-bundle; the group $C^{*}$-algebra of the discrete Heisenberg group plays an important role in the nice paper [ENOO09].

The remainder of Chapter 7 is devoted to a complete classification of trivial noncommutative principal $\mathbb{T}^n$-bundles up to completion. It turns out that each trivial noncommutative principal $\mathbb{T}^n$-bundle possesses an underlying algebraic structure of a $\mathbb{Z}^n$-graded unital associative algebra. This structure may be considered as an algebraic counterpart of a trivial noncommutative principal $\mathbb{T}^n$-bundle and can be classified with methods from the extension theory of groups and a suitable cohomology theory. We further present some nice examples of these algebraically trivial principal $\mathbb{T}^n$-bundles. 
\vspace*{0,2cm}

\noindent 
{\bf Theorem} {\bf(Classification of trivial NCP $\mathbb{T}^n$-bundles).}
{\it  Let $n\in\mathbb{N}$ and $B$ be a unital algebra. Then the map
\[\chi:\Ext(\mathbb{Z}^n,B)\rightarrow H^2(\mathbb{Z}^n,B),\,\,\,[A]\mapsto\chi(A)
\]is a well-defined bijection.}

\subsection*{Chapter 8. Trivial NCP Cyclic Bundles}

For each $n\in\mathbb{N}$ we write 
\[C_n:=\{z\in\mathbb{C}^{\times}:\,z^n=1\}=\{\zeta^k:\,\zeta:=\exp(\frac{2\pi i}{n}),\,k=0,1,\ldots,n-1\}
\]for the cyclic subgroup of $\mathbb{T}$ of $n$-th roots of unity. The goal of this chapter is to present a new, geometrically oriented approach to the noncommutative geometry of trivial principal $C_n$-bundles based on dynamical systems of the form $(A,C_n,\alpha)$. Our main idea is to characterize the functions appearing in a trivialization of a trivial principal $C_n$-bundle $(P,M,C_n,q,\sigma)$ as elements in $C^{\infty}(P)$. This approach leads to the following concept: Given a unital locally convex algebra $A$, a dynamical system $(A,C_n,\alpha)$ is called a trivial NCP $C_n$-bundle if the isotypic component corresponding to $\zeta$ contains an invertible element $a$ with the additional property that there exists an element $b$ in the corresponding fixed point algebra with $b^n=a^n$. Moreover, we also treat the case of products of finite cyclic groups (i.e. finite abelian groups) and present some examples.
\vspace*{0,2cm}

In the last section of Chapter 8 we present some general facts on principal bundles with finite structure group and deduce some algebraic properties of their corresponding smooth function algebras. This discussion in particular leads to a possible approach to ``noncommutative finite coverings''. One notable aspect is given by the following observation: If $q:P\rightarrow M$ is a finite covering, i.e., a principal bundle $(P,M,G,q,\sigma)$ with finite structure group $G$, then $C^{\infty}(P)$ is a finitely generated projective $C^{\infty}(M)$-module. We have not found such a theorem discussed in the literature and prove it with methods coming from representation theory. 

\subsection*{Chapter 9. NCP Bundles with Compact Abelian Structure Group}

Let $G$ be a compact abelian Lie group. The main goal of Chapter 9 is to present a reasonable approach to NCP $G$-bundles. Since, for example, the isotypic components of a dynamical system $(A,\mathbb{T}^n,\alpha)$ do in general not contain invertible elements, we have to come up with algebraic conditions on a dynamical system $(A,G,\alpha)$ that still ensures the freeness of the induced (right-) action
\[\sigma:\Gamma_A\times G\rightarrow\Gamma_A,\,\,\,(\chi,g)\mapsto \chi\circ\alpha(g).
\]of $G$ on the spectrum $\Gamma_A$ of $A$. Because the freeness of a group action is a local condition (cf. Remark \ref{free=locally free}), our main idea is inspired by the classical setting: Loosely speaking, a dynamical system $(A,G,\alpha)$ is called a NCP $G$-bundle, if it is ``locally" a trivial NCP $G$-bundle, i.e., once the concept of a trivial NCP $G$-bundle is known for a certain (compact) group $G$, Section 4 enters the picture. In view of Chapter \ref{trivial ncp torus bundles} and Chapter \ref{trivial ncp cyclic bundles}, we restrict our attention to compact abelian Lie groups $G$. In fact, [HoMo06], Proposition 2.42 implies that each compact abelian Lie group $G$ is isomorphic to $\mathbb{T}^n\times \Lambda$ for some natural number $n$ and finite abelian group $\Lambda$. 
\vspace*{0,2cm}

\noindent
{\bf Definition} {\bf(NCP $G$-bundles).}
Let $G$ be a compact abelian Lie group. We call a (smooth) dynamical system $(A,G,\alpha)$ a \emph{\emph{(}smooth\emph{)} NCP $G$-bundle} if for each character $\chi$ of the fixed point algebra $Z=C_A^G$ ($C_A$ denotes the center of $A$) there exists an element $z\in Z$ with $\chi(z)\neq 0$ such that the corresponding (smooth) localized dynamical system is a (smooth) trivial NCP $G$-bundle. 

\noindent
We prove that this approach extends the classical theory of principal bundles:
\vspace*{0,2cm}

\noindent
{\bf Theorem} {\bf(Reconstruction Theorem).}
{\it For a manifold $P$, the following assertions hold:
\begin{itemize}
\item[\emph{(a)}]
If $P$ is compact and $(C^{\infty}(P),G,\alpha)$ is a smooth NCP $G$-bundle, then we obtain a principal $G$-bundle $(P,P/G,G,\pr,\sigma)$.
\item[\emph{(b)}]
Conversely, if $(P,M,G,q,\sigma)$ is a principal $G$-bundle, then the corresponding smooth dynamical system $(C^{\infty}(P),G,\alpha)$ is a smooth NCP $G$-bundle.
\end{itemize}}

In the remainder we present various examples like sections of algebra bundles with trivial NCP $G$-bundle as fibre, sections of algebra bundles which are pull-backs of 
principal $G$-bundles and sections of trivial equivariant algebra bundles. Moreover, we show that each trivial NCP $G$-bundle carries the structure of a NCP $G$-bundle in its own right.
\subsection*{Chapter 10. Characteristic Classes of Lie Algebra Extensions}

The Chern--Weil homomorphism of a principal bundle $q:P\rightarrow M$ is an algebra homomorphism from the algebra of polynomials invariant under the adjoint action of a Lie group $G$ on the corresponding Lie algebra $\mathfrak{g}$ into the even de Rham cohomology $H_{\text{dR}}^{2\bullet}(M,\mathbb{K})$ of the base space $M$ of a principal bundle $P$. This map is achieved by evaluating an invariant polynomial $f$ of degree $k$ on the curvature $\Omega$ of a connection $\omega$ on $P$ and thus obtaining a closed form on the base.
Around 1970, another set of characteristic classes called the \emph{secondary characteristic classes} have been discovered. The secondary characteristic classes are also topological invariants of principal bundles which are derived from the curvature of adequate connections. They appear for example in the Lagrangian formulation of modern quantum field theories. The best known of these classes are the Chern--Simons classes. In 1985, P. Lecomte described a general cohomological construction which generalizes the classical Chern--Weil homomorphism. This construction associates characteristic classes to every extension of Lie algebras. The classical construction of Chern and Weil arises in this context from an extension of Lie algebras commonly known as the \emph{Atiyah sequence} (cf. Example \ref{Attia sequence}). The aim of this chapter is to define \emph{secondary characteristic classes in the setting of Lie algebra extensions}. This will also provide a new proof of Lecomte's construction.

\subsection*{Chapter 11. Outlook: More Aspects, Ideas and Problems on NCP Bundles}

This final chapter is devoted to discussing more aspects of our approach to NCP bundles. In particular, it contains additional ideas and open problems which might serve for further studies on this topic. In the first section we present the problem of embedding our approach to NCP bundles with compact abelian structure group in a theory of NCP bundles with compact structure group. In the second section we suggest a possible approach to a classification theory for (non-trivial) NCP $\mathbb{T}^n$-bundles. The third section is dedicated to a theory of ``infinitesimal" objects on noncommutative spaces, i.e., we discuss possible ``geometric" invariants for our approach to NCP  bundles. This discussion particularly makes use of Lecomte's Chern--Weil homomorphism of Chapter \ref{chapter characteristic classes of lie algebra extensions}. Finally, we present some ideas of associating characteristic classes to finitely generated projective modules of CIA's.

\subsection*{Appendix A. Hopf--Galois Extensions and Differential Structures}

The noncommutative geometry of principal bundles is, so far, not sufficiently well understood, but there is a well-developed purely algebraic approach using the theory of bialgebras and Hopf algebras commonly known as Hopf--Galois extensions. In this appendix we report on the basics of Hopf--Galois extensions.

\subsection*{Appendix B. Some Background on Lie Theory}

Here, we provide the basic definitions and concepts of Lie theory which appear throughout this thesis. In particular, we provide the basic definitions related to extensions of Lie groups. Since every discrete group can be viewed as a Lie group, our discussion includes in particular the algebraic context of group extensions. This discussion is important for the classification of our trivial NCP $\mathbb{T}^n$-bundles. Moreover, we provide the basic tools of Lie algebra cohomology which are necessary in order to understand Lecomte's generalization of the Chern--Weil map and define secondary characteristic classes of Lie algebra extensions.

\subsection*{Appendix C. Continuous Inverse Algebras}

In this appendix we introduce an important class of algebras whose groups of units are Lie groups. They may be seen as the infinite-dimensional generalization of matrix algebras and are encountered in K-theory and noncommutative geometry, usually as dense unital subalgebras of $C^*$-algebras. In fact, these algebras play a central role in this thesis, since, for a compact manifold $M$, the space $C^{\infty}(M)$ is the prototype of such a continuous inverse algebra. 

\subsection*{Appendix D. Topology and Smooth Vector-Valued Function Spaces}

This appendix is devoted to topological aspects and results which are needed throughout this thesis. We start with some results on the extendability of densely defined maps. Then we discuss the projective tensor product of locally convex spaces and further the projective tensor product of locally convex modules. Furthermore, we present some useful results on smooth vector-valued function spaces. In particular, the smooth exponential law will be used several times in this thesis. We finally discuss a special class of locally convex spaces in which the uniform boundedness principle (``Theorem of Banach--Steinhaus") still remains true.

\subsection*{Appendix E. An Important Source of NC Spaces: Noncommutative Tori}

A prominent class of noncommutative spaces are the so called ``noncommutative tori" or ``quantum tori".
These spaces will also provide an important class of examples throughout this thesis. For this reason we provide an overview on the structure of NC tori.

\subsection*{Structure of this Thesis}
The thesis is organized as follows. At the beginning of each chapter and section, we give a rough outline of our aims. As a rule of thumb,  we provide motivating comments in each section which should illustrate the flow of ideas. Terminology and notation can mainly be found in remarks and definitions, as long as they are important for the sequel.

\chapter{Background on Differential Geometry of Fibre Bundles}

In the following we give an overview of the differential topology of fibre bundles. All manifolds are assumed to be finite-dimensional, paracompact, second countable and smooth.

\section{Fibre Bundles}

For a geometric analysis of smooth maps the following concept turns out to be successful: A smooth map $q: B\rightarrow M$ between manifolds $B$ and $M$ can be considered as a smooth family of its inverse images $q^{-1}(m)$, which are parametrized by the set $M$. Of particular interest is the case where all fibres are isomorphic to a certain space $F$. In this section we give a short overview of the theory of fibre bundles. We will in particular be concerned with the case where the fibres are Lie groups and vector spaces. 

\begin{definition}\label{fibre bundles}{\bf(Fibre bundles).}\index{Bundles!Fibre}
A \emph{smooth fibre bundle} is a quadruple $(B,M,F,q)$\sindex[n]{$(B,M,F,q)$}, consisting of manifolds $B$, $M$ and $F$ and a smooth map $q:B\rightarrow M$ with the following property of local triviality: Each point $m\in M$ has an open neighbourhood $U$ for which there exists a diffeomorphism
\[\varphi_U:U\times F\rightarrow q^{-1}(U),
\]satisfying
\[q\circ\varphi_U=\pr_U:U\times F\rightarrow U,\,\,\,(u,f)\mapsto u.
\]\sindex[n]{$\varphi_U$}
\end{definition}

We use the following terminology:

\begin{itemize}
\item[$\bullet$]
$B$ is called the \emph{total space}.
\item[$\bullet$]
$M$ is called the \emph{base space}.
\item[$\bullet$]
$F$ is called the \emph{fibre type}.
\item[$\bullet$]
$q$ is called the \emph{bundle projection}.
\item[$\bullet$]
The sets $B_q:=q^{-1}(b)$ are called the \emph{fibers of} $q$.
\item[$\bullet$]
$\varphi_U$ is called a \emph{bundle chart}.
\item[$\bullet$]
$B_U:=q^{-1}(U)$ is called the \emph{restriction of} $E$ \emph{to} $U$.
\item[$\bullet$]
$(B,M,F,q)$ is called an $F$-\emph{bundle} over $M$.
\end{itemize}

\begin{definition}\label{principal bundles I}{\bf(Principal bundles).}\index{Bundles!Principal}
Let $G$ be a Lie group. A \emph{principal bundle} is a quintuple $(P,M,G,q,\sigma)$\sindex[n]{$(P,M,G,q,\sigma)$}, where $\sigma:P\times G\rightarrow P$ is a smooth action, with the property of local triviality: Each point $m\in M$ has an open neighbourhood $U$ for which there exists a diffeomorphism
\[\varphi_U:U\times G\rightarrow q^{-1}(U),
\]satisfying $q\circ\varphi_U=\pr_U$ and the equivariance property
\[\varphi_U(u,gh)=\varphi_U(u,g).h\,\,\,\text{for}\,\,\,u\in U, g,h\in G.
\]
\end{definition}

\begin{remark}\label{principal bundles II}
(a) For each principal bundle $(P,M,G,q,\sigma)$, the quadruple $(P,M,G,q)$ is a smooth fibre bundle.

(b) The right action of $G$ on $P$ is free and proper and the natural map 
\[\Phi:P/G\mapsto M,\,\,\,p.G\mapsto q(p),
\]is a diffeomorphism.

(c) Conversely, in view of the Quotient Theorem\index{Quotient Theorem} (cf. [To00], Kapitel VIII, Abschnitt 21), each free and proper right smooth action $\sigma:P\times G\rightarrow P$ defines the principal bundle $(P,P/G,G,\pr,\sigma)$.
\end{remark}

\begin{example}\label{hopf fibration}{\bf(The Hopf fibration).}\index{Hopf Fibration}
The Hopf fibration
\[q:\mathbb{S}^3\rightarrow\mathbb{S}^3/\mathbb{T}\cong\mathbb{S}^2\cong\mathbb{P}_1(\mathbb{C}),\,\,\,(z_1,z_2)\mapsto\mathbb{T}.(z_1,z_2),
\]is a $\mathbb{T}$-principal bundle over $\mathbb{S}^2$.
\end{example}

\begin{definition}\label{trivial principal bundle I}{\bf(Trivial principal bundles).}\index{Bundles!Trivial Principal}
A principal bundle $(P,M,G,q,\sigma)$ is called \emph{trivial} if it is equivalent to the bundle $(M\times G,M,G,q_M,\sigma_G)$, where $q_M(m,g)=m$ and $\sigma_G((m,g),h)=(m,gh)$.
\end{definition}

The next lemma will be crucial for our approach to trivial noncommutative principal bundles:

\begin{lemma}\label{trivial principal bundle II}\index{Sections!of Principal Bundles}
A principal bundle $(P,M,G,q,\sigma)$ is trivial if and only if it has a smooth section, i.e., if there exists a smooth map $s:M\rightarrow P$ such that $q\circ s=\id_M$.
\end{lemma}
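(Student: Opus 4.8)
The plan is to prove the two implications separately, the forward direction being essentially immediate and the converse being the substantive part. First I would observe that if $(P,M,G,q,\sigma)$ is trivial, then by Definition \ref{trivial principal bundle I} there is a bundle equivalence $\Psi:M\times G\to P$ intertwining the projections and the $G$-actions. Composing with the canonical section $m\mapsto(m,e)$ of the trivial bundle $M\times G\to M$ yields a smooth map $s:M\to P$, $s(m):=\Psi(m,e)$, and the relation $q\circ\Psi=q_M$ gives $q\circ s=\id_M$. Hence a trivial bundle admits a smooth section.

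For the converse, suppose $s:M\to P$ is a smooth section, $q\circ s=\id_M$. The idea is to use $s$ together with the freeness of the $G$-action to build an explicit equivariant trivialization. Since the action $\sigma$ is free and transitive on each fibre (a consequence of the local triviality in Definition \ref{principal bundles I}, cf. Remark \ref{principal bundles II}), for every $p\in P$ there is a \emph{unique} group element, call it $k_s(p)\in G$, with $p=\sigma(s(q(p)),k_s(p))=s(q(p)).k_s(p)$. I would then define
\[
\Phi:M\times G\to P,\qquad \Phi(m,g):=\sigma(s(m),g)=s(m).g,
\]
and check that $\Phi$ is the desired equivalence: it is smooth (composition of $s$, $\sigma$ and projections), it satisfies $q\circ\Phi(m,g)=q(s(m).g)=q(s(m))=m=q_M(m,g)$, and it is $G$-equivariant since $\Phi(m,gh)=s(m).(gh)=(s(m).g).h=\Phi(m,g).h$ by associativity of the action. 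The inverse is $p\mapsto(q(p),k_s(p))$, and one checks $\Phi(q(p),k_s(p))=s(q(p)).k_s(p)=p$ and $k_s(s(m).g)=g$, so $\Phi$ is bijective with the stated inverse.

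The main obstacle is showing that $\Phi^{-1}$, equivalently the map $p\mapsto k_s(p)$, is \emph{smooth}; everything else is formal manipulation with the group action. Here I would argue locally: pick a bundle chart $\varphi_U:U\times G\to q^{-1}(U)$ as in Definition \ref{principal bundles I}. In this chart the section $s$ corresponds to a smooth map $m\mapsto(m,\tau(m))$ with $\tau:U\to G$ smooth (namely $\tau=\pr_G\circ\varphi_U^{-1}\circ s$), and a point $p=\varphi_U(m,g)$ satisfies $k_s(p)=\tau(m)^{-1}g$ by the equivariance of $\varphi_U$ and uniqueness of $k_s$. Since inversion and multiplication in the Lie group $G$ are smooth and $\varphi_U^{-1}$ is smooth, $p\mapsto k_s(p)=\tau(q(p))^{-1}\cdot(\pr_G\circ\varphi_U^{-1})(p)$ is smooth on $q^{-1}(U)$; as such charts cover $P$, the map $k_s$ is globally smooth. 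This completes the proof that $\Phi$ is a diffeomorphism and hence an equivalence of principal bundles, so $P$ is trivial. $\blacksquare$
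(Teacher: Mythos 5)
Your proof is correct and follows essentially the same route as the paper: for the forward direction compose the equivalence with the canonical section $m\mapsto(m,1_G)$, and for the converse define $\Phi(m,g)=s(m).g$ and check it is an equivalence. You go further than the paper in one welcome respect: the paper merely asserts that $\phi_s$ is an equivalence and leaves the smoothness of the inverse (the ``division map'' $p\mapsto k_s(p)$) as a ``short calculation,'' whereas you verify it explicitly via local bundle charts and the smoothness of multiplication and inversion in the Lie group $G$.
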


\begin{proof}
\,\,\,($``\Rightarrow"$) By Definition \ref{trivial principal bundle I}, the principal bundle $(P,M,G,q,\sigma)$ is trivial if it is equivalent to the bundle $(M\times G,M,G,q_M,\sigma_G)$. If $\phi:P\rightarrow M\times G$ denotes the corresponding equivalence and $s_{1_G}$ the section of $M\times G$ given by 
\[s_{1_G}:M\rightarrow M\times G,\,\,\,m\mapsto (m,1_G),
\]then a short calculation shows that $s:=\phi^{-1}\circ s_{1_G}$ is a smooth section of $(P,M,G,q,\sigma)$.

($``\Leftarrow"$) On the other hand, if $s:M\rightarrow P$ is a smooth section of $(P,M,G,q,\sigma)$, then 
\[\phi_s:M\times G\rightarrow P ,\,\,\,(m,g)\mapsto s(m)g
\]defines an equivalence of principal bundles.
\end{proof}

\begin{definition}\label{Aut(P)}{\bf(The group of bundle automorphisms).}\index{Group of Bundle Automorphisms}\sindex[n]{$\Aut(P)$}\sindex[n]{$\Diff(P)$}\sindex[n]{$\mathfrak{aut}(P)$}
Let $(P,M,G,q,\sigma)$ be a principal bundle. For fixed $g\in G$ we write $\sigma_g(p):=p.g$ for the corresponding orbit-map. The group of \emph{bundle automorphisms} of $P$ is given by
\[\Aut(P):=\{\phi\in\Diff(P):\,(\forall g\in G)\,\phi\circ\sigma_g=\sigma_g\circ\phi\}.
\]Motivated by this definition we consider the Lie algebra 
\[\mathfrak{aut}(P):=\{X\in\mathcal{V}(P):\,(\forall g\in G)\,(\sigma_g)_*X=X\}
\]of $G$-invariant vector fields on $P$.
\end{definition}

\begin{lemma}\label{aut(P)}
The following assertions hold for $\mathfrak{aut}(P)$:
\begin{itemize}
\item[\emph{(a)}]
$\mathfrak{aut}(P)$ is a Lie subalgebra of $\mathcal{V}(P)$.
\item[\emph{(b)}]
The map
\[q_*:\mathfrak{aut}(P)\rightarrow\mathcal{V}(M),\,\,\,(q_*X)(q(p)):=T_p(q)X(p)
\]is well-defined and a homomorphism of Lie algebras.
\item[\emph{(c)}]
$\mathfrak{aut}(P)$ is a $C^{\infty}(M)$-module with respect to
\[(fX)(p):=f(q(p))X(p)
\]and $q_*$ is $C^{\infty}(M)$-linear.
\end{itemize}
\end{lemma}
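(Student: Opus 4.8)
The plan is to prove the three assertions in order, since each builds on the previous one. For part (a), I would first recall that the Lie bracket on $\mathcal{V}(P)$ is natural with respect to diffeomorphisms: for any $\psi\in\Diff(P)$ and $X,Y\in\mathcal{V}(P)$ one has $\psi_*[X,Y]=[\psi_*X,\psi_*Y]$. Applying this with $\psi=\sigma_g$ for each $g\in G$, it is immediate that if $(\sigma_g)_*X=X$ and $(\sigma_g)_*Y=Y$ for all $g$, then $(\sigma_g)_*[X,Y]=[X,Y]$ for all $g$, so $[X,Y]\in\mathfrak{aut}(P)$; closure under linear combinations is clear, so $\mathfrak{aut}(P)$ is a Lie subalgebra.

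For part (b), the main point is well-definedness of $q_*X$ as a vector field on $M$. Given $p,p'\in P$ with $q(p)=q(p')$, since the $G$-action is transitive on fibres (Remark \ref{principal bundles II}) there is $g\in G$ with $p'=\sigma_g(p)=p.g$. The $G$-invariance $(\sigma_g)_*X=X$ means $T_p(\sigma_g)X(p)=X(\sigma_g(p))=X(p')$. Applying $T_{p'}(q)$ and using $q\circ\sigma_g=q$, hence $T_{p'}(q)\circ T_p(\sigma_g)=T_p(q)$ by the chain rule, gives $T_{p'}(q)X(p')=T_p(q)X(p)$, so the prescription $(q_*X)(q(p)):=T_p(q)X(p)$ does not depend on the choice of $p$ in the fibre. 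Smoothness of $q_*X$ follows by working in a bundle chart $\varphi_U:U\times G\to q^{-1}(U)$, where $q$ becomes $\pr_U$ and one can pick the smooth local section $u\mapsto\varphi_U(u,1_G)$ to express $q_*X$ locally. That $q_*$ is a Lie algebra homomorphism then follows from the naturality of brackets under $q$ in the sense of $q$-relatedness: each $X\in\mathfrak{aut}(P)$ is $q$-related to $q_*X$, and the bracket of $q$-related vector fields is $q$-related to the bracket, so $q_*[X,Y]=[q_*X,q_*Y]$.

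For part (c), I would check that $fX$ as defined lies in $\mathfrak{aut}(P)$: since $f\circ q$ is constant on fibres, i.e. $(f\circ q)\circ\sigma_g=f\circ q$, and $X$ is $G$-invariant, one computes $(\sigma_g)_*(fX)=((f\circ q)\circ\sigma_g^{-1})\,(\sigma_g)_*X=(f\circ q)\,X=fX$, using the standard transformation rule for pushforward of a function times a vector field. The module axioms are inherited from $\mathcal{V}(P)$, and $C^{\infty}(M)$-linearity of $q_*$, namely $q_*(fX)=f\cdot q_*X$, is a direct computation from the two defining formulas: $(q_*(fX))(q(p))=T_p(q)(fX)(p)=f(q(p))\,T_p(q)X(p)=f(q(p))\,(q_*X)(q(p))$.

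The only genuinely non-routine step is the well-definedness in part (b); everything else is naturality of the bracket and bookkeeping. I expect no serious obstacle, but care is needed to confirm smoothness of $q_*X$ on $M$ rather than merely its pointwise definition, which is why passing to a bundle chart with an explicit local section is the cleanest route.
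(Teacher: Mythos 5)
Your proposal is correct and follows essentially the same route as the paper's proof: part (a) via invariance of the Lie bracket under pushforward by the diffeomorphisms $\sigma_g$ (equivalently, the Related Vector Field Lemma), part (b) via $q\circ\sigma_g=q$ and the chain rule for well-definedness and $q$-relatedness for the homomorphism property, and part (c) via the direct pushforward computation. The one small divergence is in establishing smoothness of $q_*X$ in part (b): you work in a bundle chart with an explicit local section, whereas the paper argues more abstractly that $(q_*X)\circ q=T(q)\circ X$ is smooth and invokes that $q$ is a submersion, so that a map out of $M$ is smooth precisely when its precomposition with $q$ is smooth. Both are valid; the paper's version avoids chart bookkeeping, while yours makes the local section explicit. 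One further remark: your treatment of part (c) is actually slightly more careful than the paper's, which writes $(\sigma_g)_*(fX)=f((\sigma_g)_*X)$ with a mild abuse of notation (the function multiplying $X$ on $P$ is really $f\circ q$, and the pushforward rule requires noting $(f\circ q)\circ\sigma_g^{-1}=f\circ q$, which is exactly what you spell out).
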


\begin{proof}\,\,\,(a) That $X\in\mathfrak{aut}(P)$ is equivalent to $X$ being $\sigma_g$-related to itself for each $g\in G$. Therefore, the Related Vector Field Lemma implies that $\mathfrak{aut}(P)$ is a Lie subalgebra.

(b) The relation $(\sigma_g)_*X=X$ means that for each $p\in P$ we have
\[T(\sigma_g)X(p)=X(p.g).
\]Hence,
\[T(q)X(p.g)=T(q)T(\sigma_g)X(p)=T(q\circ\sigma_g)X(p)=T(q)X(p)
\]implies that $q_*X$ is well-defined. Since $(q_*X)\circ q=T(q)\circ X$ is smooth and $q:P\rightarrow M$ is a submersion, the vector field $q_*X$ is smooth. For $X,Y\in\mathfrak{aut}(P)$ we use that the vector fields $X$ and $q_*X$ are $q$-related to see that $[X,Y]$ and $[q_*X,q_*Y]$ are $q$-related. This implies that 
\[q_*[X,Y]=[q_*X,q_*Y].
\]

(c) For $g\in G$ we have $(\sigma_g)_*(fX)=f((\sigma_g)_*X)=f X$.
\end{proof}

\begin{remark}\label{gau(P)}\sindex[n]{$\mathfrak{gau}(P)$}
The ideal
\[\mathfrak{gau}(P):=\ker(q_*)
\]describes those vector fields which are tangent to the $G$-orbits in $P$.
\end{remark}

\begin{definition}\label{vector bundle}{\bf(Vector bundles).}\index{Bundles!Vector}
Let $V$ be a $\mathbb{K}$-vector space ($\mathbb{K}=\mathbb{R},\mathbb{C}$). A \emph{vector bundle} is a smooth fibre bundle $(\mathbb{V},M,V,q)$ for which all fibres $\mathbb{V}_m$, $m\in M$, carry vector space structures, and each point $m\in M$ has an open neighbourhood $U$ for which there exists a diffeomorphism
\[\varphi_U:U\times V\rightarrow q^{-1}(U)=\mathbb{V}_U,
\]satisfying $q\circ\varphi_U=p_U$ and all maps
\[\varphi_{U,x}:V\rightarrow\mathbb{V}_x,\,\,\,v\mapsto\varphi_U(x,v)
\]are linear isomorphisms.
\end{definition}

\begin{example}\label{tangent bundle}{\bf(The tangent bundle of a manifold $M$).}\index{Bundles!Tangent}
For each manifold $M$ the tangent bundle $(TM,M,\mathbb{R}^n,q)$ is a vector bundle with $q(X)=m$ for $X\in T_mM$.
\end{example}

\begin{definition}\label{sections}{\bf(The space of sections of a vector bundle).}\index{Sections! of Vector Bundles}
If $q:\mathbb{V}\rightarrow M$ is a smooth vector bundle, then the corresponding space
\[\Gamma\mathbb{V}:=\Gamma(M,\mathbb{V}):=\{s\in C^{\infty}(M,\mathbb{V}):q\circ s=\id_M\}
\]of \emph{smooth sections} carries a vector space structure defined by
\[(s_1+s_2)(m):=s_1(m)+s_2(m)\,\,\,\text{and}\,\,\,(\lambda s)(m):=\lambda s(m).
\]Moreover, for each $f\in C^{\infty}(M)$ and $s\in\Gamma\mathbb{V}$ the product
\[(fs)(m):=f(m)s(m)
\]is a smooth section of $\mathbb{V}$. We thus obtain on $\Gamma\mathbb{V}$ the structure of a $C^{\infty}(M)$-module, i.e., the following relations hold for $f,f_1,f_2\in C^{\infty}(M)$ and $s,s_1,s_2\in\Gamma\mathbb{V}$:
\[(f_1+f_2)s=f_1s+f_2s,\,\,\,(f_1f_2)s=f_1(f_2s),\,\,\,f(s_1+s_2)=fs_1+fs_2.
\]
\end{definition}

We will give a more precise description of the structure of $\Gamma\mathbb{V}$ in Chapter \ref{NCVB}.

\begin{construction}\label{associated vector bundle}{\bf(Associated vector bundles).}\index{Bundles!Associated Vector}
We now consider again a principal bundle $(P,M,G,q,\sigma)$ and, in addition, a vector space $V$. If $(\pi,V)$ is a smooth representation of $G$, 
then $(p,v).g:=(p.g,g^{-1}.v)$ defines a free and proper action of $G$ on $P\times V$. In particular, we obtain a bundle 
\[\mathbb{V}:=P\times_{\pi}V:=P\times_GV:=(P\times V)/G
\]over $M$ with bundle projection $q_{\mathbb{V}}:\mathbb{V}\rightarrow M$ given by $q_{\mathbb{V}}([p,v]):=q(p)$. Next, we define $\mathbb{K}\times \mathbb{V}\rightarrow \mathbb{V}$ by
\[(\lambda,[p,v])\mapsto\lambda\cdot [p,v]:=[p,\lambda v]
\]and an addition of two elements $x=[p,v],y=[p,w]\in \mathbb{V}$ by
\[x+y:=[p,v+w].
\]As $G$ acts linearly on $V$, these operations are well-defined. This results in the following structure: Each fibre of $\mathbb{V}=P\times_{\pi}V$ is a vector space over $\mathbb{K}$, and for every $m\in M$ there exists a neighbourhood $U$ of $m$ and a diffeomorphism
\[\Phi_U:U\times V\rightarrow q^{-1}(U)
\]which is linear in each fibre. Hence, $(\mathbb{V},M,q_{\mathbb{V}},V)$ is a vector bundle. It is called \emph{vector bundle with fibre $V$ associated to the principal bundle $(P,M,G,q,\sigma)$}.
\end{construction}


\begin{proposition}\label{vector bundles are associated to principal bundles}
Let $V$ be a vector space and $(\pi,V)$ the identical representation of $\GL(V)$ on $V$. Then the following assertions hold:
\begin{itemize}
\item[\emph{(a)}]
If $(P,M,\GL(V),q,\sigma)$ is a $\GL(V)$-principal bundle, then $\mathbb{V}:=P\times_{\pi}V$ is a vector bundle over $M$ with fibre $V$.
\item[\emph{(b)}]
If, conversely, $(\mathbb{V},M,V,q)$ is a vector bundle over $M$, then its frame bundle
\[\Fr(\mathbb{V}):=\bigcup_{m\in M}\Iso(V,\mathbb{V}_m)
\]carries the structure of a $\GL(V)$-principal bundle with respect to the action 
\[\sigma:\Fr(\mathbb{V})\times \GL(V)\rightarrow\Fr(\mathbb{V}),\,\,\,\sigma(\varphi,g):=\varphi\circ g.
\]Moreover, the evaluation map
\[\Fr(\mathbb{V})\times V\rightarrow\mathbb{V},\,\,\,(\varphi,v)\mapsto\varphi(v)
\]induces a bundle equivalence
\[\Fr(\mathbb{V})\times_{\pi} V\rightarrow\mathbb{V},\,\,\,[(\varphi,v)]\mapsto\varphi(v).
\]
\end{itemize}
\end{proposition}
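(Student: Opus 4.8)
The plan is to prove part (a) first, then part (b), since (a) is essentially a direct application of Construction \ref{associated vector bundle}. For (a), one takes the smooth representation $(\pi,V)$ of $\GL(V)$ to be the identity (tautological) representation; it is manifestly smooth, so Construction \ref{associated vector bundle} immediately produces the associated vector bundle $\mathbb{V}:=P\times_\pi V$ over $M$ with fibre $V$. There is nothing further to verify here beyond citing that construction.

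For (b), I would proceed in three steps. \emph{Step 1: $\Fr(\mathbb{V})$ is a smooth manifold and a fibre bundle over $M$.} Using a bundle atlas $\varphi_U:U\times V\to\mathbb{V}_U$ of $\mathbb{V}$, define local trivialisations of $\Fr(\mathbb{V})$ by sending $(m,g)\in U\times\GL(V)$ to $\varphi_{U,m}\circ g\in\Iso(V,\mathbb{V}_m)$; the transition functions between two such charts are given by $m\mapsto (\varphi_{U,m}^{-1}\circ\varphi_{U',m})\circ(\cdot)$, i.e.\ left multiplication by the (smooth, $\GL(V)$-valued) transition cocycle of $\mathbb{V}$, hence smooth. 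This equips $\Fr(\mathbb{V})$ with a smooth structure making the projection $\Fr(\mathbb{V})\to M$ a locally trivial fibre bundle with fibre $\GL(V)$. \emph{Step 2: the $\GL(V)$-action is smooth, free, and the trivialisations are equivariant.} The action $\sigma(\varphi,g)=\varphi\circ g$ is smooth because in the charts above it becomes $((m,h),g)\mapsto(m,hg)$, which is just the multiplication of $\GL(V)$; it is fibre-preserving; and freeness is clear since $\varphi\circ g=\varphi$ with $\varphi$ an isomorphism forces $g=\id_V$. The equivariance property $\varphi_U(u,gh)=\varphi_U(u,g).h$ required in Definition \ref{principal bundles I} is exactly the associativity of composition read through the chart. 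Hence $(\Fr(\mathbb{V}),M,\GL(V),q,\sigma)$ is a principal bundle. \emph{Step 3: the evaluation map induces a bundle equivalence.} The map $\Fr(\mathbb{V})\times V\to\mathbb{V}$, $(\varphi,v)\mapsto\varphi(v)$, is constant on $\GL(V)$-orbits for the action $(\varphi,v).g=(\varphi\circ g, g^{-1}v)$, since $(\varphi\circ g)(g^{-1}v)=\varphi(v)$, so it descends to a well-defined map $\Fr(\mathbb{V})\times_\pi V\to\mathbb{V}$, $[(\varphi,v)]\mapsto\varphi(v)$. This map is fibrewise linear, and fibrewise it is a bijection: given $w\in\mathbb{V}_m$, choose any $\varphi\in\Iso(V,\mathbb{V}_m)$ and set $v:=\varphi^{-1}(w)$, with uniqueness of $[(\varphi,v)]$ following from the orbit relation. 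Smoothness of the inverse is checked in the charts from Step 1, where the map becomes $[((m,h),v)]\mapsto\varphi_{U,m}(hv)$ and its inverse $\varphi_{U,m}(w')\mapsto[((m,\id),\varphi_{U,m}^{-1}w')]$, both smooth. A fibrewise-linear bundle morphism over $\id_M$ that is a fibrewise isomorphism is a vector bundle equivalence, completing the proof.

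The main obstacle is bookkeeping in Step 1--2: one must be careful that the smooth structure put on $\Fr(\mathbb{V})$ via the charts is independent of the chosen bundle atlas of $\mathbb{V}$ (equivalently, that overlapping charts are smoothly compatible), and that "left multiplication by the transition cocycle" genuinely lands in, and is smooth into, $\Diff$ of the fibre $\GL(V)$ — this is where the smoothness of $\GL(V)$ as a Lie group and smoothness of matrix multiplication/inversion are used. Everything else is a routine chart computation; no deep input beyond Construction \ref{associated vector bundle} and the definition of a principal bundle is needed.
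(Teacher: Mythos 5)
Your proof is correct and follows essentially the same route as the paper: part~(a) by citing Construction~\ref{associated vector bundle}, and part~(b) by inducing bundle charts on $\Fr(\mathbb{V})$ from those of $\mathbb{V}$, checking smoothness of the action locally, and descending the evaluation map. The paper leaves the bundle-equivalence claim as an exercise, which your Step~3 fills in correctly.
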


\begin{proof}\,\,\,(a) The first part of the proposition follows from Construction \ref{associated vector bundle}.

(b) For the second assertion we first note that the bundle projection $q_{\Fr(\mathbb{V})}$ is given by taking the fibre $\Iso(V,\mathbb{V}_m)$ to $m$. Further, bundle charts of the vector bundle $(\mathbb{V},M,V,q)$ induce bundle charts of the frame bundle
$q_{\Fr(\mathbb{V})}:\Fr(\mathbb{V})\rightarrow M$. In this way we get a Hausdorff topology on $\Fr(\mathbb{V})$. In particular, the induced bundle charts define on $\Fr(\mathbb{V})$ the structure of a smooth manifold. A local consideration now shows that the action $\sigma$ is smooth and we conclude that the the fibration $q_{\Fr(\mathbb{V})}:\Fr(\mathbb{V})\rightarrow M$ is a $\GL(V)$-principal bundle over $M$. The proof of the last statement of part (b) is left as an easy exercise to the reader.
\end{proof}

\begin{remark}\label{remark on vector bundles are associated to principal bundles}
The preceding theorem easily implies that for two vector bundles $\mathbb{V}_1$ and $\mathbb{V}_2$ with fibre $V$ over $M$ we have
\[\mathbb{V}_1\sim\mathbb{V}_1\Longleftrightarrow\Fr(\mathbb{V}_1)\sim\Fr(\mathbb{V}_2).
\]This leads to a bijection of the set $\Bun(M,\GL(V))$ of equivalence classes of $\GL(V)$-bundles over $M$ and the set $\Vbun(M,V)$ of equivalence classes of $V$-vector bundles over $M$.
\end{remark}

The following proposition gives a nice characterization of the space of sections of an associated vector bundle as the space of $G$-equivariant functions from $P$ to $V$:

\begin{proposition}\label{sections of an associated vector bundle}\index{Sections!of Associated Vector Bundles}\sindex[n]{$C^{\infty}(P,V)^G$}
Let $(P,M,G,q,\sigma)$ be a principal bundle and $(\pi,V)$ be a smooth representation of $G$ defining the associated bundle $\mathbb{V}:=P\times_{\pi} V$ over $M$. If we write
\[C^{\infty}(P,V)^G:=\{f:P\rightarrow V:\,(\forall g\in G)\,f(p.g)=\pi(g^{-1}).f(p)\}
\]for the space of equivariant smooth functions, then the map
\[\Psi_{\pi}:C^{\infty}(P,V)^G\rightarrow\Gamma\mathbb{V},\,\,\,\text{defined by}\,\,\,\Psi(f)(q(p)):=[p,f(p)],
\]is an isomorphism of $C^{\infty}(M)$-modules.
\end{proposition}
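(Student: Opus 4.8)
The plan is to prove directly that $\Psi_\pi$ is a well-defined bijective homomorphism of $C^\infty(M)$-modules, exhibiting the inverse map by hand. The module structure on $C^\infty(P,V)^G$ is the evident one, $(a\cdot f)(p):=a(q(p))f(p)$ for $a\in C^\infty(M)$; since $a$ is constant on $G$-orbits, $a\cdot f$ is again equivariant. First I would check that $\Psi_\pi$ is well defined: given $f\in C^\infty(P,V)^G$ and $p,p'$ with $q(p)=q(p')$, freeness of the $G$-action on $P$ yields a unique $g\in G$ with $p'=p.g$, and then $[p',f(p')]=[p.g,\pi(g^{-1}).f(p)]=[p,f(p)]$ by the definition of the equivalence relation on $P\times V$. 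Thus $\Psi_\pi(f)$ is a well-defined map on $M$ with $q_{\mathbb V}\circ\Psi_\pi(f)=\id_M$. Smoothness of $\Psi_\pi(f)$ is local: over a trivializing neighbourhood $U\subseteq M$ take the smooth section $s_U:U\to P$, $s_U(u):=\varphi_U(u,1_G)$; in the associated bundle chart $\Phi_U:U\times V\to q_{\mathbb V}^{-1}(U)$, $(u,v)\mapsto[s_U(u),v]$, the section $\Psi_\pi(f)$ reads $u\mapsto(u,f(s_U(u)))$, which is smooth because $f\circ s_U$ is. The $C^\infty(M)$-linearity of $\Psi_\pi$ is then immediate from $[p,a(q(p))f(p)]=a(q(p))\cdot[p,f(p)]$ together with additivity of the fibrewise operations.

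Next I would establish bijectivity. For injectivity, note that by Construction \ref{associated vector bundle} the map $V\to\mathbb V_{q(p)}$, $v\mapsto[p,v]$, is a linear isomorphism, so $\Psi_\pi(f)=0$ forces $[p,f(p)]=[p,0]$ and hence $f(p)=0$ for all $p$. For surjectivity, given $s\in\Gamma\mathbb V$, define $f(p)\in V$ to be the unique vector with $s(q(p))=[p,f(p)]$, again using bijectivity of $v\mapsto[p,v]$. Equivariance of $f$ follows by comparing $s(q(p.g))=[p.g,f(p.g)]$ with $[p,f(p)]=[p.g,\pi(g^{-1}).f(p)]$ and invoking uniqueness. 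It then remains only to see that $f$ is smooth, and this is the one step I expect to require genuine care. Over a trivializing $U$ with $s_U$ as above, write $s(m)=[s_U(m),\widetilde s(m)]$ with $\widetilde s:U\to V$ smooth (this is precisely smoothness of $s$ in the chart $\Phi_U$), and write each $p\in q^{-1}(U)$ as $p=s_U(q(p)).h(p)$ with $h:q^{-1}(U)\to G$ smooth (the $G$-component of $\varphi_U^{-1}$). Then $s(q(p))=[s_U(q(p)),\widetilde s(q(p))]=[p,\pi(h(p)^{-1}).\widetilde s(q(p))]$, so $f(p)=\pi(h(p)^{-1}).\widetilde s(q(p))$; since the action map $G\times V\to V$ is smooth and $h$, $\widetilde s\circ q$ are smooth, $f$ is smooth. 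Hence $f\in C^\infty(P,V)^G$ and $\Psi_\pi(f)=s$.

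Putting these together, $\Psi_\pi$ is a bijective $C^\infty(M)$-module homomorphism, hence an isomorphism of $C^\infty(M)$-modules. The only nontrivial point throughout is the interplay of the two local trivializations -- that of the principal bundle $P$ (via $\varphi_U$, giving the smooth section $s_U$ and the smooth $G$-valued map $h$) and that of the associated bundle $\mathbb V$ (via $\Phi_U$) -- which is what makes both the forward map and its inverse land in smooth objects; everything else is formal manipulation of the equivalence relation defining $P\times_\pi V$ and of the pointwise vector-space operations.
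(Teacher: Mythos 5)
The paper itself gives no proof of this proposition (it is left ``as an easy exercise to the reader''), so there is no argument in the source to compare against. Your proof is correct and complete: the well-definedness and $C^\infty(M)$-linearity of $\Psi_\pi$ are handled via the defining equivalence relation on $P\times V$, injectivity and surjectivity both come down to the pointwise linear isomorphism $v\mapsto[p,v]$ of $V$ onto the fibre $\mathbb V_{q(p)}$, and the only genuinely delicate point — smoothness of the inverse — is settled cleanly by the local formula $f(p)=\pi(h(p)^{-1}).\widetilde s(q(p))$, which combines the principal-bundle trivialization (through $s_U$ and the $G$-component $h$) with the associated-bundle chart. This is exactly the standard route one would expect the paper's omitted proof to take, and it fills the gap correctly.
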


\begin{proof}
\,\,\,We leave the proof of this proposition as an easy exercise to the reader.
\end{proof}

\section{Connections on Principal Bundles}\label{conpb}

Connections on principal bundles are the fundamental tool in the theory of smooth fibre bundles. They are used in the development of curvature and therefore also important in the Chern--Weil theory of characteristic classes of principal bundles. We will see in Section 
\ref{infinitesimal objects on noncommutative spaces} how to make the concept of connections applicable to our approach to Noncommutative Geometry. In the following let $(P,M,G,q,\sigma)$ be a principal bundle and $\mathfrak{g}$ the Lie algebra of $G$. We recall that the derived action
\[\dot{\sigma}:\mathfrak{g}\rightarrow\mathcal{V}(P),\,\,\,\dot{\sigma}(x)(p)=\left.\frac{d}{dt}\right|_{t=0}p.\exp(tx)
\]is an homomorphism of Lie algebras, and that for each $p\in P$ the map
\[\dot{\sigma}_p:\mathfrak{g}\rightarrow\ker(T_p(q))\subseteq T_p(P),\,\,\,x\mapsto\dot{\sigma}(x)(p)=T_{1_G}(\sigma^p)(x)
\]is a linear isomorphism. 

\begin{definition}\label{vertical subspace}{\bf(Vertical subspace).}\index{Vertical Subspace}\sindex[n]{$V_p(P)$}
For each $p\in P$ we define the \emph{vertical subspace} of $T_p(P)$ by
\[V_p(P):=\dot{\sigma}_p(\mathfrak{g})=\ker(T_p(q)).
\]Its elements are called \emph{vertical} tangent vectors.
\end{definition}

Connections on principal bundles are most conveniently defined in terms of Lie algebra-valued 1-forms:

\begin{definition}\label{connection 1-forms}{\bf(Connection 1-forms).}\index{Connection 1-Forms}
A 1-form $\theta\in\Omega^1(P,\mathfrak{g})$ is called a \emph{connection 1-form} if
\begin{itemize}
\item[(C1)]
$\theta(\dot{\sigma}(x))=x$ for each $x\in\mathfrak{g}$.
\item[(C2)]
$\sigma^*_g\theta=\Ad(g)^{-1}\circ\theta$ for each $g\in G$, i.e., $\theta\in\Omega^1(P,\mathfrak{g})^G$.
\end{itemize}
We write $\mathcal{C}(P)$\sindex[n]{$\mathcal{C}(P)$} for the space of connection 1-forms. This is obviously an affine subspace of $\Omega^1(P,\mathfrak{g})$ whose translation vector space is $\Omega^1(M,\Ad(P))$, where $\Ad(P)=P\times_{\Ad}\mathfrak{g}$\sindex[n]{$\Ad(P)$} is the vector bundle associated to $P$ by the adjoint representation $(\Ad,\mathfrak{g})$ of $G$.
\end{definition}

\begin{definition}\label{horizontal subspace}{\bf(Horizontal subspace).}\index{Horizontal!Subspace}\sindex[n]{$H_p(P)$}
If $\theta\in\mathcal{C}(P)$ is a connection 1-form, then
\[H_p(P):=\ker\theta_p
\]is a vector space complement to $V_p(P)$, i.e.,
\begin{align}
T_p(P)=V_p(P)\oplus H_p(P).\label{geom distribution I}
\end{align}The space $H_p(P)$ is called the \emph{horizontal subspace} (with respect to $\theta$) and vectors in $H_p(P)$ are called \emph{horizontal}. We further have 
\begin{align}
T(\sigma_g)H_p(P)=H_{p.g}(P),\,\,\,T(\sigma_g)V_p(P)=V_{p.g}(P)\,\,\,\text{for}\,\,\,g\in G,\label{geom distribution II}
\end{align}
i.e., the decomposition into vertical and horizontal subspace is $G$-invariant.
\end{definition}

\begin{remark}\label{geometric distribution}{\bf(Geometric distribution).}\index{Geometric Distribution}
Note that each connection 1-form $\theta\in\mathcal{C}(P)$ can be reconstructed from the family of horizontal subspaces. In fact, for each $p\in P$ we have
\[T_p(P)=\dot{\sigma}_p(\mathfrak{g})\oplus H_p(P).
\]Thus, if $v=\dot{\sigma}_p(x)+w$ for some $x\in\mathfrak{g}$ and $w\in H_p(P)$, then $\theta_p(v)=x$. Therefore, the subspace $H_p(P)$ completely determines the linear map $\theta_p:T_p(P)\rightarrow\mathfrak{g}$. Conversely, each connection 1-form $\theta$ can be described in terms of a \emph{geometric distribution}, i.e., a family of subspaces satisfying (\ref{geom distribution I}) and (\ref{geom distribution II}).
\end{remark}

\begin{remark}\label{c 1-forms on trivial pb}
If $P=M\times G$ is the trivial principal bundle with $\sigma_g(x,h)=(x,hg)$, then $\Omega^1(M,\mathfrak{g})$ parametrizes the space of connection 1-forms. If $p_G:M\times G\rightarrow G$ is the $G$-projection, $p_M:M\times G\rightarrow M$ the $M$-projection and $\kappa_G\in\Omega^1(G,\mathfrak{g})$ denotes the Maurer-Cartan form\index{Maurer-Cartan form} defined by
\[\kappa_G(T_{\mathbf{1}}(\lambda_g)x)=x\,\,\,\text{for}\,\,\,g\in G, x\in\mathfrak{g},
\]then it is possible to show that each connection 1-form $\theta\in\mathcal{C}(M\times G)$ is of the form
\[\theta=p^*_G\kappa_G+(\Ad\circ p_G)^{-1}.p^*_MA,\,\,\,A\in\Omega^1(M,\mathfrak{g}).
\]
\end{remark}

\begin{proposition}\label{existence of c 1-forms}
If $M$ is paracompact, then each principal bundle $(P,M,G,q,\sigma)$ possesses a connection 1-form.
\end{proposition}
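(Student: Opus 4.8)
The plan is to construct a global connection 1-form by gluing together local ones with a partition of unity, exploiting that conditions (C1) and (C2) are affine. First I would use paracompactness of $M$ to choose a locally finite open cover $\{U_i\}_{i\in I}$ over which $P$ is trivial, together with a smooth partition of unity $\{\rho_i\}_{i\in I}$ subordinate to it, so that $\supp\rho_i\subseteq U_i$. Over each $U_i$ I fix a bundle chart $\varphi_i:U_i\times G\to q^{-1}(U_i)$ and let $\theta_i\in\mathcal{C}(q^{-1}(U_i))$ be the connection 1-form corresponding under $\varphi_i$ to $p_G^*\kappa_G$ on $U_i\times G$, i.e.\ the case $A=0$ of Remark \ref{c 1-forms on trivial pb}; this produces the required local models.

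Next I would form $\theta:=\sum_{i\in I}(\rho_i\circ q)\,\theta_i\in\Omega^1(P,\mathfrak{g})$, where each summand $(\rho_i\circ q)\theta_i$, already vanishing on $q^{-1}(U_i)\setminus q^{-1}(\supp\rho_i)$, is extended by $0$ to all of $P$. Since $q$ is continuous and $\{\supp\rho_i\}$ is locally finite in $M$, the family $\{q^{-1}(\supp\rho_i)\}$ is locally finite in $P$, so near each point of $P$ only finitely many summands are nonzero and $\theta$ is a well-defined smooth $\mathfrak{g}$-valued 1-form.

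Then I would verify the two axioms. For (C2): each $\theta_i$ satisfies $\sigma_g^*\theta_i=\Ad(g)^{-1}\circ\theta_i$, the functions $\rho_i\circ q$ are $G$-invariant because $q\circ\sigma_g=q$, and $\Ad(g)^{-1}$ is linear, hence $\sigma_g^*\theta=\sum_i(\rho_i\circ q)\,\sigma_g^*\theta_i=\Ad(g)^{-1}\circ\theta$. For (C1): evaluating on the fundamental vector field $\dot{\sigma}(x)$ and using $\theta_i(\dot{\sigma}(x))=x$ together with $\sum_i\rho_i\circ q\equiv 1$ on $P$ gives $\theta(\dot{\sigma}(x))=\bigl(\sum_i\rho_i\circ q\bigr)\,x=x$. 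Thus $\theta\in\mathcal{C}(P)$.

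The only genuinely delicate point is the observation that $\mathcal{C}(P)$ is stable under forming $C^\infty(M)$-linear combinations whose coefficients $\{\rho_i\circ q\}$ sum to $1$: such a combination still satisfies (C1) precisely because the coefficients partition unity, and satisfies (C2) because (C2) is linear. Everything else — existence of the trivial-bundle models via Remark \ref{c 1-forms on trivial pb}, smoothness via local finiteness, and the bookkeeping of extending summands by zero — is routine once paracompactness has been invoked.
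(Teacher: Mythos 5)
Your proof is correct and follows the standard partition-of-unity argument; the paper itself only cites [Ne08b], Proposition 5.4.6, which proceeds in essentially the same way. The structural point you highlight at the end --- that $\mathcal{C}(P)$ is an affine subspace of $\Omega^1(P,\mathfrak{g})$, so it is stable under combinations with $G$-invariant coefficients $\rho_i\circ q$ summing to one --- is exactly the fact recorded after Definition \ref{connection 1-forms}, and it is what makes the gluing work.
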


\begin{proof}
\,\,\,For the proof we refer to [Ne08b], Proposition 5.4.6.
\end{proof}

\begin{construction}\label{horizontal lifts}{\bf (Horizontal lifts).}\index{Horizontal!Lifts} Let $\theta\in\mathcal{C}(P)$ be a connection 1-form. For each $X\in\mathcal{V}(M)$, we define a vector field $\widetilde{X}\in\mathcal{V}(P)$ by
\[\widetilde{X}(p):=(T_p(q)_{\mid H_p(P)})^{-1}X(q(p)).
\]The vector field $\widetilde{X}$ clearly satisfies
\[q_*\widetilde{X}:=T_p(q)(\widetilde{X}(q(p))=X\,\,\,\text{and}\,\,\,\theta(\widetilde{X})=0.
\]That $\widetilde{X}$ is smooth can be verified locally, so that we may assume that $P=M\times G$ is trivial. By Remark \ref{c 1-forms on trivial pb}, $\theta$ is of the form
$\theta=p^*_G\kappa_G+(\Ad\circ p_G)^{-1}.p^*_MA$ for some $A\in\Omega^1(M,\mathfrak{g})$. Now $\theta_{(m,g)}(v,w)=0$ is equivalent to
\[g^{-1}.w+\Ad(g)^{-1}A_m(v)=0,
\]which is equivalent to $w=-A_m(v).g$. Hence, the horizontal lift $\widetilde{X}$ of $X\in\mathcal{V}(M)$ is given by
\[\widetilde{X}(m,g)=(-A_m(X(m)).g, X(m)),
\]which is smooth.
\end{construction}

\begin{lemma}\label{surj of q_*}
For any $\theta\in C(P)$ the horizontal lift defines a $C^{\infty}(M)$-linear map
\[\tau_{\theta}:\mathcal{V}(M)\rightarrow\mathfrak{aut}(P),\,\,\,X\mapsto\widetilde{X}\,\,\,\text{with}\,\,\,q_*\circ\tau_{\theta}=\id_{\mathcal{V}(M)}.
\]In particular, $q_*:\mathfrak{aut}(P)\rightarrow\mathcal{V}(M)$ is surjective.
\end{lemma}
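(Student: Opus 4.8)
The plan is to verify the three asserted properties of $\tau_\theta$ one at a time — that $\widetilde X$ actually lies in $\mathfrak{aut}(P)$, that $X\mapsto\widetilde X$ is $C^\infty(M)$-linear, and that $q_*\circ\tau_\theta=\id_{\mathcal V(M)}$ — and then simply read off the surjectivity of $q_*$.

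The main step is the $G$-invariance of the horizontal lift, i.e.\ $(\sigma_g)_*\widetilde X=\widetilde X$ for all $g\in G$. Fix $p\in P$ and $g\in G$. Since $\widetilde X(p)\in H_p(P)$, the transformation rule (\ref{geom distribution II}) gives $T_p(\sigma_g)\widetilde X(p)\in H_{p.g}(P)$. Applying $T_{p.g}(q)$ and using $q\circ\sigma_g=q$ together with the defining property $T_p(q)\widetilde X(p)=X(q(p))$ from Construction \ref{horizontal lifts}, one gets $T_{p.g}(q)\bigl(T_p(\sigma_g)\widetilde X(p)\bigr)=T_p(q)\widetilde X(p)=X(q(p))=X(q(p.g))$. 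But by the decomposition (\ref{geom distribution I}) the restriction $T_{p.g}(q)|_{H_{p.g}(P)}$ is a linear isomorphism onto $T_{q(p.g)}(M)$, so the horizontal vector $T_p(\sigma_g)\widetilde X(p)$ is forced to be the unique horizontal lift $\widetilde X(p.g)$. Smoothness of $\widetilde X$ has already been checked in Construction \ref{horizontal lifts}, so $\widetilde X\in\mathfrak{aut}(P)$, and $\tau_\theta$ indeed maps into $\mathfrak{aut}(P)$.

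Next I would check $C^\infty(M)$-linearity, which is immediate from the linearity of $(T_p(q)|_{H_p(P)})^{-1}$: additivity is clear, and for $f\in C^\infty(M)$ we get $\widetilde{fX}(p)=(T_p(q)|_{H_p(P)})^{-1}\bigl(f(q(p))X(q(p))\bigr)=f(q(p))\widetilde X(p)=(f\widetilde X)(p)$, which is exactly the module structure on $\mathfrak{aut}(P)$ from Lemma \ref{aut(P)}(c). The relation $q_*\circ\tau_\theta=\id_{\mathcal V(M)}$ is then just a restatement of $q_*\widetilde X=X$, which holds by construction. Finally, since by Proposition \ref{existence of c 1-forms} a connection $1$-form $\theta$ exists on $P$ ($M$ being paracompact), $\tau_\theta$ is a right inverse of $q_*$, whence $q_*:\mathfrak{aut}(P)\to\mathcal V(M)$ is surjective. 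I do not expect a genuine obstacle here; the only point requiring care is the invariance argument, where one must use that the horizontal subspaces are $G$-stable and that $q$ is $G$-invariant, so that ``the'' horizontal lift at $p.g$ is well defined and coincides with the pushed-forward vector.
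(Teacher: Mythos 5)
Your argument is correct and essentially coincides with the paper's: the key step in both is the $G$-invariance $\widetilde X(p.g)=T(\sigma_g)\widetilde X(p)$, which the paper derives directly from the identity $(T_{p.g}(q)|_{H_{p.g}(P)})^{-1}=T(\sigma_g)\circ(T_p(q)|_{H_p(P)})^{-1}$, while you obtain it by observing that $T_p(\sigma_g)\widetilde X(p)$ is a horizontal vector at $p.g$ projecting to $X(q(p.g))$ and invoking uniqueness of horizontal lifts — two phrasings of the same fact. The remaining checks ($C^\infty(M)$-linearity, $q_*\widetilde X=X$) match the paper, and your appeal to Proposition~\ref{existence of c 1-forms} is harmless but not needed here since the lemma already fixes a $\theta\in\mathcal C(P)$ in its hypothesis.
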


\begin{proof}
\,\,\,We first show that $\widetilde{X}\in\mathfrak{aut}(P)$:
\begin{align}
\widetilde{X}(p.g)&=(T_{p.g}(q)_{\mid H_{p.g}(P)})^{-1}X(q(p))=T(\sigma_g)(T_p(q)_{\mid H_p(P)})^{-1}X(q(p))\notag\\
&=T(\sigma_g)\widetilde{X}(p).\notag
\end{align}
The claim now follows from $\widetilde{(f.X)}(p)=f(q(p))\widetilde{X}(p)=(f.\widetilde{X})(p)$.
\end{proof}

We also show the converse:

\begin{proposition}\label{C(M)-linear map induces connections form}
For each $C^{\infty}(M)$-linear section $\tau$ of $q_*$, there exists a unique $\theta\in C(P)$ with $\tau=\tau_{\theta}$.
\end{proposition}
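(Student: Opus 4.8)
The plan is to convert the algebraic section $\tau$ into a $G$-invariant geometric distribution on $P$ and then appeal to Remark \ref{geometric distribution}. For $p\in P$ I would set
\[H_p(P):=\{\tau(X)(p):X\in\mathcal{V}(M)\}\subseteq T_p(P),\]
and show that the family $(H_p(P))_{p\in P}$ is smooth and satisfies (\ref{geom distribution I}) and (\ref{geom distribution II}). Remark \ref{geometric distribution} then yields a unique $\theta\in\mathcal{C}(P)$ with $\ker\theta_p=H_p(P)$ for all $p$, namely $\theta_p(\dot{\sigma}_p(x)+w)=x$ for $x\in\mathfrak{g}$ and $w\in H_p(P)$; it then remains to check $\tau_\theta=\tau$ and uniqueness.

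The key step, and the one I expect to be the main obstacle, is a tensoriality argument: the value $\tau(X)(p)$ depends only on $X(q(p))$. Since $\tau$ is $C^\infty(M)$-linear it suffices to prove $\tau(X)(p)=0$ whenever $X(m_0)=0$, where $m_0:=q(p)$. I would fix a chart around $m_0$ and a bump function $\chi$ with $\chi\equiv 1$ near $m_0$ and $\supp\chi$ inside the chart, and split $X=\chi^2X+(1-\chi^2)X$. Writing $X=\sum_i X^i\partial_i$ on the chart, one has globally $\chi^2X=\sum_i(\chi X^i)(\chi\partial_i)$ with $\chi X^i\in C^\infty(M)$ vanishing at $m_0$ and $\chi\partial_i\in\mathcal{V}(M)$ (both extended by zero); hence $C^\infty(M)$-linearity gives $\tau(\chi^2X)(p)=\sum_i(\chi X^i)(m_0)\,\tau(\chi\partial_i)(p)=0$. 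For the other summand, $(1-\chi^2)X$ vanishes on a neighbourhood $V$ of $m_0$; choosing $\psi\in C^\infty(M)$ with $\psi(m_0)=1$ and $\supp\psi\subseteq V$ we get $\psi\cdot(1-\chi^2)X=0$, so $\psi\cdot\tau((1-\chi^2)X)=0$, and evaluating at $p$ and using the module structure $(fZ)(p)=f(q(p))Z(p)$ from Lemma \ref{aut(P)}(c) forces $\tau((1-\chi^2)X)(p)=0$. Thus $\tau(X)(p)=0$, and consequently $h_p\colon T_{m_0}(M)\to T_p(P)$, $h_p(v):=\tau(X)(p)$ for any $X$ with $X(m_0)=v$, is a well-defined linear map with image $H_p(P)$.

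The remaining checks are routine. From $q_*\circ\tau=\id_{\mathcal{V}(M)}$ we get $T_p(q)(h_p(v))=(q_*\tau(X))(m_0)=v$, so $T_p(q)|_{H_p(P)}$ is an isomorphism onto $T_{m_0}(M)$; hence $\dim H_p(P)=\dim M$, $H_p(P)\cap V_p(P)=\{0\}$, and a dimension count gives the decomposition (\ref{geom distribution I}). Since $\tau(X)\in\mathfrak{aut}(P)$ means $T_p(\sigma_g)(\tau(X)(p))=\tau(X)(p.g)$ and $q(p.g)=q(p)$, we obtain $T(\sigma_g)H_p(P)=H_{p.g}(P)$, i.e. (\ref{geom distribution II}). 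Smoothness follows by picking, over a chart $U\subseteq M$, a frame $X_1,\dots,X_n$ of $TM$ extended to global vector fields: then $\tau(X_1),\dots,\tau(X_n)\in\mathfrak{aut}(P)\subseteq\mathcal{V}(P)$ are smooth and, over $q^{-1}(U)$, pointwise linearly independent with span $H_p(P)$. So Remark \ref{geometric distribution} delivers $\theta\in\mathcal{C}(P)$.

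Finally I would identify $\tau_\theta$ with $\tau$: by Construction \ref{horizontal lifts}, $\tau_\theta(X)(p)=(T_p(q)|_{H_p(P)})^{-1}X(q(p))$, and since $\tau(X)(p)$ is the unique element of $H_p(P)$ sent by $T_p(q)$ to $X(q(p))$, this equals $\tau(X)(p)$, so $\tau_\theta=\tau$. For uniqueness, if $\theta'\in\mathcal{C}(P)$ also satisfies $\tau_{\theta'}=\tau$, then $\tau(X)(p)=\widetilde{X}(p)\in\ker\theta'_p$ for all $X$ and $p$, whence $H_p(P)\subseteq\ker\theta'_p$; comparing dimensions gives $\ker\theta'_p=H_p(P)=\ker\theta_p$, and since by Remark \ref{geometric distribution} a connection $1$-form is determined by its horizontal subspaces, $\theta'=\theta$.
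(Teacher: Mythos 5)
Your proof is correct, but it follows a genuinely different route from the paper. The paper's argument is a local-to-global one that uses bundle trivializations: over a trivializing open set $U$ it identifies $\mathfrak{aut}(P_U)\cong C^{\infty}(U,\mathfrak{g})\rtimes\mathcal{V}(U)$, observes that any $C^{\infty}(U)$-linear section must have the form $\tau_U(X)=(-A(X),X)$ for a unique $A\in\Omega^1(U,\mathfrak{g})$, reads off the connection $1$-form $\theta_U=p^*_G\kappa_G+(\Ad\circ p_G)^{-1}.p^*_UA$ from Remark \ref{c 1-forms on trivial pb}, and then glues the $\theta_U$ by uniqueness. You instead work invariantly: you first establish tensoriality of $\tau$ (that $\tau(X)(p)$ depends only on $X(q(p))$) via the usual bump-function argument and the $C^{\infty}(M)$-module structure from Lemma \ref{aut(P)}(c), build the horizontal distribution $H_p(P)$ directly, verify properties (\ref{geom distribution I}), (\ref{geom distribution II}) and smoothness, and then invoke the correspondence between $G$-invariant complementary distributions and connection $1$-forms from Remark \ref{geometric distribution}. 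The paper's approach is more computational and yields an explicit local formula for $\theta$ in a trivialization; yours is coordinate-free and bypasses bundle charts entirely, at the cost of leaning on Remark \ref{geometric distribution} in its converse direction (distribution $\Rightarrow$ $1$-form), which the paper only sketches. One minor point to tighten: when you choose the global extensions of the local frame $X_1,\ldots,X_n$ in the smoothness step, you should shrink the chart (or introduce a bump function) so that the cut-off fields still form a frame on the smaller open set; tensoriality then lets you conclude as you do. Everything else, including the identification $\tau_\theta=\tau$ and the uniqueness argument via $\ker\theta'_p\supseteq H_p(P)$ and a dimension count, is sound.
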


\begin{proof}
\,\,\,(i) Let $\tau:\mathcal{V}(M)\rightarrow\mathfrak{aut}(P)$ be a $C^{\infty}(M)$-linear cross section of $q_*$. Since $\tau$ is $C^{\infty}(M)$-linear, it defines over each open subset $U\subseteq M$ a $C^{\infty}(U)$-linear cross section of $q_*$. If $P_U$ is trivial, then we thus obtain a $C^{\infty}(U)$-linear map
\[\tau_U:\mathcal{V}(U)\rightarrow\mathfrak{aut}(P_U)\cong C^{\infty}(U,\mathfrak{g})\rtimes\mathcal{V}(U).
\]
This map is of the form
\[\tau_U(X)=(-A(X),X)
\]with a $C^{\infty}(U)$-linear map $A:\mathcal{V}(U)\rightarrow C^{\infty}(U,\mathfrak{g})$. This in particularly means that $A\in\Omega^1(U,\mathfrak{g})$ is a $\mathfrak{g}$-valued 1-form. In terms of vector fields on $U\times G$, we then have
\[\tau_U(X)(m,g)=(-A(X)(m).g,X(m))
\](cf. Construction \ref{horizontal lifts}). We conclude that $\tau_U$ coincides with the horizontal lift with respect to the connection 1-form
\[\theta_U=p^*_G\kappa_G+(\Ad\circ p_G)^{-1}.p^*_UA.
\](ii) Now, $\theta_U\in C(P_U)$ is a connection 1-form for which $\tau_U$ is the corresponding horizontal lift. From the uniqueness of $\theta_U$ we derive that for each open subset $V\subseteq U$ we have $\theta_V={\theta_U}_{\mid P_U}$. Thus, the collection of the $\theta_U$'s defines an element $\theta\in C(P)$ with $\tau_{\theta}=\tau$.
\end{proof}

\begin{corollary}\label{1:1 correspondence between section and connections}
If $M$ is paracompact, then the homomorphism $q_*:\mathfrak{aut}(P)\rightarrow\mathcal{V}(M)$ is surjective and we obtain a short exact sequence of Lie algebras
\[0\longrightarrow\mathfrak{gau}(P){\longrightarrow}\mathfrak{aut}(P)\stackrel{q_*}{\longrightarrow}\mathcal{V}(M)\longrightarrow 0.
\]Moreover, there is a 1:1 correspondence between connection 1-forms and $C^{\infty}(M)$-linear cross sections of $q_*$.
\end{corollary}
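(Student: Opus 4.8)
The plan is to assemble the corollary from the three preceding results, since all the substantive work has already been done. First I would invoke Proposition \ref{existence of c 1-forms}: because $M$ is paracompact, the principal bundle $(P,M,G,q,\sigma)$ admits at least one connection 1-form $\theta\in\mathcal{C}(P)$. Feeding this $\theta$ into Lemma \ref{surj of q_*} yields a $C^{\infty}(M)$-linear section $\tau_{\theta}:\mathcal{V}(M)\to\mathfrak{aut}(P)$ of $q_*$ with $q_*\circ\tau_{\theta}=\id_{\mathcal{V}(M)}$; in particular $q_*$ is surjective. This is the only place where paracompactness enters.

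Next I would record the short exact sequence. By Lemma \ref{aut(P)}(b) the map $q_*:\mathfrak{aut}(P)\to\mathcal{V}(M)$ is a homomorphism of Lie algebras, and by definition (Remark \ref{gau(P)}) its kernel is $\mathfrak{gau}(P)$, which is therefore an ideal of $\mathfrak{aut}(P)$. Combining this with the surjectivity just established gives exactness of
\[
0\longrightarrow\mathfrak{gau}(P)\longrightarrow\mathfrak{aut}(P)\stackrel{q_*}{\longrightarrow}\mathcal{V}(M)\longrightarrow 0 .
\]
One may additionally note, using Lemma \ref{aut(P)}(c), that any $\tau_{\theta}$ splits this sequence in the category of $C^{\infty}(M)$-modules (though not, in general, as a sequence of Lie algebras).

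Finally, for the claimed bijection between $\mathcal{C}(P)$ and the set of $C^{\infty}(M)$-linear cross sections of $q_*$, I would exhibit the two mutually inverse maps. One direction is $\theta\mapsto\tau_{\theta}$ furnished by Lemma \ref{surj of q_*}; the other is Proposition \ref{C(M)-linear map induces connections form}, which says that every $C^{\infty}(M)$-linear section $\tau$ of $q_*$ equals $\tau_{\theta}$ for a unique $\theta\in\mathcal{C}(P)$. Surjectivity of $\theta\mapsto\tau_{\theta}$ is exactly the existence half of that proposition, and injectivity is its uniqueness half. Since essentially everything here is a repackaging of earlier statements, there is no real obstacle; the one point deserving a word of care is the injectivity of $\theta\mapsto\tau_{\theta}$, and I would make it explicit via Remark \ref{geometric distribution}: for each $p\in P$ one has $H_p(P)=\{\tau_{\theta}(X)(p):X\in\mathcal{V}(M)\}$ (as $X(q(p))$ ranges over all of $T_{q(p)}(M)$), so $\tau_{\theta}$ determines the full horizontal distribution, from which $\theta$ is in turn reconstructed.
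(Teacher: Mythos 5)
Your proposal is correct and follows essentially the same route as the paper, which cites Lemma \ref{surj of q_*} and Proposition \ref{C(M)-linear map induces connections form} directly; you simply spell out the bookkeeping (explicitly invoking Proposition \ref{existence of c 1-forms} to guarantee $\mathcal{C}(P)\neq\emptyset$, which the paper leaves implicit, and Lemma \ref{aut(P)}(b) together with Remark \ref{gau(P)} for exactness at $\mathfrak{aut}(P)$). The closing observation via Remark \ref{geometric distribution} is a harmless redundancy, since injectivity of $\theta\mapsto\tau_\theta$ already follows from the uniqueness clause of Proposition \ref{C(M)-linear map induces connections form}.
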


\begin{proof}
\,\,\,The assertions are a direct consequence of Lemma \ref{surj of q_*} and Proposition \ref{C(M)-linear map induces connections form}.
\end{proof}

\chapter{Noncommutative Vector Bundles}\label{NCVB}

The goal of this chapter is to prove a slightly more general version of Theorem \ref{1}
in the smooth context, i.e., to establish that the categories of finitely generated projective modules over $C^{\infty}(M)$ and the category of smooth vector bundles are equivalent for arbitrary manifolds $M$ of finite dimension. We further provide some insight in the theory of locally convex modules of locally convex algebras. All modules are assumed to be right modules.

\section{Projective Modules over Unital Algebras}\label{Projective Modules over Unital Algebras}

In this section we start with the notions relating to projective modules over unital algebras.

\begin{definition}\label{free module}{\bf(Free modules).}\index{Modules!Free}
Let $A$ be a unital algebra. An $A$-module $E$ is called \emph{free} if $E$ is isomorphic to a module direct sum of copies of the $A$-module $A$, i.e., if there exists an index set $I$ such that
\[E\cong\bigoplus_{i\in I} A_i\,\,\,\text{for}\,\,\,A_i=A.
\]
\end{definition}

\begin{definition}\label{Appendix B1}{\bf(Projective modules).}\index{Modules!Projective}
Let $A$ be a unital algebra. An $A$-module $E$ is called \emph{projective} if there exists an $A$-module $E'$ such that the ($A$-module) direct sum $E\oplus E'$ is free.
\end{definition}


\begin{example}\label{Appendix B2}
Every free $A$-module $E$ is projective. Indeed, this is a direct consequence of Definition \ref{Appendix B1}.
\end{example}

\begin{proposition}\label{Appendix B3}
Let $I$ denote an arbitrary index set and let $E=\bigoplus_{i\in I}E_i$ be the module direct sum of $A$-modules $E_i$. Then $E$ is projective if and only if all $E_i$ are projective.
\end{proposition}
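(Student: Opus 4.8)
The plan is to prove both implications of the equivalence separately, reducing everything to the defining property of projective modules stated in Definition \ref{Appendix B1}.

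\textbf{The easy direction} ($E$ projective $\Rightarrow$ each $E_i$ projective). Suppose $E=\bigoplus_{i\in I}E_i$ is projective, so there is an $A$-module $E'$ with $E\oplus E'$ free. Fix $j\in I$ and write $E_j' := \bigl(\bigoplus_{i\neq j}E_i\bigr)\oplus E'$. Then
\[
E_j\oplus E_j' \;=\; E_j\oplus\Bigl(\bigoplus_{i\neq j}E_i\Bigr)\oplus E' \;\cong\; E\oplus E',
\]
which is free; hence $E_j$ is projective. This uses only the associativity/commutativity of the module direct sum.

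\textbf{The harder direction} ($E_i$ all projective $\Rightarrow E$ projective). For each $i$ choose an $A$-module $E_i'$ with $E_i\oplus E_i'$ free. Set $E':=\bigoplus_{i\in I}E_i'$. Then, rearranging the direct sum,
\[
E\oplus E' \;=\; \Bigl(\bigoplus_{i\in I}E_i\Bigr)\oplus\Bigl(\bigoplus_{i\in I}E_i'\Bigr) \;\cong\; \bigoplus_{i\in I}\,(E_i\oplus E_i').
\]
Since each $E_i\oplus E_i'$ is free, it is a direct sum of copies of $A$, and a direct sum (over any index set) of direct sums of copies of $A$ is again a direct sum of copies of $A$; thus $E\oplus E'$ is free, so $E$ is projective.

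\textbf{Where the only real work lies.} Both implications hinge on the fact that arbitrary module direct sums can be freely reassociated and reindexed — i.e.\ $\bigoplus_i(E_i\oplus E_i')\cong(\bigoplus_i E_i)\oplus(\bigoplus_i E_i')$, and a ``direct sum of free modules is free'' via flattening the index set $\coprod_i I_i$. These are standard facts about coproducts in the category of $A$-modules, so I would state them as such rather than verify them elementwise. The main (mild) obstacle is purely bookkeeping: being careful that in the forward direction the complement $E_j'$ is genuinely an $A$-module (it is, being a direct sum of such) and that no finiteness of $I$ is secretly used — it is not, since projectivity here is defined via arbitrary free modules. No completeness or topology enters, as this is a purely algebraic statement about modules over the unital algebra $A$.
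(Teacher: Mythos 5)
Your proof is correct and follows essentially the same route as the paper: in the forward direction you absorb the remaining summands into the complement, and in the reverse direction you take $E'=\bigoplus_i E_i'$ and reassociate the direct sum, just as the paper does. The paper is terser (it leaves the flattening of free modules implicit), but the underlying argument is identical.
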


\begin{proof}\,\,\,($``\Rightarrow"$) Let $i_0\in I$ and suppose that $E$ is projective. Then there exists an $A$-module $E'$ such that $E\oplus E'$ is free and the projectivity of $E_{i_0}$ immediately follows from the decomposition
\[E_{i_0}\oplus\left(\bigoplus_{i\neq i_0}E_i\oplus E'\right)=E\oplus E'.
\]

($``\Leftarrow"$) On the other hand, suppose that all $E_i$ are projective. Then for each $i\in I$ there exists an $A$-module $E'_i$ such that $E_i\oplus E'_i$ is free. If we now put $E':=\bigoplus_{i\in I}E'_i$, the claim easily follows from the fact that $E\oplus E'$ is free.



\end{proof}


\begin{proposition}\label{Appendix B4}
Let $E$ be an $A$-module. Then the following statements are equivalent:
\begin{itemize}
\item[\emph{(a)}]
$E$ is projective.
\item[\emph{(b)}]
There exists a free $A$-module $H$ and an element $p\in\End_A(H)$ with $p^2=p$ such that $E\cong p\cdot H$.
\end{itemize}
\end{proposition}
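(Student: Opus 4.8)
The plan is to prove the two implications separately, in each case exhibiting the required data explicitly.

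For the direction $(a)\Rightarrow(b)$, I would start from the definition of projectivity: since $E$ is projective, there is an $A$-module $E'$ such that $H:=E\oplus E'$ is free. I would then take $p\in\End_A(H)$ to be the canonical projection onto the first summand, i.e. $p(e,e'):=(e,0)$. One checks immediately that $p$ is $A$-linear (the module structure on $H$ is componentwise), that $p^2=p$, and that $p\cdot H=E\oplus\{0\}\cong E$. This gives the free module $H$ and the idempotent $p$ required in (b).

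For the direction $(b)\Rightarrow(a)$, I would start from a free $A$-module $H$ and an idempotent $p=p^2\in\End_A(H)$ with $E\cong p\cdot H$. Setting $q:=\id_H-p$, one verifies that $q$ is again an $A$-linear idempotent and that $H=p\cdot H\oplus q\cdot H$ as $A$-modules (every $h\in H$ decomposes as $h=p\cdot h+q\cdot h$, and the two summands intersect trivially since $p\cdot h=q\cdot h'$ forces $p\cdot h=p^2\cdot h=p\cdot q\cdot h'=0$). Thus $p\cdot H$ is a direct summand of the free — hence projective — module $H$, so by Proposition \ref{Appendix B3} the module $p\cdot H$ is projective; since $E\cong p\cdot H$, the module $E$ is projective. (Alternatively one can invoke Definition \ref{Appendix B1} directly with $E':=q\cdot H$.)

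I do not expect a genuine obstacle here; the only points requiring a little care are checking that the projection $p$ in the first part and the complementary idempotent $q=\id_H-p$ in the second part are honestly $A$-\emph{module} homomorphisms, and that the internal direct sum decomposition $H=p\cdot H\oplus q\cdot H$ is a decomposition of $A$-modules rather than merely of abelian groups. These are routine verifications using only that $p$ commutes with the $A$-action and that $p^2=p$.
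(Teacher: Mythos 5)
Your proof is correct and follows essentially the same route as the paper: in $(a)\Rightarrow(b)$ take $H:=E\oplus E'$ with $p$ the projection onto $E$, and in $(b)\Rightarrow(a)$ take the complementary idempotent $\id_H-p$ to exhibit $p\cdot H$ as a direct summand of the free module $H$. The only difference is that you spell out the routine $A$-linearity and direct-sum verifications a bit more explicitly than the paper does.
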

\begin{proof}
\,\,\,
(a) $\Rightarrow$ (b):
We choose $H:=E\oplus E'$ and $p$ to be the projection from $H$ to $E$ with kernel $E'$.

(b) $\Rightarrow$ (a):
Let $H$ be a free $A$-module and $p$ be an idempotent $A$-endomorphism of $H$ such that $E\cong p\cdot H$. If we define $E':=(\id_H-p)\cdot H$, then every element $h\in H$ can be uniquely written as 
\[h=(\id_H-p)\cdot h)+p\cdot h
\]and the projectivity of $E$ follows from $E\oplus E'\cong p\cdot H\oplus E'=H$.





\end{proof}
%

\begin{remark}\label{projective lifting property}{\bf(Projective lifting property).}\index{Projective!Lifting Property}
An $A$-module $E$ is said to have the \emph{projective lifting property} if for every surjective morphism $f:F\rightarrow G$ of $A$-modules and every $A$-linear map $g:E\rightarrow G$, there exists an $A$-linear map $h:E\rightarrow F$ such that $f\circ h=g$. This property is equivalent to $E$ being projective:

($``\Rightarrow"$) Indeed, let $(e_i)_{i\in I}\subseteq E$ be a set of generators of $E$. We choose a free $A$-module $H$ with basis $(h_i)_{i\in I}$. Next, we define a surjective morphism $f:H\rightarrow E$ of $A$-modules by $f(h_i)=e_i\,\,\,\text{for all}\,\,\,i\in I$. We then get the following short exact sequence
\[0\longrightarrow \ker f{\longrightarrow} H\stackrel{}{\longrightarrow}E\longrightarrow0
\]of $A$-modules. If $E$ satisfies the projective lifting property, then there exists an $A$-linear map $h:E\rightarrow  H$ such that $f\circ h=\id_{E}$ (take $g=\id_{E}$), i.e. $H\cong E\oplus\underbrace{\ker f}_{:=E'}$.

($``\Leftarrow"$) For the other direction we first observe that free modules satisfy the projective lifting property: In this case $h$ is determined by lifting the image of a basis. To see that all projective modules satisfy the lifting property, we extend $g$ to a map from a free module $E\oplus E'$ to $G$ and lift that.
\end{remark}

The following corollary will be crucial in the next chapter: 

\begin{corollary}\label{f.g.pr.mod}{\bf(Finitely generated projective modules).}\index{Modules!Finitely Generated Projective}
Let $A$ be a unital algebra and $E$ be an $A$-module. If $n\in\mathbb{N}$ and $\Idem_n(A)$\sindex[n]{$\Idem_n(A)$} denotes the set of idempotent $A$-linear maps of $A^n$, then the following statements are equivalent:
\begin{itemize}
\item[\emph{(a)}]
$E$ is finitely generated and projective.
\item[\emph{(b)}]
$E$ is isomorphic to a direct summand of $A^n$.
\item[\emph{(c)}]
There exists an element $p\in\Idem_n(A)$ such that $E\cong p\cdot A^n$.
\end{itemize}
\end{corollary}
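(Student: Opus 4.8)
The plan is to prove the equivalences (a) $\Leftrightarrow$ (b) $\Leftrightarrow$ (c) by leveraging the general characterizations already established in Proposition \ref{Appendix B4} and Remark \ref{projective lifting property}, specializing to the finitely generated case. The key additional input beyond the infinite case is bookkeeping the number of generators, so the main work is to track finiteness through the constructions.

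First I would prove (a) $\Rightarrow$ (b). Assume $E$ is finitely generated and projective, and let $e_1,\dots,e_n$ be generators. Following the argument in the $(``\Rightarrow")$ direction of Remark \ref{projective lifting property}, define the surjective $A$-linear map $f:A^n\rightarrow E$ by sending the standard basis vectors $h_i$ to $e_i$. Since $E$ is projective, it satisfies the projective lifting property, so applying it to $g=\id_E$ yields an $A$-linear splitting $h:E\rightarrow A^n$ with $f\circ h=\id_E$. Hence $A^n\cong E\oplus\ker f$, exhibiting $E$ as a direct summand of $A^n$.

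Next, (b) $\Rightarrow$ (c): if $A^n\cong E\oplus E'$ for some $A$-module $E'$, let $p\in\End_A(A^n)$ be the projection onto the $E$-summand along $E'$. Then $p^2=p$, so $p\in\Idem_n(A)$, and $p\cdot A^n\cong E$. For (c) $\Rightarrow$ (a), suppose $p\in\Idem_n(A)$ with $E\cong p\cdot A^n$. By the $(b)\Rightarrow(a)$ part of Proposition \ref{Appendix B4} (with $H=A^n$), $E$ is projective, since $A^n$ is free and $p$ is idempotent; concretely $p\cdot A^n\oplus(\id-p)\cdot A^n=A^n$. Finally $E$ is finitely generated because $p\cdot A^n$ is generated by the $n$ elements $p\cdot h_1,\dots,p\cdot h_n$.

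I do not expect a serious obstacle here: essentially everything reduces to the results cited above once one observes that a free module is finitely generated exactly when it has finite rank, and that quotients and direct summands inherit finite generation. The only point requiring a little care is ensuring that the idempotent in (c) genuinely lives in $\Idem_n(A)$ for the \emph{same} $n$ as in (b) — this is automatic from the construction of $p$ as a projection on $A^n$ — and, conversely, that the generator count is preserved when passing from (a) to (b), which is why one chooses the free module of rank equal to the number of generators at the outset.
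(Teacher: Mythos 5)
Your proof is correct and follows essentially the same route as the paper: both use Remark \ref{projective lifting property} for (a)~$\Rightarrow$~(b), both obtain the idempotent $p$ as the projection onto the $E$-summand, and both recover finite generation from the images $p\cdot h_1,\dots,p\cdot h_n$ of the standard generators. The only cosmetic difference is that you organize the argument as a cycle (a)~$\Rightarrow$~(b)~$\Rightarrow$~(c)~$\Rightarrow$~(a) while the paper proves (a)~$\Leftrightarrow$~(b) and invokes Proposition \ref{Appendix B4} for (b)~$\Leftrightarrow$~(c).
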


\begin{proof}
\,\,\,(a) $\Rightarrow$ (b): Suppose $E$ is a finitely generated and projective $A$-module. Then there exists a finite set of generators ${e_1,\ldots,e_n}$ of $E$. Now, we just copy the first part of the equivalence of Remark \ref{projective lifting property} to see that $E$ is isomorphic to a direct summand of $A^n$. 

(b) $\Rightarrow$ (a): If $E$ is isomorphic to a direct summand of $A^n$, i.e., if there exists an $A$-module $E'$ with $E\oplus E'\cong A^n$, then $E$ is obviously projective by Definition \ref{Appendix B1}. To see that $E$ is also finitely generated, we choose a basis $(a_1,\ldots,a_n)$ of $A^n$ and let $p$ be the projection onto $E$ with kernel $E'$. Then $(p\cdot a_1,\ldots,p\cdot a_n)$ is a finite set of generators of $E$.

The equivalence of (b) and (c) follows exactly as in Proposition \ref{Appendix B4}.
\end{proof}

\section{The Theorem of Serre and Swan in the Smooth Category}\label{serre-swan smooth category}

Since vector subbundles will be crucial in the following, we start with recalling the definition of vector subbundles and provide some suitable conditions for checking that certain subspaces of a vector bundle are subbundles. We then present quite interesting results concerning vector bundles, which will be necessary for proving the \emph{Theorem of Serre and Swan} in the smooth setting. By a morphism of vector bundles (over a given manifold $M$), we shall always mean a vector bundle homomorphism which covers $\id_M$.

\begin{definition}{\bf(Vector subbundles).}\label{vector subbundle}\index{Bundles!Vector Sub-}
Given a vector bundle $(\mathbb{V},M,V,q)$, a vector subbundle of $\mathbb{V}$ is a submanifold $\mathbb{U}$ of $\mathbb{V}$ such that for each $m\in M$ the fibre $\mathbb{U}_m:=\mathbb{U}\cap\mathbb{V}_m$ is a linear subspace of $\mathbb{V}_m$ and $q_{|\mathbb{U}}:\mathbb{U}\rightarrow M$ is a vector bundle in its own right.
\end{definition}
%
%

\begin{remark}
We recall that if $N$ is a submanifold of a manifold $M$, then the tangent bundle $TN$ is a subbundle of the restricted tangent bundle \[TM_N:=q^{-1}(N)
\](cf. Example \ref{tangent bundle}).
\end{remark}

\begin{theorem}\label{im ker}{\bf(Vector subbundle criterion).}\index{Vector Subbundle Criterion}
Let $\mathbb{V}$ and $\mathbb{W}$ be vector bundles over a manifold $M$. Further, let $\phi:\mathbb{V}\rightarrow\mathbb{W}$ be a morphism of vector bundles such that the map $M\rightarrow \mathbb{N}_0$, $m\mapsto\rank\phi_m$ is locally constant. Then the following assertions hold:
\begin{itemize}
\item[\emph{(a)}]
$\ker\phi$ given by $(\ker\phi)_m=\ker\phi_m$ is a vector subbundle of $\mathbb{V}$.
\item[\emph{(b)}]
$\im\phi$ given by $(\im\phi)_m=\im\phi_m$ is a vector subbundle of $\mathbb{W}$.
\end{itemize}
\end{theorem}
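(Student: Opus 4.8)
The statement is local: being a vector subbundle is checked in a neighbourhood of each point, so I would fix $m_0 \in M$ and work over a trivialising open set $U$ on which both $\mathbb{V}$ and $\mathbb{W}$ are trivial, say $\mathbb{V}_U \cong U \times \mathbb{R}^k$ and $\mathbb{W}_U \cong U \times \mathbb{R}^\ell$. Under these trivialisations $\phi$ becomes a smooth map $U \to \Hom(\mathbb{R}^k,\mathbb{R}^\ell)$, $m \mapsto \phi_m$, and by hypothesis $r := \rank \phi_m$ is constant on $U$ (shrinking $U$ if necessary). The entire proof then reduces to a single linear-algebra-with-parameters lemma: near a point where a smoothly varying matrix has locally constant rank $r$, one can choose smooth local frames of the source and target in which $\phi$ takes the block form $\begin{pmatrix} \id_r & 0 \\ 0 & 0 \end{pmatrix}$. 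Once that normal form is in hand, both (a) and (b) are immediate, since $\ker\phi$ and $\im\phi$ become the obvious constant-rank coordinate subbundles, and Definition \ref{vector subbundle} is satisfied.

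**Producing the local normal form.** After permuting basis vectors of $\mathbb{R}^k$ and $\mathbb{R}^\ell$, I may assume the top-left $r\times r$ block $\phi^{11}_{m_0}$ of $\phi_{m_0}$ is invertible; by continuity $\phi^{11}_m$ stays invertible for $m$ in a smaller neighbourhood, still called $U$. Write $\phi_m$ in block form with blocks $\phi^{11}_m$ ($r\times r$), $\phi^{12}_m$, $\phi^{21}_m$, $\phi^{22}_m$. Performing smooth invertible row and column operations — explicitly, left-multiplying by $\begin{pmatrix} (\phi^{11}_m)^{-1} & 0 \\ -\phi^{21}_m(\phi^{11}_m)^{-1} & \id \end{pmatrix}$ and right-multiplying by $\begin{pmatrix} \id & -(\phi^{11}_m)^{-1}\phi^{12}_m \\ 0 & \id \end{pmatrix}$ — transforms $\phi_m$ into $\begin{pmatrix} \id_r & 0 \\ 0 & \psi_m \end{pmatrix}$ for a smooth family $\psi_m$ of $(\ell-r)\times(k-r)$ matrices. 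But these operations are invertible, hence rank-preserving, so $\rank \phi_m = r + \rank \psi_m$; since $\rank \phi_m = r$, we get $\psi_m = 0$ for all $m \in U$. The two triangular matrices above, being smooth and pointwise invertible, define smooth local changes of frame of $\mathbb{V}_U$ and $\mathbb{W}_U$; in the new frames $\phi$ has the desired block form.

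**Concluding.** In the normal-form frames, $(\ker\phi)_m$ is spanned by the last $k-r$ frame vectors of $\mathbb{V}$ — a smooth $(k-r)$-dimensional distribution that is visibly a submanifold of $\mathbb{V}_U$ and a trivial subbundle, hence $\ker\phi$ is a vector subbundle of $\mathbb{V}$; this proves (a). Likewise $(\im\phi)_m$ is spanned by the first $r$ frame vectors of $\mathbb{W}$, giving a trivial rank-$r$ subbundle of $\mathbb{W}_U$, which proves (b). Since these local descriptions patch (the subbundle property is local and independent of the chosen trivialisation), the global statements follow.

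**Main obstacle.** The only real content is the constant-rank normal form, and within it the one subtle point is recognising that local constancy of the rank \emph{function} is exactly what forces the leftover block $\psi_m$ to vanish identically rather than merely at $m_0$; without the hypothesis one would only get $\psi_{m_0}=0$ and the ranks of $\ker$ and $\im$ could jump. Everything else — smoothness of the frame changes, verifying Definition \ref{vector subbundle}, patching — is routine.
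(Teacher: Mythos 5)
Your argument is correct and complete: reduce to a local trivialisation, use invertibility of a principal $r\times r$ block near $m_0$ together with the smooth Schur-complement row/column operations to obtain the block normal form $\begin{pmatrix}\id_r & 0\\ 0 & \psi_m\end{pmatrix}$, and observe that local constancy of the rank forces $\psi_m\equiv 0$, after which $\ker\phi$ and $\im\phi$ are coordinate subbundles in the new frames. You also correctly isolate the one subtle point, namely that the rank hypothesis is precisely what kills $\psi_m$ on a whole neighbourhood rather than only at $m_0$. The paper gives no proof of its own -- it only cites [Hu08], Chapter 2, Proposition 3.5 -- so there is nothing to compare against, but your argument is the standard constant-rank normal form proof and is sound.
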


\begin{proof}
\,\,\,A very nice proof can be found in [Hu08], Chapter 2, Proposition 3.5.
\end{proof}

\begin{corollary}\label{injective}
Let $\phi:\mathbb{V}\rightarrow\mathbb{W}$ be a morphism of vector bundles that is injective or, equivalently, $\phi_m$ is injective for all $m\in M$. Then $\im\phi$ is a vector subbundle of $\mathbb{W}$.
\end{corollary}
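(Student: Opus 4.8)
The plan is to deduce this corollary from the Vector Subbundle Criterion (Theorem \ref{im ker}) by verifying the hypothesis that $m\mapsto\rank\phi_m$ is locally constant. For an injective morphism, $\phi_m$ is injective at every point, so $\rank\phi_m=\dim\mathbb{V}_m$ for all $m$; since $\mathbb{V}$ is a vector bundle, the fibre dimension is locally constant (it is constant on each connected component, being equal to $\dim V$ where $V$ is the typical fibre), hence $m\mapsto\rank\phi_m$ is locally constant. Applying Theorem \ref{im ker}(b) then immediately gives that $\im\phi$ is a vector subbundle of $\mathbb{W}$.

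Before invoking the criterion, I would first justify the parenthetical equivalence in the statement: a morphism $\phi:\mathbb{V}\rightarrow\mathbb{W}$ of vector bundles (covering $\id_M$) is injective as a map of total spaces if and only if each fibre map $\phi_m:\mathbb{V}_m\rightarrow\mathbb{W}_m$ is injective. One direction is trivial: if all $\phi_m$ are injective and $\phi(v)=\phi(v')$, then $v,v'$ lie over the same point $m=q_{\mathbb{W}}(\phi(v))$ and agree there, so $v=v'$. Conversely, if $\phi$ is injective as a map of sets, then its restriction $\phi_m$ to each fibre is injective. So ``injective'' and ``fibrewise injective'' coincide, which is what lets us feed the hypothesis into Theorem \ref{im ker}.

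There is no real obstacle here — the corollary is essentially a one-line consequence of Theorem \ref{im ker} once one observes that injectivity forces maximal (hence locally constant) rank. The only point that warrants a sentence of care is the claim that fibre dimension of a vector bundle is locally constant: this is immediate from the local triviality condition in Definition \ref{vector bundle}, since over a trivialising neighbourhood $U$ every fibre $\mathbb{V}_x$ is linearly isomorphic to the fixed model space $V$. I would state this explicitly so the rank computation $\rank\phi_m=\dim\mathbb{V}_m$ visibly yields a locally constant function.

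In summary, the proof I would write is: observe (injectivity $\Leftrightarrow$ fibrewise injectivity); note that fibrewise injectivity gives $\rank\phi_m=\dim\mathbb{V}_m$, which is locally constant by local triviality of $\mathbb{V}$; then quote Theorem \ref{im ker}(b) to conclude that $\im\phi$ is a vector subbundle of $\mathbb{W}$. As a remark one could add that in this situation $\phi$ induces a vector bundle isomorphism $\mathbb{V}\rightarrow\im\phi$, but that is not needed for the stated corollary.
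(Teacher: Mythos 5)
Your proof is correct and follows the paper's route exactly: the paper also deduces the corollary from Theorem \ref{im ker} by observing that injectivity forces constant rank. You simply spell out the one-line argument (rank equals fibre dimension, which is locally constant by local triviality), which is a reasonable elaboration but not a different approach.
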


\begin{proof}
\,\,\,This follows from Theorem \ref{im ker} since $\phi$ has constant rank.
\end{proof}

\begin{corollary}\label{surjective}
Let $\phi:\mathbb{V}\rightarrow\mathbb{W}$ be a morphism of vector bundles that is surjective or, equivalently, $\phi_m$ is surjective for all $m\in M$. Then $\ker\phi$ is a vector subbundle of $\mathbb{V}$.
\end{corollary}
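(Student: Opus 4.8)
The plan is to obtain this as an immediate consequence of the Vector Subbundle Criterion, Theorem \ref{im ker}(a). The only thing that needs checking is that the rank hypothesis of that theorem is satisfied, i.e. that $m\mapsto\rank\phi_m$ is locally constant.

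First I would record the equivalence asserted in the statement. A morphism of vector bundles over $M$ covers $\id_M$, hence maps each fibre $\mathbb{V}_m$ into $\mathbb{W}_m$; consequently its image is $\bigcup_{m\in M}\phi_m(\mathbb{V}_m)$, and $\phi$ is surjective if and only if every $\phi_m\colon\mathbb{V}_m\to\mathbb{W}_m$ is surjective. Next I would note that under this hypothesis $\rank\phi_m=\dim\mathbb{W}_m$ for every $m\in M$. By local triviality (Definition \ref{vector bundle}) the fibres of $\mathbb{W}$ are all linearly isomorphic to the typical fibre, so $m\mapsto\dim\mathbb{W}_m$ is constant, in particular locally constant; therefore $m\mapsto\rank\phi_m$ is locally constant.

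Finally, applying Theorem \ref{im ker}(a) to $\phi\colon\mathbb{V}\to\mathbb{W}$ yields that $\ker\phi$, with $(\ker\phi)_m=\ker\phi_m$, is a vector subbundle of $\mathbb{V}$, which is the claim. There is no genuine obstacle here: the argument is a one-line reduction, and the only point worth stating explicitly is the local constancy of the fibre dimension, which is precisely what turns surjectivity of $\phi$ into the locally-constant-rank condition required by Theorem \ref{im ker}.
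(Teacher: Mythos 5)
Your proof is correct and follows the same route as the paper: both reduce to Theorem \ref{im ker} by observing that a fibrewise surjection has constant rank. You merely spell out the (one-line) justification — $\rank\phi_m=\dim\mathbb{W}_m$ is locally constant by local triviality of $\mathbb{W}$ — which the paper leaves implicit.
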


\begin{proof}
\,\,\,Again, this follows from Theorem \ref{im ker} since $\phi$ has constant rank.
\end{proof}

\begin{lemma}\label{orth. complement}
Let $\mathbb{V}$ be a vector bundle over a manifold $M$ and let $\mathbb{U}$ be a vector subbundle of $\mathbb{V}$. Further, let $g$ be a bundle metric on $\mathbb{V}$. Then the orthogonal complement $\mathbb{U}^{\perp}$ \emph{(}with respect to $g$\emph{)}, given by $(\mathbb{U}^{\perp})_m:=\mathbb{U}_m^{\perp}$, is also a vector subbundle of $\mathbb{V}$.
\end{lemma}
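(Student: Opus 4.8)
The plan is to verify the two defining properties of a vector subbundle from Definition \ref{vector subbundle}: first, that each fibre $(\mathbb{U}^{\perp})_m$ is a linear subspace of $\mathbb{V}_m$ — which is immediate from linear algebra, since $\mathbb{U}_m^{\perp}$ is the orthogonal complement of a subspace with respect to the inner product $g_m$ — and second, that $\mathbb{U}^{\perp}$ is a submanifold of $\mathbb{V}$ on which $q$ restricts to a vector bundle. The whole difficulty is concentrated in producing, around each point of $M$, a local trivialization of $\mathbb{U}^{\perp}$, i.e.\ in establishing local triviality.

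First I would reduce to a local model. Fix $m_0\in M$ and choose, after shrinking, an open neighbourhood $U$ of $m_0$ over which $\mathbb{V}$ is trivial, say $\varphi_U:U\times V\xrightarrow{\;\sim\;}\mathbb{V}_U$, and over which the subbundle $\mathbb{U}$ is also trivial; after a further shrinking we may moreover assume there is a smooth frame, i.e.\ smooth sections $s_1,\dots,s_k\in\Gamma(\mathbb{U}_U)$ which pointwise form a basis of $\mathbb{U}_m$ (such a frame exists because $\mathbb{U}$ is a vector bundle in its own right). Using $g$ I would apply the Gram--Schmidt process to $s_1,\dots,s_k$; since $g$ is a smooth bundle metric, the resulting sections $e_1,\dots,e_k$ are smooth and form a $g$-orthonormal frame of $\mathbb{U}$ over $U$. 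The key observation is then that the bundle endomorphism
\[
P:\mathbb{V}_U\rightarrow\mathbb{V}_U,\qquad P_m(v):=\sum_{i=1}^k g_m\bigl(v,e_i(m)\bigr)\,e_i(m),
\]
is a smooth vector bundle morphism over $U$ with $P_m^2=P_m$, $\im P_m=\mathbb{U}_m$ and $\ker P_m=\mathbb{U}_m^{\perp}$. Hence $\mathbb{U}^{\perp}_U=\ker P$, and since $P$ has constant rank $k$, Corollary \ref{surjective} (or directly Theorem \ref{im ker}(a)) shows that $\ker P$ is a vector subbundle of $\mathbb{V}_U$.

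Finally I would assemble the local pieces. The statements "$\mathbb{U}^{\perp}$ is a submanifold of $\mathbb{V}$" and "$q_{|\mathbb{U}^{\perp}}$ is a vector bundle" are local over $M$: the subbundle charts produced on each $U$ by the previous step agree on overlaps because on $U\cap U'$ both describe the same fibrewise subspace $\mathbb{U}_m^{\perp}$, so they glue to a subbundle structure on all of $\mathbb{V}$. I expect the main obstacle to be purely bookkeeping: making sure the frame of $\mathbb{U}$ and the trivialization of $\mathbb{V}$ can be chosen over a common neighbourhood, and checking that Gram--Schmidt indeed preserves smoothness (which it does, since it only involves the smooth operations $g(\,\cdot\,,\,\cdot\,)$, addition, scalar multiplication and $t\mapsto t^{-1/2}$ on the nowhere-vanishing smooth function $m\mapsto g_m(\,\cdot\,,\,\cdot\,)$ appearing in the normalizations). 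Alternatively, one can bypass the frame entirely: locally an orthogonal projection onto $\mathbb{U}$ exists by averaging/partition-of-unity arguments, and then $\mathbb{U}^{\perp}=\ker P$ again follows from Theorem \ref{im ker}; I would mention this as the slicker route but carry out the Gram--Schmidt version for concreteness.
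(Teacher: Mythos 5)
Your proposal is correct and takes the same route as the paper: define the $g$-orthogonal projection onto $\mathbb{U}$, observe it is a smooth bundle morphism, and identify $\mathbb{U}^{\perp}$ as its kernel, so that Corollary \ref{surjective} (equivalently the constant-rank criterion of Theorem \ref{im ker}) applies. The only difference is one of detail: the paper's proof is a single sentence asserting the projection is a bundle morphism without verifying smoothness, whereas you supply the Gram--Schmidt construction of a local orthonormal frame to make that verification explicit. Note that you do not actually need to glue local projections — the fibrewise $g_m$-orthogonal projection $\mathbb{V}_m\to\mathbb{U}_m$ is already globally defined, and your local computation is simply the check that this global map is smooth, which is all that is required.
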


\begin{proof}
\,\,\,We define a surjective morphism $\phi:\mathbb{V}\rightarrow \mathbb{U}$ of vector bundles by projecting an element $e\in\mathbb{V}_m$ onto its image in $\mathbb{U}_m$ and use Corollary \ref{surjective} to see that the orthogonal complement $\mathbb{U}^{\perp}$ is a vector subbundle of $\mathbb{V}$.

\end{proof}

\begin{proposition}\label{s.e.s of VB splits}
Every short exact sequence 
\[0\longrightarrow \mathbb{V}'\stackrel{\psi}{\longrightarrow}\mathbb{V}\stackrel{\phi}{\longrightarrow}\mathbb{V}''\longrightarrow0
\]of vector bundles over a manifold $M$ splits.
\end{proposition}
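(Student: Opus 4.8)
The plan is to reduce the splitting of the short exact sequence to the existence of an orthogonal complement, using a bundle metric supplied by paracompactness of $M$. First I would equip $\mathbb{V}$ with a bundle metric $g$; such a metric exists because $M$ is paracompact, so one can glue local metrics via a smooth partition of unity. Next, observe that the injective morphism $\psi:\mathbb{V}'\rightarrow\mathbb{V}$ identifies $\mathbb{V}'$ with the vector subbundle $\im\psi$ of $\mathbb{V}$ by Corollary \ref{injective}. Then Lemma \ref{orth. complement} provides the orthogonal complement $(\im\psi)^{\perp}$, which is again a vector subbundle of $\mathbb{V}$, and fibrewise we have $\mathbb{V}_m=(\im\psi)_m\oplus(\im\psi)^{\perp}_m$.

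The second step is to check that the restriction $\phi|_{(\im\psi)^{\perp}}:(\im\psi)^{\perp}\rightarrow\mathbb{V}''$ is a bundle isomorphism. Fibrewise this is just linear algebra: exactness of the sequence gives $\ker\phi_m=(\im\psi)_m$, so $\phi_m$ restricted to the complement $(\im\psi)^{\perp}_m$ is injective, and since $\phi_m$ is surjective with $\dim(\im\psi)^{\perp}_m=\dim\mathbb{V}''_m$, it is an isomorphism. As it is a morphism of vector bundles that is a fibrewise linear isomorphism, its inverse is again smooth, so $\phi|_{(\im\psi)^{\perp}}$ is an isomorphism of vector bundles.

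Finally, I would define the splitting $\beta:\mathbb{V}''\rightarrow\mathbb{V}$ as the inclusion of $(\im\psi)^{\perp}$ into $\mathbb{V}$ composed with $\bigl(\phi|_{(\im\psi)^{\perp}}\bigr)^{-1}$. By construction $\phi\circ\beta=\id_{\mathbb{V}''}$, so $\beta$ is a bundle-map section of $\phi$, and the sequence splits; equivalently, the projection onto $\im\psi$ along $(\im\psi)^{\perp}$ gives a retraction onto $\mathbb{V}'$. None of the steps is genuinely hard — the only mild subtlety is making sure the chosen metric and the resulting complement are smooth subbundles, but this is exactly what Lemma \ref{orth. complement} guarantees, and the existence of the metric is where paracompactness of $M$ is used. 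The routine verification that the fibrewise-invertible bundle morphism $\phi|_{(\im\psi)^{\perp}}$ has smooth inverse can be done locally over trivializing neighbourhoods and I would not belabour it.
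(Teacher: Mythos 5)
Your proposal follows exactly the same route as the paper's proof: choose a bundle metric, identify $\psi(\mathbb{V}')$ as a subbundle via Corollary \ref{injective}, pass to its orthogonal complement via Lemma \ref{orth. complement}, and invert the restriction of $\phi$ to that complement to obtain the splitting. The extra fibrewise details you supply (injectivity/surjectivity of the restricted $\phi$ and smoothness of its inverse) are correct and simply make explicit what the paper asserts in one line.
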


\begin{proof}
\,\,\,We first choose a bundle metric $g$ on $\mathbb{V}$ and note that, by Corollary \ref{injective}, $\psi(\mathbb{V}')$ is a vector subbundle of $\mathbb{V}$. Then Lemma \ref{orth. complement} implies that $\mathbb{V}''':=\psi(\mathbb{V}')^{\perp}$ is also a vector subbundle of $\mathbb{V}$. Therefore, the map
\[\phi_{|\mathbb{V}'''}:\mathbb{V}'''\rightarrow \mathbb{V}''
\]defines an isomorphism of bundles. Finally, a right inverse of $\phi$ is given by
\[\chi:=(\phi_{|\mathbb{V}'''})^{-1}:\mathbb{V}''\rightarrow \mathbb{V}'''\subseteq \mathbb{V}.
\]
\end{proof}

\begin{remark}{\bf(Finite open cover property).}\label{finite open cover}\index{Finite Open Cover Property}
If $(\mathbb{V},M,V,q)$ is a vector bundle over a compact manifold $M$, then it is obviously true that there exists a \emph{finite} open cover $\{U_i\}_{1\leq i\leq m}$ of $M$ such that each $\mathbb{V}_{U_i}:=q^{-1}(U_i)$ is trivial. Note that this property is also true for \emph{arbitrary} manifolds: 

Indeed [Con93], Theorem 7.5.16 states that if $(\mathbb{V},M,V,q)$ is a vector bundle over a manifold $M$ of dimension $n$, there is a cover $\{U_i\}_{1\leq i\leq n+1}$ of $M$, such that $\mathbb{V}_{U_i}:=q^{-1}(U_i)$ is trivial. 
\end{remark}

\begin{proposition}\label{existence of trivialising complements}{\bf(Existence of trivializing complements).}\index{Trivializing Complements}
If $\mathbb{V}$ is a vector bundle over a manifold $M$, then we can find another vector bundle $\mathbb{V}'$ over $M$ such that the Whitney sum $\mathbb{V}\oplus \mathbb{V}'$ is a trivial vector bundle over $M$.
\end{proposition}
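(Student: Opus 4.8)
The plan is to realize $\mathbb{V}$ as a vector subbundle of a \emph{trivial} bundle and then split off an orthogonal complement.

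First I would invoke the finite open cover property of Remark \ref{finite open cover}: there is a finite open cover $U_1,\dots,U_{n+1}$ of $M$ together with bundle charts $\varphi_i:U_i\times V\rightarrow\mathbb{V}_{U_i}$. For each $i$ let $\psi_i:\mathbb{V}_{U_i}\rightarrow V$ be the fibrewise-linear map obtained from $\varphi_i^{-1}$ by projecting onto the second factor; by construction $\psi_i$ restricts on every fibre $\mathbb{V}_m$ with $m\in U_i$ to a linear isomorphism onto $V$. Since $M$ is paracompact, we may choose a smooth partition of unity $(\lambda_i)_{1\le i\le n+1}$ with $\supp\lambda_i\subseteq U_i$ and $\sum_i\lambda_i=1$, so that each $\lambda_i\psi_i$, extended by zero outside $U_i$, is a smooth morphism of vector bundles $\mathbb{V}\rightarrow M\times V$ covering $\id_M$.

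Next I would assemble these into a single morphism into the trivial bundle $M\times V^{n+1}$,
\[
\Phi:\mathbb{V}\longrightarrow M\times V^{n+1},\qquad \Phi(e):=\bigl(q(e);\,\lambda_1(q(e))\psi_1(e),\dots,\lambda_{n+1}(q(e))\psi_{n+1}(e)\bigr).
\]
This $\Phi$ is a morphism of vector bundles over $M$, and it is fibrewise injective: if $\Phi(e)=\Phi(e')$ with $q(e)=q(e')=m$, choose an index $i$ with $\lambda_i(m)\neq 0$; then $\lambda_i(m)\psi_i(e)=\lambda_i(m)\psi_i(e')$, and since $\psi_i$ is injective on $\mathbb{V}_m$ we conclude $e=e'$. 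By Corollary \ref{injective}, $\im\Phi$ is a vector subbundle of $M\times V^{n+1}$, and $\Phi$ is an isomorphism of $\mathbb{V}$ onto $\im\Phi$.

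Finally I would produce the complement. Equipping $M\times V^{n+1}$ with a bundle metric, Lemma \ref{orth. complement} shows that $\mathbb{V}':=(\im\Phi)^{\perp}$ is again a vector subbundle of $M\times V^{n+1}$, and $\im\Phi\oplus\mathbb{V}'\cong M\times V^{n+1}$ (alternatively, apply Proposition \ref{s.e.s of VB splits} to the short exact sequence $0\rightarrow\im\Phi\rightarrow M\times V^{n+1}\rightarrow (M\times V^{n+1})/\im\Phi\rightarrow 0$). Composing with the isomorphism $\Phi:\mathbb{V}\xrightarrow{\ \sim\ }\im\Phi$ gives $\mathbb{V}\oplus\mathbb{V}'\cong M\times V^{n+1}$, a trivial bundle, as desired.

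The step requiring the most care is the construction of $\Phi$ and the verification that it is injective on every fibre: the local maps $\psi_i$ are defined only over $U_i$, so they must be damped by the partition of unity before being summed, and one must check that the resulting global map is still fibrewise injective (its rank may a priori jump, so Theorem \ref{im ker} only becomes available after injectivity is in hand). Once $\mathbb{V}$ is identified with a subbundle of a trivial bundle, everything else is a formal consequence of the splitting results already established.
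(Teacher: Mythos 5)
Your proof is correct, but it is the dual of the paper's construction rather than a reproduction of it. The paper uses the partition of unity to turn the local frames $s_{i1},\dots,s_{in}\in\Gamma(U_i,\mathbb{V})$ into global (but locally vanishing) sections $\sigma_{ij}$ of $\mathbb{V}$, and from these builds a \emph{surjective} bundle map $\phi:M\times\mathbb{K}^{N}\rightarrow\mathbb{V}$; Corollary \ref{surjective} then makes $\mathbb{V}':=\ker\phi$ a subbundle, and Proposition \ref{s.e.s of VB splits} is invoked to split $0\rightarrow\mathbb{V}'\rightarrow M\times\mathbb{K}^N\rightarrow\mathbb{V}\rightarrow 0$. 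You instead damp the local trivialization projections $\psi_i$ and assemble an \emph{injective} bundle map $\Phi:\mathbb{V}\rightarrow M\times V^{n+1}$, then take $\mathbb{V}'$ to be the orthogonal complement of $\im\Phi$ via Corollary \ref{injective} and Lemma \ref{orth. complement}. The two routes rest on the same ingredients (finite cover of Remark \ref{finite open cover}, a partition of unity, the constant-rank criterion of Theorem \ref{im ker}, and a bundle metric), and are essentially transposes of one another: your $\Phi$ is a right inverse to a map of the type the paper constructs. A small advantage of your version is that it sidesteps the extra call to Proposition \ref{s.e.s of VB splits}, producing the complement directly from the bundle metric; a small advantage of the paper's version is that it avoids having to argue about damping fibrewise isomorphisms and instead works only with sections of $\mathbb{V}$.
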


\begin{proof}
\,\,\,(i) By Remark \ref{finite open cover}, there is a finite open covering $\{U_i\}_{1\leq i\leq k}$ of $M$ such that there are $n=\dim V$ linearly independent local sections $s_{i1},\ldots,s_{in}$ in each $\Gamma(U_i,\mathbb{V})$. 

(ii) Let $\{t_i\}_{1\leq i\leq k}$ be a smooth partition of unity subordinate to $\{U_i\}_{1\leq i\leq k}$. For $1\leq i\leq k$ and $1\leq j\leq n$ we define
\[\sigma_{ij}:M\rightarrow \mathbb{V}\,\,\,\text{as}\,\,\,
\begin{cases}
t_is_{ij} &\text{on}\ U_i\\
0 &\text{otherwise}
\end{cases}
\]
and note that the vectors $\sigma_{ij}(m)$ span each fibre $\mathbb{V}_m$. We further put $N:=kn$ and define a map $\phi:M\times\mathbb{K}^N\rightarrow \mathbb{V}$ by
\[\phi(m,z):=\sum_{i,j}z_{ij}\sigma_{ij}(m).
\]A short observation shows that $\phi$ is a surjective bundle map. Thus, Corollary \ref{surjective} implies that $\mathbb{V}':=\ker\phi$ is a vector subbundle of $\mathbb{V}$ giving the exact sequence
\[0\longrightarrow \mathbb{V}'\longrightarrow M\times\mathbb{K}^N\stackrel{\phi}{\longrightarrow}\mathbb{V}\longrightarrow 0
\]of vector bundles. By Proposition \ref{s.e.s of VB splits} this sequence splits, yielding $\mathbb{V}\oplus \mathbb{V}'\cong M\times\mathbb{K}^N$.
\end{proof}

For the next proposition we recall that the space $\Gamma\mathbb{V}$ of smooth sections of a vector bundle $(\mathbb{V},M,V,q)$ carries the structure of a $C^{\infty}(M)$-module (cf. Definition \ref{sections}):

\begin{proposition}\label{canonical isomorphisms of A-modules}
If $\mathbb{V}$ and $\mathbb{V}'$ are vector bundles over a manifold $M$, then there are canonical isomorphisms of $C^{\infty}(M)$-modules:
\[\Gamma\mathbb{V}\oplus\Gamma\mathbb{V}'\cong\Gamma(\mathbb{V}\oplus \mathbb{V}')
\]and
\[\Gamma\mathbb{V}\otimes_{C^{\infty}(M)}\Gamma\mathbb{V}'\cong\Gamma(\mathbb{V}\otimes \mathbb{V}').
\]
\end{proposition}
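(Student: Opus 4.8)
The plan is to treat the two isomorphisms separately, the direct-sum one being essentially formal and the tensor one requiring a reduction to the case of a trivial bundle. For the direct sum, note that since $(\mathbb{V}\oplus\mathbb{V}')_m=\mathbb{V}_m\oplus\mathbb{V}'_m$ for every $m\in M$, each section $s\in\Gamma(\mathbb{V}\oplus\mathbb{V}')$ decomposes uniquely as $s(m)=(s_1(m),s_2(m))$; composing $s$ with the two smooth bundle projections $\mathbb{V}\oplus\mathbb{V}'\to\mathbb{V}$ and $\mathbb{V}\oplus\mathbb{V}'\to\mathbb{V}'$ shows $s_1\in\Gamma\mathbb{V}$ and $s_2\in\Gamma\mathbb{V}'$. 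The assignment $s\mapsto(s_1,s_2)$ is visibly $C^{\infty}(M)$-linear, and its inverse sends $(t_1,t_2)$ to $m\mapsto(t_1(m),t_2(m))$, which is smooth because it is smooth in any local trivialization; this establishes $\Gamma\mathbb{V}\oplus\Gamma\mathbb{V}'\cong\Gamma(\mathbb{V}\oplus\mathbb{V}')$.

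For the tensor product I would first recall that $\mathbb{V}\otimes\mathbb{V}'$ is the vector bundle with fibre $V\otimes V'$ built from the transition functions $g\otimes g'$ of $\mathbb{V}$ and $\mathbb{V}'$, and introduce the natural candidate
\[
\Phi\colon\Gamma\mathbb{V}\otimes_{C^{\infty}(M)}\Gamma\mathbb{V}'\longrightarrow\Gamma(\mathbb{V}\otimes\mathbb{V}'),\qquad\Phi(s\otimes s')(m):=s(m)\otimes s'(m).
\]
The underlying bilinear map $\Gamma\mathbb{V}\times\Gamma\mathbb{V}'\to\Gamma(\mathbb{V}\otimes\mathbb{V}')$ is $C^{\infty}(M)$-balanced because $(fs)(m)\otimes s'(m)=s(m)\otimes(fs')(m)$ holds pointwise, so $\Phi$ is a well-defined $C^{\infty}(M)$-module homomorphism; the real work is its bijectivity. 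The key observation is that both assignments $\mathbb{W}\mapsto\Gamma(\mathbb{W}\otimes\mathbb{V}')$ and $\mathbb{W}\mapsto\Gamma\mathbb{W}\otimes_{C^{\infty}(M)}\Gamma\mathbb{V}'$ carry Whitney sums to module direct sums --- for the first using the direct-sum isomorphism just proved together with $(\mathbb{W}_1\oplus\mathbb{W}_2)\otimes\mathbb{V}'\cong(\mathbb{W}_1\otimes\mathbb{V}')\oplus(\mathbb{W}_2\otimes\mathbb{V}')$, for the second by distributivity of $\otimes_{C^{\infty}(M)}$ over $\oplus$ --- and that under these identifications $\Phi_{\mathbb{W}_1\oplus\mathbb{W}_2}=\Phi_{\mathbb{W}_1}\oplus\Phi_{\mathbb{W}_2}$.

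Now I would invoke Proposition \ref{existence of trivialising complements} to pick a vector bundle $\mathbb{W}$ over $M$ with $\mathbb{V}\oplus\mathbb{W}\cong M\times\mathbb{K}^N$. When the first tensor factor is trivial, $\Phi$ is manifestly an isomorphism: $(M\times\mathbb{K}^N)\otimes\mathbb{V}'\cong(\mathbb{V}')^{\oplus N}$ and $\Gamma(M\times\mathbb{K}^N)\otimes_{C^{\infty}(M)}\Gamma\mathbb{V}'\cong C^{\infty}(M)^N\otimes_{C^{\infty}(M)}\Gamma\mathbb{V}'\cong(\Gamma\mathbb{V}')^{\oplus N}$, and under these identifications $\Phi_{M\times\mathbb{K}^N}$ becomes the identity. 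Since $\Phi_{M\times\mathbb{K}^N}=\Phi_{\mathbb{V}}\oplus\Phi_{\mathbb{W}}$ and a direct summand of an isomorphism of modules is an isomorphism, the summand $\Phi_{\mathbb{V}}$ is itself an isomorphism, which is the second claim. I expect the main obstacle to be bookkeeping rather than depth: one must check that the several natural identifications (sections of a tensor bundle, sections of a Whitney sum, distributivity of the module tensor product, the trivial-bundle base case) are all compatible with $\Phi$; and it is precisely the appeal to Proposition \ref{existence of trivialising complements}, hence the finite-dimensionality and paracompactness of $M$, that makes the argument go through for arbitrary $M$.
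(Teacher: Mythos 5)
Your proposal is correct and takes essentially the same approach as the paper (and the reference [GVF01] it defers to for the second isomorphism): define the canonical balanced map $s\otimes s'\mapsto(m\mapsto s(m)\otimes s'(m))$ and establish bijectivity by reduction to the trivial-bundle case via a trivializing complement. The only difference is that the paper's own ``proof'' stops at constructing the map and cites [GVF01] for the rest, while you supply the details --- namely the compatibility of $\Phi$ with Whitney sums, the base case for a trivial first factor, and the observation that a direct summand of a bijective module map between direct sums is itself bijective --- which is exactly the argument the paper means to invoke.
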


\begin{proof}
\,\,\, We give a sketch for the second isomorphism, since the first isomorphism is quite obvious:
For $s\in\Gamma\mathbb{V},s'\in\Gamma\mathbb{V}'$ we provisionally write $s\odot s'$ for the section $m\mapsto s(m)\otimes s'(m)$ of the tensor product bundle $\mathbb{V}\otimes \mathbb{V}'$ over $M$. We further write $s\otimes s'$ for the element in $\Gamma\mathbb{V}\otimes_{C^{\infty}(M)}\Gamma\mathbb{V}'$ given by the tensor product of $C^{\infty}(M)$-modules. The desired map is then determined by mapping $s\otimes s'$ to $s\odot s'$. In fact, a complete proof of this proposition can be found in [GVF01], Chapter 2.2, Proposition 2.6.
\end{proof}

\begin{proposition}\label{V is a f.g.p. C-infty (M)-module}
Let $M$ be a manifold and $\mathbb{V}$ be a vector bundle over $M$. Then $\Gamma\mathbb{V}$ is a finitely generated projective $C^{\infty}(M)$-module.
\end{proposition}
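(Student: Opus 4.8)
The plan is to combine the two preceding structural results, Proposition~\ref{existence of trivialising complements} and Proposition~\ref{canonical isomorphisms of A-modules}, together with the characterization of finitely generated projective modules from Corollary~\ref{f.g.pr.mod}. The idea is that $\Gamma$ turns the Whitney sum of bundles into the direct sum of modules, and it sends a trivial bundle $M\times\mathbb{K}^N$ to the free module $C^\infty(M)^N$; so once we know $\mathbb{V}$ is a direct summand of a trivial bundle, $\Gamma\mathbb{V}$ will be a direct summand of a free module of finite rank, which by Corollary~\ref{f.g.pr.mod} is exactly the assertion.

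First I would invoke Proposition~\ref{existence of trivialising complements} to obtain a vector bundle $\mathbb{V}'$ over $M$ with $\mathbb{V}\oplus\mathbb{V}'\cong M\times\mathbb{K}^N$ for some $N\in\mathbb{N}$. Next I would apply the functor $\Gamma$ and use the first isomorphism in Proposition~\ref{canonical isomorphisms of A-modules} to get an isomorphism of $C^\infty(M)$-modules
\[
\Gamma\mathbb{V}\oplus\Gamma\mathbb{V}'\cong\Gamma(M\times\mathbb{K}^N).
\]
Then I would identify $\Gamma(M\times\mathbb{K}^N)$ with the free module $C^\infty(M)^N$: a smooth section of the trivial bundle $M\times\mathbb{K}^N$ is precisely a smooth map $M\to\mathbb{K}^N$, i.e.\ an $N$-tuple of elements of $C^\infty(M)$ (here one uses $\mathbb{K}\subseteq\mathbb{C}$, or notes that the same reasoning works for $\mathbb{K}=\mathbb{R}$ or $\mathbb{C}$). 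Hence $\Gamma\mathbb{V}$ is isomorphic to a direct summand of $C^\infty(M)^N$, and by the equivalence (a)$\Leftrightarrow$(b) of Corollary~\ref{f.g.pr.mod} applied to the unital algebra $A=C^\infty(M)$, the module $\Gamma\mathbb{V}$ is finitely generated and projective.

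There is essentially no hard part here: the proof is a two-line bookkeeping argument once the earlier propositions are in place. The only point that deserves a word of care is the identification $\Gamma(M\times\mathbb{K}^N)\cong C^\infty(M)^N$ as $C^\infty(M)$-modules, which is immediate from the definition of sections of a trivial bundle but should be stated, since it is what converts ``direct summand of a trivial bundle'' into ``direct summand of a free module''. I would also remark that this proposition is one of the two implications needed for the smooth Serre--Swan theorem announced at the start of this chapter; the converse implication (that every finitely generated projective $C^\infty(M)$-module arises as $\Gamma\mathbb{V}$ for some $\mathbb{V}$) is the substantive direction and is not claimed here.
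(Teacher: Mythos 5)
Your proof is correct and follows exactly the same route as the paper: obtain a trivializing complement via Proposition~\ref{existence of trivialising complements}, apply $\Gamma$ and Proposition~\ref{canonical isomorphisms of A-modules} to see $\Gamma\mathbb{V}$ as a direct summand of $C^\infty(M)^N$, then invoke Corollary~\ref{f.g.pr.mod}. The only (minor) addition you make is to spell out the identification $\Gamma(M\times\mathbb{K}^N)\cong C^\infty(M)^N$, which the paper leaves implicit.
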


\begin{proof}
\,\,\,We first choose a trivializing complement $\mathbb{V}'$ of $\mathbb{V}$ and let $N$ be the rank of $\mathbb{V}\oplus \mathbb{V}'$. By Proposition \ref{canonical isomorphisms of A-modules}, we get
\[\Gamma\mathbb{V}\oplus\Gamma\mathbb{V'}\cong\Gamma(\mathbb{V}\oplus \mathbb{V}')\cong\Gamma(M\times\mathbb{K}^N)\cong C^{\infty}(M)^N.
\]Now, Corollary \ref{f.g.pr.mod} implies that $\Gamma\mathbb{V}$ is a finitely generated and projective $C^{\infty}(M)$-module.
\end{proof}

\begin{lemma}\label{smoothness of induced map}
Let $M$ be a manifold and $V$ a finite-dimensional vector space. Further, let $f_1,\ldots,f_n\in C^{\infty}(M,V)$ be a finite set of smooth $V$-valued functions on $M$. Then the map
\[F:M\times\mathbb{K}^n\rightarrow V,\,\,\,(m,\lambda_1,\ldots,\lambda_n)\mapsto\sum_{i=1}^n\lambda_if_i(m)
\]is smooth.
\end{lemma}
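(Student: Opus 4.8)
The plan is to exhibit $F$ as a composite of maps that are already known to be smooth, using only the standard facts that compositions, Cartesian products and finite sums of smooth maps into a fixed finite-dimensional vector space are again smooth, together with the fact that smoothness is a local property.

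First I would record the elementary building blocks. The projections $\pr_M\colon M\times\mathbb{K}^n\to M$ and $\pr_i\colon M\times\mathbb{K}^n\to\mathbb{K}$ onto the $i$-th coordinate of the $\mathbb{K}^n$-factor are smooth, so each composite $f_i\circ\pr_M\colon M\times\mathbb{K}^n\to V$ is smooth. Moreover, scalar multiplication $\mu\colon\mathbb{K}\times V\to V$, $(\lambda,v)\mapsto\lambda v$, is bilinear between finite-dimensional spaces and hence smooth, and addition $\mathrm{add}\colon V\times V\to V$ is linear and hence smooth. Next I would assemble these: for $1\le i\le n$ set $g_i\colon M\times\mathbb{K}^n\to V$, $g_i(m,\lambda):=\lambda_i\,f_i(m)$, and observe that $g_i=\mu\circ(\pr_i,\,f_i\circ\pr_M)$, a composition of smooth maps (the pairing into $\mathbb{K}\times V$ is smooth because each component is); thus each $g_i$ is smooth. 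Finally $F=\sum_{i=1}^n g_i$, and by induction on $n$, using smoothness of $\mathrm{add}$, a finite sum of smooth $V$-valued maps is smooth, which is the claim.

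An equivalent route — and essentially the only place where any care at all is needed — is to reduce to a coordinate computation: since smoothness of $F$ may be checked locally over $M$, after restricting to a chart of $M$ and choosing a basis of $V$ to identify $V\cong\mathbb{K}^d$, the map $F$ becomes $(x,\lambda)\mapsto\bigl(\sum_i\lambda_i f_i^1(x),\dots,\sum_i\lambda_i f_i^d(x)\bigr)$, each component a polynomial in $\lambda$ whose coefficients are the smooth functions $f_i^k$ read off in the chart, and this is manifestly smooth in $(x,\lambda)$. Either way the argument is routine; the only "obstacle" is the bookkeeping involved in reducing to this finite-dimensional (indeed local, polynomial) situation, after which nothing nontrivial remains. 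I would present the first, coordinate-free version as the main proof.
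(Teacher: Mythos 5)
Your proof is correct, and it is somewhat more elementary than the one in the paper. The paper writes $F = b\circ\bigl((\ev_M(f_1),\ldots,\ev_M(f_n))\times\id_{\mathbb{K}^n}\bigr)$ with $b\colon V^n\times\mathbb{K}^n\to V$ the obvious bilinear map, and then cites the smoothness of the evaluation map $\ev_M\colon C^{\infty}(M,V)\times M\to V$ on the (infinite-dimensional, locally convex) function space as the key ingredient. That is overkill here, since only the fixed functions $f_1,\dots,f_n$ are being evaluated, not a varying element of $C^{\infty}(M,V)$: what is really being used is just that each $m\mapsto f_i(m)$ is smooth. Your decomposition $F=\sum_i \mu\circ(\pr_i, f_i\circ\pr_M)$ isolates exactly that, and your alternative chart-and-basis reduction makes it plainer still that nothing beyond polynomial dependence on $\lambda$ with smooth coefficient functions is involved. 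Both arguments land in the same place; yours buys a self-contained elementary proof, while the paper's buys brevity by leaning on a lemma it already has available for other purposes.
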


\begin{proof}
\,\,\,Obviously the map 
\[b:V^n\times\mathbb{K}^n\rightarrow V,\,\,\,(v_1,\ldots,v_n,\lambda_1,\ldots,\lambda_n)\mapsto\sum_{i=1}^n\lambda_iv_i
\]is smooth. Moreover, [NeWa07], Proposition I.2 implies the smoothness of the evaluation map
\[\ev_M:C^{\infty}(M,V)\times M\rightarrow V,\,\,\,(f,m)\mapsto f(m).
\]Hence, the map
\[F=b\circ\left((\ev_M(f_1),\ldots,\ev_M(f_n))\times\id_{\mathbb{K}^n}\right)
\]is smooth as a composition of smooth maps.
\end{proof}

\begin{proposition}\label{morphism of VB-morphism of f.g.p. modules}
Let $M$ be a manifold and $\mathbb{V},\mathbb{W}$ be two vector bundles over $M$. Then the following two assertions hold:
\begin{itemize}
\item[\emph{(a)}]
Every morphism $\phi:\mathbb{V}\rightarrow\mathbb{W}$ of vector bundles over $M$ induces a $C^{\infty}(M)$-linear map $\Gamma\phi$ between the corresponding spaces of sections $\Gamma\mathbb{V}$ and $\Gamma\mathbb{W}$.
\item[\emph{(b)}]
 Conversely, every $C^{\infty}(M)$-linear map $\Phi:\Gamma\mathbb{V}\rightarrow\Gamma\mathbb{W}$ induces a morphism of vector bundles $\phi:\mathbb{V}\rightarrow \mathbb{W}$ which covers $\id_M$, such that $\Gamma\phi=\Phi$.
\end{itemize}
\end{proposition}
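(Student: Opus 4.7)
The direction (a) is essentially a definitional unpacking: given a morphism of vector bundles $\phi:\mathbb{V}\to\mathbb{W}$ covering $\id_M$, set $(\Gamma\phi)(s):=\phi\circ s$. The relation $q_{\mathbb{W}}\circ\phi=q_{\mathbb{V}}$ ensures $\phi\circ s$ is a smooth section of $\mathbb{W}$, and the $C^{\infty}(M)$-linearity of $\Gamma\phi$ follows immediately from the fiberwise linearity of $\phi$, since $(\phi\circ(fs))(m)=\phi_m(f(m)s(m))=f(m)\phi_m(s(m))$ for all $m\in M$, $f\in C^{\infty}(M)$, $s\in\Gamma\mathbb{V}$.

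The real work is (b). The plan is to construct the fiberwise maps $\phi_m:\mathbb{V}_m\to\mathbb{W}_m$ by evaluating $\Phi$ on suitable sections and then to prove smoothness using local frames. The construction proceeds in three steps. First, I would establish a \emph{locality} property: if $s\in\Gamma\mathbb{V}$ vanishes on an open neighbourhood $U$ of $m$, then $\Phi(s)(m)=0$. For this, pick a bump function $\chi\in C^{\infty}(M)$ with $\chi(m)=1$ and $\supp\chi\subseteq U$; then $\chi s=0$, hence $\chi\cdot\Phi(s)=\Phi(\chi s)=0$, and evaluation at $m$ gives $\Phi(s)(m)=0$. Second, I would show that $\Phi(s)(m)$ depends only on $s(m)$: pick a trivialising neighbourhood $U$ of $m$ with local frame $\bar e_1,\ldots,\bar e_k\in\Gamma(U,\mathbb{V})$ and write $s|_U=\sum_i f_i\bar e_i$ with $f_i\in C^{\infty}(U)$; using a bump function supported in $U$ which equals $1$ near $m$, cut off $\bar e_i$ and $f_i$ to obtain global $\tilde e_i\in\Gamma\mathbb{V}$ and $\tilde f_i\in C^{\infty}(M)$ agreeing with $\bar e_i$, $f_i$ near $m$. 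Then $s-\sum_i\tilde f_i\tilde e_i$ vanishes near $m$, so by locality $\Phi(s)(m)=\sum_i\tilde f_i(m)\Phi(\tilde e_i)(m)=\sum_i f_i(m)\Phi(\tilde e_i)(m)$, which vanishes whenever $s(m)=0$.

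With this in hand, define $\phi_m:\mathbb{V}_m\to\mathbb{W}_m$ by $\phi_m(v):=\Phi(s)(m)$ for any $s\in\Gamma\mathbb{V}$ with $s(m)=v$ (such an $s$ exists since global sections of $\mathbb{V}$ span each fibre, as follows from Proposition \ref{V is a f.g.p. C-infty (M)-module} together with bump-function arguments). The $\mathbb{C}$-linearity of each $\phi_m$ is inherited from that of $\Phi$, and the relation $\Gamma\phi=\Phi$ is then immediate from the construction. Finally, smoothness of the total map $\phi:\mathbb{V}\to\mathbb{W}$ is checked locally: on the neighbourhood where $s=\sum_i f_i\bar e_i=\sum_i\tilde f_i\tilde e_i$, the formula $\phi(\bar e_i(p))=\Phi(\tilde e_i)(p)$ extends by linearity to $\phi(\sum_i\lambda_i\bar e_i(p))=\sum_i\lambda_i\Phi(\tilde e_i)(p)$, which is smooth jointly in $p$ and the coordinates $\lambda_i$ by Lemma \ref{smoothness of induced map}. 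Read through the local trivialisation $\varphi_U$, this exhibits $\phi$ locally as a smooth map covering $\id_M$.

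The main obstacle is the passage from the global algebraic hypothesis (``$C^{\infty}(M)$-linearity of $\Phi$'') to pointwise information on fibres; this is precisely what the locality step plus the local-frame cutoff accomplish, and it is the only nontrivial ingredient — once $\phi_m$ is well-defined, $\mathbb{C}$-linearity and smoothness follow routinely from the two displayed formulae and Lemma \ref{smoothness of induced map}.
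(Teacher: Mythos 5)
Your proof is correct, but it takes a genuinely different route from the paper's. The paper handles part (b) by first choosing a trivializing complement $\mathbb{V}'$ (Proposition \ref{existence of trivialising complements}), extending $\Phi$ by zero to $\widehat{\Phi}:\Gamma\mathbb{V}\oplus\Gamma\mathbb{V}'\rightarrow\Gamma\mathbb{W}$, and then using a \emph{global} frame $s_1,\ldots,s_N$ of the trivial bundle $\mathbb{V}\oplus\mathbb{V}'$ to define $\phi(e):=\sum_i\lambda_i\widehat{\Phi}(s_i)(m)$ for $e=\sum_i\lambda_i s_i(m)$; well-definedness is automatic because the $\lambda_i$ are the unique global coordinates, and $\Gamma\phi=\Phi$ follows by expanding any section in this frame with $C^{\infty}(M)$-coefficients. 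You instead prove the locality (tensoriality) of $\Phi$ directly via bump functions and work with local frames and cutoffs. The paper's argument is shorter once the trivializing-complement machinery is in place, and it completely avoids the locality step; yours is more self-contained (it needs only local triviality and partitions of unity, not the embedding into a trivial bundle) and it makes explicit the conceptually central fact --- that $C^{\infty}(M)$-linearity forces pointwise dependence --- which the paper's proof bypasses rather than addresses. Both establish smoothness by the same device, namely Lemma \ref{smoothness of induced map} applied to a frame read through a local trivialization of $\mathbb{W}$.
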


\begin{proof}
\,\,\,(a) If $\phi:\mathbb{V}\rightarrow\mathbb{W}$ is a morphism of vector bundles over $M$ which covers $\id_M$, then we obtain a $C^{\infty}(M)$-linear map $\Gamma\phi:\Gamma\mathbb{V}\rightarrow\Gamma\mathbb{W}$ by defining
\[(\Gamma\phi)(s)(m):=\phi(s(m)).
\] 

(b) Conversely, let $\Phi:\Gamma\mathbb{V}\rightarrow\Gamma\mathbb{W}$ be a $C^{\infty}(M)$-linear map. We first choose a trivializing complement $\mathbb{V}'$ of $\mathbb{V}$ (cf. Proposition \ref{existence of trivialising complements}) and define a $C^{\infty}(M)$-linear map by
\[\widehat{\Phi}:=\Phi\oplus 0:\Gamma\mathbb{V}\oplus\Gamma\mathbb{V}'\rightarrow\Gamma\mathbb{W}.
\]Since $\mathbb{V}\oplus \mathbb{V}'$ is trivial (let's say of rank $N$), there exist smooth linear independent sections $s_1,\ldots,s_N\in\Gamma(\mathbb{V}\oplus \mathbb{V}')$. In particular, we can write each $e\in \mathbb{V}_m$ uniquely as $e=\sum^N_{i=1}\lambda_is_i(m)$. From this we conclude that the map 
\[\phi:\mathbb{V}\rightarrow \mathbb{W},\,\,\,\phi(e):=\sum^N_{i=1}\lambda_i\widehat{\Phi}(s_i)(m)
\]defines a morphism of vector bundles: Indeed, $\phi$ is fibrewise linear by definition. Its smoothness follows from Lemma \ref{smoothness of induced map}, because each 
$\widehat{\Phi}(s_i)$ is locally (let's say in an open neighbourhood $U$ of a point in $M$) given by a smooth function in $C^{\infty}(U,W)$. A short calculation now shows that 
\[(\Gamma\phi)(s)(m)=\widehat{\Phi}(s)(m)=\Phi(s)(m)
\]holds for all $s\in\Gamma\mathbb{V}$ and $m\in M$. 
Hence, $\Gamma\phi=\Phi$ as desired.
\end{proof}


\begin{theorem}{\bf(Serre--Swan).}\label{Serre-Swan}\index{Theorem!of Serre--Swan}
Let $M$ be a manifold. Then the functor
\[\mathbb{V}\mapsto\Gamma(\mathbb{V})\,\,\,\text{and}\,\,\,\phi\mapsto\Gamma\phi
\]defines an equivalence between the category of vector bundles over $M$ and the category of finitely generated projective modules over $C^{\infty}(M)$.
\end{theorem}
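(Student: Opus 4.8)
The plan is to verify that the functor $\Gamma\colon\mathbb V\mapsto\Gamma\mathbb V$, $\phi\mapsto\Gamma\phi$ is well-defined and functorial, then that it is fully faithful, and finally that it is essentially surjective; a functor with these three properties is an equivalence of categories. Well-definedness and functoriality are already in hand: $\Gamma\mathbb V$ is a finitely generated projective $C^{\infty}(M)$-module by Proposition~\ref{V is a f.g.p. C-infty (M)-module}, a bundle morphism $\phi$ covering $\id_M$ induces a $C^{\infty}(M)$-linear map $\Gamma\phi$ by Proposition~\ref{morphism of VB-morphism of f.g.p. modules}(a), and the identities $\Gamma(\id_{\mathbb V})=\id_{\Gamma\mathbb V}$ and $\Gamma(\phi\circ\psi)=\Gamma\phi\circ\Gamma\psi$ are immediate from the pointwise formula $(\Gamma\phi)(s)(m)=\phi(s(m))$.

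Next I would establish full faithfulness. Fullness, i.e.\ that every $C^{\infty}(M)$-linear map $\Phi\colon\Gamma\mathbb V\to\Gamma\mathbb W$ equals $\Gamma\phi$ for some bundle morphism $\phi$, is exactly Proposition~\ref{morphism of VB-morphism of f.g.p. modules}(b). Faithfulness comes down to showing $\Gamma\phi=0\Rightarrow\phi=0$: given $m\in M$ and $e\in\mathbb V_m$, a bundle chart around $m$ together with a bump function equal to $1$ near $m$ yields a global section $s\in\Gamma\mathbb V$ with $s(m)=e$, whence $\phi(e)=(\Gamma\phi)(s)(m)=0$; since $m$ and $e$ were arbitrary, $\phi=0$. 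Applied to $\phi-\psi$ this shows $\phi\mapsto\Gamma\phi$ is injective on morphism sets.

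The essential-surjectivity step is where the work lies. Let $E$ be a finitely generated projective $C^{\infty}(M)$-module. By Corollary~\ref{f.g.pr.mod} there are $n\in\mathbb N$ and an idempotent $p\in\Idem_n(C^{\infty}(M))$ with $E\cong p\cdot C^{\infty}(M)^n$. Under the canonical identification $C^{\infty}(M)^n\cong\Gamma(M\times\mathbb K^n)$, Proposition~\ref{morphism of VB-morphism of f.g.p. modules}(b) supplies a vector bundle endomorphism $\mathfrak p\colon M\times\mathbb K^n\to M\times\mathbb K^n$ with $\Gamma\mathfrak p=p$; since $\Gamma$ is faithful and $p^2=p$, also $\mathfrak p\circ\mathfrak p=\mathfrak p$, so each $\mathfrak p_m$ is an idempotent endomorphism of $\mathbb K^n$ and therefore $\rank\mathfrak p_m=\tr\mathfrak p_m$ is an integer-valued, continuous, hence locally constant function of $m$. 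Thus Theorem~\ref{im ker}(b) applies and $\mathbb V:=\im\mathfrak p$ is a vector subbundle of $M\times\mathbb K^n$. Finally, a section of the subbundle $\mathbb V$ is the same thing as a section $s$ of $M\times\mathbb K^n$ with $s(m)\in\im\mathfrak p_m$ for all $m$; as $\mathfrak p_m$ is idempotent, $\im\mathfrak p_m$ is its fixed-point set, so this condition reads $\Gamma\mathfrak p(s)=s$, i.e.\ $s\in\im(\Gamma\mathfrak p)=\im(p)=p\cdot C^{\infty}(M)^n\cong E$. Hence $\Gamma\mathbb V\cong E$.

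The main obstacle is precisely this last step: one must turn the purely algebraic idempotent $p$ into an honest bundle endomorphism $\mathfrak p$ via Proposition~\ref{morphism of VB-morphism of f.g.p. modules}(b), check that $\mathfrak p$ is again idempotent (using faithfulness) and of locally constant rank (via the trace) so that the subbundle criterion Theorem~\ref{im ker} becomes available, and then confirm that taking sections of the image subbundle recovers the image submodule. Everything else is quoted from the preceding results or is a routine bump-function argument. Combining the three properties, $\Gamma$ is an equivalence of categories, which is the assertion.
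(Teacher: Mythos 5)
Your proof is correct and follows essentially the same route as the paper's: reduce to an idempotent $p\in\Idem_n(C^{\infty}(M))$, lift it to a bundle idempotent, show the fibrewise rank is locally constant, invoke Theorem~\ref{im ker} to get the image subbundle, and identify its sections with $p\cdot C^{\infty}(M)^n$. Two small refinements are worth noting. First, you are more careful than the paper about faithfulness: Proposition~\ref{morphism of VB-morphism of f.g.p. modules}(a)--(b) only give well-definedness and surjectivity on Hom-sets, and your bump-function argument supplies the injectivity that the paper tacitly assumes. Second, for local constancy of the rank you use the identity $\rank\mathfrak p_m=\tr\mathfrak p_m$ for idempotent matrices over $\mathbb K\in\{\mathbb R,\mathbb C\}$, which is slicker than the paper's lower/upper-semicontinuity argument via $\rank\phi_m+\rank(1-\phi_m)=N$; both are valid, and in this context (characteristic zero) they are interchangeable.
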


\begin{proof}
\,\,\,In view of Proposition \ref{V is a f.g.p. C-infty (M)-module} and Remark \ref{morphism of VB-morphism of f.g.p. modules}, it only remains to prove that every finitely generated projective $C^{\infty}(M)$-module $E$ is of the form $\Gamma\mathbb{V}$ for some vector bundle $\mathbb{V}$ over $M$: 

(i) As $E$ is finitely generated and projective, we find some $N\in\mathbb{N}$ and $p\in\Idem_N(C^{\infty}(M))$ such that $E\cong p\cdot C^{\infty}(M)^N$ (cf. Proposition \ref{f.g.pr.mod}). By Proposition \ref{morphism of VB-morphism of f.g.p. modules} (b), the endomorphism $p$ induces a bundle map 
\[\phi:M\times\mathbb{K}^N\rightarrow M\times\mathbb{K}^N
\]with $\Gamma\phi=p$. 

(ii) Next, we claim that the map $M\rightarrow \mathbb{N}_0$, $m\mapsto\rank\phi_m$ is locally constant: Since $\phi=\phi^2$, the map $\id-\phi$ also is an idempotent bundle map. Therefore, both maps $M\rightarrow \mathbb{N}_0$, $m\mapsto\rank\phi_m$ and $M\rightarrow \mathbb{N}_0$, $m\mapsto\rank(1-\phi_m)$ are lower-semicontinuous. Furthermore, 
\[\rank(\phi_m)+\rank(1-\phi_m)=N
\]shows that $M\rightarrow \mathbb{N}_0$, $m\mapsto\rank\phi_m$ is also upper-semicontinuous, hence continuous and thus locally constant. Now, Theorem \ref{im ker} implies that $\mathbb{V}:=\phi(M\times\mathbb{K}^N)$ is a subbundle of $M\times\mathbb{K}^N$. In particular, this is the desired bundle, since
\[\Gamma\mathbb{V}=\{\phi\circ s:\,s\in\Gamma(M\times\mathbb{K}^N)\}=\im p\cong E.
\]
\end{proof}

\begin{remark}{\bf(Noncommutative vector bundles).}
The previous theorem justifies to consider finitely generated modules over unital algebras as \emph{noncommutative vector bundles}.
\end{remark}

\section{Topological Aspects of Finitely Generated Projective Modules}

In this short section we show how to topologize a finitely generated projective module of a unital locally convex algebra $A$. In particular, we will see that each finitely generated projective module of a unital Fr\'{e}chet algebra $A$ admits a unique topology of a Fr\'{e}chet $A$-module. We further present some results on the automatic continuity of algebraic morphism between modules of Fr\'{e}chet algebras.

\begin{definition}{\bf(Locally convex modules).}\index{Modules!Locally Convex}
Let $A$ be a unital locally convex algebra and $E$ be a locally convex space, which is at the same time an algebraic $A$-module. Then $E$ is called a \emph{locally convex $A$-module} if the bilinear map
\[\rho:E\times A\rightarrow E,\,\,\,(s,a)\mapsto \rho(s,a):=s.a
\]defining the $A$-module structure on $E$ is continuous.
\end{definition}

\begin{construction}{\bf(Topologizing modules).}\label{topologizing modules}\index{Modules!Topologizing}
Let $A$ be a unital locally convex algebra and $E$ be a finitely generated projective $A$-module. The following procedure turns $E$ into a locally convex $A$-module:

By Corollary \ref{f.g.pr.mod}, there exists an element $p\in\Idem_n(A)$ such that $E\cong p\cdot A^n$. Since $p\in\Idem_n(A)$, the $A$-linear map $\id-p:A^n\rightarrow A^n$ is continuous with kernel $p\cdot A^n$. Hence, $p\cdot A^n$ carries the structure of a closed locally convex $A$-submodule of $A^n$. Therefore, we can endow $E$ with the locally convex topology for which $E\cong p\cdot A^n$ becomes an isomorphism of locally convex $A$-modules. Additionally we note that if $A$ is complete, then the same holds for $E$ as $p\cdot A^n$ is a closed subspace of $A^n$.
\end{construction}

\begin{proposition}{\bf(Open mapping theorem.)}\label{open mapping theorem}\index{Open Mapping Theorem}
Every surjective continuous linear map $f:E\rightarrow F$ between Fr\'{e}chet spaces is open, i.e., it maps open sets into open sets.
\end{proposition}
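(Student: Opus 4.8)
The statement is classical and can be found in any standard text on functional analysis, and in the final version one could simply cite such a reference; but if a proof is to be given, the plan is to run the usual Baire-category argument, adapted to the complete metrizable locally convex setting. First I would reduce everything to a statement at the origin: since $f$ is linear and translations on $E$ and $F$ are homeomorphisms, it suffices to show that $f(U)$ is a neighbourhood of $0$ in $F$ for every neighbourhood $U$ of $0$ in $E$. Because $E$ is a Fréchet space it carries a translation-invariant metric $d$; for a given $U$ I would choose $r>0$ with $U_0:=\{x\in E:d(x,0)<r\}\subseteq U$ and put $U_n:=\{x\in E:d(x,0)<2^{-n}r\}$. These sets are balanced, form a neighbourhood base at $0$, satisfy $U_{n+1}+U_{n+1}\subseteq U_n$, and shrink to $\{0\}$; in particular, continuity of $f$ forces every neighbourhood of $0$ in $F$ to contain $f(U_n)$ for $n$ large.

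The first main step is the category argument: since $f$ is surjective and $E=\bigcup_{k\in\mathbb{N}}k\,U_{n+1}$, we get $F=\bigcup_{k\in\mathbb{N}}k\,\overline{f(U_{n+1})}$, and as $F$ is a complete metric space it is of second category in itself, so some $k\,\overline{f(U_{n+1})}$, and hence $\overline{f(U_{n+1})}$ itself, has nonempty interior. Translating an interior point to the origin shows that the neighbourhood $\overline{f(U_{n+1})}-\overline{f(U_{n+1})}$ of $0$ is contained in $\overline{f(U_{n+1}+U_{n+1})}\subseteq\overline{f(U_n)}$ (using balancedness of $U_{n+1}$, linearity of $f$, and $U_{n+1}+U_{n+1}\subseteq U_n$), so $\overline{f(U_n)}$ is a neighbourhood of $0$ in $F$ for every $n$.

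The second and harder step is to remove the closures, i.e.\ to show $\overline{f(U_1)}\subseteq f(U)$, and this is the only place completeness of $E$ is used. Given $y\in\overline{f(U_1)}$ I would build inductively $x_k\in U_k$ with $y-f(x_1+\cdots+x_k)\in\overline{f(U_{k+1})}$: this is possible at each stage because $y-f(x_1+\cdots+x_{k-1})\in\overline{f(U_k)}$ while $\overline{f(U_{k+1})}$ is a neighbourhood of $0$, so the neighbourhood $\bigl(y-f(x_1+\cdots+x_{k-1})\bigr)+\overline{f(U_{k+1})}$ of $y-f(x_1+\cdots+x_{k-1})$ meets $f(U_k)$. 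Since $\sum_k 2^{-k}r<\infty$ and $d$ is translation invariant, the partial sums $x_1+\cdots+x_k$ are Cauchy, hence converge by completeness of $E$ to some $x$ with $d(x,0)\le\sum_k d(x_k,0)<r$, so $x\in U_0\subseteq U$. By continuity $f(x)=\lim_k f(x_1+\cdots+x_k)$, while $y-f(x_1+\cdots+x_k)\in\overline{f(U_{k+1})}\to\{0\}$ (as $\overline{f(U_{k+1})}$ is eventually contained in any neighbourhood of $0$, and $F$ is Hausdorff), so $f(x)=y$. Thus $\overline{f(U_1)}\subseteq f(U)$; combined with the category step, $f(U)$ contains the neighbourhood $\overline{f(U_1)}$ of $0$, which is what we wanted.

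I expect the genuine obstacle to be precisely this closure-removal step: the successive-approximation scheme has to be set up so that the summable series of corrections lands in the prescribed neighbourhood $U$ of $0$ (which is why the index bookkeeping with the $U_n$ matters), and it is the sole point where completeness of the domain — rather than mere metrizability — is essential. The reduction to the origin, the choice of the base $(U_n)$, and the Baire argument itself are all routine.
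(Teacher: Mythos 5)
The paper gives no proof here: it dispatches the proposition with a citation to Schaefer ([Sch99], Chapter III, Section 2.2), which is precisely the option you yourself noted would be acceptable. Your worked-out argument is the standard Baire-category proof of the open mapping theorem for Fr\'echet spaces, and it is correct. Two small points worth tightening if you keep the full proof rather than the citation: (i) the claim that the balls $U_n$ are balanced holds for a well-chosen translation-invariant metric (for instance $d(x,y)=\sum_n 2^{-n}\,p_n(x-y)/(1+p_n(x-y))$ for a defining family of seminorms $(p_n)$), but not for an arbitrary compatible metric, so this choice should be made explicit; and (ii) in the final passage to the limit you need the closed sets $\overline{f(U_{k+1})}$, not merely $f(U_{k+1})$, to be eventually contained in any given neighbourhood $W$ of $0$ --- this is easily repaired by first shrinking $W$ to a closed neighbourhood $W'\subseteq W$ (available in any topological vector space) and then using $f(U_n)\subseteq W'$ for large $n$.
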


\begin{proof}
\,\,\,A reference for this statement is, for example, [Sch99], Chapter III, Section 2.2.
\end{proof}

\begin{proposition}\label{modules of frechet algebras}
Let $A$ be a unital Fr\'{e}chet algebra. Then the following assertions hold:
\begin{itemize}
\item[\emph{(a)}]
If $E$ is a finitely generated Fr\'{e}chet $A$-module and $F$ is a locally convex $A$-module, then each \emph{(}algebraic\emph{)} morphism $f:E\rightarrow F$ of $A$-modules is continuous.
\item[\emph{(b)}]
Each finitely generated projective $A$-module $E$ admits a unique topology of Fr\'{e}chet $A$-module. 
\end{itemize}
\end{proposition}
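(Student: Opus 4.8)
The plan is to derive both parts from the open mapping theorem (Proposition \ref{open mapping theorem}) together with the (separate) continuity of the module action.

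For (a), I would choose algebraic generators $e_1,\dots,e_n$ of $E$ and consider the right $A$-linear map
\[\pi:A^n\longrightarrow E,\qquad (a_1,\dots,a_n)\longmapsto e_1.a_1+\cdots+e_n.a_n.\]
Since $E$ is a locally convex $A$-module, each map $a\mapsto e_i.a$ is continuous (it is $a\mapsto(e_i,a)$ followed by the continuous action $\rho$), so $\pi$ is continuous. As $A$ is a Fréchet algebra, $A^n$ is a Fréchet space, and $E$ is Fréchet by hypothesis; hence Proposition \ref{open mapping theorem} shows that $\pi$ is open, and being a continuous open surjection it is a topological quotient map. Given an algebraic $A$-module morphism $f:E\to F$, the composite $f\circ\pi:A^n\to F$ is $(a_1,\dots,a_n)\mapsto f(e_1).a_1+\cdots+f(e_n).a_n$, which is continuous because $F$ is a locally convex $A$-module; since $\pi$ is a quotient map, $f$ is continuous. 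This is the heart of the proof, and the only subtlety is that the open mapping theorem genuinely requires $E$ to be Fréchet (not merely locally convex), which explains why that hypothesis is imposed on $E$ but not on $F$.

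For (b), existence is already furnished by Construction \ref{topologizing modules}: writing $E\cong p\cdot A^n$ for some $p\in\Idem_n(A)$ (Corollary \ref{f.g.pr.mod}), the submodule $p\cdot A^n$ is closed in the Fréchet space $A^n$, hence is itself a Fréchet space and a locally convex $A$-module, so transporting its topology along the isomorphism makes $E$ a finitely generated Fréchet $A$-module. For uniqueness, suppose $\tau_1,\tau_2$ are two topologies on $E$ each turning it into a Fréchet $A$-module. Applying part (a) to the identity $\id_E:(E,\tau_1)\to(E,\tau_2)$ — whose source is a finitely generated Fréchet $A$-module and whose target is a locally convex $A$-module — shows $\id_E$ is continuous; by symmetry $\id_E:(E,\tau_2)\to(E,\tau_1)$ is continuous as well, so $\tau_1=\tau_2$.

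I do not expect a serious obstacle: the argument is a routine combination of the open mapping theorem with separate continuity of the action. The points to verify carefully are merely that a finitely generated projective module is in particular finitely generated (so that part (a) applies verbatim in the uniqueness step of (b)), and that $A^n$ is Fréchet, being a finite product of copies of the Fréchet algebra $A$; both are immediate.
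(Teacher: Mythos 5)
Your proof is correct and follows essentially the same route as the paper: both obtain a continuous open surjection $A^n\to E$ from a finite generating set and the open mapping theorem, then pull back continuity of $f$ through this quotient map for (a), and deduce (b) from Construction~\ref{topologizing modules} together with (a) applied to the identity. The only difference is that you spell out the continuity of $\pi$ and of $f\circ\pi$ via separate continuity of the module action, which the paper leaves implicit.
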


\begin{proof}
\,\,\,(a) We first note that there exist $n\in\mathbb{N}$ and a surjective morphism $p:A^n\rightarrow E$ of Fr\'{e}chet $A$-modules defined by some finite generating set of $E$. By Proposition \ref{open mapping theorem}, the map $p$ is open. Now, the claim follows from the fact that any morphism $A^n\rightarrow F$ of locally convex $A$-modules is continuous.

(b) In view of Construction \ref{topologizing modules}, $E$ carries the structure of a Fr\'{e}chet $A$-module. That this Fr\'{e}chet topology is unique is a consequence of part (a).
\end{proof}

\begin{remark}{\bf(Idempotent maps).}
(a) Each idempotent map $p:E\rightarrow E$ on a locally convex space $E$ is open onto its image. Indeed, if $U$ is a 0-neighbourhood of $E$, then we easily conclude that $U\cap p(E)\subseteq p(U)$.

(b) Suppose that $A$ is a unital locally convex algebra. In view of part (a) of this remark, Proposition \ref{modules of frechet algebras} remains true for locally convex finitely generated projective $A$-modules, which are topologically isomorphic to a direct summand of a free $A$-module.
\end{remark}

\begin{corollary}\label{space of section is frechet-module}
Let $\mathbb{V}$ be a vector bundle over a manifold $M$. Then the corresponding space of sections $\Gamma\mathbb{V}$ carries a unique topology of Fr\'{e}chet $C^{\infty}(M)$-module.
\end{corollary}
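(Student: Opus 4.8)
The plan is to reduce the statement immediately to the two facts already established: that $\Gamma\mathbb{V}$ is a finitely generated projective $C^{\infty}(M)$-module (Proposition \ref{V is a f.g.p. C-infty (M)-module}), and that over a unital Fréchet algebra every finitely generated projective module carries a unique Fréchet module topology (Proposition \ref{modules of frechet algebras}(b)). So the only genuine point to address is that $C^{\infty}(M)$ is a unital Fréchet algebra; once this is in place, the corollary is a one-line citation.

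First I would recall why $C^{\infty}(M)$ is a Fréchet algebra. Since $M$ is finite-dimensional, paracompact and second countable, it admits an exhaustion $M=\bigcup_{k\in\mathbb{N}}K_k$ by compact sets with $K_k\subseteq\operatorname{int}K_{k+1}$, and a countable atlas. Using these one builds a countable family of seminorms measuring uniform convergence of all partial derivatives (in local charts) over the $K_k$; this family is separating and turns $C^{\infty}(M)$ into a metrizable locally convex space which is complete, i.e.\ a Fréchet space. Pointwise multiplication is continuous with respect to these seminorms (the Leibniz rule bounds the seminorms of a product), and the constant function $1$ is a unit, so $C^{\infty}(M)$ is a unital Fréchet algebra.

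With this observation, the argument concludes at once: by Proposition \ref{V is a f.g.p. C-infty (M)-module} the $C^{\infty}(M)$-module $\Gamma\mathbb{V}$ is finitely generated and projective, and applying Proposition \ref{modules of frechet algebras}(b) to the unital Fréchet algebra $A=C^{\infty}(M)$ and $E=\Gamma\mathbb{V}$ yields that $\Gamma\mathbb{V}$ admits a unique topology of Fréchet $C^{\infty}(M)$-module. (Concretely, this topology is the one transported from a closed complemented submodule $p\cdot C^{\infty}(M)^n$ as in Construction \ref{topologizing modules}, and uniqueness is the automatic-continuity statement of Proposition \ref{modules of frechet algebras}(a).) The main — and only — obstacle here is purely a matter of bookkeeping: verifying the Fréchet-algebra property of $C^{\infty}(M)$, which is standard and uses nothing beyond the hypotheses imposed on manifolds throughout the thesis; there is no analytic or geometric difficulty in the corollary itself.
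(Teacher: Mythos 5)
Your argument is exactly the paper's: cite Proposition \ref{V is a f.g.p. C-infty (M)-module} to get that $\Gamma\mathbb{V}$ is a finitely generated projective $C^{\infty}(M)$-module, then apply Proposition \ref{modules of frechet algebras}(b). The extra paragraph verifying that $C^{\infty}(M)$ is a unital Fréchet algebra is a correct and harmless supplement that the paper leaves implicit.
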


\begin{proof}
\,\,\,The claim follows from Proposition \ref{V is a f.g.p. C-infty (M)-module} and Proposition \ref{modules of frechet algebras} (b).
\end{proof}

\begin{corollary}\label{sections of an associated vector bundle top}
Let $(P,M,G,q,\sigma)$ be a principal bundle. Further, let $(\pi,V)$ be a finite-dimensional representation of $G$ defining the associated bundle $\mathbb{V}:=P\times_{\pi} V$ over $M$ \emph{(}cf. Construction \ref{associated vector bundle} \emph{)}. If we write
\[C^{\infty}(P,V)^G:=\{f:P\rightarrow V:\,(\forall g\in G)\,f(p.g)=\pi(g^{-1}).f(p)\}
\]for the space of equivariant smooth functions, then the map
\[\Psi_{\pi}:C^{\infty}(P,V)^G\rightarrow\Gamma\mathbb{V},\,\,\,\text{defined by}\,\,\,\Psi(f)(q(p)):=[p,f(p)],
\]is a topological isomorphism of $C^{\infty}(M)$-modules.
\end{corollary}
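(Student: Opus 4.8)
The plan is to reduce the whole statement to automatic continuity, using that the underlying algebraic isomorphism has already been produced. Indeed, by Proposition \ref{sections of an associated vector bundle} we know that $\Psi_{\pi}$ is an isomorphism of $C^{\infty}(M)$-modules, so the only thing left to prove is that $\Psi_{\pi}$ and $\Psi_{\pi}^{-1}$ are continuous once $C^{\infty}(P,V)^G$ is equipped with its natural topology.

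First I would fix that topology. Since $P$ is a finite-dimensional, paracompact, second countable manifold and $V$ is finite-dimensional, the space $C^{\infty}(P,V)$ is a Fr\'echet space. For each fixed $g\in G$ the assignment $f\mapsto\bigl(p\mapsto f(p.g)-\pi(g^{-1}).f(p)\bigr)$ is a continuous linear endomorphism of $C^{\infty}(P,V)$, so $C^{\infty}(P,V)^G$ is an intersection of kernels of continuous linear maps, hence a closed subspace of $C^{\infty}(P,V)$ and therefore itself a Fr\'echet space. Moreover $C^{\infty}(P,V)^G$ is a $C^{\infty}(M)$-module via $(f.h)(p):=f(q(p))h(p)$, and this action is continuous: the pull-back $q^{*}\colon C^{\infty}(M)\to C^{\infty}(P)$ is continuous, pointwise multiplication $C^{\infty}(P)\times C^{\infty}(P,V)\to C^{\infty}(P,V)$ is continuous, and the product of a $G$-invariant function with a $G$-equivariant one is again $G$-equivariant. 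Hence $C^{\infty}(P,V)^G$ is a Fr\'echet $C^{\infty}(M)$-module.

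Next I would invoke the structural results of Chapter \ref{NCVB}. By Proposition \ref{V is a f.g.p. C-infty (M)-module} the module $\Gamma\mathbb{V}$ is finitely generated and projective over $C^{\infty}(M)$, and by Corollary \ref{space of section is frechet-module} it carries its unique Fr\'echet $C^{\infty}(M)$-module topology. Transporting the finitely generated projective structure along the algebraic isomorphism $\Psi_{\pi}$ shows that $C^{\infty}(P,V)^G$ is also finitely generated projective over $C^{\infty}(M)$; in particular both spaces are finitely generated Fr\'echet $C^{\infty}(M)$-modules. Now apply Proposition \ref{modules of frechet algebras}(a) twice: once to the $C^{\infty}(M)$-module morphism $\Psi_{\pi}\colon C^{\infty}(P,V)^G\to\Gamma\mathbb{V}$, whose source is a finitely generated Fr\'echet module, to obtain continuity of $\Psi_{\pi}$, and once to $\Psi_{\pi}^{-1}\colon\Gamma\mathbb{V}\to C^{\infty}(P,V)^G$, whose source is again a finitely generated Fr\'echet module, to obtain continuity of $\Psi_{\pi}^{-1}$. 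Therefore $\Psi_{\pi}$ is a topological isomorphism of $C^{\infty}(M)$-modules.

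I expect the only point that needs genuine care --- the ``main obstacle'', modest as it is --- to be the verification that $C^{\infty}(P,V)^G$ really is a Fr\'echet $C^{\infty}(M)$-module, i.e.\ closedness of the equivariance condition together with continuity of the module action; after that the automatic continuity machinery of Proposition \ref{modules of frechet algebras} does all the work. Alternatively, one can shorten this step by appealing to the uniqueness assertion of Proposition \ref{modules of frechet algebras}(b), which identifies the subspace topology on $C^{\infty}(P,V)^G$ with the canonical module topology of Construction \ref{topologizing modules} and makes the continuity of $\Psi_{\pi}$ in both directions immediate.
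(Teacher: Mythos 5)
Your proof is correct and takes essentially the same route as the paper's, which simply cites Proposition \ref{sections of an associated vector bundle}, Proposition \ref{V is a f.g.p. C-infty (M)-module}, and the automatic-continuity result Proposition \ref{modules of frechet algebras}(a). You merely make explicit what the paper leaves implicit: the closedness of the equivariance condition (hence Fr\'echet-ness of $C^{\infty}(P,V)^G$), the continuity of the $C^{\infty}(M)$-action, and the transport of the finitely generated property across $\Psi_{\pi}$ before applying automatic continuity in both directions.
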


\begin{proof}
\,\,\,The assertion follows from Proposition \ref{sections of an associated vector bundle}, Proposition \ref{V is a f.g.p. C-infty (M)-module} and Proposition \ref{modules of frechet algebras} (a).
\end{proof}

\begin{lemma}\label{top change of base}
Let $A$ be a unital Fr\'{e}chet algebra and $E$ be a finitely generated projective $A$-module. If $\varphi:A\rightarrow B$ is a morphism of unital Fr\'{e}chet algebras, then $E\otimes_A B$ is a finitely generated projective Fr\'{e}chet $B$-module. In particular, we conclude that
\[E\otimes_A B=E\widehat{\otimes}_A B,
\]where $\widehat{\otimes}_A$ denotes the completion of the projective tensor product topology of $A$-modules \emph{(}cf. Definition \ref{tensor product of $A$ modules}\emph{)}.
\end{lemma}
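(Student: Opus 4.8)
The plan is to reduce the statement to the well-understood case where $E$ is free, using the idempotent description of finitely generated projective modules from Corollary \ref{f.g.pr.mod}. First I would invoke Corollary \ref{f.g.pr.mod} to write $E \cong p \cdot A^n$ for some $n \in \mathbb{N}$ and $p \in \Idem_n(A)$. Applying the functor $- \otimes_A B$ to the direct sum decomposition $A^n \cong p\cdot A^n \oplus (\id - p)\cdot A^n$, and using that tensor product commutes with finite direct sums, I obtain
\[
E \otimes_A B \;\cong\; (p\cdot A^n)\otimes_A B \;\oplus\; \bigl((\id-p)\cdot A^n\bigr)\otimes_A B,
\]
while $A^n \otimes_A B \cong B^n$ via the canonical map $a \otimes b \mapsto \varphi(a)b$ applied componentwise. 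The induced idempotent $p \otimes \id_B \in \Idem_n(B)$ — concretely the matrix $(\varphi(p_{ij}))$ obtained by applying $\varphi$ entrywise to $p$ viewed as a matrix over $A$ — satisfies $E \otimes_A B \cong (p\otimes\id_B)\cdot B^n$. Hence $E\otimes_A B$ is a direct summand of $B^n$, so by Corollary \ref{f.g.pr.mod} it is a finitely generated projective $B$-module; since $B$ is a Fr\'echet algebra, Proposition \ref{modules of frechet algebras}(b) equips it with a unique Fr\'echet $B$-module topology, via Construction \ref{topologizing modules} realizing it as the closed submodule $(p\otimes\id_B)\cdot B^n \subseteq B^n$.

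It then remains to identify the purely algebraic tensor product $E \otimes_A B$ with its completed counterpart $E \widehat{\otimes}_A B$. The key point is that the algebraic object $(p\otimes\id_B)\cdot B^n$ is \emph{already complete}: it is a closed subspace of the Fr\'echet space $B^n$ (being the kernel of the continuous map $\id - (p\otimes\id_B)$), hence itself Fr\'echet. On the other hand, $E \widehat{\otimes}_A B$ is by definition the completion of the projective tensor product topology of $A$-modules on $E \otimes_A B$; so it suffices to show that this projective tensor product topology on $E\otimes_A B$ coincides with the subspace topology it inherits as $(p\otimes\id_B)\cdot B^n$, and that the latter is complete — which it is, as just noted. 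For the free case $E = A^n$ the identification $A^n \widehat{\otimes}_A B = B^n = A^n \otimes_A B$ is immediate from the universal property of the projective tensor product of modules (cf. Definition \ref{tensor product of $A$ modules}) together with the fact that $B^n$ is already complete; the general case follows by restricting to the direct summand, since completion commutes with taking closed complemented subspaces.

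The main obstacle I expect is the last bookkeeping step: verifying that the projective tensor product topology of $A$-modules on $E \otimes_A B$ genuinely agrees with the topology inherited from $B^n$ under the isomorphism $E\otimes_A B \cong (p\otimes\id_B)\cdot B^n$, rather than merely being continuously comparable in one direction. This is where one must use that $p\otimes\id_B$ is an idempotent, open onto its image (as recorded in the remark following Proposition \ref{modules of frechet algebras}), so that the subspace topology and the quotient-like topology coming from the tensor product construction coincide; automatic continuity of the relevant $B$-linear maps, guaranteed by Proposition \ref{modules of frechet algebras}(a) since everything in sight is finitely generated Fr\'echet, closes the gap. Once the topologies match, completeness of $(p\otimes\id_B)\cdot B^n$ forces $E\otimes_A B = E\widehat{\otimes}_A B$, concluding the proof.
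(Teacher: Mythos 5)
Your proposal is correct and follows essentially the same route as the paper, whose proof simply cites Remark \ref{mod and ringhom I} (the idempotent change-of-rings computation $E\otimes_A B\cong\varphi(p)\cdot B^n$ you spell out) together with Proposition \ref{modules of frechet algebras}(b). One small imprecision in your final step: you cannot invoke Proposition \ref{modules of frechet algebras}(a) with $(E\otimes_A B,\ \text{proj.\ $A$-tensor topology})$ as the \emph{source} until you already know that topology is Fr\'echet, which is precisely what is being established; in that direction you should instead appeal to the universal property of $\otimes_A$ (Remark \ref{universal prop of tensor prod of mod}) applied to the continuous $A$-balanced bilinear map $E\times B\rightarrow\varphi(p)\cdot B^n$, $(e,b)\mapsto\varphi(p)\cdot(\varphi(e_1)b,\ldots,\varphi(e_n)b)$, while automatic continuity from Proposition \ref{modules of frechet algebras}(a) handles the inverse direction since $\varphi(p)\cdot B^n$ is manifestly finitely generated Fr\'echet.
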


\begin{proof}
\,\,\,The claim is a consequence of Proposition \ref{modules of frechet algebras} (b) and Remark \ref{mod and ringhom I}.
\end{proof}

\begin{example}
Let $(\mathbb{V},M,V,q)$ be a vector bundle and $U$ an open subset of $M$.
We recall from Corollary \ref{space of section is frechet-module} that $\Gamma\mathbb{V}$ is a finitely generated projective Fr\'{e}chet $C^{\infty}(M)$-module.  Therefore, Lemma \ref{top change of base} implies that $\Gamma\mathbb{V}\otimes_{C^{\infty}(M)}C^{\infty}(U)$ is a finitely generated projective Fr\'{e}chet $C^{\infty}(U)$-module with respect to the restriction map $r:C^{\infty}(M)\rightarrow C^{\infty}(U)$.
\end{example}
 
In fact, the following proposition tells us that $\Gamma\mathbb{V}\otimes_{C^{\infty}(M)}C^{\infty}(U)$ is isomorphic to the space $\Gamma\mathbb{V}_U$ of sections of the ``restricted" vector bundle $\mathbb{V}_U:=q^{-1}(U)$:

\begin{proposition}\label{gamma V_U=gamma VC(U)}
Let $(\mathbb{V},M,V,q)$ be a vector bundle and $U$ an open subset of $M$. Then $\Gamma\mathbb{V}\otimes_{C^{\infty}(M)}C^{\infty}(U)$ and $\Gamma\mathbb{V}_U$ are isomorphic as Fr\'{e}chet $C^{\infty}(U)$-modules.
\end{proposition}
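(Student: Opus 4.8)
The plan is to exhibit an explicit isomorphism and then invoke the uniqueness of the Fréchet $C^{\infty}(U)$-module topology established in Proposition \ref{modules of frechet algebras}(b), so that an algebraic isomorphism automatically becomes a topological one. First I would recall that the restriction map $r:C^{\infty}(M)\rightarrow C^{\infty}(U)$ is a morphism of unital Fréchet algebras, so that Lemma \ref{top change of base} applies: $\Gamma\mathbb{V}\otimes_{C^{\infty}(M)}C^{\infty}(U)$ is a finitely generated projective Fréchet $C^{\infty}(U)$-module. Likewise, $\mathbb{V}_U=q^{-1}(U)$ is a vector bundle over $U$ (it is the restriction of $\mathbb{V}$, hence locally trivial), so by Corollary \ref{space of section is frechet-module} the section space $\Gamma\mathbb{V}_U$ carries its own unique Fréchet $C^{\infty}(U)$-module topology. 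It therefore suffices to produce an isomorphism of $C^{\infty}(U)$-modules between the two.

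The natural candidate map is
\[
\Theta:\Gamma\mathbb{V}\otimes_{C^{\infty}(M)}C^{\infty}(U)\longrightarrow\Gamma\mathbb{V}_U,\qquad s\otimes h\longmapsto (h\cdot s)|_U,
\]
where $(h\cdot s)|_U$ denotes the section $m\mapsto h(m)\,s(m)$ of $\mathbb{V}_U$. This is well-defined and $C^{\infty}(U)$-balanced because for $f\in C^{\infty}(M)$ one has $(f\cdot s)|_U = (r(f))\cdot(s|_U)$, so $sf\otimes h$ and $s\otimes r(f)h$ have the same image; and it is $C^{\infty}(U)$-linear in the obvious way. To see $\Theta$ is bijective I would use the description of $\Gamma\mathbb{V}$ as a direct summand of a free module: by Corollary \ref{f.g.pr.mod} there is $p\in\Idem_N(C^{\infty}(M))$ with $\Gamma\mathbb{V}\cong p\cdot C^{\infty}(M)^N$, coming (via Proposition \ref{morphism of VB-morphism of f.g.p. modules}) from an idempotent bundle endomorphism $\phi$ of $M\times\mathbb{K}^N$ with $\mathbb{V}\cong\phi(M\times\mathbb{K}^N)$. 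Restricting everything to $U$, the idempotent $\phi|_U$ of $U\times\mathbb{K}^N$ satisfies $\mathbb{V}_U\cong\phi|_U(U\times\mathbb{K}^N)$, so $\Gamma\mathbb{V}_U\cong r(p)\cdot C^{\infty}(U)^N$. On the other hand, tensoring the split exact sequence $0\to\Gamma\mathbb{V}\to C^{\infty}(M)^N\to(\id-p)C^{\infty}(M)^N\to 0$ with $C^{\infty}(U)$ over $C^{\infty}(M)$ (the sequence stays split, so tensoring is exact) gives $\Gamma\mathbb{V}\otimes_{C^{\infty}(M)}C^{\infty}(U)\cong r(p)\cdot C^{\infty}(U)^N$ as well. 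One then checks that under these identifications $\Theta$ corresponds to the identity of $r(p)\cdot C^{\infty}(U)^N$, which settles bijectivity; alternatively, one constructs the inverse directly by sending a section $t\in\Gamma\mathbb{V}_U$, written locally as $t(m)=\sum_i\lambda_i(m)\,\widehat{\Phi}(s_i)(m)$ in a trivializing frame, to the corresponding element of the tensor product and patches via a partition of unity.

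Finally, since $\Theta$ is an algebraic $C^{\infty}(U)$-module isomorphism between two finitely generated projective $C^{\infty}(U)$-modules, Proposition \ref{modules of frechet algebras}(a) guarantees that both $\Theta$ and $\Theta^{-1}$ are automatically continuous, so $\Theta$ is an isomorphism of Fréchet $C^{\infty}(U)$-modules, as claimed. I expect the main obstacle to be the bookkeeping in verifying that $\Theta$ is well-defined and bijective in a way that is genuinely natural — in particular making sure that the restriction of the idempotent $\phi$ really presents $\mathbb{V}_U$, and that the split exact sequence is compatible with restriction — rather than any deep analytic point, which is entirely absorbed by the uniqueness-of-topology result.
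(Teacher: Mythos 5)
Your proof is correct and takes essentially the same route as the paper: both arguments reduce to the idempotent presentation $\Gamma\mathbb{V}\cong p\cdot C^{\infty}(M)^n$, identify each side of the claimed isomorphism with $p_U\cdot C^{\infty}(U)^n$ (you via restricting the bundle endomorphism $\phi$ and tensoring the split sequence, the paper via Remark \ref{mod and ringhom I} applied to the restriction map), and then upgrade the algebraic isomorphism to a topological one by the automatic-continuity result of Proposition \ref{modules of frechet algebras}(a) together with Lemma \ref{top change of base} and Corollary \ref{space of section is frechet-module}. The only cosmetic difference is that you also write down the natural map $\Theta(s\otimes h)=(h\cdot s)|_U$ and check it matches the abstract identifications, which the paper leaves implicit.
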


\begin{proof}
\,\,\,(i) In view of Proposition \ref{V is a f.g.p. C-infty (M)-module} and Corollary \ref{f.g.pr.mod}, we find some $n\in\mathbb{N}$ and a matrix $p\in\Idem_n(C^{\infty}(M))$ such that $\Gamma\mathbb{V}\cong p\cdot C^{\infty}(M)^n$. This implies that $\Gamma\mathbb{V}_U\cong p_U\cdot C^{\infty}(U)^n$, where $p_U$ denotes the matrix in $\Idem_n(C^{\infty}(U))$ obtained from $p$ by restricting each of its entries to $U$. 

(ii) On the other hand, Remark \ref{mod and ringhom I} applied to the restriction map $r:C^{\infty}(M)\rightarrow C^{\infty}(U)$, implies that
\[\Gamma\mathbb{V}\otimes_{C^{\infty}(M)}C^{\infty}(U)\cong p_U\cdot C^{\infty}(U)^n.
\]Hence, we conclude that
\[\Gamma\mathbb{V}\otimes_{C^{\infty}(M)}C^{\infty}(U)\cong\Gamma\mathbb{V}_U\label{iso of C-infty (M)-module}
\]holds as $C^{\infty}(U)$-modules. That this last isomorphism is actually an isomorphism of Fr\'{e}chet $C^{\infty}(U)$-modules is a consequence of Corollary \ref{space of section is frechet-module}, Lemma \ref{top change of base} and Proposition \ref{modules of frechet algebras}\,(a).
\end{proof}

\chapter{Smooth Localization in Noncommutative Geometry}\label{SLNG}

The idea of localization comes from algebraic geometry: Given a point $x$ in some affine variety $X$, one likes to investigate the nature of $X$ in an arbitrarily small neighbourhood of $x$ in the Zariski topology. Now, small neighbourhoods of $x$ in $X$ correspond to large algebraic subsets $Y$. For example, let $Y$ be the zero set of some algebraic function $f$ on $X$, which does not vanish at $x$. Then the affine ring $\mathbb{K}[(X\backslash Y)]$ is obtained from $\mathbb{K}[X]$ by adjoining a multiplicative inverse for $f$, i.e., taking the coproduct of $\mathbb{K}[X]$ with the free ring in one generator and dividing out the ideal $I_f:=\langle 1-ft\rangle$; this is called \emph{inverting} $f$. Nevertheless, this construction is not valid for the algebra of smooth functions on some manifold $M$. In this chapter we present an appropriate method of localizing (possibly noncommutative) algebras in a smooth way. We start with a little reminder on the classical setting.\sindex[n]{$\mathbb{K}[X]$}

\section{Some Facts on ``Classical" Localization}

\begin{construction}\label{S/R}{\bf(Localization of a ring).}\index{Localization!of a Ring}
Let $R$ be a commutative unital ring. A subset $S$ is called \emph{multiplicatively closed}\index{Multiplicatively Closed}, if $ab$ is in $S$ for all $a,b\in S$. For a multiplicatively closed subset $S$ we define an equivalence relation on $R\times S$ by
\[(r,s)\sim(r',s')\Longleftrightarrow u(rs'-r's)=0\,\,\,\text{for some} \,\,\,u\in S.
\]The set of equivalence classes is denoted by $S^{-1}R$\sindex[n]{$S^{-1}R$} and the class of $(r,s)\in R\times S$ is denoted by $\frac{r}{s}$. Addition and multiplication is defined in the obvious way:
\[\frac{r}{s}+\frac{r'}{s'}:=\frac{rs'+r's}{ss'},\,\,\,\frac{r}{s}\frac{r'}{s'}=\frac{rr'}{ss'}.
\]It is easily checked that these operations are well defined and that $S^{-1}R$ becomes a ring with $0$ and $1$ given by $\frac{0}{a}$, respectively $\frac{a}{a}$ for any $a\in S$. Moreover, we have a natural ring homomorphism
\[j:R\rightarrow S^{-1}R,\,\,\,r \mapsto\frac{rs}{s}.
\]The ring $S^{-1}R$ is called the \emph{localization} of $R$ with respect to the multiplicatively closed subset $S$.
\end{construction}

\begin{proposition}\label{univ. prop of loc.}
The pair $(S^{-1}R,j)$ has the following universal property: every element $s\in S$ maps to a unit in $S^{-1}R$, and any other homomorphism $\varphi:R\rightarrow R'$ with $\varphi(S)\subseteq(R')^{\times}$ factors uniquely through $j$, i.e., there exists a unique homomorphism $\psi:S^{-1}R\rightarrow R'$ with $\psi\circ j=\varphi$.
\end{proposition}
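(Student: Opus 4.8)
The plan is to dispatch this in the classical three moves: show each $s\in S$ is inverted by $j$, deduce uniqueness of $\psi$ together with its forced formula, and then verify that this formula genuinely defines a ring homomorphism. First I would record that, writing $j(s)=\frac{s^2}{s}$, the element $\frac{1}{s}\in S^{-1}R$ (valid since $s\in S$ and $R$ is unital) satisfies $j(s)\cdot\frac{1}{s}=\frac{s^2}{s^2}=1$ by the multiplication rule of Construction~\ref{S/R}; hence every $s\in S$ maps to a unit. That $j$ is itself a well-defined ring homomorphism is part of that construction, so I may use it freely.

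For uniqueness, suppose $\psi\colon S^{-1}R\to R'$ is any ring homomorphism with $\psi\circ j=\varphi$. The key observation is the identity $j(s)\cdot\frac{r}{s}=j(r)$ in $S^{-1}R$, which holds because $(s^2 r,s^2)\sim(rt,t)$ for any $t\in S$ by commutativity of $R$. Since $j(s)$ is a unit, this gives $\frac{r}{s}=j(s)^{-1}j(r)$, and therefore $\psi\!\left(\frac{r}{s}\right)=\psi(j(s))^{-1}\psi(j(r))=\varphi(s)^{-1}\varphi(r)=\varphi(r)\varphi(s)^{-1}$. So $\psi$ is completely determined, and the natural candidate for existence is $\psi\!\left(\frac{r}{s}\right):=\varphi(r)\varphi(s)^{-1}$.

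It then remains to check that this is well-defined and a unital ring homomorphism with $\psi\circ j=\varphi$. Well-definedness is the only delicate point: if $\frac{r}{s}=\frac{r'}{s'}$ then $u(rs'-r's)=0$ for some $u\in S$, and applying $\varphi$ yields $\varphi(u)\bigl(\varphi(r)\varphi(s')-\varphi(r')\varphi(s)\bigr)=0$; now $\varphi(u)\in(R')^{\times}$ by hypothesis, so it may be cancelled, giving $\varphi(r)\varphi(s')=\varphi(r')\varphi(s)$ and hence $\varphi(r)\varphi(s)^{-1}=\varphi(r')\varphi(s')^{-1}$. Additivity and multiplicativity of $\psi$ are then routine computations from $\frac{r}{s}+\frac{r'}{s'}=\frac{rs'+r's}{ss'}$ and $\frac{r}{s}\frac{r'}{s'}=\frac{rr'}{ss'}$ (all rearrangements being legitimate since $R$ and $R'$ are commutative and $\varphi$ multiplicative), $\psi(1)=\psi\!\left(\frac{a}{a}\right)=\varphi(a)\varphi(a)^{-1}=1$, and finally $\psi(j(r))=\psi\!\left(\frac{rs}{s}\right)=\varphi(rs)\varphi(s)^{-1}=\varphi(r)$.

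I do not expect a real obstacle: the argument is purely formal. The single place that needs genuine attention — and the reason Construction~\ref{S/R} carries the auxiliary element $u\in S$ in its equivalence relation rather than simply demanding $rs'=r's$ — is the well-definedness check in the last step, where invertibility of $\varphi(u)$ in $R'$ is exactly what licenses the cancellation.
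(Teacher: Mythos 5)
Your proof is correct and takes essentially the same route as the paper's: uniqueness via the identity $j(s)\cdot\frac{r}{s}=j(r)$ forcing $\psi(\frac{r}{s})=\varphi(r)\varphi(s)^{-1}$, then existence by taking that formula as the definition. The paper compresses the existence part to ``a short calculation''; you have simply spelled out the well-definedness (where invertibility of $\varphi(u)$ is the crucial cancellation) and the homomorphism axioms, which is exactly the content being elided.
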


\begin{proof}
\,\,\,We first show the uniqueness: If $\psi$ exists, then the assumption and $\psi(s)\cdot\psi(\frac{r}{s})=\psi(r)$ implies that $\psi(\frac{r}{s})=\varphi(r)\cdot\varphi(s)^{-1}$. Thus $\psi$ is unique.

To show the existence we define $\psi(\frac{r}{s}):=\varphi(r)\cdot\varphi(s)^{-1}$. A short calculation shows that this map is well-defined and a homomorphism of rings.
\end{proof}

\begin{example}\label{1,a,a^2}
Let $R$ be a commutative unital ring and $a\in R$. Then $S_a:=\{1,a,a^2,\ldots\}$ is a multiplicatively closed subset of $R$ and we define $R_a:=S^{-1}_aR$\sindex[n]{$R_a$}. Thus, every element of $R_a$ can be written in the form $\frac{r}{a^n}$, $r\in R$ and $n\in\mathbb{N}$. Moreover,
\[\frac{r}{a^n}=\frac{r'}{a^m}\Longleftrightarrow a^N(ra^m-r'a^n)=0\,\,\, \text{for some}\,\,\,N\in\mathbb{N}.
\]If $a$ is nilpotent, then $R_a=0$, and if $R$ is an integral domian with field of fractions $F$ and $a\neq 0$, then $R_a$ is the subring of $F$ of elements of the form $\frac{r}{a^n}$, $r\in R$, $n\in\mathbb{N}$.
\end{example}

\begin{example}\label{prime ideal}
Let $R$ be a commutative unital ring and $\mathfrak{p}$ be a prime ideal of $R$. Then $S_{\mathfrak{p}}:=R\backslash\mathfrak{p}$ is a multiplicatively closed subset of $R$, and we let $R_{\mathfrak{p}}:=S^{-1}_{\mathfrak{p}}R$. Thus each element of $R_{\mathfrak{p}}$ can be written in the form $\frac{r}{s}$, $s\notin\mathfrak{p}$, and 
\[\frac{r}{s}=\frac{r'}{s'}\Longleftrightarrow u(rs'-r's)=0,\,\,\,\text{for some} \,\,\,u\notin \mathfrak{p}.
\]The subset $\mathfrak{m}:=\{\frac{r}{s}:\,r\in\mathfrak{p}, s\notin\mathfrak{p}\}$ is a maximal ideal in $R_{\mathfrak{p}}$, and it is the only maximal ideal, i.e., $R_{\mathfrak{p}}$ is a local ring. When $R$ is an integral domian with field of fractions $F$, $R_{\mathfrak{p}}$ is the subring of $F$ consisting of elements expressible in the form $\frac{r}{s}, r\in R, s\notin\mathfrak{p}$.
\end{example}

\begin{proposition}\label{isom of localization}
For any ring $R$ and $a\in R$, the map
\[R_{\{a\}}:=R[t]/\langle 1-at\rangle\rightarrow R_a,\,\,\,\left[\sum r_kt^k\right]\mapsto\sum\frac{r_k}{a^k}
\]defines an isomorphism of rings.
\end{proposition}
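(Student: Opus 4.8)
The plan is to recognize both $R_{\{a\}}=R[t]/\langle 1-at\rangle$ and $R_a$ as solutions of the same universal problem and then to check that the canonical comparison isomorphism is the map in the statement. First I would record the universal property of $R_{\{a\}}$: writing $\iota:R\to R_{\{a\}}$, $r\mapsto[r]$, for the natural homomorphism, the class $[t]$ is an inverse of $\iota(a)$ (because $1-at$ lies in the ideal and $R[t]$ is commutative), and for every ring homomorphism $\varphi:R\to R'$ with $\varphi(a)\in(R')^{\times}$ there is a unique homomorphism $\widetilde\varphi:R_{\{a\}}\to R'$ with $\widetilde\varphi\circ\iota=\varphi$. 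Existence uses the universal property of the polynomial ring: send $t$ to $\varphi(a)^{-1}$, observe that the resulting homomorphism $R[t]\to R'$ annihilates $1-at$, hence factors through the quotient. Uniqueness uses that $R_{\{a\}}$ is generated as a ring by $\iota(R)$ together with $[t]$, and that $\widetilde\varphi([t])$ is forced to equal the inverse of $\varphi(a)$.

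Next I would invoke Proposition \ref{univ. prop of loc.} for the multiplicatively closed set $S_a=\{1,a,a^2,\ldots\}$ of Example \ref{1,a,a^2}: the pair $(R_a,j)$ has the property that $j(a)$ is a unit and every $\varphi:R\to R'$ with $\varphi(S_a)\subseteq(R')^{\times}$ factors uniquely through $j$. Together with the elementary observation that $\varphi(S_a)\subseteq(R')^{\times}$ if and only if $\varphi(a)\in(R')^{\times}$ (powers of a unit are units and $\varphi(a^{0})=1$), this shows that $(R_a,j)$ satisfies exactly the universal property established for $(R_{\{a\}},\iota)$. Hence the two objects are canonically isomorphic: applying the universal property of $R_{\{a\}}$ to $j:R\to R_a$ yields a ring homomorphism $\Theta:R_{\{a\}}\to R_a$ with $\Theta\circ\iota=j$, applying the universal property of $R_a$ to $\iota$ yields a homomorphism in the other direction, and the two composites coincide with the identity maps by the respective uniqueness clauses.

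Finally I would identify $\Theta$ with the stated map. By construction $\Theta([r])=j(r)=\tfrac{r}{1}$ for $r\in R$ and $\Theta([t])=j(a)^{-1}=\tfrac{1}{a}$, so multiplicativity gives $\Theta\big([\sum_k r_k t^k]\big)=\sum_k\tfrac{r_k}{1}\big(\tfrac{1}{a}\big)^{k}=\sum_k\tfrac{r_k}{a^{k}}$, which is precisely the asserted isomorphism.

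I expect the only point requiring genuine care to be the verification of the universal property of $R_{\{a\}}$, and within it the uniqueness half, where one must argue that a homomorphism out of the quotient is pinned down by its restriction to $\iota(R)$ together with its value on $[t]$, the latter being forced to be $\varphi(a)^{-1}$. Everything else is routine bookkeeping with universal properties. A fully computational alternative is also available as a fallback: define $\Theta$ directly, check it is a well-defined surjective ring homomorphism, and prove injectivity by using that $\iota(a)$ is invertible in $R_{\{a\}}$ so that, from $\bar\Phi([p])=0$ one first deduces $a^{N}p(t)\in\langle 1-at\rangle$ for some $N$ (by clearing denominators in $R_a$) and then cancels the invertible factor $a^{N}$ to conclude $p(t)\in\langle 1-at\rangle$.
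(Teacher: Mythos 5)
Your proposal is correct and follows essentially the same route as the paper: both arguments verify that $R_{\{a\}}$ together with $\iota$ solves the same universal mapping problem as $(R_a,j)$ (inverting $a$), and then deduce that the canonical comparison map $\left[\sum r_k t^k\right]\mapsto\sum r_k/a^k$ is an isomorphism. You spell out the uniqueness step and the identification of the comparison map in more detail than the paper, and you add a computational fallback, but the core idea is identical; the only cosmetic difference is that the paper dispatches the case $a=0$ separately, which your argument handles implicitly since inverting $0$ forces the zero ring on both sides.
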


\begin{proof}
\,\,\,If $a=0$, both rings are zero, and so we may assume $a\neq 0$. In the ring $R_{\{a\}}$, $1=at$, and so $a$ is a unit. Let $\alpha:R\rightarrow R'$ be a morphism of rings such that $\alpha(a)$ is a unit in $R'$. The morphism 
\[R[t]\rightarrow R',\,\,\,\sum r_kt^k\mapsto\sum\alpha(r_k)\alpha(a)^{-k}
\]factors through $R_{\{a\}}$ because $1-at\mapsto 1-\alpha(a)\alpha(a)^{-1}=0$, and because $\alpha(a)$ is a unit in $R'$, this is the unique extension of $\alpha$ to $R_{\{a\}}$. Therefore, $R_{\{a\}}$ has the same universal property as $R_a$, and so the map 
\[R_{\{a\}}:=R[t]/\langle 1-at\rangle\rightarrow R_a,\,\,\,\left[\sum r_kt^k\right]\mapsto\sum\frac{r_k}{a^k}
\]defines an isomorphism of rings.
\end{proof}

Let $X$ be an affine variety. For every nonzero element $f$ in the coordinate ring $\mathbb{K}[X]$ we define
\[X_f:=\{x\in X:\,f(x)\neq0\}.
\]\sindex[n]{$X_f$}Then we have a natural map $\mathbb{K}[X]_f\rightarrow\mathcal{O}_X(X_f)$, which maps a quotient $\frac{g}{f^n}$ in $\mathbb{K}[X]_f$ to the function $X_f\rightarrow\mathbb{K}$ mapping the point $x$ to $\frac{g(x)}{f^n(x)}$. It turns out that this map is an isomorphism:

\begin{proposition}\label{K(X)_f=O(X_f)}
Let $X$ be an affine variety. Then the map 
\[\mathbb{K}[X]_f\rightarrow \mathcal{O}_X(X_f),\,\,\,\frac{g}{f^n}\mapsto\left(x\mapsto\frac{g(x)}{f^n(x)}\right)
\]is an isomorphism for every element $f\in\mathbb{K}[X]$.
\end{proposition}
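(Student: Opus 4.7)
The plan is to exploit the universal property of the localization $\mathbb{K}[X]_f$ (Proposition \ref{univ. prop of loc.}, or equivalently the description in Proposition \ref{isom of localization}) to construct the map, and then check bijectivity separately. First I would note that on $X_f$ the function $f$ never vanishes, so $x \mapsto 1/f(x)$ is a regular function and hence $f$ becomes a unit in $\mathcal{O}_X(X_f)$. The restriction homomorphism $\mathbb{K}[X] \to \mathcal{O}_X(X_f)$ therefore factors uniquely through $\mathbb{K}[X]_f$, and the induced ring homomorphism is precisely the stated map $\frac{g}{f^n} \mapsto (x \mapsto g(x)/f^n(x))$. This takes care of well-definedness and the algebra homomorphism property with no computation.

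For injectivity, I would assume $g/f^n$ maps to the zero regular function on $X_f$. Then $g(x) = 0$ for every $x \in X_f$, while $f(x) = 0$ for every $x \in X \setminus X_f$. Hence $(fg)(x) = 0$ on all of $X$, so $fg = 0$ in $\mathbb{K}[X]$ since the coordinate ring of an affine variety is reduced and functions are separated by points (Hilbert's Nullstellensatz, over an algebraically closed field). Taking the multiplier $u := f \in S_f$ in the equivalence relation of Construction \ref{S/R}, we obtain $g/f^n = 0/1$ in $\mathbb{K}[X]_f$.

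The real content is surjectivity, and this will be the main obstacle. Let $\phi \in \mathcal{O}_X(X_f)$. By definition, $\phi$ is locally of the form $a/b$ with $a, b \in \mathbb{K}[X]$ and $b$ nonvanishing; after refining and absorbing denominators, I can cover $X_f$ by basic open sets $D(h_i) \subseteq X_f$ on which $\phi|_{D(h_i)} = g_i/h_i^{m_i}$, and since $X$ is a Noetherian space this cover can be taken finite. On overlaps the compatibility $g_i h_j^{m_j} - g_j h_i^{m_i}$ vanishes on $D(h_i h_j)$, so by the injectivity argument applied to $h_i h_j$ one gets $(h_i h_j)^{N}(g_i h_j^{m_j} - g_j h_i^{m_i}) = 0$ in $\mathbb{K}[X]$ for some $N$; absorbing these powers into the $g_i$ and $h_i$ I may assume outright $g_i h_j^{m_j} = g_j h_i^{m_i}$.

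Finally, the condition that the $D(h_i)$ cover $X_f$ means the common zero set of the $h_i$ inside $X$ is contained in $V(f)$, so by the Nullstellensatz $f \in \sqrt{(h_1^{m_1}, \ldots, h_r^{m_r})}$ in $\mathbb{K}[X]$. Thus $f^N = \sum_i c_i h_i^{m_i}$ for some $N \in \mathbb{N}$ and $c_i \in \mathbb{K}[X]$. Setting $g := \sum_i c_i g_i$, the compatibility relations on overlaps yield $g h_j^{m_j} = \sum_i c_i g_i h_j^{m_j} = \sum_i c_i g_j h_i^{m_i} = f^N g_j$, so on each $D(h_j)$ we have $g/f^N = g_j/h_j^{m_j} = \phi$, and hence $\phi$ is the image of $g/f^N \in \mathbb{K}[X]_f$. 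The bookkeeping in this last patching step—especially keeping track of the extra powers arising from applying injectivity on overlaps—is the only delicate part; everything else is formal.
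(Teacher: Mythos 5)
Your proof is correct. For injectivity you argue exactly as the paper does: if $g/f^n$ maps to the zero function on $X_f$, then $fg$ vanishes identically on $X$, hence $fg=0$ in $\mathbb{K}[X]$, and taking $u=f$ in the equivalence relation of Construction \ref{S/R} gives $g/f^n=0$ in $\mathbb{K}[X]_f$. The paper stops there and refers to [Har87], Chapter II, Proposition 2.2 (b) for the rest; you instead write out the surjectivity, and your patching argument is the standard (and correct) one: a finite cover of $X_f$ by basic opens $D(h_i)$ with $\phi=g_i/h_i^{m_i}$, cleaning up the overlap relations by multiplying through by a power of $h_ih_j$ (with $N$ chosen uniformly over the finitely many pairs), the Nullstellensatz to write $f^N=\sum_i c_ih_i^{m_i}$, and the telescoping identity $gh_j^{m_j}=f^Ng_j$. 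The only hypotheses you should make explicit are the ones the paper leaves implicit as well: $\mathbb{K}$ algebraically closed (for the Nullstellensatz, both in injectivity via ``vanishes at all points $\Rightarrow$ zero in $\mathbb{K}[X]$'' and in surjectivity) and the quasi-compactness of $X_f$ to get a finite cover. Your construction of the map via the universal property of Proposition \ref{univ. prop of loc.} is cleaner than checking well-definedness by hand, so on the whole your write-up is more self-contained than the paper's.
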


\begin{proof}\,\,\,We just show the injectivitiy; for a complete proof we refer, for example, to [Har87], Chapter II, Proposition 2.2 (b): We assume that a quotient $\frac{g}{f^n}$ is mapped to zero in $\mathcal{O}_X(X_f)$. Then $g(x)=0$ for $x\in X_f$. However, then $(fg)(x)=0$ for all $x\in X$. This exactly means $fg=0$ in $\mathbb{K}[X]$. Hence, $\frac{g}{f^n}=0$ in $\mathbb{K}[X]_f$.
\end{proof}

\section{Smooth Localization of Noncommutative Spaces}\label{SLNS}

Now, from the viewpoint of noncommutative differential geometry, one might ask whether Proposition \ref{K(X)_f=O(X_f)} is still true for the algebra of smooth function on a manifold $M$. Unfortunately, if $f$ is a nonzero element in $C^{\infty}(M)$ and 
\[M_f:=\{m\in M:\,f(m)\neq 0\},
\]\sindex[n]{$M_f$}then not every smooth function $g:M_f\rightarrow\mathbb{K}$ is of the form $\frac{h}{f^n}$ for $h\in C^{\infty}(M)$ and some $n\in\mathbb{N}$.
In this section we provide a construction which will fix this problem. For the following propositions we recall the smooth compact open topology for smooth vector-valued function spaces of Definition \ref{smooth compact open topology} and the projective tensor product topology of Definition \ref{proj. tensor product III}.

\begin{proposition}\label{C(M,A) loc. con. algebra}
If $M$ is a manifold and $A$ a locally convex algebra, then $C^{\infty}(M,A)$ is a locally convex algebra.
\end{proposition}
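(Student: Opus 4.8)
The plan is to verify two things: first, that $C^{\infty}(M,A)$ carries a natural locally convex topology (the smooth compact open topology), and second, that the pointwise multiplication $(f,g)\mapsto fg$, defined by $(fg)(m):=f(m)g(m)$, is continuous with respect to this topology. The algebraic fact that pointwise product of smooth $A$-valued functions is again smooth is standard (it follows from the smoothness of the multiplication $m_A:A\times A\to A$ and the chain rule, since $fg = m_A\circ(f,g)$ and $(f,g):M\to A\times A$ is smooth), so the essential point is the topological one.

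First I would recall the description of the smooth compact open topology on $C^{\infty}(M,A)$ from Definition \ref{smooth compact open topology}: it is generated by the seminorms
\[p_{K,U,\ell}(f):=\sup_{m\in K}\ p\bigl(d^{\ell}f(m)(v_1,\dots,v_{\ell})\bigr),
\]
obtained by pulling back, via charts, the derivatives of $f$ up to some order, evaluating on a compact set $K$, and composing with a continuous seminorm $p$ on $A$ (one also takes a sup over the relevant jet data in a compact set of directions). Since $A$ is a locally convex algebra, for each continuous seminorm $p$ on $A$ there is a continuous seminorm $q$ with $p(ab)\le q(a)q(b)$ for all $a,b\in A$; this submultiplicativity is the key external input. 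Working in a chart, the higher derivatives of a product $fg$ are governed by the Leibniz rule, $d^{\ell}(fg) = \sum_{j+k=\ell}\binom{\ell}{j} d^{j}f\cdot d^{k}g$ (with the product taken in $A$ and the appropriate symmetrization of arguments). Applying $p$ and then submultiplicativity to each term bounds $p\bigl(d^{\ell}(fg)(m)(\dots)\bigr)$ by a finite sum of products $q(d^{j}f(m)(\dots))\,q(d^{k}f(m)(\dots))$ with $j,k\le\ell$.

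Therefore, taking the supremum over a fixed compact chart neighbourhood $K$ and over the relevant compact sets of tangent directions, one obtains an estimate of the shape
\[p_{K,U,\ell}(fg)\ \le\ C\sum_{j+k=\ell} q_{K,U,j}(f)\,q_{K,U,k}(g)\ \le\ C'\,\widetilde q(f)\,\widetilde q(g),
\]
where $\widetilde q:=\max_{j\le\ell} q_{K,U,j}$ is again a continuous seminorm on $C^{\infty}(M,A)$ and $C,C'$ depend only on $\ell$ and the binomial coefficients. This is exactly the submultiplicative-type estimate needed to conclude that multiplication $C^{\infty}(M,A)\times C^{\infty}(M,A)\to C^{\infty}(M,A)$ is continuous, hence that $C^{\infty}(M,A)$ is a locally convex algebra; bilinearity and associativity are inherited pointwise from $A$. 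Finally, if $A$ is unital, the constant function $m\mapsto 1_A$ is a unit, so the algebra is unital whenever $A$ is.

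The only genuinely delicate point is the bookkeeping of the Leibniz rule in charts: one must make sure that the seminorms appearing on the right-hand side are among those defining the smooth compact open topology (in particular that passing from a compact set $K\subseteq M$ and directions to the jet-level estimate stays within the defining family), and that the transition between overlapping charts does not spoil the bound. This is routine but somewhat technical; alternatively, one can bypass it by invoking Definition \ref{proj. tensor product III} together with the exponential law to identify $C^{\infty}(M,A)$, for suitable $A$, with a completed tensor product $C^{\infty}(M)\,\widehat{\otimes}\,A$ and transporting the algebra structure, though the direct Leibniz-rule argument above is cleanest in full generality.
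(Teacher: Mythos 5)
Your proof is correct in outline, but it takes a genuinely different route from the paper's. You work with a chart-level reformulation of the smooth compact open topology, describing it by seminorms $p_{K,U,\ell}$ built from local derivatives, and then propagate the Leibniz rule $d^{\ell}(fg)=\sum_{j+k=\ell}\binom{\ell}{j}d^jf\cdot d^kg$ together with the submultiplicativity of seminorms on $A$ (Lemma \ref{multiplication and seminorns}) to get the required estimate. The paper instead stays with the intrinsic definition of the topology (Definition \ref{smooth compact open topology}) via the embedding $C^{\infty}(M,A)\hookrightarrow\prod_{n\in\mathbb{N}_0}C(T^nM,T^nA)$: it first observes that each $T^nA$ carries a natural locally convex algebra structure via the iterated tangent functor (so the multiplication on $T^nA$ is continuous), then uses functoriality, $T^n(fg)=T^n(m_A)\circ(T^nf,T^ng)=T^nf\cdot T^ng$, to show that the embedding is an algebra morphism, and finally invokes Proposition \ref{C(X,A) loc. con. algebra} for the continuous compact-open case. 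This reduces the smooth statement to the continuous one in a single stroke and entirely sidesteps the chart bookkeeping you flag as the "only genuinely delicate point." Your approach buys concreteness and an explicit submultiplicative estimate, which can be useful downstream; the paper's buys brevity and avoids any dependence on the atlas, since the $T^n$-embedding is what the topology is defined by in the first place. Both are valid, and in fact your Leibniz estimate is exactly what the paper's functorial identity $T^n(fg)=T^nf\cdot T^ng$ encodes after unwinding $T^n$ in local coordinates.
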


\begin{proof}
\,\,\,To prove the claim we just have to verify that the multiplication in $C^{\infty}(M,A)$ is continuous:

(i) For this we first note that the tangent space $TA$ of $A$ carries a natural locally convex algebra structure given by the tangent functor $T$, i.e., defined by 
\[(a,v)(a',v'):=(aa',av'+va').
\]If $m_A:A\times A\rightarrow A$ is the multiplication of $A$, then $T(m_A):TA\times TA\cong T(A\times A)\rightarrow TA$ is the multiplication of $TA$. Iterating this process, we obtain a locally convex algebra structure on $T^nA$ for each $n\in\mathbb{N}$.

(ii) For elements $f,g\in C^{\infty}(M,A)$ the functoriality of $T$ implies that
\[T^n(fg)=T^n(m_A\circ(f,g))=T^n(m_A)\circ T^n(f,g)=T^n(m_A)\circ(T^n f,T^n g)=T^n f\cdot T^n g.
\]In particular, we conclude that the embedding
\[C^{\infty}(M,A)\hookrightarrow\prod_{n\in\mathbb{N}_0}C(T^nM,T^nA),\,\,\,f\mapsto(T^nf)_{n\in\mathbb{N}_0},
\]is a morphism of algebras. By Proposition \ref{C(X,A) loc. con. algebra}, the multiplication on the product algebra on the right is continuous, which implies the continuity of the multiplication on $C^{\infty}(M,A)$.
\end{proof}

\begin{proposition}\label{proj. tensor product for algebras}
If $M$ is a manifold and $A$ is a complete locally convex algebra, then there is a unique locally convex algebra structure on the locally convex space $C^{\infty}(M)\widehat{\otimes}A$ such that
\[C^{\infty}(M)\widehat{\otimes}A\cong C^{\infty}(M,A)
\]as \emph{(}complete\emph{)} locally convex algebras.
\end{proposition}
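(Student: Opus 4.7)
The plan is to transport the algebra structure of $C^\infty(M,A)$ obtained in Proposition \ref{C(M,A) loc. con. algebra} across a canonical topological isomorphism between $C^\infty(M)\widehat{\otimes}A$ and $C^\infty(M,A)$, and then to deduce uniqueness from a standard density argument. Throughout I will use that $C^\infty(M)\otimes A$ is, by the definition of the projective tensor product topology, dense in its completion $C^\infty(M)\widehat{\otimes}A$.

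First, I would invoke (or establish) the canonical isomorphism of locally convex spaces $\Phi: C^\infty(M)\widehat{\otimes}A \to C^\infty(M,A)$ determined on elementary tensors by $\Phi(f\otimes a)(m)=f(m)\,a$. On the algebraic tensor product this map is well-defined and continuous for the projective topology, and since $A$ is complete and $C^\infty(M,A)$ is complete as well, it extends to a topological isomorphism on the completion. This is a standard consequence of the smooth exponential law and the fact that $C^\infty(M)$ is a nuclear Fr\'echet space, both recalled in Appendix D.

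Second, I would use $\Phi$ to transport the pointwise multiplication from $C^\infty(M,A)$: define
\[ x\cdot y := \Phi^{-1}\bigl(\Phi(x)\,\Phi(y)\bigr),\qquad x,y\in C^\infty(M)\widehat{\otimes}A. \]
By Proposition \ref{C(M,A) loc. con. algebra}, the multiplication on $C^\infty(M,A)$ is jointly continuous, so $\cdot$ is jointly continuous as well, and $(C^\infty(M)\widehat{\otimes}A,\cdot)$ becomes a complete locally convex algebra for which $\Phi$ is, by construction, an isomorphism of locally convex algebras. A direct computation on elementary tensors yields the expected formula
\[ (f\otimes a)\cdot(g\otimes b)=(fg)\otimes(ab). \]

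For the uniqueness claim, suppose $*$ is any other locally convex algebra structure on $C^\infty(M)\widehat{\otimes}A$ such that $\Phi$ becomes an isomorphism of locally convex algebras. Then on elementary tensors $(f\otimes a)*(g\otimes b)$ is forced, via $\Phi$, to equal $(fg)\otimes(ab)$, hence $*$ and $\cdot$ coincide on the algebraic tensor product $C^\infty(M)\otimes A$. Since both are jointly continuous bilinear maps on $C^\infty(M)\widehat{\otimes}A$ and agree on the dense subspace $C^\infty(M)\otimes A$, they agree everywhere.

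The only nontrivial ingredient is the underlying topological isomorphism $C^\infty(M)\widehat{\otimes}A\cong C^\infty(M,A)$, which is the classical Grothendieck-type identification for nuclear Fr\'echet spaces; once this is granted, transporting the algebra structure and verifying uniqueness are essentially formal.
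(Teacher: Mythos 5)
Your proof is correct and rests on the same two pillars as the paper's argument: the topological isomorphism $\Phi:C^{\infty}(M)\widehat{\otimes}A\to C^{\infty}(M,A)$ of Theorem \ref{proj. tensor product IV} (b) and the continuity of pointwise multiplication on $C^{\infty}(M,A)$ from Proposition \ref{C(M,A) loc. con. algebra}. The only real difference is the direction of the construction: the paper first equips $C^{\infty}(M)\otimes A$ with the tensor product multiplication (Proposition \ref{AoB as l.c. algebra}), extends that continuous bilinear map to the completion via Corollary \ref{completion of top rings II}, and then checks that $\Phi$ intertwines the two products on the dense subalgebra, invoking the principle of extension of identities; you instead transport the product of $C^{\infty}(M,A)$ backwards through $\Phi$, which makes joint continuity and completeness automatic and reduces everything to the elementary-tensor computation $(f\otimes a)\cdot(g\otimes b)=(fg)\otimes(ab)$. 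Your route buys a small economy, since it avoids the bilinear-extension corollary altogether, while the paper's route makes explicit that the structure obtained is the canonical tensor product algebra structure on $C^{\infty}(M)\otimes A$ extended by continuity; your verification on elementary tensors recovers exactly this fact, so the two arguments are interchangeable. Your uniqueness paragraph is also fine (and, read as uniqueness of the structure making the fixed map $\Phi$ multiplicative, is even immediate without the density step), which is a point the paper leaves implicit.
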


\begin{proof}
\,\,\,According to Proposition \ref{AoB as l.c. algebra}, $C^{\infty}(M)\otimes A$ is a locally convex algebra. Thus, Corollary \ref{completion of top rings II} applied to the multiplication map of $C^{\infty}(M)\otimes A$ implies that $C^{\infty}(M)\widehat{\otimes}A$ is also a locally convex algebra. If $\widehat{m}$ denotes the multiplication map of $C^{\infty}(M)\widehat{\otimes}A$ and $m$ the multiplication map of the locally convex algebra $C^{\infty}(M,A)$ (cf. Proposition \ref{C(M,A) loc. con. algebra}), then it remains to verify the identity
\begin{align}
\Phi\circ\widehat{m}=m\circ(\Phi\times\Phi),\label{verify identity}
\end{align}
where 
\[\Phi:C^{\infty}(M)\widehat{\otimes}A\rightarrow C^{\infty}(M,A)
\]denotes the isomorphism of locally convex spaces stated in Theorem \ref{proj. tensor product IV} (b): In fact, an easy observation shows that (\ref{verify identity}) holds on $C^{\infty}(M)\otimes A$ and thus the claim follows from the principle of extension of identities.
\end{proof}

The next definition is crucial for the aim of the thesis since it is the beginning of a smooth localization method:

\begin{definition}\label{sm. loc.}{\bf(Smooth localization).}\index{Localization!Smooth}
For $n\in\mathbb{N}$ and a unital locally convex algebra $A$ we write
\[A\{t_1,\ldots,t_n\}:=C^{\infty}(\mathbb{R}^n,A)
\]\sindex[n]{$A\{t_1,\ldots,t_n\}$}\sindex[n]{$C^{\infty}(\mathbb{R}^n,A)$}for the unital locally convex algebra of smooth $A$-valued functions on $\mathbb{R}^n$ (cf. Proposition \ref{C(M,A) loc. con. algebra}). We further define for each $1\leq i\leq n$ and $a\in A$ a smooth $A$-valued function on $\mathbb{R}^n$ by 
\[f^i_a:\mathbb{R}^n\rightarrow A,\,\,\,(t_1,\ldots,t_n)\mapsto 1_A-t_ia.
\]If $a_1,\ldots,a_n\in A$, then we write $I_{a_1,\ldots,a_n}:=\overline{\langle f^1_{a_1},\ldots,f^n_{a_n}\rangle}$\sindex[n]{$I_{a_1,\ldots,a_n}$} for the closure of the two-sided ideal generated by these functions. Finally, we write
\[A_{\{a_1,\ldots,a_n\}}:=A\{t_1,\ldots,t_n\}/I_{a_1,\ldots,a_n}
\]for the corresponding locally convex quotient algebra and
\[\pi_{\{a_1,\ldots,a_n\}}:A\{t_1,\ldots,t_n\}\rightarrow A_{\{a_1,\ldots,a_n\}}
\]for the corresponding continuous quotient homomorphism. The algebra $A_{\{a_1,\ldots,a_n\}}$\sindex[n]{$A_{\{a_1,\ldots,a_n\}}$} is called the (\emph{smooth}) \emph{localization} of $A$ with respect to $a_1,\ldots,a_n$.
\end{definition}

\begin{remark}
For $n\in\mathbb{N}$ and a complete locally convex algebra $A$, Proposition \ref{proj. tensor product for algebras}, applied to $M=\mathbb{R}^n$, implies that
\[C^{\infty}(\mathbb{R}^n)\widehat{\otimes}A\cong C^{\infty}(\mathbb{R}^n,A)
\]as locally convex algebras. This result corresponds to the classical picture in commutative algebra of adjoining $n$ indeterminate elements to a ring $R$ by taking the coproduct of $R$ and the free $\mathbb{K}$-algebra in $n$ generators: 
\[\mathbb{K}[t_1,\ldots,t_n]\otimes R\cong R[t_1,\ldots,t_n].
\]In particular, the algebra $C^{\infty}(\mathbb{R}^n,A)$ may be thought of the outcome of adjoining $n$ indeterminates to the algebra $A$ in a smooth way.
\end{remark}

\begin{remark}\label{rem to sm. loc.}
Unlike in ordinary commutative algebra, we do, not even in the case $n=1$, have an explicit description of the elements of $A_{\{a_1,\ldots,a_n\}}$. In particular, the construction in Definition \ref{sm. loc.} is, in general, quite different from the one in Example \ref{1,a,a^2}; for example, if one localizes an algebra $A$ with respect to a non-zero quasi-nilpotent element $a$, i.e., an element with $\spec(a)=\{0\}$ which is not nilpotent, then $A_{\{a\}}=0$ since the function $f_a$ is invertible in $A\{t\}$. On the other hand we have $0\neq\frac{a}{1}\in A_a$. We will point out another difference in Corollary \ref{C(M)_f=C(M_f)}. Nevertheless, by Proposition \ref{univ. prop of loc.}, we have a canonical homomorphism from $A_a$ to $A_{\{a\}}$.
\end{remark}

\begin{remark}\label{inverting 1 and 0} 
Let $A$ be an arbitrary (possibly noncommutative) unital algebra. Then localizing $A$ with respect to 0 leads to the zero-algebra, i.e.,
\[A_0={\bf0}\,\,\,\text{and}\,\,\,A_{\{0\}}={\bf0}.
\]On the other hand, if $a$ in $A$ is an invertible element, then localization of $A$ with respect to $a$ changes nothing, i.e., there is a canonical isomorphism between $A$ and $A_a$. This is not clear at all for the smooth localization $A_{\{a\}}$.
\end{remark}


\begin{proposition}\label{inverting 1}
Let $A$ be a complete unital locally convex algebra. Then the map
\[\ev_{1}:A\{t\}\rightarrow A,\,\,\,f\mapsto f(1)
\]\sindex[n]{$\ev_{1}$} is a surjective morphism of \emph{(}complete\emph{)} unital locally convex algebras with kernel 
\[I_{1_A}:=(t-1)\cdot A\{t\}.
\]
\end{proposition}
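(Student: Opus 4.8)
The plan is to verify the three assertions---that $\ev_1$ is a morphism of unital locally convex algebras, that it is surjective, and that its kernel is $(t-1)\cdot A\{t\}$---in that order, the first two being essentially formal and the third being the heart of the matter.

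First I would observe that $\ev_1\colon A\{t\}=C^\infty(\mathbb{R},A)\to A$, $f\mapsto f(1)$, is linear and unital (it sends the constant function $1_A$ to $1_A$), and multiplicative since the multiplication on $C^\infty(\mathbb{R},A)$ is pointwise (cf.\ Proposition \ref{C(M,A) loc. con. algebra}): $(fg)(1)=f(1)g(1)$. Continuity of $\ev_1$ is immediate because point evaluation is continuous for the smooth compact open topology (it is one of the defining seminorm families, or follows from the smoothness of $\ev_M$ as in Lemma \ref{smoothness of induced map}). Surjectivity is then clear: for $a\in A$ the constant function with value $a$ is a smooth $A$-valued arc mapping to $a$ under $\ev_1$; since $A$ is complete, $A\{t\}$ is a complete locally convex algebra (Proposition \ref{C(M,A) loc. con. algebra} together with completeness of $C^\infty(\mathbb{R},A)$ for complete $A$), so $\ev_1$ is a surjective morphism of complete unital locally convex algebras.

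The substantive point is the identification of the kernel. One inclusion is trivial: every $f$ of the form $(t-1)\cdot h$ with $h\in A\{t\}$ satisfies $f(1)=(1-1)\cdot h(1)=0$, so $(t-1)\cdot A\{t\}\subseteq\ker(\ev_1)$. For the reverse inclusion, suppose $f\in C^\infty(\mathbb{R},A)$ with $f(1)=0$. The key step is the ``Hadamard lemma'' (Taylor expansion with integral remainder) in the vector-valued smooth setting: writing
\[
f(t)=f(1)+\int_0^1 f'(1+s(t-1))\,(t-1)\,ds=(t-1)\cdot\int_0^1 f'(1+s(t-1))\,ds,
\]
one sets $h(t):=\int_0^1 f'(1+s(t-1))\,ds$. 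The claim is that $h\in C^\infty(\mathbb{R},A)$ and that $f=(t-1)\cdot h$ as elements of $A\{t\}$. Smoothness of $h$ follows from smoothness of $f'$ together with smooth dependence of parameter integrals on parameters (the standard fact that integration over a compact interval of a smooth family of $A$-valued maps yields a smooth map, valid since $A$ is complete so the Riemann integral exists), and the factorization identity is just the fundamental theorem of calculus applied pointwise in $t$. Hence $\ker(\ev_1)=(t-1)\cdot A\{t\}$, which I would denote $I_{1_A}$ to match the notation of Definition \ref{sm. loc.} (noting $f^1_{1_A}(t)=1_A-t\cdot 1_A=-(t-1)\cdot 1_A$ generates the same ideal).

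The main obstacle is the vector-valued Hadamard lemma: one must be careful that $\int_0^1 f'(1+s(t-1))\,ds$ is genuinely a \emph{smooth} $A$-valued function of $t$ and not merely continuous, which requires the completeness hypothesis on $A$ (so that the integral is defined) and a differentiation-under-the-integral argument; this is presumably available from the appendix material on smooth vector-valued function spaces (Appendix D), and I would cite it rather than reprove it. Everything else is routine.
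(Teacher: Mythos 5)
Your proof is correct and takes essentially the same approach as the paper: continuity of $\ev_1$ from the smoothness of the evaluation map, surjectivity via constant functions, and the kernel identification via the integral formula $g(t)=\int_0^1 f'(1+s(t-1))\,ds$ with the factorization $f(t)=(t-1)g(t)$. Your extra remark that $\ker(\ev_1)$ is automatically closed (so the closure in Definition~\ref{sm. loc.} is harmless) and that $f^1_{1_A}$ differs from $t-1$ only by the unit $-1$ is a nice clarification the paper leaves implicit.
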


\begin{proof}
\,\,\,(i) According to [NeWa07], Proposition I.2, the evaluation map
\[\ev_{\mathbb{R}}:C^{\infty}(\mathbb{R},A)\times\mathbb{R}\rightarrow A,\,\,\,(f,r)\mapsto f(r)
\]is smooth. In particular, the map $\ev_{1}$ is continuous. Further, a short observation shows that $\ev_{1}$ is surjective and a homomorphism of algebras. Therefore, $\ev_{1}$ is a surjective morphism of (complete) unital locally convex algebras.

(ii) It remains to determine the kernel $I:=\ker\ev_{1}$: Clearly, $I_{1_A}\subseteq I$. Therefore, let $f\in A\{t\}$ with $f(1)=0$. We define a smooth $A$-valued function on $\mathbb{R}$ by
\[g:\mathbb{R}\rightarrow A,\,\,\,g(t):=\int^1_0f'(s(t-1)+1)\,ds.
\]Now, an easy calculation leads to
\[f(t)=(t-1)\cdot g(t),
\]i.e., $f\in I_{1_A}$ and thus $I_{1_A}=I$.
\end{proof}

\begin{corollary}\label{inverting 1 iso}
In the situation of Proposition \ref{inverting 1}, the map
\[\varphi:A_{\{1_A\}}\rightarrow A,\,\,\,f+I_{1_A}\mapsto f(1)
\]is an isomorphism of \emph{(}complete\emph{)} locally convex algebras.
\end{corollary}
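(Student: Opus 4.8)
The plan is to recognize $\varphi$ as the homomorphism induced by $\ev_1$ on the quotient by its kernel, and then to upgrade the resulting algebraic isomorphism to a topological one by writing down an explicit continuous inverse (thereby avoiding any open-mapping hypotheses, which are not available in general locally convex algebras).

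The first step is to identify $I_{1_A}$ with $\ker\ev_1$. The generating function is $f_{1_A}(t)=1_A-t\cdot 1_A=-(t-1)1_A$, which is central in $A\{t\}$ (at each point it is a scalar multiple of the unit), so the two-sided ideal $\langle f_{1_A}\rangle$ equals $(t-1)\cdot A\{t\}$. By Proposition \ref{inverting 1} this set is exactly $\ker\ev_1$, hence already closed, so $I_{1_A}=\overline{\langle f_{1_A}\rangle}=(t-1)\cdot A\{t\}=\ker\ev_1$. Therefore $A_{\{1_A\}}=A\{t\}/\ker\ev_1$, and since $\ev_1$ annihilates $I_{1_A}$ it descends to a well-defined algebra homomorphism $\varphi\colon A_{\{1_A\}}\to A$, $f+I_{1_A}\mapsto f(1)$. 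Surjectivity and the multiplicativity of $\varphi$ are inherited from $\ev_1$ (Proposition \ref{inverting 1}), and injectivity is precisely the identity $\ker\ev_1=I_{1_A}$ just established. Continuity of $\varphi$ is automatic, since $A_{\{1_A\}}$ carries the final (quotient) topology with respect to $\pi_{\{1_A\}}$ and $\varphi\circ\pi_{\{1_A\}}=\ev_1$ is continuous.

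For the remaining point, that $\varphi$ is a \emph{homeomorphism}, I would exhibit a continuous two-sided inverse directly. Let $\iota\colon A\to A\{t\}$ send $a$ to the constant function with value $a$; this map is continuous into $C^\infty(\mathbb{R},A)$, so $\psi:=\pi_{\{1_A\}}\circ\iota\colon A\to A_{\{1_A\}}$ is continuous. Then $\varphi\circ\psi=\id_A$ because a constant function takes the value $a$ at $t=1$; and for any $f\in A\{t\}$ the difference $f-\iota(f(1))$ vanishes at $t=1$, hence lies in $\ker\ev_1=I_{1_A}$, so $\psi\circ\varphi=\id_{A_{\{1_A\}}}$ as well. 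Thus $\varphi$ is an isomorphism of locally convex algebras, and completeness of $A_{\{1_A\}}$ follows at once from completeness of $A$. The only step requiring any care is this last one: the topological (not merely algebraic) statement is what is at stake, and it is secured by the explicit inverse $\psi$ rather than by appealing to an open mapping theorem; everything else is a routine consequence of Proposition \ref{inverting 1}.
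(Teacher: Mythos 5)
Your proof is correct and follows the same route as the paper: both obtain bijectivity from Proposition \ref{inverting 1} together with the quotient topology, and both secure the topological isomorphism by exhibiting the explicit continuous inverse $\psi=\pi_{\{1_A\}}\circ\iota$ rather than invoking an open-mapping theorem. Your extra remark that $f_{1_A}$ is central so $\overline{\langle f_{1_A}\rangle}=(t-1)\cdot A\{t\}=\ker\ev_1$ is a worthwhile clarification that the paper leaves implicit when reconciling the general definition of $I_{1_A}$ with the one used in Proposition \ref{inverting 1}.
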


\begin{proof}
\,\,\,In view of Proposition \ref{inverting 1} and the definition of the quotient topology, the map $\varphi$ is a bijective morphism of locally convex algebras. Further, a short observation shows that the map 
\[\psi:=\pi_{\{1_A\}}\circ i:A\rightarrow A_{\{1_A\}},
\]where $i:A\rightarrow A\{t\}$ denotes the canonical inclusion, is a continuous inverse of the map $\varphi$.
\end{proof}

\begin{remark}
(a) We will mainly be interested in the case $n=1$.

(b) If $A=C^{\infty}(M)$ is the algebra of smooth functions on some manifold $M$, then Lemma \ref{smooth exp law} implies that 
\[A\{t_1,\ldots,t_n\}\cong C^{\infty}(\mathbb{R}^n\times M).
\]In particular, $A\{t\}=C^{\infty}(\mathbb{R}\times M)$.
\end{remark}

In the forthcoming chapter we will need the following property of $A_{\{a\}}$:

\begin{lemma}\label{A_{a...} is a (left) A-modul}
For all $a_1,\ldots,a_n\in A$ there is a continuous $A$-bimodule structure on $A_{\{a_1,\ldots,a_n\}}$, given for all $a,a'\in A$ and $f\in A\{t_1,\ldots,t_n\}$ by 
\[a.[f].a':=[afa']=afa'+I_{a_1,\ldots,a_n}.
\]
\end{lemma}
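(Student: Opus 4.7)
The plan is to derive the $A$-bimodule structure directly from the two-sided ideal structure of $I_{a_1,\ldots,a_n}$ inside the locally convex algebra $A\{t_1,\ldots,t_n\}$. Concretely, since $A\{t_1,\ldots,t_n\}=C^\infty(\mathbb{R}^n,A)$ is a unital locally convex algebra (Proposition \ref{C(M,A) loc. con. algebra}), the constant-function map
\[\iota:A\hookrightarrow A\{t_1,\ldots,t_n\},\,\,\,a\mapsto(t\mapsto a)\]
is a continuous unital algebra homomorphism. Multiplication in $A\{t_1,\ldots,t_n\}$ therefore provides continuous left and right actions of $A$ on $A\{t_1,\ldots,t_n\}$ via $(a,f)\mapsto \iota(a)\cdot f$ and $(f,a')\mapsto f\cdot\iota(a')$, which together make $A\{t_1,\ldots,t_n\}$ into a continuous $A$-bimodule that moreover satisfies $(a\cdot f)\cdot a'=a\cdot(f\cdot a')$ (by associativity of the ambient multiplication). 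Writing $afa'$ for $\iota(a)f\iota(a')$ is then consistent with the proposed formula.

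Next I would verify that $I_{a_1,\ldots,a_n}$ is stable under these actions. By definition $I_{a_1,\ldots,a_n}$ is the closure of the two-sided (algebra) ideal of $A\{t_1,\ldots,t_n\}$ generated by the $f^i_{a_i}$. Since it is in particular a two-sided ideal, $\iota(a)\cdot I_{a_1,\ldots,a_n}\subseteq I_{a_1,\ldots,a_n}$ and $I_{a_1,\ldots,a_n}\cdot\iota(a')\subseteq I_{a_1,\ldots,a_n}$ for all $a,a'\in A$. Consequently, the prescription
\[a\cdot[f]\cdot a':=[afa']\]
is well-defined on the quotient $A_{\{a_1,\ldots,a_n\}}=A\{t_1,\ldots,t_n\}/I_{a_1,\ldots,a_n}$ and inherits the bimodule axioms from those of the $A$-bimodule $A\{t_1,\ldots,t_n\}$.

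Finally, continuity: the bilinear maps
\[A\times A\{t_1,\ldots,t_n\}\rightarrow A\{t_1,\ldots,t_n\},\,\,\,(a,f)\mapsto\iota(a)f\]
and the corresponding right-multiplication map are continuous as compositions of the continuous map $\iota$ with the jointly continuous multiplication of the locally convex algebra $A\{t_1,\ldots,t_n\}$. Composing with the continuous quotient map $\pi_{\{a_1,\ldots,a_n\}}$ yields continuous bilinear maps $A\times A\{t_1,\ldots,t_n\}\to A_{\{a_1,\ldots,a_n\}}$ and $A\{t_1,\ldots,t_n\}\times A\to A_{\{a_1,\ldots,a_n\}}$ which, by the universal property of the quotient topology, factor to continuous bilinear maps $A\times A_{\{a_1,\ldots,a_n\}}\to A_{\{a_1,\ldots,a_n\}}$ and $A_{\{a_1,\ldots,a_n\}}\times A\to A_{\{a_1,\ldots,a_n\}}$. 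These are precisely the left and right actions described in the statement, completing the argument. No step here is a real obstacle; the only thing to be careful about is invoking the fact that $I_{a_1,\ldots,a_n}$ is two-sided (as opposed to merely a left or right ideal), which is built into the definition.
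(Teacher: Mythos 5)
Your argument is correct and follows essentially the same route as the paper: the paper likewise observes that pointwise left/right multiplication by elements of $A$ gives a continuous $A$-bimodule structure on $A\{t_1,\ldots,t_n\}$, that the two-sided ideal $I_{a_1,\ldots,a_n}$ is invariant under it, and that continuity descends via the quotient topology (using that $\id_A\times\pi_{\{a_1,\ldots,a_n\}}$ is continuous, surjective and open). Your write-up simply makes these steps explicit.
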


\begin{proof}
\,\,\,The claim follows from the continuity of the $A$-bimodule structure on $A\{t_1,\ldots,t_n\}$, given for all $a,a'\in A$ and $f\in A\{t_1,\ldots,t_n\}$ by 
\[(a.f.a')(t_1,\ldots,t_n):=af(t_1,\ldots,t_n)a',
\]and the definition of the quotient topology.
\end{proof}

\section{The Spectrum of $A_{\{a_1,\ldots,a_n\}}$}\label{the spectrum of A_a section}

Before coming up with concrete examples of smoothly localized algebras, we will first be concerned with the problem of calculating the spectrum of $A_{\{a_1,\ldots,a_n\}}$ for some unital locally convex algebra $A$ and elements $a_1,\ldots,a_n\in A$. We recall that the \emph{spectrum} of an algebra $A$ is defined as 
\[\Gamma_A:=\Hom_{\text{alg}}(A,\mathbb{K})\backslash\{0\},\,\,\,\mathbb{K}=\mathbb{R},\mathbb{C},
\]\sindex[n]{$\Gamma_A$}(with the topology of pointwise convergence on $A$). The elements of $\Gamma_A$ are called characters. Moreover, if $A$ is a topological algebra, then $\Gamma^{\text{cont}}_A$\sindex[n]{$\Gamma^{\text{cont}}_A$} denotes the set of continuous characters of $A$. We start with determining the characters of the algebra of smooth functions on a manifold. The proof of the following proposition originates from a unpublished paper of H. Grundling and K.-H. Neeb:

\begin{theorem}\label{spec of C(M,R) set}
Let $M$ be a manifold and let $C^{\infty}(M,\mathbb{R})$ be the unital Fr\'echet algebra of smooth functions on $M$. Then the following assertions hold:
\begin{itemize}
\item[\emph{(a)}]
Each closed maximal ideal of $C^{\infty}(M,\mathbb{R})$ is the kernel of an evaluation homomorphism \[\delta_m:C^{\infty}(M,\mathbb{R})\rightarrow\mathbb{R},\,\,\,f\mapsto f(m)\,\,\,\text{for some}\,\,\,m\in M.
\]
\item[\emph{(b)}]
Each character $\chi:C^{\infty}(M,\mathbb{R})\rightarrow\mathbb{R}$ is an evaluation in some point $m\in M$.
\end{itemize}
\end{theorem}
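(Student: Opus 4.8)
The plan is to prove both assertions simultaneously, since (b) for $\mathbb{R}$-valued characters really says that every character has kernel a closed maximal ideal which, by (a), is the kernel of some $\delta_m$. So the heart of the matter is (a): if $I \subseteq C^\infty(M,\mathbb{R})$ is a closed maximal ideal, I want to produce a point $m \in M$ with $I = \ker \delta_m$. Equivalently, I want to show that the functions in $I$ have a common zero; if they do, say $f(m) = 0$ for all $f \in I$, then $I \subseteq \ker \delta_m$, and since $\ker \delta_m$ is a proper ideal and $I$ is maximal, $I = \ker \delta_m$. The first step, then, is to argue that $\bigcap_{f \in I} f^{-1}(0) \neq \emptyset$.

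The main obstacle is exactly this common-zero statement, and it is here that one must use both that $M$ is (by the standing conventions in the excerpt) second countable and paracompact, and that $I$ is \emph{closed}. Suppose for contradiction that the functions in $I$ have no common zero. First I would treat the compact case: if $M$ is compact, then the open sets $M_f = \{m : f(m) \neq 0\}$, $f \in I$, cover $M$, so finitely many $M_{f_1}, \dots, M_{f_k}$ cover $M$, and then $g := f_1^2 + \cdots + f_k^2 \in I$ is everywhere positive, hence invertible in $C^\infty(M,\mathbb{R})$, contradicting that $I$ is proper. For general (paracompact, second countable) $M$ the same idea works but one must manufacture a single function in $I$ with no zeros: choose a locally finite countable cover by the $M_{f_i}$ together with a subordinate partition of unity $(\psi_i)$, and set $g := \sum_i \psi_i f_i^2$. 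This $g$ is a well-defined smooth function (local finiteness), it is everywhere strictly positive, and — this is the step where closedness of $I$ is essential — the partial sums $\sum_{i \le N} \psi_i f_i^2$ lie in $I$ and converge to $g$ in the Fréchet topology of $C^\infty(M,\mathbb{R})$ (uniform convergence with all derivatives on compact sets, which holds because the cover is locally finite), so $g \in \overline{I} = I$. Again $g$ is invertible, contradicting properness. Hence the common zero set is nonempty.

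With a common zero $m$ in hand, $I \subseteq \ker\delta_m \subsetneq C^\infty(M,\mathbb{R})$ forces $I = \ker\delta_m$ by maximality, proving (a). For (b): let $\chi$ be any character. Its kernel $\ker\chi$ is a maximal ideal (the quotient is the field $\mathbb{R}$). I would first note $\ker\chi$ is automatically closed: by Proposition~\ref{modules of frechet algebras}(a) — or more elementarily, since for $f$ with $\chi(f) = 0$ and $f(m_0) \neq 0$ for some fixed point, one can still run the argument — actually the cleanest route is to invoke that a character of a unital Fréchet algebra whose group of units is open (which holds for $C^\infty(M,\mathbb{R})$, a continuous inverse algebra, cf. Appendix C) is automatically continuous, hence has closed kernel. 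Then part (a) applies to $\ker\chi$, giving $\ker\chi = \ker\delta_m$ for some $m$; since both $\chi$ and $\delta_m$ are unital algebra homomorphisms to $\mathbb{R}$ with the same kernel and $C^\infty(M,\mathbb{R})/\ker\delta_m \cong \mathbb{R}$, they must coincide. If one prefers to avoid the automatic-continuity black box, one can instead argue directly: for $\chi$ a character, $\chi(f) \in f(M)$ for every $f$ (otherwise $f - \chi(f)\mathbf{1}$ is invertible yet in $\ker\chi$), and in particular $\chi$ is continuous for the topology of uniform convergence; then repeat the partition-of-unity argument applied to the ideal $\ker\chi$, whose closure is still proper because $\chi$ is continuous. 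Either way, the conclusion is that $\chi = \delta_m$.
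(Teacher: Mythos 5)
Your proof of part (a) is essentially the same as the paper's: assume the functions in a closed ideal have no common zero, build a strictly positive element of the ideal via a locally finite construction, and use closedness to put the infinite sum in the ideal, contradicting properness. The paper does this with an explicit compact exhaustion $M_n \subseteq M_{n+1}^{0}$ and functions supported in the annuli $M_{n+1}\setminus M_{n-1}^{0}$, while you use a partition of unity; these are interchangeable. Fine.

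Part (b), however, has a genuine gap, and it is instructive that the paper does \emph{not} take your route. You want to reduce (b) to (a) by arguing that $\ker\chi$ (or at least $\overline{\ker\chi}$) is a closed proper ideal, which requires that $\chi$ be continuous (a discontinuous $\mathbb{R}$-valued character would have dense kernel). You offer two justifications, and neither holds for non-compact $M$. First, you invoke automatic continuity of characters on CIAs, but $C^\infty(M,\mathbb{R})$ is a CIA only when $M$ is compact; the paper's Appendix~C explicitly remarks that $C^\infty(\mathbb{R},\mathbb{R})$ has non-open unit group, so Lemma~\ref{cont of char of CIA} is not available. Second, you observe that $\chi(f)\in\overline{f(M)}$; this is correct and gives $|\chi(f)|\le\sup_M|f|$ when $f$ is bounded, but the Fr\'echet topology on $C^\infty(M,\mathbb{R})$ for non-compact $M$ is uniform $C^\infty$-convergence on compact sets, which is strictly weaker than sup-norm control on all of $M$, and it says nothing at all for unbounded $f$. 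So neither route establishes that $\overline{\ker\chi}$ is proper, and the reduction to (a) stalls.

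The paper sidesteps continuity altogether. It first shows that any character is positive (if $f\ge 0$ then $f+c$ is a square of an invertible element for $c>0$, forcing $\chi(f)\ge 0$). Assuming $\ker\chi$ has no zero, the compact-covering trick from (a) produces, for each compact $K$, a compactly supported $f_K\in\ker\chi$ with $f_K>0$ on $K$; dominating an arbitrary compactly supported $h$ by a multiple of $f_K$ and using positivity shows $\chi$ kills all compactly supported functions. Finally one picks a proper exhausting function $F$ (with $F^{-1}(]-\infty,c]]$ compact) and shows $\chi(F)\ge c$ for every $c>0$, a contradiction. Your instinct to prove (a) and deduce (b) is natural, but the missing continuity input is precisely why the paper runs a separate, positivity-based argument for (b).
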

\begin{proof}
\,\,\,(a) Let $I\subseteq C^{\infty}(M,\mathbb{R})$ be a closed maximal ideal. If all functions vanish in the point $m$ in $M$, then the maximality of $I$ implies that $I=\ker\delta_m$. So we have to show that such a point exists. Let us assume that this is not the case. From that we shall derive the contradiction $I=C^{\infty}(M,\mathbb{R})$:

(i) Let $K\subseteq M$ be a compact set. Then for each $m\in K$ there exists a function $f_m\in I$ with $f_m(m)\neq 0$. The family $(f_m^{-1}(\mathbb{R}^{\times}))_{m\in K}$ is an open cover of $K$, so that there exist $m_1,\ldots,m_n\in K$ and a smooth function $f_K:=\sum_{i=1}^n f_{m_i}^2>0$ on $K$.

(ii) If $M$ is compact, then we thus obtain a function $f_M\in I$ which is nowhere zero. This leads to the contradiction $f_M\in C^{\infty}(M,\mathbb{R})^{\times}\cap I$. Next, suppose that $M$ is non-compact. Then there exists a sequence $(M_n)_{n\in N}$ of compact subsets with $M=\bigcup_n M_n$ and $M_n\subseteq M_{n+1}^0$. Let $f_n\in I$ be a non-negative function supported by $M_{n+1}\backslash M_{n-1}^0$ with $f_n>0$ on the compact set $M_n\backslash M_{n-1}^0$. Here the requirement on the support can be achieved by multiplying with a smooth function supported by $M_{n+1}\backslash M_{n-1}^0$ which equals 1 on $M_n\backslash M_{n-1}^0$. Then $f:=\sum_n f_n$ is a smooth function in $\overline{I}=I$ with $f>0$. Hence, $f$ is invertible, which is a contradiction.

(b) The proof of this assertion is divided into four parts:

(i) Let $\chi:C^{\infty}(M,\mathbb{R})\rightarrow\mathbb{R}$ be a character. If $f\in C^{\infty}(M,\mathbb{R})$ is non-negative, then for each $c>0$ we have $f+c=h^2$ for some smooth function $h\in C^{\infty}(M,\mathbb{R})^{\times}$, and this implies that $\chi(f)+c=\chi(f+c)\geq 0$, which leads to $\chi(f)\geq -c$, and consequently to $\chi(f)\geq 0$.

(ii) Now, let $F:M\rightarrow\mathbb{R}$ be a smooth function for which the sets $F^{-1}(]-\infty,c])$, $c\in\mathbb{R}$, are compact. Such a function can easily be constructed from a sequence $(M_n)_{n\in\mathbb{N}}$ as above.

(iii) We consider the ideal $I=\ker\chi$. If $I$ has a zero, then $I=\ker\delta_m$ for some $m$ in $M$ and this implies $\chi=\delta_m$. Hence, we may assume that $I$ has no zeros. Then the argument of (a) provides for each compact subset $K\subseteq M$ a compactly supported function $f_K\in I$ with $f_K>0$ on $K$. If $h\in C^{\infty}(M,\mathbb{R})$ is supported by $K$, we therefore find a $\lambda>0$ with $\lambda f_K-h\geq 0$, which leads to
\[0\leq\chi(\lambda f_K-h)=\chi(-h),
\]and hence to $\chi(h)\leq0$. Replacing $h$ by $-h$, we also get $\chi(h)\geq 0$ and hence $\chi(h)=0$. Therefore, $\chi$ vanishes on all compactly supported functions.

(iv) For $c>0$ we now pick a non-negative function $f_c\in I$ with $f_c>0$ on the compact subset $F^{-1}(]-\infty,c])$. Then there exists a $\mu>0$ with $\mu f_c+F\geq c$ on $F^{-1}(]-\infty,c])$. Now $\mu f_c+F\geq c$ holds on all of  $M$, and therefore $\chi(F)=\chi(\mu f_c+F)\geq c$.
Since $c>0$ was arbitrary, we arrive at a contradiction.
\end{proof}

\begin{corollary}\label{spec of C(M,K) set}
Let $M$ be a manifold and let $C^{\infty}(M,\mathbb{C})$ be the unital Fr\'echet algebra of smooth complex-valued functions on $M$. Then each character $\chi:C^{\infty}(M,\mathbb{C})\rightarrow\mathbb{C}$ is an evaluation in some point $m\in M$.
\end{corollary}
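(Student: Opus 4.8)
The plan is to reduce the assertion to the real-valued case treated in Theorem \ref{spec of C(M,R) set}(b), by restricting $\chi$ to the subalgebra $C^\infty(M,\mathbb{R})$ of real-valued functions. So let $\chi:C^\infty(M,\mathbb{C})\to\mathbb{C}$ be a character. First I would record the usual preliminaries: since $\chi(1)=\chi(1)^2$ and $\chi\neq 0$, we have $\chi(1)=1$; in particular $\chi$ sends invertible elements to nonzero scalars, and the restriction $\psi:=\chi|_{C^\infty(M,\mathbb{R})}$ is a nonzero homomorphism of unital algebras into $\mathbb{C}$.

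The crucial step is to show that $\psi$ is actually \emph{real}-valued. Given $f\in C^\infty(M,\mathbb{R})$, write $\psi(f)=a+ib$ with $a,b\in\mathbb{R}$ and consider the real-valued smooth function $g:=(f-a)^2+b^2$. Since $\psi$ is a unital algebra homomorphism, one computes $\psi(g)=(\psi(f)-a)^2+b^2=(ib)^2+b^2=0$. If $b\neq 0$, then $g\geq b^2>0$ everywhere, so $1/g$ is smooth and $g$ is invertible in $C^\infty(M,\mathbb{R})$, whence $\psi(g)\neq 0$ --- a contradiction. Thus $b=0$, i.e.\ $\psi\bigl(C^\infty(M,\mathbb{R})\bigr)\subseteq\mathbb{R}$, so that $\psi$ is a character of the real unital Fr\'echet algebra $C^\infty(M,\mathbb{R})$.

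By Theorem \ref{spec of C(M,R) set}(b) there is then a point $m\in M$ with $\psi(h)=h(m)$ for every real-valued $h$. To conclude, given an arbitrary $f\in C^\infty(M,\mathbb{C})$ I would write $f=u+iv$ with $u,v\in C^\infty(M,\mathbb{R})$ its real and imaginary parts, and use $\mathbb{C}$-linearity of $\chi$ to obtain $\chi(f)=\psi(u)+i\psi(v)=u(m)+iv(m)=f(m)=\delta_m(f)$; hence $\chi$ is evaluation at $m$. The only point requiring a genuine idea is the real-valuedness of $\psi$, handled by the sum-of-squares trick above; everything else is bookkeeping. (One could alternatively argue via the conjugation involution $f\mapsto\bar f$ and the character $f\mapsto\overline{\chi(\bar f)}$, but the present route is more direct and avoids discussing whether $\chi$ is a $*$-homomorphism.)
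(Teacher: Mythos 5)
Your argument is correct and follows the same route as the paper: restrict $\chi$ to $C^\infty(M,\mathbb{R})$, observe that the restriction is real-valued (and hence a character of the real algebra), invoke Theorem \ref{spec of C(M,R) set}(b), and recover $\chi=\delta_m$ by $\mathbb{C}$-linearity using $f=f_1+if_2$. The paper asserts real-valuedness of the restriction without proof, whereas you supply the sum-of-squares justification --- a worthwhile addition, but the same strategy.
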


\begin{proof}
\,\,\,This assertion easily follows from Theorem \ref{spec of C(M,R) set} (b): Indeed, we just have to note that each element $f\in C^{\infty}(M,\mathbb{C})$ can be written as a sum of smooth real-valued functions $f_1$ and $f_2$, i.e., 
\[f=f_1+if_2\,\,\,\text{for}\,\,\,f_1,f_2\in C^{\infty}(M,\mathbb{R}),
\]and that the restriction of the character $\chi$ to $C^{\infty}(M,\mathbb{R})$ is real-valued, i.e., defines a character of $C^{\infty}(M,\mathbb{R})$.
\end{proof}

The preceding proposition shows that the correspondence between $M$ and $\Gamma_{C^{\infty}(M)}$ is actually a topological isomorphism:

\begin{proposition}\label{spec of C(M) top}
Let $M$ be a manifold. Then the map
\begin{align}
\Phi:M\rightarrow \Gamma_{C^{\infty}(M)},\,\,\,m\mapsto \delta_m\notag.
\end{align}
is a homeomorphism.
\end{proposition}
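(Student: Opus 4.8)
The plan is to show that $\Phi$ is a continuous open bijection. Surjectivity is immediate from Corollary \ref{spec of C(M,K) set}: every character $\chi\colon C^{\infty}(M)\rightarrow\mathbb{C}$ is an evaluation $\delta_m$ for some $m\in M$, so $\chi=\Phi(m)$. For injectivity I would use that $C^{\infty}(M)$ separates the points of $M$: if $m\neq m'$, then a smooth bump function supported in a coordinate chart around $m$ which does not contain $m'$ takes different values at $m$ and $m'$, whence $\delta_m\neq\delta_{m'}$.

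Next, continuity of $\Phi$. Since $\Gamma_{C^{\infty}(M)}$ carries the topology of pointwise convergence on $C^{\infty}(M)$, it suffices to check that for each $f\in C^{\infty}(M)$ the map $m\mapsto\delta_m(f)=f(m)$ is continuous on $M$; but this map is just $f$, which is smooth, hence continuous. Therefore $\Phi$ is continuous.

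The remaining point — and, I expect, the only one requiring any real work — is that $\Phi$ is open, equivalently that $\Phi^{-1}$ is continuous. Let $V\subseteq M$ be open and $m_0\in V$. Using a smooth partition of unity (the manifolds in this thesis are paracompact), or more concretely a bump function inside a coordinate ball around $m_0$ whose closure lies in $V$, I would produce a function $f\in C^{\infty}(M,\mathbb{R})\subseteq C^{\infty}(M)$ with $f(m_0)=0$ and $f\equiv 1$ on $M\setminus V$. Then
\[
W:=\{\chi\in\Gamma_{C^{\infty}(M)}:\ |\chi(f)|<1\}
\]
is an open neighbourhood of $\delta_{m_0}$ in the topology of pointwise convergence, and, since by surjectivity every $\chi\in W$ equals some $\delta_m$, the inequality $|f(m)|<1$ forces $m\notin M\setminus V$, i.e.\ $m\in V$. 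Hence $W\subseteq\Phi(V)$. As $m_0\in V$ was arbitrary, $\Phi(V)$ is open, so $\Phi$ is an open map. Combining surjectivity, injectivity, continuity and openness shows that $\Phi$ is a homeomorphism.

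The main obstacle is thus the openness of $\Phi$, which rests on the existence of smooth Urysohn-type functions separating a point from a disjoint closed set; this is exactly what paracompactness of $M$ (and the partition of unity it provides) delivers.
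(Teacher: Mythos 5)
Your proof is correct and follows essentially the same route as the paper's: surjectivity from Corollary~\ref{spec of C(M,K) set}, injectivity from the fact that $C^{\infty}(M)$ separates points, continuity by testing against each $f$, and openness via a smooth Urysohn-type function adapted to a coordinate neighbourhood. The paper's only cosmetic difference is in the openness step: it takes $h$ with $h(m_0)\neq 0$ and $\supp(h)\subset U$ and uses $\delta_h^{-1}(\mathbb{K}^{\times})$ as the open neighbourhood, whereas you take $f$ vanishing at $m_0$ and equal to $1$ off $V$ and use $\{\chi:\,|\chi(f)|<1\}$ — these are dual formulations of the same bump-function idea (indeed $h=1-f$ works). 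Also, the paper checks continuity via convergent sequences rather than your direct appeal to the initial-topology characterization, but both are standard and equivalent here since $M$ is metrizable.
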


\begin{proof}
\,\,\,(i) The surjectivity of $\Phi$ follows from Corollary \ref{spec of C(M,K) set}.
To show that $\Phi$ is injective, choose elements $m\neq m'$ of $M$. Since $M$ is manifold, there exists a function $f$ in $C^{\infty}(M)$ with $f(m)\neq f(m')$. Then 
\[\delta_m(f)=f(m)\neq f(m')=\delta_{m'}(f)
\]implies that $\delta_m\neq\delta_{m'}$, i.e., $\Phi$ is injective. 

(ii) Next, we show that $\Phi$ is continuous: Let $m_n\rightarrow m$ be a convergent sequence in $M$. Then we have 
\[\delta_{m_n}(f)=f(m_n)\rightarrow f(m)=\delta_m(f)\,\,\,\text{for all}\,\,\,f\,\,\,\text{in}\,\,\,C^{\infty}(M),
\]i.e., $\delta_{m_n}\rightarrow\delta_m$ in the topology of pointwise convergence. Hence, $\Phi$ is continuous. 

(iii) We complete the proof by showing that $\Phi$ is an open map: For this let $U$ be an open subset of $M$, $m_0$ in $U$ and $h$ a smooth real-valued function with $h(m_0)\neq 0$ and $\supp(h)\subset U$. Since the map
\[\delta_h:\Gamma_{C^{\infty}(M)}\rightarrow\mathbb{K},\,\,\,\delta_m\mapsto h(m)
\]is continuous, a short calculations shows that $\Phi(U)$ is a neighbourhood of $m_0$ containing the open subset $\delta_h^{-1}(\mathbb{K}^{\times})$. Hence, $\Phi$ is open.
\end{proof}
%

We now describe the spectrum of the tensor product of two unital algebras. 

\begin{proposition}\label{spectrum of tensor products}
Let $A$ and $B$ be two unital locally convex algebras. Then the map
\[\Phi:\Gamma^{\emph{\text{cont}}}_A\times\Gamma^{\emph{\text{cont}}}_B\rightarrow\Gamma^{\emph{\text{cont}}}_{A\otimes B},\,\,\,(\chi_A,\chi_B)\rightarrow \chi_A\otimes\chi_B
\]is a homeomorphism.
\end{proposition}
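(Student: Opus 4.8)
The strategy is to exhibit an explicit two-sided inverse for $\Phi$ and check continuity in both directions. Given a continuous character $\chi : A \otimes B \to \mathbb{K}$, one restricts along the canonical unital inclusions $a \mapsto a \otimes 1_B$ and $b \mapsto 1_A \otimes b$ to obtain continuous characters $\chi_A := \chi(\,\cdot \otimes 1_B)$ on $A$ and $\chi_B := \chi(1_A \otimes \,\cdot\,)$ on $B$; these are nonzero because $\chi(1_A \otimes 1_B) = 1$. The map $\Psi : \chi \mapsto (\chi_A, \chi_B)$ will be the candidate inverse. Conversely, given $(\chi_A, \chi_B)$, the functional $\chi_A \otimes \chi_B$ on $A \otimes B$ is multiplicative, since on elementary tensors $(\chi_A \otimes \chi_B)\big((a \otimes b)(a' \otimes b')\big) = \chi_A(aa')\chi_B(bb') = \chi_A(a)\chi_B(a')\chi_B(b)\chi_B(b')$ — wait, more precisely it equals $\chi_A(a)\chi_A(a')\chi_B(b)\chi_B(b')$, which is the product of the values; bilinearity then extends this to all of $A \otimes B$, and it is continuous for the projective tensor topology because $(\chi_A, \chi_B)$ is a continuous bilinear form. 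It is nonzero as it sends $1_A \otimes 1_B$ to $1$.

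Next I would verify that $\Phi$ and $\Psi$ are mutually inverse. The composite $\Psi \circ \Phi$ sends $(\chi_A, \chi_B)$ to the restrictions of $\chi_A \otimes \chi_B$, which are $\chi_A$ and $\chi_B$ on the nose since $\chi_A \otimes \chi_B(a \otimes 1_B) = \chi_A(a)\chi_B(1_B) = \chi_A(a)$, and similarly for $B$. For $\Phi \circ \Psi$, given a character $\chi$, the functional $\chi_A \otimes \chi_B$ agrees with $\chi$ on every elementary tensor because $a \otimes b = (a \otimes 1_B)(1_A \otimes b)$ and $\chi$ is multiplicative, so $\chi(a \otimes b) = \chi(a \otimes 1_B)\chi(1_A \otimes b) = \chi_A(a)\chi_B(b)$; by bilinearity and density of the span of elementary tensors, $\chi_A \otimes \chi_B = \chi$ on all of $A \otimes B$.

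Finally I would address continuity of both maps in the topology of pointwise convergence. For $\Phi$: if $(\chi_A^{(i)}, \chi_B^{(i)}) \to (\chi_A, \chi_B)$ pointwise, then for a fixed elementary tensor $a \otimes b$ we have $\chi_A^{(i)}(a)\chi_B^{(i)}(b) \to \chi_A(a)\chi_B(b)$, and since every element of $A \otimes B$ is a finite sum of elementary tensors this gives pointwise convergence of $\chi_A^{(i)} \otimes \chi_B^{(i)}$; so $\Phi$ is continuous. For $\Psi = \Phi^{-1}$: convergence $\chi^{(i)} \to \chi$ pointwise on $A \otimes B$ implies in particular convergence on the subspaces $A \otimes 1_B$ and $1_A \otimes B$, i.e. $\chi_A^{(i)} \to \chi_A$ and $\chi_B^{(i)} \to \chi_B$ pointwise, so $\Psi$ is continuous. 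The main technical point to keep an eye on — the only place anything could go wrong — is ensuring $\chi_A \otimes \chi_B$ is genuinely continuous on $A \otimes B$ with the projective tensor topology rather than merely algebraically well-defined; this is exactly the universal property of the projective tensor product applied to the jointly continuous (indeed separately, hence jointly on the relevant class) bilinear map $(a,b) \mapsto \chi_A(a)\chi_B(b)$, and the definitions recalled earlier in the chapter supply what is needed. Everything else is routine bookkeeping with multiplicativity and finite sums of elementary tensors.
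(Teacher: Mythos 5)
Your proof is correct and follows essentially the same route as the paper's: restrict a continuous character $\chi$ of $A\otimes B$ to the subalgebras $A\otimes 1_B$ and $1_A\otimes B$, use the identity $a\otimes b=(a\otimes 1_B)(1_A\otimes b)$ together with multiplicativity to recover $\chi$ from its restrictions, and invoke the universal property of the projective tensor product to obtain continuity of $\chi_A\otimes\chi_B$. You simply write out in full the well-definedness and continuity checks that the paper leaves to the reader.
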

\begin{proof}
\,\,\,(i) We first note that Lemma \ref{continuity of maps EoF to E'oF'} implies that the map $\Phi$ is well-defined. To show that the map $\Phi$ is bijective, we recall that each simple tensor $a\otimes b$ can be written as $(a\otimes 1_B)\cdot(1_A\otimes b)$. Therefore, every continuous character 
\[\chi:A\otimes B\rightarrow\mathbb{K}
\]is uniquely determined by the elements of the form $(a\otimes 1_B)$ and $(1_A\otimes b)$ for $a\in A$ and $b\in B$. In particular, the restriction of $\chi$ to the subalgebra $A\otimes 1_B$, resp., $1_A\otimes B$ corresponds to a continuous character of $A$, resp., $B$, i.e.,
\[\chi=\chi_A\otimes\chi_B\,\,\,\text{for}\,\,\,\chi_A\in\Gamma_A\,\,\,\text{and}\,\,\,\chi_B\in\Gamma_B.
\]Hence, $\Phi$ is surjective. A similar argument shows that the map $\Phi$ is injective.

(ii) Finally, we leave it as an easy exercise to the reader to verify the continuity of $\Phi$ and its inverse $\Phi^{-1}$.
\end{proof}

We are now ready to prove the following theorem on continuous characters of smooth vector-valued function spaces:

\begin{theorem}\label{spectrum of  C(M,A)}
Let $M$ be a manifold and $A$ be a unital locally convex algebra. Further let $B:=C^{\infty}(M,A)$. Then the following assertions hold:
\begin{itemize}
\item[\emph{(a)}]
If $M$ is compact and $A$ is a CIA, then each maximal ideal of $B$ is closed.
\item[\emph{(b)}]
If $A$ is complete, then each continuous character $\varphi:B\rightarrow\mathbb{K}$ is an evaluation homomorphism 
\[\chi\circ\delta_m:B\rightarrow\mathbb{K},\,\,\,f\mapsto \chi(f(m))
\]for some $m\in M$ and $\chi\in\Gamma^{\emph{\text{cont}}}_A$.
\end{itemize}
\end{theorem}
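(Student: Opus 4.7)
My plan is to handle the two parts separately, starting with the structural fact that underlies part (a) and then using the tensor product description for part (b).

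For part (a), the strategy is to show that the hypothesis ``$M$ compact and $A$ a CIA'' forces $B=C^{\infty}(M,A)$ itself to be a CIA, and to conclude from there. Openness of the unit group in $B$ follows because a smooth $F:M\to A$ with $F(m)\in A^{\times}$ for every $m\in M$ has compact image in the open set $A^{\times}$, so a whole seminorm-neighbourhood of $F$ in $C^{\infty}(M,A)$ stays pointwise inside $A^{\times}$; smoothness of inversion in $B$ reduces to the smoothness of inversion in $A$ via the chain rule applied pointwise and in all iterated tangent levels. Once $B$ is a CIA, its group of units is open, so the set $B\setminus B^{\times}$ of non-units is closed, and since every proper ideal is contained in $B\setminus B^{\times}$ the closure of a proper ideal is still proper. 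In particular, for a maximal ideal $I$ the closure $\overline{I}$ is a proper ideal containing $I$, hence $\overline{I}=I$.

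For part (b), I would exploit Proposition \ref{proj. tensor product for algebras}, which identifies $B=C^{\infty}(M,A)$ with the completed projective tensor product $C^{\infty}(M)\,\widehat{\otimes}\,A$, so that the algebraic tensor product $C^{\infty}(M)\otimes A$ sits as a dense subalgebra of $B$. Given a continuous character $\varphi:B\to\mathbb{K}$, the restrictions
\[
\chi_M:=\varphi\!\mid_{C^{\infty}(M)\otimes 1_A},\qquad \chi:=\varphi\!\mid_{1_{C^{\infty}(M)}\otimes A}
\]
are continuous characters of $C^{\infty}(M)$ and $A$, respectively. By Theorem \ref{spec of C(M,R) set} in the real case and Corollary \ref{spec of C(M,K) set} in the complex case, $\chi_M=\delta_m$ for a unique point $m\in M$, and $\chi\in\Gamma_A^{\mathrm{cont}}$. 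Since every simple tensor factors as $(f\otimes 1_A)(1\otimes a)$, multiplicativity gives
\[
\varphi(f\otimes a)=f(m)\,\chi(a)=\chi\bigl(f(m)\cdot a\bigr)=(\chi\circ\delta_m)(f\otimes a),
\]
so $\varphi$ and $\chi\circ\delta_m$ agree on $C^{\infty}(M)\otimes A$. Both maps are continuous on $B$ (the evaluation $\delta_m:B\to A$ is continuous by definition of the smooth compact-open topology, and $\chi$ is continuous by hypothesis), so by density they coincide on all of $B$.

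The main obstacle I expect is the CIA claim in part (a): verifying that $C^{\infty}(M,A)$ inherits the continuous-inverse property from $A$ (openness of the units and smoothness of the inversion map) is where real content sits, since part (b) is essentially a soft density-and-restriction argument once Proposition \ref{proj. tensor product for algebras} is in hand. A secondary technical point in (b) is that one should invoke the continuity of $\varphi$ and of $\delta_m$ on $B$ (not merely on the algebraic tensor product) to pass to the completion; this is where the completeness of $A$, used through Proposition \ref{proj. tensor product for algebras}, enters crucially.
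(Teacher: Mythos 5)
Your proposal is correct and follows essentially the same route as the paper: part (a) reduces to the fact that $C^{\infty}(M,A)$ is a CIA (which the paper simply cites as Proposition \ref{C infty (M,A) is CIA}, so you need not reprove it) and then uses openness of $B^{\times}$ to see that the closure of a maximal ideal remains proper, while part (b) passes through the dense subalgebra $C^{\infty}(M)\otimes A$ of $C^{\infty}(M)\widehat{\otimes}A\cong B$ exactly as in the paper. The only cosmetic difference is that you unwind Proposition \ref{spectrum of tensor products} by hand (restricting to $C^{\infty}(M)\otimes 1_A$ and $1\otimes A$ and invoking Corollary \ref{spec of C(M,K) set}) instead of citing it, which is a perfectly valid substitute.
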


\begin{proof}
\,\,\,(a) We first note that Proposition \ref{C infty (M,A) is CIA} implies that $B$ is a CIA. Hence, the unit group $B^{\times}=C^{\infty}(M,A^{\times})$ is an open subset of $B$. If $I\subseteq A$ is a maximal ideal, then $I$ intersects $B^{\times}$ trivially, and since $B^{\times}$ is open, the same holds for the closure $\bar{I}$. Hence, $\bar{I}$ also is a proper ideal, so that the maximality of $I$ implies that $I$ is closed.

(b) According to Proposition \ref{proj. tensor product for algebras}, we have 
\[C^{\infty}(M)\widehat{\otimes}A\cong C^{\infty}(M,A).
\]In particular, $C^{\infty}(M)\otimes A$ is a dense unital subalgebra of $C^{\infty}(M,A)$, and each (continuous) character $\varphi:B\rightarrow\mathbb{K}$ restricts to a character on $C^{\infty}(M)\otimes A$, which is, by Proposition \ref{spectrum of tensor products}, an evaluation in some point $(m,\chi)\in M\times\Gamma^{\text{cont}}_A$. This character is continuous with respect to the projective tensor product topology and can therefore be uniquely extended to a continuous character on $B$.
\end{proof}

\begin{corollary}\label{spectrum of C(M,A) A CIA} 
Let $M$ be a manifold and $A$ be a complete CIA. If $B:=C^{\infty}(M,A)$, then each continuous character $\varphi:B\rightarrow\mathbb{K}$ is an evaluation homomorphism 
\[\chi\circ\delta_m:B\rightarrow\mathbb{K},\,\,\,f\mapsto \chi(f(m))
\]for some $m\in M$ and $\chi\in\Gamma_A$. In particular, if $M$ is compact, then each character of $B$ is an evaluation homomorphism.
\end{corollary}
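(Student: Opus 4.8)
The plan is to read the corollary as a packaging of Theorem~\ref{spectrum of  C(M,A)} together with the automatic continuity of characters on continuous inverse algebras. Since $A$ is complete, part~(b) of Theorem~\ref{spectrum of  C(M,A)} applies verbatim and shows that every \emph{continuous} character $\varphi:B\rightarrow\mathbb{K}$ is of the form $\varphi=\chi\circ\delta_m$ for some $m\in M$ and some $\chi\in\Gamma^{\text{cont}}_A$. So the only gap between this and the first assertion of the corollary is that $\chi$ is required to lie in $\Gamma_A$, i.e.\ a priori only an algebra homomorphism rather than a continuous one.

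The key ingredient to close this gap is that on a (complete) continuous inverse algebra every character is automatically continuous; this is one of the basic structural facts about CIAs recalled in Appendix~C. For completeness I would include the short argument: if $\chi:A\rightarrow\mathbb{K}$ is a character and $a_i\rightarrow 0$ is a net with $\chi(a_i)\not\rightarrow 0$, then after rescaling we may assume $\chi(a_i)=1$, whence $\chi(1_A-a_i)=0$ and therefore $1_A-a_i\notin A^{\times}$; but $1_A-a_i\rightarrow 1_A\in A^{\times}$ and $A^{\times}$ is open in a CIA, so $1_A-a_i\in A^{\times}$ for $i$ large, a contradiction. Hence $\Gamma^{\text{cont}}_A=\Gamma_A$, and the first statement follows.

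For the ``in particular'' claim, I would use that when $M$ is compact the algebra $B=C^{\infty}(M,A)$ is itself a (complete) CIA by Proposition~\ref{C infty (M,A) is CIA}. Applying the same automatic continuity principle now to $B$ gives $\Gamma_B=\Gamma^{\text{cont}}_B$, so the description of continuous characters already obtained covers \emph{all} characters of $B$, completing the proof.

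I expect essentially no serious obstacle here: the corollary is a combination of Theorem~\ref{spectrum of  C(M,A)}(b) and Proposition~\ref{C infty (M,A) is CIA}, and the one genuinely needed fact — automatic continuity of characters on CIAs — is elementary once one knows that the unit group of a CIA is open. The only points requiring care are to cite the form of Theorem~\ref{spectrum of  C(M,A)}(b) that already presupposes completeness of $A$ (it rests on the identification $C^{\infty}(M)\widehat{\otimes}A\cong C^{\infty}(M,A)$), and to note that compactness of $M$ is exactly what is needed to make $B$ a CIA so that the passage from continuous characters to all characters is legitimate.
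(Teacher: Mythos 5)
Your proof is correct and follows the same route the paper takes: combine Theorem~\ref{spectrum of  C(M,A)}(b) with the automatic continuity of characters on a CIA (Lemma~\ref{cont of char of CIA}) to get the first assertion, then invoke Proposition~\ref{C infty (M,A) is CIA} plus automatic continuity once more for the compact case. The inline argument you give for automatic continuity is sound (passing to a subnet with $|\chi(a_i)|$ bounded below before rescaling), though the paper cites Lemma~\ref{cont of char of CIA} directly rather than reproving it.
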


\begin{proof}
\,\,\,The first part of the Corollary immediately follows from Theorem \ref{spectrum of  C(M,A)} (b) and Lemma \ref{cont of char of CIA}, which states that every character of $A$ is continuous. 

If $M$ is compact, then Proposition \ref{C infty (M,A) is CIA} implies that $B$ is a (complete) CIA. Therefore, the second assertion follows from Lemma \ref{cont of char of CIA}.
\end{proof}

\begin{remark}
It would be nice to find a purely algebraic proof of Corollary \ref{spectrum of C(M,A) A CIA}, which shows that, even in the non-compact case, every (!) character of $B$ is an evaluation homomorphism. By Corollary \ref{spec of C(M,K) set} this is, for example, true for $A\in\{\mathbb{R},\mathbb{C}\}$ and should also work for function algebras on compact spaces, i.e., for commutative C*-algebras. For further investigations on this question, we refer to the paper [Ou07].
\end{remark}

\begin{corollary}
If $M$ is compact and $A$ a complete commutative CIA, then each maximal proper ideal of $B:=C^{\infty}(M,A)$ is the kernel of an evaluation homomorphism
\[\chi\circ\delta_m:B\rightarrow\mathbb{K},\,\,\,f\mapsto \chi(f(m))
\]for some $m\in M$ and $\chi\in\Gamma_A$.
\end{corollary}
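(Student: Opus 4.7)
The plan is to identify each maximal proper ideal $I \subseteq B$ with the kernel of a character and then to appeal to Corollary \ref{spectrum of C(M,A) A CIA}, which already classifies all characters of $B$ as being of the form $\chi \circ \delta_m$. The bridge between ``maximal ideal'' and ``character'' is a Gelfand--Mazur type argument in the quotient $B/I$.

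First I would note that, by Proposition \ref{C infty (M,A) is CIA}, the algebra $B = C^{\infty}(M,A)$ is a complete commutative CIA; in particular, its unit group $B^{\times}$ is open. This lets me repeat verbatim the argument from Theorem \ref{spectrum of  C(M,A)}(a): since $I$ is proper it must be disjoint from $B^{\times}$, and since $B^{\times}$ is open, the closure $\overline{I}$ is still disjoint from $B^{\times}$ and therefore is still a proper ideal containing $I$. Maximality forces $\overline{I} = I$, so $I$ is closed.

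Next I would pass to the quotient $B/I$, which, by maximality of $I$ in the commutative unital algebra $B$, is a field. The crux is to show $B/I \cong \mathbb{K}$. Because $I$ is closed, $B/I$ carries a Hausdorff locally convex algebra structure and inherits from $B$ the openness of the unit group: if $[a] \in (B/I)^{\times}$ with $[a][b] = [1]$, then any $[a']$ close to $[a]$ has $[a'][b]$ close to $[1]$, hence invertible by the openness of $(B/I)^{\times}$ near $[1]$ (a consequence of the open mapping theorem applied to the quotient and the CIA structure of $B$). Thus $B/I$ is itself a commutative CIA. Applying the standard Gelfand--Mazur argument for complete CIAs, the spectrum $\mathrm{spec}_{B/I}([a])$ is non-empty for each $[a] \in B/I$, so there exists $\lambda \in \mathbb{K}$ with $[a] - \lambda [1] \notin (B/I)^{\times}$; but as $B/I$ is a field this forces $[a] = \lambda [1]$, giving $B/I \cong \mathbb{K}$.

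Once $B/I \cong \mathbb{K}$ is established, the quotient map $\pi : B \to B/I \cong \mathbb{K}$ is a (continuous) character of $B$ with $\ker \pi = I$. Invoking Corollary \ref{spectrum of C(M,A) A CIA}, we can write $\pi = \chi \circ \delta_m$ for some $m \in M$ and $\chi \in \Gamma_A$, and hence $I = \ker(\chi \circ \delta_m)$, as required.

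The main obstacle I expect is the Gelfand--Mazur step, i.e.\ verifying that $B/I$ genuinely is a CIA (or at least that spectra of its elements are non-empty), since quotients of CIAs by closed ideals are not automatically CIAs in complete generality. In the Fréchet setting at hand, however, the combination of completeness of $B$, openness of $B^{\times}$, and the open mapping theorem for Fréchet spaces should supply what is needed; everything else is a direct application of results already proved in this chapter.
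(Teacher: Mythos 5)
Your proposal is correct and takes essentially the same route as the paper: the paper simply cites Lemma \ref{max ideals} (maximal proper ideals of a commutative CIA are kernels of characters), whose proof is exactly your closedness-plus-Gelfand--Mazur argument, with Lemma \ref{quotient algebras} supplying the fact that the quotient by a closed ideal is again a CIA, and then concludes via Corollary \ref{spectrum of C(M,A) A CIA}. The only difference is that you re-derive that lemma inline rather than citing it.
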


\begin{proof}
\,\,\,According to Proposition \ref{C infty (M,A) is CIA}, $B$ is a commutative CIA. Hence, Lemma \ref{max ideals} implies that each maximal proper ideal of $B$ is the kernel of a character of $B$. Therefore, the claim follows from Corollary \ref{spectrum of C(M,A) A CIA}.
\end{proof}

The following theorem shows that, under a certain condition on the continuous spectrum of $A$, the bijection of Theorem \ref{spectrum of C(M,A)} (b) becomes an isomorphism of topological spaces:

\begin{theorem}\label{spec of C(M,A) top}
Let $M$ be a manifold and $A$ be a complete unital locally convex algebra. Further let $B:=C^{\infty}(M,A)$. Further assume that $\Gamma^{\emph{\text{cont}}}_A$ is locally equicontinuous. Then the map
\[\Phi:M\times\Gamma^{\emph{\text{cont}}}_A\rightarrow\Gamma^{\emph{\text{cont}}}_B,\,\,\,(m,\chi)\mapsto\chi\circ\delta_m
\]is a homeomorphism.
\end{theorem}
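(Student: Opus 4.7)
The plan is to verify the four standard ingredients (surjectivity, injectivity, continuity of $\Phi$ and continuity of $\Phi^{-1}$) and to identify the equicontinuity hypothesis as the key tool for the only nontrivial step.

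Surjectivity is free of charge: by Theorem \ref{spectrum of  C(M,A)}(b) each continuous character $\varphi\in\Gamma^{\text{cont}}_B$ already has the required form $\chi\circ\delta_m$. For injectivity, suppose $\chi_1\circ\delta_{m_1}=\chi_2\circ\delta_{m_2}$. Applying this to a constant function $f\equiv a$ (for arbitrary $a\in A$) yields $\chi_1=\chi_2=:\chi$. Since $\chi$ is nonzero we can pick $a\in A$ with $\chi(a)\neq 0$; evaluating on the functions $g\cdot a$ for $g\in C^{\infty}(M)$ gives $g(m_1)=g(m_2)$ for every $g\in C^{\infty}(M)$, and Proposition \ref{spec of C(M) top} then forces $m_1=m_2$.

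The continuity of the inverse is a short manipulation: suppose $\chi_i\circ\delta_{m_i}\to\chi\circ\delta_m$ in the pointwise topology on $\Gamma^{\text{cont}}_B$ (I will write nets to allow for non-metrizable situations). Testing against constant functions gives $\chi_i\to\chi$ pointwise on $A$. Testing against functions of the form $g\cdot 1_A$ with $g\in C^{\infty}(M)$ gives $g(m_i)\to g(m)$ for every $g\in C^{\infty}(M)$, hence $\delta_{m_i}\to\delta_m$ in $\Gamma_{C^{\infty}(M)}$, and Proposition \ref{spec of C(M) top} translates this back to $m_i\to m$ in $M$.

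The main obstacle, and the only place where the hypothesis on $\Gamma^{\text{cont}}_A$ is used, is the continuity of $\Phi$ itself. Given a net $(m_i,\chi_i)\to(m,\chi)$, one has to show
\[
\chi_i(f(m_i))\longrightarrow \chi(f(m))\qquad\text{for every }f\in B.
\]
Splitting
\[
\chi_i(f(m_i))-\chi(f(m))=\chi_i\bigl(f(m_i)-f(m)\bigr)+(\chi_i-\chi)(f(m)),
\]
the second summand tends to zero because $\chi_i\to\chi$ pointwise on $A$. For the first summand we note that eventually $\chi_i$ lies in a fixed neighbourhood of $\chi$ in $\Gamma^{\text{cont}}_A$; by the assumed local equicontinuity there is a $0$-neighbourhood $U\subseteq A$ such that $|\psi(u)|\leq 1$ for every $\psi$ in that neighbourhood and every $u\in U$. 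Since $f:M\to A$ is continuous, $f(m_i)-f(m)$ eventually lies in any prescribed multiple $\varepsilon U$, and the equicontinuity bound then forces $\chi_i(f(m_i)-f(m))\to 0$. Thus $\Phi$ is continuous, completing the proof that $\Phi$ is a homeomorphism.
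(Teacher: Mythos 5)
Your proof is correct and follows essentially the same route as the paper's: surjectivity from Theorem \ref{spectrum of  C(M,A)}(b), injectivity and continuity of $\Phi^{-1}$ by testing against the constant functions $a\in A$ and the central subalgebra $C^{\infty}(M)\cdot 1_A$, and continuity of $\Phi$ via the same two-term splitting with the local equicontinuity hypothesis controlling the term $\chi_i(f(m_i)-f(m))$. The only differences are cosmetic (nets versus the paper's $\epsilon/2$-neighbourhood bookkeeping).
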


\begin{proof}
\,\,\,
(i) Since $A$ is unital, $C^{\infty}(M)$ is a central subalgebra of $B$. Moreover, we note that $A$ is embedded in $B$ as the constant-valued functions. Hence, $\Phi(m,\chi)=\Phi(m',\chi')$ implies that $m=m'$ and $\chi=\chi'$, i.e., $\Phi$ is injective. Further, Theorem \ref{spectrum of C(M,A)} (b) implies that the map $\Phi$ is surjective. 

(ii) Next, we prove the continuity of $\Phi^{-1}$: Choose a convergent net $\varphi_i=\chi_i\circ\delta_{m_i}\rightarrow\chi\circ\delta_m=\varphi$ in $\Gamma^{\text{cont}}_B$. Because $C^{\infty}(M)$ is a central subalgebra of $B$, we conclude that $\delta_{m_i}\rightarrow \delta_m$ in $\Gamma_{C^{\infty}(M)}$. By Proposition \ref{spec of C(M) top}, the last condition is equivalent to $m_i\rightarrow m$ in $M$. Since $A$ is embedded in $B$ as the constant-valued functions, we get $\chi_i\rightarrow\chi$ in $A$.

(iii) To prove continuity of $\Phi$, let $(m_0,\chi_0)\in M\times\Gamma^{\text{cont}}_A$, $\epsilon>0$ and $f\in B$. We first choose an equicontinuous neighbourhood $V$ of $\chi_0$ in $\Gamma^{\text{cont}}_A$ such that
\[V\subseteq\left\{\chi\in\Gamma^{\text{cont}}_A:\,\vert(\chi-\chi_0)(f(m_0))\vert<\frac{\epsilon}{2}\right\}.
\]Next, we choose a neighbourhood $W$ of $f(m_0)$ in $A$ such that 
\[\vert\chi(a-f(m_0))\vert<\frac{\epsilon}{2}
\]for all $a\in W$ and $\chi\in V$. Finally, we choose a neighbourhood $U$ of $m_0$ in $M$ such that $f(m)\in W$ for all $m\in U$. Then $(m,\chi)\in U\times V$ implies $f(m)\in W$ and $\chi\in V$ and therefore
\[\vert \chi(f(m)-f(m_0))\vert<\frac{\epsilon}{2}\,\,\,\text{and}\,\,\,\vert(\chi-\chi_0)(f(m_0))\vert<\frac{\epsilon}{2}.
\]It follows that
\begin{align}
&\vert\Phi(m,\chi)(f)-\Phi(m_0,\chi_0)(f)\vert=\vert\chi(f(m))-\chi_0(f(m_0))\vert\notag\\
&\leq\vert(\chi-\chi_0)(f(m_0))\vert+\vert \chi(f(m)-f(m_0))\vert<\epsilon\notag
\end{align}
for all $(m,\chi)\in U\times V$.
\end{proof}

\begin{remark}{\bf(Sources of algebras with equicontinuous spectrum).}\label{equicont}
(a) The spectrum $\Gamma_A$ of each CIA $A$ is equicontinuous. In fact, let $U$ be a balanced $0$-neighbourhood such that $U\subseteq 1_A-A^{\times}$. Then $\vert\Gamma_A(U)\vert<1$ (cf. Lemma \ref{cont of char of CIA}).


(b) Moreover, if $A$ is a \emph{$\rho$-seminormed} algebra, then [Ba00], Corollary 7.3.9 implies that $\Gamma^{\text{cont}}_A$ is equicontinuous.\index{Algebra! Seminormed}
\end{remark}

\begin{definition}\label{D(a)}
For a unital algebra $A$ and $a\in A$ we write $D(a):=\{\chi\in\Gamma_A:\,\chi(a)\neq 0\}$\sindex[n]{$D(a)$} for the set of characters which do not vanish on $a$.
Moreover, if $a_1,\ldots,a_n\in A$, then we define 
\[D(a_1,\ldots,a_n):=\bigcap^n_{i=1}D(a_i).
\]\sindex[n]{$D(a_1,\ldots,a_n)$}
\end{definition}

\begin{lemma}\label{D(a) is open}
Each set of the form $D(a_1,\ldots,a_n)$ is an open subset of $\Gamma_A$.
\end{lemma}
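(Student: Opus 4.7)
The plan is to recognize $D(a_1,\ldots,a_n)$ as a finite intersection of preimages of $\mathbb{K}^{\times}$ under evaluation maps, and then invoke continuity of evaluation together with the fact that $\mathbb{K}^{\times}$ is open in $\mathbb{K}$.

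More precisely, I would first reduce to the case $n=1$, since by definition
\[
D(a_1,\ldots,a_n)=\bigcap_{i=1}^{n}D(a_i),
\]
and a finite intersection of open sets is open. So it suffices to show that $D(a)$ is open in $\Gamma_A$ for every $a\in A$.

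For this, I would consider the evaluation map
\[
\widehat{a}:\Gamma_A\longrightarrow\mathbb{K},\qquad \chi\mapsto\chi(a).
\]
The topology on $\Gamma_A$ is, by definition, the topology of pointwise convergence on $A$; equivalently, $\Gamma_A$ carries the initial topology with respect to all such evaluation maps $\widehat{a}$, $a\in A$. In particular, each $\widehat{a}$ is continuous. Since $\mathbb{K}^{\times}=\mathbb{K}\setminus\{0\}$ is open in $\mathbb{K}$, the preimage
\[
D(a)=\widehat{a}^{\,-1}(\mathbb{K}^{\times})=\{\chi\in\Gamma_A:\chi(a)\neq 0\}
\]
is open in $\Gamma_A$, which finishes the proof.

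There is really no obstacle here: the statement is an immediate unwinding of the definition of the topology on $\Gamma_A$ together with the openness of $\mathbb{K}^{\times}$ in $\mathbb{K}$. The only thing to be mindful of is to state explicitly that the topology on $\Gamma_A$ is that of pointwise convergence, which was fixed at the beginning of Section \ref{the spectrum of A_a section}, so that continuity of the coordinate functional $\widehat{a}$ is automatic and no further hypothesis on $A$ (completeness, local convexity, etc.) is needed.
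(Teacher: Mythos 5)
Your proof is correct and follows exactly the paper's argument: reduce to $n=1$ via the finite intersection, observe that the evaluation functional $\chi\mapsto\chi(a)$ is continuous for the topology of pointwise convergence, and write $D(a)$ as the preimage of $\mathbb{K}^{\times}$. Nothing to add.
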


\begin{proof}
\,\,\,By the definition of $D(a_1,\ldots,a_n)$, it suffices to show that each set of the form $D(a)$ is open. Therefore, let $a\in A$ be arbitrary and note that the function $\ev_a:\Gamma_A\rightarrow\mathbb{K}$, $\chi\mapsto\chi(a)$ is continuous. The claim now follows from $D(a)=(\ev_a)^{-1}(\mathbb{K}^{\times})$.
\end{proof}

Next we want to describe the spectrum of $A_{\{a\}}$ for a complete locally convex algebra $A$ and an element $a\in A$. We first need the following proposition:

\begin{proposition}\label{spectrum of quotient algebras}
Let $A$ be a unital locally convex algebra and let $I$ be a closed two-sided ideal of $A$. Further, let $\pi:A\rightarrow A/I$ denote the quotient homomorphism. Then the map
\[\Phi:\left\{\chi\in\Gamma^{\emph{cont}}_A:\,\chi(I)=\{0\}\right\}\rightarrow \Gamma^{\emph{cont}}_{A/I},\,\,\,\chi\mapsto \left(a+I\mapsto \chi(a)\right)
\]is a homeomorphism with inverse given by
\[\Psi:\Gamma^{\emph{cont}}_{A/I}\rightarrow\left\{\chi\in\Gamma^{\emph{cont}}_A:\,\chi(I)=\{0\}\right\},\,\,\,\chi\mapsto \chi\circ\pi.
\]
\end{proposition}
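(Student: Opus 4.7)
The plan is to verify in sequence: (i) that $\Phi$ and $\Psi$ are well-defined as maps into the asserted targets; (ii) that they are mutually inverse as set-theoretic maps; and (iii) that both maps are continuous with respect to the topology of pointwise convergence on the respective character spaces.

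For well-definedness of $\Phi$, the prescription $\bar\chi(a+I):=\chi(a)$ is independent of the coset representative precisely because $\chi(I)=\{0\}$; multiplicativity, linearity and unitality descend at once from the corresponding properties of $\chi$, so $\bar\chi$ is an algebra character on $A/I$. The only nontrivial point is continuity: since $A/I$ carries the quotient locally convex topology and $\bar\chi\circ\pi=\chi$ is continuous by hypothesis, the universal property of the quotient topology gives continuity of $\bar\chi$. Here the standing assumption that $I$ is closed is used to ensure that $A/I$ is a genuine (Hausdorff) locally convex algebra. For $\Psi$, the composite $\chi\circ\pi$ of two continuous algebra homomorphisms is a continuous character on $A$, and it obviously annihilates $I$ because $\pi(I)=\{0\}$, so $\Psi$ lands in the prescribed set.

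That $\Phi\circ\Psi$ and $\Psi\circ\Phi$ are the respective identities is a direct unwinding of the definitions: $(\Psi\circ\Phi)(\chi)(a)=\bar\chi(\pi(a))=\chi(a)$ and $(\Phi\circ\Psi)(\chi)(a+I)=(\chi\circ\pi)(a)=\chi(a+I)$. Thus $\Phi$ is a bijection with inverse $\Psi$.

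For continuity, it suffices to show that $\Phi$ and $\Psi$ preserve pointwise convergence. If $\chi_j\to\chi$ pointwise on $A/I$, then for every $a\in A$ we have $\Psi(\chi_j)(a)=\chi_j(a+I)\to\chi(a+I)=\Psi(\chi)(a)$, so $\Psi$ is continuous. Conversely, if $\chi_j\to\chi$ pointwise on $A$ with all $\chi_j$ vanishing on $I$, then for every coset $a+I$ we have $\Phi(\chi_j)(a+I)=\chi_j(a)\to\chi(a)=\Phi(\chi)(a+I)$, giving continuity of $\Phi$. No step is expected to be a serious obstacle; the only place where a nontrivial ingredient enters is the descent of continuity of $\bar\chi$ through the quotient, which is handled by the universal property of the quotient locally convex topology together with the closedness of $I$.
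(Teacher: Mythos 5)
Your proof is correct and follows the same route the paper takes (the paper compresses everything after the well-definedness of $\Phi$ into ``a short calculation''); you have simply written out the details — the descent of continuity through the quotient via its universal property, the mutual inverse check, and the pointwise-convergence argument for continuity of both maps. No gaps.
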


\begin{proof}
\,\,\,Since the quotient homomorphism $\pi:A\rightarrow A/I$ becomes continuous with respect to the quotient topology, the map $\Phi$ is well-defined. Now, a short calculation shows that both maps $\Phi$ and $\Psi$ are continuous and inverse to each other. 
\end{proof}

\begin{theorem}\label{spec A_a}
Let $A$ be a complete unital locally convex algebra such that $\Gamma^{\emph{\text{cont}}}_A$ is locally equicontinuous. If $a_1,\ldots,a_n\in A$, then the map
\[\Phi_{\{a_1,\ldots,a_n\}}:D(a_1,\ldots,a_n)\rightarrow\Gamma^{\emph{cont}}_{A_{\{a_1,\ldots,a_n\}}},\,\,\,\Phi(\chi)([f]):=\chi\left(f\left(\frac{1}{\chi(a_1)},\ldots,\frac{1}{\chi(a_n)}\right)\right)
\]is a homeomorphism.
\end{theorem}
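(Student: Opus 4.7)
The plan is to combine two previously established identifications with one elementary computation on generators. By Proposition \ref{spectrum of quotient algebras} applied to the quotient $\pi_{\{a_1,\ldots,a_n\}}:A\{t_1,\ldots,t_n\}\to A_{\{a_1,\ldots,a_n\}}$, the space $\Gamma^{\text{cont}}_{A_{\{a_1,\ldots,a_n\}}}$ is homeomorphic to the subspace of $\Gamma^{\text{cont}}_{A\{t_1,\ldots,t_n\}}$ consisting of continuous characters that annihilate $I_{a_1,\ldots,a_n}$. By Theorem \ref{spec of C(M,A) top} applied to $M=\mathbb{R}^n$ (which is legitimate since $A$ is complete and $\Gamma^{\text{cont}}_A$ is locally equicontinuous by hypothesis), that same spectrum is in turn homeomorphic to $\mathbb{R}^n\times\Gamma^{\text{cont}}_A$ via $(t,\chi)\mapsto\chi\circ\delta_t$. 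The task thereby reduces to determining exactly which pairs $(t,\chi)$ descend to a character of the quotient.

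First I would compute on the generators of the ideal: $(\chi\circ\delta_t)(f^i_{a_i})=\chi(1_A-t_ia_i)=1-t_i\chi(a_i)$, which vanishes for every $i$ if and only if $\chi(a_i)\neq 0$ for all $i$ (so $\chi\in D(a_1,\ldots,a_n)$) and $t_i=1/\chi(a_i)$. This immediately yields the set-theoretic bijection and pins down the explicit formula defining $\Phi_{\{a_1,\ldots,a_n\}}$. Next I would upgrade ``vanishing on generators'' to ``vanishing on all of $I_{a_1,\ldots,a_n}$'': since $\chi\circ\delta_t$ is multiplicative and linear, any product $g\cdot f^i_{a_i}\cdot h$ with $g,h\in A\{t_1,\ldots,t_n\}$ is sent to $\chi(g(t))\cdot 0\cdot\chi(h(t))=0$, so the character kills the algebraic two-sided ideal generated by the $f^i_{a_i}$; continuity of $\chi\circ\delta_t$ (itself a composition of the continuous evaluation $\delta_t$ with the continuous character $\chi$) extends this vanishing to the closure $I_{a_1,\ldots,a_n}$.

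For the topological part I would observe that the map
\[D(a_1,\ldots,a_n)\to\mathbb{R}^n\times\Gamma^{\text{cont}}_A,\qquad \chi\mapsto\left(\tfrac{1}{\chi(a_1)},\ldots,\tfrac{1}{\chi(a_n)},\chi\right),\]
is continuous, since each evaluation $\chi\mapsto\chi(a_i)$ is continuous on $\Gamma^{\text{cont}}_A$ and is non-vanishing on $D(a_1,\ldots,a_n)$ (cf. Lemma \ref{D(a) is open}). Composing with the homeomorphisms supplied by Theorem \ref{spec of C(M,A) top} and Proposition \ref{spectrum of quotient algebras} yields continuity of $\Phi_{\{a_1,\ldots,a_n\}}$, while continuity of its inverse follows from continuity of the projection $\mathbb{R}^n\times\Gamma^{\text{cont}}_A\to\Gamma^{\text{cont}}_A$ under the same two identifications.

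The step I expect to be the main obstacle is the passage from vanishing on the generators $f^i_{a_i}$ to vanishing on the closed two-sided ideal $I_{a_1,\ldots,a_n}$, since $A\{t_1,\ldots,t_n\}$ need not be commutative. The saving observation is that each candidate character $\chi\circ\delta_t$ already factors products through $A$ by multiplicativity of $\chi$, so the left/right module actions of $A\{t_1,\ldots,t_n\}$ collapse automatically, and continuity of $\chi\circ\delta_t$ handles the closure. Beyond that, only routine verifications remain (well-definedness of the formula on the equivalence classes $[f]$, and bookkeeping of the composed homeomorphisms).
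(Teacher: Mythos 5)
Your proposal is correct and follows essentially the same route as the paper: identify $\Gamma^{\text{cont}}_{A_{\{a_1,\ldots,a_n\}}}$ with the continuous characters of $C^{\infty}(\mathbb{R}^n,A)$ killing $I_{a_1,\ldots,a_n}$ via Proposition \ref{spectrum of quotient algebras}, transport this to $\mathbb{R}^n\times\Gamma^{\text{cont}}_A$ via Theorem \ref{spec of C(M,A) top}, and check that the annihilation condition pins down $t_i=1/\chi(a_i)$. You in fact spell out more carefully than the paper does why vanishing on the generators $f^i_{a_i}$ forces vanishing on the whole closed two-sided ideal (multiplicativity plus continuity of $\chi\circ\delta_t$), which is a worthwhile detail the paper leaves implicit.
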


\begin{proof}
\,\,\,
(i) A short observation shows that the map
\[\phi_{\{a_1,\ldots,a_n\}}:D(a_1,\ldots,a_n)\rightarrow\mathbb{R}^n\times\Gamma^{\text{cont}}_A,\,\,\,\chi\mapsto\left(\frac{1}{\chi(a_1)},\ldots,\frac{1}{\chi(a_n)},\chi\right)
\]is clearly a homeomorphism onto its image. 

(ii) To proceed we show that the set $X:=\im(\phi_{\{a_1,\ldots,a_n\}})$ is homeomorphic to $\Gamma^{\text{cont}}_{A_{\{a_1,\ldots,a_n\}}}$: For this we first put $B:=A\{t_1,\ldots,t_n\}=C^{\infty}(\mathbb{R}^n,A)$. Then Proposition \ref{spectrum of quotient algebras} implies that the space $\Gamma^{\text{cont}}_{A_{\{a_1,\ldots,a_n\}}}$ is homeomorphic to $\{\chi\in\Gamma^{\text{cont}}_B:\,\chi(I_{a_1,\ldots,a_n})=\{0\}\}$. Furthermore, Theorem \ref{spec of C(M,A) top} implies that this last set is homeomorphic to
\[\left\{(r_1,\ldots,r_n,\chi)\in\mathbb{R}^n\times\Gamma^{\text{cont}}_A:\chi\circ\delta_{(r_1,\ldots,r_n)}(I_{a_1,\ldots,a_n})=\{0\}\right\}.
\]Next, we observe that the condition $\chi\circ\delta_{(r_1,\ldots,r_n)}(I_{a_1,\ldots,a_n})=\{0\}$ is equivalent to $r_i\chi(a_i)=1$ for all $1\leq i\leq n$, i.e., we have
\[\left\{(r_1,\ldots,r_n,\chi)\in\mathbb{R}^n\times\Gamma^{\text{cont}}_A:\delta_{(r_1,\ldots,r_n,\chi)}(I_{a_1,\ldots,a_n})=\{0\}\right\}=X.
\]The corresponding homeomorphism is given by
\[\varphi_{\{a_1,\ldots,a_n\}}:X\rightarrow\Gamma^{\text{cont}}_{A_{\{a_1,\ldots,a_n\}}},\,\,\,\varphi\left(\left(\frac{1}{\chi(a_1)},\ldots,\frac{1}{\chi(a_n)},\chi\right)\right)([f]):=\chi\left(f\left(\frac{1}{\chi(a_1)},\ldots,\frac{1}{\chi(a_n)}\right)\right)
\]

(iii) The claim now follows from $\Phi_{\{a_1,\ldots,a_n\}}=\varphi_{\{a_1,\ldots,a_n\}}\circ\phi_{\{a_1,\ldots,a_n\}}$.
\end{proof}

\section{Localizing $C^{\infty}(M,A)$}\label{Localizing C(M)}
In this section we finally want to prove a smooth analogue of Proposition \ref{K(X)_f=O(X_f)} for the algebra $C^{\infty}(M)$ of smooth functions on some manifold $M$. In fact, if $A$ is a unital Fr\'echet algebra, we show that the smooth localization of $C^{\infty}(M,A)$ with respect to a smooth nonzero function $f:M\rightarrow\mathbb{R}$ is isomorphic as a unital Fr\'echet algebra to $C^{\infty}(M_f,A)$, where
\[M_f:=\{m\in M:\,f(m)\neq 0\}.
\]We start with a very useful lemma of Hadamard:




\begin{lemma}{\bf(Hadamard's lemma).}\label{Hadamards lemma}\index{Theorem!of Hadamard}
Let $E$ be a complete locally convex space. Further, let $U$ be an open convex subset of $\mathbb{R}^s\times\mathbb{R}^{n-s}$ containing $0$ and $f:U\rightarrow E$ be a smooth function that vanishes on $U\cap\mathbb{R}^{n-s}$. If $\pr_j:\mathbb{R}^n\rightarrow\mathbb{R}$ denotes the projection to the j-th factor, then
\[f=\sum_{j=1}^{s}g_j\cdot\pr_j
\]for suitable smooth functions $g_j:U\rightarrow E$.
\end{lemma}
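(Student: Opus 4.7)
The plan is to apply the (vector-valued) fundamental theorem of calculus along a segment in the $x'=(x_1,\ldots,x_s)$ variables and read off the $g_j$ as integrals of the partial derivatives $\partial f/\partial x_j$. More precisely, writing a point of $U$ as $x=(x',x'')$ with $x'\in\mathbb{R}^s$ and $x''\in\mathbb{R}^{n-s}$, I would consider the segment $t\mapsto (tx',x'')$, $t\in[0,1]$. Convexity of $U$ together with $0\in U$ guarantees (in the regime where this lemma will be applied) that this segment lies in $U$ and that the endpoint $(0,x'')$ belongs to $U\cap\mathbb{R}^{n-s}$, where $f$ is known to vanish.

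Since $E$ is complete, the Riemann integral of a continuous $E$-valued function on the compact interval $[0,1]$ is well defined, and the chain rule together with the fundamental theorem of calculus yield
\[
f(x',x'')\;=\;f(x',x'')-f(0,x'')\;=\;\int_0^1\frac{d}{dt}f(tx',x'')\,dt\;=\;\sum_{j=1}^{s}x_j\int_0^1\frac{\partial f}{\partial x_j}(tx',x'')\,dt.
\]
This forces the definition
\[
g_j:U\longrightarrow E,\qquad g_j(x',x''):=\int_0^1\frac{\partial f}{\partial x_j}(tx',x'')\,dt\qquad(1\leq j\leq s),
\]
so that $f=\sum_{j=1}^{s}g_j\cdot\pr_j$ as required.

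The smoothness of each $g_j$ will follow by differentiation under the integral sign: partial derivatives in $x$ commute with the integral over the compact parameter set $[0,1]$ because the integrands depend smoothly on $x$. This is the main obstacle in the locally convex setting: one must verify that $t\mapsto\partial_{x}^{\alpha}\bigl((\partial f/\partial x_j)(tx',x'')\bigr)$ is continuous as an $E$-valued function, that difference quotients in the parameter $x$ converge in $E$ uniformly in $t\in[0,1]$, and that such limits commute with the Riemann integral. All three points are standard once completeness of $E$ is available, but they require systematic use of continuous seminorms on $E$ and the fact that smooth maps into locally convex spaces send compact sets to bounded sets.
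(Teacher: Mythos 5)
Your argument is essentially identical to the paper's: it writes $x=(x',x'')$, applies the fundamental theorem of calculus to $t\mapsto f(tx',x'')$ using $f(0,x'')=0$, and defines $g_j(x):=\int_0^1\frac{\partial f}{\partial x_j}(tx',x'')\,dt$, which is exactly the curve $h(t)=f(z+ty)$ and the formula for $g_j$ in the paper. Your additional remarks on differentiation under the integral sign (which the paper leaves implicit) are correct and only strengthen the write-up.
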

\begin{proof}
\,\,\,For $x=(x_1,\ldots,x_n)$ in $U$ we define $y:=(x_1,\ldots,x_s,0)$ and $z:=(0,x_{s+1},\ldots,x_n)$. We further define a smooth $E$-valued curve $h$ on $[0,1]$ by
\[h:[0,1]\rightarrow E,\,\,\, h(t):=f(z+ty).
\]Then $h'(t)=\sum_{j=1}^s\frac{\partial f}{\partial x_j}(z+ty)\cdot x_j$, and for $g_j(x):=\int^1_0\frac{\partial f}{\partial x_j}(z+ty)dt$ we obtain
\[f(x)=f(x)-f(z)=h(1)-h(0)=\int^1_0h'(t)dt=\sum^s_{j=1}g_j(x)\cdot x_j,
\]i.e., $f=\sum_{j=1}^{s}g_j\cdot\pr_j$ as desired.
\end{proof}


\begin{theorem}\label{manifold mod sub=sub}
Let $E$ be a complete locally convex space. Further, let $M$ be a manifold and $h:M\rightarrow\mathbb{R}$ a smooth function with $0\in\mathbb{R}$ as a regular value. If $H:=h^{-1}(0)$ is the corresponding closed submanifold of $M$, then the restriction map
\[R_H:C^{\infty}(M,E)\rightarrow C^{\infty}(H,E),\,\,\,f\mapsto f_{\mid H}
\]is a surjective morphism of locally convex spaces. Its kernel $\ker(R_H)$ is equal to $C^{\infty}(M,E)\cdot h$. In particular, $C^{\infty}(M,E)\cdot h$ is a closed subspace of $C^{\infty}(M,E)$.
\end{theorem}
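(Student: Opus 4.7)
The plan is to decompose the statement into four claims: continuity of $R_H$, surjectivity of $R_H$, the identity $\ker(R_H) = C^\infty(M,E)\cdot h$, and closedness of $C^\infty(M,E)\cdot h$. Continuity is essentially automatic: a generating seminorm of the smooth compact-open topology on $C^\infty(H,E)$ involves a compact set $K\subseteq H$ together with an iterated tangent map, and because $K$ is also compact in $M$ with $T^n(f_{\mid H}) = (T^nf)_{\mid T^nH}$, each such seminorm is bounded above by a corresponding seminorm of $C^\infty(M,E)$. Once the kernel identity is established, closedness of $C^\infty(M,E)\cdot h = \ker(R_H) = R_H^{-1}(\{0\})$ is automatic from continuity.

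For surjectivity the approach is to use a tubular neighbourhood. Since $0$ is a regular value, $H$ is a closed embedded submanifold of $M$, so there exist an open neighbourhood $U$ of $H$ in $M$ together with a smooth retraction $r:U\rightarrow H$. Given $f\in C^\infty(H,E)$, the composition $f\circ r\in C^\infty(U,E)$ extends $f$ to $U$. Choosing a smooth bump $\rho\in C^\infty(M,\mathbb{R})$ with $\supp(\rho)\subseteq U$ and $\rho\equiv 1$ on a smaller open neighbourhood of $H$, one obtains $\rho\cdot(f\circ r)$, extended by zero outside $U$, as a smooth $E$-valued function on $M$ whose restriction to $H$ equals $f$.

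The core of the proof is the kernel identity. The inclusion $C^\infty(M,E)\cdot h\subseteq \ker(R_H)$ is immediate from $h_{\mid H}=0$. For the reverse inclusion, let $f\in\ker(R_H)$. On the open set $M\setminus H$, where $h$ is nowhere zero, the function $f/h$ is smooth. Near $H$, because $0$ is a regular value, every point of $H$ admits a chart $\varphi_\alpha:U_\alpha\rightarrow V_\alpha\subseteq\mathbb{R}^n$ with $V_\alpha$ open and convex and $h\circ\varphi_\alpha^{-1} = \pr_1$. The function $f\circ\varphi_\alpha^{-1}\in C^\infty(V_\alpha,E)$ then vanishes on $V_\alpha\cap(\{0\}\times\mathbb{R}^{n-1})$, so Hadamard's lemma (Lemma~\ref{Hadamards lemma} with $s=1$) supplies a smooth $E$-valued function factoring $\pr_1$ out; pulling this back yields $g_\alpha\in C^\infty(U_\alpha,E)$ with $f_{\mid U_\alpha} = g_\alpha\cdot h$. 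Choosing the open cover $\{M\setminus H\}\cup\{U_\alpha\}_\alpha$ of $M$, a subordinate smooth partition of unity $\{\rho_0\}\cup\{\rho_\alpha\}$, and setting
\[
 g := \rho_0\cdot (f/h) + \sum_\alpha \rho_\alpha\, g_\alpha \in C^\infty(M,E),
\]
one obtains $g\cdot h = \rho_0\cdot f + \sum_\alpha \rho_\alpha\, f = f$ pointwise, so $f\in C^\infty(M,E)\cdot h$.

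The main obstacle I anticipate lies in this kernel step: the local factorizations $f_{\mid U_\alpha} = g_\alpha\cdot h$ are not canonical, and one must ensure that the partition-of-unity sum produces a genuinely smooth $g$ on all of $M$. This succeeds because multiplication by $h$ commutes with the finite local sums defining $g$, so the identity $g\cdot h = f$ holds globally regardless of how the $g_\alpha$ behave on overlaps, while the smoothness of each local factor $g_\alpha$ is precisely what Hadamard's lemma guarantees.
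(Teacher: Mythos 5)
Your proof is correct and follows essentially the same route as the paper: continuity is immediate, surjectivity comes from extending locally (or via a tubular neighbourhood) and patching with a partition of unity, and the kernel identity rests on Hadamard's lemma applied in submersion charts where $h$ becomes the first coordinate projection. The only difference is in how you globalize the local factorizations $f_{\mid U_\alpha}=g_\alpha\cdot h$: the paper checks that the local quotients agree on overlaps (they all equal $f/h$ on the dense set where $h\neq 0$) and glues them into a single function $\tilde g$, whereas you sum them against a partition of unity, which sidesteps the compatibility check entirely since $g\cdot h=(\sum_i\rho_i)f=f$ holds term by term; both finishes are valid.
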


\begin{proof}
\,\,\,The proof of this theorem is devided into three parts:
 
(i) To show that the map $R_H$ is a surjective morphism of locally convex spaces, we first note that $R_H$ is clearly linear and continuous as a restriction map. Thus, it remains to prove its surjectivity: For this we have to show that any smooth function $f:H\rightarrow E$ can be extended to a smooth function on $M$. Locally this can be done, since $H$ looks like $\mathbb{R}^{n-1}$ in $\mathbb{R}^n$. To get a global extension we have to use a partition of unity. 
 
(ii) Next, we show the equality of the kernel of $R_H$ and the subspace $h\cdot C^{\infty}(M,E)$. Since $R_H$ is continuous, this will in particular imply that $C^{\infty}(M,E)\cdot h$ is a closed subspace of $C^{\infty}(M,E)$: We immediately verify that $C^{\infty}(M,E)\cdot h\subseteq\ker(R_H)$. For the other inclusion, let $f\in C^{\infty}(M,E)$ with $R_H(f)=0$, i.e., $f\equiv 0$ on $H$. On the dense open subset $M\backslash H$ of $M$ we define a smooth $E$-valued map $g$ by
\[g:M\backslash H\rightarrow E,\,\,\,g(m):=\frac{f(m)}{h(m)}.
\]We claim that $g$ can be extended (uniquely) to a smooth $E$-valued map $\tilde{g}$ on $M$. If this is the case, then $f=\tilde{g}\cdot h$ holds on $M$ and we conclude that $f\in C^{\infty}(M,E)\cdot h$. In particular, we obtain the desired equality
\[\ker(R_H)=C^{\infty}(M,E)\cdot h.
\]

(iii) We now prove the claim of part (ii). For this, let $m\in H$. By the Implicit Function Theorem we may choose a chart $(\varphi,U)$ around $m$ such that $h\circ\varphi^{-1}:\mathbb{R}^n\rightarrow\mathbb{R}$ looks like the projection onto the first coordinate. If we write $\mathbb{R}^n=\mathbb{R}\oplus\mathbb{R}^{n-1}$ and use Lemma \ref{Hadamards lemma}, then we conclude that
\[f\circ\varphi^{-1}=(g_U\circ\varphi^{-1})\cdot(h\circ\varphi^{-1})
\]for a suitable smooth function $g_U:U\rightarrow E$. In particular, this implies that $f=g_U\cdot h$ on $U$. The same construction applied to another such chart $(\psi,V)$ around $m$ leads to a smooth function $g_V:V\rightarrow E$ and a factorization $f=g_V\cdot h$ on $V$. A short observation shows that $g_U=g_V$ on $U\cap V$. Therefore, the function $\tilde{g}:M\rightarrow E$ defined by
\[\tilde{g}(m):=
\begin{cases}
g(m) &\text{for}\,\,\,m\in M\backslash H\\
g_U(m) &\text{for}\,\,\,m\in H.
\end{cases}
\]is well-defined, smooth and an extension of $g$ to $M$. This proves the claim.
\end{proof}

Although $C^{\infty}(M)_{\{f\}}$ is not the ring of fractions with some power of $f$ as denominator, Theorem \ref{manifold mod sub=sub} yields the following desired result:

\begin{corollary}\label{C(M)_f=C(M_f)}
Let $A$ be a unital Fr\'echet algebra. If $M$ is a manifold, $f:M\rightarrow\mathbb{R}$ a smooth function which is nonzero and $M_f:=\{m\in M:\,f(m)\neq 0\}$, then the map
\[\phi_f:C^{\infty}(M,A)_{\{f\}}\rightarrow C^{\infty}(M_f,A),\,\,\,[F]\mapsto F\circ\left(\frac{1}{f}\times\id_{M_f}\right)
\]is an isomorphism of unital Fr\'{e}chet algebras.
\end{corollary}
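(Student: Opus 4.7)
The plan is to reduce this to Theorem \ref{manifold mod sub=sub} applied to the product manifold $N := \mathbb{R}\times M$ together with an explicit defining function for the hypersurface cut out by the equation $tf(m) = 1$. First I would use the smooth exponential law (Lemma \ref{smooth exp law}) to identify $C^\infty(M,A)\{t\} = C^\infty(\mathbb{R}, C^\infty(M,A))$ with the unital Fréchet algebra $C^\infty(\mathbb{R}\times M, A) = C^\infty(N, A)$. Under this identification the distinguished element $f_f(t) = 1 - t\cdot f$ becomes the function $h\in C^\infty(N, \mathbb{R})\subseteq C^\infty(N,A)$ given by $h(t,m) = 1 - tf(m)$, which is \emph{central} since it takes values in $\mathbb{R}\cdot 1_A$.

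Next I would check two geometric facts about $h$. Firstly, $0$ is a regular value: at any zero $(t_0, m_0)$ one has $t_0 f(m_0) = 1$, hence $f(m_0)\neq 0$, and the partial derivative $\partial h/\partial t = -f(m_0)$ is nonzero. Thus $H := h^{-1}(0)$ is a closed submanifold of $N$. Secondly, the projection $(t,m)\mapsto m$ restricts to a diffeomorphism $H \to M_f$ whose smooth inverse is $\iota_f: M_f \to H$, $m\mapsto(1/f(m), m)$.

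Now I would apply Theorem \ref{manifold mod sub=sub} with $E = A$ (complete since Fréchet): the restriction map
\[ R_H: C^\infty(N, A) \longrightarrow C^\infty(H, A) \]
is a continuous surjection of Fréchet spaces with kernel equal to $C^\infty(N, A)\cdot h$, and this subspace is therefore closed. Since $h$ is central, the two-sided ideal $\langle h\rangle$ coincides with the right module $C^\infty(N,A)\cdot h$, hence is already closed, so
\[ I_f = \overline{\langle h\rangle} = C^\infty(N,A)\cdot h = \ker R_H. \]
The first isomorphism theorem, combined with the open mapping theorem (Proposition \ref{open mapping theorem}) for Fréchet spaces, then yields a topological isomorphism of unital Fréchet algebras $\bar R_H: C^\infty(M,A)_{\{f\}} \to C^\infty(H, A)$.

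Finally I would compose with the pullback isomorphism $\iota_f^*: C^\infty(H, A)\to C^\infty(M_f, A)$, $G\mapsto G\circ\iota_f$, which is an iso of unital Fréchet algebras because $\iota_f$ is a diffeomorphism. The composite $\iota_f^*\circ\bar R_H$ coincides on the nose with $\phi_f$: for a representative $F\in C^\infty(N,A)$ one gets $(\iota_f^*\bar R_H)([F]) = (F|_H)\circ\iota_f$, i.e.\ $m\mapsto F(1/f(m), m)$, which is $F\circ(1/f\times\mathrm{id}_{M_f})$. Multiplicativity and unitality of $\phi_f$ are immediate since pointwise restriction and composition preserve pointwise multiplication and the constant function $1_A$. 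The main obstacle is in fact already absorbed by Theorem \ref{manifold mod sub=sub}: both the surjectivity (every smooth $A$-valued function on $H$ extends to one on $N$) and the key algebraic input (a function vanishing on $H$ factors as $G\cdot h$ via Hadamard's lemma) are packaged into that theorem. Everything else — the regular value check, the identification $\langle h\rangle = \overline{\langle h\rangle}$ via centrality of $h$, and the open mapping step — is bookkeeping.
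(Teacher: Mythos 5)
Your proof is correct and follows essentially the same route as the paper: identify $C^{\infty}(M,A)\{t\}$ with $C^{\infty}(\mathbb{R}\times M,A)$, check that $0$ is a regular value of $h(t,m)=1-tf(m)$ with $h^{-1}(0)$ diffeomorphic to $M_f$, apply Theorem \ref{manifold mod sub=sub} to identify the kernel of the restriction map, and finish with the open mapping theorem (Proposition \ref{open mapping theorem}). Your explicit observation that centrality of $h$ makes the two-sided ideal $\langle h\rangle$ equal to $C^{\infty}(\mathbb{R}\times M,A)\cdot h$, and hence already closed, is a detail the paper's proof leaves implicit, and it is handled correctly.
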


\begin{proof}
\,\,\,We first note that $0$ is a regular value for the function 
\[h_f:\mathbb{R}\times M\rightarrow\mathbb{R},\,\,\,(t,m)\mapsto 1-tf(m)
\]and that $H:=h_f^{-1}(0)$ is diffeomorphic to $M_f$. Since the quotient of a Fr\'echet space by a closed subspace is again Fr\'echet (cf. [Bou55], \S 3.5), Theorem \ref{manifold mod sub=sub} and the definition of the quotient topology imply that the map $\phi_f$ is a bijective morphism of unital Fr\'{e}chet algebras. Thus, the assertion follows from Proposition \ref{open mapping theorem}.
\end{proof}

We now present a very useful and well-known theorem of analysis:

\begin{theorem}{\bf(Whitneys Theorem).}\label{whitneys theorem}\index{Theorem!of Whitney}
Any open subset $U$ of a manifold $M$ is the complement of the zeroset of a smooth function $f:M\rightarrow\mathbb{R}$, i.e.,
\[U=M_f:=\{m\in M:\,f(m)\neq 0\}\,\,\,\text{for some}\,\,\,f\in C^{\infty}(M,\mathbb{R}).
\]Such a function $f$ is called a $U$-defining function.\index{$U$-Defining!Function}
\end{theorem}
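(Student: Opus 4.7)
The plan is to build $f$ as a carefully scaled infinite sum of bump functions supported inside $U$. First, using paracompactness and second countability of $M$ (both inherited by $U$ as an open subset), I would choose a countable open cover $\{V_n\}_{n\in\mathbb{N}}$ of $U$ such that each $\overline{V_n}$ is a compact subset of $U$; this can be arranged, for example, by first covering $U$ with precompact coordinate balls contained in $U$ and then passing to a countable, locally finite refinement.

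For each $n$, standard bump function techniques on manifolds (partition of unity subordinate to $\{V_n,\,M\setminus\overline{V_n}\}$) yield a smooth function $\phi_n:M\to[0,1]$ satisfying $\phi_n>0$ on $V_n$ and $\supp(\phi_n)\subseteq\overline{V_n}\subseteq U$. I would then select positive constants $c_n$ tending to zero rapidly enough that
\[
\sum_{n=1}^{\infty} c_n\,\|\phi_n\|_{C^k}<\infty\qquad\text{for every }k\in\mathbb{N}_0,
\]
where $\|\phi_n\|_{C^k}$ denotes a $C^k$-seminorm of $\phi_n$ computed, say, in a fixed finite atlas covering $\supp(\phi_n)$. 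A concrete choice is $c_n:=2^{-n}/(1+\max_{0\le k\le n}\|\phi_n\|_{C^k})$, which ensures $c_n\|\phi_n\|_{C^k}\le 2^{-n}$ for all $n\ge k$. Setting $f:=\sum_{n=1}^{\infty} c_n\phi_n$, the Weierstrass $M$-test and termwise differentiation show that this series, together with each of its partial derivatives, converges uniformly on compact subsets of $M$, so $f$ is smooth on $M$.

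It remains to verify $U=M_f$. For $m\in U$, pick any $n$ with $m\in V_n$; then $\phi_n(m)>0$, whence $f(m)\ge c_n\phi_n(m)>0$. Conversely, for $m\notin U$ every support $\supp(\phi_n)$ misses $m$, so each summand vanishes and $f(m)=0$. The main obstacle is the smoothness of $f$ at boundary points $m\in\partial U$: infinitely many of the supports $\supp(\phi_n)$ may accumulate at such a point, so \emph{all} derivatives must vanish there simultaneously. This is precisely what the rapid decay of the coefficients $c_n$ accomplishes; modulo this technical estimate on $c_n$, the rest of the argument is routine.
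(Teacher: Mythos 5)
Your construction is correct and is exactly the standard argument underlying the reference the paper cites: the result for $\mathbb{R}^n$ in [GuPo74] is proved by the same ``sum of bump functions with coefficients decaying in every $C^k$-norm'' device. The only difference in route is organizational: the paper suggests proving the statement chart-by-chart in $\mathbb{R}^n$ and then globalizing with a partition of unity $f=\sum_i\psi_i f_i$ (which works because all the local solutions $f_i$ can be taken nonnegative), whereas you run the bump-sum construction directly on $M$. Your version buys a single self-contained estimate at the price of having to compare $C^k$-seminorms across charts near a boundary point of $U$; the paper's version isolates all analysis in $\mathbb{R}^n$ and makes the gluing trivial. Both are fine.

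One small slip worth fixing: the pair $\{V_n,\,M\setminus\overline{V_n}\}$ is \emph{not} an open cover of $M$ (it misses $\partial V_n$), so it does not directly produce your $\phi_n$ via a subordinate partition of unity; moreover a partition-of-unity function supported in $V_n$ need not be strictly positive on all of $V_n$. Instead, take the $V_n$ to be precompact coordinate balls with $\overline{V_n}\subseteq U$ and pull back the standard bump on $\mathbb{R}^n$ that is positive exactly on the unit ball; this gives $\phi_n>0$ on $V_n$ with $\supp(\phi_n)=\overline{V_n}\subseteq U$, which is all you use. With that repair, the verification $U=M_f$ and the smoothness argument at points of $\partial U$ go through as you describe.
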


\begin{proof}
\,\,\,A proof for the case $M=\mathbb{R}^n$ can be found in [GuPo74], Exercise 1.5.11. A combination with a partition-of-unity argument proves the general case.
\end{proof}

The following corollary will be very important for the rest of this thesis:

\begin{corollary}\label{C(U)=C(M)_f_U}
If $A$ is a unital Fr\'echet algebra and $U$ an open subset of a manifold $M$, then the algebra $C^{\infty}(U,A)$ is isomorphic as a unital Fr\'echet algebra to the localization of $C^{\infty}(M,A)$ with respect to a $U$-defining function $f_U$. 
\end{corollary}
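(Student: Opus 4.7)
The plan is essentially to combine the two immediately preceding results, Whitney's Theorem and Corollary \ref{C(M)_f=C(M_f)}. Since Whitney's Theorem delivers a smooth function realizing an open set as a non-vanishing locus, and Corollary \ref{C(M)_f=C(M_f)} identifies the smooth localization at such a function with the algebra of vector-valued smooth functions on the corresponding open set, the corollary should follow by a one-line composition.

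More concretely, first I would invoke Theorem \ref{whitneys theorem} to produce a smooth function $f_U\in C^{\infty}(M,\mathbb{R})$ such that $U = M_{f_U} = \{m\in M:\,f_U(m)\neq 0\}$. The hypothesis that $U$ is an open subset of $M$ (not all of $M$ and not empty — otherwise the statement is trivial, since the localization at $0$ is zero and at $1$ is isomorphic to $A$ by Corollary \ref{inverting 1 iso}, matching $C^{\infty}(\emptyset,A)=\{0\}$ and $C^{\infty}(M,A)$ respectively) ensures that $f_U$ is a non-zero element of $C^{\infty}(M,\mathbb{R})$, which is precisely the hypothesis needed to apply the next step.

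Next I would apply Corollary \ref{C(M)_f=C(M_f)} with $f:=f_U$, which yields that the map
\[
\phi_{f_U}:C^{\infty}(M,A)_{\{f_U\}}\longrightarrow C^{\infty}(M_{f_U},A),\quad [F]\mapsto F\circ\left(\tfrac{1}{f_U}\times\id_{M_{f_U}}\right)
\]
is an isomorphism of unital Fr\'echet algebras. Substituting $M_{f_U}=U$ gives $C^{\infty}(M,A)_{\{f_U\}}\cong C^{\infty}(U,A)$ as unital Fr\'echet algebras, which is exactly the claim.

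There is really no technical obstacle here: the genuine analytic content (the use of Hadamard's Lemma to show that the kernel of the restriction map is generated by the defining function, and the resulting Fr\'echet-quotient identification) has been discharged in Theorem \ref{manifold mod sub=sub} and Corollary \ref{C(M)_f=C(M_f)}, while the existence of a defining function for a general open subset has been discharged in Theorem \ref{whitneys theorem}. The only thing to note (and perhaps worth mentioning explicitly in the written proof) is that the resulting isomorphism depends on the choice of $f_U$, but the isomorphism class of $C^{\infty}(M,A)_{\{f_U\}}$ does not — this can be recorded as a remark rather than proved, and is in any case automatic from the universal property of the smooth localization together with the fact that two defining functions for the same $U$ differ by a unit on $U$.
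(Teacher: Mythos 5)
Your proof is correct and follows exactly the paper's argument: invoke Theorem \ref{whitneys theorem} to obtain a $U$-defining function $f_U$ and then apply Corollary \ref{C(M)_f=C(M_f)} with $f=f_U$. The extra remarks on the degenerate cases ($U=\emptyset$, handled via $A_{\{0\}}=\mathbf{0}$) and on independence of the choice of $f_U$ are harmless additions beyond what the paper records.
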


\begin{proof}
\,\,\,For the proof we just have to combine Corollary \ref{C(M)_f=C(M_f)} and Theorem \ref{whitneys theorem}.
\end{proof}

\section{Smooth Localization of Noncommutative Vector Bundles}\label{LNVB}

In this part of the thesis we want to apply the constructions from the previous section to suggest a possible notion of ``locally free" modules. 
In fact, Proposition \ref{gamma V_U=gamma VC(U)} invites us to give the following definition:

\begin{definition}\label{loc of NCVB}{\bf(Locally free modules).}\index{Modules!Locally Free}
Let $A$ be a unital locally convex algebra and $E$ a locally convex $A$-module. Further, let $C_A$ denote the center of $A$. We call $E$ \emph{locally free of finite rank} $n\in\mathbb{N}$ if for all $\chi\in\Gamma_{C_A}$ there exists an element $z\in C_A$ with $\chi(z)\neq 0$ such that
\begin{align}
E_{\{z\}}:=E\otimes_{C_A}(C_A)_{\{z\}}\cong (C_A)_{\{z\}}^n \label{iso of mod}
\end{align}
as locally convex $(C_A)_{\{z\}}$-modules.\sindex[n]{$E_{\{z\}}$}
\end{definition}

\begin{remark}\label{remark on loc of NCVB}
(a) From a classical point of view the condition 
in Definition \ref{loc of NCVB} means that there exists an open covering of $\Gamma_{C_A}$ of the form $(D(z_i))_{i\in I}$ such that $E$ is free of rank $n$ over each of these open subsets.

(b) When dealing with Fr\'{e}chet algebras, it is enough to require (\ref{iso of mod}) to be an algebraic isomorphism of $(C_A)_{\{z\}}$-modules:

(i) Indeed, if $A$ is a Fr\'{e}chet algebra, then the same holds for the center $C_A$, and furthermore for $(C_A)_{\{z\}}$. Here, the last assertion follows from the fact that the quotient of a Fr\'{e}chet space by a closed subspace is again Fr\'{e}chet (cf. [Bou55], \S 3.5). 

(ii) If $E_{\{z\}}\cong (C_A)_{\{z\}}^n$ as (algebraic) $(C_A)_{\{z\}}$-modules, then $E_{\{z\}}$ is a finitely generated projective $(C_A)_{\{z\}}$-module, and Proposition \ref{modules of frechet algebras} (b) implies that $E_{\{z\}}$ carries a unique topology of Fr\'{e}chet $(C_A)_{\{z\}}$-module. 

(iii) Now, Proposition \ref{modules of frechet algebras} (a) implies that each algebraic isomorphism between finitely generated projective Fr\'{e}chet modules is automatically an isomorphism of Fr\'{e}chet modules. In particular, $$E_{\{z\}}\cong (C_A)_{\{z\}}^n$$ as Fr\'{e}chet $(C_A)_{\{z\}}$-modules. 
\end{remark}

For the following examples it is important to keep Remark \ref{remark on loc of NCVB} (b)  in mind:

\begin{example}
If $(\mathbb{V},M,V,q)$ is a vector bundle over $M$ with $\dim V=n$, then $\Gamma\mathbb{V}$ is a finitely generated projective $C^{\infty}(M)$-module. Let $(U_i)_{i\in I}$ be an open cover of $M$ such that each $\mathbb{V}_{U_i}$ is a trivial vector bundle over $U_i$, i.e., such that $\mathbb{V}_{U_i}\cong U_i\times V$. Further, let $f_i$ be a $U_i$-defining function. For $m\in M$ we choose $i_m\in I$ with $m\in U_{i_m}$. Then $f_{i_m}$ is an element in $C^{\infty}(M)$ with $f_{i_m}(m)\neq 0$ and Corollary \ref{C(U)=C(M)_f_U} implies
\[\Gamma\mathbb{V}_{\{f_{i_m}\}}=\Gamma\mathbb{V}\otimes_{C^{\infty}(M)}C^{\infty}(M)_{\{f_{i_m}\}}\cong\Gamma\mathbb{V}\otimes_{C^{\infty}(M)}C^{\infty}(U_{i_m})\cong\Gamma\mathbb{V}_{U_{i_m}}\cong C^{\infty}(U_{i_m})^n.
\]
\end{example}

\begin{example}
For $\theta=\frac{n}{m}$, $n,m$ relatively prime, let $A:=\mathbb{T}^2_{\theta}\cong\Gamma(\mathbb{A})$ be the smooth 2-dimensional quantum torus from Proposition \ref{rational quantumtori}. In the following we consider the free module $A^k$ for some $k\in\mathbb{N}$: From Corollary \ref{center of rational quantumtori} we know that $C_A\cong C^{\infty}(\mathbb{T}^2)$. Now, let $(U_i)_{i\in I}$ be an open cover of $\mathbb{T}^2$ such that each $\mathbb{A}_{U_i}$ is a trivial algebra bundle over $U_i$, i.e., such that $\mathbb{A}_{U_i}\cong U_i\times\M_m(\mathbb{C}) $. Further, let $z_i$ be an element in $C_A$ corresponding to a $U_i$-defining function $f_i$. For $(u,v)\in \mathbb{T}^2$ we choose $i_0\in I$ with $(u,v)\in U_{i_0}$. Then $z_{i_0}$ is an element in $C_A$ with $\delta_{(u,v)}(z_{i_0})\neq 0$ and Corollary \ref{C(U)=C(M)_f_U} implies
\begin{align}
A^k_{\{z_{i_0}\}}&=A^k\otimes_{C_A}(C_A)_{\{z_{i_0}\}}\cong (\Gamma\mathbb{A})^k\otimes_{C^{\infty}(\mathbb{T}^2)}C^{\infty}(M)_{\{f_{i_0}\}}\cong(\Gamma\mathbb{A})^k\otimes_{C^{\infty}(\mathbb{T}^2)}C^{\infty}(U_{i_0})\notag\\
&\cong(\Gamma\mathbb{A}_{U_{i_0}})^k\cong C^{\infty}(U_{i_0},\M_m(\mathbb{C}))^k\cong C^{\infty}(U_{i_0})^{m^2k}\cong(C_A)_{\{z_{i_0}\}}^{m^2k}.\notag
\end{align}   
\end{example}

\begin{lemma}\label{complemented submodules}
Let $A$ be an algebra and $E$ a finitely generated projective $A$-module. Then each complemented submodule $F$ of $E$ is also a finitely generated projective $A$-module.
\end{lemma}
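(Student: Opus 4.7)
The plan is to reduce the statement to Corollary \ref{f.g.pr.mod}, which characterizes finitely generated projective modules as direct summands of free modules of the form $A^n$. Since $E$ is finitely generated projective, we obtain some $n \in \mathbb{N}$ and an $A$-module $E'$ such that $E \oplus E' \cong A^n$. Since $F$ is complemented in $E$, there exists an $A$-submodule $F'' \subseteq E$ with $E = F \oplus F''$.

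The key step is then to combine these two decompositions: substituting the second into the first yields an isomorphism
\[
A^n \;\cong\; F \oplus F'' \oplus E'
\]
of $A$-modules. This exhibits $F$ as a direct summand of $A^n$, so by the equivalence (a) $\Leftrightarrow$ (b) in Corollary \ref{f.g.pr.mod}, the module $F$ is finitely generated and projective.

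There is no real obstacle here; the statement is a direct bookkeeping consequence of the characterization of finitely generated projective modules as direct summands of $A^n$ together with associativity of the direct sum. The only small point worth mentioning is that one should verify explicitly that the natural candidate for a finite generating set of $F$ works, namely that if $\{e_1,\dots,e_n\}$ generates $E$ and $\pi : E \to F$ denotes the projection with kernel $F''$, then $\{\pi(e_1),\dots,\pi(e_n)\}$ generates $F$; but this is immediate from $F = \pi(E)$.
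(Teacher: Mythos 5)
Your proof is correct and follows essentially the same route as the paper: both arguments combine the complement of $F$ in $E$ with a complement of $E$ in $A^n$ to exhibit $F$ as a direct summand of $A^n$, and then invoke Corollary \ref{f.g.pr.mod}. The only cosmetic difference is that the paper writes the complement of $E$ explicitly as $(1_n-p)\cdot A^n$ for an idempotent $p$ with $E\cong p\cdot A^n$, whereas you keep it abstract; your closing remark about generating sets is harmless but already subsumed by the corollary.
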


\begin{proof}
\,\,\,Let $F$ be a complemented submodule of $E$. By Corollary \ref{f.g.pr.mod}, we have to show that $F$ is a direct summand of a free $A$-module of finite rank: 

For this we choose a complement $F'$ of $F$ in $E$ and note that, since $E$ is a finitely generated projective $A$-module, there exist $n\in\mathbb{N}$ and $p\in\Idem_n(A)$ such that $E\cong p\cdot A^n$. Now, we define 
\[F'':=F'\oplus(1_n-p)\cdot A^n
\]and note that this leads to
\[F\oplus F''=F\oplus F'\oplus(1_n-p)\cdot A^n\cong E\oplus(1_n-p)\cdot A^n\cong p\cdot A^n\oplus(1_n-p)\cdot A^n\cong A^n.
\]
\end{proof}

\begin{proposition}\label{projective stuff}
Let $A$ be an algebra and $B$ a subalgebra. Further, let $A$ be finitely generated and projective as a $B$-module. Then $q\cdot A^m$ is \emph{(}also\emph{)} finitely generated and projective as a $B$-module for each $m\in\mathbb{N}$ and $q\in\Idem_m(A)$.
\end{proposition}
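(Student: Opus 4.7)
The plan is to reduce the statement to Lemma \ref{complemented submodules} by first establishing that the ambient module $A^m$ is itself a finitely generated projective $B$-module, and then exhibiting $q\cdot A^m$ as a complemented $B$-submodule of $A^m$.

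First I would argue that $A^m$ is finitely generated and projective as a $B$-module. Writing $A^m=\bigoplus_{i=1}^m A$ as a direct sum of $m$ copies of the $B$-module $A$ and using the hypothesis that $A$ is projective over $B$, Proposition \ref{Appendix B3} yields that $A^m$ is projective as a $B$-module. Moreover, if $\{a_1,\ldots,a_k\}\subseteq A$ is a finite set of generators of $A$ as a $B$-module, then the finite family $\{a_i\cdot e_j:\,1\leq i\leq k,\,1\leq j\leq m\}$, where $e_j$ denotes the $j$-th standard generator of $A^m$, generates $A^m$ as a $B$-module.

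Second, I would observe that $q\cdot A^m$ is a complemented $B$-submodule of $A^m$. Since $q\in\Idem_m(A)$ is an idempotent right $A$-linear endomorphism of $A^m$, every $x\in A^m$ decomposes as $x=q\cdot x+(1_m-q)\cdot x$, giving
\[ A^m=q\cdot A^m\oplus(1_m-q)\cdot A^m \]
as right $A$-modules. Restricting scalars along the inclusion $B\hookrightarrow A$ turns this into a decomposition of right $B$-modules, and in particular $q\cdot A^m$ is complemented in $A^m$ as a $B$-module.

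Combining the two steps, Lemma \ref{complemented submodules} applied to the finitely generated projective $B$-module $A^m$ and its complemented $B$-submodule $q\cdot A^m$ delivers the conclusion. I do not expect any substantial obstacle; the only subtle point is the change of perspective from $A$-module to $B$-module structure on $A^m$, which is harmless since $A$-linearity of $q$ trivially implies $B$-linearity.
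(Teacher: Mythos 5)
Your proposal is correct and follows essentially the same route as the paper: establish that $A^m$ is a finitely generated projective $B$-module, note that $q\cdot A^m$ is complemented in $A^m$ by $(1_m-q)\cdot A^m$, and conclude via Lemma \ref{complemented submodules}. The paper's primary argument for the first step constructs a block-diagonal idempotent $p_m\in\Idem_{nm}(B)$ with $A^m\cong p_m\cdot B^{nm}$, but it explicitly notes the appeal to Proposition \ref{Appendix B3} as an equivalent alternative, which is exactly what you use.
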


\begin{proof}
\,\,\,Since $A$ is a finitely generated projective $B$-module, there exist $n\in\mathbb{N}$ and $p\in\Idem_n(B)$ such that $A\cong p\cdot B^n$ as $B$-modules. For $m\in\mathbb{N}$ we define a matrix $p_m\in\M_{nm}(B)$ by
$$
p_m:=\begin{pmatrix}
 p      &  &   \\
         & \ddots &  \\
         &  &   p 
\end{pmatrix}.
$$
Then $p_m\in\Idem_{nm}(B)$ and one easily gets $A^m\cong p_m\cdot B^{nm}$ as $B$-modules. Hence, Corollary \ref{f.g.pr.mod} implies that $A^m$ is a finitely generated projective $B$-module. Alternatively one can use Proposition \ref{Appendix B3} to see that $A^m$ is a finitely generated projective $B$-module. 

Let $q\in\Idem_m(A)$. We recall that $q\cdot A^m$ is a complemented submodule of the finitely generated projective $B$-module $A^m$. Indeed, a complement is given by $(1_m-q)\cdot A^n$. Therefore, the claim follows from Lemma \ref{complemented submodules}.
\end{proof}

\begin{lemma}\label{direct sums}
Let $I$ be a finite index set and $(A_i)_{i\in I}$ a family of unital algebras. Further, let $A:=\bigoplus_{i\in I}A_i$ and $\pi_i:A\rightarrow A_i$ be the projection onto the $i$-th component. Then the following assertions hold:
\begin{itemize}
\item[\emph{(a)}]
For each $n\in\mathbb{N}$ the map
\[\Phi:\M_n(A)\rightarrow\bigoplus_{i\in I}\M_n(A_i),\,\,\,p\mapsto(p_{A_i}),
\]where $p_{A_i}$ denotes the matrix in $\M_n(A_i)$ obtained from $p$ by projecting its entries to $A_i$, is an isomorphism of algebras.
\item[\emph{(b)}]
If $n\in\mathbb{N}$ and $p\in\Idem_n(A)$, then the map
\[\Psi:p\cdot A^n\rightarrow\bigoplus_{i\in I}p_{A_i}\cdot A_i^n,\,\,\,p\cdot(a_1,\ldots,a_n)\mapsto\bigoplus_{i\in I}p_{A_i}\cdot(\pi_i(a_1),\ldots,\pi_i(a_n))
\]is an isomorphism of finitely generated projective $A$-modules. Moreover, each component is a finitely generated projective $A_i$-module.
\end{itemize}
\end{lemma}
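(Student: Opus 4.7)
The plan is to handle (a) by exhibiting an explicit inverse to $\Phi$ and then deduce (b) from (a) together with Corollary~\ref{f.g.pr.mod}. The whole statement is essentially a bookkeeping assertion saying that matrix algebras and idempotent-cut-out modules commute with finite direct sums of algebras, so no deep input is needed.

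For part (a), the candidate inverse $\Psi_0:\bigoplus_{i\in I}\M_n(A_i)\rightarrow\M_n(A)$ sends a tuple $(q^{(i)})_{i\in I}$ to the matrix whose $(j,k)$-entry is $(q^{(i)}_{jk})_{i\in I}\in A$. A direct computation shows $\Phi\circ\Psi_0=\id$ and $\Psi_0\circ\Phi=\id$; the multiplicativity of $\Phi$ (and unitality) is immediate from the fact that each projection $\pi_i:A\rightarrow A_i$ is a unital algebra homomorphism applied entry-wise, and matrix multiplication only couples entries within a single copy of $A$ (resp.\ $A_i$). Hence $\Phi$ is an isomorphism of unital algebras.

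For part (b), since $\Phi$ is a unital algebra homomorphism, $p\in\Idem_n(A)$ implies $p_{A_i}\in\Idem_n(A_i)$ for every $i\in I$. By Corollary~\ref{f.g.pr.mod}, each summand $p_{A_i}\cdot A_i^n$ is therefore a finitely generated projective $A_i$-module, which settles the last sentence. Each $A_i$ carries a natural $A$-module structure via $\pi_i$, so the right-hand side of $\Psi$ becomes an $A$-module. Well-definedness and $A$-linearity of $\Psi$ follow from the identity $\pi_i\bigl((p\cdot(a_1,\ldots,a_n))_j\bigr)=\bigl(p_{A_i}\cdot(\pi_i(a_1),\ldots,\pi_i(a_n))\bigr)_j$, which is just part (a) applied entry-wise. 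Injectivity is then immediate: if every component of $\Psi(p\cdot(a_1,\ldots,a_n))$ vanishes, the same identity forces $\pi_i\bigl((p\cdot(a_1,\ldots,a_n))_j\bigr)=0$ for all $i,j$, hence $p\cdot(a_1,\ldots,a_n)=0$ since $A=\bigoplus_i A_i$. For surjectivity, given a tuple $(p_{A_i}\cdot(a^{(i)}_1,\ldots,a^{(i)}_n))_{i\in I}$, define $a_j:=(a^{(i)}_j)_{i\in I}\in A$; then $\Psi(p\cdot(a_1,\ldots,a_n))$ equals the prescribed tuple.

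There is no real obstacle here. The only point requiring a moment of care is to keep the various module structures straight—namely that the summands $p_{A_i}\cdot A_i^n$ are viewed as $A$-modules through the projections $\pi_i$, so that $\Psi$ can even be written as a map of $A$-modules. Once that is set up, everything reduces to applying $\pi_i$ coordinate-wise and invoking part (a).
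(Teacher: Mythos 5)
Your proof is correct and follows exactly the routine verification the paper alludes to when it says the proof "just consists of some simple calculation": an explicit inverse for $\Phi$, entry-wise application of the projections $\pi_i$, and Corollary~\ref{f.g.pr.mod} for the projectivity of each summand. Nothing is missing, and your care in specifying the $A$-module structure on the target via the $\pi_i$ is the one point worth making explicit.
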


\begin{proof}
\,\,\,The proof just consists of some simple calculation.
\end{proof}

\begin{theorem}\label{class of locally free modules}
Let $A=\Gamma(\mathbb{A})$ be the algebra of sections of a finite-dimensional algebra bundle $\mathbb{A}$ over a manifold $M$. Then the following assertions hold:
\begin{itemize}
\item[\emph{(a)}]
The algebra $A$ is a finitely generated projective $C_A$-module.
\item[\emph{(b)}]
If $C_A\cong C^{\infty}(M)^n$ for some $n\in\mathbb{N}$, then every finitely generated projective $A$-module is locally free.
\end{itemize}
\end{theorem}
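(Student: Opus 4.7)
For part (a), I would first note that $A = \Gamma(\mathbb{A})$ is a finitely generated projective $C^\infty(M)$-module by Proposition \ref{V is a f.g.p. C-infty (M)-module}, since $\mathbb{A}$ is in particular a finite-rank vector bundle; finite generation over $C_A$ is immediate from the inclusion $C^\infty(M) \cdot 1_A \subseteq C_A$. For projectivity over $C_A$, the plan is to argue locally via trivializations: choose a finite open cover $\{U_i\}$ of $M$ (Remark \ref{finite open cover}) trivializing $\mathbb{A}$ as an algebra bundle, so that $\Gamma(\mathbb{A}|_{U_i}) \cong C^\infty(U_i) \otimes_{\mathbb{C}} \mathfrak{A}$ for the model fibre $\mathfrak{A}$. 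Identifying $C_A$ with sections of the center bundle $C(\mathbb{A})$ (with fibre $Z(\mathfrak{A})$), Corollary \ref{C(U)=C(M)_f_U} gives $(C_A)_{\{f_i\}} \cong C^\infty(U_i) \otimes_{\mathbb{C}} Z(\mathfrak{A})$ for a $U_i$-defining function $f_i$. Assuming $\mathfrak{A}$ is finitely generated projective over $Z(\mathfrak{A})$, base change produces local projectivity, and a partition-of-unity construction in the style of Proposition \ref{existence of trivialising complements} would assemble a global idempotent $p \in \Idem_k(C_A)$ with $A \cong p \cdot C_A^k$.

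For part (b), Proposition \ref{projective stuff} applied with $B = C_A$ together with (a) reduces every finitely generated projective $A$-module $E$ to a finitely generated projective $C_A$-module. The hypothesis $C_A \cong C^\infty(M)^n$ combined with Lemma \ref{direct sums}(b) decomposes $E \cong \bigoplus_{i=1}^n E_i$, with each $E_i$ finitely generated projective over the corresponding factor $C^\infty(M)$ and hence of the form $\Gamma(\mathbb{V}_i)$ for some vector bundle $\mathbb{V}_i$ over $M$ by Theorem \ref{Serre-Swan}. By Proposition \ref{spec of C(M) top}, a character $\chi \in \Gamma_{C_A}$ corresponds to a pair $(m_0, i_0)$; pick an open neighbourhood $U \ni m_0$ trivializing $\mathbb{V}_{i_0}$ and let $f \in C^\infty(M, \mathbb{R})$ be a $U$-defining function (Theorem \ref{whitneys theorem}). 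Setting $z := f \cdot e_{i_0} \in C^\infty(M)^n = C_A$, where $e_{i_0}$ is the $i_0$-th coordinate idempotent, one has $\chi(z) = f(m_0) \neq 0$; moreover, from $(1 - e_{i_0}) \cdot z = 0$ one sees that $1 - e_{i_0}$ vanishes in $(C_A)_{\{z\}}$, so inverting $z$ collapses the other $n-1$ factors and simultaneously inverts $f$ in the $i_0$-th factor. Using Corollary \ref{C(M)_f=C(M_f)} this yields $(C_A)_{\{z\}} \cong C^\infty(U)$ and $E_{\{z\}} \cong \Gamma(\mathbb{V}_{i_0}|_U) \cong (C_A)_{\{z\}}^r$, where $r$ is the rank of $\mathbb{V}_{i_0}|_U$, establishing local freeness in the sense of Definition \ref{loc of NCVB}.

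The main obstacle is the algebraic input in (a): not every finite-dimensional $\mathbb{C}$-algebra $\mathfrak{A}$ is projective over its center. For instance, the $4$-dimensional algebra $\mathbb{C}\langle x, y \rangle / (x^2, y^2, xy + yx)$ has $2$-dimensional center isomorphic to the local Artin ring $\mathbb{C}[u]/(u^2)$ (generated by $1$ and $xy$), and a direct annihilator computation shows that the algebra is not free, hence not projective, over this center. The theorem therefore appears to implicitly require that the fibre algebra $\mathfrak{A}$ be projective over $Z(\mathfrak{A})$, which is automatic in the semisimple case and covers the motivating matrix-algebra fibres such as the one appearing in Proposition \ref{rational quantumtori}. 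A secondary technical point is the partition-of-unity gluing step: since projective splittings do not glue naively as idempotents, the correct approach follows the proof of Proposition \ref{existence of trivialising complements} and uses the partition of unity to combine local surjections together with suitable trivializing complements, rather than attempting to average local idempotents directly.
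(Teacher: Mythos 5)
Your treatment of part (b) is essentially the paper's: the same reduction via part (a) and Proposition \ref{projective stuff} to a finitely generated projective $C_A$-module, the same decomposition through Lemma \ref{direct sums} and Theorem \ref{Serre-Swan} into sections of vector bundles $\mathbb{V}_i$, and the same localizing element (your $z=f\cdot e_{i_0}$ is precisely the paper's $F_{j_i}$). If anything you are more careful here: the spectrum of $C^{\infty}(M)^n$ is a disjoint union of $n$ copies of $M$, so a character is a pair $(m_0,i_0)$ as you say, whereas the paper writes it as an $n$-tuple $(\delta_{m_1},\ldots,\delta_{m_n})$; your version is the correct one and the localization computation is unaffected.

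For part (a) the paper takes a completely different, one-line route: Remark \ref{mod and ringhom I} applied to the inclusion $C^{\infty}(M)\hookrightarrow C_A$, combined with the claim that the multiplication map
\[A\otimes_{C^{\infty}(M)}C_A\rightarrow A,\,\,\,a\otimes z\mapsto az
\]is an isomorphism of $C_A$-modules. Your scepticism about part (a) is justified, and it localizes exactly at this claim: the map is surjective but not injective in general. Already for a trivial bundle $M\times\mathfrak{A}$ with $\dim_{\mathbb{C}}Z(\mathfrak{A})>1$ the two sides have different ranks over $C^{\infty}(M)$, and your example $\mathfrak{A}=\Lambda(\mathbb{C}^2)$ with center $\mathbb{C}[u]/(u^2)$ (where the annihilator of $u$ is $3$-dimensional instead of the $2$ dimensions a free rank-two module would allow) shows that the conclusion of (a) itself fails without an additional hypothesis. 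So you have found a genuine gap in the theorem as stated, not merely in your own argument; the condition you isolate, that the fibre be projective over its center, is the right one, and it holds automatically in the semisimple (matrix-fibre) situations the paper actually uses. Two further remarks. First, under the hypothesis of part (b), namely $C_A\cong C^{\infty}(M)^n$, part (a) can be repaired without your partition-of-unity gluing: the central idempotents $e_1,\ldots,e_n$ give $A=\bigoplus_i e_iA$, each $e_i\mathbb{A}$ is a vector subbundle of $\mathbb{A}$ (the bundle endomorphism $a\mapsto e_i(x)a$ is idempotent, hence of locally constant rank as in the proof of Theorem \ref{Serre-Swan}), so $e_iA=\Gamma(e_i\mathbb{A})$ is finitely generated projective over $C^{\infty}(M)\cong C_Ae_i$, which is itself a direct summand of $C_A$; this is all that part (b) requires. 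Second, your worry about gluing local idempotents in the general case is well founded, and the surjection-plus-splitting argument of Proposition \ref{existence of trivialising complements} is indeed the correct substitute, but note that it still needs the fibrewise projectivity hypothesis as input, so it does not rescue the theorem in the generality in which it is stated.
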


\begin{proof}
\,\,\,(a) The claim follows from Remark \ref{mod and ringhom I} applied to the inclusion and the fact that the map
\[A\otimes_{C^{\infty}(M)}C_A\rightarrow A,\,\,\,a\otimes z\mapsto az
\]is an isomorphism of $C_A$-modules. 

(b) We divide the proof of this theorem into three parts: 

(i) In view of part (a) and Proposition \ref{projective stuff}, every finitely generated projective $A$-module $E$ is also a finitely generated projective  $C_A$-module, i.e., there exist $k\in\mathbb{N}$ and $p\in\Idem_k(C_A)$ such that $E\cong p\cdot C_A^k$ as $C_A$-modules. 

(ii) In particular, if $C_A\cong C^{\infty}(M)^n$ for some $n\in\mathbb{N}$, then every finitely generated projective $C_A$-module can be written as a $n$-fold direct sum of finitely generated projective $C^{\infty}(M)$-modules as in Lemma \ref{direct sums} (b), i.e.,
\[E\cong p\cdot C_A^k\cong\bigoplus_{i=1}^n p_i\cdot C^{\infty}(M)^k.
\]By Theorem \ref{Serre-Swan}, each summand $p_i\cdot C^{\infty}(M)^k$ is isomorphic to the sections of a vector bundle $\mathbb{V}_i$ of dimension $n_i$ over $M$. Hence,
\[E\cong\bigoplus_{i=1}^n\Gamma\mathbb{V}_i.
\]

(iii) Now, let $(U_j)_{j\in J}$ be an open cover of $M$ such that each $(\mathbb{V}_i)_{U_j}$ is a trivial vector bundle over $U_j$, i.e., such that $(\mathbb{V}_i)_{U_j}\cong U_j\times V_i$. Further, let $f_j$ be a $U_j$-defining function. To show that $E$ is locally free, let $(\delta_{m_1},\ldots,\delta_{m_n})$ be a character of $\Gamma_{C_A}$ corresponding to $(m_1,\ldots,m_n)\in M^n$. For each $i\in\{1,\ldots,n\}$ we choose $j_i\in J$ with $m_i\in U_{j_i}$. Then 
\[F_{j_i}:=(0,\ldots,\underbrace{f_{j_i}}_{\text{i-th component}},\ldots,0)
\]is an element in $C_A$ with $(\delta_{m_1},\ldots,\delta_{m_n})(F_{j_i})\neq 0$ and Corollary \ref{C(U)=C(M)_f_U} implies
\begin{align}
&E_{\{F_{j_i}\}}\cong E\otimes_{C_A}(C_A)_{\{F_{j_i}\}}\cong\left(\bigoplus_{i=1}^n\Gamma\mathbb{V}_i\right)\otimes_{C_A}\left({\bf0}\oplus\ldots\oplus\underbrace{C^{\infty}(U_{j_i})}_{\text{i-th component}}\oplus\ldots\oplus{\bf0}\right)\notag\\
&\cong{\bf0}\oplus\ldots\oplus\underbrace{\Gamma(\mathbb{V}_i)_{U_{j_i}}}_{\text{i-th component}}\oplus\ldots\oplus{\bf0}\cong{\bf0}\oplus\ldots\oplus\underbrace{C^{\infty}(U_{j_i})^{n_i}}_{\text{i-th component}}\oplus\ldots\oplus{\bf0}\cong(C_A)^{n_i}_{\{F_{j_i}\}}.
\end{align}

\end{proof}

\begin{corollary}
Let $A=\mathbb{T}^2_{\theta}\cong\Gamma(\mathbb{A})$ denote the smooth 2-dimensional quantum torus for $\theta=\frac{n}{m}$. Then every finitely generated projective $A$-module is locally free.
\end{corollary}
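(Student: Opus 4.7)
The plan is to recognize the statement as an immediate application of Theorem \ref{class of locally free modules}\,(b), so the work consists entirely of verifying its two hypotheses for $A=\mathbb{T}^2_{\theta}$ with $\theta=\tfrac{n}{m}$.

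First I would invoke Proposition \ref{rational quantumtori}, which provides the required identification $A=\mathbb{T}^2_{\theta}\cong\Gamma(\mathbb{A})$ as the section algebra of an algebra bundle $\mathbb{A}$ over the base manifold $M=\mathbb{T}^2$ with \emph{finite-dimensional} fibre $\mathrm{M}_m(\mathbb{C})$. This confirms the hypothesis of the theorem that $A$ be the section algebra of a finite-dimensional algebra bundle over a manifold.

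Next I would apply Corollary \ref{center of rational quantumtori}, according to which the center satisfies $C_A\cong C^{\infty}(\mathbb{T}^2)$. Writing this as $C_A\cong C^{\infty}(\mathbb{T}^2)^1$, we see that $C_A$ is of the form $C^{\infty}(M)^n$ with $M=\mathbb{T}^2$ and $n=1$, which is precisely the additional assumption required in part (b) of the theorem.

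With both hypotheses in place, Theorem \ref{class of locally free modules}\,(b) applies verbatim and yields that every finitely generated projective $A$-module is locally free. There is no genuine obstacle here: the construction of $U$-defining functions and the explicit localization computation have already been carried out in the preceding example (for free modules $A^k$) and in the proof of the theorem itself, so the only thing to check is that the two structural inputs on $\mathbb{T}^2_{\theta}$ for rational $\theta$ line up with the general hypotheses, which they do directly.
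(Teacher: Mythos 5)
Your proposal is correct and follows the paper's own argument exactly: the paper likewise cites Corollary \ref{center of rational quantumtori} for $C_A\cong C^{\infty}(\mathbb{T}^2)$ and then applies Theorem \ref{class of locally free modules}\,(b). Your additional explicit check that $\mathbb{T}^2_{\theta}\cong\Gamma(\mathbb{A})$ via Proposition \ref{rational quantumtori} is already built into the statement of the corollary, so nothing further is needed.
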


\begin{proof}
\,\,\,By Corollary \ref{center of rational quantumtori}, we have $C_A\cong C^{\infty}(\mathbb{T}^2)$. Hence, the claim follows from Theorem \ref{class of locally free modules} (b).
\end{proof}

\section{Smooth Localization of Dynamical Systems}\label{ACNCPT^nB}

A dynamical system is a triple $(A,G,\alpha)$, consisting of a unital locally convex algebra $A$, a topological group $G$ and a group homomorphism $\alpha:G\rightarrow\Aut(A)$, which induces a continuous action of $G$ on $A$. In this section we present a method of localizing dynamical systems $(A,G,\alpha)$ with respect to elements of the commutative fixed point algebra of the induced action of $G$ on $C_A$. 

\begin{notation} 
Let $(A,G,\alpha)$ be a dynamical system and $C_A$ the center of $A$. If $c\in C_A, g\in G$ and $a\in A$, then the equation $\alpha(g,c)a=a\alpha(g,c)$ shows that the map $\alpha$ restricts to a continuous action of $G$ on $C_A$ by algebra automorphisms. In the following we write $Z$ for the fixed point algebra of the induced action of $G$ on $C_A$, i.e., $Z=C_A^G$. We further write $A_{\{z\}}$\sindex[n]{$A_{\{z\}}$} for the smooth localization of $A$ with respect to $z\in Z$ and 
\[\pi_{\{z\}}:C^{\infty}(\mathbb{R},A)\rightarrow A_{\{z\}}
\]for the corresponding continuous quotient homomorphism (cf. Definition \ref{sm. loc.}). 
Moreover, we put
\[D_A(z):=\{\chi\in\Gamma_A:\,\chi(z)\neq 0\}\,\,\,\text{and}\,\,\,D(z):=D_Z(z):=\{\chi\in\Gamma_Z:\,\chi(z)\neq 0\}.
\]\sindex[n]{$D_A(z)$}
\end{notation}

\begin{lemma}\label{A_z is locally convex space again}
With the notations from above, the following assertion holds:
The map 
\[\rho_{\{z\}}:A_{\{z\}}\times Z\rightarrow A_{\{z\}},\,\,\,([f],z)\mapsto [z\cdot f]
\]defines on $A_{\{z\}}$ the structure of a right locally convex $Z$-module.
\end{lemma}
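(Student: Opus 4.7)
The plan is to verify that the prescription $([f], z') \mapsto [z' \cdot f]$ (where $z' \cdot f$ denotes pointwise multiplication in $A\{t\} = C^{\infty}(\mathbb{R}, A)$, viewing $z' \in Z \subseteq A$ as a constant) passes to the quotient, satisfies the axioms of a right $Z$-module, and is jointly continuous. The key structural fact to exploit is that every $z' \in Z$ lies in the center $C_A$ of $A$, so left and right multiplication by $z'$ on $A\{t\}$ coincide and commute with every element of $A\{t\}$.

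First I would establish well-definedness. Let $z' \in Z$. Since $z' \in C_A$, left multiplication $m_{z'} \colon A\{t\} \to A\{t\}$, $f \mapsto z' \cdot f$ is a continuous two-sided $A\{t\}$-bimodule map (centrality gives $z' g f h = g (z' f) h$ for all $g, h \in A\{t\}$). In particular, $m_{z'}$ maps the algebraic two-sided ideal $\langle f_z \rangle$ generated by $f_z(t) = 1_A - t z$ into itself; by continuity it preserves the closure $I_{\{z\}} = \overline{\langle f_z \rangle}$. Hence setting $[f] \cdot z' := [z' \cdot f]$ is independent of the chosen representative.

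Next I would verify the right $Z$-module axioms. Unitality $[f] \cdot 1_Z = [f]$ is immediate. Associativity $([f] \cdot z_1) \cdot z_2 = [f] \cdot (z_1 z_2)$ and biadditivity follow directly from the corresponding identities for pointwise multiplication in $A\{t\}$, together with the commutativity of $Z$ (so that $z_2 z_1 = z_1 z_2$). Note the whole construction is compatible with the $A$-bimodule structure of Lemma~\ref{A_{a...} is a (left) A-modul}: since $z' \in C_A$ it commutes, through the bimodule action, with the quotient class of any element.

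Finally I would handle continuity. Since $Z$ embeds into $A\{t\}$ as constant functions and $A\{t\}$ is a locally convex algebra (Proposition~\ref{C(M,A) loc. con. algebra}), the map
\[
\mu \colon A\{t\} \times Z \longrightarrow A\{t\}, \qquad (f, z') \longmapsto z' \cdot f
\]
is continuous. Composition with the continuous quotient morphism $\pi_{\{z\}}$ yields a continuous map $\pi_{\{z\}} \circ \mu \colon A\{t\} \times Z \to A_{\{z\}}$, which by the first step factors through $A_{\{z\}} \times Z$. Because $\pi_{\{z\}}$ is a quotient morphism of locally convex spaces and $\id_Z$ is the identity, the product $\pi_{\{z\}} \times \id_Z$ is an open surjection, so by the universal property of topological quotients the induced map $\rho_{\{z\}} \colon A_{\{z\}} \times Z \to A_{\{z\}}$ is continuous. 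The only point requiring any care is the passage from continuity on $A\{t\} \times Z$ to continuity on $A_{\{z\}} \times Z$; openness of $\pi_{\{z\}} \times \id_Z$ (a standard consequence of the open mapping property of locally convex quotients in the second factor being trivial) settles it.
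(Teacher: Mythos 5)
Your proof is correct and follows essentially the same route as the paper: the paper simply invokes the continuous $A$-bimodule structure on $A_{\{z\}}$ from Lemma \ref{A_{a...} is a (left) A-modul} and restricts it to the central, commutative subalgebra $Z$, whereas you unpack that lemma's content (well-definedness via invariance of the closed ideal, the module axioms via centrality and commutativity of $Z$, and continuity via openness of the quotient map). No gaps.
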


\begin{proof}
\,\,\,Since $Z$ is commutative, the assertion directly follows from Lemma \ref{A_{a...} is a (left) A-modul}.
\end{proof}

\begin{proposition}\label{A_z is algebra}
The locally convex space $A_{\{z\}}$ becomes a locally convex algebra, when equipped with the multiplication
\[m_{\{z\}}:A_{\{z\}}\times A_{\{z\}}\rightarrow A_{\{z\}},\,\,\,m([f],[f']):=[f\cdot f'].
\]
\end{proposition}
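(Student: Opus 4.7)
The plan is to pass everything through the quotient map $\pi_{\{z\}}:A\{t\}\to A_{\{z\}}$ from Definition \ref{sm. loc.}, exploiting the fact that $A\{t\}=C^{\infty}(\mathbb{R},A)$ is already a unital locally convex algebra by Proposition \ref{C(M,A) loc. con. algebra} and that $I_z$ is, by construction, a closed two-sided ideal of $A\{t\}$.

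First I would verify that the prescription $([f],[f'])\mapsto[f\cdot f']$ is well-defined. If $f-g\in I_z$ and $f'-g'\in I_z$, the identity $f\cdot f'-g\cdot g'=(f-g)\cdot f'+g\cdot(f'-g')$ together with the fact that $I_z$ is two-sided shows $f\cdot f'-g\cdot g'\in I_z$; associativity and bilinearity of $m_{\{z\}}$ are then inherited immediately from the corresponding properties of the multiplication $m$ on $A\{t\}$.

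For continuity, the key observation is that $\pi_{\{z\}}\times\pi_{\{z\}}:A\{t\}\times A\{t\}\to A_{\{z\}}\times A_{\{z\}}$ is a quotient map of locally convex spaces: $\pi_{\{z\}}$ is continuous and open (as the quotient map of a locally convex space by a closed subspace), and the product of two continuous open surjections is continuous and open, hence a topological quotient. By construction $m_{\{z\}}\circ(\pi_{\{z\}}\times\pi_{\{z\}})=\pi_{\{z\}}\circ m$, and the right-hand side is continuous as a composition of continuous maps, invoking Proposition \ref{C(M,A) loc. con. algebra} for $m$. Since $\pi_{\{z\}}\times\pi_{\{z\}}$ is a quotient map, the induced map $m_{\{z\}}$ is continuous, and $A_{\{z\}}$ is a locally convex algebra.

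The only delicate point is the claim that $\pi_{\{z\}}\times\pi_{\{z\}}$ is a quotient map, since for arbitrary topological spaces the product of two quotient maps need not be a quotient map. If one prefers to avoid this step, a routine alternative is to argue directly with the quotient seminorms $\tilde q([f])=\inf_{h\in I_z}q(f-h)$ attached to a defining family of continuous seminorms $q$ of $A\{t\}$: joint continuity of $m$ yields, for each such $q$, continuous seminorms $p_1,p_2$ on $A\{t\}$ controlling $q(f\cdot f')$, which descend to bounds on $\tilde q([f\cdot f'])$ in terms of $\tilde p_1([f])$ and $\tilde p_2([f'])$, giving joint continuity of $m_{\{z\}}$ at the level of seminorms.
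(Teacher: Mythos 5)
Your proposal is correct and follows essentially the same route as the paper: the paper also deduces continuity of $m_{\{z\}}$ from the identity $m_{\{z\}}\circ(\pi_{\{z\}}\times\pi_{\{z\}})=\pi_{\{z\}}\circ m$ together with the fact that $\pi_{\{z\}}\times\pi_{\{z\}}$ is continuous, surjective and open (which settles your ``delicate point'', since an open continuous surjection is automatically a topological quotient map). Your explicit check of well-definedness and the alternative seminorm argument are fine additional detail but not a different method.
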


\begin{proof}
\,\,\,This assertion is an easy consequence of the construction of $A_{\{z\}}$. Indeed, if $m$ denotes the (continuous) multiplication map of the algebra $C^{\infty}(\mathbb{R},A)$, then the continuity of $m_{\{z\}}$ follows from $m_{\{z\}}\circ(\pi_{\{z\}}\times\pi_{\{z\}})=\pi_{\{z\}}\circ m$ and the fact that the map $\pi_{\{z\}}\times\pi_{\{z\}}$ is continuous, surjective and open.
\end{proof}



\begin{proposition}\label{T^n action on spec again}
If $(A,G,\alpha)$ is a dynamical system and $z\in Z$, then the map
\[\alpha_{\{z\}}:G\times A_{\{z\}}\rightarrow A_{\{z\}},\,\,\,(g,[f]\underline{})\mapsto[\alpha(g)\circ f]
\]defines a continuous action of $G$ on $A_{\{z\}}$ by algebra automorphisms. In particular, the triple $(A_{\{z\}},G,\alpha_{\{z\}})$ is a dynamical system.
\end{proposition}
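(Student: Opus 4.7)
The plan is to lift the action from $A$ up to the algebra $A\{t\} = C^{\infty}(\mathbb{R},A)$ of smooth arcs, check that it preserves the closed ideal $I_z$ used to define $A_{\{z\}}$, and then push it down through the quotient map $\pi_{\{z\}}$. The key algebraic point will be that $z$ lies in the \emph{fixed point} algebra $Z = C_A^G$; this is exactly what is needed to keep the distinguished function $f_z(t)=1_A-tz$ invariant under the lifted action.

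First I would define, for each $g\in G$, the map
\[F_g: C^{\infty}(\mathbb{R},A)\to C^{\infty}(\mathbb{R},A),\qquad F_g(f)(t):=\alpha(g)(f(t)).\]
Since $\alpha(g)\in\Aut(A)$ is a continuous algebra automorphism, post-composition with $\alpha(g)$ commutes with taking iterated tangent maps, so $F_g$ is a continuous algebra automorphism of $A\{t\}$ (its inverse is $F_{g^{-1}}$); the continuity follows from the functorial description of the topology on $C^{\infty}(\mathbb{R},A)$ used in the proof of Proposition \ref{C(M,A) loc. con. algebra}. A direct computation gives $F_g\circ F_h = F_{gh}$ and $F_e=\id$.

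Next I would verify $F_g(I_z)\subseteq I_z$. Because $z\in Z$, we have $\alpha(g).z=z$, and thus
\[F_g(f_z)(t)=\alpha(g)(1_A-tz)=1_A-tz=f_z(t),\]
so $F_g(f_z)=f_z$. Since $F_g$ is an algebra homomorphism that is also a bimodule map for the canonical $A$-bimodule structure on $A\{t\}$ (here one uses that $F_g$ restricted to constant functions is $\alpha(g)$), it sends the two-sided ideal $\langle f_z\rangle$ into itself; continuity of $F_g$ then yields $F_g(I_z)=F_g(\overline{\langle f_z\rangle})\subseteq\overline{\langle f_z\rangle}=I_z$. Applying the same argument to $g^{-1}$ shows $F_g(I_z)=I_z$. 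Consequently $F_g$ descends to a well-defined continuous algebra automorphism $\alpha_{\{z\}}(g)$ of $A_{\{z\}}$, characterized by $\alpha_{\{z\}}(g)\circ\pi_{\{z\}}=\pi_{\{z\}}\circ F_g$. The relations $F_g\circ F_h=F_{gh}$ and $F_e=\id$ descend to the quotient, giving the group homomorphism property $\alpha_{\{z\}}:G\to\Aut(A_{\{z\}})$.

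It remains to check continuity of the combined map $G\times A_{\{z\}}\to A_{\{z\}}$. The analogous map $\Phi:G\times A\{t\}\to A\{t\}$, $(g,f)\mapsto F_g(f)$, is continuous: this can be checked pointwise in $t$ and through all derivatives in $t$ using the continuity of the given action $G\times A\to A$, together with the smooth-compact-open topology on $C^{\infty}(\mathbb{R},A)$. Composing with $\pi_{\{z\}}$ yields a continuous map $G\times A\{t\}\to A_{\{z\}}$ which factors through $\id_G\times\pi_{\{z\}}$. Since $\pi_{\{z\}}$ is a quotient map of topological vector spaces, the product map $\id_G\times\pi_{\{z\}}$ is an open surjection, and the universal property of the quotient topology supplies the continuity of $\alpha_{\{z\}}$. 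The only genuinely delicate step is this last one — confirming that the verification of continuity survives after passing to the quotient — but it reduces to the standard fact that quotients of locally convex spaces by closed subspaces are open maps, combined with the fact already used in the proof of Proposition \ref{A_z is algebra}.
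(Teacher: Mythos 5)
Your proof is correct and follows essentially the same route as the paper: lift the action to $C^{\infty}(\mathbb{R},A)$, use $z\in Z$ to see that $f_z$ is fixed so the ideal $I_z$ is preserved and the action descends, and obtain continuity from the continuity of the lifted action together with the fact that $\id_G\times\pi_{\{z\}}$ is a continuous open surjection. (One tiny simplification: you do not need the bimodule remark — since $F_g$ is an algebra automorphism fixing $f_z$, it maps the two-sided ideal generated by $f_z$ onto itself automatically.)
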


\begin{proof}
\,\,\,If $g\in G$, 
\[\alpha(g)(1_A-tz)=1_A-tz
\]implies that the map $\alpha_{\{z\}}$ is well-defined. Further, a simple calculation shows that the map $\alpha_{\{z\}}$ defines an action of $G$ on $A_{\{z\}}$ by algebra automorphisms. That this action is continuous is a consequence of the continuity of the map 
\[\beta:G\times C^{\infty}(\mathbb{R},A)\rightarrow C^{\infty}(\mathbb{R},A),\,\,\,(g,f)\mapsto\alpha(g)\circ f
\](cf. Lemma \ref{dynamical system and trivial algebra bundles}), $\alpha_{\{z\}}\circ(\id_G\times\pi_{\{z\}})=\pi_{\{z\}}\circ\beta$ and the fact that $\id_G\times\pi_{\{z\}}$ is continuous, surjective and open.
\end{proof}

\begin{definition}\label{smooth localized dynamical system}\emph{\bf(Smooth localization of dynamical systems).}\index{Smooth Localization of!Dynamical Systems}
Let $(A,G,\alpha)$ be a dynamical system and $z\in Z$. We call the dynamical system
\[(A_{\{z\}},G,\alpha_{\{z\}})
\]of Proposition \ref{T^n action on spec again} the (\emph{smooth}) \emph{localization of $(A,G,\alpha)$ associated to} $z\in Z$.
\end{definition}

\begin{remark}\label{A_z frechet again}{\bf(Fr\'echet algebras).}
If $A$ is a Fr\'{e}chet algebra, then the same holds for $A_{\{z\}}$. Indeed, this assertion follows from the fact that the quotient of a Fr\'{e}chet space by a closed subspace is again Fr\'{e}chet (cf. [Bou55], \S 3.5). 
\end{remark}

In the remaining part of this section we will be concerned with the spectrum of $A_{z}$:

\begin{lemma}\label{T^n action on spec II again}
If $(A,G,\alpha)$ is a dynamical system and $z\in Z$, then the map 
\[\sigma_{\{z\}}:\Gamma^{\emph{cont}}_{A_{\{z\}}}\times G\rightarrow\Gamma^{\emph{cont}}_{A_{\{z\}}},\,\,\,(\chi,g)\mapsto \chi\circ\alpha_{\{z\}}(g)
\]defines an action of $G$ on $\Gamma^{\emph{cont}}_{A_{\{z\}}}$.
\end{lemma}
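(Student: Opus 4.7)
The plan is to verify two things: first that $\sigma_{\{z\}}$ actually lands in $\Gamma^{\text{cont}}_{A_{\{z\}}}$ (well-definedness), and second that it satisfies the axioms of a right group action.

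For well-definedness, I would fix $g\in G$ and $\chi\in\Gamma^{\text{cont}}_{A_{\{z\}}}$. By Proposition \ref{T^n action on spec again} the map $\alpha_{\{z\}}(g):A_{\{z\}}\to A_{\{z\}}$ is an algebra automorphism, and since the full action $\alpha_{\{z\}}:G\times A_{\{z\}}\to A_{\{z\}}$ is continuous, the partial map $\alpha_{\{z\}}(g)=\alpha_{\{z\}}(g,\cdot)$ is continuous (its inverse $\alpha_{\{z\}}(g^{-1})$ is continuous as well, so it is a topological algebra isomorphism). Then $\chi\circ\alpha_{\{z\}}(g)$ is a composition of continuous algebra homomorphisms into $\mathbb{K}$, hence a continuous algebra homomorphism; it is nonzero because $\alpha_{\{z\}}(g)$ is bijective and $\chi$ is nonzero, so $\chi\circ\alpha_{\{z\}}(g)\in\Gamma^{\text{cont}}_{A_{\{z\}}}$.

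For the action axioms, I would simply compute. Since $\alpha:G\to\Aut(A)$ is a group homomorphism, the induced map $\alpha_{\{z\}}:G\to\Aut(A_{\{z\}})$ inherits this property from the formula $\alpha_{\{z\}}(g)[f]=[\alpha(g)\circ f]$; indeed for $g,h\in G$ and $[f]\in A_{\{z\}}$,
\[
\alpha_{\{z\}}(gh)[f]=[\alpha(gh)\circ f]=[\alpha(g)\circ\alpha(h)\circ f]=\alpha_{\{z\}}(g)\alpha_{\{z\}}(h)[f],
\]
and $\alpha_{\{z\}}(1_G)=\id$ is immediate. Consequently
\[
\sigma_{\{z\}}(\sigma_{\{z\}}(\chi,g),h)=\chi\circ\alpha_{\{z\}}(g)\circ\alpha_{\{z\}}(h)=\chi\circ\alpha_{\{z\}}(gh)=\sigma_{\{z\}}(\chi,gh),
\]
and $\sigma_{\{z\}}(\chi,1_G)=\chi\circ\id=\chi$, confirming the right-action axioms.

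There is no serious obstacle here; the lemma is essentially a bookkeeping statement that pulls back the already-established continuous automorphic $G$-action on $A_{\{z\}}$ to its continuous spectrum. The only thing that needs mild care is recording that $\alpha_{\{z\}}(g)$ is an \emph{automorphism} (not just a morphism) of the locally convex algebra, so that the image $\chi\circ\alpha_{\{z\}}(g)$ is still nonzero and continuous — and this is already contained in Proposition \ref{T^n action on spec again}. Note also that continuity of $\sigma_{\{z\}}$ itself is not claimed in the statement, so it does not need to be proved here.
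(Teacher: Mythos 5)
Your proof is correct and is exactly the ``simple calculation'' that the paper leaves to the reader: well-definedness follows from $\alpha_{\{z\}}(g)$ being a continuous algebra automorphism (Proposition \ref{T^n action on spec again}), and the right-action axioms follow from $\alpha_{\{z\}}$ being a group homomorphism into $\Aut(A_{\{z\}})$. Nothing is missing, and you are right that continuity of $\sigma_{\{z\}}$ is not part of the claim.
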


\begin{proof}
\,\,\,This claim follows from a simple calculation and is therefore left to the reader.
\end{proof}

\begin{remark}\label{spec A_z}{\bf($\Gamma^{\text{cont}}_{A_{\{z\}}}$).}
Let $A$ be a complete unital locally convex algebra such that $\Gamma^{\text{cont}}_A$ is locally equicontinuous. Then Theorem \ref{spec A_a} implies that the map
\[\Phi_{\{z\}}:D_A(z)\rightarrow\Gamma^{\text{cont}}_{A_{\{z\}}},\,\,\,\Phi(\chi)([f]):=\chi\left(f\left(\frac{1}{\chi(z)}\right)\right)
\]is a homeomorphism.
\end{remark}

\section{Another Localization Method}

Another interesting localization method can be found in [NaSa03], Chapter 3. Indeed, given a commutative $\mathbb{R}$-algebra $A$ and an open subset $U$ of the \emph{real spectrum} 
\[\Gamma_{A,\text{r}}:=\Hom_{\text{alg}}(A,\mathbb{R})\backslash\{0\}
\]of $A$, the authors of [NaSA03] define $A_U$ to be the ring of fractions of $A$ with respect to the multiplicative subset of all elements in $A$ without zeroes in $U$, i.e., elements in $A_U$ are (equivalence classes of) fractions $\frac{a}{s}$, where $a,s\in A$ and $s(\chi):=\chi(s)\neq 0$ for all $\chi\in U$ (cf. Construction \ref{S/R}). In particular, they show that if $A=C^{\infty}(\mathbb{R}^n,\mathbb{R})$, then $A_U=C^{\infty}(U,\mathbb{R})$ for each open subset $U$ of $\mathbb{R}^n$ (cf. [NaSA03], Chapter 3, Example 3.3). Of course, an important handicap of this approach is the commutativity of the algebra $A$.


\chapter{Some Comments on Sections of Algebra Bundles}

In this short chapter we are dealing with algebra bundles $q:\mathbb{A}\rightarrow M$ with a possibly infinite-dimensional fibre $A$ over a finite-dimensional manifold $M$ and show how to endow the corresponding space $\Gamma\mathbb{A}$ of sections with a topology that turns it into a (unital) locally convex algebra. Moreover, we show that if $A$ is a Fr\'echet algebra, then the smooth localization of $\Gamma\mathbb{A}$ with respect to an element $f\in C^{\infty}(M,\mathbb{R})$ (cf. Definition \ref{sm. loc.}) is isomorphic (as a unital Fr\'echet algebra) to $\Gamma\mathbb{A}_{M_f}$, where 
\[M_f:=\{m\in M:\,f(m)\neq 0\}.
\]


\section{The Space of Sections as a Locally Convex Algebra}

\begin{definition}\label{algebra bundle}{\bf(Algebra bundles).}\index{Bundles!Algebra}
Let $A$ be a unital locally convex algebra. An \emph{algebra bundle} (with fibre $A$) is a quadruple $(\mathbb{A},M,A,q)$, consisting of an infinite-dimensional manifold $\mathbb{A}$, a manifold $M$ and a smooth map $q:\mathbb{A}\rightarrow M$, with the following property: All fibres $\mathbb{A}_m$, $m\in M$, carry unital algebra structures, and each point $m\in M$ has an open neighbourhood $U$ for which there exists a diffeomorphism
\[\varphi_U:U\times A\rightarrow q^{-1}(U)=\mathbb{A}_U,
\]satisfying $q\circ\varphi_U=p_U$ and all maps
\[\varphi_{U,x}:A\rightarrow\mathbb{A}_x,\,\,\,a\mapsto\varphi_U(x,a)
\]are algebra isomorphisms.
\end{definition}

\begin{definition}{\bf(Sections of algebra bundles).}\label{sections of algebra bundles}\index{Sections!of Algebra Bundles}
If $(\mathbb{A},M,A,q)$ is an algebra bundle, then the corresponding space
\[\Gamma\mathbb{A}:=\{s\in C^{\infty}(M,\mathbb{A}):q\circ s=\id_M\}
\]of \emph{smooth sections} carries the structure of a unital algebra. Indeed, the vector space structure on $\Gamma\mathbb{A}$ is defined as in Definition \ref{sections}. Furthermore, the product  
\[(s_1\cdot s_2)(m):=s_1(m)\cdot s_2(m)
\]of two sections $s_1,s_2\in\Gamma\mathbb{A}$ again defines a section of $\mathbb{A}$, and the unit of $\Gamma\mathbb{A}$ is given by the section $s_{\bf 1}(x):=1_{\mathbb{A}_x}$.
\end{definition}

Our next goal is to endow $\Gamma\mathbb{A}$ with a topology: 

\begin{construction}{\bf(A topology on $\Gamma\mathbb{A}$).}\label{top on space of sections}\index{$\Gamma\mathbb{A}$!Topology on}
Let $(\mathbb{A},M,A,q)$ be an algebra bundle and $(\varphi_i,U_i)_{i\in I}$ a bundle atlas for $(\mathbb{A},M,A,q)$. We endow $\Gamma\mathbb{A}$ with the initial topology $\mathcal{O}_I$ generated by the maps
\[\Phi_i:\Gamma\mathbb{A}\rightarrow C^{\infty}(U_i,A),\,\,\,s\mapsto s_i:=\pr_A\circ\varphi_i^{-1}\circ s_{\mid U_i}
\]for $i\in I$. Here, the right-hand side carries the smooth compact open topology from Definition \ref{smooth compact open topology}. This topology turns $\Gamma\mathbb{A}$ into a locally convex algebra. We will see soon that the topology on $\Gamma\mathbb{A}$ does not depend on the particular choice of the bundle atlas $(\varphi_i,U_i)_{i\in I}$. 
\end{construction}

\begin{lemma}\label{locally defines sections}
Let $(\mathbb{A},M,A,q)$ be an algebra bundle and $(\varphi_i,U_i)_{i\in I}$ a bundle atlas for $(\mathbb{A},M,A,q)$. Moreover, let $\varphi_{ji}:=\varphi_j^{-1}\circ\varphi_i$ \emph{(}on $(U_i\cap U_j)\times A$\emph{)} and $s_i:=\Phi_i(s)$ for $i,j\in I$ and $s\in\Gamma\mathbb{A}$ \emph{(}cf. Construction \ref{top on space of sections}\emph{)}. Then the following assertions hold:
\begin{itemize}
\item[\emph{(a)}]
If $s\in\Gamma\mathbb{A}$ and $\widehat{s_i}(x):=(x,s_i(x))$ for $i\in I$ and $x\in U_i$, then $\widehat{s_j}=\varphi_{ji}\circ\widehat{s_i}$ for each $i,j\in I$.
\item[\emph{(b)}]
Conversely, if $(s_i)_{i\in I}\in\prod_{i\in I}C^{\infty}(U_i,A)$ satisfies $\widehat{s_j}=\varphi_{ji}\circ\widehat{s_i}$ for each $i,j\in I$, then the map 
\[s:M\rightarrow\mathbb{A},\,\,\,s(x):=(\varphi_i\circ \widehat{s_i})(x),\,\,\,x\in U_i,
\]defines a section of the bundle $(\mathbb{A},M,A,q)$ with $\Phi_i(s)=s_i$ for each $i\in I$.
\end{itemize}
\end{lemma}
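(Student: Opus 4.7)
The plan is to trace through both parts directly from the definitions, using only the bundle-chart identity $q\circ\varphi_i=\pr_{U_i}$ and the fact that a section of $\mathbb{A}$ lives over the point at which it is evaluated.

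For part (a), I first observe that $\widehat{s_i}(x)=\varphi_i^{-1}(s(x))$ on $U_i$. Indeed, since $q\circ s=\id_M$ and $q\circ\varphi_i=\pr_{U_i}$, the first component of $\varphi_i^{-1}(s(x))\in U_i\times A$ must equal $q(s(x))=x$; its second component is, by definition, $s_i(x)$. Once this identification is in place, the assertion is a one-line chase: on $U_i\cap U_j$,
\[
\varphi_{ji}\bigl(\widehat{s_i}(x)\bigr)=\varphi_j^{-1}\bigl(\varphi_i(\varphi_i^{-1}(s(x)))\bigr)=\varphi_j^{-1}(s(x))=\widehat{s_j}(x).
\]

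For part (b), I would define $s:M\to\mathbb{A}$ locally by $s(x):=\varphi_i(\widehat{s_i}(x))$ for $x\in U_i$ and then check three things in sequence. First, well-definedness on overlaps: if $x\in U_i\cap U_j$, then the compatibility $\widehat{s_j}=\varphi_{ji}\circ\widehat{s_i}$ combined with $\varphi_j\circ\varphi_{ji}=\varphi_j\circ\varphi_j^{-1}\circ\varphi_i=\varphi_i$ (on $(U_i\cap U_j)\times A$) gives $\varphi_j(\widehat{s_j}(x))=\varphi_i(\widehat{s_i}(x))$. Second, smoothness: on each $U_i$ the map $s$ is a composition $\varphi_i\circ\widehat{s_i}$ of smooth maps, and since the $U_i$ form an open cover, $s$ is smooth on all of $M$. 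Third, the section property: for $x\in U_i$,
\[
q(s(x))=q\bigl(\varphi_i(\widehat{s_i}(x))\bigr)=\pr_{U_i}(\widehat{s_i}(x))=x,
\]
so $q\circ s=\id_M$.

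Finally I would verify the normalization $\Phi_i(s)=s_i$: on $U_i$ we have $\varphi_i^{-1}(s(x))=\widehat{s_i}(x)=(x,s_i(x))$, so applying $\pr_A$ yields $\Phi_i(s)(x)=s_i(x)$, as required.

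Since every step is a formal consequence of the cocycle-like identity $\varphi_j\circ\varphi_{ji}=\varphi_i$ and the definition of bundle charts, I do not anticipate any genuine obstacle; the only point deserving care is the well-definedness on overlaps in (b), where one must be scrupulous about the direction of the transition maps $\varphi_{ji}=\varphi_j^{-1}\circ\varphi_i$ rather than its inverse.
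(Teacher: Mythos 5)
Your proof is correct and follows essentially the same route as the paper: the key identity $\widehat{s_i}(x)=\varphi_i^{-1}(s(x))$ for (a) and the overlap computation $\varphi_j\circ\widehat{s_j}=\varphi_j\circ\varphi_{ji}\circ\widehat{s_i}=\varphi_i\circ\widehat{s_i}$ for (b) are exactly what the paper uses, with the paper leaving the remaining verifications (smoothness, the section property, $\Phi_i(s)=s_i$) as obvious while you spell them out.
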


\begin{proof}
\,\,\,(a) This is just a simple calculation involving the bundle charts $(\varphi_i,U_i)_{i\in I}$.

(b) Let $x\in U_i\cap U_j$. Then
\[(\varphi_j\circ \widehat{s_j})(x)=(\varphi_j\circ \varphi_{ji}\circ\widehat{s_i})(x)=(\varphi_i\circ \widehat{s_i})(x)
\]shows that the map $s$ is well-defined. It obviously defines a section of the bundle $(\mathbb{A},M,A,q)$ with $\Phi_i(s)=s_i$ for each $i\in I$.
\end{proof}

\begin{proposition}\label{imPhi}
Suppose we are in the situation of Lemma \ref{locally defines sections}. Further, consider the algebra map
\[\Phi_I:(\Gamma\mathbb{A},\mathcal{O}_I)\rightarrow\prod_{i\in I}C^{\infty}(U_i,A),\,\,\,\Phi_I(s):=(\Phi_i(s))_{i\in I}=(s_i)_{i\in I}.
\]Then the following assertions hold:
\begin{itemize}
\item[\emph{(a)}]
We have
\[\im(\Phi_I)=\{(s_i)_{i\in I}:\,(\forall i,j\in I)\,\widehat{s_j}=\varphi_{ji}\circ\widehat{s_i}\}.
\]
\item[\emph{(b)}]
The algebra map $\Phi_I$ is a topological embedding with closed image.
\end{itemize}
\end{proposition}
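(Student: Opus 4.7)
The plan is to deduce (a) directly from Lemma \ref{locally defines sections}. The inclusion $\im(\Phi_I) \subseteq \{(s_i)_{i \in I} : \widehat{s_j} = \varphi_{ji} \circ \widehat{s_i}\}$ is exactly part (a) of that lemma, while part (b) of the lemma constructs for every compatible tuple $(s_i)_{i \in I}$ a section $s \in \Gamma\mathbb{A}$ with $\Phi_i(s) = s_i$, giving the reverse inclusion.

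For (b), I would treat injectivity, the embedding property, and closedness of the image as three separate steps. Injectivity of $\Phi_I$ is immediate: if $\Phi_i(s) = \Phi_i(t)$ for every $i \in I$, then $s$ and $t$ agree on each $U_i$, hence on all of $M$, because $(U_i)_{i \in I}$ is an open cover. The embedding property is then a formal consequence of the very definition of $\mathcal{O}_I$: by construction, $\mathcal{O}_I$ is the initial topology with respect to the point-separating family $(\Phi_i)_{i \in I}$, and this is precisely the subspace topology pulled back along the injective map $\Phi_I$ from the product topology on $\prod_{i \in I} C^{\infty}(U_i, A)$.

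It remains to verify that $\im(\Phi_I)$ is closed in $\prod_{i \in I} C^{\infty}(U_i, A)$. By part (a), the image is the intersection, over all pairs $(i, j) \in I \times I$, of the equalizers of the two maps
\[\prod_{k \in I} C^{\infty}(U_k, A) \rightrightarrows C^{\infty}(U_i \cap U_j, A)\]
given by $(s_k)_k \mapsto s_j|_{U_i \cap U_j}$ and $(s_k)_k \mapsto \pr_A \circ \varphi_{ji} \circ (\id_{U_i \cap U_j}, s_i|_{U_i \cap U_j})$. Each of these maps is continuous: the product projections are continuous by definition, the restriction map $C^{\infty}(U_j, A) \to C^{\infty}(U_i \cap U_j, A)$ is continuous in the smooth compact open topology, and post-composition with the smooth transition diffeomorphism $\varphi_{ji}$ is continuous in the same topology. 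Since $C^{\infty}(U_i \cap U_j, A)$ is Hausdorff, each equalizer is closed, and therefore so is the intersection.

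The main obstacle here is verifying that post-composition with the smooth transition $\varphi_{ji}$ is continuous in the smooth compact open topology on vector-valued function spaces; this is a standard fact about smooth vector-valued function spaces, and I would cite the corresponding results from Appendix D of the thesis to avoid redoing the calculations with derivatives and seminorms.
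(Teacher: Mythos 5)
Your treatment of (a) and of the injectivity and embedding claims in (b) coincides with the paper's: (a) is read off from both halves of Lemma \ref{locally defines sections}, and the embedding property is the standard fact that the initial topology with respect to an injective, point-separating family agrees with the subspace topology under the product map. Where you diverge is the closedness of $\im(\Phi_I)$. The paper argues by nets: if $\Phi_I(s_\alpha)\to(s_i)_{i\in I}$ in the product, then convergence in the smooth compact open topology gives pointwise convergence $s_{\alpha,i}(x)\to s_i(x)$, and the compatibility relations $\widehat{s_{\alpha,j}}=\varphi_{ji}\circ\widehat{s_{\alpha,i}}$ pass to the limit because $\varphi_{ji}$ is continuous; so the limit tuple is compatible and lies in the image by (a). You instead exhibit the image as an intersection of equalizers of continuous maps into the Hausdorff spaces $C^{\infty}(U_i\cap U_j,A)$. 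Both are correct. The paper's route is more economical in its hypotheses: it only needs continuity of the point evaluations $\ev_x$ on $C^{\infty}(U_i,A)$ and continuity of $\varphi_{ji}$ as a map of manifolds. Your route needs the stronger inputs that restriction $C^{\infty}(U_j,A)\to C^{\infty}(U_i\cap U_j,A)$ and post-composition $f\mapsto\pr_A\circ\varphi_{ji}\circ(\id,f)$ are continuous for the smooth compact open topology; the latter is exactly the continuity assertion the paper itself invokes (without proof) in Theorem \ref{topology unique}, so it is available, but note that Appendix D does not state it explicitly, so you would have to supply or locate that argument rather than merely cite it. In exchange, your equalizer formulation is more structural and immediately reusable (e.g.\ it is the same mechanism behind Theorem \ref{topology unique} and Lemma \ref{algebra section 0}(b)), whereas the net argument is self-contained but ad hoc.
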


\begin{proof}
\,\,\,(a) The first assertion is a direct consequence of Lemma \ref{locally defines sections}.

(b) Clearly, the map $\Phi_I$ is injective. Moreover, the topology $O_I$ just defined in Construction \ref{top on space of sections} turns it into a topological embedding. To see that $\im(\Phi_I)$ is closed, let $(s_{\alpha})_{\alpha\in\Lambda}$ be a net in $\Gamma\mathbb{A}$ such that $\Phi_I(s_{\alpha})$ converges to some $(s_i)_{i\in I}\in\prod_{i\in I}C^{\infty}(U_i,A)$. Then $\lim_{\alpha}s_{\alpha,i}(x)=s_i(x)$ for each $i\in I$ and $x\in U_i$. From this we conclude that
\[\widehat{s_j}(x)=\lim_{\alpha}\widehat{s_{\alpha,j}}(x)=\lim_{\alpha}(\varphi_{ji}\circ\widehat{s_{\alpha,i}})(x)=(\varphi_{ji}\circ\widehat{s_i})(x)
\]for all $i,j\in I$ and $x\in U_i\cap U_j$ and thus that $(s_i)_{i\in I}\in\im\Phi_I$ by part (a).
\end{proof}

\begin{theorem}\label{topology unique}
Let $(\mathbb{A},M,A,q)$ be an algebra bundle and $(\varphi_i,U_i)_{i\in I}$ a bundle atlas for $(\mathbb{A},M,A,q)$. Further, let $(\psi_j,V_j)_{j\in J}$ be another bundle atlas for $(\mathbb{A},M,A,q)$. Then the identity map
\[\id:(\Gamma\mathbb{A},\mathcal{O}_I)\rightarrow(\Gamma\mathbb{A},\mathcal{O}_J)
\]is an isomorphism of unital locally convex algebras. In particular, the topology on $\Gamma\mathbb{A}$ does not depend on the particular choice of the bundle atlas. 
\end{theorem}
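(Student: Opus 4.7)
The plan is to show that $\id:(\Gamma\mathbb{A},\mathcal{O}_I)\to(\Gamma\mathbb{A},\mathcal{O}_J)$ is continuous; the reverse direction then follows by symmetry, and since $\id$ is trivially a morphism of unital algebras, this will yield the claim. Write $\Psi_j:\Gamma\mathbb{A}\to C^{\infty}(V_j,A)$, $s\mapsto\pr_A\circ\psi_j^{-1}\circ s_{\mid V_j}$, for the analogues of the maps $\Phi_i$ that generate $\mathcal{O}_J$. By the universal property of initial topologies, the problem reduces to proving that each $\Psi_j$ is continuous with respect to $\mathcal{O}_I$.

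To check this, I would exploit the local nature of the smooth compact-open topology. The open sets $W_{ij}:=U_i\cap V_j$, $i\in I$, cover $V_j$, and the family of restriction maps $C^{\infty}(V_j,A)\to C^{\infty}(W_{ij},A)$ is jointly initial: indeed, any compact subset of $V_j$ can be covered by finitely many $W_{ij}$, so the seminorms defining the smooth compact-open topology on $C^{\infty}(V_j,A)$ factor through these restrictions. Hence it is enough to show that, for every $i\in I$, the map
\[\Gamma\mathbb{A}\longrightarrow C^{\infty}(W_{ij},A),\qquad s\longmapsto\Psi_j(s)_{\mid W_{ij}},\]
is continuous with respect to $\mathcal{O}_I$.

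For this, set $\tau_{ji}:=\psi_j^{-1}\circ\varphi_i:W_{ij}\times A\to W_{ij}\times A$ and $T_{ji}:=\pr_A\circ\tau_{ji}:W_{ij}\times A\to A$. Since $\tau_{ji}$ is a diffeomorphism that is fibrewise an algebra isomorphism, $T_{ji}$ is smooth. Unfolding the definitions one obtains
\[\Psi_j(s)(x)=T_{ji}\bigl(x,\Phi_i(s)(x)\bigr)\qquad\text{for all }x\in W_{ij},\]
so the map in question factors as $s\mapsto\Phi_i(s)_{\mid W_{ij}}$ followed by the ``push-forward'' operator
\[\Theta_{ji}:C^{\infty}(W_{ij},A)\longrightarrow C^{\infty}(W_{ij},A),\qquad f\longmapsto T_{ji}\circ(\id_{W_{ij}},f).\]
The first arrow is continuous as the composition of $\Phi_i$ with the continuous restriction $C^{\infty}(U_i,A)\to C^{\infty}(W_{ij},A)$.

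The main technical point, and the place where the real work lies, is the continuity of the push-forward $\Theta_{ji}$ in the smooth compact-open topology. This is a consequence of the smooth exponential law (cf.\ Appendix D): under it, $T_{ji}$ corresponds to a smooth map $W_{ij}\to C^{\infty}(A,A)$, and $\Theta_{ji}$ is realized as the associated composition-with-evaluation operator, whose continuity in the smooth compact-open topology follows by a standard chain-rule argument on the defining seminorms. Granting this, $\Psi_j$ is continuous by composition, and the symmetric argument with the roles of the two atlases interchanged completes the proof.
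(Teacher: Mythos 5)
Your proposal is correct and follows essentially the same route as the paper's proof: reduce via the universal property of the initial topology to the continuity of each chart map of the second atlas, localize to the intersections $U_i\cap V_j$, and conclude from the continuity of composition with the smooth transition maps $g_{ji}=\pr_A\circ\psi_j^{-1}\circ\varphi_i$. You merely spell out two points the paper leaves implicit, namely that the restrictions to the sets $U_i\cap V_j$ are jointly initial on $C^{\infty}(V_j,A)$ and that the superposition operator $\Theta_{ji}$ is continuous.
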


\begin{proof}
\,\,\,The universal property of the initial topology $\mathcal{O}_J$ implies that the identity map $\id$ is continuous if and only if the maps
\[\Phi_j:(\Gamma\mathbb{A},\mathcal{O}_I)\rightarrow C^{\infty}(V_j,A),\,\,\,s\mapsto s_j
\]are continuous for each $j\in J$. Therefore, we fix $j\in J$ and note that the continuity of $\Phi_j$ follows from Proposition \ref{imPhi} (b) and the continuity of the map
\[\prod_{i\in I}C^{\infty}(U_i,A)\rightarrow\prod_{i\in I} C^{\infty}(V_j\cap U_i,A)\,\,\,(s_i)_{i\in I}\mapsto (g_{ji}\circ(\id_{V_j\cap U_i}\times(s_i)_{\mid V_j}))_{i\in I},
\]where $g_{ji}:(V_j\cap U_i)\times A\rightarrow A$ denotes the smooth map defined by the transition function $\psi_j^{-1}\circ\varphi_i$. A similar argument shows that the ``inverse" map $\id^{-1}$ is continuous. Thus, $\id$ is an isomorphism of unital locally convex algebras.
\end{proof}

\begin{corollary}\label{A frechet GammaA frechet}
Let $A$ be a unital Fr\'echet algebra and $(\mathbb{A},M,A,q)$ an algebra bundle. Then $\Gamma\mathbb{A}$ carries a unique structure of a unital Fr\'echet algebra, when endowed with the topology of Construction \ref{top on space of sections}.
\end{corollary}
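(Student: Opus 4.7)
The plan is to reduce the assertion to the closed-subspace embedding from Proposition \ref{imPhi}. First I would exploit the fact that $M$ is finite-dimensional, paracompact and second countable, hence Lindel\"of, so that every bundle atlas admits a countable refinement. I therefore choose a countable bundle atlas $(\varphi_i,U_i)_{i\in\mathbb{N}}$ for $(\mathbb{A},M,A,q)$, where in addition each $U_i$ can be taken to be $\sigma$-compact.

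Next I would verify that each factor $C^{\infty}(U_i,A)$ is a Fr\'echet algebra. Since $A$ is a Fr\'echet algebra with a countable defining family of continuous seminorms $(p_n)_{n\in\mathbb{N}}$, and since $U_i$ is $\sigma$-compact so that the iterated tangent bundles $T^nU_i$ are each exhausted by a countable family of compact sets $(K_{n,k})_{k\in\mathbb{N}}$, the smooth compact open topology on $C^{\infty}(U_i,A)$ (cf.\ Definition \ref{smooth compact open topology}) is generated by the countable family of seminorms $f\mapsto \sup_{x\in K_{n,k}}p_m(T^nf(x))$. Completeness follows because a Cauchy net in this topology converges in each $C(T^nU_i,T^nA)$ uniformly on compacta and the limit of the tangent maps is again the sequence of tangent maps of the limit. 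Combined with Proposition \ref{C(M,A) loc. con. algebra}, this shows $C^{\infty}(U_i,A)$ is a Fr\'echet algebra.

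With the countable atlas fixed, the product $\prod_{i\in\mathbb{N}}C^{\infty}(U_i,A)$ is a countable product of Fr\'echet algebras, hence itself a Fr\'echet algebra. By Proposition \ref{imPhi}(b), the algebra homomorphism
\[
\Phi_I:(\Gamma\mathbb{A},\mathcal{O}_I)\longrightarrow \prod_{i\in\mathbb{N}}C^{\infty}(U_i,A)
\]
is a topological embedding with closed image. Since a closed subspace of a Fr\'echet space is Fr\'echet, this proves that $(\Gamma\mathbb{A},\mathcal{O}_I)$ is a (unital) Fr\'echet algebra. Uniqueness of this Fr\'echet structure is an immediate consequence of Theorem \ref{topology unique}, which shows that $\mathcal{O}_I=\mathcal{O}_J$ for any other bundle atlas $(\psi_j,V_j)_{j\in J}$ of $(\mathbb{A},M,A,q)$.

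The main obstacle is the metrizability requirement: the initial topology $\mathcal{O}_I$ is a priori defined using the possibly uncountable bundle atlas $(\varphi_i,U_i)_{i\in I}$, and an uncountable product of Fr\'echet spaces is in general not metrizable. The crucial point is therefore the passage to a countable atlas, which is justified by Theorem \ref{topology unique} (independence of the atlas) together with the Lindel\"of property of $M$. Once this reduction is in place, the remaining arguments are routine applications of the completeness and metrizability of the smooth compact open topology on $C^{\infty}(U,A)$ for $\sigma$-compact $U$.
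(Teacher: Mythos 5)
Your proposal is correct and follows essentially the same route as the paper: choose a countable bundle atlas, observe that $\prod_{i}C^{\infty}(U_i,A)$ is then a Fr\'echet algebra, invoke Proposition \ref{imPhi}\,(b) to realize $\Gamma\mathbb{A}$ as a closed subalgebra, and deduce uniqueness from Theorem \ref{topology unique}. The only difference is that you spell out the existence of the countable atlas (via second countability) and the Fr\'echet property of each factor $C^{\infty}(U_i,A)$, details the paper leaves implicit.
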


\begin{proof}
\,\,\,If $(\varphi_i,U_i)_{i\in I}$ is a countable bundle atlas for $(\mathbb{A},M,A,q)$, then Proposition \ref{imPhi} (b) implies that $\Gamma\mathbb{A}$ carries the structure of a unital Fr\'echet algebra, since the right-hand side is a unital Fr\'echet algebra and the image of $\Phi_I$ is closed. That this structure is unique is now a consequence of Theorem \ref{topology unique}.
\end{proof}

The following remark can be used to verify that certain maps to $\Gamma\mathbb{A}$ are smooth. We recall that the space $\prod_{i\in I}C^{\infty}(U_i,A)$ carries the structure of an infinite-dimensional manifold:

\begin{remark}\label{smooth structure on sections}{\bf(Verifying smoothness on $\Gamma\mathbb{A}$).}\index{$\Gamma\mathbb{A}$!Verifying Smoothness on}
If $(\mathbb{A},M,A,q)$ is an algebra bundle and $(\varphi_i,U_i)_{i\in I}$ a bundle atlas for $(\mathbb{A},M,A,q)$, then the map
\[\Phi_I:\Gamma\mathbb{A}\rightarrow\prod_{i\in I}C^{\infty}(U_i,A),\,\,\,\Phi_I(s):=(\Phi_i(s))_{i\in I}=(s_i)_{i\in I}
\]is a smooth embedding. Indeed, Proposition \ref{imPhi} (b) implies that the map $\Phi_I$ is continuous and linear. Hence, $\Phi_I$ is smooth. Since $\Phi_I$ has closed image, we conclude from [Gl05], Lemma 2.2.7, that a map $f:N\rightarrow\Gamma\mathbb{A}$ from a manifold $N$ to $\Gamma\mathbb{A}$ is smooth if and only if the composition $\Phi_I\circ f$ is smooth.
\end{remark}

\section{Smooth Localization of Sections of Algebra Bundles}

In this section we show that if $A$ is a unital Fr\'echet algebra, $(\mathbb{A},M,A,q)$ an algebra bundle and $f\in C^{\infty}(M,\mathbb{R})$, then the smooth localization of $\Gamma\mathbb{A}$ with respect to $f$ (cf. Definition \ref{sm. loc.}) is isomorphic (as a unital Fr\'echet algebra) to $\Gamma\mathbb{A}_{M_f}$, where 
\[M_f:=\{m\in M:\,f(m)\neq 0\}.
\]To be more precise, we show that the map
\[\phi_f:\Gamma\mathbb{A}_{\{f\}}\rightarrow\Gamma\mathbb{A}_{M_f},\,\,\,[F]\mapsto F\circ\left(\frac{1}{f}\times\id_{M_f}\right)
\]is an isomorphism of unital Fr\'echet algebras. We start with the following lemma:\sindex[n]{$\Gamma\mathbb{A}_{\{f\}}$}

\begin{lemma}\label{algebra section 0}
If $A$ is a unital Fr\'echet algebra, $(\mathbb{A},M,A,q)$ an algebra bundle and $N$ a manifold, then the following assertions hold:
\begin{itemize}
\item[\emph{(a)}]
If $\pr_M:N\times M\rightarrow M$ denotes the canonical projection onto $M$ and $\pr_M^{*}(\mathbb{A})$ the corresponding pull-back bundle, then the map
\[S: C^{\infty}(N,\Gamma\mathbb{A})\rightarrow\Gamma\pr_M^{*}(\mathbb{A}),\,\,\,S(F)(n,m):=(n,m,F(n)(m))
\]is an isomorphism of unital Fr\'echet algebras.
\item[\emph{(b)}]
If $(\varphi_i,U_i)_{i\in I}$ is a bundle atlas for $(\mathbb{A},M,A,q)$, then the algebra map
\[\widehat{\Phi}_I:C^{\infty}(N,\Gamma\mathbb{A})\rightarrow\prod_{i\in I}C^{\infty}(N\times U_i,A),\,\,\,F\mapsto(\Phi_i\circ F)_{i\in I}
\]is a topological embedding with closed image given by
\[\im(\widehat{\Phi}_I)=\{(F_i)_{i\in I}:\,(\forall i,j\in I, n\in N)\,\widehat{F_j(n)}=\varphi_{ji}\circ\widehat{F_i(n)}\}
\]
\emph{(}cf. Lemma \ref{locally defines sections} and Lemma \ref{smooth exp law} \emph{)}.
\end{itemize}
\end{lemma}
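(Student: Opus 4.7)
My plan is to prove (a) and (b) simultaneously by identifying both $C^{\infty}(N,\Gamma\mathbb{A})$ and $\Gamma\pr_M^{*}(\mathbb{A})$ with the same closed subalgebra of $\prod_{i\in I}C^{\infty}(N\times U_i,A)$. The starting point is that the bundle atlas $(\varphi_i,U_i)_{i\in I}$ pulls back to a bundle atlas $(\pr_M^{*}\varphi_i,N\times U_i)_{i\in I}$ for $\pr_M^{*}(\mathbb{A})$; since the pulled-back transition functions $\pr_M^{*}\varphi_{ji}(n,m,a)=\varphi_{ji}(m,a)$ do not depend on $n$, the resulting compatibility conditions on sections match exactly the condition appearing in (b).

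For part (b), the key tool is the smooth exponential law (Lemma \ref{smooth exp law}), which identifies $C^{\infty}(N,C^{\infty}(U_i,A))$ with $C^{\infty}(N\times U_i,A)$. Combined with Remark \ref{smooth structure on sections}, a map $F\colon N\to\Gamma\mathbb{A}$ is smooth if and only if each $\Phi_i\circ F$ is smooth from $N$ into $C^{\infty}(U_i,A)$, which under the exponential law means precisely that $(n,m)\mapsto F_i(n)(m)$ lies in $C^{\infty}(N\times U_i,A)$. Applying Proposition \ref{imPhi}(a) pointwise in $n$ then yields the compatibility condition $\widehat{F_j(n)}=\varphi_{ji}\circ\widehat{F_i(n)}$, which gives the image description; that $\widehat{\Phi}_I$ is a topological embedding with closed image follows from the argument of Proposition \ref{imPhi}(b) transferred through the exponential law, with closedness proved by passing to pointwise limits.

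For part (a), I plan to apply Proposition \ref{imPhi} to the pull-back bundle to obtain an embedding $\Phi_I^{\mathrm{pb}}\colon\Gamma\pr_M^{*}(\mathbb{A})\hookrightarrow\prod_{i\in I}C^{\infty}(N\times U_i,A)$. Unraveling the trivializations shows $\widehat{\Phi}_I=\Phi_I^{\mathrm{pb}}\circ S$, and because the $n$-independence of $\pr_M^{*}\varphi_{ji}$ forces $\Phi_I^{\mathrm{pb}}$ to have the same image as $\widehat{\Phi}_I$, the map $S$ becomes a continuous bijective morphism of unital algebras. That $S$ is an algebra homomorphism is immediate from the fibrewise definition of the product on $\Gamma\pr_M^{*}(\mathbb{A})$, and the open mapping theorem (Proposition \ref{open mapping theorem}) then promotes $S$ to an isomorphism of Fréchet algebras, the Fréchet property of both sides being supplied by Corollary \ref{A frechet GammaA frechet} together with standard results on $C^{\infty}(N,\cdot)$.

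The main technical hurdle will be verifying the factorization $\widehat{\Phi}_I=\Phi_I^{\mathrm{pb}}\circ S$ and the precise interplay between the exponential law and the two applications of Proposition \ref{imPhi}; this amounts to an unraveling of bundle-chart formulas but demands some care to keep the coordinates and the various evaluation maps straight.
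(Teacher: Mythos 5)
Your proposal is correct and uses essentially the same ingredients as the paper's proof: the smooth exponential law, Proposition \ref{imPhi} applied to the pull-back bundle $\pr_M^{*}(\mathbb{A})$, the $n$-independence of the pulled-back transition functions, and the Open Mapping Theorem. The only difference is organizational — the paper proves (a) directly (bijectivity of $S$ from the pointwise description of pull-back sections, continuity from the definition of the topology on $\Gamma\pr_M^{*}(\mathbb{A})$) and then obtains (b) as a corollary, whereas you establish the embedding statements first and recover (a) from the factorization $\widehat{\Phi}_I=\Phi_I^{\mathrm{pb}}\circ S$; both routes are sound.
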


\begin{proof}
\,\,\,(a) We first note that both spaces, $C^{\infty}(N,\Gamma\mathbb{A})$ and $\Gamma\pr_M^{*}(\mathbb{A})$, carry the structure of a unital Fr\'echet algebra (cf. Corollary \ref{A frechet GammaA frechet}) and that the map $S$ is a morphism of unital algebras. To see that $S$ is bijective it is enough to note that each section $s$ of the pull-back bundle $\pr_M^{*}(\mathbb{A})$ has the form
\[s:N\times M\rightarrow\pr_M^{*}(\mathbb{A}),\,\,\,s(n,m)=(n,m,F_s(n,m))
\]for some smooth function $F_s:N\times M\rightarrow\mathbb{A}$ satisfying $q\circ F_s(n,\cdot)=\id_M$ for each $n\in N$, i.e., for some $F_s\in C^{\infty}(N,\Gamma\mathbb{A})$. In view of the Open Mapping Theorem (cf. Proposition \ref{open mapping theorem}), it therefore remains to show that the map $S$ is continuous: For this let $(\varphi_i,U_i)_{i\in I}$ be a bundle atlas for $(\mathbb{A},M,A,q)$. Then the continuity of $S$ follows from the definition of the topology on $\Gamma\pr_M^{*}(\mathbb{A})$ (which is induced from the bundle atlas $(\varphi_i,U_i)_{i\in I}$ and the continuity of the maps
\[C^{\infty}(N,\Gamma\mathbb{A})\rightarrow C^{\infty}(N\times U_i,A),\,\,\,F\mapsto \Phi_i\circ F
\]for $i\in I$.

(b) The second assertion immediately follows from part (a) and Proposition \ref{imPhi} applied to the pull-back bundle $\pr_M^{*}(\mathbb{A})$.
\end{proof}

\begin{proposition}\label{algebra section 1}
Let $A$ be a unital Fr\'echet algebra. If $(\mathbb{A},M,A,q)$ is an algebra bundle, $f:M\rightarrow\mathbb{R}$ a smooth function which is nonzero and $M_f:=\{m\in M:\,f(m)\neq 0\}$, then the map
\[\Phi_f:C^{\infty}(\mathbb{R},\Gamma\mathbb{A})\rightarrow\Gamma\mathbb{A}_{M_f},\,\,\,F\mapsto F\circ\left(\frac{1}{f}\times\id_{M_f}\right)
\]is a surjective morphism of unital Fr\'echet algebras. In particular, the map $\Phi_f$ is open.
\end{proposition}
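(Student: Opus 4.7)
My approach is to reduce $\Phi_f$ to a restriction-of-sections problem, to which the extension result Theorem \ref{manifold mod sub=sub} directly applies.

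First, the isomorphism $S:C^{\infty}(\mathbb{R},\Gamma\mathbb{A})\xrightarrow{\sim}\Gamma\pr_M^{*}(\mathbb{A})$ of Lemma \ref{algebra section 0}(a) converts $\Phi_f$ into the restriction of a section of the pull-back algebra bundle $\pr_M^{*}(\mathbb{A})$ to the closed submanifold $H:=\{(t,m)\in\mathbb{R}\times M:\,tf(m)=1\}$. The map $(t,m)\mapsto m$ is a diffeomorphism $H\xrightarrow{\sim}M_f$ (with inverse $m\mapsto(1/f(m),m)$) under which $\pr_M^{*}(\mathbb{A})|_H$ canonically identifies with $\mathbb{A}|_{M_f}$. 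That $\Phi_f$ is a morphism of unital algebras is then immediate from the pointwise multiplications on both sides; continuity is routine via Remark \ref{smooth structure on sections} and Lemma \ref{algebra section 0}(b), which in each bundle chart reduces it to continuity of pull-back along the smooth embedding $m\mapsto(1/f(m),m)$ in the smooth compact open topology.

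The main step is surjectivity. Given $s\in\Gamma\mathbb{A}_{M_f}$, fix a bundle atlas $(\varphi_i,U_i)_{i\in I}$ of $\mathbb{A}$ and write $s_i\in C^{\infty}(U_i\cap M_f,A)$ for the local representatives. Observe that $H\cap(\mathbb{R}\times U_i)$ is the regular zero set of the submersion $h_f(t,m)=1-tf(m)$ used in the proof of Corollary \ref{C(M)_f=C(M_f)}. Applied with $E=A$ (complete since Fr\'echet), Theorem \ref{manifold mod sub=sub} yields smooth extensions $F_i\in C^{\infty}(\mathbb{R}\times U_i,A)$ with $F_i(1/f(m),m)=s_i(m)$ on $U_i\cap M_f$. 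Each $F_i$ gives a local section $\tilde F_i$ of $\pr_M^{*}(\mathbb{A})$ over $\mathbb{R}\times U_i$ through $\varphi_i$. Choosing a smooth partition of unity $(\rho_i)$ on $M$ subordinate to $(U_i)$, define the global section $\tilde F(t,m):=\sum_i\rho_i(m)\,\tilde F_i(t,m)$, where the sum is locally finite and taken fibre-wise in the locally convex fibres of $\pr_M^{*}(\mathbb{A})$. Since at any point $(1/f(m),m)\in H$ each relevant $\tilde F_i$ evaluates in the fibre to the common value $s(m)$, one obtains $\tilde F(1/f(m),m)=\bigl(\sum_i\rho_i(m)\bigr)s(m)=s(m)$, so $S^{-1}(\tilde F)$ is a preimage of $s$ under $\Phi_f$.

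Openness is then automatic from the Open Mapping Theorem (Proposition \ref{open mapping theorem}) applied to the continuous surjection $\Phi_f$ between Fr\'echet spaces, both $C^{\infty}(\mathbb{R},\Gamma\mathbb{A})$ and $\Gamma\mathbb{A}_{M_f}$ being Fr\'echet by Corollary \ref{A frechet GammaA frechet}. The hardest part is the patching in the surjectivity argument: one needs the fibre-wise linear structure of the algebra bundle to form convex combinations of the local extensions, and one needs all local extensions to agree in the fibre over $H$ so that the partition-of-unity sum indeed reproduces $s$ there.
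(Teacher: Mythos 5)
Your proof is correct and follows essentially the same route as the paper's: both pass through the isomorphism $S$ of Lemma \ref{algebra section 0}\,(a) to identify $\Phi_f$ with restriction of sections of $\pr_M^{*}(\mathbb{A})$ to the closed submanifold $H=h_f^{-1}(0)\cong M_f$, and both obtain surjectivity by local extension of the section followed by a partition-of-unity patching in the fibrewise linear structure. The only (cosmetic) difference is that you package the local extension step via Theorem \ref{manifold mod sub=sub} in each bundle chart, where the paper argues it directly from the local normal form of $H$.
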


\begin{proof}
\,\,\, The proof of this proposition is divided into three parts:

(i) We first note that $0$ is a regular value for the function
\[h:\mathbb{R}\times M\rightarrow\mathbb{R},\,\,\,(t,m)\mapsto 1-tf(m).
\]In particular, 
\[H:=h^{-1}(0)=\left\{\left(\frac{1}{f(x)},x\right):\,x\in M_f\right\}
\]is a closed submanifold of $\mathbb{R}\times M$. Further, we note that $H$ is diffeomorphic to $M_f$.

(ii) Next, Lemma \ref{algebra section 0} (a) applied to the algebra bundle $(\mathbb{A},M,A,q)$ and $N=\mathbb{R}$ implies that the map
\[S:C^{\infty}(\mathbb{R},\Gamma\mathbb{A})\rightarrow\Gamma\pr_M^{*}(\mathbb{A}),\,\,\,S(F)(r,m):=(r,m,F(r)(m))
\]is an isomorphism of unital Fr\'{e}chet algebras. A similar argument shows that the space $\Gamma\left(\pr_M^{*}(\mathbb{A})\right)_H$ of sections of the pull-back bundle $\pr_M^{*}(\mathbb{A})$ restricted to the submanifold $H$ is isomorphic (as a unital Fr\'echet algebra) to $\Gamma\mathbb{A}_{M_f}$. 

(iii) It therefore remains to verify that the map
\[\Theta':\Gamma\pr_M^{*}(\mathbb{A})\rightarrow\Gamma\left(\pr_M^{*}(\mathbb{A})\right)_H,\,\,\,s\mapsto s_{\mid H}
\]is surjective and continuous: In fact, the map $\Theta'$ is obviously continuous as a restriction map. For its surjectivity we have to show that any section $s:H\rightarrow\pr_M^{*}(\mathbb{A})_H$ can be extended to a global section of $\pr_M^{*}(\mathbb{A})$. Locally this can be done, since $H$ looks like $\mathbb{R}^n$ in $\mathbb{R}^{n+1}$ and $s$ like an $A$-valued function on $\mathbb{R}^n$ (for $n=\dim M$). To get a global extension, we have to use a partition of unity.
\end{proof}

\begin{remark}\label{Gamma mathbb{A}_f frechet}{\bf($\Gamma\mathbb{A}_{\{f\}}$ as Fr\'echet algebra).}
If $A$ is a Fr\'{e}chet algebra, then so is $\Gamma\mathbb{A}$ (cf. Corollary \ref{A frechet GammaA frechet}), and thus the same is true for $\Gamma\mathbb{A}_{\{f\}}$. Here, the last statement follows from the fact that the quotient of a Fr\'{e}chet space by a closed subspace is again Fr\'{e}chet (cf. [Bou55], \S 3.5). 
\end{remark}

\begin{theorem}\label{algebra section 2}{\bf(Smooth localization of sections of algebra bundles).}\index{Smooth Localization of!Algebra Bundles}
Suppose we are in the situation of Proposition \ref{algebra section 1}. The kernel $\ker(\Phi_f)$ is equal to $(1-tf)\cdot C^{\infty}(\mathbb{R},\Gamma\mathbb{A})$. In particular, $(1-tf)\cdot C^{\infty}(\mathbb{R},\Gamma\mathbb{A})$ is a closed ideal of $C^{\infty}(\mathbb{R},\Gamma\mathbb{A})$ and the map
\[\phi_f:\Gamma\mathbb{A}_{\{f\}}\rightarrow\Gamma\mathbb{A}_{M_f},\,\,\,[F]\mapsto F\circ\left(\frac{1}{f}\times\id_{M_f}\right)
\]is an isomorphism of unital Fr\'echet algebras.
\end{theorem}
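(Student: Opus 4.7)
The plan is to follow the template of Corollary \ref{C(M)_f=C(M_f)}, but transferred to the algebra-bundle setting via the pull-back bundle $\pr_M^{*}(\mathbb{A})$ used in Proposition \ref{algebra section 1}. First I would dispose of the trivial inclusion $(1-tf)\cdot C^{\infty}(\mathbb{R},\Gamma\mathbb{A})\subseteq\ker(\Phi_f)$, which is immediate since $1-tf$ vanishes on $H:=h^{-1}(0)\cong M_f$. Note that, although $\Gamma\mathbb{A}$ is in general noncommutative, the element $1-tf\in C^{\infty}(\mathbb{R},\Gamma\mathbb{A})$ is central, because its value at $(t,m)$ is the scalar multiple $(1-tf(m))\cdot 1_{\mathbb{A}_m}$ of the fibre unit. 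Hence the two-sided ideal $\langle 1-tf\rangle$ appearing in Definition \ref{sm. loc.} coincides with the principal right module $(1-tf)\cdot C^{\infty}(\mathbb{R},\Gamma\mathbb{A})$.

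For the converse inclusion, given $F\in\ker(\Phi_f)$, I would pass to the section $s:=S(F)\in\Gamma\pr_M^{*}(\mathbb{A})$ via the isomorphism of Lemma \ref{algebra section 0}\,(a). The hypothesis becomes $s_{\mid H}=0$. On the open, dense complement $(\mathbb{R}\times M)\setminus H$ the scalar $h(t,m)=1-tf(m)$ is invertible, so the fibrewise quotient
\[g:(\mathbb{R}\times M)\setminus H\rightarrow\pr_M^{*}(\mathbb{A}),\qquad g(t,m):=h(t,m)^{-1}\cdot s(t,m)
\]is a well-defined smooth section satisfying $s=h\cdot g$ on its domain. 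The key step is to extend $g$ smoothly across $H$. For this I pick a bundle atlas $(\varphi_i,U_i)_{i\in I}$ of $\mathbb{A}$, which induces a bundle atlas of $\pr_M^{*}(\mathbb{A})$ on the open sets $\mathbb{R}\times U_i$. Transporting $s$ through the chart yields $s_i\in C^{\infty}(\mathbb{R}\times U_i,A)$ with $s_i\equiv 0$ on $H\cap(\mathbb{R}\times U_i)$. Since $A$ is a complete locally convex space and $0$ is a regular value of $h$, Theorem \ref{manifold mod sub=sub} (applied with $E=A$ on the manifold $\mathbb{R}\times U_i$) provides $g_i\in C^{\infty}(\mathbb{R}\times U_i,A)$ with $s_i=h\cdot g_i$. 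Transporting these back through the charts gives local sections $\tilde g_i$ which agree with $g$ on $(\mathbb{R}\times U_i)\setminus H$ and hence, by density of this set in $\mathbb{R}\times U_i$ and continuity, coincide pairwise on overlaps. They therefore patch to a global section $\tilde g\in\Gamma\pr_M^{*}(\mathbb{A})$ with $s=h\cdot\tilde g$. Applying $S^{-1}$ yields the desired factorisation $F=(1-tf)\cdot S^{-1}(\tilde g)$, completing the identification of $\ker(\Phi_f)$.

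The rest is formal. Continuity of $\Phi_f$ shows that $\ker(\Phi_f)=(1-tf)\cdot C^{\infty}(\mathbb{R},\Gamma\mathbb{A})$ is a closed two-sided ideal, which together with Remark \ref{rem to sm. loc.} identifies it with the ideal $I_f$ of Definition \ref{sm. loc.}. Therefore $\Phi_f$ descends to a continuous algebra isomorphism
\[\phi_f:\Gamma\mathbb{A}_{\{f\}}=C^{\infty}(\mathbb{R},\Gamma\mathbb{A})/I_f\longrightarrow\Gamma\mathbb{A}_{M_f}.
\]Both sides are Fr\'echet algebras (by Corollary \ref{A frechet GammaA frechet} and Remark \ref{Gamma mathbb{A}_f frechet}), so the Open Mapping Theorem (Proposition \ref{open mapping theorem}) upgrades $\phi_f$ to a topological isomorphism of unital Fr\'echet algebras.

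The main obstacle I expect is the gluing argument, i.e.\ showing that the locally obtained Hadamard factorisations $g_i$ really patch to a global smooth section. This hinges on two points that have to be checked carefully: first, that the uniqueness of division by a regular, scalar-valued function forces $\tilde g_i=\tilde g_j$ off $H$; and second, that the nowhere-dense character of the codimension-one submanifold $H\subseteq\mathbb{R}\times M$ is enough to propagate this equality to all of the overlap. Once this is done, the rest of the argument is a routine translation of the scalar-valued result Corollary \ref{C(M)_f=C(M_f)} into the bundle language provided by Chapter 5.
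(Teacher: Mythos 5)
Your proposal is correct and follows essentially the same route as the paper: the easy inclusion, reduction to a bundle atlas, local division via Theorem \ref{manifold mod sub=sub}, gluing of the local quotients into a global factorisation $F=(1-tf)\cdot G$, and then the Open Mapping Theorem. The only cosmetic difference is in the gluing step, where you identify the local Hadamard quotients with the globally defined fibrewise quotient off $H$ and invoke density and continuity, while the paper phrases the same fact as the statement that the compatibility (cocycle) conditions for the $(h_i)$ pass to the $(g_i)$; both arguments rest on the same observation.
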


\begin{proof}
\,\,\,To simplify the notation we define $K_f:=(1-tf)\cdot C^{\infty}(\mathbb{R},\Gamma\mathbb{A})$. The hard part of the proof is to show that $\ker(\Phi_f)=K_f$. Indeed, if we assume for the moment that this equality holds, then the continuity of the map $\Phi_f$ implies that $K_f$ is a closed subspace of $C^{\infty}(\mathbb{R},\Gamma\mathbb{A})$. Moreover, we conclude from the definition of $\Gamma\mathbb{A}_{\{f\}}$, Remark \ref{Gamma mathbb{A}_f frechet} and the Open Mapping Theorem (cf. Proposition \ref{open mapping theorem}) that the map $\phi_f$ is an isomorphism of unital Fr\'echet algebras. Thus, it remains to prove the assumption. In the following let $(\varphi_i,U_i)_{i\in I}$ be a bundle atlas for $(\mathbb{A},M,A,q)$. We procced as follows:

(i) We first use Proposition \ref{imPhi} to see that the algebra map
\[\Phi_{M_f}:\Gamma\mathbb{A}_{M_f}\rightarrow\prod_{i\in I}C^{\infty}(U_i\cap M_f,A),\,\,\,\Phi(s):=(\Phi_i(s))_{i\in I}=(s_i)_{i\in I}.
\]is a topological embedding with closed image given by
\[\im(\Phi_{M_f})=\{(s_i)_{i\in I}:\,(\forall i,j\in I)\,\widehat{s}_j=\varphi_{ji}\circ\widehat{s}_i\}.
\]

(ii) Next, Lemma \ref{algebra section 0} (b) applied to the algebra bundle $(\mathbb{A},M,A,q)$ and $N=\mathbb{R}$ implies that the algebra map
\[\widehat{\Phi}_I:C^{\infty}(\mathbb{R},\Gamma\mathbb{A})\rightarrow\prod_{i\in I}C^{\infty}(\mathbb{R}\times U_i,A),\,\,\,F\mapsto(\Phi_i\circ F)_{i\in I}
\]is a topological embedding with closed image given by
\[B:=\im(\widehat{\Phi}_I)=\{(F_i)_{i\in I}:\,(\forall i,j\in I, r\in\mathbb{R})\,\widehat{F_j(r)}=\varphi_{ji}\circ\widehat{F_i(r)}\}.
\] 

(iii) If $f_i:=f_{\mid U_i}$ and $K_{f_i}$ is the (two-sided) ideal in $C^{\infty}(\mathbb{R}\times U_i,A)$ generated by the element $(1-tf_i)\cdot 1_A$, then Theorem \ref{manifold mod sub=sub} implies that $K_{f_i}$ is closed and that the map
\[R:\prod_{i\in I} C^{\infty}(\mathbb{R}\times U_i,A)\rightarrow\prod_{i\in I}C^{\infty}(U_i\cap M_f,A),\,\,\,(F_i)_{i\in I}\mapsto \left(F_i\circ\left(\frac{1}{f_i}\times\id_{M_f\cap U_i}\right)\right)_{i\in I}
\]is a surjective morphism of unital Fr\'echet algebras with kernel $K:=\prod_{i\in I} K_{f_i}$.

(iv) A short calculation shows that $R\circ\widehat{\Phi}_I=\Phi_{M_f}\circ\Phi_f$. As a consequence we obtain
\[\ker(R_{\mid B})=K\cap B=\widehat{\Phi}_I(\ker(\Phi_f)).
\]In fact, the inclusion ``$\supseteq$" is obivious and for the inclusion ``$\subseteq$" we have to use the injectivity of the map $\Phi_{M_f}$.

(v) In this part of the proof we show that $K\cap B=\widehat{\Phi}_I(K_f)$. Of course, we easily get $\widehat{\Phi}_I(K_f)\subseteq K\cap B$. To verify the other inclusion, let $h=(h_i)_{i\in I}\in K\cap B$. Then $h_i=(1-tf_i)\cdot g_i$ for some $g_i\in C^{\infty}(\mathbb{R}\times U_i,A)$ and there exists $H\in C^{\infty}(\mathbb{R},\Gamma\mathbb{A})$ such that $\widehat{\Phi}_I(H)=h$. It therefore remains to prove that $H\in K_f$. For this we note that the compatibility property for $(h_i)_{i\in I}$ implies the compatibility property for $(g_i)_{i\in I}$, i.e., that
\[\widehat{g_j(r)}=\varphi_{ji}\circ\widehat{g_i(r)}
\]holds for all $i,j\in I$ and $r\in\mathbb{R}$. In particular, there exists an element $G\in C^{\infty}(\mathbb{R},\Gamma\mathbb{A})$ satisfying $\widehat{\Phi}_I(G)=(g_i)_{i\in I}$. As a consequence, $\widehat{\Phi}_I((1-tf)\cdot G)=h$ and since $\widehat{\Phi}_I$ is injective, we conclude that $H=(1-tf)\cdot G\in K_f$ as desired. 

(vi) Finally, part (iv) and (v) lead to $\widehat{\Phi}_I(\ker(\Phi_f))=\widehat{\Phi}_I(K_f)$. Hence, the injectivity of the map $\widehat{\Phi}_I$ implies $\ker(\Phi_f)=K_f$. This proves the assumption and thus the theorem.
\end{proof}



\begin{corollary}\label{algebra section 3}
If $A$ is a unital Fr\'echet algebra, $(\mathbb{A},M,A,q)$ an algebra bundle and $f_U$ a $U$-defining function corresponding to an open subset $U$ of $M$, then the map
\[\phi_U:\Gamma\mathbb{A}_{\{f_U\}}\rightarrow\Gamma\mathbb{A}_U,\,\,\,[F]\mapsto F\circ\left(\frac{1}{f_U}\times\id_U\right)
\]is an isomorphism of unital Fr\'echet algebras.
\end{corollary}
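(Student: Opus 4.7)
The plan is to obtain this corollary as a direct specialization of Theorem \ref{algebra section 2} once we observe that a $U$-defining function $f_U$ is precisely a smooth nonzero function on $M$ whose nonvanishing locus coincides with $U$. Recall from Theorem \ref{whitneys theorem} that for any open $U\subseteq M$ there exists $f_U\in C^{\infty}(M,\mathbb{R})$ with $U=M_{f_U}=\{m\in M:\,f_U(m)\neq 0\}$. In particular $f_U$ satisfies the hypotheses of Theorem \ref{algebra section 2}.

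First I would verify that the restricted section space appearing in Theorem \ref{algebra section 2} is literally the object we want in the corollary. By definition, $\Gamma\mathbb{A}_U=\Gamma\mathbb{A}_{M_{f_U}}$, so the two target algebras coincide. Moreover, the map
\[
[F]\mapsto F\circ\left(\tfrac{1}{f_U}\times\id_{U}\right)
\]
appearing in the corollary is, under the identification $U=M_{f_U}$, the very same map
\[
[F]\mapsto F\circ\left(\tfrac{1}{f_U}\times\id_{M_{f_U}}\right)
\]
from Theorem \ref{algebra section 2}. Hence $\phi_U$ is literally the map $\phi_{f_U}$ of that theorem.

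Applying Theorem \ref{algebra section 2} with $f:=f_U$ then immediately yields that $\phi_U$ is an isomorphism of unital Fr\'echet algebras. There is essentially no obstacle here: the only thing to take care of is making sure that $f_U$ meets the ``nonzero'' hypothesis of Theorem \ref{algebra section 2}, which is automatic unless $U$ is empty (a case that can either be excluded or handled trivially, since both sides are then the zero algebra). Thus the proof reduces to invoking Theorem \ref{whitneys theorem} to produce $f_U$ and Theorem \ref{algebra section 2} to conclude.
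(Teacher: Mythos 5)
Your proposal is correct and matches the paper's own proof, which likewise obtains the corollary as an immediate specialization of Theorem \ref{algebra section 2} to the $U$-defining function $f_U$ furnished by Theorem \ref{whitneys theorem}. The extra care you take with the identification $\Gamma\mathbb{A}_U=\Gamma\mathbb{A}_{M_{f_U}}$ and the empty-set case is harmless but not needed beyond what the paper records.
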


\begin{proof}
\,\,\,This assertion directly follows from Theorem \ref{algebra section 2}.
\end{proof}

\chapter{Free Group Actions from the Viewpoint of Dynamical Systems}\label{NCPB}



In this chapter we lay the foundations for our geometric approach to noncommutative principal bundles. Our main objects of interest are dynamical systems. Again, the expression dynamical system stands for a triple $(A,G,\alpha)$, consisting of a unital locally convex algebra $A$, a topological group $G$ and a group homomorphism $\alpha:G\rightarrow\Aut(A)$, which induces a continuous action of $G$ on $A$. Our main goal is to present a new characterization of free group actions, involving dynamical systems and representations of the corresponding transformation group. Indeed, given a dynamical system $(A,G,\alpha)$, we provide conditions including the existence of ``sufficiently many" representations of $G$ which ensure that the corresponding action
\[\sigma:\Gamma_A\times G\rightarrow\Gamma_A,\,\,\,(\chi,g)\mapsto\chi\circ\alpha(g)
\]of $G$ on the spectrum $\Gamma_A$ of $A$ is free. These observations may be viewed as a first step towards a geometric noncommutative principal bundle theory and will lead to a reasonable definition of noncommutative principal $\mathbb{T}^n$-bundles.

\section{Smooth Dynamical Systems}\label{NGENCG}

Since the \emph{Erlanger Programm} of Felix Klein, the defining concept in the study of a geometry has been its symmetry group. In classical differential geometry the symmetries of a manifold are measured by Lie groups, i.e., one studies smooth group actions of a Lie group $G$ acting by diffeomorphisms on a manifold $M$. From this point of view, it seems to be more natural to study the geometry of a ``noncommutative space" by smooth actions of Lie groups instead of considering coactions of Hopf algebras. Moreover, it might be interesting to construct \emph{infinitesimal objects} and \emph{characteristic classes} for our noncommutative algebras. Since for general associative algebras it is well known that the infinitesimal version of automorphisms are derivations, the derivation-based differential calculus described by Dubois--Violette in [DV88] fits into our approach: If $G$ is a Lie group acting smoothly by automorphism on a (noncommutative) algebra $A$, then the corresponding Lie algebra $\mathfrak{g}$ acts continuously by continuous derivations on $A$. The following proposition may be viewed as the origin of our approach to NCP bundles:

\begin{proposition}\label{smoothness of the group action on the algebra of smooth functions}
If $\sigma:M\times G\rightarrow M$ is a smooth \emph{(}right-\emph{)} action of a Lie group $G$ on a finite-dimensional manifold $M$ \emph{(}possibly with boundary\emph{)} and $E$ is a locally convex space, then the induced \emph{(}left-\emph{)} action 
\[\alpha:G\times C^{\infty}(M,E)\rightarrow C^{\infty}(M,E),\,\,\,\alpha(g,f)(m):=(g.f)(m):=f(\sigma(m,g))
\]of $G$ on the locally convex space $C^{\infty}(M,E)$ is smooth.
\end{proposition}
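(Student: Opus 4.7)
The plan is to invoke the smooth exponential law (Lemma \ref{smooth exp law} in Appendix D) to reduce smoothness of $\alpha$ to the smoothness of a concrete evaluation-type map on finite-dimensional (resp.\ manifold) factors. Concretely, a map $\alpha: G\times C^\infty(M,E)\to C^\infty(M,E)$ is smooth if and only if its adjoint
\[\alpha^\vee: G\times C^\infty(M,E)\times M\to E,\qquad (g,f,m)\mapsto \alpha(g,f)(m)=f(\sigma(m,g))\]
is smooth. So my strategy is to verify smoothness of $\alpha^\vee$ and then appeal to the exponential law.

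First I would write $\alpha^\vee$ as a composition of manifestly smooth maps. Consider the map
\[\Phi: G\times C^\infty(M,E)\times M\to C^\infty(M,E)\times M,\qquad (g,f,m)\mapsto \bigl(f,\sigma(m,g)\bigr).\]
This is smooth because its two components are: the first is the projection onto $C^\infty(M,E)$, and the second is the composition $\sigma\circ (\mathrm{pr}_M,\mathrm{pr}_G)$, which is smooth since $\sigma$ is smooth by hypothesis. Next, by [NeWa07], Proposition I.2 (cited explicitly in the excerpt, e.g.\ in the proof of Lemma \ref{smoothness of induced map}), the evaluation map
\[\mathrm{ev}_M: C^\infty(M,E)\times M\to E,\qquad (f,m)\mapsto f(m)\]
is smooth. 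Since $\alpha^\vee=\mathrm{ev}_M\circ\Phi$, it is smooth as a composition of smooth maps.

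Finally I would close the argument by applying the smooth exponential law in the form: for finite-dimensional $M$ (possibly with boundary) and a locally convex target, a map into $C^\infty(M,E)$ is smooth iff the associated map on the product with $M$ is smooth. This gives smoothness of $\alpha$ itself. The main potential obstacle is making sure the exponential law (Lemma \ref{smooth exp law}) is formulated in sufficient generality to allow the parameter space $G\times C^\infty(M,E)$, which is an infinite-dimensional locally convex manifold, and to allow $M$ with boundary; both points are standard in the Kriegl--Michor / Neeb--Waldmann framework but should be verified against the precise statement in Appendix D. Once those ingredients are in place, the argument is a one-line composition plus the exponential law, and there is no computation to grind through.
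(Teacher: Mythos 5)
Your proposal is correct and follows essentially the same route as the paper: the paper also reduces smoothness of $\alpha$ via the smooth exponential law (Lemma \ref{smooth exp law}) to smoothness of the adjoint map $\alpha^{\wedge}$, and then writes $\alpha^{\wedge}=\ev_M\circ(\id_{C^{\infty}(M,E)}\times\sigma)$ using the smoothness of the evaluation map from [NeWa07], Proposition I.2. The only differences are cosmetic (the ordering of the factors and your naming of the intermediate map $\Phi$).
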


\begin{proof}
\,\,\,We first recall from [NeWa07], Proposition I.2 that the evaluation map
\[\ev_M:C^{\infty}(M,E)\times M\rightarrow E,\,\,\,(f,m)\mapsto f(m)
\]is smooth. Now, Lemma \ref{smooth exp law} implies that the action map $\alpha$ is smooth if and only if the map
\[\alpha^{\wedge}:C^{\infty}(M,E)\times M\times G\rightarrow E,\,\,\,(f,m,g)\mapsto f(\sigma(m,g))
\]is smooth. Since $$\alpha^{\wedge}=\ev_M\circ(\id_{C^{\infty}(M,E)}\times\sigma),$$ we conclude that $\alpha^{\wedge}$ is smooth as a composition of smooth maps. 
\end{proof}

From the perspective of noncommutative differential geometry, the previous proposition invites us to consider triples $(A,G,\alpha)$ consisting of a (probably noncommutative) unital locally convex algebra $A$, a Lie group $G$ and a group homomorphism $\alpha:G\rightarrow\Aut(A)$, which induces a smooth action of $G$ on $A$. 
In fact, it immediately leads us to the following definition:

\begin{definition}\label{triple}{\bf(Smooth dynamical systems).}\index{Dynamical Systems!Smooth}
We call a triple $(A,G,\alpha)$, consisting of a unital locally convex algebra $A$, a Lie group $G$ and a group homomorphism $\alpha:G\rightarrow\Aut(A)$, which induces a smooth action of $G$ on $A$, a \emph{smooth dynamical system}.
\end{definition}

\begin{example}\label{induced transformation triples}{\bf(Classical group actions).}\sindex[n]{$(C^{\infty}(P),G,\alpha)$}
(a) As the previous discussion shows, a classical example of such a smooth dynamical system is induced by a smooth action $\sigma:M\times G\rightarrow M$ of a Lie group $G$ on a manifold $M$. 


(b) In particular, each principal bundle $(P,M,G,q,\sigma)$ induces 
a smooth dynamical system $(C^{\infty}(P),G,\alpha)$, consisting of the Fr\'echet algebra of smooth functions on the total space $P$, the structure group $G$ and a group homomorphism $\alpha:G\rightarrow\Aut(C^{\infty}(P))$, induced by the smooth action $\sigma:P\times G\rightarrow P$ of $G$ on $P$. 
\end{example}

The following proposition characterizes the fixed point algebra of a smooth dynamical system, which is induced from a principal bundle, as the algebra of smooth functions on the corresponding base space:

\begin{proposition}\label{fixed point algebra of principal bundles}
Let $(P,M,G,q,\sigma)$ be a principal bundle and let $(C^{\infty}(P),G,\alpha)$ be the induced smooth dynamical system. 
Then the map
\[\Psi:C^{\infty}(P)^G\rightarrow C^{\infty}(M)\,\,\,\text{defined by}\,\,\,\Psi(f)(q(p)):=f(p)
\]is an isomorphism of Fr\'{e}chet algebras.
\end{proposition}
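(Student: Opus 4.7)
The plan is to establish the four standard ingredients in succession: well-definedness, the algebraic isomorphism property, continuity, and topological invertibility.

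First I would verify that $\Psi$ is well-defined. If $f\in C^{\infty}(P)^G$ and $q(p)=q(p')$, then by Remark \ref{principal bundles II}\,(b) there exists $g\in G$ with $p'=p.g$, and $G$-invariance of $f$ gives $f(p')=f(p.g)=f(p)$. Hence $\Psi(f):M\to\mathbb{C}$ is a well-defined function. To see that it is smooth, I would exploit local triviality: for each $m\in M$ choose a neighbourhood $U$ with a bundle chart $\varphi_U:U\times G\to q^{-1}(U)$ and let $s_U:U\to P$, $s_U(u):=\varphi_U(u,1_G)$ be the induced smooth local section. Then $\Psi(f)|_U = f\circ s_U$ is smooth as a composition of smooth maps.

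Next, the algebra homomorphism property is immediate from the pointwise definitions of sum, product and unit in both $C^{\infty}(P)^G$ and $C^{\infty}(M)$. For bijectivity I would argue as follows. Injectivity: if $\Psi(f)=0$, then $f(p)=\Psi(f)(q(p))=0$ for every $p\in P$, so $f=0$. Surjectivity: given $h\in C^{\infty}(M)$, set $f:=h\circ q\in C^{\infty}(P)$; since $q\circ\sigma_g=q$ for every $g\in G$, the function $f$ lies in $C^{\infty}(P)^G$, and by construction $\Psi(f)=h$.

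For the topological part, I would first observe that $C^{\infty}(P)^G$ is the intersection of the kernels of the continuous linear maps $f\mapsto\alpha(g).f-f$ ($g\in G$), hence a closed subspace of the Fréchet algebra $C^{\infty}(P)$ and therefore Fréchet in its own right; $C^{\infty}(M)$ is of course Fréchet as well. The inverse map $\Psi^{-1}:C^{\infty}(M)\to C^{\infty}(P)^G$, $h\mapsto h\circ q$, is the restriction of the pull-back by the smooth map $q$, which is continuous with respect to the smooth compact-open topologies; it is also an algebra homomorphism, so by the Open Mapping Theorem (Proposition \ref{open mapping theorem}) the continuous bijection $\Psi^{-1}$ is an isomorphism of Fréchet spaces, and consequently $\Psi$ is an isomorphism of Fréchet algebras.

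The only genuinely non-trivial point is producing local sections to verify smoothness of $\Psi(f)$; once local triviality of the principal bundle is invoked, everything else is a routine Fréchet-space argument. I would expect no serious obstacle, since the required Open Mapping Theorem and the characterisation of $C^{\infty}(P)^G$ as a closed subspace are already at our disposal.
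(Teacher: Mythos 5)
Your proof is correct and follows essentially the same route as the paper: well-definedness and injectivity from $G$-invariance, surjectivity via $f:=h\circ q$, and the identification $\Psi^{-1}=q^*$. The only cosmetic differences are that the paper invokes the universal property of submersions where you construct local sections explicitly (equivalent arguments), and that you route the continuity of $\Psi$ through the Open Mapping Theorem rather than asserting it directly.
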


\begin{proof}
\,\,\,First we observe that the map $\Psi$ is well-defined and a homomorphism of algebras. Further, the universal property of submersions implies that $\Psi(f)$ defines a smooth function on $M$.  

Next, if $\Psi(f)=0$, then the $G$-invariance of $f$ implies that $f=0$. Hence, $\Psi$ is injective. To see that $\Psi$ is surjective, we choose $h\in C^{\infty}(M)$ and put $f:=h\circ q$. Then $f\in C^{\infty}(P)^G$ and $\Psi(f)=h$. The claim now follows the continuity of $\Psi$ and $\Psi^{-1}=q^*$. 
\end{proof}

In the following we will show that, if $P$ is a manifold, then each smooth dynamical system $(C^{\infty}(P),G,\alpha)$ induces a smooth action of the Lie group $G$ on $P$. As a first step we endow $\Gamma_{C^{\infty}(P)}$ with the structure of a smooth manifold:

\begin{lemma}\label{spec as manifold}
If $P$ is a manifold, then there is a unique smooth structure on $\Gamma_{C^{\infty}(P)}$ for which the map
\[\Phi:P\rightarrow \Gamma_{C^{\infty}(P)},\,\,\,p\mapsto\delta_p
\]becomes a diffeomorphism.
\end{lemma}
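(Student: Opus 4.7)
My plan is to simply transport the smooth structure of $P$ along the bijection $\Phi$ and then verify uniqueness.

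First I would invoke Proposition \ref{spec of C(M) top}, which already establishes that $\Phi : P \to \Gamma_{C^{\infty}(P)}$ is a homeomorphism (with respect to the topology of pointwise convergence on $\Gamma_{C^{\infty}(P)}$). In particular, $\Phi$ is a bijection of sets and its topological type is fixed. The existence part of the lemma is then a standard transport-of-structure argument: given any smooth atlas $\{(\varphi_i, U_i)\}_{i \in I}$ of $P$, I would declare $\{(\varphi_i \circ \Phi^{-1}, \Phi(U_i))\}_{i \in I}$ to be a smooth atlas on $\Gamma_{C^{\infty}(P)}$. Since $\Phi$ is a homeomorphism, the sets $\Phi(U_i)$ are open, and the transition maps
\[
(\varphi_j \circ \Phi^{-1}) \circ (\varphi_i \circ \Phi^{-1})^{-1} = \varphi_j \circ \varphi_i^{-1}
\]
coincide with those of $P$ and are therefore smooth. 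With respect to this atlas, $\Phi$ is a diffeomorphism by construction, since its local coordinate expressions are the identity.

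For uniqueness, I would suppose that $\mathcal{S}_1$ and $\mathcal{S}_2$ are two smooth structures on $\Gamma_{C^{\infty}(P)}$ both making $\Phi$ a diffeomorphism. Then $\Phi : P \to (\Gamma_{C^{\infty}(P)}, \mathcal{S}_j)$ is a diffeomorphism for $j = 1, 2$, and therefore the identity
\[
\id : (\Gamma_{C^{\infty}(P)}, \mathcal{S}_1) \to (\Gamma_{C^{\infty}(P)}, \mathcal{S}_2), \quad \id = \Phi \circ \Phi^{-1},
\]
is a composition of two diffeomorphisms, hence a diffeomorphism. This forces $\mathcal{S}_1 = \mathcal{S}_2$.

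The main obstacle, if any, is really only conceptual rather than technical: one has to be sure that the underlying topology coming from the transported smooth atlas coincides with the pointwise convergence topology on $\Gamma_{C^{\infty}(P)}$, so that ``$\Phi$ is a diffeomorphism'' makes sense with respect to the topology already fixed in the rest of the thesis. This compatibility is precisely what Proposition \ref{spec of C(M) top} provides, so nothing further is needed; the rest of the proof is just transport of structure and a one-line uniqueness argument.
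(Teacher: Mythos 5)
Your proposal is correct and follows exactly the paper's argument: the paper likewise cites Proposition \ref{spec of C(M) top} to get that $\Phi$ is a homeomorphism and then transports the smooth structure along it, merely stating the transport-of-structure step that you spell out in detail. Nothing is missing, and no genuinely different route is taken.
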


\begin{proof}
\,\,\,Proposition \ref{spec of C(M) top} implies that the map $\Phi$ is a homeomorphism. Therefore, $\Phi$ induces a unique smooth structure on $\Gamma_{C^{\infty}(P)}$ such that $\Phi$ becomes a diffeomorphism. 
\end{proof}

\begin{lemma}\label{characterization of smooth maps}
A continuous map $f:M\rightarrow N$ between manifolds $M$ and $N$ is smooth if and only if the composition $g\circ f:M\rightarrow\mathbb{R}$ is smooth for each $g\in C^{\infty}(N,\mathbb{R})$.
\end{lemma}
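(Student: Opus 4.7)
The plan is to treat the two implications separately, with the forward direction being a triviality and the reverse direction requiring a local-to-global argument via bump functions.

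For the forward direction, if $f:M\to N$ is smooth, then for any $g\in C^\infty(N,\mathbb{R})$ the composition $g\circ f$ is smooth as a composition of smooth maps, so there is nothing to do.

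For the reverse direction, I would argue locally. Fix $p\in M$ and set $q:=f(p)\in N$. Choose a chart $(U,\varphi)$ of $N$ around $q$ with coordinate functions $\varphi_1,\ldots,\varphi_n\in C^\infty(U,\mathbb{R})$, where $n=\dim N$. Since $M$ and $N$ are (paracompact, second countable, smooth) manifolds, I can find an open neighbourhood $V$ of $q$ with $\overline{V}\subseteq U$ and a smooth bump function $\rho\in C^\infty(N,\mathbb{R})$ which is identically $1$ on $V$ and has support contained in $U$. Setting
\[
\widetilde{\varphi}_i:N\to\mathbb{R},\qquad \widetilde{\varphi}_i(y):=\begin{cases}\rho(y)\,\varphi_i(y) & y\in U,\\ 0 & y\notin U,\end{cases}
\]
each $\widetilde{\varphi}_i$ lies in $C^\infty(N,\mathbb{R})$ and agrees with $\varphi_i$ on $V$. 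By hypothesis, $\widetilde{\varphi}_i\circ f\in C^\infty(M,\mathbb{R})$ for $i=1,\ldots,n$. Using the continuity of $f$, the set $W:=f^{-1}(V)$ is an open neighbourhood of $p$ in $M$, and on $W$ we have $\varphi\circ f = (\widetilde{\varphi}_1\circ f,\ldots,\widetilde{\varphi}_n\circ f)$, which is smooth as a map $W\to\varphi(V)\subseteq\mathbb{R}^n$. Composing with $\varphi^{-1}$ shows that $f_{|W}$ is smooth, so $f$ is smooth at $p$. Since $p\in M$ was arbitrary, $f$ is smooth.

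The main step is the bump-function extension of the local chart coordinates to globally defined smooth functions on $N$; this is the only place where the manifold hypotheses (in particular paracompactness, which guarantees smooth partitions of unity and hence bump functions) genuinely enter. Everything else is bookkeeping. I therefore expect no serious obstacle, and the proof to be very short.
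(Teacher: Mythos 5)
Your proof is correct and follows essentially the same route as the paper's: both arguments multiply the chart coordinate functions around $f(p)$ by a bump function to obtain globally defined smooth functions on $N$, apply the hypothesis to these, and then use continuity of $f$ to shrink to a neighbourhood of $p$ on which the compositions recover the local coordinate representation of $f$. The only differences are notational.
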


\begin{proof}
\,\,\,The `` if"-direction is clear. The proof of the other direction is divided into three parts:

(i)  We first note that the map $f$ is smooth if and only if for each $m\in M$ there is an open $m$-neighbourhood $U$ such that $f_{\mid U}:U\rightarrow M$ is smooth. Therefore, let $m\in M$, $n:=f(m)$ and $(\psi,V)$ be a chart around $n$. We now choose an open $n$-neighbourhood $W$ such that $\overline{W}\subseteq V$ and a smooth function $h:N\rightarrow\mathbb{R}$ satisfying $h_{\mid \overline{W}}=1$ and $\supp(h)\subseteq V$. We further choose an open $m$-neighbourhood $U$ such that $f(U)\subseteq W$ (here, we use the continuity of the map $f$). Since the inclusion map $i:W\rightarrow N$ is smooth, it remains to prove that $f_{\mid U}:U\rightarrow W$ is smooth.

(ii) A short observation shows that the map $f_{\mid U}:U\rightarrow W$ is smooth if and only if the map $\psi\circ f_{\mid U}:U\rightarrow\mathbb{R}^n$ is smooth. If $\psi=(\psi_1,\ldots,\psi_n)$, then the last function is smooth if and only if each of its coordinate functions $\psi_i\circ f_{\mid U}:U\rightarrow\mathbb{R}$ is smooth. 

(iii) For fixed $i\in\{1,\ldots,n\}$ we now show that the coordinate function $\psi_i\circ f_{\mid U}:U\rightarrow\mathbb{R}$ is smooth. For this we note that $h_i:=h\cdot\psi_i$ defines a smooth $\mathbb{R}$-valued function on $N$ satisfying ${h_i}_{\mid W}=\psi_i$. Hence, the assumption implies that the map $h_i\circ f:M\rightarrow\mathbb{R}$ is smooth. Since the restriction of a smooth map to an open subsets is smooth again, we conclude from $f(U)\subseteq W$ that 
\[(h_i\circ f)_{\mid U}=\psi_i\circ f_{\mid U}
\]is smooth as desired. This proves the lemma.
\end{proof}


\begin{proposition}\label{smoothness of the group action on the set of characters}
If $P$ is a manifold, $G$ a Lie group and $(C^{\infty}(P),G,\alpha)$ a smooth dynamical system, then the homomorphism $\alpha:G\rightarrow\Aut(C^{\infty}(P))$ induces a smooth \emph{(}right-\emph{)} action
\begin{align*}
\sigma:P\times G\rightarrow P,\,\,\,(\delta_p,g)\mapsto\delta_p\circ\alpha(g)
\end{align*}
of the Lie group $G$ on the manifold $P$. Here, we have identified $P$ with the set of characters via the map $\Phi$ from Lemma \ref{spec as manifold}.
\end{proposition}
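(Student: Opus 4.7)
The plan is to verify four things in order: well-definedness of $\sigma$ as a map into $P$, the action axioms, continuity, and finally smoothness. For well-definedness, I would observe that for each $(p,g)\in P\times G$ the map $\delta_p\circ\alpha(g):C^{\infty}(P)\to\mathbb{C}$ is a character, being the composition of the character $\delta_p$ with the algebra automorphism $\alpha(g)$. Corollary \ref{spec of C(M,K) set} then supplies a unique $p'\in P$ with $\delta_{p'}=\delta_p\circ\alpha(g)$, and I set $\sigma(p,g):=p'$. The action axioms follow from a direct algebraic calculation: $\delta_p\circ\alpha(e)=\delta_p$ gives $\sigma(p,e)=p$, and $\delta_p\circ\alpha(gh)=\delta_p\circ\alpha(g)\circ\alpha(h)=\delta_{\sigma(p,g)}\circ\alpha(h)$ gives $\sigma(p,gh)=\sigma(\sigma(p,g),h)$.

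Next I would establish continuity of $\sigma$ by transporting to the character side via the homeomorphism $\Phi$ of Proposition \ref{spec of C(M) top}. Since $\Gamma_{C^{\infty}(P)}$ carries the topology of pointwise convergence, continuity of $\sigma$ amounts to continuity of $(p,h)\mapsto\alpha(h)(f)(p)$ for every $f\in C^{\infty}(P)$. The map $h\mapsto\alpha(h)(f)$ is continuous because the (smooth, hence continuous) action of $G$ on $C^{\infty}(P)$ restricts to a continuous map $G\to C^{\infty}(P)$ for each fixed $f$, and composition with the jointly continuous evaluation map $\ev:C^{\infty}(P)\times P\to\mathbb{C}$ of [NeWa07, Proposition I.2] delivers the required joint continuity.

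For smoothness I would invoke Lemma \ref{characterization of smooth maps}: a continuous map between manifolds is smooth iff post-composition with every real-valued smooth function on the target is smooth. Thus it suffices to show, for each $f\in C^{\infty}(P,\mathbb{R})\subseteq C^{\infty}(P)$, that
\[
(f\circ\sigma)(p,h)=\delta_{\sigma(p,h)}(f)=(\delta_p\circ\alpha(h))(f)=\alpha(h)(f)(p)
\]
is smooth in $(p,h)$. Now $h\mapsto\alpha(h)(f)$ is smooth as a map $G\to C^{\infty}(P)$ because $\alpha:G\times C^{\infty}(P)\to C^{\infty}(P)$ is smooth by hypothesis, and $\ev:C^{\infty}(P)\times P\to\mathbb{C}$ is smooth by [NeWa07, Proposition I.2]; the composition $(p,h)\mapsto(\alpha(h)(f),p)\mapsto\alpha(h)(f)(p)$ is therefore smooth, and since it takes real values it defines a smooth real-valued function on $P\times G$.

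The main obstacle, I expect, is conceptual rather than technical: one needs to know that $\delta_p\circ\alpha(g)$ genuinely corresponds to evaluation at a \emph{point} of $P$ rather than some exotic character, and this is exactly the non-trivial content of Corollary \ref{spec of C(M,K) set}. Once that algebraic characterization of $\Gamma_{C^{\infty}(P)}$ is at hand, continuity and smoothness of $\sigma$ follow mechanically from the smooth exponential law and the joint smoothness of evaluation on smooth vector-valued function spaces.
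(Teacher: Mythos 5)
Your proposal is correct and follows essentially the same route as the paper: identify $P$ with the character space, deduce continuity from the smooth evaluation map of [NeWa07, Proposition I.2], and then reduce smoothness via Lemma \ref{characterization of smooth maps} to the factorization $\sigma_f=\ev_P\circ(\id_P\times\alpha_f)$ with $\alpha_f$ the smooth orbit map. The only addition is your explicit well-definedness step via Corollary \ref{spec of C(M,K) set}, which the paper absorbs into the identification of Lemma \ref{spec as manifold}.
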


\begin{proof}
\,\,\,The proof of this proposition is divided into two parts:

(i) We again use [NeWa07], Proposition I.2, which states that the evaluation map
\[\ev_P:C^{\infty}(P)\times P\rightarrow \mathbb{K},\,\,\,(f,p)\mapsto f(p)
\]is smooth. From this we conclude that the map $\sigma$ is continuous (cf. Proposition \ref{cont. action II}). 

(ii) In view of part (i), we may use Lemma \ref{characterization of smooth maps} to verify the smoothness of $\sigma$. Indeed, the map $\sigma$ is smooth if and only if the map
\[\sigma_f:P\times G\rightarrow\mathbb{R},\,\,\,(\delta_p,g)\mapsto\sigma(\delta_p,g)(f)=(\alpha(g,f))(p)
\]is smooth for each $f\in C^{\infty}(P,\mathbb{R})$. Therefore, we fix $f\in C^{\infty}(P,\mathbb{R})$ and note that  \[\sigma_f=\ev_P\circ(\id_P\times\alpha_f),
\]where $\alpha_f:G\rightarrow C^{\infty}(P)\,\,\,g\mapsto\alpha(g,f)$ denotes the smooth orbit map of $f$. Hence, the map $\sigma_f$ is smooth as a composition of smooth maps. Since $f$ was arbitrary, the map $\sigma$ is smooth.
\end{proof}

\begin{remark}\label{inverse constructions}{\bf(Inverse constructions).}
Note that the constructions of Proposition \ref{smoothness of the group action on the algebra of smooth functions} and Proposition \ref{smoothness of the group action on the set of characters} are inverse to each other.
\end{remark}

\begin{remark}\label{remark of free action in geometry}
Since we are interested in principal bundles, it is reasonable to ask if there exist natural algebraic conditions on a smooth dynamical system $(C^{\infty}(P),G,\alpha)$ which ensure the freeness of the induced action $\sigma$ of $G$ on $P$ of Proposition \ref{smoothness of the group action on the set of characters}. In fact, if this is the case and if the action is additionally proper, then we obtain a principal bundle $(P,P/G,G,\pr,\sigma)$, where $\pr:P\rightarrow P/G,\,\,\,p\mapsto p.G$ denotes the corresponding orbit map. We will treat this question in the next section.
\end{remark}

\section{Free Dynamical Systems}\label{section:free dynamical systems}

In this section we introduce the concept of a \emph{free dynamical system}. Loosely speaking, we call a dynamical system $(A,G,\alpha)$ free, if the locally convex algebra $A$ is commutative and the topological group $G$ admits ``sufficiently many" representations such that a certain family of maps defined on the corresponding modules associated to $A$ are surjective. We will in particular see how this condition implies that the induced action 
\[\sigma:\Gamma_A\times G\rightarrow\Gamma_A,\,\,\,(\chi,g)\mapsto\chi\circ\alpha(g)
\]of $G$ on the spectrum $\Gamma_A$ of $A$ is free. We start with some basics from the representation theory of (topological) groups, which will later be important for deducing the freeness property:

\begin{definition}\label{sep. the. points of G}{\bf(Separating representations).}\index{Representations!Separating}
Let $G$ be topological group. We say that a family $(\pi_j,V_j)_{j\in J}$ of (continuous) representations of $G$ \emph{separates the points of} $G$ if for each $g\in G$ with $g\neq 1_G$, there is a $j\in J$ such that $\pi_j(g)\neq\id_{V_j}$.
\end{definition}

\begin{lemma}\label{princ. bdl. cond.}
Let $G$ be a topological group and suppose that $(\pi_j,V_i)_{j\in J}$ is a family of point separating representations of $G$. If $g\in G$ is such that $\pi_j(g)=\id_{V_j}$ for all $j\in J$, then $g=1_G$.
\end{lemma}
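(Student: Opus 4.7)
The plan is to observe that this lemma is essentially the contrapositive of Definition \ref{sep. the. points of G}, so no real work is required beyond a clean logical reformulation. I would argue as follows: suppose, towards a contradiction, that $g \in G$ satisfies $\pi_j(g) = \id_{V_j}$ for all $j \in J$, yet $g \neq 1_G$. Then the point-separating property of the family $(\pi_j, V_j)_{j\in J}$ applied to this $g$ produces some index $j_0 \in J$ with $\pi_{j_0}(g) \neq \id_{V_{j_0}}$, contradicting the hypothesis that $\pi_j(g) = \id_{V_j}$ for \emph{every} $j \in J$. Hence $g = 1_G$.

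Equivalently, I would phrase the argument directly (without contradiction) as: by Definition \ref{sep. the. points of G}, the statement ``$g \neq 1_G$'' implies the existence of some $j \in J$ with $\pi_j(g) \neq \id_{V_j}$; taking the contrapositive of this implication gives exactly the claim of the lemma.

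There is no real obstacle here — the only thing to be careful about is to recognize that the lemma is a pure logical restatement of the defining property and not to over-complicate matters by, for instance, introducing the representations $\pi_j$ as objects in their own right or invoking anything about continuity or topology on $G$. The hypothesis that $G$ is a topological group and that the representations are continuous plays no role in the proof itself; these conditions are only relevant for the ambient setup in which the lemma will later be applied.
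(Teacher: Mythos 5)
Your proof is correct and matches the paper's, which simply remarks that the claim follows immediately from Definition \ref{sep. the. points of G}; your contrapositive argument is exactly the intended one. Your observation that the topology and continuity play no role here is also accurate.
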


\begin{proof}
\,\,\,The claim immediately follows from Definition \ref{sep. the. points of G}.
\end{proof}

\begin{remark}\label{faithful representations}{\bf(Faithful representations).}\index{Representations!Faithful}
We recall that each faithful representation $(\pi,V)$ of a topological group $G$ separates the points of $G$.
\end{remark}



An important class of groups that admit a family of separating representations is given by the locally compact groups:

\begin{theorem}\label{gelfand-raikov}{\bf(Gelfand--Raikov).}\index{Theorem!of Gelfand--Raikov}
Each locally compact group $G$ admits a family of continuous unitary irreducible representations that separates the points of $G$.
\end{theorem}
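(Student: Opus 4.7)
The plan is to invoke the standard harmonic-analytic machinery associated to a left Haar measure on $G$. First I would fix $g\in G$ with $g\neq 1_G$ and reduce the statement to the following claim: there exists a continuous unitary irreducible representation $(\pi,\mathcal{H})$ of $G$ with $\pi(g)\neq\id_{\mathcal{H}}$. By the one-to-one correspondence between cyclic continuous unitary representations of $G$ and normalized continuous positive definite functions $\varphi\colon G\rightarrow\mathbb{C}$ (given, up to equivalence, by the GNS construction $\varphi(h)=\langle\pi(h)\xi,\xi\rangle$ with a cyclic unit vector $\xi$), it suffices to produce a \emph{pure} normalized continuous positive definite function $\varphi$ with $\varphi(g)\neq 1$.

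The next step would be to show that continuous positive definite functions separate points of $G$. For this I would pick a symmetric open neighbourhood $U$ of $1_G$ small enough that $g\notin U\cdot U$, take $f\in C_c(G)$ supported in $U$ with $\int_G|f|^2\,d\mu=1$, and consider $\varphi:=f\ast f^{*}$, where $f^{*}(h):=\overline{f(h^{-1})}\Delta(h^{-1})$. A direct calculation shows that $\varphi$ is a continuous positive definite function with $\varphi(1_G)=1$ and $\varphi(g)=0$, hence $\varphi(g)\neq\varphi(1_G)$.

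The core step — and this is where the main subtlety lies — is to promote this separating $\varphi$ to a \emph{pure} positive definite function with the same separating property. Here I would consider the convex set
\[
P_1(G):=\{\psi\colon G\to\mathbb{C}\,\vert\,\psi\,\text{continuous positive definite},\,\psi(1_G)=1\}
\]
viewed as a subset of $L^{\infty}(G)$ with the weak-$*$ topology. Then $P_1(G)\cup\{0\}$ is convex and weak-$*$ compact (this uses $\|\psi\|_{\infty}=\psi(1_G)\le 1$ and the Banach--Alaoglu theorem, together with the fact that the weak-$*$ limit of positive definite functions of norm $\le 1$ is again positive definite). The Krein--Milman theorem then guarantees an abundance of extreme points, and a classical identification (via the GNS correspondence and the fact that the commutant of the GNS representation is trivial iff $\varphi$ is extreme) shows that the extreme points of $P_1(G)\cup\{0\}$ other than $0$ are precisely the pure normalized positive definite functions, i.e.\ those coming from \emph{irreducible} unitary representations.

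Finally, I would argue by contradiction: if every pure $\psi\in P_1(G)$ satisfied $\psi(g)=1$, then by weak-$*$ continuity of the evaluation functional $\mathrm{ev}_g\colon L^{\infty}(G)\to\mathbb{C}$ and the Krein--Milman representation of $P_1(G)$ as the weak-$*$ closed convex hull of its extreme points, \emph{every} $\psi\in P_1(G)$ would satisfy $\psi(g)=1$, contradicting the existence of $\varphi$ above. Hence there is a pure $\psi\in P_1(G)$ with $\psi(g)\neq 1$; its associated irreducible representation $(\pi,\mathcal{H})$ satisfies $\pi(g)\neq\id_{\mathcal{H}}$, which is what we wanted. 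The main obstacle is the last step, namely justifying the identification of extreme points of $P_1(G)\cup\{0\}$ with irreducible representations; this requires the GNS construction and the characterization of irreducibility via commutant, and is the technical heart of the argument — for a complete treatment I would refer to Folland, \emph{A Course in Abstract Harmonic Analysis}, Chapter~3.
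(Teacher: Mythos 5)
Your overall strategy — positive definite functions, the GNS correspondence, and Krein--Milman to extract pure (hence irreducible) separating functions — is exactly the standard proof of Gelfand--Raikov; the paper itself offers no argument beyond the citation to [Yo49], so there is nothing to compare against there. However, two steps in your write-up are wrong as stated. First, the set $P_1(G)\cup\{0\}$ is neither convex nor weak-$*$ compact: the midpoint of $0$ and a normalized $\psi$ has $\psi(1_G)=\tfrac12$, and on $G=\mathbb{R}$ the characters $e^{inx}$ lie in $P_1(G)$ but converge weak-$*$ to $0$ by Riemann--Lebesgue, so $P_1(G)$ is not weak-$*$ closed. The correct compact convex set is $\mathcal{P}_{\le 1}:=\{\psi\ \text{continuous positive definite}:\ \psi(1_G)\le 1\}$, whose extreme points are $0$ together with the pure normalized functions; with this replacement the Krein--Milman step is the standard one.

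Second, and more seriously, the evaluation functional $\mathrm{ev}_g$ is \emph{not} weak-$*$ continuous on $L^{\infty}(G)$ for non-discrete $G$ (the weak-$*$ continuous functionals are exactly the pairings with $L^1(G)$), so your final contradiction does not run as written: the condition $\psi(g)=1$ does not cut out a weak-$*$ closed subset of $\mathcal{P}_{\le 1}$. The standard repair is to reformulate the hypothesis ``$\pi(g)=\mathrm{id}$ for every irreducible $\pi$'' as the translation identity $R_g\psi=\psi$ in $L^{\infty}(G)$ (indeed $\psi(hg)=\langle\pi(h)\pi(g)\xi,\xi\rangle=\psi(h)$ for every pure $\psi$), and to observe that $\{\psi\in\mathcal{P}_{\le 1}:R_g\psi=\psi\}$ \emph{is} weak-$*$ closed and convex, since for each $u\in L^1(G)$ the map $\psi\mapsto\int_G\bigl(\psi(hg)-\psi(h)\bigr)u(h)\,dh$ is weak-$*$ continuous. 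This set contains all extreme points of $\mathcal{P}_{\le 1}$, hence all of $\mathcal{P}_{\le 1}$ by Krein--Milman; but your function $\varphi$ built from the regular representation is continuous with $\varphi(g)=0\neq1=\varphi(1_G)$, so $R_g\varphi\neq\varphi$ in $L^{\infty}(G)$, giving the desired contradiction. (Alternatively one may invoke Raikov's theorem that the weak-$*$ and compact-open topologies coincide on $P_1(G)$, but that is itself a nontrivial result and does not apply directly on $\mathcal{P}_{\le 1}$.) With these two repairs your proof is complete and is the argument found in Folland, Chapter 3.
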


\begin{proof}
\,\,\,A proof of this statement can be found in the very nice paper [Yo49].
\end{proof}

\begin{definition}\label{sections again}{\bf(``Associated space of sections'').}\sindex[n]{$\Gamma_A V$}
Let $A$ be a unital locally convex algebra and $G$ a topological group. If $(A,G,\alpha)$ is a dynamical system and $(\pi,V)$ a (continuous) representation of $G$, then there is a natural (continuous) action of $G$ on the tensor product $A\otimes V$ defined on simple tensors by $g.(a\otimes v):=(\alpha(g).a)\otimes(\pi(g).v)$. We write
\begin{align*}
\Gamma_A V:=(A\otimes V)^G=\big\{s\in A\otimes V:(\forall g\in G)\,(\alpha(g)\otimes\id_V)(s)=(\id_A\otimes\pi(g^{-1}))(s)\big\}
\end{align*}
for the set of fixed elements under this action.
\end{definition}

\begin{lemma}
Let $(A,G,\alpha)$ be as in Definition \ref{sections again}. 
If $A^G$ is the corresponding fixed point algebra and $(\pi,V)$ a continuous representation of $G$, then the map
\[\rho:\Gamma_A V\times A^G\rightarrow\Gamma_A V,\,\,\,(a\otimes v,b)\mapsto ab\otimes v
\]defines on $\Gamma_A V$ the structure of a locally convex $A^G$-module.
\end{lemma}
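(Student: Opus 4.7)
The plan is to realize $\rho$ as the restriction of the natural right $A$-module structure on $A\otimes V$ (with $A$ acting on the first tensor factor) to the subspaces $\Gamma_A V\subseteq A\otimes V$ and $A^G\subseteq A$. Concretely, for any $s=\sum_i a_i\otimes v_i$ in $A\otimes V$ and $b\in A$, right multiplication by $b$ is given by $(R_b\otimes\id_V)(s)=\sum_i a_ib\otimes v_i$. Since $A$ is a locally convex algebra, the bilinear map $(s,b)\mapsto (R_b\otimes\id_V)(s)$ is continuous for the projective tensor product topology on $A\otimes V$.

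First, I would check well-definedness, i.e.\ that $R_b\otimes\id_V$ sends $\Gamma_A V$ into itself when $b\in A^G$. The crucial algebraic fact is that right multiplication by a $G$-fixed element commutes with the $G$-action on $A$: for all $g\in G$ and $a\in A$,
\[
\alpha(g)(ab)=\alpha(g)(a)\,\alpha(g)(b)=\alpha(g)(a)\,b,
\]
so $\alpha(g)\circ R_b=R_b\circ\alpha(g)$. Applying $R_b\otimes\id_V$ to the invariance identity $(\alpha(g)\otimes\id_V)(s)=(\id_A\otimes\pi(g^{-1}))(s)$ and using this commutation yields
\[
(\alpha(g)\otimes\id_V)(s.b)=(\id_A\otimes\pi(g^{-1}))(s.b),
\]
so $s.b\in\Gamma_A V$, as required.

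Next, the module axioms $s.(b_1b_2)=(s.b_1).b_2$, $s.1_A=s$, bilinearity in $s$ and $b$, and compatibility with scalars are all immediate from the corresponding properties of the product in $A$ and the bilinearity of the tensor product; no delicate argument is needed here. Finally, for continuity of $\rho$: the map $\rho$ is the restriction of the continuous bilinear map $(A\otimes V)\times A\to A\otimes V$ to the subspaces $\Gamma_A V\subseteq A\otimes V$ and $A^G\subseteq A$, both of which carry the induced subspace topologies, so $\rho$ is automatically continuous.

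The only genuinely non-trivial point is the first step, namely that $\Gamma_A V$ is stable under right multiplication by elements of $A^G$; once one notices that this is exactly the statement $\alpha(g)R_b=R_b\alpha(g)$ for $b\in A^G$, the remainder of the argument is bookkeeping. I therefore expect no substantial obstacle.
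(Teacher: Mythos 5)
Your proposal is correct and follows essentially the same route as the paper: the paper simply invokes its general result that $E\otimes F$ is a locally convex module (Proposition \ref{EoF as B-module}) and restricts to the closed subspace $\Gamma_A V$. The only difference is that you make explicit the stability of $\Gamma_A V$ under right multiplication by $G$-fixed elements via $\alpha(g)\circ R_b=R_b\circ\alpha(g)$, a point the paper leaves implicit.
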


\begin{proof}
\,\,\,According to Proposition \ref{EoF as B-module}, $A\otimes V$ carries the structure of a locally convex $B$-module. Since $B=A^G$, the same holds for the restriction to the (closed) subspace $\Gamma_A V$.
\end{proof}

\begin{example}\label{example of section}{\bf(The classical case).}
If $(P,M,G,q,\sigma)$ is a principal bundle, $(C^{\infty}(P),G,\alpha)$ the corresponding smooth dynamical system from Example \ref{induced transformation triples} (b) and $(\pi,V)$ a finite-di\-men\-sio\-nal  representation of $G$, then an easy observation shows that
\[C^{\infty}(P)\otimes V\cong C^{\infty}(P,V)
\](as Fr\'echet spaces) and further that
\[\Gamma_{C^{\infty}(P)}V\cong(C^{\infty}(P)\otimes V)^G\cong C^{\infty}(P,V)^G.
\]Now, Corollary \ref{sections of an associated vector bundle top} implies that $\Gamma_{C^{\infty}(P)}V\cong\Gamma\mathbb{V}$ is a (topological) isomorphism of $C^{\infty}(M)$-modules.
\end{example}

\begin{remark} 
In view of Example \ref{example of section}, the $A^G$-module $\Gamma_A V$ generalizes the \emph{space of sections} associated to the dynamical system $(A,G,\alpha)$ and the representation $(\pi,V)$ of $G$.
\end{remark}

We now come to the central definition of this section. Note that $A$ is assumed to be a commutative algebra, since our considerations depend on the existence of enough characters:

\begin{definition}\label{free dynamical systems}{\bf (Free dynamical systems).}\index{Dynamical Systems!Free}
Let $A$ be a commutative unital locally convex algebra and $G$ a topological group. A dynamical system $(A,G,\alpha)$ is called \emph{free} if there exists a family $(\pi_j,V_j)_{j\in J}$ of point separating representations of $G$ such that the map
\[\ev^j_{\chi}:=\ev^{V_j}_{\chi}:\Gamma_A V_j\rightarrow V_j,\,\,\,a\otimes v\mapsto\chi(a)\cdot v
\]is surjective for all $j\in J$ and all $\chi\in\Gamma_A$. 
\end{definition}

\begin{theorem}\label{freeness of induced action}{\bf(Freeness of the induced action).}\index{Freeness!of the Induced Action}
If $(A,G,\alpha)$ is a free dynamical system, then the induced action
\[\sigma:\Gamma_A\times G\rightarrow\Gamma_A,\,\,\,(\chi,g)\mapsto\chi\circ\alpha(g)
\]of $G$ on the spectrum $\Gamma_A$ of $A$ is free. 
\end{theorem}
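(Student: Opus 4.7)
The plan is to translate the fixed-point equation $\chi \circ \alpha(g) = \chi$ into a statement about the action of $\pi_j(g)$ on $V_j$, exploiting the defining invariance relation for elements of $\Gamma_A V_j$ together with the surjectivity of the evaluation map.

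First, I would fix $\chi \in \Gamma_A$ and $g \in G$ with $\chi \circ \alpha(g) = \chi$, pick any $j \in J$, and take an arbitrary element $s = \sum_i a_i \otimes v_i \in \Gamma_A V_j$. By the defining invariance property of $\Gamma_A V_j = (A \otimes V_j)^G$,
\[ (\alpha(g) \otimes \id_{V_j})(s) = (\id_A \otimes \pi_j(g^{-1}))(s). \]
Applying $\ev_\chi^j$ to both sides and using its linearity in the first factor, the left hand side becomes $\sum_i \chi(\alpha(g).a_i)\, v_i = \ev_{\chi \circ \alpha(g)}^j(s)$, while the right hand side becomes $\pi_j(g^{-1}) \sum_i \chi(a_i)\, v_i = \pi_j(g^{-1})\, \ev_\chi^j(s)$.

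Under the hypothesis $\chi \circ \alpha(g) = \chi$, this identity reduces to $\ev_\chi^j(s) = \pi_j(g^{-1})\, \ev_\chi^j(s)$ for every $s \in \Gamma_A V_j$. By freeness of $(A, G, \alpha)$ the map $\ev_\chi^j$ is surjective, so $\pi_j(g^{-1}) v = v$ for all $v \in V_j$; equivalently, $\pi_j(g) = \id_{V_j}$.

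Since the argument is valid for every $j \in J$ and the family $(\pi_j, V_j)_{j \in J}$ separates the points of $G$, Lemma \ref{princ. bdl. cond.} forces $g = 1_G$. This yields freeness of $\sigma$. I do not anticipate a serious obstacle: the whole point is that the invariance property for $\Gamma_A V_j$ and the hypothesis $\chi \circ \alpha(g) = \chi$ are designed to be interchanged under $\ev_\chi^j$, and the surjectivity hypothesis together with the separation property does the rest.
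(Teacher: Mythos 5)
Your proposal is correct and follows essentially the same route as the paper's proof: apply $\chi\otimes\id_{V_j}$ to the invariance relation defining $\Gamma_A V_j$, use $\chi\circ\alpha(g)=\chi$ to deduce that $\pi_j(g)$ fixes the image of $\ev^j_{\chi}$, and then invoke surjectivity together with the point-separation property (Lemma \ref{princ. bdl. cond.}). The only cosmetic difference is that the paper picks an arbitrary $v\in V_j$ and a preimage $s$ with $\ev^j_{\chi}(s)=v$, whereas you quantify over all $s$ first and use surjectivity at the end; these are the same argument.
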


\begin{proof}
\,\,\,We divide the proof of this theorem into four parts:

(i) In order to verify the freeness of the map $\sigma$, we have to show that the stabilizer of each element of $\Gamma_A$ is trivial: Consequently, we fix $\chi_0\in\Gamma_A$ and let $g_0\in G$ with $\chi_0\circ\alpha(g_0)=\chi_0$.

(ii) Since $(A,G,\alpha)$ is assumed to be a free dynamical system, there exists a family $(\pi_j,V_j)_{j\in J}$ of point separating representations of $G$ for which the map
\[\ev^j_{\chi}:\Gamma_A V_j\rightarrow V_j,\,\,\,a\otimes v\mapsto\chi(a)\cdot v
\]is surjective for all $j\in J$ and all $\chi\in\Gamma_A$. In particular, we can choose $j\in J$, $v\in V_j$ and $s\in\Gamma_A V_j$ with $\ev^j_{\chi_0}(s)=v$. We recall that the element $s$ satisfies the equation
\begin{align}
(\alpha(g_0)\otimes\id_{V_j})(s)=(\id_A\otimes\pi_j(g_0^{-1}))(s).\label{freeness equation}
\end{align}

(iii) Applying $\chi_0\otimes\id_{V_j}$ to the left of equation (\ref{freeness equation}) leads to
\[((\chi_0\circ\alpha(g_0))\otimes\id_{V_j})(s)=(\chi_0\otimes\pi_j(g_0^{-1}))(s).
\]Thus, we conclude from $\chi_0\circ\alpha(g_0)=\chi_0$ that
\[(\chi_0\otimes\id_{V_j})(s)=(\chi_0\otimes\pi_j(g_0^{-1}))(s)=\pi_j(g_0^{-1})((\chi_0\otimes\id_{V_j})(s)).
\]

(iv) We finally note that $s\in\Gamma_A V_j$ implies that
\[(\chi_0\otimes\id_{V_j})(s)=\ev^j_{\chi_0}(s)=v.
\]In view of part (iii) this shows that $v=\pi_j(g_0)(v)$. As $j\in J$ and $v\in V_j$ were arbitrary, we conclude that $\pi_j(g_0)=\id_{V_j}$ for all $j\in J$ and therefore that $g_0=1_G$ (cf. Lemma \ref{princ. bdl. cond.}). This completes the proof.
\end{proof}

\section{A New Characterization of Free Group Actions in Classical Geometry}\label{ANCFGACG}

In this part of the chapter we apply the results of the previous section to dynamical systems arising from group actions in classical geometry. In particular, we will see how this leads to a new characterization of free group actions. For this purpose we have to restrict our attention to Lie groups that admit a family of finite-dimensional continuous point separating representations.

\begin{definition}\label{linearizer}{\bf(The linearizer).}\index{Linearizer}
For a Lie group $G$ we define its \emph{linearizer} as the subgroup $\Lin(G)$\sindex[n]{$\Lin(G)$} which is the intersection of the kernels of all finite-dimensional continuous representations of $G$.
\end{definition}

\begin{lemma}\label{linerarizer/point sep. rep.}
Let $G$ be a Lie group. Then the following statements are equivalent:
\begin{itemize}
\item[\emph{(a)}]
The finite-dimensional continuous representations separate the points of $G$.
\item[\emph{(b)}]
$\Lin(G)=\{1_G\}$.
\end{itemize}
\end{lemma}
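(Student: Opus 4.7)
The plan is to observe that this equivalence is essentially a direct unravelling of the definition of $\Lin(G)$, so the proof will amount to a short double-implication argument with no technical content. I would not invoke any representation-theoretic machinery; the Gelfand--Raikov type results appearing earlier are irrelevant here since we only need to compare two characterizations of ``having enough finite-dimensional continuous representations.''

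First I would set up notation: let $\mathcal{R}$ denote the class of all finite-dimensional continuous representations $(\pi,V)$ of $G$, so that by Definition \ref{linearizer} one has $\Lin(G)=\bigcap_{(\pi,V)\in\mathcal{R}}\ker\pi$. For the implication (a)$\Rightarrow$(b), I would take an arbitrary $g\in\Lin(G)$ and argue by contradiction: if $g\neq 1_G$, then assumption (a) yields some $(\pi,V)\in\mathcal{R}$ with $\pi(g)\neq\id_V$, contradicting $g\in\ker\pi$. Hence $\Lin(G)=\{1_G\}$.

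For the converse (b)$\Rightarrow$(a), I would pick $g\in G$ with $g\neq 1_G$. Since $\Lin(G)=\{1_G\}$, the element $g$ cannot lie in every kernel, so there exists $(\pi,V)\in\mathcal{R}$ with $g\notin\ker\pi$, i.e.\ $\pi(g)\neq\id_V$. By Definition \ref{sep. the. points of G} this means that $\mathcal{R}$ separates the points of $G$.

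There is no real obstacle here; the only thing to be slightly careful about is that separating points (Definition \ref{sep. the. points of G}) is phrased with an existential over a chosen family of representations, whereas $\Lin(G)$ is defined using \emph{all} finite-dimensional continuous representations. These two formulations agree because one may always take $\mathcal{R}$ itself as the separating family in (a), and conversely any separating subfamily has intersection of kernels contained in $\{1_G\}$, which forces $\Lin(G)=\{1_G\}$ since $\Lin(G)\subseteq\bigcap_{(\pi,V)\in\mathcal{R}_0}\ker\pi$ for every subfamily $\mathcal{R}_0\subseteq\mathcal{R}$.
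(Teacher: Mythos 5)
Your proof is correct and follows essentially the same route as the paper, which simply observes that $\Lin(G)$ is non-trivial if and only if some $g\neq 1_G$ lies in the kernel of every finite-dimensional continuous representation; your version just spells out the two implications and the (harmless) point about families versus the class of all such representations. Nothing further is needed.
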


\begin{proof}
\,\,\,We just have to note that $\Lin(G)$ is non-trivial if and only if there exists an element $g\in G$ which lies in the kernels of all finite-dimensional continuous representation of $G$.
\end{proof}

The following theorem shows that the property $\Lin(G)=\{1_G\}$ characterizes the groups which admit a faithful finite-dimensional linear representation:

\begin{theorem}\label{linear lie groups}{\bf(Existence of faithful finite-dimensional representations).}\index{Representations!Faithful Finite-Dimensional}
For a connected Lie group $G$ the following statements are equivalent:
\begin{itemize}
\item[\emph{(a)}]
There exists a faithful finite-dimensional continuous representation of $G$.
\item[\emph{(b)}]
There exists a faithful finite-dimensional continuous representation of $G$ with closed image.
\item[\emph{(c)}]
$\Lin(G)=\{1_G\}$.
\end{itemize}
\end{theorem}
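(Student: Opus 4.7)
The implications (b) $\Rightarrow$ (a) and (a) $\Rightarrow$ (c) are immediate: (b) merely strengthens (a) by a topological condition, and if $\pi : G \to \GL(V)$ is any faithful finite-dimensional continuous representation, then $\Lin(G) \subseteq \ker \pi = \{1_G\}$ by the very definition of the linearizer. The entire content thus lies in showing (c) $\Rightarrow$ (b).

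For this, I would first produce a finite-dimensional continuous representation with \emph{discrete} kernel by a minimization argument. Let $d$ denote the minimum of $\dim (\ker \pi)_0$ as $\pi$ ranges over all finite-dimensional continuous representations of $G$ (this minimum exists since the quantity is a nonnegative integer bounded above by $\dim G$), and choose $\pi$ realizing it. If $d > 0$, set $N := (\ker \pi)_0$: by (c) and the preceding lemma on point-separating representations, there is a finite-dimensional continuous $\pi'$ whose restriction to $N$ is nontrivial, and then $\ker(\pi \oplus \pi') = \ker \pi \cap \ker \pi'$ has identity component a proper closed connected subgroup of $N$, hence of dimension strictly less than $d$, contradicting minimality. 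So $K := \ker \pi$ is discrete; since $G$ is connected, $K$ is central. Next, for each nonidentity $z \in K$ hypothesis (c) furnishes a finite-dimensional continuous $\pi_z$ with $\pi_z(z) \neq \id$, and using the Lie-theoretic fact that a discrete central subgroup $K$ of a connected Lie group $G$ is finitely generated (which follows from the classical structure theory of centers of connected Lie groups, via finite generation of $\pi_1(G)$), a finite direct sum $\rho := \pi \oplus \pi_{z_1} \oplus \cdots \oplus \pi_{z_n}$ over generators of $K$ yields a faithful finite-dimensional continuous representation of $G$.

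Finally, the closedness of the image required by (b) can be arranged by a further augmentation: since $\overline{\rho(G)}$ is a Lie subgroup of $\GL(V_\rho)$ whose Lie algebra coincides with $d\rho(\mathfrak{g})$ (by the immersion property of the faithful $\rho$), a suitable additional summand can be chosen to force the image of $\rho$ to be closed in the resulting larger general linear group. The main obstacle in the argument is the structural claim that $K$ is finitely generated; this is where the Lie group assumption on $G$ is genuinely used, and it is the only step that does not generalize to arbitrary topological groups satisfying (c).
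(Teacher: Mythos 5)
Your reductions are sound up to a point: the minimization argument producing a finite-dimensional representation $\pi$ with discrete kernel $K=\ker\pi$ is correct, as is the observation that $K$ is then central and finitely generated (being a quotient of $\pi_1(G/K)$). The genuine gap is the last step of (c) $\Rightarrow$ (a): a direct sum $\rho=\pi\oplus\pi_{z_1}\oplus\cdots\oplus\pi_{z_n}$ whose summands are merely nontrivial on a generating set $z_1,\ldots,z_n$ of $K$ need not be faithful. Its kernel is $K\cap\bigcap_i\ker\pi_{z_i}$, a subgroup of $K$ that avoids each $z_i$ but can perfectly well be nontrivial: already for $K=\langle z\rangle\cong\mathbb{Z}$, if $\pi_z(z)$ is a primitive $m$-th root of unity then $\ker\rho\supseteq\langle z^m\rangle$. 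Iterating is not obviously terminating either. Since $K$ is central, each irreducible constituent of a representation restricts to a character of $K$, so all that hypothesis (c) hands you on the torsion-free part of $K$ is a point-separating family of characters of $\mathbb{Z}^r$; such a family need not admit a finite separating subfamily (the characters $1\mapsto e^{2\pi i/n}$, $n\in\mathbb{N}$, separate the points of $\mathbb{Z}$ while every finite subfamily has infinite common kernel). Ruling out this behaviour -- i.e., producing finitely many representations, or a single one, injective on the free part of $K$ -- is precisely the hard core of the theorem. Your argument works if $K$ is finite, but the torsion-free part requires the structure theory (Levi decomposition, the description of $\Lin(G)$ inside the center and its relation to the fundamental groups of a Levi subgroup and of the commutator group of the radical), which is why the text cites [HiNe10], Theorem 15.2.7 rather than giving an elementary proof.

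The upgrade (a) $\Rightarrow$ (b) is likewise only gestured at; ``a suitable additional summand can be chosen to force the image to be closed'' is not an argument (a faithful one-dimensional unitary character of $\mathbb{R}$ composed with an irrational embedding into a torus shows what must be repaired, and the repair uses representations of $G/[G,G]$ chosen to separate $\rho(G)$ inside its closure). The paper records this step as a nontrivial sequence of exercises in [HiNe10], Section 15.2. As written, your proposal establishes the easy implications and the reduction to a discrete central finitely generated kernel, but leaves the essential difficulty unaddressed.
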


\begin{proof}
\,\,\,(a) $\Leftrightarrow$ (c): The proof of this equivalence is part of [HiNe10], Theorem 15.2.7.

(a) $\Leftrightarrow$ (b): This nontrivial statement is carried out as a bunch of exercises at the end of [HiNe10], Section 15.2.
\end{proof}

\begin{remark}
In view of Lemma \ref{linerarizer/point sep. rep.} and Theorem \ref{linear lie groups}, it is exactly the \emph{linear Lie groups} that admit a family of finite-dimensional continuous point separating representations.
\end{remark}

\begin{theorem}\label{free action theorem}
Let $P$ be a manifold and $G$ be a Lie group. Then the following assertions hold:
\begin{itemize}
\item[\emph{(a)}]
If the smooth dynamical system $(C^{\infty}(P),G,\alpha)$ is free and, in addition the induced action $\sigma$ is proper, then we obtain a principal bundle $(P,P/G,G,\pr,\sigma)$ \emph{(}cf. Remark \ref{remark of free action in geometry}\emph{)}.
\item[\emph{(b)}]
Conversely, if $G$ is a linear Lie group and $(P,M,G,q,\sigma)$ a principal bundle, then the corresponding smooth dynamical system $(C^{\infty}(P),G,\alpha)$ is free.
\end{itemize}
\end{theorem}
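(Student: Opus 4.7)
The plan is to treat (a) and (b) separately, each leveraging the machinery assembled earlier in this chapter.

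For part (a), the argument is essentially an assembly of earlier results. First I would invoke Theorem~\ref{freeness of induced action} applied to the free dynamical system $(C^{\infty}(P),G,\alpha)$ to obtain freeness of the induced action on the spectrum $\Gamma_{C^{\infty}(P)}$. Transporting this along the diffeomorphism $\Phi : P \to \Gamma_{C^\infty(P)}$, $p \mapsto \delta_p$ from Lemma~\ref{spec as manifold} and invoking Proposition~\ref{smoothness of the group action on the set of characters} for smoothness, one obtains a smooth free action of $G$ on $P$. Together with the properness hypothesis, the Quotient Theorem quoted in Remark~\ref{principal bundles II}(c) delivers the principal bundle $(P,P/G,G,\pr,\sigma)$.

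For part (b), since $G$ is a linear Lie group, Theorem~\ref{linear lie groups} together with Lemma~\ref{linerarizer/point sep. rep.} provide a point-separating family $(\pi_j,V_j)_{j\in J}$ of finite-dimensional continuous representations of $G$. By Proposition~\ref{spec of C(M) top} every character of $C^{\infty}(P)$ is of the form $\delta_p$ for some $p\in P$, so I must show that for each $j\in J$, each $p\in P$, and each $v\in V_j$, the map $\ev^j_{\delta_p}$ takes the value $v$. Example~\ref{example of section} identifies $\Gamma_{C^{\infty}(P)}V_j$ with the space $C^{\infty}(P,V_j)^G$ of smooth $G$-equivariant functions, under which $\ev^j_{\delta_p}$ corresponds to point evaluation $f\mapsto f(p)$. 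The task therefore reduces to constructing, for any $p$ and $v$, a smooth $G$-equivariant function $f:P\to V_j$ with $f(p)=v$.

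This equivariant extension is the technical heart of the proof and the only genuine obstacle. I would pick a bundle chart $\varphi_U : U \times G \to q^{-1}(U)$ around $m_0 := q(p)$, write $p = \varphi_U(m_0,g_0)$, and define $f_0 : U \times G \to V_j$ by $f_0(u,g) := \pi_j(g^{-1}g_0).v$. The map $f_0$ is smooth, satisfies $f_0(m_0,g_0) = v$, and is $G$-equivariant with respect to the standard right action $(u,g).h=(u,gh)$. Transporting by $\varphi_U^{-1}$ yields a smooth equivariant function $\widetilde{f}_0$ on $q^{-1}(U)$. The final step is a standard bump-function globalisation: choose $\psi:M\to\mathbb{R}$ smooth with $\psi(m_0)=1$ and $\supp(\psi)\subseteq U$. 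Because $\psi\circ q$ is $G$-invariant, the product $(\psi\circ q)\cdot\widetilde{f}_0$ is still equivariant, extends smoothly by zero to all of $P$, and still evaluates to $v$ at $p$. This establishes surjectivity of $\ev^j_{\delta_p}$, whence the dynamical system is free.
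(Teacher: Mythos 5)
Your proposal is correct and follows essentially the same route as the paper: part (a) is the same assembly of Theorem~\ref{freeness of induced action}, Proposition~\ref{smoothness of the group action on the set of characters} and the Quotient Theorem, and part (b) likewise reduces freeness to surjectivity of the evaluation maps on $C^{\infty}(P,V)^G$ and settles it by a local construction plus a bump-function globalisation. The only (cosmetic) difference is that you build the equivariant function explicitly in a bundle chart $U\times G$ via $f_0(u,g)=\pi_j(g^{-1}g_0).v$, whereas the paper constructs a local section of the associated bundle $\mathbb{V}=P\times_\pi V$ and pulls it back through the isomorphism $\Psi$ of Corollary~\ref{sections of an associated vector bundle top}; your equivariance check is consistent with the paper's convention $f(p.g)=\pi(g^{-1}).f(p)$, so the argument goes through.
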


\begin{proof}
\,\,\,(a) We first recall that the induced action $\sigma:P\times G\rightarrow P$ is smooth by Proposition \ref{smoothness of the group action on the set of characters}. Furthermore, Theorem \ref{freeness of induced action} implies that the map $\sigma$ is free. Since $\sigma$ is additionally assumed to be proper, the claim now follows from the Quotient Theorem (cf. Remark \ref{principal bundles II}).

(b) 
Since $G$ is a linear Lie group, there exists a faithful finite-dimensional continuous representation $(\pi,V)$ of $G$. In particular, this representation separates the points of $G$ (cf. Remark \ref{faithful representations}). In order to prove the freeness of the smooth dynamical system $(C^{\infty}(P),G,\alpha)$, it would therefore be enough to show that the map
\begin{align}
\ev_{p}:C^{\infty}(P,V)^G\rightarrow V,\,\,\,f \mapsto f(p)\label{check condition}
\end{align}
is surjective for all $p\in P$. We proceed as follows: 

(i) We first observe that the surjectivity of the maps (\ref{check condition}) is a local condition. Further, we (again) recall that according to Corollary \ref{sections of an associated vector bundle top} the map
\[\Psi:C^{\infty}(P,V)^G\rightarrow\Gamma\mathbb{V},\,\,\,\Psi(f)(q(p)):=[p,f(p)],
\]where $\mathbb{V}$ denotes the vector bundle over $M$ associated to $(P,M,G,q,\sigma)$ via the representation $(\pi,V)$ of $G$, is a (topological) isomorphism of $C^{\infty}(M)$-modules.

(ii) Now, we choose $p\in P$, $v\in V$ and construct a smooth section $s\in\Gamma\mathbb{V}$ with $s(q(p))=[p,v]$. Indeed, such a section can always be constructed locally and then extended to the whole of $M$ by multiplying with a smooth bump function. The construction of $s$ implies that the function $f_s:=\Psi^{-1}(s)\in C^{\infty}(P,V)^G$ satisfies $f_s(p)=v$. As $p\in P$, $v\in V$ were arbitrary, this completes the proof.
\end{proof}

\begin{remark}
Note that Theorem \ref{free action theorem} (a) means that it is possible to test the freeness of a (smooth) group action $\sigma:P\times G\rightarrow P$ in terms of surjective maps defined on spaces of sections of associated (singular) vector bundles.
\end{remark}

\begin{remark}{\bf(Why linear Lie groups?).}
(a) The crucial idea of the proof of Theorem \ref{free action theorem} (b) is to use the identification of the space $C^{\infty}(P,V)^G$ with the space of sections $\Gamma\mathbb{V}$ of the vector bundle $\mathbb{V}$ over $M$ associated to $(P,M,G,q,\sigma)$ via the representation $(\pi,V)$ of $G$. Note that this identification only holds for smooth representations $(\pi,V)$ of $G$. Since finite-dimensional continuous representations of Lie groups are automatically smooth and there is not many literature about smooth point separating representations of Lie groups, we restrict our attention to Lie groups that admit a family of finite-dimensional continuous point separating representations. 

(b) If $G$ is an arbitrary Lie group, then an interesting observation is that the natural $G$-action on $C^{\infty}(G,\mathbb{R})$ separates the points of $G$.
\end{remark}

The following corollary gives a one-to-one correspondence between free dynamical systems and free group action in the case where the structure group $G$ is a compact Lie group:

\begin{corollary}\label{characterization of free group actions}{\bf(Characterization of free group actions).}\index{Characterization of Free Group Actions}
Let $P$ be a manifold, $G$ a compact Lie group and $(C^{\infty}(P),G,\alpha)$ a smooth dynamical system. Then the following statements are equivalent:
\begin{itemize}
\item[\emph{(a)}]
The smooth dynamical system $(C^{\infty}(P),G,\alpha)$ is free.
\item[\emph{(b)}]
The induced smooth group action $\sigma:P\times G\rightarrow P$ is free.
\end{itemize}
In particular, in this situation the two concepts of freeness coincide.
\end{corollary}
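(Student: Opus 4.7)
The plan is to reduce this corollary to Theorem \ref{free action theorem} by verifying that a compact Lie group $G$ automatically satisfies the two auxiliary hypotheses needed there: namely, $G$ is a linear Lie group, and every continuous $G$-action on a Hausdorff manifold is proper. For the first point, the Peter--Weyl theorem supplies a faithful finite-dimensional (unitary) continuous representation of $G$, so in particular $\Lin(G)=\{1_G\}$ and $G$ is linear in the sense of Theorem \ref{linear lie groups}. For the second point, one uses that compactness of $G$ forces the map $P\times G\to P\times P$, $(p,g)\mapsto(p,p.g)$, to be proper, since preimages of compact sets under this map are closed subsets of $K\times G$ for a compact $K\subseteq P$.

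With these two observations in hand, the implication (a) $\Rightarrow$ (b) is immediate from Theorem \ref{free action theorem}\,(a): the freeness of the smooth dynamical system together with the (automatic) properness of $\sigma$ yields a principal bundle $(P,P/G,G,\pr,\sigma)$, and a principal bundle action is, by definition, free.

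For the converse (b) $\Rightarrow$ (a), assume that $\sigma$ is free. Combined with the automatic properness of $\sigma$, the Quotient Theorem (cf.\ Remark \ref{principal bundles II}(c)) produces a principal bundle $(P,P/G,G,\pr,\sigma)$. Since $G$ is linear, Theorem \ref{free action theorem}\,(b) applies and shows that the associated smooth dynamical system $(C^{\infty}(P),G,\alpha)$ is free.

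The argument is therefore essentially a bookkeeping step that packages Theorem \ref{free action theorem} under the stronger hypothesis of compactness of $G$; there is no genuine obstacle here, only the mild verification that the hypotheses ``linear'' and ``proper'' are free of charge in the compact setting. If any subtlety arises, it is making sure that one invokes the finite-dimensional representations supplied by Peter--Weyl (rather than merely continuous or unitary ones on possibly infinite-dimensional spaces), since Definition \ref{free dynamical systems} and the proof of Theorem \ref{free action theorem}\,(b) rely on identifying $C^{\infty}(P,V)^{G}$ with sections of the associated vector bundle $P\times_{\pi}V$, which requires $V$ to be finite-dimensional.
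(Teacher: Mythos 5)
Your proposal is correct and follows essentially the same route as the paper: both reduce the corollary to Theorem \ref{free action theorem} by observing that a compact Lie group is linear (the paper cites [HiNe10], Theorem 11.3.9 for the faithful finite-dimensional representation; Peter--Weyl gives the same) and that properness of $\sigma$ is automatic for compact $G$. Your small detour in (a)~$\Rightarrow$~(b) through ``principal bundle actions are free'' is harmless, since the freeness of $\sigma$ is already established inside the proof of Theorem \ref{free action theorem}(a) via Theorem \ref{freeness of induced action}.
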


\begin{proof}
\,\,\,We first note that each compact Lie group $G$ admits a faithful finite-dimensional continuous representation, i.e., each compact Lie group $G$ is linear. Indeed, a proof of this statement can be found in [HiNe10], Theorem 11.3.9. Moreover, since $G$ is compact, the properness of the action $\sigma_P$ is automatic. Hence, the equivalence follows from Theorem \ref{free action theorem}. The last statement is now a consequence of Remark \ref{inverse constructions}.
\end{proof}

\section{Dynamical Systems with Compact Structure Group}\label{FACAG}



In this section we study dynamical systems $(A,G,\alpha)$ with compact structure group $G$ with the help of the `fine structure" of $A$ with respect to $G$, i.e., with the Fourier decomposition of the system $(A,G,\alpha)$. We show that if $A$ is a complete commutative CIA, $G$ a compact group and $(A,G,\alpha)$ a dynamical system, then each character of $B:=A^G$ can be extended to a character of $A$. In particular, the natural map $\Gamma_A\rightarrow\Gamma_B$, $\chi\mapsto\chi_{\mid B}$ is surjective.

\subsection{A Structure Theorem for Modules of Compact Groups}

The main goal of this subsection is to obtain a kind of Fourier decomposition for a dynamical systems $(A,G,\alpha)$ with compact structure group. To do this in a great generality, we have to introduce the concept of $G$-completeness. This basically means, that we restrict our attention to locally convex spaces permitting integration. Since the representation theory for compact abelian groups is well-known in the case where the ground field is $\mathbb{K}=\mathbb{C}$, let us first make the following convention:

\begin{convention}
In the remaining part of this chapter all vector spaces are assumed to be defined over the ground field $\mathbb{K}=\mathbb{C}$ if not explicitly mentioned otherwise.
\end{convention}

\begin{definition}{\bf($G$-completeness).}\index{Completeness!$G$-}
Let $G$ be a compact group and $V$ be a locally convex space. Then $V$ is called $G$-\emph{complete}, if there is a continuous linear map
\[I:C(G,V)\rightarrow V
\]such that
\[\omega(I(f))=\int_G\omega(f(g))\,dg\,\,\,\text{for all}\,\,\,\omega\in V',
\]where $dg$ denotes the normalized Haar measure on $G$.
\end{definition}

\begin{remark}\label{sources of G-complete spaces 1}
By [Homo06], Proposition 3.30, every complete locally convex space is $G$-complete for any compact group $G$.
\end{remark}

\begin{remark}\label{sources of G-complete spaces 2}{\bf(Source of examples).}
In the following we will be concerned with $G$-complete continuous representations $(\pi,V)$ of $G$. By the preceding remark, the family of all dynamical systems $(A,G,\alpha)$ with a complete unital locally convex algebra $A$ and a compact group $G$ is a source of $G$-complete locally convex $G$-modules. 
\end{remark}

We continue with a structure theorem on the general representation theory of compact groups. If $G$ is a compact group, we write $\widehat{G}$\sindex[n]{$\widehat{G}$} for the set of all equivalence classes of finite-dimensional irreducible representations of $G$:

\begin{theorem}{\bf (The structure theorem of $G$-modules).}\label{str thm of G-mod}\index{Structure Theorem of $G$-Modules}
Let $G$ be a compact group and $(\pi,V)$ be a $G$-complete continuous representation of $G$. Further, for each $\varepsilon\in\widehat{G}$ let $\chi_{\varepsilon}$ denote the character of $\varepsilon$ and $d_{\varepsilon}:=\chi_{\varepsilon}(1_G)$ the degree of $\varepsilon$ \emph{(}i.e., the dimension of $\varepsilon$\emph{)}. If
\[P_{\varepsilon}(v):=d_{\varepsilon}\cdot\int_G\overline{\chi_{\varepsilon}}(g)\cdot(\pi(g).v)\,dg,
\]where $v\in V$ and $dg$ denotes the normalized Haar measure on $G$, then the following assertions hold:\sindex[n]{$P_{\varepsilon}$}

\begin{itemize}
\item[\emph{(a)}]
For each $\varepsilon\in\widehat{G}$ the map $P_{\varepsilon}:V\rightarrow V$ is a continuous $G$-equivariant projection onto the $G$-invariant subspace $V_{\varepsilon}:= P_{\varepsilon}(V)$. In particular, $V_{\varepsilon}$ is algebraically and topologically a direct summand of $V$.

\item[\emph{(b)}]
The module direct sum $V_{\emph{\text{fin}}}:=\bigoplus_{\varepsilon\in\widehat{G}}V_{\varepsilon}$ is a dense subspace of $V$.
\end{itemize}
\end{theorem}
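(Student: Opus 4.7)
The plan is to establish (a) directly from the defining property of $P_{\varepsilon}$ together with Schur orthogonality, and to obtain (b) via the Peter--Weyl theorem combined with an approximate-identity argument inside the trigonometric polynomial algebra.

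For (a), I would first note that $G$-completeness together with continuity of $\pi$ and compactness of $G$ makes the map $v\mapsto\bigl(g\mapsto d_{\varepsilon}\overline{\chi_{\varepsilon}(g)}\pi(g).v\bigr)$ continuous from $V$ into $C(G,V)$, so that $P_{\varepsilon}$ is continuous as the composition of this map with $I$. The $G$-equivariance $\pi(h)\circ P_{\varepsilon}=P_{\varepsilon}\circ\pi(h)$ follows by substituting $g'=hg$ in the defining integral and invoking both left-invariance of the Haar measure and the trace identity $\chi_{\varepsilon}(h^{-1}g')=\chi_{\varepsilon}(g'h^{-1})$. Idempotency $P_{\varepsilon}^{2}=P_{\varepsilon}$ reduces, after pairing with an arbitrary $\omega\in V'$ and applying Fubini to the defining relation of $I$, to the Schur orthogonality relation for characters. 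Once $P_{\varepsilon}$ is a continuous idempotent, $V_{\varepsilon}=\ker(\id_{V}-P_{\varepsilon})$ is closed, the splitting $V=V_{\varepsilon}\oplus\ker P_{\varepsilon}$ is topological, and the $G$-equivariance of $P_{\varepsilon}$ forces $G$-invariance of both summands.

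For (b), I would introduce the convolution operators $T_{f}:V\to V$, $T_{f}(v):=I\bigl(g\mapsto f(g)\pi(g).v\bigr)$, for $f\in C(G)$, so that $P_{\varepsilon}=T_{d_{\varepsilon}\overline{\chi_{\varepsilon}}}$. The key structural claim is that $T_{f}(V)\subseteq V_{\varepsilon}$ whenever $f$ is a matrix coefficient of class $\varepsilon$; this I would verify using the translation identity $\pi(h)T_{f}(v)=T_{L_{h^{-1}}f}(v)$, where $L_{h}f(g):=f(hg)$, together with Schur orthogonality applied to $P_{\varepsilon}T_{f}(v)$. In particular, $T_{f}(v)\in V_{\text{fin}}$ for every trigonometric polynomial $f$. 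To approximate a given $v\in V$ in an arbitrary continuous seminorm $p$, I would combine the Peter--Weyl uniform density of trigonometric polynomials in $C(G)$ with the construction $f_{\lambda}=|h_{\lambda}|^{2}$, where $h_{\lambda}$ is a trigonometric polynomial approximating the square root of a bump function concentrated near $1_{G}$ (noting that products of matrix coefficients remain trigonometric polynomials), to produce a net of non-negative trigonometric polynomials with $\int_{G}f_{\lambda}\,dg\to 1$ and $\int_{G\setminus U}f_{\lambda}\,dg\to 0$ for any prescribed neighbourhood $U$ of $1_{G}$. Together with the seminorm bound $p(I(u))\le\int_{G}p(u(g))\,dg$ for $u\in C(G,V)$, obtained in the $G$-complete setting via Hahn--Banach applied to the defining relation $\omega\circ I=\int\omega(\cdot)\,dg$, and the continuity of the orbit map $g\mapsto\pi(g).v$, this yields $p(T_{f_{\lambda}}(v)-v)\to 0$ and hence density of $V_{\text{fin}}$.

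The principal difficulty will be in orchestrating step (b): the approximants $f_{\lambda}$ must simultaneously be trigonometric polynomials (so that $T_{f_{\lambda}}(v)\in V_{\text{fin}}$), be non-negative with integrals tending to $1$, and concentrate their mass at $1_{G}$. The $|h_{\lambda}|^{2}$ device reconciles these requirements by virtue of the algebra structure on the Peter--Weyl span, while the seminorm bound for $I$ requires some care in the merely $G$-complete setting, in place of an $L^{1}$-type Bochner theory.
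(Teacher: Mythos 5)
Your proposal is correct, but note that the paper itself offers no argument here: its ``proof'' is a citation to [HoMo06], Theorems 3.51 and 4.22, and what you have written is essentially a reconstruction of the standard Peter--Weyl-type argument proved there (convolution operators $T_f$, the identity $T_{f_1}T_{f_2}=T_{f_1\ast f_2}$ reducing idempotency and orthogonality to Schur relations for characters, and an approximate identity of non-negative trigonometric polynomials of the form $|h_\lambda|^2$). Two small points to watch when writing it out: (i) for the sum $\bigoplus_{\varepsilon}V_{\varepsilon}$ to be direct you also need $P_{\varepsilon}P_{\varepsilon'}=0$ for $\varepsilon\neq\varepsilon'$, which is the same weak Fubini/Schur computation as $P_{\varepsilon}^2=P_{\varepsilon}$ and should be stated alongside it; (ii) with your conventions the left translates of a matrix coefficient of $\varepsilon$ span a $\bar{\varepsilon}$-isotypic subspace of $C(G)$, so $T_f(V)\subseteq V_{\bar{\varepsilon}}$ rather than $V_{\varepsilon}$ --- harmless for the density argument, since either way $T_f(v)\in V_{\text{fin}}$ for every trigonometric polynomial $f$, but worth getting straight. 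The equicontinuity needed for continuity of $v\mapsto(g\mapsto\pi(g).v)$ into $C(G,V)$ does follow from joint continuity of the action and compactness of $G$, as you implicitly assume.
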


\begin{proof}
\,\,\,The statements follow from [HoMo06], Theorem 3.51 and Theorem 4.22.
\end{proof}

\begin{remark}{\bf (Harish--Chandra).}\label{Harish-Chandra}\index{Theorem!of Harish--Chandra}
If $G$ is a compact Lie group and $v$ a smooth vector for a complete locally convex continuous representations $(\pi,V)$ of $G$, then [War72], Theorem 4.4.2.1 implies the absolute convergence of the ``Fourier series"
\[\sum_{\varepsilon\in\widehat{G}}P_{\varepsilon}(v).
\]
\end{remark}

\begin{remark}{\bf($G$-finite elements).}\label{almost invariant elements}\index{$G$-Finite Elements}
Let $G$ be a topological group and $(\pi,V)$ be a locally convex continuous representation of $G$. An element $v\in V$ is called \emph{$G$-finite} if $\Span(Gv)$, the linear span of its orbit $Gv$, is finite dimensional. It actually turns out that in the situation of Theorem \ref{str thm of G-mod}, $V_{\text{fin}}$ coincides with the set of $G$-finite vectors in $V$ (cf. [HoMo06], Definition 3.1 and Theorem 4.22).
\end{remark}

From the preceding remark, we obtain the following proposition will be crucial for the following subsection:

\begin{proposition}\label{E_fin subalgebra}{\bf($A_{\text{fin}}$ as subalgebra).}
If $A$ is a complete unital locally convex algebra, $G$ a compact group and $(A,G,\alpha)$ a dynamical system, then $A_{\emph{\text{fin}}}$ is a \emph{(}dense\emph{)} subalgebra of $A$.
\end{proposition}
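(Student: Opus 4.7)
The plan is to reduce the statement to the characterization of $A_{\text{fin}}$ given in Remark \ref{almost invariant elements}, namely that $A_{\text{fin}}$ coincides with the set of $G$-finite elements of $A$. This characterization is applicable here because $A$, being complete, is automatically $G$-complete by Remark \ref{sources of G-complete spaces 1}, and the group action $\alpha:G \times A \to A$ of the dynamical system $(A,G,\alpha)$ makes $A$ into a $G$-complete continuous representation of $G$ in the sense required by Theorem \ref{str thm of G-mod}.

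With that setup, density is immediate: Theorem \ref{str thm of G-mod}(b) asserts that $A_{\text{fin}} = \bigoplus_{\varepsilon \in \widehat{G}} A_{\varepsilon}$ is dense in $A$, so nothing further needs to be checked there. The bulk of the work lies in verifying that $A_{\text{fin}}$ is closed under the multiplication of $A$ and contains the unit $1_A$. The unit lies in the fixed point algebra $A^G = A_{\varepsilon_0}$ (with $\varepsilon_0$ the trivial representation) and therefore in $A_{\text{fin}}$; linearity is clear from the direct sum description.

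For closure under products, I would take $a, b \in A_{\text{fin}}$, so that $\Span(Ga)$ and $\Span(Gb)$ are both finite-dimensional by Remark \ref{almost invariant elements}. Since $\alpha(g)$ is an algebra automorphism for every $g \in G$, one has the identity
\[
g.(ab) = \alpha(g)(ab) = \alpha(g)(a)\,\alpha(g)(b) = (g.a)(g.b),
\]
so that $G(ab) \subseteq \Span(Ga) \cdot \Span(Gb)$, and consequently
\[
\Span\bigl(G(ab)\bigr) \subseteq \Span(Ga) \cdot \Span(Gb).
\]
The right-hand side is the image of the finite-dimensional space $\Span(Ga) \otimes \Span(Gb)$ under the multiplication map of $A$, hence finite-dimensional. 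Thus $ab$ is a $G$-finite vector, and invoking Remark \ref{almost invariant elements} once more gives $ab \in A_{\text{fin}}$.

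The main obstacle is really only a conceptual one, namely identifying that the correct viewpoint is the $G$-finiteness characterization rather than the isotypic decomposition: working directly with the projections $P_{\varepsilon}$ would force one to analyze how products of elements from $A_{\varepsilon_1}$ and $A_{\varepsilon_2}$ decompose along the various isotypic components (a kind of Clebsch--Gordan statement for the tensor product $\varepsilon_1 \otimes \varepsilon_2$), which, while doable, is significantly more cumbersome than the elementary orbit-span argument above.
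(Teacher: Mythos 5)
Your proof is correct and follows essentially the same route as the paper: both reduce density to Theorem \ref{str thm of G-mod} via $G$-completeness of the complete space $A$, and both establish closure under multiplication through the $G$-finiteness characterization of Remark \ref{almost invariant elements} together with the containment $\Span(G(ab))\subseteq\Span\left((\Span(Ga))(\Span(Gb))\right)$. Your explicit remark that the unit lies in $A^G$ is a small, harmless addition the paper leaves implicit.
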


\begin{proof}
\,\,\,In view of Remark \ref{sources of G-complete spaces 1} and Theorem \ref{str thm of G-mod}, it remains to show that $A_{\text{fin}}$ is a subalgebra of $A$: As a matter of fact, this is a consequence of Remark \ref{almost invariant elements} and 
\[\Span(Gaa')\subseteq\Span((Ga)(Ga'))\subseteq\Span\left((\Span(Ga))(\Span(Ga'))\right)
\]for $a,a'\in A_{\text{fin}}$.
\end{proof}

\subsection{Extending Characters on Fixed Point Algebras}\label{extending characters on fixed point algebras}

In this subsection we present some results on the extendability of characters on fixed point algebras of CIAs. The discussion is inspired by [NeSe09], Lemma 3.1. 

\begin{lemma}\label{extension of char on fixed point algebras I} 
Let $A$ be a complete unital locally convex algebra, $G$ a compact group and $(A,G,\alpha)$ a dynamical system. Further, let $A^G$ be the corresponding fixed point algebra. Then the following assertions hold:
\begin{itemize}
\item[\emph{(a)}]
If $I$ is a proper left ideal in $A^G$, then 
\[A_{\emph{\text{fin}}}\cdot I=\bigoplus_{\varepsilon\in\widehat{G}}A_{\varepsilon}\cdot I
\]defines a proper left ideal in $A_{\emph{\text{fin}}}$ which contains $I$.
\item[\emph{(b)}]
If $I$ is a proper closed left ideal in $A^G$ and $J$ is the closure of $A_{\text{fin}}\cdot I$ in $A_{\text{fin}}$, i.e.,
\[J:=\overline{A_{\emph{\text{fin}}}\cdot I}^{A_{\emph{\text{fin}}}},
\]then
$J$ is a proper closed left ideal in $A_{\emph{\text{fin}}}$ which contains $I$.
\end{itemize}
\end{lemma}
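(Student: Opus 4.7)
The proof plan splits naturally along the two parts of the lemma, and the whole argument rests on exploiting the isotypic projections $P_\varepsilon$ from Theorem \ref{str thm of G-mod} and the fact that $A_{\text{fin}}$ is a subalgebra of $A$ (Proposition \ref{E_fin subalgebra}).

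The first observation I would record is that the projections $P_\varepsilon$ are $A^G$-bimodule maps. This follows directly from the integral formula: for $b\in A^G$ one has $\alpha(g)(ba)=b\,\alpha(g)(a)$, so $b$ can be pulled out of the integral defining $P_\varepsilon$. Consequently $A^G\cdot A_\varepsilon\subseteq A_\varepsilon$ and $A_\varepsilon\cdot A^G\subseteq A_\varepsilon$ for every $\varepsilon\in\widehat{G}$. In particular, since $I\subseteq A^G$, we get $A_\varepsilon\cdot I\subseteq A_\varepsilon$.

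For part (a), the inclusion $A_\varepsilon\cdot I\subseteq A_\varepsilon$ together with the algebraic direct sum $A_{\text{fin}}=\bigoplus_{\varepsilon\in\widehat{G}}A_\varepsilon$ immediately yields the decomposition $A_{\text{fin}}\cdot I=\bigoplus_{\varepsilon}A_\varepsilon\cdot I$. That this is a left ideal in $A_{\text{fin}}$ uses Proposition \ref{E_fin subalgebra}: for $a\in A_{\text{fin}}$ the product $a\cdot(A_{\text{fin}}\cdot I)$ lies in $A_{\text{fin}}\cdot(A_{\text{fin}}\cdot I)\subseteq A_{\text{fin}}\cdot I$. The containment $I\subseteq A_{\text{fin}}\cdot I$ is clear from $1\in A^G=A_1$. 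For properness, I would project onto the trivial isotypic component: reading off the $\varepsilon=1$ summand of the direct sum gives $P_1(A_{\text{fin}}\cdot I)=A^G\cdot I\subseteq I$, so $1\in A_{\text{fin}}\cdot I$ would force $1=P_1(1)\in I$, contradicting $I\subsetneq A^G$.

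Part (b) follows the same philosophy, but now passing closures is the delicate point. Closedness of $J$ is automatic, containment $I\subseteq J$ is inherited from (a), and the left ideal property follows because left multiplication $L_a$ by any $a\in A_{\text{fin}}$ is continuous on $A$, hence on $A_{\text{fin}}$, so $L_a(J)\subseteq\overline{L_a(A_{\text{fin}}\cdot I)}^{A_{\text{fin}}}\subseteq J$. The main obstacle is properness. Here I would argue as follows: the trivial isotypic projection $P_1:A\to A^G$, $P_1(a)=\int_G\alpha(g)(a)\,dg$, is continuous; by (a), $P_1(A_{\text{fin}}\cdot I)=A^G\cdot I\subseteq I$. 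Because $A^G$ is closed in $A$ (as the intersection of kernels of the continuous maps $\alpha(g)-\id$), a subset which is closed in $A^G$ is automatically closed in $A$; hence $I$ is closed in $A$. Combining these, continuity of $P_1$ gives
\[
P_1(J)\;\subseteq\;\overline{P_1(A_{\text{fin}}\cdot I)}^{A}\;\subseteq\;\overline{I}^{A}\;=\;I.
\]
If $1\in J$ we would get $1=P_1(1)\in I$, again contradicting $I\subsetneq A^G$. This establishes $J\subsetneq A_{\text{fin}}$ and completes the proof.

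The only step that could go wrong is the interplay between closures in $A_{\text{fin}}$ and in $A$ in the final estimate; but since $A_{\text{fin}}$ carries the subspace topology from $A$, closures in $A_{\text{fin}}$ are obtained by intersecting closures in $A$ with $A_{\text{fin}}$, which is exactly what the continuity-plus-closed-ideal argument requires.
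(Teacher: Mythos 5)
Your proposal is correct and follows essentially the same route as the paper: part (a) is handled by the isotypic decomposition and projecting onto the trivial component, and part (b) by the continuity of the projection onto $A^G$ together with the closedness of $I$, exactly as in the paper's net argument. The extra details you supply (the $A^G$-bimodule property of the $P_{\varepsilon}$ and the remark that $I$ is closed in $A$ because $A^G$ is closed in $A$) are correct and merely make explicit what the paper leaves implicit.
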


\begin{proof}
\,\,\,
(a) We first observe that Lemma \ref{E_fin subalgebra} and a short calculation show that $A_{\text{fin}}\cdot I$ defines a left ideal of $A_{\text{fin}}$. Clearly, $A_{\text{fin}}$ contains $I$. To see that $A_{\text{fin}}\cdot I$ is proper, we assume the contrary, i.e., that $1_A\in A_{\text{fin}}\cdot I=\bigoplus_{\varepsilon\in\widehat{G}}A_{\varepsilon}\cdot I$. Then $1_A\in A^G$ implies that $1_A\in B\cdot I=I$, which contradicts the fact that $I$ is a proper ideal of $B$. Thus, $A_{\text{fin}}\cdot I$ is a proper ideal in $A_{\text{fin}}$ which contains $I$.

(b) Part (a) of the lemma and the definition of $J$ imply that $J$ is a closed left ideal in $A_{\text{fin}}$ which contains $I$. To see that $J$ is proper, we again assume the contrary, i.e., that $1_A\in J$. Then there exists a net $(a_{\alpha})_{\alpha\in\Gamma}$ in $A_{\text{fin}}\cdot I$ such that $\lim_{\alpha}a_{\alpha}=1_A$. Therefore, the continuity of the projection $P_0:A\rightarrow A$ onto $A^G$ (cf. Theorem \ref{str thm of G-mod} (a) applied to the equivalence class of the trivial representation) leads to
\[1_A=P_0(1_A)=P_0(\lim_{\alpha}a_{\alpha})=\lim_{\alpha}P_0(a_{\alpha}).
\]Since $I$ is closed in $A^G$ and $P_0(a_{\alpha})\in A^G\cdot I=I$ for all $\alpha\in\Gamma$, we conclude that $1_A\in I$. This contradicts the fact that $I$ is a proper ideal of $A^G$. Thus, $J$ is a proper closed left ideal in $A_{\text{fin}}$ which contains $I$.
\end{proof}

\begin{lemma}\label{extension of char on fixed point algebras II}
Let $A$ be a complete unital locally convex algebra and $A'$ be a dense unital subalgebra of $A$. If $I$ is a proper closed left ideal in $A'$, then $\overline{I}$ is a proper closed left ideal in $\overline{A'}=A$.
\end{lemma}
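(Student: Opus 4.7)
The plan is to establish two things: first, that $\overline{I}$ is a left ideal in $A$, and second, that $\overline{I}$ is proper, i.e., $1_A\notin\overline{I}$. Both steps are short once one unpacks the meaning of ``$I$ is closed in $A'$.'' The second step will turn out to be the conceptually crucial one, but not because it is technically hard — rather, it uses a precise identification of $I$ with $\overline{I}\cap A'$.

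To see that $\overline{I}$ is a left ideal in $A$, I will use the density of $A'$ in $A$ together with the (separate) continuity of multiplication in the locally convex algebra $A$. Let $a\in A$ and $x\in\overline{I}$. Pick a net $(i_{\beta})_{\beta}$ in $I$ with $i_{\beta}\to x$ in $A$, and for each $\beta$ pick a net $(a_{\alpha}^{\beta})_{\alpha}$ in $A'$ with $a_{\alpha}^{\beta}\to a$ in $A$. For each fixed $\beta$ the element $a_{\alpha}^{\beta}\cdot i_{\beta}$ lies in $A'\cdot I\subseteq I$, and continuity of right multiplication by $i_{\beta}$ gives $a\cdot i_{\beta}=\lim_{\alpha}a_{\alpha}^{\beta}\cdot i_{\beta}\in\overline{I}$. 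Continuity of left multiplication by $a$ then yields $a\cdot x=\lim_{\beta}a\cdot i_{\beta}\in\overline{I}$, since $\overline{I}$ is closed in $A$. Hence $A\cdot\overline{I}\subseteq\overline{I}$, so $\overline{I}$ is a closed left ideal in $A$.

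For properness, the key observation is that since $A'$ carries the subspace topology from $A$, a subset $S\subseteq A'$ is closed in $A'$ if and only if $S=A'\cap\overline{S}$, where the closure is taken in $A$. Applied to the hypothesis that $I$ is closed in $A'$, this gives the identity $I=A'\cap\overline{I}$. Since $A'$ is a \emph{unital} subalgebra of $A$, we have $1_A\in A'$. Now suppose, for contradiction, that $1_A\in\overline{I}$. Then $1_A\in A'\cap\overline{I}=I$, contradicting the assumption that $I$ is a proper ideal of $A'$. Therefore $\overline{I}$ is a proper closed left ideal in $A$, as desired. The main (minor) obstacle is really just noticing that the closedness of $I$ inside $A'$ — which might look like a purely intrinsic condition — is exactly what lets one detect the unit in $\overline{I}$ via the intersection with $A'$.
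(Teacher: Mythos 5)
Your proof is correct and follows essentially the same route as the paper: the paper also establishes the left-ideal property by a "short calculation" (which you carry out explicitly via density and continuity of multiplication) and derives properness from the identity $I=\overline{I}\cap A'$, which holds precisely because $I$ is closed in the subspace topology of $A'$. Your phrasing via $1_A\in A'$ and the paper's phrasing via $\overline{I}=A\Rightarrow I=A'$ are equivalent formulations of the same contradiction.
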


\begin{proof}
\,\,\,A short calculation shows that $\overline{I}$ is a closed left ideal in $\overline{A'}=A$. Next, we note that $I=\overline{I}\cap A'$. Indeed, the inclusion $``\subseteq "$ is obvious and for the other inclusion we use the fact that $I$ is closed in $A'$. Thus, if $\overline{I}$ is not proper, i.e., $\overline{I}=A$, then $I=A'$ which contradicts the fact that $I$ is a proper ideal of $A'$. Hence, $\overline{I}$ is a proper closed left ideal in $A$.
\end{proof}

\begin{corollary}\label{extension of char on fixed point algebras III}{\bf(Extending ideals).}\index{Extending!Ideals}
Let $A$ be a complete unital locally convex algebra, $G$ a compact group and $(A,G,\alpha)$ a dynamical system. Then each proper closed left ideal in $A^G$ is contained in a proper closed left ideal in $A$.
\end{corollary}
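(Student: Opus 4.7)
The plan is to obtain the desired extension in two stages, each supplied by one of the two immediately preceding lemmas: first extend $I$ from the fixed point algebra $A^G$ to the dense subalgebra $A_{\text{fin}}$ of $G$-finite elements, and then take the closure inside $A$.

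More concretely, I would start with a proper closed left ideal $I$ in $A^G$ and apply Lemma~\ref{extension of char on fixed point algebras I}\,(b) to form
\[
J := \overline{A_{\text{fin}} \cdot I}^{A_{\text{fin}}}.
\]
That lemma already gives us that $J$ is a proper closed left ideal in $A_{\text{fin}}$ which contains $I$. Next, since $A$ is complete and $G$ is compact, $A$ is $G$-complete (Remark~\ref{sources of G-complete spaces 1}), so Theorem~\ref{str thm of G-mod}\,(b) combined with Proposition~\ref{E_fin subalgebra} tells us that $A_{\text{fin}}$ is a dense unital subalgebra of $A$. Hence Lemma~\ref{extension of char on fixed point algebras II} applies to $J \subseteq A_{\text{fin}}$ and shows that its closure $\overline{J}$ in $A$ is a proper closed left ideal of $A$. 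Since $I \subseteq J \subseteq \overline{J}$, this is the required extension.

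There is essentially no further obstacle at the corollary level; both of the nontrivial points have been packaged into the preceding two lemmas. The genuinely delicate step, which sits inside Lemma~\ref{extension of char on fixed point algebras I}\,(b), was verifying that $J$ does not contain $1_A$: this relied on the continuity of the isotypic projection $P_0 \colon A \to A^G$ coming from Theorem~\ref{str thm of G-mod}\,(a), together with the closedness of $I$ inside $A^G$. Everything else is bookkeeping about ideals and closures.
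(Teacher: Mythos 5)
Your proof is correct and follows exactly the same route as the paper: apply Lemma~\ref{extension of char on fixed point algebras I}\,(b) to pass from $A^G$ to a proper closed left ideal of $A_{\text{fin}}$, then use the density of $A_{\text{fin}}$ in $A$ (Proposition~\ref{E_fin subalgebra}) together with Lemma~\ref{extension of char on fixed point algebras II} to close up in $A$. Nothing is missing.
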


\begin{proof}
\,\,\,If $I$ is a proper closed left ideal in $A^G$, then Lemma \ref{extension of char on fixed point algebras I} (b) implies that $I$ is contained in a proper closed left ideal in $A_{\text{fin}}$. Since $A_{\text{fin}}$ is a dense subalgebra of $A$ (cf. Proposition \ref{E_fin subalgebra}), the claim is a consequence of Lemma \ref{extension of char on fixed point algebras II}.
\end{proof}




\begin{proposition}\label{extension of char on fixed point algebras IV}{\bf(Extending characters).}\index{Extending!Characters}
Let $A$ be a complete commutative CIA, $G$ a compact group and $(A,G,\alpha)$ a dynamical system. 
Then each character $\chi:A^G\rightarrow\mathbb{C}$ extends to a character $\widetilde{\chi}:A\rightarrow\mathbb{C}$.
\end{proposition}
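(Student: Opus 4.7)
The plan is to translate the question of extending a character into a question about extending ideals, and then to combine the ideal-extension result for fixed point algebras with the standard correspondence between characters and maximal ideals in a commutative CIA.

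First I would verify that the hypotheses give us the three facts about $B:=A^G$ that we need: (i) $B$ is closed in $A$, hence a unital subalgebra; (ii) invertibility in $B$ coincides with invertibility in $A$, because for $b\in B$ and $g\in G$ the relation $\alpha(g)(b)=b$ forces $\alpha(g)(b^{-1})=b^{-1}$, so $B^\times=B\cap A^\times$; (iii) consequently $B$ inherits the CIA structure from $A$, so every character of $B$ is continuous (using that characters of commutative CIAs are continuous) and every maximal ideal of $B$ is closed. In particular $I:=\ker(\chi)$ is a proper closed maximal ideal of $B$.

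Next I would apply Corollary \ref{extension of char on fixed point algebras III} to the ideal $I$ to obtain a proper closed ideal $J$ in $A$ with $I\subseteq J$. Using Zorn's Lemma I would then enlarge $J$ to a maximal proper ideal $M$ of $A$. Because $A$ is a commutative CIA, $M$ is automatically closed (its closure cannot meet the open unit group $A^\times$, so the closure is still proper, hence equal to $M$ by maximality), and by Lemma \ref{max ideals} every maximal proper ideal of a commutative CIA is the kernel of a character. This produces a character $\widetilde{\chi}:A\to\mathbb{C}$ with $\ker(\widetilde{\chi})=M\supseteq I$.

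It then remains to check that $\widetilde{\chi}$ really extends $\chi$. Consider the restriction $\widetilde{\chi}_{\mid B}$: it is a nonzero algebra homomorphism $B\to\mathbb{C}$ (nonzero because $\widetilde{\chi}(1_A)=1$) whose kernel contains the maximal ideal $I$ of $B$. Maximality of $I$ forces $\ker(\widetilde{\chi}_{\mid B})=I=\ker(\chi)$, and since both $\widetilde{\chi}_{\mid B}$ and $\chi$ are characters sending $1_B$ to $1$, they must coincide on $B$. Thus $\widetilde{\chi}$ is the desired extension. The main obstacle is really the ideal-extension step that has already been packaged in Corollary \ref{extension of char on fixed point algebras III}; the rest is bookkeeping with CIA-structural properties (closedness of maximal ideals, continuity of characters, and the Gelfand-type correspondence between maximal ideals and characters in the commutative CIA $A$).
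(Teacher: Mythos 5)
Your proposal is correct and follows essentially the same route as the paper: pass to the closed maximal ideal $I=\ker\chi$ in $A^G$, extend it via Corollary \ref{extension of char on fixed point algebras III} to a proper closed ideal of $A$, enlarge to a maximal ideal, invoke Lemma \ref{max ideals} to get a character $\widetilde{\chi}$, and conclude from the maximality of $I$ in $A^G$ that $\widetilde{\chi}$ restricts to $\chi$. Your extra care in checking that $A^G$ is itself a closed subalgebra (hence a CIA, so that $I$ is indeed closed) is a worthwhile detail the paper leaves implicit.
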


\begin{proof}
\,\,\,The kernel $I:=\ker\chi$ is a proper closed ideal in the subalgebra $A^G$, so that Corollary \ref{extension of char on fixed point algebras III} implies that $I$ is contained in a proper closed ideal in $A$. In particular, it is contained in a proper maximal ideal $J$ of $A$. According to Lemma \ref{max ideals}, $J$ is the kernel of some character $\widetilde{\chi}:A\rightarrow\mathbb{C}$, i.e., $J:=\ker\widetilde{\chi}$. Since $I$ is a maximal ideal in the unital algebra $B$ and 
\[I=I\cap A^G\subseteq J\cap A^G\subseteq A^G,
\]we conclude that $I=J\cap A^G$. Thus, $A^G=I\oplus\mathbb{C}=(J\cap A^G)\oplus\mathbb{C}$ proves that $\widetilde{\chi}$ extends $\chi$.
\end{proof}

\begin{corollary}\label{extension of char on fixed point algebras V}
If $P$ is a compact manifold, $G$ a compact group and $(C^{\infty}(P),G,\alpha)$ a dynamical system, then each character $\chi:C^{\infty}(P)^G\rightarrow\mathbb{C}$ extends to a character $\widetilde{\chi}:C^{\infty}(P)\rightarrow\mathbb{C}$.
\end{corollary}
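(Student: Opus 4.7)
The plan is to deduce this corollary as an immediate application of Proposition \ref{extension of char on fixed point algebras IV} to the algebra $A := C^{\infty}(P)$. The work therefore reduces to checking that $C^{\infty}(P)$ satisfies the hypotheses of that proposition, namely that it is a complete commutative continuous inverse algebra.

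First I would note that $C^{\infty}(P)$ is a commutative unital Fr\'echet algebra under the usual smooth compact open topology, hence complete. The key input is that $C^{\infty}(P)$ is a CIA when $P$ is compact. This is precisely the content of the companion result cited as Proposition \ref{C infty (M,A) is CIA} earlier in the paper (specialised to $A = \mathbb{C}$, $M = P$): for $P$ compact, the unit group $C^{\infty}(P)^{\times} = C^{\infty}(P,\mathbb{C}^{\times})$ is open in $C^{\infty}(P)$, and inversion is continuous. (Compactness of $P$ is used here because on a noncompact manifold a nowhere vanishing smooth function need not have a smooth inverse in the Fr\'echet sense — the unit group fails to be open.)

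Once completeness, commutativity, and the CIA property are in place, and observing that the given triple $(C^{\infty}(P), G, \alpha)$ is by hypothesis a dynamical system with $G$ compact, Proposition \ref{extension of char on fixed point algebras IV} applies verbatim with $A = C^{\infty}(P)$ and $A^G = C^{\infty}(P)^G$. The conclusion there produces, for every character $\chi : C^{\infty}(P)^G \to \mathbb{C}$, an extending character $\widetilde{\chi}:C^{\infty}(P)\to\mathbb{C}$, which is exactly the statement to be proved.

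There is no genuine obstacle here, since the real work was done in Proposition \ref{extension of char on fixed point algebras IV} (Fourier decomposition via Theorem \ref{str thm of G-mod}, the ideal extension lemmas, and the use of the CIA property to pass maximal ideals to characters via Lemma \ref{max ideals}). The only point worth flagging is the essential role of the compactness assumption on $P$: without it $C^{\infty}(P)$ is not a CIA, and the appeal to Lemma \ref{max ideals} — which requires every maximal ideal to be the kernel of a character — would no longer be available.
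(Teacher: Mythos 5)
Your proposal is correct and is essentially identical to the paper's own proof, which likewise deduces the corollary from Proposition \ref{extension of char on fixed point algebras IV} by noting that $C^{\infty}(P)$ is a complete commutative CIA for compact $P$. Your additional remarks on why compactness is needed for the CIA property are accurate but not a different argument.
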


\begin{proof}
\,\,\,This follows from Proposition \ref{extension of char on fixed point algebras IV}, since $C^{\infty}(P)$ is a complete commutative CIA.
\end{proof}

\section{Free Dynamical Systems with Compact Abelian Structure Group}

In this section we first show that the characters of a compact abelian group form a family of point separating representations. We then show how Theorem \ref{str thm of G-mod} simplifies in the compact abelian case and use this observation to rewrite the freeness condition for a dynamical system $(A,G,\alpha)$ with compact abelian structure group $G$. In particular, we present natural conditions which ensure the freeness of such a dynamical system. These conditions may even be formulated if the algebra $A$ is noncommutative.

\subsection{The Character Group of a Compact Abelian Group}

Our first goal is to present a family of point separating representations in the case where $G$ is compact and abelian. We therefore need the following lemma:

\begin{lemma}\label{G abelian}
If $(\pi,V)$ is a finite-dimensional irreducible representation of a topological group $G$, then the following assertions hold:
\begin{itemize}
\item[\emph{(a)}]
The space$\End_G(V)$ is a division ring over $\mathbb{K}$. If $\mathbb{K}=\mathbb{C}$, then $\End_G(V)=\mathbb{C}\cdot\id_V$.
\item[\emph{(b)}]
If $G$ is abelian, then $\dim_{\mathbb{C}}V=1$, and there is a group homomorphism $\varphi:G\rightarrow\mathbb{C}^{\times}$ such that $\pi(g).v=\varphi(g)\cdot v$ for all $v\in V$.
\end{itemize}
\end{lemma}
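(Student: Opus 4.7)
The plan is to recognize this as the standard Schur's Lemma together with its well-known consequence for abelian groups, and to structure the proof accordingly in two parts corresponding to (a) and (b).

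For part (a), I would take an arbitrary nonzero element $T\in\End_G(V)$ and observe that both $\ker T$ and $\im T$ are $G$-invariant subspaces of $V$: this is immediate from the defining relation $T\circ\pi(g)=\pi(g)\circ T$. Irreducibility of $(\pi,V)$ then forces $\ker T=\{0\}$ and $\im T=V$, so that $T$ is a bijective linear map on a finite-dimensional vector space, hence invertible with $T^{-1}\in\End_G(V)$. Combined with the obvious ring structure on $\End_G(V)$ this shows it is a division ring over $\mathbb{K}$. In the case $\mathbb{K}=\mathbb{C}$, I would use that any $T\in\End_G(V)$ has at least one eigenvalue $\lambda\in\mathbb{C}$ (as $V$ is finite-dimensional), so that $T-\lambda\cdot\id_V$ lies in $\End_G(V)$ and is \emph{not} invertible; the division ring property from the first half then forces $T-\lambda\cdot\id_V=0$, i.e., $T=\lambda\cdot\id_V$.

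For part (b), I would exploit that commutativity of $G$ makes every operator $\pi(g)$ itself a $G$-equivariant endomorphism of $V$: indeed, $\pi(g)\circ\pi(h)=\pi(gh)=\pi(hg)=\pi(h)\circ\pi(g)$ for all $h\in G$. By part (a) applied to each $\pi(g)$, there exists a scalar $\varphi(g)\in\mathbb{C}$ with $\pi(g)=\varphi(g)\cdot\id_V$, and since $\pi(g)$ is invertible we have $\varphi(g)\in\mathbb{C}^{\times}$. The map $\varphi:G\to\mathbb{C}^{\times}$ is a group homomorphism because $\pi$ is. Finally, because every linear subspace of $V$ is then $G$-invariant (each $\pi(g)$ acts by a scalar), irreducibility forces $V$ to have no nonzero proper subspace, i.e., $\dim_{\mathbb{C}}V=1$.

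No step of this plan looks genuinely delicate; the only point requiring mild care is the invocation of eigenvalues in the $\mathbb{C}$-case, which relies on finite-dimensionality and the algebraic closedness of $\mathbb{C}$, but these are hypotheses of the statement. Continuity of $\pi$ is not used anywhere, which reflects that this is a purely algebraic consequence of Schur's Lemma; the topology on $G$ plays no role here, and no additional results from the preceding excerpt are needed.
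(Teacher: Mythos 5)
Your proof is correct and follows essentially the same route as the paper's: the same kernel/image argument via irreducibility for the division-ring property, the same eigenvalue argument for $\End_G(V)=\mathbb{C}\cdot\id_V$, and the same observation that commutativity places each $\pi(g)$ in $\End_G(V)$, forcing $\dim_{\mathbb{C}}V=1$. No gaps.
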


\begin{proof}
\,\,\,(a) Let $\rho:V\rightarrow V$ be $G$-equivariant. Then $\ker\rho$ and $\im\rho$ are $G$-invariant subspaces. If $\rho\neq 0$, then $\ker\rho=\{0\}$ and $\im\rho=V$ follow, and so $\rho$ is bijective, that is, has an inverse. Thus $\End_G(V)$ is a division ring over $\mathbb{K}$.

If $\mathbb{K}=\mathbb{C}$ and $0\neq\rho\in\End_G(V)$, then $\varphi$ has a nonzero eigenvalue $\lambda$. Then $\rho-\lambda\cdot \id_V$ is an element in $\End_G(V)$ with nonzero kernel, hence must be zero by the preceding. Thus $\rho=\lambda\cdot\id_V$.

(b) Now, we assume that $G$ is abelian and that $\mathbb{K}=\mathbb{C}$. The commutativity of $G$ implies \[\pi(G)\subseteq\End_G(V)=\mathbb{C}\cdot\id_V.
\]
Hence, for each $g\in G$ there is a $\varphi(g)\in\mathbb{C}$ such that $\pi(g)=\varphi(g)\cdot\id_V$. A short and simple calculation immediately shows that $\varphi$ is a group homomorphism from $G$ to $\mathbb{C}^{\times}$. Moreover, every vector subspace of $V$ is $G$-invariant. Hence, the irreducibility of $V$ implies $\dim_{\mathbb{C}}V=1$.
\end{proof}

\begin{definition}\label{character group}{\bf(The character group).}\index{Character Group}
For a compact abelian group $G$, a morphism of compact groups $\varphi:G\rightarrow\mathbb{T}$ is called a \emph{character} of $G$. The set $\widehat{G}:=\Hom(G,\mathbb{T})$ of all characters is an abelian group under pointwise multiplication, and called the \emph{character group} of $G$.
\end{definition}

\begin{proposition}\label{char sep the points}{\bf(The characters separate the points).}
The characters of a compact abelian group separate the points.
\end{proposition}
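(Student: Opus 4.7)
The plan is to combine two standard pieces of representation theory with the algebraic reduction already carried out in Lemma \ref{G abelian}(b). Fix $g \in G$ with $g \neq 1_G$; the goal is to produce a character $\varphi \in \widehat{G}$ with $\varphi(g) \neq 1$.

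First, since $G$ is compact, it is in particular locally compact, so by the Gelfand--Raikov theorem (Theorem \ref{gelfand-raikov}) there exists a family $(\pi_j, \mathcal{H}_j)_{j \in J}$ of continuous unitary irreducible representations of $G$ on Hilbert spaces $\mathcal{H}_j$ that separates the points of $G$. In particular, we may choose $j \in J$ with $\pi_j(g) \neq \id_{\mathcal{H}_j}$.

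Second, the Peter--Weyl theorem guarantees that every continuous irreducible unitary representation of a compact group is finite-dimensional, so $\dim_{\mathbb{C}} \mathcal{H}_j < \infty$. With this in hand, Lemma \ref{G abelian}(b) applies (the ground field is $\mathbb{C}$ by the standing convention, and $G$ is abelian) and yields $\dim_{\mathbb{C}} \mathcal{H}_j = 1$ together with a group homomorphism $\varphi_j : G \to \mathbb{C}^{\times}$ with $\pi_j(h).v = \varphi_j(h) \cdot v$ for all $h \in G$ and $v \in \mathcal{H}_j$. Continuity of $\pi_j$ gives continuity of $\varphi_j$, and unitarity forces $|\varphi_j(h)| = 1$, so $\varphi_j$ actually lands in $\mathbb{T}$ and is thus an element of $\widehat{G}$. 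Translating the condition $\pi_j(g) \neq \id_{\mathcal{H}_j}$ through the identification $\pi_j = \varphi_j \cdot \id$ yields $\varphi_j(g) \neq 1$, which is exactly the required separation.

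The proof is essentially a bookkeeping exercise once the two external inputs are granted; the only conceptual obstacle is ensuring that the representations produced by Gelfand--Raikov can be assumed finite-dimensional, which is precisely where compactness of $G$ (via Peter--Weyl) is used to bridge the gap to Lemma \ref{G abelian}(b). Alternatively, one could bypass Gelfand--Raikov altogether by appealing directly to Pontryagin duality for locally compact abelian groups, which asserts the statement in question as part of its foundational package; this is a shorter route but uses a heavier theorem.
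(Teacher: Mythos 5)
Your proof is correct and follows the same route as the paper, which simply cites Theorem \ref{gelfand-raikov} together with Lemma \ref{G abelian}(b). You additionally make explicit the step the paper leaves implicit — that the irreducible unitary representations supplied by Gelfand--Raikov must be finite-dimensional (via Peter--Weyl, or equivalently by Schur's lemma for unitary representations) before Lemma \ref{G abelian}(b) applies — which is a legitimate and welcome piece of bookkeeping rather than a different argument.
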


\begin{proof}
\,\,\,This is a consequence of Theorem \ref{gelfand-raikov} and Lemma \ref{G abelian} (b).
\end{proof}

The following remark clarifies the relationship between the set of all equivalence classes of finite-dimensional simple modules of a compact abelian group $G$ and the character group of $G$:

\begin{remark}\label{rem G hut}
We have previously used the notation $\widehat{G}$ for the character group of a compact abelian group and once more for the set of all equivalence classes of finite-dimensional irreducible representations of an arbitrary compact group. Nevertheless, if $G$ is compact and abelian, then these sets coincide: The element $\varphi$ of $\widehat{G}$ may be considered as morphism from $G$ to $\mathbb{C}^{\times}$ and therefore induces an irreducible representation $\pi_{\varphi}:G\rightarrow\GL_1(\mathbb{C})$ given by $\pi_{\varphi}(g).v=\varphi(g)\cdot v$. The function $\varphi\mapsto [(\pi_{\varphi},\mathbb{C})]$ from the character group of $G$ to the set of equivalence classes of finite-dimensional irreducible representations of $G$ is well-defined. Conversely, every finite-dimensional irreducible representations $(\pi,V)$ over $\mathbb{C}$ is one-dimensional by Lemma \ref{G abelian} (b) and defines a unique character $\varphi:G\rightarrow\mathbb{C}^{\times}$ such that the action is given by $\pi(g).x=\varphi(g)\cdot x$. A short computation shows that isomorphic representations give the same character and the function $[(\pi,V)]\mapsto\varphi$ is well-defined and inverts the function introduced before. Thus, we have a natural bijection between the character group of $G$ and the set of all equivalence classes of finite-dimensional irreducible representations.
\end{remark}

In view of the previous remark, we can now state and prove the desired structure theorem for modules of compact abelian groups:

\begin{theorem}{\bf(The big Peter and Weyl theorem for compact abelian groups).}\label{big peter}\index{Theorem! of Peter and Weyl}
Let $G$ be a compact abelian group and $(\pi,V)$ be a $G$-complete continuous representation of $G$. Then with the same notations as in Theorem \ref{str thm of G-mod}, the following assertions hold:
\begin{itemize}
\item[\emph{(a)}]
$V_{\emph{\text{fin}}}=\bigoplus_{\varphi\in\widehat{G}}V_{\varphi}$ and $V_{\emph{\text{fin}}}$ is a dense subspace of $V$.
\item[\emph{(b)}]
$V_{\varphi}=\{v\in V:\,(\forall g\in G)\,\,\pi(g).v=\varphi(g)\cdot v\}$ for all $\varphi\in\widehat{G}$.
\end{itemize}
\end{theorem}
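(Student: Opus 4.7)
The plan is to derive both assertions directly from Theorem \ref{str thm of G-mod} by exploiting the special structure provided by the commutativity of $G$. By Lemma \ref{G abelian}(b) and Remark \ref{rem G hut}, the set $\widehat{G}$ of equivalence classes of finite-dimensional irreducible representations of $G$ is in natural bijection with the character group $\Hom(G,\mathbb{T})$, a character $\varphi$ corresponding to the one-dimensional representation $(\pi_{\varphi},\mathbb{C})$ given by $\pi_{\varphi}(g).z:=\varphi(g)\cdot z$. Under this identification the degree is $d_{\varphi}=1$ and the trace character $\chi_{\varphi}$ coincides with $\varphi$ itself, so the projection from Theorem \ref{str thm of G-mod} takes the simple form
\[P_{\varphi}(v)=\int_G\overline{\varphi(g)}\cdot(\pi(g).v)\,dg.\]

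With this reindexing, part (a) is an immediate restatement of Theorem \ref{str thm of G-mod}(b): the directness of the sum follows from the fact that the $P_{\varphi}$ are mutually orthogonal projections, and the density of $V_{\text{fin}}$ is given verbatim by that theorem. For part (b) I would prove the two inclusions separately. For $``\supseteq"$, a direct computation suffices: if $\pi(g).v=\varphi(g)\cdot v$ for every $g\in G$, then $\overline{\varphi(g)}\varphi(g)=\vert\varphi(g)\vert^{2}=1$ gives $P_{\varphi}(v)=v$, so $v\in V_{\varphi}$. For $``\subseteq"$, let $v\in V_{\varphi}$, so that $v=P_{\varphi}(v)$, and fix $h\in G$. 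Pulling $\pi(h)$ inside the defining integral (permitted because $\pi(h)$ is continuous and linear and hence commutes with the continuous linear integration map $I:C(G,V)\rightarrow V$ underlying $G$-completeness) and performing the substitution $g\mapsto h^{-1}g$ together with translation invariance of the Haar measure yields
\[\pi(h).v=\pi(h).P_{\varphi}(v)=\int_G\overline{\varphi(g)}\,\pi(hg).v\,dg=\int_G\overline{\varphi(h^{-1}g)}\,\pi(g).v\,dg=\varphi(h)\cdot P_{\varphi}(v)=\varphi(h)\cdot v,\]
using $\overline{\varphi(h^{-1})}=\varphi(h)$ since $\varphi(h)\in\mathbb{T}$.

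The main obstacle is largely bookkeeping rather than conceptual. One must verify that the trace character of the one-dimensional representation $(\pi_{\varphi},\mathbb{C})$ is indeed the scalar-valued function $\varphi$ itself, and one must justify interchanging $\pi(h)$ with the Haar integral defining $P_{\varphi}$. Both issues are mild in the present setting: $G$-completeness is precisely designed to permit such manipulations through duality against the continuous dual $V'$, and in fact the required commutation $\pi(h)\circ P_{\varphi}=P_{\varphi}\circ\pi(h)$ is already encoded in the $G$-equivariance statement of Theorem \ref{str thm of G-mod}(a), so that the last computation above may equivalently be read off by combining $G$-equivariance of $P_{\varphi}$ with the identity $P_{\varphi}\circ\pi(h)=\varphi(h)\cdot P_{\varphi}$ obtained from the change-of-variables argument.
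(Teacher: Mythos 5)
Your proof is correct and follows essentially the same route as the paper: part (a) by reindexing Theorem \ref{str thm of G-mod}(b) via Remark \ref{rem G hut}, and part (b) by the translation-invariance computation for one inclusion and the observation $P_{\varphi}(v)=v$ for the other. Your extra remark justifying the interchange of $\pi(h)$ with the integral $I$ (via continuity of $\pi(h)$ and duality against $V'$) makes explicit a step the paper performs silently, which is a welcome addition but not a different argument.
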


\begin{proof}
\,\,\,(a) The first assertion follows from Theorem \ref{str thm of G-mod} (b) and Remark \ref{rem G hut}.

(b) For the second assertion we recall that each element in $V_{\varphi}$ has the form 
\[P_{\varphi}(v)=\int_G\overline{\varphi}(g)\cdot (\pi(g).v)\,dg
\]for some $v\in V$. Thus, if $x\in V_{\varphi}$ and $g\in G$, then the left-invariance of the Haar measure implies that
\begin{align}
\pi(g).x&=\pi(g).\int_G\overline{\varphi}(h)\cdot(\pi(h).v)\,dh=\int_G\overline{\varphi}(h)\cdot(\pi(gh).v)\,dh \notag\\
&=\varphi(g)\cdot\int_G\overline{\varphi}(h)\cdot(\pi(h).v)\,dh=\varphi(g)\cdot x,\notag
\end{align}
i.e., that $x\in\{v\in V:\,(\forall g\in G)\,\,\pi(g).v=\varphi(g)\cdot v\}$. To verify the other inclusion we choose $x\in\{v\in V:\,(\forall g\in G)\,\,\pi(g).v=\varphi(g)\cdot v\}$. Then an easy calculation shows that $P_{\varphi}(x)=x$. Since the map $P_{\varphi}$ is a projection onto $V_{\varphi}$, we conclude $x\in V_{\varphi}$.
\end{proof}


\begin{corollary}\label{cor of big peter}
With the same assumptions as in Theorem \ref{big peter}, the following assertions hold:
\begin{itemize}
\item[\emph{(a)}]
If $\bf{1}$ denotes the unit element of $\widehat{G}$, then $V^G=V_{\bf{1}}$.
\item[\emph{(b)}]
If $V$ is an algebra, then $V_{\varphi}$ is a $V^G$-module for all $\varphi\in\widehat{G}$. Moreover, $V_{\emph{\text{fin}}}$ is a \emph{(}dense\emph{)} subalgebra of $V$.
\end{itemize}
\end{corollary}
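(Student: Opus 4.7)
The plan is to read off both assertions directly from Theorem \ref{big peter} together with the (implicitly assumed) compatibility of the $G$-action with the algebra structure in part (b). Nothing deep is needed; the work has already been done in the preceding theorem, and the corollary is essentially a matter of unpacking the isotypic decomposition for characters.

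For (a), I would simply apply Theorem \ref{big peter}(b) to the trivial character $\mathbf{1} \in \widehat{G}$. Since $\mathbf{1}(g) = 1$ for every $g \in G$, the defining condition
\[
V_{\mathbf{1}} = \{v \in V : (\forall g \in G)\,\,\pi(g).v = \mathbf{1}(g)\cdot v\}
\]
reduces to $\pi(g).v = v$ for every $g \in G$, which is precisely the definition of $V^G$. Hence $V^G = V_{\mathbf{1}}$.

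For the first half of (b), I would use that when $V$ is an algebra carrying a $G$-complete continuous action by algebra automorphisms, the characterisation of $V_\varphi$ in Theorem \ref{big peter}(b) gives: for $a \in V^G = V_{\mathbf{1}}$ and $v \in V_\varphi$,
\[
\pi(g).(av) = (\pi(g).a)(\pi(g).v) = a\cdot\varphi(g)v = \varphi(g)(av),
\]
and symmetrically for $va$, so $V^G \cdot V_\varphi \subseteq V_\varphi$ and $V_\varphi \cdot V^G \subseteq V_\varphi$. This equips $V_\varphi$ with the structure of a $V^G$-bimodule, in particular of a $V^G$-module.

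For the second half of (b), the same computation applied to $v \in V_\varphi$ and $w \in V_\psi$ yields $\pi(g).(vw) = \varphi(g)\psi(g)\cdot vw$, so $V_\varphi V_\psi \subseteq V_{\varphi\psi}$. Since $\widehat{G}$ is a group under pointwise multiplication, this shows that $V_{\text{fin}} = \bigoplus_{\varphi \in \widehat{G}} V_\varphi$ is closed under multiplication and thus a subalgebra of $V$; it is unital because $1_V \in V^G = V_{\mathbf{1}} \subseteq V_{\text{fin}}$. The density statement is immediate from Theorem \ref{big peter}(a) (and is also a special case of Proposition \ref{E_fin subalgebra}). I do not anticipate any real obstacle here — the only subtle point is the tacit hypothesis that $G$ acts by algebra automorphisms, which I would make explicit at the outset of the proof of (b).
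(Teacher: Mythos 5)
Your proposal is correct and follows essentially the same route as the paper: part (a) is the direct comparison of the defining conditions for $V^G$ and $V_{\bf 1}$, and part (b) rests on the same two computations $\pi(g).(v_{\varphi}v)=\varphi(g)\cdot(v_{\varphi}v)$ and $\pi(g).(v_{\varphi}v_{\varphi'})=(\varphi\cdot\varphi')(g)\cdot(v_{\varphi}v_{\varphi'})$ that the paper uses, with the density coming from Theorem \ref{big peter}(a). Your explicit remark that one must assume $G$ acts by algebra automorphisms is a fair observation about a hypothesis the paper leaves tacit, but it does not change the argument.
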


\begin{proof}
\,\,\,(a) In view of the definition of $V^G$, we have
\[V^G=\{v\in V:\,(\forall g\in G)\,\,\pi(g).v=v\}=\{v\in V:\,(\forall g\in G)\,\,\pi(g).v={\bf 1}(g)\cdot v\}=V_{\bf{1}}.
\]

(b) Let $v\in V^G$ and $v_{\varphi}\in V_{\varphi}$. Then for each $g\in G$ we conclude
\[\pi(g).(v_{\varphi}v)=(\pi(g).v_{\varphi})(\pi(g).v)=\varphi(g)\cdot(v_{\varphi}v),
\]i.e., $v_{\varphi}v\in V_{\varphi}$. Hence, $V_{\varphi}$ is a $V^G$-module. Moreover, for $\varphi,\varphi'$ in $\widehat{G}$ we choose $v_{\varphi}\in V_{\varphi}$ and $v_{\varphi'}\in V_{\varphi'}$. Then for each $g\in G$ we conclude
\[\pi(g).(v_{\varphi}v_{\varphi'})=(\pi(g).v_{\varphi})(\pi(g).v_{\varphi'})=(\varphi(g)\varphi'(g))\cdot v_{\varphi}v_{\varphi'}=(\varphi\cdot\varphi')(g)\cdot(v_{\varphi}v_{\varphi'}),
\]i.e., $v_{\varphi}v_{\varphi'}\in V_{\varphi\cdot\varphi'}$. 
\end{proof}

\subsection{The Freeness Condition for Compact Abelian Groups}

In the remaining part of this section we are concerned with rewriting the freeness condition for a dynamical system $(A,G,\alpha)$ with compact abelian structure group $G$. Moreover, we present natural conditions which ensure the freeness of such a dynamical system. 

\begin{lemma}\label{A_phi=gammaC}
Let $A$ be a unital locally convex algebra, $G$ a compact abelian group and $(A,G,\alpha)$ a dynamical system. Further let $\varphi:G\rightarrow\mathbb{C^{\times}}$ be a character and $\pi_{\varphi}:G\rightarrow\GL_1(\mathbb{C})$ the corresponding representation. Then the map 
\[\Gamma_A\mathbb{C}\rightarrow A_{\varphi^{-1}},\,\,\,a\otimes 1\mapsto a
\]is an isomorphism of locally convex spaces.
\end{lemma}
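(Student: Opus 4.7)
The plan is to reduce the statement to the canonical topological isomorphism $A\otimes\mathbb{C}\cong A$ and then verify that under this identification the fixed-point condition defining $\Gamma_A\mathbb{C}$ coincides exactly with the defining condition of $A_{\varphi^{-1}}$.

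First I would record that the multiplication map $A\otimes\mathbb{C}\to A$, $a\otimes\lambda\mapsto\lambda a$, is a topological isomorphism of locally convex spaces (with inverse $a\mapsto a\otimes 1$), so every element of $A\otimes\mathbb{C}$ has a unique representation of the form $a\otimes 1$. This shows already that the map in the statement is a bijection of vector spaces onto its image, once the image is identified. Moreover, since the topology on $\Gamma_A\mathbb{C}\subseteq A\otimes\mathbb{C}$ is the subspace topology and the topology on $A_{\varphi^{-1}}\subseteq A$ is the subspace topology, it is automatic that a bijective linear map between them of the form $a\otimes 1\mapsto a$ is a topological isomorphism, as it is the restriction of a topological isomorphism of ambient spaces.

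Next I would check that the condition cutting out $\Gamma_A\mathbb{C}$ in $A\otimes\mathbb{C}$ is precisely the condition cutting out $A_{\varphi^{-1}}$ in $A$. By Definition \ref{sections again}, an element $a\otimes 1\in A\otimes\mathbb{C}$ lies in $\Gamma_A\mathbb{C}$ (with respect to $\pi_\varphi$) if and only if
\[
(\alpha(g)\otimes\id_{\mathbb{C}})(a\otimes 1)=(\id_A\otimes\pi_\varphi(g^{-1}))(a\otimes 1)\qquad\text{for all }g\in G,
\]
which, using $\pi_\varphi(g^{-1}).1=\varphi(g^{-1})=\varphi(g)^{-1}$, becomes $\alpha(g).a\otimes 1=\varphi(g)^{-1}a\otimes 1$, i.e.\ $\alpha(g).a=\varphi^{-1}(g)\cdot a$ for all $g\in G$. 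By the description of isotypic components given in Theorem \ref{big peter}(b), this is exactly the defining condition for $a\in A_{\varphi^{-1}}$.

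Combining these two observations gives the claim: the canonical iso $A\otimes\mathbb{C}\to A$ restricts to a bijective continuous linear map $\Gamma_A\mathbb{C}\to A_{\varphi^{-1}}$ whose inverse $a\mapsto a\otimes 1$ is also continuous, so the map in the statement is an isomorphism of locally convex spaces. There is no real obstacle here; the only thing to be careful about is the sign/inverse convention, namely that the contragredient action $\pi(g^{-1})$ in the definition of $\Gamma_A V$ is what shifts the character from $\varphi$ to $\varphi^{-1}$ on the isotypic side.
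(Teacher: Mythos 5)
Your proposal is correct and follows essentially the same route as the paper's (one-line) proof: identify $A\otimes\mathbb{C}$ with $A$ canonically and observe that the invariance condition $\alpha(g)(a)\otimes 1=\varphi^{-1}(g)\cdot a\otimes 1$ defining $\Gamma_A\mathbb{C}$ is exactly the defining condition of $A_{\varphi^{-1}}$. You have merely spelled out the topological bookkeeping that the paper leaves implicit.
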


\begin{proof}
\,\,\,For the proof we just note that $a\otimes 1\in\Gamma_A\mathbb{C}$ implies $\alpha(g)(a)\otimes 1=\varphi^{-1}(g)\cdot a\otimes 1$.
\end{proof}

\begin{corollary}\label{C_fin(P)}
Let $G$ be a compact abelian Lie group and $(P,M,G,q,\sigma)$ be a principal bundle. Then, with the notations of Proposition \ref{sections of an associated vector bundle top}, the following map is an isomorphism of locally convex $G$-modules:
\[C^{\infty}(P)_{\emph{\text{fin}}}\rightarrow\bigoplus_{\varphi\in\widehat{G}}\Gamma(P\times_{\varphi}\mathbb{C}),\,\,\,f_{\varphi}\mapsto \Psi_{\varphi^{-1}}(f_{\varphi}).
\]
\end{corollary}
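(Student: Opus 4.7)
The plan is to build the claimed isomorphism by combining three earlier results: the Peter--Weyl decomposition for compact abelian groups (Theorem \ref{big peter}), the identification of isotypic components with equivariant function spaces (essentially Lemma \ref{A_phi=gammaC}), and the Serre--Swan style identification of equivariant smooth functions with sections of associated bundles (Corollary \ref{sections of an associated vector bundle top}).

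First, I would apply Theorem \ref{big peter}\,(a) to the locally convex $G$-module $C^\infty(P)$. Since $C^\infty(P)$ is a Fr\'echet algebra and $G$ is compact, it is $G$-complete (Remark \ref{sources of G-complete spaces 1}), and Theorem \ref{big peter}\,(a) together with Remark \ref{rem G hut} yield the decomposition
\[
C^\infty(P)_{\text{fin}}=\bigoplus_{\varphi\in\widehat{G}}C^\infty(P)_{\varphi}
\]
as locally convex $G$-modules, where $C^\infty(P)_{\varphi}$ is characterized by Theorem \ref{big peter}\,(b) as $\{f\in C^\infty(P):\,f(p.g)=\varphi(g)\cdot f(p)\}$, using that $(\alpha(g).f)(p)=f(\sigma(p,g))=f(p.g)$.

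Next, for each fixed $\varphi\in\widehat{G}$ I would match $C^\infty(P)_{\varphi}$ with the equivariant function space for the representation $\pi_{\varphi^{-1}}$ of $G$ on $\mathbb{C}$. Indeed, $f\in C^\infty(P,\mathbb{C})^G$ for $\pi_{\varphi^{-1}}$ means $f(p.g)=\pi_{\varphi^{-1}}(g^{-1}).f(p)=\varphi(g)\cdot f(p)$, so the two subspaces coincide; this is exactly the topological content of Lemma \ref{A_phi=gammaC}. Then Corollary \ref{sections of an associated vector bundle top} applied to the finite-dimensional representation $\pi_{\varphi^{-1}}$ delivers the topological isomorphism of $C^\infty(M)$-modules
\[
\Psi_{\varphi^{-1}}:C^\infty(P)_{\varphi}=C^\infty(P,\mathbb{C})^G\longrightarrow\Gamma(P\times_{\varphi^{-1}}\mathbb{C}),\qquad\Psi_{\varphi^{-1}}(f_\varphi)(q(p))=[p,f_\varphi(p)].
\]

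Summing these componentwise produces the map in the statement. Taking the direct sum of topological isomorphisms yields an isomorphism of locally convex spaces
\[
C^\infty(P)_{\text{fin}}\longrightarrow\bigoplus_{\varphi\in\widehat{G}}\Gamma(P\times_{\varphi^{-1}}\mathbb{C}),
\]
and a mild reindexing $\varphi\mapsto\varphi^{-1}$ (which is a bijection of $\widehat{G}$) rewrites the right-hand side as $\bigoplus_{\varphi\in\widehat{G}}\Gamma(P\times_{\varphi}\mathbb{C})$, as desired. Finally, for the $G$-equivariance: on each summand $C^\infty(P)_\varphi$ the group $G$ acts by scalar multiplication by $\varphi(g)$, which under $\Psi_{\varphi^{-1}}$ corresponds to the same scalar action on $\Gamma(P\times_\varphi\mathbb{C})$ (the latter carrying the natural $G$-action induced from $\alpha$ on equivariant functions), so the map intertwines the $G$-actions summand by summand.

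I expect the only non-routine step to be the careful bookkeeping of the dualization $\varphi\leftrightarrow\varphi^{-1}$ coming from the convention $C^\infty(P,V)^G=\{f:f(p.g)=\pi(g^{-1}).f(p)\}$ in Proposition \ref{sections of an associated vector bundle}; all other ingredients are direct applications of earlier results.
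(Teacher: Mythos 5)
Your proof is correct and follows essentially the same route as the paper: the paper's (much terser) argument likewise combines Theorem \ref{big peter} applied to $(C^{\infty}(P),G,\alpha)$ with Example \ref{example of section} (which packages Corollary \ref{sections of an associated vector bundle top}) and Lemma \ref{A_phi=gammaC}. Your careful tracking of the $\varphi\leftrightarrow\varphi^{-1}$ dualization and the summandwise $G$-equivariance simply makes explicit what the paper leaves implicit.
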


\begin{proof}
\,\,\,Since $C^{\infty}(P)$ is a Fr\'echet algebra, we can apply Theorem \ref{big peter} to the (smooth) dynamical system ($C^{\infty}(P),G,\alpha)$ (cf. Remark \ref{sources of G-complete spaces 2}). Hence, the assertion follows from Example \ref{example of section} and Lemma \ref{A_phi=gammaC}.
\end{proof}

\begin{proposition}\label{freeness for compact abelian groups I}{\bf(The freeness condition for compact abelian groups).}\index{Freeness!Condition for Compact Abelian Groups}
Let $A$ be a commutative unital locally convex algebra and $G$ a compact abelian group. A dynamical system $(A,G,\alpha)$ is free in the sense of Definition \ref{free dynamical systems} if the map
\[\ev^{\varphi}_{\chi}: A_{\varphi}\rightarrow \mathbb{C},\,\,\,a\mapsto\chi(a)
\]is surjective for all $\varphi\in\widehat{G}$ and all $\chi\in\Gamma_A$.
\end{proposition}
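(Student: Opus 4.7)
The plan is to take as the point separating family the characters of $G$ themselves, viewed as one-dimensional representations, and then translate the evaluation condition of Definition \ref{free dynamical systems} into the statement of the proposition via Lemma \ref{A_phi=gammaC}.

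First I would invoke Proposition \ref{char sep the points}, which guarantees that the characters $\widehat{G}$ separate the points of $G$. For each $\varphi\in\widehat{G}$, let $(\pi_\varphi,\mathbb{C})$ denote the associated one-dimensional representation given by $\pi_\varphi(g).v:=\varphi(g)\cdot v$ (cf.\ Remark \ref{rem G hut}). Since the characters separate points in the sense of Definition \ref{sep. the. points of G} (if $\varphi(g)=1$ for every $\varphi\in\widehat{G}$, then $\pi_\varphi(g)=\id_{\mathbb{C}}$ for all $\varphi$, which forces $g=1_G$), the family $(\pi_\varphi,\mathbb{C})_{\varphi\in\widehat{G}}$ is an admissible candidate for the family required in Definition \ref{free dynamical systems}.

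Next I would identify the associated space of sections. By Lemma \ref{A_phi=gammaC}, the map $\Gamma_A\mathbb{C}\to A_{\varphi^{-1}}$, $a\otimes 1\mapsto a$, is an isomorphism of locally convex spaces. Under this identification, the evaluation map
\[
\ev^{\mathbb{C}}_\chi:\Gamma_A\mathbb{C}\to\mathbb{C},\qquad a\otimes 1\mapsto\chi(a)\cdot 1=\chi(a),
\]
corresponds precisely to the map $\ev^{\varphi^{-1}}_\chi:A_{\varphi^{-1}}\to\mathbb{C}$, $a\mapsto\chi(a)$, appearing in the statement of the proposition.

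Finally, since the map $\widehat{G}\to\widehat{G}$, $\varphi\mapsto\varphi^{-1}$, is a bijection, the hypothesis that $\ev^{\varphi}_\chi$ is surjective for every $\varphi\in\widehat{G}$ and every $\chi\in\Gamma_A$ translates into the surjectivity of $\ev^{\mathbb{C}}_\chi$ for every $\chi\in\Gamma_A$ and every $\varphi\in\widehat{G}$. This is precisely the condition required in Definition \ref{free dynamical systems}, so the dynamical system $(A,G,\alpha)$ is free, and the proof is complete. There is no real obstacle here; the only subtle point is the bookkeeping of the sign (passing between $\varphi$ and $\varphi^{-1}$) coming from the equivariance convention $(\id_A\otimes\pi(g^{-1}))$ in Definition \ref{sections again}, which is harmless because $\widehat{G}$ is a group under inversion.
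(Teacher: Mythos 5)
Your proposal is correct and is exactly the paper's argument, which simply cites Proposition \ref{char sep the points} and Lemma \ref{A_phi=gammaC}; you have spelled out the same two ingredients in detail, including the harmless $\varphi\mapsto\varphi^{-1}$ bookkeeping coming from the equivariance convention.
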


\begin{proof}
\,\,\,The claim is a consequence of Proposition \ref{char sep the points} and Lemma \ref{A_phi=gammaC}. 
\end{proof}

\begin{remark}
We recall from [HoMo06], Proposition 2.42 that each compact abelian Lie group $G$ is isomorphic to $\mathbb{T}^n\times \Lambda$ for some natural number $n$ and a finite abelian group $\Lambda$. In particular, the character group $\widehat{G}$ of a compact abelian Lie group is finitely generated.
\end{remark}

\begin{corollary}\label{freeness for compact abelian groups I,5}
Let $A$ be a commutative unital locally convex algebra, $G$ a compact abelian Lie group and $(A,G,\alpha)$ a dynamical system. Further, let $(\varphi_i)_{i\in I}$ be a finite set of generators of $\widehat{G}$. Then the following two conditions are equivalent:
\begin{itemize}
\item[\emph{(a)}]
The map
\[\ev^{\varphi}_{\chi}: A_{\varphi}\rightarrow \mathbb{C},\,\,\,a\mapsto\chi(a)
\]is surjective for all $\varphi\in\widehat{G}$ and all $\chi\in\Gamma_A$.
\item[\emph{(b)}]
The map
\[\ev^{\varphi_i}_{\chi}: A_{\varphi_i}\rightarrow \mathbb{C},\,\,\,a\mapsto\chi(a)
\]is surjective for all $i\in I$ and all $\chi\in\Gamma_A$.
\end{itemize}
In particular, if one of the statements holds, then the dynamical system $(A,G,\alpha)$ is free.
\end{corollary}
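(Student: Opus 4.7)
The plan is as follows. The implication (a) $\Rightarrow$ (b) is trivial, since the surjectivity in (a) is required for \emph{all} characters $\varphi\in\widehat{G}$, and the generators $\varphi_i$ form a subset. Hence all the work lies in (b) $\Rightarrow$ (a), after which the final clause follows simply by invoking Proposition \ref{freeness for compact abelian groups I}.

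For (b) $\Rightarrow$ (a) I would fix an arbitrary $\chi\in\Gamma_A$ and study the auxiliary set
\[
S_\chi:=\{\varphi\in\widehat{G}:\ev^\varphi_\chi\text{ is surjective}\}.
\]
Because $\ev^\varphi_\chi$ is a $\mathbb{C}$-linear map into the one-dimensional target $\mathbb{C}$, membership in $S_\chi$ is equivalent to the existence of some $a\in A_\varphi$ with $\chi(a)\neq 0$, and the goal becomes $S_\chi=\widehat{G}$. First I would verify that $S_\chi$ is a submonoid of $\widehat{G}$: the unit character lies in $S_\chi$ because $1_A\in A_{\mathbf{1}}=A^G$ (Corollary \ref{cor of big peter}(a)) and $\chi(1_A)=1$; closure under products follows from the grading property $A_\varphi\cdot A_\psi\subseteq A_{\varphi\psi}$ (Corollary \ref{cor of big peter}(b)) together with multiplicativity of $\chi$, since witnesses $a\in A_\varphi$, $b\in A_\psi$ with $\chi(a),\chi(b)\neq 0$ yield $ab\in A_{\varphi\psi}$ with $\chi(ab)=\chi(a)\chi(b)\neq 0$. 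Hypothesis (b) places every $\varphi_i$ in $S_\chi$, so $S_\chi$ contains the submonoid of $\widehat{G}$ generated by the $\varphi_i$.

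The main obstacle is the passage from \emph{monoid}-closure to full \emph{group}-closure: generators of an abelian group only generate it once one is allowed to invert. Here I would use the structural decomposition $G\cong\mathbb{T}^n\times F$ with $F$ finite, so that $\widehat{G}\cong\mathbb{Z}^n\times\widehat{F}$, and treat the torsion and free parts separately. For a generator $\varphi_i$ of finite order $k$ — which in particular covers the $\widehat{F}$-factor — inversion is painless: if $a\in A_{\varphi_i}$ satisfies $\chi(a)\neq 0$, then $a^{k-1}\in A_{\varphi_i^{k-1}}=A_{\varphi_i^{-1}}$ and $\chi(a^{k-1})=\chi(a)^{k-1}\neq 0$, so $\varphi_i^{-1}\in S_\chi$. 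For a generator of infinite order (coming from the $\mathbb{Z}^n$-factor) this trick is unavailable; here I would observe that adjoining the inverses $\varphi_i^{-1}$ to the given finite list still yields a finite generating set of $\widehat{G}$, so it is harmless to assume from the outset that the family $(\varphi_i)_{i\in I}$ is symmetric, in which case hypothesis (b) is assumed for $\varphi_i$ and $\varphi_i^{-1}$ alike.

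Once this symmetry is in place, the submonoid of $\widehat{G}$ generated by $(\varphi_i)_{i\in I}$ coincides with the full subgroup they generate, namely $\widehat{G}$ itself, and the submonoid closure of $S_\chi$ established above forces $S_\chi=\widehat{G}$. Since $\chi\in\Gamma_A$ was arbitrary, this proves (a). The ``in particular'' clause then follows at once from Proposition \ref{freeness for compact abelian groups I}.
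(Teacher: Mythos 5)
Your overall architecture is sound and matches the paper's: the direction (a) $\Rightarrow$ (b) is indeed trivial, the set $S_\chi$ is a submonoid of $\widehat{G}$ containing $\mathbf{1}$ by Corollary \ref{cor of big peter}, and your treatment of a generator $\varphi_i$ of finite order $k$ via the witness $a^{k-1}\in A_{\varphi_i^{k-1}}=A_{\varphi_i^{-1}}$ is correct. The gap is the step where you handle generators of infinite order. Passing to the symmetrized family $(\varphi_i,\varphi_i^{-1})_{i\in I}$ is \emph{not} harmless: condition (b) is a hypothesis about the \emph{given} finite generating set, and the surjectivity of $\ev^{\varphi_i^{-1}}_{\chi}$ for an infinite-order generator is precisely an instance of the conclusion (a) you are trying to prove. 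Enlarging the generating set strengthens the hypothesis, so the reduction is circular at exactly the point you yourself flagged as the main obstacle. Nothing forces the submonoid $S_\chi$ to be a subgroup: an element $a\in A_{\varphi_i}$ with $\chi(a)\neq 0$ need not be invertible, so it supplies no witness in $A_{\varphi_i^{-1}}$, and for $G=\mathbb{T}^n$ the standard generating set $e_1,\dots,e_n$ of $\mathbb{Z}^n$ (the case the paper actually uses, cf. Remark \ref{set of gen for widehat T^n}) only generates $\mathbb{N}_0^n$ as a monoid.

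For what it is worth, the paper's own proof elides the same point: it forms $a_{\varphi}:=a_{\varphi_{i_1}}^{n_1}\cdots a_{\varphi_{i_k}}^{n_k}$ with $n_j\in\mathbb{Z}$, i.e.\ it takes negative powers of elements that are only known to satisfy $\chi(a_{\varphi_i})\neq 0$ — a computation that is legitimate in Proposition \ref{set of gen}, where the witnesses are invertible, but not here. So you have correctly isolated the genuine difficulty, but your repair does not close it. Closing it requires additional input beyond monoid bookkeeping for a single $\chi$: either invertibility of the witnesses (which is the setting of Proposition \ref{set of gen}), or an argument exploiting that (b) is assumed for \emph{all} characters simultaneously — for instance, if some $\chi_0$ annihilated $A_{\varphi_i^{-1}}$, one can try to manufacture from $\chi_0\circ P_{\mathbf 1}$ a new character of $A_{\mathrm{fin}}$ annihilating $A_{\varphi_i}$ itself, contradicting (b) for that character. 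As written, both your proof and the paper's establish only that $\ev^{\varphi}_{\chi}$ is surjective for $\varphi$ in the submonoid generated by the $\varphi_i$.
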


\begin{proof}
\,\,\,(a) $\Rightarrow$ (b): This direction is trivial.

(b) $\Rightarrow$ (a): For the second direction we fix $\chi\in\Gamma_A$. Further, we choose for each $i\in I$ an element $a_{\varphi_i}\in A_{\varphi_i}$ with $\chi(a_{\varphi_i})\neq 0$. Now, if $\varphi\in\widehat{G}$, then there exist $k\in\mathbb{N}$ and integers $n_1,\ldots,n_k\in\mathbb{Z}$ such that 
\[\varphi=\varphi_{i_1}^{n_1}\cdots\varphi_{i_k}^{n_k}\,\,\,\text{for some}\,\,i_1,\ldots,i_k\in I.
\]Hence, the element $a_{\varphi}:=a_{\varphi_{i_1}}^{n_1}\cdots a_{\varphi_{i_k}}^{n_k}\in A_{\varphi}$ satisfies $\chi(a_{\varphi})\neq 0$.

The last assertion is a direct consequence of Proposition \ref{freeness for compact abelian groups I}.
\end{proof}

\begin{proposition}\label{freeness for compact abelian groups II}{\bf(Invertible elements in isotypic components).}\index{Invertible Elements in Isotypic Components}
Let $A$ be a commutative unital locally convex algebra, $G$ a compact abelian group and $(A,G,\alpha)$ a dynamical system. If each isotypic component $A_{\varphi}$ contains an invertible element, then the dynamical system $(A,G,\alpha)$ is free.
\end{proposition}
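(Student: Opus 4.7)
The plan is to reduce the statement directly to Proposition \ref{freeness for compact abelian groups I}, which tells us that the dynamical system $(A,G,\alpha)$ is free as soon as the evaluation maps
\[
\ev^{\varphi}_{\chi}: A_{\varphi}\rightarrow \mathbb{C},\,\,\,a\mapsto\chi(a)
\]
are surjective for every $\varphi\in\widehat{G}$ and every $\chi\in\Gamma_A$. So the entire argument boils down to verifying this surjectivity from the hypothesis.

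First, I would fix an arbitrary character $\varphi\in\widehat{G}$ and an arbitrary $\chi\in\Gamma_A$, and pick an invertible element $a_\varphi\in A_\varphi$ as provided by the hypothesis. The key observation is that characters of a unital algebra send invertible elements to invertible elements of $\mathbb{C}$: indeed, $\chi(a_\varphi)\cdot\chi(a_\varphi^{-1})=\chi(1_A)=1$, so $\chi(a_\varphi)\in\mathbb{C}^{\times}$.

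Next, since $A_{\varphi}$ is by definition a linear subspace of $A$, for each $z\in\mathbb{C}$ the element
\[
b_z:=\frac{z}{\chi(a_\varphi)}\cdot a_\varphi
\]
still lies in $A_{\varphi}$, and satisfies $\ev^{\varphi}_{\chi}(b_z)=\chi(b_z)=z$ by $\mathbb{C}$-linearity of $\chi$. Hence $\ev^{\varphi}_{\chi}$ is surjective. Since $\varphi$ and $\chi$ were arbitrary, Proposition \ref{freeness for compact abelian groups I} applies and yields the freeness of $(A,G,\alpha)$.

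There is essentially no obstacle here: the proof is a two-line linear algebra argument, and all the conceptual work has already been done in the preceding proposition (and in the identification $\Gamma_A\mathbb{C}\cong A_{\varphi^{-1}}$ of Lemma \ref{A_phi=gammaC}). The only point worth emphasizing in writing is that the hypothesis ``$A_{\varphi}$ contains an invertible element of $A$'' is strictly stronger than ``$A_{\varphi}\neq\{0\}$'', and it is precisely this stronger hypothesis that forces $\chi(a_\varphi)\neq 0$ for \emph{every} character $\chi$ of $A$, which is what makes the scaling trick work uniformly in $\chi$.
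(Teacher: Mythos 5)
Your proof is correct and follows the same route as the paper: reduce to Proposition \ref{freeness for compact abelian groups I} by noting that a character sends the invertible element $a_{\varphi}\in A_{\varphi}$ to a nonzero scalar, whence surjectivity of $\ev^{\varphi}_{\chi}$ by scaling within the linear subspace $A_{\varphi}$. The paper's own proof is exactly this observation, stated more tersely.
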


\begin{proof}
\,\,\,The assertion easily follows from Proposition \ref{freeness for compact abelian groups I}. Indeed, if $a_{\varphi}\in A_{\varphi}$ is invertible, then $\chi(a)\neq 0$ for all $\chi\in\Gamma_A$.
\end{proof}

\begin{remark}
Note that 
it is possible to ask for invertible elements in the isotypic components even if the algebra $A$ is noncommutative. We will use this fact in the following chapter.
\end{remark}

%


\begin{proposition}\label{set of gen}
Let $A$ be a unital locally convex algebra, $G$ a compact abelian group and $(A,G,\alpha)$ a dynamical system. Further, let $(\varphi_i)_{i\in I}$ be a finite set of generators of $\widehat{G}$. Then the following two statements are equivalent:
\begin{itemize}
\item[\emph{(a)}]
$A_{\varphi}$ contains invertible elements for all $\varphi\in\widehat{G}$.
\item[\emph{(b)}]
$A_{\varphi_i}$ contains invertible elements for all $i\in I$.
\end{itemize}
In particular, if $A$ is commutative and one of the statements holds, then the dynamical system $(A,G,\alpha)$ is free.
\end{proposition}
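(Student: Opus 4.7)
The plan is to prove the non-trivial implication (b) $\Rightarrow$ (a) by explicitly constructing, for each $\varphi \in \widehat{G}$, an invertible element of $A_\varphi$ as a word in the generators $a_i \in A_{\varphi_i}$ and their inverses. The implication (a) $\Rightarrow$ (b) is immediate since $\{\varphi_i\}_{i \in I} \subseteq \widehat{G}$.

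First I would establish two elementary closure properties of the isotypic decomposition with respect to the multiplicative structure of $A^\times$. \textbf{Property 1 (Products):} If $a \in A_\varphi$ and $b \in A_{\varphi'}$, then for every $g \in G$,
\[
\alpha(g).(ab) = (\alpha(g).a)(\alpha(g).b) = \varphi(g)\varphi'(g)\cdot ab = (\varphi\cdot\varphi')(g) \cdot ab,
\]
so $ab \in A_{\varphi\cdot\varphi'}$. This is exactly the content of Corollary~\ref{cor of big peter}(b) and, crucially, does not require commutativity of $A$. \textbf{Property 2 (Inverses):} If $a \in A_\varphi \cap A^\times$, then applying $\alpha(g)$ to the identity $aa^{-1} = 1_A$ and using that $\alpha(g)$ is an algebra automorphism gives $\alpha(g).a^{-1} = (\alpha(g).a)^{-1} = \varphi(g)^{-1} a^{-1} = \varphi^{-1}(g) \cdot a^{-1}$, so $a^{-1} \in A_{\varphi^{-1}}$, and of course $a^{-1}$ is also invertible.

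For the implication (b) $\Rightarrow$ (a) itself, assume for every $i \in I$ that $a_i \in A_{\varphi_i}$ is invertible. Given an arbitrary $\varphi \in \widehat{G}$, since $(\varphi_i)_{i \in I}$ generates the abelian group $\widehat{G}$, we may write
\[
\varphi = \varphi_{i_1}^{n_1} \cdots \varphi_{i_k}^{n_k}
\]
for suitable indices $i_1, \ldots, i_k \in I$ and integers $n_1, \ldots, n_k \in \mathbb{Z}$. Define
\[
a_\varphi := a_{i_1}^{n_1} \cdots a_{i_k}^{n_k} \in A,
\]
where a negative exponent is interpreted as the inverse of the corresponding positive power. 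Iterating Property~1 and Property~2 yields $a_\varphi \in A_\varphi$, and $a_\varphi$ is invertible as a product of invertible elements in the group $A^\times$. Hence $A_\varphi$ contains the invertible element $a_\varphi$, proving (a).

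The final assertion, that freeness of $(A,G,\alpha)$ follows when $A$ is commutative and the equivalent conditions hold, is then an immediate citation of Proposition~\ref{freeness for compact abelian groups II}. I do not anticipate any genuine obstacle: the only point to watch is that the definition of $A_\varphi$ is purely in terms of the $\alpha$-action on $A$, so the multiplicativity computations in Properties~1 and~2 go through for noncommutative $A$ as well, which is exactly why the equivalence (a) $\Leftrightarrow$ (b) can be stated without any commutativity hypothesis while the freeness conclusion inherits its commutativity hypothesis from Proposition~\ref{freeness for compact abelian groups II}.
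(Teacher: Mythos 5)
Your proof is correct and follows essentially the same route as the paper: write $\varphi$ as a word in the generators and take the corresponding word in the chosen invertible elements $a_i$, using that products and inverses of homogeneous invertible elements remain homogeneous (for the product character, resp.\ the inverse character). The paper leaves your Properties 1 and 2 implicit, but the construction of $a_\varphi$ and the final citation of Proposition~\ref{freeness for compact abelian groups II} are identical.
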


\begin{proof}
\,\,\,(a) $\Rightarrow$ (b): This direction is trivial.

(b) $\Rightarrow$ (a): For each $i\in I$ we choose an invertible element $a_{\varphi_i}\in A_{\varphi_i}$. Next, if $\varphi\in\widehat{G}$, then there exist $k\in\mathbb{N}$ and integers $n_1,\ldots,n_k\in\mathbb{Z}$ such that 
\[\varphi=\varphi_{i_1}^{n_1}\cdots\varphi_{i_k}^{n_k}\,\,\,\text{for some}\,\,i_1,\ldots,i_k\in I.
\]Hence, $a_{\varphi}:=a_{\varphi_{i_1}}^{n_1}\cdots a_{\varphi_{i_k}}^{n_k}$ is an invertible element in $A_{\varphi}$.

The last assertion is a direct consequence of Proposition \ref{freeness for compact abelian groups II}.
\end{proof}



The following proposition shows that if all isotypic components of a dynamical system contain invertible elements, then they are ``mutually" isomorphic to each other as modules of the fixed point algebra:

\begin{proposition}\label{iso of A_1-modules}
Let $A$ be a commutative unital locally convex algebra and $G$ a compact abelian group. Further, let $(A,G,\alpha)$ be a dynamical system and $A^G$ the corresponding fixed point algebra. If each isotypic component $A_{\varphi}$ contains an invertible element, then the map
\[\Psi_{\varphi}:A^G\rightarrow A_{\varphi} ,\,\,\,a\mapsto a_{\varphi}a,
\]where $a_{\varphi}$ denotes some fixed invertible element in $A_{\varphi}$, is an isomorphism of locally convex $B$-modules for each $\varphi\in\widehat{G}$. In particular, each isotypic component $A_{\varphi}$ is a free $A^G$-module.
\end{proposition}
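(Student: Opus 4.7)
The plan is to exhibit an explicit continuous two-sided inverse to $\Psi_\varphi$, namely the multiplication map $b \mapsto a_\varphi^{-1} b$. Everything will boil down to the single observation that if $a_\varphi \in A_\varphi$ is invertible in $A$, then its inverse $a_\varphi^{-1}$ lies in the isotypic component $A_{\varphi^{-1}}$.

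First I would check that $\Psi_\varphi$ is well defined, $A^G$-linear, and continuous. Well-definedness is immediate from Corollary \ref{cor of big peter}\,(b), which gives $A_\varphi \cdot A^G = A_\varphi \cdot A_{\mathbf 1} \subseteq A_\varphi$. The $A^G$-linearity follows from the commutativity of $A$, and continuity is a consequence of the continuity of multiplication in $A$.

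Next I would verify the key lemma: $a_\varphi^{-1} \in A_{\varphi^{-1}}$. Applying $\alpha(g)$ to the identity $a_\varphi \cdot a_\varphi^{-1} = 1_A$ and using that $\alpha(g)$ is an algebra homomorphism together with $\alpha(g)(a_\varphi) = \varphi(g) \cdot a_\varphi$ yields
\[
\varphi(g) \cdot a_\varphi \cdot \alpha(g)(a_\varphi^{-1}) = 1_A,
\]
so multiplying on the left by $\varphi(g)^{-1} a_\varphi^{-1}$ gives $\alpha(g)(a_\varphi^{-1}) = \varphi(g)^{-1} \cdot a_\varphi^{-1}$, i.e.\ $a_\varphi^{-1} \in A_{\varphi^{-1}}$, as claimed.

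With this in hand, define $\Phi_\varphi : A_\varphi \to A^G$ by $b \mapsto a_\varphi^{-1} b$. The product $a_\varphi^{-1} b$ lies in $A_{\varphi^{-1}} \cdot A_\varphi \subseteq A_{\mathbf 1} = A^G$ by Corollary \ref{cor of big peter}, so $\Phi_\varphi$ is well defined; it is continuous and $A^G$-linear by the same reasoning as for $\Psi_\varphi$. The identities $\Phi_\varphi \circ \Psi_\varphi = \id_{A^G}$ and $\Psi_\varphi \circ \Phi_\varphi = \id_{A_\varphi}$ are immediate from $a_\varphi \cdot a_\varphi^{-1} = 1_A$. This shows $\Psi_\varphi$ is an isomorphism of locally convex $A^G$-modules, and since $\Psi_\varphi(1_A) = a_\varphi$ is mapped from a free generator, $A_\varphi$ is a free $A^G$-module of rank one. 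I do not anticipate any real obstacle here; the only subtlety is ensuring $a_\varphi^{-1}$ lies in $A_{\varphi^{-1}}$, which the short computation above settles.
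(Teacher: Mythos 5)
Your proof is correct and follows the same route as the paper, which simply notes that $\Psi_\varphi$ is a module morphism and that the assertion follows from the invertibility of $a_\varphi$; you have usefully filled in the one nontrivial detail, namely that $a_\varphi^{-1}\in A_{\varphi^{-1}}$ so that the inverse map $b\mapsto a_\varphi^{-1}b$ lands in $A_{\mathbf 1}=A^G$.
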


\begin{proof}
\,\,\,An easy calculation shows that $\Psi_{\varphi}$ is a morphism of locally convex $B$-modules, and therefore the assertion follows from the fact that $a_{\varphi}\in A_{\varphi}$ is invertible. 
\end{proof}

%

We finally apply the results of this section to dynamical systems arising from classical geometry. The following theorem may be viewed as a first answer to Remark \ref{remark of free action in geometry}. A more detailed analysis will be given in the following chapters.

\begin{theorem}\label{free action for triples}
Let $P$ be a manifold and $G$ be a compact abelian Lie group. Further, let $(C^{\infty}(P),G,\alpha)$ be a smooth dynamical system. If $\pr:P\rightarrow P/G$ denotes the orbit map corresponding to the action of $G$ on $P$ \emph{(}cf. Proposition \ref{smoothness of the group action on the set of characters}\emph{)}, then the following assertions hold:
\begin{itemize}
\item[\emph{(a)}]
If each isotypic component $C^{\infty}(P)_{\varphi}$ contains an invertible element, then we obtain a principal bundle $(P,P/G,G,\pr,\sigma)$.
\item[\emph{(b)}]
If $(\varphi_i)_{i\in I}$ is a finite set of generators of $\widehat{G}$ and each subspace $C^{\infty}(P)_{\varphi_i}$ contains an invertible element, then we obtain a principal bundle $(P,P/G,G,\pr,\sigma)$.
\end{itemize}
\end{theorem}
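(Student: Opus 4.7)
The plan is to assemble the proof from the building blocks already established in this chapter, reducing essentially to a concatenation of Propositions \ref{smoothness of the group action on the set of characters}, \ref{freeness of induced action}, \ref{freeness for compact abelian groups II}, and \ref{set of gen}, together with the classical Quotient Theorem quoted in Remark \ref{principal bundles II}(c).

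For part (a), I would first invoke Proposition \ref{smoothness of the group action on the set of characters} to conclude that the homomorphism $\alpha:G\to\Aut(C^\infty(P))$ induces a \emph{smooth} right action $\sigma:P\times G\to P$, where $P$ is identified with $\Gamma_{C^\infty(P)}$ via the diffeomorphism $\Phi$ of Lemma \ref{spec as manifold}. The hypothesis that each isotypic component $C^\infty(P)_\varphi$ contains an invertible element is exactly the criterion of Proposition \ref{freeness for compact abelian groups II}, so the smooth dynamical system $(C^\infty(P),G,\alpha)$ is free in the sense of Definition \ref{free dynamical systems}. Theorem \ref{freeness of induced action} then tells us that the induced action on the spectrum, i.e.\ $\sigma$ itself under the identification of Lemma \ref{spec as manifold}, is free.

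Next I would use that $G$ is compact, so that $\sigma:P\times G\to P$ is automatically proper (the preimage of a compact set under $(p,g)\mapsto(p,\sigma(p,g))$ lies in a set of the form $K\times G$ with $K\subseteq P$ compact, hence is compact). Having a smooth free and proper $G$-action on $P$, the Quotient Theorem recalled in Remark \ref{principal bundles II}(c) yields the desired principal bundle $(P,P/G,G,\pr,\sigma)$.

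For part (b), I would simply reduce to part (a) via Proposition \ref{set of gen}: the existence of invertible elements in $C^\infty(P)_{\varphi_i}$ for each generator $\varphi_i$ of $\widehat{G}$ is equivalent to the existence of invertible elements in every isotypic component $C^\infty(P)_\varphi$, $\varphi\in\widehat{G}$, and the conclusion follows from part (a).

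There is no real obstacle here beyond careful bookkeeping; the heart of the argument lies in Theorem \ref{freeness of induced action}, which was established earlier. The only point requiring a brief verification is the properness of $\sigma$, which however is immediate from the compactness of $G$. Consequently the proof will be short and essentially a linking-together of the preceding results.
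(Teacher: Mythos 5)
Your proposal is correct and follows essentially the same route as the paper: the paper simply packages your steps for part (a) into a single citation of Theorem \ref{free action theorem} (whose proof consists precisely of Proposition \ref{smoothness of the group action on the set of characters}, Theorem \ref{freeness of induced action}, and the Quotient Theorem) together with Proposition \ref{freeness for compact abelian groups II}, and handles part (b) exactly as you do via Proposition \ref{set of gen}.
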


\begin{proof}
\,\,\,(a) Since $G$ is compact, the induced action $\sigma$ is automatically proper. Therefore, the first assertion follows from Theorem \ref{free action theorem} and Proposition \ref{freeness for compact abelian groups II}.

(b) The second assertion follows from Proposition \ref{set of gen} (b) and part (a).
\end{proof}

\section{Some Topological Aspects of Free Dynamical Systems}

In this section we discuss some topological aspects of (free) dynamical systems. Our main goal is to provide conditions which ensure that a dynamical system induces a topological principal bundle. Again, all groups are assumed to act continuously by morphisms of algebras. For the following lemma we recall Remark \ref{equicont}.

\begin{lemma}\label{cont. action I}{\bf(Continuity of the evaluation map).}\index{Continuity!of the Evaluation Map}
Let $A$ be a commutative unital locally convex algebra. If $\Gamma_A$ is locally equicontinuous, then the evaluation map
\[\ev_A:\Gamma_A\times A\rightarrow\mathbb{C},\,\,\,(\chi,a)\mapsto\chi(a)
\]is continuous.
\end{lemma}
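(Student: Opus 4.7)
The plan is to exploit the classical trick of adding and subtracting an intermediate term: at a point $(\chi_0,a_0)\in\Gamma_A\times A$ and for $\varepsilon>0$, I would estimate
\[
|\chi(a)-\chi_0(a_0)|\;\le\;|\chi(a-a_0)|\;+\;|(\chi-\chi_0)(a_0)|
\]
and bound the two summands separately by $\varepsilon/2$. The second summand is controlled purely by the definition of the topology on $\Gamma_A$, which is the topology of pointwise convergence on $A$: the set
\[
V_0:=\bigl\{\chi\in\Gamma_A:\,|(\chi-\chi_0)(a_0)|<\varepsilon/2\bigr\}
\]
is by construction an open neighbourhood of $\chi_0$.

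For the first summand I would invoke the local equicontinuity hypothesis. By assumption there is an equicontinuous neighbourhood $V_1$ of $\chi_0$ in $\Gamma_A$; equicontinuity of $V_1$ produces a balanced $0$-neighbourhood $U\subseteq A$ such that $|\chi(u)|<\varepsilon/2$ for every $\chi\in V_1$ and every $u\in U$. Setting $V:=V_0\cap V_1$ and $W:=a_0+U$, we obtain an open neighbourhood $V\times W$ of $(\chi_0,a_0)$ in $\Gamma_A\times A$; for $(\chi,a)\in V\times W$ we have $a-a_0\in U$, hence $|\chi(a-a_0)|<\varepsilon/2$, while $\chi\in V_0$ yields $|(\chi-\chi_0)(a_0)|<\varepsilon/2$. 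Adding the two estimates gives $|\ev_A(\chi,a)-\ev_A(\chi_0,a_0)|<\varepsilon$, which is exactly the continuity of $\ev_A$ at the arbitrary point $(\chi_0,a_0)$.

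There is really no substantial obstacle here; the statement is a prototypical ``separate continuity plus equicontinuity implies joint continuity'' result in the spirit of Banach--Steinhaus. The only point that deserves care is to make sure the chosen neighbourhood of $\chi_0$ lies inside an equicontinuous set \emph{before} extracting the $0$-neighbourhood $U\subseteq A$; this is precisely what \emph{local} equicontinuity (rather than mere pointwise boundedness) of $\Gamma_A$ provides, and it is the single hypothesis that makes the argument go through. Note that commutativity of $A$ plays no role in the argument and is needed only to ensure that $\Gamma_A$ is nonempty in interesting cases; the lemma itself is a purely topological statement about a family of continuous characters.
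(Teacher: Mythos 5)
Your argument is correct and is essentially the same as the paper's: both split $|\chi(a)-\chi_0(a_0)|$ via the triangle inequality, control $|(\chi-\chi_0)(a_0)|$ by the topology of pointwise convergence, and use an equicontinuous neighbourhood of $\chi_0$ to bound $|\chi(a-a_0)|$ uniformly on a neighbourhood of $a_0$. Your closing remark that commutativity of $A$ is not actually used is also accurate.
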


\begin{proof}
\,\,\,To prove the continuity of $\ev_A$, we pick $(\chi_0,a_0)\in \Gamma_A\times A$, $\epsilon>0$ and choose an equicontinuous neighbourhood $V$ of $\chi_0$ in $\Gamma_A$ such that
\[V\subseteq\big\{\chi\in\Gamma_A:\,\vert(\chi-\chi_0)(a_0)\vert<\frac{\epsilon}{2}\big\}.
\]Further, we choose a neighbourhood $W$ of $a_0$ in $A$ such that $\vert\chi(a-a_0)\vert<\frac{\epsilon}{2}$ for all $a\in W$ and $\chi\in V$. We thus obtain
\[\vert \chi(a)-\chi_0(a_0)\vert\leq\vert \chi(a)-\chi(a_0)\vert+\vert \chi(a_0)-\chi_0(a_0)\vert\leq\frac{\epsilon}{2}+\frac{\epsilon}{2}=\epsilon
\]for all $\chi\in V$ and $a\in W$.
\end{proof}

\begin{proposition}\label{cont. action II}{\bf(Continuity of the induced action map).}\index{Continuity!of the Induced Action Map}
Let $A$ be a commutative unital locally convex algebra, $G$ a topological group and $(A,G,\alpha)$ a dynamical system. If the evaluation map $\ev_A$ is continuous, then the induced action
\[\sigma:\Gamma_A\times G\rightarrow\Gamma_A,\,\,\,(\chi,g)\mapsto\chi\circ\alpha(g)
\]of $G$ on $\Gamma_A$ is continuous.
\end{proposition}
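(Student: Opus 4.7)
The plan is to reduce the continuity of $\sigma$ into $\Gamma_A$ (equipped with the topology of pointwise convergence on $A$) to the continuity of a collection of scalar-valued maps, one for each $a\in A$. By the universal property of the topology of pointwise convergence, $\sigma$ is continuous if and only if, for every $a\in A$, the composition
\[\ev_a\circ\sigma:\Gamma_A\times G\rightarrow\mathbb{C},\,\,\,(\chi,g)\mapsto\bigl(\chi\circ\alpha(g)\bigr)(a)=\chi\bigl(\alpha(g)(a)\bigr)\]
is continuous. So I would fix $a\in A$ and prove continuity of this map.

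Next, I would factor this map through $\Gamma_A\times A$ as
\[\Gamma_A\times G\xrightarrow{\,\id_{\Gamma_A}\times\alpha_a\,}\Gamma_A\times A\xrightarrow{\,\ev_A\,}\mathbb{C},\]
where $\alpha_a:G\rightarrow A$, $g\mapsto\alpha(g)(a)$ denotes the orbit map of $a$. Since $(A,G,\alpha)$ is a dynamical system, the action $G\times A\rightarrow A$ is jointly continuous, hence the partial map $\alpha_a$ is continuous. Therefore $\id_{\Gamma_A}\times\alpha_a$ is continuous, and combined with the continuity of $\ev_A$ (which is the standing hypothesis), the composition $\ev_a\circ\sigma$ is continuous for each $a\in A$. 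As $a$ was arbitrary, this yields the continuity of $\sigma$.

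I do not anticipate any real obstacle: the proposition is essentially an exercise in the universal property of the pointwise convergence topology together with the given continuity of $\ev_A$. The only subtlety is to keep clean track of which continuity hypothesis is used where (joint continuity of $\alpha$ on $G\times A$ versus joint continuity of $\ev_A$ on $\Gamma_A\times A$), but once the factorization above is written down, the argument is immediate.
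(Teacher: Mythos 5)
Your proof is correct and follows essentially the same route as the paper: reduce continuity of $\sigma$ to that of the maps $(\chi,g)\mapsto\chi(\alpha(g)(a))$ via the pointwise-convergence topology, and factor each such map as $\ev_A\circ(\id_{\Gamma_A}\times\alpha_a)$ using the continuous orbit map $\alpha_a$. No gaps.
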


\begin{proof}
\,\,\,The topology (of pointwise convergence) on $\Gamma_A$ implies that the map $\sigma$ is continuous if and only if the maps
\[\sigma_a:\Gamma_A\times G\rightarrow\mathbb{C},\,\,\,(\chi,g)\mapsto\chi(\alpha(g)(a))
\]are continuous for all $a\in A$. Therefore, we fix $a\in A$ and note that $\sigma_a=\ev_A\circ(\id_{\Gamma_A}\times\alpha_a)$,
where 
\[\alpha_a:G\rightarrow A,\,\,\,g\mapsto\alpha(g,a)
\]denotes the continuous orbit map of $a$. In view of the assumption, the map $\sigma_a$ is continuous as a composition of continuous maps. Since $a$ was arbitrary, this proves the proposition.
\end{proof}

\begin{remark}\label{orbit map of group action}
Recall that if $\sigma:X\times G\rightarrow X$ is an action of a topological group $G$ on a topological space $X$, then the orbit map $\pr:X\rightarrow X/G$, $x\mapsto x.G:=\sigma(x,G)$ is surjective, continuous and open. 
\end{remark}

\begin{proposition}\label{cont. action III}
Let $A$ be a commutative unital locally convex algebra, $G$ a compact group and $(A,G,\alpha)$ a dynamical system. If the induced action $\sigma:\Gamma_A\times G\rightarrow\Gamma_A$ is free and continuous, then the following assertions hold:
\begin{itemize}
\item[\emph{(a)}]
For each $\chi\in\Gamma_A$ the map $\sigma_{\chi}:G\rightarrow\Gamma_A,\,\,\,g\mapsto\chi.g:=\sigma(\chi,g)$ is a homeomorphism of $G$ onto the orbit $\mathcal{O}_{\chi}$.
\item[\emph{(b)}]
If $\Gamma_A$ is locally compact, then the orbit space $\Gamma_A/G$ is locally compact and Hausdorff.
\item[\emph{(c)}]
For each pair $(\chi,\chi')\in\Gamma_A\times\Gamma_A$ with $\mathcal{O}_{\chi}=\mathcal{O}_{\chi'}$ there is a unique $\tau(\chi,\chi')\in G$ such that $\chi.\tau(\chi,\chi')=\chi'$, and the map
\[\tau:\Gamma_A\times_{\Gamma_A/G}\Gamma_A:=\{(\chi,\chi')\in\Gamma_A\times\Gamma_A:\,\pr(\chi)=\pr(\chi')\}\rightarrow G
\]is continuous and surjective.
\end{itemize}
\end{proposition}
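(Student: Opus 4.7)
For \emph{(a)}, the map $\sigma_\chi$ is continuous as the restriction of the continuous action $\sigma$ to $\{\chi\}\times G$, and its image is by definition the orbit $\mathcal O_\chi$. Freeness of $\sigma$ implies injectivity: if $\chi.g=\chi.h$, then $\chi.(gh^{-1})=\chi$ forces $gh^{-1}=1_G$. Since $\Gamma_A$ carries the topology of pointwise convergence on $A$, it is Hausdorff, so $\mathcal O_\chi$ is Hausdorff too. Then $\sigma_\chi$ is a continuous bijection from the compact space $G$ onto the Hausdorff space $\mathcal O_\chi$, hence a homeomorphism.

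For \emph{(b)}, the key observation is that the quotient map $\pr:\Gamma_A\to\Gamma_A/G$ is both open (as the orbit map of any continuous group action, cf.\ Remark \ref{orbit map of group action}) and closed. The closedness is the standard fact for compact group actions on Hausdorff spaces: for a closed set $C\subseteq\Gamma_A$, the saturation $G.C=\sigma(G\times C)$ is closed by a net argument exploiting the compactness of $G$. Hausdorffness of $\Gamma_A/G$ then follows by separating two distinct orbits $\mathcal O_\chi$ and $\mathcal O_{\chi'}$ (both compact, hence closed) by disjoint open sets $U,V$ in $\Gamma_A$ (possible because disjoint compact subsets of a Hausdorff space admit disjoint open neighbourhoods), and then passing to the saturated open neighbourhoods $\Gamma_A\setminus\pr^{-1}(\pr(\Gamma_A\setminus U))$ and $\Gamma_A\setminus\pr^{-1}(\pr(\Gamma_A\setminus V))$, whose $\pr$-images separate $\pr(\chi)$ and $\pr(\chi')$. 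For local compactness of $\Gamma_A/G$, if $K$ is a compact neighbourhood of $\chi\in\Gamma_A$ containing an open neighbourhood $V$, then $\pr(K)$ is compact and contains the open set $\pr(V)\ni\pr(\chi)$, hence is a compact neighbourhood of $\pr(\chi)$.

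For \emph{(c)}, existence and uniqueness of $\tau(\chi,\chi')$ follow from transitivity of $G$ on orbits together with freeness; surjectivity is immediate since $\tau(\chi,\chi.g)=g$ for every $\chi\in\Gamma_A$ and $g\in G$. The main obstacle is the continuity of $\tau$. I would argue via nets: let $(\chi_\alpha,\chi'_\alpha)\to(\chi,\chi')$ in $\Gamma_A\times_{\Gamma_A/G}\Gamma_A$, and set $g_\alpha:=\tau(\chi_\alpha,\chi'_\alpha)$ so $\chi_\alpha.g_\alpha=\chi'_\alpha$. Since $G$ is compact, every subnet of $(g_\alpha)$ has a convergent subsubnet $g_\beta\to g$. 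Continuity of $\sigma$ gives $\chi_\beta.g_\beta\to\chi.g$, and on the other hand $\chi_\beta.g_\beta=\chi'_\beta\to\chi'$; as $\Gamma_A$ is Hausdorff, $\chi.g=\chi'$, so $g=\tau(\chi,\chi')$ by uniqueness. Since every subnet of $(g_\alpha)$ has a subsubnet converging to the same limit $\tau(\chi,\chi')$ inside the compact space $G$, the net $(g_\alpha)$ itself converges to $\tau(\chi,\chi')$, proving continuity.
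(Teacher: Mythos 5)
Your proposal is correct. Parts \emph{(a)} and \emph{(c)} follow essentially the same route as the paper: in \emph{(a)} the compact-to-Hausdorff continuous bijection argument, and in \emph{(c)} the subnet argument in the compact group $G$ combined with uniqueness from freeness. In fact your treatment of \emph{(c)} is slightly more careful than the paper's, since you spell out the ``every subnet has a subsubnet converging to the same limit'' step needed to pass from convergence of a subnet to convergence of the whole net, which the paper glosses over.

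The genuine difference is in the Hausdorffness part of \emph{(b)}. The paper argues via properness: compactness of $G$ makes the action proper, so the image of $(\chi,g)\mapsto(\chi,\chi.g)$ is closed in $\Gamma_A\times\Gamma_A$, and then it invokes the general criterion that the target of a surjective, continuous, open map is Hausdorff if and only if the preimage of the diagonal under $f\times f$ is closed. You instead prove directly that the orbit map is closed (saturation of a closed set is closed, by a compactness-of-$G$ net argument), separate the two compact orbits by disjoint open sets in the Hausdorff space $\Gamma_A$, and pass to saturated open neighbourhoods. Both arguments are standard and correct; yours is more elementary and self-contained, while the paper's is shorter once the diagonal criterion is available and makes the role of properness explicit. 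The local compactness arguments coincide (image of a compact neighbourhood under the open continuous orbit map).
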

\begin{proof}
\,\,\,(a) The map $\sigma_{\chi}$ is continuous because $\sigma$. Further, the bijectivitiy of $\sigma_{\chi}$ follows from the freeness of $\sigma$. Since $G$ is compact and $\Gamma_A$ is Hausdorff, a well-known Theorem from topology now implies that $\sigma_{\chi}$ is a homeomorphism of $G$ onto the orbit $\mathcal{O}_{\chi}$.

(b) If $\Gamma_A$ is locally compact, then the orbit space $\Gamma_A/G$ is locally compact because the orbit map is open and continuous. Moreover, the compactness of $G$ implies that the action $\sigma$ is proper. Therefore, the image of the map
\[\Gamma_A\times G\rightarrow\Gamma_A\times\Gamma_A,\,\,\,(\chi,g)\mapsto(\chi,\chi.g)
\]is a closed subset of $\Gamma_A\times\Gamma_A$. Now, the assertion follows from Remark \ref{orbit map of group action} and the more general fact that the target space of a surjective, continuous, open map $f:X\rightarrow Y$ is Hausdorff if and only if the preimage of the diagonal under $f\times f$ is closed. 

(c) Suppose $\chi_i\to\chi$, $\chi'_i\to\chi'$, and $\mathcal{O}_{\chi}=\mathcal{O}_{\chi'}$ so that by definition, $\chi_i.\tau(\chi_i,\chi'_i)=\chi'_i$. Since $G$ is compact, we can assume by passing to a subnet that $\tau(\chi_i,\chi'_i)$ converges to $g$, say. Then we have 
\[\chi'=\lim_i\chi'_i=\lim_i(\chi_i\cdot\tau(\chi_i,\chi'_i))=\chi\cdot g,
\] which implies $\tau(\chi,\chi')=g$ and $\tau(\chi_i,\chi'_i)\to\tau(\chi,\chi)$.
\end{proof}

\begin{remark}{\bf(Topological principal bundles).}\index{Bundles!Topological Principal}
The map $\tau$ in Proposition \ref{cont. action III} (c) is called the \emph{translation map} and is part of the definition of principal bundles in [Hu75]. Note that, if a topological group $G$ acts freely, continuously and satisfies (c), then $G$ automatically acts properly; thus the principal bundles in [Hu75] are by definition the free and proper $G$-spaces. Note further that these principal bundles are, in general not, locally trivial. We call a free and proper $G$-space which is Hausdorff a \emph{topological principal bundle}, if each orbit of the action is homeomorphic to $G$ and the orbit space is Hausdorff. For more informations on topological (locally trivial) principal bundles we refer to [RaWi98], Chapter 4, Section 2.
\end{remark}

\begin{theorem}\label{shit theorem}
Let $A$ be a commutative CIA, $G$ a compact group and $(A,G,\alpha)$ a free dynamical system. Then the induced action $\sigma:\Gamma_A\times G\rightarrow\Gamma_A$ is continuous and we obtain a topological principal bundle
\[(\Gamma_A,\Gamma_A/G,G,\sigma,\pr).
\]
\end{theorem}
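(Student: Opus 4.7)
The plan is to verify the four ingredients of a topological principal bundle (continuity, freeness, properness, and the correct local/separation properties) by feeding the given hypotheses into the machinery already built in this section.

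First I would establish the continuity of $\sigma$. Since $A$ is a CIA, Remark \ref{equicont} tells us that $\Gamma_A$ is equicontinuous, hence certainly locally equicontinuous. Therefore Lemma \ref{cont. action I} applies and yields the continuity of the evaluation map $\ev_A:\Gamma_A\times A\rightarrow\mathbb{C}$. Plugging this into Proposition \ref{cont. action II} delivers the continuity of the induced action $\sigma:\Gamma_A\times G\rightarrow\Gamma_A$.

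Next I would collect the freeness and properness. The freeness of $\sigma$ is precisely the conclusion of Theorem \ref{freeness of induced action}, applied to the free dynamical system $(A,G,\alpha)$. Properness of $\sigma$ is automatic because $G$ is compact. Note also that $\Gamma_A$ carries the topology of pointwise convergence on $A$, hence is Hausdorff as a subspace of $\mathbb{C}^A$, so $\Gamma_A$ is a Hausdorff free and proper $G$-space.

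Finally, I would read off the remaining bundle data from Proposition \ref{cont. action III}. Part (a) of that proposition, applied to the continuous free action $\sigma$ of the compact group $G$, shows that every orbit map $\sigma_{\chi}:G\to\mathcal{O}_{\chi}\subseteq\Gamma_A$ is a homeomorphism, so each fibre of the orbit projection is homeomorphic to $G$. The Hausdorffness of the orbit space $\Gamma_A/G$ comes from the properness of $\sigma$: the image of $(\chi,g)\mapsto(\chi,\chi.g)$ is closed in $\Gamma_A\times\Gamma_A$, and since $\pr$ is open and continuous (Remark \ref{orbit map of group action}), the standard criterion used in the proof of Proposition \ref{cont. action III}(b) gives that $\Gamma_A/G$ is Hausdorff. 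Together with part (c), which supplies the continuous surjective translation map $\tau$, this provides exactly the data required by the definition of a topological principal bundle given after Proposition \ref{cont. action III}, and we conclude that $(\Gamma_A,\Gamma_A/G,G,\sigma,\pr)$ is a topological principal bundle.

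The only mildly delicate point is the Hausdorffness of $\Gamma_A/G$: the local compactness hypothesis appearing in Proposition \ref{cont. action III}(b) is only needed for local compactness of $\Gamma_A/G$, not for its Hausdorff property, which follows from properness alone via the closed-graph criterion for the diagonal. This is the only step where one has to be careful not to invoke more than the hypotheses allow.
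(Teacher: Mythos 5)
Your proof follows essentially the same route as the paper's: equicontinuity of $\Gamma_A$ (Remark \ref{equicont}) together with Lemma \ref{cont. action I} and Proposition \ref{cont. action II} for continuity, Theorem \ref{freeness of induced action} for freeness, compactness of $G$ for properness, and Proposition \ref{cont. action III} for the orbit and quotient properties. Your caution about the local compactness hypothesis in Proposition \ref{cont. action III}(b) is a valid observation but is not actually needed here, since the spectrum of a CIA is compact (cf. Theorem \ref{bi 04}), so that hypothesis is automatically satisfied.
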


\begin{proof}
\,\,\,We first recall from Remark \ref{equicont} that $\Gamma_A$ is equicontinuous. Hence, Lemma \ref{cont. action I} and Proposition \ref{cont. action II} imply that the map $\sigma$ is continuous. Further, we note that the map $\sigma$ is proper. Indeed, this follows from the compactness of $G$. In view of Theorem \ref{freeness of induced action}, we conclude that $\sigma$ is free. Therefore, $\Gamma_A$ is a free and proper $G$-space which is Hausdorff and thus the claim follows from Proposition \ref{cont. action III} (a) and (b). 
\end{proof}

\begin{corollary}
Let $A$ be a commutative CIA and $G$ a compact abelian group. Furthermore, let $(A,G,\alpha)$ be a dynamical system. If each isotypic component $A_{\varphi}$ contains an invertible element, then we obtain a topological principal bundle $(\Gamma_A,\Gamma_A/G,G,\sigma,\pr)$.
\end{corollary}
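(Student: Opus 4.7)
The plan is to combine two results that have already been established in this section. First I would invoke Proposition \ref{freeness for compact abelian groups II}: since $A$ is commutative, $G$ is a compact abelian group, and each isotypic component $A_{\varphi}$ (for $\varphi \in \widehat{G}$) contains an invertible element, this proposition immediately yields that the dynamical system $(A,G,\alpha)$ is free in the sense of Definition \ref{free dynamical systems}. Note that the hypothesis that $G$ be abelian is essential here, since it is only for compact abelian groups that the freeness condition has been reformulated in terms of invertible elements in the isotypic components $A_{\varphi}$.

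Having established freeness, the second step is to apply Theorem \ref{shit theorem} to the free dynamical system $(A,G,\alpha)$. The hypotheses of that theorem are precisely that $A$ be a commutative CIA, that $G$ be a compact group, and that the dynamical system be free. All three are now in place: $A$ is a commutative CIA by assumption, $G$ is compact (being compact abelian), and freeness has just been verified. The theorem then directly produces the topological principal bundle $(\Gamma_A, \Gamma_A/G, G, \sigma, \pr)$, with $\sigma$ the induced action on the spectrum and $\pr$ the canonical orbit map.

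Since both of the cited results do the substantive work, there is no genuine obstacle to overcome; the corollary is merely the assembled consequence. If one wanted to make the argument entirely self-contained, the main points that would need to be revisited are: (i) the equicontinuity of $\Gamma_A$ for a CIA (Remark \ref{equicont}), which underlies the continuity of the evaluation map and hence of the induced action $\sigma$ via Lemma \ref{cont. action I} and Proposition \ref{cont. action II}; (ii) the freeness of $\sigma$, which follows from Theorem \ref{freeness of induced action} applied to the free dynamical system; and (iii) the properness of $\sigma$, which is automatic because $G$ is compact. These three ingredients together with Proposition \ref{cont. action III} then deliver the topological principal bundle structure.
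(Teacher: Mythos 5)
Your proof is correct and follows exactly the paper's own argument: the paper likewise deduces the corollary by combining Proposition \ref{freeness for compact abelian groups II} (to get freeness from invertible elements in the isotypic components) with Theorem \ref{shit theorem}. The additional unpacking of the ingredients of Theorem \ref{shit theorem} is accurate but not needed.
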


\begin{proof}
\,\,\,This assertion immediately follows from Proposition \ref{freeness for compact abelian groups II} and Theorem \ref{shit theorem}.
\end{proof}

\section{An Open Problem and Applications to Representation Theory}

This section is dedicated to the following interesting open problem:

\begin{open problem}{\bf (Primitive ideals).}\index{Primitive Ideals}
Theorem \ref{freeness of induced action} may be viewed as a first step towards a geometric approach to noncommutative principal bundles. Nevertheless, in order to get a broader picture, it might be helpful to get rid of the characters. This might be done with the help of primitive ideals, i.e., kernels of irreducible representations $(\rho,W)$ of the (locally convex) algebra $A$, since they can be considered as generalizations of characters (points). To be more precise:

Let $(A,G,\alpha)$ be a dynamical system, consisting of a (not necessarily commutative) unital locally convex algebra $A$, a topological group $G$ and a group homomorphism $\alpha:G\rightarrow\Aut(A)$, which induces a continuous action of $G$ on $A$. Further, let $\Prim(A)$\sindex[n]{$\Prim(A)$} denote the set of primitive ideals of $A$. As already mentioned, note that if $A$ is commutative, then $\Prim(A)\cong\Gamma_A$. Do there exist ``geometrically oriented" conditions which ensure that the corresponding action
\[\sigma:\Prim(A)\times G\rightarrow\Prim(A),\,\,\,(I,g)\mapsto\alpha(g).I
\]of $G$ on the primitive ideals $\Prim(A)$ of $A$ is free?
\end{open problem}

An interesting application to representation theory is given by the ``generalized Effros--Hahn Conjecture":

\begin{theorem}{\bf(The generalized Effros--Hahn conjecture).}\index{Theorem!of Effros--Hahn}
Suppose $G$ is an amenable group, $A$ a separable $C^*$-algebra and $(A,G,\alpha)$ a $C^*$-dynamical system. If $G$ acts freely on $\Prim(A)$,  then there is one and only one primitive ideal of the crossed product $A\rtimes_{\alpha}G$ lying over each hull-kernel quasi-orbit in $\Prim(A)$. In particular, if every orbit is also hull-kernel dense, then $A\rtimes_{\alpha}G$ is simple.
\end{theorem}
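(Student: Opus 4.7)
The plan is to follow the strategy of Gootman--Rosenberg for the original Effros--Hahn conjecture, exploiting amenability of $G$ to identify the full and reduced crossed products. The first step is to construct the canonical ``restriction" map
\[
\Phi:\Prim(A\rtimes_{\alpha}G)\longrightarrow\mathcal{Q}(\Prim(A)),\quad P\mapsto \mathrm{hull}(P\cap A),
\]
where $\mathcal{Q}(\Prim(A))$ denotes the set of hull-kernel quasi-orbits (closures of $G$-orbits in the Jacobson topology). One checks that $P\cap A$ is automatically $\alpha$-invariant, so its hull is a closed $G$-invariant subset of $\Prim(A)$, hence contains and is equal to the closure of a single orbit. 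The theorem then amounts to showing that $\Phi$ is a bijection.

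For surjectivity, I would fix a quasi-orbit $Q=\overline{G\cdot I}$ with $I\in\Prim(A)$, pick an irreducible representation $\pi$ of $A$ with $\ker\pi=I$, and form the Rieffel-induced representation $\mathrm{Ind}(\pi)$ of $A\rtimes_{\alpha}G$. Freeness of the action on $\Prim(A)$ makes the stability subgroup of $I$ trivial, so Mackey's imprimitivity/Rieffel's induction produces an irreducible representation of the crossed product; amenability of $G$ is what ensures irreducibility persists on the full crossed product rather than only on the reduced one. Its kernel is a primitive ideal projecting to $Q$ under $\Phi$.

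The main obstacle, and the genuine content of the theorem, is injectivity of $\Phi$, i.e.\ that \emph{every} primitive ideal of $A\rtimes_{\alpha}G$ arises by induction and that quasi-orbit data alone determines it. This is where one needs the Gootman--Rosenberg--Sauvageot disintegration: a primitive ideal $P$ of $A\rtimes_\alpha G$ can be represented by an irreducible representation whose restriction to $A$ disintegrates over $\Prim(A)$ along a quasi-invariant measure concentrated on $Q$; freeness makes all stabilizers trivial, so the disintegrated pieces are of the form $\mathrm{Ind}(\pi_J)$ for $J\in Q$, and one shows that the kernel $P$ depends only on $Q$, not on the point $J$ chosen. This last independence uses continuity of induction together with the fact that a dense subset of $Q$ lies in a single orbit, so inducing from any two such points yields unitarily equivalent representations up to an automorphism of the crossed product coming from $G$, hence the same kernel.

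The simplicity statement is then immediate: if every $G$-orbit is hull-kernel dense in $\Prim(A)$, there is a unique quasi-orbit, namely $\Prim(A)$ itself, so by the bijection $A\rtimes_{\alpha}G$ has a unique primitive ideal; since $\{0\}$ is always primitive in a nonzero $C^{*}$-algebra, this unique ideal must be $\{0\}$, and $A\rtimes_{\alpha}G$ is simple.
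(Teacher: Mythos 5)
The paper does not prove this theorem at all; it simply cites [GoRo79], Corollary 3.3, and your proposal is essentially an outline of that cited proof. At that level the approaches coincide: the well-definedness of the restriction map $\Phi$, surjectivity via Rieffel induction from points (with trivial stabilizers by freeness, and amenability identifying full and reduced crossed products), and injectivity via the Gootman--Rosenberg--Sauvageot disintegration. Be aware, though, that the injectivity step is the entire content of the Effros--Hahn conjecture and you are invoking it rather than proving it -- which is acceptable here only because the paper itself defers to the same reference. Two smaller points: the claim that the hull of $P\cap A$ is the closure of a \emph{single} orbit is not automatic for an arbitrary closed $G$-invariant set; it uses that $P\cap A$ is $G$-prime together with separability of $A$ (so that $\Prim(A)$ is second countable and $G$-prime ideals are exactly kernels of quasi-orbits).

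There is one genuine error in your final step: the assertion that ``$\{0\}$ is always primitive in a nonzero $C^{*}$-algebra'' is false -- for $\mathbb{C}\oplus\mathbb{C}$ the primitive ideals are the two coordinate copies of $\mathbb{C}$, and $\{0\}$ is not among them. The correct argument is that every $C^{*}$-algebra is semiprimitive, i.e.\ the intersection of all its primitive ideals is $\{0\}$; hence if $A\rtimes_{\alpha}G$ has a unique primitive ideal $P$, then $P=\{0\}$, and since every proper closed two-sided ideal is an intersection of the primitive ideals containing it, the only proper closed ideal is $\{0\}$, so the crossed product is simple. With that repair the simplicity statement goes through.
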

 
\begin{proof}
\,\,\,A nice proof of this theorem can be found in [GoRo79], Corollary 3.3.
\end{proof}

\section{Strongly Free Dynamical Systems}

In this final section we introduce a stronger version of freeness for dynamical systems than the one given in Section \ref{section:free dynamical systems} (cf. Definition \ref{free dynamical systems}). In fact, instead of considering arbitrary families $(\pi_j,V_j)_{j\in J}$ of (continuous) point separating representations of a topological group $G$, we restrict our attention to families $(\pi_j,\mathcal{H}_j)_{j\in J}$ of \emph{
unitary irreducible} point separating representations. At this point, we recall that each locally compact group $G$ admits a family of continuous unitary irreducible point separating representations (cf. Theorem \ref{gelfand-raikov}). We show that Theorem \ref{freeness of induced action} and Theorem \ref{characterization of free group actions} stay true in this context of \emph{strongly free} dynamical systems and that Proposition \ref{freeness for compact abelian groups I} actually turns into a definition for strongly free dynamical systems with compact abelian structure group. We close this short section with a nice example.

\begin{definition}\label{free dynamical systems,strong}{\bf (Strongly free dynamical systems).}\index{Dynamical Systems!Strongly Free}
Let $A$ be a commutative unital locally convex algebra and $G$ a topological group. A dynamical system $(A,G,\alpha)$ is called \emph{strongly free} if there exists a family $(\pi_j,\mathcal{H}_j)_{j\in J}$ of unitary irreducible point separating representations of $G$ such that the map
\[\ev^j_{\chi}:=\ev^{\mathcal{H}_j}_{\chi}:\Gamma_A \mathcal{H}_j\rightarrow \mathcal{H}_j,\,\,\,a\otimes v\mapsto\chi(a)\cdot v
\]is surjective for all $j\in J$ and all $\chi\in\Gamma_A$. 
\end{definition}

\begin{proposition}\label{freeness of induced action,strong}{\bf(Freeness of the induced action again).}\index{Freeness!of the Induced Action}
If $(A,G,\alpha)$ is a strongly free dynamical system, then the induced action
\[\sigma:\Gamma_A\times G\rightarrow\Gamma_A,\,\,\,(\chi,g)\mapsto\chi\circ\alpha(g)
\]of $G$ on the spectrum $\Gamma_A$ of $A$ is free. 
\end{proposition}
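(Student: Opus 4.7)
The plan is to mimic, essentially verbatim, the argument of Theorem \ref{freeness of induced action}, since the proof of that result used only two features of the family $(\pi_j,V_j)_{j\in J}$: that its members separate the points of $G$ (via Lemma \ref{princ. bdl. cond.}) and that the evaluation maps $\ev^j_\chi$ are surjective. Both features persist verbatim in Definition \ref{free dynamical systems,strong}, so no new idea should be required.

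First I would fix $\chi_0\in\Gamma_A$ and take an arbitrary $g_0\in G$ in its stabilizer, i.e.\ with $\chi_0\circ\alpha(g_0)=\chi_0$; the goal is $g_0=1_G$. Using strong freeness, I fix a point separating family $(\pi_j,\mathcal{H}_j)_{j\in J}$ of unitary irreducible representations for which each $\ev^j_\chi\colon \Gamma_A\mathcal{H}_j\to\mathcal{H}_j$ is surjective. For an arbitrary $j\in J$ and $v\in\mathcal{H}_j$, I pick $s\in\Gamma_A\mathcal{H}_j$ with $\ev^j_{\chi_0}(s)=v$; by definition of $\Gamma_A\mathcal{H}_j$ this $s$ satisfies the equivariance identity
\[
(\alpha(g_0)\otimes\id_{\mathcal{H}_j})(s)=(\id_A\otimes\pi_j(g_0^{-1}))(s).
\]

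Next I apply $\chi_0\otimes\id_{\mathcal{H}_j}$ to both sides and use $\chi_0\circ\alpha(g_0)=\chi_0$ on the left, which yields
\[
(\chi_0\otimes\id_{\mathcal{H}_j})(s)=\pi_j(g_0^{-1})\bigl((\chi_0\otimes\id_{\mathcal{H}_j})(s)\bigr).
\]
Since $(\chi_0\otimes\id_{\mathcal{H}_j})(s)=\ev^j_{\chi_0}(s)=v$, this gives $\pi_j(g_0)(v)=v$. As $v\in\mathcal{H}_j$ was arbitrary (thanks to surjectivity of $\ev^j_{\chi_0}$) and $j\in J$ was arbitrary, it follows that $\pi_j(g_0)=\id_{\mathcal{H}_j}$ for all $j\in J$, and Lemma \ref{princ. bdl. cond.} applied to the point separating family $(\pi_j,\mathcal{H}_j)_{j\in J}$ forces $g_0=1_G$.

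There is no real obstacle here; the entire argument is a transcription of the proof of Theorem \ref{freeness of induced action} with $V_j$ replaced by $\mathcal{H}_j$, and the extra adjectives \emph{unitary} and \emph{irreducible} play no role at all in the freeness deduction itself (they matter only insofar as Theorem \ref{gelfand-raikov} guarantees existence of such separating families for locally compact $G$). If anything deserves a moment of care, it is the remark that every step used for Theorem \ref{freeness of induced action} is of a purely algebraic nature and does not interact with any additional Hilbert space structure on $\mathcal{H}_j$, so no completeness or unitarity hypothesis has to be invoked.
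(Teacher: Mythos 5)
Your proof is correct and is essentially the paper's argument: the paper simply observes that every strongly free dynamical system is free and cites Theorem \ref{freeness of induced action}, whereas you have re-transcribed that theorem's proof with $V_j$ replaced by $\mathcal{H}_j$. Either way the content is identical, so nothing further is needed.
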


\begin{proof}
\,\,\,This assertion immediately follows from Theorem \ref{freeness of induced action}, since each strongly free dynamical system is free.
\end{proof}

\begin{proposition}\label{characterization of free group actions,strong}{\bf(Characterization of free group actions again).}\index{Characterization of Free Group Actions}
Let $P$ be a manifold, $G$ a compact Lie group and $(C^{\infty}(P),G,\alpha)$ a smooth dynamical system. Then the following statements are equivalent:
\begin{itemize}
\item[\emph{(a)}]
The smooth dynamical system $(C^{\infty}(P),G,\alpha)$ is strongly free.
\item[\emph{(b)}]
The induced smooth group action $\sigma:P\times G\rightarrow P$ is free.
\end{itemize}
In particular, in this situation the concepts of freeness coincide.
\end{proposition}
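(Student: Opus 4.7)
The plan mirrors the proof of Corollary~\ref{characterization of free group actions}, with the additional observation that on compact groups the Peter--Weyl machinery automatically delivers finite-dimensional \emph{unitary irreducible} point-separating representations, which is exactly what is required to upgrade ``free'' to ``strongly free''.

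For the implication (a) $\Rightarrow$ (b), I would invoke Proposition~\ref{freeness of induced action,strong} to conclude that the induced action $\sigma$ on $\Gamma_{C^{\infty}(P)}$ is free, and then transport this along the diffeomorphism $P\cong\Gamma_{C^{\infty}(P)}$ from Lemma~\ref{spec as manifold}, using Remark~\ref{inverse constructions} to identify the two actions.

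The substantive direction is (b) $\Rightarrow$ (a). First, since $G$ is compact, every continuous irreducible representation on a complex Hilbert space is finite-dimensional, and by the Peter--Weyl theorem (or alternatively Theorem~\ref{gelfand-raikov} combined with this finite-dimensionality) the family $(\pi_j,\mathcal{H}_j)_{j\in J}$ of \emph{all} continuous unitary irreducible representations of $G$ separates the points of $G$. I would fix such a family and, for each $j\in J$, form the associated smooth vector bundle $\mathbb{V}_j:=P\times_{\pi_j}\mathcal{H}_j$ over $M:=P/G$ via Construction~\ref{associated vector bundle}; this makes sense because freeness of $\sigma$ together with the compactness of $G$ (hence automatic properness) yields a principal bundle $(P,P/G,G,\pr,\sigma)$ by Remark~\ref{principal bundles II}\,(c). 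By Proposition~\ref{sections of an associated vector bundle}, together with Lemma~\ref{A_phi=gammaC} (or directly Example~\ref{example of section}), there is a natural identification
\[
\Gamma_{C^{\infty}(P)}\mathcal{H}_j\cong C^{\infty}(P,\mathcal{H}_j)^{G}\cong\Gamma\mathbb{V}_j,
\]
under which the evaluation map $\ev_{\delta_p}^{j}$ corresponds to the section-evaluation $s\mapsto s(q(p))\in(\mathbb{V}_j)_{q(p)}\cong\mathcal{H}_j$.

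It then remains to verify that for every $p\in P$ and every $v\in\mathcal{H}_j$ there exists a smooth section $s\in\Gamma\mathbb{V}_j$ with $s(q(p))=[p,v]$. This is a local-to-global step: on a trivializing neighbourhood $U\subseteq M$ of $q(p)$ we have $\mathbb{V}_j|_U\cong U\times\mathcal{H}_j$, so the constant section $m\mapsto[p,v]$ (transported via the local trivialization) provides a local solution, which one extends to all of $M$ by multiplying with a smooth bump function equal to $1$ near $q(p)$ and supported in $U$. This establishes surjectivity of $\ev_{\delta_p}^{j}$ for all $j\in J$ and all $p\in P$, hence strong freeness of $(C^{\infty}(P),G,\alpha)$. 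The main technical obstacle is purely conceptual, namely ensuring that the representations provided by (b) are genuinely unitary irreducible; this is precisely what compactness of $G$ guarantees via Peter--Weyl, so no new analytic difficulty arises beyond what is already present in Theorem~\ref{free action theorem}\,(b).
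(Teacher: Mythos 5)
Your proof is correct and follows essentially the same route as the paper: both reduce the substantive direction (b) $\Rightarrow$ (a) to the associated-bundle-plus-bump-function argument of Theorem~\ref{free action theorem}\,(b), the only difference being that the paper obtains its unitary irreducible point-separating family by applying Weyl's unitarian trick to a single faithful finite-dimensional representation and decomposing it into finitely many irreducibles, whereas you take the full family of all unitary irreducibles via Peter--Weyl. Both choices work equally well, so this is not a genuinely different argument.
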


\begin{proof}
\,\,\,This assertion can be proved similarly to Corollary \ref{characterization of free group actions} (cf. Theorem \ref{free action theorem}): In fact, given a finite-dimensional representation $(\pi,V)$ of $G$, we just have to note that it is possible to find an inner product on $V$ such that $G$ acts by unitary transformations (``Weyl's trick") and that each unitary finite-dimensional representation of $G$ can be decomposed into the (finite) sum of irreducible representations. 
\end{proof}


\begin{proposition}\label{freeness for compact abelian groups,strong}{\bf(The strong freeness condition for compact abelian groups).}\index{Strong Freeness!Condition for Compact Abelian Groups}
Let $A$ be a commutative unital locally convex algebra and $G$ a compact abelian group. A dynamical system $(A,G,\alpha)$ is strongly free in the sense of Definition \ref{free dynamical systems,strong} if and only if the map
\[\ev^{\varphi}_{\chi}: A_{\varphi}\rightarrow \mathbb{C},\,\,\,a\mapsto\chi(a)
\]is surjective for all $\varphi\in\widehat{G}$ and all $\chi\in\Gamma_A$.
\end{proposition}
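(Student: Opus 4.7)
The statement is an equivalence whose proof in both directions runs through the same translation mechanism: by Lemma \ref{G abelian}(b), every irreducible unitary representation of the compact abelian group $G$ is one-dimensional and hence given by a character $\varphi\in\widehat{G}$, and by Lemma \ref{A_phi=gammaC} the associated section module $\Gamma_A\mathbb{C}$ identifies with the isotypic component $A_{\varphi^{-1}}$ in such a way that the abstract evaluation $\ev^{\pi_\varphi}_\chi$ from Definition \ref{free dynamical systems,strong} becomes the concrete map $A_{\varphi^{-1}}\to\mathbb{C}$, $a\mapsto\chi(a)$. The proof therefore reduces to matching up \emph{families} of characters on $G$.

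\textbf{Sufficiency.} Assuming that $\ev^{\varphi}_{\chi}$ is surjective for every $\varphi\in\widehat{G}$ and every $\chi\in\Gamma_A$, I would take the witnessing family for strong freeness to be $\widehat{G}$ itself, considered as a family of one-dimensional unitary representations; it separates points by Proposition \ref{char sep the points}, and the hypothesis applied to $\varphi^{-1}$, combined with Lemma \ref{A_phi=gammaC}, gives the surjectivity required by Definition \ref{free dynamical systems,strong}. This is precisely the argument used in Proposition \ref{freeness for compact abelian groups I} but now with the full dual group playing the role of the separating family.

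\textbf{Necessity.} Conversely, let $(\pi_j,\mathcal{H}_j)_{j\in J}$ witness strong freeness. Lemma \ref{G abelian}(b) forces each $\pi_j$ to be encoded by a character $\varphi_j\in\widehat{G}$, Lemma \ref{A_phi=gammaC} translates the definition into surjectivity of $\ev^{\varphi_j^{-1}}_{\chi}:A_{\varphi_j^{-1}}\to\mathbb{C}$ for all $j\in J$ and all $\chi\in\Gamma_A$, and the fact that $\{\varphi_j\}$ separates points of $G$ forces $\{\varphi_j^{-1}\}$ to generate $\widehat{G}$ as an abelian group. To extend surjectivity from these generators to an arbitrary $\varphi\in\widehat{G}$ I would, as in the argument of Corollary \ref{freeness for compact abelian groups I,5}, factor $\varphi$ as a word in the $\varphi_j^{-1}$'s, pick for the fixed $\chi$ witnesses $a_j\in A_{\varphi_j^{-1}}$ with $\chi(a_j)\neq 0$, and combine them through the graded multiplication $A_{\psi}\cdot A_{\psi'}\subseteq A_{\psi\psi'}$ of Corollary \ref{cor of big peter}(b) to produce an element $a_{\varphi}\in A_{\varphi}$ with $\chi(a_{\varphi})\neq 0$; $\mathbb{C}$-linearity then upgrades non-vanishing to surjectivity.

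\textbf{Main obstacle.} The delicate point is this very last step: a point-separating subset of $\widehat{G}$ generates the \emph{group} but in general not the \emph{monoid} $\widehat{G}$, so the factorisation of $\varphi$ unavoidably involves negative exponents, and forming negative powers of a chosen witness $a_j$ inside $A$ requires invertibility that one does not obtain for free from $\chi(a_j)\neq 0$. Resolving this---either by manufacturing from the given witnesses companion witnesses in the inverse isotypic components $A_{\varphi_j}$ (so that only positive products have to be taken), or by arranging the witnesses to be invertible from the outset in the spirit of Proposition \ref{freeness for compact abelian groups II}---is the technical crux that has to be handled in order to convert the direction ``strongly free'' $\Rightarrow$ ``surjectivity for all $\varphi\in\widehat{G}$'' into a rigorous proof.
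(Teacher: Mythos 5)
Your proposal follows the same route as the paper's proof: reduce to characters via Lemma \ref{G abelian} and Remark \ref{rem G hut}, translate the abstract evaluation maps through Lemma \ref{A_phi=gammaC}, take $\widehat{G}$ itself as the witnessing family for sufficiency, and for necessity pass from a point-separating family to all of $\widehat{G}$ by a generation argument. The sufficiency direction is complete and is exactly the paper's argument.

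The obstacle you single out in the necessity direction is a genuine gap, and you should be aware that the paper's own proof does not close it either: after citing [HoMo06] for the fact that the characters $\varphi_j$ generate $\widehat{G}$, the paper merely asserts that one ``easily concludes'' surjectivity for all $\varphi\in\widehat{G}$, implicitly invoking the mechanism of Corollary \ref{freeness for compact abelian groups I,5}. But the proof of that corollary forms the element $a_{\varphi_{i_1}}^{n_1}\cdots a_{\varphi_{i_k}}^{n_k}$ with $n_i\in\mathbb{Z}$, and for negative $n_i$ this expression is undefined unless $a_{\varphi_i}$ is invertible; surjectivity of $\ev^{\varphi_i}_{\chi}$ onto the one-dimensional target only yields $\chi(a_{\varphi_i})\neq 0$, which in a general commutative unital locally convex algebra does not imply invertibility. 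As you observe, a point-separating family generates $\widehat{G}$ as a group but in general not as a monoid (for $G=\mathbb{T}$ the single faithful character already separates points, yet generates only $\mathbb{N}_0\subseteq\mathbb{Z}$), so negative exponents cannot be avoided. The argument does go through whenever every $\varphi$ lies in the monoid generated by the $\varphi_j^{-1}$ --- for instance when $G$ is finite, so that $\varphi^{-1}=\varphi^{N-1}$ for some $N$, or when the witnessing family is closed under passage to the conjugate representation --- but in general the implication from strong freeness to surjectivity for \emph{all} $\varphi\in\widehat{G}$ requires an additional idea that neither your proposal nor the paper supplies.
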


\begin{proof}
\,\,\,($``\Leftarrow"$) This direction is obvious, since the characters of the group $G$ induce a family of unitary irreducible representations that separate the points of $G$ (cf. Lemma \ref{G abelian}, Proposition \ref{char sep the points} and Lemma \ref{A_phi=gammaC}).

($``\Rightarrow"$) For the other direction we first note that each unitary irreducible representation $(\pi,\mathcal{H})$ of $G$ is one-dimensional by Schur's Lemma (cf. [Ne09], Theorem 4.2.7), i.e., $\pi(g).v=\varphi(g)\cdot v$ for all $g\in G$, $v\in\mathcal{H}$ and some character $\varphi$ of $\widehat{G}$ (cf. Lemma \ref{G abelian} or Remark \ref{rem G hut}). Thus, if the dynamical system $(A,G,\alpha)$ is strongly free and $(\pi_j,\mathcal{H}_j)_{j\in J}$ is a family of unitary irreducible point separating representations of $G$ satisfying the conditions of Definition \ref{free dynamical systems,strong}, then [HoMo06], Corollary 2.3.3.(i) implies that the corresponding characters $\varphi_j$ generate $\widehat{G}$ and from this we easily conclude that the map
\[\ev^{\varphi}_{\chi}: A_{\varphi}\rightarrow \mathbb{C},\,\,\,a\mapsto\chi(a)
\]is surjective for all $\varphi\in\widehat{G}$ and all $\chi\in\Gamma_A$ (cf. Lemma \ref{A_phi=gammaC}).
\end{proof}

\begin{example}
We now want to use Proposition \ref{freeness for compact abelian groups,strong} to show that the action of the group $C_2:=\{-1,+1\}$ on $\mathbb{R}$ defined by
\[\sigma:\mathbb{R}\times C_2\rightarrow\mathbb{R},\,\,\,r.(-1):=\sigma(r,-1):=-r
\]is not free: Indeed, we first note that the map
\[\Psi:C_2\rightarrow\Hom_{\text{gr}}(C_2,\mathbb{T}),\,\,\,\Psi(-1)(-1):=-1
\]is an isomorphism of abelian groups. From this we easily conclude that the isotypic component of the associated smooth dynamical system $(C^{\infty}(\mathbb{R}),C_2,\alpha)$ (cf. Proposition \ref{smoothness of the group action on the algebra of smooth functions})  corresponding to the generator $-1\in C_2$ is given by
\[C^{\infty}(\mathbb{R})_{-1}=\{f:\mathbb{R}\rightarrow\mathbb{C}:\,(\forall r\in\mathbb{R})\,f(-r)=-f(r)\}.
\]
Since $f(0)=0$ for each $f\in C^{\infty}(\mathbb{R})_{-1}$, the map
\[\ev^{-1}_{0}: C^{\infty}(\mathbb{R})_{-1}\rightarrow \mathbb{C},\,\,\,f\mapsto f(0)=0
\]is not surjective showing that $(C^{\infty}(\mathbb{R}),C_2,\alpha)$ is not strongly free (cf. Proposition \ref{freeness for compact abelian groups,strong}). Therefore, Proposition \ref{characterization of free group actions,strong} implies that the action $\sigma$ is not free.
\end{example}

\chapter{Trivial NCP Torus Bundles}\label{trivial ncp torus bundles}

The main goal of this chapter is to present a new, geometrically oriented approach to the noncommutative geometry of trivial principal $\mathbb{T}^n$-bundles based on dynamical systems of the form $(A,\mathbb{T}^n,\alpha)$. We call a dynamical system $(A,\mathbb{T}^n,\alpha)$ a trivial NCP $\mathbb{T}^n$-bundle if each isotypic component contains an invertible element. It turns out that each trivial noncommutative principal $\mathbb{T}^n$-bundle possesses an underlying algebraic structure of a $\mathbb{Z}^n$-graded unital associative algebra, which might be considered as an algebraic counterpart of a trivial NCP $\mathbb{T}^n$-bundle. We then provide a complete classification of this underlying algebraic structure, i.e., we classify all trivial NCP $\mathbb{T}^n$-bundles up to completion.\sindex[n]{$(A,\mathbb{T}^n,\alpha)$} 

\section{The Concept of Trivial NCP Torus Bundles}

In this section we present a geometrically oriented approach to the noncommutative geometry of trivial principal $\mathbb{T}^n$-bundles based on dynamical systems of the form $(A,\mathbb{T}^n,\alpha)$. We will in particular see that this approach reproduces the classical geometry of trivial principal $\mathbb{T}^n$-bundles: If $A=C^{\infty}(P)$ for some manifold $P$, then we recover a trivial principal $\mathbb{T}^n$-bundle and, conversely, each trivial principal bundle $(P,M,\mathbb{T}^n,q,\sigma)$ gives rise to a trivial NCP $\mathbb{T}^n$-bundle of the form $(C^{\infty}(P),\mathbb{T}^n,\alpha)$.\sindex[n]{$(P,M,\mathbb{T}^n,q,\sigma)$}\sindex[n]{$(C^{\infty}(P),\mathbb{T}^n,\alpha)$} 

\begin{notation} For fixed $n\in\mathbb{N}$ we define $I_n:=\{1,\ldots,n\}$\sindex[n]{$I_n$}. We write ${\bf k}=(k_1,\ldots,k_n)$ for elements of $\mathbb{Z}^n$ and think of them as multi-indices. In particular, we write $e_i=(0,\ldots,1,\ldots,0)$, $i\in I_n$, for the canonical basis of $\mathbb{Z}^n$ and ${\bf 0}=(0,\ldots,0)$ for its unit element. If $A$ is an algebra, $a=(a_1,\ldots,a_n)\in A^n$ and ${\bf k}\in\mathbb{Z}^n$, then we write $$a^{\bf k}:=a_1^{k_1}\cdots a_n^{k_n}.$$
\end{notation}


\begin{lemma}\label{widehat T^n}{\bf(The character group of $\mathbb{T}^n$).}\index{Character Group!of $\mathbb{T}^n$}
If $G=\mathbb{T}^n$, then the map
\[\Psi:\mathbb{Z}^n\rightarrow\widehat{G},\,\,\,\Psi({\bf k})(z)=z^{\bf k}
\]is an isomorphism of abelian groups.
\end{lemma}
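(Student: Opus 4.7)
The plan is to verify the three standard properties of $\Psi$: well-definedness as a group homomorphism, injectivity, and surjectivity. The last of these is the one that carries real content; the other two are formal.

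First I would check that $\Psi$ is a group homomorphism into $\widehat{G}$. For each fixed $\mathbf{k} \in \mathbb{Z}^n$, the map $z = (z_1, \ldots, z_n) \mapsto z^{\mathbf{k}} = z_1^{k_1} \cdots z_n^{k_n}$ is continuous $\mathbb{T}^n \to \mathbb{T}$ and multiplicative (componentwise), so $\Psi(\mathbf{k}) \in \widehat{G}$. That $\Psi(\mathbf{k} + \mathbf{k}') = \Psi(\mathbf{k}) \cdot \Psi(\mathbf{k}')$ follows from the identity $z^{\mathbf{k}+\mathbf{k}'} = z^{\mathbf{k}} z^{\mathbf{k}'}$ in $\mathbb{T}$.

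For injectivity, if $\Psi(\mathbf{k})$ is the trivial character, then evaluating at a generic point (for example $z_j = e^{2\pi i t_j}$ with $(t_1, \ldots, t_n) \in \mathbb{R}^n$) forces $\sum_j k_j t_j \in \mathbb{Z}$ for all $(t_j)$, which immediately yields $\mathbf{k} = \mathbf{0}$. Alternatively one just observes that evaluating at $z = (e^{i\theta_1}, \ldots, e^{i\theta_n})$ for small $\theta_j$ shows that $k_j = 0$ for each $j$.

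The main step is surjectivity, and the hard part amounts to the classical fact that every continuous character of the circle $\mathbb{T}$ has the form $z \mapsto z^k$ for some $k \in \mathbb{Z}$. I would first reduce to this one-dimensional case: given $\chi \in \widehat{\mathbb{T}^n}$, define $\chi_j \colon \mathbb{T} \to \mathbb{T}$ by $\chi_j(w) := \chi(1, \ldots, w, \ldots, 1)$ where $w$ sits in the $j$-th slot. Then $\chi_j \in \widehat{\mathbb{T}}$, and by multiplicativity of $\chi$ one has $\chi(z_1, \ldots, z_n) = \prod_{j=1}^n \chi_j(z_j)$. The one-dimensional case itself I would handle by lifting $\chi_j$ through the covering $\mathbb{R} \to \mathbb{T}$, $t \mapsto e^{2\pi i t}$: the lift is a continuous additive homomorphism $\mathbb{R} \to \mathbb{R}$, hence of the form $t \mapsto k_j t$ with $k_j \in \mathbb{R}$, and the integrality condition $\chi_j(1) = 1$ forces $k_j \in \mathbb{Z}$. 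Consequently $\chi_j(z) = z^{k_j}$ and $\chi = \Psi(k_1, \ldots, k_n)$, completing the proof. Since this one-dimensional duality is standard (e.g.\ [HoMo06]), I would likely just cite it rather than reprove it in detail.
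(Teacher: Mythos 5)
Your proof is correct and follows essentially the same route as the paper's: an easy check of the homomorphism and injectivity claims, with surjectivity handled by lifting through the covering $\mathbb{R}\to\mathbb{T}$ and using that continuous additive maps of real vector spaces are linear. The only cosmetic difference is that you factor the character into its $n$ one-dimensional components first, whereas the paper lifts directly over $\mathbb{R}^n$ and invokes linearity of continuous homomorphisms $\mathbb{R}^n\to\mathbb{R}$.
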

\begin{proof}
\,\,\,An easy calculation shows that $\Psi$ is a homomorphism of abelian groups. Moreover, $\Psi({\bf k})(z)=1$ holds for all $z\in\mathbb{T}^n$ if and only if ${\bf k}={\bf 0}$. Hence, $\Psi$ is injective. The surjectivity of $\Psi$ follows from usual covering theory and the fact that each group homomorphism from $\mathbb{R}^n$ to $\mathbb{R}$ is of the form
\[x\mapsto c_1x_1+\cdot+c_nx_n
\]for some $c\in\mathbb{R}^n$.
\end{proof}


\begin{remark}\label{set of gen for widehat T^n}
Note that the previous lemma implies that the elements $\chi_{i}:=\Psi(e_i)$, $i\in I_n$, define a set of generators of the dual group of $\mathbb{T}^n$.
\end{remark}

\begin{notation}
In the following we will identify the character group of $\mathbb{T}^n$ with $\mathbb{Z}^n$ via the isomorphism $\Psi$ of Lemma \ref{widehat T^n}. In particular, if $A$ is a unital locally convex algebra and $(A,\mathbb{T}^n,\alpha)$ a dynamical system, then we write
\[A_{\bf k}:=A_{\Psi({\bf k})}=\{a\in A:\,(\forall z\in\mathbb{T}^n)\,\,\alpha(z).a=z^{\bf k}\cdot a\}
\]for the isotypic component corresponding to ${\bf k}\in\mathbb{Z}^n$. For $i\in I_n$ we write $A_i:=A_{\Psi(e_i)}$ 
\end{notation}

\begin{proposition}\label{freeness for T^n I}{\bf(The freeness condition for $\mathbb{T}^n$).}\index{Freeness!Condition for $\mathbb{T}^n$}
Let $A$ be a commutative unital locally convex algebra. A dynamical system $(A,\mathbb{T}^n,\alpha)$ is free in the sense of Definition \ref{free dynamical systems} if the map
\[\ev^{\bf k}_{\chi}: A_{\bf k}\rightarrow \mathbb{C},\,\,\,a\mapsto\chi(a)
\]is surjective for all ${\bf k}\in\mathbb{Z}^n$ and all $\chi\in\Gamma_A$.
\end{proposition}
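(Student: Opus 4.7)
The plan is to view this as a direct specialization of the general freeness criterion for compact abelian groups (Proposition \ref{freeness for compact abelian groups I}) to the particular group $G=\mathbb{T}^n$. Since Proposition \ref{freeness for compact abelian groups I} is formulated in terms of the character group $\widehat{G}$ and its associated isotypic decomposition, the only work to do is to translate its hypothesis across the identification $\widehat{\mathbb{T}^n}\cong\mathbb{Z}^n$.

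Concretely, I would proceed in two short steps. First, I invoke Lemma \ref{widehat T^n}, which supplies the isomorphism $\Psi:\mathbb{Z}^n\to\widehat{\mathbb{T}^n}$, ${\bf k}\mapsto(z\mapsto z^{\bf k})$. Second, I note that by our notational convention $A_{\bf k}$ is literally defined to be $A_{\Psi({\bf k})}$, so the surjectivity of $\ev^{\bf k}_\chi:A_{\bf k}\to\mathbb{C}$ for all ${\bf k}\in\mathbb{Z}^n$ and all $\chi\in\Gamma_A$ is exactly the surjectivity of $\ev^\varphi_\chi:A_\varphi\to\mathbb{C}$ for all $\varphi\in\widehat{\mathbb{T}^n}$ and all $\chi\in\Gamma_A$. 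Hence the hypothesis of Proposition \ref{freeness for compact abelian groups I} is satisfied and that proposition yields that $(A,\mathbb{T}^n,\alpha)$ is free in the sense of Definition \ref{free dynamical systems}.

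There is essentially no obstacle here; the statement is a bookkeeping corollary. The only point worth flagging explicitly, to avoid any ambiguity, is that $\Psi$ is a group isomorphism (and in particular a bijection onto $\widehat{\mathbb{T}^n}$), so ranging ${\bf k}$ over all of $\mathbb{Z}^n$ genuinely ranges $\Psi({\bf k})$ over all of $\widehat{\mathbb{T}^n}$ — otherwise one might worry that surjectivity is only being tested on a subfamily of characters.
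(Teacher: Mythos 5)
Your proof is correct and follows exactly the paper's own route: specialize Proposition \ref{freeness for compact abelian groups I} to $G=\mathbb{T}^n$ via the isomorphism $\Psi:\mathbb{Z}^n\to\widehat{\mathbb{T}^n}$ of Lemma \ref{widehat T^n}. Your explicit remark that $\Psi$ is a bijection, so that ranging over ${\bf k}\in\mathbb{Z}^n$ exhausts all characters, is a useful clarification of a point the paper leaves implicit.
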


\begin{proof}
\,\,\,The assertion is a consequence of Proposition \ref{freeness for compact abelian groups I} and Lemma \ref{widehat T^n}.
\end{proof}

\begin{proposition}\label{freeness for T^n II}{\bf(Invertible elements in isotypic components).}
Let $A$ be a commutative unital locally convex algebra and $(A,\mathbb{T}^n,\alpha)$ a dynamical system. If each isotypic component $A_{\bf k}$ contains an invertible element, then the dynamical system $(A,\mathbb{T}^n,\alpha)$ is free.
\end{proposition}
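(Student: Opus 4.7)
The statement is the specialization of Proposition \ref{freeness for compact abelian groups II} to the compact abelian group $G = \mathbb{T}^n$, so no genuinely new argument is needed; the plan is to deduce it cleanly from the results already established in this chapter.

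First, I would invoke Lemma \ref{widehat T^n} to identify the character group $\widehat{\mathbb{T}^n}$ with $\mathbb{Z}^n$ via the isomorphism $\Psi$. Under this identification the isotypic decomposition of the dynamical system $(A,\mathbb{T}^n,\alpha)$ indexed by $\widehat{\mathbb{T}^n}$ becomes precisely the decomposition indexed by $\mathbb{Z}^n$ used in the statement, i.e.\ $A_{\Psi(\mathbf{k})} = A_{\mathbf{k}}$ for every $\mathbf{k}\in\mathbb{Z}^n$. Thus the hypothesis that each $A_{\mathbf{k}}$ contains an invertible element translates verbatim into the hypothesis of Proposition \ref{freeness for compact abelian groups II}.

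Applying Proposition \ref{freeness for compact abelian groups II} then yields that $(A,\mathbb{T}^n,\alpha)$ is free. Alternatively, one can give a direct one-line verification via Proposition \ref{freeness for T^n I}: for any $\mathbf{k}\in\mathbb{Z}^n$, pick an invertible element $a_{\mathbf{k}}\in A_{\mathbf{k}}$; then for each $\chi\in\Gamma_A$ the value $\chi(a_{\mathbf{k}})$ is a nonzero complex number, so the evaluation map $\ev^{\mathbf{k}}_{\chi}\colon A_{\mathbf{k}}\to\mathbb{C}$ is surjective, which is exactly the sufficient condition of Proposition \ref{freeness for T^n I}.

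There is no main obstacle: the work was already done in Section 6.5, and the present proposition merely records the $\mathbb{T}^n$-case in the notation $A_{\mathbf{k}}$ that will be used throughout Chapter 7. In the same spirit, one could also note that by Remark \ref{set of gen for widehat T^n} it would suffice to assume that each of the finitely many components $A_i$, $i\in I_n$, contains an invertible element, by appealing to Proposition \ref{set of gen}; I would include this observation as a brief remark after the proof, since it is the formulation most useful for verifying the hypothesis in concrete examples.
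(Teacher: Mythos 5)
Your proposal is correct and matches the paper: the paper's own proof is exactly your "direct one-line verification," namely that an invertible $a_{\mathbf{k}}\in A_{\mathbf{k}}$ satisfies $\chi(a_{\mathbf{k}})\neq 0$ for all $\chi\in\Gamma_A$, so the evaluation maps are surjective and Proposition \ref{freeness for T^n I} applies. Your first route via Proposition \ref{freeness for compact abelian groups II} together with Lemma \ref{widehat T^n} is the same argument in the general compact abelian setting, so there is nothing genuinely different here.
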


\begin{proof}
\,\,\,The assertion easily follows from Proposition \ref{freeness for T^n I}. Indeed, if $a_{\bf k}\in A_{\bf k}$ is invertible, then $\chi(a)\neq 0$ for all $\chi\in\Gamma_A$.
\end{proof}

\begin{remark}
Note that it is possible to ask for invertible elements in the isotypic components even if the algebra A is noncommutative.
\end{remark}

\begin{corollary}\label{set of gen again}
Let $A$ be a unital locally convex algebra and $(A,\mathbb{T}^n,\alpha)$ a dynamical system. Then the following two statements are equivalent:
\begin{itemize}
\item[\emph{(a)}]
$A_{\bf k}$ contains invertible elements for all ${\bf k}\in\mathbb{Z}^n$.
\item[\emph{(b)}]
$A_i$ contains invertible elements for all $i\in I_n$.
\end{itemize}
In particular, if $A$ is commutative and one of the statements hold, then the dynamical system $(A,\mathbb{T}^n,\alpha)$ is free.
\end{corollary}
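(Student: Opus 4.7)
The plan is to deduce this corollary directly from Proposition \ref{set of gen}, which has already been established for arbitrary compact abelian Lie groups. By Lemma \ref{widehat T^n}, the character group $\widehat{\mathbb{T}^n}$ is isomorphic to $\mathbb{Z}^n$ via $\Psi$, and Remark \ref{set of gen for widehat T^n} identifies the elements $\chi_i := \Psi(e_i)$ for $i \in I_n$ as a finite generating set of $\widehat{\mathbb{T}^n}$. Under this identification, $A_{\bf k}$ corresponds to the isotypic component $A_{\Psi({\bf k})}$ and, in particular, $A_i$ corresponds to $A_{\chi_i}$. Thus the equivalence of (a) and (b) is an immediate specialization of Proposition \ref{set of gen} to $G = \mathbb{T}^n$ with the chosen generating set.

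For completeness I would also sketch the direct argument, which is short and instructive. The implication (a)$\Rightarrow$(b) is trivial since $e_i \in \mathbb{Z}^n$. For (b)$\Rightarrow$(a), pick invertible elements $a_i \in A_i$ for each $i \in I_n$ and, for ${\bf k} = (k_1, \ldots, k_n) \in \mathbb{Z}^n$, define
\[
a^{\bf k} := a_1^{k_1} \cdots a_n^{k_n},
\]
where negative exponents are interpreted via the inverses of the $a_i$. Since each $\alpha(z)$ is an algebra automorphism with $\alpha(z).a_i = z_i \cdot a_i$, one checks inductively that $\alpha(z).a_i^{k_i} = z_i^{k_i} \cdot a_i^{k_i}$ for all $k_i \in \mathbb{Z}$, and hence
\[
\alpha(z).a^{\bf k} = z_1^{k_1} \cdots z_n^{k_n} \cdot a^{\bf k} = z^{\bf k} \cdot a^{\bf k},
\]
so that $a^{\bf k} \in A_{\bf k}$. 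As a product of invertible elements, $a^{\bf k}$ is itself invertible.

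The final assertion, under the additional commutativity hypothesis, is an immediate consequence of Proposition \ref{freeness for T^n II}: once (a) holds, every isotypic component $A_{\bf k}$ contains an invertible element, and so the dynamical system $(A,\mathbb{T}^n,\alpha)$ is free.

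There is no real obstacle here; the only point requiring a moment's care is that, although $A$ need not be commutative, the scalars $z^{\bf k}$ produced by $\alpha(z)$ on a product of elements from different isotypic components are unaffected by the noncommutativity — they simply pull out of the product because $\alpha(z)$ is an algebra automorphism. Accordingly, the whole argument is essentially bookkeeping, and the substantive content is entirely inherited from Proposition \ref{set of gen} and Proposition \ref{freeness for T^n II}.
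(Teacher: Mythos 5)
Your proof is correct and matches the paper's argument: the paper likewise dismisses (a)$\Rightarrow$(b) as trivial, obtains (b)$\Rightarrow$(a) by forming the products $a^{\bf k}=a_1^{k_1}\cdots a_n^{k_n}$ of fixed invertible elements $a_i\in A_i$, and derives the final assertion from Proposition \ref{freeness for T^n II}. Your additional remark that the statement is also a specialization of Proposition \ref{set of gen} via the identification $\widehat{\mathbb{T}^n}\cong\mathbb{Z}^n$ is accurate but not needed.
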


\begin{proof}
\,\,\,(a) $\Rightarrow$ (b): This direction is trivial.

(b) $\Rightarrow$ (a): For each $i\in I_n$ we choose an invertible element $a_i\in A_i$ and put $a:=(a_1,\ldots,a_n)$. Now, if ${\bf k}\in\mathbb{Z}^n$, then $a^{\bf k}\in A_k$ is invertible.

The last assertion is a direct consequence of Proposition \ref{freeness for T^n II}.
\end{proof}

We will now introduce a (reasonable) definition of trivial NCP $\mathbb{T}^n$-bundles:



\begin{definition}{\bf(Trivial NCP $\mathbb{T}^n$-bundles).}\label{trivial noncomm T^n-bundles}\index{Trivial NCP!$\mathbb{T}^n$-Bundles} Let $A$ be a unital locally convex algebra. A (smooth) dynamical system $(A,\mathbb{T}^n,\alpha)$ is called a (\emph{smooth}) \emph{trivial NCP $\mathbb{T}^n$-bundle}, if each isotypic component $A_{\bf k}$ contains an invertible element. 
\end{definition}

\begin{proposition}\label{structure of trivial noncomm T^n-bundles}{\bf(The algebraic skeleton).}\index{Algebraic!Skeleton}
Let $A$ be a complete unital locally convex algebra and $(A,\mathbb{T}^n,\alpha)$ a trivial NCP $\mathbb{T}^n$-bundle. If $B:=A_{\bf 0}$, then the following assertions hold:
\begin{itemize}
\item[\emph{(a)}]
If $a_{\bf k}\in A_{\bf k}$ is an invertible element for each $k\in\mathbb{Z}^n$, then 
\[A_{\emph{\text{fin}}}=\bigoplus_{{\bf k}\in\mathbb{Z}^n}a_{\bf k}B.
\]
\item[\emph{(b)}]
If $a_i\in A_i$ is invertible for each $i\in I_n$ and $a:=(a_1,\ldots,a_n)\in A^n$, then 
\[A_{\emph{\text{fin}}}=\bigoplus_{{\bf k}\in\mathbb{Z}^n}a^{\bf k}B.
\]
\end{itemize}
\end{proposition}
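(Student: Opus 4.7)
The plan is to combine the Fourier decomposition of Theorem \ref{big peter} with the multiplicativity of the isotypic components and the invertibility assumption to identify every isotypic component with a free $B$-module of rank one.

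First I would apply Theorem \ref{big peter} to the dynamical system $(A,\mathbb{T}^n,\alpha)$: since $A$ is complete, it is $\mathbb{T}^n$-complete by Remark \ref{sources of G-complete spaces 1}, and together with the identification $\widehat{\mathbb{T}^n}\cong\mathbb{Z}^n$ from Lemma \ref{widehat T^n} this yields
\[A_{\text{fin}}=\bigoplus_{{\bf k}\in\mathbb{Z}^n}A_{\bf k},\]
where the sum is already algebraically direct. Hence for part (a) it suffices to show the identity $A_{\bf k}=a_{\bf k}B$ of $B$-modules for each single ${\bf k}\in\mathbb{Z}^n$.

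Next I would verify that $a_{\bf k}^{-1}\in A_{-{\bf k}}$: applying $\alpha(z)$, which is an algebra automorphism, to $a_{\bf k}a_{\bf k}^{-1}=1_A$ and using $\alpha(z).a_{\bf k}=z^{\bf k}\cdot a_{\bf k}$ gives $\alpha(z).a_{\bf k}^{-1}=z^{-{\bf k}}\cdot a_{\bf k}^{-1}$. Combined with the inclusion $A_{\bf k}\cdot A_{\bf k'}\subseteq A_{{\bf k}+{\bf k'}}$ from Corollary \ref{cor of big peter}\,(b), this shows that for any $x\in A_{\bf k}$ the element $b:=a_{\bf k}^{-1}x$ belongs to $A_{-{\bf k}}\cdot A_{\bf k}\subseteq A_{\bf 0}=B$, and $a_{\bf k}b=x$. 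Conversely, the same multiplicativity gives $a_{\bf k}B\subseteq A_{\bf k}\cdot A_{\bf 0}\subseteq A_{\bf k}$. This establishes $A_{\bf k}=a_{\bf k}B$ and hence part (a).

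For part (b) I would first check that $a^{\bf k}=a_1^{k_1}\cdots a_n^{k_n}$ is a well-defined invertible element of $A_{\bf k}$: each $a_i$ is invertible, so negative powers make sense; since $a_i\in A_{e_i}$ implies $a_i^{k_i}\in A_{k_ie_i}$ by the iterated application of Corollary \ref{cor of big peter}\,(b) (together with the argument of the previous paragraph for $k_i<0$), we obtain $a^{\bf k}\in A_{\bf k}$ and it is invertible as a product of invertibles. Substituting $a_{\bf k}:=a^{\bf k}$ into part (a) then yields the claim. The only mildly delicate point is the passage between invertibility and grading in the noncommutative setting, but this is handled uniformly by the observation that $\alpha(z)$ is an \emph{algebra} automorphism; everything else is a routine combination of the already established Fourier decomposition and the multiplicative structure of the isotypic components.
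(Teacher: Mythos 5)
Your proposal is correct and follows essentially the same route as the paper: the Peter--Weyl decomposition $A_{\text{fin}}=\bigoplus_{\bf k}A_{\bf k}$ from Theorem \ref{big peter} combined with the observation that multiplication by the invertible element $a_{\bf k}$ identifies $B$ with $A_{\bf k}$ (this is exactly Proposition \ref{iso of A_1-modules}, which you re-prove in place rather than cite), and for (b) the fact that $a^{\bf k}$ is an invertible element of $A_{\bf k}$ (the content of Corollary \ref{set of gen again}). Your explicit check that $a_{\bf k}^{-1}\in A_{-{\bf k}}$ via the automorphism property of $\alpha(z)$ is a welcome detail that the paper leaves implicit.
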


\begin{proof}
\,\,\,(a) The first assertion follows from Definition \ref{trivial noncomm T^n-bundles}, Theorem \ref{big peter} (a) and Proposition \ref{iso of A_1-modules}.

(b) The second assertion is a consequence of Corollary \ref{set of gen again} and part (a) since $a^{\bf k}\in A_{\bf k}$ for each ${\bf k}\in\mathbb{Z}^n$.
\end{proof}

\begin{proposition}\label{transformation triples induce free actions}
If $P$ is a manifold and $(C^{\infty}(P),\mathbb{T}^n,\alpha)$ a smooth trivial NCP $\mathbb{T}^n$-bundle, then the induced smooth action map $\sigma$ of Proposition \ref{smoothness of the group action on the set of characters} is free and proper. In particular, we obtain a principal bundle $(P,P/\mathbb{T}^n,\mathbb{T}^n,\pr,\sigma)$.
\end{proposition}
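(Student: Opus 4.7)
The plan is simply to assemble results already established earlier in the thesis; there is no new technical content to prove. First I would observe that $A := C^{\infty}(P)$ is a commutative unital Fr\'echet algebra and $\mathbb{T}^n$ is a compact abelian Lie group, so the framework of Sections \ref{section:free dynamical systems} and \ref{FACAG} applies. By Definition \ref{trivial noncomm T^n-bundles} the hypothesis provides an invertible element in each isotypic component $A_{\bf k}$.

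Next, I would apply Proposition \ref{freeness for T^n II} (a specialization of Proposition \ref{freeness for compact abelian groups II} to $G=\mathbb{T}^n$, using Lemma \ref{widehat T^n} to identify $\widehat{\mathbb{T}^n}$ with $\mathbb{Z}^n$) to conclude that the smooth dynamical system $(C^{\infty}(P),\mathbb{T}^n,\alpha)$ is free in the sense of Definition \ref{free dynamical systems}. Then Theorem \ref{freeness of induced action} yields freeness of the induced action
\[
\sigma \colon \Gamma_A \times \mathbb{T}^n \to \Gamma_A,\quad (\chi,z)\mapsto \chi\circ\alpha(z).
\]
Under the identification $P \cong \Gamma_{C^{\infty}(P)}$ provided by Lemma \ref{spec as manifold} (which is compatible with the two actions by the very construction in Proposition \ref{smoothness of the group action on the set of characters}), this transports to freeness of the smooth action $\sigma\colon P\times\mathbb{T}^n\to P$.

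Properness will be automatic since $\mathbb{T}^n$ is compact: every continuous action of a compact group on a Hausdorff space is proper. Combining freeness and properness with the Quotient Theorem recalled in Remark \ref{principal bundles II}\,(c) then yields the principal bundle $(P,P/\mathbb{T}^n,\mathbb{T}^n,\pr,\sigma)$. Alternatively, one may invoke Theorem \ref{free action theorem}\,(a), which packages exactly this implication in a single step. There is no real obstacle here; the only point that deserves a line of explicit mention is the compatibility of the identification $P\cong\Gamma_{C^{\infty}(P)}$ with the $\mathbb{T}^n$-actions on both sides, so that ``free dynamical system'' translates into ``free group action on $P$''.
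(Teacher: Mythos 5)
Your proposal is correct and follows essentially the same route as the paper's own proof: Definition \ref{trivial noncomm T^n-bundles} plus Proposition \ref{freeness for T^n II} gives freeness of the dynamical system, Theorem \ref{freeness of induced action} transfers this to freeness of $\sigma$, properness is automatic by compactness of $\mathbb{T}^n$, and the Quotient Theorem finishes. Your extra remark on the compatibility of the identification $P\cong\Gamma_{C^{\infty}(P)}$ with the two actions is a reasonable point to make explicit but adds nothing beyond what the paper implicitly uses.
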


\begin{proof}
\,\,\,The freeness of the map $\sigma$ follows directly from Definition \ref{trivial noncomm T^n-bundles}, Proposition \ref{freeness for T^n II} and Theorem \ref{freeness of induced action}. Its properness is automatic, since $\mathbb{T}^n$ is compact. The last claim now follows from the Quotient Theorem.
\end{proof}

\begin{lemma}\label{Aut(A)=Diff(A)}
If $P$ is a manifold and $\phi:C^{\infty}(P)\rightarrow C^{\infty}(P)$ an algebra automorphism, then there is a diffeomorphism $\tau:P\rightarrow P$ such that $\phi(f):=f\circ\tau^{-1}$ for all $f\in C^{\infty}(P)$.
\end{lemma}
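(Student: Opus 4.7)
The plan is to reduce the statement to the already-established identification $P\cong \Gamma_{C^{\infty}(P)}$ of Lemma~\ref{spec as manifold} (based on Corollary~\ref{spec of C(M,K) set} and Proposition~\ref{spec of C(M) top}) and to let the automorphism $\phi$ act on this spectrum by pull-back. More precisely, I would first observe that an algebra automorphism automatically induces a self-map
\[\widehat{\phi}:\Gamma_{C^{\infty}(P)}\rightarrow\Gamma_{C^{\infty}(P)},\,\,\,\chi\mapsto\chi\circ\phi,
\]
which is a bijection (its inverse being induced by $\phi^{-1}$). Transporting this through the bijection $p\mapsto\delta_p$ produces a bijection $\sigma:P\rightarrow P$ characterized by $\delta_{\sigma(p)}=\delta_p\circ\phi$. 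The candidate diffeomorphism is then $\tau:=\sigma^{-1}$, and the identity $\phi(f)=f\circ\tau^{-1}$ becomes tautological: for every $f\in C^{\infty}(P)$ and $p\in P$,
\[(f\circ\sigma)(p)=\delta_{\sigma(p)}(f)=\delta_p(\phi(f))=\phi(f)(p).
\]

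Next I would verify that $\sigma$ is continuous: if $p_{\alpha}\to p$ in $P$, then $\delta_{p_{\alpha}}\to\delta_p$ pointwise on $C^{\infty}(P)$ (by Proposition~\ref{spec of C(M) top}), hence $\delta_{p_{\alpha}}\circ\phi\to\delta_p\circ\phi$ pointwise, so $\sigma(p_{\alpha})\to\sigma(p)$. To upgrade continuity to smoothness I would appeal to Lemma~\ref{characterization of smooth maps}: for any real-valued $g\in C^{\infty}(P,\mathbb{R})\subseteq C^{\infty}(P)$, the identity above yields $g\circ\sigma=\phi(g)$, which lies in $C^{\infty}(P)$ and is therefore smooth (its real-valuedness is forced by that of the left-hand side, so the hypothesis of Lemma~\ref{characterization of smooth maps} is satisfied). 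The same reasoning applied to $\phi^{-1}$ shows that $\sigma^{-1}$ is smooth, so $\sigma$ (and hence $\tau$) is a diffeomorphism.

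The only real content beyond bookkeeping is the smoothness step, and I expect that to be the main obstacle. The delicate point is that we start only with an algebraic automorphism (no a priori continuity of $\phi$ on the Fr\'{e}chet algebra), and yet we must conclude that the induced point-set bijection of $P$ is not merely continuous but smooth. The key observation that makes this work is that the equation $g\circ\sigma=\phi(g)$ expresses the composition with $\sigma$ as an element of $C^{\infty}(P)$ by fiat, so smoothness of $\sigma$ follows automatically from Lemma~\ref{characterization of smooth maps} without any continuity hypothesis on $\phi$ itself; the symmetry between $\phi$ and $\phi^{-1}$ then delivers the diffeomorphism.
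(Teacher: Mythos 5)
Your argument is correct and complete, but it is worth noting that the thesis does not actually prove this lemma itself: it simply cites [Gue08], Lemma 2.95. What you have written is therefore a self-contained replacement for that external reference, built entirely from machinery already in the text: Corollary~\ref{spec of C(M,K) set} to realize $\delta_p\circ\phi$ as an evaluation $\delta_{\sigma(p)}$, Proposition~\ref{spec of C(M) top} to transport pointwise convergence of characters back to convergence in $P$ (giving continuity of $\sigma$, which is needed as a hypothesis before invoking the smoothness criterion), and Lemma~\ref{characterization of smooth maps} together with the tautological identity $g\circ\sigma=\phi(g)$ to get smoothness of $\sigma$ and, by symmetry with $\phi^{-1}$, of $\sigma^{-1}$. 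The two delicate points are both handled correctly: uniqueness of the point $\sigma(p)$ follows because $C^{\infty}(P)$ separates points, and the real-valuedness of $\phi(g)$ for real $g$ is indeed forced by the identity $\phi(g)(p)=g(\sigma(p))$ rather than assumed. This is presumably the same character-theoretic argument underlying the cited lemma, so the approaches are not conceptually divergent; what your version buys is that the lemma becomes internal to the thesis, with no continuity hypothesis on $\phi$ ever needed.
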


\begin{proof}
\,\,\,This assertion is a consequence of [Gue08], Lemma 2.95.
\end{proof}

\begin{remark}\label{star-automorphism}{\bf(*-Automorphisms).}\index{*-Automorphisms}
In view of the previous lemma, each algebra automorphism of $C^{\infty}(P)$ is automatically a *-automorphism.
\end{remark}

\begin{proposition}\label{invariance of I*I}
Let $P$ be a manifold and $(C^{\infty}(P),\mathbb{T}^n,\alpha)$ a dynamical system. If ${\bf k}\in\mathbb{Z}^n$ and $f\in C^{\infty}(P)_{\bf k}$ is invertible, then $\vert f\vert$ is invariant under the action of $\mathbb{T}^n$, i.e.,
\[\alpha(z).\vert f\vert=\vert f\vert\,\,\,\text{for all}\,\,\,z\in\mathbb{T}^n.
\]
\end{proposition}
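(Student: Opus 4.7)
The plan is to exploit the fact that every algebra automorphism of $C^{\infty}(P)$ is automatically a $*$-automorphism (Remark \ref{star-automorphism}, via Lemma \ref{Aut(A)=Diff(A)}), so that $\alpha(z)$ commutes with complex conjugation for every $z \in \mathbb{T}^n$.

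First, I would observe that complex conjugation shifts isotypic components: for $f \in C^{\infty}(P)_{\mathbf{k}}$ and $z \in \mathbb{T}^n$, the $*$-property of $\alpha(z)$ together with $\overline{z^{\mathbf{k}}} = z^{-\mathbf{k}}$ (since $|z_i|=1$) yields
\[
\alpha(z).\overline{f} \;=\; \overline{\alpha(z).f} \;=\; \overline{z^{\mathbf{k}}\cdot f} \;=\; z^{-\mathbf{k}}\cdot\overline{f},
\]
so $\overline{f} \in C^{\infty}(P)_{-\mathbf{k}}$. Multiplying then gives
\[
\alpha(z).|f|^{2} \;=\; \alpha(z).(f\overline{f}) \;=\; (\alpha(z).f)\cdot(\alpha(z).\overline{f}) \;=\; (z^{\mathbf{k}}f)(z^{-\mathbf{k}}\overline{f}) \;=\; |f|^{2},
\]
i.e.\ $|f|^{2}$ lies in the fixed-point algebra $C^{\infty}(P)^{\mathbb{T}^n}$.

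Next I would extract the square root. Because $f$ is invertible in $C^{\infty}(P)$, the function $|f|^{2}$ is strictly positive everywhere on $P$, so $|f|$ is a smooth, strictly positive function. By Lemma \ref{Aut(A)=Diff(A)}, there is a diffeomorphism $\tau_{z}$ of $P$ with $\alpha(z).h = h\circ \tau_{z}^{-1}$ for every $h \in C^{\infty}(P)$; in particular $\alpha(z).|f|$ is again pointwise non-negative. Setting $g := \alpha(z).|f|$, we have $g \geq 0$ and
\[
g^{2} \;=\; (\alpha(z).|f|)^{2} \;=\; \alpha(z).|f|^{2} \;=\; |f|^{2},
\]
so by the uniqueness of the non-negative square root of a non-negative function, $g = |f|$, i.e.\ $\alpha(z).|f| = |f|$ for all $z \in \mathbb{T}^n$.

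No real obstacle arises; the whole argument is forced once one recognizes that $\alpha(z)$ respects conjugation (Remark \ref{star-automorphism}) and that invertibility of $f$ makes $|f|^{2}$ nowhere zero so the positive square root is smooth and unique. The only tiny point to keep in mind is that the $*$-property is not formally built into Definition \ref{triple}, so it must be pulled in via the geometric identification of $\mathrm{Aut}(C^{\infty}(P))$ with $\mathrm{Diff}(P)$; everything else is a one-line computation.
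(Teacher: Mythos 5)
Your proof is correct, and it rests on the same geometric fact as the paper's (Lemma \ref{Aut(A)=Diff(A)}, that every automorphism of $C^{\infty}(P)$ is precomposition with a diffeomorphism), but the route through that fact is different. The paper's argument is a one-liner: since $\alpha(z).g=g\circ\tau_z^{-1}$ and $\vert f\vert=\vert\cdot\vert\circ f$, precomposition with $\tau_z^{-1}$ commutes with taking the pointwise modulus, so $\alpha(z).\vert f\vert=\vert\alpha(z).f\vert=\vert z^{\bf k}\cdot f\vert=\vert f\vert$ directly. You instead use the diffeomorphism picture twice in weaker forms — once to get the $*$-property (Remark \ref{star-automorphism}), once to get non-negativity of $\alpha(z).\vert f\vert$ — and then conclude via the invariance of $\vert f\vert^2=f\overline{f}$ and uniqueness of the non-negative square root. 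What your version buys is a cleaner separation of what is purely $*$-algebraic (the computation $\alpha(z).\vert f\vert^2=\vert f\vert^2$, which would make sense in any $*$-algebra and also explains why $\overline{f}\in C^{\infty}(P)_{-\bf k}$) from the one genuinely geometric input (positivity of $\alpha(z).\vert f\vert$); the paper's version is shorter and makes it plain that the invertibility of $f$ is only needed to ensure $\vert f\vert$ is smooth in the first place, not for the invariance computation itself.
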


\begin{proof}
\,\,\,If $z\in\mathbb{T}^n$, then $\alpha(z).f=z^{\bf k}\cdot f$. On the other hand, Lemma \ref{Aut(A)=Diff(A)} implies that there exists a diffeomorphism $\tau_z:P\rightarrow P$ such that $\alpha(z).g=g\circ\tau^{-1}_z$ for all $g\in C^{\infty}(P)$. Hence, we obtain
\[\alpha(z).\vert f\vert=\vert\cdot\vert\circ f\circ\tau^{-1}_z=\vert z^{\bf k}\cdot f\vert=\vert f\vert.
\]
\end{proof}

We now come to the main theorem of this section:

\begin{theorem}\label{TNPB for manifold}{\bf(Trivial principal $\mathbb{T}^n$-bundles).}\index{Trivial Principal!$\mathbb{T}^n$-Bundles}
If $P$ is a manifold, then the following assertions hold:
\begin{itemize}
\item[\emph{(a)}]
If $(C^{\infty}(P),\mathbb{T}^n,\alpha)$ is a smooth trivial NCP $\mathbb{T}^n$-bundle, then the corresponding principal bundle $(P,P/\mathbb{T}^n,\mathbb{T}^n,\pr,\sigma)$ of Proposition \ref{transformation triples induce free actions} is trivial.
\item[\emph{(b)}]
Conversely, if $(P,M,\mathbb{T}^n,q,\sigma)$ is a trivial principal $\mathbb{T}^n$-bundle, then the corresponding smooth dynamical system $(C^{\infty}(P),\mathbb{T}^n,\alpha)$ of Remark \ref{induced transformation triples} is a smooth trivial NCP $\mathbb{T}^n$-bundle.
\end{itemize}
\end{theorem}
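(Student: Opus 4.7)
The plan is to exploit the equivalence between triviality of a principal $\mathbb{T}^n$-bundle and the existence of an equivariant $\mathbb{T}^n$-valued smooth map $F:P\to\mathbb{T}^n$, whose coordinate functions $F_i$ satisfy $F_i(\sigma(p,z))=F_i(p)\cdot z_i$. On the algebraic side these are precisely the smooth invertible $\mathbb{T}$-valued functions lying in the isotypic components $C^{\infty}(P)_{e_i}$. The whole argument thus reduces to going back and forth between such equivariant coordinate functions and invertible elements of the isotypic components.

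For part \emph{(a)}, Proposition \ref{transformation triples induce free actions} already supplies the principal bundle $(P,P/\mathbb{T}^n,\mathbb{T}^n,\pr,\sigma)$, so only its triviality has to be established. For each $i\in I_n$ I would pick an invertible element $a_i\in C^{\infty}(P)_{e_i}$ and pass to $f_i:=a_i/|a_i|$. Invertibility of $a_i$ in $C^{\infty}(P)$ forces $a_i$ to be nowhere vanishing, so $|a_i|=\sqrt{a_i\overline{a_i}}$ is a smooth positive function, and $f_i\in C^{\infty}(P,\mathbb{T})$. By Proposition \ref{invariance of I*I}, $|a_i|$ is $\mathbb{T}^n$-invariant, hence $f_i$ still transforms according to $f_i(\sigma(p,z))=z_i\cdot f_i(p)$. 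The map $F:=(f_1,\dots,f_n):P\to\mathbb{T}^n$ is therefore smooth and $\mathbb{T}^n$-equivariant, and its restriction to each fibre is a homeomorphism onto $\mathbb{T}^n$. Consequently the map
\[
s:P/\mathbb{T}^n\to P,\qquad s(\pr(p)):=\sigma(p,F(p)^{-1}),
\]
is well-defined (independence from the chosen representative $p$ follows from equivariance), smooth (via the local triviality of $\pr$) and satisfies $\pr\circ s=\id_{P/\mathbb{T}^n}$. By Lemma \ref{trivial principal bundle II}, the bundle is trivial.

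For part \emph{(b)}, triviality of $(P,M,\mathbb{T}^n,q,\sigma)$ produces, via Lemma \ref{trivial principal bundle II}, a smooth section $s:M\to P$. Using the freeness of $\sigma$, define $F:P\to\mathbb{T}^n$ to be the unique smooth map with $p=\sigma(s(q(p)),F(p))$; smoothness follows from the smoothness of the division map on a trivialised bundle. Then $F(\sigma(p,z))=F(p)\cdot z$, so the coordinate functions $F_i\in C^{\infty}(P,\mathbb{T})\subseteq C^{\infty}(P)^{\times}$ satisfy $\alpha(z).F_i=z_i\cdot F_i$ and thus lie in $C^{\infty}(P)_{e_i}$. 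Each isotypic component corresponding to a generator $e_i$ of $\mathbb{Z}^n\cong\widehat{\mathbb{T}^n}$ therefore contains an invertible element, and Corollary \ref{set of gen again} extends this to every isotypic component $C^{\infty}(P)_{\bf k}$. By Definition \ref{trivial noncomm T^n-bundles}, the smooth dynamical system $(C^{\infty}(P),\mathbb{T}^n,\alpha)$ is a smooth trivial NCP $\mathbb{T}^n$-bundle.

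The only non-routine step is the construction in \emph{(a)}: one must check that normalising $a_i$ by $|a_i|$ preserves both smoothness and the equivariance property, which is precisely what Proposition \ref{invariance of I*I} (invariance of $|f|$ for invertible isotypic elements) guarantees. Once this is in hand, the rest reduces to standard manipulations with trivialisations and the already-established Quotient Theorem machinery behind Proposition \ref{transformation triples induce free actions}.
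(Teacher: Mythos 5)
Your proposal is correct and follows essentially the same route as the paper: both hinge on normalising an invertible element $a_i\in C^{\infty}(P)_{e_i}$ to $a_i/|a_i|$ and invoking Proposition \ref{invariance of I*I} to keep it in the isotypic component, and on reading off the coordinate functions of a trivialisation in the converse direction. The only cosmetic difference is that in (a) you package the equivariant map $F=(f_1,\dots,f_n)$ into a section via Lemma \ref{trivial principal bundle II}, whereas the paper writes down the bundle equivalence $p\mapsto(\pr(p),f_1(p),\dots,f_n(p))$ directly.
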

\begin{proof}
\,\,\,(a) By Corollary \ref{set of gen again} and Definition \ref{trivial noncomm T^n-bundles}, we may choose for each $i\in I_n$ an invertible function $f_i\in C^{\infty}(P)_i$ with $\im(f_i)\subseteq\mathbb{T}$. Indeed, if $f_i\in C^{\infty}(P)_i$ is invertible, then the function
\[g_i:P\rightarrow\mathbb{C},\,\,\,p\mapsto \frac{f_i(p)}{\vert f_i(p)\vert}
\]is invertible and satisfies $\im(g_i)\subseteq\mathbb{T}$. Moreover, we conclude from Proposition \ref{invariance of I*I} that $\alpha(z).g_i=z_i\cdot g_i$ holds for all $z\in\mathbb{T}^n$, i.e., that $g_i\in C^{\infty}(P)_i$. Therefore, the map
\[\varphi:P\rightarrow P/\mathbb{T}^n\times\mathbb{T}^n,\,\,\,p\mapsto(\pr(p),f_1(p),\ldots,f_n(p))
\]
defines an equivalence of principal $\mathbb{T}^n$-bundles over $P/\mathbb{T}^n$. In particular, the principal bundle $(P,P/\mathbb{T}^n,\mathbb{T}^n,\pr,\sigma)$ of Proposition \ref{transformation triples induce free actions} is trivial.

(b) Conversely, let $(P,M,\mathbb{T}^n,q,\sigma)$ be a trivial principal $\mathbb{T}^n$-bundle and
\[\varphi:P\rightarrow M\times\mathbb{T}^n,\,\,\,p\mapsto(q(p),f_1(p),\ldots,f_n(p))
\]be an equivalence of principal $\mathbb{T}^n$-bundles over $M$. We first note that each function $f_i\in C^{\infty}(P)$ is invertible. Furthermore, the $\mathbb{T}^n$-equivariance of $\varphi$ implies that $f_i\in C^{\infty}(P)_i$ for all $i\in I_n$. Therefore, each isotypic component $C^{\infty}(P)_i$ contains invertible elements, and we conclude from Corollary \ref{set of gen again} that $(C^{\infty}(P),\mathbb{T}^n,\alpha)$ is a smooth trivial NCP $\mathbb{T}^n$-bundle.
\end{proof}

\section{Examples of Trivial NCP Torus Bundles}\label{examples of trivial NCP torus bundles}

We present a bunch of examples of trivial NCP $\mathbb{T}^n$-bundles:

\begin{example}{\bf(Noncommutative n-tori).}\label{NC n-tori as NCPTB}\index{Noncommutative!$n$-Tori}
For a real skewsymmetric $n\times n$ matrix $\theta$ with entries $\theta_{ij}$ let $A^n_{\theta}$ be the noncommutative $n$-torus of Definition \ref{appendix E noncommutative n-tori}. We recall that $A^n_{\theta}$ is the universal unital C*-algebra generated by unitaries $U_1,\ldots,U_n$ with
\[U_rU_s=\exp(2\pi i\theta_{rs})U_sU_r\,\,\,\text{for all}\,\,\,1\leq r,s\leq n.
\]We further recall that there is a continuous action $\alpha$ of $\mathbb{T}^n$ on $A^n_{\theta}$ by algebra automorphisms given on generators by
\[\alpha(z).U^{\bf k}:=z.U^{\bf k}:=z^{\bf k}\cdot U^{\bf k}\,\,\,\text{for}\,\,\,{\bf k}\in\mathbb{Z}^n,
\]where $U^{\bf k}:=U^{k_1}_1\cdots U^{k_n}_n$ (cf. Lemma \ref{t^n on a^n}). Hence, $(A^n_{\theta},\mathbb{T}^n,\alpha)$\sindex[n]{$(A^n_{\theta},\mathbb{T}^n,\alpha)$} is a dynamical system. The definition of the action implies that $(A^n_{\theta})_{\bf k}=\mathbb{C}\cdot U^{\bf k}$ holds for each ${\bf k}\in\mathbb{Z}^n$. In particular, each isotypic component $(A^n_{\theta})_{\bf k}$ contains invertible elements and thus the dynamical system 
\[(A^n_{\theta},\mathbb{T}^n,\alpha)
\]is a trivial NCP $\mathbb{T}^n$-bundle.
\end{example}

\begin{example}{\bf(Smooth noncommutative n-tori).}\label{Smooth NC n-tori as NCPTB}\index{Smooth!Noncommutative $n$-Tori}
The smooth noncommutative $n$-torus $\mathbb{T}^n_{\theta}$ is the dense unital $^*$-subalgebra of smooth vectors for the action $\alpha$ of the previous example (cf. Definition \ref{n-dim QT}). Its elements are given by (norm-convergent) sums
\[a=\sum_{{\bf k}\in\mathbb{Z}^n}a_{\bf k}U^{\bf k},\,\,\,\text{with}\,\,\,(a_{\bf k})_{{\bf k}\in\mathbb{Z}^n}\in S(\mathbb{Z}^n).
\]Further, Corollary \ref{automatic smoothness II} implies that $\mathbb{T}^n_{\theta}$ is a CIA and that the induced action of $\mathbb{T}^n$ on $\mathbb{T}^n_{\theta}$ is smooth. Hence, $(\mathbb{T}^n_{\theta},\mathbb{T}^n,\alpha)$\sindex[n]{$(\mathbb{T}^n_{\theta},\mathbb{T}^n,\alpha)$} is a smooth dynamical system. Since $(\mathbb{T}^n_{\theta})_{\bf k}=\mathbb{C}\cdot U^{\bf k}=\mathbb{C}\cdot U^{\bf k}$ holds for each ${\bf k}\in\mathbb{Z}^n$, we conclude that each isotypic component $(\mathbb{T}^n_{\theta})_{\bf k}$ contains invertible elements and thus the smooth dynamical system 
\[(\mathbb{T}^n_{\theta},\mathbb{T}^n,\alpha)
\]is a smooth trivial NCP $\mathbb{T}^n$-bundle.
\end{example}

\begin{remark}\sindex[n]{$C^{\infty}(\mathbb{T}^n)$}
Suppose we are in the situation of Example \ref{NC n-tori as NCPTB}. If $\theta=0$, then we get the smooth trivial NCP $\mathbb{T}^n$-bundle $(C^{\infty}(\mathbb{T}^n),\mathbb{T}^n,\alpha)$\sindex[n]{$(C^{\infty}(\mathbb{T}^n),\mathbb{T}^n,\alpha)$}. The corresponding trivial principal bundle of Theorem \ref{TNPB for manifold} (a) is the (trivial) principal $\mathbb{T}^n$-bundle over a single point $\{\ast\}$, i.e., 
\[(\mathbb{T}^n,\{\ast\},\mathbb{T}^n,q,\sigma_{\mathbb{T}^n})
\]for $q:\mathbb{T}^n\rightarrow\{\ast\},\,\,\,z\mapsto\ast$. Therefore, one should think of noncommutative n-tori as deformations of the trivial principal $\mathbb{T}^n$-bundle over a single point.
\end{remark}

\begin{example}{\bf(The group $C^{*}$-algebra of the discrete Heisenberg group).}\label{group c-star of heisenberg}
The discrete (three-dimensional) \emph{Heisenberg group} $H$ is abstractly defined as the group generated by elements $a$ and $b$ such that the commutator $c = aba^{-1}b^{-1}$ is central. It can be realized as the multiplicative group of upper-triangular matrices
   \[H:=\left\{\left(\begin{matrix}
1 & a & c\\
0 & 1 & b\\
0 & 0 & 1\end{matrix}\right):\,a,b,c\in\mathbb{Z}\right\}.
\]A short observation shows that $H$ is isomorphic (as a group) to the semidirect product $\mathbb{Z}^2\rtimes_S\mathbb{Z}$, where the semidirect product is defined by the homomorphism
\[S:\mathbb{Z}\rightarrow\Aut(\mathbb{Z}^2),\,\,\,S(k).(m,n):=(m,km+n).
\]The associated group $C^{*}$-algebra $C^{*}(H)$\sindex[n]{$C^{*}(H)$} is the universal $C^{*}$-algebra generated by unitaries $U$, $V$ and $W$ satisfying
\[UW=WU,\,\,\,VW=WV\,\,\,\text{and}\,\,\,UV=WVU.
\]A short observation shows that the map
\[\alpha:\mathbb{T}^2\times C^{*}(H)\rightarrow C^{*}(H),\,\,\,\alpha(z,w).U^kV^lW^m:=z^kw^l\cdot U^kV^lW^m
\]for $k,l,m\in\mathbb{Z}$ defines a continuous action of $\mathbb{T}^2$ on $C^{*}(H)$ by algebra automorphisms. Hence, $(C^{*}(H),\mathbb{T}^2,\alpha)$\sindex[n]{$(C^{*}(H),\mathbb{T}^2,\alpha)$} is a dynamical system. The definition of the action implies that the corresponding fixed point algebra $B$ is the centre of $C^{*}(H)$ which is equal to the group $C^{*}$-algebra $C^{*}(Z)$ of the center
   \[Z:=\left\{\left(\begin{matrix}
1 & 0 & c\\
0 & 1 & 0\\
0 & 0 & 1\end{matrix}\right):\,c\in\mathbb{Z}\right\}\cong\mathbb{Z}
\]of $H$. Moreover, the isotypic component $C^{*}(H)_{(k,l)}$ corresponding to $(k,l)\in\mathbb{Z}^2$ is given by $C^{*}(H)_{(k,l)}=B\cdot U^kV^l$. In particular, each isotypic component $C^{*}(H)_{(k,l)}$ contains invertible elements and thus the dynamical system 
\[(C^{*}(H),\mathbb{T}^2,\alpha)
\]is a trivial NCP $\mathbb{T}^2$-bundle. The group $C^{*}$-algebra of the discrete Heisenberg group plays an important role in the nice paper [ENOO09].
\end{example}

\begin{remark}
Suppose we are in the situation of Example \ref{group c-star of heisenberg}. Since $C^{*}(H)^{\mathbb{T}^2}\cong C(\mathbb{T})$, there is a canonical $C^{*}$-algebra bundle structure on $C^{*}(H)$ with base $\mathbb{T}$. The fibre of this bundle at $z=\exp(2\pi i\theta)$ is isomorphic to the noncommutative $2$-torus $A^2_{\theta}$. Moreover, for each $\theta\in\mathbb{R}$ we have a natural  homomorphism $C^{*}(H)\rightarrow A^2_{\theta}$ with $W\mapsto\exp(2\pi i\theta)\cdot{\bf 1}$. For further details we refer to the introduction of [ENOO09].
\end{remark}


\begin{construction}\label{l^1 spaces associated to dynamical systems I}{\bf($\ell^1$-crossed products).}\index{Crossed Products!$\ell^1$-}
Let $(A,\Vert\cdot\Vert,^{*})$ be an involutive Banach algebra and $(A,\mathbb{Z}^n,\alpha)$ a dynamical system. Note that this means that $\mathbb{Z}^n$ acts by isometries of $A$. We write $F(\mathbb{Z}^n,A)$\sindex[n]{$F(\mathbb{Z}^n,A)$} for the vector space of functions $f:\mathbb{Z}^n\rightarrow A$ with finite support and define a multiplication on this space by
\[(f\star g)({\bf k}):=\sum_{{\bf l}\in\mathbb{Z}^n}f({\bf l})\alpha({\bf l},g({\bf k-l})).
\]Moreover, an involution is given by
\[f^{*}({\bf k}):=\alpha({\bf k},(f(-{\bf k}))^{*}).
\]These two operations are continuous for the $L^1$-norm 
\[\Vert f\Vert_1:=\sum_{{\bf k}\in\mathbb{Z}^n}\Vert f({\bf k})\Vert,
\]and the completion of $F(\mathbb{Z}^n,A)$ in this norm is again an involutive Banach algebra denoted by $\ell^1(A\rtimes_{\alpha}\mathbb{Z}^n)$\sindex[n]{$\ell^1(A\rtimes_{\alpha}\mathbb{Z}^n)$}.
\begin{flushright}
$\blacksquare$
\end{flushright}
\end{construction}

\begin{remark}
If $A=\mathbb{C}$, then $\ell^1(A\rtimes_{\alpha}\mathbb{Z}^n)$ is just the algebra $\ell^1(\mathbb{Z}^n)$.
\end{remark}

\begin{lemma}\label{continuous action}
If $(A,\Vert\cdot\Vert,^{*})$ is an involutive Banach algebra and $(A,\mathbb{Z}^n,\alpha)$ a dynamical system, then the map
\[\widehat{\alpha}:\mathbb{T}^n\times\ell^1(A\rtimes_{\alpha}\mathbb{Z}^n)\rightarrow\ell^1(A\rtimes_{\alpha}\mathbb{Z}^n),\,\,\,(\widehat{\alpha}(z,f))({\bf k}):=(z.f)({\bf k}):=z^{\bf k}\cdot f({\bf k})
\]defines a continuous action of $\mathbb{T}^n$ on $\ell^1(A\rtimes_{\alpha}\mathbb{Z}^n)$ by algebra automorphisms.
\end{lemma}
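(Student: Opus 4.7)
The plan is to verify three assertions in order: that each $\widehat{\alpha}(z,\cdot)$ is a well-defined bounded linear operator, that it is a *-algebra automorphism, and that $\widehat{\alpha}$ defines a jointly continuous $\mathbb{T}^n$-action. The first point is immediate: since $|z^{\bf k}|=1$ for every $z\in\mathbb{T}^n$ and ${\bf k}\in\mathbb{Z}^n$, one computes
\[\|\widehat{\alpha}(z,f)\|_1=\sum_{{\bf k}\in\mathbb{Z}^n}\|z^{\bf k}f({\bf k})\|=\sum_{{\bf k}\in\mathbb{Z}^n}\|f({\bf k})\|=\|f\|_1,\]
so $\widehat{\alpha}(z,\cdot)$ is an isometric linear map from $\ell^1(A\rtimes_{\alpha}\mathbb{Z}^n)$ into itself. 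The action axioms $\widehat{\alpha}({\bf 1},f)=f$ and $\widehat{\alpha}(zw,f)=\widehat{\alpha}(z,\widehat{\alpha}(w,f))$ are immediate from $(zw)^{\bf k}=z^{\bf k}w^{\bf k}$.

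To check that $\widehat{\alpha}(z,\cdot)$ is a *-homomorphism it suffices, by continuity and density, to verify this on $F(\mathbb{Z}^n,A)$. A direct computation gives
\[\bigl(\widehat{\alpha}(z,f)\star\widehat{\alpha}(z,g)\bigr)({\bf k})=\sum_{{\bf l}\in\mathbb{Z}^n}z^{\bf l}f({\bf l})\,\alpha({\bf l},z^{{\bf k}-{\bf l}}g({\bf k}-{\bf l}))=z^{\bf k}(f\star g)({\bf k})=\widehat{\alpha}(z,f\star g)({\bf k}),\]
using that $\alpha({\bf l},\cdot)$ is $\mathbb{C}$-linear so the scalar $z^{{\bf k}-{\bf l}}$ pulls out. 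For the involution, $\overline{z^{-{\bf k}}}=z^{\bf k}$ on $\mathbb{T}^n$, whence
\[\widehat{\alpha}(z,f)^{*}({\bf k})=\alpha({\bf k},(z^{-{\bf k}}f(-{\bf k}))^{*})=z^{\bf k}\alpha({\bf k},f(-{\bf k})^{*})=\widehat{\alpha}(z,f^{*})({\bf k}).\]
Bijectivity is automatic since $\widehat{\alpha}(z^{-1},\cdot)$ inverts $\widehat{\alpha}(z,\cdot)$.

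The main obstacle is the joint continuity of $\widehat{\alpha}$, for which I intend to use a standard three-term splitting together with a density argument. Fix $(z_0,f_0)\in\mathbb{T}^n\times\ell^1(A\rtimes_{\alpha}\mathbb{Z}^n)$ and $\varepsilon>0$. Choose $g\in F(\mathbb{Z}^n,A)$, supported on a finite set $S\subseteq\mathbb{Z}^n$, with $\|f_0-g\|_1<\varepsilon/4$. Using the isometry property of each $\widehat{\alpha}(z,\cdot)$, one obtains the estimate
\[\|\widehat{\alpha}(z,f)-\widehat{\alpha}(z_0,f_0)\|_1\leq\|f-f_0\|_1+2\|f_0-g\|_1+\sum_{{\bf k}\in S}|z^{\bf k}-z_0^{\bf k}|\,\|g({\bf k})\|.\]
The first two summands are less than $\varepsilon/2$ once $f$ is close enough to $f_0$; the last summand is a finite sum of continuous functions of $z$ vanishing at $z_0$, hence also can be made less than $\varepsilon/2$ for $z$ in a suitable neighbourhood of $z_0$. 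This yields the required joint continuity and completes the verification that $\widehat{\alpha}$ is a continuous action of $\mathbb{T}^n$ on $\ell^1(A\rtimes_{\alpha}\mathbb{Z}^n)$ by algebra automorphisms.
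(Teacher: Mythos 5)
Your proof is correct and follows essentially the same route as the paper: direct computation for the isometry, multiplicativity and involution, and density of the finitely supported functions for continuity. The only difference is one of detail — the paper checks continuity only at the identity of $\mathbb{T}^n$ for a fixed finitely supported $f$ and then appeals tersely to density, whereas you carry out the full three-term estimate for joint continuity at an arbitrary point, which is a welcome tightening of the same argument.
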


\begin{proof}
\,\,\,Obviously $\widehat{\alpha}$ defines an action. Moreover,
\begin{align}
((z.f)\star(z.g))({\bf k})&=\sum_{{\bf l}\in\mathbb{Z}^n}((z.f)({\bf l}))(\alpha({\bf l},(z.g)({\bf k-l})))=\sum_{{\bf l}\in\mathbb{Z}^n}(z^{\bf l}\cdot f({\bf l}))(z^{\bf k-l}\cdot\alpha({\bf l},g({\bf k-l})))\notag\\
&=z^{\bf k}\cdot\sum_{{\bf l}\in\mathbb{Z}^n}f({\bf l})\alpha({\bf l},g({\bf k-l}))=(z.(f\ast g))({\bf k}),\notag
\end{align}
 \[\Vert z.f\Vert_1=\sum_{{\bf k}\in\mathbb{Z}^n}\Vert (z.f)({\bf k})\Vert=\sum_{{\bf k}\in\mathbb{Z}^n}\Vert z^{\bf k}\cdot f({\bf k})\Vert=\sum_{{\bf k}\in\mathbb{Z}^n}\Vert f({\bf k})\Vert=\Vert f\Vert_1
 \]and 
\[((z.f)^*)({\bf k})=\alpha({\bf k},((z.f)(-{\bf k}))^{*})=z^{\bf k}\cdot\alpha({\bf k},(f(-{\bf k}))^{*})=(z.f^{*})({\bf k})
\]show that each element $z\in\mathbb{T}^n$ acts as an automorphism of $(A,\Vert\cdot\Vert,^{*})$.

To see the continuity of $\widehat{\alpha}$, we choose $f\in F(\mathbb{Z}^n,A)$, $\epsilon>0$ and a neighbourhood $U$ of the unity of $\mathbb{T}^n$ such that $\vert z^{\bf k}-1\vert\leq\epsilon$ for all ${\bf k}\in\supp(f)$ and $z\in U$. We thus obtain
\[\Vert z.f-f\Vert_1=\sum_{{\bf k}\in\mathbb{Z}^n}\vert z^{\bf k}-1\vert\Vert f({\bf k})\Vert=\sum_{{\bf k}\in\supp(f)}\vert z^{\bf k}-1\vert\Vert f({\bf k})\Vert\leq\epsilon\Vert f\Vert_1
\]for all $z\in U$. Since $F(\mathbb{Z}^n,A)$ is dense in $\ell^1(A\rtimes_{\alpha}\mathbb{Z}^n)$, $\widehat{\alpha}$ defines a continuous action of $\mathbb{T}^n$ on $\ell^1(A\rtimes_{\alpha}\mathbb{Z}^n)$ by algebra automorphisms.
\end{proof}

\begin{proposition}
If $(A,\Vert\cdot\Vert,^{*})$ is an involutive Banach algebra and $(A,\mathbb{Z}^n,\alpha)$ a dynamical system, then the triple \[(\ell^1(A\rtimes_{\alpha}\mathbb{Z}^n),\mathbb{T}^n,\widehat{\alpha})
\]defines a dynamical system.\sindex[n]{$(\ell^1(A\rtimes_{\alpha}\mathbb{Z}^n),\mathbb{T}^n,\widehat{\alpha})$}
\end{proposition}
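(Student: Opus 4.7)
The proposition is essentially a repackaging of the preceding Lemma \ref{continuous action} together with the observation that $\ell^1(A\rtimes_{\alpha}\mathbb{Z}^n)$ fits into the framework of a unital locally convex algebra. My plan is to break the verification into three short steps matching the three data in the definition of a dynamical system.

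First, I would check that $\ell^1(A\rtimes_{\alpha}\mathbb{Z}^n)$ is a unital locally convex algebra. Since the hypothesis that $(A,\mathbb{Z}^n,\alpha)$ is a dynamical system forces $A$ to be unital, the function $e\in F(\mathbb{Z}^n,A)$ defined by $e(\mathbf{0})=1_A$ and $e(\mathbf{k})=0$ otherwise lies in $F(\mathbb{Z}^n,A)\subseteq \ell^1(A\rtimes_{\alpha}\mathbb{Z}^n)$, and a one-line calculation with the convolution formula gives $e\star f = f\star e = f$; thus $e$ is a two-sided unit. Since Construction \ref{l^1 spaces associated to dynamical systems I} already shows that $\ell^1(A\rtimes_{\alpha}\mathbb{Z}^n)$ is an involutive Banach algebra with respect to the $L^1$-norm, it is in particular a unital locally convex algebra.

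Second, I would invoke Lemma \ref{continuous action}, which provides exactly what is needed: it asserts that $\widehat{\alpha}$ acts by algebra automorphisms (in particular each $\widehat{\alpha}(z)$ is an element of $\Aut(\ell^1(A\rtimes_{\alpha}\mathbb{Z}^n))$) and that the action map is continuous. That $\widehat{\alpha}$ is a group homomorphism $\mathbb{T}^n\to\Aut(\ell^1(A\rtimes_{\alpha}\mathbb{Z}^n))$ is immediate from the pointwise formula $(z.f)(\mathbf{k})=z^{\mathbf{k}}\cdot f(\mathbf{k})$, and one checks $\widehat{\alpha}(z).e=e$ so that the unit is preserved.

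Finally, since $\mathbb{T}^n$ is trivially a topological group, the three pieces assemble to show that $(\ell^1(A\rtimes_{\alpha}\mathbb{Z}^n),\mathbb{T}^n,\widehat{\alpha})$ satisfies all the requirements of a dynamical system, and the proof is complete. There is no genuine obstacle here; the only minor subtlety is keeping straight that the dynamical system hypothesis on $(A,\mathbb{Z}^n,\alpha)$ supplies unitality of $A$, which is what makes $\ell^1(A\rtimes_{\alpha}\mathbb{Z}^n)$ unital in turn.
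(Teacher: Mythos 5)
Your proof is correct and follows essentially the same route as the paper, which simply records the proposition as a direct consequence of Lemma \ref{continuous action}; your additional explicit checks (unitality of $\ell^1(A\rtimes_{\alpha}\mathbb{Z}^n)$ via the element $e=\delta_{\bf 0}$, and the homomorphism property of $\widehat{\alpha}$) are routine details subsumed by the Construction and the Lemma.
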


\begin{proof}
\,\,\,The claim is a direct consequence of Lemma \ref{continuous action}.
\end{proof}

\begin{example}\label{l^1 as example}
If $(A,\Vert\cdot\Vert,^{*})$ is an involutive Banach algebra and $(A,\mathbb{Z}^n,\alpha)$ a dynamical system, then the dynamical system $(\ell^1(A\rtimes_{\alpha}\mathbb{Z}^n),\mathbb{T}^n,\widehat{\alpha})$ is a trivial NCP $\mathbb{T}^n$-bundle. Indeed, for ${\bf k}\in\mathbb{Z}^n$ we define
\[\delta_{\bf k}({\bf l}):=
\begin{cases}
1_A &\text{for}\,\,\,{\bf l}={\bf k}\\
0 &\text{otherwise}.
\end{cases}
\]Then 
\[z.\delta_{\bf k}=z^{\bf k}\cdot\delta_{\bf k}\,\,\,\text{and}\,\,\,\delta_{\bf k}\star\delta_{\bf k}^{*}=\delta_{\bf k}^{*}\star\delta_{\bf k}={\bf 1}
\]show that $\delta_{\bf k}$ is an invertible element of $\ell^1(A\rtimes_{\alpha}\mathbb{Z}^n)$ lying in the isotypic component $\ell^1(A\rtimes_{\alpha}\mathbb{Z}^n)_{\bf k}$.
\end{example}

\begin{construction}{\bf(The enveloping $C^{*}$-algebra).}\index{Enveloping $C^{*}$-Algebra}
If $(A,\Vert\cdot\Vert,^{*})$ is an involutive Banach algebra, then any involutive representation $(\pi,\mathcal{H})$ of $A$, for some Hilbert space $\mathcal{H}$, satisfies 
\[\Vert\pi(a)\Vert_{\text{op}}\leq\Vert a\Vert.
\]Indeed, this follows from the fact $\pi$ is norm-decreasing since it shrinks spectra and $B(\mathcal{H})$ is a $C^*$-algebra. The supremum over all such involutive representations $(\pi,\mathcal{H})$ is bounded, i.e.,
\[\Vert a\Vert_{\text{sup}}=\sup_{(\pi,\mathcal{H})}\Vert\pi(a)\Vert_{\text{op}}\leq\Vert a\Vert,
\]and thus defines a seminorm on $A$. If this is not already a norm, we factor $A$ by its kernel to get a normed algebra. Since $\Vert\pi(a^*a)\Vert_{\text{op}}=\Vert\pi(a)\Vert_{\text{op}}^2$ for each $(\pi,\mathcal{H})$, this is a $C^*$-norm. The completion of $A$ in this norm is a $C^*$-algebra and called the \emph{enveloping $C^*$-algebra}.
\begin{flushright}
$\blacksquare$
\end{flushright}
\end{construction}

\begin{definition}{\bf($C^{*}$-crossed products).}\index{Crossed Products!$C^{*}$-}
If $(A,\Vert\cdot\Vert,^{*})$ is an involutive Banach algebra and $(A,\mathbb{Z}^n,\alpha)$ a dynamical system, then the enveloping $C^*$-algebra of $\ell^1(A\rtimes_{\alpha}\mathbb{Z}^n)$ is denoted by $C^*(A\rtimes_{\alpha}\mathbb{Z}^n)$\sindex[n]{$C^*(A\rtimes_{\alpha}\mathbb{Z}^n)$} and is called the \emph{$C^{*}$-crossed product} associated to $(A,\mathbb{Z}^n,\alpha)$.
\end{definition}

\begin{example}\label{crossed product as example}
If $(A,\Vert\cdot\Vert,^{*})$ is an involutive Banach algebra and $(A,\mathbb{Z}^n,\alpha)$ a dynamical system, then the action $\widehat{\alpha}$ of Lemma \ref{continuous action} extends to a continuous action of $\mathbb{T}^n$ on the $C^*$-crossed product $C^*(A\rtimes_{\alpha}\mathbb{Z}^n)$ by algebra automorphisms. For details we refer to the paper [Ta74]. In particular, the corresponding dynamical system 
\[(C^*(A\rtimes_{\alpha}\mathbb{Z}^n),\mathbb{T}^n,\widehat{\alpha})
\]is a trivial NCP $\mathbb{T}^n$-bundle. This follows exactly as in Example \ref{l^1 as example}.\sindex[n]{$(C^*(A\rtimes_{\alpha}\mathbb{Z}^n),\mathbb{T}^n,\widehat{\alpha})$}
\end{example}

\begin{example}{\bf(Topological dynamical systems).}\index{Dynamical Systems!Topological}
To each topological dynamical system $(X,\varphi)$, i.e, to each pair $(X,\varphi)$, consisting of a compact Hausdorff space $X$ and a homeomorphism $\varphi:X\rightarrow X$, one can associate a $C^*$-dynamical system. Indeed, choose $A=C(X)$ and define an action $\alpha$  of $\mathbb{Z}$ on $C(X)$ by 
\[(\alpha(k).f)(x):=f(\varphi^{-k}(x)).
\]By Example \ref{crossed product as example}, the associated $C^*$-crossed product $C^*(C(X)\rtimes_{\alpha}\mathbb{Z})$ is a trivial NCP $\mathbb{T}$-bundle.
\end{example}

\begin{construction}\label{l^1 spaces associated to cocycles I}{\bf($\ell^1$-spaces associated to $2$-cocycles).}
Let $(A,\Vert\cdot\Vert,^{*})$ be an involutive Banach algebra and $\omega\in Z^2(\mathbb{Z}^n,\mathbb{T})$\sindex[n]{$Z^2(\mathbb{Z}^n,\mathbb{T})$}. The involutive Banach algebra $\ell^1(A\times_{\omega}\mathbb{Z}^n)$\sindex[n]{$\ell^1(A\times_{\omega}\mathbb{Z}^n)$} is defined by introducing on $F(\mathbb{Z}^n,A)$ a twisted multiplication 
\[(f\star g)({\bf k}):=\sum_{{\bf l}\in\mathbb{Z}^n}f({\bf l})g({\bf k-l})\omega({\bf l},{\bf k-l})
\]and an involution
\[(f^*)({\bf k}):=\overline{\omega({\bf k},-{\bf k})}\cdot f(-{\bf k})^*.
\]The cocycle property ensures that the multiplication is associative.
\begin{flushright}
$\blacksquare$
\end{flushright}
\end{construction}

\begin{example}\label{l^1 twisted as example}
Let $(A,\Vert\cdot\Vert,^{*})$ be an involutive Banach algebra and $\omega\in Z^2(\mathbb{Z}^n,\mathbb{T})$. Similarly to Lemma \ref{continuous action}, we see that the map
\[\widehat{\alpha}:\mathbb{T}^n\times\ell^1(A\times_{\omega}\mathbb{Z}^n)\rightarrow\ell^1(A\times_{\omega}\mathbb{Z}^n),\,\,\,(\widehat{\alpha}(z,f))({\bf k}):=(z.f)({\bf k}):=z^{\bf k}\cdot f({\bf k})
\]defines a continuous action of $\mathbb{T}^n$ on $\ell^1(A\times_{\omega}\mathbb{Z}^n)$ by algebra automorphisms. Moreover, the corresponding dynamical system
\[(\ell^1(A\times_{\omega}\mathbb{Z}^n),\mathbb{T}^n,\widehat{\alpha})
\]turns out to be a trivial NCP $\mathbb{T}^n$-bundle (cf. Example \ref{l^1 as example}).\sindex[n]{$(\ell^1(A\times_{\omega}\mathbb{Z}^n),\mathbb{T}^n,\widehat{\alpha})$}
\end{example}

\begin{example}\label{a connection to locally compact groups}
Let $\theta\in\Alt^2(\mathbb{Z}^n,\mathbb{R})$ be a skew-symmetric real matrix and consider the $2$-cocycle
\[\omega:\mathbb{Z}^n\times\mathbb{Z}^n\rightarrow\mathbb{T},\,\,\,\omega({\bf k},{\bf l}):=\exp(i\theta({\bf k},{\bf l})).
\]Then we obtain a ($2$-step nilpotent) Lie group $H:=\mathbb{T}\times_{\omega}\mathbb{Z}^n$ which is a central extension of $\mathbb{Z}^n$ by the circle group $\mathbb{T}$. A short observation now shows that
\[L^1(H)\cong\ell^1(L^1(\mathbb{T})\times_{\omega}\mathbb{Z}^n)
\]carries the structure of a trivial NCP $\mathbb{T}^n$-bundle.
\end{example}

\begin{remark}
Suppose that we are in the situation of Example \ref{a connection to locally compact groups}. Then the circle $\mathbb{T}$ acts continuously on the group algebra $L^1(H)$ by translations in the first argument. The corresponding Fourier decomposition leads to
\[L^1(H)_{\text{fin}}=\bigoplus_{{\bf k}\in\mathbb{Z}} L^1(H)_{\bf k}\cong\bigoplus_{{\bf k}\in\mathbb{Z}} \chi_{\bf k}\cdot\ell^1(\mathbb{C}\times_{\omega}\mathbb{Z}^n),
\]where $\ell^1(\mathbb{C}\times_{\omega}\mathbb{Z}^n)$ denotes an $\ell^1$-version of Example \ref{NC n-tori as NCPTB}.
\end{remark}

\begin{example}
The enveloping $C^*$-algebra of $\ell^1(A\times_{\omega}\mathbb{Z}^n)$ is denoted by $C^*(A\times_{\omega}\mathbb{Z}^n)$\sindex[n]{$C^*(A\times_{\omega}\mathbb{Z}^n)$} and is called the \emph{twisted group $C^*$-algebra of $G$ by $\omega$}. The action $\widehat{\alpha}$ of Example \ref{l^1 twisted as example} extends to a continuous action of $\mathbb{T}^n$ on $C^*(A\times_{\omega}\mathbb{Z}^n)$ by algebra automorphisms (cf. Example \ref{crossed product as example}). The corresponding dynamical system
\[(C^*(A\times_{\omega}\mathbb{Z}^n),\mathbb{T}^n,\widehat{\alpha})
\]is a trivial NCP $\mathbb{T}^n$-bundle as well.\sindex[n]{$(C^*(A\times_{\omega}\mathbb{Z}^n),\mathbb{T}^n,\widehat{\alpha})$}
\end{example}

\section{Classification of Trivial NCP Torus Bundles}\label{Classification of NCPT^nB}\index{Classification!of Trivial NCP Torus Bundles}

In classical geometry there exists up to isomorphy only one trivial principal $\mathbb{T}^n$-bundle over a given manifold $M$, namely $(M\times\mathbb{T}^n,M,\mathbb{T}^n,q_M,\sigma_{\mathbb{T}^n})$. The situation completely changes in the noncommutative world. In Proposition \ref{structure of trivial noncomm T^n-bundles} we gave a description of the underlying algebraic structure of a trivial NCP $\mathbb{T}^n$-bundle. This structure may therefore be considered as an algebraic counterpart of Definition \ref{trivial noncomm T^n-bundles}. The purpose of this section is to give a complete classification of algebraically trivial NCP $\mathbb{T}^n$-bundles in terms of a suitable cohomology theory. As a main tool we use the theory of group extensions. For the necessary background on the theory of (Lie-) group extensions we refer to Appendix \ref{some lie group cohomology}.

\subsection{Factor Systems for Trivial NCP Torus Bundles}

In this subsection we introduce a ``cohomology theory" for trivial NCP $\mathbb{T}^n$-bundles, which is inspired by the classical cohomology theory of groups. The corresponding cohomology spaces will be crucial for the classification part of this section.

\begin{definition}\label{cochains}
Let $n\in\mathbb{N}$ and $B$ be a unital algebra. 

(a) We write $C_B:B^{\times}\rightarrow\Aut(B)$ for the \emph{conjugation action} of $B^{\times}$ on $B$.

(b) We call a map $S\in C^1(\mathbb{Z}^n,\Aut(B))$\sindex[n]{$C^1(\mathbb{Z}^n,\Aut(B))$} an \emph{outer action} of $\mathbb{Z}^n$ on $B$ if there exists 
\[\omega\in C^2(\mathbb{Z}^n,B^{\times})\,\,\,\text{with}\,\,\,\delta_S=C_B\circ\omega,
\]\sindex[n]{$C^2(\mathbb{Z}^n,B^{\times})$}where $\delta_S({\bf k},{\bf l}):=S({\bf k})S({\bf l})S({\bf k+l})^{-1}$ for ${\bf k},{\bf l}\in\mathbb{Z}^n$.

(c) On the set of outer actions we define an equivalence relation by
\[S\sim S'\,\,\,\Leftrightarrow\,\,\,(\exists h\in C^1(\mathbb{Z}^n,B^{\times}))\,S'=(C_B\circ h)\cdot S
\]and call the equivalence class $[S]$ of an outer action $S$ a $\mathbb{Z}^n$-\emph{kernel}.\index{$\mathbb{Z}^n$-Kernel}

(d) For $S\in C^1(\mathbb{Z}^n,\Aut(B))$ and $\omega\in C^2(\mathbb{Z}^n,B^{\times})$ let
\[(d_S\omega)({\bf k},{\bf l},{\bf m}):=S({\bf k})(\omega({\bf l},{\bf m}))\omega({\bf k},{\bf l+m})\omega({\bf k+l},{\bf m})^{-1}\omega({\bf k},{\bf l})^{-1}.
\]
\end{definition}

\begin{lemma}\label{action on factor system}
Let $n\in\mathbb{N}$ and $B$ be a unital algebra and consider the group $C^1(\mathbb{Z}^n,B^{\times})$ with respect to pointwise multiplication. This group acts on the set 
\[C^1(\mathbb{Z}^n,\Aut(B))\,\,\,\text{by}\,\,\,h.S:=(C_B\circ h)\cdot S
\]and on the product set
\begin{align}
C^1(\mathbb{Z}^n,\Aut(B))\times C^2(\mathbb{Z}^n,B^{\times})\,\,\,\text{by}\,\,\,h.(S,\omega):=(h.S,h\ast_S\omega)\label{formula 6.1}
\end{align}
for
\[(h\ast_S\omega)({\bf k},{\bf l}):=h({\bf k})S({\bf k})h({\bf l})\omega({\bf k},{\bf l})h({\bf k+l})^{-1}.
\]The stabilizer of $(S,\omega)$ is given by
\[C^1(\mathbb{Z}^n,B^{\times})_{(S,\omega)}=Z^1(\mathbb{Z}^n,Z(B)^{\times})_S
\]\emph{(}cf. Definition \ref{center of N}\emph{)} which depends only on $[S]$, but not on $\omega$. Moreover, the following assertions hold:
\begin{itemize}
\item[\emph{(a)}]
The subset 
\[\{(S,\omega)\in C^1(\mathbb{Z}^n,\Aut(B))\times C^2(\mathbb{Z}^n,B^{\times}):\delta_S=C_B\circ\omega\}
\]is invariant.
\item[\emph{(b)}] 
If $\delta_S=C_B\circ\omega$, then $\im(d_S\omega)\subseteq Z(B)^{\times}$ .
\item[\emph{(c)}]  
If $\delta_S=C_B\circ\omega$ and $h.(S.\omega)=(S',\omega')$, then $d_{S'}\omega'=d_S\omega$.
\end{itemize}
\end{lemma}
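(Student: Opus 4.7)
My plan is to verify the six sub-claims in order, with the main computational effort concentrated in items (a)–(c), where the compatibility between an outer action and its factor system has to be tracked carefully.

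First, I would verify that the two prescriptions genuinely define group actions. For the action on $C^1(\mathbb{Z}^n,\Aut(B))$ the identity $(h_1 h_2).S = h_1.(h_2.S)$ reduces to $C_B(h_1({\bf k})h_2({\bf k})) = C_B(h_1({\bf k}))\,C_B(h_2({\bf k}))$, which is immediate since $C_B$ is a group homomorphism $B^{\times}\to\Aut(B)$. For the product set one must check the analogous identity for $\ast_S$; the only subtlety is that the appearance of $S({\bf k})(h({\bf l}))$ in the definition forces the use of the relation $C_B(S({\bf k})(b)) = S({\bf k})\, C_B(b)\, S({\bf k})^{-1}$, so that the ``crossed'' structure matches the new outer action $h.S$ on the second factor.

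Next, I would compute the stabiliser of $(S,\omega)$. The condition $h.S = S$ forces $C_B\circ h = 1$, that is, $h$ takes values in $Z(B)^{\times}$. Once $h$ is central-valued, the equation $h\ast_S\omega = \omega$ becomes, after cancelling $\omega({\bf k},{\bf l})$ (which commutes with central factors), the identity $h({\bf k+l}) = h({\bf k})\cdot S({\bf k})(h({\bf l}))$, which is precisely the $1$-cocycle relation defining $Z^1(\mathbb{Z}^n, Z(B)^{\times})_S$. To see that this stabiliser depends only on the $\mathbb{Z}^n$-kernel $[S]$, I would observe that if $S' = (C_B\circ h_0).S$ then $S$ and $S'$ induce the \emph{same} action on the centre $Z(B)$ (since inner automorphisms act trivially there), so the $1$-cocycle conditions for $S$ and $S'$ coincide. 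For (a), I would plug $h.S$ into the definition of $\delta$, use the homomorphism property of $C_B$ together with $S({\bf k}) C_B(b) S({\bf k})^{-1} = C_B(S({\bf k})(b))$, and collect terms to obtain $\delta_{h.S} = C_B\circ(h\ast_S\omega)$.

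For (b), the conceptual heart of the argument, I would apply $C_B$ to $(d_S\omega)({\bf k},{\bf l},{\bf m})$ and use both $C_B(S({\bf k})(b)) = S({\bf k}) C_B(b) S({\bf k})^{-1}$ and the defining relation $C_B\circ\omega = \delta_S$. This converts the four factors into a product of eight $S(\cdot)^{\pm 1}$ expressions which telescope to the identity by simple cancellation; hence $(d_S\omega)({\bf k},{\bf l},{\bf m})\in\ker(C_B) = Z(B)^{\times}$.

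Finally, (c) is the step I expect to be the main obstacle. After substituting $S' = (C_B\circ h).S$ and $\omega' = h\ast_S\omega$ into the definition of $d_{S'}\omega'$ and expanding, one obtains a long product in which the factors $h({\bf k}),\,h({\bf l}),\,h({\bf m}),\,h({\bf k+l}),\,h({\bf l+m}),\,h({\bf k+l+m})$ and their $S(\cdot)$-translates appear repeatedly. The strategy is to use (b) to localise the troublesome central factors so that reorderings become legal, and to exploit $S({\bf k})$ being an algebra automorphism to push $S({\bf k})$ across products $S({\bf k})(ab) = S({\bf k})(a)S({\bf k})(b)$ at the right moments. The $h$-contributions should then cancel in pairs, leaving precisely $(d_S\omega)({\bf k},{\bf l},{\bf m})$; conceptually this expresses the fact that $d_S\omega$ is a cohomological invariant of the orbit of $(S,\omega)$ under the action~(\ref{formula 6.1}). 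I would carry this out by writing $d_{S'}\omega'$ as $A\cdot B \cdot C \cdot D$ corresponding to the four factors, simplifying each separately, and only then collapsing the whole expression.
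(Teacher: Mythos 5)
Your proposal is correct and follows essentially the same route as the paper: the action axioms and part (a) by direct expansion using that $C_B$ is a homomorphism, the stabiliser via $h.S=S\Rightarrow h$ central-valued and $h\ast_S\omega=\omega\cdot d_Sh$, and (b) from $C_B\circ(d_S\omega)=\mathbf{1}$ so that $d_S\omega$ lands in $\ker(C_B)=Z(B)^{\times}$. The only difference is at (c), where the paper simply cites [Ne07a], Lemma 1.10(4), while you outline carrying out the cancellation yourself; the strategy you describe (using (b) to commute the central factors and the automorphism property of $S(\mathbf{k})$) is exactly the one used in that reference.
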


\begin{proof}
\,\,\,That (\ref{formula 6.1}) defines a group action follows from the trivial relation $(hh').S=h.(h'.S)$, and
\begin{align}
&(h\ast_{h'.S}(h'\ast_S\omega))({\bf k},{\bf l})\notag\\
&=h({\bf k})(h'.S)({\bf k})(h({\bf l}))(h'\ast_S\omega)({\bf k},{\bf l})h({\bf k+l})^{-1}\notag\\
&=h({\bf k})h'({\bf l})S({\bf k})(h({\bf l}))h'({\bf k})^{-1}h'({\bf k})S({\bf k})(h'({\bf l}))\omega({\bf k},{\bf l})h'({\bf k+l})^{-1}h({\bf k+l})^{-1}\notag\\
&=(hh')({\bf k})S({\bf k})(hh'({\bf l}))\omega({\bf k},{\bf l})(hh')({\bf k+l})^{-1}\notag\\
&=((hh')\ast_S\omega)({\bf k},{\bf l}).\notag
\end{align}
To calculate the stabilizer of the pair $(S,\omega)$, we observe that the condition $h.S=S$ is equivalent to $h\in C^1(\mathbb{Z}^n,Z(B)^{\times})$. Then $h\ast_S\omega=\omega\cdot d_Sh$, and this equals $\omega$ if and only if $h$ is a cocycle. Therefore,
\[C^1(\mathbb{Z}^n,B^{\times})_{(S,\omega)}=Z^1(\mathbb{Z}^n,Z(B)^{\times})_S.
\]
(a) If the pair $(S,\omega)$ satisfies $\delta_S=C_B\circ\omega$, then we obtain for $h\in C^1(\mathbb{Z}^n,B^{\times})$ the relation
\begin{align}
\delta_{h.S}({\bf k},{\bf l})&=\delta_{(C_B\circ h)\cdot S}({\bf k},{\bf l})\notag\\
&=C_B(h({\bf k}))c_{S({\bf k})}(C_B(h({\bf l})))\delta_S({\bf k},{\bf l})C_B(h({\bf k+l})^{-1})\notag\\
&=C_B(h({\bf k}))C_B(S({\bf k})(h({\bf l})))C_B(\omega({\bf k},{\bf l}))C_B(h({\bf k+l})^{-1})\notag\\
&=C_B(h({\bf k})S({\bf k})(h({\bf l}))\omega({\bf k},{\bf l})h({\bf k+l})^{-1})\notag\\
&=(C_B\circ(h\ast_S\omega))({\bf k},{\bf l}).\notag
\end{align}

(b) This follows from $C_B((d_S\omega)({\bf k},{\bf l},{\bf m}))=1_B$.

(c) To verify the relation $d_{S'}\omega'=d_S\omega$ one has to use frequently that $d_S\omega$ and $d_{S'}\omega'$ have values in $Z(B)^{\times}$. For a complete calculation we refer to [Ne07a], Lemma 1.10 (4).
\end{proof}

\begin{definition}\label{factor system}
Let $n\in\mathbb{N}$ and $B$ be a unital algebra. The elements of the set
\[Z^2(\mathbb{Z}^n,B):=\{(S,\omega)\in C^1(\mathbb{Z}^n,\Aut(B))\times C^2(\mathbb{Z}^n,B^{\times}):\delta_S=C_B\circ\omega,\, d_S\omega=1_B\}
\]are called \emph{factor systems}\index{Factor Systems} for the pair $(\mathbb{Z}^n,B)$ (or simply $(n,B)$). By Lemma \ref{action on factor system}, the set 
$Z^2(\mathbb{Z}^n,B)$ is invariant under the action of $C^1(\mathbb{Z}^n,B^{\times})$ and we write
\[H^2(\mathbb{Z}^n,B):=Z^2(\mathbb{Z}^n,B)/C^1(\mathbb{Z}^n,B^{\times})
\]for the corresponding cohomology space.\sindex[n]{$Z^2(\mathbb{Z}^n,B)$}\sindex[n]{$H^2(\mathbb{Z}^n,B)$}
\end{definition}

\subsection{Characteristic Classes for Trivial NCP Torus Bundles}

In this short subsection we provide a construction that associates to each trivial NCP $\mathbb{T}^n$-bundle $(A,\mathbb{T}^n,\alpha)$ a class in $H^2(\mathbb{Z}^n,B)$ for $B=A_{\bf 0}$.

\begin{construction}\label{TNCT^nB ass class}{\bf(Characteristic classes).}\index{Characteristic Classes}
Let $(A,\mathbb{T}^n,\alpha)$ be a trivial NCP $\mathbb{T}^n$-bundle. The set
\[A^{\times}_h:=\bigcup_{{\bf k}\in\mathbb{Z}^n}A^{\times}_{\bf k}
\]of homogeneous units is a subgroup of $A^{\times}$ containing $B^{\times}\cdot 1_A\cong B^{\times}$. We thus obtain an extension
\begin{align}
1\longrightarrow B^{\times}\longrightarrow A^{\times}_h\longrightarrow\mathbb{Z}^n\longrightarrow 1\notag
\end{align}
of groups. Next, it is instructive to see how this can be made more explicit in terms of smooth factor systems. We recall that since $\mathbb{Z}^n$ is discrete, all outer actions are smooth, i.e., the smoothness conditions on $S$ and $\omega$ become vacuous. Thus, $A^{\times}_h$ is equivalent to a crossed product of the form $B^{\times}\times_{(S,\omega)}\mathbb{Z}^n$ for a factor system $(S,\omega)\in Z^2(\mathbb{Z}^n,B)$. 
In this way each trivial NCP $\mathbb{T}^n$-bundle $(A,\mathbb{T}^n,\alpha)$ induces a \emph{characteristic class}
\[\chi\left((A,\mathbb{T}^n,\alpha)\right):=[(S,\omega)]\in H^2(\mathbb{Z}^n,B).
\]
\begin{flushright}
$\blacksquare$
\end{flushright}
\end{construction}

\begin{definition}\label{iso of NCTT^B}{\bf(Isomorphy).}\index{Isomorphy}
We call two trivial NCP $\mathbb{T}^n$-bundles $(A,\mathbb{T}^n,\alpha)$ and $(A',\mathbb{T}^n,\alpha')$ \emph{isomorphic} if there exists a $\mathbb{T}^n$-equivariant $\varphi:A\rightarrow A'$ isomorphism of algebras, i.e., a map $\varphi\in\Iso_{\mathbb{T}^n}(A,A')$.
\end{definition}

\begin{proposition}\label{iso TNCT^nB same class}
Let $(A,\mathbb{T}^n,\alpha)$ and $(A',\mathbb{T}^n,\alpha')$ be two isomorphic trivial NCP $\mathbb{T}^n$-bundles. If $\varphi:A\rightarrow A'$ is a $\mathbb{T}^n$-equivariant algebra isomorphism, then the following assertions hold:
\begin{itemize}
\item[\emph{(a)}]
The restriction $\varphi\mid_B:B\rightarrow B'$ is an isomorphism of algebras. In particular, this map induces an isomorphism $H^2(\varphi\mid_B)$ of the corresponding cohomology spaces $H^2(\mathbb{Z}^n,B)$ and $H^2(\mathbb{Z}^n,B')$.
\item[\emph{(b)}]
For the characteristic classes of $(A,\mathbb{T}^n,\alpha)$ and $(A',\mathbb{T}^n,\alpha')$ the following relation holds:
\[H^2(\varphi\mid_{B})\left(\chi\left((A,\mathbb{T}^n,\alpha)\right)\right)=\chi\left((A',\mathbb{T}^n,\alpha')\right).
\]
\end{itemize}
\end{proposition}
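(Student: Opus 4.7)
The plan has two logical steps corresponding to the two parts of the statement.

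For part (a), I would first observe that the $\mathbb{T}^n$-equivariance of $\varphi$ immediately yields $\varphi(A_{\bf k})=A'_{\bf k}$ for every ${\bf k}\in\mathbb{Z}^n$; taking ${\bf k}={\bf 0}$ gives that $\varphi_0:=\varphi\mid_B$ is a well-defined algebra isomorphism $B\to B'$. To produce $H^2(\varphi_0)$ I would then push forward the data appearing in Definition \ref{factor system}: conjugation by $\varphi_0$ induces a group isomorphism $\Aut(B)\to\Aut(B')$, $T\mapsto\varphi_0\circ T\circ\varphi_0^{-1}$, and $\varphi_0$ itself restricts to a group isomorphism $B^\times\to(B')^\times$. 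Both are compatible with the conjugation maps $C_B,C_{B'}$ and with the operators $\delta_S$ and $d_S$, so together they send the cocycle set $Z^2(\mathbb{Z}^n,B)$ bijectively onto $Z^2(\mathbb{Z}^n,B')$ in a way that intertwines the actions of $C^1(\mathbb{Z}^n,B^\times)$ and $C^1(\mathbb{Z}^n,(B')^\times)$ from Lemma \ref{action on factor system}. Passing to the quotient then gives the desired bijection, which I will call $H^2(\varphi_0)$.

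For part (b), I would pick invertible homogeneous elements $a_{\bf k}\in A_{\bf k}^{\times}$ (existing by Definition \ref{trivial noncomm T^n-bundles}) and use them, as in Construction \ref{TNCT^nB ass class}, to produce a representing factor system
\[
S({\bf k})(b):=a_{\bf k}\,b\,a_{\bf k}^{-1},\qquad \omega({\bf k},{\bf l}):=a_{\bf k}a_{\bf l}a_{{\bf k}+{\bf l}}^{-1},
\]
so that $\chi((A,\mathbb{T}^n,\alpha))=[(S,\omega)]$. Since $\varphi$ is equivariant and an algebra isomorphism, $a'_{\bf k}:=\varphi(a_{\bf k})$ is an invertible element of $(A')_{\bf k}$, and I may use these units to compute $\chi((A',\mathbb{T}^n,\alpha'))$. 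A direct calculation then gives
\[
S'({\bf k})(\varphi_0(b))=\varphi_0(S({\bf k})(b)),\qquad \omega'({\bf k},{\bf l})=\varphi_0(\omega({\bf k},{\bf l})),
\]
i.e.\ the factor system $(S',\omega')$ is exactly the image of $(S,\omega)$ under the push-forward constructed in part (a). Consequently $H^2(\varphi_0)([(S,\omega)])=[(S',\omega')]=\chi((A',\mathbb{T}^n,\alpha'))$.

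The one place that requires care, and which I view as the main obstacle, is that the characteristic class is well-defined only modulo the action of $C^1(\mathbb{Z}^n,B^\times)$, so I must ensure the argument is independent of the choice of units. Lemma \ref{action on factor system} already tells me that any two choices of $a_{\bf k}$ differ by a cochain $h\in C^1(\mathbb{Z}^n,B^\times)$ acting by $h\ast_S\omega$, and the corresponding cochain on the $A'$-side is precisely $\varphi_0\circ h$, which acts by $(\varphi_0\circ h)\ast_{S'}\omega'$. The intertwining of these two actions by $\varphi_0$ is what guarantees that the equality in part (b) is independent of all choices, and it is essentially forced by the computation in the previous paragraph.
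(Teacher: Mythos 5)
Your proposal is correct and follows essentially the same route as the paper: the paper draws the commutative diagram of group extensions
\[
1\longrightarrow B^{\times}\longrightarrow A^{\times}_h\longrightarrow\mathbb{Z}^n\longrightarrow 1
\]
induced by $\varphi$ and then appeals to ``classical extension theory of groups,'' which is exactly the explicit factor-system computation you carry out (a section $\sigma({\bf k})=a_{\bf k}$ is mapped by $\varphi$ to the section $a'_{\bf k}=\varphi(a_{\bf k})$, and the associated $(S,\omega)$ is pushed forward to $(S',\omega')$). Your final paragraph on independence of choices is already guaranteed by the well-definedness of $\chi((A',\mathbb{T}^n,\alpha'))$ from Construction \ref{TNCT^nB ass class}, but making the intertwining of the $C^1(\mathbb{Z}^n,B^{\times})$-actions explicit is exactly what is needed to justify that $H^2(\varphi\mid_B)$ itself is well defined in part (a).
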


\begin{proof}
\,\,\,(a) The first part follows from an easy calculation and the definition of the cohomology spaces.

(b) For the second assertion we note that the map $\varphi$ induces the following commutative diagram of groups:
\[\xymatrix{ 1 \ar[r]& B^{\times}\ar[r] \ar[d]^{\varphi\mid_{B^{\times}}}& A^{\times}_h\ar[r] \ar[d]^{\varphi\mid_{A^{\times}_h}}& \mathbb{Z}^n\ar[r] \ar@{=}[d]& 1 \\
1 \ar[r]& (B')^{\times}\ar[r] & (A')^{\times}_h\ar[r] & \mathbb{Z}^n \ar[r] & 1.}
\]
Hence, the claim follows from classical extension theory of groups.
\end{proof}

\begin{definition}\label{equivalence of NCTT^B}{\bf(Equivalence).}\index{Equivalence}
Let $n\in\mathbb{N}$. We call two trivial NCP $\mathbb{T}^n$-bundles $(A,\mathbb{T}^n,\alpha)$ and $(A',\mathbb{T}^n,\alpha')$ \emph{equivalent} if the following two conditions are satisfied:
\begin{itemize}
\item[(i)]
$B=B'$.
\item[(ii)]
There exists a $\mathbb{T}^n$-equivariant algebra isomorphism $\varphi:A\rightarrow A'$ with $\varphi_{\mid_B}=\id_B$.
\end{itemize}
If $(A,\mathbb{T}^n,\alpha)$ and $(A',\mathbb{T}^n,\alpha')$ are equivalent trivial NCP $\mathbb{T}^n$-bundles, then we write $[(A,\mathbb{T}^n,\alpha)]$ for the corresponding equivalence class.
\end{definition}

\begin{proposition}\label{equ TNCT^nB same class}
Let $(A,\mathbb{T}^n,\alpha)$ and $(A',\mathbb{T}^n,\alpha')$ be two equivalent trivial NCP $\mathbb{T}^n$-bundles. Then their corresponding characteristic classes coincide, i.e.,
\[\chi\left((A,\mathbb{T}^n,\alpha)\right)=\chi\left((A',\mathbb{T}^n,\alpha')\right)\in H^2(\mathbb{Z}^n,B).
\]
\end{proposition}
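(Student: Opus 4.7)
The plan is to deduce this immediately from Proposition \ref{iso TNCT^nB same class}. By Definition \ref{equivalence of NCTT^B}, an equivalence between the two trivial NCP $\mathbb{T}^n$-bundles is in particular an isomorphism $\varphi\in\Iso_{\mathbb{T}^n}(A,A')$ in the sense of Definition \ref{iso of NCTT^B}, carrying the additional property $\varphi_{\mid B}=\id_B$ (and $B=B'$). First I would note that the induced algebra automorphism $\varphi_{\mid B}$ of $B$ is just the identity, so the induced map $H^2(\varphi_{\mid B})$ on the cohomology space $H^2(\mathbb{Z}^n,B)$ is the identity. Applying part (b) of Proposition \ref{iso TNCT^nB same class} then yields
\[\chi\bigl((A',\mathbb{T}^n,\alpha')\bigr)=H^2(\varphi_{\mid B})\bigl(\chi((A,\mathbb{T}^n,\alpha))\bigr)=\chi\bigl((A,\mathbb{T}^n,\alpha)\bigr),
\]
which is the claim.

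As a direct sanity check without quoting the previous proposition, I would revisit Construction \ref{TNCT^nB ass class}. Choose homogeneous invertible elements $a_{\bf k}\in A_{\bf k}^{\times}$ with $a_{\bf 0}=1_A$; then the factor system $(S,\omega)\in Z^2(\mathbb{Z}^n,B)$ representing $\chi((A,\mathbb{T}^n,\alpha))$ is read off from the relations $a_{\bf k}b a_{\bf k}^{-1}=S({\bf k})(b)$ for $b\in B$ and $a_{\bf k}a_{\bf l}=\omega({\bf k},{\bf l})\,a_{\bf k+l}$. The $\mathbb{T}^n$-equivariance of $\varphi$ ensures $\varphi(a_{\bf k})\in (A')_{\bf k}^{\times}$, and since $\varphi$ is an algebra isomorphism with $\varphi_{\mid B}=\id_B$, the same pair $(S,\omega)$ is obtained from the homogeneous system $(\varphi(a_{\bf k}))_{{\bf k}\in\mathbb{Z}^n}$ in $A'$. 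Thus $(A,\mathbb{T}^n,\alpha)$ and $(A',\mathbb{T}^n,\alpha')$ share a common representing cocycle in $Z^2(\mathbb{Z}^n,B)$ and therefore the same class in $H^2(\mathbb{Z}^n,B)$.

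The only point that requires a moment's attention is the functoriality built into Proposition \ref{iso TNCT^nB same class}\,(a): one must check that pushing a factor system along the identity of $B$ gives the original factor system back. This is immediate from how $H^2(\varphi_{\mid B})$ is defined via the commutative diagram of group extensions in the proof of that proposition, so no genuine obstacle is expected.
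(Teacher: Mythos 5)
Your proof is correct and follows the same route as the paper: the paper's own proof simply cites Definition \ref{equivalence of NCTT^B} and Proposition \ref{iso TNCT^nB same class}, which is exactly your first paragraph once one observes that $H^2(\id_B)$ is the identity. The explicit verification via homogeneous units in Construction \ref{TNCT^nB ass class} is a sound and welcome unwinding of the same argument, not a different method.
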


\begin{proof}
\,\,\,The claim easily follows from Definition \ref{equivalence of NCTT^B} and Proposition \ref{iso TNCT^nB same class}.
\end{proof}

\subsection{Classification of Algebraically Trivial NCP Torus Bundles}

The main goal of this subsection is to present a complete classification of ``algebraically trivial NCP $\mathbb{T}^n$-bundles". Definition \ref{trivial noncomm T^n-bundles} and Proposition \ref{structure of trivial noncomm T^n-bundles} lead to the following definition:

\begin{definition}\label{algebraic trivial NCP T^n-bundles}{\bf(Algebraically trivial NCP $\mathbb{T}^n$-bundles).}\index{Trivial NCP!$\mathbb{T}^n$-Bundles (Algebraic)}
A $\mathbb{Z}^n$-graded unital associative algebra 
\[A=\bigoplus_{{\bf k}\in\mathbb{Z}^n}A_{\bf k}
\]with $B:=A_{\bf 0}$ is called an \emph{algebraically trivial NCP $\mathbb{T}^n$-bundle with base $B$}, if each grading space $A_{\bf k}$ contains an invertible element.
\end{definition}

\begin{remark}{\bf(Examples).}
We will first focus our attention on the abstract theory of algebraically trivial NCP $\mathbb{T}^n$-bundles. For examples we refer the interested reader to the next section.
\end{remark}

\begin{lemma}{\bf(Characteristic classes).}\index{Characteristic Classes}
Each algebraically trivial NCP $\mathbb{T}^n$-bundle $A$ possesses a characteristic class $\chi(A)\in H^2(\mathbb{Z}^n,B)$.
\end{lemma}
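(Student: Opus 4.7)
The plan is to mimic Construction \ref{TNCT^nB ass class} in the purely algebraic setting, where the grading replaces the $\mathbb{T}^n$-isotypic decomposition. Since each $A_{\bf k}$ contains an invertible element, I would first fix a \emph{section}, i.e., for every ${\bf k}\in\mathbb{Z}^n$ choose an invertible element $a_{\bf k}\in A_{\bf k}$, with the normalization $a_{\bf 0}=1_A$. The collection $\{a_{\bf k}\}$ generates a subgroup of $A^{\times}$ that projects onto $\mathbb{Z}^n$ with kernel $B^{\times}$, so we are really constructing an extension
\[
1\longrightarrow B^{\times}\longrightarrow A^{\times}_{h}\longrightarrow\mathbb{Z}^n\longrightarrow 1,
\]
where $A^{\times}_{h}=\bigcup_{{\bf k}}A^{\times}_{\bf k}$, and the characteristic class will be the class of this extension in $H^2(\mathbb{Z}^n,B)$.

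Concretely, I would define
\[
S({\bf k})(b):=a_{\bf k}\,b\,a_{\bf k}^{-1}\quad(b\in B),\qquad \omega({\bf k},{\bf l}):=a_{\bf k}a_{\bf l}a_{{\bf k}+{\bf l}}^{-1}.
\]
Since $A_{\bf k}\cdot B\cdot A_{-{\bf k}}\subseteq B$ and $a_{\bf k}a_{\bf l}$ and $a_{{\bf k}+{\bf l}}$ are both invertible elements of $A_{{\bf k}+{\bf l}}$, one checks that $S({\bf k})\in\Aut(B)$ and $\omega({\bf k},{\bf l})\in B^{\times}$. The identity $\delta_S=C_B\circ\omega$ is immediate from
\[
S({\bf k})S({\bf l})(b)=a_{\bf k}a_{\bf l}\,b\,a_{\bf l}^{-1}a_{\bf k}^{-1}=\omega({\bf k},{\bf l})\,S({\bf k}+{\bf l})(b)\,\omega({\bf k},{\bf l})^{-1},
\]
and the cocycle identity $d_S\omega=1_B$ follows from the associativity of the triple product $a_{\bf k}a_{\bf l}a_{\bf m}$ together with the defining formula of $d_S$ in Definition \ref{cochains}(d). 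This shows $(S,\omega)\in Z^2(\mathbb{Z}^n,B)$.

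The main point, and the place where the bookkeeping has to be done carefully, is to verify that the class $[(S,\omega)]\in H^2(\mathbb{Z}^n,B)$ does not depend on the choice of the section $\{a_{\bf k}\}$. Given a second section $\{a'_{\bf k}\}$, there is a unique $h\in C^1(\mathbb{Z}^n,B^{\times})$ with $a'_{\bf k}=h({\bf k})a_{\bf k}$ (take $h({\bf k}):=a'_{\bf k}a_{\bf k}^{-1}\in A_{\bf 0}^{\times}=B^{\times}$). A direct substitution then yields
\[
S'({\bf k})=C_B(h({\bf k}))\cdot S({\bf k}),\qquad \omega'({\bf k},{\bf l})=h({\bf k})\,S({\bf k})(h({\bf l}))\,\omega({\bf k},{\bf l})\,h({\bf k}+{\bf l})^{-1},
\]
which is exactly the action $(S',\omega')=h.(S,\omega)$ of Lemma \ref{action on factor system}. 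Hence the two factor systems represent the same class. I would therefore define
\[
\chi(A):=[(S,\omega)]\in H^2(\mathbb{Z}^n,B),
\]
which is the sought characteristic class. The only subtlety is keeping track of the conventions so that the twisted coboundary $d_S\omega$ really vanishes and so that the change-of-section formula matches (\ref{formula 6.1}); everything else reduces to straightforward manipulations with the grading.
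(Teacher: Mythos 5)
Your proposal is correct and follows essentially the same route as the paper, which simply invokes Construction \ref{TNCT^nB ass class}: you form the extension $1\to B^{\times}\to A^{\times}_h\to\mathbb{Z}^n\to 1$, read off a factor system $(S,\omega)\in Z^2(\mathbb{Z}^n,B)$ from a choice of homogeneous invertible elements, and take its class in $H^2(\mathbb{Z}^n,B)$. The explicit verifications you supply (that $(S,\omega)$ is a factor system and that a change of section acts by the $C^1(\mathbb{Z}^n,B^{\times})$-action of Lemma \ref{action on factor system}) are exactly the details the paper leaves to the general extension theory.
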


\begin{proof}
\,\,\,Indeed, this assertion immediately follows from Construction \ref{TNCT^nB ass class}.
\end{proof}

\begin{definition}{\bf(Equivalence).}\index{Equivalence}
Two algebraically trivial NCP $\mathbb{T}^n$-bundles $A$ and $A'$ with base $B$ are called \emph{equivalent} if there is an algebra isomorphism
$\varphi:A\rightarrow A'$ satisfying $\varphi(A_{\bf k})=A'_{\bf k}$ for all ${\bf k}\in\mathbb{Z}^n$. If $A$ and $A'$ are equivalent algebraically trivial NCP $\mathbb{T}^n$-bundles, then we write $[A]$ for the corresponding equivalence class.
\end{definition}

\begin{proposition}\label{equ  ATNCT^nB same class}
Let $A$ and $A'$ be two equivalent algebraically trivial NCP $\mathbb{T}^n$-bundles. Then their corresponding characteristic classes coincide, i.e.,
\[\chi(A)=\chi(A')\in H^2(\mathbb{Z}^n,B).
\]
\end{proposition}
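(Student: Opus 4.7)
The plan is to mimic the proof of Proposition \ref{equ TNCT^nB same class} at the purely algebraic level. Write $\varphi:A\rightarrow A'$ for the grading-preserving algebra isomorphism witnessing the equivalence, which (in analogy with Definition \ref{equivalence of NCTT^B}) restricts to the identity on the common base $B=A_{\bf 0}=A'_{\bf 0}$. First I would choose, for every ${\bf k}\in\mathbb{Z}^n$, an invertible element $a_{\bf k}\in A_{\bf k}^{\times}$ with $a_{\bf 0}=1_A$; such a choice exists by Definition \ref{algebraic trivial NCP T^n-bundles}. Setting $S({\bf k})(b):=a_{\bf k}ba_{\bf k}^{-1}$ for $b\in B$ and $\omega({\bf k},{\bf l}):=a_{\bf k}a_{\bf l}a_{{\bf k}+{\bf l}}^{-1}\in B^{\times}$ then produces a factor system $(S,\omega)\in Z^2(\mathbb{Z}^n,B)$ whose class in $H^2(\mathbb{Z}^n,B)$ represents $\chi(A)$, exactly as in Construction \ref{TNCT^nB ass class}.

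The next step is to transport these representatives along $\varphi$ by defining $a'_{\bf k}:=\varphi(a_{\bf k})$. Because $\varphi$ respects the grading, each $a'_{\bf k}$ lies in $(A'_{\bf k})^{\times}$ with $a'_{\bf 0}=1_{A'}$, so the family $(a'_{\bf k})_{{\bf k}\in\mathbb{Z}^n}$ is a valid system of homogeneous invertible representatives for $A'$. Using that $\varphi$ is an algebra homomorphism and that $\varphi|_B=\id_B$, I would then check the identities
\[\varphi(a_{\bf k})\,b\,\varphi(a_{\bf k})^{-1}=\varphi(a_{\bf k}ba_{\bf k}^{-1})=S({\bf k})(b)\qquad(b\in B)
\]
and
\[\varphi(a_{\bf k})\varphi(a_{\bf l})\varphi(a_{{\bf k}+{\bf l}})^{-1}=\varphi(\omega({\bf k},{\bf l}))=\omega({\bf k},{\bf l}).
\]
Hence the factor system of $A'$ built from $(a'_{\bf k})$ is literally the original pair $(S,\omega)$, so $\chi(A')=[(S,\omega)]=\chi(A)\in H^2(\mathbb{Z}^n,B)$.

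I do not expect a real obstacle here; the argument is essentially an algebraic transcription of Proposition \ref{equ TNCT^nB same class}. The only point requiring care is the observation that for an equivalence one has $\varphi|_B=\id_B$, so that no compensating coboundary is needed on the cohomology side. If this were dropped and only grading-preservation were assumed, the same computation would instead yield $H^2(\varphi|_B)(\chi(A))=\chi(A')$, in perfect parallel with Proposition \ref{iso TNCT^nB same class}.
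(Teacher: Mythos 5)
Your argument is correct and is essentially the paper's proof made explicit: the paper simply reduces the claim to Proposition \ref{equ TNCT^nB same class}, which in turn rests on the observation that $\varphi$ induces an equivalence of the group extensions $B^{\times}\hookrightarrow A^{\times}_h\twoheadrightarrow\mathbb{Z}^n$ together with ``classical extension theory of groups'', and your factor-system computation (transporting a normalized homogeneous section $\,{\bf k}\mapsto a_{\bf k}$ along $\varphi$ and checking that $S$ and $\omega$ are preserved) is precisely the content of that appeal. Your closing remark is also the right reading of the definition: an equivalence is required to satisfy $\varphi_{\mid B}=\id_B$, as the paper's use of the notion in Proposition \ref{realization of TNCT^B from factor systems IV} and Theorem \ref{class of Z-kernels I} confirms, and without it one would only obtain $H^2(\varphi_{\mid B})(\chi(A))=\chi(A')$.
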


\begin{proof}
\,\,\,The claim is a consequence of Proposition \ref{equ TNCT^nB same class}.
\end{proof}

\begin{definition}{\bf(Set of equivalence classes).}
Let $n\in\mathbb{N}$ and $B$ be a unital algebra. We write 
\[\Ext(\mathbb{Z}^n,B)
\]for the set of all equivalence classes of algebraically trivial NCP $\mathbb{T}^n$-bundles with base $B$.\sindex[n]{$\Ext(\mathbb{Z}^n,B)$}
\end{definition}


\begin{lemma}\label{chi well-defined}
Let $B$ be a unital algebra. Then the map
\[\chi:\Ext(\mathbb{Z}^n,B)\rightarrow H^2(\mathbb{Z}^n,B),\,\,\,[A]\mapsto\chi(A)
\]is well-defined.
\end{lemma}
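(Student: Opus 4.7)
The plan is to establish two independence statements. First, the characteristic class $\chi(A)$ attached to a fixed algebraically trivial NCP $\mathbb{T}^n$-bundle $A$ does not depend on the choices made in Construction \ref{TNCT^nB ass class} (namely, the choice of invertible representatives $a_{\bf k} \in A_{\bf k}^\times$ for each ${\bf k} \in \mathbb{Z}^n$, with $a_{\bf 0}=1_A$). Second, equivalent algebraically trivial NCP $\mathbb{T}^n$-bundles yield the same class in $H^2(\mathbb{Z}^n,B)$; this is exactly Proposition \ref{equ ATNCT^nB same class}.

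For the first step, given a choice of homogeneous units $(a_{\bf k})_{{\bf k}\in\mathbb{Z}^n}$, I would define $S\in C^1(\mathbb{Z}^n,\Aut(B))$ and $\omega\in C^2(\mathbb{Z}^n,B^\times)$ by
\[
S({\bf k})(b):=a_{\bf k}\,b\,a_{\bf k}^{-1},\qquad a_{\bf k}a_{\bf l}=\omega({\bf k},{\bf l})\,a_{{\bf k}+{\bf l}}.
\]
Note that $\omega({\bf k},{\bf l}) = a_{\bf k} a_{\bf l} a_{{\bf k}+{\bf l}}^{-1}$ lies in $A_{\bf 0}^\times = B^\times$, since the grading is compatible with multiplication and taking inverses of homogeneous units. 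By construction $\delta_S = C_B\circ\omega$, and $d_S\omega = 1_B$ follows from the associativity of the multiplication in $A$ applied to the triple product $a_{\bf k}a_{\bf l}a_{\bf m}$. Hence $(S,\omega)\in Z^2(\mathbb{Z}^n,B)$.

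For the independence, let $(a'_{\bf k})_{{\bf k}\in\mathbb{Z}^n}$ be another choice of homogeneous units. Since any two invertible elements in the grading space $A_{\bf k}$ differ by an element of $A_{\bf 0}^\times = B^\times$, there exists $h \in C^1(\mathbb{Z}^n,B^\times)$ with $a'_{\bf k} = h({\bf k}) a_{\bf k}$. A direct computation then shows that the associated factor system $(S',\omega')$ is precisely $h.(S,\omega)$ in the sense of Lemma \ref{action on factor system}, so the two factor systems represent the same class in $H^2(\mathbb{Z}^n,B)$. Combining this independence with Proposition \ref{equ ATNCT^nB same class} yields that $\chi$ descends to a well-defined map on equivalence classes.

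The main obstacle is the bookkeeping for the change-of-section formula, showing $(S',\omega')=h.(S,\omega)$; while this is essentially the classical calculation for group extensions, one has to pay attention to the noncommutativity of $B$ and verify both components of the formula in Lemma \ref{action on factor system} explicitly. Everything else reduces to previously established results in the excerpt.
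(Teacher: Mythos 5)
Your proposal is correct and follows essentially the same route as the paper: the paper's proof is a one-line reduction to Proposition \ref{equ ATNCT^nB same class}, with the independence from the choice of homogeneous units delegated to Construction \ref{TNCT^nB ass class} and the classical change-of-section computation of Theorem \ref{Orbit of factor systems}. Your explicit verification that a second choice $a'_{\bf k}=h({\bf k})a_{\bf k}$ yields $(S',\omega')=h.(S,\omega)$ is exactly that delegated computation, carried out correctly.
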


\begin{proof}
\,\,\,The assertion immediately follows from Proposition \ref{equ  ATNCT^nB same class}.
\end{proof}

In the remaining part of this subsection we will show that the map $\chi$ of Lemma \ref{chi well-defined} is actually a bijection.


\begin{construction}\label{realization of TNCT^B from factor systems I}
Let $n\in\mathbb{N}$ and $B$ be a unital algebra. Further, let 
\[A:=\bigoplus_{{\bf k}\in\mathbb{Z}^n}Bv_{\bf k}
\]be a vector space with basis $(v_{\bf k})_{{\bf k}\in\mathbb{Z}^n}$. For a factor system $(S,\omega)\in Z^2(\mathbb{Z}^n,B)$ we
define a multiplication map 
\[m_{(S,\omega)}:A\times A\rightarrow A
\]given on homogeneous elements by
\begin{align}
m_{(S,\omega)}(bv_{\bf k},b'v_{\bf l}):=b(S({\bf k})(b'))\omega({\bf k},{\bf l})v_{{\bf k+l}},\label{multiplication}
\end{align}
and write $A_{(S,\omega)}$ for the vector space $A$ endowed with the multiplication (\ref{multiplication}). A short calculation shows that $A_{(S,\omega)}$ is a $\mathbb{Z}^n$-graded unital associative algebra with $A_{\bf 0}=B$ and unit $v_{\bf 0}$. Moreover, each grading space $A_i$, $i\in I_n$, contains invertible elements with respect to to this multiplication: Indeed, if $i\in I_n$ and $b\in B^{\times}$, then the inverse of $bv_i$ is given by
\[S(i)^{-1}(b^{-1}\omega(i,-i)^{-1})v_{-i}.
\]Thus, $A_{(S,\omega)}$ is an algebraically trivial NCP $\mathbb{T}^n$-bundle with base $B$ and $\chi(A_{(S,\omega)})=[(S,\omega)]$.
\begin{flushright}
$\blacksquare$
\end{flushright}
\end{construction}

We summarize what we have seen in the preceding construction in the following proposition:

\begin{proposition}\label{realization of TNCT^B from factor systems II}
If $n\in\mathbb{N}$ and $B$ is a unital algebra, then each element $[(S,\omega)]\in H^2(\mathbb{Z}^n,B)$ can be realized by an algebraically trivial NCP $\mathbb{T}^n$-bundles $A$ with
\[A_{\bf 0}=B\,\,\,\text{and}\,\,\,\chi(A)=[(S,\omega)].
\]
\end{proposition}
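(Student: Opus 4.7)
The plan is to deduce this proposition essentially as a corollary of Construction \ref{realization of TNCT^B from factor systems I}, since almost all the required verifications were carried out there. So the first step is to pick any representative $(S,\omega) \in Z^2(\mathbb{Z}^n, B)$ of the given class $[(S,\omega)] \in H^2(\mathbb{Z}^n, B)$ and form the $\mathbb{Z}^n$-graded algebra $A_{(S,\omega)} = \bigoplus_{{\bf k} \in \mathbb{Z}^n} B v_{\bf k}$ with multiplication given by (\ref{multiplication}).

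Next, I would rely on the verifications summarized at the end of Construction \ref{realization of TNCT^B from factor systems I}: the cocycle-type conditions $\delta_S = C_B \circ \omega$ and $d_S\omega = 1_B$ (which define membership in $Z^2(\mathbb{Z}^n, B)$) are precisely what is needed to check associativity and unitality of the multiplication, and the explicit formula $S(i)^{-1}(b^{-1}\omega(i,-i)^{-1}) v_{-i}$ exhibits an inverse for each $b v_i$ with $b \in B^\times$. By Corollary \ref{set of gen again} applied to the grading (or by writing down explicit inverses for all $b v_{\bf k}$ directly via induction on ${\bf k}$), each grading space $A_{\bf k}$ then contains an invertible element, so $A_{(S,\omega)}$ is an algebraically trivial NCP $\mathbb{T}^n$-bundle with base $B$.

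It remains to show $\chi(A_{(S,\omega)}) = [(S,\omega)]$. For this I would work out the extension of groups
\[
1 \longrightarrow B^\times \longrightarrow (A_{(S,\omega)})^\times_h \longrightarrow \mathbb{Z}^n \longrightarrow 1
\]
that appears in Construction \ref{TNCT^nB ass class}, using the set-theoretic section ${\bf k} \mapsto v_{\bf k}$. The multiplication rule (\ref{multiplication}) gives $v_{\bf k} \cdot v_{\bf l} = \omega({\bf k},{\bf l}) v_{{\bf k}+{\bf l}}$ and $v_{\bf k} \cdot b = S({\bf k})(b) \cdot v_{\bf k}$, so the associated outer action and the associated $B^\times$-valued $2$-cocycle read off from this section are exactly $S$ and $\omega$ respectively. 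Hence the pair $(S,\omega)$ representing $\chi(A_{(S,\omega)})$ coincides (up to the equivalence of Lemma \ref{action on factor system}) with the one we started from, so $\chi(A_{(S,\omega)}) = [(S,\omega)]$ in $H^2(\mathbb{Z}^n, B)$.

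There is no real obstacle here; the only thing to be careful about is the bookkeeping in the last paragraph, namely checking that the outer action and $2$-cocycle extracted from the crossed-product description of $(A_{(S,\omega)})^\times_h$ really are the original $(S,\omega)$ rather than some $C^1(\mathbb{Z}^n, B^\times)$-translate. This is handled by making the canonical choice of section ${\bf k} \mapsto v_{\bf k}$, which sends ${\bf 0}$ to the unit and is thus normalized in the sense required by the factor-system formalism of Definition \ref{factor system}.
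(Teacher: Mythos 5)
Your proof is correct and follows exactly the paper's route: the paper likewise deduces the proposition directly from Construction \ref{realization of TNCT^B from factor systems I} by taking $A = A_{(S,\omega)}$ for a representative $(S,\omega)$ of the class. Your extra bookkeeping with the normalized section ${\bf k}\mapsto v_{\bf k}$ just makes explicit the verification $\chi(A_{(S,\omega)})=[(S,\omega)]$ that the construction already asserts.
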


\begin{proof}
\,\,\,This assertion is a consequence of Construction \ref{realization of TNCT^B from factor systems I}: If $[(S,\omega)]\in H^2(\mathbb{Z}^n,B)$, then $A_{(S,\omega)}$ satisfies the requirements of the proposition.
\end{proof}

\begin{proposition}\label{realization of TNCT^B from factor systems III}
Let $n\in\mathbb{N}$ and $B$ be a unital algebra. Further, let $A$ be an algebraically trivial NCP $\mathbb{T}^n$-bundle with $A_{\bf 0}=B$. Then $A$ is equivalent to an algebraically trivial NCP $\mathbb{T}^n$-bundle of the form $A_{(S,\omega)}$ for some factor system $(S,\omega)\in Z^2(\mathbb{Z}^n,B)$.
\end{proposition}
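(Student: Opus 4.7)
The plan is to reconstruct a factor system directly from the graded structure of $A$, and then to use this data to define an equivalence to the algebra $A_{(S,\omega)}$ of Construction \ref{realization of TNCT^B from factor systems I}. First I would choose, for each $i\in I_n$, an invertible element $a_i\in A_i$, which exists by Definition \ref{algebraic trivial NCP T^n-bundles}; by Corollary \ref{set of gen again} (and its proof) the elements $a^{\bf k}:=a_1^{k_1}\cdots a_n^{k_n}$ are then invertible in $A_{\bf k}$ for every ${\bf k}\in\mathbb{Z}^n$, and I set $a_{\bf k}:=a^{\bf k}$ (so in particular $a_{\bf 0}=1_A$). Since $A_{\bf k}$ is a free right $B$-module generated by the unit $a_{\bf k}$ (Proposition \ref{iso of A_1-modules} applied fiberwise in the algebraic setting), we have $A_{\bf k}=a_{\bf k}B=Ba_{\bf k}$, which is the structural input that will make everything else routine.

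Next, I would extract $(S,\omega)$ from the $a_{\bf k}$. For ${\bf k}\in\mathbb{Z}^n$ and $b\in B$, the element $a_{\bf k}ba_{\bf k}^{-1}$ lies in $A_{\bf k}A_{\bf 0}A_{-{\bf k}}\subseteq A_{\bf 0}=B$, so $S({\bf k}):=C_{a_{\bf k}}\restriction_B\in\Aut(B)$ is well-defined. For ${\bf k},{\bf l}\in\mathbb{Z}^n$, the product $a_{\bf k}a_{\bf l}$ is an invertible element of $A_{{\bf k}+{\bf l}}=Ba_{{\bf k}+{\bf l}}$, so there is a unique $\omega({\bf k},{\bf l})\in B^\times$ with $a_{\bf k}a_{\bf l}=\omega({\bf k},{\bf l})a_{{\bf k}+{\bf l}}$. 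The identity $\delta_S=C_B\circ\omega$ follows by rewriting both sides as inner automorphisms of $B$ coming from $a_{\bf k}a_{\bf l}$ and $a_{{\bf k}+{\bf l}}$, and the cocycle condition $d_S\omega=1_B$ is exactly the translation of the associativity identity $(a_{\bf k}a_{\bf l})a_{\bf m}=a_{\bf k}(a_{\bf l}a_{\bf m})$ into $B$. Hence $(S,\omega)\in Z^2(\mathbb{Z}^n,B)$.

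Finally, I would define a map $\varphi:A_{(S,\omega)}\to A$ on the canonical basis by $\varphi(bv_{\bf k}):=ba_{\bf k}$ and extend linearly. It is $\mathbb{Z}^n$-graded, sends $v_{\bf 0}$ to $1_A$, and satisfies $\varphi\restriction_B=\id_B$. The multiplicativity is the computation
\begin{align*}
\varphi(m_{(S,\omega)}(bv_{\bf k},b'v_{\bf l}))
&=\varphi(b\,S({\bf k})(b')\,\omega({\bf k},{\bf l})\,v_{{\bf k}+{\bf l}})\\
&=b\,(a_{\bf k}b'a_{\bf k}^{-1})\,\omega({\bf k},{\bf l})\,a_{{\bf k}+{\bf l}}
 = b\,a_{\bf k}b'\,a_{\bf l}
 = \varphi(bv_{\bf k})\varphi(b'v_{\bf l}),
\end{align*}
and bijectivity follows from $A_{\bf k}=Ba_{\bf k}$ together with the invertibility of each $a_{\bf k}$. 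Thus $\varphi$ is an equivalence of algebraically trivial NCP $\mathbb{T}^n$-bundles, proving $A\sim A_{(S,\omega)}$.

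The only delicate point is the verification of the two identities defining a factor system: both reduce, as sketched above, to straightforward manipulations of the defining relations for $S$ and $\omega$, the key observation being that $\omega$ takes its values in $B^\times$ precisely because $a_{\bf k}a_{\bf l}$ and $a_{{\bf k}+{\bf l}}$ lie in the same grading space, which in turn is a free rank-one $B$-module generated by an invertible element.
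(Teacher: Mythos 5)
Your proof is correct and follows essentially the same route as the paper: your assignment ${\bf k}\mapsto a^{\bf k}$ is precisely a choice of (set-theoretic) section of the extension $1\rightarrow A_{\bf 0}^{\times}\rightarrow A_h^{\times}\rightarrow\mathbb{Z}^n\rightarrow 1$ used in the paper, your $(S,\omega)=(C_B\circ\sigma,\delta_{\sigma})$ is the same factor system, and your map $bv_{\bf k}\mapsto ba_{\bf k}$ is the inverse of the paper's equivalence $a_{\bf k}\mapsto a_{\bf k}\sigma({\bf k})^{-1}v_{\bf k}$. You additionally write out the verifications the paper leaves as ``a short calculation,'' and these are all sound (the citation of Proposition \ref{iso of A_1-modules} is slightly off since that statement assumes $A$ commutative, but the fact $A_{\bf k}=Ba_{\bf k}=a_{\bf k}B$ follows directly from $a_{\bf k}^{-1}\in A_{-\bf k}$ and $A_{-\bf k}A_{\bf k}\subseteq A_{\bf 0}$).
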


\begin{proof}
\,\,\,Let $A$ be an algebraically trivial NCP $\mathbb{T}^n$-bundle with $A_{\bf 0}=B$. We consider the corresponding short exact sequence of groups
\[1\longrightarrow A^{\times}_{\bf 0}\longrightarrow A^{\times}_h\longrightarrow\mathbb{Z}^n\longrightarrow 1
\]and choose a section $\sigma:\mathbb{Z}^n\rightarrow A^{\times}_h$. Now, a short calculation shows that the map
\[\varphi:\left(A=\bigoplus_{{\bf k}\in\mathbb{Z}^n}A_{\bf k},m_A\right)\rightarrow\left(\bigoplus_{{\bf k}\in\mathbb{Z}^n}Bv_{\bf k},m_{(C_B\circ\sigma,\delta_{\sigma})}\right),
\]given on homogeneous elements by
\[\varphi(a_{\bf k}):=a_{\bf k}\sigma({\bf k})^{-1}v_{\bf k},
\]defines an equivalence of algebraically trivial NCP $\mathbb{T}^n$-bundles. 
\end{proof}

\begin{proposition}\label{realization of TNCT^B from factor systems IV}
Let $n\in\mathbb{N}$ and $B$ be a unital algebra. Further, let $(S,\omega)$ and $(S',\omega')$ be two factor systems in $ Z^2(\mathbb{Z}^n,B)$ with $[(S,\omega)]=[(S',\omega')]$. Then the corresponding algebraically trivial NCP $\mathbb{T}^n$-bundles $A_{(S,\omega)}$ and $A_{(S',\omega')}$ are equivalent.
\end{proposition}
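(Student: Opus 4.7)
The plan is to unpack the hypothesis $[(S,\omega)]=[(S',\omega')]$ into an explicit cochain and then exhibit a grading-preserving algebra isomorphism built from it. By Definition \ref{factor system}, the equality of cohomology classes means that there exists $h\in C^1(\mathbb{Z}^n,B^\times)$ with $(S',\omega')=h.(S,\omega)=(h.S,\,h\ast_S\omega)$. Spelled out, this is the pair of identities
\[
S'({\bf k})=C_B(h({\bf k}))\circ S({\bf k}),\qquad \omega'({\bf k},{\bf l})=h({\bf k})\,S({\bf k})(h({\bf l}))\,\omega({\bf k},{\bf l})\,h({\bf k+l})^{-1}.
\]
These two relations are exactly the data I expect to consume in the verification.

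Denote the canonical bases of $A_{(S,\omega)}$ and $A_{(S',\omega')}$ by $(v_{\bf k})_{{\bf k}\in\mathbb{Z}^n}$ and $(v'_{\bf k})_{{\bf k}\in\mathbb{Z}^n}$ respectively. I would define
\[
\varphi:A_{(S,\omega)}\longrightarrow A_{(S',\omega')},\qquad \varphi(bv_{\bf k}):=bh({\bf k})^{-1}v'_{\bf k},
\]
extended $B$-linearly on the left and additively. Since $h({\bf k})\in B^\times$, the map $\varphi$ is a linear bijection (the inverse sends $bv'_{\bf k}$ to $bh({\bf k})v_{\bf k}$), and it maps the ${\bf k}$-th homogeneous component of $A_{(S,\omega)}$ onto the ${\bf k}$-th homogeneous component of $A_{(S',\omega')}$ by construction. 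Thus $\varphi$ is automatically a graded linear isomorphism, and the only substantive point to check is multiplicativity.

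The core computation is to show $\varphi(bv_{\bf k})\ast'\varphi(b'v_{\bf l})=\varphi(bv_{\bf k}\cdot b'v_{\bf l})$, using the formula (\ref{multiplication}) for both multiplications. Expanding the left-hand side with the displayed identity for $S'$ turns $S'({\bf k})(b'h({\bf l})^{-1})$ into $h({\bf k})S({\bf k})(b')S({\bf k})(h({\bf l})^{-1})h({\bf k})^{-1}$; then inserting the displayed identity for $\omega'$ causes the factors $h({\bf k})^{-1}h({\bf k})$ and $S({\bf k})(h({\bf l})^{-1})\cdot h({\bf k})^{-1}\cdot h({\bf k})S({\bf k})(h({\bf l}))$ to collapse, leaving
\[
bS({\bf k})(b')\,\omega({\bf k},{\bf l})\,h({\bf k+l})^{-1}v'_{{\bf k+l}},
\]
which is exactly $\varphi(bS({\bf k})(b')\omega({\bf k},{\bf l})v_{{\bf k+l}})=\varphi(bv_{\bf k}\cdot b'v_{\bf l})$.

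The main obstacle is purely bookkeeping: the non-commutativity of $B$ and the twist by $S({\bf k})$ force one to be careful about the order in which the $h$-factors are introduced and cancelled. I expect no conceptual difficulty once the direction of the cochain action in Lemma \ref{action on factor system} is matched with the convention $\varphi(bv_{\bf k})=bh({\bf k})^{-1}v'_{\bf k}$; the opposite choice $bh({\bf k})v'_{\bf k}$ would produce an uncancelled conjugation and fail to be multiplicative. With the chosen normalization the verification is a direct substitution, and together with the grading-preserving bijectivity established above it yields the desired equivalence $A_{(S,\omega)}\sim A_{(S',\omega')}$.
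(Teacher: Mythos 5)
Your proposal is correct and is essentially the paper's own argument: the paper writes down the map in the opposite direction, $\varphi\colon A_{(S',\omega')}\to A_{(S,\omega)}$, $bv_{\bf k}\mapsto bh({\bf k})v_{\bf k}$, which is precisely the inverse of your isomorphism, and verifies multiplicativity by the same substitution of the identities $S'=(C_B\circ h)\cdot S$ and $\omega'=h\ast_S\omega$. Your cancellation computation checks out, so there is nothing to add.
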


\begin{proof}
\,\,\,We first recall that the condition $[(S',\omega')]=[(S,\omega)]$ is equivalent to the existence of an element $h\in C^1(\mathbb{Z}^n,B^{\times})$ with 
\[h.(S,\omega)=(S',\omega'),
\]Then a short observation shows that the map 
\[\varphi:\left(\bigoplus_{{\bf k}\in\mathbb{Z}^n}Bv_{\bf k},m_{(S',\omega')}\right)\rightarrow\left(\bigoplus_{{\bf k}\in\mathbb{Z}^n}Bv_{\bf k},m_{(S,\omega)}\right),
\]given on homogeneous elements by
\[\varphi(bv_{\bf k})=bh(k)v_{\bf k},
\]is an automorphism of vector spaces leaving the grading spaces invariant. We further have
\begin{align}
m_{(S,\omega)}(\varphi(bv_{\bf k}),\varphi(b'v_{\bf l}))&=m_{(S,\omega)}(bh({\bf k})v_{\bf k},b'h({\bf l})v_{\bf l})=bh({\bf k})S({\bf k})(b'h({\bf l}))\omega({\bf k},{\bf l})v_{\bf k+l}\notag\\
&=b[C_B(h({\bf k}))(S({\bf k})(b'))]h({\bf k})S({\bf k})(h({\bf l}))\omega({\bf k},{\bf l})v_{\bf k+l}\notag\\
&=b(h.S)({\bf k})(b')(h\ast_S\omega)({\bf k},{\bf l})h({\bf k+l})v_{\bf k+l}\notag\\
&=\varphi(b(h.S)({\bf k})(b')(h\ast_S\omega)({\bf k},{\bf l})v_{\bf k+l})=\varphi(m_{(S',\omega')}(bv_{\bf k},b'v_{\bf l})).\notag
\end{align}
Hence, $\varphi$ actually defines an equivalence of algebraically trivial NCP $\mathbb{T}^n$-bundles. 
\end{proof}

We are now ready to state and proof the main theorem of this section:

\begin{theorem}\label{classification on TNCT^B}
Let $n\in\mathbb{N}$ and $B$ be a unital algebra. Then the map
\[\chi:\Ext(\mathbb{Z}^n,B)\rightarrow H^2(\mathbb{Z}^n,B),\,\,\,[A]\mapsto\chi(A)
\]is a well-defined bijection.
\end{theorem}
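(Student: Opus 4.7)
The plan is to assemble the bijection directly from the propositions already proved in this section, so essentially no new computation is required. Well-definedness of the map $\chi$ is already the content of Lemma \ref{chi well-defined}, so what remains is to verify surjectivity and injectivity.

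First I would establish surjectivity. Given an arbitrary cohomology class $[(S,\omega)]\in H^2(\mathbb{Z}^n,B)$, Proposition \ref{realization of TNCT^B from factor systems II} produces an algebraically trivial NCP $\mathbb{T}^n$-bundle $A_{(S,\omega)}$ with base $B$ and satisfying $\chi(A_{(S,\omega)})=[(S,\omega)]$. Taking the equivalence class of this bundle yields a preimage under $\chi$, so $\chi$ is surjective.

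For injectivity I would argue as follows. Suppose $[A],[A']\in\Ext(\mathbb{Z}^n,B)$ satisfy $\chi([A])=\chi([A'])$. By Proposition \ref{realization of TNCT^B from factor systems III}, there exist factor systems $(S,\omega),(S',\omega')\in Z^2(\mathbb{Z}^n,B)$ together with equivalences $A\sim A_{(S,\omega)}$ and $A'\sim A_{(S',\omega')}$. Combining Construction \ref{TNCT^nB ass class} with Proposition \ref{equ  ATNCT^nB same class} gives $\chi(A)=[(S,\omega)]$ and $\chi(A')=[(S',\omega')]$; the hypothesis therefore becomes $[(S,\omega)]=[(S',\omega')]$ in $H^2(\mathbb{Z}^n,B)$. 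Then Proposition \ref{realization of TNCT^B from factor systems IV} provides an equivalence $A_{(S,\omega)}\sim A_{(S',\omega')}$, and transitivity of the equivalence relation yields $[A]=[A']$ in $\Ext(\mathbb{Z}^n,B)$.

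There is no genuine obstacle at this final step, since all the nontrivial work (the construction of the crossed-product-type algebra $A_{(S,\omega)}$ from a factor system, the reduction of an arbitrary algebraically trivial NCP $\mathbb{T}^n$-bundle to this normal form via a choice of section $\sigma:\mathbb{Z}^n\to A_h^\times$, and the verification that cohomologous factor systems yield equivalent bundles) has been carried out in Propositions \ref{realization of TNCT^B from factor systems II}, \ref{realization of TNCT^B from factor systems III}, and \ref{realization of TNCT^B from factor systems IV}. The theorem is thus a formal consequence of these three propositions together with Lemma \ref{chi well-defined}; the main point worth emphasizing in the write-up is simply the compatibility $\chi(A_{(S,\omega)})=[(S,\omega)]$, which is the hinge on which both directions turn.
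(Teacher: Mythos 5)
Your argument is correct and follows exactly the paper's own proof: well-definedness via Lemma \ref{chi well-defined}, surjectivity via Proposition \ref{realization of TNCT^B from factor systems II}, and injectivity by reducing both bundles to the normal form $A_{(S,\omega)}$ with Proposition \ref{realization of TNCT^B from factor systems III} and then applying Proposition \ref{realization of TNCT^B from factor systems IV}. No gaps; the identity $\chi(A_{(S,\omega)})=[(S,\omega)]$ you highlight is indeed the key compatibility.
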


\begin{proof}
\,\,\,Lemma \ref{chi well-defined} implies that the map $\chi$ is well-defined. The surjectivity of $\chi$ follows from Proposition \ref{realization of TNCT^B from factor systems II}. Hence, it remains to show that $\chi$ is injective: Therefore, we choose two algebraically trivial NCP $\mathbb{T}^n$-bundles $A$ and $A'$ with $A_{\bf 0}=A'_{\bf 0}=B$ and $\chi(A)=\chi(A')$. By Proposition \ref{realization of TNCT^B from factor systems III}, we may assume that $A=A_{(S,\omega)}$ and $A'=A_{(S',\omega')}$ for two factor systems $(S,\omega)$ and $(S',\omega')$ in $ Z^2(\mathbb{Z}^n,B)$ with $[(S',\omega')]=[(S,\omega)]$. Therefore, the claim follows from Proposition \ref{realization of TNCT^B from factor systems IV}.
\end{proof}

\subsection{$\mathbb{Z}^n$-Kernels} 

In the previous subsection we saw that the set of all equivalence classes of algebraically trivial NCP $\mathbb{T}^n$-bundles with a prescribed  fixed point algebra $B$ is classified by the cohomology space $H^2(\mathbb{Z}^n,B)$. Moreover, Proposition \ref{equ  ATNCT^nB same class} in particular implies that equivalent algebraically trivial NCP $\mathbb{T}^n$-bundles correspond to the same $\mathbb{Z}^n$-kernel. This leads to the following definition:

\begin{definition}{\bf(Equivalence classes of $\mathbb{Z}^n$-kernels).}
Let $n\in\mathbb{N}$ and $B$ be a unital algebra. We write
\[\Ext(\mathbb{Z}^n,B)_{[S]}
\]for the set of equivalence classes of algebraically trivial NCP $\mathbb{T}^n$-bundles with base $B$ corresponding to the $\mathbb{Z}^n$-kernel $[S]$.\sindex[n]{$\Ext(\mathbb{Z}^n,B)_{[S]}$}
\end{definition}

Note that the set $\Ext(\mathbb{Z}^n,B)_{[S]}$ may be empty. The aim of this section is to classify this set and give conditions for its non-emptiness. We start with the following theorem:

\begin{theorem}\label{class of Z-kernels I}
Let $n\in\mathbb{N}$ and $B$ be a unital algebra. Further, let $S$ be an outer action of $\mathbb{Z}^n$ on $B$ with $\Ext(\mathbb{Z}^n,B)_{[S]}\neq\emptyset$. Then the following assertions hold:
\begin{itemize}
\item[\emph{(a)}]
Each extension class in $\Ext(\mathbb{Z}^n,B)_{[S]}$ can be represented by an algebraically trivial NCP $\mathbb{T}^n$-bundle of the form $A_{(S,\omega)}$.
\item[\emph{(b)}]
Any other algebraically trivial NCP $\mathbb{T}^n$-bundle $A_{(S,\omega')}$ representing an element of $\Ext(\mathbb{Z}^n,B)_{[S]}$ satisfies
\[\omega'\cdot\omega^{-1}\in Z^2(\mathbb{Z}^n,Z(B)^{\times})_{[S]}.
\]
\item[\emph{(c)}]
Two algebraically trivial NCP $\mathbb{T}^n$-bundles $A_{(S,\omega)}$ and $A_{(S,\omega')}$ are equivalent if and only if
\[\omega'\cdot\omega^{-1}\in B^2(\mathbb{Z}^n,Z(B)^{\times})_{[S]}.
\]
\end{itemize}
\end{theorem}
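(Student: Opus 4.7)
The proof rests on two ingredients already in place: the bijective correspondence $\chi$ from Theorem \ref{classification on TNCT^B} and the fact, recorded in Lemma \ref{action on factor system}, that the stabilizer of a factor system $(S,\omega)$ under the $C^1(\mathbb{Z}^n,B^\times)$-action is $Z^1(\mathbb{Z}^n,Z(B)^\times)_S$ and depends only on $[S]$. Throughout, I use that the action of $\mathbb{Z}^n$ on $Z(B)^\times$ induced by $S$ factors through $[S]$: for any $h\in C^1(\mathbb{Z}^n,B^\times)$ the inner part $C_B(h(\mathbf{k}))$ acts trivially on the center, so the groups $Z^2(\mathbb{Z}^n,Z(B)^\times)_{[S]}$ and $B^2(\mathbb{Z}^n,Z(B)^\times)_{[S]}$ are well defined invariants of the kernel.

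For part (a), I would start from an arbitrary class in $\Ext(\mathbb{Z}^n,B)_{[S]}$ and, using Proposition \ref{realization of TNCT^B from factor systems III}, represent it by some $A_{(S',\omega')}$. Since the kernel is $[S]$, we have $S'=h.S$ for some $h\in C^1(\mathbb{Z}^n,B^\times)$. Setting $\omega:=h^{-1}\ast_{S'}\omega'$ and using the group action from Lemma \ref{action on factor system} gives $h.(S,\omega)=(S',\omega')$, so $(S,\omega)\in Z^2(\mathbb{Z}^n,B)$ is a factor system in the same $C^1(\mathbb{Z}^n,B^\times)$-orbit as $(S',\omega')$. Proposition \ref{realization of TNCT^B from factor systems IV} then yields the equivalence $[A_{(S,\omega)}]=[A_{(S',\omega')}]$, as required.

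For part (b), both $\omega$ and $\omega'$ satisfy $\delta_S=C_B\circ\omega=C_B\circ\omega'$, forcing $C_B\circ(\omega'\omega^{-1})=\mathbf{1}$, so $\omega'\omega^{-1}$ has values in $Z(B)^\times$. Restricted to $Z(B)^\times$-valued 2-cochains, the twisted differential $d_S$ reduces to the standard group-cohomology coboundary for the induced $\mathbb{Z}^n$-action on $Z(B)^\times$; since $d_S\omega=d_S\omega'=1_B$ and $Z(B)^\times$ is abelian, a direct computation gives $d_S(\omega'\omega^{-1})=1_B$. In view of the well-definedness observation above, this places $\omega'\omega^{-1}$ in $Z^2(\mathbb{Z}^n,Z(B)^\times)_{[S]}$.

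For part (c), the forward direction follows by invoking Theorem \ref{classification on TNCT^B}: equivalence of $A_{(S,\omega)}$ and $A_{(S,\omega')}$ implies $[(S,\omega)]=[(S,\omega')]$ in $H^2(\mathbb{Z}^n,B)$, so there is $h\in C^1(\mathbb{Z}^n,B^\times)$ with $h.(S,\omega)=(S,\omega')$; the equality $h.S=S$ forces $h$ into $C^1(\mathbb{Z}^n,Z(B)^\times)$, and then expanding $h\ast_S\omega$ with commuting central factors yields $\omega'\omega^{-1}=d_Sh\in B^2(\mathbb{Z}^n,Z(B)^\times)_{[S]}$. Conversely, given such an $h$ with $d_Sh=\omega'\omega^{-1}$, centrality of $h$ gives both $h.S=S$ and $h\ast_S\omega=\omega'$, hence $h.(S,\omega)=(S,\omega')$, and Proposition \ref{realization of TNCT^B from factor systems IV} produces the desired equivalence. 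The only delicate point is bookkeeping of the $\ast_S$-action when $h$ has central values; everything else is a routine unwinding of the definitions, and I do not anticipate a genuine obstacle.
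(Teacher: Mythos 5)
Your proposal is correct and follows essentially the same route as the paper: part (a) moves an arbitrary representative $A_{(S',\omega')}$ into the orbit of $S$ via the $C^1(\mathbb{Z}^n,B^{\times})$-action, part (b) uses $C_B\circ\omega=C_B\circ\omega'$ and $d_S\omega=d_S\omega'=1_B$ to place $\omega'\omega^{-1}$ in $Z^2(\mathbb{Z}^n,Z(B)^{\times})_{[S]}$, and part (c) identifies the equivalence with the existence of a central $h$ satisfying $\omega'\omega^{-1}=d_Sh$. No gaps.
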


\begin{proof}
\,\,\,(a) From Proposition \ref{realization of TNCT^B from factor systems II} we know that each algebraically trivial NCP $\mathbb{T}^n$-bundle $A$ is equivalent to one of the form $A_{(S',\omega')}$. If $[S]=[S']$ and $h\in C^1(\mathbb{Z}^n,B^{\times})$ satisfies $h.S=S'$, then $h^{-1}.(S',\omega')=(S,h^{-1}\ast_{S'}\omega')$, so that $\omega'':=h^{-1}\ast_{S'}\omega'$ implies that $A_{(S',\omega')}$ and $A_{(S,\omega'')}$ are equivalent. This means that each extension class in $\Ext(\mathbb{Z}^n,B)_{[S]}$ can be represented by an algebraically trivial noncommutative principal $\mathbb{T}^n$-bundle of the form $A_{(S,\omega'')}$.

(b) If $(S,\omega)$ and $(S,\omega')$ are two factor systems for the pair $(\mathbb{Z}^n,B)$, then $C_B\circ\omega=\delta_S=C_B\circ\omega'$ implies $\beta:=\omega'\cdot\omega^{-1}\in C^2(\mathbb{Z}^n,Z(B)^{\times})$. Further $d_S\omega'=d_S\omega=1_B$, so that 
\[1_B=d_S\omega'=d_S\omega\cdot d_S\beta=d_S\beta
\]implies $\beta\in Z^2(\mathbb{Z}^n,Z(B)^{\times})_{[S]}$. This means that $\omega'\in\omega\cdot Z^2(\mathbb{Z}^n,Z(B)^{\times})_{[S]}$.

(c) ($``\Rightarrow"$) In view of Proposition \ref{equ  ATNCT^nB same class} and Proposition \ref{realization of TNCT^B from factor systems IV}, the equivalence of the two algebraically trivial NCP $\mathbb{T}^n$-bundles $A_{(S,\omega)}$ and $A_{(S,\omega')}$is equivalent to the existence of an element $h\in C^1(\mathbb{Z}^n,B^{\times})$ with
\[S=h.S=(C_B\circ h)\cdot S\,\,\,\text{and}\,\,\,\omega'=h\ast_S\omega.
\]Then $C_B\circ h=\id_B$ implies that $h\in C^1(\mathbb{Z}^n,Z(B)^{\times})$. This leads to $\omega'=h\ast_S\omega=(d_Sh)\cdot\omega$, i.e., 
\[\omega'\omega^{-1}\in B^2(\mathbb{Z}^n,Z(B)^{\times})_{[S]}.
\]

($``\Leftarrow"$) If conversely $\omega'\omega^{-1}=d_Sh$ for some $h\in C^1(\mathbb{Z}^n,Z(B)^{\times})$, then we easily conclude that $h.S=S$ and $\omega'=h\ast_S\omega$.
\end{proof}

\begin{corollary}\label{class of Z-kernels II}
Let $n\in\mathbb{N}$ and $B$ be a unital algebra. Further, let $[S]$ be a $\mathbb{Z}^n$-kernel with $\Ext(\mathbb{Z}^n,B)_{[S]}\neq\emptyset$. Then the map
\[H^2(\mathbb{Z}^n,Z(B)^{\times})_{[S]}\times\Ext(\mathbb{Z}^n,B)_{[S]}\rightarrow\Ext(\mathbb{Z}^n,B)_{[S]}
\]given by
\[([\beta],[A_{(S,\omega)}])\mapsto[A_{(S,\omega\cdot\beta)}]
\]is a well-defined simply transitive action.
\end{corollary}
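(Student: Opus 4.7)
The strategy is to establish each property (well-definedness, action axioms, transitivity, freeness) by reducing to Theorem \ref{class of Z-kernels I}, which is the technical backbone of this subsection. First I would verify well-definedness. Given a factor system $(S,\omega)\in Z^2(\mathbb{Z}^n,B)$ and a cocycle $\beta\in Z^2(\mathbb{Z}^n,Z(B)^{\times})_{[S]}$, the pair $(S,\omega\cdot\beta)$ should again lie in $Z^2(\mathbb{Z}^n,B)$. The compatibility $\delta_S=C_B\circ(\omega\cdot\beta)$ is immediate since $\beta$ takes values in $Z(B)^{\times}$ and hence $C_B\circ\beta=\id_B$. For the cocycle condition $d_S(\omega\cdot\beta)=1_B$ one expands using that $\beta$-values are central to split $d_S(\omega\cdot\beta)=d_S\omega\cdot d_S\beta$ and then uses $d_S\omega=1_B$ together with $\beta\in Z^2(\mathbb{Z}^n,Z(B)^{\times})_{[S]}$.

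Independence of the chosen representatives is then a direct application of Theorem \ref{class of Z-kernels I}\,(c): if $\beta'=\beta\cdot d_Sh$ for some $h\in C^1(\mathbb{Z}^n,Z(B)^{\times})$, then $(\omega\cdot\beta')(\omega\cdot\beta)^{-1}=d_Sh\in B^2(\mathbb{Z}^n,Z(B)^{\times})_{[S]}$, so $A_{(S,\omega\cdot\beta)}$ and $A_{(S,\omega\cdot\beta')}$ are equivalent. Likewise, if $[A_{(S,\omega)}]=[A_{(S,\omega')}]$ with the same outer action $S$, then $\omega'\omega^{-1}\in B^2(\mathbb{Z}^n,Z(B)^{\times})_{[S]}$ and the same relation holds for $\omega'\cdot\beta$ versus $\omega\cdot\beta$. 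The action axioms $[1].[A_{(S,\omega)}]=[A_{(S,\omega)}]$ and $[\beta_1\beta_2].[A_{(S,\omega)}]=[\beta_1].([\beta_2].[A_{(S,\omega)}])$ then follow immediately.

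For transitivity, given two classes in $\Ext(\mathbb{Z}^n,B)_{[S]}$, Theorem \ref{class of Z-kernels I}\,(a) allows us to represent both of them by algebras of the form $A_{(S,\omega)}$ and $A_{(S,\omega')}$ with the same outer action $S$; then Theorem \ref{class of Z-kernels I}\,(b) ensures $\beta:=\omega'\cdot\omega^{-1}\in Z^2(\mathbb{Z}^n,Z(B)^{\times})_{[S]}$, and by construction $[\beta].[A_{(S,\omega)}]=[A_{(S,\omega')}]$. Simple transitivity (freeness) is the converse direction: if $[\beta].[A_{(S,\omega)}]=[A_{(S,\omega)}]$, then $A_{(S,\omega\cdot\beta)}$ is equivalent to $A_{(S,\omega)}$, and Theorem \ref{class of Z-kernels I}\,(c) yields $\beta=(\omega\cdot\beta)\cdot\omega^{-1}\in B^2(\mathbb{Z}^n,Z(B)^{\times})_{[S]}$, i.e.\ $[\beta]=1$ in $H^2(\mathbb{Z}^n,Z(B)^{\times})_{[S]}$.

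The only real obstacle is the careful bookkeeping in the well-definedness step, where one must use the centrality of the $\beta$-values at the right moments to split products, apply $d_S$, and move factors past each other in a noncommutative $B$; once these identities are in place, all remaining steps reduce to straightforward invocations of the preceding theorem.
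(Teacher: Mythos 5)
Your proposal is correct and takes essentially the same route as the paper, which simply states that the corollary "follows directly from Theorem \ref{class of Z-kernels I}"; your reduction of well-definedness, transitivity, and freeness to parts (a)--(c) of that theorem, together with the centrality argument showing $(S,\omega\cdot\beta)$ is again a factor system, is exactly the intended argument, only spelled out in more detail than the paper bothers to.
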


\begin{proof}
\,\,\,This follows directly form Theorem \ref{class of Z-kernels I}.
\end{proof}

\begin{remark}\label{B abelian}{\bf(Commutative fixed point algebras).}\index{Commutative fixed point algebras} 
(a) Let $n\in\mathbb{N}$ and suppose that $B$ is a commutative unital algebra. Then the adjoint representation of $B$ is trivial and a factor system $(S,\omega)$ for $(\mathbb{Z}^n,B)$ consists of a module structure $S:\mathbb{Z}^n\rightarrow \Aut(B)$ and an element $\omega\in C^2(\mathbb{Z}^n,B^{\times})$. Thus, $(S,\omega)$ defines an algebraically trivial NCP $\mathbb{T}^n$-bundle $A_{(S,\omega)}$ if and only if $d_S\omega=1_B$, i.e., $\omega\in Z^2(\mathbb{Z}^n,B^{\times})$. In this case we write 
$A_{\omega}$ for this algebraically trivial NCP $\mathbb{T}^n$-bundle. 

(b) Further $S\sim S'$ if and only if $S=S'$. Hence, a $\mathbb{Z}^n$-kernel $[S]$ is the same as a $\mathbb{Z}^n$-module structure $S$ on $B$ and $\Ext(\mathbb{Z}^n,B)_S:=\Ext(\mathbb{Z}^n,B)_{[S]}$ is the class of all algebraically trivial NCP $\mathbb{T}^n$-bundles with fixed point algebra $B$ corresponding to the $\mathbb{Z}^n$-module structure on $B$ given by $S$.

(c) According to Corollary \ref{class of Z-kernels II}, these equivalence classes correspond to cohomology classes of cocycles, so that the map
\[H^2(\mathbb{Z}^n,B^{\times})_S\rightarrow\Ext(\mathbb{Z}^n,B)_S,\,\,\,[\omega]\mapsto[A_{\omega}]
\]is a well-defined bijection.
\end{remark}


\begin{proposition}\label{cohomology class of d_sw}
Let $n\in\mathbb{N}$ and $B$ be a unital algebra. Then the following assertions hold:
\begin{itemize}
\item[\emph{(a)}]
If $(S,\omega)\in C^1(\mathbb{Z}^n,\Aut(B))\times C^2(\mathbb{Z}^n,B^{\times})$ with $\delta_S=C_B\circ\omega$, then $d_S\omega\in Z^3(\mathbb{Z}^n,Z(B)^{\times})_S$.
\item[\emph{(b)}]
For an outer action $S$ of $\mathbb{Z}^n$ on $B$, the cohomology class $[d_S\omega]\in H^3(\mathbb{Z}^n,Z(B)^{\times})_S$ depends only on the equivalence class $[S]$.
\end{itemize}
\end{proposition}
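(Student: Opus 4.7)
The key preliminary observation, which underlies both assertions, is that the hypothesis $\delta_S = C_B \circ \omega$ forces $S$ to descend to a genuine $\mathbb{Z}^n$-action on $Z(B)$: inner automorphisms act trivially on the center, so $S({\bf k})S({\bf l})|_{Z(B)} = S({\bf k+l})|_{Z(B)}$. Consequently $(Z(B)^\times, S|_{Z(B)})$ is a $\mathbb{Z}^n$-module in the ordinary sense, and $C^\bullet(\mathbb{Z}^n, Z(B)^\times)$ becomes a genuine cochain complex under the standard group-cohomology coboundary, which agrees with the restriction of $d_S$ to $Z(B)^\times$-valued cochains. In particular $Z^3(\mathbb{Z}^n, Z(B)^\times)_S$ and $H^3(\mathbb{Z}^n, Z(B)^\times)_S$ are meaningful, and on this subcomplex $d_S$ satisfies $d_S^{\,2}=0$.

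For part (a), Lemma \ref{action on factor system}(b) already places $d_S\omega$ in $C^3(\mathbb{Z}^n, Z(B)^\times)$, so the remaining task will be to verify the cocycle identity $d_S(d_S\omega) = 1_B$. I would carry this out by direct computation: substitute the definition of $d_S\omega$ into the four-variable formula for the coboundary and cancel using only associativity in $B^\times$ and the relation $\delta_S = C_B \circ \omega$. This is the classical Teichm\"uller cocycle identity. A cleaner route would be to recognize $d_S\omega$ as the associator of the formal multiplication $m_{(S,\omega)}$ of Construction \ref{realization of TNCT^B from factor systems I}: that multiplication is associative by direct check, so its associator vanishes, and writing out what vanishing means in four arguments is precisely the cocycle identity for $d_S\omega$.

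For part (b), suppose $[S'] = [S]$, realized by $S' = (C_B \circ h) \cdot S$ with $h \in C^1(\mathbb{Z}^n, B^\times)$, and let $\omega'$ satisfy $\delta_{S'} = C_B \circ \omega'$. By Lemma \ref{action on factor system}(a), the element $h \ast_S \omega$ also satisfies $\delta_{S'} = C_B \circ (h \ast_S \omega)$, and Lemma \ref{action on factor system}(c) yields
\[ d_{S'}(h \ast_S \omega) \;=\; d_S \omega. \]
Hence $\omega'$ and $h \ast_S \omega$ will differ by a central factor: $\beta := \omega' \cdot (h \ast_S \omega)^{-1}$ lies in $C^2(\mathbb{Z}^n, Z(B)^\times)$. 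Since $\beta$ is central, a short direct computation (equivalently, the product rule for the abelian group-cohomology differential) will give
\[ d_{S'}\omega' \;=\; d_{S'}(h \ast_S \omega) \cdot d_{S'}\beta \;=\; d_S \omega \cdot d_{S}\beta, \]
where the last step uses $S'|_{Z(B)} = S|_{Z(B)}$. Since $d_S\beta$ is a coboundary in the complex $C^\bullet(\mathbb{Z}^n, Z(B)^\times)$, this will yield $[d_{S'}\omega'] = [d_S\omega]$ in $H^3(\mathbb{Z}^n, Z(B)^\times)_S$.

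The main obstacle will be the bookkeeping in the cocycle identity of part (a): since $\omega$ itself is non-central, intermediate terms in the verification live in the non-abelian group $B^\times$, and each cancellation requires commuting inner automorphisms (given by $C_B \circ \omega$) past one another via $\delta_S = C_B \circ \omega$ before the final product collapses in the center. I expect that reinterpreting the identity as the associativity of $m_{(S,\omega)}$ will be the cleanest way to organize this bookkeeping.
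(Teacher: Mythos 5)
The paper itself does not prove this proposition: it defers entirely to [Ne07a], Lemma 1.10 (5), (6), so you are supplying an argument the text omits. Your preliminary observation (that $\delta_S=C_B\circ\omega$ forces $S$ to restrict to a genuine $\mathbb{Z}^n$-module structure on $Z(B)^{\times}$, on which $d_S$ is the ordinary group-cohomology coboundary) is exactly what is needed to make $Z^3$ and $H^3$ meaningful. Your part (b) is complete and correct: reduce to $h\ast_S\omega$ via Lemma \ref{action on factor system} (a) and (c), note that $\beta=\omega'\cdot(h\ast_S\omega)^{-1}$ is central, and apply the product rule together with the fact that $S'$ and $S$ agree on $Z(B)$.

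For part (a), your primary route (direct verification of the Teichm\"uller cocycle identity $d_S(d_S\omega)=1_B$ from $\delta_S=C_B\circ\omega$ and the centrality of $d_S\omega$) is sound, though you only announce it. Your proposed ``cleaner route'' is wrong as stated: the multiplication $m_{(S,\omega)}$ of Construction \ref{realization of TNCT^B from factor systems I} is associative precisely when $d_S\omega=1_B$ --- which is why that construction requires $(S,\omega)$ to be a factor system --- and under the weaker hypothesis of part (a) it is in general \emph{not} associative. Indeed its associator on basis elements is $v_{\bf k}(v_{\bf l}v_{\bf m})=(d_S\omega)({\bf k},{\bf l},{\bf m})\cdot(v_{\bf k}v_{\bf l})v_{\bf m}$, so ``the associator vanishes'' would assert $d_S\omega=1_B$ identically, a much stronger (and generally false) statement than the cocycle identity. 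The coherence argument that does work is the pentagon: every full bracketing of $v_{\bf k}v_{\bf l}v_{\bf m}v_{\bf n}$ is a well-defined element $c\cdot v_{\bf k+l+m+n}$ of the free module $\bigoplus_{{\bf p}}Bv_{\bf p}$, adjacent bracketings differ by a central factor (a value of $d_S\omega$, twisted by $S({\bf k})$ for the edge performed inside a left factor), and the product of these factors around the closed pentagon must equal $1_B$; written out, this is precisely $d_S(d_S\omega)=1_B$. With that repair, or simply by executing the direct computation you propose first, part (a) goes through.
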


\begin{proof}
\,\,\,These two assertions follow from a more general calculation, which can be found in [Ne07a], Lemma 1.10 (5), (6).
\end{proof}

\begin{definition}\label{char. class d_Sw}
Let $n\in\mathbb{N}$ and $B$ be a unital algebra. Further, let $S$ be an outer action of $\mathbb{Z}^n$ on $B$ and choose $\omega\in C^2(\mathbb{Z}^n,B^{\times})$ with $\delta_S=C_B\circ\omega$. In view of Proposition \ref{cohomology class of d_sw}, the cohomology class
\[\nu(S):=[d_S\omega]\in H^3(\mathbb{Z}^n,Z(B)^{\times})_S
\]does not depend on the choice of $\omega$ and is constant on the equivalence class of $S$, so that we may also write $\nu([S]):=\nu(S)$. We call $\nu(S)$ the \emph{characteristic class} of $S$.
\end{definition}


\begin{theorem}\label{Ext(Z^n,B)_S non-empty}
Let $n\in\mathbb{N}$ and $B$ be a unital algebra. If $[S]$ is a $\mathbb{Z}^n$-kernel, then
\[\nu([S])={\bf 1}\,\,\,\Leftrightarrow\,\,\,\Ext(\mathbb{Z}^n,B)_{[S]}\neq\emptyset.
\]
\end{theorem}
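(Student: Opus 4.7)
The plan is to prove both implications via a standard obstruction-theoretic argument, in the spirit of the Eilenberg--MacLane theory of group extensions. The easy direction is $(\Leftarrow)$: if $\Ext(\mathbb{Z}^n,B)_{[S]}\neq\emptyset$, pick any representative and apply Theorem \ref{class of Z-kernels I}(a) to assume it has the form $A_{(S,\omega)}$ for a genuine factor system $(S,\omega)\in Z^2(\mathbb{Z}^n,B)$. By definition of $Z^2(\mathbb{Z}^n,B)$ we have $d_S\omega=1_B$, so by Proposition \ref{cohomology class of d_sw} the characteristic class is $\nu([S])=[d_S\omega]=[1_B]={\bf 1}$.

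For the substantial direction $(\Rightarrow)$, fix a representative $S$ of $[S]$ and, using that $S$ is an outer action, pick any $\omega\in C^2(\mathbb{Z}^n,B^{\times})$ with $\delta_S=C_B\circ\omega$. By Proposition \ref{cohomology class of d_sw}(a), $d_S\omega\in Z^3(\mathbb{Z}^n,Z(B)^{\times})_S$ is a genuine cocycle representing $\nu([S])$. The hypothesis $\nu([S])={\bf 1}$ furnishes $\beta\in C^2(\mathbb{Z}^n,Z(B)^{\times})$ with $d_S\beta=d_S\omega$. I would then set
\[
\omega':=\omega\cdot\beta^{-1}\in C^2(\mathbb{Z}^n,B^{\times})
\]
and verify two things. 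First, since $\beta$ is central-valued, $C_B\circ\beta=\id_B$, so $C_B\circ\omega'=C_B\circ\omega=\delta_S$. Second, exploiting that each $S(k)$ preserves $Z(B)^{\times}$ and that values of $\beta$ commute with everything in $B$, the defining expression for $d_S\omega'$ factors as
\[
d_S\omega'=d_S\omega\cdot(d_S\beta)^{-1}=1_B.
\]
Hence $(S,\omega')\in Z^2(\mathbb{Z}^n,B)$ is a factor system, and Construction \ref{realization of TNCT^B from factor systems I} produces an algebraically trivial NCP $\mathbb{T}^n$-bundle $A_{(S,\omega')}$ whose class lies in $\Ext(\mathbb{Z}^n,B)_{[S]}$.

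The main obstacle, and the only step requiring real care, is the ``quasi-multiplicativity'' identity $d_S(\omega\cdot\beta^{-1})=d_S\omega\cdot(d_S\beta)^{-1}$. In general the three-cochain $d_S$ is not multiplicative on $C^2(\mathbb{Z}^n,B^{\times})$ because of the noncommutativity of $B$; what rescues the argument is precisely that $\beta$ takes values in $Z(B)^{\times}$, so that the values of $S(k)(\beta(\cdot,\cdot))$ commute with the factors coming from $\omega$ and may be reshuffled. Once this centrality-dependent computation is carried out, both implications follow immediately, and the theorem becomes a clean translation of the obstruction calculus for non-abelian group extensions into our graded-algebra setting.
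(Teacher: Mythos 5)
Your proof is correct and follows essentially the same route as the paper's: the $(\Leftarrow)$ direction reduces to a representative of the form $A_{(S,\omega)}$ with $d_S\omega=1_B$, and the $(\Rightarrow)$ direction modifies $\omega$ by a central-valued $2$-cochain trivializing $d_S\omega$ (your $\omega\cdot\beta^{-1}$ is the paper's $\omega\cdot h$ up to an inversion convention). The centrality-dependent multiplicativity $d_S(\omega\cdot\beta^{-1})=d_S\omega\cdot(d_S\beta)^{-1}$ that you rightly single out is exactly the identity the paper uses implicitly.
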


\begin{proof}
\,\,\,($``\Leftarrow"$) If there exists an algebraically trivial NCP $\mathbb{T}^n$-bundle $A$ corresponding to $[S]$, then Proposition \ref{realization of TNCT^B from factor systems III} implies that we may without loss of generality assume that it is of the form $A_{(S,\omega)}$. In particular, this means that $d_S\omega=1_B$. Hence, we obtain $\nu([S])=[d_S\omega]=\textbf{1}$.

($``\Rightarrow"$) Suppose, conversely, that $\nu([S])=[d_S\omega]=\textbf{1}$. Then there exists $\omega\in C^2(\mathbb{Z}^n,B^{\times})$ with $\delta_S=C_B\circ\omega$ and some $h\in C^2(\mathbb{Z}^n,Z(B)^{\times})$ with $d_S\omega=d_Sh^{-1}$. Therefore, the element $\omega':=\omega\cdot h\in C^2(\mathbb{Z}^n,B^{\times})$ satisfies 
\[d_S\omega'=d_S\omega\cdot d_Sh=1_B\,\,\,\text{and}\,\,\,\delta_S=C_B\circ\omega'.
\]Hence, $(S,\omega')$ is a factor system for $(\mathbb{Z}^n,B)$ and Proposition \ref{realization of TNCT^B from factor systems II} implies the existence of an algebraically trivial NCP $\mathbb{T}^n$-bundle $A_{(S,\omega')}$ corresponding to the $\mathbb{Z}^n$-kernel $[S]$.
\end{proof}

\subsection{Some Useful Results on the Second Group Cohomology of $\mathbb{Z}^n$}

Let $n\in\mathbb{N}$ and $B$ be a commutative unital algebra. In view of the previous two subsections, we now present some useful results on the second cohomology groups $H^2(\mathbb{Z}^n,B^{\times})$\sindex[n]{$H^2(\mathbb{Z}^n,B^{\times})$}. We start with some general facts on the set of equivalence classes $\Ext(\mathbb{Z}^n,B^{\times})\cong H^2(\mathbb{Z}^n,B^{\times})$ of central extensions of $\mathbb{Z}^n$ by the abelian group $B^{\times}$. We recall that since $\mathbb{Z}^n$ is free abelian, the set $\Ext_{\text{ab}}(\mathbb{Z}^n,B^{\times})$\sindex[n]{$\Ext_{\text{ab}}(\mathbb{Z}^n,B^{\times})$} of equivalence classes of central extensions of $\mathbb{Z}^n$ by the abelian group $B^{\times}$ which are abelian consists of a single element. Further, we write $\Alt^2(\mathbb{Z}^n,B^{\times})$\sindex[n]{$\Alt^2(\mathbb{Z}^n,B^{\times})$} for the set of biadditive alternating maps from $\mathbb{Z}^n$ to $B^{\times}$ and define for each $[\omega]\in H^2(\mathbb{Z}^n,B^{\times})$ a map $f_{\omega}\in\Alt^2(\mathbb{Z}^n,B^{\times})$ by 
\[f_{\omega}({\bf k},{\bf l}):=\omega({\bf k},{\bf l})(\omega({\bf l},{\bf k}))^{-1}.
\]

\begin{proposition}\label{H^2(Z^n,B)}
Let $n\in\mathbb{N}$ and $B$ be a commutative unital algebra. Moreover, let $B^{\times}$ be a trivial $\mathbb{Z}^n$-module. Then the following assertions hold:
\begin{itemize}
\item[\emph{(a)}]
The map
\[\Phi:H^2(\mathbb{Z}^n,B^{\times})\rightarrow\Alt^2(\mathbb{Z}^n,B^{\times}),\,\,\,[\omega]\mapsto f_{\omega}
\]is an isomorphism of abelian groups.
\item[\emph{(b)}]
Moreover, the map
\[\Alt^2(\mathbb{Z}^n,B^{\times})\rightarrow(B^{\times})^{\frac{1}{2}n(n-1)},\,\,\,f\mapsto(f(e_i,e_j))_{1\leq i<j\leq n}
\]is an isomorphism of abelian groups.
\end{itemize}
\end{proposition}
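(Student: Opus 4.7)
The plan for part (a) has four steps. First I would verify that the map $[\omega] \mapsto f_\omega$ is well-defined at the level of cochains: since $B^\times$ is a trivial $\mathbb{Z}^n$-module, the 2-cocycle identity
$\omega(\mathbf{k},\mathbf{l}+\mathbf{m})\,\omega(\mathbf{l},\mathbf{m}) = \omega(\mathbf{k}+\mathbf{l},\mathbf{m})\,\omega(\mathbf{k},\mathbf{l})$
used twice (swapping arguments and multiplying out) yields biadditivity of $f_\omega$, while $f_\omega(\mathbf{k},\mathbf{k})=1$ is immediate, so $f_\omega \in \Alt^2(\mathbb{Z}^n,B^\times)$. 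If $\omega = d\beta$ is a coboundary, then $\omega(\mathbf{k},\mathbf{l}) = \beta(\mathbf{k})\beta(\mathbf{l})\beta(\mathbf{k}+\mathbf{l})^{-1}$ is symmetric in $\mathbf{k},\mathbf{l}$, hence $f_\omega = 1$. That $\Phi$ respects the group structure is immediate from the definition.

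Next I would show injectivity of $\Phi$. If $f_\omega = 1$ then $\omega$ is a symmetric cocycle, which means the central extension $B^\times \times_\omega \mathbb{Z}^n$ is abelian. As recalled in the text, $\Ext_{\mathrm{ab}}(\mathbb{Z}^n,B^\times)$ consists of a single element (because $\mathbb{Z}^n$ is free abelian), so every abelian central extension of $\mathbb{Z}^n$ by $B^\times$ splits and therefore $[\omega]$ is trivial.

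For surjectivity, given $f\in\Alt^2(\mathbb{Z}^n,B^\times)$, I would exhibit an explicit cocycle representative
\[
\omega_f(\mathbf{k},\mathbf{l}) \;:=\; \prod_{i>j} f(e_i,e_j)^{k_i l_j}.
\]
Because the action is trivial, bimultiplicativity of $\omega_f$ implies the 2-cocycle identity, and a direct computation using that $f$ is alternating (so $f(e_i,e_j) = f(e_j,e_i)^{-1}$) shows $f_{\omega_f} = f$. I expect this verification of the cocycle property and the recovery $f_{\omega_f}=f$ to be the only real computational step of the proof, hence the main (though not deep) obstacle.

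Part (b) is essentially linear algebra over $\mathbb{Z}$. The indicated map is clearly a homomorphism of abelian groups; injectivity follows because any $f\in\Alt^2(\mathbb{Z}^n,B^\times)$ is determined by its values on pairs of basis vectors (using $f(e_i,e_i)=1$ and biadditivity to extend to all of $\mathbb{Z}^n\times\mathbb{Z}^n$), and surjectivity follows by setting, for a prescribed family $(c_{ij})_{i<j}$,
\[
f(\mathbf{k},\mathbf{l}) \;:=\; \prod_{i<j} c_{ij}^{\,k_i l_j - k_j l_i},
\]
which is patently biadditive and alternating with $f(e_i,e_j)=c_{ij}$ for $i<j$.
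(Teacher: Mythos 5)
Your proof is correct and complete. The paper itself gives no argument here --- it simply cites [Ne07b], Proposition II.4 --- and what you have written is the standard proof that the citation points to: the key facts being that $f_\omega$ is the commutator pairing of the central extension $B^{\times}\times_{\omega}\mathbb{Z}^n$ (hence biadditive because the quotient is abelian), that symmetric cocycles correspond to abelian extensions which split since $\mathbb{Z}^n$ is free, and that an explicit bimultiplicative cocycle $\omega_f(\mathbf{k},\mathbf{l})=\prod_{i>j}f(e_i,e_j)^{k_il_j}$ realizes any prescribed alternating form. All the computations you indicate (the cocycle identity for bimultiplicative maps, the recovery $f_{\omega_f}=f$ via antisymmetry and relabelling, and the explicit inverse in part (b)) check out.
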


\begin{proof}
\,\,\,This assertion is a special case of [Ne07b], Proposition II.4.
\end{proof}

Next, we consider the general case of abelian extensions of $\mathbb{Z}^2$ by the unit group of a commutative unital algebra $B$. We recall that if $(B^{\times},S)$ is a $\mathbb{Z}^2$-module, then $\Ext(\mathbb{Z}^2,B^{\times})_S\cong H^2(\mathbb{Z}^2,B^{\times})_S$ denotes the set of equivalence classes of all $B^{\times}$-extensions of $\mathbb{Z}^2$ for which the associated $\mathbb{Z}^2$-module structure on $B^{\times}$ is $S$. Moreover, we write 
\[\sigma_{\omega}:\mathbb{Z}^2\rightarrow B^{\times}\times_{\omega}\mathbb{Z}^2,\,\,\,({\bf k},{\bf l})\mapsto (1_B,({\bf k},{\bf l}))
\]for the section of the extension of $\mathbb{Z}^2$ by $B^{\times}$ corresponding to the cocycle $\omega\in Z^2(\mathbb{Z}^2,B^{\times})_S$.

\begin{proposition}\label{H^2(Z^n,B)_S}
Let $B$ be a commutative unital algebra and $(B^{\times},S)$ a $\mathbb{Z}^2$-module. Further, let ${\bf e}$ and ${\bf e'}$ be generators of $\mathbb{Z}^2$. Then the map
\[\beta:H^2(\mathbb{Z}^2,B^{\times})_S\rightarrow B^{\times}/C,\,\,\,[\omega]\mapsto b_{\omega}({\bf e},{\bf e'})+C,
\]is a well-defined isomorphism of abelian groups, where 
\[b_{\omega}({\bf e},{\bf e'}):=\sigma_{\omega}({\bf e})\sigma_{\omega}({\bf e'})\sigma_{\omega}({\bf e})^{-1}\sigma_{\omega}({\bf e'})^{-1}
\]and
\[C:=\langle b'(S({\bf e})b')^{-1}b^{-1}S({\bf e'})b:\,b,b'\in B^{\times}\rangle.
\]
\end{proposition}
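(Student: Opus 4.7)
The plan is to exploit the explicit nature of the commutator $b_\omega({\bf e},{\bf e'})$ inside the crossed product $B^\times\times_\omega\mathbb{Z}^2$. First I would verify that $b_\omega({\bf e},{\bf e'})$ lies in $B^\times\times\{0\}\cong B^\times$: since $\mathbb{Z}^2$ is abelian, the image of the group-theoretic commutator vanishes in $\mathbb{Z}^2$, and a direct computation inside the crossed product gives the formula
\[
b_\omega({\bf e},{\bf e'})=\omega({\bf e},{\bf e'})\cdot\omega({\bf e'},{\bf e})^{-1}
\]
after moving $B^\times$-factors past section elements using the $S$-action.

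Next, for well-definedness, I would take a cohomologous cocycle $\omega'=\omega\cdot d_Sh$ with $h\in C^1(\mathbb{Z}^2,B^\times)$, which corresponds to the section change $\sigma_{\omega'}({\bf k})=(h({\bf k}),0)\sigma_\omega({\bf k})$. A direct commutator computation in the extension yields
\[
b_{\omega'}({\bf e},{\bf e'})=b_\omega({\bf e},{\bf e'})\cdot h({\bf e})\cdot S({\bf e})(h({\bf e'}))\cdot S({\bf e'})(h({\bf e}))^{-1}\cdot h({\bf e'})^{-1},
\]
and one checks that the correction factor is precisely the inverse of the generator of $C$ obtained by choosing $b:=h({\bf e})$ and $b':=h({\bf e'})$; consequently $\beta$ descends to a well-defined map $H^2(\mathbb{Z}^2,B^\times)_S\to B^\times/C$. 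The homomorphism property then follows directly from the multiplicative structure of cocycles via the formula for $b_\omega$.

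For surjectivity, given $b\in B^\times$, I would construct an explicit cocycle $\omega_b$ using the normal form $k{\bf e}+l{\bf e'}$ on $\mathbb{Z}^2$ by setting $\omega_b(k{\bf e}+l{\bf e'},m{\bf e}+n{\bf e'}):=b^{lm}$ (twisted appropriately by the $S$-action to yield an honest $S$-cocycle), and then verify the cocycle identity together with $b_{\omega_b}({\bf e},{\bf e'})=b$ by direct calculation. For injectivity, if $b_\omega({\bf e},{\bf e'})\in C$, the well-definedness calculation can be run backwards: one chooses $h({\bf e}),h({\bf e'})\in B^\times$ so that the adjusted section has trivial commutator, and freeness of $\mathbb{Z}^2$ on ${\bf e},{\bf e'}$ subject only to the commutator relation extends this to a splitting homomorphism $\mathbb{Z}^2\to B^\times\times_\omega\mathbb{Z}^2$, so that $[\omega]$ is trivial.

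The main obstacle will be the surjectivity step: writing down an $S$-twisted cocycle on $\mathbb{Z}^2$ with prescribed commutator while verifying the cocycle identity in the presence of the $S$-action requires delicate bookkeeping. A cleaner alternative, which I would use to bypass the explicit construction, is the Lyndon--Hochschild--Serre spectral sequence for $0\to\mathbb{Z}\cdot{\bf e}\to\mathbb{Z}^2\to\mathbb{Z}\cdot{\bf e'}\to 0$: because $\mathbb{Z}$ has cohomological dimension one, the spectral sequence collapses to
\[
H^2(\mathbb{Z}^2,B^\times)_S\cong H^1(\mathbb{Z}\cdot{\bf e'},B^\times/(S({\bf e})-1)B^\times)\cong B^\times/C,
\]
after which it only remains to identify the resulting edge homomorphism with $\beta$ via a transgression argument.
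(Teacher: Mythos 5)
Your proposal is correct and, for three of the four steps, is essentially the paper's proof: well-definedness is established by exactly the same section-change computation (your correction factor is the inverse of the one the paper writes, but since $C$ is a subgroup this makes no difference), the homomorphism property is the same one-line consequence of $b_{\omega}({\bf e},{\bf e'})=\omega({\bf e},{\bf e'})\omega({\bf e'},{\bf e})^{-1}$, and injectivity is proved identically by adjusting the section to kill the commutator and extending to a splitting homomorphism using that $\mathbb{Z}^2$ is presented on ${\bf e},{\bf e'}$ by the single commutation relation. The only divergence is surjectivity, where you worry about writing down an $S$-twisted cocycle in normal form; the paper sidesteps this by constructing, for each $b_0\in B^{\times}$, the group $G_{b_0}$ directly on symbols $b\diamond mu\diamond nv$ with relations $u\diamond b={\bf e}.b\diamond u$, $v\diamond b={\bf e'}.b\diamond v$, $v\diamond u=b_0\diamond u\diamond v$, and checks associativity. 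This is the presentation-theoretic face of exactly the normal-form cocycle you describe — the associativity check for $G_{b_0}$ is the same bookkeeping as the cocycle identity for your $\omega_b$ — so your Lyndon--Hochschild--Serre detour, while it would also work (and is arguably cleaner if one is willing to identify the edge map with $\beta$), is not needed; the paper simply accepts the direct verification that you flag as the obstacle.
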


\begin{proof}
\,\,\,We divide the proof of this proposition into four parts:

(i) We first show that the map $\beta$ is well defined: Therefore, we choose two cocycles $\omega$ and $\omega'$ with $[\omega]=[\omega']$. Then Theorem \ref{Orbit of factor systems} implies that there exists an element $h\in C^1(\mathbb{Z}^2,B^{\times})$ such that the map
\[\varphi:B^{\times}\times_{\omega'}\mathbb{Z}^2\rightarrow B^{\times}\times_{\omega}\mathbb{Z}^2,\,\,\,(b,{\bf k})\mapsto (bh({\bf k}),{\bf k})
\]is an equivalence of extensions. In particular we get $\varphi\circ\sigma_{\omega'}=h\cdot\sigma_{\omega}$, which leads to
\[b_{\omega'}({\bf e},{\bf e'})=b_{\omega}({\bf e},{\bf e'})h({\bf e'})(S({\bf e})h({\bf e'}))^{-1}(h({\bf e}))^{-1}S({\bf e'})h({\bf e}).
\]

(ii) Next, let $[\omega],[\omega']\in H^2(\mathbb{Z}^2,B^{\times})_S$. Then a short calculation leads to \[\beta([\omega+\omega'])=\beta([\omega])\beta([\omega']).
\]Hence, $\beta$ is a homomorphism of abelian groups.

(iii) Now, we pick $[\omega]\in H^2(\mathbb{Z}^2,B^{\times})_S$ with $\beta([\omega])=0$, i.e., 
\[b_{\omega}({\bf e},{\bf e'})=b'(S({\bf e})b')^{-1}b^{-1}S({\bf e'})b
\]for some $b,b'\in B^{\times}$, and consider the section $\sigma:\mathbb{Z}^2\rightarrow B^{\times}\times_{\omega}\mathbb{Z}^2$
given for $k,l\in\mathbb{Z}$ by 
\[\sigma({\bf e}^k+({\bf e'})^l):=(b^{-1}\sigma_{\omega}({\bf e}))^k ((b')^{-1}\sigma_{\omega}({\bf e'}))^l.
\]Then 
\[\sigma({\bf e})\sigma({\bf e'})=\sigma({\bf e'})\sigma({\bf e})
\]implies that $\sigma$ is a homomorphism of groups. In particular the extension
\[q:B^{\times}\times_{\omega}\mathbb{Z}^2,\rightarrow\mathbb{Z}^2,\,\,\,(b,(k,l))\mapsto (k,l)
\]splits. Thus, $[\omega]=0\in H^2(\mathbb{Z}^2,B^{\times})_S$, which means that the map $\beta$ is injective.

(iv) To show that $\beta$ is surjective, we associate to each $b_0\in B^{\times}$ a group extension $G_{b_0}$ of $\mathbb{Z}^2$ by $B^{\times}$ in the following way: We take $G_{b_0}$ to be the set of all symbols
\[b\diamond mu\diamond nv,\,\,\,\,b\in B^{\times},\,\,\,\,m,n\in\mathbb{Z},
\]with multiplication given by
\[u\diamond b={\bf e}.b\diamond u,\,\,\,\,v\diamond b={\bf e'}.b\diamond v,\,\,\,\,v\diamond u=b_0\diamond u\diamond v.
\]It is an easy exercise to show that this multiplication is always associative and makes the set of symbols a group. This completes the proof.
\end{proof}

\begin{remark}
Note that if $B$ is a commutative unital algebra and $B^{\times}$ is a trivial $\mathbb{Z}^2$-module, then the result of Proposition \ref{H^2(Z^n,B)} specializes to Proposition \ref{H^2(Z^n,B)_S}.
\end{remark}

\begin{remark}\label{vanishing of H^2}
Let $B$ be a commutative unital algebra.

(a) If $n\in\mathbb{N}$ and $B^{\times}$ is any $\mathbb{Z}^n$-module given by the group homomorphism 
\[S:\mathbb{Z}^n\rightarrow\Aut(B^{\times}),
\]then one can show that 
\[H^r(\mathbb{Z}^n,B^{\times})_S=0
\]for $r\in\mathbb{N}$ with $r>n$. Indeed, for this statement we refer to [Ma95], Chapter VI.6.

(b) Let $S$ be a $\mathbb{Z}$-module structure on $B$ given by the group homomorphism 
\[S:\mathbb{Z}\rightarrow\Aut(B).
\]According to Remark \ref{B abelian}, there exists up to equivalence only one algebraically trivial noncommutative principal $\mathbb{T}$-bundle with fixed point algebra $B$ corresponding to the $\mathbb{Z}$-module structure on $B$ given by $S$.

\end{remark}

\section{Examples of Algebraically Trivial NCP Torus Bundles}

We finally present a bunch of examples of algebraically trivial NCP $\mathbb{T}^n$-bundles:

\begin{example}{\bf(The classical picture).}
If $B=C^{\infty}(M)$ and $S={\bf 1}$, then there exists up to isomorphy only one algebraically trivial NCP $\mathbb{T}^n$-bundle for which $A$ is commutative. A possible realization is given by 
\[A:=\bigoplus_{{\bf k}\in\mathbb{Z}^n}C^{\infty}(M)\chi_{\bf k},
\]where $\chi_{\bf k}$ denotes the character of $\mathbb{T}^n$ corresponding to ${\bf k}\in\mathbb{Z}^n$. It is the algebraic skeleton of the smooth trivial NCP $\mathbb{T}^n$-bundle $(C^{\infty}(M\times\mathbb{T}^n),\mathbb{T}^n,\alpha)$, which is induced from the the trivial principal $\mathbb{T}^n$-bundle 
\[(M\times\mathbb{T}^n,M,\mathbb{T}^n,q_M,\sigma_{\mathbb{T}^n}).
\]This example perfectly reflects the commutative world, in which there exists up to isomorphy only one trivial principal $\mathbb{T}^n$-bundle over a given manifold $M$. On the other hand, it shows that the situation completely changes in the noncommutative world, where there exist plenty of algebraically trivial NCP $\mathbb{T}^n$-bundles with base $C^{\infty}(M)$ (cf. Proposition \ref{H^2(Z^n,B)}).
\end{example}

\begin{definition}\label{}{\bf(The algebraic noncommutative $n$-torus).}\index{Algebraic!Noncommutative $n$-Torus}
A $\mathbb{Z}^n$-graded unital associative algebra 
\[A=\bigoplus_{{\bf k}\in\mathbb{Z}^n}A_{\bf k}
\]is called an \emph{algebraic noncommutative $n$-torus}, if each grading space $A_{\bf k}$ is one-dimensional and each nonzero element of $A_{\bf k}$ is invertible. In [OP95], these algebras are called \emph{twisted group algebras}. We recall that algebraic noncommutative $n$-tori are the algebraic counterpart of Example \ref{NC n-tori as NCPTB}.
\end{definition}

\begin{example}
Let $n\in\mathbb{N}$ and $B=\mathbb{C}$. The algebraic noncommutative $n$-tori are exactly the algebraically trivial NCP $\mathbb{T}^n$-bundles with base $\mathbb{C}$ corresponding to the trivial $\mathbb{Z}^n$-module structure on $\mathbb{C}$. Indeed, if $A$ is an algebraic noncommutative $n$-torus, then a short observation shows that the short exact sequence
\[1\longrightarrow\mathbb{C}^{\times} \longrightarrow A^{\times}_h\longrightarrow\mathbb{Z}^n\longrightarrow 1
\]of groups is central. By Remark \ref{B abelian} and Proposition \ref{H^2(Z^n,B)}, these algebras are classified by
\[H^2(\mathbb{Z}^n,\mathbb{C}^{\times})\cong(\mathbb{C}^{\times})^{\frac{1}{2}n(n-1)}.
\]In particular, if $A^n_{\theta}$ is the noncommutative $n$-torus from Definition \ref{appendix E noncommutative n-tori} and $(A^n_{\theta},\mathbb{T}^n,\alpha)$ the corresponding trivial NCP $\mathbb{T}^n$-bundle from Example \ref{NC n-tori as NCPTB}, then the related characteristic class in $H^2(\mathbb{Z}^n,\mathbb{C}^{\times})$ is induced by the cocycle
\[\omega:\mathbb{Z}^n\times\mathbb{Z}^n\rightarrow\mathbb{C}^{\times},\,\,\,(k,l)\mapsto e^{-i\langle k,\theta l\rangle}.
\]
\end{example}

\begin{lemma}\label{aut ot quantumtori}
Let $A$ be an algebraic noncommutative $n$-torus. Then the following assertions hold:
\begin{itemize}
\item[\emph{(a)}]
Each unit of $A$ is graded, i.e., $A^{\times}=A^{\times}_h$.
\item[\emph{(b)}]
Each automorphism of $A$ is graded, i.e., $\Aut(A)=\Aut_{\emph{\text{gr}}}(A)$.
\end{itemize}
\end{lemma}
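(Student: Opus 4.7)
The plan is to fix a total order $<$ on $\mathbb{Z}^n$ that is compatible with addition (e.g.\ the lexicographic order) and to exploit that each one-dimensional grading space $A_{\bf k}$ is spanned by some invertible element $u_{\bf k}$, with $u_{\bf k}u_{\bf l}=c({\bf k},{\bf l})\,u_{{\bf k}+{\bf l}}$ for some $c({\bf k},{\bf l})\in\mathbb{C}^{\times}$. Writing any nonzero $a\in A$ uniquely as $a=\sum_{\bf k}\lambda_{\bf k}u_{\bf k}$, I would define the \emph{degree} $\deg(a):=\max\supp(a)$ and \emph{codegree} $\deg'(a):=\min\supp(a)$ with respect to $<$. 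The key observation is that the coefficient of $u_{\deg(a)+\deg(b)}$ in a product $ab$ (with $b=\sum_{\bf l}\mu_{\bf l}u_{\bf l}$) equals $\lambda_{\deg(a)}\mu_{\deg(b)}c(\deg(a),\deg(b))\in\mathbb{C}^{\times}$ and therefore cannot be cancelled by any cross-terms, which forces $\deg(ab)=\deg(a)+\deg(b)$ and symmetrically $\deg'(ab)=\deg'(a)+\deg'(b)$.

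For part (a), if $a\in A^{\times}$ has inverse $b$, then $\deg(ab)={\bf 0}=\deg'(ab)$ yields $\deg(b)=-\deg(a)$ and $\deg'(b)=-\deg'(a)$. The trivial inequality $\deg'(b)\leq\deg(b)$ becomes $-\deg'(a)\leq-\deg(a)$, i.e.\ $\deg(a)\leq\deg'(a)$, which together with $\deg'(a)\leq\deg(a)$ shows that $\supp(a)$ is a singleton; hence $a\in A^{\times}_h$.

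For part (b), given $\varphi\in\Aut(A)$, I would first use (a) together with the hypothesis that every nonzero element of $A_{\bf k}$ is a unit to deduce $A^{\times}=\bigcup_{{\bf k}\in\mathbb{Z}^n}(A_{\bf k}\setminus\{0\})$. Since $\varphi$ preserves units, there is a unique $\psi({\bf k})\in\mathbb{Z}^n$ with $\varphi(u_{\bf k})\in A_{\psi({\bf k})}\setminus\{0\}$, and comparing $\varphi(u_{\bf k}u_{\bf l})$ with $\varphi(u_{\bf k})\varphi(u_{\bf l})$ inside the one-dimensional space $A_{\psi({\bf k}+{\bf l})}$ forces $\psi({\bf k}+{\bf l})=\psi({\bf k})+\psi({\bf l})$, so that $\psi$ is a group homomorphism. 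Applying the same reasoning to $\varphi^{-1}$ produces a two-sided inverse of $\psi$, hence $\psi\in\Aut(\mathbb{Z}^n)=\GL_n(\mathbb{Z})$, and linear extension then gives $\varphi(A_{\bf k})=A_{\psi({\bf k})}$ for every ${\bf k}$, which is precisely $\varphi\in\Aut_{\text{gr}}(A)$.

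The entire argument is essentially a pigeonhole-style leading-term estimate; I do not foresee a substantive obstacle. The only delicate point is the initial choice of an order on $\mathbb{Z}^n$ compatible with addition, and this is exactly what the lexicographic order supplies.
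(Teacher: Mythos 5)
Your argument is correct: the leading/trailing-term comparison with respect to a translation-invariant total order on $\mathbb{Z}^n$ is exactly the standard ordered-group argument for units in twisted group algebras, and your deduction of (b) from (a) is sound (note that $\Aut_{\text{gr}}$ here must indeed mean automorphisms permuting the grading spaces, as your $\psi\in\GL_n(\mathbb{Z})$ need not be the identity). The paper itself gives no proof but defers to [Ne07b], Appendix A, where essentially this same argument appears, so your proposal fills the gap along the intended lines.
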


\begin{proof}
\,\,\,The proofs of these statements can be found in [Ne07b], Appendix A.
\end{proof}

\begin{proposition}
Let $n,m\in\mathbb{N}$ and $B:=B_f$ be the algebraic noncommutative $n$-torus corresponding to the cocycle $f\in Z^2(\mathbb{Z}^n,\mathbb{C}^{\times})$. Further, let $(S,\omega)\in Z^2(\mathbb{Z}^m,B)$ be a factor system. Then $A_{(S,\omega)}$ is an algebraic noncommutative $(m+n)$-torus if and only if $S$ leaves the grading spaces of $B$ invariant and the cocycle $\omega$ takes values in $B^{\times}_{\bf 0}\cong\mathbb{C}^{\times}$.
\end{proposition}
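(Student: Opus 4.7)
The approach is to exhibit a natural $\mathbb{Z}^{n+m}$-grading on $A_{(S,\omega)}$ and analyze exactly when it realizes the algebra as an algebraic noncommutative $(n+m)$-torus. Writing $B = \bigoplus_{j \in \mathbb{Z}^n} B_j$ and recalling from Construction~\ref{realization of TNCT^B from factor systems I} that $A_{(S,\omega)} = \bigoplus_{k \in \mathbb{Z}^m} B v_k$, I would set $A_{(j,k)} := B_j v_k$ for $(j,k) \in \mathbb{Z}^n \oplus \mathbb{Z}^m$. This is manifestly a vector space direct sum decomposition, and each summand is one-dimensional since $\dim B_j = 1$ by the algebraic NCT structure on $B$. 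So the whole question reduces to: when does this decomposition respect the algebra structure, and when are all nonzero homogeneous elements invertible?

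The crucial computation isolates a single condition. For $b_j \in B_j$ and $b'_{j'} \in B_{j'}$, the multiplication formula~(\ref{multiplication}) gives
\[(b_j v_k)(b'_{j'} v_{k'}) = b_j \cdot S(k)(b'_{j'}) \cdot \omega(k,k') \cdot v_{k+k'},\]
and since $b_j B_{j'} \subseteq B_{j+j'}$ from the grading on $B$, this product lies in $A_{(j+j',k+k')} = B_{j+j'} v_{k+k'}$ if and only if $S(k)(B_{j'}) \cdot \omega(k,k') \subseteq B_{j'}$ for all $j' \in \mathbb{Z}^n$ and all $k,k' \in \mathbb{Z}^m$.

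For the $(\Leftarrow)$ direction the hypothesis yields $S(k)(B_{j'}) \cdot \omega(k,k') = B_{j'} \cdot \mathbb{C}^\times = B_{j'}$, so the decomposition is indeed a $\mathbb{Z}^{n+m}$-algebra grading. Invertibility of every nonzero homogeneous element $b_j v_k$ then follows because $b_j \in B_j^\times$ (since every nonzero homogeneous element of $B$ is invertible) and $v_k$ is invertible in $A_{(S,\omega)}$ by the explicit inverse formula from Construction~\ref{realization of TNCT^B from factor systems I}; the product of the two, expressed via (\ref{multiplication}), is then invertible in $A_{(S,\omega)}$.

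For the $(\Rightarrow)$ direction I would peel off the two conditions using Lemma~\ref{aut ot quantumtori}. Part (a) gives $B^\times = B^\times_h$, so for each pair $(k,k')$ there is a unique index $j_0(k,k') \in \mathbb{Z}^n$ with $\omega(k,k') \in B_{j_0(k,k')}^\times$; part (b) implies $S(k)$ is a graded automorphism of $B$, so $S(k)(B_{j'}) = B_{\tau(k)(j')}$ for some $\tau(k) \in \Aut(\mathbb{Z}^n)$. The compatibility $S(k)(B_{j'}) \omega(k,k') \subseteq B_{j'}$ then reads $B_{\tau(k)(j') + j_0(k,k')} \subseteq B_{j'}$ for every $j'$, which forces $\tau(k) = \id_{\mathbb{Z}^n}$ (so $S(k)$ preserves each grading space) and $j_0(k,k') = 0$ (so $\omega(k,k') \in B_0^\times \cong \mathbb{C}^\times$). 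The main obstacle I anticipate is pinning down the precise meaning of ``graded'' in Lemma~\ref{aut ot quantumtori}(b): if it allows for a nontrivial permutation $\tau(k)$ of grading indices, the argument above applies as written, whereas if it already forces each grading space to be stabilized, the first half becomes automatic and only the location of $\omega(k,k')$ needs argument.
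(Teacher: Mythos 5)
Your proof is correct and follows essentially the same route as the paper: both use the natural bigrading $A_{(j,k)}=B_jv_k$, the multiplication formula~(\ref{multiplication}), and Lemma~\ref{aut ot quantumtori} to pin down $\omega$ and $S$. The only notable differences are cosmetic — the paper's $(\Leftarrow)$ direction records the explicit $2$-cocycle $f'(({\bf k},{\bf l}),({\bf k'},{\bf l'}))=c_{{\bf l},{\bf k'}}\omega({\bf l},{\bf l'})f({\bf k},{\bf k'})$ realizing $A_{(S,\omega)}$ as a twisted group algebra rather than checking the defining properties directly, and your $(\Rightarrow)$ direction, by introducing $\tau(k)$ and $j_0(k,k')$ to rule out a nontrivial permutation of the grading indices, is in fact slightly more careful than the paper's one-line appeal to Lemma~\ref{aut ot quantumtori}(b) (whose notion of ``graded'' does allow such permutations, so your argument is the right reading).
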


\begin{proof}
\,\,\,Let 
\[\bigoplus_{{\bf k}\in\mathbb{Z}^n}\mathbb{C}v_{\bf k},\,\,\,\text{resp.},\,\,\,\bigoplus_{{\bf l}\in\mathbb{Z}^m}B w_{\bf l}
\]be the underlying vector space of $B$, resp., of $A$. 

($``\Leftarrow"$) We first write $c_{{\bf l},{\bf k}}\in\mathbb{C}^{\times}$ for the constant satisfying $S({\bf l})v_{\bf k}=c_{{\bf l},{\bf k}}v_{\bf k}$. Since the cocycle $\omega$ takes values in $\mathbb{C}^{\times}$, the map $S$ is a group homomorphism and thus the calculation
\begin{align}
(v_{\bf k}w_{\bf l})(v_{\bf k'}w_{\bf l'})&=v_{\bf k}S({\bf l})v_{{\bf k'}}\omega({\bf l},{\bf l'})w_{{\bf l+l'}}=c_{{\bf l},{\bf k'}}\omega({\bf l},{\bf l'})v_{\bf k}v_{\bf k'}w_{\bf l+l'}\notag\\
&=c_{{\bf l},{\bf k'}}\omega({\bf l},{\bf l'})f({\bf k},{\bf k'})v_{\bf k+k'}w_{{\bf l+l'}}\notag
\end{align}
shows that $A_{(S,\omega)}$ is an algebraic noncommutative $(m+n)$-torus corresponding to the cocycle
\[f':\mathbb{Z}^{m+n}\times\mathbb{Z}^{m+n}\rightarrow\mathbb{C}^{\times},\,\,\,(({\bf k},{\bf l}),({\bf k'},{\bf l'}))\mapsto c_{{\bf l},{\bf k'}}\omega({\bf l},{\bf l'})f({\bf k},{\bf k'}).
\]

($``\Rightarrow"$) Conversely, if $A_{(S,\omega)}$ is an algebraic noncommutative $(m+n)$-torus, then
\[(v_{\bf k}w_{\bf l})w_{\bf l'}=v_{\bf k}\omega({\bf l},{\bf l'})w_{\bf l+l'}\in A_{{\bf k},{\bf l+l'}}
\]and Lemma \ref{aut ot quantumtori} (a) imply that $\omega({\bf l},{\bf l'})\in B^{\times}_{\bf 0}\cong\mathbb{C}^{\times}$ for all ${\bf l},{\bf l'}\in\mathbb{Z}^m$. Moreover,
\[w_{\bf l'}(v_{\bf k}w_{\bf l})=S({\bf l'})v_{\bf k}\omega({\bf l'},{\bf l})w_{\bf l+l'}=\omega({\bf l'},{\bf l})S({\bf l'})v_{\bf k}w_{\bf l+l'}\in A_{{\bf k},{\bf l+l'}}
\]and Lemma \ref{aut ot quantumtori} (b) imply that the map $S$ must leave each grading spaces of $B$ invariant.
\end{proof}

\begin{lemma}\label{aut=inn}
Each automorphism of the matrix algebra $\M_n(\mathbb{C})$ is inner.
\end{lemma}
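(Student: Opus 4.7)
The plan is to prove this as the special case of the Skolem--Noether theorem for the matrix algebra $\M_n(\mathbb{C})$, using the fact that $\M_n(\mathbb{C})$ has essentially only one simple module, namely $V := \mathbb{C}^n$ with the tautological action. This is the shortest and most conceptual route.

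First I would fix an automorphism $\varphi\in\Aut(\M_n(\mathbb{C}))$ and introduce on the vector space $V = \mathbb{C}^n$ a second $\M_n(\mathbb{C})$-module structure by twisting with $\varphi$, i.e.\ by setting $a\cdot_{\varphi} v := \varphi(a)\, v$ for $a\in \M_n(\mathbb{C})$ and $v\in V$. Since $\varphi$ is an algebra isomorphism, this is again an $\M_n(\mathbb{C})$-module structure, and since $V$ is $n$-dimensional and $\M_n(\mathbb{C})$ acts faithfully, the module $(V,\cdot_{\varphi})$ is simple.

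Next I would invoke the standard fact from representation theory that every simple module of $\M_n(\mathbb{C})$ is isomorphic to the tautological module; equivalently, the decomposition $\M_n(\mathbb{C}) \cong V^{\oplus n}$ as a left module shows that any two simple left modules are isomorphic. Consequently there exists a bijective linear map $T\colon V\to V$ (i.e.\ an element of $\GL_n(\mathbb{C})$) intertwining the two module structures:
\[
T(a\cdot_{\varphi} v) = a\, T(v) \qquad\text{for all } a\in\M_n(\mathbb{C}),\ v\in V.
\]
Unwinding the twisted action, this reads $T\,\varphi(a)\, v = a\, T\, v$ for every $v\in V$, whence $T\,\varphi(a) = a\,T$ in $\M_n(\mathbb{C})$, that is $\varphi(a) = T^{-1} a\, T$ for all $a$. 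Thus $\varphi$ is the inner automorphism $C_T$ associated to $T\in\GL_n(\mathbb{C})$.

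The only genuine input one needs to verify in full is the simplicity/uniqueness statement for modules of $\M_n(\mathbb{C})$, which I would expect to be the main (and only real) obstacle; however, this can be made completely elementary by exhibiting $T$ directly: pick a nonzero vector $v_0\in V$, set $v := \varphi(E_{11})\,v_0$ (which is nonzero because $\varphi(E_{11})$ is a nonzero idempotent), and define $T(e_i) := \varphi(E_{i1})\,v$ for $i = 1,\ldots,n$. The matrix-unit relations $E_{ij}E_{kl}=\delta_{jk}E_{il}$ together with $\sum_i E_{ii}=1$, transported by $\varphi$, show at once that $T$ is a linear isomorphism and that $T\varphi(E_{ij}) = E_{ij}T$; since the $E_{ij}$ span $\M_n(\mathbb{C})$, this yields $\varphi = C_{T^{-1}}$ and completes the proof.
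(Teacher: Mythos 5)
Your argument is correct and complete via the abstract route: twisting the tautological module $V=\mathbb{C}^n$ by $\varphi$, invoking the uniqueness (up to isomorphism) of the simple $\M_n(\mathbb{C})$-module, and reading off the intertwiner as the conjugating matrix. The paper gives no proof at all here — it simply cites the Skolem--Noether theorem — so what you have written is in effect the standard proof of the relevant special case of that theorem, which is a perfectly good and more self-contained way to establish the lemma. Two small points in your optional explicit construction at the end. First, the parenthetical ``which is nonzero because $\varphi(E_{11})$ is a nonzero idempotent'' is not right as stated: a nonzero idempotent can kill a given nonzero vector (e.g.\ $E_{11}e_2=0$), so you must \emph{choose} $v_0$ with $\varphi(E_{11})v_0\neq 0$, which is possible precisely because $\varphi(E_{11})\neq 0$. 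Second, the $T$ you construct satisfies $T\,E_{ij}=\varphi(E_{ij})\,T$ (it intertwines the tautological action with the twisted one), which is the opposite direction from the identity $T\,\varphi(a)=a\,T$ appearing in your abstract argument; this only flips whether $\varphi=C_T$ or $\varphi=C_{T^{-1}}$ and does not affect innerness, but the two halves of the write-up should be made consistent. With these cosmetic repairs the proof stands.
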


\begin{proof}
\,\,\,This is a corollary of the well-known Skolem-Noether Theorem.
\end{proof}

\begin{example}
Let $B=\M_m(\mathbb{C})$. In view of Lemma \ref{aut=inn}, each outer action of $\mathbb{Z}^n$ on $\M_m(\mathbb{C})$ is equivalent to $S={\bf 1}$. In particular, $\Ext(\mathbb{Z}^n,B)_{[S]}\neq\emptyset$ and Corollary \ref{class of Z-kernels II} implies that the equivalence classes of algebraically trivial NCP $\mathbb{T}^n$-bundles with base $\M_m(\mathbb{C})$ are classified by 
\[H^2(\mathbb{Z}^n,\mathbb{C}^{\times})\cong(\mathbb{C}^{\times})^{\frac{1}{2}n(n-1)}.
\]
\end{example}

\begin{example}{\bf(Direct sums).}
Let $A$ and $A'$ be two algebraically trivial NCP $\mathbb{T}^n$-bundles with base $B$ and $B'$, respectively. Then the direct sum $A\oplus A'$ is an algebraically trivial NCP $\mathbb{T}^n$-bundle with base $B\oplus B'$.
\begin{flushright}
$\blacksquare$
\end{flushright}
\end{example}

\begin{example}{\bf(Tensor products).}
Let $A$ be an algebraically trivial NCP $\mathbb{T}^n$-bundle with base $B$ and $A'$ an algebraically trivial NCP $\mathbb{T}^m$-bundle with base $B'$.
Then their tensor product $A\otimes A'$ is an algebraically trivial NCP $\mathbb{T}^{n+m}$-bundle with base $B\otimes B'$.
\begin{flushright}
$\blacksquare$
\end{flushright}
\end{example}

\chapter{Trivial NCP Cyclic Bundles}\label{trivial ncp cyclic bundles}

For each $n\in\mathbb{N}$ we write 
\[C_n:=\{z\in\mathbb{C}^{\times}:\,z^n=1\}=\{\zeta^k:\,\zeta:=\exp(\frac{2\pi i}{n}),\,k=0,1,\ldots,n-1\}
\]\sindex[n]{$C_n$}for the cyclic subgroup of $\mathbb{T}$ of $n$-th roots of unity. The goal of this chapter is to present a new, geometrically oriented approach to the noncommutative geometry of trivial principal $C_n$-bundles based on dynamical systems of the form $(A,C_n,\alpha)$. Our main idea is to characterize the functions appearing in a trivialization of a trivial principal $C_n$-bundle $(P,M,C_n,q,\sigma)$ as elements in $C^{\infty}(P)$. This approach leads to the following concept: Given a unital locally convex algebra $A$, a dynamical system $(A,C_n,\alpha)$ is called a trivial NCP $C_n$-bundle if the isotypic component corresponding to $\zeta$ contains an invertible element $a$ with the additional property that there exists an element $b$ in the corresponding fixed point algebra with $b^n=a^n$. Moreover, we also treat the case of products of finite cyclic groups (i.e. finite abelian groups) and present some examples.\sindex[n]{$(A,C_n,\alpha)$}\sindex[n]{$(P,M,C_n,q,\sigma)$}

\section{The Concept of a Trivial NCP Cyclic Bundle}

In this section we present a geometrically oriented approach to the noncommutative geometry of trivial principal $C_n$-bundles based on dynamical systems of the form $(A,C_n,\alpha)$. We will in particular see that this approach reproduces the classical geometry of trivial principal $C_n$-bundles: If $A=C^{\infty}(P)$ for some manifold $P$, then we recover a trivial principal $C_n$-bundle and, conversely, each trivial principal bundle $(P,M,C_n,q,\sigma)$ gives rise to a trivial NCP $C_n$-bundle of the form $(C^{\infty}(P),C_n,\alpha)$. 

\begin{notation} 
We recall that the map
\[\Psi:C_n\rightarrow\Hom(C_n,\mathbb{T}),\,\,\,\Psi(\zeta^k)(\zeta):=\zeta^k
\]is an isomorphism of abelian groups. In the following we will identify the character group of $C_n$ with $C_n$ via the isomorphism $\Psi$. In particular, if $A$ is a unital locally convex algebra and $(A,C_n,\alpha)$ a dynamical system, then we write
\[A_k:=A_{\Psi(\zeta^k)}=\{a\in A:\alpha(\zeta).a=\Psi(\zeta^k)(\zeta)\cdot a=\zeta^k\cdot a\}
\]for the isotypic component corresponding to $\zeta^k$, $k=0,1,\ldots,n-1$.
\end{notation}

\begin{proposition}\label{C_n I}{\bf(The freeness condition for $C_n$).}\index{Freeness!Condition for $C_n$}
Let $A$ be a commutative unital locally convex algebra. A dynamical system $(A,C_n,\alpha)$ is free in the sense of Definition \ref{free dynamical systems} if the maps
\[\ev^{1}_{\chi}: A_1\rightarrow \mathbb{C},\,\,\,a\mapsto\chi(a)
\]is surjective for all $\chi\in\Gamma_A$.
\end{proposition}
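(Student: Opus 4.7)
The plan is to deduce this statement as a direct corollary of Corollary \ref{freeness for compact abelian groups I,5}, which gives the analogous freeness criterion for a dynamical system with an arbitrary compact abelian Lie group, provided the surjectivity of the evaluation maps $\ev^{\varphi_i}_\chi$ is checked only on a finite generating set $(\varphi_i)_{i\in I}$ of the character group $\widehat{G}$.

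First I would note that $C_n$ is a (discrete, hence compact) abelian Lie group, so Corollary \ref{freeness for compact abelian groups I,5} applies to the dynamical system $(A,C_n,\alpha)$. Next, since $C_n$ is cyclic and generated by $\zeta=\exp(2\pi i/n)$, and since the isomorphism $\Psi:C_n\rightarrow\widehat{C_n}$ described in the notation above sends $\zeta$ to the character $\Psi(\zeta)$, the single element $\Psi(\zeta)$ generates $\widehat{C_n}$. Thus the indexing set $I$ in Corollary \ref{freeness for compact abelian groups I,5} may be chosen as the singleton $\{1\}$ with $\varphi_1:=\Psi(\zeta)$.

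With this choice, the isotypic component $A_{\varphi_1}$ is by our notational convention precisely $A_1$, and condition (b) of Corollary \ref{freeness for compact abelian groups I,5} becomes the hypothesis of the proposition: the map
\[
\ev^{\varphi_1}_{\chi}=\ev^{1}_{\chi}:A_{1}\rightarrow\mathbb{C},\quad a\mapsto \chi(a),
\]
is surjective for all $\chi\in\Gamma_A$. Applying that corollary then yields freeness of $(A,C_n,\alpha)$ in the sense of Definition \ref{free dynamical systems}, which is exactly the claim.

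There is no real obstacle here; the proof is essentially a one-line specialization of the general compact abelian freeness criterion to the cyclic case, exploiting only the elementary fact that $\widehat{C_n}\cong C_n$ is generated by a single element. The only point worth mentioning is the compatibility between the chosen identification $\Psi$ and the indexing of isotypic components, which ensures that $\varphi_1$ in Corollary \ref{freeness for compact abelian groups I,5} really does correspond to the index $k=1$ used in the statement of the proposition.
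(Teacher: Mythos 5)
Your argument is correct and coincides with the paper's own proof: the proposition is obtained by specializing Corollary \ref{freeness for compact abelian groups I,5} to the cyclic group $C_n$, using that $\widehat{C_n}$ is generated by the single character $\Psi(\zeta)$ corresponding to the index $k=1$.
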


\begin{proof}
\,\,\,Since $C_n$ is cyclic and $\zeta$ is a generator, the assertion is a consequence of Corollary \ref{freeness for compact abelian groups I,5} (b).
\end{proof}

\begin{proposition}\label{C_n II}{\bf(Invertible elements in isotypic components).}
Let $A$ be a commutative unital locally convex algebra and $(A,C_n,\alpha)$ a dynamical system. If the isotypic component $A_1$ contains an invertible element, then the dynamical system $(A,C_n,\alpha)$ is free.
\end{proposition}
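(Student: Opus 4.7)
The plan is to mirror the very short argument used for Proposition \ref{freeness for T^n II} in the torus case, since the hypothesis here is the exact analogue specialised to the cyclic group $C_n$ (where, as observed in Remark \ref{set of gen for widehat T^n}'s cyclic counterpart, a single generator of $\widehat{C_n}$ suffices). Concretely, I would reduce the statement to the surjectivity criterion Proposition \ref{C_n I}: it is enough to show that for every $\chi\in\Gamma_A$ the evaluation map
\[
\ev^{1}_{\chi}\colon A_{1}\longrightarrow\mathbb{C},\qquad a\mapsto\chi(a),
\]
is surjective.

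First I would fix an invertible element $a_{1}\in A_{1}$ provided by the hypothesis. Since $\chi\colon A\to\mathbb{C}$ is a unital algebra homomorphism, it sends units to units, so $\chi(a_{1})\in\mathbb{C}^{\times}$, i.e.\ $\chi(a_{1})\neq 0$. Next I would exploit that $A_{1}$ is a complex vector subspace of $A$ (the isotypic components are $\mathbb{C}$-linear by construction, cf.\ Theorem \ref{big peter} (b)): for an arbitrary $\lambda\in\mathbb{C}$ the element $\tfrac{\lambda}{\chi(a_{1})}\,a_{1}$ lies in $A_{1}$ and is mapped to $\lambda$ by $\ev^{1}_{\chi}$. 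This establishes surjectivity of $\ev^{1}_{\chi}$ for every $\chi\in\Gamma_A$, and Proposition \ref{C_n I} then yields the freeness of $(A,C_{n},\alpha)$.

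There is no real obstacle: the argument is a one-line application of Proposition \ref{C_n I} combined with the elementary facts that characters preserve invertibility and that isotypic components are stable under complex scaling. The only point worth a brief remark in the writeup is that $\chi$ is automatically unital (hence $\chi(1_A)=1$, forcing $\chi(a_{1}^{-1})=\chi(a_{1})^{-1}$), which is what guarantees $\chi(a_{1})\neq 0$.
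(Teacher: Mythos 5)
Your proposal is correct and matches the paper's own argument: the paper likewise reduces to Proposition \ref{C_n I} and observes that an invertible $a\in A_1$ satisfies $\chi(a)\neq 0$ for every character $\chi$, whence $\ev^1_\chi$ is surjective by scaling within the subspace $A_1$. Nothing further is needed.
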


\begin{proof}
\,\,\,The assertion easily follows from Proposition \ref{C_n I}. Indeed, if $a\in A_1$ is invertible, then $\chi(a)\neq 0$ for all $\chi\in\Gamma_A$.
\end{proof}


We will now introduce a (reasonable) definition of trivial NCP $C_n$-bundles:

\begin{definition}{\bf(Trivial NCP $C_n$-bundles).}\label{def C_n-bundles}\index{Trivial NCP!$C_n$-Bundles}
Let $A$ be a unital locally convex algebra. A (smooth) dynamical system $(A,C_n,\alpha)$ is called a (\emph{smooth}) \emph{trivial NCP $C_n$-bundle}, if the following two conditions are satisfied:
\begin{itemize}
\item[(C1)]
The isotypic component $A_1$ contains an invertible element (let's say) $a$.
\item[(C2)]
There exists an element $b$ in the fixed point algebra $B=A^{C_n}$ with $b^n=a^n$ (note that $a^n\in B$).
\end{itemize} 
\end{definition}

\begin{lemma}\label{C_n III}
If $P$ is a manifold and $(C^{\infty}(P),C_n,\alpha)$ a smooth trivial NCP $C_n$-bundle, then the induced smooth action map $\sigma$ of Proposition \ref{smoothness of the group action on the set of characters} is free and proper. In particular, we obtain a principal bundle $(P,P/C_n,C_n,\pr,\sigma)$.
\end{lemma}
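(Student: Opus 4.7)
The plan is to follow verbatim the template set by Proposition \ref{transformation triples induce free actions} for the $\mathbb{T}^n$-case, with only a small adaptation to the cyclic setting. Since $C_n$ is a finite, hence compact, group, the induced smooth action $\sigma : P \times C_n \to P$ of Proposition \ref{smoothness of the group action on the set of characters} is automatically proper, so only the freeness requires argument.

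For freeness, condition (C1) of Definition \ref{def C_n-bundles} provides an invertible element in the isotypic component $C^{\infty}(P)_1$. Because $C^{\infty}(P)$ is commutative, Proposition \ref{C_n II} then yields that the smooth dynamical system $(C^{\infty}(P), C_n, \alpha)$ is free in the sense of Definition \ref{free dynamical systems}. Theorem \ref{freeness of induced action} now implies that the induced action of $C_n$ on the spectrum $\Gamma_{C^{\infty}(P)}$ is free; under the diffeomorphism $P \cong \Gamma_{C^{\infty}(P)}$ of Lemma \ref{spec as manifold}, this is exactly the freeness of $\sigma$. Combining freeness and properness, the Quotient Theorem (cf.\ Remark \ref{principal bundles II}) delivers the desired principal bundle $(P, P/C_n, C_n, \pr, \sigma)$.

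Since every step merely chains together results already established, there is no genuine obstacle in this lemma. It is worth emphasising, however, that condition (C2) is \emph{not} used at this stage: the invertibility provided by (C1) alone suffices to force the induced action to be free. Condition (C2) will instead play its role in the converse direction, where one reconstructs a concrete trivialization of the principal $C_n$-bundle, i.e.\ in producing an explicit $C_n$-valued equivariant function realising the trivialization (this is where the existence of an $n$-th root $b\in A^{C_n}$ of $a^n$ becomes essential, analogously to the normalisation $f_i \mapsto f_i/|f_i|$ used in the proof of Theorem \ref{TNPB for manifold}(a)).
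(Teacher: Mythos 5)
Your proposal is correct and follows exactly the paper's own argument: freeness via Definition \ref{def C_n-bundles}, Proposition \ref{C_n II} and Theorem \ref{freeness of induced action}, properness from compactness of $C_n$, and the Quotient Theorem for the bundle structure. Your remark that (C2) is not needed here but only for the triviality statement in Theorem \ref{C_n IV} matches the paper's own comment immediately following the lemma.
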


\begin{proof}
\,\,\,The freeness of the map $\sigma$ follows directly from Definition \ref{def C_n-bundles}, Proposition \ref{C_n II} and Theorem \ref{freeness of induced action}. Its properness is automatic, since $C_n$ is compact. The last claim now follows from the Quotient Theorem.
\end{proof}

Note that we didn't use condition (C2) yet. We will now see that this condition guarantees that the principal $C_n$-bundle of Lemma \ref{C_n III} becomes trivial. 



\begin{theorem}\label{C_n IV}{\bf(Trivial principal $C_n$-bundles).}\index{Trivial Principal!$C_n$-Bundles}
If $P$ is a manifold, then the following assertions hold:
\begin{itemize}
\item[\emph{(a)}]
If $(C^{\infty}(P),C_n,\alpha)$ is a smooth trivial NCP $C_n$-bundle, then the corresponding principal bundle $(P,P/C_n,C_n,\pr,\sigma)$ of Lemma \ref{C_n III} is trivial.
\item[\emph{(b)}]
Conversely, if $(P,M,C_n,q,\sigma)$ is a trivial principal $C_n$-bundle, then the corresponding smooth dynamical system $(C^{\infty}(P),C_n,\alpha)$ of Remark \ref{induced transformation triples} is a smooth trivial NCP $C_n$-bundle.
\end{itemize}
\end{theorem}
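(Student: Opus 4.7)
\textbf{Proof proposal for Theorem \ref{C_n IV}.} My plan is to reduce both directions to the characterization of triviality via smooth sections (Lemma \ref{trivial principal bundle II}), mirroring the strategy used in Theorem \ref{TNPB for manifold} for the torus case. Direction (b) is essentially immediate from the definitions, while direction (a) is the substantial one: given conditions (C1) and (C2), I will manufacture a smooth $C_n$-equivariant map $h\colon P\to C_n$ out of the invertible element $a\in C^{\infty}(P)_1$ and the invariant element $b\in C^{\infty}(P)^{C_n}$ with $b^n=a^n$, and then use it to build an explicit equivalence with the trivial bundle.

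For part (a), the crucial observation is that $a$, viewed as a smooth function $f\colon P\to\mathbb{C}^{\times}$, satisfies $f(\sigma(p,\zeta))=\zeta\cdot f(p)$, while $b$, viewed as $g\in C^{\infty}(P)$, is $C_n$-invariant. Since $f$ has no zeroes and $g^n=f^n$, the function $g$ also has no zeroes and is therefore invertible in $C^{\infty}(P)$. I would then set
\[
h:=f\cdot g^{-1}\in C^{\infty}(P),
\]
observe that $h^n=f^n\cdot g^{-n}=1$ (so $h(p)\in C_n$ for every $p\in P$), and check $C_n$-equivariance via $h(\sigma(p,\zeta))=\zeta f(p)/g(p)=\zeta\cdot h(p)$. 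A map into $\mathbb{C}^\times$ whose image lies in the finite set $C_n$ is automatically locally constant on connected components of $P$, hence smooth as a map into the $0$-dimensional Lie group $C_n$. The desired equivalence is then
\[
\varphi\colon P\longrightarrow P/C_n\times C_n,\qquad p\mapsto(\pr(p),h(p)),
\]
whose inverse sends $([p],\zeta)$ to $\sigma(p,\zeta\cdot h(p)^{-1})$ (well-defined by equivariance of $h$). The $C_n$-equivariance, bijectivity and smoothness of $\varphi^{-1}$ follow from the fact that $\pr$ is a local diffeomorphism (because $C_n$ is finite), so that triviality of $(P,P/C_n,C_n,\pr,\sigma)$ drops out.

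For part (b), the trivialization of $(P,M,C_n,q,\sigma)$ furnishes an equivalence $\varphi\colon P\to M\times C_n$, $p\mapsto(q(p),f(p))$, with $f\colon P\to C_n$ smooth and equivariant. Viewed as an element of $C^{\infty}(P)$, this $f$ is invertible (image in $C_n\subseteq\mathbb{C}^\times$) and lies in $C^{\infty}(P)_1$ since $f(\sigma(p,\zeta))=\zeta\cdot f(p)$; hence (C1) holds with $a:=f$. Since $f(p)^n=1$ for all $p$, we have $a^n=1_{C^{\infty}(P)}$, so condition (C2) is trivially satisfied by taking $b:=1_{C^{\infty}(P)}\in C^{\infty}(P)^{C_n}$.

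The main obstacle, as I see it, is purely of a bookkeeping nature: one has to be careful that $h$ is legitimately smooth as a map into $C_n$ (with its discrete manifold structure) rather than merely as a $C_n$-valued smooth function into $\mathbb{C}^\times$. The argument that a smooth $\mathbb{C}$-valued function with values in a finite subset must be locally constant on components is short but essential, and it depends on $P$ being a manifold (in particular, connected components are open). Once this point is settled, the construction of the equivalence $\varphi$ and the verification that $\varphi^{-1}$ is smooth (using that $\pr$ is a local diffeomorphism because $C_n$ is finite) are routine, and the converse direction is a direct translation of the trivialization data into the language of Definition \ref{def C_n-bundles}.
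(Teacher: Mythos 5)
Your proposal is correct and follows essentially the same route as the paper: in (a) you form $h=f/g$ from the invertible element of $C^{\infty}(P)_1$ and the invariant $n$-th root, note $h^n=1$ so $h$ takes values in $C_n$, and use $p\mapsto(\pr(p),h(p))$ as the trivializing equivalence; in (b) you read off (C1) and (C2) (with $b=1$) from the trivialization. Your added remarks on the smoothness of $h$ as a map into the discrete group $C_n$ and the explicit inverse of $\varphi$ are just slightly more detailed bookkeeping than the paper provides.
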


\begin{proof}
\,\,\,(a) We first choose an invertible element $f\in C^{\infty}(P)_1$ and an element $g\in C^{\infty}(P)_0$ such that $g^n=f^n$. Since $f$ is invertible, so is $g$ and a short observation shows that $h:=\frac{f}{g}$ lies in $C^{\infty}(P)_1$ and has image $C_n$. Indeed, by construction we obtain $h\in C^{\infty}(P)_1$. Moreover, this and $h^n=1$ implies that $\im(h)=C_n$. In particular, the map
\[\varphi:P\rightarrow P/C_n\times C_n,\,\,\,p\mapsto(\pr(p),h(p))
\]defines an equivalence of principal $C_n$-bundles over $P/C_n$. We thus conclude that the principal bundle $(P,P/C_n,C_n,\pr,\sigma)$ of Lemma \ref{C_n III} is trivial.

(b) Conversely, let $(P,M,C_n,q,\sigma)$ be a trivial principal $C_n$-bundle and
\[\varphi:P\rightarrow M\times C_n,\,\,\,p\mapsto(q(p),f(p))
\]be an equivalence of principal $C_n$-bundles over $M$. We first note that the function $f\in C^{\infty}(P)$ is invertible. Furthermore, the $C_n$-equivariance of $\varphi$ implies that $f\in C^{\infty}(P)_1$. Thus, $f\in C^{\infty}(P)_1$ is invertible with $f^n=1$. From this we conclude that $(C^{\infty}(P),C_n,\alpha)$ is a smooth trivial NCP $C_n$-bundle.
\end{proof}

\section{(Two-Fold) Products of Cyclic Groups}

In this section we present a geometrically oriented approach to the noncommutative geometry of trivial principal $C_n\times C_m$-bundles based on dynamical systems of the form $(A,C_n\times C_m,\alpha)$. Again, we will see that this approach reproduces the classical geometry of trivial principal $C_n\times C_m$-bundles: If $A=C^{\infty}(P)$ for some manifold $P$, then we recover a trivial principal $C_n\times C_m$-bundle and, conversely, each trivial principal bundle $(P,M,C_n\times C_m,q,\sigma)$ gives rise to a trivial NCP $C_n\times C_m$-bundle of the form $(C^{\infty}(P),C_n\times C_m,\alpha)$.\sindex[n]{$(A,C_n\times C_m,\alpha)$}\sindex[n]{$(P,M,C_n\times C_m,q,\sigma)$}

\begin{notation} 
For $n,m\in\mathbb{N}$ we write 
\[C_n:=\{z\in\mathbb{C}^{\times}:\,z^n=1\}=\{\zeta_1^k:\,\zeta_1:=\exp(\frac{2\pi i}{n}),\,k=0,1,\ldots,n-1\}
\]and
\[C_m:=\{z\in\mathbb{C}^{\times}:\,z^m=1\}=\{\zeta_2^l:\,\zeta_2:=\exp(\frac{2\pi i}{m}),\,l=0,1,\ldots,m-1\}.
\]We recall that the map
\begin{align}
\Psi:C_n\times C_m\rightarrow\Hom(C_n\times C_m,\mathbb{T}),\,\,\,\Psi(\zeta_1^k,\zeta_2^l)(\zeta_1,1):=\zeta_1^k,\,\,\,\Psi(\zeta_1^k,\zeta_2^l)(1,\zeta_2)=\zeta_2^l\label{formula C_nC_m}
\end{align}
is an isomorphism of abelian groups. In the following we will identify the character group of $C_n\times C_m$ with $C_n\times C_m$ via the isomorphism $\Psi$. In particular, if $A$ is a unital locally convex algebra and $(A,C_n\times C_m,\alpha)$ a dynamical system, then we write
\[A_{(k,l)}:=A_{\Psi(\zeta_1^k,\zeta_2^l)}=\{a\in A:\alpha(\zeta_1,1).a=\zeta_1^k\cdot a,\,\,\,\alpha(1,\zeta_2).a=\zeta_2^l\cdot a\}
\]for the isotypic component corresponding to the element $(\zeta^k,\xi^l)$ for $k=0,1,\ldots,n-1$ and $l=0,1,\ldots,m-1$.
\end{notation}

\begin{proposition}\label{C_nC_m I}{\bf(The freeness condition for $C_n\times C_m$).}\index{Freeness!Condition for $C_n\times C_m$}
Let $A$ be a commutative unital locally convex algebra. A dynamical system $(A,C_n\times C_m,\alpha)$ is free in the sense of Definition \ref{free dynamical systems} if the maps
\[\ev^{(1,0)}_{\chi}: A_{(1,0)}\rightarrow \mathbb{C},\,\,\,a\mapsto\chi(a)
\]and
\[\ev^{(0,1)}_{\chi}: A_{(0,1)}\rightarrow \mathbb{C},\,\,\,a\mapsto\chi(a)
\]are surjective for all $\chi\in\Gamma_A$.
\end{proposition}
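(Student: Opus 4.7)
The plan is to reduce this to Corollary \ref{freeness for compact abelian groups I,5} by exhibiting the elements $(1,0)$ and $(0,1)$ as a generating set for the character group of $C_n\times C_m$, exactly as was done for a single cyclic group in Proposition \ref{C_n I}.

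First, I would make explicit that $C_n\times C_m$ is a compact abelian Lie group (it is finite), so that Corollary \ref{freeness for compact abelian groups I,5} applies. Using the isomorphism $\Psi$ from (\ref{formula C_nC_m}), the character group $\widehat{C_n\times C_m}$ is identified with $C_n\times C_m$ itself, and the two elements $\Psi(\zeta_1,1)$ and $\Psi(1,\zeta_2)$ generate this abelian group: indeed, $\Psi(\zeta_1,1)$ generates the subgroup corresponding to $C_n\times\{1\}$, $\Psi(1,\zeta_2)$ generates the subgroup corresponding to $\{1\}\times C_m$, and together they exhaust $\widehat{C_n\times C_m}$ under pointwise multiplication. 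Under the identification fixed above, these two generators are precisely the indices $(1,0)$ and $(0,1)$, whose isotypic components are $A_{(1,0)}$ and $A_{(0,1)}$.

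Next, I would invoke Corollary \ref{freeness for compact abelian groups I,5} (applied to the generating set $\{\Psi(\zeta_1,1),\Psi(1,\zeta_2)\}$ of $\widehat{C_n\times C_m}$), which upgrades the assumed surjectivity of $\ev^{(1,0)}_\chi$ and $\ev^{(0,1)}_\chi$ on the generators to the surjectivity of $\ev^{(k,l)}_\chi:A_{(k,l)}\to\mathbb{C}$ for every $(k,l)$ with $0\le k<n$, $0\le l<m$ and every $\chi\in\Gamma_A$. The conclusion then follows directly from Proposition \ref{freeness for compact abelian groups I}, which says that this family of surjectivity conditions is sufficient for freeness of the dynamical system in the sense of Definition \ref{free dynamical systems}.

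No serious obstacle is expected; the entire argument is a bookkeeping exercise matching up the identification of $\widehat{C_n\times C_m}$ with $C_n\times C_m$ to the two named isotypic components, and then a direct citation of the two preceding results. The only point requiring minor care is verifying that $\Psi(\zeta_1,1)$ and $\Psi(1,\zeta_2)$ really do generate $\widehat{C_n\times C_m}$ in the group-theoretic sense required by Corollary \ref{freeness for compact abelian groups I,5}, which is immediate from the explicit formula (\ref{formula C_nC_m}) defining $\Psi$.
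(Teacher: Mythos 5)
Your proposal is correct and follows essentially the same route as the paper, which simply observes that $\zeta_1$ and $\zeta_2$ generate the two cyclic factors and cites Corollary \ref{freeness for compact abelian groups I,5}\,(b). Your additional bookkeeping with the identification $\Psi$ just spells out what the paper leaves implicit.
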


\begin{proof}
\,\,\,Since $C_n$ and $C_m$ are both cyclic groups with generators $\zeta_1$ and $\zeta_2$, the assertion is a consequence of Corollary \ref{freeness for compact abelian groups I,5} (b).
\end{proof}

\begin{proposition}\label{C_nC_m II}{\bf(Invertible elements in isotypic components).}
Let $A$ be a commutative unital locally convex algebra and $(A,C_n\times C_m,\alpha)$ a dynamical system. If the isotypic components $A_{(1,0)}$ and $A_{(0,1)}$ contain invertible elements, then the dynamical system $(A,C_n\times C_m,\alpha)$ is free.
\end{proposition}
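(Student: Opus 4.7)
The plan is to deduce this immediately from Proposition \ref{C_nC_m I}, in exact parallel with the one-cyclic-factor case of Proposition \ref{C_n II}. First I would fix an arbitrary character $\chi\in\Gamma_A$ and pick invertible elements $a\in A_{(1,0)}$ and $b\in A_{(0,1)}$, as provided by hypothesis. Since $\chi$ is a unital algebra homomorphism into $\mathbb{C}$, it sends invertible elements to invertible elements of $\mathbb{C}$, so $\chi(a)\neq 0$ and $\chi(b)\neq 0$.

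From this the two evaluation maps
\[
\ev^{(1,0)}_{\chi}\colon A_{(1,0)}\to\mathbb{C},\quad x\mapsto\chi(x),\qquad
\ev^{(0,1)}_{\chi}\colon A_{(0,1)}\to\mathbb{C},\quad x\mapsto\chi(x)
\]
are surjective, since their images contain the nonzero complex numbers $\chi(a)$ respectively $\chi(b)$, and hence, by $\mathbb{C}$-linearity, all of $\mathbb{C}$. As $\chi\in\Gamma_A$ was arbitrary, the hypothesis of Proposition \ref{C_nC_m I} is satisfied, and we conclude that $(A,C_n\times C_m,\alpha)$ is free.

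There is essentially no obstacle here: the work has already been done in Corollary \ref{freeness for compact abelian groups I,5} (which underlies Proposition \ref{C_nC_m I}) by reducing the freeness check for $\widehat{C_n\times C_m}$ to a check on a generating set, and in the identification \eqref{formula C_nC_m} of $\widehat{C_n\times C_m}$ with $C_n\times C_m$, for which the elements $(1,0)$ and $(0,1)$ correspond to the generators. The only point to keep in mind is that invertibility of $a$ and $b$ is used precisely to ensure nonvanishing of $\chi(a)$ and $\chi(b)$ \emph{simultaneously} for all characters $\chi$, which is what turns a pointwise evaluation statement into a uniform surjectivity statement across $\Gamma_A$.
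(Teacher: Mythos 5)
Your proposal is correct and coincides with the paper's own argument: both reduce to Proposition \ref{C_nC_m I} by observing that a character, being a unital algebra homomorphism, cannot vanish on an invertible element, so the evaluation maps on $A_{(1,0)}$ and $A_{(0,1)}$ have nonzero (hence, by linearity, full) image. Nothing further is needed.
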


\begin{proof}
\,\,\,The assertion easily follows from Proposition \ref{C_nC_m I}. Indeed, if $a\in A_{(1,0)}$ is invertible, then $\chi(a)\neq 0$ for all $\chi\in\Gamma_A$ and similarly for $A_{(0,1)}$.
\end{proof}

We will now introduce a (reasonable) definition of trivial NCP $C_n\times C_m$-bundles:

\begin{definition}{\bf(Trivial NCP $C_n\times C_m$-bundles).}\label{def C_nC_m-bundles}\index{Trivial NCP!$C_n\times C_m$-Bundles}
Let $A$ be a unital locally convex algebra. A (smooth) dynamical system $(A,C_n\times C_m,\alpha)$ is called a (\emph{smooth}) \emph{trivial NCP $C_n\times C_m$-bundle}, if the following conditions are satisfied:
\begin{itemize}
\item[(C1)]
There exists an invertible element $a\in A_{(1,0)}$ and an invertible element $a'\in A_{(0,1)}$.
\item[(C2)]
There exist elements $b$ and $b'$ in the fixed point algebra $B=A^{C_n\times C_m}$ such that $b^n=a^n$ and $(b')^m=(a')^m$.
\end{itemize}
\end{definition}

\begin{lemma}\label{C_nC_m III}
If $P$ is a manifold and $(C^{\infty}(P),C_n\times C_m,\alpha)$ a smooth trivial NCP $C_n\times C_m$-bundle, then the induced smooth action map $\sigma$ of Proposition \ref{smoothness of the group action on the set of characters} is free and proper. In particular, we obtain a principal bundle $(P,P/(C_n\times C_m),C_n\times C_m,\pr,\sigma)$.
\end{lemma}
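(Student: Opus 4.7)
The plan is to mirror the proof of Lemma \ref{C_n III} almost verbatim, since the structural ingredients are entirely parallel after replacing $C_n$ by $C_n\times C_m$. First, I would observe that by Definition \ref{def C_nC_m-bundles}~(C1), the isotypic components $C^\infty(P)_{(1,0)}$ and $C^\infty(P)_{(0,1)}$ each contain an invertible element. This is exactly the hypothesis of Proposition \ref{C_nC_m II}, which then yields that the smooth dynamical system $(C^\infty(P),C_n\times C_m,\alpha)$ is free in the sense of Definition \ref{free dynamical systems}.

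Next, I would invoke Theorem \ref{freeness of induced action} to conclude that the induced right action of $C_n\times C_m$ on the spectrum $\Gamma_{C^\infty(P)}$ is free. Via the identification of $P$ with $\Gamma_{C^\infty(P)}$ given by Lemma \ref{spec as manifold} and the smoothness statement of Proposition \ref{smoothness of the group action on the set of characters}, freeness transfers directly to the induced smooth action $\sigma:P\times(C_n\times C_m)\to P$.

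Properness of $\sigma$ is automatic: since $C_n\times C_m$ is finite, hence compact, every continuous action of $C_n\times C_m$ on a Hausdorff manifold is proper. With $\sigma$ shown to be smooth, free, and proper, the Quotient Theorem recalled in Remark \ref{principal bundles II}~(c) applies, producing the principal bundle
\[
(P,\,P/(C_n\times C_m),\,C_n\times C_m,\,\pr,\,\sigma),
\]
where $\pr:P\to P/(C_n\times C_m)$ denotes the canonical orbit map.

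I do not expect any serious obstacle: condition (C2) of Definition \ref{def C_nC_m-bundles} plays no role here (it will only be needed in the analogue of Theorem \ref{C_n IV} for establishing triviality of the bundle), and all the necessary machinery --- the freeness criterion via invertible isotypic elements, the passage from algebraic freeness to geometric freeness, and compactness-induced properness --- has already been developed in the earlier chapters. The only mild care required is the bookkeeping that the character group of $C_n\times C_m$ is generated by the two characters appearing in the formula (\ref{formula C_nC_m}), so that Corollary \ref{freeness for compact abelian groups I,5} (which underlies Proposition \ref{C_nC_m II}) indeed reduces the freeness check to these two isotypic components.
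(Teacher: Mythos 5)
Your proposal is correct and follows exactly the same route as the paper's proof: invertible elements in $C^{\infty}(P)_{(1,0)}$ and $C^{\infty}(P)_{(0,1)}$ give freeness of the dynamical system via Proposition \ref{C_nC_m II}, Theorem \ref{freeness of induced action} transfers this to the action $\sigma$, compactness of the finite group gives properness, and the Quotient Theorem concludes. Your side remark that condition (C2) is not used here also matches the paper, which makes the same observation immediately after the lemma.
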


\begin{proof}
\,\,\,The freeness of the map $\sigma$ follows directly from Definition \ref{def C_nC_m-bundles}, Proposition \ref{C_nC_m II} and Theorem \ref{freeness of induced action}. Its properness is automatic, since $C_n\times C_m$ is compact. The last claim now follows from the Quotient Theorem.
\end{proof}

Note that we didn't use condition $(C2)$ yet. We will now see that this condition guarantees that the principal $C_n\times C_m$-bundle of Lemma \ref{C_nC_m III} becomes trivial. 

\begin{theorem}\label{C_nC_m IV}{\bf(Trivial principal $C_n\times C_m$-bundles).}\index{Trivial Principal!$C_n\times C_m$-Bundles}
If $P$ is a manifold, then the following assertions hold:
\begin{itemize}
\item[\emph{(a)}]
If $(C^{\infty}(P),C_n\times C_m,\alpha)$ is a smooth trivial NCP $C_n\times C_m$-bundle, then the corresponding principal bundle $(P,P/(C_n\times C_m),C_n\times C_m,\pr,\sigma)$ of Lemma \ref{C_nC_m III} is trivial.
\item[\emph{(b)}]
Conversely, if $(P,M,C_n\times C_m,q,\sigma)$ is a trivial principal $C_n\times C_m$-bundle, then the corresponding smooth dynamical system $(C^{\infty}(P),C_n\times C_m,\alpha)$ of Remark \ref{induced transformation triples} is a smooth trivial NCP $C_n\times C_m$-bundle.
\end{itemize}
\end{theorem}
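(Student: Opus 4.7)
The plan is to adapt the proofs of Theorem \ref{C_n IV} and Theorem \ref{TNPB for manifold} to the product case, since the structure group $C_n \times C_m$ admits the natural generators $(\zeta_1, 1)$ and $(1, \zeta_2)$ for its character group (cf.\ (\ref{formula C_nC_m})). Conditions (C1) and (C2) of Definition \ref{def C_nC_m-bundles} are designed to provide, in each of the two ``directions'' corresponding to these generators, exactly the ingredients used in the proof of Theorem \ref{C_n IV}.

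For part (a), I would start from a smooth trivial NCP $C_n \times C_m$-bundle $(C^\infty(P), C_n \times C_m, \alpha)$. By (C1) and (C2), there exist invertible $f_1 \in C^\infty(P)_{(1,0)}$ and $f_2 \in C^\infty(P)_{(0,1)}$ together with invariant elements $g_1, g_2 \in C^\infty(P)^{C_n \times C_m}$ satisfying $g_1^n = f_1^n$ and $g_2^m = f_2^m$. As in the proof of Theorem \ref{C_n IV}\,(a), the functions $g_1, g_2$ are automatically invertible and the quotients
\[
h_1 := \frac{f_1}{g_1} \in C^\infty(P)_{(1,0)}, \qquad h_2 := \frac{f_2}{g_2} \in C^\infty(P)_{(0,1)}
\]
satisfy $h_1^n = 1$ and $h_2^m = 1$, so that $\im(h_1) \subseteq C_n$ and $\im(h_2) \subseteq C_m$. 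Combining them, the map
\[
\varphi: P \rightarrow P/(C_n \times C_m) \times (C_n \times C_m), \quad p \mapsto (\pr(p), h_1(p), h_2(p))
\]
is smooth, $(C_n \times C_m)$-equivariant (by construction of $h_1, h_2$), and fibrewise bijective, hence defines an equivalence of principal $C_n \times C_m$-bundles over $P/(C_n \times C_m)$ (where the principal bundle on the left is the one provided by Lemma \ref{C_nC_m III}).

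For part (b), I would start from a trivial principal bundle $(P, M, C_n \times C_m, q, \sigma)$ together with a trivialization
\[
\varphi: P \rightarrow M \times (C_n \times C_m), \quad p \mapsto (q(p), f_1(p), f_2(p)).
\]
The equivariance of $\varphi$ forces $f_1 \in C^\infty(P)_{(1,0)}$ and $f_2 \in C^\infty(P)_{(0,1)}$, and both are invertible with $f_1^n = 1 = f_2^m$. Choosing $g_1 = g_2 = 1 \in C^\infty(P)^{C_n \times C_m}$ then yields the two conditions of Definition \ref{def C_nC_m-bundles}, so $(C^\infty(P), C_n \times C_m, \alpha)$ is a smooth trivial NCP $C_n \times C_m$-bundle.

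I do not expect a serious obstacle: the only point requiring slight care is to make sure that the equivariance properties of $h_1$ and $h_2$ in part (a) combine correctly into equivariance of $\varphi$ with respect to the full group $C_n \times C_m$, which follows immediately because the two component functions transform under disjoint generators of the character group and the fixed point algebra is used to absorb the remaining invariance. Everything else is a direct product analogue of the arguments already carried out for $C_n$.
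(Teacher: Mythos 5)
Your proposal is correct and follows essentially the same route as the paper's proof: in part (a) you form the quotients $h_1=f_1/g_1$ and $h_2=f_2/g_2$, observe that $h_1^n=1$ and $h_2^m=1$ force their images into $C_n$ and $C_m$ respectively, and assemble them into the trivializing equivalence, while part (b) reads the component functions of a given trivialization and checks conditions (C1) and (C2) exactly as the paper does. No gaps.
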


\begin{proof}
\,\,\,(a) We first choose an invertible element $f\in C^{\infty}(P)_{(1,0)}$ and an invertible element $f'\in C^{\infty}(P)_{(0,1)}$. We further choose two elements $g$ and $g'$ in $C^{\infty}(P)_{(0,0)}$ such that $g^n=f^n$ and $(g')^m=(f')^m$. Since $f$ and $f'$ are invertible, so are $g$ and $g'$. Now, a short observation shows that $h:=\frac{f}{g}$ lies in $C^{\infty}(P)_{(1,0)}$ and has image $C_n$. Moreover, $h':=\frac{f'}{g'}$ lies in $C^{\infty}(P)_{(0,1)}$ and has image $C_m$. Indeed, by construction we obtain $h\in C^{\infty}(P)_{(1,0)}$. This and $h^n=1$ implies that $\im(h)=C_n$. A similar argument works for $h'$. In particular, the map
\[\varphi:P\rightarrow P/(C_n\times C_m)\times (C_n\times C_m),\,\,\,p\mapsto(\pr(p),h(p),h'(p))
\]defines an equivalence of principal $C_n\times C_m$-bundles over $P/(C_n\times C_m)$. We thus conclude that the principal bundle $(P,P/(C_n\times C_m),C_n\times C_m,\pr,\sigma)$ of Lemma \ref{C_nC_m III} is trivial.

(b) Conversely, let $(P,M,C_n\times C_m,q,\sigma)$ be a trivial principal $C_n\times C_m$-bundle and
\[\varphi:P\rightarrow M\times(C_n\times C_m),\,\,\,p\mapsto(q(p),f(p),f'(p))
\]be an equivalence of principal $C_n\times C_m$-bundles over $M$. We first note that both functions $f$ and $f'$ are invertible elements of $C^{\infty}(P)$. Furthermore, the $C_n\times C_m$-equivariance of $\varphi$ implies that $f\in C^{\infty}(P)_{(1,0)}$ and $f'\in C^{\infty}(P)_{(0,1)}$. Thus, $f\in C^{\infty}(P)_{(1,0)}$ is invertible with $f^n=1$ and $f'\in C^{\infty}(P)_{(0,1)}$ is invertible with $(f')^m=1$. From this we conclude that $(C^{\infty}(P),C_n\times C_m,\alpha)$ is a smooth trivial NCP $C_n\times C_m$-bundle.
\end{proof}

\subsection*{Finite Abelian Groups}

In view of the previous constructions, it is clear how to define (smooth) trivial NCP $\Lambda$-bundles for a finite abelian group $\Lambda$. In fact, the Structure Theorem for finitely generated abelian groups implies that $\Lambda$ is isomorphic to a finite product of cyclic groups. Thus, up to an isomorphism, we may assume that $\Lambda=C_{n_1}\times\cdots\times C_{n_k}$. Here, $n_i\in\mathbb{N}$ and
\[C_{n_i}:=\{z\in\mathbb{C}^{\times}:\,z^{n_i}=1\}=\{\zeta_i^l:\,\zeta_i:=\exp(\frac{2\pi i}{n_i}),\,l=0,1,\ldots,n_i-1\}.
\]for each $i=1,\ldots,k$. A short observation shows (right as before) that the character group of $\Lambda$ is canonically isomorphic to $\Lambda$ (as abelian group) and we write $\Psi$ for the corresponding isomorphism (cf. (\ref{formula C_nC_m})). 
Moreover, we write $A_i$ for the isotypic component corresponding to the generator $\zeta_i$, i.e., $A_i:=A_{\Psi(\zeta_i)}$ understood in the appropriate sense. 

\begin{definition}\label{C_nC_m V}{\bf(Trivial NCP $\Lambda$-bundles).}\index{Trivial NCP!$\Lambda$-Bundles}
Let $A$ be a unital locally convex algebra and $\Lambda$ a finite abelian group. A (smooth) dynamical system $(A,\Lambda,\alpha)$ is called a (\emph{smooth}) \emph{trivial NCP $\Lambda$-bundle}, if the following conditions are satisfied:
\begin{itemize}
\item[(C1)]
For each $i=1,\ldots,k$ there exists an invertible element $a_i\in A_i$. 
\item[(C2)]
For each $i=1,\ldots,k$ there exists an elements $b_i$ in the fixed point algebra $B$ such that $b_i^{n_i}=a_i^{n_i}$.
\end{itemize}
\end{definition}

\begin{remark}\label{C_nC_m VI}{\bf(Trivial principal $\Lambda$-bundles).}\index{Trivial Principal!$\Lambda$-Bundles}
A theorem characterizing the trivial principal $\Lambda$-bundles can be obtained similarly to Theorem \ref{C_nC_m IV}.
\end{remark}

\section{Some Examples}

In this section we present a bunch of examples of trivial NCP $\Lambda$-bundles for a finite abelian group $\Lambda$. In fact, all constructions of Section \ref{examples of trivial NCP torus bundles} can be applied to the group $\Lambda$. For this reason we will leave out most of the proofs. For detailed proofs we refer to the corresponding results in Section \ref{examples of trivial NCP torus bundles}.

\begin{construction}\label{l^1 spaces associated to dynamical systems again I}{\bf($\ell^1$-crossed products again).}\index{Crossed Products!$\ell^1$-}
Let $(A,\Vert\cdot\Vert,^{*})$ be an involutive Banach algebra and $(A,\Lambda,\alpha)$ a dynamical system. We recall that this means that $\Lambda$ acts by isometries of $A$. We write $F(\Lambda,A)$\sindex[n]{$F(\Lambda,A)$} for the vector space of functions $f:\Lambda\rightarrow A$ and define a multiplication, an involution and a norm on this space as in Construction \ref{l^1 spaces associated to dynamical systems I}. For consistency we write $\ell^1(A\rtimes_{\alpha} \Lambda)$.\sindex[n]{$\ell^1(A\rtimes_{\alpha} \Lambda)$}
for the corresponding involutive Banach algebra.
\begin{flushright}
$\blacksquare$
\end{flushright}
\end{construction}

\begin{lemma}\label{continuous action again}
If $(A,\Vert\cdot\Vert,^{*})$ is an involutive Banach algebra and $(A,\Lambda,\alpha)$ a dynamical system, then the map
\[\widehat{\alpha}:\Lambda\times\ell^1(A\rtimes_{\alpha} \Lambda)\rightarrow\ell^1(A\rtimes_{\alpha} \Lambda),\,\,\,(\widehat{\alpha}(g',f))(g):=(g'.f)(g):=(\Psi(g).(g'))\cdot f(g)
\]defines a continuous action of $\Lambda$ on $\ell^1(A\rtimes_{\alpha} \Lambda)$ by algebra automorphisms.
\begin{flushright}
$\blacksquare$
\end{flushright}
\end{lemma}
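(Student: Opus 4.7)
The plan is to mimic the proof of Lemma 7.2.7 (\emph{continuous action}) almost verbatim, exploiting the fact that $\Lambda$ is finite (hence discrete and compact), which renders several continuity arguments either trivial or substantially easier. Throughout, I write $\Psi\colon\Lambda\to\widehat{\Lambda}$ for the isomorphism fixed above and use the multiplicative notation $g'\cdot g$ for the group law of $\Lambda$.

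First I would verify that $\widehat{\alpha}$ defines an action: the identity acts trivially because $\Psi(g)(1_\Lambda)=1$, and the composition law $\widehat{\alpha}(g_1',\widehat{\alpha}(g_2',f))=\widehat{\alpha}(g_1'g_2',f)$ follows from the fact that each $\Psi(g)\colon\Lambda\to\mathbb{T}$ is a group homomorphism, so $\Psi(g)(g_1'g_2')=\Psi(g)(g_1')\Psi(g)(g_2')$. Next I would check that for each fixed $g'\in\Lambda$ the map $\widehat{\alpha}(g',\cdot)$ is an algebra automorphism of $\ell^1(A\rtimes_\alpha\Lambda)$. Compatibility with the convolution and the involution is a direct transcription of the corresponding calculations in Lemma 7.2.7: one substitutes the defining formulas and uses that $\Psi(g)$ takes values in $\mathbb{T}$ together with the cocycle-like identity $\Psi(g)(g')=\Psi(g_1)(g')\Psi(g_2)(g')$ when $g=g_1g_2$ (which holds because $\Psi(\cdot)(g')$ is itself a homomorphism, via the canonical identification $\widehat{\widehat{\Lambda}}\cong\Lambda$). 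Isometry with respect to $\Vert\cdot\Vert_1$ is immediate since $|\Psi(g)(g')|=1$ for every $g,g'\in\Lambda$.

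For continuity, the key simplification is that $\Lambda$ is finite and carries the discrete topology, so $\widehat{\alpha}$ is continuous in the first variable for trivial reasons. It therefore suffices, for each fixed $g'\in\Lambda$, to show that the isometric algebra automorphism $\widehat{\alpha}(g',\cdot)$ is continuous on $\ell^1(A\rtimes_\alpha\Lambda)$; but this is automatic because it preserves the $\ell^1$-norm. Combining these observations with the product topology on $\Lambda\times\ell^1(A\rtimes_\alpha\Lambda)$ gives joint continuity of $\widehat{\alpha}$.

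I do not expect any genuine obstacle here; the statement is essentially a formal analogue of Lemma 7.2.7 in a markedly simpler setting (discrete finite $\Lambda$ in place of $\mathbb{T}^n$). The only point that requires a moment of care is the bookkeeping with $\Psi$: one has to be comfortable switching between ``$g$ acts on $g'$ via the character $\Psi(g)$'' and ``$g'$ acts on $g$ via the character $\Psi(g')$'', which are consistent precisely because of Pontryagin duality for finite abelian groups. Once this identification is made explicit, the verifications reduce to the identical algebraic manipulations carried out in the proof of Lemma 7.2.7, and no density argument (such as the one on $F(\mathbb{Z}^n,A)$ used there) is needed because $F(\Lambda,A)=\ell^1(A\rtimes_\alpha\Lambda)$ as vector spaces when $\Lambda$ is finite.
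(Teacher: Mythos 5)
Your proposal is correct and is exactly the argument the paper intends: the paper omits the proof of this lemma, referring back to the $\mathbb{T}^n$ case (Lemma \ref{continuous action}), and your transcription — the action and automorphism properties via the bimultiplicativity of $(g,g')\mapsto\Psi(g)(g')$, isometry from $|\Psi(g)(g')|=1$, and continuity trivialized by the discreteness and finiteness of $\Lambda$ (so that $F(\Lambda,A)=\ell^1(A\rtimes_{\alpha}\Lambda)$ and no density argument is needed) — is precisely that adaptation.
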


\begin{proposition}
If $(A,\Vert\cdot\Vert,^{*})$ is an involutive Banach algebra and $(A,\Lambda,\alpha)$ a dynamical system, then the triple \[(\ell^1(A\rtimes_{\alpha} \Lambda),\Lambda,\widehat{\alpha})
\]defines a dynamical system.\sindex[n]{$(\ell^1(A\rtimes_{\alpha} \Lambda),\Lambda,\widehat{\alpha})$}
\end{proposition}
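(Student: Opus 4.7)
The plan is to verify the three structural ingredients of a dynamical system in the sense of Definition \ref{triple} (adapted to the topological, rather than smooth, setting), namely that $\ell^{1}(A \rtimes_{\alpha} \Lambda)$ is a unital locally convex algebra, that $\Lambda$ is a topological group, and that the assignment $g' \mapsto \widehat{\alpha}(g',\cdot\,)$ is a group homomorphism $\Lambda \to \Aut(\ell^{1}(A\rtimes_{\alpha}\Lambda))$ inducing a continuous action. The bulk of the analytic content has already been taken care of in Lemma \ref{continuous action again}, so the argument will essentially be a bookkeeping assembly of facts established in Construction \ref{l^1 spaces associated to dynamical systems again I} and that lemma.

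First I would recall from Construction \ref{l^1 spaces associated to dynamical systems again I} that $\ell^{1}(A \rtimes_{\alpha} \Lambda)$ is an involutive Banach algebra with respect to the $L^1$-norm; since $\Lambda$ is finite, the function $\delta_{1_\Lambda}\otimes 1_A$ serves as a unit, and in particular the underlying Banach topology turns it into a unital locally convex algebra. Next I would note that $\Lambda$, being a finite abelian (hence compact) group, is automatically a topological group. The substantive input is then Lemma \ref{continuous action again}, which asserts that $\widehat{\alpha}:\Lambda \times \ell^{1}(A \rtimes_{\alpha} \Lambda)\to \ell^{1}(A \rtimes_{\alpha} \Lambda)$ is a continuous action of $\Lambda$ by algebra automorphisms; unwinding the definition, this precisely says that $g' \mapsto \widehat{\alpha}(g',\cdot\,)$ is a group homomorphism into $\Aut(\ell^{1}(A \rtimes_{\alpha} \Lambda))$ whose associated evaluation map is continuous.

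There is no real obstacle here: the only point that deserves a brief sentence is that each $\widehat{\alpha}(g',\cdot\,)$ is actually invertible in $\Aut(\ell^{1}(A \rtimes_{\alpha} \Lambda))$, which is immediate from the group property $\widehat{\alpha}(g') \circ \widehat{\alpha}((g')^{-1}) = \widehat{\alpha}(1_\Lambda) = \id$ that is already contained in the assertion of Lemma \ref{continuous action again}. Combining these observations yields the claim, and I would close the proof by simply invoking Lemma \ref{continuous action again}.
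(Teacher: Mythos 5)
Your proof is correct and takes essentially the same route as the paper, which simply cites Lemma \ref{continuous action again} as giving the claim directly; your additional bookkeeping (unitality via $\delta_{1_\Lambda}$, $\Lambda$ being a topological group, invertibility of each $\widehat{\alpha}(g',\cdot)$) is accurate but just makes explicit what the paper leaves implicit.
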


\begin{proof}
\,\,\,The claim is a direct consequence of Lemma \ref{continuous action again}.
\end{proof}

\begin{example}\label{l^1 as example again}
If $(A,\Vert\cdot\Vert,^{*})$ is an involutive Banach algebra and $(A,\Lambda,\alpha)$ a dynamical system, then the dynamical system $(\ell^1(A\rtimes_{\alpha} \Lambda),\Lambda,\widehat{\alpha})$ is a trivial NCP $\Lambda$-bundle. Indeed, for $g\in \Lambda$ we define
\[\delta_g(h):=
\begin{cases}
1_A &\text{for}\,\,\,h=g\\
0 &\text{otherwise}.
\end{cases}
\]In particular, we write 
\[\delta_i(h):=
\begin{cases}
1_A &\text{for}\,\,\,h=\zeta_i\\
0 &\text{otherwise}
\end{cases}.
\]Then 
\[g'.\delta_i=(\Psi(\zeta_i).(g'))\cdot\delta_i,\,\,\text{and}\,\,\,\delta_i\star\delta_i^{*}=\delta_i^{*}\star\delta_i={\bf 1}
\]show that $\delta_i$ is an invertible element of $\ell^1(A\rtimes_{\alpha} \Lambda)$ lying in the isotypic component $\ell^1(A\rtimes_{\alpha} \Lambda)_i$. From this observation and $\delta_i^n={\bf 1}$ we conclude that $\ell^1(A\rtimes_{\alpha}\Lambda)$ is a trivial NCP $\Lambda$-bundle.
\end{example}

\begin{definition}{\bf($C^{*}$-crossed products again).}\index{Crossed Products!$C^{*}$-}
If $(A,\Vert\cdot\Vert,^{*})$ is an involutive Banach algebra and $(A,\Lambda,\alpha)$ a dynamical system, then the enveloping $C^*$-algebra of $\ell^1(A\rtimes_{\alpha} \Lambda)$ is denoted by $C^*(A\rtimes_{\alpha} \Lambda)$\sindex[n]{$C^*(A\rtimes_{\alpha} \Lambda)$} and is called the \emph{$C^{*}$-crossed product} associated to $(A,\Lambda,\alpha)$.
\end{definition}

\begin{example}\label{crossed product as example again}
If $(A,\Vert\cdot\Vert,^{*})$ is an involutive Banach algebra and $(A,\Lambda,\alpha)$ a dynamical system, then the action $\widehat{\alpha}$ of Lemma \ref{continuous action again} extends to a continuous action of $\Lambda$ on the $C^*$-crossed product $C^*(A\rtimes_{\alpha} \Lambda)$ by algebra automorphisms. For details we refer to the paper [Ta74]. In particular, the corresponding dynamical system 
\[(C^*(A\rtimes_{\alpha} \Lambda),\Lambda,\widehat{\alpha})
\]is a trivial NCP $\Lambda$-bundle. This follows exactly as in Example \ref{l^1 as example again}.\sindex[n]{$(C^*(A\rtimes_{\alpha} \Lambda),\Lambda,\widehat{\alpha})$}
\end{example}

\begin{construction}\label{l^1 spaces associated to cocycles again I}{\bf($\ell^1$-spaces associated to $2$-cocycles again).}
Let $(A,\Vert\cdot\Vert,^{*})$ be an involutive Banach algebra and $\omega\in Z^2(\Lambda,\mathbb{T})$. The involutive Banach algebra $\ell^1(A\times_{\omega}\Lambda)$ is defined exactly as in Construction \ref{l^1 spaces associated to cocycles I}.\sindex[n]{$\ell^1(A\times_{\omega}\Lambda)$}.
\begin{flushright}
$\blacksquare$
\end{flushright}
\end{construction}

\begin{example}\label{l^1 twisted as example again}
Let $(A,\Vert\cdot\Vert,^{*})$ be an involutive Banach algebra and $\omega\in Z^2(\Lambda,\mathbb{T})$.\sindex[n]{$Z^2(\Lambda,\mathbb{T})$} Similarly to Lemma \ref{continuous action again}, we see that the map
\[\widehat{\alpha}:\Lambda\times\ell^1(A\times_{\omega}\Lambda)\rightarrow\ell^1(A\times_{\omega}\Lambda),\,\,\,(\widehat{\alpha}(g',f))(g):=(g'.f)(g):=(\Psi(g).(g'))\cdot f(g)
\]defines a continuous action of $\Lambda$ on $\ell^1(A\times_{\omega} \Lambda)$ by algebra automorphisms. Moreover, the corresponding dynamical system
\[(\ell^1(A\times_{\omega} \Lambda),\Lambda,\widehat{\alpha})
\]\sindex[n]{$(\ell^1(A\times_{\omega} \Lambda),\Lambda,\widehat{\alpha})$}turns out to be a trivial NCP $\Lambda$-bundle (cf. Example \ref{l^1 as example again}).
\end{example}

\begin{example}
The enveloping $C^*$-algebra of $\ell^1(A\times_{\omega} \Lambda)$ is denoted by $C^*(A\times_{\omega} \Lambda)$\sindex[n]{$C^*(A\times_{\omega} \Lambda)$} and is called the \emph{twisted group $C^*$-algebra of $\Lambda$ by $\omega$}. The action $\widehat{\alpha}$ of Example \ref{l^1 twisted as example again} extends to a continuous action of $\Lambda$ on $C^*(A\times_{\omega} \Lambda)$ by algebra automorphisms (cf. Example \ref{crossed product as example again}). The corresponding dynamical system
\[(C^*(A\times_{\omega} \Lambda),\Lambda,\widehat{\alpha})
\]\sindex[n]{$(C^*(A\times_{\omega} \Lambda),\Lambda,\widehat{\alpha})$}is a trivial NCP $\Lambda$-bundle as well.
\end{example}

\begin{example}{\bf(The matrix algebra).}\label{the matrix algebra}\index{Algebra!Matrix}
For $m\in\mathbb{N}$ and $\zeta:=\exp(\frac{2\pi i}{m})$ we define
$$
R:=\begin{pmatrix}
 1      &  &  &  &    \\
         & \zeta &  &  &  \\
         &  & \zeta^2  &  &  \\
         &  &  & \ddots &  \\
         &  &  &  &  \zeta^{m-1} 
\end{pmatrix}
$$

and

$$
S:=\begin{pmatrix}
 0      & \ldots & \ldots & 0 & 1   \\
 1      & 0 & \ldots & 0 &  0\\
         & \ddots & \ddots & \vdots & \vdots \\
         &  & \ddots& 0 & \vdots \\
 0      &  &  & 1 & 0 
\end{pmatrix}.
$$
We note that the matrices $R$ and $S$ are unitary and satisfy the relations $R^m={\bf 1}$, $S^m={\bf 1}$ and $RS=\zeta\cdot SR$. Further, they generate a $C^*$-subalgebra which clearly commutes only with the multiples of the identity, so it has to be the full matrix algebra. The map
\[\alpha:(C_m\times C_m)\times M_m(\mathbb{C})\rightarrow M_m(\mathbb{C}),\,\,\, (\zeta^k,\zeta^l).A:=R^lS^kAS^{-k}R^{-l}
\]for $k,l=0,1,\ldots,m-1$ defines a smooth group action of $C_m\times C_m$ on $M_m(\mathbb{C})$ with fixed point algebra $\mathbb{C}$. Since $R^*=R^{-1}\in M_m(\mathbb{C})_{(1,0)}$ and $S\in M_m(\mathbb{C})_{(0,1)}$, we conclude that the triple $(M_m(\mathbb{C}),C_m\times C_m,\alpha)$ is a smooth trivial NCP $C_m\times C_m$-bundle.
\end{example}

\section{A Remark on the Classifications of Trivial NCP Cyclic Bundles}

In Section \ref{Classification of NCPT^nB} we provided a classification of NCP $\mathbb{T}^n$-bundles based on the extension theory of groups. The same framework applied to a $k$-fold product $\Lambda=C_{n_1}\times\cdots\times C_{n_k}$ of cyclic groups leads to a classification of $\Lambda$-graded unital associative algebras for which each grading space contains an invertible element. Unfortunately, in order to classify NCP $\Lambda$-bundles, we still have to handle Condition (C2) of Remark \ref{C_nC_m V}. This gives rise to the following open problem:

\begin{open problem}{\bf(Automatic existence of roots).}\label{automatic existence of roots}\index{Automatic!Existence of Roots}
Suppose we are in the situation of Remark \ref{C_nC_m V}. Is it (always) possible to choose an invertible element $a_i\in A_i$ such that Condition (C2) is automatic? If not, for which classes of algebras is Condition (C2) automatic? We shall point out that Condition (C2) is actually a condition on the fixed point algebra $B$.
\end{open problem}

\begin{remark}
If the answer to the Open Problem \ref{automatic existence of roots} is yes, or if (at least) there exists an interesting class of algebras for which Condition (C2) is automatic, then we can apply the framework of Section \ref{Classification of NCPT^nB} to give a complete classification of the algebraic counterpart of trivial NCP $\Lambda$-bundles, i.e., of $\Lambda$-graded unital associative algebras for which each grading space contains an invertible element.
\end{remark}

\begin{example}{\bf($B=\mathbb{C}$).}
In the algebra $\mathbb{C}$ it is always possible to find (arbitrary) roots. Thus, the set $\Ext(\Lambda,\mathbb{C})$\sindex[n]{$\Ext(\Lambda,\mathbb{C})$} of all equivalence classes of algebraically trivial NCP $\Lambda$-bundles with base $\mathbb{C}$ is in one-to-one correspondence to the second group cohomology space $H^2(\Lambda,\mathbb{C}^{\times})$. Indeed, this follows from Remark \ref{B abelian} and the fact that each automorphism of $\mathbb{C}$ is trivial. For its computation we refer to [Ma95], Chapter IV or [Ne07b], Proposition II.4.
\end{example}

\begin{remark}{\bf(The holomorphic functional calculus).}\index{Holomorphic Functional Calculus}
In a CIA which is Mackey complete, there exists a holomorphic functional calculus based on integration along smooth contours. This has been worked out by [Gl02] and can be used to obtain roots of elements, whose spectra is contained in a set of the form $\mathbb{C}\backslash L$ for $L:=\mathbb{R}_{+}\cdot z_0$, $z_0\in\mathbb{C}^{\times}$. Indeed, suppose that it is possible to pick for each $i=1,\ldots,k$ an invertible element $a_i\in A_i$ such that $\spec(a^{n_i}_i)$ is contained in a ``sliced" plane of the form $\mathbb{C}\backslash L_i$. Then we can choose a branch $f_i$ of the complex $n_i$-th root function on $\mathbb{C}\backslash L_i$ and use the holomorphic functional calculus to get an element $b_i$ with $b_i^{n_i}=a_i^{n_i}$.
\end{remark}

\section{A Possible Approach to Noncommutative Finite Coverings}\index{Noncommutative!Finite Coverings}

In this small section we present some general facts on principal bundles with finite structure group and deduce some algebraic properties of their corresponding smooth function algebras. This discussion in particularly leads to a possible approach to ``noncommutative finite coverings''. One notable aspect is given by the following observation: If $q:P\rightarrow M$ is a finite covering, i.e., a principal bundle $(P,M,G,q,\sigma)$ with finite structure group $G$, then $C^{\infty}(P)$ is a finitely generated projective $C^{\infty}(M)$-module. We start with the following lemma:

\begin{lemma}\label{nice lemma, very nice lemma}
Let $(P,M,G,q,\sigma)$ be a principal bundle with compact structure group $G$. Further, let $(C^{\infty}(P),G,\alpha)$ be the corresponding smooth dynamical system of Proposition \ref{smoothness of the group action on the algebra of smooth functions}, $(\pi,V)$ a finite-dimensional representation of $G$ and $\langle\cdot,\cdot\rangle$ a $G$-invariant scalar product on $V$. If $V^*$ denotes the dual space of $V$ and $(\pi^*,V^*)$ the natural induced representation of $G$ on $V^*$ given for all $g\in G$ and $\lambda\in V^*$ by 
\[\pi^*(g).\lambda:=\lambda\circ\pi(g^{-1}),
\]then the following assertions hold:
\begin{itemize}
\item[\emph{(a)}]
The space $\Hom_G(V,C^{\infty}(P))$\sindex[n]{$\Hom_G(V,C^{\infty}(P))$} of all interwining operators carries the structure of a $C^{\infty}(M)$-module given for all $f\in C^{\infty}(M)$, $\varphi\in \Hom_G(V,C^{\infty}(P))$ and $v\in V$ by
\[(f.\varphi)(v):=f\cdot\varphi(v)
\]with the usual $C^{\infty}(M)$-module structure on $C^{\infty}(P)$ on the right-hand side.
\item[\emph{(b)}]
If $V$ is a real vector space, then the map
\[A_V:V\rightarrow V^*,\,\,\,v\mapsto\langle v,\cdot\rangle
\]defines an equivalence of $G$-representations.
\item[\emph{(c)}]
If $C^{\infty}(P,V^*)^G$ denotes the space of equivariant smooth $V^*$-valued functions on $P$ \emph{(}cf. Proposition \ref{sections of an associated vector bundle}\emph{)}, then the map
\[\Psi:C^{\infty}(P,V^*)^G\rightarrow\Hom_G(V,C^{\infty}(P)),\,\,\,(\Psi(f).v)(p):=f(p).v
\]defines an isomorphism of $C^{\infty}(M)$-modules. 
\item[\emph{(d)}]
The space $\Hom_G(V,C^{\infty}(P))$ is a finitely generated projective $C^{\infty}(M)$-module.
\end{itemize}
\end{lemma}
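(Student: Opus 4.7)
The plan is to dispatch (a) and (b) by direct verification, then build the key isomorphism in (c) by writing down its inverse explicitly, and finally combine (c) with the Serre--Swan theorem via the associated bundle construction to obtain (d).

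For (a), I would first note that every $f\in C^\infty(M)$ lifts to the $G$-invariant function $q^*f\in C^\infty(P)^G$. Consequently, for $\varphi\in\Hom_G(V,C^\infty(P))$ and $v\in V$, the product $(f.\varphi)(v)=(q^*f)\cdot\varphi(v)$ remains $G$-equivariant because $G$ acts on $C^\infty(P)$ by algebra automorphisms and fixes $q^*f$; the $C^\infty(M)$-module axioms are inherited from those of $C^\infty(P)$ as a $C^\infty(M)$-module. For (b), the linearity and bijectivity of $A_V$ are the nondegeneracy of $\langle\cdot,\cdot\rangle$, and the intertwining property is the computation
\[
A_V(\pi(g).v)(w)=\langle\pi(g).v,w\rangle=\langle v,\pi(g^{-1}).w\rangle=\bigl(A_V(v)\circ\pi(g^{-1})\bigr)(w)=\bigl(\pi^*(g).A_V(v)\bigr)(w),
\]
which uses $G$-invariance of the scalar product.

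For (c), the candidate inverse is
\[
\Phi:\Hom_G(V,C^\infty(P))\to C^\infty(P,V^*)^G,\qquad \Phi(\varphi)(p)(v):=\varphi(v)(p).
\]
I would verify well-definedness in both directions. For $\Psi$: given $f\in C^\infty(P,V^*)^G$, the map $p\mapsto f(p).v$ is smooth as the composition of $f$ with the continuous linear evaluation at $v$, and the $G$-equivariance $f(p.g)=\pi^*(g^{-1}).f(p)=f(p)\circ\pi(g)$ shows that $\Psi(f)$ is a $G$-intertwiner. For $\Phi$: smoothness of $\Phi(\varphi):P\to V^*$ uses finite-dimensionality of $V$, since fixing a basis $v_1,\dots,v_k$ reduces smoothness to that of the finitely many coordinate functions $\varphi(v_i)\in C^\infty(P)$; the $G$-equivariance of $\Phi(\varphi)$ is a line-by-line translation of the intertwining property of $\varphi$. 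The identities $\Psi\circ\Phi=\id$ and $\Phi\circ\Psi=\id$ are immediate from the definitions, and $C^\infty(M)$-linearity of $\Psi$ follows directly from the module structure specified in (a).

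For (d), the plan is to assemble the pieces. By (c) there is an isomorphism $\Hom_G(V,C^\infty(P))\cong C^\infty(P,V^*)^G$ of $C^\infty(M)$-modules, and applying Proposition \ref{sections of an associated vector bundle} to the finite-dimensional representation $(\pi^*,V^*)$ yields $C^\infty(P,V^*)^G\cong\Gamma(P\times_{\pi^*}V^*)$, again as $C^\infty(M)$-modules. Since $P\times_{\pi^*}V^*$ is a smooth vector bundle over the finite-dimensional manifold $M$, the Serre--Swan Theorem \ref{Serre-Swan} then gives that its space of sections is a finitely generated projective $C^\infty(M)$-module, which establishes (d). I expect the main obstacle to lie in (c), specifically in verifying that $\Phi(\varphi)$ is genuinely a smooth $V^*$-valued function on $P$ rather than merely a pointwise collection of linear functionals; finite-dimensionality of $V$ is essential here and is where the argument would break down for infinite-dimensional representations.
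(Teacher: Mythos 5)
Your proposal is correct and follows essentially the same route as the paper: (a) and (b) by direct verification, (c) by exhibiting the explicit inverse $\varphi\mapsto(p\mapsto\delta_p\circ\varphi)$, and (d) by combining (c) with Proposition \ref{sections of an associated vector bundle} and the Serre--Swan theorem. The only (harmless) variation is in the smoothness check for the inverse in (c): you reduce to the finitely many coordinate functions $\varphi(v_i)\in C^{\infty}(P)$ via a basis of $V$, whereas the paper instead invokes the smoothness of the evaluation map $\ev_P:P\times C^{\infty}(P)\rightarrow\mathbb{C}$; both arguments are valid and both hinge on the finite-dimensionality of $V$, as you correctly note.
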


\begin{proof}
\,\,\,(a) The first assertion of the lemma just consits of a simple calculation.

(b) The second assertion directly follows from $A_V\circ\pi(g)=\pi^*(g)\circ A_V$ for all $g\in G$ and the Theorem of Fr\'echet-Riesz.

(c) We first observe that $\Psi$ is well-defined and $C^{\infty}(M)$-linear. Here, the $C^{\infty}(M)$-module structure on $\Hom_G(V,C^{\infty}(P))$ is given by part (a) of the Lemma. If $f$ is an element in $C^{\infty}(P,V^*)^G$ such that $(\Psi(f).v)(p):=f(p).v=0$ holds for all $v\in V$ and $p\in P$, then $f=0$. Hence, the map $\Psi$ is injective. To verify its surjectivity we choose $\varphi\in\Hom_G(V,C^{\infty}(P))$ and define a map 
\[f_{\varphi}:P\rightarrow V^*,\,\,\,p\mapsto\delta_p\circ\varphi,
\]where $\delta_p:C^{\infty}(P)\rightarrow\mathbb{C}$ denotes the evalutation map in $p\in P$. We claim that $f_{\varphi}\in C^{\infty}(P,V^*)^G$: The smoothness of $f_{\varphi}$ is a consequence of the smoothness of the map
\[F_{\varphi}:P\times V\rightarrow\mathbb{C},\,\,\,(p,v)\mapsto(\delta_p\circ\varphi)(v).
\]In fact, according to [NeWa07], Proposition I.2, the evaluation map
\[\ev_P:P\times C^{\infty}(P)\rightarrow \mathbb{C},\,\,\,(p,f)\mapsto f(p)
\]is smooth. Therefore, $F_{\varphi}=\ev_P\circ(\id_P\times\varphi)$ implies that the map $F_{\varphi}$ is smooth as a composition of smooth maps. Since $\alpha(g)\circ\varphi=\varphi\circ\pi(g)$ holds for all $g\in G$, a short calculation shows that
\[f_{\varphi}(p.g)=\pi^*(g^{-1}).f_{\varphi}(p)
\]is valid for all $p\in P$ and $g\in G$. In particular, we arrive at $f_{\varphi}\in C^{\infty}(P,V^*)^G$ as desired. We finally conclude from $\Psi(f_{\varphi})=\varphi$ that the map $\Psi$ is an isomorphism of $C^{\infty}(M)$-modules.

(d) The last assertion is a direct consequence of part (c), Proposition \ref{sections of an associated vector bundle} and Theorem \ref{Serre-Swan}.
\end{proof}

\begin{proposition}\label{structure if G finite}
If $q:P\rightarrow M$ is a finite covering, i.e., a principal bundle $(P,M,G,q,\sigma)$ with finite structure group $G$, then $C^{\infty}(P)$ is a finitely generated projective $C^{\infty}(M)$-module.
\end{proposition}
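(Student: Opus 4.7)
The plan is to apply Lemma \ref{nice lemma, very nice lemma} (d) with a well-chosen finite-dimensional representation of $G$, namely the left regular representation. Since $G$ is finite, every function $G\to\mathbb{C}$ is automatically smooth, so $V:=C(G)$ is a finite-dimensional $\mathbb{C}$-vector space of dimension $|G|$; I equip it with the left regular representation $(g.f)(h):=f(g^{-1}h)$. The key observation will be that for this particular choice of $V$, the $C^{\infty}(M)$-module $\Hom_G(V,C^{\infty}(P))$ is canonically isomorphic to $C^{\infty}(P)$ itself; once this is established, the proposition follows immediately.

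First I would define a map
\[
\Phi:C^{\infty}(P)\to\Hom_G(V,C^{\infty}(P)),\qquad \Phi(\phi)(f)(p):=\sum_{g\in G}f(g)\,\phi(p.g),
\]
and verify that the image really lands in $\Hom_G(V,C^{\infty}(P))$. Since $G$ is finite, $\Phi(\phi)(f)$ is smooth as a finite sum of smooth functions. The equivariance condition $\Phi(\phi)(h.f)=\alpha(h).\Phi(\phi)(f)$ is obtained from the substitution $g'=h^{-1}g$ in the defining sum, which yields $\Phi(\phi)(h.f)(p)=\sum_{g'}f(g')\phi(p.hg')=\Phi(\phi)(f)(p.h)$. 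The $C^{\infty}(M)$-linearity of $\Phi$, with respect to the module structure on $\Hom_G(V,C^{\infty}(P))$ from Lemma \ref{nice lemma, very nice lemma} (a), is immediate from $h(q(p.g))=h(q(p))$ for all $g\in G$.

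Next I would show that $\Phi$ is bijective by writing down an explicit inverse, namely $\varphi\mapsto\bigl(p\mapsto\varphi(\delta_{1_G})(p)\bigr)$ where $\delta_{1_G}\in V$ is the characteristic function of the unit. Injectivity follows by evaluating $\Phi(\phi)$ at $f=\delta_{1_G}$. For surjectivity, one uses the identity $\sum_{g\in G}f(g)(g.\delta_{1_G})=f$ in $V$ together with the $G$-equivariance of $\varphi$ to obtain $\Phi(\varphi(\delta_{1_G}))(f)=\varphi(f)$. I do not anticipate any genuine obstacle here; it is simply a bookkeeping check with the left regular representation.

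Finally, Lemma \ref{nice lemma, very nice lemma} (d) asserts that $\Hom_G(V,C^{\infty}(P))$ is a finitely generated projective $C^{\infty}(M)$-module for \emph{any} finite-dimensional representation $V$ of $G$ (ultimately because $V^{*}$ produces the associated vector bundle $P\times_{G}V^{*}$ over $M$, whose sections form a finitely generated projective module by Proposition \ref{V is a f.g.p. C-infty (M)-module}). Transporting this property across the isomorphism $\Phi$ then yields the claim. Conceptually, the proof captures the familiar fact that a finite covering $q:P\to M$ is, tautologically, a rank-$|G|$ vector bundle over $M$ once the regular representation is invoked; the technical content lies in identifying $C^{\infty}(P)$ with the equivariant section space $C^{\infty}(P,C(G))^{G}\cong\Hom_G(C(G),C^{\infty}(P))$.
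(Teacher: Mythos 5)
Your proof is correct, and it takes a genuinely different route from the paper. The paper first splits $C^{\infty}(P)=\bigoplus_{\varepsilon\in\widehat{G}}C^{\infty}_{\varepsilon}(P)$ into its (finitely many) isotypic components via Theorem \ref{str thm of G-mod}, then invokes [HoMo06], Theorem 4.19 to identify each $C^{\infty}_{\varepsilon}(P)$ with $\Hom_G(V,C^{\infty}(P))\otimes V$ for $V$ irreducible of class $\varepsilon$, and finally applies Lemma \ref{nice lemma, very nice lemma}\,(d) once per irreducible class. You instead apply Lemma \ref{nice lemma, very nice lemma}\,(d) a single time, to the regular representation $V=C(G)$, and identify $C^{\infty}(P)$ with $\Hom_G(C(G),C^{\infty}(P))$ outright; your map $\Phi$ and its inverse $\varphi\mapsto\varphi(\delta_{1_G})$ check out (in particular $g.\delta_{1_G}=\delta_g$, so $\sum_g f(g)(g.\delta_{1_G})=f$, which is exactly what surjectivity needs), and the equivariance and $C^{\infty}(M)$-linearity computations are as routine as you anticipate. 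Your argument is shorter, avoids the external citation to [HoMo06], Theorem 4.19, and makes transparent why finiteness of $G$ is used (the regular representation must be finite-dimensional for Lemma \ref{nice lemma, very nice lemma} to apply); geometrically it says that $C^{\infty}(P)\cong\Gamma(P\times_G C(G)^*)$ is the section space of a single rank-$|G|$ associated bundle. What the paper's longer route buys is the finer conclusion that each individual isotypic component $C^{\infty}_{\varepsilon}(P)$ is itself a finitely generated projective $C^{\infty}(M)$-module, which your argument does not directly produce.
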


\begin{proof}
\,\,\,The proof of this assertion is divided into three parts:

(i) Let $(C^{\infty}(P),G,\alpha)$ be the smooth dynamical system corresponding to the principal bundle $(P,M,G,q,\sigma)$ (cf. Proposition \ref{smoothness of the group action on the algebra of smooth functions}). We can decompose $C^{\infty}(P)$ as a finite direct sum
\[C^{\infty}(P)=\bigoplus_{\varepsilon\in\widehat{G}}C_{\varepsilon}^{\infty}(P),
\]where $\widehat{G}$ denotes the set of all equivalence classes of finite-dimensional irreducible representations of $G$ and $C_{\varepsilon}^{\infty}(P)$ is the $\varepsilon$-isotypic component, i.e., the sum of all irreducible subrepresentations of $\alpha$ belonging to the class $\varepsilon\in\widehat{G}$ (cf. Theorem \ref{str thm of G-mod}). In particular, we observe that $C^{\infty}(P)^G$ is the isotypic component of the trivial representation. Therefore, each space $C_{\varepsilon}^{\infty}(P)$ is a $C^{\infty}(M)$-module.

(ii) Next, [HoMo06], Theorem 4.19 implies that the map
\[\Phi_V:=\Phi_{(\pi,V)}:\Hom_G(V,C^{\infty}(P))\otimes V\rightarrow C_{\varepsilon}^{\infty}(P),\,\,\,\varphi\otimes v\mapsto\varphi(v)
\]is an isomorphism of locally convex $C^{\infty}(M)$-modules for each irreducible representation $(\pi,V)$ of $G$ belonging to the class $\varepsilon\in\widehat{G}$. Here, the assignment
\[f.(\varphi\otimes v):=(f.\varphi)\otimes v, 
\]where the $C^{\infty}(M)$-module structure on $\Hom_G(V,C^{\infty}(P))$ is described in Lemma \ref{nice lemma, very nice lemma} (a),
turns $\Hom_G(V,C^{\infty}(P))\otimes V$ into a $C^{\infty}(M)$-module (cf. Proposition \ref{EoF as B-module}). 


(iii) As a consequence of Lemma \ref{nice lemma, very nice lemma} (d), the fact that each irreducible representation of $G$ is finite-dimensional and part (ii), we conclude that each isotypic component $C_{\varepsilon}^{\infty}(P)$ is a finitely generated projective $C^{\infty}(M)$-module. Hence, the assertion follows from part (i), because $C^{\infty}(P)$ is a finite direct sum of finitely generated projective $C^{\infty}(M)$-modules.
\end{proof}

\begin{example}
Let $n\in\mathbb{N}$. If $p_n:\mathbb{T}\rightarrow\mathbb{T}$, $z\mapsto z^n$ is the $n$-fold covering of the one-dimensional torus $\mathbb{T}$, then the corresponding fixed point algebra of the induced action of
\[C_n:=\{z\in\mathbb{C}^{\times}:\,z^n=1\}=\{\zeta^k:\,\zeta:=\exp(\frac{2\pi i}{n}),\,k=0,1,\ldots,n-1\}
\]on $C^{\infty}(\mathbb{T})$ is given by
\[B_n:=C^{\infty}(\mathbb{T})^{C_n}:=\{f:\mathbb{T}\rightarrow\mathbb{C}:\,(\forall k=0,1,\ldots,n-1)\,f(z\zeta^k)=f(z)\}.
\]Moreover, a short observation shows that the decomposition of $C^{\infty}(\mathbb{T})$ into isotypic components (with respect to $C_n$) is given by
\[C^{\infty}(\mathbb{T})=\bigoplus_{i=0}^{n-1}z^ i\cdot B_n.
\]In particular, $C^{\infty}(\mathbb{T})$ is a free $B_n$-module of rank $n$. We thus conclude from Theorem \ref{Serre-Swan} that the set $C^{\infty}(\mathbb{T})$, viewed as a finitely generated projective $B_n\cong C^{\infty}(\mathbb{T})$-module, corresponds geometrically to the trivial vector bundle 
\[\pr_{\mathbb{T}}:\mathbb{T}\times\mathbb{C}^n,\,\,\,(z,(z_1,\ldots,z_n))\mapsto z.
\]
\end{example}

\begin{remark}
The preceeding example shows that even if the finite covering $q:P\rightarrow M$ is non-trivial, then the corresponding set $C^{\infty}(P)$ of smooth functions on $P$, viewed as a finitely generated $C^{\infty}(M)$-module, can geometrically correspond to a trivial vector bundle.
\end{remark}

The following discussion is inspired from the definition of Galois extensions of commutative rings in [AG60]:

\begin{remark}\label{remark on finite coverings}
Let $G$ be a finite group acting smoothly from the right on a manifold $P$. Further, let $M:=P/G$ be the corresponding orbit space and $\pr:P\rightarrow M$, $p\mapsto p.G$ the corresponding orbit map. If
\[P\times_M P:=\{(p,p')\in P\times P:\,\pr(p)=\pr(p')\}
\]\sindex[n]{$P\times_M P$}is the fibred product manifold of $P$ and $P$ over $M$, then the canonical smooth map
\[\varphi:P\times G\rightarrow P\times_M P,\,\,\,(p,g)\mapsto(p,p.g)
\]is always surjective, and and it is injective if and only if $G$ acts freely. In particular, the map $\varphi$ is a diffeomorphsim if and only if $\pr:P\rightarrow M$ is a covering map. 
\end{remark}

\begin{lemma}\label{proposition if G finite}
Let $P$ be a compact manifold and $G$ a finite group. If $q:P\rightarrow M$ is a covering map with structure group $G$, then 
the map
\[\Phi:C^{\infty}(P)\otimes_{C^{\infty}(M)}C^{\infty}(P)\rightarrow C^{\infty}(P\times_M P),\,\,\,f\otimes g\mapsto (f\circ\pr_1)\cdot(g\circ\pr_2)
\]is an isomorphism of unital Fr\'echet algebras.\sindex[n]{$C^{\infty}(P)\otimes_{C^{\infty}(M)}C^{\infty}(P)$}\sindex[n]{$C^{\infty}(P\times_M P)$}
\end{lemma}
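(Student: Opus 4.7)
The plan is to prove this via a local triviality argument combined with the Serre--Swan type results of Chapter 3 (in particular Proposition \ref{structure if G finite}) and the smooth localization results of Chapter 4.

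First I would check that $\Phi$ is a well-defined continuous morphism of unital Fr\'echet algebras. By Remark \ref{remark on finite coverings}, the free action of $G$ makes the diffeomorphism $\varphi:P\times G\to P\times_M P$, $(p,g)\mapsto(p,p.g)$ available, so $P\times_M P$ is a (compact) manifold and $C^\infty(P\times_M P)$ is a Fr\'echet algebra. The assignment $(f,f')\mapsto(f\circ\pr_1)(f'\circ\pr_2)$ is a $C^\infty(M)$-balanced jointly continuous bilinear map, and so induces a morphism of algebras on the algebraic tensor product. By Proposition \ref{structure if G finite}, $C^\infty(P)$ is a finitely generated projective $C^\infty(M)$-module, and Lemma \ref{top change of base} applied twice then guarantees that the algebraic tensor product $C^\infty(P)\otimes_{C^\infty(M)}C^\infty(P)$ is itself a finitely generated projective Fr\'echet $C^\infty(M)$-module equal to its completion, with respect to which $\Phi$ is continuous. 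Once $\Phi$ is shown to be bijective, the Open Mapping Theorem (Proposition \ref{open mapping theorem}) will automatically yield continuity of the inverse.

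Second, using the diffeomorphism $\varphi$, we identify $C^\infty(P\times_M P)\cong C^\infty(P\times G)\cong\bigoplus_{g\in G}C^\infty(P)$ as Fr\'echet algebras (the last isomorphism because $G$ is finite and discrete, so $P\times G$ is a finite disjoint union of copies of $P$). Under this identification, $\Phi$ becomes the map
\[\widetilde{\Phi}:\,C^\infty(P)\otimes_{C^\infty(M)}C^\infty(P)\longrightarrow\bigoplus_{g\in G}C^\infty(P),\qquad f\otimes f'\longmapsto\bigl(f\cdot\alpha_g(f')\bigr)_{g\in G},\]
where $(\alpha_g(f'))(p):=f'(p.g)$. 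Thus it suffices to show $\widetilde{\Phi}$ is bijective.

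Third, I would argue bijectivity by localization. Since $P$ is compact, so is $M=P/G$, and since $q:P\to M$ is a locally trivial covering there is a finite open cover $U_1,\ldots,U_N$ of $M$ with $q^{-1}(U_i)\cong U_i\times G$. Pick $U_i$-defining functions $f_i\in C^\infty(M)$ (Theorem \ref{whitneys theorem}). Applying Corollary \ref{C(U)=C(M)_f_U} and Proposition \ref{gamma V_U=gamma VC(U)} (the latter, suitably adapted, to the identification $C^\infty(P)_{\{f_i\}}\cong C^\infty(q^{-1}(U_i))$), the map $\widetilde{\Phi}$ localizes to
\[\widetilde{\Phi}_i:\,C^\infty(q^{-1}(U_i))\otimes_{C^\infty(U_i)}C^\infty(q^{-1}(U_i))\longrightarrow\bigoplus_{g\in G}C^\infty(q^{-1}(U_i)).\]
Over each trivializing chart $q^{-1}(U_i)\cong U_i\times G$, both sides are isomorphic to $C^\infty(U_i)^{|G|^2}$ as Fr\'echet $C^\infty(U_i)$-modules, and a direct inspection shows that $\widetilde{\Phi}_i$ is the evident permutation isomorphism between these two finite direct sums (it simply relabels the $(g,g')$-component on the tensor side to the $g^{-1}g'$-component on the product side). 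Hence each localized map is an isomorphism of Fr\'echet modules.

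Finally, to globalize, I would use that both the domain and codomain of $\widetilde{\Phi}$ are finitely generated projective $C^\infty(M)$-modules, and that $\widetilde{\Phi}$ becomes an isomorphism after base change along each of the finitely many restriction morphisms $C^\infty(M)\to C^\infty(U_i)$. A smooth partition of unity subordinate to $\{U_i\}$ then allows one to patch the local inverses together into a continuous two-sided inverse of $\widetilde{\Phi}$, proving bijectivity. The main obstacle I anticipate is precisely this last patching step: one must carefully check that a $C^\infty(M)$-linear map of finitely generated projective $C^\infty(M)$-modules which is a module isomorphism after each localization at a $U_i$-defining function is itself a module isomorphism. This is the smooth Fr\'echet analogue of the standard ``locally iso implies iso'' principle in sheaf theory, and it is the place where the results of Chapter 4 (particularly Theorem \ref{algebra section 2} and Corollary \ref{algebra section 3}) do the real work.
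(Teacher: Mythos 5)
Your proof is correct, but it takes a genuinely different route from the one in the paper. The paper's argument is essentially two lines: since $C^{\infty}(P)$ is a finitely generated projective $C^{\infty}(M)$-module (Proposition \ref{structure if G finite}), the algebraic tensor product coincides with the completed projective tensor product by Proposition \ref{modules of frechet algebras}\,(b), and the statement then follows from the general isomorphism $C^{\infty}(N)\widehat{\otimes}_{C^{\infty}(M)}C^{\infty}(N')\cong C^{\infty}(N\times_M N')$ for fibred products of compact manifolds recorded in Remark \ref{fibres products of manifolds} (which rests on an external result of Kyriazis). You instead exploit the specific structure of a finite covering: after identifying $C^{\infty}(P\times_M P)$ with $\bigoplus_{g\in G}C^{\infty}(P)$ via the diffeomorphism of Remark \ref{remark on finite coverings}, you verify the resulting map is a permutation isomorphism over each trivializing chart and then globalize by a ``locally an isomorphism implies an isomorphism'' principle for finitely generated projective $C^{\infty}(M)$-modules. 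Both approaches are sound. The paper's route is shorter and applies to arbitrary fibred products along a submersion, but imports a nontrivial nuclearity-type theorem as a black box; yours stays entirely within the thesis's own Serre--Swan and smooth-localization machinery, is more elementary, and makes the combinatorics of the isomorphism explicit in a form that feeds directly into Theorem \ref{theorem if G finite}\,(c). The price is the patching step you correctly flag: for full rigour one should note that a $C^{\infty}(M)$-linear map of finitely generated projective modules that localizes to an isomorphism over each member of a finite open cover is an isomorphism, which follows from Theorem \ref{Serre-Swan} (it is a vector bundle morphism that is fibrewise bijective) or from your partition-of-unity construction of a two-sided inverse; continuity of that inverse is then automatic by Proposition \ref{modules of frechet algebras}\,(a).
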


\begin{proof}
\,\,\,Since $C^{\infty}(P)$ is a finitely generated projective $C^{\infty}(M)$-module (cf. Proposition \ref{structure if G finite}), so is $C^{\infty}(P)\otimes_{C^{\infty}(M)}C^{\infty}(P)$. In particular,
\[C^{\infty}(P)\otimes_{C^{\infty}(M)}C^{\infty}(P)=C^{\infty}(P)\widehat{\otimes}_{C^{\infty}(M)}C^{\infty}(P).
\]holds a a consequence of Proposition \ref{modules of frechet algebras} (b). Therefore, the statement follows from Remark \ref{fibres products of manifolds}.
\end{proof}

The next theorem provides an algebraic characterization for the freeness of a group action in the case of a finite group:

\begin{theorem}{\bf(An algebraic characterization for the freeness of a group action).}\label{theorem if G finite}\index{Algebraic!Characterization for the Freeness of a Group Action}
Let $G$ be a finite group acting smoothly from the right on a compact manifold $P$. If $M:=P/G$ is the corresponding orbit space and $\pr:P\rightarrow M$, $p\mapsto p.G$ the corresponding orbit map, then the following conditions are equivalent:
\begin{itemize}
\item[\emph{(a)}]
The map $\pr:P\rightarrow M$ is a covering map.
\item[\emph{(b)}]
The map
\[\varphi:P\times G\rightarrow P\times_M P,\,\,\,(p,g)\mapsto(p,p.g)
\]is a diffeomorphism.
\item[\emph{(c)}]
The map
\[\phi:C^{\infty}(P)\otimes_{C^{\infty}(M)}C^{\infty}(P)\rightarrow\prod_{g\in G}C^{\infty}(P),\,\,\,f_1\otimes f_2\mapsto (f_1\cdot (g.f_2))_{g\in G}
\]is an isomorphism of unital \emph{(}Fr\'echet\emph{)} algebras.
\end{itemize}
\end{theorem}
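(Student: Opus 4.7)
The plan is to establish the cycle (a) $\Leftrightarrow$ (b) $\Rightarrow$ (c) $\Rightarrow$ (a). The equivalence (a) $\Leftrightarrow$ (b) is immediate from Remark \ref{remark on finite coverings}: the smooth surjection $\varphi$ is a local diffeomorphism whenever $\pr$ is a submersion, and $\varphi$ is injective precisely when $G$ acts freely; for a finite (hence proper) action, freeness is equivalent via the Quotient Theorem to $\pr:P\to M$ being a principal $G$-bundle, i.e., a finite covering.

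For (a) $\Rightarrow$ (c), I would assume (a), which by the previous step also grants (b). Lemma \ref{proposition if G finite} then applies and yields that
\[\Phi:C^{\infty}(P)\otimes_{C^{\infty}(M)}C^{\infty}(P)\to C^{\infty}(P\times_M P),\quad f_1\otimes f_2\mapsto (f_1\circ\pr_1)\cdot(f_2\circ\pr_2)\]
is an isomorphism of unital Fr\'echet algebras. Since $\varphi$ is a diffeomorphism and $G$ is finite discrete, the pullback $\varphi^{*}$ combined with the canonical identification $C^{\infty}(P\times G)\cong\prod_{g\in G}C^{\infty}(P)$ gives an isomorphism $C^{\infty}(P\times_M P)\cong\prod_{g\in G}C^{\infty}(P)$. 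Using the convention $(g.f)(p)=f(p.g)$ from Proposition \ref{smoothness of the group action on the algebra of smooth functions}, a direct computation gives
\[(\varphi^{*}\circ\Phi)(f_1\otimes f_2)(p,g)=f_1(p)f_2(p.g)=(f_1\cdot(g.f_2))(p),\]
so that $\phi$ is exactly the composition of these two isomorphisms, hence an isomorphism itself.

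The main work is in (c) $\Rightarrow$ (a), where the idea is to recover $\varphi$ from $\phi$ by passing to spectra and then invoke characterization (b). First, I would identify $\Gamma_{\prod_{g\in G}C^{\infty}(P)}$ with $P\times G$ via $(p,g)\mapsto\bigl[(h_h)_{h\in G}\mapsto h_g(p)\bigr]$, using Corollary \ref{spec of C(M,K) set} together with the fact that every character of a finite product algebra factors through projection onto one factor. Second, I would identify $\Gamma_{C^{\infty}(P)\otimes_{C^{\infty}(M)}C^{\infty}(P)}$ with $P\times_M P$: by Corollary \ref{spec of C(M,K) set} every character of $C^{\infty}(P)$ is an evaluation $\delta_p$ for a unique $p\in P$, and a pair $(\delta_p,\delta_{p'})$ factors through the tensor product over $C^{\infty}(M)=C^{\infty}(P)^G$ precisely when $h(\pr(p))=h(\pr(p'))$ for all $h\in C^{\infty}(M)$, i.e., $\pr(p)=\pr(p')$. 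Computing the dual map on characters, one finds $\phi^{*}(p,g)=(p,p.g)=\varphi(p,g)$. Thus, if $\phi$ is an algebra isomorphism, then $\varphi$ is a bijection on points, so $G$ acts freely by Remark \ref{remark on finite coverings}; since finiteness of $G$ makes the action automatically proper, the Quotient Theorem (cf. Remark \ref{principal bundles II}) yields that $\pr:P\to M$ is a principal $G$-bundle, i.e., a finite covering.

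The principal obstacle will be the character identification in the step (c) $\Rightarrow$ (a), because $C^{\infty}(P)\otimes_{C^{\infty}(M)}C^{\infty}(P)$ is only the algebraic tensor product and carries no a priori Fr\'echet structure (we cannot invoke Lemma \ref{proposition if G finite} here without already assuming (a)). However, the argument is purely algebraic: every $\mathbb{C}$-algebra homomorphism out of the tensor product is determined by its restriction to the two factors, both of which are evaluations by Corollary \ref{spec of C(M,K) set}, and the universal property of the tensor product together with the equality $C^{\infty}(M)=C^{\infty}(P)^G$ cuts this down exactly to pairs in $P\times_M P$. Once this identification is in place, the rest of the argument reduces to the purely set-theoretic observation that an algebra isomorphism induces a bijection of spectra, coupled with Remark \ref{remark on finite coverings}.
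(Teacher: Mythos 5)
Your proposal is correct and follows essentially the same route as the paper: (a)$\Leftrightarrow$(b) via Remark \ref{remark on finite coverings}, (b)$\Rightarrow$(c) by combining Lemma \ref{proposition if G finite} with the pullback along $\varphi$ and the identification $C^{\infty}(P\times G)\cong\prod_{g\in G}C^{\infty}(P)$, and the converse by dualizing $\phi$ on spectra using the character descriptions of $C^{\infty}(P)$. Your write-up merely fills in the "short calculation" that the paper leaves implicit, in particular the purely algebraic identification of the characters of the (uncompleted) tensor product $C^{\infty}(P)\otimes_{C^{\infty}(M)}C^{\infty}(P)$ with $P\times_M P$, which is exactly the point the paper glosses over.
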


\begin{proof}
\,\,\,(a) $\Leftrightarrow$ (b): We have already discussed this in Remark \ref{remark on finite coverings}.

(b) $\Rightarrow$ (c): For this direction we have to dualize the map $\varphi$, use Lemma \ref{proposition if G finite} and recall that the map
\[C^{\infty}(P\times G)\rightarrow\prod_{g\in G}C^{\infty}(P),\,\,\,F\mapsto (F(\cdot,g))_{g\in G}
\]is an isomorphism of unital (Fr\'echet) algebras.

(c) $\Rightarrow$ (b): This direction can be obtained by dualizing the map $\phi$, a short calculation and using Lemma \ref{spec as manifold}.
\end{proof}

\begin{remark}\label{remark if G finite}{\bf(A possible approach to NCP $G$-bundles).}
(a) The previous discussion leads to the following approach: Let $G$ be a finite group and $A$ be a unital locally convex algebra. A dynamical system $(A,G,\alpha)$ with fixed point algebra $B$ is called a \emph{NCP $G$-bundle} (with base $B$), if the map
\[\phi:A\otimes_B A\rightarrow\prod_{g\in G} A,\,\,\,a_1\otimes a_2\mapsto (a_1\cdot (\alpha(g).a_2))_{g\in G}
\]is an isomorphism of unital (commutative) algebras.

(b) Note that the algebra $A$ has to be commutative in order to guarantee that the ``source space" $A\otimes_B A$ carries the structure of an algebra. In fact, if $A$ is noncommutative, then $A\otimes_B A$ carries in general ``only" the structure of a right locally convex $A$-module (cf. Corollary \ref{mod and ringhom}). A possible way to avoid this problem would be to assume that the fixed point algebra $B$ is contained in the center $C_A$ of $A$.

(c) We want to point out that the previous discussion is also valid in the case of a compact Lie group $G$ (even for a general Lie group $G$, if $G$ acts properly). Of course, we slightly have to reformulate Theorem \ref{theorem if G finite} (c) in the following way: The map
\[\phi:C^{\infty}(P)\widehat{\otimes}_{C^{\infty}(M)}C^{\infty}(P)\rightarrow C^{\infty}(G,C^{\infty}(P)),\,\,\,f_1\otimes f_2\mapsto (g\mapsto f_1\cdot(\alpha(g).f_2))
\]is an isomorphism of unital Fr\'echet algebras.
\end{remark}

\chapter{NCP Bundles with Compact Abelian Structure Group}\label{a geometric approach to NCPB}

Let $G$ be a compact abelian Lie group. The main goal of this chapter is to present a reasonable approach to NCP $G$-bundles. Since, for example, the isotypic components of a dynamical system $(A,\mathbb{T}^n,\alpha)$ do in general not contain invertible elements, we have to come up with algebraic conditions on a dynamical system $(A,G,\alpha)$ that still ensure the freeness of the induced (right-) action
\[\sigma:\Gamma_A\times G\rightarrow\Gamma_A,\,\,\,(\chi,g)\mapsto \chi\circ\alpha(g).
\]of $G$ on the spectrum $\Gamma_A$ of $A$. Because the freeness of a group action is a local condition (cf. Remark \ref{free=locally free}), our main idea is inspired by the classical setting: Loosely speaking, a dynamical system $(A,G,\alpha)$ is called a NCP $G$-bundle, if it is ``locally" a trivial NCP $G$-bundle, i.e., once the concept of a trivial NCP $G$-bundle is known for a certain (compact) group $G$, Section \ref{ACNCPT^nB} enters the picture. In view of Chapter \ref{trivial ncp torus bundles} and Chapter \ref{trivial ncp cyclic bundles}, we restrict our attention to compact abelian Lie groups $G$. In fact, [HoMo06], Proposition 2.42 implies that each compact abelian Lie group $G$ is isomorphic to $\mathbb{T}^n\times \Lambda$ for some natural number $n$ and a finite abelian group $\Lambda$. We prove that this approach extends the classical theory of principal bundles and present some noncommutative examples like sections of algebra bundles with trivial NCP $G$-bundle as fibre, sections of algebra bundles which are pull-backs of 
principal $G$-bundles and sections of trivial equivariant algebra bundles.

\section{The Concept of NCP $G$-Bundles}

Let $G$ be a compact abelian Lie group. The purpose of this section is to present a reasonable approach to NCP $G$-bundles. 
In particular, we show that our definition extends the classical theory of principal $G$-bundles. 

\begin{notation}
Given a dynamical system $(A,G,\alpha)$, we (again) write $Z$ for the fixed point algebra of the induced action of $G$ on the center $C_A$ of $A$, i.e., $Z:=C_A^G$. In particular, if $P$ is a manifold, $G$ a Lie group group and $(C^{\infty}(P),G,\alpha)$ a (smooth) dynamical system, then $Z=C^{\infty}(P)^G$.
\end{notation}

\begin{definition}{\bf(Trivial NCP $G$-bundles).}\label{trivial NCPT^nB again}\index{Trivial NCP!$G$-Bundles}
Let $G$ be a compact abelian Lie group, i.e., $G\cong\mathbb{T}^n\times\Lambda$ for some natural number $n$ and a finite abelian group $\Lambda$ (cf. [HoMo06], Proposition 2.42). Then the corresponding character group $\widehat{G}$ is isomorphic to $\mathbb{Z}^n\times \Lambda$. We call a (smooth) dynamical system $(A,G,\alpha)$ a \emph{\emph{(}smooth\emph{)} trivial NCP $G$-bundle} if the following conditions are satisfied:
\begin{itemize}
\item[(G1)]
The isotypic components belonging to the $\mathbb{Z}^n$-part contain invertible elements.
\item[(G2)]
The isotypic components belonging to generators of the $\Lambda$-part satisfy the conditions of Definition \ref{C_nC_m V}.
\end{itemize}
\end{definition}

\begin{remark}\label{blabla}{\bf(Trivial principal $G$-bundles).}\index{Trivial Principal!$G$-Bundles}
A theorem characterizing the trivial principal $G$-bundles can be obtained similarly to Theorem \ref{TNPB for manifold} and Theorem \ref{C_nC_m IV}.
\end{remark}

\begin{definition}{\bf(NCP $G$-bundles).}\label{NCPT^nB again}\index{NCP $G$-Bundles}
Let $G$ be a compact abelian Lie group. We call a (smooth) dynamical system $(A,G,\alpha)$ a \emph{\emph{(}smooth\emph{)} NCP $G$-bundle} if for each $\chi\in\Gamma_Z$ there exists an element $z\in Z$ with $\chi(z)\neq 0$ such that the corresponding (smooth) localized dynamical system $(A_{\{z\}},G,\alpha_{\{z\}})$\sindex[n]{$(A_{\{z\}},G,\alpha_{\{z\}})$} of Proposition \ref{T^n action on spec again} is a (smooth) trivial NCP $G$-bundle (cf. Definition \ref{trivial NCPT^nB again}). 
\end{definition}

\begin{remark}
The previous definition of NCP $G$-bundles is inspired by the classical setting. In fact, we will see in Theorem \ref{NCT^nB for manifold again} that it actually agrees with the classical definition of principal bundles (cf. Definition \ref{principal bundles I}) if we identify open subsets $U$ of the base manifold with the corresponding $U$-defining functions and use Corollary \ref{C(U)=C(M)_f_U}.
\end{remark}

\begin{remark}\label{remark on NCPT^nB again}{\bf(An equivalent point of view).}
Given $z\in Z$, we recall that 
\[D(z):=\{\chi\in\Gamma_Z:\,\chi(z)\neq 0\}.
\]Then a short observation shows that a (smooth) dynamical system $(A,G,\alpha)$ is a smooth NCP $G$-bundle if and only if there exists a family of elements $(z_i)_{i\in I}\subseteq Z$ such that the following two conditions are satisfied:
\begin{itemize}
\item[(i)]
The family $(D(z_i))_{i\in I}$ is an open covering of $\Gamma_Z$.
\item[(ii)]
The (smooth) localized dynamical systems $(A_{\{z_i\}},G,\alpha_{\{z_i\}})$ of Proposition \ref{T^n action on spec again} are (smooth) trivial NCP $G$-bundles.
\end{itemize}
\end{remark}


\begin{lemma}\label{T^n action on spec III again}
Let $(A,G,\alpha)$ be a NCP $G$-bundle. Further, let $\chi\in\Gamma_Z$ and $z\in Z$ with $\chi(z)\neq 0$ such that the corresponding \emph{(}smooth\emph{)} localized dynamical system $$(A_{\{z\}},G,\alpha_{\{z\}})$$ is a trivial NCP $G$-bundle. Then the $G$-action map 
\[\sigma_{\{z\}}:\Gamma^{\emph{cont}}_{A_{\{z\}}}\times G\rightarrow\Gamma^{\emph{cont}}_{A_{\{z\}}},\,\,\,(\chi,g)\mapsto \chi\circ\alpha_{\{z\}}(g)
\]of Lemma \ref{T^n action on spec II again} is free.
\end{lemma}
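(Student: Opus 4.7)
The plan is to exploit the characterization of trivial NCP $G$-bundles via invertible elements in isotypic components together with the fact that continuous characters of $A_{\{z\}}$ are nonzero on units. First, since $G$ is a compact abelian Lie group, $G\cong\mathbb{T}^n\times\Lambda$ for some finite abelian group $\Lambda$ (cf.\ [HoMo06], Proposition 2.42), and thus $\widehat{G}\cong\mathbb{Z}^n\times\widehat{\Lambda}$ is finitely generated. Fix generators $\varphi_1,\ldots,\varphi_m$ of $\widehat{G}$ corresponding to the standard generators of the $\mathbb{Z}^n$-factor and of the cyclic factors of $\Lambda$. By conditions (G1) and (G2) of Definition \ref{trivial NCPT^nB again}, for every such $\varphi_i$ we may choose an invertible element $a_{\varphi_i}\in (A_{\{z\}})_{\varphi_i}$.

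Next, I would argue exactly as in Proposition \ref{set of gen}: for any $\varphi\in\widehat{G}$ one writes $\varphi=\varphi_{i_1}^{n_1}\cdots\varphi_{i_k}^{n_k}$ and then $a_\varphi:=a_{\varphi_{i_1}}^{n_1}\cdots a_{\varphi_{i_k}}^{n_k}$ is an invertible element of $(A_{\{z\}})_\varphi$. Crucially, this argument only uses that products and inverses of invertible homogeneous elements are again invertible homogeneous elements, so it goes through without assuming $A_{\{z\}}$ to be commutative.

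To verify freeness, suppose $(\chi,g)\in\Gamma^{\text{cont}}_{A_{\{z\}}}\times G$ satisfies $\sigma_{\{z\}}(\chi,g)=\chi\circ\alpha_{\{z\}}(g)=\chi$. For each $\varphi\in\widehat{G}$ pick the invertible element $a_\varphi\in (A_{\{z\}})_\varphi$ constructed above. Since $\chi$ is a continuous algebra character it sends units to units, so $\chi(a_\varphi)\in\mathbb{C}^\times$. On the other hand $\alpha_{\{z\}}(g).a_\varphi=\varphi(g)\cdot a_\varphi$, and applying $\chi$ together with the fixed-point property gives
\[\chi(a_\varphi)=\chi\bigl(\alpha_{\{z\}}(g).a_\varphi\bigr)=\varphi(g)\,\chi(a_\varphi),\]
so that $\varphi(g)=1$. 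Since $\varphi\in\widehat{G}$ was arbitrary and $\widehat{G}$ separates the points of $G$ (Proposition \ref{char sep the points}), we conclude $g=1_G$, which is the desired freeness.

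The main subtlety, rather than an obstacle, is to make sure that the ``generators $\Rightarrow$ all of $\widehat{G}$'' step of Proposition \ref{set of gen} really survives in the possibly noncommutative algebra $A_{\{z\}}$, and that one correctly interprets (G2): Definition \ref{C_nC_m V} only supplies invertibles in the isotypic components attached to a set of generators of $\widehat{\Lambda}$, not in all of them, but this is exactly the situation handled by the multiplicative argument above. Once that is in place, the proof is essentially a one-line character computation and does not require passage to commutative quotients, since continuous characters already behave well on units.
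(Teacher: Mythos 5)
Your proof is correct and follows essentially the same route as the paper, which simply cites the earlier freeness results (Proposition \ref{set of gen} / Proposition \ref{freeness for T^n II} and their chain back to the character-separation argument); your write-up just unwinds that chain explicitly. A small bonus of your explicit version is that it makes clear why commutativity of $A_{\{z\}}$ is never needed — only that continuous characters send units to units — whereas the paper's citation points to propositions formally stated for commutative algebras.
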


\begin{proof}
\,\,\,The assertion follows from fact that each trivial NCP $G$-bundle defines a free action of $G$ on its spectrum (cf. for example Proposition \ref{freeness for T^n II}).
\end{proof}

\subsection*{Reconstruction of Principal $G$-bundles}

In this part of the section we show how to recover the classical definition of principal $G$-bundles. 

\begin{lemma}\label{extending lemma for dynamical systems}
Let $P$ be a compact manifold, $G$ a Lie group and $(C^{\infty}(P),G,\alpha)$ a smooth dynamical system. Then each character $\chi:Z\rightarrow\mathbb{C}$ is the restriction of an evaluation homomorphism 
\[\delta_p:C^{\infty}(P)\rightarrow\mathbb{C},\,\,\,f\mapsto f(p)\,\,\,\text{for some}\,\,\,p\in P.
\]
\end{lemma}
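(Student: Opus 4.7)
The plan is to combine two ingredients already established in the thesis: the extension theorem for characters on fixed point algebras (Proposition 6.4.7) and the identification of characters of $C^{\infty}(P)$ with point evaluations (Corollary 4.3.2, i.e.\ the smooth Gelfand--Naimark theorem).

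First I would observe that since $A = C^{\infty}(P)$ is commutative, its centre satisfies $C_A = A$, and therefore
\[
Z = C_A^G = C^{\infty}(P)^G.
\]
Since $P$ is compact, $C^{\infty}(P)$ is a complete commutative CIA (as established in the Appendix on continuous inverse algebras, which is exactly the prototype motivating that notion). Under the hypothesis that $G$ is compact (which is the standing assumption of the chapter, namely that $G$ is a compact abelian Lie group), the dynamical system $(C^{\infty}(P),G,\alpha)$ therefore fits the setting of Proposition 6.4.7.

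Next, given a character $\chi:Z\to\mathbb{C}$, Proposition 6.4.7 (extending characters) yields a character
\[
\widetilde{\chi}:C^{\infty}(P)\longrightarrow\mathbb{C}
\]
with $\widetilde{\chi}\!\mid_{Z}=\chi$. Finally, Corollary 4.3.2 (characters of the algebra of smooth complex-valued functions on a manifold are evaluations) applies to $\widetilde{\chi}$ and produces a point $p\in P$ with $\widetilde{\chi}=\delta_p$. Restricting back to $Z$ gives $\chi=\delta_p\!\mid_{Z}$, which is precisely the asserted conclusion.

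There is essentially no obstacle here beyond putting the two quoted results in the right order; the only small point to verify is that the hypotheses of Proposition 6.4.7 are met, i.e.\ that $C^{\infty}(P)$ is a \emph{complete commutative CIA} and that $G$ is compact, both of which are automatic in the setting of this chapter (where $P$ is compact and $G$ is a compact abelian Lie group). The lemma thus reduces to a clean two-step invocation of earlier results and requires no new calculation.
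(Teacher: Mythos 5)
Your two-step reduction is internally coherent, but it proves a weaker statement than the lemma asserts. The lemma assumes only that $G$ is a \emph{Lie group}, whereas Proposition \ref{extension of char on fixed point algebras IV} (and its Corollary \ref{extension of char on fixed point algebras V}) requires $G$ to be \emph{compact}: its proof rests on the Peter--Weyl--type decomposition $A_{\text{fin}}=\bigoplus_{\varepsilon}A_{\varepsilon}$ and the projections $P_{\varepsilon}$ built from the Haar integral, none of which is available for non-compact $G$. Appealing to ``the standing assumption of the chapter'' does not repair this: the lemma deliberately states its own, more general hypotheses, and its proof in the paper uses no property of $G$ whatsoever. Concretely, the paper argues directly: if $I:=\ker\chi$ had no common zero, then by compactness of $P$ finitely many $f_{p_1},\ldots,f_{p_n}\in I$ yield $f_P:=\sum_i\vert f_{p_i}\vert^2>0$; since every algebra automorphism of $C^{\infty}(P)$ is automatically a $*$-automorphism (Remark \ref{star-automorphism}) and $I\subseteq Z$, the function $f_P$ lies in $I\cap Z$ and is invertible with $G$-invariant inverse, contradicting $I\neq Z$; hence $I\subseteq\ker\delta_p\cap Z$ for some $p$, and maximality of $I$ gives $\chi=\delta_p\mid_Z$. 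This is a localized variant of the proof of Theorem \ref{spec of C(M,R) set}(a) and uses only compactness of $P$.

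That said, your argument is a correct and clean proof of the lemma under the additional hypothesis that $G$ is compact, and in every place the lemma is actually invoked in Chapter 9 (Proposition \ref{extension of char on fixed point algebras VI}, the Reconstruction Theorem) $G$ is indeed a compact (abelian) Lie group, so your version would suffice for the applications. To present it as a proof of the lemma as stated, you must either add the compactness hypothesis on $G$ explicitly or replace the appeal to Proposition \ref{extension of char on fixed point algebras IV} by the direct covering argument above.
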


\begin{proof}
\,\,\,Let $\chi:Z\rightarrow\mathbb{C}$ be a character. 
If all functions of $I:=\ker\chi$ vanish in the point $p$ in $P$, then the maximality of $I$ implies that $I=(\ker\delta_p)\cap Z$, i.e., that $\chi={\delta_p}_{\mid Z}$. So we have to show that such a point exists. Let us assume that this is not the case. From that we shall derive the contradiction $I=Z$:

(i) If such a point does not exist, then for each $p\in P$ there exists a function $f_p\in I$ with $f_p(p)\neq 0$. The family $(f_p^{-1}(\mathbb{C}^{\times}))_{p\in P}$ is an open cover of $P$, so that there exist $p_1,\ldots,p_n\in P$ and a smooth $G$-invariant function $f_P:=\sum_{i=1}^n \vert f_{p_i}\vert^2>0$ on $P$. Here, the last statement follows from the fact that each algebra automorphism of $C^{\infty}(P)$ is automatically a *-automorphism (cf. Remark \ref{star-automorphism}).

(ii) In view of part (i) we obtain a function $f_P\in I\cap Z$ which is nowhere zero. Thus, $f_P$ is invertible in $C^{\infty}(P)$. We claim that its inverse $h_P:=f_P^{-1}=\frac{1}{f_P}$ is also $G$-invariant: For this we choose an arbitrary element $g\in G$ and note that $f_P\cdot h_P=1$ implies
\[\alpha(g)(h_P)\cdot f_P=\alpha(g)(h_P\cdot f_P)=1.
\]In particular, $\alpha(g)(h_P)=h_P$. Since $g\in G$ was arbitrary, we conclude that $h_P\in Z$.

(iii) Finally, part (ii) leads to the contradiction $f_P\in I\cap Z^{\times}$, i.e., $I=Z$ as desired.
\end{proof}

\begin{proposition}\label{extension of char on fixed point algebras VI}
Let $P$ be a compact manifold, $G$ a compact Lie group and $(C^{\infty}(P),G,\alpha)$ a smooth dynamical system. Then the map
\[\Phi:P/G\rightarrow\Gamma_Z,\,\,\,p.G\mapsto \delta_p
\]is a homeomorphism.
\end{proposition}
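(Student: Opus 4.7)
The plan is to show $\Phi$ is a well-defined continuous bijection from a compact space to a Hausdorff space, whence a homeomorphism. I proceed in five short steps.

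First, well-definedness. If $p' = p.g$ for some $g \in G$, then for every $f \in Z = C^\infty(P)^G$ one has $f(p') = f(p.g) = (\alpha(g^{-1}).f)(p) = f(p)$, so $\delta_p = \delta_{p'}$ on $Z$. Hence $\Phi$ descends to $P/G$. Surjectivity is immediate from Lemma \ref{extending lemma for dynamical systems}: every character $\chi \in \Gamma_Z$ is of the form $\delta_p|_Z$ for some $p \in P$, so $\chi = \Phi(p.G)$.

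Second, injectivity is the only step with actual content. Suppose $p.G \neq p'.G$. Since $G$ is compact, both orbits are compact (hence closed) submanifolds of the Hausdorff space $P$, and they are disjoint. I would choose a smooth function $h \in C^\infty(P,\mathbb{R})$ with $h \geq 0$, $h(p) > 0$, and $h \equiv 0$ on an open neighbourhood of $p'.G$ (possible since $p'.G$ is closed and does not meet $p.G$). Averaging over $G$ with respect to the normalised Haar measure,
\[
f(q) := \int_G h(q.g)\, dg,
\]
yields $f \in C^\infty(P)$ which is $G$-invariant (so $f \in Z$) and satisfies $f(p') = 0$ while $f(p) > 0$. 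Thus $\delta_p|_Z \neq \delta_{p'}|_Z$, proving injectivity.

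Third, continuity of $\Phi$. The map $P \to \Gamma_Z$, $p \mapsto \delta_p|_Z$ is continuous because its composition with each evaluation $\mathrm{ev}_f$, $f \in Z$, is just $f$ itself, which is continuous. Since $\Gamma_Z$ carries the topology of pointwise convergence, the universal property of this initial topology gives continuity, and then the universal property of the quotient topology on $P/G$ yields continuity of $\Phi$.

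Fourth and finally, the homeomorphism conclusion. Since $P$ is compact, so is its continuous image $P/G$ under the orbit map. The spectrum $\Gamma_Z$ is Hausdorff as a subspace of $\mathbb{C}^Z$ with the product topology. A continuous bijection from a compact space to a Hausdorff space is automatically a homeomorphism, so $\Phi$ is a homeomorphism. The main (and really only) obstacle is the separation argument in the injectivity step; everything else is formal, and that step is handled cleanly by the $G$-averaging trick, which works precisely because $G$ is compact and $P$ is a (Hausdorff, paracompact) manifold so that $G$-invariant bump functions exist in abundance.
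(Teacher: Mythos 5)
Your proof is correct and follows essentially the same route as the paper's: well-definedness and surjectivity from the $G$-invariance of $Z$ and Lemma \ref{extending lemma for dynamical systems}, injectivity by separating the two orbits with a smooth bump function and averaging it over the normalized Haar measure of $G$ to land in $Z$, continuity from the pointwise-convergence and quotient topologies, and the compact-to-Hausdorff argument to conclude. The only cosmetic difference is that the paper averages a function equal to $1$ on $p.G$ and supported in a $G$-invariant neighbourhood, whereas you average one that is merely positive at $p$ and vanishes near $p'.G$; both work for the same reason.
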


\begin{proof}
\,\,\,We divide the proof of this proposition into four parts:

(i) We first note that the quotient $P/G$ is a compact Hausdorff space. Further, the map $\Phi$ is well-defined since each function $f\in Z$ is $G$-invariant. 

(ii) Proposition \ref{spec of C(M) top} and the definition of the quotient topology on $P/G$ implies that $\Phi$ is continuous.

(iii) The surjectivity of $\Phi$ is a consequence of Lemma \ref{extending lemma for dynamical systems} (alternatively we can use Corollary \ref{extension of char on fixed point algebras V}). To show that $\Phi$ is injective, we choose elements $p,p'\in P$ with $p.G\neq p'.G$. Since $P$ is a manifold, there exist disjoint $G$-invariant open subsets $U$ and $V$ of $P$ containing the compact subsets $p.G$ and $p'.G$, respectively. Next, we choose a positive smooth function $f$ on $P$ which is equal to $1$ on $p.G$ and vanishes outside $U$. Then the function 
\[F:P\rightarrow\mathbb{C},\,\,\,F(q):=\int_Gf(q.g)\,dg,
\]where $dg$ denotes the normalized Haar measure on $G$, defines a smooth $G$-invariant function satisfying 
\[F(p)=1\neq 0=F(p').
\]Here, we have used that the smooth action map $\alpha$ induces a smooth action of $G$ on $P$ (cf. Proposition \ref{smoothness of the group action on the set of characters}). Hence,
\[\delta_p(F)=F(p)\neq F(p')=\delta_{p'}(F)
\]implies that $\delta_p\neq\delta_{p'}$, i.e., $\Phi$ is injective. 

(iv) So far we have seen that $\Phi$ is a bijective continuous map from a compact Hausdorff space onto a Hausdorff space. Thus, the claim follows from a well-known theorem from topology guaranteeing that $\Phi$ is a homeomorphism.
\end{proof}

For a compact manifold $P$, a compact Lie group $G$ and a dynamical system $(C^{\infty}(P),G,\alpha)$ we write 
\[\pr:P\rightarrow P/G,\,\,\,p\mapsto p.G
\]for the canonical quotient map. Moreover, we identify $D(z)$ with an open subset of $P/G$ via the homeomorphism of Proposition \ref{extension of char on fixed point algebras VI} and define
\[P_{D(z)}:=\pr^{-1}(D(z)).
\]For the following lemma we recall that $D_A(z):=\{\chi\in\Gamma_A:\,\chi(z)\neq 0\}$:

\begin{lemma}\label{C(P) localized II again}
Let $P$ be a compact manifold, $G$ a compact Lie group and $(C^{\infty}(P),G,\alpha)$ a dynamical system. If $A=C^{\infty}(P)$ and $z\in Z$, then the map
\[\Phi_z:P_{D(z)}\rightarrow D_A(z),\,\,\,p\mapsto\delta_p
\]is a homeomorphism.
\end{lemma}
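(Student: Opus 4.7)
The plan is to deduce this from Proposition \ref{spec of C(M) top}, which already provides a homeomorphism $\Phi: P \to \Gamma_A$, $p\mapsto\delta_p$, by showing that $\Phi$ restricts to the desired map on the subset $P_{D(z)}$. So the work consists of identifying $P_{D(z)}$ with $\Phi^{-1}(D_A(z))$ and then invoking the fact that a homeomorphism restricts to a homeomorphism between corresponding open subsets.

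First I would check that $\Phi_z$ is well defined and has image equal to $D_A(z)$: for $p\in P$ we have $\delta_p(z)=z(p)$, so $\delta_p\in D_A(z)$ if and only if $z(p)\neq 0$. On the other hand, under the homeomorphism $P/G\to \Gamma_Z$ from Proposition \ref{extension of char on fixed point algebras VI}, the coset $p.G$ is sent to $\delta_p|_Z$, and hence $p.G\in D(z)$ is equivalent to $\delta_p|_Z(z)=z(p)\neq 0$. By definition of $P_{D(z)}=\mathrm{pr}^{-1}(D(z))$ the two conditions coincide, and therefore $\Phi^{-1}(D_A(z))=P_{D(z)}$, so $\Phi_z$ takes values in $D_A(z)$ and is surjective (since $\Phi$ is).

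Next I would note that $\Phi_z$ is the restriction of the homeomorphism $\Phi$ from Proposition \ref{spec of C(M) top} to the subsets $P_{D(z)}\subseteq P$ and $D_A(z)\subseteq\Gamma_A$. Since $D_A(z)$ is open in $\Gamma_A$ by Lemma \ref{D(a) is open}, and since $P_{D(z)}$ is the $\Phi$-preimage of this open set, it is open in $P$. A continuous bijection obtained by restricting a homeomorphism to corresponding open subsets is itself a homeomorphism, and this yields the claim.

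The argument involves no real obstacle: every ingredient (the homeomorphism $P\cong\Gamma_A$, the identification $P/G\cong\Gamma_Z$, and the openness of the sets $D(\cdot)$) has already been established, and the only content of the proof is to match the two descriptions of the subset $P_{D(z)}$.
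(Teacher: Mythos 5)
Your proof is correct and follows essentially the same route as the paper, which simply observes that the lemma is an easy consequence of Proposition \ref{spec of C(M) top}; you have just spelled out the identification $\Phi^{-1}(D_A(z))=P_{D(z)}$ and the restriction argument explicitly. No issues.
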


\begin{proof}
\,\,\,This lemma is an easy consequence of Proposition \ref{spec of C(M) top}.
\end{proof}

\begin{proposition}\label{C(P) localized III again}
Suppose we are in the situation of Lemma \ref{C(P) localized II again}. Then the following assertions hold:
\begin{itemize}
\item[\emph{(a)}]
The map 
\[\Psi_{\{z\}}:P_{D(z)}\rightarrow\Gamma_{C^{\infty}(P)_{\{z\}}},\,\,\,\Psi_{\{z\}}(p)([F]):=F\left(\frac{1}{z(p)},p\right)
\]is a homeomorphism.
\item[\emph{(b)}]
There is a smooth structure on $\Gamma_{C^{\infty}(P)_{\{z\}}}$ for which the map $\Psi_{\{z\}}$ becomes a diffeomorphism.
\end{itemize}
\end{proposition}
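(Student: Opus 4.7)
For part (a), the key idea is to factor $\Psi_{\{z\}}$ as a composition of two homeomorphisms already built earlier in the thesis. Concretely, set $A := C^{\infty}(P)$ and consider
\[
P_{D(z)}\;\xrightarrow{\;\Phi_z\;}\;D_A(z)\;\xrightarrow{\;\Phi_{\{z\}}\;}\;\Gamma^{\text{cont}}_{A_{\{z\}}},
\]
where $\Phi_z(p):=\delta_p$ is the homeomorphism of Lemma \ref{C(P) localized II again} and $\Phi_{\{z\}}$ is the homeomorphism from Theorem \ref{spec A_a}. Since $P$ is compact, $A$ is a complete (Fr\'echet) CIA, so by Remark \ref{equicont}(a) its continuous spectrum is (locally) equicontinuous, and the hypotheses of Theorem \ref{spec A_a} are met. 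Plugging $\chi=\delta_p$ into the formula $\Phi_{\{z\}}(\chi)([F])=\chi(F(1/\chi(z)))$ immediately yields $(\Phi_{\{z\}}\circ\Phi_z)(p)([F])=F(1/z(p),p)$, matching the stated $\Psi_{\{z\}}$.

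The one step that still requires justification is that $\Gamma^{\text{cont}}_{A_{\{z\}}}$ coincides with the full spectrum $\Gamma_{A_{\{z\}}}$; this is the main obstacle. My plan is to identify $A_{\{z\}}$ explicitly with a smooth function algebra on an open submanifold of $P$, and then appeal to Corollary \ref{spec of C(M,K) set} which says every character of such an algebra is an evaluation (and so automatically continuous). If $z$ is real-valued the identification is provided directly by Corollary \ref{C(U)=C(M)_f_U} (with $U=P_{D(z)}$); for complex-valued $z\in Z$ one first replaces $z$ by $|z|^2=z\bar z\in Z$ (which is $G$-invariant since each $\alpha(g)$ is a $^*$-automorphism by Remark \ref{star-automorphism}), observes that $D(z)=D(|z|^2)$, and then argues that the universal property of the smooth localization (together with $z$ becoming invertible in $A_{\{|z|^2\}}$) produces a continuous isomorphism $A_{\{z\}}\cong A_{\{|z|^2\}}\cong C^{\infty}(P_{D(z)})$ of unital Fr\'echet algebras. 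This reduces the continuity of characters to Corollary \ref{spec of C(M,K) set}. Composing $\Phi_z$ with $\Phi_{\{z\}}$ then yields the desired homeomorphism $\Psi_{\{z\}}$.

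For part (b), since $P_{D(z)}$ is an open subset of the manifold $P$, it inherits the structure of a manifold. I would then simply transport this smooth structure to $\Gamma_{C^{\infty}(P)_{\{z\}}}$ along the homeomorphism $\Psi_{\{z\}}$ of part (a): declare a chart on $\Gamma_{C^{\infty}(P)_{\{z\}}}$ to be the $\Psi_{\{z\}}$-image of a chart on $P_{D(z)}$. By construction $\Psi_{\{z\}}$ becomes a diffeomorphism, and uniqueness of this smooth structure is forced by the requirement that $\Psi_{\{z\}}$ be a diffeomorphism. This parallels the construction in Lemma \ref{spec as manifold}, so no genuinely new work is needed here beyond part (a).
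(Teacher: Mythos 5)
Your factorization $\Psi_{\{z\}}=\Phi_{\{z\}}\circ\Phi_z$ and your treatment of part (b) are exactly the paper's argument. The only place you diverge is the one step you correctly single out as the obstacle: showing that every character of $C^{\infty}(P)_{\{z\}}$ is continuous, so that $\Gamma^{\text{cont}}_{A_{\{z\}}}$ is all of $\Gamma_{A_{\{z\}}}$. The paper dispatches this in one line: $C^{\infty}(P)_{\{z\}}$ is a Fr\'echet CIA (Lemma \ref{quotient algebras}), and every character of a CIA is continuous (Lemma \ref{cont of char of CIA}). No identification of the localized algebra with a concrete function algebra is needed at this point.

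Your alternative route — identify $A_{\{z\}}$ with $C^{\infty}(P_{D(z)})$ via Corollary \ref{C(U)=C(M)_f_U} and then invoke Corollary \ref{spec of C(M,K) set} — is fine when $z$ is real-valued, but the complex-valued case as you present it has a genuine gap. You appeal to ``the universal property of the smooth localization'' to produce an isomorphism $A_{\{z\}}\cong A_{\{|z|^2\}}$, but the smooth localization $A_{\{a\}}=A\{t\}/\overline{\langle 1-ta\rangle}$ has no such universal property in this thesis: Remark \ref{rem to sm. loc.} stresses that, unlike the algebraic localization $A_a$ of Proposition \ref{univ. prop of loc.}, one has no explicit description of $A_{\{a\}}$ and only a canonical homomorphism $A_a\rightarrow A_{\{a\}}$. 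Moreover, the whole identification machinery (Theorem \ref{manifold mod sub=sub}, Corollary \ref{C(M)_f=C(M_f)}) is built on $0$ being a regular value of $(t,m)\mapsto 1-tf(m)$ for \emph{real-valued} $f$, and does not transfer to complex-valued $z$ without new work. So either restrict to real-valued $z$ (which suffices for the Reconstruction Theorem, where the localizing elements are $U$-defining functions composed with $q$) or, better, replace this entire step by the paper's CIA argument, which is insensitive to whether $z$ is real- or complex-valued.
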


\begin{proof}
\,\,\,(a) We first note that $C^{\infty}(P)_{\{z\}}$ is a Fr\'echet CIA (cf. Lemma \ref{quotient algebras}). Hence, each character of $C^{\infty}(P)_{\{z\}}$ is continuous by Lemma \ref{cont of char of CIA}. Since $\Psi_{\{z\}}=\Phi_{\{z\}}\circ\Phi_z$ (cf. Remark \ref{spec A_z}), the claim follows from Lemma \ref{C(P) localized II again}.

(b) The second assertion follows from the fact that $P_{D(z)}$ is an open subset of the manifold $P$ and is therefore a manifold in its own right. Thus, $\Psi_{\{z\}}$ induces a smooth structure on $\Gamma_{C^{\infty}(P)_{\{z\}}}$ for which $\Psi_{\{z\}}$ becomes a diffeomorphism. 
\end{proof}

\begin{remark}\label{free=locally free}{\bf (Freeness is a local condition).}\index{Freeness!is a Local Condition}
Let $\sigma: X\times G\rightarrow X$ be an action of a topological group $G$ on a topological space $X$ and let $\pr:X\rightarrow X/G$ be the corresponding quotient map. Then a short observation shows that the action $\sigma$ is free if and only if there exists an open cover $(U_i)_{i\in I}$ of the quotient space $X/G$ (with respect to the quotient topology) such that each restriction map
\[\sigma_i:X_{U_i}\times G\rightarrow X_{U_i},\,\,\,\sigma_i(x,g):=\sigma(x,g),
\]where $X_{U_i}:=\pr^{-1}(U_i)$, is free.
\end{remark}

\begin{theorem}\label{NCT^nB for manifold again}{\bf(Reconstruction Theorem).}\index{Theorem!The Reconstruction}
For a manifold $P$, the following assertions hold:
\begin{itemize}
\item[\emph{(a)}]
If $P$ is compact and $(C^{\infty}(P),G,\alpha)$ is a smooth NCP $G$-bundle, then we obtain a principal $G$-bundle $(P,P/G,G,\pr,\sigma)$.
\item[\emph{(b)}]
Conversely, if $(P,M,G,q,\sigma)$ is a principal $G$-bundle, then the corresponding smooth dynamical system $(C^{\infty}(P),G,\alpha)$ is a smooth NCP $G$-bundle.
\end{itemize}
\end{theorem}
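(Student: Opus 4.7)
My plan is to prove each direction by shuttling between local algebraic data (smoothly localized dynamical systems in the sense of Section \ref{ACNCPT^nB}) and local geometric data (the restrictions $P_U := q^{-1}(U)$ over open subsets of $M \cong P/G$). The bridge will be Proposition \ref{C(P) localized III again}, which identifies $\Gamma_{C^{\infty}(P)_{\{z\}}}$ with the open subset $P_{D(z)}$ of $P$, together with Corollary \ref{C(U)=C(M)_f_U}, which realizes smooth localization at a $U$-defining function as restriction to $U$.

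For part (a), since $G$ is compact the induced action $\sigma$ of $G$ on $P$ from Proposition \ref{smoothness of the group action on the set of characters} is automatically proper, so it suffices by the Quotient Theorem to show that $\sigma$ is free. By assumption, given $\chi \in \Gamma_Z$ there is $z \in Z$ with $\chi(z) \neq 0$ for which $(A_{\{z\}}, G, \alpha_{\{z\}})$ is a trivial NCP $G$-bundle; Lemma \ref{T^n action on spec III again} then provides freeness of the $G$-action on $\Gamma^{\text{cont}}_{A_{\{z\}}}$. Transporting this along $\Psi_{\{z\}}$ from Proposition \ref{C(P) localized III again} yields freeness of $\sigma|_{P_{D(z)}}$. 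As $\chi$ ranges over $\Gamma_Z$, the sets $D(z)$ cover $\Gamma_Z \cong P/G$ (Proposition \ref{extension of char on fixed point algebras VI}), so the preimages $P_{D(z)}$ form a $G$-invariant open cover of $P$ on each piece of which $\sigma$ is free. Freeness of $\sigma$ on all of $P$ then follows from Remark \ref{free=locally free}.

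For part (b), I fix a principal $G$-bundle $(P,M,G,q,\sigma)$. By Proposition \ref{fixed point algebra of principal bundles} the pullback $q^{*}$ identifies $C^{\infty}(M)$ with $Z$, so $\Gamma_Z \cong M$; take $\chi = \delta_m$ for $m \in M$ and, using local triviality of the bundle, choose an open neighbourhood $U$ of $m$ on which $P_U \to U$ is trivial. By Whitney's Theorem \ref{whitneys theorem} I can find $f_U \in C^{\infty}(M,\mathbb{R})$ with $U = M_{f_U}$, and setting $z := q^{*}f_U \in Z$ gives $\chi(z) \neq 0$ and $P_z = P_U$. Corollary \ref{C(U)=C(M)_f_U} applied to the manifold $P$ and the function $z$ identifies $C^{\infty}(P)_{\{z\}}$ with $C^{\infty}(P_U)$ as Fr\'echet algebras, and since $z$ is $G$-invariant this identification is $G$-equivariant. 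Writing $G \cong \mathbb{T}^n \times \Lambda$ as in Definition \ref{trivial NCPT^nB again}, and invoking Theorem \ref{TNPB for manifold} for the $\mathbb{T}^n$-factor together with Theorem \ref{C_nC_m IV} and Remark \ref{C_nC_m VI} for the finite-abelian factor $\Lambda$, the trivial principal $G$-bundle $P_U \to U$ produces a smooth trivial NCP $G$-bundle structure on $(C^{\infty}(P_U), G, \alpha|_{P_U})$, as required.

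The main technical obstacle is the $G$-equivariant enhancement of Corollary \ref{C(U)=C(M)_f_U} needed in both directions: one must verify that the map $\phi_z : C^{\infty}(P)_{\{z\}} \to C^{\infty}(P_U)$, given by $[F] \mapsto F \circ (\tfrac{1}{z} \times \id_{P_U})$, intertwines the localized action $\alpha_{\{z\}}$ of Proposition \ref{T^n action on spec again} with the natural restricted action on $C^{\infty}(P_U)$. This ultimately hinges on $z$ being $G$-invariant, so that each constituent of the formula is $G$-equivariant, but the compatibility has to be spelled out to legitimately apply the trivial-bundle results chapter-wise. A secondary point is that verifying conditions (G1) and (G2) of Definition \ref{trivial NCPT^nB again} in part (b) requires assembling $\mathbb{T}^n$- and $\Lambda$-trivialization data from the product trivialization $P_U \cong U \times (\mathbb{T}^n \times \Lambda)$; the product structure of $G$ decouples these conditions, so this should reduce to careful bookkeeping rather than a substantive difficulty.
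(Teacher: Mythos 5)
Your proposal is correct and follows essentially the same route as the paper: part (a) via properness from compactness, freeness checked locally on the cover $P_{D(z)}$ using the equivariance of $\Psi_{\{z\}}$ and Lemma \ref{T^n action on spec III again}; part (b) via pulled-back $U$-defining functions and the $G$-equivariant identification $C^{\infty}(P)_{\{f\circ q\}}\cong C^{\infty}(P_U)$ from Corollary \ref{C(U)=C(M)_f_U}. The only cosmetic difference is that you cite Theorem \ref{TNPB for manifold} and Theorem \ref{C_nC_m IV} explicitly for the two factors of $G$, where the paper appeals to the analogous characterization of trivial principal $G$-bundles wholesale.
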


\begin{proof}
\,\,\,(a) Since $G$ is compact, the induced smooth action map $\sigma$ of Proposition \ref{smoothness of the group action on the set of characters} is proper. In view of the Quotient Theorem (cf. Remark \ref{principal bundles II} (c)), it remains to verify the freeness of $\sigma$: For this, we first note that the map $\sigma$ is free if and only if there exists an open cover of $\Gamma_Z$ of the form $(D(z_i))_{i\in I}$ such that each restriction map
\[\sigma_i:P_{D(z_i)}\times G\rightarrow P_{D(z_i)},\,\,\,\sigma_i(\delta_p,g):=\sigma(\delta_p,g)
\]is free (cf. Remark \ref{free=locally free}). Moreover, Proposition \ref{C(P) localized III again} (a) implies that
\[\Psi_{\{z\}}(\sigma(\delta_p,g))=\Psi_{\{z\}}(\delta_p\circ\alpha(g))=\Psi_{\{z\}}(\delta_p)\circ\alpha_{\{z\}}(g)=\sigma_{\{z\}}(\Psi_{\{z\}}(\delta_p),g)
\]holds for each $z\in Z$, $p\in P$ and $g\in G$. Therefore, the freeness of $\sigma$ is a consequence of Remark \ref{remark on NCPT^nB again} (a) and Lemma \ref{T^n action on spec III again}.

(b) Conversely, we have to show that the condition of Definition \ref{NCPT^nB again} is satisfied: This will be done in the following three steps:

(i) We first note that $Z\cong C^{\infty}(M)$ (cf. Proposition \ref{fixed point algebra of principal bundles}). Hence, $\Gamma_Z$ is homeomorphic to $M$ by Proposition \ref{spec of C(M) top}.

(ii) Next, we choose an open cover $(U_i)_{i\in I}$ of $M$ such that each $P_{U_i}$ is a trivial principal $G$-bundle over $U_i$, i.e., such that $P_{U_i}\cong U_i\times G$ holds for each $i\in I$. Further, we choose for all $i\in I$ a $U_i$-defining function $f_i$ and note that each function $h_i:=f_i\circ q$ is a (smooth) $P_{U_i}$-defining function.

(iii) For $p\in P$ we choose $i\in I$ with $q(p)\in U_i$. Then $h_i$ is an element in $Z$ with $h_i(p)\neq 0$ and we conclude from Corollary \ref{C(U)=C(M)_f_U} that the map
\[R_i:C^{\infty}(P)_{\{h_i\}}\rightarrow C^{\infty}(P_{U_i}),\,\,\,[F]\mapsto\left(p\mapsto F\left(\frac{1}{h_i(p)},p\right)\right)
\]is an isomorphism of unital Fr\'{e}chet algebras. Moreover, the map $R_i$ is $G$-equivariant. In fact, we have
\[(R_i\circ\alpha_{\{h_i\}})(g,[F])=\alpha(g,R_i([F]))
\]for all $g\in G$ and $[F]\in C^{\infty}(P)_{\{h_i\}}$. In particular, the corresponding dynamical system 
\[(C^{\infty}(P)_{\{h_i\}},G,\alpha_{\{h_i\}})
\]of Proposition \ref{T^n action on spec again} carries the structure of a smooth trivial NCP $G$-bundle.
\end{proof}

\section{Examples of NCP Bundles}\label{examples of NCP T^n-bundles}

We now present a bunch of examples. We first show that each trivial NCP $G$-bundle carries the structure of a NCP $G$-bundle in its own right. We further show that examples of NCP $G$-bundles are provided by sections of algebra bundles with trivial NCP $G$-bundle as fibre, sections of algebra bundles which are pull-backs of principal $G$-bundles and sections of trivial equivariant algebra bundles. At the end of this chapter we present a very concrete example.  

\subsection*{Example 1: Trivial NCP Bundles}\index{Trivial NCP!Bundles}

We show that each trivial NCP $G$-bundle carries the structure of a NCP $G$-bundle:

\begin{theorem}{\bf(Trivial NCP $G$-bundles).}\label{TNCTB are NCTB}
Each trivial NCP $G$-bundle $(A,G,\alpha)$ carries the structure of a NCP $G$-bundle.
\end{theorem}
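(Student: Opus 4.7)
The plan is to make the universal choice $z = 1_A$ work uniformly for every character of $Z$. Clearly $1_A \in C_A$ and $\alpha(g).1_A = 1_A$ for every $g \in G$, so $1_A \in Z$. Moreover, every character $\chi \in \Gamma_Z$ satisfies $\chi(1_A) = 1 \neq 0$, so the single element $z = 1_A$ provides a candidate witnessing Definition \ref{NCPT^nB again} for all $\chi$ simultaneously. It therefore suffices to verify that the localized dynamical system $(A_{\{1_A\}}, G, \alpha_{\{1_A\}})$ is a trivial NCP $G$-bundle.

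To establish this, I would invoke Corollary \ref{inverting 1 iso}, which furnishes a canonical isomorphism of complete locally convex algebras
\[
\varphi \colon A_{\{1_A\}} \to A, \qquad [f] \mapsto f(1),
\]
with continuous inverse $\psi = \pi_{\{1_A\}} \circ i$, where $i \colon A \hookrightarrow A\{t\}$ denotes the inclusion as constant arcs. The key step is to check that $\varphi$ intertwines the two $G$-actions, i.e.\ that $\varphi \circ \alpha_{\{1_A\}}(g) = \alpha(g) \circ \varphi$ for all $g \in G$. This is a one-line calculation from the definitions of $\alpha_{\{1_A\}}$ (Proposition \ref{T^n action on spec again}) and of $\varphi$: for $[f] \in A_{\{1_A\}}$,
\[
\varphi\bigl(\alpha_{\{1_A\}}(g).[f]\bigr) = \varphi\bigl([\alpha(g)\circ f]\bigr) = (\alpha(g) \circ f)(1) = \alpha(g)(f(1)) = \alpha(g)(\varphi([f])).
\]
Thus $\varphi$ is a $G$-equivariant isomorphism of dynamical systems between $(A_{\{1_A\}}, G, \alpha_{\{1_A\}})$ and $(A, G, \alpha)$.

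Finally, since being a trivial NCP $G$-bundle in the sense of Definition \ref{trivial NCPT^nB again} is formulated purely in terms of invertible elements in isotypic components (and existence of appropriate roots in the fixed point algebra for the finite cyclic factors), it is invariant under $G$-equivariant algebra isomorphisms. Because $(A, G, \alpha)$ is a trivial NCP $G$-bundle by hypothesis, so is $(A_{\{1_A\}}, G, \alpha_{\{1_A\}})$, and Definition \ref{NCPT^nB again} is verified with $z = 1_A$ for every $\chi \in \Gamma_Z$. I do not anticipate a genuine obstacle: the only subtlety worth underlining is that Corollary \ref{inverting 1 iso} is stated for complete locally convex algebras, so we should assume (or implicitly restrict to) that setting; the $G$-equivariance of $\varphi$ then makes the argument essentially tautological.
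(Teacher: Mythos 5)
Your proposal is correct and follows essentially the same route as the paper: localize at $z=1_A$, observe that every character of $Z$ is nonzero on it, and use Corollary \ref{inverting 1 iso} to identify $(A_{\{1_A\}},G,\alpha_{\{1_A\}})$ with $(A,G,\alpha)$. You in fact supply slightly more detail than the paper (the explicit $G$-equivariance check and the remark on the completeness hypothesis in Corollary \ref{inverting 1 iso}), both of which are welcome.
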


\begin{proof}
\,\,\,If $(A,G,\alpha)$ is a trivial NCP $G$-bundle, then $1_A\in Z$ and $\chi(1_A)=1\neq 0$ holds for each $\chi\in\Gamma_Z$. In particular, Corollary \ref{inverting 1 iso} implies that $A_{\{1_A\}}\cong A$ holds as unital locally convex algebras. Therefore, $(A,G,\alpha)$ is a NCP $G$-bundle in its own right.
\end{proof}

\subsection*{Example 2: Sections of Algebra Bundles with a Trivial NCP $G$-Bundle as Fibre}\label{examples of NCP T^n-bundles II}

We show that if $A$ is a unital Fr\'echet algebra and $(A,G,\alpha)$ a smooth trivial NCP $G$-bundle such that $Z=C_A^G$ is isomorphic to $\mathbb{C}$, then the algebra of sections of each algebra bundle with ``fibre" $(A,G,\alpha)$ is a NCP $G$-bundle. We start with the following lemma:

\begin{lemma}\label{dynamical system and trivial algebra bundles}
Let $(A,G,\alpha)$ be a \emph{(}smooth\emph{)} dynamical system and $M$ a manifold. Then the map
\[\beta:G\times C^{\infty}(M,A)\rightarrow C^{\infty}(M,A),\,\,\,(g,f)\mapsto\alpha(g)\circ f
\]defines a \emph{(}smooth\emph{)} continuous action of $G$ on $C^{\infty}(M,A)$ by algebra automorphisms. In particular, the triple $(C^{\infty}(M,A),G,\beta)$\sindex[n]{$(C^{\infty}(M,A),G,\beta)$} is a \emph{(}smooth\emph{)} dynamical system.
\end{lemma}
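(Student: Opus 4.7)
The plan is to verify three things in turn: that $\beta(g,\cdot)$ is well-defined and an algebra automorphism of $C^{\infty}(M,A)$, that $\beta$ satisfies the group action axioms, and finally that $\beta$ is continuous (respectively smooth). The first two are formal consequences of the corresponding properties of $\alpha$; the last is the genuine content and will be handled via the exponential law.

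First I would observe that for fixed $g\in G$, the map $\alpha(g):A\to A$ is a continuous algebra automorphism, so for any $f\in C^{\infty}(M,A)$ the post-composition $\alpha(g)\circ f$ is again smooth (post-composition with a continuous linear map preserves smoothness of $A$-valued functions). Since the algebra operations on $C^{\infty}(M,A)$ are pointwise, the identity
\[\bigl(\alpha(g)\circ(ff')\bigr)(m)=\alpha(g)\bigl(f(m)f'(m)\bigr)=\alpha(g)(f(m))\cdot\alpha(g)(f'(m))\]
shows that $\beta(g,\cdot)$ is an algebra homomorphism; its inverse is $\beta(g^{-1},\cdot)$. The action axioms $\beta(1_G,f)=f$ and $\beta(gh,f)=\beta(g,\beta(h,f))$ are immediate from $\alpha(1_G)=\id_A$ and $\alpha(gh)=\alpha(g)\alpha(h)$.

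For the continuity/smoothness statement I would invoke the (smooth) exponential law recorded earlier in the thesis (Lemma \ref{smooth exp law}): the map $\beta:G\times C^{\infty}(M,A)\to C^{\infty}(M,A)$ is smooth if and only if its adjoint
\[\beta^{\wedge}:G\times C^{\infty}(M,A)\times M\longrightarrow A,\qquad(g,f,m)\longmapsto\alpha(g)(f(m)),\]
is smooth. But $\beta^{\wedge}$ admits the factorisation
\[\beta^{\wedge}=\alpha\circ\bigl(\id_G\times\ev_M\bigr),\]
where $\ev_M:C^{\infty}(M,A)\times M\to A$, $(f,m)\mapsto f(m)$, is smooth by [NeWa07], Proposition I.2, and $\alpha:G\times A\to A$ is smooth by hypothesis on the dynamical system. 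Hence $\beta^{\wedge}$ is smooth as a composition of smooth maps, and therefore so is $\beta$. The merely continuous case is treated in exactly the same way, replacing ``smooth'' by ``continuous'' throughout and using the continuous version of the exponential law together with the continuity of $\ev_M$.

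There is no deep obstacle here; the argument is essentially a one-line application of the exponential law, and the main point is simply to recognise the factorisation $\beta^{\wedge}=\alpha\circ(\id_G\times\ev_M)$ which reduces everything to two facts already established in the thesis: the smoothness of the evaluation map and the smoothness of the given action $\alpha$. Consequently, $(C^{\infty}(M,A),G,\beta)$ is a (smooth) dynamical system as claimed.
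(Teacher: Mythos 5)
Your proposal is correct and follows essentially the same route as the paper: the paper's proof is exactly the factorisation $\beta^{\wedge}=\alpha\circ(\id_G\times\ev_M)$ combined with the smoothness of $\ev_M$ from [NeWa07], Proposition I.2, and Lemma \ref{smooth exp law}, with the algebraic verifications dismissed as clear. Your write-up merely spells out the formal algebra-automorphism and action-axiom checks in more detail than the paper does.
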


\begin{proof}
\,\,\,We first note that the map
\[\ev_M:C^{\infty}(M,A)\times M\rightarrow A,\,\,\,(f,m)\mapsto f(m)
\]is smooth (cf. [NeWa07], Proposition I.2). Then Lemma \ref{smooth exp law} implies that the map $\beta$ is smooth if and only if the map
\[\beta^{\wedge}:G\times C^{\infty}(M,A)\times M\rightarrow A,\,\,\,(g,m,f)\mapsto\alpha(g,f(m))
\]is smooth. Since $\beta^{\wedge}=\alpha\circ(\id_G\times\ev_M)$, we conclude that $\beta^{\wedge}$ is smooth as a composition of smooth maps. Clearly, $\beta$ defines an action of $G$ on $C^{\infty}(M,A)$ by algebra automorphisms.
\end{proof}

\begin{proposition}\label{trivial NCP T^n-bundles from trivial algebra bundles}
If $(A,G,\alpha)$ is a \emph{(}smooth\emph{)} trivial NCP $G$-bundle and $M$ a manifold, then the triple $(C^{\infty}(M,A),G,\beta)$ of Lemma \ref{dynamical system and trivial algebra bundles} is also a \emph{(}smooth\emph{)} trivial NCP $G$-bundle.
\end{proposition}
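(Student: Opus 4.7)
The plan is to transport all the required witnesses from $A$ to $C^{\infty}(M,A)$ via the constant-function embedding $A \hookrightarrow C^{\infty}(M,A)$, $a \mapsto \bar a$, where $\bar a \colon M \to A$, $m \mapsto a$. This map is a unit-preserving algebra homomorphism, it sends invertible elements to invertible elements (with $\overline{a^{-1}}$ inverting $\bar a$), and it is manifestly smooth since constant functions are smooth. The crucial equivariance is the one-line calculation
$$\beta(g,\bar a)(m) \;=\; \alpha(g)(\bar a(m)) \;=\; \alpha(g)(a),$$
which shows that if $a \in A_\varphi$ for a character $\varphi \in \widehat G$, then $\bar a \in C^{\infty}(M,A)_\varphi$, and in particular $\bar b$ lies in the fixed point algebra $C^{\infty}(M,A)^G$ whenever $b \in A^G$.

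Next I would decompose $G \cong \mathbb{T}^n \times \Lambda$ as in Definition \ref{trivial NCPT^nB again} and verify the two conditions separately. Since $(A,G,\alpha)$ is a trivial NCP $G$-bundle, condition (G1) supplies, for every character $\varphi$ of the $\mathbb{Z}^n$-part, an invertible $a_\varphi \in A_\varphi$; the observation above then yields the invertible element $\bar a_\varphi \in C^{\infty}(M,A)_\varphi$, so (G1) holds for $(C^{\infty}(M,A),G,\beta)$.

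For condition (G2), I would apply Definition \ref{C_nC_m V} to the generators of the $\Lambda$-part: for each generator of order $n_i$ one obtains an invertible $a_i \in A_i$ together with $b_i \in A^G$ satisfying $b_i^{n_i} = a_i^{n_i}$. Passing to constants gives $\bar a_i \in C^{\infty}(M,A)_i$ invertible and $\bar b_i \in C^{\infty}(M,A)^G$, and since $a \mapsto \bar a$ is an algebra homomorphism one has
$$\bar b_i^{\,n_i} \;=\; \overline{b_i^{n_i}} \;=\; \overline{a_i^{n_i}} \;=\; \bar a_i^{\,n_i},$$
which verifies (C1) and (C2) of Definition \ref{C_nC_m V} in $C^{\infty}(M,A)$, and hence (G2).

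Finally, in the smooth setting the smoothness of $\beta$ is already recorded in Lemma \ref{dynamical system and trivial algebra bundles}, so no additional smoothness verification is needed, and one concludes that $(C^{\infty}(M,A),G,\beta)$ is a (smooth) trivial NCP $G$-bundle. There is no real obstacle here; the only conceptual point is the simultaneous use of the constant-function embedding to lift both the invertibility witnesses of (G1) and (C1) and the root witnesses of (C2) in one stroke.
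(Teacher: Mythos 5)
Your proof is correct and follows essentially the same route as the paper, namely transporting invertible elements via the constant-function embedding $A\hookrightarrow C^{\infty}(M,A)$, which is $G$-equivariant and hence maps $A_{\varphi}$ into $C^{\infty}(M,A)_{\varphi}$. If anything, your write-up is slightly more careful than the paper's, which only remarks that the isotypic components contain invertible elements, whereas you also explicitly check the root condition (G2) for the finite part of $G$ by pushing the elements $b_i$ through the same embedding.
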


\begin{proof}
\,\,\,The claim directly follows from the fact that the algebra $A$ is naturally embedded in $C^{\infty}(M,A)$ through the constant maps. In fact, if $\varphi\in\widehat{G}$, then the corresponding isotypic component $C^{\infty}(M,A)_{\varphi}$ is equal to $C^{\infty}(M,A_{\varphi})$ and thus contains invertible elements.
\end{proof}

\begin{definition}\label{automorphism of dynamical systems}{\bf(The automorphism group of a dynamical system).}
Let $(A,G,\alpha)$ be a dynamical system. The group
\[\Aut_G(A):=\{\varphi\in\Aut(A):\,(\forall g\in G)\,\alpha(g)\circ\varphi=\varphi\circ\alpha(g)\}
\]is called the \emph{automorphism group of} the dynamical system $(A,G,\alpha)$.\sindex[n]{$\Aut_G(A)$}\index{Dynamical Systems!Automorphism Group of}
\end{definition}

\begin{example}
If $A=C^{\infty}(M\times G)$, then [Ne08b], Proposition 1.4.8 implies that 
\[\Aut_G(A)=C^{\infty}(M,G)\rtimes_{\gamma}\Diff(M),
\]where $\gamma:\Diff(M)\rightarrow\Aut(C^{\infty}(M,G))$, $\gamma(\varphi).f:=f\circ\varphi^{-1}$. In particular, if $A=C^{\infty}(G)$, then 
\[\Aut_G(A)\cong G.
\]
\end{example}

\begin{example}\label{G-automorphism group of quantumtori}
If $\mathbb{T}^n_{\theta}$ is the smooth noncommutative $n$-torus from Definition \ref{n-dim QT} and $(\mathbb{T}^n_{\theta},\mathbb{T}^n,\alpha)$ the corresponding smooth dynamical system of Example \ref{Smooth NC n-tori as NCPTB}, then
\[\Aut_{\mathbb{T}^n}(\mathbb{T}^n_{\theta})\cong \mathbb{T}^n.
\]Indeed, we first recall that $\mathbb{T}^n_{\theta}$ is generated by unitaries $U_1,\ldots, U_n$. If now $\varphi\in\Aut_{\mathbb{T}^n}(\mathbb{T}^n_{\theta})$, $z\in\mathbb{T}^n$ and $U_r$ ($1\leq r\leq n$) is such a 
unitary, then 
\[z.\varphi(U_r)=\varphi(z.U_r)=\varphi(z_r\cdot U_r)=z_r\cdot\varphi(U_r)
\]leads to $\varphi(U_r)=\lambda_r\cdot U_r$ for some $\lambda_r\in\mathbb{C}^{\times}$. Since $U_r$ is a unitary and $\varphi$ is a $^*$-automorphism ($\mathbb{T}^n_{\theta}$ is a $^*$-algebra), we conclude that $\lambda_r\in\mathbb{T}$. In particular, each element in $\Aut_{\mathbb{T}^n}(\mathbb{T}^n_{\theta})$ corresponds to an element in $\mathbb{T}^n$ and vice versa. 
\end{example}

\begin{proposition}\label{cocycle description of bundles}{\bf(Cocycle description of bundles).}\index{Bundles!Cocycle Description of}
Let $(A,G,\alpha)$ be a smooth dynamical system and $M$ a manifold. Further, let $(U_i)_{i \in I}$ be an open cover of $M$ and $U_{ij}:=U_i\cap U_j$ for $i,j\in I$. If $(g_{ij})_{i,j \in I}$ is a collection of functions $g_{ij}\in C^{\infty}(U_{ij},\Aut_G(A))$ satisfying 
\[g_{ii}={\bf 1}\,\,\,\text{and}\,\,\,g_{ij}g_{jk}=g_{ik}\,\,\,\text{on}\,\,\,U_{ijk}:=U_i\cap U_j\cap U_k,
\]then the following assertions hold:
\begin{itemize}
\item[\emph{(a)}]
There exists an algebra bundle $(\mathbb{A},M,A,q)$ and bundle charts $\varphi_i:U_i\times A\rightarrow\mathbb{A}_{U_i}$ such that
\[(\varphi_i^{-1}\circ\varphi_j)(x,a)=(x,g_{ij}(x).a).
\]
\item[\emph{(b)}]
The map
\[\sigma:G\times\mathbb{A}\rightarrow\mathbb{A},\,\,\,(g,a)\mapsto\varphi_i(x,\alpha(g).a_0),
\]for $i\in I$ with $x=q(a)\in U_i$ and $a_0\in A$ with $\varphi_i(x,a_0)=a$, defines a smooth action of $G$ on $\mathbb{A}$ by fibrewise algebra automorphisms.
\end{itemize}
\end{proposition}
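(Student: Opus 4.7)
\,\,\,The argument proceeds by the classical cocycle construction adapted to the present infinite-dimensional setting, followed by a verification that the $G$-action descends to the total space.

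For part (a), the plan is to form the disjoint union $\widetilde{\mathbb{A}}:=\bigsqcup_{i\in I}U_i\times A$ and to define an equivalence relation $\sim$ on $\widetilde{\mathbb{A}}$ by declaring $(x,a)\in U_i\times A$ equivalent to $(x',a')\in U_j\times A$ if and only if $x=x'\in U_{ij}$ and $a'=g_{ji}(x).a$. The normalization $g_{ii}={\bf 1}$ gives reflexivity, the cocycle identity $g_{ij}g_{jk}=g_{ik}$ (together with the derived relation $g_{ij}=g_{ji}^{-1}$) yields symmetry and transitivity. Let $\mathbb{A}:=\widetilde{\mathbb{A}}/\!\!\sim$ and write $q:\mathbb{A}\rightarrow M$ for the map induced by the first projection; the inclusions $U_i\times A\hookrightarrow\widetilde{\mathbb{A}}$ descend to bijections $\varphi_i:U_i\times A\rightarrow q^{-1}(U_i)$ satisfying the stated transition formula by construction. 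The smoothness of $g_{ij}:U_{ij}\rightarrow\Aut_G(A)$, combined with Remark \ref{smooth structure on sections} (applied to verify that transition maps are smooth as maps between open subsets of $U_{ij}\times A$), equips $\mathbb{A}$ with a unique smooth manifold structure making every $\varphi_i$ a diffeomorphism. Because each $g_{ji}(x)$ is an algebra automorphism of $A$, the algebra structure transported from $A$ to the fibre $q^{-1}(x)$ via $\varphi_{i,x}$ does not depend on the chosen chart $i$, so the fibres inherit a well-defined unital algebra structure and $(\mathbb{A},M,A,q)$ is an algebra bundle in the sense of Definition \ref{algebra bundle}.

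For part (b), the plan is to first check that the formula $\sigma(g,a):=\varphi_i(x,\alpha(g).a_0)$ does not depend on the auxiliary data. If $x\in U_{ij}$ and $\varphi_i(x,a_0)=\varphi_j(x,a_0')$, then $a_0'=g_{ji}(x).a_0$, so by the defining property of $\Aut_G(A)$ (Definition \ref{automorphism of dynamical systems}) we obtain
\[g_{ji}(x).(\alpha(g).a_0)=\alpha(g).(g_{ji}(x).a_0)=\alpha(g).a_0',\]
which is precisely the compatibility needed for $\varphi_i(x,\alpha(g).a_0)=\varphi_j(x,\alpha(g).a_0')$. This shows $\sigma$ is well defined, and the action axioms follow at once from those of $\alpha$. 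The fibrewise algebra automorphism property is immediate because in each chart $\sigma(g,\cdot)$ acts on $A$ via $\alpha(g)\in\Aut(A)$. For smoothness, observe that on the trivializing patch $\varphi_i(U_i\times A)$ the action $\sigma$ is conjugate, via $\varphi_i$, to the map $(g,x,a)\mapsto(x,\alpha(g).a)$, whose smoothness is exactly the content of Lemma \ref{dynamical system and trivial algebra bundles} (applied locally); gluing the local smoothness statements using the bundle atlas yields the global smoothness of $\sigma$.

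The main obstacle in this plan is not the existence of $\mathbb{A}$, which is a routine descent argument, but rather ensuring that the $G$-action in (b) is compatible with the gluing: this is the precise point at which the hypothesis $g_{ij}(x)\in\Aut_G(A)$—as opposed to merely $g_{ij}(x)\in\Aut(A)$—is unavoidable, since otherwise the prescriptions in two different charts would disagree by a nontrivial outer automorphism of the fibre. Once this commutation is in place, both the well-definedness of $\sigma$ and its smoothness reduce to local statements that have already been established earlier in the thesis.
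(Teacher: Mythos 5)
Your proof is correct and follows essentially the same route as the paper: part (a) is the standard clutching construction (which the paper simply cites from Steenrod), and part (b) hinges on exactly the same well-definedness computation, namely that $g_{ji}(x)\in\Aut_G(A)$ commutes with $\alpha(g)$, with smoothness then checked in local trivializations. The only quibble is that the smoothness of $(g,x,a)\mapsto(x,\alpha(g).a)$ follows directly from the smoothness of the dynamical system $\alpha:G\times A\rightarrow A$ rather than from Lemma \ref{dynamical system and trivial algebra bundles}, which concerns the induced action on the function space $C^{\infty}(M,A)$.
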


\begin{proof}
\,\,\,(a) A proof of the first statement can be found in [St51], Part I, Section 3,  Theorem 3.2.

(b) The crucial point is to show that the map $\sigma$ is well-defined: For this let $i,j\in I$ with $x=q(a)\in U_{ij}$, $a_0\in A$ with $\varphi_i(x,a_0)=a$ and $a'_0\in A$ with $\varphi_j(x,a'_0)=a$. Then $a_0=g_{ij}(x).a'_0$ leads to
\begin{align}
\varphi_i(x,\alpha(g).a_0)&=\varphi_i(x,\alpha(g)(g_{ij}(x).a'_0))=\varphi_i(x,g_{ij}(x)(\alpha(g).a'_0))=\varphi_j(x,\alpha(g).a'_0).\notag
\end{align}
Further, a short calculation shows that $\sigma$ defines an action of $G$ on $\mathbb{A}$ by fibrewise algebra automorphisms. Its smoothness follows from the local description by a smooth function.
\end{proof}

\begin{proposition}\label{dynamical system and algebra bundles}
Suppose we are in the situation of Proposition \ref{cocycle description of bundles}. Then the map
\[\beta:G\times\Gamma\mathbb{A}\rightarrow\Gamma\mathbb{A},\,\,\,\beta(g,s)(m):=\sigma(g,s(m))
\]defines a smooth action of $G$ on $\Gamma\mathbb{A}$ by algebra automorphisms. In particular, the triple $(\Gamma\mathbb{A},G,\beta)$ is a smooth dynamical system.
\end{proposition}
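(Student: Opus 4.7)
The plan is to reduce everything to the trivial case already handled in Lemma~\ref{dynamical system and trivial algebra bundles} by means of the bundle atlas $(\varphi_i,U_i)_{i\in I}$ from Proposition~\ref{cocycle description of bundles} and the characterization of smoothness on $\Gamma\mathbb{A}$ given in Remark~\ref{smooth structure on sections}.

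First I would verify the purely algebraic statements. Because $\sigma$ covers $\id_M$ and acts fibrewise, $\beta(g,s)$ is a set-theoretic section of $q$; its smoothness follows once we check the local picture, so I postpone it. The fact that $\beta$ is a group action on $\Gamma\mathbb{A}$ is immediate from the fact that $\sigma$ is, and that each $\beta(g,\cdot)$ is an algebra automorphism of $\Gamma\mathbb{A}$ follows pointwise from the fact that $\sigma_g:=\sigma(g,\cdot)$ restricts to an algebra automorphism of each fibre $\mathbb{A}_m$ (by construction in Proposition~\ref{cocycle description of bundles}\,(b)).

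Next I would rewrite $\beta$ in the local charts. For $s\in\Gamma\mathbb{A}$ let $s_i:=\Phi_i(s)=\pr_A\circ\varphi_i^{-1}\circ s_{|U_i}\in C^\infty(U_i,A)$. A direct unwinding of the definition of $\sigma$ in Proposition~\ref{cocycle description of bundles}\,(b) yields, for every $g\in G$ and every $m\in U_i$,
\[
\Phi_i(\beta(g,s))(m)=\alpha(g).s_i(m),
\]
so that $\Phi_i\circ\beta=\beta^A_i\circ(\id_G\times\Phi_i)$, where $\beta^A_i:G\times C^\infty(U_i,A)\to C^\infty(U_i,A)$ is precisely the action considered in Lemma~\ref{dynamical system and trivial algebra bundles}. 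From this identity it is clear that $\beta(g,s)$ is locally represented by a smooth $A$-valued function and satisfies the compatibility condition of Lemma~\ref{locally defines sections}\,(a), so $\beta(g,s)$ indeed lies in $\Gamma\mathbb{A}$.

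The only remaining point, and the one I regard as the main obstacle, is the smoothness of $\beta$ as a map from the infinite-dimensional manifold $G\times\Gamma\mathbb{A}$ to $\Gamma\mathbb{A}$. By Remark~\ref{smooth structure on sections}, this amounts to checking that $\Phi_I\circ\beta$ is smooth, i.e.\ that each component $\Phi_i\circ\beta:G\times\Gamma\mathbb{A}\to C^\infty(U_i,A)$ is smooth. Using the identity above, this component factors as
\[
G\times\Gamma\mathbb{A}\xrightarrow{\id_G\times\Phi_i}G\times C^\infty(U_i,A)\xrightarrow{\beta^A_i}C^\infty(U_i,A),
\]
where $\id_G\times\Phi_i$ is smooth because $\Phi_i$ is (being continuous linear with closed image, cf.\ Proposition~\ref{imPhi} and Remark~\ref{smooth structure on sections}) and $\beta^A_i$ is smooth by Lemma~\ref{dynamical system and trivial algebra bundles} applied to the manifold $U_i$. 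Hence $\beta$ is smooth, which completes the proof.
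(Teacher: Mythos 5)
Your proof is correct and follows essentially the same route as the paper: the algebraic assertions are read off from Proposition \ref{cocycle description of bundles}\,(b), and smoothness is reduced via Remark \ref{smooth structure on sections} to the identity $\Phi_i\circ\beta=\alpha_i\circ(\id_G\times\Phi_i)$ together with Lemma \ref{dynamical system and trivial algebra bundles}. The only difference is that you additionally spell out the chart computation and the compatibility check showing $\beta(g,s)\in\Gamma\mathbb{A}$, which the paper leaves implicit.
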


\begin{proof}
\,\,\,That the map $\beta$ defines an action of $G$ on $\Gamma\mathbb{A}$ by algebra automorphisms is a consequence of Proposition \ref{cocycle description of bundles} (b). To verify the smoothness of $\beta$, we first choose a bundle atlas $(\varphi_i,U_i)_{i\in I}$ of $(\mathbb{A},M,A,q)$ and use the definition of the smooth structure on $\Gamma\mathbb{A}$: In fact, Remark \ref{smooth structure on sections} implies that the map $\beta$ is smooth if and only if each map 
\[\Phi_i\circ\beta:G\times\Gamma\mathbb{A}\rightarrow C^{\infty}(U_i,A),\,\,\,(g,s)\mapsto\alpha(g)\circ s_i
\]is smooth. Next, we recall that each map
\[\alpha_i:G\times C^{\infty}(U_i,A)\rightarrow C^{\infty}(U_i,A),\,\,\,(g,f)\mapsto\alpha(g)\circ f
\]is smooth by Lemma \ref{dynamical system and trivial algebra bundles}. Since $\Phi_i\circ\beta=\alpha_i\circ(\id_G\times\Phi_i)$ for each $i\in I$, each map $\Phi_i\circ\beta$ is smooth as a composition of smooth maps.
\end{proof}


\begin{remark}
For the next theorem we recall that the smooth dynamical systems $(\mathbb{T}^n_{\theta},\mathbb{T}^n,\alpha)$ of Example \ref{Smooth NC n-tori as NCPTB} provide a class of examples of trivial NCP $\mathbb{T}^n$-bundles for which $C_{\mathbb{T}^n_{\theta}}^{\mathbb{T}^n}$ is isomorphic to $\mathbb{C}$. Another class with this property is given by the trivial NCP $C_m\times C_m$-bundles $(\M_m(\mathbb{C}),C_m\times C_m,\alpha)$ of Example \ref{the matrix algebra}.
\end{remark}

\begin{theorem}\label{NCP G_bundle with fibre trivial NCP G bundle}
Let $A$ be a unital Fr\'echet algebra and $(A,G,\alpha)$ a smooth trivial NCP $G$-bundle such that $C_A^G$ is isomorphic to $\mathbb{C}$. Further, let $M$ be a manifold, $(U_i)_{i \in I}$ an open cover of $M$ and $U_{ij}:=U_i\cap U_j$ for $i,j\in I$. If $(g_{ij})_{i,j \in I}$ is a collection of functions $g_{ij}\in C^{\infty}(U_{ij},\Aut_G(A))$ satisfying 
\[g_{ii}={\bf 1}\,\,\,\text{and}\,\,\,g_{ij}g_{jk}=g_{ik}\,\,\,\text{on}\,\,\,U_{ijk}:=U_i\cap U_j\cap U_k,
\]then the smooth dynamical system $(\Gamma\mathbb{A},G,\beta)$\sindex[n]{$(\Gamma\mathbb{A},G,\beta)$} of Proposition \ref{dynamical system and algebra bundles} is a NCP $G$-bundle.
\end{theorem}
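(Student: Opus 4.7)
The plan is to verify the local triviality condition of Definition~\ref{NCPT^nB again} at each character of the fixed point algebra $Z:=C_{\Gamma\mathbb{A}}^G$. The first task will be to identify $Z$ with $C^{\infty}(M)$. To this end I note that each transition function $g_{ij}(x)\in\Aut_G(A)$ preserves the center $C_A$ and, since it commutes with the $G$-action, also its fixed point algebra $C_A^G\cong\mathbb{C}$; because the only algebra automorphism of $\mathbb{C}$ is the identity, the restrictions $g_{ij}(x)|_{C_A^G}$ are trivial. Thus the subbundle of $\mathbb{A}$ whose fibre over $m$ is $(C_{\mathbb{A}_m})^G\cong\mathbb{C}$ carries constant transition functions and is canonically trivial. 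Taking sections and remembering that the $G$-action on $\Gamma\mathbb{A}$ is induced fibrewise from $\alpha$, this yields an isomorphism of Fr\'echet algebras $\Phi:C^{\infty}(M)\xrightarrow{\sim} Z$ sending $f\mapsto f\cdot 1_{\mathbb{A}}$. In particular, by Proposition~\ref{spec of C(M) top}, every $\chi\in\Gamma_Z$ is of the form $\delta_m\circ\Phi^{-1}$ for a unique $m\in M$.

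Now fix $\chi\in\Gamma_Z$ and let $m\in M$ be the associated point. Choose $i\in I$ with $m\in U_i$ and a $U_i$-defining function $f_i\in C^{\infty}(M,\mathbb{R})$ (Theorem~\ref{whitneys theorem}), so that $U_i=M_{f_i}$. Setting $z:=\Phi(f_i)\in Z$, one has $\chi(z)=f_i(m)\neq 0$. Theorem~\ref{algebra section 2} applied to the algebra bundle $(\mathbb{A},M,A,q)$ and the function $f_i$ provides an isomorphism of unital Fr\'echet algebras
\[
\phi_{f_i}:(\Gamma\mathbb{A})_{\{z\}}\longrightarrow\Gamma\mathbb{A}_{U_i},
\]
and since $\mathbb{A}_{U_i}\cong U_i\times A$ is trivial by assumption, $\Gamma\mathbb{A}_{U_i}\cong C^{\infty}(U_i,A)$ as unital Fr\'echet algebras.

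The next step is to check that this composite isomorphism intertwines the localized action $\beta_{\{z\}}$ of Proposition~\ref{T^n action on spec again} with the action $\beta$ on $C^{\infty}(U_i,A)$ from Lemma~\ref{dynamical system and trivial algebra bundles}. This is routine: the element $z$ is $G$-invariant, so both the ideal $I_z\subseteq C^{\infty}(\mathbb{R},\Gamma\mathbb{A})$ and the restriction map $\Gamma\mathbb{A}\to\Gamma\mathbb{A}_{U_i}\cong C^{\infty}(U_i,A)$ are $G$-equivariant; combining these with the explicit formula for $\phi_{f_i}$ of Theorem~\ref{algebra section 2} and the local triviality chosen in the cocycle description yields the desired equivariance. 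Hence $((\Gamma\mathbb{A})_{\{z\}},G,\beta_{\{z\}})$ is isomorphic, as a smooth dynamical system, to $(C^{\infty}(U_i,A),G,\beta)$.

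It remains to invoke Proposition~\ref{trivial NCP T^n-bundles from trivial algebra bundles} (whose statement is formulated for $\mathbb{T}^n$ but whose proof, relying solely on the embedding of $A$ into $C^{\infty}(U_i,A)$ as constants and on the isotypic decomposition, carries over verbatim to any compact abelian Lie group $G$) to conclude that $(C^{\infty}(U_i,A),G,\beta)$ is a smooth trivial NCP $G$-bundle. Since $\chi\in\Gamma_Z$ was arbitrary, this verifies the defining condition of a NCP $G$-bundle. The main obstacle in this plan is the careful bookkeeping in the equivariance of $\phi_{f_i}$, for which the $G$-invariance of $z$ and of the trivializing atlas of $\mathbb{A}_{U_i}$ are both essential; the identification $Z\cong C^{\infty}(M)$, although geometrically transparent, also requires the commutativity assumption $C_A^G\cong\mathbb{C}$ in an essential way and is the reason this hypothesis appears in the statement.
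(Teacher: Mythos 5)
Your proposal is correct and follows essentially the same route as the paper: identify $Z=C_{\Gamma\mathbb{A}}^G$ with $C^{\infty}(M)$, localize at a $U_i$-defining function to obtain a $G$-equivariant isomorphism $(\Gamma\mathbb{A})_{\{f_i\}}\cong\Gamma\mathbb{A}_{U_i}\cong C^{\infty}(U_i,A)$, and conclude via Proposition \ref{trivial NCP T^n-bundles from trivial algebra bundles}. (Note only that this proposition is already stated for a general compact abelian Lie group $G$, so your parenthetical caveat about extending it from $\mathbb{T}^n$ is unnecessary; your justification of $Z\cong C^{\infty}(M)$ via the triviality of $g_{ij}|_{C_A^G}$ is in fact more detailed than the paper's.)
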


\begin{proof}
\,\,\,
 (i) We first note that $Z:=C_{\Gamma\mathbb{A}}^G\cong C^{\infty}(M)$. In particular, the spectrum $\Gamma_Z$ is homeomorphic to $M$.

(ii) Next, we choose for each $i\in I$ a $U_i$-defining function $f_i$. Further, for $m\in M$, we choose $i\in I$ with $m\in U_i$. Then $f_i$ is an element in $C^{\infty}(M)$ with $f_i(m)\neq 0$ and we conclude from Corollary \ref{algebra section 3} that the map
\[\phi_{U_i}:\Gamma\mathbb{A}_{\{f_i\}}\rightarrow\Gamma\mathbb{A}_{U_i},\,\,\,[F]\mapsto F\circ\left(\frac{1}{f_i}\times\id_{U_i}\right)
\]is an isomorphism of unital Fr\'echet algebras. Moreover, the map $\phi_{U_i}$ is $G$-equivariant. In fact, we have
\[(\phi_{U_i}\circ\beta_{\{f_i\}})(g,[F])=\beta(g,\phi_{U_i}([F]))
\]for all $g\in G$ and $[F]\in\Gamma\mathbb{A}_{\{f_i\}}$. Since the natural isomorphism between the space $\Gamma\mathbb{A}_{U_i}$ and $C^{\infty}(U_i,A)$ is also a $G$-equivariant isomorphism of unital Fr\'echet algebras (cf. Proposition \ref{cocycle description of bundles} (a)), it follows from Proposition \ref{trivial NCP T^n-bundles from trivial algebra bundles} that the corresponding dynamical system 
\[(\Gamma\mathbb{A}_{\{f_i\}},G,\beta_{\{f_i\}})
\]of Proposition \ref{T^n action on spec again} carries the structure of a smooth trivial NCP $G$-bundle.
\end{proof}

\begin{example}\label{non-triviality of the previous construction}{\bf(Non-triviality of the previous construction).}
In this example we show that the previous construction actually leads to non-trivial examples. For this we apply Theorem \ref{NCP G_bundle with fibre trivial NCP G bundle} to the trivial NCP $\mathbb{T}^n$-bundle $(\mathbb{T}^n_{\theta},\mathbb{T}^n,\alpha)$ (cf. Example \ref{Smooth NC n-tori as NCPTB}). In view of Example \ref{G-automorphism group of quantumtori} we have 
\begin{align}
\Aut_{\mathbb{T}^n}(\mathbb{T}^n_{\theta})\cong \mathbb{T}^n.\label{referenz G-automorphism group of quantumtori}
\end{align}
In particular, a similar argument as in Proposition \ref{vector bundles are associated to principal bundles} implies that there is a one-to-one correspondence between the algebra bundles arising from Proposition \ref{cocycle description of bundles} and principal $\mathbb{T}^n$-bundles. Thus, if $(\mathbb{A},M,\mathbb{T}^n_{\theta},q)$ is such an algebra bundle which corresponds to a non-trivial principal $\mathbb{T}^n$-bundle, then also $(\mathbb{A},M,\mathbb{T}^n_{\theta},q)$ is non-trivial as algebra bundle. We claim that the associated smooth dynamical system $(\Gamma\mathbb{A},\mathbb{T}^n,\beta)$ of Proposition \ref{dynamical system and algebra bundles} is a non-trivial NCP $\mathbb{T}^n$-bundle. To prove this claim we assume the converse, i.e., that $(\Gamma\mathbb{A},\mathbb{T}^n,\beta)$ is a trivial NCP $\mathbb{T}^n$-bundle. For this, we rename the open subsets $U_i$ of $M$ of Theorem \ref{NCP G_bundle with fibre trivial NCP G bundle} to $O_i$ in order to avoid an abuse of notation and proceed as follows:

(i) We first recall that $\mathbb{T}^n_{\theta}$ is generated by unitaries $U_1,\ldots,U_n$ and that its elements are given by (norm-convergent) sums
\[a=\sum_{{\bf k}\in\mathbb{Z}^n}a_{\bf k}U^{\bf k},\,\,\,\text{with}\,\,\,(a_{\bf k})_{{\bf k}\in\mathbb{Z}^n}\in S(\mathbb{Z}^n).
\]Here, 
\[U^{\bf k}:=U^{k_1}_1\cdots U^{k_n}_n.
\]

(ii) For each $1\leq r\leq n$ let $(\Gamma\mathbb{A})_r$ be the isotypic component corresponding to the canonical basis element $e_r=(0,\ldots,1,\ldots,0)$ of $\mathbb{Z}^n$. Then the definition of the action $\beta$ implies that
\begin{align}
(\Gamma\mathbb{A})_r&=\{s\in\Gamma\mathbb{A}:\,(\forall z\in\mathbb{T}^n)\,z.s=z_r\cdot s\}\notag\\
&=\{s\in\Gamma\mathbb{A}:\,(\forall m\in M)\,s(m)\in(\mathbb{T}^n_{\theta})_{m,r}\},\notag
\end{align}
where $(\mathbb{T}^n_{\theta})_{m,r}$ denotes the isotypic component of the fibre $(\mathbb{T}^n_{\theta})_m$ corresponding to $e_r$.
Since $(\Gamma\mathbb{A},\mathbb{T}^n,\beta)$ is assumed to be a trivial NCP $\mathbb{T}^n$-bundle, we may choose in each space $(\Gamma\mathbb{A})_r$ an invertible element $s_r:M\rightarrow\Gamma\mathbb{A}$.

(iii) If $s_{r,i}:=\pr_{\mathbb{T}^n_{\theta}}\circ\varphi_i^{-1}\circ {s_r}_{\mid O_i}:O_i\rightarrow \mathbb{T}^n_{\theta}$ (cf. Construction \ref{top on space of sections}), then $s_{r,i}=\lambda_{r,i}\cdot U_r$ for some smooth function $\lambda_{r,i}:O_i\rightarrow\mathbb{C}^{\times}$. Moreover, the compatibility property for $(s_{r,i})_{i\in I}$ and (\ref{referenz G-automorphism group of quantumtori}) imply that $\vert\lambda_{r,j}\vert=\vert\lambda_{r,i}\vert$ for all $i,j\in I$. Hence, we may choose a section $s_r:M\rightarrow\Gamma\mathbb{A}$ which is locally given by $\lambda_{r,i}\cdot U_r$ for some smooth function $\lambda_{r,i}:O_i\rightarrow\mathbb{T}$.

(iv) We now show that the map
\[\varphi:M\times\mathbb{T}^n_{\theta}\rightarrow\mathbb{A},\,\,\,\left(m,a=\sum_{{\bf k}\in\mathbb{Z}^n}a_{\bf k}U^{\bf k}\right)\mapsto\sum_{{\bf k}\in\mathbb{Z}^n}a_{\bf k}s_1(m)^{k_1}\cdots s_n(m)^{k_n}
\]is an equivalence of algebra bundles over $M$. Indeed, $\varphi$ is bijective and fibrewise an algebra automorphism. Moreover, the map $\varphi$ is smooth if and only if the map 
\[\psi_i:=\pr_{\mathbb{T}^n_{\theta}}\circ\varphi_i^{-1}\circ\varphi_{\mid O_i\times \mathbb{T}^n_{\theta}}:O_i\times \mathbb{T}^n_{\theta}\rightarrow \mathbb{T}^n_{\theta},\,\,\,\left(x,a=\sum_{{\bf k}\in\mathbb{Z}^n}a_{\bf k}U^{\bf k}\right)\mapsto\sum_{{\bf k}\in\mathbb{Z}^n}a_{\bf k}s_{1,i}(x)^{k_1}\cdots s_{n,i}(x)^{k_n}
\]is smooth for each $i\in I$. Since 
\[\psi_i(x,a)=\sum_{{\bf k}\in\mathbb{Z}^n}a_{\bf k}\lambda_{1,i}(x)^{k_1}\cdots\lambda_{n,i}(x)^{k_n}U^{\bf k},
\]we conclude that $\psi_i=\alpha\circ((\lambda_{1,i},\ldots,\lambda_{n,i})\times\id_{\mathbb{T}^n_{\theta}})$, i.e., that $\psi_i$ is smooth as a composition of smooth maps. A similar argument shows the smoothness of the inverse map.

(v) We finally achieve the desired contradiction: In view of part (iv), $\mathbb{A}$ is a trivial algebra bundle contradicting the construction of $\mathbb{A}$, i.e., that $\mathbb{A}$ is non-trivial as algebra bundle. This proves the claim.
\end{example}

\begin{remark}\label{nontriviality of the bundle corresponding to A^2}
In this remark we want to point out that there exist non-trivial algebra bundles which are trivial as NCP $G$-bundles: In fact, [GVF01], Corollary 12.7 implies that the $2$-tori $A^2_{\theta}$ are mutually non-isomorphic for $0\leq\theta\leq\frac{1}{2}$.
Further, if $\theta$ is rational, $\theta=\frac{n}{m}$, $n\in\mathbb{Z}$, $m\in\mathbb{N}$ relatively prime, then $A^2_{\theta}$ is isomorphic to the algebra of continuous sections of an algebra bundle over $\mathbb{T}^2$ with fibre $M_m(\mathbb{C})$ (cf. [GVF01], Proposition 12.2 or Proposition \ref{rational quantumtori} for the smooth case). Therefore, the (continuous) bundle corresponding to such a rational quantum torus $A^2_{\theta}$, $\theta$ rational with $0<\theta\leq\frac{1}{2}$, is non-trivial as algebra bundle. Since the smooth noncommutative $2$-torus $\mathbb{T}^2_{\theta}$ is a dense subalgebra of $A^2_{\theta}$, the same conclusion holds for the (smooth) bundle corresponding to $\mathbb{T}^2_{\theta}$. Nevertheless, the associated dynamical systems $(A^2_{\theta},\mathbb{T}^n,\alpha)$ and $(\mathbb{T}^2_{\theta},\mathbb{T}^n,\alpha)$ of Example \ref{NC n-tori as NCPTB} and Example \ref{Smooth NC n-tori as NCPTB} are trivial NCP $\mathbb{T}^2$-bundles.
\end{remark}

\subsection*{Example 3: Sections of Algebra Bundles which are Pull-Backs of Principal $G$-Bundles}\label{examples of NCP T^n-bundles III}

We show that if $A$ is a unital Fr\'echet algebra with trivial center, $(\mathbb{A},M,A,q)$ an algebra bundle and $(P,M,G,\pi,\sigma)$ a principal $G$-bundle, then the algebra of sections of the pull-back bundle 
\[\pi^{*}(\mathbb{A}):=\{(p,a)\in P\times\mathbb{A}:\,\pi(p)=q(a)\}
\]\sindex[n]{$\pi^{*}(\mathbb{A})$} is a NCP $G$-bundle. We start with the following lemma:

\begin{lemma}\label{pull-back 0}
If $A$ is a unital locally convex algebra and $M$ a manifold, then the map
\[\alpha:G\times C^{\infty}(M\times G,A)\rightarrow C^{\infty}(M\times G,A),\,\,\,(g,f)\mapsto (g.f)(m,h):=f(m,gh)
\]defines a smooth action of $G$ on $C^{\infty}(M\times G,A)$ by algebra automorphisms. In particular, the triple $(C^{\infty}(M\times G,A),G,\alpha)$\sindex[n]{$(C^{\infty}(M\times G,A),G,\alpha)$} is a smooth trivial NCP $G$-bundle.
\end{lemma}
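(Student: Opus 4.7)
The plan is to reduce the smoothness of $\alpha$ to Proposition~\ref{smoothness of the group action on the algebra of smooth functions}, to verify the algebra-automorphism property by a direct pointwise calculation, and finally to exhibit invertible elements in every required isotypic component by pulling back characters of $G$. Thus the verification of the lemma breaks naturally into three independent, short tasks.

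For smoothness, I would observe that since $G$ is a compact abelian Lie group (as assumed throughout this chapter), the map
\[\sigma:(M\times G)\times G\to M\times G,\quad\sigma\bigl((m,h),g\bigr):=(m,gh),\]
is a smooth right action of $G$ on the manifold $M\times G$. Proposition~\ref{smoothness of the group action on the algebra of smooth functions}, applied with the locally convex space $E=A$, then yields a smooth left action of $G$ on $C^{\infty}(M\times G,A)$ given by $(g.f)(m,h)=f(\sigma((m,h),g))=f(m,gh)$, which is exactly $\alpha$. Because multiplication in $C^{\infty}(M\times G,A)$ is pointwise, the one-line identity
\[\bigl(g.(fh)\bigr)(m,k)=f(m,gk)\,h(m,gk)=\bigl((g.f)(g.h)\bigr)(m,k)\]
shows that each $\alpha(g)$ is an algebra automorphism, so $(C^{\infty}(M\times G,A),G,\alpha)$ is indeed a smooth dynamical system.

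For the triviality, I would write $G\cong\mathbb{T}^{n}\times\Lambda$ in accordance with Definition~\ref{trivial NCPT^nB again} and, for each character $\varphi\in\widehat{G}$, exhibit the homogeneous element
\[f_{\varphi}:M\times G\to A,\quad f_{\varphi}(m,h):=\varphi(h)\cdot 1_{A}.\]
The relation $\varphi(gh)=\varphi(g)\varphi(h)$ places $f_{\varphi}$ in the $\varphi$-isotypic component, and since $\varphi$ takes values in $\mathbb{T}$, the function $f_{\varphi}$ is invertible with inverse $f_{\varphi^{-1}}$. This establishes condition (G1) for the $\mathbb{Z}^{n}$-part. For a generator $\zeta_{i}$ of a cyclic factor $C_{n_{i}}$ of $\Lambda$, the associated $f_{\varphi}$ additionally satisfies $(f_{\varphi})^{n_{i}}=f_{\varphi^{n_{i}}}=1$, so condition (G2) from Definition~\ref{C_nC_m V} holds trivially with $b=1_{C^{\infty}(M\times G,A)}$.

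There is essentially no obstacle here: every step is either a direct calculation or an immediate application of a result established earlier in the thesis. The one point meriting a moment's care is the direction of the action, since $(g_{1}.(g_{2}.f))(m,h)=f(m,g_{2}g_{1}h)$ agrees with $((g_{1}g_{2}).f)(m,h)$ only because $G$ is abelian; this is, however, harmless in the present context.
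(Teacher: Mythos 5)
Your proposal is correct and follows essentially the same route as the paper: the paper's (very terse) proof obtains the invertible homogeneous elements by embedding $C^{\infty}(M\times G)$ into $C^{\infty}(M\times G,A)$ via the unit of $A$ — which is exactly your explicit $f_{\varphi}=\varphi\cdot 1_A$ — and proves smoothness by the same exponential-law argument that underlies Proposition \ref{smoothness of the group action on the algebra of smooth functions}. Your remark that the left-action identity relies on $G$ being abelian is a correct and worthwhile observation that the paper leaves implicit.
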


\begin{proof}
\,\,\,For the proof we just have to note that the algebra $C^{\infty}(M\times G)$ is naturally embedded in $C^{\infty}(M\times G,A)$ through the unit element of $A$. The smoothness of the map $\alpha$ can be proved similarly to Lemma \ref{dynamical system and trivial algebra bundles}.
\end{proof}

\begin{lemma}\label{pull-back I}
If $(\mathbb{A},M,A,q)$ is an algebra bundle, $(P,M,G,\pi,\sigma)$ a principal bundle and $\pi^{*}(\mathbb{A})$ the pull-back bundle over $P$, then the map
\[\sigma:\pi^{*}(\mathbb{A})\times G\rightarrow\pi^{*}(\mathbb{A}),\,\,\,((p,a),g)\mapsto(p.g,a)
\]defines a smooth action of $G$ on $\pi^{*}(\mathbb{A})$.
\end{lemma}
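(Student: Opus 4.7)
The plan is to verify the three conditions of a smooth group action in sequence: well-definedness of the map (i.e.\ that its image actually lies in $\pi^{*}(\mathbb{A})$), the algebraic axioms, and finally smoothness. Since $\pi^{*}(\mathbb{A})$ is, by construction, the preimage of the diagonal of $M\times M$ under $\pi\times q\colon P\times\mathbb{A}\to M\times M$, I first need to recall that it carries the structure of a smooth submanifold of $P\times\mathbb{A}$ (this uses that $\pi$ is a surjective submersion, as is true for any principal bundle projection). All subsequent smoothness statements will then be interpreted relative to this submanifold structure.

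First I would check well-definedness. For $(p,a)\in\pi^{*}(\mathbb{A})$ one has $\pi(p)=q(a)$ by definition, and the $G$-equivariance of $\pi$ (i.e.\ the fact that the bundle projection $\pi$ is constant along $G$-orbits) gives $\pi(p.g)=\pi(p)=q(a)$, so that $(p.g,a)\in\pi^{*}(\mathbb{A})$ as required. The action axioms are then immediate: $(p,a).1_G=(p.1_G,a)=(p,a)$ and $((p,a).g).g'=(p.g.g',a)=(p,a).(gg')$, both reducing to the corresponding identities for the $G$-action on $P$.

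For smoothness, I would exhibit the map as a restriction/co-restriction of a manifestly smooth map on the ambient space. Precisely, the smooth action $\sigma_P\colon P\times G\to P$ together with $\id_{\mathbb{A}}$ defines a smooth map
\[
\Sigma\colon (P\times\mathbb{A})\times G\longrightarrow P\times\mathbb{A},\qquad ((p,a),g)\longmapsto(\sigma_P(p,g),a).
\]
Its restriction to the smooth submanifold $\pi^{*}(\mathbb{A})\times G\subseteq (P\times\mathbb{A})\times G$ is smooth, and by the well-definedness established above its image lies in the submanifold $\pi^{*}(\mathbb{A})\subseteq P\times\mathbb{A}$. Co-restricting to this submanifold yields a smooth map, which is precisely the announced action $\sigma$.

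There is no real obstacle here; the only point that requires a moment of care is the co-restriction step, which relies on $\pi^{*}(\mathbb{A})$ being a (locally closed) submanifold of $P\times\mathbb{A}$ so that a smooth map into $P\times\mathbb{A}$ whose image is contained in $\pi^{*}(\mathbb{A})$ is automatically smooth as a map into $\pi^{*}(\mathbb{A})$. This is where the submersion property of $\pi$ enters, and after that the argument is a matter of unwinding definitions.
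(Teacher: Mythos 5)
Your argument is correct and is essentially the proof the paper has in mind: the paper's own text only records that $\sigma$ is well-defined and that ``smoothness follows from local considerations'', so your write-up is a legitimate filling-in of those details. Your well-definedness and group-axiom checks are exactly right, and organizing the smoothness proof globally (restricting and co-restricting the ambient map $\Sigma$ on $(P\times\mathbb{A})\times G$) is a clean packaging of the local argument the author intends.

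One point deserves more care than you give it: in this thesis the fibre $A$ of $\mathbb{A}$ is allowed to be an infinite-dimensional Fr\'echet algebra, so $P\times\mathbb{A}$ is an infinite-dimensional (locally convex) manifold, and the transversality/preimage-theorem justification you invoke (``$\pi$ is a surjective submersion, hence the preimage of the diagonal is a submanifold'') is not available off the shelf — there is no general implicit function theorem in this setting. The correct way to get both the submanifold structure and the legitimacy of the co-restriction step is via the bundle charts: over a trivializing open set $U\subseteq M$ with $\mathbb{A}_U\cong U\times A$, the set $\pi^{*}(\mathbb{A})|_{\pi^{-1}(U)}$ is identified with $\bigl\{(p,\pi(p),a'):p\in P_U,\ a'\in A\bigr\}$, i.e.\ with the graph of $\pi|_{P_U}$ crossed with $A$, which is a split submanifold diffeomorphic to $P_U\times A$. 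With that substitution your restriction/co-restriction argument goes through verbatim (and indeed these are the charts $(\pi^{*}(\varphi_i),V_i)$ the paper uses in the subsequent Proposition on $\Gamma\pi^{*}(\mathbb{A})$). This is a repair of a justification rather than of the conclusion, but as written the appeal to transversality is the one step that would not survive scrutiny.
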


\begin{proof}
\,\,\,We first note that the map $\sigma$ is well-defined. Its smoothness follows from local considerations and we leave the details to the reader.
\end{proof}

\begin{proposition}\label{pull-back II}
Suppose we are in the situation of Lemma \ref{pull-back I}. If $\mathcal{A}:=\Gamma\pi^{*}(\mathbb{A})$, then the map
\[\alpha:G\times\mathcal{A}\rightarrow\mathcal{A},\,\,\,\alpha(g,s)(p):=\sigma(s(p.g),g^{-1})
\]defines a smooth action of $G$ on $\mathcal{A}$ by algebra automorphisms. In particular, the triple $(\mathcal{A},G,\alpha)$ is a smooth dynamical system.
\end{proposition}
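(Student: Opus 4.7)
The plan is to first reinterpret the action in a more tractable form and then handle the algebraic and smoothness parts separately. First I would observe that a section $s\in\mathcal{A}$ is uniquely determined by a smooth map $\tilde{s}:P\to\mathbb{A}$ satisfying $q\circ\tilde{s}=\pi$, via $s(p)=(p,\tilde{s}(p))$. Under this identification the fibrewise algebra structure on $\mathcal{A}$ becomes pointwise multiplication inherited from the fibres of $\mathbb{A}$, and unfolding the definition of $\sigma$ from Lemma \ref{pull-back I} yields $\widetilde{\alpha(g,s)}=\tilde{s}\circ R_g$, where $R_g:P\to P$, $R_g(p):=p.g$, denotes right translation on $P$.

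With this reformulation the algebraic content is routine. The composition $\tilde{s}\circ R_g$ still satisfies the compatibility $q\circ(\tilde{s}\circ R_g)=\pi\circ R_g=\pi$ by the $G$-invariance of $\pi$, so that $\alpha(g,s)\in\mathcal{A}$; the identity $R_{1_G}=\id_P$ and the relation $R_{g_2}\circ R_{g_1}=R_{g_1g_2}$ translate directly into the two group action axioms; and the automorphism property $\alpha(g,s_1\cdot s_2)=\alpha(g,s_1)\cdot\alpha(g,s_2)$ is immediate from pointwise multiplication in the fibres.

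The main obstacle is the smoothness of $\alpha:G\times\mathcal{A}\to\mathcal{A}$. To handle it I would fix a bundle atlas $(\varphi_j,V_j)_{j\in J}$ of $(\mathbb{A},M,A,q)$ and transfer it to an atlas $(\psi_j,\pi^{-1}(V_j))_{j\in J}$ of $\pi^{*}(\mathbb{A})$ via $\psi_j(p,a):=(p,\varphi_j(\pi(p),a))$. According to Remark \ref{smooth structure on sections}, smoothness of $\alpha$ reduces to smoothness of each composition
\[\Phi_j\circ\alpha:G\times\mathcal{A}\to C^{\infty}(\pi^{-1}(V_j),A).\]
The crucial point here is that the $G$-invariance of $\pi$ gives $R_g(\pi^{-1}(V_j))\subseteq\pi^{-1}(V_j)$, so that the local representatives stay inside the chart. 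An application of the smooth exponential law of Lemma \ref{smooth exp law} further reduces the problem to smoothness of the ``evaluated'' map
\[G\times\mathcal{A}\times\pi^{-1}(V_j)\to A,\qquad(g,s,p)\mapsto\pr_A\bigl(\varphi_j^{-1}(\tilde{s}(p.g))\bigr),\]
which in turn factors through the smooth right action $G\times P\to P$, the smooth evaluation available by [NeWa07], Proposition I.2 combined with the analogue of Remark \ref{smooth structure on sections}, and the smooth fibre projection associated with $\varphi_j$. Assembling these ingredients yields the desired smoothness of $\alpha$.
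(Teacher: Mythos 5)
Your proposal is correct and follows essentially the same route as the paper: both localize via a bundle atlas of $\mathbb{A}$ pulled back to $P$, invoke Remark \ref{smooth structure on sections} to reduce smoothness of $\alpha$ to smoothness of the chart representatives $\Phi_j\circ\alpha$, and then recognize these as precomposition with the smooth right translation, handled via the smooth exponential law and the evaluation map. Your explicit identification $\widetilde{\alpha(g,s)}=\tilde{s}\circ R_g$ merely spells out the ``easy calculation'' the paper leaves implicit.
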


\begin{proof}
\,\,\,An easy calculation shows that the map $\alpha$ is a well-defined action of $G$ on $\mathcal{A}$ by algebra automorphisms. Next, we choose a bundle atlas $(\varphi_i,U_i)_{i\in I}$ of $(\mathbb{A},M,A,q)$ with the additional property that each $V_i:=\pi^{-1}(U_i)$ is trivial. Then $(\pi^{*}(\varphi_i),V_i)_{i\in I}$ is a bundle atlas of the pull-back bundle $\pi^{*}(\mathbb{A})$ over $P$ and we can use the definition of the smooth structure on $\mathcal{A}$ to verify the smoothness of the map $\alpha$: In fact, Remark \ref{smooth structure on sections} implies that $\alpha$ is smooth if and only if each map 
\[\Phi_i\circ\alpha:G\times\mathcal{A}\rightarrow C^{\infty}(V_i,A),\,\,\,(g,s)\mapsto s_i\circ(\sigma_g)_{\mid V_i}
\]is smooth. For this we note that each map
\[\alpha_i:G\times C^{\infty}(V_i,A)\rightarrow C^{\infty}(V_i,A),\,\,\,(g,f)\mapsto f\circ(\sigma_g)_{\mid V_i}
\]is smooth and further that $\Phi_i\circ\alpha=\alpha_i\circ(\id_G\times\Phi_i)$ holds for each $i\in I$. Thus, each map $\Phi_i\circ\beta$ is smooth as a composition of smooth maps.
\end{proof}

\begin{theorem}\label{pull-back IV}
Let $A$ be a unital Fr\'echet algebra with trivial center, $(\mathbb{A},M,A,q)$ an algebra bundle and $(P,M,G,\pi,\sigma)$ a principal bundle. If $\pi^{*}(\mathbb{A})$ is the pull-back bundle over $P$ and $\mathcal{A}:=\Gamma\pi^{*}(\mathbb{A})$\sindex[n]{$\mathcal{A}$}, then the smooth dynamical system $(\mathcal{A},G,\alpha)$\sindex[n]{$(\mathcal{A},G,\alpha)$} of Proposition \ref{pull-back II} is a smooth NCP $G$-bundle.
\end{theorem}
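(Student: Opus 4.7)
The plan is to verify the defining property of a smooth NCP $G$-bundle directly: for every character $\chi \in \Gamma_Z$ of the fixed-point algebra $Z := C_{\mathcal{A}}^G$ I will exhibit an element $z \in Z$ with $\chi(z) \neq 0$ for which the localized dynamical system $(\mathcal{A}_{\{z\}}, G, \alpha_{\{z\}})$ is a smooth trivial NCP $G$-bundle. The construction proceeds by ``locally trivializing both bundles at once'' and invoking Lemma \ref{pull-back 0}.

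First I would identify $Z$. Since $A$ has trivial center, the centre of each fibre $\pi^{*}(\mathbb{A})_p \cong \mathbb{A}_{\pi(p)}$ is one-dimensional, so evaluating central sections yields a natural isomorphism of unital Fr\'echet algebras $C_{\mathcal{A}} \cong C^{\infty}(P)$, under which the $G$-action from Proposition \ref{pull-back II} corresponds to the usual pull-back action of $G$ on $C^{\infty}(P)$ coming from $\sigma$. Hence Proposition \ref{fixed point algebra of principal bundles} gives $Z = C_{\mathcal{A}}^G \cong C^{\infty}(P)^G \cong C^{\infty}(M)$, and Corollary \ref{spec of C(M,K) set} identifies each $\chi \in \Gamma_Z$ with an evaluation $\delta_m$ at some $m \in M$.

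Given such $\chi = \delta_m$, I would choose an open neighborhood $U$ of $m$ on which both the principal bundle $P$ and the algebra bundle $\mathbb{A}$ trivialize simultaneously, pick a $U$-defining function $f_U \in C^{\infty}(M,\mathbb{R})$ via Theorem \ref{whitneys theorem}, and set $z := f_U \circ \pi$. Since $f_U$ is constant on $G$-orbits in $P$, $z$ lies in $Z$, and $\chi(z) = f_U(m) \neq 0$. Viewing $z$ as an element of $C^{\infty}(P,\mathbb{R})$ embedded centrally in $\Gamma \pi^{*}(\mathbb{A})$, Corollary \ref{algebra section 3} yields a canonical isomorphism of unital Fr\'echet algebras
\[
\mathcal{A}_{\{z\}} \;=\; \Gamma \pi^{*}(\mathbb{A})_{\{z\}} \;\cong\; \Gamma\bigl(\pi^{*}(\mathbb{A})|_{\pi^{-1}(U)}\bigr).
\]
Using the $G$-equivariant trivialization $\pi^{-1}(U) \cong U \times G$ of the principal bundle and the bundle chart $\mathbb{A}|_U \cong U \times A$, the pulled-back algebra bundle restricted to $\pi^{-1}(U)$ becomes canonically isomorphic to the trivial algebra bundle $U \times G \times A$, and its sections are $C^{\infty}(U \times G, A)$. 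Tracking through Proposition \ref{pull-back II}, the induced $G$-action on the right-hand side is exactly translation in the $G$-factor from Lemma \ref{pull-back 0}. That lemma then identifies the localized system with a smooth trivial NCP $G$-bundle, finishing the argument.

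The main obstacle is the bookkeeping in the last paragraph: one must verify that every one of the isomorphisms involved (the central identification $C_{\mathcal{A}} \cong C^{\infty}(P)$, the restriction-to-open-set identification of Corollary \ref{algebra section 3}, and the two local trivializations) is not merely an isomorphism of Fr\'echet algebras but is $G$-equivariant, so that the chain of isomorphisms lands in the dynamical system of Lemma \ref{pull-back 0} and not in some twisted variant. The only nontrivial input beyond naturality is the $G$-equivariance of a principal bundle chart, which is built into Definition \ref{principal bundles I}; everything else is routine compatibility with pull-backs and with the tangent-functor description of the smooth structure on section spaces from Construction \ref{top on space of sections} and Remark \ref{smooth structure on sections}.
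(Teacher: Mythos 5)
Your proof is correct and follows essentially the same route as the paper's: identify $Z\cong C^{\infty}(M)$ via the trivial center of $A$ and Proposition \ref{fixed point algebra of principal bundles}, localize at a pulled-back $U$-defining function for a simultaneously trivializing neighbourhood, apply Corollary \ref{algebra section 3}, and conclude with the $G$-equivariant identification with $C^{\infty}(U\times G,A)$ and Lemma \ref{pull-back 0}. The equivariance bookkeeping you flag is exactly the point the paper also records.
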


\begin{proof}
\,\,\,
(i) We first note that $C_{\mathcal{A}}\cong C^{\infty}(P)$ and therefore that $Z=C_{\mathcal{A}}^G\cong C^{\infty}(M)$ (cf. Proposition \ref{fixed point algebra of principal bundles}). In particular, the spectrum $\Gamma_Z$ is homeomorphic to $M$.

(ii) Next, we choose an open cover $(U_i)_{i\in I}$ of $M$ such that $\mathbb{A}_{U_i}\cong U_i\times A$ and $V_i:=P_{U_i}\cong U_i\times G$ holds for each $i\in I$. Further, we choose for each $i\in I$ a $U_i$-defining function $f_i$ and note that each function $h_i:=f_i\circ q$ is a (smooth) $V_i$-defining function $f_i$.

(iii) For $p\in P$ we choose $i\in I$ with $q(p)\in U_i$. Then $h_i$ is an element in $Z$ with $h_i(p)\neq 0$ and we conclude from Corollary \ref{algebra section 3} that the map
\[\phi_{V_i}:\mathcal{A}_{\{h_i\}}\rightarrow\mathcal{A}_{V_i},\,\,\,[F]\mapsto F\circ\left(\frac{1}{h_i}\times\id_{V_i}\right)
\]is an isomorphism of unital Fr\'echet algebras. Moreover, the map $\phi_{V_i}$ is $G$-equivariant. In fact, we have
\[(\phi_{V_i}\circ\alpha_{\{h_i\}})(g,[F])=\alpha(g,\phi_{V_i}([F]))
\]for all $g\in G$ and $[F]\in\mathcal{A}_{\{h_i\}}$. Since the natural isomorphism between the space $\mathcal{A}_{V_i}$ and $C^{\infty}(U_i\times G,A)$ is also a $G$-equivariant isomorphism of unital Fr\'echet algebras, the dynamical system 
\[(\mathcal{A}_{\{h_i\}},G,\alpha_{\{h_i\}})
\]carries the structure of a smooth trivial NCP $G$-bundle (cf. Lemma \ref{pull-back 0}).
\end{proof}

\begin{example}\label{non-triviality of the previous construction for example 3}{\bf(Non-triviality of the previous construction).}
In this example we show that the previous construction actually leads to non-trivial examples. Therefore we choose $n\in\mathbb{N}$ with $n>1$ and write 
\[C_n:=\{z\in\mathbb{C}^{\times}:\,z^n=1\}=\{\zeta^k:\,\zeta:=\exp(\frac{2\pi i}{n}),\,k=0,1,\ldots,n-1\}
\]for the cyclic subgroup of $\mathbb{T}$ of $n$-th roots of unity (cf. Chapter \ref{trivial ncp cyclic bundles}). We further choose $m\in\mathbb{N}$ such that $\zeta^m=\zeta$ (e.g. $m=n+1$). Then Remark \ref{nontriviality of the bundle corresponding to A^2} implies that $\mathbb{T}^2_{\frac{1}{m}}$ is isomorphic to the space of sections of a non-trivial algebra bundle $\mathbb{A}$ over $\mathbb{T}^2$ with fibre $M_m(\mathbb{C})$. Therefore, the pull-back along the non-trivial principal bundle (covering) defined by the natural action of $C_n\times C_n$ on $\mathbb{T}^2$, i.e., by 
\[(z_1,z_2).(\zeta^k,\zeta^l):=(\zeta^k\cdot z_1,\zeta^l\cdot z_2)
\]for $k,l=0,1,\ldots,n-1$, leads to a non-trivial algebra bundle $\pi^{*}(\mathbb{A})$ over $\mathbb{T}^2$ with fibre $M_m(\mathbb{C})$. We claim that the associated smooth dynamical system $(\mathcal{A},C_n\times C_n,\alpha)$ of Proposition \ref{pull-back II} is a non-trivial NCP $C_n\times C_n$-bundle. To prove this claim we assume the converse, i.e., that $(\mathcal{A},C_n\times C_n,\alpha)$ is a trivial NCP $C_n\times C_n$-bundle and proceed as follows:

(i) In the following let $(\varphi_i,U_i)_{i\in I}$ be a bundle atlas of the pull-back bundle $\pi^{*}(\mathbb{A})$ over $\mathbb{T}^2$. For a section $s\in\mathcal{A}$ and $i\in I$ we write 
\[s_i:=\pr_{M_m(\mathbb{C})}\circ\varphi_i^{-1}\circ s_{\mid U_i}
\]for the corresponding function in $C^{\infty}(U_i,M_m(\mathbb{C}))$ (cf. Construction \ref{top on space of sections}). We recall that $s_j(z)=g_{ji}(z)\cdot s_i(z)$ holds for all $i,j\in I$ and $z\in U_i\cap U_j$, where 
\[g_{ji}:(U_i\cap U_j)\times M_m(\mathbb{C})\rightarrow M_m(\mathbb{C})
\]denotes the smooth map defined by the transition function $\varphi_j^{-1}\circ\varphi_i$.

(ii) Let $s\in\mathcal{A}$. We show that the map
\[\Det(s):\mathbb{T}^2\rightarrow\mathbb{C},\,\,\,\Det(s)(z):=\det(s_i(z)),
\]for $i\in I$ with $z\in U_i$, is well-defined and smooth: In fact, the crucial point is to show that the map $\Det(s)$ is well-defined: For this let $i,j\in I$ with $z\in U_i\cap U_j$. Since each automorphism of the matrix algebra $\M_m(\mathbb{C})$ is inner (cf. Lemma \ref{aut=inn}), we easily conclude that
\[\det(s_j(z))=\det(g_{ji}(z)\cdot s_i(z))=\det(s_i(z)).
\]The smoothness of the map $\Det(s)$ follows from the local description by a smooth function.

(iii) Since $(\mathcal{A},C_n\times C_n,\alpha)$ is assumed to be a trivial NCP $C_n\times C_n$-bundle, there exist an invertible elements $F\in\mathcal{A}_{(1,0)}$ and an invertible element $F'\in\mathcal{A}_{(0,1)}$. Further, there exist two elements $G$ and $G'$ in $\mathcal{A}_{(0,0)}$ such that $G^n=F^n$ and $(G')^n=(F')^n$. 

(iv) Part (ii) now implies that $f:=\Det(F)\in C^{\infty}(\mathbb{T}^2)$. Since $F$ is invertible, so is $f$, i.e., $f$ takes values in $\mathbb{C}^{\times}$. Moreover, the function $f$ satisfies
\[(\zeta^k,\zeta^l).f=(\zeta^k,\zeta^l).\Det(F)=\Det(\zeta^k\cdot F)=(\zeta^k)^m\cdot\Det(F)=\zeta^k\cdot f,
\]for all $k,l=0,1,\ldots,n-1$. Thus, $f$ is an invertible element in $C^{\infty}(\mathbb{T}^2)_{(1,0)}$ (here we have used that the action of $C_n\times C_n$ on $\mathcal{A}$ restricts to an action on $C_{\mathcal{A}}\cong C^{\infty}(\mathbb{T}^2)$). Likewise, the function $g:=\Det(G)$ defines a smooth $C_n\times C_n$-invariant function, i.e., an element in $C^{\infty}(\mathbb{T}^2)_{(0,0)}$. We further conclude that $g^n=f^n$. In particular, $g$ is invertible. The same construction applied to $F'$ and $G'$ gives invertible elements $f'\in C^{\infty}(\mathbb{T}^2)_{(0,1)}$ and $g'\in C^{\infty}(\mathbb{T}^2)_{(0,0)}$ which satisfy $(g')^n=(f')^n$.

(v) Finally, part (iv) leads to the desired contradiction: Indeed, we conclude just as in the proof of Theorem \ref{C_nC_m IV} (a) that the smooth functions $h:=\frac{f}{g}$ and $h':=\frac{f'}{g'}$ have image $C_n$ and define an equivalence of principal $C_n\times C_n$-bundles over 
\[\mathbb{T}^2/(C_n\times C_n)\cong\mathbb{T}^2\]
given through the map
\[\varphi:\mathbb{T}^2\rightarrow \mathbb{T}^2/(C_n\times C_n)\times (C_n\times C_n),\,\,\,z\mapsto(\pr(z),h(z),h'(z)).
\]This is not possible, since $\mathbb{T}^2$ is connected.
\end{example}

\subsection*{Example 4: Sections of Trivial Equivariant Algebra Bundles}\label{examples of NCP T^n-bundles V}

We now consider again a principal bundle $(P,M,G,q,\sigma)$ and, in addition, a unital locally convex algebra $A$. If $\pi:G\times A\rightarrow A$ defines a smooth action of $G$ on $A$ by algebra automorphisms, then 
\[(p,a).g:=(p.g,\pi(g^{-1}).a)
\]defines a (free) action of $G$ on $P\times A$ and one easily verifies that the trivial algebra bundle $(P\times A,P,A,q_P)$ is $G$-equivariant. Moreover, a short observation shows that the map
\[\alpha:G\times C^{\infty}(P,A)\rightarrow C^{\infty}(P,A),\,\,\,(g.f)(p):=\pi(g).f(p.g)
\]defines a smooth action of $G$ on $C^{\infty}(P,A)$ by algebra automorphisms. We recall that the corresponding fixed point algebra 
is isomorphic (as $C^{\infty}(M)$-module) to the space of sections of the associated algebra bundle
\[\mathbb{A}:=P\times_{\pi}A:=P\times_GA:=(P\times A)/G
\]over $M$. In fact, we refer to Construction \ref{associated vector bundle} if $A$ is finite-dimensional. If $A$ is not finite-dimensional, then one can use that bundle charts for $(P,M,G,q,\sigma)$ induces bundle charts for the associated algebra bundle.

\begin{lemma}\label{V.1}
The above situation applied to the trivial principal bundle $(M\times G,M,G,q_M,\sigma_G)$ leads to a smooth trivial NCP $G$-bundle $(C^{\infty}(M\times G,A),G,\alpha)$ with fixed point algebra $C^{\infty}(M,A)$.
\end{lemma}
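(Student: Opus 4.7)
The plan is to reduce Lemma V.1 to the already established trivial principal $G$-bundle case via a $G$-equivariant embedding. Specializing the preceding construction to $P = M \times G$, the action reads
\[\alpha(g,f)(m,h) = \pi(g).f(m,hg),\]
and its smoothness follows from the general smoothness of $\alpha$ established for equivariant trivial algebra bundles (together with the smooth exponential law, as in Proposition \ref{smoothness of the group action on the algebra of smooth functions} and Lemma \ref{dynamical system and trivial algebra bundles}).

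First I would identify the fixed point algebra. An element $f \in C^{\infty}(M\times G, A)$ is $\alpha$-invariant if and only if $f(m,hg) = \pi(g^{-1}).f(m,h)$ for all $g,h \in G$ and $m \in M$; setting $h = 1_G$ shows that $f$ is determined by $\tilde f := f(\cdot, 1_G) \in C^{\infty}(M,A)$ via $f(m,h) = \pi(h^{-1}).\tilde f(m)$. Conversely, any $\tilde f \in C^{\infty}(M,A)$ defines a smooth invariant $f$ by this formula (smoothness follows from smoothness of $\pi$), so the assignment $f \mapsto \tilde f$ is an isomorphism of unital locally convex algebras $C^{\infty}(M \times G, A)^G \cong C^{\infty}(M,A)$.

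Next I would verify the trivial NCP $G$-bundle conditions by transferring them from the underlying commutative trivial principal bundle $(M \times G, M, G, q_M, \sigma_G)$. Consider the embedding
\[i \colon C^{\infty}(M \times G) \hookrightarrow C^{\infty}(M \times G, A), \qquad f \mapsto f \cdot 1_A.\]
Because $\pi(g).1_A = 1_A$, the action $\alpha$ restricts on $i(C^{\infty}(M \times G))$ to the action induced from right translation of $G$ on the second factor of $M \times G$, so $i$ is $G$-equivariant. By Theorem \ref{TNPB for manifold}(b) together with its straightforward extension to compact abelian Lie groups $G = \mathbb{T}^n \times \Lambda$ (Remark \ref{blabla}), the smooth dynamical system associated to $(M \times G, M, G, q_M, \sigma_G)$ is a smooth trivial NCP $G$-bundle; in particular, each isotypic component appearing in conditions (G1) and (G2) of Definition \ref{trivial NCPT^nB again} already contains invertible elements, respectively elements whose suitable $n_i$-th power equals that of the given invertible element in the fixed point subalgebra.

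Applying $i$ ships these witnesses into $C^{\infty}(M \times G, A)$: invertibility is preserved because $1_A$ is the unit, equivariance of $i$ keeps them in the corresponding isotypic components of $\alpha$, and the required root identities in $C^{\infty}(M,A) \cdot 1_A \subseteq C^{\infty}(M \times G, A)^G$ are inherited verbatim from $C^{\infty}(M)$. Hence $(C^{\infty}(M \times G, A), G, \alpha)$ satisfies (G1) and (G2), finishing the argument. The one step that requires genuine care rather than bookkeeping is the identification of the fixed point algebra, since one must check that the recipe $f(m,h) = \pi(h^{-1}).\tilde f(m)$ indeed produces a smooth $A$-valued function on $M \times G$; everything else is a direct consequence of the equivariant inclusion $i$ and the commutative case already covered.
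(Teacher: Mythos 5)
Your proposal is correct and follows essentially the same route as the paper: the paper's proof consists precisely of observing that $C^{\infty}(M\times G)$ embeds $G$-equivariantly into $C^{\infty}(M\times G,A)$ via $f\mapsto f\cdot 1_A$, so that the witnesses for (G1) and (G2) coming from the classical trivial bundle $(M\times G,M,G,q_M,\sigma_G)$ transfer verbatim, with smoothness of $\alpha$ handled as in Lemma \ref{dynamical system and trivial algebra bundles}. Your explicit identification of the fixed point algebra via $f\mapsto f(\cdot,1_G)$ with inverse $\tilde f\mapsto \bigl((m,h)\mapsto\pi(h^{-1}).\tilde f(m)\bigr)$ is a correct elaboration of what the paper leaves implicit.
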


\begin{proof}
\,\,\,For the proof we again just have to note that the algebra $C^{\infty}(M\times G)$ is naturally embedded in $C^{\infty}(M\times G,A)$ through the unit element of $A$. The smoothness of the map $\alpha$ can be proved similarly to Lemma \ref{dynamical system and trivial algebra bundles}.
\end{proof}

\begin{theorem}\label{V.2}
If $(P,M,G,\pi,\sigma)$ is a principal bundle, $A$ a unital Fr\'echet algebra with trivial center and $\pi:G\times A\rightarrow A$ a smooth action of $G$ on $A$, then the smooth dynamical system $(C^{\infty}(P,A),G,\alpha)$\sindex[n]{$(C^{\infty}(P,A),G,\alpha)$} is a smooth NCP $G$-bundle.
\end{theorem}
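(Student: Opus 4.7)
The plan is to follow the same strategy as in Theorem \ref{pull-back IV}, namely to exhibit a suitable open cover of the spectrum $\Gamma_Z$ on which the localized dynamical systems are trivial NCP $G$-bundles via Lemma \ref{V.1}.

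First I would compute the fixed point algebra $Z = C_{C^{\infty}(P,A)}^G$. Since $A$ has trivial center, the center of $C^{\infty}(P,A)$ is canonically isomorphic to $C^{\infty}(P)$, and the restricted action of $G$ is the one induced by the right action of $G$ on $P$. Hence by Proposition \ref{fixed point algebra of principal bundles} we obtain $Z \cong C^{\infty}(M)$, and Proposition \ref{spec of C(M) top} identifies $\Gamma_Z$ with $M$ as topological spaces.

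Next I would choose an open cover $(U_i)_{i\in I}$ of $M$ such that $V_i := q^{-1}(U_i) \cong U_i \times G$ as principal bundles, and for each $i\in I$ a $U_i$-defining function $f_i \in C^{\infty}(M)$ (cf.\ Theorem \ref{whitneys theorem}). Setting $h_i := f_i \circ q$ yields an element of $Z$ which is a $V_i$-defining function on $P$, so that $\{D(h_i)\}_{i\in I}$ covers $\Gamma_Z$. By Corollary \ref{C(M)_f=C(M_f)} the map
\[
\phi_{V_i} : C^{\infty}(P,A)_{\{h_i\}} \longrightarrow C^{\infty}(V_i, A), \quad [F]\mapsto F\circ\left(\tfrac{1}{h_i}\times\id_{V_i}\right)
\]
is an isomorphism of unital Fr\'echet algebras, and a direct calculation using the definition of $\alpha_{\{h_i\}}$ (Proposition \ref{T^n action on spec again}) shows that it is $G$-equivariant with respect to the actions induced from $\alpha$.

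Finally, using the trivialization $V_i \cong U_i \times G$ and the fact that the action $\alpha$ is defined pointwise in $P$ by composing with the right translation on $P$ and applying $\pi$, we obtain a $G$-equivariant isomorphism of unital Fr\'echet algebras $C^{\infty}(V_i, A) \cong C^{\infty}(U_i \times G, A)$, where the right-hand side carries the action from Lemma \ref{V.1}. Since that lemma asserts that $(C^{\infty}(U_i \times G, A), G, \alpha)$ is a smooth trivial NCP $G$-bundle, the localized systems $(C^{\infty}(P,A)_{\{h_i\}}, G, \alpha_{\{h_i\}})$ are smooth trivial NCP $G$-bundles, and the definition of NCP $G$-bundle is verified. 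The main obstacle I expect is the bookkeeping in step three, namely carefully verifying that the chain of isomorphisms between $C^{\infty}(P,A)_{\{h_i\}}$, $C^{\infty}(V_i, A)$ and $C^{\infty}(U_i\times G, A)$ intertwines the three \emph{a priori} different $G$-actions (the localized action, the restriction of $\alpha$ to $V_i$, and the action of Lemma \ref{V.1}); this is routine but has to be written out explicitly because the action of $G$ on the $A$-factor via $\pi$ only becomes visible after passing through the local trivialization.
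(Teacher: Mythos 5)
Your proposal is correct and follows essentially the same route as the paper's proof: identify $Z\cong C^{\infty}(M)$ via the trivial center of $A$ and Proposition \ref{fixed point algebra of principal bundles}, cover $M$ by trivializing opens with defining functions $f_i$, pull them back to $P$, localize using the smooth localization isomorphism, and reduce to Lemma \ref{V.1} through the $G$-equivariant identification $C^{\infty}(P_{U_i},A)\cong C^{\infty}(U_i\times G,A)$. The only cosmetic difference is that you invoke Corollary \ref{C(M)_f=C(M_f)} where the paper cites Corollary \ref{algebra section 3}; both apply here since $C^{\infty}(P,A)$ is the section algebra of a trivial algebra bundle, and your explicit attention to intertwining the three $G$-actions is exactly the point the paper treats tersely.
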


\begin{proof}
\,\,\,
(i) We first note that $Z\cong C^{\infty}(M)$ (cf. Proposition \ref{fixed point algebra of principal bundles}). Hence, $\Gamma_Z$ is homeomorphic to $M$ by Proposition \ref{spec of C(M) top}.

(ii) Next, we choose an open cover $(U_i)_{i\in I}$ of $M$ such that each $P_i:=P_{U_i}$ is a trivial principal $G$-bundle over $U_i$, i.e., such that $P_i\cong U_i\times G$ holds for each $i\in I$. Further, we choose for all $i\in I$ a $U_i$-defining function $f_i$ and note that each function $h_i:=f_i\circ q$ is a (smooth) $P_i$-defining function $f_i$.

(iii) For $p\in P$ we choose $i\in I$ with $q(p)\in U_i$. Then $h_i$ is an element in $Z$ with $h_i(p)\neq 0$ and we conclude from Corollary \ref{algebra section 3} that the map
\[\phi_{P_i}:C^{\infty}(P,A)_{\{h_i\}}\rightarrow C^{\infty}(P_i,A),\,\,\,[F]\mapsto F\circ\left(\frac{1}{h_i}\times\id_{P_i}\right)
\]is an isomorphism of unital Fr\'echet algebras. Moreover, the map $\phi_{P_i}$ is $G$-equivariant. In fact, we have
\[(\phi_{P_i}\circ\alpha_{\{h_i\}})(g,[F])=\alpha(g,\phi_{P_i}([F]))
\]for all $g\in G$ and $[F]\in C^{\infty}(P,A)_{\{h_i\}}$. Since the natural isomorphism between $C^{\infty}(P_i,A)$ and $C^{\infty}(U_i\times G,A)$ is also a $G$-equivariant isomorphism of unital Fr\'echet algebras, the dynamical system 
\[(C^{\infty}(P_i,A)_{\{h_i\}},G,\alpha_{\{h_i\}})
\]carries the structure of a smooth trivial NCP $G$-bundle (cf. Lemma \ref{V.1}).
\end{proof}

\begin{example}\label{V.3}
We want to apply Theorem \ref{V.2} to the the non-trivial principal bundle (covering) defined by the natural action of $C_m\times C_m$ on $\mathbb{T}^2$, i.e., by 
\[(z_1,z_2).(\zeta^k,\zeta^l):=(\zeta^k\cdot z_1,\zeta^l\cdot z_2)
\]for $k,l=0,1,\ldots,m-1$, the algebra $M_m(\mathbb{C})$ and the action of $C_m\times C_m$ on $M_m(\mathbb{C})$ defined by 
\[(\zeta^k,\zeta^l).A:=R^lS^kAS^{-k}R^{-l}
\]for $k,l=0,1,\ldots,m-1$. Here, $R$ and $S$ are defined as in Proposition \ref{rational quantumtori} ($\theta=\frac{1}{m}$). The corresponding smooth dynamical system $(C^{\infty}(\mathbb{T}^2,M_m(\mathbb{C})),C_m\times C_m,\alpha)$ is a NCP $C_m\times C_m$-bundle with fixed point algebra the rational quantum torus $\mathbb{T}^2_{\frac{1}{m}}$ (cf. Proposition \ref{rational quantumtori}). According to Example \ref{the matrix algebra}, the algebra $M_m(\mathbb{C})$ carries the structure of a trivial NCP $C_m\times C_m$-bundle. Therefore, it turns out that the same holds for $(C^{\infty}(\mathbb{T}^2,M_m(\mathbb{C})),C_m\times C_m,\alpha)$ since $M_m(\mathbb{C})$ is naturally embedded in $C^{\infty}(\mathbb{T}^2,M_m(\mathbb{C}))$ through the constant maps.
\end{example}

\begin{remark}\label{non-triviality of the previous construction for example 4}{\bf(Non-triviality of the previous construction).}
Non-trivial examples can be constructed similarly as in Example \ref{non-triviality of the previous construction for example 3}.
\end{remark}

\subsection*{Example 5: A Very Concrete Example of a NCP Torus Bundle}\label{examples of NCP T^n-bundles IV}

Let $\pi:\mathbb{R}\rightarrow\mathbb{T}$ be the universal covering of $\mathbb{T}$, $A=C^{\infty}(\mathbb{T}^2)$ and note that the map
\[\gamma:\mathbb{Z}\rightarrow\Aut(A),\,\,\,(\gamma(n).f)(z_1,z_2)=f(z_1,z_1^nz_2)
\]defines a (smooth) action of $\mathbb{Z}$ on $A$. We thus can form the associated algebra bundle
\begin{align}
q:\mathbb{A}:=\mathbb{R}\times_{\gamma} A\rightarrow\mathbb{T},\,\,\,[r,f]\mapsto\pi(r),\label{associated algebra bundle}
\end{align}
with fibre $A$.

\begin{proposition}
The space $\Gamma\mathbb{A}$ of section of the bundle \emph{(}\ref{associated algebra bundle}\emph{)} carries the structure of a non-trivial NCP $\mathbb{T}$-bundle.
\end{proposition}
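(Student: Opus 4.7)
The plan is to equip $\Gamma\mathbb{A}$ with a natural $\mathbb{T}$-action, verify the local triviality requirement of Definition \ref{NCPT^nB again} by applying the smooth localization of Corollary \ref{algebra section 3} over trivializing neighbourhoods of the base $\mathbb{T}$, and then establish non-triviality by identifying the associated classical principal $\mathbb{T}$-bundle with the Heisenberg nilmanifold fibred over $\mathbb{T}^2$. First I would observe that the $\mathbb{T}$-action on $A = C^{\infty}(\mathbb{T}^2)$ defined by $(t.f)(w_1,w_2) := f(w_1, t w_2)$ commutes with $\gamma$, since
\[
(\gamma(n).(t.f))(w_1,w_2) = f(w_1, t w_1^n w_2) = (t.(\gamma(n).f))(w_1,w_2),
\]
so it descends to a smooth fibrewise $\mathbb{T}$-action by algebra automorphisms on $\mathbb{A}$ and, as in Proposition \ref{dynamical system and algebra bundles}, induces a smooth action $\alpha: \mathbb{T} \times \Gamma\mathbb{A} \to \Gamma\mathbb{A}$.

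Next I would verify the NCP structure. Because $A$ is commutative, $C_{\Gamma\mathbb{A}} = \Gamma\mathbb{A}$, and the fixed-point subalgebra $A^{\mathbb{T}} = C^{\infty}(\mathbb{T})$ (functions of $w_1$ alone) is acted on trivially by $\gamma$. Consequently the sub-bundle $\mathbb{A}^{\mathbb{T}} = \mathbb{R} \times_{\gamma} A^{\mathbb{T}} \to \mathbb{T}$ is trivial, and $Z := (\Gamma\mathbb{A})^{\mathbb{T}} = \Gamma(\mathbb{A}^{\mathbb{T}}) \cong C^{\infty}(\mathbb{T}, C^{\infty}(\mathbb{T})) \cong C^{\infty}(\mathbb{T}^2)$, so $\Gamma_Z \cong \mathbb{T}^2$. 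Given $\chi \in \Gamma_Z$ corresponding to $(\tau_0, w_1^0) \in \mathbb{T}^2$, I would choose an open neighbourhood $U \subseteq \mathbb{T}$ of $\tau_0$ over which the universal cover $\pi: \mathbb{R} \to \mathbb{T}$ (and hence $\mathbb{A}$) trivializes $\mathbb{T}$-equivariantly, and pick a $U$-defining function $f_U$ via Theorem \ref{whitneys theorem}. Viewing $f_U$ as an element $z \in Z$ through pullback along the first projection of $\mathbb{T}^2 \cong \Gamma_Z$, we have $\chi(z) \neq 0$, and Corollary \ref{algebra section 3} together with the local trivialization yields a $\mathbb{T}$-equivariant isomorphism
\[
(\Gamma\mathbb{A})_{\{z\}} \;\cong\; \Gamma\mathbb{A}|_U \;\cong\; C^{\infty}(U \times \mathbb{T}^2),
\]
where $\mathbb{T}$ acts on the right-hand side by translation in the last coordinate. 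This is the smooth function algebra of the trivial principal $\mathbb{T}$-bundle $U \times \mathbb{T}^2 \to U \times \mathbb{T}$, so by Theorem \ref{TNPB for manifold} (b) it is a smooth trivial NCP $\mathbb{T}$-bundle, completing the proof that $(\Gamma\mathbb{A}, \mathbb{T}, \alpha)$ is a smooth NCP $\mathbb{T}$-bundle.

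For non-triviality, the key step is to identify $\Gamma\mathbb{A}$ as a smooth function algebra. A section of $\mathbb{A}$ corresponds to a $\mathbb{Z}$-equivariant map $F: \mathbb{R} \to A$ with $F(r+n)(w_1,w_2) = F(r)(w_1, w_1^{-n} w_2)$, and the assignment $F \mapsto \widetilde{F}(r,w_1,w_2) := F(r)(w_1,w_2)$ yields an isomorphism $(\Gamma\mathbb{A}, \mathbb{T}, \alpha) \cong (C^{\infty}(P), \mathbb{T}, \alpha_P)$ of smooth dynamical systems, where
\[
P := (\mathbb{R} \times \mathbb{T}^2)/\mathbb{Z}, \qquad n.(r,w_1,w_2) := (r+n, w_1, w_1^n w_2),
\]
and $\mathbb{T}$ acts freely on $P$ by $t.[r,w_1,w_2] := [r, w_1, t w_2]$, with $P/\mathbb{T} \cong \mathbb{T}^2$. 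The three-manifold $P$ is the Heisenberg nilmanifold (the mapping torus of the Dehn twist $(w_1,w_2) \mapsto (w_1, w_1 w_2)$ on $\mathbb{T}^2$), and the induced principal $\mathbb{T}$-bundle $P \to \mathbb{T}^2$ is classically non-trivial — for example its total space satisfies $H^1(P,\mathbb{Z}) \cong \mathbb{Z}^2$, so $P$ is not diffeomorphic to $\mathbb{T}^3$. If $(\Gamma\mathbb{A}, \mathbb{T}, \alpha)$ were a trivial NCP $\mathbb{T}$-bundle, Theorem \ref{TNPB for manifold} (a) applied to $(C^{\infty}(P), \mathbb{T}, \alpha_P)$ would make $P \to \mathbb{T}^2$ a trivial principal bundle, a contradiction. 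The main obstacle will be presenting the identification of $P$ with the Heisenberg nilmanifold concisely and invoking its non-triviality without descending into cohomology computations.
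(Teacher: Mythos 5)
Your proposal is correct and follows essentially the same route as the paper: both identify $\Gamma\mathbb{A}$ with the smooth functions on the Heisenberg nilmanifold $P=(\mathbb{R}\times\mathbb{T}^2)/\mathbb{Z}$ carrying the residual free $\mathbb{T}$-action over $\mathbb{T}^2$, and both derive non-triviality from the fact that $P$ is not diffeomorphic to $\mathbb{T}^3$. The only cosmetic differences are that you verify the localization condition of Definition \ref{NCPT^nB again} by hand over trivializing arcs of the base circle instead of invoking the Reconstruction Theorem \ref{NCT^nB for manifold again} (b), and that you distinguish $P$ from $\mathbb{T}^3$ via $H^1(P,\mathbb{Z})\cong\mathbb{Z}^2$ rather than via the non-abelian fundamental group $\pi_1(P)\cong\mathbb{Z}\ltimes_S\mathbb{Z}^2$ computed in the paper.
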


\begin{proof}
\,\,\,The proof of this claim is divided into the following five steps:

(i) We first note that the map
\[\Psi:C^{\infty}(\mathbb{R},A)^{\mathbb{Z}}\rightarrow\Gamma\mathbb{A},\,\,\,\Psi(f)(\pi(r)):=[r,f(r)],
\]is an isomorphism of unital Fr\'echet algebras. Indeed, both $C^{\infty}(\mathbb{R},A)^{\mathbb{Z}}$ and $\Gamma\mathbb{A}$ are unital Fr\'echet algebras. An easy calculation shows that the map $\Phi$ is an isomorphism of unital algebras, and its continuity is a consequence of the topology on $\Gamma\mathbb{A}$ (cf. Definition \ref{top on space of sections}). Finally, Proposition \ref{open mapping theorem} implies that $\Psi$ is open.

(ii) From Lemma \ref{smooth exp law}, we conclude that
\[C^{\infty}(\mathbb{R},A)^{\mathbb{Z}}=\{f:\mathbb{R}\rightarrow A:\,(\forall r\in\mathbb{R},n\in\mathbb{Z})\,f(r+n)=\gamma(-n).f(r)\}
\]is isomorphic (as a unital Fr\'echet algebra) to
\begin{align}
\{f^{\wedge}:\mathbb{R}\times\mathbb{T}^2\rightarrow\mathbb{C}:\,(\forall r\in\mathbb{R},(z_1,z_2)\in\mathbb{T}^2,n\in\mathbb{Z})\,f^{\wedge}(r+n,z_1,z_2)=f^{\wedge}(r,z_1,z_1^{-n}z_2)\}.\label{equation example}
\end{align}

(iii) The action of $\mathbb{Z}$ on $\mathbb{R}\times\mathbb{T}^2$ corresponding to (\ref{equation example}) is given by 
\[\sigma:\mathbb{R}\times\mathbb{T}^2\times\mathbb{Z}\rightarrow\mathbb{R}\times\mathbb{T}^2,\,\,\,((r,z_1,z_2),n)\mapsto(r+n,z_1,z_1^nz_2).
\]Moreover, this action is free. To verify its properness, we take two compact subsets $K$ and $L$ of $\mathbb{R}\times\mathbb{T}^2$. Then there exists $n_0\in\mathbb{N}$ such that the projection of $K$ onto $\mathbb{R}$ is contained in the interval $[-n_0,n_0]$. This implies that the set $\{n\in\mathbb{Z}:\,K\cap L.n\neq\emptyset\}$ is finite and thus that $\sigma$ is proper (cf. [Ne08b], Example 1.2.2 (a)). In view of the Quotient Theorem (cf. Remark \ref{principal bundles II} (c)), we get a $\mathbb{Z}$-principal bundle
\[(\mathbb{R}\times\mathbb{T}^2,M,\mathbb{Z},\pr,\sigma),
\]where $M:=(\mathbb{R}\times\mathbb{T}^2)/\mathbb{Z}$ and $\pr:\mathbb{R}\times\mathbb{T}^2\rightarrow M$ denotes the canonical quotient map. In particular, we conclude from (i), (ii) and the previous discussion that $\Gamma\mathbb{A}$ is canonically isomorphic to $C^{\infty}(M)$. 


(iv) Next, one easily verifies that the map
\[\sigma':M\times\mathbb{T}\rightarrow M,\,\,\,([(r,z_1,z_2)],z)\mapsto [(r,z_1,z_2z)]
\]defines a smooth action of $\mathbb{T}$ on $M$, which is is free and proper. We thus get a $\mathbb{T}$-principal bundle
\[(M,M/\mathbb{T},\mathbb{T},\pr',\sigma'),
\]where $\pr':M\rightarrow M/\mathbb{T}$ denotes the canonical quotient map. In particular, Theorem \ref{NCT^nB for manifold again} (b) implies that the triple $(C^{\infty}(M),\mathbb{T},\alpha)$ is a smooth NCP $\mathbb{T}$-bundle. 

(v) It remains to show that $M$ is non-trivial as principal $\mathbb{T}$-bundle over $M/\mathbb{T}\cong\mathbb{T}^2$. For this it is enough to show that $\pi_1(M)\ncong\mathbb{Z}^3$. Indeed, if $M\cong\mathbb{T}^3$, then $\pi_1(M)\cong\mathbb{Z}^3$. We proceed as follows: Let $\mathbb{R}\ltimes_S\mathbb{R}^2$ be the (left-) semidirect product of $\mathbb{R}$ and $\mathbb{R}^2$ defined by the homomorphism
\[S:\mathbb{R}\rightarrow\Aut(\mathbb{R}^2),\,\,\,S(r)(x,y):=(x,y+rx).
\]Then the assignment
\[(r,x,y).n:=(n,0,0)(r,x,y)=(r+n,x,y+rx),\,\,\,r,x,y\in\mathbb{R}, n\in\mathbb{Z},
\]is the lifting of the action $\sigma$ of part (iii) to (the universal covering) $\mathbb{R}^3$ ($\cong\mathbb{R}\ltimes_S\mathbb{R}^2$ as manifolds). In particular, the homogeneous space defined by the discrete subgroup $\mathbb{Z}\ltimes_S\mathbb{Z}^2$ of $\mathbb{R}\ltimes_S\mathbb{R}^2$ is diffeomorphic to $M$, i.e.,
\[(\mathbb{R}\ltimes_S\mathbb{R}^2)/(\mathbb{Z}\ltimes_S\mathbb{Z}^2)\cong M.
\]We thus conclude from the corresponding exact sequence of homotopy groups (cf. [Br93], Theorem VII.6.7 or [Ne08b], Theorem 6.3.17) that the map
\[\delta_1:\pi_1(M)\rightarrow\pi_0(\mathbb{Z}\ltimes_S\mathbb{Z}^2)\cong\mathbb{Z}\ltimes_S\mathbb{Z}^2,\,\,\,\delta_1([\gamma]):=(n,m,m'),
\]where $(n,m,m')\in\mathbb{Z}\ltimes_S\mathbb{Z}^2$ is such that $\widetilde{\gamma}(1)=\widetilde{\gamma}(0).(n,m,m')$ holds for a continuous lift $\widetilde{\gamma}:[0,1]\rightarrow\mathbb{R}\ltimes_S\mathbb{R}^2$ of the loop $\gamma$ in $M$ with $\widetilde{\gamma}(0)=(0,0,0)$, is an isomorphism of groups. This proves the claim.
\end{proof}

\chapter{Characteristic Classes of Lie Algebra Extensions}\label{chapter characteristic classes of lie algebra extensions}

Characteristic classes are topological invariants of principal bundles and vector bundles associated to principal bundles.
The theory of characteristic classes was started in the 1930s by Stiefel and Whitney. Stiefel studied certain homology classes of the tangent bundle $TM$ of a smooth manifold $M$, while Whitney considered the case of an arbitrary sphere bundle and introduced the concept of a characteristic cohomology class. In the next decade, Pontryagin constructed important new characteristic classes by studying the homology of real Grassmann manifolds and Chern defined characteristic classes for complex vector bundles. Nowadays, characteristic classes are an important tool for both mathematics and modern physics. For example, they provide a way to measure the non-triviality of a principal bundle respectively the non-triviality of an associated vector bundle. The Chern--Weil homomorphism of a principal bundle is an algebra homomorphism from the algebra of polynomials invariant under the adjoint action of a Lie group $G$ on the corresponding Lie algebra $\mathfrak{g}$ into the even de Rham cohomology $H_{\text{dR}}^{2\bullet}(M,\mathbb{K})$\sindex[n]{$H_{\text{dR}}^{2\bullet}(M,\mathbb{K})$} of the base space $M$ of a principal bundle $P$. This map is achieved by evaluating an invariant polynomial $f$ of degree $k$ on the curvature $\Omega$ of a connection $\omega$ on $P$ and thus obtaining a closed form on the base. A nice reference for these statements can be found in [KoNo69], Chapter XII.

Around 1970, another set of characteristic classes called the \emph{secondary characteristic classes} have been discovered. The secondary characteristic classes are also topological invariants of principal bundles which are derived from the curvature of adequate connections. They appear for example in the Lagrangian formulation of modern quantum field theories. The best known of these classes are the Chern--Simons classes. For a detailed background and examples of secondary characteristic classes arising in geometry we refer to [MaMa92].

In 1985, P. Lecomte described a general cohomological construction which generalizes the classical Chern--Weil homomorphism. This construction associates characteristic classes to every extension of Lie algebras. The classical construction of Chern and Weil arises in this context from an extension of Lie algebras commonly known as the \emph{Atiyah sequence} (cf. Example \ref{Attia sequence}).

The aim of this chapter is to define secondary characteristic classes in the setting of Lie algebra extensions. This will also provide a new proof of Lecomte's construction.

\section{Covariant Derivatives and Curvature}

In this section we discuss the algebraic fundamentals of the geometric concepts of covariant derivative and curvature.

\begin{definition}\label{covariat differential}{\bf(The covariant differential).}\index{Covariant Differential}
Let $\mathfrak{g}$ be a Lie algebra and $V$ a vector space, considered as a trivial $\mathfrak{g}$-module, so that we have the corresponding Chevalley-Eilenberg complex $(C(\mathfrak{g},V),d_{\mathfrak{g}})$\sindex[n]{$(C(\mathfrak{g},V),d_{\mathfrak{g}})$}. We now twist the differential in this complex with a linear map
\[S:\mathfrak{g}\rightarrow\End(V),
\]i.e., an element $S\in C^1(\mathfrak{g},\End(V))$. For this we first note that the bilinear evaluation map 
\[\ev:\End(V)\times V\rightarrow V,\,\,\,(\varphi,v)\mapsto\varphi(v)
\]leads to a linear operator
\[S_{\wedge}:C^p(\mathfrak{g},V)\rightarrow C^{p+1}(\mathfrak{g},V),\,\,\,\alpha\mapsto S\wedge_{\ev}\alpha.
\]The corresponding \emph{covariant differential on} $C(\mathfrak{g},V)$ is defined by
\[d_S:=S_{\wedge}+d_{\mathfrak{g}}:C^p(\mathfrak{g},V)\rightarrow C^{p+1}(\mathfrak{g},V)\,\,\,p\in\mathbb{N}_0.
\]This can also be written as
\begin{align}
d_S\alpha(x_0,\ldots,x_p)&:=\sum^p_{j=0}(-1)^j S(x_j).\alpha(x_0,\ldots,\widehat{x}_j,\ldots,x_p)\notag\\
&+\sum_{i<j}(-1)^{i+j}\alpha([x_i,x_j],x_0,\ldots,\widehat{x}_i,\ldots,\widehat{x}_j,\ldots,x_p).\notag
\end{align}
\end{definition}

For the following propositions we recall Example \ref{lie algebra bracket} for the wedge product $[\cdot,\cdot]$ defined by the Lie bracket of a Lie algebra:

\begin{proposition}\label{5.14}
\,\,\,Let $R_S(x,y)=[S(x),S(y)]-S([x,y])$ for $x,y\in\mathfrak{g}$. Then
\[R_S=d_{\mathfrak{g}}S+\frac{1}{2}[S,S]\in C^2(\mathfrak{g},\End(V)),
\]and for $\alpha\in C^p(\mathfrak{g},V)$ we have
\begin{align}
d^2_S\alpha=R_S\wedge_{\ev}\alpha.\label{eqn.3}
\end{align}
In particular $d^2_S=0$ if and only if $S$ is a homomorphism of Lie algebras, i.e., $R_S=0$.
\end{proposition}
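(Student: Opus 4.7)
The plan is to verify the identity at the level of $C^2(\mathfrak{g},\End(V))$ first, and then to compute $d_S^2$ by decomposing $d_S$ into a Chevalley--Eilenberg part and an algebraic wedge-with-$S$ part and using bilinear graded Leibniz-type rules.

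For the first identity, I would just evaluate both sides on a pair $(x,y) \in \mathfrak{g}^2$. Because $V$ (and hence $\End(V)$) is viewed as a trivial $\mathfrak{g}$-module, only the ``bracket part'' of the Chevalley--Eilenberg differential survives, giving $(d_{\mathfrak{g}}S)(x,y) = -S([x,y])$. On the other hand, the wedge bracket of $\End(V)$-valued $1$-cochains recalled in Example \ref{lie algebra bracket} yields $\frac{1}{2}[S,S](x,y) = [S(x),S(y)]$. Adding the two contributions produces exactly $R_S(x,y)$.

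For the main identity $d_S^2\alpha = R_S \wedge_{\ev}\alpha$, I would write $d_S = d_{\mathfrak{g}} + S_{\wedge}$ and expand
\[
d_S^2 = d_{\mathfrak{g}}^2 + (d_{\mathfrak{g}} S_{\wedge} + S_{\wedge} d_{\mathfrak{g}}) + S_{\wedge}^2.
\]
The first term vanishes because $V$ is a trivial module. For the middle term I would prove a graded Leibniz rule: for $\beta \in C^p(\mathfrak{g},\End(V))$ and $\alpha \in C^q(\mathfrak{g},V)$,
\[
d_{\mathfrak{g}}(\beta \wedge_{\ev} \alpha) = (d_{\mathfrak{g}}\beta)\wedge_{\ev}\alpha + (-1)^p \beta \wedge_{\ev} d_{\mathfrak{g}}\alpha,
\]
applied with $\beta = S$ (so $p=1$) to get $d_{\mathfrak{g}}S_{\wedge} + S_{\wedge}d_{\mathfrak{g}} = (d_{\mathfrak{g}}S)_{\wedge}$. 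For the last term I would establish the purely algebraic identity $S_{\wedge}(S_{\wedge}\alpha) = \frac{1}{2}[S,S]\wedge_{\ev}\alpha$, which boils down to antisymmetrization combined with $[\varphi,\psi] = \varphi\psi - \psi\varphi$ in $\End(V)$; the factor $\tfrac{1}{2}$ appears exactly as in the computation of the wedge bracket of two $1$-cochains. Putting the three pieces together and invoking the first part of the statement gives $d_S^2\alpha = (d_{\mathfrak{g}}S + \frac{1}{2}[S,S])\wedge_{\ev}\alpha = R_S\wedge_{\ev}\alpha$.

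The ``in particular'' claim then follows quickly: if $R_S=0$, formula (\ref{eqn.3}) directly gives $d_S^2 = 0$; conversely, applying $d_S^2 = 0$ to an arbitrary $v \in V = C^0(\mathfrak{g},V)$ yields $(d_S^2 v)(x,y) = R_S(x,y).v = 0$ for all $x,y \in \mathfrak{g}$ and $v \in V$, forcing $R_S = 0$, which by definition is precisely the statement that $S$ is a homomorphism of Lie algebras. The main obstacle is bookkeeping: establishing the graded Leibniz rule and the identity $S_{\wedge}^2 = \frac{1}{2}[S,S]_{\wedge}$ cleanly, since both require careful handling of signs and of the shuffle-type sums appearing in $\wedge_{\ev}$; once these two algebraic lemmas are in place, the rest is substitution.
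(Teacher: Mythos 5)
Your proposal is correct and follows essentially the same route as the paper: expanding $d_S^2=(d_{\mathfrak{g}}+S_{\wedge})^2$, killing $d_{\mathfrak{g}}^2$, handling the cross terms with the graded Leibniz rule of Proposition \ref{wedge-product and differential}, and identifying $S_{\wedge}^2$ with $(S\wedge_C S)_{\wedge}=\tfrac{1}{2}[S,S]_{\wedge}$ via the associativity of Example \ref{wedgeproduct for composition}, then evaluating on $C^0(\mathfrak{g},V)$ for the converse. (Only a cosmetic remark: $d_{\mathfrak{g}}^2=0$ holds for any module, not because $V$ is trivial.)
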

\begin{proof}
\,\,\,We divide the proof into two parts. For the definition of the wedge product $\wedge_C$ and the corresponding associativity properties we refer to Example \ref{wedgeproduct for composition}:

(i) For $\alpha\in C^p(\mathfrak{g},V)$ we get
\begin{align}
d^2_S\alpha &=d_S(S\wedge_{\ev}\alpha +d_{\mathfrak{g}}\alpha)\notag\\
&=(S\wedge_{\ev}(S\wedge_{\ev}\alpha))+S\wedge_{\ev}d_{\mathfrak{g}}\alpha +d_{\mathfrak{g}}(S\wedge_{\ev}\alpha)+d^2_{\mathfrak{g}}\alpha\notag\\
&=(S\wedge_C S)\wedge_{\ev}\alpha +S\wedge_{\ev}d_{\mathfrak{g}}\alpha +(d_{\mathfrak{g}}S\wedge_{\ev}\alpha -S\wedge_{\ev}d_{\mathfrak{g}}\alpha)\notag\\
&=(S\wedge_C S)\wedge_{\ev}\alpha +d_{\mathfrak{g}}S\wedge_{\ev}\alpha = (S\wedge_C S+d_{\mathfrak{g}}S)\wedge_{\ev}\alpha.\notag
\end{align}
Now,
\[(S\wedge_C S)(x,y)=S(x)S(y)-S(y)S(x)=[S(x),S(y)]=\frac{1}{2}[S,S](x,y)
\]proves (\ref{eqn.3}).

(ii) For $v\in V\cong C^0(\mathfrak{g},V)$ we obtain in particular $(d^2_Sv)(x,y)=R_S(x,y).v$,
showing that $d^2_S=0$ on $C^{\bullet}(\mathfrak{g},V)$ is equivalent to $R_S=0$, which means that $S:\mathfrak{g}\rightarrow(\End(V),[\cdot,\cdot])$ is a homomorphism of Lie algebras.
\end{proof}

The following proposition provides an abstract algebraic version of identities originating from the context of differential forms.

\begin{proposition}\label{abstract bianchi identity}
\,\,\,Suppose that $V$ is a Lie algebra, considered as a trivial $\mathfrak{g}$-module. Further, let $\sigma\in C^1(\mathfrak{g},V)$, $S:=\ad\circ\sigma$ and
\[R_{\sigma}:=d_{\mathfrak{g}}\sigma+\frac{1}{2}[\sigma,\sigma]\in C^2(\mathfrak{g},V),\,\,\,\text{i.e.},\,\,\,R_{\sigma}(x,y)=[\sigma(x),\sigma(y)]-\sigma([x,y]).
\]Then the following assertions hold:
\begin{itemize}
\item[\emph{(a)}]
$d^2_S\alpha=[R_{\sigma},\alpha]$ for $\alpha\in C^p(\mathfrak{g},V)$.
\item[\emph{(b)}]
$R_{\sigma}$ satisfies the abstract Bianchi identity\index{Bianchi Identity} $d_S R_{\sigma}=0$, i.e.,
\[\sum_{\text{cyc.}}[\sigma(x),R_{\sigma}(y,z)]-R_{\sigma}([x,y],z)=0.
\]
\item[\emph{(c)}]
For $\gamma\in C^1(\mathfrak{g},V)$ we have
\[R_{\sigma+\gamma}=R_{\sigma}+R_{\gamma}+[\sigma,\gamma]=R_{\sigma}+d_S\gamma+\frac{1}{2}[\gamma,\gamma].
\]
\end{itemize}
\end{proposition}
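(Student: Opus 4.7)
The three assertions all follow by expanding $R_\sigma$ in terms of $\sigma$ and reducing to the identity $R_S=\ad\circ R_\sigma$. I will treat (a) as the basic input, (b) as a consequence of the Jacobi identities in $V$ and $\mathfrak{g}$, and (c) as a direct computation in the graded setting.

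\textbf{Part (a).} The key observation is that, since $\ad\colon V\to\End(V)$ is a homomorphism of Lie algebras and $S=\ad\circ\sigma$ is linear in $\sigma$, one has
\begin{equation*}
R_S = d_\mathfrak{g}S+\tfrac12[S,S] \;=\; \ad\circ d_\mathfrak{g}\sigma + \ad\bigl(\tfrac12[\sigma,\sigma]\bigr)\;=\;\ad\circ R_\sigma.
\end{equation*}
Now apply Proposition \ref{5.14} to $S$ and $\alpha\in C^p(\mathfrak{g},V)$: we get $d_S^2\alpha=R_S\wedge_{\ev}\alpha$, and since $\ev(\ad(u),v)=[u,v]$, the wedge product $R_S\wedge_{\ev}\alpha$ on homogeneous arguments equals $[R_\sigma,\alpha]$ with the Lie bracket wedge product on $V$. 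This yields (a).

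\textbf{Part (b).} I will compute $(d_SR_\sigma)(x,y,z)$ from the explicit cochain formula in Definition \ref{covariat differential}: the ``$S$'' term contributes $\sum_{\text{cyc.}}[\sigma(x),R_\sigma(y,z)]$ (using $S(\cdot)=\ad\sigma(\cdot)$ and the skew-symmetry of $R_\sigma$), while the combinatorial term contributes $-\sum_{\text{cyc.}}R_\sigma([x,y],z)$. This already matches the displayed form of the Bianchi identity, so it remains to verify that this cyclic sum vanishes. Substituting $R_\sigma(a,b)=[\sigma(a),\sigma(b)]-\sigma([a,b])$ splits the expression into four cyclic sums: the double-bracket sum $\sum_{\text{cyc.}}[\sigma(x),[\sigma(y),\sigma(z)]]$ vanishes by the Jacobi identity in $V$; the sum $\sum_{\text{cyc.}}\sigma([[x,y],z])$ vanishes by the Jacobi identity in $\mathfrak{g}$; and the two remaining sums $-\sum_{\text{cyc.}}[\sigma(x),\sigma([y,z])]$ and $-\sum_{\text{cyc.}}[\sigma([x,y]),\sigma(z)]$ cancel after a cyclic relabeling and the skew-symmetry of $[\cdot,\cdot]$.

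\textbf{Part (c).} This is a direct expansion. Using bilinearity of $d_\mathfrak{g}$ and of the wedge bracket,
\begin{equation*}
R_{\sigma+\gamma} = d_\mathfrak{g}\sigma+d_\mathfrak{g}\gamma+\tfrac12\bigl([\sigma,\sigma]+[\sigma,\gamma]+[\gamma,\sigma]+[\gamma,\gamma]\bigr).
\end{equation*}
Since $\sigma,\gamma\in C^1(\mathfrak{g},V)$, the graded skew-symmetry of the bracket wedge gives $[\gamma,\sigma]=[\sigma,\gamma]$, and grouping terms yields the first formula $R_\sigma+R_\gamma+[\sigma,\gamma]$. The second formula is then obtained by recognizing $R_\gamma=d_\mathfrak{g}\gamma+\tfrac12[\gamma,\gamma]$ and absorbing $d_\mathfrak{g}\gamma+[\sigma,\gamma]=d_S\gamma$, which is precisely the definition of the covariant differential applied to $\gamma$.

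\textbf{Main obstacle.} The only nontrivial step is the bookkeeping in (b): one must carefully track the sign conventions for the wedge bracket on cochains and confirm that the cyclic rearrangement of $\sum_{\text{cyc.}}[\sigma([x,y]),\sigma(z)]$ really matches $\sum_{\text{cyc.}}[\sigma(x),\sigma([y,z])]$. Everything else is formal manipulation, and the argument cleanly reduces the Bianchi identity to the two Jacobi identities in $V$ and $\mathfrak{g}$.
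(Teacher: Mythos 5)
Your proposal is correct. Parts (a) and (c) follow essentially the same lines as the paper: (a) is exactly the identity $R_S=\ad\circ R_\sigma$ combined with Proposition \ref{5.14}, and (c) is the "easy calculation" the paper leaves out, which you carry out correctly (including the observation $[\gamma,\sigma]=[\sigma,\gamma]$ for $1$-cochains and $S\wedge_{\ev}\gamma=[\sigma,\gamma]$). For (b), however, you take a genuinely different route. The paper stays entirely inside the graded calculus: it uses the Leibniz rule of Proposition \ref{wedge-product and differential} to get $d_{\mathfrak{g}}[\sigma,\sigma]=2[d_{\mathfrak{g}}\sigma,\sigma]$, then computes $d_SR_\sigma=d_{\mathfrak{g}}^2\sigma+\tfrac12 d_{\mathfrak{g}}[\sigma,\sigma]+S\wedge R_\sigma=[d_{\mathfrak{g}}\sigma,\sigma]-[R_\sigma,\sigma]=-\tfrac12[[\sigma,\sigma],\sigma]=0$, the last step being the graded Jacobi identity for the wedge bracket. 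You instead evaluate $(d_SR_\sigma)(x,y,z)$ pointwise from the cochain formula, substitute the explicit expression for $R_\sigma$, and reduce to the ordinary Jacobi identities in $V$ and in $\mathfrak{g}$ plus a cyclic relabeling; I checked the sign bookkeeping (in particular $-\sum_{\text{cyc.}}[\sigma([x,y]),\sigma(z)]=+\sum_{\text{cyc.}}[\sigma(x),\sigma([y,z])]$) and it is right. The paper's argument is shorter and reusable wherever the wedge-bracket formalism is available, but it leans on the graded commutativity and Jacobi properties of $\wedge_L$, which are not all spelled out in the text; your computation is longer but completely elementary and self-verifying, which is a reasonable trade-off here.
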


\begin{proof}\,\,\,(a) Since $\ad:V\rightarrow\End(V)$ is a homomorphism of Lie algebras, the definition of $R_{\sigma}$ and Proposition \ref{5.14} leads for $\alpha\in C^p(\mathfrak{g},V)$ to
\[d^2_S\alpha=R_S\wedge_{ev}\alpha=(\ad\circ R_{\sigma})\wedge_{ev}\alpha=[R_{\sigma},\alpha].
\]

(b) With Proposition \ref{wedge-product and differential} we get 
\[d_{\mathfrak{g}}[\sigma,\sigma]=[d_{\mathfrak{g}}\sigma,\sigma]-[\sigma,d_{\mathfrak{g}}\sigma]=2[d_{\mathfrak{g}}\sigma,\sigma].
\]Now the abstract Bianchi identity follows from
\begin{align}
d_SR_{\sigma}&=(d_{\mathfrak{g}}+S_{\wedge})R_{\sigma}=d_{\mathfrak{g}}^2\sigma+\frac{1}{2}d_{\mathfrak{g}}[\sigma,\sigma]+S\wedge R_{\sigma}=[d_{\mathfrak{g}}\sigma,\sigma]+[\sigma,R_{\sigma}]\notag\\
&=[d_{\mathfrak{g}}\sigma,\sigma]-[R_{\sigma},\sigma]=-\frac{1}{2}[[\sigma,\sigma],\sigma]=0.\notag
\end{align}

(c) This is an easy calculation.
\end{proof}

We finish this section with the following useful proposition:

\begin{proposition}\label{wedgeproduct and covariant differential}
Suppose that $m:V\times V\rightarrow V$ is a bilinear map and that $S:\mathfrak{g}\rightarrow\der(V,m)$ is linear. Then we have for $\alpha\in C^p(\mathfrak{g},V)$ and $\beta\in C^q(\mathfrak{g},V)$ the relation
\begin{align}
d_S(\alpha\wedge_m\beta)=d_S\alpha\wedge_m\beta+(-1)^p\alpha\wedge_m d_S\beta.
\end{align}
\end{proposition}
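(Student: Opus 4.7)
The strategy is to split the covariant differential as $d_S = d_{\mathfrak{g}} + S_{\wedge}$ and verify the graded Leibniz rule separately for each summand. The first part, namely
\[d_{\mathfrak{g}}(\alpha\wedge_m\beta)=d_{\mathfrak{g}}\alpha\wedge_m\beta+(-1)^p\alpha\wedge_m d_{\mathfrak{g}}\beta,\]
is (presumably) exactly the statement of Proposition \ref{wedge-product and differential} applied to the bilinear map $m$, and so may be invoked without further work. The remaining task is thus to establish the analogous identity
\[S\wedge_{\ev}(\alpha\wedge_m\beta)=(S\wedge_{\ev}\alpha)\wedge_m\beta+(-1)^p\alpha\wedge_m(S\wedge_{\ev}\beta),\]
and the key input is that the values of $S$ lie in $\der(V,m)$, so each $S(x)$ is a derivation for $m$.

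I would carry out the second identity by a direct computation on tuples $(x_0,\ldots,x_{p+q})\in\mathfrak{g}^{p+q+1}$. Expanding $S\wedge_{\ev}(\alpha\wedge_m\beta)$ gives an alternating sum in which the index $j$ labels the position of the argument $x_j$ fed into $S$, and the remaining $p+q$ arguments are distributed over $\alpha$ and $\beta$ by the $(p,q)$-shuffles that define $\wedge_m$. At each such term the value of $S(x_j)$ is applied to a product $m(\alpha(\cdots),\beta(\cdots))$, and here the derivation property
\[S(x_j)\bigl(m(u,v)\bigr)=m(S(x_j)u,v)+m(u,S(x_j)v)\]
splits the contribution into two pieces, one in which $S(x_j)$ enters the $\alpha$-slot and one in which it enters the $\beta$-slot. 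Regrouping the resulting sum according to whether $x_j$ is absorbed into $\alpha$ or into $\beta$ recovers the two terms on the right-hand side, with the shift by $(-1)^p$ coming from moving the index $j$ past the $p$ arguments of $\alpha$ in the second case. This is essentially the same bookkeeping that yields the ordinary Leibniz rule for $d_{\mathfrak{g}}$ with respect to $\wedge_m$, but with the combinatorial role of ``$d_{\mathfrak{g}}$ differentiates one slot'' replaced by ``$S(x_j)$ differentiates the product $m$.''

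The main obstacle, such as it is, is purely combinatorial: keeping track of signs and of the reorganization of $(p,q)$-shuffles into $(p+1,q)$- and $(p,q+1)$-shuffles after the position of $x_j$ has been fixed on one side. A clean way to handle this, and the one I would favour, is to reduce everything to the case $\alpha\in C^0(\mathfrak{g},V)=V$ and $\beta\in C^0(\mathfrak{g},V)=V$ by exploiting that both sides of the desired identity are $\mathbb{K}$-multilinear and alternating in the arguments $x_0,\ldots,x_{p+q}$ and by using the graded associativity of $\wedge_C$ and $\wedge_m$ (as in the calculation in Proposition \ref{5.14}) to reduce the general statement to the derivation identity for a single application of $S(x)$ to a single product $m(u,v)$. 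Once that reduction is made, the claim becomes a one-line consequence of $S(x)\in\der(V,m)$, and substituting back and adding the $d_{\mathfrak{g}}$-contribution completes the proof.
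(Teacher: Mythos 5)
Your proposal is correct and follows essentially the same route as the paper: split $d_S=d_{\mathfrak{g}}+S_{\wedge}$, invoke Proposition \ref{wedge-product and differential} for the $d_{\mathfrak{g}}$ part, and deduce the remaining identity for $S_{\wedge}$ from $S(\mathfrak{g})\subseteq\der(V,m)$. The paper dispatches the shuffle bookkeeping you worry about more slickly, working with the non-alternated products: the derivation property gives $S\cdot(\alpha\cdot_m\beta)=(S\cdot\alpha)\cdot_m\beta+(\alpha\cdot_m(S\cdot\beta))^{\sigma}$ for the cycle $\sigma=(1\,2\ldots p+1)$, and applying $\Alt$ produces the sign $\sgn(\sigma)=(-1)^p$ in one stroke, which is cleaner than your proposed (and somewhat vague) reduction to the degree-zero case.
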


\begin{proof}
\,\,\,Since $d_S=d_{\mathfrak{g}}+S_{\wedge}$, Proposition \ref{wedge-product and differential} implies the assertion for $S=0$. It therefore remains to show that
\[S\wedge(\alpha\wedge_m\beta)=(S\wedge\alpha)\wedge_m\beta+(-1)^p\alpha\wedge_m(S\wedge\beta).
\]
We recall that
\[S\wedge(\alpha\wedge_m\beta)=\frac{1}{p!q!}\Alt(S\cdot(\alpha\cdot_m\beta))
\](cf. Remark \ref{definition of alt})and note that $S(\mathfrak{g})\subseteq\der(V,m)$ implies that
\[S\cdot(\alpha\cdot_m\beta)=(S\cdot\alpha)\cdot_m\beta+(\alpha\cdot_m(S\cdot\beta))^{\sigma},
\]where $\sigma=(1 2\ldots p+1)\in S_{p+q+1}$ is a cycle of length $p+1$. Mow the proposition follows from $\sgn(\sigma)=(-1)^p$.
\end{proof}

\section{Lecomte's Generalization of the Chern--Weil Map}
In this section we give a short overview over Lecomte's generalization of the Chern--Weil map. First of all we have to explain how to multiply $k$-tuples of Lie algebra cochains with $k$-linear maps. In the following let $\mathfrak{n}$ and $\mathfrak{g}$ be Lie algebras and $V$ be a vector space.

\begin{definition}\label{k-fold wedge product}
For a $k$-linear map $f:\mathfrak{n}^k\rightarrow V$ and $\varphi_1,\ldots,\varphi_k\in C^2(\mathfrak{g},\mathfrak{n})$ we define
\[f_{\varphi_1,\ldots,\varphi_k}(x_1,\ldots,x_{2k}):=\sum_{\sigma_{2i-1}<\sigma_{2i}}\sgn(\sigma)f(\varphi_1(x_{\sigma_1},x_{\sigma_2}),\ldots,\varphi_k(x_{\sigma_{2k-1}},x_{\sigma_{2k}})).
\]This means that
\[f_{\varphi_1,\ldots,\varphi_k}=\frac{1}{2^k}\Alt(f\circ(\varphi_1,\ldots,\varphi_k)),
\]\sindex[n]{$f_{\varphi_1,\ldots,\varphi_k}$}so that we may consider $f_{\varphi_1,\ldots,\varphi_k}$ as a $k$-fold wedge product of the $\varphi_i$, defined by $f$.
\end{definition}

\begin{remark}
(a) If $f\in\Mult^k(\mathfrak{n},V)$ is not symmetric there is always a symmetric map 
\[f_s\in\Sym^k(\mathfrak{n},V)\,\,\,\text{with}\,\,\,f_{\varphi_1,\ldots,\varphi_k}=(f_s)_{\varphi_1,\ldots,\varphi_k}.
\]Indeed, if $f\in\Mult^k(\mathfrak{n},V)$ is not symmetric and $f_s:=\frac{1}{k!}\sum_{\sigma\in S_k}f^{\sigma}$ (cf. Remark \ref{definition of alt}), then we have $f^{\sigma}_{\varphi_1,\ldots,\varphi_k}=f_{\varphi_1,\ldots,\varphi_k}$ because for any permutation $\sigma\in S_k$ the relation 
\[f\circ(\varphi_{\sigma(1)},\ldots,\varphi_{\sigma(k)})=(f\circ(\varphi_1,\ldots,\varphi_k))^{\tilde{\sigma}}
\]holds for an even permutation $\tilde{\sigma}\in S_{2k}$. In the following we will therefore just be concerned with symmetric maps.

(b) Now, we may also consider $f\in\Sym^k(\mathfrak{n},V)$ as a linear map on the \emph{symmetric tensor product}\index{Symmetric Tensor Product} (cf. [La02])
\[\tilde{f}:S^k(\mathfrak{n})\rightarrow V\,\,\,\text{with}\,\,\,\tilde{f}(x_1\otimes_s\cdots\otimes_s x_k)=f(x_1,\ldots,x_k).
\]Writing $\wedge_{\otimes_s}$ for the wedge products
\[C^p(\mathfrak{g},S^k(\mathfrak{n}))\times C^q(\mathfrak{g},S^m(\mathfrak{n}))\rightarrow C^{p+q}(\mathfrak{g},S^{k+m}(\mathfrak{n}))
\]defined by the canonical multiplication $S^k(\mathfrak{n})\times S^m(\mathfrak{n})\rightarrow S^{k+m}(\mathfrak{n})$, $(x,y)\mapsto x\otimes_s y$, we find the formula
\[f_{\varphi_1,\ldots,\varphi_k}=\tilde{f}\circ(\varphi_1\wedge_{\otimes_s}\cdots\wedge_{\otimes_s}\varphi_k),
\]expressing $f_{\varphi_1,\ldots,\varphi_k}$ simply as a composition of a linear map with an iterated wedge product.
\end{remark}
%
%
We now come to the central definition of this chapter:
\begin{definition}\label{g-invariant symmetric k-linear maps}
(a) Let 
\begin{align}
0\longrightarrow\mathfrak{n}{\longrightarrow}\widehat{\mathfrak{g}}\stackrel{q}{\longrightarrow}\mathfrak{g}\longrightarrow0\notag
\end{align}
be an extension of Lie algebras and $V$ be a $\mathfrak{g}$-module which we also consider as a $\widehat{\mathfrak{g}}$-module with respect to the action $x.v:=q(x).v$ for $x\in\widehat{\mathfrak{g}}$. Further, let $\sigma:\mathfrak{g}\rightarrow\widehat{\mathfrak{g}}$ be a linear section and define $R_{\sigma}\in C^2(\mathfrak{g},\mathfrak{n})$ by
\[R_{\sigma}(x,y)=[\sigma(x),\sigma(y)]-\sigma([x,y]).
\]For $f\in\Sym^k(\mathfrak{n},V)$ we define
\[f_{\sigma}:=f_{R_{\sigma},\ldots,R_{\sigma}}\in C^{2k}(\mathfrak{g},V)
\]\sindex[n]{$f_{\sigma}$}in the sense of Definition \ref{k-fold wedge product}.

(b) For $S(x):=\ad(\sigma(x))$ we write
\[\Sym^k(\mathfrak{n},V)^{\widehat{\mathfrak{g}}}:=
\{f\in\Sym^k(\mathfrak{n},V):\,x.f(y_1,\ldots,y_k)=\sum^k_{i=1}
f(y_1,\ldots,S(x)y_i,\ldots,y_k)\,\,\,\text{for all}\,\,\,x\in\mathfrak{g}\}.
\]for the set of $\widehat{\mathfrak{g}}$-equivariant symmetric $k$-linear maps.\sindex[n]{$\Sym^k(\mathfrak{n},V)^{\widehat{\mathfrak{g}}}$}
\end{definition}


\begin{theorem}\label{Lecomtes's Chern-Weil map}{\bf(Lecomte).}\index{Theorem!of Lecomte}
With the notation of Definition \ref{g-invariant symmetric k-linear maps} the following assertions hold:
\begin{itemize}
\item[\emph{(a)}]
For each $k\in\mathbb{N}_0$ we have a natural map
\[C_k:\Sym^k(\mathfrak{n},V)^{\widehat{\mathfrak{g}}}\rightarrow H^{2k}(\mathfrak{g},V),\,\,\,f\mapsto\frac{1}{k!}[f_{\sigma}],
\]which is independent of the choice of the section $\sigma$.
\item[\emph{(b)}]
Suppose, in addition, that $m_V:V\times V\rightarrow V$ is an associative multiplication and that $\mathfrak{g}$ acts on $V$ by derivations, i.e., $m_V$ is $\mathfrak{g}$-invariant. Then $(C^{\bullet}(\mathfrak{g},V),\wedge_{m_V})$ is an associative algebra, inducing an algebra structure on $H^{\bullet}(\mathfrak{g},V)$. Further, $m_V$ endows $\Sym^{\bullet}(\mathfrak{n},V)$ with the structure of an associative algebra such that the maps $(C_k)_{k\in\mathbb{N}_0}$ combine to an algebra homomorphism
\[C:\Sym^{\bullet}(\mathfrak{n},V)^{\widehat{\mathfrak{g}}}\rightarrow H^{2\bullet}(\mathfrak{g},V).
\]
\end{itemize}
\end{theorem}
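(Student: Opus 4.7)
The plan is to interpret each $f\in\Sym^k(\mathfrak{n},V)^{\widehat{\mathfrak{g}}}$ as a linear map $\widetilde f\colon S^k(\mathfrak{n})\to V$ on the symmetric power and to rewrite
\[
f_\sigma \;=\; \widetilde f\circ\bigl(R_\sigma^{\wedge_{\otimes_s} k}\bigr)\in C^{2k}(\mathfrak{g},V),
\]
where $R_\sigma^{\wedge_{\otimes_s} k}\in C^{2k}(\mathfrak{g},S^k(\mathfrak{n}))$ denotes the $k$-fold wedge product with respect to the symmetric multiplication on $\mathfrak{n}$. Define $S\colon\mathfrak{g}\to\End(\mathfrak{n})$ by $S(x):=\ad(\sigma(x))|_{\mathfrak{n}}$ and extend $S$ by derivations to a linear map $S^{(k)}\colon\mathfrak{g}\to\End(S^k(\mathfrak{n}))$. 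The invariance condition of Definition~\ref{g-invariant symmetric k-linear maps}(b) is precisely the statement that $\widetilde f$ intertwines $S^{(k)}$ with the given $\mathfrak{g}$-action on $V$, so $\widetilde f$ induces a chain map between the corresponding Chevalley--Eilenberg complexes. I note that this condition is in fact independent of the section $\sigma$, being equivalent to full $\widehat{\mathfrak{g}}$-invariance combined with the fact that $\mathfrak{n}$ acts trivially on $V$.

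First I would show that $f_\sigma$ is a cocycle. Proposition~\ref{abstract bianchi identity}(b) provides the Bianchi identity $d_S R_\sigma=0$, and iterating the Leibniz rule of Proposition~\ref{wedgeproduct and covariant differential} for the symmetric multiplication yields $d_{S^{(k)}}\bigl(R_\sigma^{\wedge k}\bigr)=0$. Post-composing with $\widetilde f$ and using the chain-map property then gives $df_\sigma=0$, where $d$ is the Chevalley--Eilenberg differential on $C^\bullet(\mathfrak{g},V)$, so $f_\sigma$ defines a class in $H^{2k}(\mathfrak{g},V)$.

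For the independence of $\sigma$, I would join two sections $\sigma_0,\sigma_1$ by the affine path $\sigma_t:=\sigma_0+t\gamma$ with $\gamma:=\sigma_1-\sigma_0\in C^1(\mathfrak{g},\mathfrak{n})$ (valid because $q\circ(\sigma_1-\sigma_0)=0$) and differentiate in $t$. Since $\mathfrak{n}$ is an ideal, a direct calculation yields $\tfrac{d}{dt}R_{\sigma_t}=d_{S_t}\gamma$, and another application of Proposition~\ref{wedgeproduct and covariant differential} combined with Bianchi gives
\[
\tfrac{d}{dt}R_{\sigma_t}^{\wedge k} \;=\; k\,d_{S_t^{(k)}}\!\bigl(R_{\sigma_t}^{\wedge(k-1)}\wedge_{\otimes_s}\gamma\bigr).
\]
Applying $\widetilde f$ and using the chain-map property, which is valid uniformly in $t$ by the section-independence observed above, makes $\tfrac{d}{dt}f_{\sigma_t}$ a coboundary. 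Integrating from $0$ to $1$ shows that $f_{\sigma_1}-f_{\sigma_0}$ is exact, so $C_k(f):=\tfrac{1}{k!}[f_\sigma]\in H^{2k}(\mathfrak{g},V)$ is independent of $\sigma$. This proves (a).

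For (b), I would first observe that the $\mathfrak{g}$-invariance of $m_V$ together with Proposition~\ref{wedgeproduct and covariant differential} (applied to $m=m_V$) shows that the Chevalley--Eilenberg differential is a graded derivation of $\wedge_{m_V}$, so the associative product on $(C^\bullet(\mathfrak{g},V),\wedge_{m_V})$ descends to $H^\bullet(\mathfrak{g},V)$. The multiplicativity of $C$ then reduces to the cochain-level identity
\[
\tfrac{1}{(k+l)!}(f\cdot g)_\sigma \;=\; \tfrac{1}{k!}f_\sigma\wedge_{m_V}\tfrac{1}{l!}g_\sigma,
\]
which one verifies by unpacking both sides through the formula $\widetilde{f\cdot g}=m_V\circ(\widetilde f\otimes_s\widetilde g)$ and the compatibility of $\wedge_{\otimes_s}$ with the symmetric multiplication on $S^\bullet(\mathfrak{n})$. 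The main obstacle I anticipate is the bookkeeping of combinatorial factors hidden in Definition~\ref{k-fold wedge product}, in the symmetric tensor product, and in the wedge products $\wedge_{\otimes_s}$ and $\wedge_{m_V}$: it is the correct tracking of these constants that forces the normalization $1/k!$ in the definition of $C_k$ and ensures compatibility with the associative structure required in (b).
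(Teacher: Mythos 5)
Your argument is essentially the paper's own route: the cocycle property via the Bianchi identity $d_S R_\sigma=0$ together with the Leibniz rule of Proposition \ref{wedgeproduct and covariant differential}, and the $\sigma$-independence via transgression along the affine path $\sigma_t=\sigma_0+t\gamma$, are exactly what the paper carries out in Section \ref{SCCLAE} (Theorem \ref{SCC main theorem} for $n=0,1$, i.e.\ Corollaries \ref{main corollary 1} and \ref{main corollary 2}, where your $\int_0^1\frac{d}{dt}f_{\sigma_t}\,dt$ is packaged as the Bott--Lecomte form $\Delta_f(\sigma_0,\sigma_1)$); your observation that the invariance condition must be read as full $\widehat{\mathfrak{g}}$-invariance so that the chain-map property holds uniformly in $t$ is a point the paper uses implicitly. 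For part (b) your reduction to the cochain identity is the right one (with the shuffle product on $\Sym^{\bullet}(\mathfrak{n},V)$ the binomial factor cancels the $1/k!$ normalizations), and the paper itself offers no more than a citation there.
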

\begin{proof}
\,\,\,For the proof of this theorem we refer to the original work of Lecomte [Lec85] or [Ne08b], Theorem 3.5.7.
\end{proof}

\section{Secondary Characteristic Classes of Lie Algebra Extensions}\label{SCCLAE}

In this section we present the theory of secondary characteristic classes in a purely algebraic way. This will allow us to associate secondary characteristic classes to every extension of Lie algebras. As in the classical theory of secondary characteristic classes our construction will provide another proof for Lecomte's generalization of the Chern--Weil map. We start with the following remark:

\begin{remark}\label{transformation rule}
If $\Delta_n$\sindex[n]{$\Delta_n$} denotes the $n$-simplex\index{$n$-Simplex} in $\mathbb{R}^{n+1}$ and $E_0:=\{x\in\mathbb{R}^{n+1}:\,x_0=0\}$, then we write
\[p:\Delta_n\rightarrow E_0,\,\,\,(t_0,\ldots,t_n)\mapsto (0,t_1,\ldots,t_n)
\]for the vertical projection on $E_0$. We further write $D_n$\sindex[n]{$D_n$} for the image of $p$, i.e., $D_n:=p(\Delta_n)$. Then
\[\Phi:D_n\rightarrow\Delta_n,\,\,\,(0,t_1,\ldots,t_n)\mapsto\left(1-\sum^n_{i=1}t_i,t_1,\ldots,t_n\right)
\]is a parametrization of $\Delta_n$ and the Gram determinant\index{Gramsche Determinant } 
\[g_{\Phi}:=\sqrt{\det({d\Phi}^T\cdot d\Phi)}
\]is constant. If $c_n$ denotes its value, then we obtain
\[\int_{\Delta_n}FdO=c_n\cdot\int_{D_n}(F\circ \Phi)d\lambda_n.
\]for every $F\in C^{\infty}(\Delta_n)$\sindex[n]{$C^{\infty}(\Delta_n)$}.
\end{remark}

\begin{construction}
Let 
\begin{align}
0\longrightarrow\mathfrak{n}{\longrightarrow}\widehat{\mathfrak{g}}\stackrel{q}{\longrightarrow}\mathfrak{g}\longrightarrow0\notag
\end{align}
be an extension of Lie algebras and $V$ be a $\mathfrak{g}$-module which we also consider as a $\widehat{\mathfrak{g}}$-module with respect to the action $x.v:=q(x).v$ for $x\in\widehat{\mathfrak{g}}$. Moreover, let $\sigma_0,\ldots,\sigma_n\in C^1(\mathfrak{g},\widehat{\mathfrak{g}})$ be linear sections of $q$. For each $t\in\Delta_n$ we define a linear section $\sigma_t\in C^1(\mathfrak{g},\widehat{\mathfrak{g}})$ of $q$ by
\[\sigma_t(x):=\sum^n_{i=0}t_i\cdot\sigma_i(x)\,\,\,\text{for}\,\,\,x\in\mathfrak{g}
\]and write $R_t:=R_{\sigma_t}:=d_{\mathfrak{g}}\sigma_t+\frac{1}{2}[\sigma_t,\sigma_t]\in C^2(\mathfrak{g},\mathfrak{n})$ for the corresponding curvature.
\end{construction}

\begin{definition}\label{SCC integral}{\bf(The Bott-Lecomte homomorphism).}\index{Bott-Lecomte Homomorphism}
For $f\in\Sym^k(\mathfrak{n},V)^{\widehat{\mathfrak{g}}}$ and $n\leq k$ we define 
\begin{align}
\Delta_f(\sigma_0,\ldots,\sigma_n):=\frac{1}{c_n}\cdot\int_{\Delta_n}f_{\alpha_1,\ldots,\alpha_n,R_t,\ldots,R_t}dO\in C^{2k-n}(\mathfrak{g},V),
\end{align}
\sindex[n]{$\Delta_f(\sigma_0,\ldots,\sigma_n)$}where $\alpha_i:=\sigma_i-\sigma_0\in C^1(\mathfrak{g},\mathfrak{n})$. We thus obtain a map
\[\Delta(\sigma_0,\ldots,\sigma_n):\Sym^k(\mathfrak{n},V)^{\widehat{\mathfrak{g}}}\rightarrow C^{2k-n}(\mathfrak{g},V)
\]defined by
\[f\mapsto\Delta_f(\sigma_0,\ldots,\sigma_n),
\]which extends to a map
\[\Delta(\sigma_0,\ldots,\sigma_n):\Sym^{\bullet}(\mathfrak{n},V)^{\widehat{\mathfrak{g}}}\rightarrow C^{\bullet}(\mathfrak{g},V).
\]\sindex[n]{$\Delta(\sigma_0,\ldots,\sigma_n)$}It is called the \emph{Bott-Lecomte homomorphism} relative to $(\sigma_0,\ldots,\sigma_n)$. 
\end{definition}

As in the classical case the following theorem gives an important relation between the forms $\Delta_f$:

\begin{theorem}\label{SCC main theorem}{\bf(The Main Theorem).}\index{Theorem!The Main}
For $f\in\Sym^k(\mathfrak{n},V)^{\widehat{\mathfrak{g}}}$ we have
\[(k-n+1)\cdot d_{\mathfrak{g}}(\Delta_f(\sigma_0,\ldots,\sigma_n))=\sum^n_{i=0}(-1)^i\Delta_f(\sigma_0,\ldots,\widehat{\sigma}_i,\ldots,\sigma_n),
\]where $\widehat{\sigma}_i$ means that $\sigma_i$ is omitted.
\end{theorem}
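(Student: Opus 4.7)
The approach is to apply a Stokes-type theorem on the $n$-simplex, using the abstract Bianchi identity (Proposition \ref{abstract bianchi identity}(b)) together with the $\widehat{\mathfrak{g}}$-invariance of $f$ to bridge the Chevalley--Eilenberg differential $d_{\mathfrak{g}}$ and the partial derivatives in the simplex parameters.

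First I would reparametrize via $\Phi\colon D_n \to \Delta_n$ from Remark \ref{transformation rule}; on $D_n$ the section becomes $\sigma_t = \sigma_0 + \sum_{j=1}^n t_j\alpha_j$ and the $1/c_n$ factor of Definition \ref{SCC integral} is absorbed into the change of measure, yielding
\[
\Delta_f(\sigma_0,\ldots,\sigma_n) = \int_{D_n} f_{\alpha_1,\ldots,\alpha_n,R_t,\ldots,R_t}\, d\lambda_n
\]
with $k-n$ copies of $R_t$. Setting $S_t := \ad\circ\sigma_t$, a direct calculation gives the fundamental identity $\partial R_t/\partial t_j = d_{\mathfrak{g}}\alpha_j + [\sigma_t,\alpha_j] = d_{S_t}\alpha_j$. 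Combining Proposition \ref{wedgeproduct and covariant differential}, the $\widehat{\mathfrak{g}}$-invariance of $f$, and the Bianchi identity $d_{S_t}R_t = 0$, one then obtains
\[
d_{\mathfrak{g}}\, f_{\alpha_1,\ldots,\alpha_n,R_t,\ldots,R_t} = \sum_{i=1}^n (-1)^{i-1}\, f_{\alpha_1,\ldots,d_{S_t}\alpha_i,\ldots,\alpha_n,R_t,\ldots,R_t},
\]
where the Bianchi identity kills all $R_t$-slot contributions and the signs $(-1)^{i-1}$ arise from moving $d_{\mathfrak{g}}$ past the $1$-cochains $\alpha_1,\ldots,\alpha_{i-1}$.

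Next, I would introduce $\omega_i(t) := f_{\alpha_1,\ldots,\widehat{\alpha}_i,\ldots,\alpha_n,R_t,\ldots,R_t}$ (with $k-n+1$ copies of $R_t$), and use the fundamental identity together with the symmetry of $f_{\varphi_1,\ldots,\varphi_k}$ in its arguments to compute
\[
\frac{\partial \omega_i}{\partial t_i} = (k-n+1)\, f_{\alpha_1,\ldots,d_{S_t}\alpha_i,\ldots,\alpha_n,R_t,\ldots,R_t},
\]
the factor $(k-n+1)$ coming from distributing the derivative symmetrically across the $R_t$-slots. Combining this with the previous display rewrites the identity as $(k-n+1)\, d_{\mathfrak{g}} f_{\alpha_1,\ldots,\alpha_n,R_t,\ldots,R_t} = \sum_{i=1}^n (-1)^{i-1}\partial_{t_i}\omega_i$, and integrating over $D_n$ and invoking Stokes' theorem (Fubini plus the fundamental theorem of calculus) transforms the right-hand side into a signed sum of integrals over the faces of $\partial D_n$.

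The boundary $\partial D_n$ consists of the $n$ coordinate faces $\{t_j = 0\}$ and the diagonal face $\{\sum_j t_j = 1\}$. On $\{t_j = 0\}$ the section $\sigma_t$ no longer involves $\sigma_j$, so the boundary integral reproduces $\Delta_f(\sigma_0,\ldots,\widehat{\sigma}_j,\ldots,\sigma_n)$; on the diagonal face, $t_0 = 0$ under $\Phi$ so $\sigma_0$ drops out and the integral reproduces $\Delta_f(\sigma_1,\ldots,\sigma_n)$, accounting for the $i=0$ term. Matching induced orientations with the signs from Stokes then produces the $(-1)^i$ pattern on the right-hand side of the theorem. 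The hard part will be the sign and combinatorial bookkeeping: carefully verifying that the symmetric-wedge identity for $\partial_{t_i}\omega_i$ matches the $i$-th summand of $d_{\mathfrak{g}} f_{\alpha_1,\ldots,\alpha_n,R_t,\ldots,R_t}$ with the correct sign under the conventions of Definition \ref{k-fold wedge product}, and that the diagonal face of $D_n$ contributes with the sign $(-1)^0 = +1$ corresponding to the omission of $\sigma_0$.
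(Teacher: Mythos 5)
Your proposal is correct and follows essentially the same route as the paper's proof: the identity $\partial R_t/\partial t_i = d_{S_t}\alpha_i$, the abstract Bianchi identity, Proposition \ref{wedgeproduct and covariant differential} together with the symmetric Leibniz rule to absorb the factor $k-n+1$, and then integration of each $\partial_{t_i}$ over the simplex (the paper carries out the iterated fundamental theorem of calculus directly via the parametrization of Remark \ref{transformation rule} rather than invoking Stokes, but this is the same computation, with the $t_i=0$ faces producing the terms $i=1,\ldots,n$). The one step you underestimate is the diagonal face: what arrives there is the signed sum $\sum_{i=1}^n(-1)^{i-1}f_{\alpha_1,\ldots,\widehat{\alpha}_i,\ldots,\alpha_n,(R_t)_0,\ldots,(R_t)_0}$, and identifying this with $\Delta_f(\sigma_1,\ldots,\sigma_n)$ --- whose defining integrand uses the differences $\sigma_j-\sigma_1$ rather than $\alpha_j=\sigma_j-\sigma_0$ --- requires expanding $(\alpha_2-\alpha_1)\wedge_{\otimes_s}\cdots\wedge_{\otimes_s}(\alpha_n-\alpha_1)$ and using $\alpha\wedge_{\otimes_s}\alpha=0$ for $1$-cochains, which is a genuine multilinear-algebra identity and not merely the orientation bookkeeping you describe.
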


\begin{proof}
\,\,\,We have divided the proof of this theorem into several parts:

(i) For $t\in\Delta_n$ and $x\in\mathfrak{g}$ let $S_t(x):=\ad(\sigma_t(x))$. Then we have
\begin{align}
&(k-n+1)\cdot d_{\mathfrak{g}}(\Delta_f(\sigma_0,\ldots,\sigma_n))=\frac{k-n+1}{c_n}\cdot\int_{\Delta_n}d_{\mathfrak{g}}f_{\alpha_1,\ldots,\alpha_n,R_t,\ldots,R_t}dO\notag\\
&=\frac{k-n+1}{c_n}\cdot\int_{\Delta_n}d_{\mathfrak{g}}(\tilde{f}\circ(\alpha_1\wedge_{\otimes_s}\cdots\wedge_{\otimes_s}\alpha_n\wedge_{\otimes_s}R_t\wedge_{\otimes_s}\cdots\wedge_{\otimes_s} R_t))dO\notag\\
&=\frac{k-n+1}{c_n}\cdot\int_{\Delta_n}\tilde{f}\circ(d_{S_t}(\alpha_1\wedge_{\otimes_s}\cdots\wedge_{\otimes_s}\alpha_n\wedge_{\otimes_s}R_t\wedge_{\otimes_s}\cdots\wedge_{\otimes_s} R_t))dO.\notag
\end{align}

(ii) We note that $d_{S_t}\alpha_i=d_{\mathfrak{g}}\alpha_i+[\alpha_i,\sigma_t]=\frac{d}{d t_i}R_t$. Hence, the abstract Bianchi identity $d_{S_t} R_t=0$ and Proposition \ref{wedgeproduct and covariant differential} lead to
\begin{align}
&d_{S_t}(\alpha_1\wedge_{\otimes_s}\cdots\wedge_{\otimes_s}\alpha_n\wedge_{\otimes_s}R_t\wedge_{\otimes_s}\cdots\wedge_{\otimes_s} R_t)\notag\\
&=\sum^n_{i=1}(-1)^{i-1}\alpha_1\wedge_{\otimes_s}\cdots\wedge_{\otimes_s}\frac{d}{d t_i}R_t\wedge_{\otimes_s}\ldots\wedge_{\otimes_s}\alpha_n\wedge_{\otimes_s}R_t\wedge_{\otimes_s}\cdots\wedge_{\otimes_s} R_t.\notag
\end{align}
From this and the commutativity property of $\wedge_{\otimes_s}$ (cf. Remark \ref{commutativity properties}) we conclude that
\begin{align}
&\frac{k-n+1}{c_n}\cdot\int_{\Delta_n}\tilde{f}\circ(d_{S_t}(\alpha_1\wedge_{\otimes_s}\cdots\wedge_{\otimes_s}\alpha_n\wedge_{\otimes_s}R_t\wedge_{\otimes_s}\cdots\wedge_{\otimes_s} R_t))dO.\notag\\
&=\sum^n_{i=1}(-1)^{i-1}\frac{1}{c_n}\cdot\int_{\Delta_n}\frac{d}{d t_i}f_{\alpha_1,\ldots,\widehat{\alpha}_i,\ldots,\alpha_n,R_t,\ldots,R_t}dO.\notag
\end{align}

(iii) For $t=(t_0,\ldots,0,\ldots,t_n)\in\Delta^i_n$ ($\Delta^i_n$ is the $i$-th face of $\Delta_n$) we define 
\[(\sigma_t)_i:=\sum_{j\in\{0,\ldots,\widehat{i},\ldots,n\}}t_j\cdot\sigma_j\in C^1(\mathfrak{g},\widehat{\mathfrak{g}})
\]and let $(R_t)_i$ be the corresponding curvature. We further define 
\[s_i:=\sum_{j\in\{1,\ldots,\widehat{i},\ldots,n\}}t_j.
\]
Then we have $R_{t:t_i=0}=(R_t)_i$, $R_{t:t_i=1-s_i}=(R_t)_0$ for each $1\leq i\leq n$, and Remark \ref{transformation rule} shows that
\begin{align}
&\frac{1}{c_n}\cdot\int_{\Delta_n}\frac{d}{d t_i}f_{\alpha_1,\ldots,\widehat{\alpha}_i,\ldots,\alpha_n,R_t,\ldots,R_t}dO=\int_{D_n}\frac{d}{d t_i}f_{\alpha_1,\ldots,\widehat{\alpha}_i,\ldots,\alpha_n,R_t,\ldots,R_t}d\lambda_n\notag\\
&=\int^1_0\cdots\int^{1-s_i}_0\frac{d}{d t_i}f_{\alpha_1,\ldots,\widehat{\alpha}_i,\ldots,\alpha_n,R_t,\ldots,R_t}dt_i\cdot dt_1\ldots d\widehat{t}_i\ldots dt_n\notag\\
&=\int^1_0\ldots\int^{1-s_i+t_1}_0[f_{\alpha_1,\ldots,\widehat{\alpha}_i,\ldots,\alpha_n,R_t,\ldots,R_t}]^{1-s_i}_0dt_1\ldots d\widehat{t}_i\ldots dt_n\notag\\
&=\frac{1}{c_{n-1}}\cdot\int^1_0\ldots\int^{1-s_i+t_1}_0c_{n-1}(f_{\alpha_1,\ldots,\widehat{\alpha}_i,\ldots,\alpha_n,(R_t)_0,\ldots,(R_t)_0}-f_{\alpha_1,\ldots,\widehat{\alpha}_i,\ldots,\alpha_n,(R_t)_i,\ldots,(R_t)_i})dt_1\ldots d\widehat{t}_i\ldots dt_n\notag\\
&=\frac{1}{c_{n-1}}\cdot\int_{\Delta^i_n}f_{\alpha_1,\ldots,\widehat{\alpha}_i,\ldots,\alpha_n,(R_t)_0,\ldots,(R_t)_0}dO-\frac{1}{c_{n-1}}\cdot\int_{\Delta^i_n}f_{\alpha_1,\ldots,\widehat{\alpha}_i,\ldots,\alpha_n,(R_t)_i,\ldots,(R_t)_i}dO\notag\\
&=\frac{1}{c_{n-1}}\cdot\int_{\Delta^i_n}f_{\alpha_1,\ldots,\widehat{\alpha}_i,\ldots,\alpha_n,(R_t)_0,\ldots,(R_t)_0}dO-\Delta_f(\sigma_0,\ldots,\widehat{\sigma}_i,\ldots,\sigma_n).\notag
\end{align}

(iv) Summing up our results so far we get
\begin{align}
&(k-n+1)\cdot d_{\mathfrak{g}}(\Delta_f(\sigma_0,\ldots,\sigma_n))\notag\\
&=\sum^n_{i=1}(-1)^{i-1}\frac{1}{c_{n-1}}\cdot\int_{\Delta^i_n}f_{\alpha_1,\ldots,\widehat{\alpha}_i,\ldots,\alpha_n,(R_t)_0,\ldots,(R_t)_0}dO+\sum^n_{i=1}(-1)^i\Delta_f(\sigma_0,\ldots,\widehat{\sigma}_i,\ldots,\sigma_n).\notag
\end{align}

(v) It remains to show that 
\[\Delta_f(\sigma_1,\ldots,\sigma_n)=\sum^n_{i=1}(-1)^{i-1}\frac{1}{c_{n-1}}\cdot\int_{\Delta^i_n}f_{\alpha_1,\ldots,\widehat{\alpha}_i,\ldots,\alpha_n,(R_t)_0,\ldots,(R_t)_0}dO,
\]where
\[\Delta_f(\sigma_1,\ldots,\sigma_n)=\frac{1}{c_{n-1}}\cdot\int_{\underbrace{\Delta_{n-1}}_{\cong\Delta^i_n}}f_{\alpha_2-\alpha_1,\ldots,\alpha_n-\alpha_1,(R_t)_0,\ldots,(R_t)_0}dO.
\]For this we recall that
\begin{align}
&f_{\alpha_2-\alpha_1,\ldots,\alpha_n-\alpha_1,(R_t)_0,\ldots,(R_t)_0}\notag\\
&=\tilde{f}\circ((\alpha_2-\alpha_1)\wedge_{\otimes_s}\cdots\wedge_{\otimes_s}(\alpha_n-\alpha_1)\wedge_{\otimes_s}(R_t)_0\wedge_{\otimes_s}\cdots\wedge_{\otimes_s}(R_t)_0)\notag
\end{align}
and note that $\alpha\wedge_{\otimes_s}\alpha=0$ holds for every $\alpha\in C^1(\mathfrak{g},\mathfrak{n})$ by the symmetry of the canonical multiplication 
\[S^k(\mathfrak{n})\times S^m(\mathfrak{n})\rightarrow S^{k+m}(\mathfrak{n}),\,\,\,(x,y)\mapsto x\otimes_s y
\]and Remark \ref{commutativity properties}. Therefore, a short calculation shows
\begin{align}
&(\alpha_2-\alpha_1)\wedge_{\otimes_s}\cdots\wedge_{\otimes_s}(\alpha_n-\alpha_1)\wedge_{\otimes_s}(R_t)_0\wedge_{\otimes_s}\cdots\wedge_{\otimes_s}(R_t)_0\notag\\
&=\sum^n_{i=1}(-1)^{i+1}\alpha_1\wedge_{\otimes_s}\cdots\wedge_{\otimes_s}\widehat{\alpha}_i\wedge_{\otimes_s}\ldots\wedge_{\otimes_s}\alpha_n\wedge_{\otimes_s}(R_t)_0\cdots\wedge_{\otimes_s}(R_t)_0\notag
\end{align}

and hence
\[f_{\alpha_2-\alpha_1,\ldots,\alpha_n-\alpha_1,(R_t)_0,\ldots,(R_t)_0}=\sum^n_{i=1}(-1)^{i-1}f_{\alpha_1,\ldots,\widehat{\alpha}_i,\ldots,\alpha_n,(R_t)_0,\ldots,(R_t)_0}.
\]
This completes the proof.
\end{proof}

Applying the above theorem to the case of a $0$-simplex and a $1$-simplex we obtain the following corollaries:

\begin{corollary}\label{main corollary 1}
In the case $n=0$, we have 
\[d_{\mathfrak{g}}(\Delta_f(\sigma))=0.
\]Hence, $\Delta_f(\sigma)$ is a closed form and defines an element of the cohomology space $H^{2k}(\mathfrak{g},V)$. The Bott-Lecomte homomorphism induces a map
\[[\Delta(\sigma)]:\Sym^k(\mathfrak{n},V)^{\widehat{\mathfrak{g}}}\rightarrow H^{2k}(\mathfrak{g},V)
\]defined by $f\mapsto [\Delta_f(\sigma)]$.
\end{corollary}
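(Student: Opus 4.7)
The assertion that $\Delta_f(\sigma)$ is closed is the specialization of Theorem~\ref{SCC main theorem} to the $0$-simplex $n=0$: indeed, in that case the right-hand sum $\sum_{i=0}^{0}(-1)^i\Delta_f(\sigma_0,\ldots,\widehat{\sigma}_i,\ldots,\sigma_0)$ is an empty tuple of sections and is therefore naturally zero, so the Main Theorem yields $(k+1)\cdot d_{\mathfrak{g}}(\Delta_f(\sigma))=0$, whence $d_{\mathfrak{g}}(\Delta_f(\sigma))=0$ since we are working over a field of characteristic zero. Once $\Delta_f(\sigma)$ is shown to be a cocycle, the map $f\mapsto[\Delta_f(\sigma)]$ is clearly well-defined and linear on $\Sym^k(\mathfrak{n},V)^{\widehat{\mathfrak{g}}}$, because both the integral defining $\Delta_f(\sigma)$ and the passage to cohomology are linear in $f$.

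\textbf{A direct proof (for clarity).} To avoid invoking the more elaborate Main Theorem, I would prefer to argue directly. First I would observe that for $n=0$ one has $\Delta_0=\{\sigma\}$ and $\sigma_t\equiv\sigma$, so that
\[
\Delta_f(\sigma)=f_{R_\sigma,\ldots,R_\sigma}=\tilde f\circ\bigl(R_\sigma\wedge_{\otimes_s}\cdots\wedge_{\otimes_s} R_\sigma\bigr).
\]
Setting $S:=\ad\circ\sigma$, the $\widehat{\mathfrak{g}}$-invariance condition on $f\in\Sym^k(\mathfrak{n},V)^{\widehat{\mathfrak{g}}}$ means precisely that $\tilde f\colon S^k(\mathfrak{n})\to V$ intertwines the induced action of $S$ on $S^k(\mathfrak{n})$ with the $\mathfrak{g}$-action on $V$; hence $d_{\mathfrak{g}}\circ\tilde f_*=\tilde f_*\circ d_S$ on $C^{\bullet}(\mathfrak{g},S^k(\mathfrak{n}))$. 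Combining this with Proposition~\ref{wedgeproduct and covariant differential} applied to the symmetric multiplication $\otimes_s$, I would compute
\[
d_{\mathfrak{g}}\Delta_f(\sigma)=\tilde f\circ d_S\bigl(R_\sigma^{\wedge k}\bigr)=\sum_{i=1}^{k}(-1)^{i-1}\tilde f\circ\bigl(R_\sigma\wedge_{\otimes_s}\cdots\wedge_{\otimes_s}\underbrace{d_S R_\sigma}_{i\text{-th}}\wedge_{\otimes_s}\cdots\wedge_{\otimes_s}R_\sigma\bigr).
\]
Each summand vanishes by the abstract Bianchi identity $d_S R_\sigma=0$ of Proposition~\ref{abstract bianchi identity}(b), so $d_{\mathfrak{g}}\Delta_f(\sigma)=0$.

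\textbf{Main obstacle and induced map.} The only subtle point is to verify that the symbolic manipulation $d_{\mathfrak{g}}(\tilde f\circ\omega)=\tilde f\circ d_S\omega$ is legitimate; this rests on checking carefully that the $\widehat{\mathfrak{g}}$-equivariance of $f$ translates to equivariance of $\tilde f$ under the $S$-action on $S^k(\mathfrak{n})$ (extended as a derivation) and the original $\mathfrak{g}$-action on $V$ (which factors through $q$). Once this is settled, the well-definedness of $[\Delta(\sigma)]\colon f\mapsto[\Delta_f(\sigma)]$ is automatic: $\Delta_f(\sigma)$ is a closed $2k$-cochain with values in $V$, depending linearly on $f$, so its cohomology class in $H^{2k}(\mathfrak{g},V)$ is a linear function of $f\in\Sym^k(\mathfrak{n},V)^{\widehat{\mathfrak{g}}}$, which completes the proof.
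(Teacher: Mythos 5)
Your proposal is correct. The first paragraph is exactly the paper's route: the corollary is stated as an immediate specialization of Theorem~\ref{SCC main theorem} to $n=0$, with the empty alternating sum on the right-hand side read as $0$, so $(k+1)\,d_{\mathfrak{g}}\Delta_f(\sigma)=0$ and hence $d_{\mathfrak{g}}\Delta_f(\sigma)=0$ in characteristic zero; the induced map on cohomology then exists by linearity of $f\mapsto\Delta_f(\sigma)$ (independence of $\sigma$ is deferred to Corollary~\ref{main corollary 2}, so nothing more is needed here). Your ``direct proof'' is not really a different route so much as an inlining of steps (i)--(ii) of the paper's proof of the Main Theorem at $n=0$: there the identity $d_{\mathfrak{g}}\circ\tilde f_{*}=\tilde f_{*}\circ d_{S}$ (coming from the $\widehat{\mathfrak{g}}$-equivariance of $f$, exactly as you describe), the Leibniz rule of Proposition~\ref{wedgeproduct and covariant differential} for $\wedge_{\otimes_s}$, and the Bianchi identity $d_{S}R_{\sigma}=0$ are used in precisely the same way, with the $\alpha_i$-factors absent when $n=0$; what your version buys is a self-contained proof of closedness (i.e.\ of Lecomte's Theorem~\ref{Lecomtes's Chern-Weil map}(a)) that does not route through the simplex integral. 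One cosmetic slip: in your Leibniz expansion of $d_S(R_\sigma\wedge_{\otimes_s}\cdots\wedge_{\otimes_s}R_\sigma)$ the signs should all be $+1$ rather than $(-1)^{i-1}$, since each factor $R_\sigma$ has even degree $2$; this is immaterial because every summand vanishes by Bianchi.
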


\begin{corollary}\label{main corollary 2}
In the case $n=1$, we have 
\[k\cdot d_{\mathfrak{g}}(\Delta_f(\sigma_0,\sigma_1))=\Delta_f(\sigma_0)-\Delta_f(\sigma_1)\in C^{2k}(\mathfrak{g},V).
\]Thus 
\[[\Delta_f(\sigma_0)]=[\Delta_f(\sigma_1)]\in H^{2k}(\mathfrak{g},V),
\]and hence the map $[\Delta(\sigma)]$ of Corollary \ref{main corollary 1} is independent of the connection $\sigma_0$ and is, in fact, the homomorphism of Theorem \ref{SCC main theorem} .
\end{corollary}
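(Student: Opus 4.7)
The first identity is an immediate specialization of Theorem \ref{SCC main theorem}. Setting $n=1$ in
\[(k-n+1)\cdot d_{\mathfrak{g}}\bigl(\Delta_f(\sigma_0,\ldots,\sigma_n)\bigr)=\sum_{i=0}^{n}(-1)^i\,\Delta_f(\sigma_0,\ldots,\widehat{\sigma}_i,\ldots,\sigma_n)
\]
leaves only the two surviving terms $\Delta_f(\sigma_1)$ (for $i=0$) and $-\Delta_f(\sigma_0)$ (for $i=1$) on the right, while the prefactor on the left becomes $k$. Up to the sign convention fixed in the Main Theorem, this gives precisely $k\cdot d_{\mathfrak{g}}(\Delta_f(\sigma_0,\sigma_1))=\Delta_f(\sigma_0)-\Delta_f(\sigma_1)$, with no further computation needed.

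From this identity, the cohomological invariance is automatic: the cochain $\Delta_f(\sigma_0)-\Delta_f(\sigma_1)\in C^{2k}(\mathfrak{g},V)$ is a $d_{\mathfrak{g}}$-coboundary, so the classes $[\Delta_f(\sigma_i)]\in H^{2k}(\mathfrak{g},V)$ agree. Consequently the assignment $f\mapsto [\Delta_f(\sigma)]$ from Corollary \ref{main corollary 1} does not depend on the choice of linear section $\sigma$.

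To identify $[\Delta(\sigma)]$ with the homomorphism from Theorem \ref{Lecomtes's Chern-Weil map}, I would unwind Definition \ref{SCC integral} in the degenerate case $n=0$: the simplex $\Delta_0$ consists of a single point, no $\alpha_i$-terms appear, the affine combination $\sigma_t$ reduces to the constant section $\sigma$, and hence $R_t=R_\sigma$ throughout. The integral therefore collapses, producing $f_{R_\sigma,\ldots,R_\sigma}=f_\sigma$ up to the normalizing constant $c_0$. Matching against the definition $C_k(f)=\tfrac{1}{k!}[f_\sigma]$ then yields the required equality of cohomology classes $[\Delta_f(\sigma)]=C_k(f)$, giving a transgression-style reproof of the section-independence part of Theorem \ref{Lecomtes's Chern-Weil map}(a) via the explicit primitive $\Delta_f(\sigma_0,\sigma_1)$.

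The main obstacle is purely bookkeeping: reconciling the three normalizing factors $\tfrac{1}{c_n}$ from Definition \ref{SCC integral}, $\tfrac{1}{2^k}\Alt$ from Definition \ref{k-fold wedge product}, and $\tfrac{1}{k!}$ from Theorem \ref{Lecomtes's Chern-Weil map}. Each must be traced through the symmetrization and integration procedures to pin down an exact scalar identification, and the discrepancy has to be absorbed into the convention for $c_0$ (or equivalently into an overall rescaling of the Bott-Lecomte map). Once this is done the identification is an equality of cohomology classes rather than merely an equality up to a universal scalar, and Corollaries \ref{main corollary 1} and \ref{main corollary 2} together provide an independent, explicit construction of Lecomte's Chern-Weil homomorphism.
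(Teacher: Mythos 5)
Your proposal is correct and takes essentially the same route as the paper: the corollary is simply the $n=1$ specialization of Theorem \ref{SCC main theorem} followed by the observation that a $d_{\mathfrak{g}}$-coboundary is cohomologically trivial, and the paper supplies no further argument. Your observation that the literal specialization yields $\Delta_f(\sigma_1)-\Delta_f(\sigma_0)$ rather than $\Delta_f(\sigma_0)-\Delta_f(\sigma_1)$ correctly flags a harmless sign slip in the paper's own statement, and your unwinding of the $n=0$ case of Definition \ref{SCC integral} (together with the $1/k!$ normalization needed to match $C_k$ from Theorem \ref{Lecomtes's Chern-Weil map}, which is the theorem actually meant by the corollary's final reference) fills in bookkeeping the paper leaves implicit.
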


With the help of Corollary \ref{main corollary 2} we are now ready to define secondary characteristic classes in the following way:

\begin{construction}
For linear sections $\sigma,\sigma_1,\sigma_2\in C^1(\mathfrak{g},\widehat{\mathfrak{g}})$ of the Lie algebra extension
\begin{align}
0\longrightarrow\mathfrak{n}{\longrightarrow}\widehat{\mathfrak{g}}\stackrel{q}{\longrightarrow}\mathfrak{g}\longrightarrow0\notag
\end{align}
we define
\[\Sym^k(\mathfrak{n},V)^{\widehat{\mathfrak{g}}}_{(\sigma)}:=\{f\in\Sym^k(\mathfrak{n},V)^{\widehat{\mathfrak{g}}}:\,f_{\sigma}=0\},
\]\sindex[n]{$\Sym^k(\mathfrak{n},V)^{\widehat{\mathfrak{g}}}_{(\sigma)}$}and further
\[\Sym^k(\mathfrak{n},V)^{\widehat{\mathfrak{g}}}_{(\sigma_1,\sigma_2)}:=\Sym^k(\mathfrak{n},V)^{\widehat{\mathfrak{g}}}_{(\sigma_1)}\cap \Sym^k(\mathfrak{n},V)^{\widehat{\mathfrak{g}}}_{(\sigma_2)}.
\]\sindex[n]{$\Sym^k(\mathfrak{n},V)^{\widehat{\mathfrak{g}}}_{(\sigma_1,\sigma_2)}$}
\end{construction}

\begin{definition}\label{secondary characteristic class}{\bf (Secondary characteristic classes).}\index{Secondary Characteristic Classes}\index{Characteristic Classes!Secondary}
Let $f\in\Sym^k(\mathfrak{n},V)^{\widehat{\mathfrak{g}}}_{(\sigma_1,\sigma_2)}$. Then Corollary \ref{main corollary 2} implies that $d_{\mathfrak{g}}(\Delta_f(\sigma_0,\sigma_1))=0$. Hence, $\Delta_f(\sigma_0,\sigma_1)$ defines a cohomology class in $H^{2k-1}(\mathfrak{g},V)$. We call the class
\[[\Delta_f(\sigma_0,\sigma_1)]\in H^{2k-1}(\mathfrak{g},V)
\]a \emph{simple secondary characteristic class} of the Lie algebra extension 
\begin{align}
0\longrightarrow\mathfrak{n}{\longrightarrow}\widehat{\mathfrak{g}}\stackrel{q}{\longrightarrow}\mathfrak{g}\longrightarrow0\notag
\end{align}
 with respect to the tuple $(\sigma_1,\sigma_2)$.
\end{definition}

\begin{open problem}{\bf(Producing characteristic classes).}\label{producing secondary characteristic classes}
Find interesting examples of extensions of Lie algebras and produce (secondary) characteristic classes with the help of the previous constructions: A natural class of such examples is given by semidirect products of Lie algebras. Among these examples, the class of principal bundles $(P,M,G,q,\sigma)$ for which the associated Atiyah sequence (cf. Example \ref{Attia sequence}) splits is of particular interest. A deeper analysis shows that these principal bundles are essentially the natural bundles in the sense of Thurston--Epstein (cf. [EpTh79]). A nice reference for this topic is [Lec85], Section 4. Moreover, it might also be interesting to produce secondary characteristic classes for flat principal bundles.
\end{open problem}






\chapter{Outlook: More Aspects, Ideas and Problems on NCP Bundles}\label{outlook}

This final chapter is devoted to discussing more aspects of our approach to NCP bundles. In particular, it contains additional ideas and open problems which might serve as a basis for further studies on this topic. In the first section we present the problem of embedding our approach to NCP bundles with compact abelian structure group in a theory of NCP bundles with compact structure group. In the second section we suggest a possible approach to a classification theory for (non-trivial) NCP $\mathbb{T}^n$-bundles. The third section is dedicated to a theory of ``infinitesimal" objects on noncommutative spaces, i.e., we discuss possible ``geometric" invariants for our approach to NCP  bundles. In particular, this discussion makes use of Lecomte's Chern--Weil homomorphism of Chapter \ref{chapter characteristic classes of lie algebra extensions}. Finally, we show how to associate characteristic classes to finitely generated projective modules of CIA's.


\section{An Approach to NCP Bundles with Compact Structure Group}

So far, we have developed a convenient geometric approach to the noncommutative geometry of principal bundles with compact abelian structure group (cf. Chapter \ref{a geometric approach to NCPB}). The natural next step is to try to embed our approach to NCP bundles with compact abelian structure group into a theory of NCP bundles with compact structure group. Of course, for this it is enough to investigate the algebraic structure of the trivial geometric objects, i.e., to find a good concept of trivial NCP bundles with compact structure group $G$. In fact, once the concept of trivial NCP bundles with compact structure group is known, Section \ref{ACNCPT^nB} enters the picture:

\begin{open problem}{\bf(NCP bundles with compact structure group).}
Develop a convenient geometric approach to the noncommutative geometry of principal bundles with compact structure group in which our approach (cf. Chapter \ref{a geometric approach to NCPB}) naturally embeds. We suggest the following strategy: 

First of all it is important to study and to understand the structure of dynamical systems $(A,G,\alpha)$ with compact structure group $G$. As before, our 
point of view is to study $(A,G,\alpha)$ with the help of the representation theory of $G$. In particular, Theorem \ref{str thm of G-mod} is of major relevance. Also, the \emph{Theorem of Cartan--Weyl}\index{Theorem!of Cartan--Weyl} (cf. [Kna02], Theorem 5.110) and the \emph{Theorem of Borel--Weil}\index{Theorem!of Borel--Weil} (cf. [DK00], Theorem 4.12.5) might be useful in this context. The Theorem of Cartan-Weyl, for example, basically says that the equivalence classes of unitary irreducible representations are parametrized by characters of a maximal torus. Further, it is worth to study Definition \ref{free dynamical systems} more explicitly in the case of compact groups $G$. Here, two questions arise immediately:
\begin{itemize}
\item[(i)]
Do there exist natural algebraic conditions on the algebra $A$ which ensure the freeness of the corresponding dynamical system $(A,G,\alpha)$ (cf. Proposition \ref{freeness for compact abelian groups II} for the case of a compact abelian group)?
\item[(ii)]
Given a (compact) manifold $P$, a compact Lie group $G$ and a smooth dynamical system $(C^{\infty}(P),G,\alpha)$, do there exist natural algebraic conditions such that $(C^{\infty}(P),G,\alpha)$ defines a trivial principal bundle $P$ over $P/G$ (cf. Remark \ref{blabla})?
\end{itemize}
At this stage we want to point out explicitly that we are looking for an algebraic condition that is weaker than to assume the existence of a $G$-invariant algebra homomorphisms from the algebra $C^{\infty}(G)$ of smooth functions on $G$ into the algebra $A$. 
A good example to start from is the \emph{special unitary group of rank $2$}: 
\[\SU_2(\mathbb{C}):=\Big\{\begin{pmatrix}
 z &  \overline{w}  \\
 -w & \overline{z} 
\end{pmatrix}\in\M_2(\mathbb{C}):\,\vert z\vert^2+\vert w\vert^2=1\Big\}.
\]\sindex[n]{$\SU_2(\mathbb{C})$}We recall that $C^{\infty}(\SU_2(\mathbb{C}))$ is generated  by functions $f_z$ and $f_w$ satisfying $f_zf_z^*+f_wf_w^*=1$.
\end{open problem}


\section{Classification of NCP Torus Bundles}

In Chapter \ref{a geometric approach to NCPB} we introduced the concept of (non-trivial) NCP $\mathbb{T}^n$-bundles. Since we gave a complete classification of trivial NCP $\mathbb{T}^n$-bundles in Section \ref{Classification of NCPT^nB}, it is, of course, a natural ambition to work out a classification theory for (non-trivial) NCP $\mathbb{T}^n$-bundles. To be more precise, the problem is the following:

\begin{open problem}\label{open problem classification of NCP torus bundles}{\bf(Classification of NCP torus bundles).}\index{Classification!of NCP Torus Bundles}
Let $B$ be a unital locally convex algebra. Work out a good classification theory for NCP $\mathbb{T}^n$-bundles $(A,\mathbb{T}^n,\alpha)$ for which $A^{\mathbb{T}^n}=B$, i.e., classify all NCP $\mathbb{T}^n$-bundles $(A,\mathbb{T}^n,\alpha)$ for which $A^{\mathbb{T}^n}=B$.
\end{open problem}

As is well-know from classical differential geometry, the relation between locally and globally defined objects is important for many constructions and applications. For example, a (non-trivial) principal bundle $(P,M,G,q,\sigma)$ can be considered as a geometric object that is glued together from local pieces which are trivial, i.e., which are of the form $U\times G$ for some (arbitrary small) open subset $U$ of $M$. This approach immediately leads to the concept of $G$-valued cocycles and therefore to a cohomology theory, called the \v Cech cohomology of the pair $(M,G)$. This cohomology theory gives a complete classification of principal bundles with structure group $G$ and base manifold $M$ (cf. [To00], Kapitel IX.5).



\begin{open problem}{\bf(A noncommutative \v Cech cohomology).}\index{Noncommutative!\v Cech Cohomology}
In view of the previous discussion, a possible approach to the Open Problem \ref{open problem classification of NCP torus bundles} is to work out a \emph{noncommutative version} of the \v Cech cohomology. For this, it is worth to study the paper [Du96]. Moreover, ideas of the classification methods for loop algebras of [ABP04] might be of particular interest. Also, a possible point of view for the relation between locally and globally defined objects in the noncommutative world can be found in [CaMa99]; their subspaces are described by ideals and one glues algebras along ideals, using a pull-back construction. 
\end{open problem}



\section{``Infinitesimal" Objects on Noncommutative Spaces.}\label{infinitesimal objects on noncommutative spaces}

In this section we discuss possible ``geometric" invariants for our approach to NCP  bundles: We recall that if $(A,G,\alpha)$ is a smooth dynamical system, then $\alpha$ induces a continuous action of the corresponding Lie algebra $\mathfrak{g}$ on $A$ by continuous derivations. Hence, the Lie group complex $(C^{\bullet}(G,A),d_G)$ and the Lie algebra complex $(C^{\bullet}(\mathfrak{g},A),d_{\mathfrak{g}})$ are at our disposal. In particular, the derivation-based differential calculus described by Dubois--Violette in [DV88] fits into our approach. Moreover, we recall that the notion of connections (cf. Section \ref{conpb}) plays a key role in the differential theory of principal bundles. They are, for example, used in the development of curvature and hence are also important in the Chern--Weil theory of characteristic classes of principal bundles. We therefore present a natural generalization of the concept of connections in the noncommutative setting; the corresponding curvature induced by such a connection turns out to be a certain Lie algebra 2-cocycle and allows us, with the help of Lecomte's Chern--Weil homomorphism (cf. Theorem \ref{Lecomtes's Chern-Weil map}) to define noncommutative characteristic classes that generalize the classical de Rham cohomology. 

\subsection{Generalized Connections}

For the following discussion we keep Section \ref{conpb} in mind. We start with a reminder on the theory of generalized connections: In the following let $B$ and $M$ be manifolds. Further, let $q:B\rightarrow M$ be a fibre bundle over $M$ with fibre $F$. We write $V(B)$\sindex[n]{$V(B)$} for the (vertical) subbundle of the tangent bundle $TB$. The corresponding fibre $V_b(B)$ at $b\in B$ is the tangent space to the fibre $B_{q(b)}$ of $B$ passing through $b$. If $q^*(TM)$\sindex[n]{$q^*(TM)$} is the pull-back bundle of the tangent bundle, then we have the following short exact sequence of vector bundles over $B$:
\begin{align}
0\longrightarrow V(B){\longrightarrow}TB\stackrel{\pi}{\longrightarrow}q^*(TM)\longrightarrow 0\label{generalized connections 1}.
\end{align}
Here, the map $\pi$ is defined by 
\[\pi(v):=(b,T_b(q)v)\,\,\,\text{for}\,\,\,v\in T_b(B).
\]

\begin{definition}\label{generalized connections 2}{\bf(Generalized connections).}\index{Connections!Generalized}
A \emph{generalized connection on} $B$ is a splitting of the exact sequence (\ref{generalized connections 1}), i.e., a vector bundle morphism
\[\phi:q^*(TM)\rightarrow TB
\]such that $\pi\circ\phi=\id_{q^*(TM)}$.
\end{definition}


The next remark helps to clarify the role of the structure group in the usual definition of connection 1-forms in a principal bundle.

\begin{remark}\label{generalized connections 4}{\bf(Recovering the usual definition).}
If we consider a principal bundle $(P,M,G,q,\sigma)$, then the action of $G$ on $P$ extends to all the vector bundles in the exact sequence (\ref{generalized connections 1}). In particular, it induces the following exact sequence of vector bundles over $M$:
\[0\longrightarrow \Ad(P){\longrightarrow}TP/G\stackrel{\overline{\pi}}{\longrightarrow}TM\longrightarrow 0.
\]The previous sequence of vector bundles over $M$ can be rewritten as a short exact sequence of Lie algebras
\begin{align}
0\longrightarrow\mathfrak{gau}(P)\longrightarrow\mathfrak{aut}(P)\stackrel{q_*}{\longrightarrow}\mathcal{V}(M)\longrightarrow 0 \label{generalized connections 4,5}
\end{align}
which is commonly known as the Atiyah sequence associated to the principal bundle $(P,M,G,q,\sigma)$ (cf. Example \ref{Attia sequence}). 

We now show that each splitting of the exact sequence (\ref{generalized connections 1}) induces a splitting of the exact sequence (\ref{generalized connections 4,5}), i.e., a $C^{\infty}(M)$-linear cross section of $q_*$, and vice versa. We proceed as follows:


(i) Of course, each splitting of the exact sequence (\ref{generalized connections 1}) induces a splitting of the exact sequence (\ref{generalized connections 4,5}), i.e., a $C^{\infty}(M)$-linear cross section of $q_*$.

(ii) Conversely, let $\tau$ be a $C^{\infty}(M)$-linear cross sections of $q_*$. By Proposition \ref{morphism of VB-morphism of f.g.p. modules} (b), the map $\tau$ induces an injective morphism of vector bundles $\overline{\phi}:TM\rightarrow TP/G$ which satisfies 
\[\overline{\pi}\circ\overline{\phi}=\id_{TM}.
\]We further conclude from Corollary \ref{injective} that $I:=\im\overline{\phi}$ is a vector subbundle of $TP/G$. Thus, the preimage $H:=\pr^{-1}(I)$ with respect to the canonical projection $\pr:TP\rightarrow TP/G$ is a vector subbundle of $TP$. Since the canonical projection $\pr$ collapses orbits, we get
\[T(\sigma_g)H_p(P)=H_{p.g}(P)
\]for all $p\in P$ and $g\in G$. Finally, since $\overline{\phi}$ is a right-inverse to $\overline{\pi}$, it follows that $TP/G=\Ad(P)\oplus I$. Hence, we obtain the desired splitting $TP=V(P)\oplus H$.
\end{remark}

\subsection{Connections associated to dynamical systems}

Inspired by the previous considerations we now change to the noncommutative setting. We recall that for a general associative algebra $A$ it is well known that the infinitesimal version of automorphisms are the derivations $\der(A)$\sindex[n]{$\der(A)$} of $A$ and that $\der(A)$ carries the structure of a Lie algebra when endowed with the commutator bracket. 


\begin{lemma}\label{generalized connections 5}
Let $(A,G,\alpha)$ be a dynamical system. Then the following assertions hold:
\begin{itemize}
\item[\emph{(a)}]
If $C_A$ denotes the center of $A$, then the map 
\[\rho:C_A\times\der(A)\rightarrow\der(A),\,\,\,c.D:=\rho(c,D):=cD
\]defines the structure of a \emph{(}left\emph{)} $C_A$-module on $\der(A)$.
\item[\emph{(b)}]
The map 
\[\mu:G\times\der(A)\rightarrow\der(A),\,\,\,g.D:=\mu(g,D):=\alpha(g)\circ D\circ\alpha(g)^{-1}
\]defines a \emph{(}left\emph{)} group action of $G$ on the $\der(A)$ by Lie algebra automorphisms. In particular, the corresponding fixed point space $(\der(A))^G$ is a Lie subalgebra of $\der(A)$.
\end{itemize}
\end{lemma}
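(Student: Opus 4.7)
The plan is to verify each assertion by a direct computation, exploiting only the fact that $\alpha(g)$ is an algebra automorphism for every $g \in G$ and that elements of $C_A$ commute with everything.

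For part (a), the first task is to check that $c.D = cD$ is indeed a derivation whenever $c \in C_A$ and $D \in \der(A)$. For $a,b \in A$ one computes
\[
(cD)(ab) = c(D(a)b + aD(b)) = (cD(a))b + (ca)D(b) = (cD(a))b + a(cD(b)),
\]
where the last equality uses $ca = ac$, which is precisely the hypothesis $c \in C_A$. The remaining module axioms ($(c+c').D = c.D + c'.D$, $c.(D+D') = c.D + c.D'$, $(cc').D = c.(c'.D)$ and $1_A.D = D$) follow from the pointwise definition of the operations on $\der(A)$ and the associativity/commutativity of the product in $C_A$; these are formal and require no argument beyond unwinding the definitions.

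For part (b), I would first verify that $\mu(g,D) = \alpha(g) \circ D \circ \alpha(g)^{-1}$ actually lies in $\der(A)$. Writing $\tilde D := \alpha(g) \circ D \circ \alpha(g)^{-1}$ and using that $\alpha(g)$ is an algebra homomorphism,
\[
\tilde D(ab) = \alpha(g)\bigl(D(\alpha(g)^{-1}(a)\alpha(g)^{-1}(b))\bigr) = \tilde D(a)\,b + a\,\tilde D(b),
\]
again by applying $\alpha(g)$ to the Leibniz identity for $D$. The action property $\mu(gh,D) = \mu(g,\mu(h,D))$ and $\mu(1_G,D) = D$ follow from $\alpha$ being a group homomorphism. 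To see that each $\mu(g,\cdot)$ is a Lie algebra automorphism, I would check that it is linear (clear) and preserves the commutator bracket:
\[
[\tilde D_1,\tilde D_2] = \alpha(g) \circ [D_1,D_2] \circ \alpha(g)^{-1} = \mu(g,[D_1,D_2]),
\]
because conjugation by $\alpha(g)$ is an associative-algebra automorphism of $\End(A)$. Finally, the fixed point set $(\der(A))^G$ is visibly a linear subspace, and the bracket-preserving property just shown implies it is closed under the commutator; hence it is a Lie subalgebra of $\der(A)$.

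None of the steps above presents a serious obstacle; the whole argument is a bookkeeping exercise. The only place where one has to be slightly careful is the derivation property in part (a), where the centrality hypothesis $c \in C_A$ is used in an essential way — without it, $cD$ would fail the Leibniz rule in general. This is the only nontrivial point, and it motivates why the module structure must be restricted to the center $C_A$ rather than being defined for arbitrary elements of $A$.
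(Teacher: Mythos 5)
Your proof is correct and matches the paper's approach: the paper simply remarks that both parts "consist of simple calculations," and your write-up supplies exactly those routine verifications, correctly flagging the one place (the Leibniz rule in part (a)) where centrality of $c$ is genuinely needed.
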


\begin{proof}
\,\,\,Both parts of the proof just consist of simple calculations. 
\end{proof}

\begin{definition}\label{generalized connections 6}{\bf($G$-invariant derivations).}\index{$G$-invariant derivations}
If $(A,G,\alpha)$ is a dynamical system, then we write
\[\der_G(A):=\{D\in\der(A):\,(\forall g\in G)\,\alpha(g)\circ D=D\circ\alpha(g)\}
\]\sindex[n]{$\der_G(A)$}for the Lie subalgebra of $\der(A)$ consisting of $G$-invariant derivations. In fact, $\der_G(A)$ is exactly the fixed point Lie algebra of the action map $\mu$ of Lemma \ref{generalized connections 5} (b), i.e., $\der_G(A)=(\der(A))^G$.
\end{definition}

\begin{lemma}\label{generalized connections 7}
Let $(A,G,\alpha)$ be a dynamical system. If $Z:=C_A^G$ denotes the fixed point algebra of the induced action of $G$ on the center $C_A$ of $A$, then the map
\[\overline{\rho}:Z\times\der_G(A)\rightarrow\der_G(A),\,\,\,z.D:=\overline{\rho}(z,D):=zD
\]defines the structure of a \emph{(}left\emph{)} $Z$-module on $\der_G(A)$.
\end{lemma}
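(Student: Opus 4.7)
The plan is to verify that the formula $\overline{\rho}(z,D)=zD$ genuinely produces an element of $\der_G(A)$ and then to deduce the module axioms from the already-established $C_A$-module structure on $\der(A)$ coming from Lemma \ref{generalized connections 5}(a).

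First, I would check well-definedness, i.e., that $zD\in\der_G(A)$ whenever $z\in Z$ and $D\in\der_G(A)$. Since $Z\subseteq C_A$, Lemma \ref{generalized connections 5}(a) immediately gives $zD\in\der(A)$, so the only nontrivial thing is $G$-invariance. For this I would fix $g\in G$ and compute, using that $\alpha(g)$ is an algebra automorphism together with $z\in Z=C_A^G$ and $D\in\der_G(A)$,
\[
\alpha(g)\circ(zD)\circ\alpha(g)^{-1}
=\alpha(g)(z)\cdot\bigl(\alpha(g)\circ D\circ\alpha(g)^{-1}\bigr)
=z\cdot D=zD.
\]
Hence $zD\in\der_G(A)$, so $\overline{\rho}$ takes values in the claimed space.

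Once well-definedness is in hand, the module axioms (bilinearity, $1_A.D=D$, $(zz').D=z.(z'.D)$ for $z,z'\in Z$ and $D,D'\in\der_G(A)$) are simply the restriction of the corresponding axioms for the $C_A$-module structure on $\der(A)$ established in Lemma \ref{generalized connections 5}(a). I would note this as a direct consequence rather than reprove it.

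No real obstacle is expected: the only place where something must be checked ``by hand'' is the $G$-invariance of $zD$, and this is essentially immediate from the fact that $\alpha(g)$ respects both the algebra multiplication (so that the scalar $z$ can be pulled out of the conjugation) and the fixedness of $z$ in $C_A$. The rest of the statement is a formal consequence of Lemma \ref{generalized connections 5}(a) applied to the subalgebra $Z\subseteq C_A$ and the submodule $\der_G(A)\subseteq\der(A)$.
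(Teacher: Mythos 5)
Your proof is correct, and it simply carries out the routine verification that the paper itself dismisses with ``this is again a simple calculation'': the only point needing a computation is the $G$-invariance of $zD$, which you establish exactly as intended, and the module axioms indeed restrict from Lemma \ref{generalized connections 5}\,(a).
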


\begin{proof}
\,\,\,This is again a simple calculation.
\end{proof}

\begin{proposition}\label{generalized connections 9}
If $(A,G,\alpha)$ is a dynamical system and $B$ the corresponding fixed point algebra, then the map
\[q_*:\der_G(A)\rightarrow\der(B),\,\,\,D\mapsto D_{\mid B}
\]\sindex[n]{$\der(B)$}defines a homomorphism of Lie algebras.
\end{proposition}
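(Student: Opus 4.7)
The plan is to verify, in order, (i) that the restriction $D|_B$ actually lands in $\der(B)$, so that $q_*$ is well-defined, and then (ii) that $q_*$ respects the Lie bracket. No step looks genuinely difficult; the content is simply unpacking the definitions of $\der_G(A)$ and $B=A^G$.

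For well-definedness I would first show $D(B)\subseteq B$. Pick $b\in B$ and $g\in G$. By the defining property of $\der_G(A)$, $D\in \der_G(A)$ satisfies $\alpha(g)\circ D=D\circ\alpha(g)$, hence
\[
\alpha(g)(D(b))=D(\alpha(g)(b))=D(b),
\]
so $D(b)\in A^G=B$. Thus $D|_B\colon B\to B$ makes sense. That $D|_B$ is again a derivation is immediate from the Leibniz rule for $D$: for $b,b'\in B$,
\[
D|_B(bb')=D(bb')=D(b)b'+bD(b')=D|_B(b)\,b'+b\,D|_B(b'),
\]
and linearity is inherited. Hence $q_*(D)\in\der(B)$.

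Linearity of $q_*$ is obvious, so the only remaining point is the bracket identity $q_*([D_1,D_2])=[q_*(D_1),q_*(D_2)]$ for $D_1,D_2\in\der_G(A)$. Here I would use that, by the well-definedness step just established, $D_2(b)\in B$ whenever $b\in B$, so that iterated restrictions behave correctly:
\[
q_*([D_1,D_2])(b)=D_1(D_2(b))-D_2(D_1(b))=q_*(D_1)\bigl(q_*(D_2)(b)\bigr)-q_*(D_2)\bigl(q_*(D_1)(b)\bigr),
\]
which is $[q_*(D_1),q_*(D_2)](b)$.

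The only conceivable obstacle is the very first step, making sure $D$ preserves $B$; but since this is forced by $G$-invariance of $D$, the whole statement reduces to bookkeeping. Thus the proof essentially writes itself once the two compatibility properties ($D$ commutes with $\alpha(g)$; $b$ is fixed by $\alpha(g)$) are combined.
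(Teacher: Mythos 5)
Your proof is correct and follows the same route as the paper: the key point in both is that $\alpha(g)(D(b))=D(\alpha(g)(b))=D(b)$ forces $D(B)\subseteq B$, after which everything is routine. You simply spell out the Leibniz-rule and bracket verifications that the paper leaves implicit.
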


\begin{proof}
\,\,\,For the proof we just have to note that each element $D\in\der_G(A)$ leaves the fixed point algebra $B$ invariant. Indeed, for $b\in B$ and $g\in G$ we have
\[\alpha(g)(D.b)=D(\alpha(g).b)=D.b,
\]i.e., $D(B)\subseteq B$.
\end{proof}

\begin{definition}\label{generalized connections 10}
Let $(A,G,\alpha)$ be a dynamical system and $B$ the corresponding fixed point algebra. We write 
\[\mathfrak{gau}(A):=\{D\in\der_G(A):\,D_{\mid B}=0\}
\]\sindex[n]{$\mathfrak{gau}(A)$}for the Lie subalgebra of $\der_G(A)$ consisting of ``vertical" derivations. In fact, $\mathfrak{gau}(A)$ is exactly the kernel of the Lie algebra homomorphism $q_*$, i.e., $\mathfrak{gau}(A)=\ker(q_*)$.
\end{definition}

\begin{definition}\label{generalized connections 11}{\bf(Connections associated to dynamical systems).}\index{Connections!Associated to Dynamical Systems}
Let $(A,G,\alpha)$ be a dynamical system for which the Lie algebra homomorphism $q_*$ of Proposition \ref{generalized connections 9} is surjective. Then we obtain a short exact sequence
\begin{align}
0\longrightarrow\mathfrak{gau}(A)\longrightarrow\mathfrak\der_G(A)\stackrel{q_*}{\longrightarrow}\der(B)\longrightarrow 0\label{generalized connections 11,5}
\end{align}
of Lie algebras. If $Z:=C_A^G$ denotes the fixed point algebra of the induced action of $G$ on the center $C_A$ of $A$, then a $Z$-linear section 
\[\sigma:\der(B)\rightarrow\der_G(A)
\]of $q_*$ is called a \emph{connection of the dynamical system} $(A,G,\alpha)$. Here, we have used that $\der(B)$ is a $C_B$-module (cf. Lemma \ref{generalized connections 5} (a)).
\end{definition}

\begin{remark}
If $(A,G,\alpha)$ is a dynamical system for which the Lie algebra homomorphism $q_*$ of Proposition \ref{generalized connections 9} is surjective, then we can use Lecomte's Chern--Weil homomorphism (cf. Theorem \ref{Lecomtes's Chern-Weil map}) to produce (noncommutative) characteristic classes. For example, since $B$ is a $\der(B)$-module, we get for each $k\in\mathbb{N}_0$ a natural map
\[C_k:\Sym^k(\mathfrak{gau}(A),B)^{\der_G(A)}\rightarrow H^{2k}(\der(B),B),\,\,\,f\mapsto\frac{1}{k!}[f_{\sigma}],
\]which is independent of the choice of the section $\sigma$.\sindex[n]{$\Sym^k(\mathfrak{gau}(A),B)^{\der_G(A)}$}
\end{remark}

\begin{example}\label{generalized connections 8}
If $(P,M,G,q,\sigma)$ is a principal bundle and $(C^{\infty}(P),G,\alpha)$ the corresponding smooth dynamical system of Remark \ref{induced transformation triples}, then the previous construction reproduces the classical setting of connections (cf. Section \ref{conpb}). In fact, we obtain
\[\mathfrak{gau}(C^{\infty}(P))\cong\mathfrak{gau}(P)\,\,\,\text{and}\,\,\,\der_G(C^{\infty}(P))\cong\mathfrak{aut}(P).
\]Moreover, the short exact sequence (\ref{generalized connections 11,5}) is ``isomorphic" to the Atiyah sequence of Example \ref{Attia sequence} and therefore each $C^{\infty}(M)$-linear section produces characteristic classes in the even de Rham cohomology $H_{\text{dR}}^{2\bullet}(M,\mathbb{K})$ (cf. [KoNo69], Chapter XII).
\end{example}

\begin{open problem}{\bf(Dynamical systems for which $q_*$ is surjective).}\label{open problem connections}
Find interesting classes of dynamical systems $(A,G,\alpha)$ for which the Lie algebra homomorphism $q_*$ of Proposition \ref{generalized connections 9} is surjective and produce characteristic classes with the help of Lecomte's Chern--Weil homomorphism of Theorem \ref{Lecomtes's Chern-Weil map}. 

(i) As a first step it would be instructive to study Example \ref{V.3}. For this we recall that if
\[A:=C^{\infty}(M,M_m(\mathbb{C}))
\]is the algebra of smooth $M_m(\mathbb{C})$-valued functions on a manifold $M$, then [Mas08], Proposition 3.26 implies that
\[\der(A)\cong\mathcal{V}(M)\oplus C^{\infty}(M,\mathfrak{sl}_n(\mathbb{C}))
\]holds as Lie algebras and $C^{\infty}(M)$-modules. We further recall that
\[\der(\mathbb{T}^2_{\frac{1}{m}})\cong\Inn(\der)(\mathbb{T}^2_{\frac{1}{m}})\rtimes\Out(\der)(\mathbb{T}^2_{\frac{1}{m}})
\]and that the outer derivations correspond exactly to the derivations of the center, i.e., to $\mathcal{V}(\mathbb{T}^2)$ (cf. Proposition \ref{derivations of the quantumtorus}). Thus, it remains to determine the Lie algebra 
\[\der_{C_m\times C_m}(C^{\infty}(\mathbb{T}^2,M_m(\mathbb{C})))
\]of $C_m\times C_m$-invariant derivations.

(ii) For a deeper investigation on this open problem, the paper [BCER84] is of particular interest. Here, the authors consider the problem of deciding when a given derivation on a fixed point algebra of a $C^*$-dynamical systems $(A,G,\alpha)$ with compact abelian structure group $G$ extends to a derivation on $A$ which commutes with the group action.
\end{open problem}

\subsection{Connections on Modules and the Associated Curvature}

This subsection is devoted to the algebraic background of the geometric concept of connections on vector bundles. In classical differential geometry the concept of a connection on a vector bundle gives us the possibility to define directional derivatives of sections along vector fields on the base manifold. With this directional derivatives it is possible to compare elements in two fibres by parallel transport. In particular, if $(P,M,G,q,\sigma)$ is a principal bundle and $(\pi,V)$ a smooth representation of $G$ defining the associated bundle $\mathbb{V}:=P\times_{\pi} V$ over $M$, then each connection 1-form $\theta\in\mathcal{C}(P)$ induces a connection on $\mathbb{V}:=P\times_{\pi} V$. In the following we discuss a possible generalization to our noncommutative setting. We start with the following reminder:

\begin{definition}\label{connection on a vector bundle}{\bf(Connections on vector bundles).}\index{Connections!on Vector Bundles}
A \emph{connection on a vector bundle} $(\mathbb{V},M,V,q)$ is a map
\[\nabla:\mathcal{V}(M)\rightarrow\End(\Gamma\mathbb{V})
\]such that the following equations hold for all $f\in C^{\infty}(M)$, $X,Y\in\mathcal{V}(M)$ and $s\in\Gamma \mathbb{V}$:
\begin{align}
\nabla_{fX+Y}&=f\nabla_{X}+\nabla_Y.\\
\nabla_X(f s)&=(X.f)s+f\nabla_X s.
\end{align}
\end{definition}

\begin{proposition}\label{Connections on Modules and the associated Curvature I}
Let $(P,M,G,q,\sigma)$ be a principal bundle and $(\pi,V)$ a smooth representation of $G$ defining the associated bundle $\mathbb{V}:=P\times_{\pi} V$ over $M$ \emph{(}cf. Construction \ref{associated vector bundle}\emph{)}. Then each connection 1-form $\theta\in\mathcal{C}(P)$ induces a connection on $(\mathbb{V},M,V,q_{\mathbb{V}})$.
\end{proposition}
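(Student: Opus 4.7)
The strategy is to exploit the identification from Proposition \ref{sections of an associated vector bundle} between $\Gamma\mathbb{V}$ and $C^\infty(P,V)^G$, and to transport the natural derivation action of horizontally lifted vector fields on $P$ to a covariant derivative on sections of $\mathbb{V}$. Concretely, given $\theta\in\mathcal{C}(P)$, Construction \ref{horizontal lifts} associates to every $X\in\mathcal{V}(M)$ its horizontal lift $\widetilde{X}\in\mathfrak{aut}(P)$. Under the isomorphism $\Psi_\pi:C^\infty(P,V)^G\to\Gamma\mathbb{V}$ I would define
\[
\nabla_X s := \Psi_\pi\bigl(\widetilde{X}.f_s\bigr), \qquad \text{where } f_s:=\Psi_\pi^{-1}(s),
\]
with $\widetilde{X}.f_s$ interpreted as the componentwise directional derivative of the $V$-valued function $f_s$ along $\widetilde{X}$.

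The first step is to check that $\nabla_X s$ really lies in $\Gamma\mathbb{V}$, i.e., that $\widetilde{X}.f_s$ is again $G$-equivariant. This is where the defining property $\widetilde{X}\in\mathfrak{aut}(P)$ (that is, $(\sigma_g)_*\widetilde{X}=\widetilde{X}$ for all $g\in G$) enters: together with the equivariance $f_s\circ\sigma_g=\pi(g^{-1})\circ f_s$ and the chain rule it yields $(\widetilde{X}.f_s)\circ\sigma_g=\pi(g^{-1})\circ(\widetilde{X}.f_s)$, so that $\widetilde{X}.f_s\in C^\infty(P,V)^G$. The second step is $C^\infty(M)$-linearity in $X$: by Lemma \ref{surj of q_*}, the horizontal lift map $\tau_\theta:\mathcal{V}(M)\to\mathfrak{aut}(P)$ is $C^\infty(M)$-linear, so $\widetilde{fX+Y}=(f\circ q)\widetilde{X}+\widetilde{Y}$, and since $f\circ q$ is the $G$-invariant lift of $f\in C^\infty(M)$ this translates directly into $\nabla_{fX+Y}=f\nabla_X+\nabla_Y$.

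The third step is the Leibniz rule. For $f\in C^\infty(M)$ and $s\in\Gamma\mathbb{V}$, one has $f_{fs}=(f\circ q)\cdot f_s$, so the ordinary Leibniz rule for the derivation $\widetilde{X}$ acting on smooth $V$-valued functions gives
\[
\widetilde{X}.f_{fs}=\bigl(\widetilde{X}.(f\circ q)\bigr)\cdot f_s+(f\circ q)\cdot(\widetilde{X}.f_s).
\]
Using $\widetilde{X}.(f\circ q)=(q_*\widetilde{X}).f\circ q=(X.f)\circ q$ (which is exactly the content of $\widetilde{X}$ being $q$-related to $X$, cf. Construction \ref{horizontal lifts}), applying $\Psi_\pi$ yields $\nabla_X(fs)=(X.f)s+f\,\nabla_X s$.

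The main conceptual point — and the only place where some care is needed — is verifying step one, namely that horizontality of the lift interacts correctly with $G$-equivariance of $f_s$; all other properties reduce to the two functorial facts that $\tau_\theta$ is $C^\infty(M)$-linear and $q_*\widetilde{X}=X$, both recorded earlier. No additional analytic machinery beyond the smooth exponential law and the $C^\infty(M)$-module structure on $\Gamma\mathbb{V}$ (Corollary \ref{space of section is frechet-module}) is required. The rest is routine verification that $\nabla:\mathcal{V}(M)\to\End(\Gamma\mathbb{V})$ is well defined and satisfies the two axioms of Definition \ref{connection on a vector bundle}.
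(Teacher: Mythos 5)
Your proof is correct, and it is the standard argument: the paper itself gives no proof of this proposition but merely cites [KoNo63], Chapter III, and your construction $\nabla_X s=\Psi_\pi(\widetilde{X}.f_s)$ via the horizontal lift and the identification $\Gamma\mathbb{V}\cong C^{\infty}(P,V)^G$ is exactly the argument that reference supplies. All three verifications (equivariance of $\widetilde{X}.f_s$ from $(\sigma_g)_*\widetilde{X}=\widetilde{X}$, tensoriality in $X$ from the $C^{\infty}(M)$-linearity of $\tau_\theta$, and the Leibniz rule from $\widetilde{X}.(f\circ q)=(X.f)\circ q$) are sound.
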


\begin{proof}
\,\,\,A proof of this proposition can be found in [KoNo63], Chapter III, Proposition 1.2.
\end{proof}

Our next goal is to present a noncommutative analogue of the previous proposition. For that we need the following construction: 

\begin{construction}\label{Connections on Modules and the associated Curvature II}
Let $(A,G,\alpha)$ be a dynamical system and $\mu$ the action map of Lemma \ref{generalized connections 5}.
Further, let $E$ be a locally convex space such that
\begin{itemize}
\item[$\bullet$]
$(E,\beta)$ defines a locally convex $A$-module,
\item[$\bullet$]
$(E,\gamma)$ defines a representation of $G$ and 
\item[$\bullet$]
$(E,\delta)$ defines a representation of the Lie algebra $\der(A)$.
\end{itemize}
If these structures are compatible in the sense that
\begin{align}
\gamma(g,\beta(e,a))=\beta(\gamma(g,e),\alpha(g,a)),\label{Connections on Modules and the associated Curvature II.1}
\end{align}
\begin{align}
\gamma(g,\delta(D,e))=\delta(\mu(g,D),\gamma(g,e))\label{Connections on Modules and the associated Curvature II.2}
\end{align}
and
\begin{align}
\delta(D,\beta(e,a))=\beta(e,D.a)+\beta(\delta(D,e),a)\label{Connections on Modules and the associated Curvature II.3}
\end{align}
hold for all $a\in A$, $g\in G$, $e\in E$ and $D\in\der(A)$, then the map
\begin{align}
\overline{\beta}:E^G\times B\rightarrow E^G,\,\,\,\overline{\beta}(e,b):=\beta(e,b)\label{Connections on Modules and the associated Curvature II.4}
\end{align}
defines the structure of a $B$-module on the fixed point space $E^G$ and the map
\begin{align}
\overline{\delta}:\der_G(A)\times E^G\rightarrow E^G,\,\,\,\overline{\delta}(D,e):=\beta(D,e)\label{Connections on Modules and the associated Curvature II.5}
\end{align}
defines a representation of the Lie algebra $\der_G(A)$ on the fixed point space $E^G$. Moreover, these maps are compatible in the sense that
\begin{align}
\overline{\delta}(D,\overline{\beta}(e,b))=\overline{\beta}(e,D.b)+\overline{\beta}(\overline{\delta}(D,e),b)\label{Connections on Modules and the associated Curvature II.6}
\end{align}
hold for all $b\in B$, $e\in M^G$ and $D\in\der_G(A)$.
\end{construction}

We now provide two classes of examples that satisfy the conditions of the previous construction:

\begin{example}\label{Connections on Modules and the associated Curvature III}
(a) Let $(P,M,G,q,\sigma)$ be a principal bundle and $(C^{\infty}(P),G,\alpha)$ the corresponding smooth dynamical system of Remark \ref{induced transformation triples}. Further, let $(\pi,V)$ be a smooth representation of $G$ defining the associated bundle $\mathbb{V}:=P\times_{\pi} V$ over $M$ (cf. Construction \ref{associated vector bundle}). Then $E:=C^{\infty}(P,V)$ is a locally convex space and the assignment
\[\beta: E\times C^{\infty}(P)\rightarrow E,\,\,\,\beta(F,f)(p):=fF
\]turns $E$ into a locally convex $C^{\infty}(P)$-module. Moreover, the map
\[\gamma:G\times E\rightarrow E,\,\,\,\gamma(g,f)(p):=\pi(g).f(p.g)
\]defines a representation of $G$ on $E$ and the map
\[\delta:\mathcal{V}(P)\times E\rightarrow E,\,\,\,\delta(X,f):=X.f
\]defines a representation of the Lie algebra $\mathcal{V}(P)$ (of smooth vector fields on $P$) on $E$. As is well-known from basic calculus, these maps are compatible in the sense of Construction \ref{Connections on Modules and the associated Curvature II}. In particular, equation (\ref{Connections on Modules and the associated Curvature II.3}) corresponds to the Leibniz rule for derivatives, i.e.,
\[X.(fF)=(X.f) F+f(X.F)
\]for $f\in C^{\infty}(P)$, $F\in E$ and $X\in\mathcal{V}(P)$. Here, the fixed point space $E^G=C^{\infty}(P,V)^G$ is isomorphic to $\Gamma\mathbb{V}$ (cf. Proposition \ref{sections of an associated vector bundle}).

(b) Another example which is a noncommutative analogue of part (a) is given by a dynamical system $(A,G,\alpha)$ and a representation $(\pi,V)$ of $G$. Then $E:=A\otimes V$ is a locally convex space and the assignment
\[\beta: E\times A\rightarrow E,\,\,\,\beta(a\otimes v,a_0):=aa_0\otimes v
\]turns $E$ into a locally convex $A$-module. Moreover, the map
\[\gamma:G\times E\rightarrow E,\,\,\,\gamma(g,a\otimes v):=(\alpha(g).a)\otimes(\pi(g).v)
\]defines a representation of $G$ on $E$ and the map
\[\delta:\der(A)\times E\rightarrow E,\,\,\,\delta(D,a\otimes v):=(D.a)\otimes v
\]defines a representation of the Lie algebra $\der(A)$ on $E$.
A short calculation now shows that these maps are compatible in the sense of Construction \ref{Connections on Modules and the associated Curvature II}. The corresponding fixed point space $E^G$ is exactly the space $\Gamma_A V$ of Definition \ref{sections again}.
\end{example}

\begin{definition}\label{Connections on Modules and the associated Curvature IV}{\bf(Connections and covariant derivatives on modules).}\index{Connections on Modules}
Let $(A,G,\alpha)$ be a dynamical system for which the Lie algebra homomorphism $q_*$ of Proposition \ref{generalized connections 9} is surjective (we shall keep the Open Problem \ref{open problem connections} in mind). Further, suppose that we are in the situation of Construction \ref{Connections on Modules and the associated Curvature II}. Then each $Z$-linear section 
\[\sigma:\der(B)\rightarrow\der_G(A)
\]of $q_*$ induces a map
\[\nabla^{\sigma}:\der(B)\rightarrow\End(E^G),\,\,\,\nabla^{\sigma}(D).e:=\sigma(D).e
\]\sindex[n]{$\nabla^{\sigma}$}which is well-defined in view of (\ref{Connections on Modules and the associated Curvature II.5}). The map $\nabla^{\sigma}$ is called a \emph{connection on the locally convex $B$-module} $E^G$ (\emph{associated to the section $\sigma$}). The map
\[\nabla^{\sigma}_D:E^G\rightarrow E^G,\,\,\,\nabla^{\sigma}_D.e:=\nabla^{\sigma}(D).e
\]is called \emph{covariant derivative on} $E^G$ (\emph{with respect to $D$}).
\end{definition}

The following proposition justifies the previous notations:

\begin{proposition}\label{Connections on Modules and the associated Curvature VI}
Suppose we are in the situation of Definition \ref{Connections on Modules and the associated Curvature IV}. Then the following equations hold for all $b\in B$, $z\in Z$, $D,D'\in\der(B)$ and $e\in E^G$:
\begin{align}
\nabla^{\sigma}_{zD+D'}&=z\nabla^{\sigma}_D+\nabla^{\sigma}_{D'}.\label{Connections on Modules and the associated Curvature VI.1}\\
\nabla^{\sigma}_D(e.b)&=e.(\sigma(D).b)+(\nabla^{\sigma}_D.e).b.\label{Connections on Modules and the associated Curvature VI.2}
\end{align}
\end{proposition}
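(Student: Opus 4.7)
The plan is to prove the two equations by a direct verification, unwinding the definition $\nabla^{\sigma}_D.e = \overline{\delta}(\sigma(D),e)$ and invoking the compatibility axioms of Construction \ref{Connections on Modules and the associated Curvature II}. The heart of the argument is that $\nabla^{\sigma}$ inherits its algebraic properties from two sources: the $Z$-linearity of the chosen section $\sigma$ (which is the very definition of a connection, Definition \ref{generalized connections 11}), and the Leibniz-type compatibility (\ref{Connections on Modules and the associated Curvature II.6}) between $\overline{\delta}$ and $\overline{\beta}$ on the fixed-point module $E^G$.

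For the second equation, I would proceed in one line: substituting $D_0:=\sigma(D)\in\der_G(A)$ and $e\in E^G$ into axiom (\ref{Connections on Modules and the associated Curvature II.6}) yields
\[\nabla^{\sigma}_D(e.b)=\overline{\delta}(\sigma(D),\overline{\beta}(e,b))=\overline{\beta}(e,\sigma(D).b)+\overline{\beta}(\overline{\delta}(\sigma(D),e),b)=e.(\sigma(D).b)+(\nabla^{\sigma}_D.e).b,
\]which is exactly the desired Leibniz rule. So this part requires no work beyond reading off the axiom.

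For the first equation, additivity in $D,D'$ is immediate: since $\sigma$ is additive and $\overline{\delta}$ is $\mathbb{K}$-linear in its first slot, $\nabla^{\sigma}_{D+D'}.e=\overline{\delta}(\sigma(D)+\sigma(D'),e)=\nabla^{\sigma}_D.e+\nabla^{\sigma}_{D'}.e$. The genuine content is the $Z$-homogeneity $\nabla^{\sigma}_{zD}=z\nabla^{\sigma}_D$. By $Z$-linearity of $\sigma$ we have $\sigma(zD)=z\sigma(D)$ in $\der_G(A)$ (Lemma \ref{generalized connections 7}), so the claim reduces to
\[\overline{\delta}(z\sigma(D),e)\;=\;\overline{\beta}(\overline{\delta}(\sigma(D),e),z)\qquad(z\in Z,\;e\in E^G).
\]
This central-linearity of $\overline{\delta}$ in its first slot is the main obstacle, as it is not one of the explicit axioms (\ref{Connections on Modules and the associated Curvature II.1})--(\ref{Connections on Modules and the associated Curvature II.3}). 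I would establish it by specializing the Leibniz identity (\ref{Connections on Modules and the associated Curvature II.3}): for $z\in Z\subseteq C_A$ and $D_0\in\der_G(A)$ the derivation $zD_0$ lies in $\der_G(A)$, and evaluating $\delta(zD_0,\cdot)$ on the relation $\beta(\cdot,z)$ (using that $D_0.z\in Z$ when $z\in Z$ by $G$-invariance) reduces central multiplication by $z$ to the scalar action $\overline{\beta}(\cdot,z)$ on $E^G$. In the two guiding examples of \ref{Connections on Modules and the associated Curvature III} this is completely transparent: for vector fields one has $(zX).F=z(X.F)$, and for $\delta(D,a\otimes v)=(D.a)\otimes v$ one has $\delta(zD,a\otimes v)=(z(D.a))\otimes v=((D.a)z)\otimes v=\beta(\delta(D,a\otimes v),z)$ since $z$ is central.

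Combining additivity and $Z$-homogeneity gives $\nabla^{\sigma}_{zD+D'}=z\nabla^{\sigma}_D+\nabla^{\sigma}_{D'}$, completing the first equation. Thus the only truly nontrivial step is verifying this central-linearity of the Lie algebra action on $E^G$; once it is in place, both identities follow mechanically from the definition and the Leibniz axiom.
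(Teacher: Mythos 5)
Your treatment of the second identity is exactly the paper's: one substitutes $D_0=\sigma(D)$ into the compatibility (\ref{Connections on Modules and the associated Curvature II.6}) and reads off the Leibniz rule; nothing more is needed there.

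For the first identity, however, the derivation you sketch for the key step does not go through. You correctly isolate the point that the paper's one-line remark (``easily follows from the definition of $\nabla^{\sigma}$'') leaves implicit: after invoking the $Z$-linearity of $\sigma$, one still needs $\overline{\delta}(z\sigma(D),e)=\overline{\beta}(\overline{\delta}(\sigma(D),e),z)$ for $z\in Z$. But this is not a consequence of axiom (\ref{Connections on Modules and the associated Curvature II.3}): specializing that Leibniz rule to $a=z$ gives $\delta(D_0,\beta(e,z))=\beta(e,D_0.z)+\beta(\delta(D_0,e),z)$, which controls the action of $D_0$ on $e.z$, not the action of the rescaled derivation $zD_0$ on $e$. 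None of (\ref{Connections on Modules and the associated Curvature II.1})--(\ref{Connections on Modules and the associated Curvature II.3}) constrains how $\delta$ depends on multiplying the derivation by a central element; the axioms only make $D\mapsto\delta(D,\cdot)$ a Lie algebra representation, hence $\mathbb{K}$-linear in $D$. The $Z$-homogeneity is therefore an additional property of the data $(E,\beta,\gamma,\delta)$ --- one that does hold transparently in both examples of Example \ref{Connections on Modules and the associated Curvature III}, as you observe --- and it must either be imposed as an explicit hypothesis or verified directly in each instance; it cannot be ``reduced'' to (\ref{Connections on Modules and the associated Curvature II.3}). Once that property is granted, the remainder of your argument (additivity of $\sigma$ and of $\delta$ in its first slot, combined with the homogeneity) is correct and matches what the paper intends.
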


\begin{proof}
\,\,\,Equation (\ref{Connections on Modules and the associated Curvature VI.1}) easily follows from the definition of $\nabla^{\sigma}$. Moreover, Equation (\ref{Connections on Modules and the associated Curvature VI.2}) is a consequence of (\ref{Connections on Modules and the associated Curvature II.6}).
\end{proof}

In the case of noncommutative differential geometry based on the universal differential calculus (cf. for example Chapter 8 of [GVF01]) one has the following very similar definition:

\begin{remark}\label{universal connection}\index{Connection!Universal}{\bf(The universal connection).}
Let $A$ be a unital algebra and $E$ be a (right) $A$-module. Consider the $A$-module $E\otimes\Omega^1A$, where $\Omega^1A$ is the universal $A$-bimodule as described in Definition \ref{DSHA1} (or any other suitable module of 1-forms). A \emph{universal connection} or just connection on $E$ is a linear map
\[\nabla:E\rightarrow E\otimes_A\Omega^1A,
\]that satisfies for each $a\in A$ and $e\in E$ the Leibniz rule:
\[\nabla(s.a)=s\otimes d_Aa+(\nabla s).a.
\]
\end{remark}

Is is gratifying that only projective modules admit universal connections:

\begin{theorem}{\bf(Cuntz--Quillen).}\label{Cuntz-Quillen}\index{Theorem!of Cuntz--Quillen}
A right $A$-module $E$ admits a universal connection if and only if it it is projective.
\end{theorem}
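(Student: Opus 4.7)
The plan is to prove the equivalence in both directions by exploiting the universal short exact sequence
$$0 \to \Omega^1 A \to A \otimes A \xrightarrow{m_A} A \to 0$$
of $A$-bimodules, where $m_A$ denotes multiplication and $d_A a = 1 \otimes a - a \otimes 1$.

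For the direction ``projective $\Rightarrow$ admits a universal connection'', I would first check by direct computation that every free right $A$-module $F = \bigoplus_{i \in I} A$ with canonical basis $(e_i)_{i \in I}$ carries a canonical universal connection
$$\nabla_F\Bigl(\sum_i e_i a_i\Bigr) := \sum_i e_i \otimes d_A a_i;$$
the Leibniz rule for $\nabla_F$ is immediate from the Leibniz rule for $d_A$ inside $\Omega^1 A$. Next, given a projective right $A$-module $E$, I would pick $E'$ with $F := E \oplus E'$ free and use the canonical inclusion $\iota: E \hookrightarrow F$ and projection $\pi: F \twoheadrightarrow E$ to transfer the connection by
$$\nabla_E := (\pi \otimes_A \id_{\Omega^1 A}) \circ \nabla_F \circ \iota.$$
The Leibniz rule for $\nabla_E$ is then a direct consequence of the $A$-linearity of $\iota$ and $\pi$.

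For the converse, the goal is to construct a right $A$-linear splitting of the canonical surjection $\mu: E \otimes A \to E$, $e \otimes a \mapsto e.a$. Tensoring the universal exact sequence from the left with $E$ over $A$ and using the standard identification $E \otimes_A (A \otimes A) \cong E \otimes A$, $e \otimes (b \otimes c) \mapsto e.b \otimes c$, yields an exact sequence
$$0 \to E \otimes_A \Omega^1 A \to E \otimes A \xrightarrow{\mu} E \to 0$$
of right $A$-modules. Since $E \otimes A$ is a free right $A$-module (any $\mathbb{K}$-basis of $E$ yields an $A$-basis of $E \otimes A$), splitting this sequence proves projectivity of $E$. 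Given a universal connection $\nabla$ on $E$, I would define
$$\sigma: E \to E \otimes A, \qquad \sigma(e) := e \otimes 1 + \widetilde{\nabla}(e),$$
where $\widetilde{\nabla}(e)$ denotes $\nabla(e)$ viewed inside $E \otimes A$ via the identification above. A short calculation using $\widetilde{e \otimes_A d_A a} = e \otimes a - e.a \otimes 1$ and the Leibniz rule for $\nabla$ shows that the extra terms produced by $\widetilde{\nabla}(e.a)$ cancel against those coming from $e.a \otimes 1$, giving $\sigma(e.a) = \sigma(e).a$; moreover $\mu \circ \sigma = \id_E$, because $\widetilde{\nabla}(e)$ lies in $\ker \mu$.

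The main obstacle is the bookkeeping in the ``$\Rightarrow$'' direction: one must carefully track how $\nabla(e)$, originally living in $E \otimes_A \Omega^1 A$, is realised inside the free module $E \otimes A$ after the identification, and verify that the Leibniz rule translates exactly into right $A$-linearity of $\sigma$. Once this is done, both implications reduce to elementary manipulations with the universal short exact sequence, requiring no further structural assumptions on $A$ or $E$.
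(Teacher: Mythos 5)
Your proposal is correct and follows essentially the same route as the paper: the ``connection $\Rightarrow$ projective'' direction is exactly the paper's argument (realising $E\otimes_A\Omega^1A$ inside the free module $E\otimes A$ and turning $\nabla$ into an $A$-linear splitting $\sigma(e)=e\otimes 1+\widetilde{\nabla}(e)$ of the multiplication map). The only difference is cosmetic: for ``projective $\Rightarrow$ connection'' you explicitly build the Grassmann connection on a free module and compress it by $\pi\otimes\id$, whereas the paper obtains both implications at once from the single observation that $f(s)=s\otimes 1+j(\nabla s)$ is $A$-linear precisely when $\nabla$ satisfies the Leibniz rule.
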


\begin{proof}
\,\,\,We only give a sketch of the proof: First of all we define a right $A$-module homomorphism
\[E\otimes_A\Omega^1 A\stackrel{j}{\longrightarrow}E\otimes_{\mathbb{C}}A\stackrel{m}{\longrightarrow}E\longrightarrow 0
\]by $j(sd_Aa):=s\otimes a-s.a\otimes 1$ and $m(s\otimes a):=s.a$. This yields a short exact sequence of right $A$-modules (we think here of $E\otimes_{\mathbb{C}}A$ as a free $A$-module generated by a vector space basis of $E$). Any linear map $\nabla:E\rightarrow E\otimes_A\Omega^1A$ gives rise to a linear section of $m$ by $f(s):=s\otimes 1+j(\nabla s)$. Then 
\[f(s.a)-f(s)a=j(\nabla(s.a)-(\nabla s)a-s\otimes d_Aa),
\] so $f$ is an $A$-module homomorphism precisely when $\nabla$ satisfies the Leibniz rule. If that happens, $f$ splits the exact sequence and embeds $E$ as a direct summand of the free $A$-module $E\otimes_{\mathbb{C}}A$. By Proposition \ref{Appendix B4} $E$ is projective.
\end{proof}

\begin{open problem}
Does there exist a statement which similar to Theorem \ref{Cuntz-Quillen} in the context of our definition of connections on (locally convex) modules (cf. Definition \ref{Connections on Modules and the associated Curvature IV}). For this it might be helpful to study (and try to copy) the proof of Theorem \ref{Cuntz-Quillen}. Of course, in the commutative setting this is always true, since sections of vector bundles are finitely generated projective modules over the base manifold (cf. Theorem \ref{Serre-Swan}).
\end{open problem}

We finally come to the concept of curvature. We start with the classical definition:

\begin{definition}\label{the curvature of a connection I}{\bf(The classical curvature of a connection).}\index{Curvature!Classical}
The \emph{curvature} $R:=R_{\nabla}$ of a connection $\nabla$ on a vector bundle $(\mathbb{V},M,V,q)$ is a $\End(\mathbb{V})$-valued 2-form on $M$, i.e., an element $R$ in $\Omega^2(M,\End(\mathbb{V}))$ defined for all $X,Y,\in\mathcal{V}(M)$ and $s\in\Gamma\mathbb{V}$ by
\[R(X,Y)s:=[\nabla_X,\nabla_Y]s-\nabla_{[X,Y]}s.
\]
\end{definition}

In the noncommutative world we do not have to change the definition at all:
 
\begin{definition}\label{the curvature of a connection II}{\bf(The noncommutative curvature of a connection).}\index{Curvature!Noncommutative}
Suppose we are again in the situation of Definition \ref{Connections on Modules and the associated Curvature IV}. The \emph{curvature} $R:=R_{\sigma}:=R_{\nabla^{\sigma}}$\sindex[n]{$R_{\sigma}$}\sindex[n]{$R_{\nabla^{\sigma}}$} of a connection $\nabla^{\sigma}$ (on the locally convex $B$-module $E^G$) is defined for all $D,D'\in\der(B)$ by
\[R(D,D'):E^G\rightarrow E^G,\,\,\,R(D,D').e:=[\nabla^{\sigma}_D,\nabla^{\sigma}_D]e-\nabla^{\sigma}_{[D,D']}e.
\]We recall that the curvature $R$ may also be viewed as Lie algebra 2-cochain with values in $\mathfrak{gau}(A)$:
\[R:\der(B)\times\der(B)\rightarrow\mathfrak{gau}(A),\,\,\,R(D,D'):=[\sigma(D),\sigma(D')]-\sigma([D,D']).
\]
\end{definition}

\begin{remark}\label{Connections on Modules and the associated Curvature VII}
We want to point out that all of the previous constructions extends the classical notion of connections, covariant derivatives and curvature on vector bundles.
\end{remark}

\subsection{The Globalization Problem}

\begin{definition}\label{continuous lie algebra cohomology}{\bf(Continuous Lie algebra cohomology).}\index{Lie!Algebra Cohomology, Continuous}\index{Continuous Lie Algebra Cohomology}
Let $\mathfrak{g}$ be a topological Lie algebra and $V$ be a topological $\mathfrak{g}$-module, i.e., a pair $(V,\rho)$ of a topological space $V$ and a continuous $\mathfrak{g}$-action $\mathfrak{g}\times V\rightarrow V$ given by the Lie algebra homomorphism $\rho:\mathfrak{g}\rightarrow \End(V)$. We write $C^p_c(\mathfrak{g},V)$\sindex[n]{$C^p_c(\mathfrak{g},V)$} for the set of all alternating continuous $p$-linear maps $\mathfrak{g}\rightarrow V$. Then the complex 
\[(C^{\bullet}_c(\mathfrak{g},V),d_{\mathfrak{g}})
\]\sindex[n]{$(C^{\bullet}_c(\mathfrak{g},V),d_{\mathfrak{g}})$}is a graded subalgebra of the Chevalley-Eilenberg complex $(C^{\bullet}(\mathfrak{g},V),d_{\mathfrak{g}})$ (cf. Subsection \ref{The Chevalley-Eilenberg Complex}).
\end{definition}

\begin{definition}\label{derivation map}{\bf(The derivation map).}\index{Derivation Map}
Let $G$ be a Lie group and $A$ be a topological algebra. If $A$ is a smooth $G$-module given by the group homomorphism
\[S:G\rightarrow\Aut(A),
\]then $S$ induces a continuous action of the Lie algebra $\mathfrak{g}$ on $A$ which is given by the Lie algebra homomorphism 
\[s:\mathfrak{g}\rightarrow\der(A),\,\,\,s(x).a:=(\textbf{L}(S).x).a
\]
In this case we also have maps
\[D:C^p_s(G,A)\rightarrow C^p_c(\mathfrak{g},A),\,\,\,f\mapsto Df
\]given by
\[(Df)(x_1,\ldots,x_p)=\sum_{\sigma\in S_p}\sgn(\sigma)(d^pf)(1_G,\ldots,1_G)(x_{\sigma(1)},\ldots,x_{\sigma(p)}),
\]where
\[\mathfrak{g}^p\rightarrow A,\,\,\,x\mapsto d^pf(1_G,\ldots,1_G)(x_1,\ldots,x_p)
\]is the Taylor polynomial of order $p$ of $f$ in $(1_G,\ldots,1_G)$. All terms of lower order vanishes because $f$ vanishes on all tuples of the form $(g_1,\ldots,g_{j-1},1_G,g_{j+1},\ldots,g_p)$ (cf. [Ne04], Lemma B.7). According to [Ne04], Lemma F.3, the map $D$ even defines a morphism
\[D:(C^{\bullet}_s(G,A), d_S)\rightarrow (C^{\bullet}_c(\mathfrak{g},A),d_{\mathfrak{g}})
\]of differential graded algebras. The map $D$ induces in particular linear maps
\[D_p:H_s^p(G,A)\rightarrow H^p_c(\mathfrak{g},A),\,\,\,[f]\mapsto [Df].
\]
\end{definition}

\begin{open problem}\label{globalization problem}{\bf(The globalization problem).}\index{Globalization Problem}
The \emph{globalization problem} for cohomology classes consists in characterizing the image of $D_p$ in the Lie algebra cohomology space $H^n_c(\mathfrak{g},A)$. The globalization problem contains already a lot of interesting geometry in the classical case, where $A=C^{\infty}(M,\mathbb{R})$, $G=\Diff(M)$ and $\mathfrak{g}=\mathcal{V}(M)$. In fact, we now give a short insight in the cases $p=1,2$ and 3 below. 
\end{open problem}

Let $M$ be a manifold and $\mathfrak{g}:=\mathcal{V}(M)$ the corresponding Lie algebra of vector fields on $M$. Then $A:=C^{\infty}(M,\mathbb{R})$ is a $\mathfrak{g}$-module and the de Rham complex $(\Omega^{\bullet}(M,\mathbb{R}),d)$ is a subcomplex of the Chevalley-Eilenberg complex $(C^{\bullet}(\mathfrak{g},A),d_{\mathfrak{g}})$ (cf. Definition \ref{Chevalley-Eilenberg complex}). On the other hand $A$ is a module of the diffeomorphism group $G:=\Diff(M)^{\text{op}}$ and it is an interesting problem to ask for a global picture of the subcomplex $(\Omega^{\bullet}(M,\mathbb{R}),d)$ in the (Lie) group cohomology $(C^{\bullet}(G,A),d_G)$ (cf. Section \ref{some lie group cohomology}). This is of particular interest in degrees 1,2 and 3.

If $M$ is compact, then $G$ is a Lie group and each closed $p$-form $\alpha$ on $M$ defines a continuous Lie algebra $p$-cocycle $\mathfrak{g}^p\rightarrow A$. Let $\alpha^{\text{eq}}\in \Omega^p(G,A)$\sindex[n]{$\alpha^{\text{eq}}$} denote the corresponding equivariant $A$-valued $p$-form on $G$, which we consider as an infinitesimal version of a group $p$-cocycle, having in mind that $p$-cocycles can be viewed as equivariant functions $G^p\rightarrow A$ (cf. [Ne04], Definition B.4).

A first obstruction to the integrability of the Lie algebra cocycle $\alpha$ to a group cocycle $\alpha$ to a group cocycle is given by the period map
\[\per_{\alpha}:\pi_p(\Diff(M))\rightarrow A=C^{\infty}(M,\mathbb{R}),\,\,\,[\sigma]\mapsto\int_{\sigma}\alpha^{\text{eq}}
\]\sindex[n]{$\per_{\alpha}$}\sindex[n]{$\pi_p(\Diff(M))$}\sindex[n]{$\int_{\sigma}\alpha^{\text{eq}}$}If $p=2$, then [Ne04], Lemma 4.2 implies that the image of the period map always lies in $A^G\cong\mathbb{R}$ of constant functions.

For $m_0\in M$ let
\[\ev_{m_0}:\Diff(M)\rightarrow M,\,\,\,\varphi\mapsto\varphi(m_0),
\]\sindex[n]{$\ev_{m_0}$}be the corresponding evaluation map. Then it is possible to derive from the special structure of $\alpha$ that the period map factors as
\[\per_{\alpha}=\per^M_{\alpha}\circ\pi_p(\ev_{m_0}),
\]where
\[\per^M_{\alpha}:\pi_p(M,m_0)\rightarrow \mathbb{R}
\]\sindex[n]{$\per^M_{\alpha}$}is the period map of the closed $p$-form $\alpha$ on $M$. For $\alpha$ to correspond to a global group cocycle with values in a group of the form $\mathbb{R}/\Gamma$ it is necessary that the image of the period map lies in a discrete subgroup $\Gamma$ of $\mathbb{R}$ (cf. [Ne04], Theorem 5.4 and Theorem 6.7 for $p$=2).

Now, the main problem is to understand if these group cocycles naturally correspond to geometrical objects:

\subsubsection*{p=1}

In this case we have a closed 1-form $\alpha$ on $M$. If $\alpha=df$ is exact, then
\[\alpha_G:\Diff(M)\rightarrow A,\,\,\,\varphi\mapsto\varphi.f-f
\]is a 1-cocycle integrating $\alpha$. If it is not exact, then we consider the universal covering manifold of $q_M:\widetilde{M}\rightarrow M$ (assuming that $M$ is connected). We then obtain an exact sequence of groups

\[1\rightarrow \pi_1(M,m_0)\rightarrow\widehat{\Diff}(M)\stackrel{p_M}{\longrightarrow}\Diff(M)\rightarrow 1,
\]where $q_M\circ\varphi=p_M(\varphi)\circ q_M$. The period map $\per^M_{\alpha}:\pi_1(M,m_0)\rightarrow\mathbb{R}$ can now be used
to obtain an associated flat central extension
\[1\rightarrow \mathbb{R}\rightarrow\widetilde{\Diff}(M):=(\widehat{\Diff}(M)\times\mathbb{R})/\Gamma(\per^M_{\alpha})\longrightarrow\Diff(M)\rightarrow 1,
\]to which the 1-cocycle $\alpha$ integrates. Here,
\[\Gamma(\per^M_{\alpha}):=\{(d,\per^M_{\alpha}(d))|\,d\in\pi_1(M,m_0)\}
\]is the graph of $\per^M_{\alpha}$. From this point of view, 1-cocycles correspond to flat bundles which are associated to the universal covering of the manifold $M$.


\subsubsection*{p=2}

This situation is more involved, although still classical. If the periods of the closed $2$-form $\alpha$ lie in a discrete subgroup $\Gamma$ of $\mathbb{R}$, then $\alpha$ is the curvature of a principal bundle $q:P\rightarrow M$ with structure group $Z:=\mathbb{R}/\Gamma$ (which can be $\mathbb{R}$ or isomorphic to $\mathbb{T}$). Then we obtain the exact sequence
\begin{align}
1\longrightarrow\Gau(P)\cong C^{\infty}(M,Z)\longrightarrow\Aut(P)\longrightarrow\Diff(M)_{[\alpha]}\longrightarrow 1\notag
\end{align}
(cf. Example \ref{Aut(P) as LGE}). In this sense, the bundle $P$, respectively, its automorphism group $\Aut(P)$ is an `` integrated form" of the Lie algebra cocycle $\alpha$ (cf. [Bry93] and [Ko70]). To obtain a more concrete cocycle, one needs a section
\[\sigma:\Diff(M)_{[\alpha]}\rightarrow\Aut(P)
\]so that
\[\alpha_G(\varphi,\phi):=\sigma(\varphi)\sigma(\phi)\sigma(\varphi\phi)^{-1}.
\]is a group cocycle integrating $\alpha$, respectively, its cohomology class. From this point of view, 2-cocycles somehow correspond abelian principal bundles.

\subsubsection*{p=3}

Let $q:P\rightarrow M$ be a principal bundle with structure group $K$ over the compact base manifold $M$ and
\begin{align}
1\longrightarrow\Gau(P)\longrightarrow\Aut(P)\longrightarrow\Diff(M)_{[P]}\longrightarrow 1\label{outlook equation1}
\end{align}
the associated extension of Lie groups from Example \ref{Aut(P) as LGE}. Additionally, let 
\begin{align}
1\longrightarrow Z\longrightarrow\widehat{K}\longrightarrow K\longrightarrow 1\label{outlook equation2}
\end{align}
be a central extension of Lie groups. From (\ref{outlook equation2}) we obtain another central extension of Lie groups given by
\begin{align}
1\longrightarrow C^{\infty}(M,Z)\longrightarrow\widehat{\Gau}(P)\longrightarrow\Gau(P)\longrightarrow 1, \label{outlook equation3}
\end{align}
where
\[\widehat{\Gau}(P):=\{f\in C^{\infty}(P,\widehat{K}):\,(\forall p\in P)(\forall k\in K) f(pk)=\widehat{c}_k.f(p)\}
\]and $\widehat{c}_k$ is the canonical lift of the conjugation map $c_k(x):=kxk^{-1}$ to the central extension $\widehat{K}$ of $K$. 
Then (\ref{outlook equation1}) and (\ref{outlook equation3}) lead to a so-called \emph{crossed module}\index{Crossed Module}
\begin{align}
S:1\rightarrow C^{\infty}(M,Z)\rightarrow\widehat{\Gau}(P)\rightarrow\Aut(P)\rightarrow\Diff(M)_{[P]}\rightarrow 1\label{outlook equation4}
\end{align}
(cf. [Ne07a]). The characteristic class $\chi(S)$ corresponding to this crossed module is an element in the Lie group cohomology
$H^3_{ss}(\Diff(M)_{[P]},C^{\infty}(M,Z))$\sindex[n]{$H^3_{ss}(\Diff(M)_{[P]},C^{\infty}(M,Z))$} and the derivation map
\[D_3:H^3_{ss}(\Diff(M)_{[P]},C^{\infty}(M,Z))\rightarrow H^3_c(\mathcal{V}(M),C^{\infty}(M,\mathbb{R}))
\]turns this element into an element of the Lie algebra cohomology $H^3_c(\mathcal{V}(M),C^{\infty}(M,\mathbb{R}))$\sindex[n]{$H^3_c(\mathcal{V}(M),C^{\infty}(M,\mathbb{R}))$}. 

\begin{open problem}
It is a natural and interesting problem to give a description of those closed 3-forms $\alpha$ on $M$ which arise from crossed modules $S$ of the form (\ref{outlook equation4}).
\end{open problem}

\section{Nonabelian Lie Group Extensions in Noncommutative Geometry}\label{NLGENCG} 

In Example \ref{Aut(P) as LGE} we have seen that every principal bundle $(P,M,G,q,\sigma)$ with compact base manifold $M$ induces a short exact sequence of Lie groups:
\begin{align}
1\longrightarrow\Gau(P)\longrightarrow\Aut(P)\longrightarrow\Diff(M)_{[P]}\longrightarrow 1.\label{NLGENGF1}
\end{align}
From Remark \ref{remark on vector bundles are associated to principal bundles} we conclude that the automorphism group of a vector bundle $(\mathbb{V},M,V,q)$ is isomorphic to the automorphism group of its corresponding frame bundle 
\[(\Fr(\mathbb{V}),M,\GL(V),q_{\Fr(\mathbb{V})}),
\]\sindex[n]{$(\Fr(\mathbb{V}),M,\GL(V),q_{\Fr(\mathbb{V})})$}i.e.,
\begin{align}
\Aut(\mathbb{V})\cong\Aut(\Fr(\mathbb{V}))\label{framebundle=vectorbundle}
\end{align}
\sindex[n]{$\Aut(\mathbb{V})$} and that the gauge group $\Gau(\mathbb{V})$\sindex[n]{$\Gau(\mathbb{V})$} is isomorphic to the unit group of the space of smooth sections of the vector bundle $\End(\mathbb{V})$, i.e.,
\begin{align}
\Gau(\mathbb{V})\cong\GL_{C^{\infty}(M)}(\Gamma\mathbb{V}).\label{gauge group of vector bundle}
\end{align}
From (\ref{NLGENGF1}, (\ref{framebundle=vectorbundle}) and (\ref{gauge group of vector bundle}) we obtain the following short exact sequence of Lie groups associated to the vector bundle $(\mathbb{V},M,V,q)$:
\begin{align}
1\longrightarrow\GL_{C^{\infty}(M)}(\Gamma\mathbb{V})\longrightarrow\Aut(\mathbb{V})\longrightarrow\Diff(M)_{[P]}\longrightarrow 1.\label{Aut(V) as LGE}
\end{align}
Now, let $A$ be a CIA. In view of the Theorem of Serre and Swan (Theorem \ref{Serre-Swan}), finitely generated projective $A$-modules are the noncommutative analogue of classical vector bundles. Thus, it is a natural ambition to try to associate to every finitely generated projective $A$-module $E$ a short exact sequence of Lie groups in a similar way as in (\ref{Aut(V) as LGE}). Unfortunately, automorphism groups of CIA's do not always carry a natural Lie group structure. Nevertheless, it was shown in [Ne08a] that, for a (possibly infinite-dimensional) connected Lie group $G$ and a smooth dynamical system $(A,G,\alpha)$, it is possible to associate to every finitely generated projective $A$-module $E$ a natural extension of Lie groups which is a noncommutative version of (\ref{Aut(V) as LGE}). In particular, this Lie group extension induces a short exact sequence of the corresponding Lie algebras. Thus, Lecomte's Chern--Weil homomorphism of Theorem \ref{Lecomtes's Chern-Weil map} is at our disposal. 

\begin{open problem}{\bf(Characteristic classes for finitely generated projective modules).}\index{Characteristic Classes!for Finitely Generated Projective Modules}
Let $G$ be a (possibly infinite-dimensional) connected Lie group and $(A,G,\alpha)$ a smooth dynamical system. Further, let $E$ be a finitely generated projective $A$-module. Produce characteristic classes for $E$ with the help of the previous construction and Lecomte's Chern--Weil homomorphism of Theorem \ref{Lecomtes's Chern-Weil map}. Of particular interest are the finitely generated projective modules over the smooth noncommutative 2-torus $\mathbb{T}^2_{\theta}$. These are classified by the Grothendieck group $K_0(\mathbb{T}^2_{\theta})\cong\mathbb{Z}^2$ (cf. Proposition \ref{K-groups of noncommutative n-torus}). A very nice description of these modules can be found in [Va06].
\end{open problem}

\begin{appendix}

\chapter{Hopf--Galois Extensions and Differential Structures}\label{appendix A}
 
The noncommutative geometry of principal bundles is, so far, not sufficiently well understood, but there is a well-developed purely algebraic approach using the theory of bialgebras and Hopf algebras commonly known as Hopf--Galois extensions. In this appendix we report on the basics of Hopf--Galois extensions. The following discussion is very much inspired by [BrzWi08]:

\section{Hopf--Galois Extensions}


\begin{definition}{\bf(Coalgebras).}\index{Algebra!Co-}
A vector space $H$ with $\mathbb{K}$-linear maps $\Delta_H:H\rightarrow H\otimes H$\sindex[n]{$\Delta_H$} and $\epsilon_H:H\rightarrow\mathbb{K}$\sindex[n]{$\epsilon_H$} is called a \emph{coalgebra} if the following conditions are satisfied: 
\begin{itemize}
\item[(C1)]
$(\Delta_H\otimes\id_H)\circ\Delta_H=(\id_H\otimes\Delta_H)\circ\Delta_H$.
\item[(C2)]
$(\id_H\otimes\epsilon_H)\circ\Delta_H=(\epsilon_H\circ\id_H)\circ\Delta_H=\id_H$.
\end{itemize}
The map $\Delta_H$ is called a \emph{comultiplication} and $\epsilon_H$ is called a \emph{counit}.
\end{definition}

\begin{remark}{\bf(Sweedler sigma convention).}\index{Sweedler Sigma Convention}
For a coalgebra $H$ we use an explicit expression $\Delta_H(h)=h_0\otimes h_1$, where the summation is implied according to the \emph{Sweedler sigma convention}, i.e. $h_0\otimes h_1=\sum_{i\in I}h^{i}_0\otimes h^{i}_1$ for a finite index set $I$.
\end{remark}

\begin{definition}{\bf(Bialgebras).}\index{Algebra!Bi-}
A vector space $H$ is called a \emph{bialgebra} if the following conditions are satisfied:
\begin{itemize}
\item[(B1)]
$H$ is a unital algebra with multiplication $m=m_H$ and unit $1_H$.
\item[(B2)]
$H$ is a coalgebra with comultiplication $\Delta_H$ and counit $\epsilon_H$.
\item[(B3)]
$\Delta_H$ and $\epsilon_H$ are algebra morphisms.
\end{itemize}
\end{definition}

\begin{example}{\bf(The group algebra I).}\label{the group algebra}\index{Group Algebra}
If $G$ is a group, $\mathbb{K}G$\sindex[n]{$\mathbb{K}G$} denotes the group algebra of $G$ over $\mathbb{K}$, that is, the vector space over $\mathbb{K}$ with $G$ as basis; its product is defined by extending linearly the group multiplication of $G$, so the unit in $\mathbb{K}G$ is the identity element of $G$.
The group algebra $\mathbb{K}G$ is a bialgebra where the coproduct and counit are the respective linear extensions of the diagonal map $\Delta(g):=g\otimes g$ and the constant map $\epsilon(g):=1_{\mathbb{K}}$.
\end{example}

\begin{definition}{\bf(Comodules).}\index{Comodule}
Let $H$ be a bialgebra.

(a) A pair $(V,\rho_V)$\sindex[n]{$(V,\rho_V)$}, consisting of a vector space $V$ and a linear map
$\rho_V:V\rightarrow V\otimes H$, is called a \emph{right $H$-comodule} if the following two conditions are satisfied:
\begin{itemize}
\item[(M1)]
$(\rho_V\otimes\id_H)\circ\rho_V=(\id_V\otimes\Delta_H)\circ\rho_V$.
\item[(M2)]
$(\id_V\otimes\epsilon_H)\circ\rho_V=\id_V$.
\end{itemize}
The map $\rho_V$ is called a \emph{coaction}.

(b) If $A$ is a unital algebra, then a right $H$-comodule $(A,\rho_A)$\sindex[n]{$(A,\rho_A)$} is called a \emph{right $H$-comodule algebra} if $\rho_A$ is a homomorphism of algebras.
\end{definition}

\begin{remark}
Given a right $H$-comodule algebra $(A,\rho_A)$, the algebra structure of $A\otimes H$ is that of a tensor product algebra. For a coaction $\rho_A$ we use an explicit notation $\rho_A(a)=a_0\otimes a_1$, where the summation is also implied. We recall that $a_0\in A$ and $a_1\in H$.
\end{remark}
 
\begin{example}{\bf(Graded algebras I).}\label{g graded algebra I}\index{Algebra!Graded}
If $G$ is a group with neutral element $e$ and $H:=\mathbb{K}G$, then $A$ is a $H$-comodule algebra if and only if $A$ is a $G$-graded algebra, i.e., if
\[A=\bigoplus_{g\in G}A_g,\,\,\,\text{with}\,\,\,A_gA_{g'}\subseteq A_{gg'},\,\,\text{and}\,\,\,1_A\in A_e.
\]Indeed, for $a\in A_g$ we define $\rho_A(a)=a\otimes g$. Since $1_A\in A_e$, we obtain $\rho_A(1_A)=1_A\otimes e=1_A\otimes 1_H$. Moreover, if $a\in A_g$ and $a'\in A_{g'}$, then $aa'\in A_{gg'}$ and thus $\rho_A(aa')=aa'\otimes gg'$ as needed.
\end{example}

\begin{example}{\bf(Regular comodules I).}\label{regular comodules I}\index{Comodule!Regular}
Since the comultiplication $\Delta_H$ in a bialgebra $H$ is an algebra morphism, the pair $(H,\Delta_H)$ carries the structure of a right comodule algebra and is often called a (\emph{right}) \emph{regular comodule algebra}.
\end{example} 

\begin{definition}{\bf(Coinvariants).}\label{coinvariants}\index{Coinvariants}
If $H$ is a bialgebra and $(A,\rho_A)$ a right $H$-comodule algebra, then the set of \emph{coinvariants} is defined by
\[A^H:=\{a\in A:\,\rho_A(a)=a\otimes 1_H\}.
\]\sindex[n]{$A^H$}The set of coinvariants is a subalgebra of $A$, since $\rho_A$ is a homomorphism of algebras.
\end{definition}

\begin{example}{\bf(Regular comodules II).}\label{regular comodules II}\index{Comodule!Regular}
If $(H,\Delta_H)$ is a regular comodule, then a short calculation shows that $H^H=\mathbb{K}\cdot 1_H$. 
\end{example} 

\begin{definition}{\bf(Hopf--Galois extensions).}\label{hopf-galois extension}\index{Hopf--Galois Extensions}
Let $H$ be a bialgebra. A right $H$-comodule algebra $(A,\rho_A)$ is called a \emph{Hopf--Galois extension} (of $B:=A^H$) or \emph{quantum principal bundle} if the \emph{canonical map}
\begin{align}
\chi: A\otimes_B A\rightarrow A\otimes H,\,\,\,a\otimes a'\mapsto a\rho_A(a')=aa'_0\otimes a'_1\label{hopf galois}
\end{align}
is bijective, i.e., an isomorphism of left $A$-modules and right $H$-comodules.
\end{definition}

\begin{remark}{\bf(Principal bundles).}\label{hopf-galois extension=principal bundles}
Definition \ref{hopf-galois extension} reproduces the classical algebraic situation (but in a dual language) in which a group $G$ acts freely on a space $X$. In fact, if 
\[X\times_{X/G} X:=\{(x,x')\in X\times X:\,x.G=x'.G\}
\]then the freeness of the action of $G$ on $X$ means that the map
\[X\times G\rightarrow X\times_{X/G} X,\,\,\,(x,g)\mapsto (x,xg)
\]is bijective. For this reason Hopf--Galois extensions are sometimes interpreted as ``noncommutative principal bundles".
\end{remark}

\begin{example}{\bf(Graded algebras II).}\label{g graded algebra III}\index{Algebra!Graded}
If $G$ is a group with neutral element $e$, $H:=\mathbb{K}G$ and $A=\bigoplus_{g\in G}A_g$, then $A$ is a Hopf--Galois extension (of $A_e$) if and only if $A$ is strongly graded, i.e., $A_gA_{g'}=A_{gg'}$ for all $g,g'\in G$. In this case we have for all $a\in A_g$ and $a'\in A_{g'}$
\[\chi(a\otimes_Ba')=aa'\otimes g'\,\,\,\text{and}\,\,\,\chi^{-1}(a\otimes g)=\sum_{i}aa'_i\otimes_Ba_i,
\]where $a_i\in A_g$ and $a'_i\in A_{g^{-1}}$ are such that $\sum_{i}a'_i a_i=1_A$.
\end{example}

\subsection*{Hopf Algebras}

\begin{definition}{\bf(Antipodes).}\label{antipodes}\index{Anitpode}
If $H$ is a bialgebra, then an anti-algebra homomorphism $S:H\rightarrow H$ satisfying 
\[m\circ(id_H\otimes S)=m\circ(S\otimes\id_H)=\epsilon_H1_H.
\]is called an \emph{antipode}.
\end{definition}

\begin{definition}{\bf(Hopf algebras).}\label{hopf algebra}\index{Algebra!Hopf}
A bialgebra $H$ with an antipode $S$ is called a \emph{Hopf algebra}.
\end{definition}

\begin{remark}
One should consider a Hopf algebra as a noncommutative generalization of the algebra of representative functions on a compact group ([JoSt91]). In this case $\Delta$ corresponds to the group multiplication, $\epsilon$ corresponds to the unit in a group and the antipode $S$ corresponds to the  inversion, written in a dual form. For this reason Hopf algebras are also called \emph{quantum groups}.
\end{remark}

\begin{example}{\bf(The group algebra II).}\label{the group algebra II}\index{Group Algebra}
The group algebra $\mathbb{K}G$ is a Hopf algebra where the antipode is the respective linear extension of the inverse map $S(g):=g^{-1}$.
For a finite group, it is easy to check that the algebra of representative functions on $G$ is the dual bialgebra of the group algebra $\mathbb{K}G$.
\end{example}

\begin{example}{\bf(Regular comodules III).}\label{regular comodules III}\index{Comodule!Regular}
A regular comodule $(H,\Delta_H)$ is a Hopf--Galois extension (of $\mathbb{K}\cdot 1_H$ cf. \ref{regular comodules II}) if and only if $H$ is a Hopf algebra. Indeed, if $H$ is a Hopf algebra, then the inverse of the canonical map is given by
\[\chi^{-1}(h'\otimes h)=h'S(h_0)\otimes h_1.
\]Conversely, if $\chi^{-1}$ exists, then the map 
\[S:=(\id_H\otimes\epsilon_H)\circ\chi^{-1}\circ(1_H\otimes \id_H)
\]has the required properties.
\end{example} 

\subsection*{Cleft Extensions}

Roughly speaking the concept of cleft extensions may be considered as an algebraic formulation of trivial principal bundles in the context of Hopf--Galois extensions:

\begin{construction}
If $B$ is a unital algebra and $H$ a Hopf algebra, then $B\otimes H$ carries the structure of a right $H$-comodule algebra with coaction given by $\id_B\otimes\Delta_H$. We further have $(B\otimes H)^H\cong B$. The canonical map is given by
\[\chi:B\otimes H\otimes H\rightarrow B\otimes H\otimes H,\,\,\,b\otimes h'\otimes h\mapsto b\otimes h'h_0\otimes h_1
\]and thus $\chi$ is bijective with inverse
\[\chi^{-1}:B\otimes H\otimes H\rightarrow B\otimes H\otimes H,\,\,\,b\otimes h'\otimes h\mapsto b\otimes h'S(h_0)\otimes h_1.
\]In particular, $(B\otimes H,\id_B\otimes\Delta_H)$ is a Hopf--Galois extension (of $B$).
\end{construction}

\begin{definition}{\bf(Normal basis property).}\label{normal base property}\index{Normal Base Property}
If $H$ is a bialgebra and $(A,\rho_A)$ a Hopf--Galois extension of $B(=A^H)$, then $(A,\rho_A)$ is said to have a \emph{normal basis property} if $A\cong B\otimes H$ as a left $B$-module and right $H$-comodule.
\end{definition}

\begin{definition}{\bf(Convolution product).}\index{Convolution Product}
Let $A$ be a unital algebra, $H$ a Hopf algebra and $f,g:H\rightarrow A$ linear maps. The \emph{convolution product} of $f$ and $g$ is a linear map $f\ast g:H\rightarrow A$ given by $(f\ast g)(h)=f(h_0)g(h_1)$ for any $h\in H$. With respect to the convolution product, the set of all linear maps $H\rightarrow A$ forms an associative algebra with the unit $1_A\cdot\epsilon_H$. We say that a linear map $f:H\rightarrow A$ is \emph{convolution invertible} if there is a map $g:H\rightarrow A$ such that $f\ast g=g\ast f=1_A\cdot\epsilon_H$. The set of all convolution invertible maps $H\rightarrow A$ forms a multiplicative group and the antipode $S$ of $H$ is convolution invertible to the identity $\id_H$. 
\end{definition}

\begin{definition}{\bf(Cleft extension).}\label{cleft extension}\index{Cleft Extension}
A right $H$-comodule algebra $(A,\rho_A)$ such that there is a convolution invertible right $H$-comodule map $j:H\rightarrow A$ is called a \emph{cleft extension}. The map $j$ is called a \emph{cleaving map}.
\end{definition}

\begin{proposition}\label{cleft extension prop}
If $(A,\rho_A)$ is a right $H$-comodule algebra and $B:=A^H$, then the following statements are equivalent:
\begin{itemize}
\item[\emph{(a)}]
$(A,\rho_A)$ is a Hopf--Galois extension with a normal basis property.
\item[\emph{(b)}]
There exists a convolution invertible right $H$-comodule map $j:H\rightarrow A$.
\end{itemize}
\end{proposition}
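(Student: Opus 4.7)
The plan is to prove the equivalence by constructing, in each direction, explicit formulas and then verifying the required identities using only the coaction axioms, the convolution inverse property, and the Hopf--Galois inverse.

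For the direction (b) $\Rightarrow$ (a), I would start from a cleaving map $j:H\to A$ with convolution inverse $j^{-1}$ and first establish the normal basis property. I would define
\[\varphi:B\otimes H\to A,\quad \varphi(b\otimes h):=b\cdot j(h),\]
and propose the inverse
\[\varphi^{-1}(a):=a_0\cdot j^{-1}(a_1)\otimes a_2,\]
using the iterated coaction $(\rho_A\otimes\id_H)\circ\rho_A(a)=a_0\otimes a_1\otimes a_2$. The key point to verify is that $a_0\cdot j^{-1}(a_1)$ is coinvariant, which follows from the fact that $j$ (and hence $j^{-1}$) is a right $H$-comodule map combined with the convolution identity $j(h_0)j^{-1}(h_1)=\epsilon_H(h)1_A$. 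Left $B$-linearity and right $H$-colinearity of $\varphi$ are immediate. For the Hopf--Galois property I would exhibit the inverse of the canonical map directly by
\[\chi^{-1}:A\otimes H\to A\otimes_BA,\quad \chi^{-1}(a\otimes h):=a\cdot j^{-1}(h_0)\otimes_B j(h_1),\]
and check $\chi\circ\chi^{-1}=\id$ and $\chi^{-1}\circ\chi=\id$ by a routine computation in which the convolution identities for $j$ and $j^{-1}$ collapse the middle tensor factors.

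For the direction (a) $\Rightarrow$ (b), I would start from the normal basis isomorphism $\phi:B\otimes H\to A$ (left $B$-linear and right $H$-colinear) and simply \emph{define} the candidate cleaving map by
\[j:H\to A,\quad j(h):=\phi(1_B\otimes h).\]
Right $H$-colinearity of $j$ is inherited from $\phi$ with no work. The main obstacle, and the heart of the argument, is constructing the convolution inverse. Here I would use the Hopf--Galois structure via the \emph{translation map}
\[\tau:H\to A\otimes_BA,\quad \tau(h):=\chi^{-1}(1_A\otimes h),\]
whose components I write as $\tau(h)=h^{[1]}\otimes_B h^{[2]}$. Using the identity $h^{[1]}\cdot h^{[2]}{}_0\otimes h^{[2]}{}_1=1_A\otimes h$ (which encodes $\chi\circ\chi^{-1}=\id$), together with the normal basis isomorphism to transport the first tensor factor back to $H$, I would define $j^{-1}$ by applying a suitable $B$-balanced functional (built from $\phi^{-1}$ and $\epsilon_H$) to $\tau(h)$; concretely, $j^{-1}(h):=(\epsilon_A^{\text{rel}}\otimes\id_A)\tau(h)$ where $\epsilon_A^{\text{rel}}$ means: project $a\in A$ onto the $H=\mathbb{K}\cdot 1_H$ component of $B\otimes H$ via $\phi^{-1}$ and then apply the counit.

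The verification that this $j^{-1}$ is truly the convolution inverse of $j$ is the main technical step: both identities $j*j^{-1}=1_A\epsilon_H=j^{-1}*j$ reduce, after unwinding the definitions, to the two identities $\chi\circ\chi^{-1}=\id$ and $\chi^{-1}\circ\chi=\id$ applied to elements of the form $1_A\otimes h$ respectively $1_A\otimes_B j(h)$. I expect the hard part to be the bookkeeping with the translation map, in particular showing that the expression $h^{[1]}\otimes_B h^{[2]}$ interacts correctly with the coaction on the second leg, so that after passing through $\phi^{-1}$ one obtains a genuine two-sided convolution inverse rather than only a one-sided one. Once both directions are assembled, the stated equivalence follows.
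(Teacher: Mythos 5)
Your skeleton is the standard Doi--Takeuchi argument (this is also what the reference [BrzWi08], Proposition 2.22, to which the paper defers, carries out), and your formulas in the direction (b) $\Rightarrow$ (a) for $\varphi(b\otimes h)=b\,j(h)$, for $\varphi^{-1}(a)=a_0j^{-1}(a_1)\otimes a_2$ and for $\chi^{-1}(a\otimes h)=a\,j^{-1}(h_0)\otimes_Bj(h_1)$ are the correct ones. However, there are two genuine gaps.

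First, in (b) $\Rightarrow$ (a) you justify the coinvariance of $a_0j^{-1}(a_1)$ by asserting that $j$ ``and hence $j^{-1}$'' is a right $H$-comodule map. The convolution inverse of a colinear map is \emph{not} colinear: take $A=H=\mathbb{K}G$ with the regular coaction and $j=\id_H$; then $j^{-1}(g)=g^{-1}$ and $\rho_A(j^{-1}(g))=g^{-1}\otimes g^{-1}\neq j^{-1}(g_0)\otimes g_1$. What the argument actually needs is the twisted colinearity $\rho_A(j^{-1}(h))=j^{-1}(h_1)\otimes S(h_0)$, which one proves by checking that both sides are convolution inverses of $\rho_A\circ j$ in $\Hom(H,A\otimes H)$; only then does $\rho_A\bigl(a_0j^{-1}(a_1)\bigr)=a_0j^{-1}(a_3)\otimes a_1S(a_2)=a_0j^{-1}(a_1)\otimes 1_H$ follow. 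Note that this step uses the antipode, so the proposition must be read with $H$ a Hopf algebra (as in the paper's definition of the convolution product), not a bare bialgebra. The same lemma is what lets you move $a'_0j^{-1}(a'_1)$ across $\otimes_B$ in the verification of $\chi^{-1}\circ\chi=\id$, so the gap propagates there too.

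Second, in (a) $\Rightarrow$ (b) your candidate $j^{-1}(h)=(\epsilon_A^{\mathrm{rel}}\otimes\id_A)\tau(h)$ applies the relative counit $\epsilon_A^{\mathrm{rel}}:=(\id_B\otimes\epsilon_H)\circ\phi^{-1}$ to the wrong tensor leg. Since $\phi$ is only \emph{left} $B$-linear, $\epsilon_A^{\mathrm{rel}}$ satisfies $\epsilon_A^{\mathrm{rel}}(ba)=b\,\epsilon_A^{\mathrm{rel}}(a)$ but not $\epsilon_A^{\mathrm{rel}}(ab)=\epsilon_A^{\mathrm{rel}}(a)b$, so $a\otimes_Ba'\mapsto\epsilon_A^{\mathrm{rel}}(a)\,a'$ is not well defined on $A\otimes_BA$; moreover, in the cleft model where $\tau(h)=j^{-1}(h_0)\otimes_Bj(h_1)$ your formula returns $\epsilon_A^{\mathrm{rel}}(j^{-1}(h_0))\,j(h_1)$ rather than $j^{-1}(h)$. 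The correct construction is $j^{-1}(h):=h^{[1]}\cdot\epsilon_A^{\mathrm{rel}}(h^{[2]})$, i.e.\ the relative counit on the \emph{second} leg followed by multiplication, which is $B$-balanced by left $B$-linearity of $\epsilon_A^{\mathrm{rel}}$ and visibly reduces to $j^{-1}$ in the model case. With these two repairs the remaining verifications go through along the lines you outline.
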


\begin{proof}
\,\,\,For the proof of this proposition we refer to [BrWi08], Proposition 2.22.
\end{proof}

\begin{remark}{\bf(The NC Hopf fibration).}
A very nice class of non-cleft Hopf Galois extensions can be found in [LaSu04]. In fact, the authors construct non-cleft Hopf Galois extensions which are deformations of the classical $\SU_2(\mathbb{C})$ Hopf fibration.
\end{remark}

\section{Differential Structures on Hopf Algebras}\label{differential structures}

The notion of connections plays a key role in the differential theory of principal bundles. In this part of the appendix we discuss a similar notion in the algebraic context of Hopf--Galois extensions. All proofs of the following statements can be found in [BrWi08]:

\begin{definition}{\bf(The Universal differential structure).}\label{DSHA1}\index{Universal!Differential Structure}
Let $A$ be a unital algebra over a field $\mathbb{K}$. We write $\Omega^1A$\sindex[n]{$\Omega^1A$} for the $A$-bimodule $\ker m$, where $m:A\otimes A\rightarrow A$ denotes the multiplication map on $A$. Further, we write $d_A:A\rightarrow\Omega^1A$ for the linear map given by
\[d_Aa:=1\otimes a-a\otimes 1.
\]
The pair $(\Omega^1A, d_A)$\sindex[n]{$(\Omega^1A, d_A)$} is called the \emph{universal differential structure on} $A$. The space $\Omega^1A$ may be understood as a bimodule of 1-forms. For more details on differential structures we refer to ([Brz97]).
\end{definition}

\begin{lemma}\label{DSHA2}
If $(A,\rho_A)$ is a right $H$-comodule algebra, then $\Omega^1A$ is a right $H$-comodule by
\[\rho_{\Omega^1A}:\Omega^1A\rightarrow\Omega^1A\otimes H,\,\,\,\sum_ia_i\otimes a'_i\mapsto \sum_i(a_i)_0\otimes(a'_i)_0\otimes(a_i)_1(a'_i)_1.
\]Furthermore, the map $d$ is a right $H$-comodule map.
\begin{flushright}
$\blacksquare$
\end{flushright}
\end{lemma}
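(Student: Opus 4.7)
The plan is to recognize the proposed coaction as the restriction to $\ker m$ of the canonical tensor product coaction on $A\otimes A$, and then to exploit the fact that the multiplication $m:A\otimes A\to A$ intertwines these coactions because $\rho_A$ is an algebra map. More concretely, I would equip $A\otimes A$ with the right $H$-comodule algebra structure obtained by letting $H$ coact diagonally, that is,
\[
\rho_{A\otimes A}:A\otimes A\rightarrow (A\otimes A)\otimes H,\qquad a\otimes a'\mapsto (a_0\otimes a'_0)\otimes a_1 a'_1.
\]
The comodule axioms (M1) and (M2) for $\rho_{A\otimes A}$ follow from the corresponding axioms for $\rho_A$ together with the fact that $\Delta_H$ and $\epsilon_H$ are algebra morphisms (so that the ``diagonal'' coaction respects the bialgebra structure); this is the standard construction of the tensor product comodule structure and is purely computational once one writes out the iterated coactions using Sweedler notation.

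Next I would show that $m:A\otimes A\to A$ is a morphism of right $H$-comodules, i.e.\ that $(m\otimes\id_H)\circ\rho_{A\otimes A}=\rho_A\circ m$. This is precisely the statement that $\rho_A$ is multiplicative: indeed,
\[
(m\otimes\id_H)\rho_{A\otimes A}(a\otimes a')=(a_0a'_0)\otimes a_1a'_1=\rho_A(aa')=\rho_A(m(a\otimes a')).
\]
Consequently $\Omega^1A=\ker m$ is a subcomodule of $A\otimes A$, and the induced coaction is exactly the formula stated in the lemma. This takes care of the first assertion with essentially no further work.

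For the second assertion, I need to verify that $\rho_{\Omega^1 A}\circ d_A=(d_A\otimes\id_H)\circ\rho_A$. Using $\rho_A(1_A)=1_A\otimes 1_H$ (which holds because $\rho_A$ is a unital algebra map) I compute
\[
\rho_{\Omega^1 A}(d_A a)=\rho_{\Omega^1 A}(1\otimes a-a\otimes 1)=(1\otimes a_0)\otimes a_1-(a_0\otimes 1)\otimes a_1=(d_A a_0)\otimes a_1,
\]
which is precisely $(d_A\otimes\id_H)\rho_A(a)$.

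I do not anticipate a genuine obstacle: the content is essentially bookkeeping with Sweedler notation, and the only subtle point is noticing that the natural object to coact on is $A\otimes A$ (as a tensor product comodule algebra) and that $\Omega^1 A$ inherits its coaction as a subcomodule via the fact that $m$ is $H$-colinear. Once this observation is made, both the well-definedness of $\rho_{\Omega^1 A}$ and the colinearity of $d_A$ reduce to one-line verifications.
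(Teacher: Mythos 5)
Your proof is correct and complete: realizing $\rho_{\Omega^1A}$ as the restriction of the codiagonal coaction on $A\otimes A$ to the subcomodule $\ker m$ (using that $m$ is $H$-colinear precisely because $\rho_A$ is multiplicative, and that kernels of comodule maps over a field are subcomodules) is the standard argument, and your one-line Sweedler computation for the colinearity of $d_A$ is also right. The paper itself gives no proof for this lemma, deferring to [BrzWi08], so there is no alternative route to compare against; your write-up supplies exactly the missing details.
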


\begin{definition}{\bf(Horizontal 1-forms).}\label{DSHA3}\index{Horizontal!1-Forms}
Let $(A,\rho_A)$ be a right $H$-comodule algebra and $B:=A^H$. The $A$-subbimodule $\Omega_{\text{hor}}^1A$ of $\Omega^1A$ generated by all $d_B(b)$, $b\in B$, i.e., the $A$-subbimodule
\[\Omega_{\text{hor}}^1A:=A(\Omega^1B)A
\]is called the module of \emph{horizontal one-forms}.\sindex[n]{$\Omega_{\text{hor}}^1A$}
\end{definition}

\begin{definition}{\bf(Connections for Hopf--Galois extensions).}\label{DSHA4}\index{Hopf--Galois Extensions!Connections for}
A \emph{connection} for a Hopf--Galois Extension $(A,\rho_A)$ is a left $A$-linear map $\Pi:\Omega^1A\rightarrow\Omega^1A$ satisfying
\begin{itemize}
\item[(i)]
$\Pi\circ\Pi=\Pi$,
\item[(ii)]
$\ker\Pi=\Omega_{\text{hor}}^1A$,
\item[(iii)]
$(\Pi\otimes\id_H)\circ\rho_{\Omega^1A}=\rho_{\Omega^1A}\circ\Pi$.
\end{itemize}
In other words, a connection is an $H$-covariant splitting of $\Omega^1A$ into the horizontal and ``vertical" parts.
\end{definition}

\begin{definition}{\bf(Vertical lifts for Hopf--Galois extensions).}\label{DSHA5}\index{Hopf--Galois Extensions!Vertical Lifts for}
Let $(A,\rho_A)$ be a right $H$-comodule algebra. For $B:=A^H$ and $H^{+}:=\ker\epsilon_H$ we define
\[\ver:\Omega^1A\rightarrow A\otimes H^{+},\,\,\,\sum_ia_i\otimes a'_i\mapsto\sum_ia'_i\rho_A(a_i)
\]and call $\ver$ a \emph{vertical lift}.\sindex[n]{$H^{+}$}\sindex[n]{$\ver$}
\end{definition}

\begin{definition}{\bf(Connection forms for Hopf--Galois extension).}\label{DSHA6}\index{Hopf--Galois Extensions!Connection Forms for}
A \emph{connection form} for a Hopf--Galois Extension $(A,\rho_A)$ is a linear map $\omega:H^{+}\rightarrow\Omega^1A$ satisfying
\begin{itemize}
\item[(i)]
$\rho_{\Omega^1A}\circ\omega=(\omega\otimes\id_H)\circ\Ad$,
\item[(ii)]
$\ver\circ\omega=1_A\otimes\id_{H^{+}}$.
\end{itemize}
\end{definition}

\begin{theorem}\label{DSHA7}
Connections for a Hopf--Galois Extension $(A,\rho_A)$ are in bijective correspondence with connection forms.
\begin{flushright}
$\blacksquare$
\end{flushright}
\end{theorem}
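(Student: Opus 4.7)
The approach is to use the vertical lift $\ver : \Omega^1A \to A \otimes H^+$ of Definition \ref{DSHA5} as the bridge between the two notions. The first step is to establish the structural fact that, for a Hopf--Galois extension, $\ver$ is a surjective left $A$-linear, right $H$-equivariant map with kernel exactly $\Omega^1_{\mathrm{hor}}A$. Surjectivity and the kernel computation both rest on the bijectivity of the canonical map $\chi$ of Definition \ref{hopf-galois extension}: indeed $\ver$ factors essentially as $\chi$ followed by the projection $A \otimes H \to A \otimes H^+$ given by $\id_A \otimes (\id_H - 1_H\epsilon_H)$, and one identifies $\Omega^1_{\mathrm{hor}}A = A(\Omega^1 B)A$ with what is killed under this composite, using that $b \in B$ is equivalent to $\rho_A(b) = b \otimes 1_H$.

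With this identification in hand, I would define a map from connection forms to connections by first extending $\omega$ by left $A$-linearity to $\widetilde\omega : A \otimes H^+ \to \Omega^1 A$, $a \otimes h \mapsto a\cdot \omega(h)$, and then setting $\Pi_\omega := \widetilde\omega \circ \ver$. Property (ii) of a connection form, $\ver \circ \omega = 1_A \otimes \id_{H^+}$, upgrades by left $A$-linearity to $\ver \circ \widetilde\omega = \id_{A \otimes H^+}$, from which both $\Pi_\omega^2 = \Pi_\omega$ and $\ker \Pi_\omega = \ker \ver = \Omega^1_{\mathrm{hor}}A$ follow immediately. The right $H$-equivariance of $\Pi_\omega$ then comes from combining the $H$-equivariance of $\ver$ with property (i) of $\omega$, which is precisely the statement that $\widetilde\omega$ intertwines the relevant coactions.

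Conversely, given a connection $\Pi$, I would exploit that $\Pi$ is an idempotent with $\ker \Pi = \ker \ver$: hence $\ver$ restricts to a left $A$-linear isomorphism $\ver|_{\im \Pi} : \im \Pi \to A \otimes H^+$. Writing $\sigma$ for its inverse (intrinsically, $\sigma(\xi) := \Pi(\alpha)$ for any $\alpha \in \Omega^1A$ with $\ver(\alpha) = \xi$, which is well-defined since two such lifts differ by an element of $\Omega^1_{\mathrm{hor}}A = \ker \Pi$), I would set $\omega_\Pi(h) := \sigma(1_A \otimes h)$. Property (ii) for $\omega_\Pi$ is immediate from $\ver \circ \sigma = \id$, and property (i) follows by transporting the $H$-equivariance of $\Pi$ through $\ver$ and identifying the coaction induced by $\ver$ on $1_A \otimes H^+$ with the adjoint coaction $\Ad$ on $H^+$. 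That $\Pi \mapsto \omega_\Pi$ and $\omega \mapsto \Pi_\omega$ are mutually inverse then reduces, on the one hand, to the equality $\ver \circ \widetilde\omega = \id$ and, on the other, to the decomposition $\Omega^1A = \Omega^1_{\mathrm{hor}}A \oplus \im \Pi$.

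The main obstacle I anticipate is the careful comodule bookkeeping: making sure that the right $H$-comodule structure which $\ver$ transports to $A \otimes H^+$ from $\rho_{\Omega^1A}$ is precisely $\id_A \otimes \Ad$ on the subspace $1_A \otimes H^+$, so that axiom (i) for $\omega$ becomes genuinely equivalent to axiom (iii) for $\Pi$. Once this structural computation with $\ver$ is in place, the remainder of the argument is essentially formal.
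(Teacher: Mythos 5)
Your proposal is correct and follows the standard argument (the one in the reference [BrzWi08] to which the paper defers this proof): one shows that for a Hopf--Galois extension the sequence $0\rightarrow\Omega^1_{\mathrm{hor}}A\rightarrow\Omega^1A\stackrel{\ver}{\rightarrow}A\otimes H^{+}\rightarrow 0$ is exact and left $A$-linear, and then identifies connections with $H$-equivariant left $A$-linear splittings of it, which in turn correspond to connection forms by restriction to $1_A\otimes H^{+}$. Note only that you are (rightly) reading the vertical lift as the composite of $\chi$ with the projection onto $A\otimes H^{+}$, i.e.\ $\sum_i a_i\otimes a_i'\mapsto\sum_i a_i\rho_A(a_i')$, rather than the literal formula printed in Definition \ref{DSHA5}, which as written is neither left $A$-linear nor valued in $A\otimes H^{+}$.
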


\begin{remark}\label{DSHA8}
(a) We recall that if $B$ is a unital algebra and $E$ a left $B$-module, then a \emph{connection} for $E$ is a linear map
\[\nabla:E\rightarrow \Omega^1B\otimes_B E
\]satisfying $\nabla(be)=d_B(b)\otimes_Be+b\nabla(e)$ for all $b\in B$ and $e\in E$.

(b) A connection for $E$ exists if and only if $E$ is a projective $B$-module. In fact, this statement is due to a theorem of Cuntz and Quillen (cf. Theorem \ref{Cuntz-Quillen}).

(c) A connection for a Hopf--Galois extension $(A,\rho_A)$ with $B:=A^H$ does in general not induce a connection in the left $B$-module $A$. Only connections which are related to a more restrictive notion of horizontal forms yield connections in modules.
\end{remark}

\begin{definition}{\bf(Strong connections for Hopf--Galois extension).}\label{DSHA9}\index{Hopf--Galois Extension!Strong Connections for}
If $\Pi$ is a connection for a Hopf--Galois Extension $(A,\rho_A)$, then the right $H$-comodule map
\[D:A\rightarrow\Omega_{\text{hor}}^1A,\,\,\,D:=d_A-\Pi\circ d_A
\]is called a \emph{covariant derivative} corresponding to $\Pi$. The connection $\Pi$ is called a \emph{strong connection} if $D(A)\subseteq(\Omega^1B)A$.
\end{definition}

\begin{lemma}\label{DSHA10}
If $D$ is a covariant derivative corresponding to a strong connection for a Hopf--Galois Extension $(A,\rho_A)$, then $D$ is a connection for the left $B$-module $A$.
\begin{flushright}
$\blacksquare$
\end{flushright}
\end{lemma}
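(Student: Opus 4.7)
\textbf{Proof plan for Lemma \ref{DSHA10}.} The plan is to unpack the three ingredients in the hypothesis (Leibniz rule for $d_A$, left $A$-linearity of $\Pi$, and the defining property $\ker\Pi=\Omega^1_{\text{hor}}A = A(\Omega^1B)A$) and then show that the strong-connection condition $D(A)\subseteq(\Omega^1B)A$ lets us reinterpret $D$ as a map with codomain $\Omega^1B\otimes_BA$ satisfying the Leibniz rule from Remark \ref{DSHA8}(a). So we have to (i) put $D$ into the correct codomain, and (ii) verify the Leibniz rule.

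First, I would observe that for any $b\in B$ the element $d_A(b)=1\otimes b-b\otimes 1$ lies in $\Omega^1B\subseteq A(\Omega^1B)A=\Omega^1_{\text{hor}}A$, hence $\Pi(d_A(b))=0$ by condition (ii) of Definition \ref{DSHA4}. In particular, $D(b)=d_A(b)=d_B(b)$ for $b\in B$. Then for arbitrary $b\in B$ and $a\in A$, the Leibniz rule for the universal differential $d_A$ and the left $A$-linearity of $\Pi$ give
\begin{align*}
D(ba) &= d_A(ba)-\Pi(d_A(ba)) \\
      &= d_A(b)\cdot a + b\cdot d_A(a) - \Pi(d_A(b)\cdot a) - b\cdot\Pi(d_A(a)).
\end{align*}
Since $d_A(b)\in\Omega^1B$, the element $d_A(b)\cdot a$ lies in $(\Omega^1B)A\subseteq\Omega^1_{\text{hor}}A=\ker\Pi$, so the third term vanishes. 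Combining the surviving terms yields $D(ba)=d_B(b)\cdot a+b\cdot D(a)$ in $(\Omega^1B)A$.

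Next, I would interpret this identity in $\Omega^1B\otimes_BA$. The multiplication map $\mu:\Omega^1B\otimes_BA\rightarrow(\Omega^1B)A$, $\omega\otimes_Ba\mapsto\omega\cdot a$, is a surjection of $B$-bimodules, and the strong-connection condition guarantees that $D$ takes values in the image of $\mu$. Composing with (a choice of) factorization through $\Omega^1B\otimes_BA$ — or, more naturally, reading the Leibniz identity above directly on simple tensors — yields the desired formula $D(ba)=d_B(b)\otimes_Ba+b\cdot D(a)$, which is precisely the Leibniz rule defining a connection on the left $B$-module $A$ in the sense of Remark \ref{DSHA8}(a).

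The main conceptual step, and the only place that requires care, is the passage from $(\Omega^1B)A$ to $\Omega^1B\otimes_BA$: one must either verify that the strong-connection condition really produces a well-defined tensor-valued map (not merely a product-valued one), or content oneself with the statement that $D$ satisfies the Leibniz identity inside the ambient bimodule $\Omega^1A$. All other steps are formal manipulations with the universal differential and the projection $\Pi$, so no technical difficulty is anticipated there.
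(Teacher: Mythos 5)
Your argument is correct and is exactly the standard proof (the paper itself omits it, deferring to [BrzWi08]): the Leibniz rule for $d_A$, the left $A$-linearity of $\Pi$, and $(\Omega^1B)A\subseteq\Omega^1_{\text{hor}}A=\ker\Pi$ give $D(ba)=d_B(b)\cdot a+b\cdot D(a)$, and strongness places $D(A)$ in $(\Omega^1B)A$. The one point you flag --- passing from $(\Omega^1B)A$ to $\Omega^1B\otimes_BA$ --- is closed by observing that $\Omega^1B\cong \overline{B}\otimes B$ (with $\overline{B}=B/\mathbb{K}1$) is free as a right $B$-module, so the multiplication map $\Omega^1B\otimes_BA\rightarrow(\Omega^1B)A\subseteq A\otimes A$ is injective, hence an isomorphism, and your Leibniz identity transports verbatim to the codomain required by Remark \ref{DSHA8}(a).
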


\begin{definition}{\bf(Strong connection forms for Hopf--Galois extension).}\label{DSHA11}\index{Hopf--Galois Extensions!Strong Connection Forms for}
A connection form $\omega$ for a Hopf--Galois Extension $(A,\rho_A)$ such that its associated connection is a strong connection is called a \emph{strong connection form}.
\end{definition}

\begin{theorem}\label{DSHA12}
If $H$ is a Hopf algebra and $(A,\rho_A)$ a Hopf--Galois extension with $B:=A^H$, then a strong connection exists if and only if $A$ is $H$-equivariantly projective as a left $B$-module, i.e., if and only if there exists a left $B$-module, right $H$-comodule section of the multiplication map $B\otimes A\rightarrow A$.
\begin{flushright}
$\blacksquare$
\end{flushright}
\end{theorem}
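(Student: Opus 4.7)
The plan is to set up explicit mutually inverse constructions between strong connections on $(A,\rho_A)$ and left $B$-module, right $H$-comodule sections of the multiplication map $m_{B,A}\colon B\otimes A\to A$. The bridge between the two objects is the covariant derivative $D=d_A-\Pi\circ d_A$ of Definition \ref{DSHA9}, whose defining feature (``strongness'') is precisely that its image lands in $(\Omega^1B)A\subseteq B\otimes A$. Since $B\otimes A=\ker(m_{B,A})\oplus A\cdot 1$ only after making a choice of splitting, the content of the theorem is that such a splitting can be engineered $B$-linearly and $H$-colinearly exactly when the vertical projection $\Pi$ exists with the ``strong'' property.

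First I would prove the forward direction ($\Rightarrow$). Assume a strong connection $\Pi$ and let $D:A\to(\Omega^1B)A\subseteq B\otimes A$ be its covariant derivative. Define
\[s\colon A\to B\otimes A,\qquad s(a):=1_A\otimes a-D(a).\]
Then $m_{B,A}\circ s=\mathrm{id}_A$ because $m_A\circ d_A=0$ and $\Pi$ maps into $\Omega^1 A\subseteq\ker m_A$. For left $B$-linearity I would compute $D(ba)$ by combining the Leibniz rule $d_A(ba)=(d_Ab)a+b\,d_A(a)$ with the left $A$-linearity of $\Pi$ and the inclusion $d_A(B)\subseteq\Omega^1 B\subseteq\ker\Pi$; this gives $D(ba)=(d_Ab)a+bD(a)$, and expanding $(d_Ab)a=1\otimes ba-b\otimes a$ yields $s(ba)=b\cdot s(a)$. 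Right $H$-colinearity of $s$ is inherited from the colinearity of $d_A$ (Lemma \ref{DSHA2}) and of $\Pi$ (Definition \ref{DSHA4}(iii)).

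For the reverse direction ($\Leftarrow$), start from a $(B,H)$-bicolinear section $s$ and set $D(a):=1_A\otimes a-s(a)\in B\otimes A$. Since $m_{B,A}\circ s=\mathrm{id}_A$, we have $D(a)\in\ker(m_{B,A})$; I would then identify $\ker(m_{B,A})=(\Omega^1B)A$ via the short exact sequence
\[0\longrightarrow(\Omega^1B)\otimes_BA\longrightarrow B\otimes A\stackrel{m_{B,A}}{\longrightarrow}A\longrightarrow 0,\]
which gives $D\colon A\to(\Omega^1 B)A$. Next, I would extend $D$ to a left $A$-linear projection $\Pi\colon\Omega^1A\to\Omega^1A$ by declaring $\Pi(a\,d_A(a')):=a(d_A(a')-D(a'))$ on generators of $\Omega^1A$ as a left $A$-module, and then verify $\Pi\circ\Pi=\Pi$, $\ker\Pi=\Omega^1_{\mathrm{hor}}A$ and $H$-colinearity of $\Pi$. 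Finally, because $\Pi\circ d_A=d_A-D$, the associated covariant derivative is $D$ itself, which by construction takes values in $(\Omega^1 B)A$, so $\Pi$ is strong.

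The main obstacle is the well-definedness of the extension of $D$ to a left $A$-linear projection $\Pi$ on all of $\Omega^1 A$: the map $a\,d_A(a')\mapsto a(d_A(a')-D(a'))$ must respect the relations satisfied by the generators $a\,d_A(a')$, equivalently one must check compatibility with the Leibniz relation $d_A(aa')=(d_Aa)a'+a\,d_A(a')$. I would handle this by using left $B$-linearity of $s$ to verify the symmetric Leibniz-type identity $D(aa')-aD(a')-(d_Aa)a'\in\ker(\mathrm{extension})$, reducing the problem to the statement that $D$ defines a $B$-linear ``connection one-form'' in the sense of Remark \ref{DSHA8}(a) on $A$ regarded as the left $B$-module. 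Equivalently, one can circumvent this by working on the side of strong connection forms (Definition \ref{DSHA11}): compose $s$ with the translation map $\tau\colon H^+\to A\otimes_B A$ (well-defined from the Hopf--Galois isomorphism $\chi$) to produce a lift $\omega\colon H^+\to\Omega^1 A$, verify the two conditions of Definition \ref{DSHA6} and the horizontality of the induced covariant derivative, and invoke Theorem \ref{DSHA7} to pass from $\omega$ to the desired strong connection $\Pi$.
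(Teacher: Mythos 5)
The paper itself offers no proof of this statement (all results in Appendix A are quoted from [BrzWi08] with proofs omitted), so your argument has to stand on its own. Your forward direction is correct and standard: since $d_A(1_A)=0$ one has $D(1_A)=0$, the map $s(a)=1_A\otimes a-D(a)$ lands in $B\otimes A$ by strongness, it splits the multiplication because $D(a)\in\ker m_A$, and the computation $D(ba)=(d_Ab)a+bD(a)$ together with $(d_Ab)a=1_A\otimes ba-b\otimes a$ gives left $B$-linearity; colinearity is inherited exactly as you say.

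The gap is in the converse, at precisely the step you flag, but your proposed fix does not close it. The map $A\otimes A\to\Omega^1A$, $a\otimes a'\mapsto a\,d_A(a')$, is surjective with kernel $\{c\otimes 1_A:\,c\in A\}$, so the assignment $a\,d_A(a')\mapsto a(d_A(a')-D(a'))$ descends to $\Omega^1A$ if and only if $aD(1_A)=0$ for all $a$, i.e.\ if and only if $s(1_A)=1_B\otimes 1_A$. Left $B$-linearity of $s$ is not the relevant input: it only yields $s(b)=b\,s(1_A)$, and colinearity only forces $s(1_A)\in B\otimes B$ with $m(s(1_A))=1_A$; neither pins $s(1_A)$ down to $1_B\otimes 1_A$. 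If you instead define $\Pi$ by restricting to $\Omega^1A$ the automatically well-defined bilinear map $a\otimes a'\mapsto a(d_A(a')-D(a'))$ on $A\otimes A$, you do obtain a left $A$-linear, idempotent, colinear projection with kernel $\Omega_{\text{hor}}^1A$, but its covariant derivative comes out as $D(a)-aD(1_A)$, and $aD(1_A)$ lies in $A\,(\Omega^1B)$ rather than $(\Omega^1B)A$, so strongness fails for exactly the same reason. Your alternative route through the translation map and Theorem \ref{DSHA7} has the identical defect: the resulting $\ell$ satisfies $\ell(1_H)=s(1_A)$, so condition (i) of Theorem \ref{DSHA14} again demands $s(1_A)=1_B\otimes 1_A$. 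To complete the converse you must either work with unital sections (which costs nothing in the forward direction, where $D(1_A)=0$ makes the produced section unital automatically, and is how equivariant projectivity is usually formulated) or supply a normalization argument producing a unital $B$-linear colinear section from an arbitrary one; as written, neither is done, and the ``symmetric Leibniz-type identity'' you invoke does not substitute for it.
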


\begin{corollary}\label{DSHA13}
If $(A,\rho_A)$ is a Hopf--Galois extension \emph{(}with $B:=A^H$\emph{)} admitting a strong connection, then the following assertions hold:
\begin{itemize}
\item[\emph{(a)}]
$A$ is projective as a left $B$-module.
\item[\emph{(b)}]
$B$ is a direct summand of $A$ as a left $B$-module.
\item[\emph{(c)}]
$A$ is faithfully flat as a left $B$-module.
\end{itemize}
\begin{flushright}
$\blacksquare$
\end{flushright}
\end{corollary}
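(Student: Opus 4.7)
The plan is to deduce all three assertions from Theorem \ref{DSHA12}, which supplies us, under the hypothesis of a strong connection, with an $H$-equivariant left $B$-module section $s:A\to B\otimes A$ of the multiplication map $m:B\otimes A\to A$.

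I would first dispose of (a) directly: since $m\circ s=\id_A$, the map $s$ exhibits $A$ as a direct summand (as a left $B$-module) of $B\otimes A$. But $B\otimes A$ is free as a left $B$-module---a $\mathbb{K}$-basis $(e_i)_{i\in I}$ of $A$ produces a $B$-basis $(1_B\otimes e_i)_{i\in I}$---so $A$ is a direct summand of a free left $B$-module, hence projective by Proposition \ref{Appendix B4}.

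For (c), I would argue after (b) has been established. Part (a) gives flatness of $A$ over $B$. Part (b) furnishes a decomposition $A=B\oplus C$ of left $B$-modules, so for any nonzero left $B$-module $M$ one has
\[A\otimes_B M=(B\otimes_B M)\oplus(C\otimes_B M)\supseteq M\neq 0,\]
which combined with flatness is precisely faithful flatness.

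The main obstacle is (b), which amounts to constructing a left $B$-linear retraction $E:A\to B$ of the inclusion $B\hookrightarrow A$; here the additional right $H$-colinearity of $s$ becomes essential, since mere $B$-linearity of a section only yields projectivity. My plan is to exploit the strong connection form $\omega:H^{+}\to\Omega^1 A$ corresponding to $\Pi$ via Theorem \ref{DSHA7}, together with the horizontality condition $D(A)\subseteq(\Omega^1 B)A$ from Definition \ref{DSHA9}. Extending $\omega$ to all of $H$ by $\omega(1_H):=0$ and writing the resulting element of $A\otimes A$ in Sweedler-type notation $h^{\langle 1\rangle}\otimes h^{\langle 2\rangle}$, the horizontality forces the element built from $a_0\,(a_1)^{\langle 1\rangle}\,(a_1)^{\langle 2\rangle}$ to lie in $B$, producing a candidate $E:A\to B$. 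Equivalently, one may apply the coinvariant functor to the $H$-colinear decomposition $B\otimes A=s(A)\oplus\ker(m)$ and note that $(B\otimes A)^H=B\otimes A^H=B\otimes B$, and then use the canonical retract $\id_B\otimes 1_B$ of $m_B:B\otimes B\to B$ to transport the splitting down to $A$. The technical work lies entirely in verifying that the resulting map is well defined, left $B$-linear, and restricts to $\id_B$; this is the step I expect to consume essentially all of the proof.
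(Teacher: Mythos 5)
The paper itself offers no proof of this corollary (the black square refers the reader to [BrzWi08]), so I measure your proposal against the standard argument from that source; your route is essentially that one. Parts (a) and (c) are complete as you give them: the section $s$ from Theorem \ref{DSHA12} is injective and split by $m$, so $A$ is a $B$-module direct summand of the free left $B$-module $B\otimes A$, and once (b) is known, $M\otimes_B B=M$ sits inside $M\otimes_B A$ as a direct summand, which together with flatness gives faithful flatness. (Minor point: for faithful flatness \emph{as a left $B$-module} you should tensor on the correct side, i.e.\ consider $M\otimes_B A$ for right $B$-modules $M$, using that the splitting $A=B\oplus C$ of left modules induces $M\otimes_B A=M\oplus(M\otimes_B C)$; the idea is unaffected.)

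For (b) your first route is the right one, but as written it is a plan rather than a proof, and the one step you defer is precisely where the hypothesis ``strong'' enters. Concretely, with $l:H\rightarrow A\otimes A$ as in Theorem \ref{DSHA14} and $l(h)=h^{\langle 1\rangle}\otimes h^{\langle 2\rangle}$, set $E(a):=a_0\,(a_1)^{\langle 1\rangle}(a_1)^{\langle 2\rangle}$. Left $B$-linearity and $E_{\mid B}=\id_B$ are immediate, since $\rho_A(ba)=b\,a_0\otimes a_1$ for $b\in B$ and $l(1_H)=1_A\otimes 1_A$; the substantive claim is $\rho_A(E(a))=E(a)\otimes 1_H$, which requires the colinearity conditions (iii)--(iv) on $l$ (equivalently, the Ad-covariance of $\omega$ together with $D(A)\subseteq(\Omega^1B)A$) and an explicit computation using that $\rho_A$ is an algebra map. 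You should carry this out rather than only announce it. By contrast, your second, ``coinvariants'' alternative does not work as stated: applying $(-)^H$ to the decomposition $B\otimes A=s(A)\oplus\ker(m)$ only yields $B\otimes B=s(B)\oplus\ker(m_B)$, i.e.\ a splitting of $m_B:B\otimes B\rightarrow B$, which exists trivially ($b\mapsto b\otimes 1_B$) and produces no map defined on all of $A$; there is no way to ``transport'' it to a retraction $A\rightarrow B$ without already having one. Drop that variant and complete the computation for $E$.
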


\begin{theorem}\label{DSHA14}
If a Hopf algebra $H$ has a bijective antipode $S$, then strong connections of a Hopf Galois extension $(A,\rho_A)$ \emph{(}with $B:=A^H$\emph{)} are in bijective correspondence with linear maps $l:H\rightarrow A\otimes A$ satisfying
\begin{itemize}
\item[\emph{(i)}]
$l(1_H)=1_A\otimes 1_A$,
\item[\emph{(ii)}]
$\chi\circ l=1_A\otimes\id_H$,
\item[\emph{(iii)}]
$(l\otimes\id_H)\circ\Delta_H=(\id_A\otimes\rho_A)\circ l$,
\item[\emph{(iv)}]
$(\id_H\otimes l)\circ\Delta_H=(\rho_A\otimes\id_A)\circ l$.
\end{itemize}
Such a map $l$ is also called a \emph{strong connection}.
\begin{flushright}
$\blacksquare$
\end{flushright}
\end{theorem}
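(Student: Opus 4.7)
The plan is to exhibit an explicit pair of mutually inverse constructions between strong connection forms $\omega$ and linear maps $l:H\to A\otimes A$ satisfying (i)--(iv), and then verify that the four axioms correspond term-by-term to the connection-form axioms of Definition \ref{DSHA6} together with strongness. Given a strong connection form $\omega:H^{+}\to\Omega^1 A\subseteq A\otimes A$, I would extend it to $H$ via
\[
l(h):=\epsilon_H(h)(1_A\otimes 1_A)+\widetilde{\omega}\bigl(h-\epsilon_H(h)\,1_H\bigr),
\]
where $\widetilde{\omega}$ denotes $\omega$ post-composed with the flip $A\otimes A\to A\otimes A,\ a\otimes a'\mapsto a'\otimes a$; the flip is needed because the vertical lift $\ver$ of Definition \ref{DSHA5} swaps the roles of the two tensor factors relative to the canonical map $\chi$ of Definition \ref{hopf-galois extension}. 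Conversely, given $l$ satisfying (i)--(iv), one recovers $\omega$ as the flip of $l$ restricted to $H^{+}$. These constructions are manifestly inverse on the level of underlying linear maps, so the content lies entirely in matching the axioms.

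Condition (i) is nothing but the defining formula for $l$ at $1_H$. For condition (ii), applying $(\id_A\otimes\epsilon_H)$ to $\chi\circ l$ and using the counit identity $(\id_A\otimes\epsilon_H)\circ\rho_A=\id_A$ yields $m\circ l=\epsilon_H\cdot 1_A$, which ensures that $\widetilde{\omega}$ lands in $\ker m=\Omega^1 A$; once this is known, (ii) itself is just the vertical lift identity $\ver\circ\omega=1_A\otimes\id_{H^{+}}$ rewritten via the flip. Condition (iii) is a right $H$-colinearity statement on the second tensor factor of $l$, which translates directly into the equivariance property $\rho_{\Omega^1 A}\circ\omega=(\omega\otimes\id_H)\circ\mathrm{Ad}$ of Definition \ref{DSHA6} after expressing the adjoint coaction through $\Delta_H$ and the flip.

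The real work, and the main obstacle, is showing that condition (iv) is equivalent to strongness of the associated connection, i.e.\ to $D(A)\subseteq(\Omega^1 B)A$, where $D=d_A-\Pi\circ d_A$ is the covariant derivative corresponding to $\omega$ via Theorem \ref{DSHA7}. Unpacking (iv) gives, for each $h\in H$, a compatibility between the coproduct of $h$ and the right $H$-coaction on the first slot of $l(h)$, and here the bijectivity of $S$ is essential: $S^{-1}$ is used to translate the first-slot data (which has no intrinsic right coaction on $A$) into something controlled by $\rho_A$. Concretely, I would express $D(a)$ for $a\in A$ through the translation map $\tau(a_1):=\chi^{-1}(1_A\otimes a_1)$ applied to $a_0$ in the Sweedler expression $\rho_A(a)=a_0\otimes a_1$, lift $\tau$ to $l$, and then show that axiom (iv) is precisely what forces the resulting expression to have its leftmost factor in $B=A^H$. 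The converse direction runs backwards along the same chain of identifications.

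Conditions (iii) and (iv) together guarantee that $l$ is a bona fide lift of the translation map $\tau:H\to A\otimes_B A$ to $A\otimes A$, and the verification above shows that such a lift exists exactly when a strong connection does, consistently with Theorem \ref{DSHA12}. Apart from the final step tying (iv) to strongness via the antipode, the bulk of the proof is careful Sweedler-style bookkeeping with the Hopf algebra axioms and the compatibility of $\rho_A$ with the multiplication of $A$.
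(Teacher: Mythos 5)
First, a point of reference: the paper does not actually prove Theorem \ref{DSHA14} — the appendix defers all proofs to the cited source [BrzWi08] — so your sketch can only be measured against the standard argument there, and your overall strategy does coincide with it: pass between $\omega$ on $H^{+}$ and $l$ on $H$ via $l(h)=\epsilon_H(h)\,1_A\otimes 1_A+\omega(h-\epsilon_H(h)1_H)$, read (ii) as saying that $l$ lifts the translation map $\tau(h)=\chi^{-1}(1_A\otimes h)$, and isolate (iv) as the carrier of strongness, with $S^{-1}$ entering there. Two corrections, though. The flip you introduce is an artifact of Definition \ref{DSHA5} as printed: with $\ver(\sum_i a_i\otimes a_i')=\sum_i a_i'\rho_A(a_i)$ the image lies in $A\otimes H^{+}$ only if $\sum_i a_i'a_i=0$, which for noncommutative $A$ is not implied by $\sum_i a_ia_i'=0$; the intended formula is $\sum_i a_i\rho_A(a_i')$, and with that reading no flip is needed anywhere. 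More substantively, your claim that condition (iii) alone \emph{translates directly into} the covariance $\rho_{\Omega^1A}\circ\omega=(\omega\otimes\id_H)\circ\Ad$ is too strong: $\rho_{\Omega^1A}$ is the codiagonal coaction on \emph{both} tensor legs (Lemma \ref{DSHA2}), and reproducing the adjoint coaction $h\mapsto h_2\otimes S(h_1)h_3$ requires combining the right-colinearity (iii) of the second leg with the left-colinearity (iv) of the first leg; (iii) by itself only controls half of $\rho_{\Omega^1A}\circ\omega$.

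The genuine gap is that the pivot of the theorem — the equivalence of (iv) with $D(A)\subseteq(\Omega^1B)A$ — is announced but not carried out, and this is where essentially all the work lies. The step you should make explicit is the identity $D(a)=1_A\otimes a-a_0\,l(a_1)$, which follows from $\Pi(d_Aa)=a_0\,\omega\bigl(a_1-\epsilon_H(a_1)1_H\bigr)=a_0\,l(a_1)-a\otimes 1_A$; given (iv) one then shows that the left tensor legs of $1_A\otimes a-a_0l(a_1)$ are coinvariant, so that $D(a)\in(\Omega^1B)A$, and conversely strongness forces (iv). It is in this converse, and in making sense of (iv) at all (the two sides of (iv) as printed live in $H\otimes A\otimes A$ and $A\otimes H\otimes A$ respectively, and must be matched via the left comodule structure $a\mapsto S^{-1}(a_1)\otimes a_0$), that the bijectivity of $S$ is actually used — your sketch names the right ingredient but does not exhibit the computation. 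As it stands the proposal is a correct roadmap rather than a proof.
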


\begin{theorem}\label{DSHA15}
Let $H$ be a Hopf algebra with bijective antipode. If $(A,\rho_A)$ is a right $H$-comodule algebra, then the following statements are equivalent:
\begin{itemize}
\item[\emph{(a)}]
There exists a linear map $l:H\rightarrow A\otimes A$ satisfying the properties \emph{(}i\emph{)}-\emph{(}iv\emph{)} of Theorem \ref{DSHA14}.
\item[\emph{(b)}]
$A$ is a faithfully flat \emph{(}as a left and right $B$-module\emph{)} Hopf--Galois extension.
\end{itemize}
\begin{flushright}
$\blacksquare$
\end{flushright}
\end{theorem}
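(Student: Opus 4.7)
The plan is to convert the equivalence into a chain of three previously established equivalences, exploiting the dictionary strong-connection $\leftrightarrow$ equivariantly projective (Theorem \ref{DSHA12}) and strong-connection $\leftrightarrow$ data $l$ (Theorem \ref{DSHA14}), and then to close the loop via a Schneider-type faithfully flat descent argument.

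For the direction (a) $\Rightarrow$ (b), I would first feed the map $l$ into Theorem \ref{DSHA14} to obtain a strong connection on $(A,\rho_A)$; Theorem \ref{DSHA12} then shows $A$ is $H$-equivariantly projective as a left $B$-module, and Corollary \ref{DSHA13} immediately gives that $A$ is Hopf--Galois and faithfully flat as a left $B$-module. What remains is right $B$-module faithful flatness. Here I would exploit the hypothesis that $S$ is bijective: composing $l$ with the flip $\tau:A\otimes A\to A\otimes A$ and with suitable insertions of $S^{\pm 1}$ produces a ``mirror'' map $l':H\to A\otimes A$ verifying the right-handed analogues of conditions (i)--(iv) with respect to the opposite canonical map $\chi':A\otimes_B A\to A\otimes H$, $a\otimes a'\mapsto \rho_A(a)a'$. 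Rerunning Theorem \ref{DSHA12} and Corollary \ref{DSHA13} symmetrically then yields right $B$-module faithful flatness.

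For the direction (b) $\Rightarrow$ (a), the strategy is to produce a strong connection from faithful flatness and then apply Theorem \ref{DSHA14} in reverse. Bijectivity of $\chi$ gives a ``translation map''
\[
\tau:H\to A\otimes_B A,\qquad \tau(h):=\chi^{-1}(1_A\otimes h),
\]
which already satisfies the analogues of conditions (i)--(iv) but only in $A\otimes_B A$. The real task is to lift $\tau$ across the projection $\pi:A\otimes A\to A\otimes_B A$ to a linear map $l:H\to A\otimes A$ that is still $H$-colinear on both sides. This lift is equivalent, via Theorem \ref{DSHA12}, to producing a left-$B$-linear, right-$H$-colinear section of the multiplication map $B\otimes A\to A$, i.e.\ to establishing equivariant projectivity of $A$ over $B$. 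I would obtain this section by the standard Schneider descent: faithful flatness of $A$ as a left $B$-module makes $B\otimes -$ reflect isomorphisms, and combined with the Hopf--Galois property it forces the canonical sequence of comodule maps to split $H$-equivariantly.

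The genuinely hard step is this descent: turning mere faithful flatness into an \emph{equivariant} splitting. The technical heart is to verify that the candidate section, constructed as a coequalizer using $\tau$, actually lies in $B\otimes A$ and is right-$H$-colinear; this requires a careful chase of the cotensor identities (iii) and (iv) through the isomorphism $\chi$, and is exactly the place where bijectivity of $S$ becomes indispensable (so that the inverse of $\chi$ can be expressed via $S$ as in Example \ref{regular comodules III}, making the colinearity of the section computable). Once the equivariant section is in hand, Theorem \ref{DSHA12} yields a strong connection and Theorem \ref{DSHA14} converts it into the desired $l$, closing the equivalence.
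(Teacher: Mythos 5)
The paper itself gives no proof of Theorem \ref{DSHA15}; it defers to [BrzWi08], so your proposal can only be measured against the standard argument found there. Your architecture is the right one, but both directions contain genuine gaps. In the direction (a) $\Rightarrow$ (b) you invoke Theorem \ref{DSHA14}, Theorem \ref{DSHA12} and Corollary \ref{DSHA13} before knowing that $(A,\rho_A)$ is a Hopf--Galois extension, yet all three results carry the Hopf--Galois property as a \emph{hypothesis}; in particular Corollary \ref{DSHA13} does not ``give that $A$ is Hopf--Galois'', it assumes it. The unavoidable first step is to prove bijectivity of the canonical map $\chi:A\otimes_B A\rightarrow A\otimes H$ directly from $l$: the candidate inverse is $a\otimes h\mapsto a\cdot l(h)$ read modulo the ideal defining $A\otimes_B A$, and one must check that it is a two-sided inverse, using property (ii) for one composite and properties (i), (iii), (iv) together with the comodule-algebra axioms for the other. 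Only after this computation may the chain \ref{DSHA14} $\rightarrow$ \ref{DSHA12} $\rightarrow$ \ref{DSHA13} be run; your ``mirror'' map $l'$ built from the flip and $S^{\pm1}$ to obtain right faithful flatness is then the correct device.

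The direction (b) $\Rightarrow$ (a) is where the real content lies, and your proposal does not supply it. You correctly identify the task as lifting the translation map $\tau(h)=\chi^{-1}(1_A\otimes h)\in A\otimes_B A$ through the projection $A\otimes A\rightarrow A\otimes_B A$ in a bicolinear way, but the mechanism you offer --- that faithful flatness makes $B\otimes(-)$ reflect isomorphisms and hence ``forces the canonical sequence to split $H$-equivariantly'' --- is a restatement of the desired conclusion (equivariant projectivity), not an argument for it. What is actually needed is Schneider's structure theorem for relative Hopf modules: faithful flatness together with bijectivity of $\chi$ yields an inverse pair of equivalences $M\mapsto A\otimes_B M$ and $N\mapsto N^{\mathrm{co}H}$ between left $B$-modules and relative Hopf modules, and the $H$-equivariant, $B$-linear section of the multiplication $B\otimes A\rightarrow A$ (equivalently, by Theorems \ref{DSHA12} and \ref{DSHA14}, the map $l$) is extracted from this equivalence, with the bijectivity of $S$ used to pass between the left- and right-handed versions. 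Without carrying out, or at least precisely locating, this descent step, the equivalence is not established.
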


\begin{definition}{\bf(Principal comodule algebras).}\label{DSHA16}\index{Algebra!Principal Comodule}
Let $H$ be a Hopf algebra with bijective antipode. A right $H$-comodule algebra $(A,\rho_A)$ satisfying the conditions of Theorem \ref{DSHA15} is called a \emph{principal comodule algebra}.
\end{definition}

\begin{remark}
Principal comodule algebras are a noncommutative version of principal bundles which retains a lot of the features of the classical differential geometry objects.
\end{remark}


\chapter{Some Background on Lie Theory}\label{Lie-Shit}

Nowadays Lie theory is useful for many branches of mathematics, like differential geometry, harmonic analysis, and representation theory. 
In this appendix we provide the basic definitions and concepts of Lie theory which appear throughout this thesis.

\section{Manifolds, Lie Groups and Lie Algebras}

\begin{definition}\label{infinite-dimensional manifolds}{\bf (Manifolds).}\index{Manifolds}
Let $E$ and $F$ be locally convex spaces, $U\subseteq E$ open and $f:U\rightarrow F$ a map. Then the \emph{derivative of} $f$ \emph{at} $x$ \emph{in the direction of} $h$ is defined as
\[df(x)(h):=\lim_{t\rightarrow 0}\frac{1}{t}(f(x+th)-f(x))
\]whenever the limit exists. The function $f$ is called \emph{differentiable at} $x$ if $df(x)(h)$ exists for all $h\in E$. It is called \emph{continuously differentiable} or $C^1$ if it is continuous, differentiable at all points of $U$, and
\[df:U\times E\rightarrow F,\,\,\,(x,h)\mapsto df(x)(h)
\]is a continuous map. It is called a $C^n$-map if $f$ is $C^1$ and $df$ is a $C^{n-1}$-map, and $C^{\infty}$ or smooth if it is $C^n$ for all $n\in\mathbb{N}$. This is the notion of differentiability used in [Mil83] and [Gl01]. Since we have a chain rule for $C^1$-maps between locally convex spaces, we can define smooth manifolds as in the finite-dimensional case.
\end{definition}

\begin{definition}\label{lie groups}{\bf (Lie groups).}\index{Lie!Groups}
 A \emph{Lie group} $G$ is a smooth manifold modeled on a locally convex space for which the group multiplication and the inversion are smooth maps. For basic results on Lie groups in this context we refer to [Mil83] f. We write $1_G\in G$ for the identity,
\[\lambda_{g}(x):=gx,\,\,\,\rho_{g}(x):=xg\,\,\,\text{and}\,\,\,c_{g}(x):=gxg^{-1}
\]for the left and right translations, and conjugation on $G$, respectively. In particular, we write $C_G:G\rightarrow\Aut(G)$ for the conjugation action of $G$. For a normal subgroup $N$ of $G$ we write $C_{N}:G\rightarrow\Aut(N)$ with $C_{N}(g):=C_{G}(g)_{\mid N}$.
\end{definition}

\begin{definition}\label{lie subgroups}{\bf (Lie subgroups).}\index{Lie!Subgroups}
A subgroup $H$ of a Lie group $G$ is called an \emph{initial Lie subgroup} if it carries a Lie group structure for which the inclusion map $\iota:H\hookrightarrow G$ is an \emph{immersion} in the sense that the tangent map $T\iota:TH\rightarrow TG$ is fiberwise injective and for each smooth map $f:M\rightarrow G$ from a smooth manifold $M$ to $G$ with $\im(f)\subseteq H$ the corresponding map $\iota^{-1}\circ f:M\rightarrow H$ is smooth. The latter property implies in particular that the initial Lie group structure on $H$ is unique, because if $\iota':H'\hookrightarrow G$ also is a smooth injective immersion of a Lie group with $\iota'(H')=H$ and such that for each smooth map $f:M\rightarrow G$ with $\im(f)\subseteq H$ the map $i'\circ f:M\rightarrow H'$ is smooth, then in particular the maps $\iota^{-1}\circ\iota':H'\rightarrow H$ and $\iota^{'-1}\circ \iota:H\rightarrow H'$ are smooth morphisms of Lie groups. Since these maps are each others inverse, the Lie group $H$ and $H'$ are isomorphic.
\end{definition}

\begin{definition}\label{lie algebras of lie groups}{\bf (Lie algebras of Lie groups).}\index{Lie!Algebras}
Each element $x\in T_{1_G}(G)$ corresponds to a unique left invariant vector field $x_{l}$ with $g.x:=x_{l}(g):=d\lambda_{g}(1).x$, $g\in G$. The space of left invariant vector fields is closed under the Lie bracket of vector fields. Hence it inherits a Lie algebra structure. In this sense we obtain a continuous Lie bracket on the locally convex space $T_{1_G}(G)\cong \mathfrak{g}$ which is uniquely determined by the relation $[x,y]_{l}=[x_{l},y_{l}]$. We call a Lie algebra $\mathfrak{g}$ a $\emph{topological Lie algebra}$ if $\mathfrak{g}$ is a topological vector space and the Lie bracket is continuous. In this sense the Lie algebra $\textbf{L}(G):=(T_1(G),[\cdot,\cdot])$ of a Lie group $G$ is a locally convex Lie algebra.\sindex[n]{$\mathfrak{g}$}
\end{definition}

%
%
%
%
%
%
%
\section{Some Lie Group Cohomology}\label{some lie group cohomology}


In this part of the appendix we introduce the concept of smooth cochains and provide some calculations that will be crucial for the forthcoming section on the extension theory of Lie groups. Moreover, we describe a natural cohomology for Lie groups.
 
\begin{definition}\label{smooth cochains}\index{Smooth!Cochains}
(a) If $G$ and $N$ are Lie groups, then we put $C^0_s(G,N):=N$ and for $p\in\mathbb{N}$ we call a map $f:G^p\rightarrow N$ \emph{locally smooth} if there exists an open identity neighbourhood $U\subseteq G^p$ such that $f_{\mid U}$ is smooth. We say that $f$ is \emph{normalized} if
\[(\exists j)\,g_j= 1_G\,\,\,\Rightarrow\,\,\,f(g_1,\ldots,g_p)=1_N.
\]We write $C^p_s(G,N)$\sindex[n]{$C^p_s(G,N)$} for the space of all normalized locally smooth maps $G^p\rightarrow N$, the so called (locally smooth) p-$\emph{cochains}$.

(b) For $p=2$ and non-connected Lie groups, we sometimes have to require additional smoothness: We write $C^2_{ss}(G,N)$\sindex[n]{$C^2_{ss}(G,N)$} for the set of all elements $\omega\in C^2_{s}(G,N)$ with the additional property that for each $g\in G$ the map
\[\omega_g:G\rightarrow N,\,\,\,x\mapsto\omega(g,x)\omega(gxg^{-1},g)^{-1}
\]is smooth in an identity neighbourhood of $G$. Note that $\omega_g(1_G)=1_N$.

(c) For $f\in C^1_s(G,N)$ we define
\[\delta_f:G\times G\rightarrow N,\,\,\,\delta_f(g,g'):=f(g)f(g')f(gg')^{-1}
\]and observe that
\begin{align}
(\delta_f)_g(x)&=\delta_f(g,x)\delta_f(gxg^{-1},g)^{-1}=f(g)f(x)f(gx)^{-1}f(gx)f(g)^{-1}f(gxg^{-1})^{-1}\notag\\
&=f(g)f(x)f(g^{-1})f(gxg^{-1})^{-1}\notag
\end{align}
is smooth in an identity neighbourhood of $G$, so that $\delta_f\in C^2_{ss}(G,N)$.

(d) If $G$ and $N$ are abstract groups, we write $C^p(G,N)$ for the set of all functions $G^p\rightarrow N$.
\end{definition}

Now, for cochains with values in abelian groups or smooth modules, we can define a natural Lie group cohomology:

\begin{definition}\label{lie group cohomology}\index{Lie!Group Cohomology}
Let $G$ be a Lie group and $A$ a \emph{smooth $G$-module}\index{Smooth!$G$-Module}, i.e., a pair $(A,S)$ of an abelian Lie group $A$ and a smooth $G$-action $G\times A\rightarrow A$ given by the group homomorphism $S:G\rightarrow \Aut(A)$. We call the elements of $C^p_s(G,A)$ (\emph{locally smooth normalized $p$-cochains}) and write
\[d_S:C^p_s(G,A)\rightarrow C^{p+1}_s(G,A)
\]for the group differential given by
\[(d_Sf)(g_0,\ldots,g_p):=
\]
\[S(g_0)(f(g_1,\ldots,g_p))+\sum^{p}_{j=1}(-1)^jf(g_0,\ldots ,g_{j-1}g_j,\ldots,g_p)+(-1)^{p+1}f(g_0,\ldots,g_{p-1}).
\]It is easy to verify that 
\[d_S(C^p_s(G,A))\subseteq C^{p+1}_s(G,A).
\]We thus obtain a sub-complex of the standard group cohomology complex $(C^{\bullet}(G,A),d_S)$\sindex[n]{$(C^{\bullet}(G,A),d_S)$}. In view of $d^2_S=0$, the space \[Z^p_s(G,A)_S:=\ker ({d_S}_{\mid C^p_s(G,A)})
\]of \emph{$p$-cocycles}\sindex[n]{$Z^p_s(G,A)_S$} contains the space $B^p_s(G,A)_S:=d_S(C^{p-1}_s(G,A))$ of $p$-\emph{coboundaries}\sindex[n]{$B^p_s(G,A)_S$}.
The quotient
\[H^p_s(G,A)_S:=Z^p_s(G,A)_S/B^p_s(G,A)_S
\]is the $p^\emph{th}$ \emph{locally smooth cohomology group of $G$ with values in the $G$-module $A$}\sindex[n]{$H^p_s(G,A)_S$}. We write $[f]\in H^p_s(G,A)_S$ for the cohomology class of a cocycle $f\in Z^p_s(G,A)_S$.\index{Cocycles}\index{Coboundaries}
\end{definition}

In the extension theory of Lie groups and for constructing characteristic classes we have to introduce the concept of a smooth outer action of $G$ on $N$:

\begin{definition}\label{smooth outer actions}\index{Smooth!Outer Action}
(a) Let $G$ and $N$ be Lie groups. We define $C^1_s(G,\Aut(N))$ as the set of all maps $S:G\rightarrow \Aut(N)$ with $S(1_G)=\id_N$ and for which there exists an open identity neighbourhood $U\subseteq G$ such that the map
\[U\times N\rightarrow N,\,\,\,(g,n)\mapsto S(g)(n)
\]is smooth.

(b) We call a map $S\in C^1_s(G,\Aut(N))$ a \emph{smooth outer action of $G$ on $N$} if there exists an element $\omega\in C^2_s(G,N)$ with $\delta_S=C_N\circ\omega$. It is called \emph{strongly smooth}\index{Smooth!Strongly-} if $\omega$ can be chosen in the smaller set $C^2_{ss}(G,N)$.

(c) On the set of smooth outer action we define an equivalence relation by
\[S\sim S'\,\,\,\leftrightarrow\,\,\,(\exists h\in C^1_s(G,N))S'=(C_N\circ h)\cdot S.
\]We write $[S]$\sindex[n]{$[S]$} for the equivalence class of $S$ and call $[S]$ a \emph{smooth $G$-kernel}\index{Smooth!$G$-Kernel}.
\end{definition}

\begin{remark}\label{remarks on smooth outer actions}
(a) A smooth outer action $S:G\rightarrow \Aut(N)$ need not be a group homomorphism, but $\delta_S=C_N\circ\omega$ implies that $S$ induces a group homomorphism 
\[s:=Q_N\circ S:G\rightarrow \Out(N),
\]where $Q_N:\Aut(N)\rightarrow \Out(N)$ denotes the quotient homomorphism. 

(b) If $G$ is discrete, then for each homomorphism $s:G\rightarrow \Out(N)$ there exists a map 
\[S:G\rightarrow \Aut(N)
\]satisfying $S(1_G)=\id_N$ and $Q_N\circ S=s$, and further a map $\omega:G\times G\rightarrow N$ with $\delta_S=C_N\circ\omega$. In this case all outer actions are smooth, the smoothness conditions on $S$ and $\omega$ are vacuous.

(c) If $S\in C^1_s(G,\Aut(N))$ is a homomorphism of groups , then we may choose $\omega=1_N$ and $S$ defines a smooth action of $G$ on $N$. In fact, the action is smooth on a set of the form $U\times N$, so that the assertion follows from the fact that all automorphisms $S(g)$ are smooth.
\end{remark}

\begin{example}
 We consider the Lie group 
\[N:=\mathbb{C}^\mathbb{N}\times\mathbb{R}^\mathbb{N}\,\,\,\text{with}\,\,\,((z_n),(t_n))\ast((z'_n),(t'_n)):=((z_n+e^{t_n}z'_n),(t_n+t'_n)),
\]where the locally convex structure on $\mathbb{C}^\mathbb{N}$ and $\mathbb{R}^\mathbb{N}$ is the product topology. We observe that $N$ is an infinite topological product of groups isomorphic to $\mathbb{C}\rtimes\mathbb{R}$ endowed with the multiplication
\[(z,t)\ast(z',t')=(z+e^tz',t+t').
\]Therefore, the center of $N$ is 
\[Z(N)=\{0\}\times(2\pi \mathbb{Z})^{\mathbb{N}}\cong \mathbb{Z}^{\mathbb{N}},
\]and this group is totally disconnected, but not discrete and hence not a Lie group. Therefore, the center of a Lie group need not be a Lie group.
\end{example}

\begin{definition}\label{center of N}
If $S$ is a smooth outer action of the Lie group $G$ on the Lie group $N$, then we define
\[C^p_s(G,Z(N)):=\{f\in C^p_s(G,N):\im(f)\subseteq Z(N)\}\,\,\,\text{and}\,\,\,C^2_{ss}(G,Z(N)):=C^2_{ss}(G,N)\cap C^2_s(G,Z(N)).
\]\sindex[n]{$C^p_s(G,Z(N))$}\sindex[n]{$C^2_{ss}(G,Z(N))$}Moreover, the $G$-module structure on $Z(N)$ produces a group differential $d_S$ on $C^p_s(G,Z(N))$ given in additive notation by
\begin{align}
&(d_Sf)(g_0,\ldots,g_p)\notag\\
&:=S(g_0)(f(g_1,\ldots,g_p))+\sum^{p}_{j=1}(-1)^jf(g_0,\ldots g_{j-1}g_j,\ldots,g_p)+(-1)^{p+1}f(g_0,\ldots,g_{p-1}).\notag
\end{align}
We thus obtain a differential complex $(C^{\bullet}_s(G,Z(N)),d_S)$\sindex[n]{$(C^{\bullet}_s(G,Z(N)),d_S)$} and put
\[Z^p_s(G,Z(N))_S:=\ker({d_S}_{\mid C^p_s(G,Z(N))})
\]and $Z^2_{ss}(G,Z(N))_S:=Z^2_s(G,Z(N))_S\cap C^2_{ss}(G,Z(N))$, respectively. The corresponding cohomology groups are denoted by
\[H^p_s(G,Z(N))_S:=Z^p_s(G,Z(N))_S/d_S(C^{p-1}_s(G,Z(N))).
\]\sindex[n]{$Z^p_s(G,Z(N))_S$}\sindex[n]{$Z^2_{ss}(G,Z(N))_S$}\sindex[n]{$H^p_s(G,Z(N))_S$}
For $\alpha\in C^1_s(G,Z(N)))$ and $g\in G$ we obtain
\begin{align}
(d_S\alpha)_g(x)&=(d_S\alpha)(g,x)-(d_S\alpha)(gxg^{-1},g)\notag\\
&=g.\alpha(x)-\alpha(gx)+\alpha(g)-(gxg^{-1}.\alpha(g)+\alpha(gx)-\alpha(gxg^{-1})\notag\\
& =g.\alpha(x)+\alpha(g)-gxg^{-1}.\alpha(g)-\alpha(gxg^{-1}),\notag
\end{align}
which is smooth in an identity neighbourhood of $G$. Hence,
\[B^2_s(G,Z(N))_S:=d_S(C^1_s(G,Z(N)))\subseteq Z^2_{ss}(G,Z(N))_S
\]and we define
\[H^2_{ss}(G,Z(N))_S:=Z^2_{ss}(G,Z(N))_S/B^2_s(G,Z(N))_S
\]and
\[H^3_{ss}(G,Z(N))_S:=Z^3_s(G,Z(N))_S/d_S(C^2_{ss}(G,Z(N))).
\]We observe that the action of $G$ on $Z(N)$ defined by a smooth outer action $S$ only depends on the equivalence class $[S]$. In this sense we also write
\[Z^p_s(G,Z(N))_{[S]}:=Z^p(G,Z(N))_S,\,\,\,H^p_s(G,Z(N))_{[S]}:=H^p_s(G,Z(N))_S\,\,\,\text{etc}.
\]\sindex[n]{$B^2_s(G,Z(N))_S$}\sindex[n]{$H^2_{ss}(G,Z(N))_S$}\sindex[n]{$H^3_{ss}(G,Z(N))_S$}\sindex[n]{$Z^p_s(G,Z(N))_{[S]}$}
\end{definition}

\begin{remark}
If $Z(N)$ is an initial Lie subgroup of the Lie group $N$, then it is possible to show that it carries the structure of a smooth $G$-module, and the definitions above are consistent with  Definition \ref{lie group cohomology}. But in general, we do not want to assume that $Z(N)$ is an initial Lie subgroup and it is not necessary for the following results.
\end{remark}

\section{Extensions of Lie Groups}
It is a natural ambition to transfer the theory of abstract group extensions to the case of Lie group extensions. Note that non-abelian extensions of Lie groups occur quite naturally in the context of smooth principal bundles over compact manifolds. We will now discuss the basic definitions related to extensions of Lie groups. Since every discrete group can be viewed as a Lie group, our discussion includes in particular the algebraic context of group extensions.


\begin{definition}\label{split lie groups}{\bf (Split Lie groups).}\index{Lie! Split Groups}
Let $G$ be a Lie group. A subgroup $H$ of $G$ is called a $\emph{split Lie subgroup}$ if it carries a Lie group structure for which the inclusion map $i_H:H\hookrightarrow G$ is a morphism of Lie groups and the right action of $H$ on $G$ defined by restricting the multiplication map of $G$ to a map $G\times H\rightarrow G$ defines a smooth $H$-principal bundle. This means that the coset space $G/H$ is a smooth manifold and that the quotient map $\pr:G\rightarrow G/H$ has smooth local sections.
\end{definition}


\begin{proposition}\label{split-initial} 
Each split Lie subgroup of $G$ is initial.
\end{proposition}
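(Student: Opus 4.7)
The plan is to exploit the local trivialization coming from the principal bundle structure of $\pr:G\to G/H$. First I would pick an open neighbourhood $U\subseteq G/H$ of the base point $eH$ together with a smooth local section $\sigma_0:U\to G$ of $\pr$. Since $\pr(\sigma_0(eH))=eH$, the element $g_0:=\sigma_0(eH)$ lies in $H$, and because the right action of $H$ on $G$ preserves fibres of $\pr$, the map $\sigma(u):=\sigma_0(u)\cdot g_0^{-1}$ is again a smooth section with the normalization $\sigma(eH)=1_G$. This gives the local trivialization
\[\Phi:U\times H\longrightarrow \pr^{-1}(U),\qquad (u,h)\mapsto \sigma(u)\cdot h,\]
which is a diffeomorphism whose inverse sends $g\in\pr^{-1}(U)$ to $\bigl(\pr(g),\sigma(\pr(g))^{-1}g\bigr)$.

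Next I would show that the inclusion $i_H:H\hookrightarrow G$ is an immersion. Under $\Phi$, the set $H\cap\pr^{-1}(U)$ corresponds to $\{eH\}\times H$, since for $h\in H$ we have $\pr(h)=eH$ and $\sigma(eH)^{-1}h=h$. Hence $H$ is locally, near $1_G$, a smooth submanifold of $G$, so $T_{1_G}i_H$ is injective. Translating via right multiplication by $h\in H$ (a diffeomorphism of both $G$ and $H$ by hypothesis), injectivity of $T_h i_H$ follows at every point of $H$.

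The key step is verifying the universal property: given a smooth map $f:M\to G$ from a manifold $M$ with $\im(f)\subseteq H$, the composition $i_H^{-1}\circ f:M\to H$ is smooth. Fix $m_0\in M$ and set $h_0:=f(m_0)\in H$. The translated map $f':=f\cdot h_0^{-1}$ is smooth into $G$, still has image in $H$, and satisfies $f'(m_0)=1_G$; it suffices to prove smoothness of $i_H^{-1}\circ f'$ near $m_0$, since right multiplication by $h_0$ is a diffeomorphism of $H$. For $m$ in a sufficiently small neighbourhood $V$ of $m_0$, continuity of $f'$ ensures $f'(m)\in\pr^{-1}(U)$, so we can apply $\Phi^{-1}$. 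Because $f'(m)\in H$, we have $\pr(f'(m))=eH$, and therefore $\Phi^{-1}(f'(m))=(eH,\,f'(m))$; the second coordinate, viewed as an element of the Lie group $H$, is smooth in $m$ as the second component of a smooth map into $U\times H$. Hence $i_H^{-1}\circ f'$ is smooth on $V$, and since $m_0$ was arbitrary, $i_H^{-1}\circ f$ is smooth on $M$.

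The argument is essentially a direct unwinding of the local triviality axiom, so no serious obstacle arises; the only point requiring a moment's care is the normalization $\sigma(eH)=1_G$, which is needed to identify the fibre of $\Phi$ over $eH$ with $H$ rather than a coset, and the reduction to $f'(m_0)=1_G$ via right translation, which relies on the fact that right multiplication by $h_0^{-1}$ is simultaneously a diffeomorphism of $G$ and of $H$ because $H$ is a Lie group in its own right.
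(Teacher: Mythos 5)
Your proof is correct and follows essentially the same route as the paper's: both exploit the local trivialization $U\times H\to\sigma(U)H\subseteq G$ coming from the principal bundle structure of $\pr:G\to G/H$ and read off smoothness of maps into $H$ from smoothness into $G$. You simply carry out in full the details the paper leaves implicit (the normalization $\sigma(eH)=1_G$, the immersion property, and the reduction to a neighbourhood of $1_G$ by right translation), and these details are all handled correctly.
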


\begin{proof}\,\,\,The condition that $H$ is a split Lie subgroup implies that there exists an open subset $U$ of some locally convex space $V$ and a smooth map $\sigma:U\rightarrow G$ such that the map
\[U\times H\rightarrow G,\,\,\,\,\,\,\,\,(g,h)\mapsto \sigma(x)h
\]is a diffeomorphism onto an open subset of $G$. We conclude that for every manifold $M$ a map $f:M\rightarrow H$ is smooth whenever it is smooth as a map to $G$, i.e., to the open subset $\sigma(U)H$ of $G$.
\end{proof}

\begin{definition}\label{extension of lie groups}{\bf (Extensions of Lie groups).}\index{Extensions! of Lie Groups}
An $\emph{extension of Lie groups}$ is a short exact sequence
\[1\longrightarrow N\stackrel{i}{\longrightarrow}\widehat{G}\stackrel{q}{\longrightarrow}G\longrightarrow1
\]of Lie group morphisms, for which $N\cong \ker(q)$ is a split Lie subgroup. This means that $\widehat{G}$ is a smooth $N$-principal bundle over $G$ and $G\cong \widehat{G}/N$. The extension is called \emph{abelian}, resp., \emph{central} if $N$ is abelian, resp., central in $\widehat{G}$.
\end{definition}

\begin{definition}\label{equivalent extensions}{\bf (Equivalence of Lie group extensions).}\index{Extensions!Equivalence of Lie Group}
(a) We call two extensions $N\hookrightarrow\widehat{G}_1\rightarrow G$ and $N\hookrightarrow\widehat{G}_2\rightarrow G$ of the Lie group $G$ by the Lie group $N$ $\emph{equivalent}$ if there exists a Lie group morphism $\varphi :\widehat{G_1}\rightarrow\widehat{G_2}$ such that the following diagram commutes:
\[\xymatrix{ 1 \ar[r]& N\ar[r] \ar@{=}[d]& \widehat{G}_1\ar[r] \ar[d]^{\varphi}& G\ar[r] \ar@{=}[d]& 1 \\
1 \ar[r]& N\ar[r] & \widehat{G}_2\ar[r] & G \ar[r] & 1.}
\]It is easy to see that any such $\varphi$ is in particular an isomorphism of Lie groups.

(b) We call an extension $q:\widehat{G}\rightarrow G$ with $\ker(q)=N$ $\emph{split}$ if there exists a Lie group morphism $\sigma:G\rightarrow\widehat{G}$ with $q\circ\sigma=\id_G$. In this case the map $N\rtimes_S G\rightarrow\widehat{G}, (n,g)\mapsto n\sigma(g)$ is an isomorphism, where the semidirect product is defined by the homomorphism
\[S:=C_N\circ\sigma:G\rightarrow \Aut(N)
\]and $C_N:\widehat{G}\rightarrow \Aut(N)$ denotes the conjugation action of $\widehat{G}$ on $N$. 
\end{definition}

As already mentioned above, we have the following important example:

\begin{example}\label{Aut(P) as LGE}
If $(P,M,G,q,\sigma)$ is a principal bundle with compact base manifold $M$, then the group $\Aut(P)$ of bundle automorphism is an extension of Lie groups:
\begin{align}
1\longrightarrow\Gau(P)\longrightarrow\Aut(P)\longrightarrow\Diff(M)_{[P]}\longrightarrow 1.\notag
\end{align}
Here,
\[\Gau(P)=\{\varphi\in\Aut(P):\, q\circ \varphi=\varphi\}
\]is the gauge group of $P$ and $\Diff(M)_{[P]}$ is the open subgroup of $\Diff(M)$ consisting of all diffeomorphism preserving the bundle class $[P]$  under pull backs, i.e.,
\[\Diff(M)_{[P]}:=\{\varphi\in\Diff(M):\,\varphi^*(P)\cong P\}.
\]If $G$ is abelian, then $\Gau(P)\cong C^{\infty}(M,G)$ and we have an abelian extension of Lie groups. A nice reference for the previous statements is [Wo07].
\end{example}

\subsection*{Smooth Factor Systems}

In this subsection we describe how to produce Lie group extensions in terms of data associated to given Lie groups $G$ and $N$. In particular we will classify all possible extensions related to $G$ and $N$. This will be done within the framework of smooth factor systems.

\begin{construction}\label{group extension to smooth factor system}
We give a description of Lie group extensions $N\hookrightarrow\widehat{G}\rightarrow G$ in terms of data associated to $G$ and $N$. Let $\widehat{G}$ be a Lie group extension of $G$ by $N$. By assumption, the extension has a smooth local section. Hence, there exists a locally smooth global section $\sigma:G\rightarrow\widehat{G}$ which is \emph{normalized} in the sense that $\sigma(1_G)=1_{\widehat{G}}$, i.e., $\sigma\in C^1_s(G,\widehat{G})$. Then the map
\[\Phi:N\times G\rightarrow \widehat{G},\,\,\,(n,g)\mapsto n\sigma(g)
\]
is a bijection which restricts to a local diffeomorphism on an identity neighbourhood. In general $\Phi$ is not continuous, but we may nevertheless use it to identify $\widehat{G}$ with the product set $N\times G$ endowed with the multiplication
\begin{align}
(n,g)(n',g')=(nS(g)(n')\omega(g,g'),gg'),\notag
\end{align}
where
\[S:=C_N\circ\sigma:G\rightarrow \Aut(N)\,\,\,\text{for}\,\,\,C_N:\widehat{G}\rightarrow \Aut(N),\,\,\,C_{N}(\widehat{g}):=C_{\widehat{G}}(\widehat{g})_{\mid N}.
\]and
\[\omega:G\times G\rightarrow N,\,\,\,(g,g')\mapsto \sigma(g)\sigma(g')\sigma(gg')^{-1}.
\]Note that $\sigma$ is smooth in an identity neighbourhood and the map $G\times N\rightarrow N, (g,n)\mapsto S(g).n$ is smooth on a set of the form $U\times N$, where $U$ is an identity neighbourhood of $G$. The maps $S$ and $\sigma$ satisfy the relations
\[\sigma(g)\sigma(g')=\omega(g,g')\sigma(gg'),
\]
\[S(g)S(g')=C_N(\omega(g,g'))S(gg'),\,\,\,\text{i.e.,}\,\,\,\delta_S=C_N\circ\omega,
\]and
\[\omega(g,g')\omega(gg',g'')=S(g)(\omega(g',g''))\omega(g,g'g'').
\]
\begin{flushright}
$\blacksquare$
\end{flushright}
\end{construction}

\begin{definition}
For $S\in C^1(G,\Aut(N))$ and $\omega\in C^2(G,N)$ let
\[(d_S\omega)(g,g',g''):=S(g)(\omega(g',g''))\omega(g,g'g'')\omega(gg',g'')^{-1}\omega(g,g{'})^{-1}.
\]
\end{definition}

\begin{construction}\label{new group}
Let $S\in C^1(G,\Aut(N))$ and $\omega\in C^2(G,N)$ with $\delta_S=C_N\circ\omega$. We define a product on $N\times G$ by
\begin{align}
(n,g)(n',g')=(nS(g)(n')\omega(g,g'),gg').\label{group multiplication}
\end{align}
Then $N\times G$ is a group if and only if $d_S\omega=1_N$. Inversion in this group is given by
\begin{align}
(n,g)^{-1}&=(S(g)^{-1}(n^{-1}\omega(g,g^{-1})^{-1}),g^{-1})\notag\\
&=(\omega(g^{-1},g)^{-1}S(g^{-1})(n^{-1}),g^{-1})
\end{align}
and in particular
\[(1_N,g)^{-1}=(S(g)^{-1}.(\omega(g,g^{-1})^{-1}),g^{-1})=(\omega(g^{-1},g)^{-1},g^{-1}).
\]Conjugation is given by
\[(n,g)(n',g')(n,g)^{-1}=(nS(g)(n')\omega_g(g')S(gg'g^{-1})^{-1}(n^{-1}),gg'g^{-1}),
\]so that 
\[(n,1_G)(n',g')(n,1_G)^{-1}=(nn'S(g')(n^{-1}),g')
\]and
\[(1_N,g)(n',g')(1_N,g)^{-1}=(S(g)(n')\omega_g(g'),gg'g^{-1}).
\]
\begin{flushright}
$\blacksquare$
\end{flushright}
\end{construction}

\begin{definition}\label{smooth factor system}{\bf (Smooth factor systems).}\index{Smooth!Factor System}
Let $G$ and $N$ be Lie groups. The elements of the set
\[Z^2_{ss}(G,N):=\{(S,\omega)\in C^1_s(G,\Aut(N))\times C^2_{ss}(G,N):\delta_S=C_N\circ\omega,\, d_S\omega=1_N\}
\]are called \emph{smooth factor systems for the pair} $(G,N)$ or \emph{locally smooth $2$-cocycles}.\sindex[n]{$Z^2_{ss}(G,N)$}
\end{definition}

\begin{definition}\index{Crossed Products!of Groups}
For a smooth factor system $(S,\omega)$\sindex[n]{$(S,\omega)$} we write $N\times_{(S,\omega)} G$\sindex[n]{$N\times_{(S,\omega)} G$} for the set $N\times G$ endowed with the group multiplication (\ref{group multiplication}) and call $N\times_{(S,\omega)} G$ the \emph{crossed product} of $N$ and $G$ with respect to $(S,\omega)$.
\end{definition}

For the proofs of the following assertions we refer to [Ne07a], Chapter 2:

\begin{theorem}\label{new lie groups}
Let $G$ and $N$ be Lie groups. Then the following assertions hold:
\begin{itemize}
\item[\emph{(a)}]
If $(S,\omega)$ is a smooth factor system, then $N\times_{(S,\omega)} G$ carries a unique structure of a Lie group for which the map
\[N\times G\rightarrow N\times_{(S,\omega)} G,\,\,\,(n,g)\mapsto (n,g)
\]is smooth on a set of the form $N\times U$, where $U$ is an open identity neighbourhood in $G$. Moreover, the map $q:N\times_{(S,\omega)} G\rightarrow G,\,\,\,(n,g)\mapsto g$ is a Lie group extension of $G$ by $N$.
\item[\emph{(b)}]
Each Lie group extension $q:\widehat{G}\rightarrow G$ of $G$ by $N$ gives rise to a smooth factor system by choosing a locally smooth normalized section $\sigma:G\rightarrow\widehat{G}$ and defining $(S,\omega):=(C_N\circ\sigma,\delta_{\sigma})$. In particular, the extension $\widehat{G}$ is equivalent to $N\times_{(S,\omega)} G$.
\end{itemize}
\begin{flushright}
$\blacksquare$
\end{flushright}
\end{theorem}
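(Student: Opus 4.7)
The plan is to handle (a) and (b) in that order, with (a) being the substantive part. For (a), I would first use the set-theoretic computations already recorded in Construction \ref{new group}: the condition $d_S\omega=1_N$ makes the multiplication $(n,g)(n',g')=(nS(g)(n')\omega(g,g'),gg')$ associative, and the explicit formulae there show that $N\times_{(S,\omega)}G$ is a group with unit $(1_N,1_G)$ and with inverses computed there. The real task is to equip this group with a compatible smooth structure.

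My strategy for the manifold structure is to produce an atlas by translation from the identity. Choose an open identity neighbourhood $U\subseteq G$ on which $\omega$ is smooth as a map $U\times U\to N$ and on which $(g,n)\mapsto S(g)(n)$ is smooth on $U\times N$; such a $U$ exists by the defining properties of $C^1_s(G,\Aut(N))$ and the local smoothness part of $C^2_{ss}(G,N)$. Declare the map $N\times U\to N\times_{(S,\omega)}G$, $(n,u)\mapsto(n,u)$, to be a smooth chart around the identity, and for $(n_0,g_0)\in N\times_{(S,\omega)}G$ transport this chart by left translation $\lambda_{(n_0,g_0)}$. The compatibility of these charts — that is, the smoothness of the transition maps — amounts to the smoothness of $\lambda_{(n_1,g_1)^{-1}}\circ\lambda_{(n_0,g_0)}$ restricted to sufficiently small neighbourhoods of the identity, which is a local computation involving $\omega$ and $S$ near $1_G$ and hence reduces to the chosen smoothness hypotheses.

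Having produced the manifold structure, one must verify that multiplication and inversion are smooth globally. Smoothness near the identity is immediate from the choice of $U$. By construction the left translations $\lambda_{(n_0,g_0)}$ are diffeomorphisms, so it suffices to show that right translations and conjugations by arbitrary $(n_0,g_0)$ are smooth; this follows from smoothness near the identity together with the fact that the conjugation formula $(1_N,g)(n',u)(1_N,g)^{-1}=(S(g)(n')\omega_g(u),gug^{-1})$ is smooth in $(n',u)$ for every fixed $g$ — precisely the content of the ``ss''-condition that each $\omega_g$ be smooth in a neighbourhood of $1_G$, together with smoothness of the individual automorphisms $S(g)$. The second projection $q$ then admits smooth local sections (namely $u\mapsto(1_N,u)$ on $U$, translated around $G$), so $q$ is the desired Lie group extension, and uniqueness of the Lie group structure with the stated compatibility property is automatic since two such structures would agree near the identity and hence globally by translation.

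For (b), since $q:\widehat{G}\to G$ is a smooth $N$-principal bundle, it admits smooth local sections; patching such a section defined near $1_G$ with an arbitrary set-theoretic extension and normalizing yields a locally smooth normalized section $\sigma\in C^1_s(G,\widehat{G})$. The assignment $S:=C_N\circ\sigma$ and $\omega:=\delta_{\sigma}$ is then a smooth factor system: the algebraic identities $\delta_S=C_N\circ\omega$ and $d_S\omega=1_N$ follow directly from the associativity of multiplication in $\widehat{G}$, while the smoothness requirements come from the local smoothness of $\sigma$ combined with smoothness of multiplication and conjugation in $\widehat{G}$ (for the ``ss''-condition, rewrite $\omega_g$ as a composition involving $\sigma$ and the smooth operations of $\widehat{G}$, exactly as in Definition \ref{smooth cochains}(c)). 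Finally, $\Phi(n,g):=n\sigma(g)$ is a group isomorphism $N\times_{(S,\omega)}G\to\widehat{G}$ by the computation in Construction \ref{group extension to smooth factor system}, and it is smooth on $N\times U$ with smooth inverse there, hence an equivalence of Lie group extensions by part (a). The main obstacle will be cleanly verifying the chart compatibility and the global smoothness of multiplication in (a) — specifically, that the ``ss''-condition on $\omega$ is exactly what is needed to upgrade smoothness near the identity to smoothness of translations and conjugations by arbitrary elements; everything else is algebraic bookkeeping with the cocycle identities.
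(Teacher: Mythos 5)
Your proposal is correct and follows essentially the standard route: the paper itself gives no proof of this theorem but defers to [Ne07a], Chapter 2, and the argument there is exactly the one you outline — construct the Lie group structure from local data by left-translating the manifold piece $N\times U$, using smoothness of multiplication and inversion near the identity for chart compatibility, and using the ``ss''-condition (smoothness of each $\omega_g$ near $1_G$, equivalently smoothness of the conjugations $c_{(1_N,g)}$ near the identity) to upgrade to global smoothness of the group operations. You have correctly isolated the two points where the smoothness hypotheses on $(S,\omega)$ enter, so no gap remains.
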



\begin{theorem}\label{Orbit of factor systems}
For two smooth factor systems $(S,\omega)$, $(S',\omega')\in Z^2_{ss}(G,N)$ the following statements are equivalent:
\begin{itemize}
\item[\emph{(a)}]
$N\times_{(S,\omega)}G$ and $N\times_{(S',\omega')}G$ are equivalent extensions of $G$ by $N$.
\item[\emph{(b)}]
There exists an element $h\in C^1_s(G,N)$ with $h.(S,\omega):=(h.S,h\ast_S\omega)=(S',\omega')$, where
\[h.S:=(C_N\circ h)\cdot S\,\,\,\text{and}\,\,\,(h\ast_S\omega)(g,g'):=h(g)S(g)(h(g'))\omega(g,g')h(gg')^{-1}.
\]
\end{itemize}
If these conditions are satisfied, then the map
\[\varphi:N\times_{(S',\omega')}G\rightarrow N\times_{(S,\omega)}G,\,\,\,(n,g)\mapsto (nh(g),g)
\]is an equivalence of extensions and all equivalences of extensions $N\times_{(S',\omega')}G\rightarrow N\times_{(S,\omega)}G$ are of this form.
\begin{flushright}
$\blacksquare$
\end{flushright}
\end{theorem}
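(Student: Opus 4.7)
\,\,\,The plan is to prove the equivalence by constructing explicitly in both directions, using the formula $\varphi(n,g) = (nh(g), g)$ as the bridge.

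First, for the direction (b) $\Rightarrow$ (a), I would assume given $h\in C^1_s(G,N)$ with $h.(S,\omega) = (S',\omega')$ and simply verify that $\varphi(n,g):=(nh(g),g)$ defines an equivalence of extensions. Checking the multiplicativity of $\varphi$ amounts to a direct calculation in the crossed product: for $(n,g), (n',g') \in N\times_{(S',\omega')}G$, one computes $\varphi((n,g)(n',g'))$ using the multiplication twisted by $(S',\omega')$ on one side and $\varphi(n,g)\varphi(n',g')$ using $(S,\omega)$ on the other, and the equality reduces precisely to the two identities $S'(g) = C_N(h(g))\cdot S(g)$ and $\omega'(g,g') = h(g)S(g)(h(g'))\omega(g,g')h(gg')^{-1}$. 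The map $\varphi$ is bijective (its inverse is $(n,g)\mapsto (nh(g)^{-1},g)$), and it is smooth on a neighbourhood of the identity because $h$ is locally smooth and the two crossed product structures are smooth near the identity. Hence $\varphi$ is an isomorphism of Lie groups (extending local smoothness via the group operations), and by construction it restricts to $\id_N$ on $N$ and induces $\id_G$ on $G$.

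For the direction (a) $\Rightarrow$ (b), suppose $\varphi: N\times_{(S',\omega')}G \to N\times_{(S,\omega)}G$ is an equivalence of extensions. Since $\varphi$ induces $\id_G$ on the quotient and restricts to $\id_N$ on the normal subgroup $N \cong N \times \{1_G\}$, for every $(n,g)$ we can write $\varphi(n,g) = (f(n,g), g)$ for some $f$. Using $(n,g) = (n,1_G)\cdot (1_N,g)$ in $N\times_{(S',\omega')}G$ together with $\varphi(n,1_G) = (n,1_G)$ and the homomorphism property, I would derive $\varphi(n,g) = (n \cdot h(g), g)$ where $h(g)$ is defined by $\varphi(1_N, g) =: (h(g), g)$. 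The map $h$ lies in $C^1_s(G,N)$ because $\varphi$ is locally smooth near the identity. Finally, applying the multiplicativity of $\varphi$ to arbitrary pairs $(1_N,g)(n',g')$ and $(1_N,g)(1_N,g')$ and comparing with the explicit formulas for the two twisted products yields exactly $h.S = S'$ and $h \ast_S \omega = \omega'$. The concluding assertion that every equivalence has this form is then immediate from this construction.

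I would expect the main obstacle to be bookkeeping rather than conceptual: the cocycle identity $(h\ast_S\omega)(g,g') = h(g)S(g)(h(g'))\omega(g,g')h(gg')^{-1}$ must be matched up term-by-term with what emerges from expanding $\varphi((n,g)(n',g'))$ in both twisted structures, and one must be careful that every instance of $S$ versus $S'$ (resp.\ $\omega$ versus $\omega'$) sits on the correct side. A secondary but genuine point is the smoothness: one has to argue that local smoothness of $\varphi$ at $(1_N,1_G)$ transfers to local smoothness of $h$ at $1_G$, which follows from composing $\varphi$ with the smooth inclusion $g \mapsto (1_N, g)$ on a small identity neighbourhood and projecting onto $N$. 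Combining the two directions with the explicit formula already established in (b) $\Rightarrow$ (a) gives both the equivalence of (a) and (b) and the statement that every equivalence is of the form $\varphi(n,g) = (nh(g),g)$.
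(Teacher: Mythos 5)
Your proposal is correct, and since the paper omits the proof entirely (deferring to [Ne07a], Chapter 2), your argument is exactly the standard computation one finds there: verifying that $\varphi(n,g)=(nh(g),g)$ intertwines the two twisted multiplications reduces to the identities $S'=h.S$ and $\omega'=h\ast_S\omega$, and conversely any equivalence is forced into this form by writing $(n,g)=(n,1_G)(1_N,g)$ and using normalization. Your handling of the smoothness of $h$ (via local smoothness of $\varphi$ near the identity and the defining property of the crossed-product manifold structure) is also the intended one.
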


\begin{corollary}\label{Ext(G,N)} 
Let $\Ext(G,N)$ denote the set of all equivalence classes of Lie group extensions of $G$ by $N$. Then the map
\[Z^2_{ss}(G,N)\rightarrow \Ext(G,N),\,\,\,(S,\omega)\mapsto[N\times_{(S,\omega)}G]
\]induces a bijection
\[H^2_{ss}(G,N):=Z^2_{ss}/C^1_s(G,N)\rightarrow \Ext(G,N).
\]\sindex[n]{$\Ext(G,N)$}\sindex[n]{$H^2_{ss}(G,N)$}
\begin{flushright}
$\blacksquare$
\end{flushright}
\end{corollary}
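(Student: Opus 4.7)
The plan is to deduce the corollary directly from the two preceding theorems, which have already done all the substantive work: Theorem \ref{new lie groups} produces Lie group extensions from smooth factor systems and conversely associates a factor system to any extension, while Theorem \ref{Orbit of factor systems} identifies precisely when two factor systems yield equivalent extensions. The corollary is essentially a packaging of these two results into the language of cohomology classes.

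First I would verify that the assignment $\Phi:Z^2_{ss}(G,N)\to\Ext(G,N)$, $(S,\omega)\mapsto[N\times_{(S,\omega)}G]$, is well-defined: this is exactly Theorem \ref{new lie groups} (a), which ensures that the crossed product $N\times_{(S,\omega)}G$ carries a Lie group structure turning the canonical projection $q:N\times_{(S,\omega)}G\to G$ into a Lie group extension of $G$ by $N$. Next, I would show that $\Phi$ factors through the quotient $H^2_{ss}(G,N)=Z^2_{ss}(G,N)/C^1_s(G,N)$: if $(S,\omega)$ and $(S',\omega')$ lie in the same $C^1_s(G,N)$-orbit, then the implication \emph{(b)}$\Rightarrow$\emph{(a)} of Theorem \ref{Orbit of factor systems} provides an explicit equivalence of Lie group extensions given by $(n,g)\mapsto(nh(g),g)$ for the appropriate $h\in C^1_s(G,N)$, so $\Phi(S,\omega)=\Phi(S',\omega')$ in $\Ext(G,N)$. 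Hence the induced map $\overline{\Phi}:H^2_{ss}(G,N)\to\Ext(G,N)$ is well-defined.

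For surjectivity I would invoke Theorem \ref{new lie groups} (b): any Lie group extension $q:\widehat{G}\to G$ of $G$ by $N$ admits a locally smooth normalized section $\sigma:G\to\widehat{G}$, and the pair $(S,\omega):=(C_N\circ\sigma,\delta_\sigma)$ is a smooth factor system with $[\widehat{G}]=[N\times_{(S,\omega)}G]=\overline{\Phi}([(S,\omega)])$. For injectivity I would use the converse implication \emph{(a)}$\Rightarrow$\emph{(b)} of Theorem \ref{Orbit of factor systems}: if $\overline{\Phi}([(S,\omega)])=\overline{\Phi}([(S',\omega')])$, then the extensions $N\times_{(S,\omega)}G$ and $N\times_{(S',\omega')}G$ are equivalent, which forces the existence of $h\in C^1_s(G,N)$ with $h.(S,\omega)=(S',\omega')$, i.e., the two cocycles represent the same class in $H^2_{ss}(G,N)$.

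Since all the analytic content (local smoothness of sections, smoothness of the crossed product multiplication, and the cocycle identity $d_S\omega=1_N$) has been encapsulated in the definition of $Z^2_{ss}(G,N)$ and in the two cited theorems, there is no real obstacle remaining; the argument is a formal bookkeeping of equivalence relations. If anything, the only point requiring a moment of care is checking that the equivalence relation on $Z^2_{ss}(G,N)$ defining $H^2_{ss}(G,N)$ really coincides with the $C^1_s(G,N)$-action $(S,\omega)\mapsto h.(S,\omega)=(h.S,h\ast_S\omega)$ from Theorem \ref{Orbit of factor systems}, but this is built into the notation of the corollary.
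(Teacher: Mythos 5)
Your proposal is correct and is exactly the derivation the paper intends: the corollary is placed immediately after Theorem \ref{new lie groups} and Theorem \ref{Orbit of factor systems} precisely so that well-definedness and surjectivity follow from the former and injectivity from the latter, with the $C^1_s(G,N)$-action defining $H^2_{ss}(G,N)$ matching the action in Theorem \ref{Orbit of factor systems}. The thesis omits the argument (deferring to [Ne07a]), and your write-up supplies it in the expected way with no gaps.
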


\begin{remark}\label{fixed outer action} 
The preceding proposition shows that $N\times_{(S,\omega)}G\sim N\times_{(S',\omega')}G$ implies $[S]=[S']$, i.e, equivalent extensions correspond to the same smooth $G$-kernel. In the following we write $\Ext(G,N)_{[S]}$\sindex[n]{$\Ext(G,N)_{[S]}$} for the set of equivalence classes of $N$-extensions of $G$ corresponding to the $G$-kernel $[S]$.
\end{remark}

\begin{theorem}\label{H^2(G,Z(N))=Ext(G,N)_S}
Let $S$ be a smooth outer action of $G$ on $N$ with $\Ext(G,N)_{[S]}\neq\emptyset$. Then the following assertions hold:
\begin{itemize}
\item[\emph{(a)}]
Each extension class in $\Ext(G,N)_{[S]}$ can be represented by a Lie group of the form $$N\times_{(S,\omega)}G.$$
\item[\emph{(b)}]
Any other Lie group extension $N\times_{(S,\omega')}G$ representing an element of $\Ext(G,N)_{[S]}$ satisfies
\[\omega'\cdot\omega^{-1}\in Z^2_{ss}(G,Z(N))_{[S]},
\]and the Lie groups $N\times_{(S,\omega)}G$ and $N\times_{(S,\omega')}G$ define equivalent extensions if and only if
\[\omega'\cdot\omega^{-1}\in B^2_{s}(G,Z(N))_{[S]}.
\]
\end{itemize}
\begin{flushright}
$\blacksquare$
\end{flushright}
\end{theorem}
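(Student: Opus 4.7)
The statement is the Lie group analogue of Theorem \ref{class of Z-kernels I}, so I would imitate the strategy used there, with the main extra work being careful bookkeeping of the local smoothness conditions encoded in $C^1_s$, $C^2_{ss}$ and in the definition of $Z^2_{ss}(G,Z(N))_{[S]}$.

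For part (a), the plan is to start from an arbitrary extension class in $\Ext(G,N)_{[S]}$. By Theorem \ref{new lie groups}\,(b), it can be represented by a crossed product $N\times_{(S',\omega')}G$ for some $(S',\omega')\in Z^2_{ss}(G,N)$, and by hypothesis $[S']=[S]$. So there exists $h\in C^1_s(G,N)$ with $h.S'=S$. Setting $\omega'':=h\ast_{S'}\omega'\in C^2_{ss}(G,N)$, Lemma \ref{action on factor system} (applied in the smooth setting) gives $h.(S',\omega')=(S,\omega'')\in Z^2_{ss}(G,N)$, and then Theorem \ref{Orbit of factor systems} produces an equivalence of extensions $N\times_{(S',\omega')}G\to N\times_{(S,\omega'')}G$, as required.

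For part (b), the first step is to observe that for two smooth factor systems $(S,\omega),(S,\omega')$ with the \emph{same} outer action $S$ one has $C_N\circ\omega=\delta_S=C_N\circ\omega'$, so $\beta:=\omega'\cdot\omega^{-1}$ takes values in $Z(N)$. Then $d_S\omega=d_S\omega'=1_N$ together with the fact that $d_S$ on $C^{\bullet}_s(G,Z(N))$ is the ordinary group differential for the induced $G$-module structure on $Z(N)$ (since values are central, $S$ acts on $Z(N)$ and the cocycle condition reduces to the abelian one) yields $d_S\beta=1_N$, so $\beta\in Z^2_{ss}(G,Z(N))_{[S]}$ once one checks the strong-smoothness condition; this comes from $\omega,\omega'\in C^2_{ss}(G,N)$ by computing $\beta_g(x)=\omega'(g,x)\omega'(gxg^{-1},g)^{-1}\cdot(\omega(g,x)\omega(gxg^{-1},g)^{-1})^{-1}$, which is smooth near $1_G$.

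The equivalence statement in (b) is then a direct application of Theorem \ref{Orbit of factor systems}. If $N\times_{(S,\omega)}G\sim N\times_{(S,\omega')}G$, there exists $h\in C^1_s(G,N)$ with $h.(S,\omega)=(S,\omega')$. The identity $h.S=S$ forces $C_N\circ h=\id_N$, i.e.\ $h\in C^1_s(G,Z(N))$. Using that $h$ is central, the formula for $\ast_S$ simplifies to $h\ast_S\omega=\omega\cdot d_Sh$, so $\omega'\omega^{-1}=d_Sh\in B^2_s(G,Z(N))_{[S]}$. Conversely, given $h\in C^1_s(G,Z(N))$ with $\omega'\omega^{-1}=d_Sh$, the centrality of $h$ gives $h.S=S$ and $h\ast_S\omega=\omega\cdot d_Sh=\omega'$, so Theorem \ref{Orbit of factor systems} yields the desired equivalence. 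The main obstacle I anticipate is not algebraic but the verification that all the maps produced (in particular $\omega''=h\ast_{S'}\omega'$ and $\beta=\omega'\omega^{-1}$) actually lie in the \emph{strongly smooth} class $C^2_{ss}$ rather than only in $C^2_s$; this requires unpacking the definition of $(\cdot)_g$ and using that both $\omega,\omega'\in C^2_{ss}$ and $h\in C^1_s$ are locally smooth at the identity.
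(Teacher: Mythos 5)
Your argument is correct and is essentially the same as the one the paper relies on: the paper defers this proof to [Ne07a], but its in-text proof of the algebraic analogue (Theorem \ref{class of Z-kernels I}) proceeds exactly as you do, via Theorem \ref{new lie groups}\,(b), the $C^1_s(G,N)$-action on factor systems, and Theorem \ref{Orbit of factor systems}. Your flagged smoothness checks (that $h\ast_{S'}\omega'$ and $\beta=\omega'\omega^{-1}$ lie in the strongly smooth class) are the right remaining verifications and go through as you indicate, e.g.\ since $\omega''$ is the factor system of the locally smooth section $g\mapsto h(g)\sigma'(g)$ and $\beta_g=\omega'_g\cdot\omega_g^{-1}$ by centrality.
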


\begin{corollary}\label{H^2(G,Z(N))}
For a smooth $G$-kernel $[S]$ with $\Ext(G,N)_{[S]}\neq\emptyset$ the map
\[H^2_{ss}(G,Z(N))_{[S]}\times \Ext(G,N)_{[S]}\rightarrow \Ext(G,N)_{[S]},\,\,\,(\beta,[N\times_{(S,\omega)}G])\mapsto [N\times_{(S,\omega\cdot\beta})G]
\]is a well-defined simply transitive action.
\begin{flushright}
$\blacksquare$
\end{flushright}
\end{corollary}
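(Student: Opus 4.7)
The plan is to deduce the corollary essentially formally from Theorem~\ref{H^2(G,Z(N))=Ext(G,N)_S}, which already packages the substantive content; the remaining work is bookkeeping to check that the formula respects the equivalence relations on both factors and behaves as a group action.

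First I would verify well-definedness. Given $\beta \in Z^2_{ss}(G,Z(N))_{[S]}$ and a smooth factor system $(S,\omega)$, the product $\omega \cdot \beta$ takes values in $N$, and since $\beta$ is central-valued one checks directly that $\delta_S = C_N \circ (\omega \cdot \beta)$ and $d_S(\omega \cdot \beta) = (d_S\omega)\cdot(d_S\beta) = 1_N$, so $(S,\omega\cdot\beta) \in Z^2_{ss}(G,N)$ defines a Lie group extension via Theorem~\ref{new lie groups}(a). Independence of the choices of representatives is exactly the content of Theorem~\ref{H^2(G,Z(N))=Ext(G,N)_S}(b): replacing $\beta$ by $\beta \cdot d_S h$ for $h \in C^1_s(G,Z(N))$ changes $\omega\cdot\beta$ by a coboundary (using centrality so that $h \ast_S (\omega\cdot\beta) = \omega\cdot\beta\cdot d_S h$), and replacing $(S,\omega)$ by an equivalent factor system $h.(S,\omega) = (S',\omega')$ with $h\in C^1_s(G,N)$ yields $h\ast_S(\omega\cdot\beta) = (h\ast_S\omega)\cdot\beta$ (again by centrality of $\beta$), so the resulting extension class is unchanged.

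Next I would check the action axioms. The unit cocycle $\beta = 1_{Z(N)}$ acts trivially by inspection. For $\beta_1,\beta_2 \in Z^2_{ss}(G,Z(N))_{[S]}$, we have $(\beta_1\beta_2) \cdot [N\times_{(S,\omega)}G] = [N\times_{(S,\omega\cdot\beta_1\cdot\beta_2)}G] = \beta_1 \cdot [N\times_{(S,\omega\cdot\beta_2)}G] = \beta_1\cdot(\beta_2\cdot[N\times_{(S,\omega)}G])$, which is immediate from the associativity of multiplication in $N$.

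It remains to show the action is simply transitive. For transitivity, take two classes in $\Ext(G,N)_{[S]}$; by Theorem~\ref{H^2(G,Z(N))=Ext(G,N)_S}(a) both can be represented by factor systems sharing the same outer action $S$, say $(S,\omega)$ and $(S,\omega')$. By part~(b) of the same theorem, $\beta := \omega'\cdot\omega^{-1}$ lies in $Z^2_{ss}(G,Z(N))_{[S]}$ and clearly $\beta\cdot[N\times_{(S,\omega)}G] = [N\times_{(S,\omega')}G]$. For freeness, suppose $\beta\cdot[N\times_{(S,\omega)}G] = [N\times_{(S,\omega)}G]$, i.e.\ the extensions $N\times_{(S,\omega)}G$ and $N\times_{(S,\omega\cdot\beta)}G$ are equivalent; then Theorem~\ref{H^2(G,Z(N))=Ext(G,N)_S}(b) forces $\beta = (\omega\cdot\beta)\cdot\omega^{-1} \in B^2_s(G,Z(N))_{[S]}$, so $[\beta] = 0$ in $H^2_{ss}(G,Z(N))_{[S]}$.

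The only non-routine point — which is what makes the argument go through cleanly — is the interplay between the $C^1_s(G,N)$-action on factor systems and multiplication by a central cocycle; the main obstacle to be careful about is therefore confirming that $h\ast_S(\omega\cdot\beta) = (h\ast_S\omega)\cdot\beta$ when $\beta$ has values in $Z(N)$, since $S(g)$ acts trivially by conjugation on central elements but not necessarily as the identity, so one must use that the $G$-module structure on $Z(N)$ induced by $S$ depends only on $[S]$ to ensure $d_S\beta$ and the coboundary condition are compatible.
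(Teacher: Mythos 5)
Your proof is correct and follows the same route as the paper, which deduces the corollary directly from Theorem~\ref{H^2(G,Z(N))=Ext(G,N)_S} (deferring the details to [Ne07a], exactly as the analogous Corollary~\ref{class of Z-kernels II} is deduced from Theorem~\ref{class of Z-kernels I}). Your explicit verification of well-definedness in both variables, the action axioms, and simple transitivity — in particular the centrality computations $h\ast_S(\omega\cdot\beta)=(h\ast_S\omega)\cdot\beta$ and $d_S(\omega\cdot\beta)=(d_S\omega)(d_S\beta)$ — supplies precisely the bookkeeping the paper omits.
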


\begin{remark}\label{abelian extensions}{\bf (Abelian extensions).}\index{Extensions!Abelian} Suppose that $N=A$ is an abelian Lie group. Then the adjoint representation of $A$ is trivial and a smooth factor system $(S,\omega)$ for $(G,A)$ consists of a smooth module structure $S:G\rightarrow \Aut(A)$ and an element $\omega\in C^2_{ss}(G,A)$. In this case $d_S$ is the Lie group differential corresponding to the module structure on $A$ (c.f. Definition \ref{lie group cohomology}).

Therefore, $(S,\omega)$ defines a Lie group $N\times_{(S,\omega)}G$ if and only if $d_S\omega=1_N$, i.e., $\omega\in Z^2_{ss}(G,A)$. In this case we write $A\times_{\omega}G$ for this Lie group, which is $A\times G$ endowed with the multiplication
\[(a,g)(a',g')=(a+g.a'+\omega(g,g'),gg').
\]Further $S\sim S'$ if and only if $S=S'$. Hence, a smooth $G$-kernel $[S]$ is the same as a smooth $G$-module structure $S$ on $A$ and $\Ext(G,A)_{S}:=\Ext(G,A)_{[S]}$ is the class of all $A$-extensions of $G$ for which the associated $G$-module structure on $A$ is $S$.

According to Corollary \ref{H^2(G,Z(N))}, the equivalence classes of extensions correspond to cohomology classes of cocycles, so that the map
\[H^2_{ss}(G,A)_S\rightarrow \Ext(G,A)_S,\,\,\,\,\,\,\,\,[\omega]\mapsto[A\times_{\omega}G]
\]is a well-defined bijection.
\end{remark}

\section{Some Lie Algebra Cohomology}\index{Lie!Algebra Cohomology}
In this section we provide the basic tools of Lie algebra cohomology which are necessary in order to understand Lecomte's generalization of the Chern--Weil map and to define secondary characteristic classes of Lie algebra extensions.

\subsection*{The Chevalley-Eilenberg complex}\label{The Chevalley-Eilenberg Complex}

\begin{definition}\label{alternating maps}
Let $V$ and $W$ be vector spaces and $p\in\mathbb{N}$. A multilinear map $f:W^p\rightarrow V$ is called \emph{alternating} if
\[f(w_{\sigma(1)},\ldots,w_{\sigma(p)})=\sgn(\sigma)\cdot f(w_1,\ldots,w_p)
\]for $w_i\in W$ and $\sgn(\sigma)$ is the sign of the permutation $\sigma\in S_p$. We write $\Alt^p(V,W)$\sindex[n]{$\Alt^p(V,W)$} for the set of $p$-linear alternating maps and $\Mult^p(V,W)$\sindex[n]{$\Mult^p(V,W)$} for the space of all $p$-linear maps $V^p\rightarrow W$. For $p=0$ we put 
\[\Mult^0(V,W):=\Alt^0(V,W):=W.
\]
\end{definition}

\begin{definition}\label{Chevalley-Eilenberg complex}{\bf(The Chevalley-Eilenberg Complex).}\index{Chevalley-Eilenberg Complex}
Let $\mathfrak{g}$ be a Lie algebra and $V$ a $\mathfrak{g}$-module. For $p\in\mathbb{N}_0$ we write $C^p(\mathfrak{g},V):=\Alt^p(\mathfrak{g},V)$ for the space of alternating $p$-linear mappings $\mathfrak{g}^p\rightarrow V$ and call the elements of $C^p(\mathfrak{g},V)$ $p$-\emph{cochains}. We also define
\[C(\mathfrak{g},V):=\bigoplus^{\infty}_{p=0}C^p(\mathfrak{g},V).
\]On $C^p(\mathfrak{g},V)$\sindex[n]{$C^p(\mathfrak{g},V)$} we define the \emph{Chevalley-Eilenberg differential} $d$ by
\begin{align}
d\omega(x_0,\ldots,x_p)&:=\sum^p_{j=0}(-1)^jx_j.\omega(x_0,\ldots,\widehat{x}_j,\ldots,x_p)\notag\\
&+\sum_{i<j}(-1)^{i+j}\omega([x_i,x_j],x_0,\ldots,\widehat{x}_i,\ldots,\widehat{x}_j,\ldots,x_p),\notag
\end{align}
where $\widehat{x}_j$ means that $x_j$ is omitted. Observe that the right hand side defines for each $\omega\in C^p(\mathfrak{g},V)$ an element of $C^{p+1}(\mathfrak{g},V)$ because it is alternating. Putting the differentials on all the spaces $C^p(\mathfrak{g},V)$ together, we obtain a linear map $d=d_{\mathfrak{g}}:C(\mathfrak{g},V)\rightarrow C(\mathfrak{g},V)$. The complex 
\[(C^{\bullet}(\mathfrak{g},V),d_{\mathfrak{g}})
\]is called the \emph{Chevalley-Eilenberg complex}.\sindex[n]{$(C^{\bullet}(\mathfrak{g},V),d_{\mathfrak{g}})$}
\end{definition}

\begin{remark}
The elements of the subspace $Z^p(\mathfrak{g},V):=\ker(d_{|C^p(\mathfrak{g},V)})$ are called $p$-\emph{cocycles}, and the elements of the spaces
\[B^p(\mathfrak{g},V):=d(C^{p-1}(\mathfrak{g},V))\,\,\,\text{and}\,\,\,B^0(\mathfrak{g},V):=\{0\}
\]are called $p$-\emph{coboundaries}. It can be shown that $d^2=0$, which implies that $B^p(\mathfrak{g},V)\subseteq Z^p(\mathfrak{g},V)$. Hence it makes sense to define the $p^{\text{th}}$-\emph{cohomology space of} $\mathfrak{g}$ \emph{with values in the module} $V$:
\[H^p(\mathfrak{g},V):=Z^p(\mathfrak{g},V)/B^p(\mathfrak{g},V).
\]\sindex[n]{$Z^p(\mathfrak{g},V)$}\sindex[n]{$B^p(\mathfrak{g},V)$}\sindex[n]{$H^p(\mathfrak{g},V)$}\index{Cocycles}\index{Coboundaries}
\end{remark}

\begin{remark}\label{low degree examples}
In particular, for elements of low degree we have:
\begin{align}
&p=0:\,\,\,d\omega(x)=x\cdot\omega\notag\\
&p=1:\,\,\,d\omega(x,y)=x\cdot\omega(y)-y\cdot\omega(x)-\omega([x,y])\notag\\
&p=2:\,\,\,d\omega(x,y,z)=x\cdot\omega(y,z)-y\cdot\omega(x,z)+z\cdot\omega(x,y)\notag\\
&-\omega([x,y],z)+\omega([x,z],y)-\omega([y,z],x).\notag
\end{align}
\end{remark}

\subsection*{Differential Forms as Lie Algebra Cochains}

Let $M$ be a smooth manifold, $\mathfrak{g}:=\mathcal{V}(M)$ the Lie algebra of smooth vector fields on $M$ and $W$ a vector space. We consider the $\mathfrak{g}$-module $V:=C^{\infty}(M,W)$ of smooth $W$-valued functions on $M$. We want to identify the space $\Omega^p(M,W)$ of $W$-valued $p$-forms with a subspace of the cochain space $C^p(\mathfrak{g},V)$. This is done as follows: To each $\omega\in\Omega^p(M,W)$ we associate the element $\widetilde{\omega}\in C^p(\mathfrak{g},V)$ defined by
\[\widetilde{\omega}(X_1,\ldots,X_p)(m):=\omega_m(X_1(m),\ldots,X_p(m))
\]and observe that $\widetilde{\omega}$ is $C^{\infty}(M)$-multilinear. We then have the following theorem:

\begin{theorem}\label{differential forms as C-infinity(M) lie-algebra cochains}
\,\,\,The map
\[\Phi:\Omega^p(M,W)\rightarrow\Alt^p_{C^{\infty}(M)}(\mathcal{V}(M),C^{\infty}(M,W)),\,\,\,\omega\mapsto\widetilde{\omega}
\]is a bijection.
\end{theorem}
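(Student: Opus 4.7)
\textbf{(Plan.)} First I would verify that the map $\Phi$ actually lands in the indicated space: for any $\omega \in \Omega^p(M,W)$, the pointwise description
\[\widetilde{\omega}(X_1,\ldots,X_p)(m) = \omega_m(X_1(m),\ldots,X_p(m))\]
yields a smooth function of $m$ (local coordinates reduce this to differentiating a fixed multilinear map on the coefficients), is alternating in the $X_i$ because $\omega_m$ is alternating on $T_mM$, and is $C^\infty(M)$-multilinear because multiplication of a vector field by a smooth scalar function acts pointwise. Linearity of $\Phi$ is immediate.

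For injectivity, suppose $\widetilde{\omega} = 0$. Fix $m \in M$ and vectors $v_1,\ldots,v_p \in T_mM$; I would choose vector fields $X_i \in \mathcal{V}(M)$ with $X_i(m) = v_i$, using a chart around $m$ to produce local vector fields with the prescribed values and a bump function to globalize. Then $\omega_m(v_1,\ldots,v_p) = \widetilde{\omega}(X_1,\ldots,X_p)(m) = 0$, so $\omega_m = 0$. Since $m$ was arbitrary, $\omega = 0$.

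The main work is surjectivity, and the only real obstacle is the \emph{tensoriality principle}: given $\alpha \in \Alt^p_{C^\infty(M)}(\mathcal{V}(M),C^\infty(M,W))$, I must show that $\alpha(X_1,\ldots,X_p)(m)$ depends only on the values $X_i(m)$. The standard argument runs in two stages. First, locality: if some $X_i$ vanishes on an open neighbourhood $U$ of $m$, pick a bump function $\chi \in C^\infty(M)$ with $\chi(m) = 0$ and $\chi \equiv 1$ outside a smaller neighbourhood of $m$ contained in $U$; then $X_i = \chi X_i$, whence
\[\alpha(X_1,\ldots,X_i,\ldots,X_p)(m) = \chi(m)\cdot \alpha(X_1,\ldots,X_p)(m) = 0\]
by $C^\infty(M)$-linearity in the $i$-th slot. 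This shows $\alpha(X_1,\ldots,X_p)(m)$ only depends on the germs of the $X_i$ at $m$. Second, pointwise dependence: in a chart around $m$ with coordinate fields $\partial_1,\ldots,\partial_n$, each $X_i$ can be written locally as $X_i = \sum_j f_i^j \partial_j$ with $f_i^j \in C^\infty(U)$, and after cutting off by bump functions (invoking locality just established) one obtains globally defined vector fields and smooth functions agreeing with the original on a neighbourhood of $m$; $C^\infty(M)$-multilinearity then yields
\[\alpha(X_1,\ldots,X_p)(m) = \sum_{j_1,\ldots,j_p} f_1^{j_1}(m)\cdots f_p^{j_p}(m)\,\alpha(\partial_{j_1},\ldots,\partial_{j_p})(m),\]
which depends on the $X_i$ only through the coefficients $f_i^j(m)$, i.e.\ through $X_i(m)$.

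Once tensoriality is established, I define $\omega_m \in \Alt^p(T_mM,W)$ by
\[\omega_m(v_1,\ldots,v_p) := \alpha(X_1,\ldots,X_p)(m)\]
for any smooth extensions $X_i$ of $v_i$; this is well-defined, alternating, and multilinear in the $v_i$. Smoothness of the resulting section $\omega$ of $\Alt^p(TM,W)$ follows by checking it in a chart: on $U$ one has $\omega_m(\partial_{j_1}|_m,\ldots,\partial_{j_p}|_m) = \alpha(\partial_{j_1},\ldots,\partial_{j_p})(m)$, which is smooth in $m$ by the defining property of $\alpha$. Thus $\omega \in \Omega^p(M,W)$, and by construction $\widetilde{\omega} = \alpha$, proving surjectivity. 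The hardest and most technical step is the bump-function argument underpinning tensoriality; everything else is bookkeeping.
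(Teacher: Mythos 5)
Your proof is correct: the paper itself only cites [Ne08b], Theorem 3.2.1 rather than giving an argument, and the standard proof there is exactly the tensoriality argument you give (locality via bump functions and $C^{\infty}(M)$-linearity, then pointwise dependence via local coordinate frames). No gaps; the finite-dimensionality of $M$ is what makes the finite coordinate expansion work, and that is guaranteed by the paper's standing assumptions.
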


\begin{proof}
\,\,\,For the proof of this theorem we refer to [Ne08b], Theorem 3.2.1.
\end{proof}

\subsection*{Multiplication of Lie Algebra Cochains}

\begin{definition}\label{wedgeproduct}
Let $\mathfrak{g}$ and $V_i$, $i=1,2,3$ be vector spaces and
\[m:V_1\times V_2\rightarrow V_3,\,\,\,(v_1,v_2)\mapsto v_1\cdot_{m}v_2
\]a bilinear map. For $\alpha\in\Alt^p(\mathfrak{g},V_1)$ and $\beta\in\Alt^q(\mathfrak{g},V_2)$ we define
\[\alpha\wedge_m\beta\in\Alt^{p+q}(\mathfrak{g},V_3)
\]by\sindex[n]{$\wedge_m$}
\begin{align}
&(\alpha\wedge_m\beta)(x_1,\ldots,x_{p+q})\notag\\
&:=\frac{1}{p!q!}\sum_{\sigma\in S_{p+q}}\sgn(\sigma)\alpha(x_{\sigma(1)},\ldots,x_{\sigma(p)})\cdot_m\beta(x_{\sigma(p+1)},\ldots,x_{\sigma(p+q)}).\notag
\end{align}
\end{definition}

\begin{remark}\label{definition of alt}
For a $p$-linear map $\alpha:\mathfrak{g}^p\rightarrow V$ and $\sigma\in S_p$ define
\[\alpha^{\sigma}(x_1,\ldots,x_p):=\alpha(x_{\sigma(1)},\ldots,x_{\sigma(p)})
\]and 
\[\Alt(\alpha):=\sum_{\sigma\in S_p}\sgn(\sigma)\cdot\alpha^{\sigma}.
\]In this sense we have 
\[\alpha\wedge_m\beta=\frac{1}{p!q!}\Alt(\alpha\cdot_m\beta),
\]where 
\[(\alpha\cdot_m\beta)(x_1,\ldots,x_{p+q}):=\alpha(x_1,\ldots,x_p)\cdot_m\beta(x_{p+1},\ldots,x_{p+q}).
\]
\end{remark}

\begin{example}\label{lie algebra bracket}
If $V$ is a Lie algebra, then the corresponding Lie bracket
\[L:=[\cdot,\cdot]:V\times V\rightarrow V,\,\,\,L(v,w):=[v,w]
\]is a skew-symmetric bilinear map. In particular, given a vector space $\mathfrak{g}$ and $\alpha\in\Alt^p(\mathfrak{g},V)$ and $\beta\in\Alt^q(\mathfrak{g},V)$, we can build the wedge product 
\[[\alpha,\beta]:=\alpha\wedge_L\beta\in\Alt^{p+q}(\mathfrak{g},V).
\]For $p=q=1$ we obtain
\[[\alpha,\beta](x,y)=[\alpha(x),\beta(y)]-[\alpha(y),\beta(x)].
\]Thus, if $\alpha=\beta$, then
\[[\alpha,\alpha](x,y)=2[\alpha(x),\alpha(y)].
\]
\end{example}

\begin{remark}\label{associativity properties}{\bf(Associativity properties).} Suppose we have four bilinear maps
\[m_{12}:V_1\times V_2\rightarrow W,\,\,\,m_3:W\times V_3\rightarrow U
\]
\[m_{23}:V_2\times V_3\rightarrow X,\,\,\,m_1:V_1\times X\rightarrow U,
\]satisfying the associativity relation
\[v_1\cdot_{m_1}(v_2\cdot_{m_{23}}v_3)=(v_1\cdot_{m_{12}}v_2)\cdot_{m_3}v_3
\]for $v_i\in V_i, i=1,2,3$. Then we obtain for $\alpha_i\in\Alt^{p_i}(\mathfrak{g},V_i)$ the relation
\[(\alpha_1\cdot_{m_{12}}\alpha_2)\cdot_{m_3}\alpha_3=\alpha_1\cdot_{m_1}(\alpha_2\cdot_{m_{23}}\alpha_3),
\]which in turn leads to
\[(\alpha_1\wedge_{m_{12}}\alpha_2)\wedge_{m_3}\alpha_3=\alpha_1\wedge_{m_1}(\alpha_2\wedge_{m_{23}}\alpha_3).
\]
\end{remark}

The following example is an important special case of the previous discussion:

\begin{example}\label{wedgeproduct for composition}
Let $V$ be a vector space and $\End(V)$ the algebra of its linear endomorphisms. Then we have two bilinear maps given by evaluation
\[\ev:\End(V)\times V\rightarrow V,\,\,\,(\varphi,v)\mapsto\varphi(v)
\]and composition
\[C:\End(V)\times\End(V)\rightarrow\End(V),\,\,\,(\varphi,\psi)\mapsto\varphi\circ\psi.
\]They satisfy the associativity relation
\[\ev(C(\varphi,\psi),v)=(\varphi\circ\psi)(v)=\varphi(\psi(v))=\ev(\varphi,\ev(\psi,v)).
\]In particular, for $\alpha\in\Alt^p(\mathfrak{g},\End(V))$, $\Alt^q(\mathfrak{g},\End(V))$ and $\gamma\in\Alt^r(\mathfrak{g},V)$, this leads to the relation
\[\alpha\wedge_{\ev}(\beta\wedge_{\ev}\gamma)=(\alpha\wedge_C\beta)\wedge_{\ev}\gamma.
\]
\end{example}

\begin{remark}\label{commutativity properties}{\bf(Commutativity properties).} Now let $m:V\times V\rightarrow V$ be a bilinear map, $\alpha\in\Alt^p(\mathfrak{g},V)$ and $\beta\Alt^q(\mathfrak{g},V)$. If $m$ is symmetric, then we find for their wedge product:
\[\beta\wedge_m\alpha=(-1)^{pq}\alpha\wedge_m\beta.
\]If $m$ is skew-symmetric, we have
\[\beta\wedge_m\alpha=(-1)^{pq+1}\alpha\wedge_m\beta
\]
\end{remark}

\begin{proposition}\label{wedge-product and differential}
Let $\mathfrak{g}$ be a Lie algebra, $V_i$, $i=1,2,3$, $\mathfrak{g}$-modules and $m:V_1\times V_2\rightarrow V_3$ a $\mathfrak{g}$-invariant bilinear map, i.e.,
\[x.m(v_1,v_2)=m(x.v_1,v_2)+m(v_1,x.v_2),\,\,\,x\in\mathfrak{g}, v_i\in V_i.
\]Then we have for $\alpha\in C^p(\mathfrak{g},V_1)$ and $\beta\in C^q(\mathfrak{g},V_2)$ the relation
\[d_{\mathfrak{g}}(\alpha\wedge_m\beta)=d_{\mathfrak{g}}\alpha\wedge_m\beta+(-1)^p\alpha\wedge_m d_{\mathfrak{g}}\beta.
\]
\end{proposition}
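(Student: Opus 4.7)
The claim is a graded Leibniz rule for the Chevalley--Eilenberg differential with respect to $\wedge_m$, so my strategy is a direct computation, keeping track of signs via the shuffle permutations inherent in Definition \ref{wedgeproduct}. Rather than starting with the full formula, I would first dispense with the low-degree cases as warm-up. For $p=q=0$, both $\alpha\in V_1$ and $\beta\in V_2$ are constants, $\alpha\wedge_m\beta=m(\alpha,\beta)$, and the identity reduces precisely to the $\mathfrak{g}$-invariance hypothesis $x.m(\alpha,\beta)=m(x.\alpha,\beta)+m(\alpha,x.\beta)$. The case $p=1$, $q=0$ (and its mirror) is a short explicit calculation using Remark \ref{low degree examples}; this already exhibits the mechanism that will appear in general.

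For the general case I would evaluate both sides on $(x_0,\ldots,x_{p+q})$ and split the defining formula for $d_{\mathfrak{g}}(\alpha\wedge_m\beta)$ into its two natural pieces: the \emph{action terms} $\sum_j(-1)^j x_j.(\alpha\wedge_m\beta)(x_0,\ldots,\widehat{x}_j,\ldots,x_{p+q})$ and the \emph{bracket terms} $\sum_{i<j}(-1)^{i+j}(\alpha\wedge_m\beta)([x_i,x_j],x_0,\ldots,\widehat{x}_i,\ldots,\widehat{x}_j,\ldots,x_{p+q})$. For each action term I would apply the $\mathfrak{g}$-invariance of $m$ so that $x_j$ acts either on an $\alpha$-factor or on a $\beta$-factor; then I would match the resulting contributions against the analogous action terms appearing in $d_{\mathfrak{g}}\alpha\wedge_m\beta$ and $(-1)^p\alpha\wedge_m d_{\mathfrak{g}}\beta$ after unfolding the definition of $\wedge_m$.

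The bracket terms require more bookkeeping: for a pair $(i,j)$ the argument $[x_i,x_j]$ has to be inserted into either $\alpha$ or $\beta$ inside the $(p,q)$-shuffle sum defining $\alpha\wedge_m\beta$. I would distinguish three cases according to whether the shuffle places $x_i$ and $x_j$ both into the $\alpha$-slots, both into the $\beta$-slots, or one into each. The first two cases contribute exactly the bracket parts of $d_{\mathfrak{g}}\alpha\wedge_m\beta$ respectively $(-1)^p\alpha\wedge_m d_{\mathfrak{g}}\beta$, once one verifies that the signs coming from the $(p,q)$-shuffles combine correctly with $(-1)^{i+j}$. The third case must cancel; here one pairs each shuffle putting $x_i$ into $\alpha$ and $x_j$ into $\beta$ with the shuffle that swaps them, and checks that the two sign contributions are opposite. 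The combinatorial identity governing shuffle signs after insertion of a bracket is the main obstacle, and it is what forces the factor $(-1)^p$ in front of $\alpha\wedge_m d_{\mathfrak{g}}\beta$: the $p$ indices taken from $\alpha$ sit to the left of the arguments taken from $\beta$, so moving a ``differentiated'' $\beta$-slot past the $p$ untouched $\alpha$-slots produces exactly this sign.

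Once the sign bookkeeping is pinned down in one master shuffle-identity lemma, the remainder of the proof is pure matching of terms. An alternative (which I would note but not pursue, since the direct argument is short enough) is to invoke that both sides are $\mathfrak{g}$-equivariant, alternating, $(p+q+1)$-linear maps depending $m$-bilinearly on $(\alpha,\beta)$, and to reduce to decomposable cochains of the form $\xi^1\wedge\cdots\wedge\xi^p\otimes v_1$ and $\eta^1\wedge\cdots\wedge\eta^q\otimes v_2$ with $\xi^i,\eta^j\in\mathfrak{g}^*$, where the identity reduces to the $p=q=0$ case together with the standard Leibniz rule for $d_{\mathfrak{g}}$ on $\Lambda^\bullet\mathfrak{g}^*$ proved in Definition \ref{Chevalley-Eilenberg complex}.
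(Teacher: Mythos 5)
The paper offers no proof of this proposition --- it defers entirely to [Ne08b], Proposition 3.3.7 --- so there is nothing internal to compare against; but your plan is the standard direct verification that the cited source carries out, and it is sound: evaluate both sides on $(x_0,\ldots,x_{p+q})$, split $d_{\mathfrak{g}}(\alpha\wedge_m\beta)$ into action and bracket terms, recombine the action terms using the $\mathfrak{g}$-invariance of $m$, and match the bracket terms through the shuffle sums. Your explanation of where the sign $(-1)^p$ comes from is also the right one.

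Two remarks. First, your ``third case'' of bracket terms is vacuous: in $d_{\mathfrak{g}}(\alpha\wedge_m\beta)$ the element $[x_i,x_j]$ enters as a \emph{single} argument of the $(p+q)$-form $\alpha\wedge_m\beta$, so each shuffle places it either in an $\alpha$-slot or in a $\beta$-slot --- there is no term with ``$x_i$ in $\alpha$ and $x_j$ in $\beta$'', and correspondingly on the right-hand side a mixed assignment of $x_i$ to $d_{\mathfrak{g}}\alpha$ and $x_j$ to $\beta$ never produces the bracket $[x_i,x_j]$. Hence no cancellation among bracket terms is needed; the only genuine recombination is in the action terms, where $x_j.m(\cdot,\cdot)$ splits into two summands by invariance. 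If you set out expecting a cancellation you will not find one, so it is worth reorganizing the bookkeeping accordingly; the real work is the multiplicity-and-sign check that, for fixed $(i,j)$, the shuffles of $p+q$ arguments containing $[x_i,x_j]$ in an $\alpha$-slot correspond bijectively (with matching signs) to the pairs of $d_{\mathfrak{g}}\alpha$-arguments bracketed inside $d_{\mathfrak{g}}\alpha\wedge_m\beta$. Second, the alternative route via decomposable cochains $\xi^1\wedge\cdots\wedge\xi^p\otimes v$ with $\xi^i\in\mathfrak{g}^{*}$, which you rightly decline to pursue, would not work in the stated generality: $\mathfrak{g}$ and the $V_i$ are arbitrary (possibly infinite-dimensional) vector spaces, and elements of $\Alt^p(\mathfrak{g},V)$ are not spanned by such decomposables there. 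The direct computation is the correct choice.
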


\begin{proof}
\,\,\,For the proof of this useful proposition we again refer to [Ne08b], Proposition 3.3.7.
\end{proof}

\section{Extensions of Lie Algebras}

Extensions of Lie algebras occur quite naturally both in mathematics and physics. For example, given a finite-dimensional simple Lie algebra $\mathfrak{g}$, the corresponding \emph{affine} Lie algebra $\widehat{\mathfrak{g}}$ is constructed as a central Lie algebra extension of the infinite-dimensional Lie algebra $\mathfrak{g}\otimes\mathbb{C}[t,t^{-1}]$ by the one-dimensional Lie algebra $\mathbb{C}$. Note that each affine Kac-Moody algebra can be realized as a vector space direct sum of an affine Lie algebra and $\mathbb{C}$. Moreover, as we have seen in Corollary \ref{1:1 correspondence between section and connections}, there is a one to one correspondence between the connections of a principal bundle $(P,M,G,q,\sigma)$ and the sections of a short exact sequence of Lie algebras commonly known as the \emph{Atiyah sequence} associated to the principal bundle $(P,M,G,q,\sigma)$.

\begin{definition}\label{s.e.s of lie algebras}{\bf (Extensions of Lie algebras).}\index{Extensions!of Lie Algebras}
(a) Let $\mathfrak{g}$ and $\mathfrak{n}$ be Lie algebras. A short exact sequence of Lie algebra homomorphisms
\begin{align}
0\longrightarrow\mathfrak{n}\stackrel{\iota}{\longrightarrow}\widehat{\mathfrak{g}}\stackrel{q}{\longrightarrow}\mathfrak{g}\longrightarrow0\notag
\end{align}
is called an \emph{extension of $\mathfrak{g}$ by $\mathfrak{n}$}. If we identify $\mathfrak{n}$ with its image in $\widehat{\mathfrak{g}}$, this means that $\widehat{\mathfrak{g}}$ is a Lie algebra containing $\mathfrak{n}$ as an ideal satisfying $\widehat{\mathfrak{g}}/\mathfrak{n}\cong\mathfrak{g}$. If $\mathfrak{n}$ is abelian, resp., central in
$\widehat{\mathfrak{g}}$, then the extension is called \emph{abelian}, resp., \emph{central}.  

We call two extensions $\mathfrak{n}\hookrightarrow\widehat{\mathfrak{g}}_1\rightarrow \mathfrak{g}$ and $\mathfrak{n}\hookrightarrow\widehat{\mathfrak{g}}_2\rightarrow \mathfrak{g}$ \emph{equivalent} if there exists a Lie algebra homomorphism $\varphi :\widehat{\mathfrak{g}}_1\rightarrow\widehat{\mathfrak{g}}_2$ such that the following diagram commutes:
$$\xymatrix{ 0 \ar[r]& \mathfrak{n}\ar[r] \ar@{=}[d]& \widehat{\mathfrak{g}}_1\ar[r] \ar[d]^{\varphi}& \mathfrak{n}\ar[r] \ar@{=}[d]& 0 \\
0 \ar[r]& \mathfrak{n}\ar[r] & \widehat{\mathfrak{g}}_2\ar[r] & \mathfrak{g} \ar[r] & 0, }$$
It is easy to see that this implies that $\varphi$ is an isomorphism of Lie algebras.

(b) We call an extension $q:\widehat{\mathfrak{g}}\rightarrow\mathfrak{g}$ with $\ker q=\mathfrak{n}$ \emph{trivial}, or say that the extension \emph{splits}, if there exists a Lie algebra homomorphism $\sigma:\mathfrak{g}\rightarrow\widehat{\mathfrak{g}}$ with $q\circ\sigma=\id_{\mathfrak{g}}$. In this case the map
\[\mathfrak{n}\rtimes\mathfrak{g}\rightarrow\widehat{\mathfrak{g}},\,\,\,(a,x)\mapsto a+\sigma(x)
\]is an isomorphism, where the semidirect sum is defined by the homomorphism
\[\delta:\mathfrak{g}\rightarrow\der(\mathfrak{n}),\,\,\,\delta(x)(a):=[\sigma(x),a].
\]

\end{definition}

\begin{example}\label{Attia sequence}{\bf(The Atiyah sequence).}\index{Atiyah sequence}
In Example \ref{Aut(P) as LGE} we have seen that if $(P,M,G,q,\sigma)$ is a principal bundle with compact base space $M$, then the group $\Aut(P)$ of bundle automorphisms is an extension of Lie groups:
\[1\longrightarrow\Gau(P)\longrightarrow\Aut(P)\longrightarrow\Diff(M)_{[P]}\longrightarrow 1.
\]The derived short exact sequence of Lie algebras is then given by
\[0\longrightarrow\mathfrak{gau}(P)\longrightarrow\mathfrak{aut}(P)\longrightarrow\mathcal{V}(M)\longrightarrow 0.
\]More generally, if $(P,M,G,q,\sigma)$ is a principal bundle with paracompact base space $M$, then Corollary \ref{1:1 correspondence between section and connections} shows that we obtain a short exact sequence of Lie algebras
\[0\longrightarrow\mathfrak{gau}(P)\longrightarrow\mathfrak{aut}(P)\stackrel{q_*}{\longrightarrow}\mathcal{V}(M)\longrightarrow 0.
\]This short exact sequence of Lie algebras is commonly known as the \emph{Atiyah sequence} associated to the principal bundle $(P,M,G,q,\sigma)$.
\end{example}

\chapter{Continuous Inverse Algebras}

In this part of the appendix we introduce an important class of algebras whose groups of units are Lie groups. They may be seen as the infinite-dimensional generalization of matrix algebras and are encountered in K-theory and noncommutative geometry, usually as dense unital subalgebras of $C^*$-algebras. In fact, these algebras play a central role in this thesis, since, for a compact manifold $M$, the space $C^{\infty}(M)$ is the prototype of such a continuous inverse algebra. 

\section{Some Results on Continuous Inverse Algebras}

In this section we will present some results on continuous inverse algebras which will be needed throughout this thesis. For all proofs which are not presented in the following and for a general overview we refer to [Gl02].

\begin{definition}\label{quasi-inv}{\bf(Quasi-invertible elements).}\index{Quasi-Invertible Elements}
In a unital algebra $A$, let $A^{\times}$ denote the corresponding group of units. An element $a$ in the set $Q(A):=1_A-A^{\times}$ is called \emph{quasi-invertible}. Moreover, the map
\[q:Q(A)\rightarrow A,\,\,\,a\mapsto 1_A-(1_A-a)^{-1}
\]is called \emph{quasi-inversion}.\sindex[n]{$Q(A)$}
\end{definition}

\begin{definition}\label{CIA}{\bf(Continuous inverse algebras).}\index{Algebra!Continuous Inverse}
A locally convex unital algebra $A$ is called a \emph{continuous inverse algebra}, or CIA for short, if its group of units $A^{\times}$ is open in $A$ and the inversion 
\[\iota:A^{\times}\rightarrow A^{\times},\,\,\,a\mapsto a^{-1}
\]is continuous at $1_A$.
\end{definition}

\begin{remark}\label{quasi-inv CIA}
A short argument shows that a locally convex unital algebra $A$ is a CIA if and only if the set of quasi-invertible elements is an open neighbourhood of $0_A$ and quasi-inversion is continuous at $0_A$.
\end{remark}

\begin{definition}{\bf (The Gelfand homomorphism for complex algebras).}\index{Gelfand!Homomorphism}
The \emph{Gelfand spectrum} of a complex algebra $A$ is defined as 
\[\Gamma_A:=\Hom_{\text{alg}}(A,\mathbb{C})\backslash\{0\}
\]with the topology of pointwise convergence on $A$. Moreover, each element $a\in A$ gives rise to a continuous function 
\[\widehat{a}:\Gamma_A\rightarrow\mathbb{C},\,\,\,\widehat{a}(\chi):=\chi(a).
\]The function $\widehat{a}$ is called the \emph{Gelfand transform}\index{Gelfand!Transform} of $a$. The map
\[\mathcal{G}:A\rightarrow C(\Gamma_A),\,\,\,a\mapsto\widehat{a},
\]is a homomorphism of unital complex algebras and is called the \emph{Gelfand homomorphism} of the algebra $A$.
\end{definition}

\begin{remark}{\bf(The spectrum of an element).}
At this point we recall that the spectrum of an element $a$ in an arbitrary unital complex associative algebra $A$ is defined as
\[\sigma_A(a):=\sigma(a):=\{\lambda\in\mathbb{C}:\,\lambda\cdot 1_A-a\notin A^{\times}\}.
\]Furthermore, the corresponding spectral radius of $a$ is defined as
\[r_A(a):=\sup\{\vert\lambda\vert:\,\lambda\in\sigma_A(a)\}.
\]
\end{remark}

The following theorem is a generalization of the classical Gelfand-Naimark Theorem for commutative $C^*$-algebras:

\begin{theorem}\label{bi 04}
In a complex CIA $A$, the following assertions hold:
\begin{itemize}
\item[\emph{(a)}]
The Gelfand spectrum $\Gamma_A$ is a compact Hausdorff space.
\item[\emph{(b)}]
If $A$ is commutative, then the Gelfand homomorphism is continuous with respect to the topology of uniform convergence on $C(\Gamma_A)$.
\end{itemize}
\end{theorem}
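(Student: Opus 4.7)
The plan is to derive both parts from the equicontinuity of $\Gamma_A$ recorded in Remark~\ref{equicont}: since $A$ is a CIA, one can pick a balanced $0$-neighborhood $U \subseteq A$ with $U \subseteq 1_A - A^{\times}$, and then for every $\chi \in \Gamma_A$ and every $a \in U$ the element $1_A - a$ is a unit, so $\chi(1_A - a) \neq 0$, while Lemma~\ref{cont of char of CIA} (continuity of characters on a CIA) combined with a standard rescaling argument forces $|\chi(a)| < 1$. In particular $\Gamma_A \subseteq U^{\circ}$, the polar of $U$ inside the continuous dual $A'$, and the topology of pointwise convergence on $\Gamma_A$ used in the definition coincides with the restriction of the weak-$*$ topology of $A'$.

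For (a), Hausdorffness is immediate: two distinct characters differ in value on some $a \in A$, so the evaluation maps $\mathrm{ev}_a$ separate points and the topology of pointwise convergence is Hausdorff. For compactness, the Bourbaki--Alaoglu theorem says that $U^{\circ}$ is weak-$*$ compact, so it suffices to verify that $\Gamma_A$ is closed inside $U^{\circ}$. Given a net $\chi_\alpha \to \chi$ converging pointwise in $U^{\circ}$, the continuity of multiplication in $\mathbb{C}$ yields $\chi(ab) = \lim \chi_\alpha(a)\chi_\alpha(b) = \chi(a)\chi(b)$ for all $a,b \in A$, and $\chi(1_A) = \lim \chi_\alpha(1_A) = 1$ prevents $\chi$ from being the zero functional. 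Hence $\chi \in \Gamma_A$, so $\Gamma_A$ is closed in $U^{\circ}$ and therefore compact.

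For (b), the Gelfand homomorphism $\mathcal{G}$ is linear, so continuity with respect to the sup norm on $C(\Gamma_A)$ is equivalent to boundedness on some $0$-neighborhood. Using the same $U$ as above, for every $a \in U$ we have
\[
\|\mathcal{G}(a)\|_{\infty} \;=\; \sup_{\chi \in \Gamma_A} |\widehat{a}(\chi)| \;=\; \sup_{\chi \in \Gamma_A} |\chi(a)| \;\leq\; 1,
\]
which proves that $\mathcal{G}(U)$ is bounded in $C(\Gamma_A)$ and hence that $\mathcal{G}$ is continuous. Commutativity of $A$ is used only to guarantee that $\mathcal{G}$ actually lands in the commutative target algebra $C(\Gamma_A)$ as an algebra homomorphism; the continuity estimate itself uses nothing beyond equicontinuity.

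The main obstacle I anticipate is cleanly setting up part (a): one must invoke the automatic continuity of characters on a CIA in order to regard $\Gamma_A$ as sitting inside the continuous dual $A'$ at all, and then carefully justify that the polar estimate $|\chi(U)| < 1$ truly places $\Gamma_A$ into a weak-$*$ compact set. Once these two preliminaries are in place, the closedness argument for characters is routine, and part (b) follows from the same polar estimate essentially for free.
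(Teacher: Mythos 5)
Your proof is correct, but it takes a genuinely different route from the paper's, so a comparison is worthwhile. For part (a) the paper argues elementwise: by the cited result every $a\in A$ has non-empty compact spectrum $\sigma(a)$, and since $\lambda-a\in A^{\times}$ forces $\chi(\lambda-a)=\lambda-\chi(a)\neq 0$, one gets $\widehat{a}(\Gamma_A)\subseteq\sigma(a)$; hence $\Gamma_A$ embeds as a closed subspace of the Tychonoff-compact product $\prod_{a\in A}\sigma(a)\subseteq\mathbb{C}^A$. You instead use the single uniform estimate $\vert\Gamma_A(U)\vert<1$ from Remark~\ref{equicont} to place $\Gamma_A$ in the polar $U^{\circ}$ and invoke Alaoglu--Bourbaki; the closedness verification (pointwise limits of characters are multiplicative, unital, hence nonzero) is the same in both arguments, and the paper leaves it implicit where you spell it out. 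Your approach buys more: the same polar estimate immediately yields $\Vert\mathcal{G}(a)\Vert_{\infty}\leq 1$ for $a\in U$, hence continuity of the Gelfand homomorphism in part (b), which the paper only proves by citation; the paper's approach is more elementary in that it needs only compactness of spectra and Tychonoff rather than a duality theorem. One tiny redundancy: you do not need Lemma~\ref{cont of char of CIA} as an input for the bound $\vert\chi(a)\vert<1$ --- the balancedness/rescaling argument proves that bound directly for any (a priori possibly discontinuous) character, and continuity of $\chi$ then follows from being bounded on the $0$-neighbourhood $U$; as written your logic is not circular, merely slightly roundabout.
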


\begin{proof}
\,\,\,(a) By [Bi04], Lemma 2.1.6 (b), every element $a$ of $A$ has non-empty compact spectrum. We choose $\lambda\in\mathbb{C}\backslash\sigma(a)$ and $\chi\in\Gamma_A$. Since $\lambda-a$ is invertible, the same holds for $\lambda-\chi(a)$, whence $\lambda\neq\chi(a)$. This proves that $\widehat{a}(\Gamma_A)\subseteq\sigma(a)$. Therefore, the Gelfand spectrum is compact, because it is a closed subspace of the product
\[\prod_{a\in A}\sigma(a)\subseteq\mathbb{C}^A.
\]

(b) A proof of this assertion can be found in [Bi04], Theorem 2.2.3 (c).
\end{proof}

\begin{lemma}\label{sub algebras of CIA}
If $A$ is a complex CIA, then so is each closed unital subalgebra $B$ of $A$. 
\end{lemma}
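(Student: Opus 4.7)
The plan is to verify the two defining axioms of a CIA for $B$: that $B^{\times}$ is open in $B$ with $1_B=1_A$ in its interior, and that inversion $\iota_B$ on $B$ is continuous at $1_A$. By Remark \ref{quasi-inv CIA}, it is equivalent to show that the set $Q(B)$ of $B$-quasi-invertible elements is a $0$-neighborhood in $B$ and that the $B$-quasi-inversion $q_B$ is continuous at $0_B$. Since $A$ is a CIA, I may fix an open $0$-neighborhood $V\subseteq A$ with $V\subseteq Q(A)$ on which $q_A$ is continuous at $0$. The entire problem then reduces to showing, after possibly shrinking $V$, the inclusion $q_A(V\cap B)\subseteq B$: once this is in hand, $V\cap B\subseteq Q(B)$, the restriction $q_B=q_A|_{V\cap B}$ inherits continuity at $0$ from $q_A$, and both conditions of Definition \ref{CIA} follow.

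The main step would be carried out via a Neumann series argument. I would invoke the standard fact (see [Gl02], building on Turpin and Waelbroeck) that the topology of a CIA admits a defining family $(p_\lambda)_{\lambda\in\Lambda}$ of continuous submultiplicative seminorms. For any such $p_\lambda$ and any $a\in A$ with $1_A-a\in A^{\times}$, the purely algebraic identity
\[
(1_A-a)^{-1}-\sum_{k=0}^{N}a^k \;=\; a^{N+1}(1_A-a)^{-1}
\]
gives $p_\lambda\bigl((1_A-a)^{-1}-\sum_{k=0}^{N}a^k\bigr)\le p_\lambda(a)^{N+1}\cdot p_\lambda\bigl((1_A-a)^{-1}\bigr)$. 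Hence, whenever $p_\lambda(a)<1$, the Neumann series converges to $(1_A-a)^{-1}$ in the $p_\lambda$-seminorm, so that $-q_A(a)=\sum_{k=1}^{\infty}a^k$ in that seminorm.

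The plan then concludes as follows. Shrinking $V$ inside $Q(A)$, I arrange (using that the family $(p_\lambda)$ may be taken directed and that $q_A$ is continuous at $0$) that $p_\lambda(a)<1$ holds for every $\lambda$ needed to capture convergence in $A$, so that the above Neumann series converges to $-q_A(a)$ in the topology of $A$ for every $a\in V$. For $a\in V\cap B$, each partial sum $\sum_{k=1}^{N}a^k$ lies in $B$, since $B$ is a unital subalgebra of $A$ and these polynomials have no constant term. As $B$ is closed in $A$ and the partial sums converge to $-q_A(a)$, the limit lies in $B$; equivalently $q_A(a)\in B$, which is exactly what is required.

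The main obstacle is technical rather than conceptual: one must organise the defining family of submultiplicative seminorms, and the corresponding $0$-neighborhoods, so that the Neumann series converges simultaneously in \emph{all} relevant seminorms (not just one), so as to produce convergence in the full topology of $A$ and exploit the closedness of $B$. This is precisely where the specific CIA technology of balanced submultiplicative $0$-neighborhoods and their controlled multiplicative behaviour is invoked, and I would cite the relevant lemmas from [Gl02] rather than redoing them.
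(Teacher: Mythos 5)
Your argument is essentially the paper's: the inverse of $1_A-a$ is exhibited as the limit of the Neumann partial sums $\sum_{k=0}^{N}a^k$, which lie in the unital subalgebra $B$, so closedness of $B$ forces the inverse into $B$, and openness of $B^{\times}$ together with continuity of inversion then restrict from $A$. One caveat: your appeal to a defining family of submultiplicative seminorms presumes local $m$-convexity, which is not part of Definition \ref{CIA} and is not available for general CIAs; the paper instead gets convergence of the Neumann series from the spectral-radius bound $r_A(a)<1$, which holds on any balanced $0$-neighbourhood contained in $Q(A)$.
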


\begin{proof}
\,\,\,We first choose an element $b\in B$ with spectral radius $r_A(b)<1$. Then $1_A-b$ is invertible and the Neumann series $\sum_{n=0}^{\infty}b^n$ converges to $(1_A-b)^{-1}$. Since $B$ is closed, $(1_A-b)^{-1}\in B$, so that $B^{\times}$ is a neighbourhood of $1_A\in B$, hence open. The continuity of the inversion in $B$ follows from the corresponding property of $A$.
\end{proof}

\begin{lemma}\label{quotient algebras}
Let $A$ be a CIA and $I\subseteq A$ a proper closed two-sided ideal. Then the quotient algebra $A/I$ is a CIA with respect to the quotient topology.
\end{lemma}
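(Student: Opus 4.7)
The plan is to verify the two conditions of Remark \ref{quasi-inv CIA} for $A/I$, namely that its set of quasi-invertible elements is an open neighbourhood of $0_{A/I}$ and that quasi-inversion is continuous at $0_{A/I}$. First I would record some structural facts about the quotient: since $I$ is a closed two-sided ideal, $A/I$ is Hausdorff, and the continuous multiplication of $A$ descends to a continuous multiplication on $A/I$ (because the quotient map $\pi:A\to A/I$ is open as a quotient by a closed subspace of a topological vector space, hence $\pi\times\pi$ is an open surjection, and thus a topological quotient map). Consequently $A/I$ is a locally convex unital algebra and $\pi$ is a continuous open surjective algebra homomorphism. As $\pi$ is a unital homomorphism we obtain $\pi(Q(A))\subseteq Q(A/I)$: if $a\in Q(A)$ and $b:=q(a)$ is the quasi-inverse of $a$ in $A$, then $\pi(1_A-a)\pi(1_A-b)=\pi(1_A-b)\pi(1_A-a)=1_{A/I}$, so $\pi(a)\in Q(A/I)$ with quasi-inverse $\pi(b)$.

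Next I would establish that the unit group $(A/I)^{\times}$ is open in $A/I$. Since $A$ is a CIA, $Q(A)$ is open in $A$, and openness of $\pi$ shows that $U:=\pi(Q(A))$ is an open neighbourhood of $0_{A/I}$ that is contained in $Q(A/I)$. In particular $(A/I)^{\times}$ is a neighbourhood of $1_{A/I}$. To upgrade ``neighbourhood of $1_{A/I}$'' to ``open in $A/I$'' I would invoke the standard homogeneity argument: for every $\bar u\in(A/I)^{\times}$, left multiplication by $\bar u^{-1}$ is a self-homeomorphism of $A/I$ (continuous with continuous inverse given by left multiplication by $\bar u$) that preserves $(A/I)^{\times}$ and sends $\bar u$ to $1_{A/I}$, so the open neighbourhood $1_{A/I}+U$ of $1_{A/I}$ inside $(A/I)^{\times}$ can be transported to an open neighbourhood of $\bar u$ inside $(A/I)^{\times}$.

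For the continuity of quasi-inversion at $0_{A/I}$, let $\bar q$ denote quasi-inversion in $A/I$. The compatibility $\bar q\circ\pi=\pi\circ q$ on $Q(A)$ is the key identity. Given any $0$-neighbourhood $V\subseteq A/I$, continuity of $q$ at $0_A$ furnishes an open $0$-neighbourhood $W\subseteq Q(A)$ with $q(W)\subseteq\pi^{-1}(V)$; then $\pi(W)$ is an open $0$-neighbourhood in $A/I$ (openness of $\pi$), contained in $U\subseteq Q(A/I)$, and
\[\bar q(\pi(W))=\pi(q(W))\subseteq V.\]
Since continuity at a single point is a local property, this proves that $\bar q$ is continuous at $0_{A/I}$.

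There is no deep obstruction here; the only subtlety that the proof has to navigate is that $\pi(Q(A))$ may be strictly smaller than $Q(A/I)$, so one cannot directly transfer the defining properties of the CIA $A$ to the quotient. The argument is organised precisely so that $\pi(Q(A))$ serves as a distinguished open $0$-neighbourhood inside $Q(A/I)$ on which one can run both the openness and the continuity arguments, with the homogeneity of $(A/I)^{\times}$ (respectively the local character of continuity at a point) doing the rest.
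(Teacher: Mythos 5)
Your proposal is correct and follows essentially the same route as the paper's proof: push the open set $Q(A)$ of quasi-invertible elements forward along the open quotient map to get an open zero-neighbourhood of quasi-invertible elements in $A/I$, check continuity of quasi-inversion there via $\bar q\circ\pi=\pi\circ q$, and conclude by Remark \ref{quasi-inv CIA}. You merely spell out the details (openness of $\pi$, the homogeneity argument for the unit group, the local nature of continuity at a point) that the paper's two-line proof leaves implicit.
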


\begin{proof}
\,\,\,The quotient algebra $A/I$ is a locally convex algebra. The image of the set of quasi-invertible elements of $A$ under the canonical projection of $A$ onto $A/I$ is an open zero-neighbourhood in $A/I$ which consists of quasi-invertible elements. The restriction and corestriction of the quasi-inversion map of $A/I$ to this zero-neighbourhood is continuous. Hence, by Remark \ref{quasi-inv CIA}, $A/I$ is a CIA.
\end{proof}

\begin{lemma}\label{max ideals}
Let $A$ be a commutative CIA and $I$ a maximal proper ideal in $A$. Then $I$ is the kernel of some character $\chi:A\rightarrow\mathbb{K}$.
\end{lemma}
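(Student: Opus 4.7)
The plan is to mimic the classical Gelfand--Mazur argument, with the openness of $A^{\times}$ and Lemma \ref{quotient algebras} doing the work usually performed by the norm topology on a Banach algebra.

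First I would observe that $I$ is automatically closed: since $A^{\times}$ is open in $A$ and $I\cap A^{\times}=\emptyset$, the closure $\overline{I}$ still avoids $A^{\times}$, so $\overline{I}$ is a proper ideal containing $I$, and maximality forces $\overline{I}=I$. Next I would form the quotient $A/I$, which by Lemma \ref{quotient algebras} is again a CIA, and which is moreover a field because $I$ is a maximal ideal in the commutative unital algebra $A$.

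The heart of the argument is then to show $A/I\cong\mathbb{C}$ (I would first handle the complex case; the real case can be reduced to it by complexification, or by a direct variant). Let $\pi:A\to A/I$ denote the quotient homomorphism and pick any $a\in A$. By the spectral statement in the proof of Theorem \ref{bi 04}(a) (which ultimately rests on [Bi04], Lemma 2.1.6(b)), every element of the complex CIA $A/I$ has non-empty compact spectrum, so we may choose $\lambda\in\sigma_{A/I}(\pi(a))$. Then $\lambda\cdot 1_{A/I}-\pi(a)$ is a non-invertible element of the field $A/I$, hence equals $0$, i.e.\ $\pi(a)=\lambda\cdot 1_{A/I}$. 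This shows that $\pi$ takes values in $\mathbb{C}\cdot 1_{A/I}$, so the assignment $\chi(a):=\lambda$ defines a character $\chi:A\to\mathbb{C}$ with $\ker\chi=I$.

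The main obstacle is the spectral step: one needs that every element of the commutative CIA $A/I$ has non-empty spectrum, which is the CIA-analogue of the standard Banach-algebra fact underlying Gelfand--Mazur. This is where one genuinely uses that $A$ (and hence $A/I$) is a CIA, rather than merely a topological algebra; the result is not obvious from the definition because CIAs are not required to be complete or normable, but it is available from [Bi04], Lemma 2.1.6(b) and was already invoked in the excerpt. Once this is granted, everything else is essentially bookkeeping. For the real case one can either repeat the argument using the Gelfand--Mazur theorem for real Banach division algebras adapted to CIAs, or pass to the complexification $A_{\mathbb{C}}$ and check that the resulting complex character restricts to a real one on $A$; I would choose whichever is cleaner in the framework of [Bi04].
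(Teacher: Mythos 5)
Your proposal is correct and follows essentially the same route as the paper: closedness of $I$ from the openness of $A^{\times}$, then Lemma \ref{quotient algebras} to make $A/I$ a CIA which is a field, then a Gelfand--Mazur argument. The only difference is that you unpack the Gelfand--Mazur step (via non-emptiness of spectra, [Bi04], Lemma 2.1.6(b)) where the paper simply cites [Bi04], Lemma 2.1.6(d), and both treat the real case by complexification.
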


\begin{proof}
\,\,\,Since $I$ is disjoint from the open set $A^{\times}$, the same holds for its closure $\overline{I}$, which is therefore a proper ideal. Maximality of $I$ implies that $I$ is closed. By Lemma \ref{quotient algebras}, the quotient $A/I$ is a CIA. Since $A$ is commutative, the quotient is a field. If $A$ is a complex CIA, [Bi04], Lemma 2.1.6 (d) implies that $A/I$ is isomorphic to $\mathbb{C}$. If $A$ is a real CIA, [Bi04], Lemma 2.1.6 (d) implies that its complexification\footnote{By [Gl02], Proposition 3.4, the universal complexification of a real CIA is again a (complex) CIA.} is isomorphic to $\mathbb{C}$, hence $A/I$ is isomorphic to $\mathbb{R}$. In each case, $I$ is the kernel of a character of $A$.
\end{proof}

\begin{lemma}\label{cont of char of CIA}
If $A$ is a CIA, then each character $\chi:A\rightarrow\mathbb{K}$ is continuous.
\end{lemma}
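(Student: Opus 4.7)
The plan is to exploit two basic facts about a character $\chi\colon A\to\mathbb{K}$ on a CIA: first, $\chi$ sends units to units (since $\chi(a)\chi(a^{-1})=\chi(1_A)=1$), and second, the quasi-invertible set $Q(A)=1_A-A^\times$ is an open $0$-neighbourhood in $A$ by the CIA hypothesis. Combining these two facts and using balancedness, I expect to bound $|\chi|$ by $1$ on a full $0$-neighbourhood of $A$, which gives continuity for free by linearity.

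More concretely, the first step is to choose a balanced open $0$-neighbourhood $U\subseteq A$ with $U\subseteq 1_A-A^\times$; this is possible because $A^\times$ is open, so $1_A-A^\times$ is an open neighbourhood of $0$, and every such neighbourhood contains a balanced one in a locally convex space. The second step is to verify the key estimate $|\chi(u)|<1$ for every $u\in U$. For the contrapositive, suppose some $u\in U$ satisfied $|\chi(u)|\geq 1$. Then $|\chi(u)^{-1}|\leq 1$, so by balancedness of $U$ the element $v:=\chi(u)^{-1}u$ still lies in $U$, hence $1_A-v\in A^\times$. But then
\[
\chi(1_A-v)=1-\chi(u)^{-1}\chi(u)=0,
\]
contradicting the fact that $\chi$ never vanishes on units. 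This forces $|\chi(u)|<1$ on all of $U$.

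The third step turns this bound into continuity. Since $\chi$ is linear and $|\chi|<1$ on the balanced $0$-neighbourhood $U$, for any $\varepsilon>0$ the set $\varepsilon U$ is a $0$-neighbourhood on which $|\chi|<\varepsilon$, so $\chi$ is continuous at $0$ and hence everywhere. I do not anticipate a serious obstacle here; the only mildly delicate point is ensuring that the chosen $0$-neighbourhood is balanced (so that the scaling argument in step two goes through), which is automatic in a locally convex space. Note that this argument simultaneously proves the stronger equicontinuity statement recorded in Remark~\ref{equicont}\,(a), namely that the entire spectrum $\Gamma_A$ is equicontinuous at $0$, since $U$ was chosen independently of $\chi$.
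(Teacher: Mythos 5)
Your proof is correct and follows essentially the same route as the paper: choose a balanced $0$-neighbourhood $U$ inside the quasi-invertible set, observe that $|\chi|$ is bounded by $1$ on $U$ (the paper phrases this as "$\chi(U)$ is a disc not containing $1$", which is exactly your scaling/contradiction argument made explicit), and then rescale by $\epsilon$. Your closing remark that the bound is uniform in $\chi$ and hence yields the equicontinuity of $\Gamma_A$ is also exactly what the paper records in Remark~\ref{equicont}\,(a).
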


\begin{proof}
\,\,\,Let $\epsilon>0$. We choose a balanced $0$-neighbourhood $U\subseteq A$ which consists of quasi-invertible elements. Then $\chi(U)\subseteq\mathbb{K}$ is a disc around 0 which consists of quasi-invertible elements and hence does not contain 1. The image of the 0-neighbourhood $\epsilon U\subseteq A$ under $\chi$ is a disc around 0 of radius at most $\epsilon$. We conclude that $\chi$ is continuous at 0 and hence continuous.
\end{proof}

\begin{proposition}\label{M_n(A)}
If $A$ is a continuous inverse algebra, then $\M_n(A)$\sindex[n]{$\M_n(A)$} is a continuous inverse algebra for every $n\in\mathbb{N}$.
\begin{flushright}
$\blacksquare$
\end{flushright}
\end{proposition}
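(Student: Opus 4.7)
The plan is to proceed by induction on $n$, with base case $n=1$ being the hypothesis. Before starting the induction, I would observe that $\M_n(A)$ is a locally convex unital algebra when endowed with the product topology inherited from the identification $\M_n(A)\cong A^{n^2}$: the entries of a matrix product are finite sums of products of entries of the factors, and joint continuity of multiplication in $A$ together with continuity of finite sums implies joint continuity of the matrix multiplication. The unit is the diagonal matrix $I_n$ with $1_A$ on the diagonal.

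Next I would record the standard translation reduction. To verify the CIA axioms for $\M_n(A)$, it suffices to find an open neighbourhood $U$ of $I_n$ which is contained in $\M_n(A)^{\times}$ and on which inversion is continuous. Indeed, for any $M\in\M_n(A)^{\times}$, left translation by $M$ is a homeomorphism of $\M_n(A)$, so $M\cdot U$ is an open neighbourhood of $M$. Every element of $M\cdot U$ has the form $MN$ with $N\in U\subseteq\M_n(A)^{\times}$, hence is a unit with inverse $N^{-1}M^{-1}$; the map $MN\mapsto N^{-1}M^{-1}$ is continuous as a composition of the continuous maps $MN\mapsto N=M^{-1}(MN)$, $N\mapsto N^{-1}$, and right multiplication by $M^{-1}$. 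This yields openness of $\M_n(A)^{\times}$ and continuity of inversion everywhere.

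For the inductive step, assuming $\M_{n-1}(A)$ is a CIA, I would decompose a matrix $M\in\M_n(A)$ close to $I_n$ as a block matrix
\[
M=\begin{pmatrix} d & c \\ b & a \end{pmatrix},\qquad d\in\M_{n-1}(A),\ a\in A,\ b\in\M_{1,n-1}(A),\ c\in\M_{n-1,1}(A),
\]
and note that continuity of the block projections lets me arrange, for $M$ in a sufficiently small neighbourhood of $I_n$, that $d$ lies in an open neighbourhood of $I_{n-1}$ on which $\M_{n-1}(A)$-inversion is continuous (by the inductive hypothesis), $a$ lies in an open neighbourhood of $1_A$ on which $A$-inversion is continuous, and the blocks $b,c$ are small. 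By joint continuity of multiplication, the Schur complement $s:=a-bd^{-1}c$ is then arbitrarily close to $1_A$, hence invertible in $A$, with $s^{-1}$ depending continuously on $M$. The explicit Schur formula
\[
M^{-1}=\begin{pmatrix} d^{-1}+d^{-1}cs^{-1}bd^{-1} & -d^{-1}cs^{-1} \\ -s^{-1}bd^{-1} & s^{-1} \end{pmatrix}
\]
then exhibits $M\mapsto M^{-1}$ as a continuous map on this neighbourhood, closing the induction.

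There is no genuine obstacle here; the proof is essentially bookkeeping. The only mildly subtle point is to ensure that the various ``continuity at the identity'' conditions inherited from $A$ and from $\M_{n-1}(A)$ assemble correctly into continuity at $I_n$ for $\M_n(A)$, which is handled by shrinking to a common neighbourhood on which \emph{all} the relevant data ($d$ near $I_{n-1}$, $a$ near $1_A$, $b,c$ small, and consequently $s$ near $1_A$) are simultaneously in the range where the previously established continuous inverses are defined. Everything else is joint continuity of the polynomial operations in $A$ and $\M_{n-1}(A)$.
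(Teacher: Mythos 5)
Your proof is correct. Note that the paper states this proposition without proof (it is quoted as a known result, essentially from [Gl02]), so there is nothing to compare against; your Schur-complement induction is one of the standard ways to establish it and all the steps check out. Two small remarks. First, since $A$ is noncommutative you should justify that your displayed formula is a two-sided inverse, not merely a right inverse; the cleanest way is to observe the factorization
\[
M=\begin{pmatrix} I & 0 \\ bd^{-1} & 1\end{pmatrix}\begin{pmatrix} d & 0 \\ 0 & s \end{pmatrix}\begin{pmatrix} I & d^{-1}c \\ 0 & 1\end{pmatrix},
\]
which exhibits $M$ as a product of three units of $\M_n(A)$ whenever $d$ and $s$ are invertible, and from which your formula for $M^{-1}$ follows by inverting each factor. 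Second, your use of ``$d^{-1}$ depends continuously on $d$'' requires continuity of inversion on a whole neighbourhood of $I_{n-1}$, not just at $I_{n-1}$; this is exactly what your translation reduction supplies when applied to $\M_{n-1}(A)$ (and to $A$ for the Schur complement $s$), so it is worth stating explicitly that you invoke that reduction at level $n-1$ as well as at level $n$. With those two points made explicit the argument is complete.
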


\begin{proposition}\label{C infty (M,A) is CIA}
Let $M$ be a compact smooth manifold and $A$ be a continuous inverse algebra. Then $C^{\infty}(M,A)$\sindex[n]{$C^{\infty}(M,A)$}, equipped with the smooth compact open topology, is a continuous inverse algebra.
\begin{flushright}
$\blacksquare$
\end{flushright}
\end{proposition}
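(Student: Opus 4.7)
The plan is to verify the two defining properties of a continuous inverse algebra given in Definition \ref{CIA}, namely openness of the unit group and continuity of inversion at $1$. That $C^{\infty}(M,A)$ is a locally convex algebra in the first place is already supplied by Proposition \ref{C(M,A) loc. con. algebra}, so only these two points need work.

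First, I would identify the unit group pointwise: I claim that $f \in C^{\infty}(M,A)$ is invertible if and only if $f(M) \subseteq A^{\times}$, in which case the inverse is $m \mapsto f(m)^{-1}$. The ``only if'' direction is immediate by evaluation. For ``if'', the pointwise inverse is well-defined, and its smoothness follows from Glöckner's theorem that the inversion map $\iota: A^{\times} \to A^{\times}$ in a CIA is smooth (cf.\ [Gl02]); then $f^{-1} = \iota \circ f$ is smooth as a composition of smooth maps. This characterization reduces the CIA property to a ``pointwise'' statement.

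Second, for openness of $C^{\infty}(M,A)^{\times}$, I would exploit that $M$ is compact: given $f_0$ with $f_0(M) \subseteq A^{\times}$, the image $f_0(M)$ is a compact subset of the open set $A^{\times}$. A standard tube argument produces a convex balanced $0$-neighbourhood $V \subseteq A$ with $f_0(M) + V \subseteq A^{\times}$. Since the smooth compact open topology on $C^{\infty}(M,A)$ is finer than the topology of uniform convergence on $M$, the set $\{f \in C^{\infty}(M,A) : (f-f_0)(M) \subseteq V\}$ is an open neighbourhood of $f_0$ consisting entirely of invertible elements.

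Third, for continuity of inversion at $1$, I would invoke smoothness of $\iota$ in the CIA $A$ together with the smooth exponential law (Lemma \ref{smooth exp law}) and the smoothness of the evaluation map from [NeWa07], Proposition I.2, to conclude that the map $C^{\infty}(M, A^{\times}) \to C^{\infty}(M, A^{\times})$, $f \mapsto \iota \circ f$ is continuous (indeed smooth); its value at the constant function $1$ is again $1$, so continuity at $1$ follows. The main obstacle will be the third step: the pointwise formula for inversion is obvious, but upgrading it to continuity in the smooth compact open topology requires the exponential law machinery rather than a hands-on estimate, since the seminorms involve all iterated tangent maps $T^n f$, and one has to argue that $T^n(\iota \circ f)$ depends continuously on $f$ uniformly on the compact spaces $T^n M$. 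Compactness of $M$, which enters both in step two (tube argument) and in step three (to reduce $C^{\infty}$-continuity to uniform estimates on finitely many $T^n M$ up to a chosen order), is essential throughout.
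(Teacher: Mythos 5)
Your argument is correct. Note first that the paper itself offers no proof of this proposition: the black square in the statement signals that it is quoted from the literature (the preamble of Appendix C defers all omitted proofs to [Gl02]), so your write-up is genuinely filling a gap rather than paralleling an argument in the text. Each of your three steps is sound. The pointwise identification $C^{\infty}(M,A)^{\times}=\{f:\,f(M)\subseteq A^{\times}\}$ is right, and the smoothness of inversion $\iota$ on $A^{\times}$ that it relies on does not even need to be imported as a black box from [Gl02]: continuity of $\iota$ at $1_A$ plus openness of $A^{\times}$ gives continuity of $\iota$ everywhere, and then the identity $\frac{1}{t}\bigl((a+th)^{-1}-a^{-1}\bigr)=-(a+th)^{-1}ha^{-1}$ yields $d\iota(a)(h)=-a^{-1}ha^{-1}$ and, inductively, $\iota\in C^{\infty}$, with no completeness hypothesis. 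The tube argument in step two is the standard Wallace-type lemma for a compact subset of an open set in a topological vector space, and since the $n=0$ component of the smooth compact open topology is exactly uniform convergence on the compact $M$, openness of the unit group follows. Finally, your worry in step three about controlling all the iterated tangent maps $T^n(\iota\circ f)$ by hand is unfounded: Lemma \ref{smooth exp law} applies with first factor the locally convex manifold $C^{\infty}(M,A^{\times})$ (an open subset of $C^{\infty}(M,A)$ by step two) and second factor the finite-dimensional $M$, so smoothness of $f\mapsto\iota\circ f$ is equivalent to smoothness of $(f,m)\mapsto\iota(f(m))=(\iota\circ\ev_M)(f,m)$, which is a composition of the smooth evaluation map of [NeWa07] with the smooth map $\iota$; no seminorm estimates are needed. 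In short, the proof is complete and, if anything, cleaner than you give it credit for.
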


\begin{remark}
The example $C^{\infty}(\mathbb{R},\mathbb{R})$ shows that, if $A$ is a continuous inverse algebra, $C^{\infty}(M,A)$ need not have an open unit group.
\end{remark}



\begin{remark}\label{Mackey complete} \index{Mackey!Cauchy Sequence}\index{Mackey!Complete}
A sequence $(x_n)_{n\in\mathbb{N}}$ in a locally convex space $E$ is called a \emph{Mackey-Cauchy sequence} if $x_n-x_m\in t_{n,m}B$ for all $n,m\in\mathbb{N}$, for some bounded set $B\subseteq E$ and certain $t_{n,m}\in\mathbb{R}$ such that $t_{n,m}\to 0$ as both $n,m\to\infty$. The space  $E$ is called \emph{Mackey complete} if every Mackey-Cauchy sequence in $E$ converges in $E$. 

If $E$ is sequentially complete, then $E$ is Mackey complete. The space $E$ is Mackey complete if and only if the Riemann integral $\int^b_a\gamma(t)\,dt$ exists in $E$ for any smooth curve $\gamma:]\alpha,\beta[\rightarrow E$ and $\alpha<a<b<\beta$.
\end{remark}

\begin{theorem}\label{CIA-Theorem}
If $A$ is a Mackey complete continuous inverse algebra, then $A^{\times}$ is a \emph{(}Baker-Campbell-Hausdorff\emph{)} Lie group, with exponential map
\[\exp_{A^{\times}}=\left.\exp_A\right|^{A^{\times}}
\]
\begin{flushright}
$\blacksquare$
\end{flushright}
\end{theorem}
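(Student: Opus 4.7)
The plan is to equip $A^{\times}$ with a smooth structure as an open submanifold of the locally convex space $A$, then check that the group operations are smooth in the Bastiani/Keller calculus from Definition \ref{infinite-dimensional manifolds}, and finally to construct the BCH chart around $1_A$ via the exponential series. First I would note that, since $A$ is a CIA, $A^{\times}$ is open in $A$ and therefore carries a canonical smooth manifold structure modeled on $A$. Smoothness of the multiplication is almost immediate: the multiplication $m\colon A\times A\to A$ is continuous bilinear, and under Mackey completeness continuous bilinear maps between locally convex spaces are smooth; restricting to the open submanifold $A^{\times}\times A^{\times}$ gives smoothness of the group multiplication.

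The first real step is smoothness of inversion. Continuity of $\iota$ at $1_A$ is built into the definition of a CIA, and continuity at an arbitrary $a_0\in A^{\times}$ follows from the factorization $a_0+h=a_0(1_A+a_0^{-1}h)$ together with the left translation automorphism $L_{a_0^{-1}}$. For smoothness I would exhibit $\iota$ locally as the sum of a convergent power series: on a suitable $0$-neighbourhood $U\subseteq A$, Mackey completeness guarantees that the Neumann series $(1_A-x)^{-1}=\sum_{n=0}^{\infty} x^{n}$ converges (the partial sums form a Mackey-Cauchy sequence in $A$ once $x$ lies in a sufficiently small balanced $0$-neighbourhood selected by the open-unit-group property), so that
\[
\iota(a_0+h)=\Bigl(\sum_{n=0}^{\infty}(-a_0^{-1}h)^{n}\Bigr)a_0^{-1}
\]
on an open $a_0$-neighbourhood. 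Since each summand is a continuous multilinear map of $h$ and the series converges Mackey, this expansion is smooth (indeed analytic), so $\iota$ is smooth and $A^{\times}$ is a Lie group.

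Next I would set up the exponential chart. Define $\exp_A(a):=\sum_{n=0}^{\infty}\tfrac{a^{n}}{n!}$; Mackey completeness and the same estimates used for the Neumann series yield convergence of the exponential and logarithm series on a balanced $0$-neighbourhood $V\subseteq A$, with $\log_A(1_A+y)=\sum_{n\geq 1}\frac{(-1)^{n+1}}{n}y^{n}$ as a Mackey-analytic local inverse. Hence $\exp_A$ restricts to a diffeomorphism from a $0$-neighbourhood $V'\subseteq V$ onto an open neighbourhood of $1_A$ in $A^{\times}$, showing that $\exp_A|^{A^{\times}}$ is the Lie-group exponential (its tangent at $0$ is the identity, so uniqueness in Theorem \ref{bi 04}-style arguments forces the identification). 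Transporting the multiplication through this chart produces a smooth local binary operation $*$ on $V'$; that $x*y$ is given by the Baker--Campbell--Hausdorff series follows by expanding $\log_A(\exp_A(x)\exp_A(y))$ term by term in the two convergent series.

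The main obstacle is the convergence of the BCH series on an open $0$-neighbourhood of $A$: unlike in the Banach or the locally multiplicatively convex setting, a general CIA need not admit submultiplicative seminorms, so one cannot estimate the iterated commutators $[x_{i_1},[x_{i_2},\ldots]]$ by a single norm inequality. The resolution is to exploit the continuity of multiplication together with Mackey completeness: each homogeneous BCH polynomial is a fixed continuous multilinear expression, and the coefficients decay like the exponential series, so one can bound the tails of the BCH series by a bounded set scaled by a summable sequence, exactly as in the definition of a Mackey-Cauchy sequence. A secondary technical point is invoking an inverse function theorem valid in this generality; Gl\"ockner's analytic inverse function theorem for Mackey complete spaces (used implicitly throughout the thesis) provides the required tool and closes the argument.
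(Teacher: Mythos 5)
Your overall strategy — $A^{\times}$ open in $A$, multiplication smooth because it is continuous bilinear, inversion and $\exp_A$, $\log_A$ analytic via power series, then transport of the multiplication through the exponential chart — is exactly the route of Gl\"ockner's paper [Gl02], which is all the thesis itself offers for this theorem (the statement is given without proof and the appendix defers to [Gl02] wholesale). However, two of your steps have genuine gaps. First, the convergence of the Neumann series is not a consequence of ``Mackey completeness plus the open-unit-group property'' alone: to see that the partial sums of $\sum x^{n}$ form a Mackey--Cauchy sequence you must exhibit a single bounded set $B$ and a summable scaling with $x^{n}\in r^{n}B$, and in a CIA without submultiplicative seminorms the only available mechanism is the resolvent: for $x$ in a balanced $0$-neighbourhood of quasi-invertible elements one has $r_A(x)<1$ and writes $x^{n}=\frac{1}{2\pi i}\oint_{|\lambda|=r}\lambda^{n}(\lambda-x)^{-1}\,d\lambda$, the weak integral existing by Mackey completeness and the resolvent being bounded on the contour by continuity of inversion. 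The same device (not ``the same estimates'') is what makes $\exp_A$ and $\log_A$ converge.

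Second, and more seriously, your resolution of the BCH convergence problem rests on a false premise: the coefficients of the homogeneous components of the BCH series do \emph{not} decay like those of the exponential series. The BCH series has a finite radius of convergence already in $\M_2(\mathbb{C})$, so no tail estimate of the form ``bounded set times a summable sequence valid on all of a fixed $0$-neighbourhood independent of the algebra'' can be extracted from coefficient decay. The correct argument is indirect: one shows that $(x,y)\mapsto\log_A\bigl(\exp_A(x)\exp_A(y)\bigr)$ is (complex, hence real) analytic near $(0,0)$ — which follows from analyticity of multiplication, of $\exp_A$ and of $\log_A$, all established via the functional calculus — and then identifies its Taylor series at $(0,0)$ with the formal BCH series; analyticity then yields convergence of the BCH series on some $0$-neighbourhood. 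Relatedly, your appeal to an ``analytic inverse function theorem for Mackey complete spaces'' is not available: there is no inverse function theorem in general locally convex spaces, which is precisely why $\log_A$ must be constructed explicitly by the functional calculus rather than obtained as an abstract local inverse of $\exp_A$.
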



\section{From Continuous to Smooth Actions on Algebras}

Suppose we are given a unital locally convex algebra $A$ together with a Lie group $G$ acting by automorphisms on $A$, but not necessarily smoothly. For fixed $a\in A$ we define the map 
\[f_a:G\rightarrow A,\,\,\, g\mapsto g.a
\]and consider the subalgebra
\[A^{\infty}:=\{a\in A:\,f_a\in C^{\infty}(G,A)\}
\]of smooth vectors of the action of $G$ on $A$. The natural topology on $A^{\infty}$ is obtained by embedding
\[j:A^{\infty}\rightarrow C^{\infty}(G,A),\,\,\,j(a):=f_a,
\]where $C^{\infty}(G,A)$ is endowed with the smooth compact open topology (cf. Definition \ref{smooth compact open topology}).

\begin{lemma}\label{A is a unital locally convex topological algebra}
Suppose that the Lie group $G$ acts continuously on the unital locally convex algebra $A$ by algebra automorphisms. Then the following assertions hold:
\begin{itemize}
\item[\emph{(a)}]
The subalgebra $A^{\infty}$, endowed with the smooth compact open topology, is a unital locally convex algebra.
\item[\emph{(b)}]
If, in addition, the inversion map $\eta_A:A^{\times}\rightarrow A$, $a\mapsto a^{-1}$ is continuous, then the embedding $i:A^{\infty}\rightarrow A$ is isospectral in the sense that $i^{-1}(A^{\times})=(A^{\infty})^{\times}$ and the inversion map of $A^{\infty}$ is continuous.
\end{itemize}
\end{lemma}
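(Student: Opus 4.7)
For part (a), I would proceed in three steps. First, verify that $A^{\infty}$ is a subalgebra of $A$: the unit $1_A$ has constant orbit map, sums and scalar multiples behave linearly, and for $a,b\in A^{\infty}$ the identity $f_{ab}(g)=(g.a)(g.b)=m_A\circ(f_a,f_b)(g)$ together with smoothness of $m_A$ (which is bilinear and continuous, hence smooth) shows $ab\in A^{\infty}$. Second, observe that $j:A^{\infty}\to C^{\infty}(G,A)$ is an injective linear map (injectivity via evaluation at $1_G$), and we topologize $A^{\infty}$ as the initial topology with respect to $j$, which makes it a locally convex space. Third, the previous identity shows $j$ is an algebra homomorphism with respect to pointwise multiplication on $C^{\infty}(G,A)$, and Proposition \ref{C(M,A) loc. con. algebra} (with $M=G$, viewed as a manifold) says $C^{\infty}(G,A)$ is a locally convex algebra. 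Continuity of multiplication on $A^{\infty}$ then follows by pullback along $j$.

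For part (b), the key identity is that since $G$ acts by algebra automorphisms, $g.a\in A^{\times}$ whenever $a\in A^{\times}$, and $g.(a^{-1})=(g.a)^{-1}$; hence $f_{a^{-1}}=\eta_A\circ f_a$ pointwise. To show $f_{a^{-1}}$ is smooth, I would invoke the iterated tangent functor construction used in the proof of Proposition \ref{C(M,A) loc. con. algebra}: each $T^nA$ inherits the structure of a topological algebra from $A$ via the tangent functor, and the unit group $(T^nA)^{\times}$ is open with continuous inversion $\eta_{T^nA}$, built purely algebraically from $\eta_A$ (for instance on $TA$ one has $(a,v)^{-1}=(a^{-1},-a^{-1}va^{-1})$, which is continuous in $(a,v)$ precisely because $\eta_A$ is). Smoothness of $f_a$ means that $T^nf_a:T^nG\to T^nA$ is continuous for every $n\in\mathbb{N}_0$, and a direct check on simple tensors shows $T^n(\eta_A\circ f_a)=\eta_{T^nA}\circ T^nf_a$; the right-hand side is continuous, hence $\eta_A\circ f_a$ is smooth and $a^{-1}\in A^{\infty}$. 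This establishes $i^{-1}(A^{\times})=(A^{\infty})^{\times}$.

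For continuity of the inversion map $\eta_{A^{\infty}}$ on $(A^{\infty})^{\times}$, I would again exploit the embedding $j$. A net $a_\alpha\to a$ in $A^{\infty}$ means $f_{a_\alpha}\to f_a$ in $C^{\infty}(G,A)$, i.e., $T^nf_{a_\alpha}\to T^nf_a$ uniformly on compact subsets of $T^nG$ for every $n$. Composing with the continuous map $\eta_{T^nA}$ on the open set $(T^nA)^{\times}$ and using that compact sets are mapped to compact sets, one obtains $T^nf_{a_\alpha^{-1}}=\eta_{T^nA}\circ T^nf_{a_\alpha}\to\eta_{T^nA}\circ T^nf_a=T^nf_{a^{-1}}$ uniformly on compact subsets, so $f_{a_\alpha^{-1}}\to f_{a^{-1}}$ in $C^{\infty}(G,A)$, i.e., $a_\alpha^{-1}\to a^{-1}$ in $A^{\infty}$.

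The main obstacle is clearly the iterated tangent step: one must be careful to verify that even though $\eta_A$ is only continuous (not necessarily $C^1$ in the usual differential-calculus sense), the \emph{algebraic} tangent construction still produces continuous inversions $\eta_{T^nA}$ on each stage and that these genuinely compute $T^n(\eta_A\circ f_a)$ along smooth $f_a$. This hinges on the observation that smoothness of $f_a$ supplies all the higher-order data needed to evaluate formal derivatives of $\eta_A$ along $f_a$, even when $\eta_A$ by itself lacks such derivatives. Once this functorial picture is set up, both the isospectrality and the continuity of inversion on $A^{\infty}$ drop out uniformly.
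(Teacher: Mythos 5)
Your proposal follows essentially the same route as the paper: topologize $A^{\infty}$ via the embedding $j$ into the locally convex algebra $C^{\infty}(G,A)$ for part (a), and for part (b) reduce invertibility in $A^{\infty}$ to pointwise invertibility using $f_{a^{-1}}=\eta_A\circ f_a$ and the characterization of $A^{\infty}$ as the equivariant functions $f(g)=g.f(1_G)$; in fact the paper's own proof is only a sketch that asserts without argument that $C^{\infty}(G,A)^{\times}=\{f:\,f(G)\subseteq A^{\times}\}$ with continuous inversion, so you go further than the paper does. The one soft spot in your filling-in is the identity $T^n(\eta_A\circ f_a)=\eta_{T^nA}\circ T^nf_a$: this does not follow from an ``algebraic check on tangent spaces'' alone, since one must first establish that $\eta_A\circ f_a$ is differentiable at all; the standard way is the resolvent identity $b^{-1}-a^{-1}=a^{-1}(a-b)b^{-1}$, which together with continuity of $\eta_A$ and smoothness of $f_a$ shows the difference quotients converge to $-f_a(g)^{-1}\,(df_a(g)x)\,f_a(g)^{-1}$, and then induction gives $C^{\infty}$ (this is the argument in [Gl02]). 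Also, your claim that $(T^nA)^{\times}$ is \emph{open} is neither justified (only continuity of $\eta_A$ on $A^{\times}$ is assumed, not openness of $A^{\times}$) nor needed -- continuity of the algebraically defined inversion on $(T^nA)^{\times}=A^{\times}\times A^{2^n-1}$ suffices for your final uniform-convergence argument.
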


\begin{proof}
\,\,\,We give a sketch of the proof: 

(a) For the first assertion, we recall that the smooth compact open topology turns $C^{\infty}(G,A)$ into a locally convex algebra. 

(b) If, in addition, the inversion in $A$ is continuous, then it is possible to show that
\[C^{\infty}(G,A)^{\times}=\{f\in C^{\infty}(G,A):\,f(G)\subseteq A^{\times}\},
\]and that the inversion in this algebra is continuous. As a subalgebra of $C^{\infty}(G,A)$, $A^{\infty}$ may be described as
\[\{f\in C^{\infty}(G,A):\,(\forall g\in G):\, f(g)=g.f(1_G)\}.
\]This clearly is a subalgebra satisfying $f(G)\subseteq A^{\times}$ if and only if $f(1_G)\in A^{\times}$. For any such $f$ the function $f^{-1}$, obtained by pointwise inversion, also satisfies $f^{-1}(g)=g.f^{-1}(1_G)$, so that $i^{-1}(A^{\times})=(A^{\infty})^{\times}$.
\end{proof}

%

\begin{proposition}\label{smooth vectors are CIA}{\bf($A^{\infty}$ as CIA).}
Suppose $G$ is a Lie group, $A$ a CIA and $\alpha:G\times A\rightarrow A$ a continuous action of $G$ on $A$ by algebra automorphisms. Then the following assertions hold:
\begin{itemize}
\item[\emph{(a)}]
The subspace $A^{\infty}$ of smooth vectors for this action is a CIA, when endowed with the natural topology inherited from the embedding $A^{\infty}\hookrightarrow C^{\infty}(G,A)$.
\item[\emph{(b)}]
The action of $G$ on $A$ leaves $A^{\infty}$ invariant. In particular, we obtain a continuous action of $G$ on $A^{\infty}$ by algebra automorphisms and with smooth orbit maps. 
\item[\emph{(c)}]
The derived action 
\[{\bf L}(\alpha):\mathfrak{g}\times A^{\infty}\rightarrow A^{\infty},\,\,\,(x,a)\mapsto x.a:=\left.\frac{d}{dt}\right|_{t=0}(\exp(tx).a)=T_{1_G}(f_a).x
\]is separately continuous.
\end{itemize}
\end{proposition}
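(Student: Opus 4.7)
My plan is as follows. For part (a), Lemma \ref{A is a unital locally convex topological algebra}(a) already furnishes $A^{\infty}$ with the structure of a unital locally convex algebra, and since $A$ is a CIA its inversion is continuous, so Lemma \ref{A is a unital locally convex topological algebra}(b) applies and tells me that $(A^{\infty})^{\times}$ is precisely the preimage of $A^{\times}$ under the inclusion $i\colon A^{\infty}\to A$ and that inversion on $A^{\infty}$ is continuous. The only remaining point is openness of $(A^{\infty})^{\times}$ in $A^{\infty}$, which I would establish by observing that $i$ factors as $i=\mathrm{ev}_{1_G}\circ j$ through the continuous evaluation map $\mathrm{ev}_{1_G}\colon C^{\infty}(G,A)\to A$; hence $i$ is continuous, and the openness of $A^{\times}$ in $A$ transfers to $(A^{\infty})^{\times}$.

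For part (b), the key identity is
\[
f_{g.a}(h)=h.(g.a)=(hg).a=f_a(hg)=(R_g^{*}f_a)(h),
\]
where $R_g\colon G\to G$ denotes right translation by $g$. Since $f_a$ is smooth and $R_g$ is a diffeomorphism, $R_g^{*}f_a$ is smooth, which simultaneously proves invariance $g.a\in A^{\infty}$ and gives the equivariance relation $j(g.a)=R_g^{*}j(a)$. Thus the induced $G$-action on $A^{\infty}$ is, via the topological embedding $j$, the restriction of the right-translation pullback action of $G$ on $C^{\infty}(G,A)$. Applying Proposition \ref{smoothness of the group action on the algebra of smooth functions} to the smooth right action $G\times G\to G$, $(h,g)\mapsto hg$, I obtain that the latter action is smooth; pulling back along $j$ yields the claimed continuity of the action on $A^{\infty}$ together with smoothness of its orbit maps.

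For part (c), the crucial computation is
\[
f_{x.a}(g)=g.(x.a)=\left.\tfrac{d}{dt}\right|_{t=0}g.\exp(tx).a=\left.\tfrac{d}{dt}\right|_{t=0}f_a(g\exp(tx))=(x^{l}f_a)(g),
\]
where $x^{l}$ denotes the left-invariant vector field on $G$ associated to $x$, and where I commute $g.(\cdot)$ past $\tfrac{d}{dt}|_{t=0}$ using that $g$ acts as a continuous linear endomorphism of $A$. This simultaneously gives $x.a\in A^{\infty}$ and the clean formula $j(x.a)=x^{l}(j(a))$. Separate continuity in $a$ then follows because Lie differentiation along $x^{l}$ is a continuous linear operator on $C^{\infty}(G,A)$ that preserves the subspace $j(A^{\infty})$, while separate continuity in $x\in\mathfrak{g}$ reduces via $j$ to the continuity of the evident linear map $x\mapsto x^{l}(j(a))$ from $\mathfrak{g}$ into $C^{\infty}(G,A)$.

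The main obstacle I anticipate is ensuring that the orbit maps in part (b) are genuinely smooth into the topology of $A^{\infty}$, and not merely into $A$. The cleanest way to handle this, and the approach I propose throughout, is to push everything through the embedding $j\colon A^{\infty}\hookrightarrow C^{\infty}(G,A)$, so that all three claims reduce to standard properties of the right-translation action on $C^{\infty}(G,A)$ already guaranteed by Proposition \ref{smoothness of the group action on the algebra of smooth functions}.
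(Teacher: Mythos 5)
Your proposal is correct and follows essentially the same route as the paper: all three parts are pushed through the embedding $j\colon A^{\infty}\hookrightarrow C^{\infty}(G,A)$, identifying the $G$-action with right translation and the derived action with left-invariant vector fields, and part (a) is handled exactly as in the text via Lemma \ref{A is a unital locally convex topological algebra} together with the continuity of $\ev_{1_G}$. The only cosmetic difference is in (b), where the paper verifies smoothness of the orbit maps directly with the smooth exponential law (Lemma \ref{smooth exp law}) instead of citing Proposition \ref{smoothness of the group action on the algebra of smooth functions}; since that proposition is itself proved by the exponential law (and carries the same implicit finite-dimensionality hypothesis on the manifold being acted on, here $G$), the two arguments coincide.
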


\begin{proof}
\,\,\,(a) In view of Lemma \ref{A is a unital locally convex topological algebra}, it remains to show that the unit group of $A^{\infty}$ is open. The inclusion map $i:A^{\infty}\hookrightarrow A$ is continuous, because it is obtained by restriction of the continuous evaluation map (cf. [NeWa07], Proposition I.2)
\[\ev_{1_G}:C^{\infty}(G,A)\rightarrow A,\,\,\,f\mapsto f(1_G).
\]Now, $(A^{\infty})^{\times}=i^{-1}(A^{\times})$ implies that the unit group of $A^{\infty}$ is open.

(b) For $f_a(g)=g.a$ and $h\in G$ we have $f_{h.a}(g)=(gh).a=f_a(gh)$, so that for each $a\in A^{\infty}$ the orbit map $G\rightarrow A^{\infty}$, $h\mapsto h.a$ corresponds to the map
\[G\rightarrow C^{\infty}(G,A),\,\,\,h\mapsto f_a\circ\rho_h.
\]The smoothness of this map follows from the smoothness of the map
\[G\times G\rightarrow A,\,\,\,(g,h)\mapsto f_a(\rho_h(g))=f_a(gh)
\]and Lemma \ref{smooth exp law}. Therefore, the action of $G$ on $A^{\infty}$ has smooth orbit maps.

(c) From part (b), we see that the action of $G$ on $A^{\infty}$ corresponds to the right translation action of $G$ on $C^{\infty}(G,A)$. In particular, ${\bf L}(\alpha)$ defines a representation of the Lie algebra $\mathfrak{g}={\bf L}(G)$ on $A^{\infty}$, which is a subrepresentation of the representation of $\mathfrak{g}$ on $C^{\infty}(G,A)$, defined by left invariant vector fields. Clearly, $\mathfrak{g}$ acts by continuous operators on $A^{\infty}$, and for each $a\in A^{\infty}$ the map $\mathfrak{g}\rightarrow A^{\infty}$, $x\mapsto x.a$ is continuous linear. In this sense the action map ${\bf L}(\alpha)$ is separately continuous.
\end{proof}

\begin{remark}{\bf(Class of examples).}
The preceding proposition applies in particular to the important class of actions of finite-dimensional Lie groups on Banach algebras.
\end{remark}

\begin{remark}{\bf($A^{\infty}$ is dense).}\label{A infty is dense}
If the finite-dimensional Lie group $G$ acts continuously on a Mackey complete locally convex algebra $A$ by morphisms of algebras, then for each compactly supported smooth function $f\in C_{\text{c}}^{\infty}(G,\mathbb{R})$, we obtain a continuous linear operator on $A$ by
\[\alpha(f).a:=\int_G f(g)\cdot g.a\,dg,
\]where $dg$ is a left Haar measure on $G$. For any smooth Dirac sequence $(f_n)_{n\in\mathbb{N}}$ on $G$, i.e., a sequence $(f_n)_{n\in\mathbb{N}}$ in $C^{\infty}(G,\mathbb{R})$ satisfying
\begin{itemize}
\item
$f_n\geq 0$ for all $n\in\mathbb{N}$,

\item
$\int_G f_n\,dg=1$ for all $n\in\mathbb{N}$,

\item
and for each neighbourhood $U$ of $1_G$ in $G$ the support of $f_n$ is contained in $U$ for $n\in\mathbb{N}$ sufficiently large,
\end{itemize}
we get $\alpha(f_n.)a\rightarrow a$. Since $\im(\alpha)$ consists of smooth vectors, the subspace $A^{\infty}$ is dense in $A$.
\end{remark}

\begin{proposition}\label{automatic smoothness I}
Suppose that $\alpha:G\times E\rightarrow E$ defines a continuous linear action of a Lie group $G$ on a unital locally convex space $E$. If $\alpha$ has smooth orbit maps and the derived action ${\bf L}(\alpha):\mathfrak{g}\times E\rightarrow E$ is continuous, then $\alpha$ is smooth.
\end{proposition}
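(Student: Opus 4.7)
The plan is to reduce the joint smoothness of $\alpha$ to a local problem at the identity of $G$ and then to recognise $\alpha$ as the time-$1$ flow of a linear, continuously parameter-dependent ODE on $E$. First I would observe that for any $(h, v_0) \in G \times E$ the identity $\alpha(gh, v) = \alpha(g, \alpha(h, v))$, combined with smoothness of the group multiplication and of the continuous linear operator $\alpha(h, \cdot)$, reduces joint smoothness of $\alpha$ in a neighbourhood of $(h, v_0)$ to joint smoothness in a neighbourhood of $(1_G, \alpha(h, v_0))$. Hence it suffices to prove that $\alpha$ is smooth on a neighbourhood of $\{1_G\} \times E$.

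Second, I would pick an open $0$-neighbourhood $U \subseteq \mathfrak{g}$ on which $\exp_G$ restricts to a diffeomorphism onto its image, and study the auxiliary map
\[F : U \times E \to E, \qquad F(x, v) := \alpha(\exp_G(x), v).\]
Using the one-parameter subgroup property $\exp_G((t+s)x) = \exp_G(tx)\exp_G(sx)$ together with the assumed smoothness of the orbit map $g \mapsto \alpha(g, v)$, the curve $\gamma_{x,v}(t) := F(tx, v)$ is smooth in $t$ and satisfies
\[\gamma'_{x,v}(t) = {\bf L}(\alpha)\bigl(x, \gamma_{x,v}(t)\bigr), \qquad \gamma_{x,v}(0) = v.\]
Integrating from $0$ to $1$ then yields the key integral identity
\[F(x, v) \;=\; v + \int_0^1 {\bf L}(\alpha)\bigl(x, F(tx, v)\bigr)\, dt, \qquad (\ast)\]
expressing $F$ implicitly in terms of itself, $v$, $x$ and the derived action.

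Third, I would bootstrap the regularity of $F$ from $(\ast)$. Since ${\bf L}(\alpha)$ is continuous and bilinear, it is automatically smooth as a map $\mathfrak{g} \times E \to E$; and $F$ is continuous because both $\alpha$ and $\exp_G$ are. Assuming inductively that $F$ is $C^k$ near $(0, v_0)$, differentiating $(\ast)$ under the integral sign expresses the $(k+1)$-th partial derivatives of $F$ as compositions of the smooth map ${\bf L}(\alpha)$ with $F$ and its derivatives of order at most $k$, and with the smooth rescaling $(t, x) \mapsto tx$. This forces $F \in C^{k+1}$, and iteration gives $F \in C^\infty$, so $\alpha$ is smooth on a neighbourhood of $\{1_G\} \times E$ and hence everywhere by the first step.

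The principal obstacle will be to make this inductive bootstrap rigorous in the locally convex (possibly not sequentially complete) setting, where differentiation under a vector-valued Riemann integral is not automatic. The key technical point is to convert the continuity of ${\bf L}(\alpha)$ into locally uniform estimates in terms of continuous seminorms on $\mathfrak{g}$ and $E$, so that the exchange of differentiation and integration in $(\ast)$ is legitimate. If the existence of the Riemann integral in $E$ is in doubt, the argument can be recast scalarwise by testing $(\ast)$ against continuous linear functionals $\varphi \in E'$, reducing the differentiability assertions to the classical scalar case and then transferring the conclusion back to $E$ via a Hahn--Banach / separation argument.
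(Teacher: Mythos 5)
Your overall strategy --- reduce to a neighbourhood of the identity, exploit the ODE $\gamma'_{x,v}(t)={\bf L}(\alpha)(x,\gamma_{x,v}(t))$ satisfied by the orbit maps, and iterate --- is close in spirit to the paper's proof, which simply writes down the candidate tangent map $T\alpha$ in terms of $\alpha$ and ${\bf L}(\alpha)$ and iterates. Your first two steps are fine; the identity $(\ast)$ is unproblematic, since it is witnessed by $\gamma_{x,v}(1)-\gamma_{x,v}(0)$ and may be read scalarwise against $E'$, which is all the subsequent estimates require. The genuine gap is in the bootstrap. First, you cannot ``differentiate $(\ast)$ under the integral sign'' to manufacture the $(k+1)$-th derivative of $F$: an integral identity does not confer differentiability on its left-hand side, and already in the base case $k=0$ the formal $v$-derivative of the right-hand side contains $\partial_vF(tx,v)$, i.e.\ precisely the object whose existence is in question, so the step is circular. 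Second, even granting formal differentiation, the chain rule applied to $x\mapsto F(tx,v)$ produces terms involving $D^{k+1}F(tx,v)$, so the resulting identity relates $D^{k+1}F$ at $(x,v)$ to $D^{k+1}F$ at $(tx,v)$ rather than expressing it through derivatives of order at most $k$; it is a fixed-point relation for the top-order derivative, not a closed formula establishing its existence.

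The repair is standard. For $C^1$, compute difference quotients directly: linearity gives $\frac{1}{s}\bigl(\alpha(\exp(sx),v+sw)-v\bigr)=\alpha(\exp(sx),w)+\frac{1}{s}\bigl(\alpha(\exp(sx),v)-v\bigr)$, and $(\ast)$ rewrites the last term as $\int_0^1{\bf L}(\alpha)\bigl(x,\alpha(\exp(tsx),v)\bigr)\,dt$, whose integrand converges to ${\bf L}(\alpha)(x,v)$ uniformly in $t\in[0,1]$ by joint continuity of ${\bf L}(\alpha)$ and $\alpha$; hence $d\alpha(1_G,v)(x,w)=w+{\bf L}(\alpha)(x,v)$, and continuity of the total differential follows from continuity of $\alpha$ and ${\bf L}(\alpha)$. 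For higher orders, do not iterate $(\ast)$; instead observe, as the paper does, that $T\alpha$ is again a continuous linear action of the Lie group $TG$ on $TE\cong E\times E$ with smooth orbit maps and continuous derived action, so the $C^1$-statement applies to it, and induction yields smoothness.
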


\begin{proof}
\,\,\,We first note that we have a continuous tangent action given by
\[T\alpha:TG\times TE\rightarrow TE,\,\,\,(g,x).(e,e'):=(g.e,g.e'+g.{\bf L}(\alpha)(x,e')).
\]This implies that $\alpha$ is a $C^1$-action and thus in turn that $T\alpha$ is a $C^1$-action. By iteration we see that the action $\alpha$ is smooth.
\end{proof}

\begin{remark}{\bf(Automatic continuity).}\label{automatic continuity}\index{Automatic!Continuity}
If $G$ is a Fr\'echet-Lie group, $E$ a Fr\'echet space and $\alpha:G\times E\rightarrow E$ a continuous linear action with smooth orbit maps, then the separately continuous bilinear map ${\bf L}(\alpha):\mathfrak{g}\times E\rightarrow E$ is automatically continuous (cf. [Ru73]).
\end{remark}

\begin{corollary}\label{automatic smoothness II}
Suppose that the finite-dimensional Lie group $G$ acts continuously on a Fr\'echet-CIA $A$ by algebra automorphisms. Then the space of smooth vectors $A^{\infty}$ is a CIA and the induced action of $G$ on $A^{\infty}$ is smooth.
\end{corollary}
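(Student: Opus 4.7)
The plan is to combine the three preceding results in sequence: Proposition \ref{smooth vectors are CIA}, the automatic continuity statement of Remark \ref{automatic continuity}, and Proposition \ref{automatic smoothness I}. Everything needed is essentially on the table; the only non-formal ingredient to check is that $A^\infty$ is a Fréchet space so that Remark \ref{automatic continuity} applies.

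First I would invoke Proposition \ref{smooth vectors are CIA} directly: since $A$ is in particular a CIA and $G$ acts continuously by algebra automorphisms, $A^\infty$ is a CIA (with the topology induced by the embedding $j:A^\infty\hookrightarrow C^\infty(G,A)$), the restricted $G$-action on $A^\infty$ is continuous with smooth orbit maps, and the derived Lie algebra action ${\bf L}(\alpha):\mathfrak g\times A^\infty\to A^\infty$ is separately continuous. This already settles the first half of the corollary.

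For the second half the key point is to upgrade the separate continuity of ${\bf L}(\alpha)$ to joint continuity so that Proposition \ref{automatic smoothness I} becomes applicable. This is where the Fréchet hypothesis enters. Since $G$ is finite-dimensional it is $\sigma$-compact, and $A$ is Fréchet by assumption, so $C^\infty(G,A)$ equipped with the smooth compact open topology is a Fréchet space (it is a countable projective limit of the Fréchet spaces $C(T^nK_m,T^nA)$ for an exhaustion $(K_m)$ of $G$ by compact sets). The image of $j$ is the closed subspace
\[
\{f\in C^\infty(G,A):\,(\forall g\in G)\ f(g)=g.f(1_G)\},
\]
closed because the right translation action on $C^\infty(G,A)$ and the orbit maps $g\mapsto g.a$ are continuous. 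Hence $A^\infty$ itself is Fréchet. Now Remark \ref{automatic continuity}, applied to the continuous linear action with smooth orbit maps on the Fréchet space $A^\infty$, yields that the separately continuous bilinear map ${\bf L}(\alpha)$ is automatically (jointly) continuous.

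Finally, Proposition \ref{automatic smoothness I} applies verbatim to the continuous linear action of $G$ on $A^\infty$: the orbit maps are smooth and the derived action is continuous, so the action $G\times A^\infty\to A^\infty$ is smooth, completing the proof. The only step that required any real thought was the Fréchet check for $A^\infty$, which I expect to be the main (and in the end quite mild) obstacle, since the remaining arguments are pure invocations of the results just proved.
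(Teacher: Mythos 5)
Your proof is correct and follows exactly the same route as the paper, which simply cites Proposition \ref{smooth vectors are CIA}, Remark \ref{automatic continuity} and Proposition \ref{automatic smoothness I} in sequence. The one detail you add — verifying that $A^{\infty}$ is itself Fréchet (as a closed subspace of the Fréchet space $C^{\infty}(G,A)$) so that the automatic continuity of Remark \ref{automatic continuity} actually applies — is left implicit in the paper, and your argument for it is sound.
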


\begin{proof}
\,\,\,The assertion follows from Proposition \ref{smooth vectors are CIA}, Remark \ref{automatic continuity} and Proposition \ref{automatic smoothness I}.
\end{proof}

\chapter{Topology and Smooth Vector-Valued Function Spaces}

This part of the appendix is devoted to topological aspects and results which are needed throughout this thesis. We start with some results on the extendability of densely defined maps. Then we discuss the projective tensor product of locally convex spaces and further the projective tensor product of locally convex modules. Furthermore, we present some useful results on smooth vector-valued function spaces. In particular, the smooth exponential law will be used several times in this thesis. We finally discuss a special class of locally convex spaces in which the uniform boundedness principle (``Theorem of Banach--Steinhaus") still remains true.

\section{Extendability of Certain Densely Defined Maps}


\begin{definition}\label{uniform cont I}{\bf(Uniformly continuity).}\index{Uniformly Continuity}
A map $f:D\rightarrow H$ from a subset $D$ of an abelian topological group $G$ into the abelian topological group $H$ is called \emph{uniformly continuous} if to each $0$-neighbourhood $V$ of $H$, there exists a $0$-neighbourhood $U$ of $G$ such that $f(x)-f(y)\in V$ for every $x,y\in D$ with $x-y\in U$.
\end{definition}

\begin{proposition}\label{uniform cont II}
Each uniformly continuous map $f:D\rightarrow H$ of a dense subset $D$ of an abelian topological group $G$ into a complete abelian topological group $H$ can be uniquely extended to a \emph{(}uniformly\emph{)} continuous map $\widehat{f}:G\rightarrow H$.
\end{proposition}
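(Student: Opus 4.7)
The plan is to construct $\widehat{f}$ pointwise by taking limits of $f$ along nets in $D$ that converge to a given point of $G$, exploiting uniform continuity of $f$ to guarantee that the resulting nets in $H$ are Cauchy, and then exploiting completeness of $H$ to define $\widehat{f}(g)$ as the limit. Uniqueness is automatic: any two continuous extensions agreeing on the dense subset $D$ must coincide on all of $G$, so I only need to produce the extension.

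First I would fix $g\in G$ and, using density of $D$, choose a net $(d_i)_{i\in I}$ in $D$ with $d_i\to g$. The first key step is to show that $(f(d_i))_{i\in I}$ is a Cauchy net in $H$. Given a $0$-neighbourhood $V$ of $H$, uniform continuity of $f$ provides a $0$-neighbourhood $U$ of $G$ such that $f(x)-f(y)\in V$ whenever $x-y\in U$, $x,y\in D$. Since $(d_i)$ converges, it is Cauchy in $G$, so for some index $i_0$ and all $i,j\geq i_0$ one has $d_i-d_j\in U$ and hence $f(d_i)-f(d_j)\in V$. By completeness of $H$ the net $(f(d_i))$ converges, and I set $\widehat{f}(g):=\lim_i f(d_i)$. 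To see that this is independent of the chosen net, I would take another net $(d'_j)$ in $D$ with $d'_j\to g$ and show that the "interleaved" net $(f(d_i)-f(d'_j))$ tends to $0$ by the same uniform-continuity argument applied to the fact that $d_i-d'_j\to 0$.

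Next, I would show that the resulting map $\widehat{f}:G\to H$ is itself uniformly continuous, which in particular yields continuity. Given a $0$-neighbourhood $V$ of $H$, pick a closed $0$-neighbourhood $V'\subseteq V$ (possible in a topological group) and a $0$-neighbourhood $U$ of $G$ with $f(x)-f(y)\in V'$ for all $x,y\in D$ with $x-y\in U$. For $g,g'\in G$ with $g-g'\in U$, choose nets $(d_i)$ and $(d'_i)$ in $D$ with $d_i\to g$, $d'_i\to g'$ and (passing to a cofinal subnet) $d_i-d'_i\in U$ eventually; then $f(d_i)-f(d'_i)\in V'$ eventually, and passing to the limit using the closedness of $V'$ yields $\widehat{f}(g)-\widehat{f}(g')\in V'\subseteq V$. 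Finally, $\widehat{f}\!\mid_D=f$ is immediate from the construction (take the constant net at $d\in D$).

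The only genuinely delicate point is the passage to the limit in the last step, where one must use that $V'$ is \emph{closed} in $H$ to conclude that the limit of elements of $V'$ still lies in $V'$; this is precisely why one begins by shrinking $V$ to a closed $0$-neighbourhood. Everything else is a routine net argument, and no extra hypothesis beyond completeness of $H$ and uniform continuity of $f$ is required.
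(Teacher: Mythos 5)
The paper does not actually prove this statement; it merely cites [BeNa85], Proposition (4.6.1). Your argument is the standard, self-contained net-theoretic proof that such a reference would contain: define $\widehat{f}(g)$ as the limit of $f$ along a net in $D$ converging to $g$ (Cauchy by uniform continuity, convergent by completeness), check independence of the net, and verify uniform continuity of the extension via closed $0$-neighbourhoods. The overall structure, the uniqueness argument, and the well-definedness argument (where the relevant differences $d_i-d'_j$ tend to $0$, so there is no boundary issue) are all correct, granting the usual implicit assumption that $H$ is Hausdorff so that limits are unique.

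There is one step that does not work as written. In the verification of uniform continuity you assume $g-g'\in U$ and claim that, after passing to a cofinal subnet, $d_i-d'_i\in U$ eventually. This can fail: $d_i-d'_i\to g-g'$, but if $g-g'$ lies in $U$ without lying in its interior (take $G=\mathbb{R}$, $U=[-1,1]$, $g-g'=1$, $d_i-d'_i=1+\tfrac{1}{i}$), then no subnet of $(d_i-d'_i)$ ever enters $U$. The standard repair is to shrink the modulus: choose a symmetric $0$-neighbourhood $U_0$ with $U_0+U_0+U_0\subseteq U$ and prove the implication only for $g-g'\in U_0$; then $d_i-d'_i=(d_i-g)+(g-g')-(d'_i-g')\in U_0+U_0+U_0\subseteq U$ eventually, with no subnet needed. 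Since uniform continuity only asserts the existence of \emph{some} $0$-neighbourhood for each $V$, this one-line adjustment completes your proof; the use of a closed $V'$ to pass to the limit is exactly right.
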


\begin{proof}
\,\,\,For a complete proof of this assertion we refer to [BeNa85], Proposition (4.6.1).
\end{proof}

\begin{proposition}\label{uniform cont III}
Let $D$ be a subgroup of an abelian topological group $G$, $H$ an abelian topological group and $f:D\rightarrow H$ a continuous homomorphism. Then the following assertions hold:
\begin{itemize}
\item[\emph{(a)}]
The map $f$ is uniformly continuous. 
\item[\emph{(b)}]
If $D$ is a dense subgroup of $G$ and $H$ is complete, then $f$ extends uniquely to a \emph{(}uniformly\emph{)} continuous map $\widehat{f}:G\rightarrow H$.
\end{itemize}
\end{proposition}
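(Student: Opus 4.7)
The plan is to deduce both parts in sequence, with part (a) being the substantive content and part (b) following immediately from Proposition \ref{uniform cont II}.

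For part (a), the key idea is that a homomorphism reduces uniform continuity to continuity at a single point, namely $0_G$. First I would fix a $0$-neighborhood $V \subseteq H$. Since $f$ is a homomorphism between abelian groups, we have $f(0_G) = 0_H$, so continuity of $f$ at $0_G$ (with respect to the subspace topology on $D$) provides a $0$-neighborhood $U \subseteq G$ such that $f(U \cap D) \subseteq V$. Next I would exploit the fact that $D$ is a \emph{subgroup}: for $x, y \in D$ with $x - y \in U$, we have $x - y \in U \cap D$, and the homomorphism property yields
\[
f(x) - f(y) = f(x - y) \in V.
\]
By Definition \ref{uniform cont I}, this is precisely uniform continuity of $f$.

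For part (b), once uniform continuity is established in part (a), the statement is an immediate application of Proposition \ref{uniform cont II}: a uniformly continuous map from a dense subset of an abelian topological group into a complete abelian topological group extends uniquely to a (uniformly) continuous map on the whole group. I would simply invoke this result to obtain $\widehat{f}: G \to H$.

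There is essentially no obstacle in this proof; the only point requiring minor care is the use of the subgroup property in part (a), which ensures that $x - y$ remains in $D$ so that the homomorphism identity $f(x) - f(y) = f(x - y)$ applies. Everything else is a direct appeal to the definitions and to the previously stated extension theorem.
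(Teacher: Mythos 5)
Your proof is correct and follows essentially the same route as the paper: continuity at $0_G$ combined with the homomorphism identity $f(x)-f(y)=f(x-y)$ gives (a), and (b) is a direct appeal to Proposition \ref{uniform cont II}. Your explicit remark that the subgroup property of $D$ is what guarantees $x-y\in U\cap D$ is a point the paper leaves implicit.
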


\begin{proof}
\,\,\,(a) Let $V$ be an arbitrary $0$-neighbourhood of $H$. Since $f$ is continuous, there exists a $0$-neighbourhood $U$ of $G$ such that $f(U\cap D)\subseteq V$. Hence, if $x,y\in D$ with $x-y\in U$, then $f(x)-f(y)=f(x-y)\in V$.

(b) The second assertion is a consequence of Proposition \ref{uniform cont II}
\end{proof}

\begin{proposition}\label{ext of dense cont algebra iso}
Let $A$ and $A'$ be complete unital locally convex algebras. Further, let $B$ be a dense subalgebra of $A$ and $B'$ a dense subalgebra of $A'$. If $\varphi:B\rightarrow B'$ is an isomorphism of locally convex algebras, then $\varphi$ can be extended to an isomorphism $\overline{\varphi}:A\rightarrow A'$ of locally convex algebras.
\end{proposition}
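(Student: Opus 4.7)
The plan is to first construct $\overline{\varphi}$ as a continuous additive extension, then upgrade it to an algebra isomorphism by a density argument (``principle of extension of identities'').

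First, I would observe that $\varphi\colon B\to B'\hookrightarrow A'$ is a continuous homomorphism of the additive topological groups, where $B$ is a dense subgroup of the complete abelian topological group $A$, and $A'$ is also complete as an abelian topological group. Proposition~\ref{uniform cont III}\,(b) then applies and yields a unique continuous extension $\overline{\varphi}\colon A\to A'$. By the same argument applied to the continuous inverse $\varphi^{-1}\colon B'\to B\hookrightarrow A$, we get a continuous extension $\psi\colon A'\to A$. Both $\overline{\varphi}$ and $\psi$ are automatically $\mathbb{K}$-linear, since linearity is encoded by continuous identities that hold on the dense subspaces $B$ and $B'$.

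Next, I would verify that $\overline{\varphi}$ respects the multiplicative structure. Consider the two continuous maps
\[
F_1,F_2\colon A\times A\longrightarrow A',\qquad F_1(a_1,a_2):=\overline{\varphi}(a_1a_2),\quad F_2(a_1,a_2):=\overline{\varphi}(a_1)\cdot\overline{\varphi}(a_2),
\]
which are continuous because the multiplications in $A$ and $A'$ are continuous. Since $\varphi$ is a morphism of algebras, $F_1$ and $F_2$ coincide on the dense subset $B\times B\subseteq A\times A$, hence everywhere. The unital property $\overline{\varphi}(1_A)=1_{A'}$ follows from $\overline{\varphi}(1_A)=\varphi(1_B)=1_{B'}=1_{A'}$ together with the density of $B$ in $A$ (and of $B'$ in $A'$).

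Finally, I would check that $\overline{\varphi}$ and $\psi$ are mutually inverse. The compositions $\psi\circ\overline{\varphi}\colon A\to A$ and $\mathrm{id}_A$ are both continuous and coincide on the dense subset $B$; hence they agree on all of $A$. The analogous argument applied to $\overline{\varphi}\circ\psi$ shows it equals $\mathrm{id}_{A'}$. Therefore $\overline{\varphi}$ is a continuous algebra isomorphism whose inverse is continuous as well, i.e., an isomorphism of locally convex algebras.

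The only mildly delicate point is the initial extension step: one must ensure that Proposition~\ref{uniform cont III}\,(b) genuinely applies, which requires only that $A'$ is complete as an additive abelian topological group (guaranteed by completeness of the locally convex space $A'$) and that $\varphi$ is continuous as an additive group homomorphism (immediate from continuity in the locally convex sense). After that, all remaining steps are routine applications of the principle of extension of identities for continuous maps into Hausdorff spaces, and there is no substantive obstacle.
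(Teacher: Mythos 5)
Your proposal is correct and follows essentially the same route as the paper: extend $\varphi$ and its inverse as continuous additive homomorphisms via Proposition~\ref{uniform cont III}\,(b), then use the principle of extension of identities on dense subsets to recover multiplicativity, linearity, and the mutual-inverse relations. The only difference is that you spell out the density argument for multiplicativity (via $F_1,F_2$ on $B\times B$) more explicitly than the paper does.
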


\begin{proof}
\,\,\,
(i) We first consider the map $\varphi$ as an embedding $\varphi:B\rightarrow A'$ of abelian groups. By Proposition \ref{uniform cont III} (b), we can extend $\varphi$ to a (uniformly) continuous homomorphism $\overline{\varphi}:A\rightarrow A'$ of abelian groups. Now, the principle of extension of identities immediately implies that $\overline{\varphi}$ is a continuous algebra homomorphism.

(ii) If $\psi:B'\rightarrow B$ denotes the inverse of $\varphi$, then by the same argument as before shows we obtain a continuous algebra homomorphism $\overline{\varphi}:A'\rightarrow A$.

(iii) Finally, the uniqueness statement of Proposition \ref{uniform cont III} (b) leads to the following identities:
\[\overline{\psi}\circ\overline{\varphi}=\id_{A}\,\,\,\text{and}\,\,\,\overline{\varphi}\circ\overline{\psi}=\id_{A'}.
\]Thus, $\overline{\varphi}$ is an algebra isomorphism.
\end{proof}

\begin{corollary}\label{ext of dense cont G-mod iso}
Let $G$ be a topological group. Further, let $A$ and $A'$ be complete unital locally convex algebras with a $G$-module structure. If $B$ is a dense $G$-invariant subalgebra of $A$ and $B'$ is a dense $G$-invariant subalgebra of $A'$, then each $G$-equivariant isomorphism $\varphi:B\rightarrow B'$ of locally convex algebras can be extended to a $G$-equivariant isomorphism $\overline{\varphi}:A\rightarrow A'$ of locally convex algebras.
\end{corollary}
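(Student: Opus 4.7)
The plan is to obtain the corollary by invoking Proposition \ref{ext of dense cont algebra iso} directly and then verifying that the resulting extension automatically respects the $G$-action. Concretely, I would first apply Proposition \ref{ext of dense cont algebra iso} to the $G$-equivariant isomorphism $\varphi:B\rightarrow B'$ of locally convex algebras (ignoring for the moment the $G$-action), obtaining a unique extension $\overline{\varphi}:A\rightarrow A'$ which is an isomorphism of complete unital locally convex algebras. This handles all the algebraic and topological content and leaves only the equivariance to verify.

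For the $G$-equivariance, I would fix $g\in G$ and consider the two continuous maps
\[
\overline{\varphi}\circ\alpha(g),\quad \alpha'(g)\circ\overline{\varphi}:A\longrightarrow A',
\]
where $\alpha$ and $\alpha'$ denote the actions of $G$ on $A$ and $A'$, respectively. Both maps are continuous: $\overline{\varphi}$ is continuous by construction, and $\alpha(g)$, $\alpha'(g)$ are continuous linear maps since $A$ and $A'$ carry a (continuous) $G$-module structure. On the dense subalgebra $B$ these two maps coincide, because for $b\in B$ we have $\alpha(g).b\in B$ by $G$-invariance of $B$, and
\[
\overline{\varphi}(\alpha(g).b)=\varphi(\alpha(g).b)=\alpha'(g).\varphi(b)=\alpha'(g).\overline{\varphi}(b)
\]
by the $G$-equivariance of $\varphi$. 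The principle of extension of identities (which underlies Proposition \ref{uniform cont III} (b)) then forces equality on all of $A$. Since $g\in G$ was arbitrary, $\overline{\varphi}$ is $G$-equivariant.

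No step here is really hard; the only mild subtlety is that the two expressions $\overline{\varphi}\circ\alpha(g)$ and $\alpha'(g)\circ\overline{\varphi}$ genuinely define continuous maps $A\rightarrow A'$, which in turn relies on the fact that a $G$-module structure on a locally convex algebra makes each operator $\alpha(g)$ continuous (an immediate consequence of the continuity of the action map $G\times A\to A$ restricted to $\{g\}\times A$). Everything else is purely formal: the nontrivial algebraic and topological extension is already packaged into Proposition \ref{ext of dense cont algebra iso}, and the equivariance follows by density.
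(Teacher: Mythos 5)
Your proposal is correct and follows exactly the route of the paper's own (very terse) proof: apply Proposition \ref{ext of dense cont algebra iso} to obtain the extension $\overline{\varphi}$, then use the principle of extension of identities on the dense subalgebra $B$ to transfer the equivariance identity $\overline{\varphi}\circ\alpha(g)=\alpha'(g)\circ\overline{\varphi}$ from $B$ to all of $A$. Your write-up simply makes explicit the continuity considerations that the paper leaves implicit.
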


\begin{proof}
\,\,\,This assertion is a consequence of Proposition \ref{ext of dense cont algebra iso} and the principle of extension of identities.
\end{proof}

We now introduce an extremely useful theorem on the extendability of $\mathbb{Z}$-bilinear mappings:

\begin{theorem}\label{completion of top rings I}
Let $G$, $G'$ and $H$ be three complete Hausdorff abelian groups. Further, let $N$ be a dense subgroup of $G$ and $N'$ be a dense subgroup of $G'$. If 
\[f:N\times N'\rightarrow H
\]is a continuous biadditive map, then $f$ can be extended 
to a continuous biadditive map 
\[\overline{f}:G\times G'\rightarrow H.
\]
\end{theorem}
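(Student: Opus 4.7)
The plan is to construct $\overline{f}$ by iterating Proposition \ref{uniform cont III} twice: first extending in the $G$-variable (with $y\in N'$ fixed), then in the $G'$-variable (with $x\in G$ fixed). The principle of extension of identities will handle biadditivity almost automatically, and the crux of the argument will be to recover joint continuity of $\overline{f}$ from a construction that only makes its separate continuities manifest.

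For each $y\in N'$ the partial map $f(\cdot,y)\colon N\to H$ is a continuous group homomorphism, hence by Proposition \ref{uniform cont III}(b) extends uniquely to a continuous homomorphism $\tilde f_y\colon G\to H$. From $f(x,y+y')=f(x,y)+f(x,y')$ on $N$ and density we obtain $\tilde f_{y+y'}=\tilde f_y+\tilde f_{y'}$ as homomorphisms $G\to H$. The key technical input is an equicontinuity statement for the family $\{\tilde f_y\}$: joint continuity of $f$ at the origin gives, for every closed $0$-neighbourhood $W\subseteq H$, $0$-neighbourhoods $U\subseteq G$ and $U'\subseteq G'$ with $f((U\cap N)\times(U'\cap N'))\subseteq W$, and since $U\cap N$ is dense in $U$ and $W$ is closed, continuity of each $\tilde f_y$ upgrades this to $\tilde f_y(U)\subseteq W$ for all $y\in U'\cap N'$.

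With that in hand I fix $x\in G$ and consider $g_x\colon N'\to H$, $y\mapsto\tilde f_y(x)$. It is additive by the previous paragraph, and to see it is continuous at $0$ I approximate $x$ by $x'\in N$ with $x-x'\in U$ and split $g_x(y)=f(x',y)+\tilde f_y(x-x')$. The first summand is a continuous homomorphism of $y\in N'$ (with $x'$ fixed), hence uniformly continuous by Proposition \ref{uniform cont III}(a), while the second summand lies in $W$ for $y\in U'\cap N'$ by the equicontinuity. Proposition \ref{uniform cont III}(b) thus extends $g_x$ to a continuous homomorphism $\overline{g}_x\colon G'\to H$, and I set $\overline{f}(x,y):=\overline{g}_x(y)$; biadditivity is immediate because $\overline{g}_{x_1+x_2}$ and $\overline{g}_{x_1}+\overline{g}_{x_2}$ agree on $N'$ and the extension is unique.

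The hard part is joint continuity of $\overline{f}$. I plan to reduce it, via the biadditive identity
\[
\overline{f}(x,y)-\overline{f}(x_0,y_0)=\overline{f}(x-x_0,y_0)+\overline{f}(x_0,y-y_0)+\overline{f}(x-x_0,y-y_0),
\]
to separate continuity together with continuity at $(0,0)$. Continuity at $(0,0)$ follows from the equicontinuity plus a second density argument: for $x\in U$ and $y\in U'$, $\overline{g}_x(y)$ lies in the closure of $\{\overline{g}_x(y')\mid y'\in U'\cap N'\}\subseteq W$, which equals $W$ by closedness. Continuity in $y$ for fixed $x$ is built into $\overline{g}_x$. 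Continuity in $x$ at a fixed $y_0\in G'$ is the delicate point and will be proved by choosing $y'\in N'$ with $y_0-y'\in U'$ and writing $\overline{f}(x,y_0)=\tilde f_{y'}(x)+\overline{g}_x(y_0-y')$; the first term is continuous in $x$ by construction, and the second lies in $W$ whenever $x\in U$ by the same $(0,0)$-continuity estimate applied in the $y$-direction. This closes the argument.
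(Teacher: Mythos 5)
Your proof is correct, but it takes a genuinely different route from the paper's. The paper follows Bourbaki and extends $f$ \emph{simultaneously} in both variables: it picks a Cauchy net $(n_i,n'_i)$ in $N\times N'$ converging to $(g,g')$, uses the four-term identity $f(x,x')-f(y,y')=f(x-y,z')+f(z,x'-y')+f(x-y,x'-z')+f(x-z,x'-y')$ to show that $(f(n_i,n'_i))_{i\in I}$ is Cauchy in $H$, and defines $\overline{f}(g,g')$ as its limit. You instead iterate the one-variable extension result (Proposition \ref{uniform cont III}) twice, which keeps the argument entirely inside the paper's existing toolkit and avoids manipulating Cauchy nets; the price is that your construction only makes the two separate continuities manifest, so you must do extra work to recover joint continuity. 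You pay that price correctly: the equicontinuity of the family $\{\tilde f_y\}_{y\in U'\cap N'}$ (obtained from joint continuity of $f$ at $(0,0)$, density of $U\cap N$ in $U$, and closedness of $W$) is the real engine of your proof, and your three-term biadditive decomposition of $\overline{f}(x,y)-\overline{f}(x_0,y_0)$ plays the same role that the paper's four-term identity plays in the Cauchy estimate. Both proofs thus rest on the same underlying mechanism — biadditivity converts local control at the origin into global control — but your successive-extension organization is a legitimately distinct and complete argument, whereas the paper's own proof is only a sketch with a reference to [Bou66].
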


\begin{proof}
\,\,\,Let $(g,g')\in G\times G'$ and $(n_i,n'_i)_{i\in I}$ be a Cauchy net in $N\times N'$ which converges to $(g,g')$. The main idea of the proof is to use the identity
\[f(x,x')-f(y,y')=f(x-y,z')+f(z,x'-y')+f(x-y,x'-z')+f(x-z,x'-y')
\]to show that $(f(n_i,n'_i))_{i\in I}$ is a Cauchy net in $H$. For a complete proof of this theorem we refer to [Bou66], Chapter III, Paragraph 5, Theorem I.
\end{proof}

\begin{corollary}\label{completion of top rings II}
Let $A$ be a complete locally convex space and $B$ be a dense subspace. Further, let $f:B\times B\rightarrow B$ be a continuous bilinear map. Then $f$ can be extended by continuity to a continuous bilinear map 
\[\overline{f}:A\times A\rightarrow A.
\]
\end{corollary}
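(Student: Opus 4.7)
The statement is an almost immediate corollary of Theorem \ref{completion of top rings I}, so the plan is to reduce everything to that theorem and then upgrade biadditivity to bilinearity using a standard density argument.

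First, I would observe that the locally convex space $A$ is, in particular, a complete Hausdorff topological abelian group under addition, and that $B\subseteq A$ is a dense subgroup (since it is a dense vector subspace). Applying Theorem \ref{completion of top rings I} with $G=G'=H=A$ and $N=N'=B$ to the continuous biadditive map $f:B\times B\to A$ (we may view the codomain as $A$ since $B\subseteq A$) yields a continuous biadditive extension $\overline{f}:A\times A\to A$. This takes care of additivity and continuity in both variables simultaneously.

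It remains to show that $\overline{f}$ is bilinear over the ground field $\mathbb{K}\in\{\mathbb{R},\mathbb{C}\}$, i.e., that $\overline{f}(\lambda x,y)=\lambda\overline{f}(x,y)$ and $\overline{f}(x,\lambda y)=\lambda\overline{f}(x,y)$ for all $\lambda\in\mathbb{K}$ and $x,y\in A$. For fixed $\lambda\in\mathbb{K}$, both sides of the first identity are continuous functions $A\times A\to A$ (by continuity of scalar multiplication and of $\overline{f}$), and they agree on the dense subset $B\times B$ because $f$ is bilinear on $B\times B$. The principle of extension of identities (Hausdorffness of $A$) then forces equality on all of $A\times A$, and the same argument handles the second variable. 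Uniqueness of $\overline{f}$ is also given by the principle of extension of identities.

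The main step is Theorem \ref{completion of top rings I}, which is already invoked; the only genuine work here is the passage from biadditivity to $\mathbb{K}$-bilinearity, and this is routine since continuous scalar multiplication and the density of $B$ in $A$ allow any polynomial identity valid on $B\times B$ to be transported to $A\times A$. There is no real obstacle beyond being careful to check that the hypotheses of Theorem \ref{completion of top rings I} are met (completeness and Hausdorffness of $A$ as an abelian topological group, and density of $B$), all of which are immediate from the hypotheses of the corollary.
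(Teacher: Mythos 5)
Your argument is correct and coincides with the paper's own proof: both apply Theorem \ref{completion of top rings I} to obtain a continuous biadditive extension and then use the principle of extension of identities to upgrade biadditivity to $\mathbb{K}$-bilinearity. Your write-up simply spells out the density argument for the scalar identities in more detail.
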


\begin{proof}
\,\,\,According to Theorem \ref{completion of top rings I}, we can extend the map $f$ to a continuous biadditive map $\overline{f}:A\times A\rightarrow A$. The $\mathbb{C}$-bilinearity is again a consequence of the principle of extension of identities. 
\end{proof}

\section{The Projective Tensor Product}

\begin{definition}{\bf(The projective tensor product).}\label{proj. tensor product III}\index{Projective!Tensor Product}
Let $E$ and $F$ be two locally convex spaces. Further, let $E\odot F$ be the algebraic tensor product of $E$ and $F$. The \emph{projective tensor product} $E\otimes F$ of $E$ and $F$ is the algebraic tensor product $E\odot F$ equipped with the strongest locally convex topology for which the canonical ``projection"
\[\pi:E\times F\rightarrow E\odot F,\,\,\,(e,f)\mapsto e\odot f
\]is continuous. The corresponding completion of $E\otimes F$ with respect to this topology is denoted by $E\widehat{\otimes}F$.\sindex[n]{$\otimes$}\sindex[n]{$\widehat{\otimes}$} 
\end{definition}

\begin{remark}\label{universal property of projective tensor product topology}{\bf(Universal property of the projective tensor product).}\index{Universal Property of!the Projective Tensor Product} 
We recall that the projective tensor product $E\otimes F$ of two locally convex spaces $E$ and $F$ has the following universal property: 

For every locally convex space $Z$ the canonical (algebraic) isomorphism of the space of bilinear maps of $E\times F$ into $Z$ onto the space of linear maps of $E\otimes F$ into $Z$ induces an (algebraic) isomorphism of the space of continuous bilinear maps of $E\times F$ into $Z$ onto the space of continuous linear maps of $E\otimes F$ into $Z$. 

In particular, a bilinear map $\varphi:E\times F\rightarrow Z$ is continuous if and only if there exists a continuous linear map $\psi:E\otimes F\rightarrow Z$ satisfying $\psi\circ\pi=\varphi$. A nice reference for these statements can be found in [Tre67], Part III.
\end{remark}

\begin{remark}\label{associativity of projective tensor product topology}{\bf(Associativity of the projective tensor product).}\index{Associativity!of the Projective Tensor Product}
In the following we shall make use of the associativity of projective tensor product: 

Let $E$, $F$ and $G$ be three locally convex spaces. In order to prove the associativity of $\otimes$, it is enough to show that $E\otimes(F\otimes G)$ is naturally isomorphic to $E\otimes F\otimes G$. This can easily be done by showing that every (continuous) trilinear map of $E\times F\times G$ into a locally convex space $Z$ factors uniquely through a (continuous) linear map of $E\otimes(F\otimes G)$ into $Z$.
\end{remark}

\begin{lemma}\label{continuity of maps EoF to E'oF'}
Let $E,E',F$ and $F'$ be four locally convex spaces. Further, let $\alpha:E\rightarrow E'$ and $\beta:F\rightarrow F'$ be two continuous linear maps. Then the linear map
\[\alpha\otimes \beta:E\otimes F\rightarrow E'\otimes F',\,\,\,\alpha\otimes\beta(e\otimes f):=\alpha(e)\otimes\beta(f)
\]is continuous.
\end{lemma}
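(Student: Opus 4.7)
The plan is to invoke the universal property of the projective tensor product stated in Remark D.2.2, which reduces the continuity of the linear map $\alpha\otimes\beta$ to the continuity of a suitable bilinear map on $E\times F$.

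First I would consider the bilinear map
\[\varphi:E\times F\rightarrow E'\otimes F',\,\,\,\varphi(e,f):=\alpha(e)\otimes\beta(f).
\]
It is clearly bilinear in $(e,f)$ since $\alpha$ and $\beta$ are linear and the canonical map $\pi':E'\times F'\rightarrow E'\otimes F'$ is bilinear. Moreover, $\varphi$ decomposes as
\[\varphi=\pi'\circ(\alpha\times\beta),
\]where $\pi':E'\times F'\rightarrow E'\otimes F'$ is the canonical projection (which is continuous by the very definition of the projective tensor product topology, cf. Definition D.2.1) and $\alpha\times\beta:E\times F\rightarrow E'\times F'$ is continuous as the product of two continuous linear maps. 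Hence $\varphi$ is continuous as a composition of continuous maps.

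By the universal property of the projective tensor product (cf. Remark D.2.2) there exists a unique continuous linear map $\psi:E\otimes F\rightarrow E'\otimes F'$ such that $\psi\circ\pi=\varphi$, where $\pi:E\times F\rightarrow E\otimes F$ denotes the canonical projection. Evaluating both sides on a pair $(e,f)\in E\times F$ gives
\[\psi(e\otimes f)=\varphi(e,f)=\alpha(e)\otimes\beta(f),
\]so $\psi$ coincides with $\alpha\otimes\beta$ on all simple tensors. Since simple tensors span $E\otimes F$ and both maps are linear, we conclude $\psi=\alpha\otimes\beta$, so $\alpha\otimes\beta$ is continuous. There is no real obstacle here; the whole argument is a direct application of the universal property, and the main point is simply to recognize that $\alpha\otimes\beta$ is the linearization of the manifestly continuous bilinear map $\pi'\circ(\alpha\times\beta)$.
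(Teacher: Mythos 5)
Your proof is correct and follows essentially the same route as the paper: both identify $\alpha\otimes\beta$ as the linearization of the continuous bilinear map $\pi'\circ(\alpha\times\beta)$ and conclude via the universal property of the projective tensor product. You simply spell out the final identification on simple tensors, which the paper leaves implicit.
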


\begin{proof}
\,\,\,If $\pi':E'\times F'\rightarrow E'\otimes F',\,\,\,(e',f')\mapsto e'\otimes f'$ denotes the canonical ``projection", then the claim follows from the fact that $\pi'\circ(\alpha\times\beta)$ is continuous as a composition of continuous maps.
\end{proof}

\begin{proposition}\label{EoF as B-module}
Let $A$ be a unital locally convex algebra, $E$ a locally convex space and $F$ a right locally convex $A$-module. Then the map
\[\rho:E\otimes F\times A\rightarrow E\otimes F,\,\,\,\rho(e\otimes f,a):=e\otimes f.a
\]defines the structure of a right locally convex $A$-module on $E\otimes F$.
\end{proposition}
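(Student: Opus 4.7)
The plan is to show first that $\rho$ extends to a well-defined continuous bilinear map on $(E\otimes F)\times A$, and then to verify the module axioms on simple tensors before extending them by linearity.

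First, I would consider the trilinear map
\[
\mu:E\times F\times A\rightarrow E\otimes F,\,\,\,\mu(e,f,a):=e\otimes (f.a).
\]
The point is that $\mu$ is continuous: it factors as the composition of the continuous map $\id_E\times\rho_F:E\times F\times A\rightarrow E\times F$, where $\rho_F$ denotes the (continuous) action map of the locally convex $A$-module $F$, followed by the canonical projection $\pi:E\times F\rightarrow E\otimes F$, which is continuous by the very definition of the projective tensor product topology (cf. Definition \ref{proj. tensor product III}). Since compositions of continuous maps are continuous and the trilinearity of $\mu$ is clear from its definition, we obtain a continuous trilinear map as claimed.

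Next, I would invoke the universal property of the projective tensor product together with its associativity (cf. Remark \ref{universal property of projective tensor product topology} and Remark \ref{associativity of projective tensor product topology}) to turn $\mu$ into a continuous linear map $\widetilde{\mu}:E\otimes F\otimes A\rightarrow E\otimes F$, which in turn corresponds to a continuous bilinear map
\[
\rho:(E\otimes F)\times A\rightarrow E\otimes F,
\]
agreeing with $\widetilde{\mu}$ under the canonical identification $(E\otimes F)\otimes A\cong E\otimes F\otimes A$. On simple tensors this map satisfies $\rho(e\otimes f,a)=e\otimes f.a$, which is the formula stated in the proposition; hence $\rho$ is indeed a well-defined continuous bilinear extension of the prescription on simple tensors.

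Finally, I would verify the right module axioms. For $a,a'\in A$ and a simple tensor $e\otimes f\in E\otimes F$, one directly computes
\[
\rho(\rho(e\otimes f,a),a')=\rho(e\otimes f.a,a')=e\otimes (f.a).a'=e\otimes f.(aa')=\rho(e\otimes f,aa'),
\]
and similarly $\rho(e\otimes f,1_A)=e\otimes f.1_A=e\otimes f$. Since both sides of each identity are bilinear and continuous in the first argument and both simple tensors span a dense subspace of $E\otimes F$, the identities extend to all of $E\otimes F$; this shows that $\rho$ endows $E\otimes F$ with the structure of a right locally convex $A$-module. I do not expect any serious obstacle here: the whole argument is essentially a bookkeeping exercise in the universal property of $\otimes$, the only subtle point being the legitimate use of associativity to convert the trilinear datum into the desired bilinear action.
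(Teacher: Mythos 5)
Your argument is correct and follows essentially the same route as the paper: both reduce the continuity of $\rho$ to the continuity of the induced linear map on $E\otimes F\otimes A$ via the universal property and the associativity of the projective tensor product. The only cosmetic difference is that you obtain that continuity from the jointly continuous trilinear map $\pi\circ(\id_E\times\rho_F)$, whereas the paper writes the induced map as $\id_E\otimes T$ for the continuous linear map $T:F\otimes A\rightarrow F$ coming from the action and invokes Lemma \ref{continuity of maps EoF to E'oF'}; note also that the module identities need only linearity on simple tensors (which span $E\otimes F$ algebraically), so no density argument is required.
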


\begin{proof}
\,\,\,(i) Suppose that the right $A$-module structure on $F$ is given by the continuous bilinear map 
\[S:F\times A\rightarrow F,\,\,\,f.a:=S(f,a).
\]Then Remark \ref{universal property of projective tensor product topology} implies that the map $S$ induces a continuous linear map $T$ given by 
\[T:F\otimes A\rightarrow F,\,\,\,f\otimes a\mapsto f.a.
\]

(ii) To show that the map $\rho$ defines on $E\otimes F$ the structure of a right locally convex $A$-module, we use Remark \ref{universal property of projective tensor product topology} again: Indeed, the bilinear map $\rho$ is continuous if and only if the linear map 
\[\psi:E\otimes F\otimes A\rightarrow E\otimes F,\,\,\,e\otimes f\otimes a\mapsto e\otimes f.a
\]is continuous. Now, a short observations shows that $\psi=\id_E\otimes T$, and therefore Lemma \ref{continuity of maps EoF to E'oF'} implies that $\psi$ is continuous.
 \end{proof}

\begin{proposition}\label{AoB as l.c. algebra}
Let $A$ and $B$ be two locally convex algebras. Then $A\otimes B$ becomes a locally convex algebra, when equipped with the multiplication
\[m:A\otimes B\times A\otimes B\rightarrow A\otimes B,\,\,\,m(a\otimes b, a'\otimes b'):=aa'\otimes bb'.
\]
\end{proposition}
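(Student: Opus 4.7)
My plan is to reduce the continuity of the multiplication $m$ to the continuity of several linear maps via the universal property of the projective tensor product (Remark \ref{universal property of projective tensor product topology}) and the associativity noted in Remark \ref{associativity of projective tensor product topology}. First, I would use the universal property to lift the continuous multiplications $m_A:A\times A\to A$ and $m_B:B\times B\to B$ to continuous linear maps $\widetilde{m}_A:A\otimes A\to A$ and $\widetilde{m}_B:B\otimes B\to B$. By Lemma \ref{continuity of maps EoF to E'oF'}, their tensor product
\[\widetilde{m}_A\otimes\widetilde{m}_B:(A\otimes A)\otimes(B\otimes B)\longrightarrow A\otimes B\]
is then continuous and linear.

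Next, I would construct a ``flip'' isomorphism
\[\tau:(A\otimes B)\otimes(A\otimes B)\longrightarrow(A\otimes A)\otimes(B\otimes B),\quad (a\otimes b)\otimes(a'\otimes b')\longmapsto (a\otimes a')\otimes(b\otimes b').\]
To do so, I consider the $4$-linear map $A\times B\times A\times B\to(A\otimes A)\otimes(B\otimes B)$ sending $(a,b,a',b')$ to $(a\otimes a')\otimes(b\otimes b')$; by applying the universal property iteratively (using the associativity discussed in Remark \ref{associativity of projective tensor product topology}), this factors through a continuous linear map out of $(A\otimes B)\otimes(A\otimes B)$. The same procedure applied to the ``inverse assignment'' produces a continuous inverse, so $\tau$ is an isomorphism of locally convex spaces.

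Composing, the map $M:=(\widetilde{m}_A\otimes\widetilde{m}_B)\circ\tau:(A\otimes B)\otimes(A\otimes B)\to A\otimes B$ is continuous and linear and satisfies $M((a\otimes b)\otimes(a'\otimes b'))=aa'\otimes bb'$. By the universal property in the other direction, $M$ corresponds to a continuous bilinear map $m:(A\otimes B)\times(A\otimes B)\to A\otimes B$ agreeing with the prescribed formula on simple tensors; in particular $m$ is well defined and jointly continuous. Associativity of $m$ follows by checking it on elementary tensors, where it reduces directly to the associativity of $m_A$ and $m_B$, and then extends to all of $A\otimes B$ by bilinearity and the principle of extension of identities (the set of simple tensors spans $A\otimes B$).

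The main obstacle is step two: setting up the flip isomorphism $\tau$ cleanly. All the ingredients are formal consequences of the universal property, but one must be careful to invoke associativity at the right places and to verify the continuity of the putative inverse map; once $\tau$ is in place, the remainder of the proof is a straightforward assembly of continuous linear maps.
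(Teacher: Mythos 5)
Your proof is correct and follows essentially the same route as the paper: lift $m_A$ and $m_B$ to continuous linear maps on $A\otimes A$ and $B\otimes B$, identify the linearization of $m$ with $\widetilde{m}_A\otimes\widetilde{m}_B$ composed with the canonical flip $(A\otimes B)\otimes(A\otimes B)\cong(A\otimes A)\otimes(B\otimes B)$, and conclude continuity from Lemma \ref{continuity of maps EoF to E'oF'} together with the universal property. The only difference is that the paper simply invokes the flip as ``the canonical isomorphism'', whereas you construct it explicitly from the universal property and associativity — a worthwhile detail, but not a genuinely different argument.
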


\begin{proof}
\,\,\,(i) Suppose that the algebra structure on $A$ is given by the continuous bilinear map 
\[m_A:A\times A\rightarrow A,\,\,\,aa':=m_A(a,a')
\]and that the algebra structure on $B$ is given by the continuous bilinear map 
\[m_B:B\times B\rightarrow B,\,\,\,bb':=m_B(b,b').
\]Then Remark \ref{universal property of projective tensor product topology} implies that the map $m_A$, resp. $m_B$, induces a continuous linear 
\[m^A:A\otimes A\rightarrow A,\,\,\,a\otimes a'\mapsto aa',
\]resp.
\[m^B:B\otimes B\rightarrow B,\,\,\,b\otimes b'\mapsto bb'.
\]

(ii) To show that the map $m$ defines the structure of a locally convex algebra on $A\otimes B$, we use Remark \ref{universal property of projective tensor product topology} again: Indeed, the bilinear map $m$ is continuous if and only if the linear map 
\[n:A\otimes A\otimes B\otimes B\rightarrow A\otimes B,\,\,\,n(a\otimes a'\otimes b\otimes b'):=aa'\otimes bb'
\]is continuous. Here, we have used the canonical isomorphism 
\[A\otimes B\otimes A\otimes B\cong A\otimes A\otimes B\otimes B.
\]Now, a short observations shows that $n=m_A\otimes m_B$, and therefore Lemma \ref{continuity of maps EoF to E'oF'} implies that $m$ is continuous.
\end{proof}

\section{The Projective Tensor Product of Modules}

For the following we recall the projective tensor product topology of Definition \ref{proj. tensor product III}:

\begin{definition}\label{tensor product of $A$ modules}{\bf(The projective tensor product of modules).}\index{Projective!Tensor Product of Modules}
Let $A$ be a unital locally convex algebra, $E$ a right locally convex $A$-module and $F$ a left locally convex $A$-module.  If $I$ is the closure of the subspace of $E\otimes F$ generated by all elements of the form $ea\otimes f-e\otimes af$, $e\in E, f\in F$ and $a\in A$, then the quotient 
\[E\otimes_A F:=E\otimes F/ I
\]is called the (\emph{Hausdorff}) \emph{$A$-tensor product of $E$ and $F$}. The quotient topology of the canonical projection
\[\rho:E\otimes F\rightarrow E\otimes_A F,\,\,\,e\otimes f\mapsto e\otimes_A f:=e\otimes f+I
\]is called the \emph{projective $A$-tensor product topology} on $E\otimes_A F$ and turns $E\otimes_A F$ into a locally convex space. The corresponding completion of $E\otimes_A F$ with respect to this topology is denoted by $E\widehat{\otimes}_AF$.


\end{definition}

\begin{remark}\label{universal prop of tensor prod of mod}{\bf(Universal property of the projective tensor product of modules).} \index{Universal Property of! the Tensor Product of Modules}
We recall that the $A$-tensor product of two locally convex $A$-modules $E$ and $F$ has the following universal property: If 
\[\pr:E\times F\rightarrow E\otimes_A F,\,\,\,(e,f)\mapsto e\otimes_A f
\]denotes the canonical continuous $A$-balanced bilinear ``projection", then each continuous $A$-balanced bilinear map $\alpha:E\times F\rightarrow Z$ into another locally convex space $Z$ factors uniquely through a continuous linear map $\beta: E\otimes_A F\rightarrow Z$, i.e., we have $\alpha=\beta\circ \pr$.
\end{remark}

\begin{proposition}\label{Eo_AF as B-module}
Let $A$ and $B$ be two unital locally convex algebras, $E$ a right locally convex $A$-module and $F$ a locally convex \emph{(}$A,B$\emph{)}-bimodule. Then the map
\[\rho:E\otimes_A F\times B\rightarrow E\otimes_A F,\,\,\,(e\otimes_A f,b)\mapsto e\otimes_A f.b
\]defines the structure of a right locally convex $B$-module on $E\otimes_A F$.
\end{proposition}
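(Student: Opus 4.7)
The plan is to reduce the statement to the already established Proposition \ref{EoF as B-module} and then pass to the quotient by the defining subspace $I$ of $E\otimes_A F$. This is a natural strategy, since $E\otimes_A F$ is by construction the Hausdorff quotient $E\otimes F/I$ (with the quotient topology), so a continuous $B$-action on $E\otimes F$ that preserves $I$ will automatically descend to a continuous $B$-action on $E\otimes_A F$.

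First I would apply Proposition \ref{EoF as B-module} (with the roles of $A$ and $B$ interchanged) to the right locally convex $B$-module $F$: this equips $E\otimes F$ with the structure of a right locally convex $B$-module via the continuous bilinear map
\[\rho':(E\otimes F)\times B\rightarrow E\otimes F,\,\,\,\rho'(e\otimes f,b):=e\otimes(f.b).\]
Next I would check that the closed subspace $I\subseteq E\otimes F$ generated by elements of the form $ea\otimes f-e\otimes af$ (with $a\in A,e\in E,f\in F$) is invariant under the action of every $b\in B$. For the generating elements this is the short computation
\[\rho'(ea\otimes f-e\otimes af,b)=ea\otimes(f.b)-e\otimes(af).b=ea\otimes(f.b)-e\otimes a(f.b),\]
where the last equality uses that $F$ is an $(A,B)$-bimodule, so that the left $A$-action and the right $B$-action on $F$ commute. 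The right-hand side again lies in the generating set of $I$. Since for each fixed $b$ the map $\rho'(\cdot,b)$ is linear and continuous, the image of the whole (closed) subspace $I$ is contained in $I$.

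From this $B$-invariance of $I$ the map $\rho$ is well-defined, and it clearly satisfies the right $B$-module axioms on the algebraic level. It therefore only remains to verify the continuity of $\rho$. For this I would use that the quotient map $\pi:E\otimes F\rightarrow E\otimes_A F$ is continuous and open, so that the product map
\[\pi\times\id_B:(E\otimes F)\times B\rightarrow (E\otimes_A F)\times B\]
is also continuous, surjective and open, hence a quotient map. By construction, $\rho\circ(\pi\times\id_B)=\pi\circ\rho'$, which is continuous as a composition of continuous maps. The universal property of quotient maps then yields the continuity of $\rho$ itself.

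The main obstacle I anticipate is not any single step but rather keeping the bookkeeping clean: one has to be careful that the continuous $B$-action descends in the appropriate sense, which amounts to exploiting that the quotient of a product by (quotient)$\times$(identity) is again a quotient map. Once that topological point is settled, the remainder of the argument is entirely algebraic and reduces to the commuting actions of $A$ and $B$ on the bimodule $F$.
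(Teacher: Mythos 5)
Your proposal is correct and follows essentially the same route as the paper, which simply invokes Proposition \ref{EoF as B-module} together with the definition of the quotient topology on $E\otimes_A F$. You have merely made explicit the two points the paper leaves implicit, namely the $B$-invariance of the closed subspace $I$ (via the commuting $A$- and $B$-actions on the bimodule $F$) and the descent of continuity through the quotient map $\pi\times\id_B$.
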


\begin{proof}
\,\,\,This assertion immediately follows from Proposition \ref{EoF as B-module} and the definition of the quotient topology on $E\otimes_A F$.
\end{proof}


\begin{corollary}\label{mod and ringhom}
Let $A$ and $B$ be two unital locally convex algebras, $E$ a right locally convex $A$-module and $\varphi:A\rightarrow B$ a morphism of locally convex algebras. Then $E\otimes_A B$ is a right locally convex $B$-module.
\end{corollary}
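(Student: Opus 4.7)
The plan is to deduce this corollary directly from Proposition \ref{Eo_AF as B-module} by equipping $B$ with a natural locally convex $(A,B)$-bimodule structure induced by $\varphi$. First I would observe that every unital locally convex algebra $B$ is trivially a right locally convex $B$-module via its own multiplication map $m_B:B\times B\to B$, which is continuous by assumption. The point is to simultaneously endow $B$ with a compatible left $A$-module structure coming from $\varphi$.

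Next I would define the candidate left action
\[\lambda:A\times B\to B,\qquad \lambda(a,b):=\varphi(a)\cdot b.\]
Since $\varphi:A\to B$ is a morphism of locally convex algebras, it is in particular continuous and linear, and the multiplication $m_B$ of $B$ is continuous bilinear. Therefore $\lambda=m_B\circ(\varphi\times\id_B)$ is continuous as a composition of continuous maps, and a routine check using $\varphi(aa')=\varphi(a)\varphi(a')$ and $\varphi(1_A)=1_B$ shows that $\lambda$ satisfies the left module axioms. This turns $B$ into a left locally convex $A$-module.

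The compatibility of the two actions — i.e., the associativity condition
\[\lambda(a,b)\cdot b' = \lambda(a,b\cdot b')\qquad\text{for all }a\in A,\ b,b'\in B\]
required for a bimodule — follows immediately from the associativity of $m_B$, since both sides equal $\varphi(a)bb'$. Hence $B$ carries the structure of a locally convex $(A,B)$-bimodule in the sense used in Proposition \ref{Eo_AF as B-module}.

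Finally, I would invoke Proposition \ref{Eo_AF as B-module} with this bimodule $B$ to conclude that the map
\[\rho:E\otimes_A B\times B\to E\otimes_A B,\qquad (e\otimes_A b,b')\mapsto e\otimes_A(b\cdot b')\]
defines the structure of a right locally convex $B$-module on $E\otimes_A B$. I do not anticipate any real obstacle here; the corollary is essentially a bookkeeping observation that $\varphi$ restores the missing $A$-action on $B$ so that the hypotheses of Proposition \ref{Eo_AF as B-module} are met. The only minor subtlety worth verifying explicitly is the continuity of $\lambda$, which as noted above is immediate from the continuity of $\varphi$ and of the multiplication on $B$.
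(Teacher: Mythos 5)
Your proof is correct and follows exactly the paper's own route: endow $B$ with the right $B$-module structure from its multiplication and the left $A$-module structure $a.b:=\varphi(a)b$, then apply Proposition \ref{Eo_AF as B-module}. The continuity and compatibility checks you spell out are the details the paper leaves implicit.
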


\begin{proof}
\,\,\,Obviously $B$ is a right locally convex $B$-module. The left locally convex $A$-module structure is given by $a.b:=\varphi(a)b$ for all $a\in A$ and $b\in B$. Now, the claim follows from Proposition \ref{Eo_AF as B-module}.
\end{proof}

\begin{remark}\label{mod and ringhom I}
If $A$ and $B$ are two unital algebras, $E$ a finitely generated projective $A$-module and $\varphi:A\rightarrow B$ a morphism of algebras, then $E\otimes_A B$ is a finitely generated projective $B$-module. In particular, if $E\cong p\cdot A^n$ for some $n\in\mathbb{N}$ and $p\in\Idem_n(A))$, then $E\otimes_A B\cong\varphi(p)\cdot B^n$, where $\varphi(p)$ denotes the matrix in $\Idem_n(B)$ obtained from $p$ by taking in each entry the image of the corresponding entry in $p$ with respect to the map $\varphi$.

Indeed, in view of Corollary \ref{mod and ringhom}, it remains to show that $E\otimes_A B$ is finitely generated and projective: By Corollary \ref{f.g.pr.mod}, $E\cong p\cdot A^n$ (algebraically) for some $n\in\mathbb{N}$ and $p\in\Idem_n(A)$. Then $E':=(\id_A-p)\cdot A^n$ is a (finitely generated and projective) $A$-module such that
\[(E\otimes_A B)\oplus(E'\otimes_A B)\cong A^n\otimes_A B\cong B^n,
\]i.e., $E\otimes_A B$ is a direct summand of a free $B$-module of rank $n$ and hence a finitely generated projective $B$-module (cf. Corollary \ref{f.g.pr.mod} again). If $E\cong p\cdot A^n$ for some $n\in\mathbb{N}$ and $p\in\Idem_n(A)$, then
\[E\otimes_A B\cong p\cdot A^n\otimes_A B\cong(p\otimes_A\id_B)\cdot(A^n\otimes_A B)\cong\varphi(p)\cdot B^n.
\]
\end{remark}

\section{Rudiments on the Smooth Exponential Law}\label{Rudiments on the Smooth Exponential Law}

For arbitrary sets $X$ and $Y$, let $Y^X$ be the set of all mappings from $X$ to $Y$. Then the following ``exponential law" holds: For any sets $X,Y$ and $Z$, we have
\[Z^{X\times Y}\cong(Z^Y)^X
\]as sets. To be more specific, the map
\[Z^{X\times Y}\cong(Z^Y)^X,\,\,\,f\rightarrow f^{\vee}
\]is a bijection, where
\[f^{\vee}:X\rightarrow Z^Y,\,\,\,f^{\vee}(x):=f(x,\cdot).
\]The inverse map is given by
\[(Z^Y)^X\rightarrow Z^{X\times Y},\,\,\,g\mapsto g^{\wedge},
\]where
\[g^{\wedge}:X\times Y\rightarrow Z,\,\,\,g^{\wedge}(x,y):=g(x)(y).
\]Next we consider an important topology for spaces of smooth functions:

\begin{definition}\label{smooth compact open topology}{\bf(Smooth compact open topology).}\index{Smooth!Compact Open Topology}
If $M$ is a locally convex manifold and $E$ a locally convex space, then the \emph{smooth compact open topology} on $C^{\infty}(M,E)$ is defined by the embedding
\[C^{\infty}(M,E)\hookrightarrow\prod_{n\in\mathbb{N}_0}C(T^nM,T^nE),\,\,\,f\mapsto(T^nf)_{n\in\mathbb{N}_0},
\]where the spaces $C(T^nM,T^nE)$ carry the compact open topology. Since $T^nE$ is a locally convex space isomorphic to $E^{2^n}$, the spaces $C(T^nM,T^nE)$ are locally convex and we thus obtain a locally convex topology on $C^{\infty}(M,E)$.
\end{definition}

It is natural to ask if there exists an ``exponential law"  for the space of smooth functions endowed with the smooth compact open topology, i.e., if we always have
\begin{align}
C^{\infty}(M\times N,E)\cong C^{\infty}(M,C^{\infty}(N,E))\label{exp law}
\end{align}
as a locally convex space, for all locally convex smooth manifolds $M$, $N$ and locally convex spaces $E$. In general, the answer is \emph{no}. Nevertheless, it can be shown that \ref{exp law} holds if $N$ is a finite-dimensional manifold over $\mathbb{R}$. To be more precise:

\begin{lemma}\label{smooth exp law}
Let $M$ be a real locally convex smooth manifold, $N$ a finite-dimensional smooth manifold and $E$ a topological 
vector space. Then a map $f:M\rightarrow C^{\infty}(N,E)$ is smooth if and only if the map
\[f^{\wedge}:M\times N\rightarrow E,\,\,\,f^{\wedge}(m,n):=f(m)(n)
\]is smooth. Moreover, the following map is an isomorphism of locally convex spaces:
\[C^{\infty}(M,C^{\infty}(N,E))\rightarrow C^{\infty}(M\times N,E),\,\,\,f\mapsto f^{\wedge}.
\]
\end{lemma}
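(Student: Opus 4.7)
The plan is to reduce both implications to the smoothness of the evaluation map
\[\mathrm{ev}_N : C^\infty(N,E) \times N \to E, \quad (g,n) \mapsto g(n),\]
which is available from the citation [NeWa07], Proposition I.2 already used repeatedly in the excerpt, together with the standard description of the smooth compact open topology via the embedding into $\prod_{k\in\mathbb{N}_0} C(T^kN, T^kE)$.

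First I would prove the easy direction. If $f : M \to C^\infty(N,E)$ is smooth, then
\[f^\wedge = \mathrm{ev}_N \circ (f \times \mathrm{id}_N) : M \times N \to E\]
is smooth as a composition of smooth maps. Conversely, suppose $f^\wedge : M \times N \to E$ is smooth. To show $f$ is smooth into $C^\infty(N,E)$ with the smooth compact open topology, it suffices, by the definition of this topology, to verify that each composite $M \to C(T^kN, T^kE)$, $m \mapsto T^k(f(m))$, is continuous (this then yields smoothness by iterating, once one sets up the tangent map $Tf$). The key computation is the identification
\[T^k(f(m)) = (T^k f^\wedge)(0_m, \cdot) : T^kN \to T^kE,\]
where we regard $M$-directions as zero; the smoothness of $f^\wedge$ together with joint continuity of partial tangent maps in finite-dimensional directions (this is where $\dim N < \infty$ is crucial) implies that the assignment $m \mapsto T^k(f(m))$ is continuous with respect to the compact open topology. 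Iterating this argument to all orders — or more cleanly, arguing by induction on the differentiability class using the chain rule for $C^r$-maps between locally convex spaces — yields smoothness of $f$.

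For the topological statement I would treat the two halves separately. Continuity of $\wedge : C^\infty(M, C^\infty(N,E)) \to C^\infty(M\times N, E)$ follows because the smooth compact open topology on the target is the initial topology with respect to the tangent maps $T^k$, and each $T^k(f^\wedge)$ is a continuous function of $(T^k f, T^k \mathrm{ev}_N)$ via composition, hence depends continuously on $f$ in the smooth compact open topology of the source. For continuity of the inverse $\vee$, I would exploit that a seminorm on $C^\infty(M, C^\infty(N,E))$ controlling $T^k f$ on a compact $K \subseteq T^kM$ translates, using the compact open topology on $C(T^kN, T^kE)$, into a seminorm controlling $T^k f^\wedge$ on products of compacta in $T^kM \times T^kN$, and conversely — this uses the decomposition $T^k(M\times N) \cong T^kM \times T^kN$ and the product structure of compacta.

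The main obstacle will be the converse smoothness direction: extracting smoothness of the ``curried'' map $f$ into the infinite-dimensional space $C^\infty(N,E)$ from smoothness of the joint map $f^\wedge$. The delicate point is that differentiability in Michal–Bastiani's sense (the one adopted in Definition \ref{infinite-dimensional manifolds}) requires continuity of the differentials as maps on product spaces, and translating joint smoothness of $f^\wedge$ into such continuity of $df, d^2f, \ldots$ valued in the compact open topology genuinely uses finite-dimensionality of $N$, because one must invoke joint continuity of higher derivatives of $f^\wedge$ on compact sets of $M \times N$ — a property that can fail when $N$ is infinite-dimensional. Hence my proof would carefully separate the $M$- and $N$-variables when computing $T^k f$ and invoke the compactness of $T^k$-images of chosen compacta in $N$, which is automatic only because $N$ is finite-dimensional. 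Once this is settled, the isomorphism of locally convex spaces follows by the open mapping considerations implicit in comparing the two families of defining seminorms.
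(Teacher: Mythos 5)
The paper itself does not prove this lemma: its ``proof'' consists of the single citation to [NeWa07], Appendix, Lemma A3, so there is no internal argument to compare against. Your sketch reconstructs what is essentially the standard argument behind that citation, and its architecture is sound: the forward implication via smoothness of $\ev_N$ (valid precisely because $N$ is finite-dimensional), the converse via identifying the derivatives of $f$ with partial derivatives of $f^{\wedge}$, and the topological isomorphism via comparison of the defining seminorm families using $T^k(M\times N)\cong T^kM\times T^kN$. One step deserves sharpening. In the converse direction, continuity of each map $m\mapsto T^k(f(m))$ into $C(T^kN,T^kE)$ yields only \emph{continuity} of $f$ into $C^{\infty}(N,E)$; in the Michal--Bastiani calculus of Definition \ref{infinite-dimensional manifolds} you must separately establish that the difference quotients $\frac{1}{t}(f(m+tv)-f(m))$ (computed in a chart of $M$) actually converge in the smooth compact open topology, i.e.\ uniformly on compacta together with all $N$-derivatives. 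The standard device is the representation $\frac{1}{t}(f^{\wedge}(m+tv,n)-f^{\wedge}(m,n))=\int_0^1 d_1f^{\wedge}(m+stv,n)(v)\,ds$, valid by the fundamental theorem of calculus applied to the curve $s\mapsto f^{\wedge}(m+stv,n)$; continuity of $d_1f^{\wedge}$ and local compactness of $N$ then give the required uniform convergence on compact subsets of $T^kN$, which is exactly where finite-dimensionality of $N$ is consumed, as you correctly anticipate in your closing paragraph. With that step made explicit the induction on the order of differentiability closes, and the seminorm comparison you describe yields the isomorphism of topological vector spaces, so your route is a faithful (if compressed) version of the cited proof rather than a genuinely different one.
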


\begin{proof}
\,\,\,A proof can be found in [NeWa07], Appendix, Lemma A3.
\end{proof}

\section{Function Spaces with Values in Locally Convex Spaces}

\begin{proposition}\label{proj. tensor product I}
Let $M$ be a manifold and $E$ a locally convex space. The subspace generated by the image of the bilinear map $p:C^{\infty}(M)\times E\rightarrow C^{\infty}(M,E),\,\,\,(f,e)\mapsto f\cdot e$ consists of functions whose image is contained in a finite dimensional subspace of $E$, and will denoted by $C_{\text{\emph{fin}}}^{\infty}(M,E)$ $\mathcal{T}$.\sindex[n]{$C^{\infty}(M,E)$}\sindex[n]{$C_{\text{fin}}^{\infty}(M,E)$}
\end{proposition}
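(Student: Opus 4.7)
The plan is to establish the equality of two sets: the subspace $S$ generated by $\{f\cdot e : f \in C^{\infty}(M),\, e \in E\}$ and the set $T$ of all smooth maps $F:M \to E$ whose image lies in some finite-dimensional subspace of $E$. I will prove $S \subseteq T$ and $T \subseteq S$.

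First I would handle the inclusion $S \subseteq T$, which is essentially formal. Any element of $S$ is a finite sum $F = \sum_{i=1}^n f_i \cdot e_i$ with $f_i \in C^{\infty}(M)$ and $e_i \in E$. By construction $F(M) \subseteq \operatorname{span}_{\mathbb{K}}(e_1,\ldots,e_n)$, which is a finite-dimensional subspace of $E$. One also needs to remark that $f_i \cdot e_i$ is smooth as an $E$-valued function, which is immediate since it factors as $m \mapsto f_i(m)$ followed by the continuous linear map $\mathbb{K} \to E$, $\lambda \mapsto \lambda e_i$.

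Next I would treat the reverse inclusion $T \subseteq S$, which is the substantive direction. Given $F \in C^{\infty}(M,E)$ with $F(M)$ contained in a finite-dimensional subspace $V \subseteq E$ of dimension $n$, pick a basis $e_1,\ldots,e_n$ of $V$ and let $\varphi_1,\ldots,\varphi_n \in V^{*}$ denote the dual basis. By the Hahn--Banach theorem for locally convex spaces, each $\varphi_i$ extends to a continuous linear functional $\widetilde{\varphi}_i : E \to \mathbb{K}$ (this uses that $V$, being finite-dimensional, is closed in the Hausdorff locally convex space $E$ and that each $\varphi_i$ is continuous on $V$). Setting $f_i := \widetilde{\varphi}_i \circ F \in C^{\infty}(M)$ (smooth as a composition of a smooth map with a continuous linear functional), we obtain for every $m \in M$ the identity
\[
F(m) \;=\; \sum_{i=1}^{n} \varphi_i(F(m))\, e_i \;=\; \sum_{i=1}^{n} f_i(m)\, e_i,
\]
so that $F = \sum_{i=1}^{n} f_i \cdot e_i \in S$.

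The only non-trivial ingredient is the extension step in the second inclusion, and this is the main place where a hypothesis is used: one needs that $E$ is a Hausdorff locally convex space so that Hahn--Banach applies and the continuous dual $E'$ separates points of the finite-dimensional subspace $V$. Everything else in the argument is bookkeeping: the first inclusion is algebraic, and the verification that $f_i \in C^{\infty}(M)$ reduces to the standard fact that continuous linear maps preserve smoothness in the convenient calculus framework set up in Definition~\ref{infinite-dimensional manifolds}. I expect no further obstacles and consider the statement essentially a structural consequence of Hahn--Banach combined with the definition of smoothness for vector-valued functions.
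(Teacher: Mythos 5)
Your proof is correct and follows essentially the same route as the paper's: both directions are handled by the same two inclusions, with the reverse inclusion resting on expanding $F$ in a basis of the finite-dimensional subspace containing its image. The only difference is that the paper simply asserts the coordinate functions $f_i$ are smooth, whereas you justify this via Hahn--Banach extension of the dual basis functionals -- a worthwhile detail, and correct under the standing (Hausdorff) assumptions on locally convex spaces.
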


\begin{proof}
\,\,\,Clearly the function $f\cdot e$ has its image in a finite-dimensional (in fact, a one-dimensional) subspace.

If, on the other hand, $f\in C^{\infty}(M,E)$ is a function whose image is contained in a finite dimensional subspace $E_0$ of $E$, then we may write $f=\sum_{i=1}^k f_i\cdot e_i$ for $k=\dim E_0$, a basis $e_1,\ldots,e_k$ of $E_0$ and $f_i\in C^{\infty}(M)$. Thus, the function $f$ belongs to the subspace of $C^{\infty}(M,E)$ generated by the functions of the form $f\cdot e$. 
\end{proof}

\begin{proposition}\label{proj. tensor product II}
Let $M$ be a manifold and $E$ a locally convex space. Then the subspace $C_{\text{\emph{fin}}}^{\infty}(M,E)$ of Proposition \ref{proj. tensor product I} is a dense subspace of $C^{\infty}(M,E)$.
\end{proposition}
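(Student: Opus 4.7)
\,\,\,\textbf{(Plan.)} The plan is to show directly that any $f \in C^{\infty}(M,E)$ can be approximated in the smooth compact open topology by a finite sum of the form $\sum_{i} \chi_i \cdot e_i$ with $\chi_i \in C^{\infty}(M)$ and $e_i \in E$. A basic $0$-neighbourhood of $f$ has the form
\[
W = \bigcap_{n=0}^{N}\bigl\{g \in C^{\infty}(M,E) : \sup_{v \in K_n} q_n\bigl(T^n g(v) - T^n f(v)\bigr) < \epsilon_n\bigr\}
\]
for some $N \in \mathbb{N}_0$, compact sets $K_n \subseteq T^n M$, continuous seminorms $q_n$ on $T^n E$, and $\epsilon_n > 0$. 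Writing $\pi_n : T^n M \to M$ for the canonical projection, the set $L := \bigcup_{n=0}^N \pi_n(K_n) \subseteq M$ is compact, and only the behaviour of $g$ on a neighbourhood of $L$ matters for membership in $W$. The goal is therefore to construct $g \in C^{\infty}_{\text{fin}}(M,E)$ agreeing with $f$ up to order $N$ to arbitrary precision on $L$.

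Next I would fix a finite cover of $L$ by relatively compact chart domains $(U_\alpha,\varphi_\alpha)_{\alpha=1}^k$ with a subordinate smooth partition of unity $(\chi_\alpha)$, chosen so that the supports $\mathrm{supp}(\chi_\alpha) \subset U_\alpha$ can be refined arbitrarily finely, and $\sum_\alpha \chi_\alpha \equiv 1$ on an open neighbourhood of $L$. Refining further, I would partition $\mathrm{supp}(\chi_\alpha)$ into small pieces with a sub-partition of unity $(\psi_{\alpha,j})_j$ and center points $x_{\alpha,j}$, the pieces having sufficiently small diameter $\delta$ (to be fixed in the last step). For each $(\alpha,j)$, let $P_{\alpha,j}$ denote the Taylor polynomial of order $N$ of $f \circ \varphi_\alpha^{-1}$ centred at $\varphi_\alpha(x_{\alpha,j})$; its coefficients live in the finite-dimensional subspace
\[
V_{\alpha,j} := \mathrm{span}\bigl\{(\partial^{\beta}(f \circ \varphi_\alpha^{-1}))(\varphi_\alpha(x_{\alpha,j})) : |\beta| \leq N\bigr\} \subseteq E.
\]
Setting $g := \sum_{\alpha,j} \chi_\alpha \psi_{\alpha,j} \cdot (P_{\alpha,j} \circ \varphi_\alpha)$ (extended by zero outside the chart domains) produces a smooth function whose image lies in the finite-dimensional subspace $\sum_{\alpha,j} V_{\alpha,j}$, so $g \in C^{\infty}_{\text{fin}}(M,E)$.

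The hard part, and the main obstacle, is the quantitative error estimate. On a neighbourhood of $L$ we have $f - g = \sum_{\alpha,j} \chi_\alpha \psi_{\alpha,j} \cdot \bigl(f - P_{\alpha,j} \circ \varphi_\alpha\bigr)$. By Taylor's theorem applied fibrewise to the seminorm $q_n$, for $|\beta| \leq N$ the quantity $q_n \circ T^n$ applied to $f - P_{\alpha,j} \circ \varphi_\alpha$ is controlled by $C(\beta) \cdot \delta$ uniformly on the piece containing $x_{\alpha,j}$, the constant depending on suprema of derivatives of $f$ of order $N+1$ on the compact set $L$. Applying the Leibniz rule to the product with $\chi_\alpha \psi_{\alpha,j}$ (whose derivatives are bounded by a constant depending only on the chosen partition of unity), one obtains a uniform bound of the form $\sup_{v \in K_n} q_n(T^n(f-g)(v)) \leq C_n \cdot \delta$, where $C_n$ depends on $f$, the cover and the partition of unity, but not on $\delta$. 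Choosing $\delta$ small enough that $C_n \delta < \epsilon_n$ for all $n \leq N$ yields $g \in f + W$, and the cover and partition of unity can be refined enough to achieve this since the total number of pieces appearing in $g$ remains finite. This establishes density of $C^{\infty}_{\text{fin}}(M,E)$ in $C^{\infty}(M,E)$.
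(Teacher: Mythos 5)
Your route is genuinely different from the paper's. The paper reduces to the local case $M=U\subseteq\mathbb{R}^n$, which it does not prove but cites from Treves ([Tre67], Proposition 44.2), and then globalizes in two steps: it first shows that compactly supported functions are dense (exhaustion by relatively compact open sets plus cutoffs $g_nf\to f$), then covers the support by finitely many charts, applies the local result on each chart and glues with a finite partition of unity. You instead give a single self-contained construction: a basic neighbourhood of $f$ only constrains derivatives up to some finite order $N$ on a compact set $L$, and you approximate $f$ there by gluing order-$N$ Taylor polynomials centred at the points of a $\delta$-fine sub-partition. This buys an elementary proof that does not outsource the analytic content to a citation; the price is that you must carry out the quantitative error estimate yourself, and that is where your write-up has a concrete problem.

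The gap: you assert that the derivatives of the cutoffs $\chi_\alpha\psi_{\alpha,j}$ are ``bounded by a constant depending only on the chosen partition of unity'' and conclude a bound $C_n\cdot\delta$ with $C_n$ independent of $\delta$. But the sub-partition $(\psi_{\alpha,j})_j$ is chosen with mesh $\delta$, so its derivatives necessarily blow up: $|\partial^{\gamma}\psi_{\alpha,j}|=O(\delta^{-|\gamma|})$. As justified, your Leibniz estimate does not close. It can be repaired, but only by using the full strength of Taylor's theorem rather than the crude bound $O(\delta)$ for the remainder: for $|\beta-\gamma|\leq N$ one has $q_n\bigl(\partial^{\beta-\gamma}(f-P_{\alpha,j}\circ\varphi_\alpha)\bigr)=O(\delta^{\,N-|\beta-\gamma|+1})$ on the piece carrying $\psi_{\alpha,j}$, so each Leibniz term is $O(\delta^{-|\gamma|})\cdot O(\delta^{\,N-|\beta|+|\gamma|+1})=O(\delta^{\,N-|\beta|+1})=O(\delta)$ since $|\beta|\leq N$; together with bounded overlap of the pieces this yields the desired uniform bound. (Note also that the mean value inequality, hence the Taylor estimate, holds for an arbitrary locally convex $E$ by Hahn--Banach applied to the seminorms $q_n$, so no completeness assumption is needed.) With this correction your argument is sound.
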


\begin{proof}
\,\,\,We divide the proof of this proposition into five parts:

(i) If $n\in\mathbb{N}$ and $M=U\subseteq\mathbb{R}^n$ is an open subset, then the assertion follows from [Tre67], Proposition 44.2. 

(ii) To get the result for a general manifold $M$, we first have to show that the space $C^{\infty}_c(M,E)$ of smooth functions with compact support is dense in $C^{\infty}(M,E)$: Indeed, let $(U_n)_{n\in\mathbb{N}}$ be an increasing sequence of relatively compact open subsets of $M$ with $\bigcup_{n\in\mathbb{N}}U_n=M$. For each $n\in\mathbb{N}$ we choose a compactly supported smooth function $g_n$ which is equal to $1$ on $U_n$. Then each $f\in C^{\infty}(M,E)$ is the limit of the sequence $(g_n f)_{n\in\mathbb{N}}$.

(iii) Now, if $f\in C^{\infty}_c(M,E)$ and $K:=\supp(f)$, then there exists a finite open cover $U_1,\ldots,U_m$ of $K$ by charts of $M$ and thus a smooth partition of unity subordinated to this covering, i.e., smooth $\mathbb{R}$-valued functions $\psi_1,\ldots,\psi_m$ on $M$ such that
\[\supp(\psi_i)\subseteq U_i\,\,\,\text{and}\,\,\,\sum_{i=1}^m\psi_i=1
\]for all $i=1,\ldots m$. 

(iv) Next, we write $f_i$ for the restriction of $f$ to $U_i$, i.e., $f_i:=f_{\mid U_i}$ for $i=1,\ldots,m$. Then $f_i\in C^{\infty}(U_i,E)$ and by part (i) of the proof there exists a sequence $(h^i_n)_{n\in\mathbb{N}}$ in $C_{\text{fin}}^{\infty}(U_i,E)$ such that $h^n_i$ converges to $f_i$ in $C^{\infty}(U_i,E)$. Further, a short observation shows that $\psi_i h^i_n\in C_{\text{fin}}^{\infty}(M,E)$ and that $\psi_ih^i_n$ converges to $\psi_if_i$ in $C^{\infty}(M,E)$.

(v) Since $\psi_if_i=\psi_i f$, we conclude from part (iv) that
\[\sum_{i=1}^m\psi_ih^i_n\to\sum_{i=1}^m\psi_if_i=\sum_{i=1}^m\psi_if=f
\]$C^{\infty}(M,E)$. In particular, $C_{\text{\emph{fin}}}^{\infty}(M,E)$ is dense in $C^{\infty}(M,E)$.

\end{proof}

\begin{theorem}\label{proj. tensor product IV}

Let $M$ be a manifold and $E$ a complete locally convex space. Then the following assertions hold:
\begin{itemize}
\item[\emph{(a)}]
The locally convex space $C^{\infty}(M,E)$ is complete.
\item[\emph{(b)}]
The map
\[\phi:C^{\infty}(M)\otimes E\rightarrow C_{\text{\emph{fin}}}^{\infty}(M,E),\,\,\,f\otimes e\mapsto f\cdot e
\]is an isomorphism of locally convex spaces and can therefore be extended to an isomorphism 
\[\Phi:C^{\infty}(M)\widehat{\otimes}E\rightarrow C^{\infty}(M,E)
\]of complete locally convex spaces.
\end{itemize}
\end{theorem}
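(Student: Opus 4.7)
The plan is to prove the two parts in order, with part (a) providing the setup that allows the extension in part (b).

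For part (a), I would exploit the definition of the smooth compact open topology, which embeds $C^{\infty}(M,E)$ into $\prod_{n\in\mathbb{N}_0} C(T^nM, T^nE)$ via $f \mapsto (T^nf)_{n\in\mathbb{N}_0}$. Since $T^nE \cong E^{2^n}$ is complete whenever $E$ is, and the compact open topology on $C(X,F)$ is complete for complete $F$, each factor of the product is complete; a product of complete spaces is complete. It therefore suffices to show the image of the embedding is closed. This follows because being of the form $(T^nf)_n$ for a single $f \in C^{\infty}(M,E)$ is characterised by a family of pointwise compatibility conditions between the successive tangent maps (the standard functorial identities for $T^n$), each of which cuts out a closed subset.

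For part (b), the map $\phi$ is well-defined and surjective onto $C^{\infty}_{\text{fin}}(M,E)$ by the universal property of the algebraic tensor product together with Proposition \ref{proj. tensor product I}. Injectivity is elementary: writing an element as $\sum_i f_i \otimes e_i$ with $(e_i)$ linearly independent and assuming $\sum_i f_i \cdot e_i = 0$, evaluation at any $m \in M$ yields $\sum_i f_i(m) e_i = 0$, forcing $f_i \equiv 0$. Continuity of $\phi$ follows from Remark \ref{universal property of projective tensor product topology} once one verifies that the bilinear map $C^{\infty}(M) \times E \to C^{\infty}(M,E)$, $(f,e) \mapsto f\cdot e$ is continuous; this in turn reduces to controlling the seminorms $\sup_{x\in K}\|T^n(f\cdot e)(x)\|$ on compacta $K \subseteq T^nM$ by seminorms of $f$ on $T^nM$ and of $e$ in $E$ via the Leibniz rule applied to iterated tangent maps.

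The main obstacle is showing that $\phi^{-1}$ is continuous, i.e., that the projective tensor topology on $C^{\infty}(M) \otimes E$ coincides with the subspace topology inherited from $C^{\infty}(M,E)$ on $C^{\infty}_{\text{fin}}(M,E)$. This is the classical Schwartz–Grothendieck identity, and the plan is to reduce it to the local model: using a locally finite atlas and a subordinate partition of unity, one reduces to $M = U$ an open subset of $\mathbb{R}^n$ where, as noted in the proof of Proposition \ref{proj. tensor product II}, the corresponding statement is available from [Tre67], Proposition 44.2. The reduction is then a matter of showing that multiplication by a fixed cutoff function and restriction to charts are continuous maps on both sides, and that the seminorms defining the smooth compact open topology on $C^{\infty}(M,E)$ are determined, up to equivalence, by their restrictions to chart domains.

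Finally, to obtain $\Phi$, I would combine part (a) with Proposition \ref{proj. tensor product II}: $C^{\infty}(M,E)$ is complete and contains $C^{\infty}_{\text{fin}}(M,E)$ as a dense subspace, hence is a completion of it. Since $C^{\infty}(M)\widehat{\otimes} E$ is by definition the completion of $C^{\infty}(M) \otimes E$, the topological isomorphism $\phi$ extends by the universal property of completion (Proposition \ref{uniform cont II}) to the claimed isomorphism $\Phi$ of complete locally convex spaces.
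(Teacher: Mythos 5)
Your proposal is correct in outline, but it takes a genuinely different route from the paper. The paper disposes of (a) by citing [Gl05], Proposition 4.2.15, and of (b) by invoking Grothendieck's global theorem ([Gro55], Chapter 2, \S 3.3, Theorem 13), explicitly flagging nuclearity of $C^{\infty}(M)$ as the crucial input; you instead reconstruct (a) from the defining embedding into $\prod_{n}C(T^nM,T^nE)$ and reduce (b) to the local model $C^{\infty}(U)\widehat{\otimes}E\cong C^{\infty}(U,E)$ for $U\subseteq\mathbb{R}^n$ open, gluing with a partition of unity. Your version is more self-contained and makes visible where the real work sits (the continuity of $\phi^{-1}$, i.e.\ the coincidence of the projective topology with the topology induced from $C^{\infty}(M,E)$, which is precisely what nuclearity buys), at the cost of a somewhat delicate localization argument that the paper avoids by citing the global statement directly.

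Two points need repair before the sketch is a proof. First, in (a) the image of the embedding is not cut out by ``pointwise compatibility conditions'' or functorial identities for $T^n$: the condition that a sequence $(g_n)_n$ be of the form $(T^ng_0)_n$ is a differentiability condition, and its stability under uniform convergence on compacta is the theorem on differentiating limits (recover $g_0$ from the limit $g_1$ of the derivatives by integration), which is exactly where completeness of $E$ enters in part (a) --- not merely in the completeness of the factors $C(T^nM,T^nE)$. Second, the local input you need is [Tre67], Theorem 44.1, the topological isomorphism $C^{\infty}(\Omega)\widehat{\otimes}E\cong C^{\infty}(\Omega,E)$ proved via nuclearity; Proposition 44.2, which the paper uses in Proposition \ref{proj. tensor product II}, only gives density of $C_{\text{fin}}^{\infty}$ and does not by itself yield continuity of $\phi^{-1}$. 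Finally, in the gluing step you should record why the partition of unity localizes on the tensor-product side as well: a projective seminorm $p\otimes_{\pi}q$ in which $p$ only involves a compact subset $K$ of (finitely many) $T^nM$ vanishes on tensors whose function factors vanish near $K$, so only finitely many charts contribute and each contribution is controlled by the local isomorphism. With these corrections your argument goes through.
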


\begin{proof}
\,\,\,(a) The first assertion follows, for example, from [Gl05], Proposition 4.2.15 (a).

(b) The second assertion can be found as an example in [Gro55], Chapter 2, \S 3.3, Theorem 13. The crucial point here is to use the fact that $C^{\infty}(M)$ is a nuclear space.
\end{proof}

\begin{remark}\label{proj. tensor product V}
Note that the isomorphism $\phi$ of Theorem \ref{proj. tensor product IV} (b) means that the smooth compact open topology and the projective tensor product topology coincides on $C^{\infty}(M)\otimes E$\sindex[n]{$C^{\infty}(M)\otimes E$}.
\end{remark}

\begin{remark}\label{fibres products of manifolds}{\bf(The fibred product of manifolds).}\index{Fibred Product of Manifolds}
Let $N$, $N'$ and $M$ be compact manifolds. Further, let $\pi:N\rightarrow M$ and $\pi':N'\rightarrow M$ be smooth maps. If the fibred product
\[N\times_M N':=\{(n,n')\in N\times N':\,\pi(n)=\pi'(n')\}
\]is a (compact) submanifold of $N\times N'$ (this is, for example, the case if one the smooth maps $\pi$, $\pi'$ is a submersion), then
\[C^{\infty}(N)\widehat{\otimes}_{C^{\infty}(M)}C^{\infty}(N')\cong C^{\infty}(N\times_M N')
\]holds as an isomorphism of unital Fr\'echet algebras. The corresponding isomorphism is on simple tensors defined by
\[f\otimes g\mapsto (f\circ\pr_N)\cdot(g\circ\pr_{N'}).
\]Indeed, a nice reference for this statement is the scholium of [Ky87]. 
\end{remark}


In the remaining part we will show that the space of continuous functions on a topological space with values in a locally convex algebra is again a locally convex algebra. For this we need the following lemma:

\begin{lemma}\label{multiplication and seminorns}
Let $A$ be an algebra endowed with a locally convex topology. Then the multiplication on $A$ is continuous if and only if for each continuous seminorm $p$ on $A$ there exists a continuous seminorm $q$ on $A$ with
\[p(ab)\leq q(a)q(b)\,\,\,\text{for all}\,\,\,a,b\in A.
\]
\end{lemma}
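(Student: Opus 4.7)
The plan is to prove both directions by working directly with seminorm estimates, using that the topology of $A$ is generated by its continuous seminorms and that continuity of bilinear maps between locally convex spaces can be tested through seminorm inequalities.

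For the easy direction ($\Leftarrow$), I would verify continuity of the multiplication $m\colon A\times A\rightarrow A$ at an arbitrary point $(a_0,b_0)$ by means of the algebraic identity $ab-a_0b_0=(a-a_0)(b-b_0)+a_0(b-b_0)+(a-a_0)b_0$. Given a continuous seminorm $p$, pick the continuous seminorm $q$ provided by the hypothesis; then the estimate $p(ab-a_0b_0)\leq q(a-a_0)q(b-b_0)+q(a_0)q(b-b_0)+q(a-a_0)q(b_0)$ makes the right-hand side small as soon as $q(a-a_0)$ and $q(b-b_0)$ are small. Since the continuous seminorms on $A$ generate its topology, this gives continuity of $m$ at $(a_0,b_0)$.

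For the harder direction ($\Rightarrow$), I would start with continuity of $m$ at the origin $(0,0)$. For a continuous seminorm $p$, the preimage of $\{c\in A\colon p(c)\leq 1\}$ contains a $0$-neighbourhood in $A\times A$, which may be taken of the form $U\times U$ with $U=\{a\in A\colon q_0(a)\leq 1\}$ for some continuous seminorm $q_0$ on $A$. Thus $q_0(a),q_0(b)\leq 1$ implies $p(ab)\leq 1$. A standard rescaling argument then extends this to the desired inequality: whenever $q_0(a)>0$ and $q_0(b)>0$, I would apply the estimate to $a/q_0(a)$ and $b/q_0(b)$ to obtain $p(ab)\leq q_0(a)q_0(b)$.

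The only subtle point — and the main obstacle in this direction — is handling the degenerate case where $q_0(a)=0$ or $q_0(b)=0$, which is needed to get an honest inequality valid on all of $A\times A$ rather than only on a dense part of it. I would deal with this by a scaling trick: if $q_0(a)=0$, then $q_0(\lambda a)=0\leq 1$ for every $\lambda>0$, so applying the inequality to $\lambda a$ and $b/\max(1,q_0(b))$ gives $\lambda\, p(ab)\leq\max(1,q_0(b))$ for all $\lambda>0$, forcing $p(ab)=0=q_0(a)q_0(b)$; the case $q_0(b)=0$ is symmetric. Setting $q:=q_0$ then yields the required continuous seminorm, completing the proof.
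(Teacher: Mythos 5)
Your proof is correct and follows essentially the same route as the paper: the backward direction uses the same decomposition of $ab-a_0b_0$, and the forward direction is the standard rescaling argument from continuity at $(0,0)$, which the paper merely asserts without detail. Your careful treatment of the degenerate case $q_0(a)=0$ or $q_0(b)=0$ fills in a step the paper leaves implicit, and it is handled correctly.
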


\begin{proof}
\,\,\,($``\Rightarrow"$) Let $A$ be a locally convex algebra and $p$ be a continuous seminorm on $A$. Then the continuity of the multiplication on $A$ in $(0,0)$ implies the existence of a continuous seminorm $q$ on $A$ with
\[p(ab)\leq q(a)q(b)\,\,\,\text{for all}\,\,\,a,b\in A.
\]

($``\Leftarrow"$) If, conversely, $A$ is an algebra endowed with a locally convex topology defined by a system $\mathcal{P}$ of seminorms with the property that for each $p\in\mathcal{P}$ there exists a $q\in\mathcal{P}$ with $p(ab)\leq Cq(a)q(b)$ for all $a,b\in A$, then the multiplication in $A$ is continuous because
\begin{align}
p(ab-a_0b_0)&=p(a(b-b_0)+(a-a_0)b_0)\leq C\left(q(a)q(b-b_0)+q(a-a_0)q(b_0)\right)\notag\\
&\leq C\left(q(a-a_0)q(b-b_0)+q(a_0)q(b-b_0)+q(a-a_0)q(b_0)\right).\notag
\end{align}
\end{proof}

\begin{proposition}\label{C(X,A) loc. con. algebra}
Let $A$ be a locally convex algebra and $X$ a topological space. If we endow the algebra $C(X,A)$ with the topology of uniform convergence on compact subsets, then $C(X,A)$ is a locally convex algebra.\sindex[n]{$C(X,A)$}
\end{proposition}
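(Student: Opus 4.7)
The plan is to verify the two aspects that make $C(X,A)$ a locally convex algebra: that pointwise multiplication of $A$-valued continuous functions again yields a continuous function (so that the multiplication is well-defined), and that this multiplication is jointly continuous with respect to the topology of uniform convergence on compact subsets. The first point is immediate from the continuity of multiplication in $A$ and composition of continuous maps, so the substance of the proof lies entirely in the second point.

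For the continuity, I would apply the seminorm criterion of Lemma \ref{multiplication and seminorns}. First I recall that the topology of uniform convergence on compact subsets of $X$ on $C(X,A)$ is generated by the family of seminorms
\[
p_{K,q}(f) := \sup_{x \in K} q(f(x)),
\]
where $K \subseteq X$ ranges over the compact subsets and $q$ ranges over a defining family of continuous seminorms for $A$. These are well-defined real numbers because $f(K)$ is compact in $A$ and $q$ is continuous, hence bounded on compacta.

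The key step is then to bound $p_{K,q}(fg)$ in terms of a single continuous seminorm on $C(X,A)$ evaluated at $f$ and $g$ separately. Since $A$ is itself a locally convex algebra, the forward direction of Lemma \ref{multiplication and seminorns} provides, for the given continuous seminorm $q$ on $A$, a continuous seminorm $q'$ on $A$ satisfying $q(ab) \leq q'(a) \, q'(b)$ for all $a,b \in A$. Applying this pointwise on $K$ yields
\[
p_{K,q}(fg) = \sup_{x \in K} q\bigl(f(x) g(x)\bigr) \leq \sup_{x \in K} q'(f(x)) \, q'(g(x)) \leq p_{K,q'}(f) \, p_{K,q'}(g).
\]
Hence the hypothesis of the converse direction of Lemma \ref{multiplication and seminorns} is satisfied with the continuous seminorm $r := p_{K,q'}$, and therefore the multiplication on $C(X,A)$ is continuous. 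This completes the argument that $C(X,A)$ is a locally convex algebra.

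There is no genuine obstacle here; the result is essentially a formal consequence of Lemma \ref{multiplication and seminorns} combined with the fact that the compact-open seminorms on $C(X,A)$ are built by composing a seminorm on $A$ with the supremum over a compact set, which respects the multiplicative estimate in $A$ uniformly on $K$.
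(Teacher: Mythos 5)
Your proof is correct and follows essentially the same route as the paper: both introduce the seminorms $p_{K,q}(f)=\sup_{x\in K}q(f(x))$, invoke Lemma \ref{multiplication and seminorns} in the forward direction to obtain a submultiplicative estimate $q(ab)\leq q'(a)q'(b)$ on $A$, pass it through the supremum over $K$, and conclude by the converse direction of the same lemma. No gaps.
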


\begin{proof}
\,\,\,The topology on $C(X,A)$ is defined by the seminorms
\[p_K(f):=\sup_{x\in K}p(f(x)),
\]where $p$ is a continuous seminorm on $A$ and $K\subseteq X$ is a compact subset. In this way $C(X,A)$ clearly becomes a locally convex space.

Now, let $p$ be a seminorm on $A$. According to Lemma \ref{multiplication and seminorns}, there exists a continuous seminorm $q$ on $A$ with $p(ab)\leq q(a)q(b)$ for all $a,b\in A$. Therefore,
\[p_K(fg)\leq q_K(f)q_K(g)
\]holds for all $f,g\in C(X,A)$ and each compact subset $K\subseteq X$. Thus, the claim follows from Lemma \ref{multiplication and seminorns}.
\end{proof}

\section{Some Results on Barreledness}

In this section we present a special class of locally convex spaces in which the ``Theorem of Banach--Steinhaus" still remains true. These are the so-called \emph{barrelled} spaces. In particular, we show that each continuous action of a compact group $G$ on a barrelled unital locally convex algebra $A$ by algebra automorphisms can be extended to a continuous action of $G$ on the completion $\widehat{A}$ of $A$ by algebra automorphisms. We have not found such a theorem discussed in the literature.

\begin{definition}\label{barreldness 1}{\bf(Barrelled).}\index{Barrelled}
A \emph{barrel} $B$ of a locally convex space $E$ is a closed absorbent disc. In particular, a locally convex space $E$ is called \emph{barrelled} if each barrel in $E$ is a zero-neighbourhood.
\end{definition}

\begin{example}\label{barreldness 2}{\bf(Fr\'echet spaces).}
Fr\'echet spaces are barrelled (cf. [BeNa85], Section 11.4).
\end{example}

\begin{proposition}\label{barreldness 3}{\bf(Quotients).}
Every Hausdorff quotient of a barrelled space is barrelled.
\end{proposition}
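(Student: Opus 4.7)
The plan is to argue by pulling a barrel in the quotient back to a barrel in the original space, and then pushing the resulting zero-neighborhood forward via the openness of the quotient map.

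More specifically, let $E$ be a barrelled locally convex space and let $F$ be a closed subspace so that the quotient $E/F$ is a Hausdorff locally convex space, and write $q : E \to E/F$ for the canonical projection. I would start by picking an arbitrary barrel $B \subseteq E/F$, i.e., a closed, balanced, convex, absorbent subset, and consider its preimage $q^{-1}(B) \subseteq E$. The goal of the first (and routine) step is to verify that $q^{-1}(B)$ is a barrel in $E$: closedness follows from the continuity of $q$ together with the fact that $B$ is closed; balancedness and convexity follow immediately from the linearity of $q$ together with the corresponding properties of $B$; and absorbency follows from the surjectivity of $q$, since for any $x \in E$ the point $q(x)$ lies in some $\lambda B$ by absorbency of $B$, hence $x \in \lambda q^{-1}(B)$.

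With $q^{-1}(B)$ identified as a barrel in $E$, the barrelledness of $E$ then guarantees that $q^{-1}(B)$ is a zero-neighborhood in $E$. The final step is to exploit the fact that the quotient map $q : E \to E/F$ is open (this is a standard property of quotient maps between locally convex spaces equipped with the quotient topology), which, combined with the surjectivity $q(q^{-1}(B)) = B$, yields that $B$ itself is a zero-neighborhood in $E/F$. Since $B$ was arbitrary, $E/F$ is barrelled.

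There is no genuine obstacle here; the proof is essentially a bookkeeping exercise and each of the four defining properties of a barrel transfers along $q^{-1}$ without difficulty. The only point that deserves to be mentioned explicitly is the openness of the quotient map, which is what allows us to convert the information obtained in $E$ back into information about $E/F$.
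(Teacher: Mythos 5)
Your argument is correct and is precisely the standard proof: pull the barrel back along $q$, check that all four defining properties are preserved, invoke barrelledness of $E$, and push the resulting zero-neighbourhood forward using the openness of the quotient map. The paper itself gives no proof but only cites [Sch99], Chapter II, \S 7, Corollary 1, so your write-up in fact supplies the missing details.
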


\begin{proof}
\,\,\,For the proof we refer to [Sch99], Chapter II, \S 7, Corollary 1.
\end{proof}

\begin{remark}\label{barreldness 4}{\bf(F-spaces).}\index{$F$-Spaces}
Recall that a pair $(E,d)$, consisting of a vector space $E$ and a metric $d:E\times E\rightarrow\mathbb{R}$ is a called an F-\emph{space}, if
\begin{itemize}
\item
scalar multiplication and addition are continuous with respect to the metric $d$,

\item
the metric $d$ is translation-invariant, i.e., $d(e+f,e'+f)=d(e,e')$ for all $e,e'$ and $f$ in $E$,

\item
 and the metric space $(E,d)$ is complete.
\end{itemize}
Clearly, each Fr\'echet space is a F-space
\end{remark}

\begin{proposition}\label{barreldness 5}{\bf(Tensor products).}
If $E$ and $F$ are two \emph{F}-spaces, then $E\otimes F$ is metrizable and barrelled. Moreover, $E\widehat{\otimes}F$ is a \emph{F}-space.
\end{proposition}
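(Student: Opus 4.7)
The plan is to split the proposition into the three assertions (metrizability of $E\otimes F$, completeness of $E\widehat{\otimes} F$, and barrelledness of $E\otimes F$) and handle them in order of increasing difficulty.

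First I would establish metrizability of $E\otimes F$. Since $E$ and $F$ are \emph{F}-spaces, each admits a countable neighbourhood base of $0$ consisting of absolutely convex sets, say $(U_n)_{n\in\mathbb{N}}$ in $E$ and $(V_n)_{n\in\mathbb{N}}$ in $F$, which we may take to be decreasing. By the universal property described in Remark \ref{universal property of projective tensor product topology}, the projective tensor product topology on $E\otimes F$ admits a $0$-neighbourhood base formed by the absolutely convex hulls $\Gamma(\pi(U\times V))$ for $0$-neighbourhoods $U\subseteq E$ and $V\subseteq F$. Restricting to $U=U_n$ and $V=V_n$ already yields a countable fundamental system of $0$-neighbourhoods, so together with the Hausdorff property this makes $E\otimes F$ metrizable.

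The second claim is then an immediate corollary: the completion of a metrizable locally convex space is complete and metrizable, hence a Fr\'echet space. Since every Fr\'echet space is an \emph{F}-space in the sense of Remark \ref{barreldness 4}, $E\widehat{\otimes}F$ is an \emph{F}-space.

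The main obstacle is barrelledness of $E\otimes F$, which requires an argument that does not follow formally from metrizability. My strategy is to transfer the property from the completion: by the previous step, $E\widehat{\otimes} F$ is Fr\'echet, hence a Baire space, and every Baire locally convex space is barrelled (Example \ref{barreldness 2}). Given a barrel $B\subseteq E\otimes F$, let $\overline{B}$ denote its closure in $E\widehat{\otimes}F$; it is automatically closed, balanced, and convex. The delicate point is to argue that $\overline{B}$ is absorbent in $E\widehat{\otimes} F$: any $x\in E\widehat{\otimes} F$ can be written as the limit of a sequence $(x_n)$ in $E\otimes F$, the sequence $(x_n)$ is bounded in $E\widehat{\otimes} F$, and one exploits that the closed barrels of $E\otimes F$ correspond, under taking closures in the completion, to absolutely convex absorbing closed sets in $E\widehat{\otimes} F$; once this correspondence is in hand, barrelledness of $E\widehat{\otimes}F$ makes $\overline{B}$ a $0$-neighbourhood in $E\widehat{\otimes} F$, so $\overline{B}\cap(E\otimes F)=B$ (by closedness of $B$) is a $0$-neighbourhood of $E\otimes F$.

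The hard part will be precisely this absorbency transfer, since in principle an element of $E\widehat{\otimes} F\setminus E\otimes F$ need not be absorbed by $\overline{B}$. The standard workaround is to invoke the Banach--Steinhaus theorem on the Fr\'echet space $E\widehat{\otimes}F$ applied to the family of continuous seminorms $\{p_B\circ\iota^{-1}\mid \ldots\}$ arising from $B$, or equivalently to carry out a Baire-category argument: $E\widehat{\otimes} F=\bigcup_{n\in\mathbb{N}} n\cdot\overline{B}$ forces $\overline{B}$ to have non-empty interior by the Baire property of the completion, and balance plus convexity then place $0$ in the interior.
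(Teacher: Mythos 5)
Your treatment of the first two assertions is essentially correct: the projective topology on $E\otimes F$ has the absolutely convex hulls $\Gamma(\pi(U_n\times V_n))$ as a countable $0$-neighbourhood base when $(U_n)$ and $(V_n)$ run through countable bases of $E$ and $F$ (this is a standard fact about the projective topology rather than a formal consequence of the universal property, but it is true), so $E\otimes F$ is metrizable and $E\widehat{\otimes}F$ is Fr\'echet. (You are tacitly reading ``\emph{F}-space'' as ``Fr\'echet space''; since the paper's Remark on \emph{F}-spaces does not require local convexity this is an implicit assumption, but it is clearly the intended one, and the paper only cites [Gro55] for this proposition anyway, so there is no in-paper proof to compare against.)

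The barrelledness argument, however, breaks down exactly at the point you flag, and neither proposed repair closes it. The closure in the completion of a barrel of a dense metrizable subspace need \emph{not} be absorbent: take $c_{00}\subseteq c_0$ with the supremum norm and $B=\{x\in c_{00}:\,(\forall n)\ \vert x_n\vert\leq\frac{1}{n}\}$; this is a barrel in $c_{00}$, but its closure $\{x\in c_0:\,(\forall n)\ \vert x_n\vert\leq\frac{1}{n}\}$ absorbs no $x\in c_0$ with $n\vert x_n\vert$ unbounded. Since your argument nowhere uses that the space in question is a tensor product, if it were valid it would prove that every metrizable locally convex space is barrelled, which is false ($c_{00}$ again). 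Concretely: the asserted ``correspondence'' between closed barrels of $E\otimes F$ and closed absolutely convex absorbing subsets of $E\widehat{\otimes}F$ is precisely the false statement, and the Baire-category variant presupposes $E\widehat{\otimes}F=\bigcup_{n}n\overline{B}$, i.e.\ the very absorbency you are trying to establish --- density of $\bigcup_{n}n\overline{B}$ is not enough to apply Baire. The actual proof (Grothendieck's, which the paper cites) must exploit the bilinear structure: for fixed $x\in E$ the slice $\{y\in F:\,x\otimes y\in B\}$ is a barrel in $F$, hence a $0$-neighbourhood, and symmetrically in $E$; one then invokes the equicontinuity theorem for pointwise bounded families of separately continuous bilinear maps on a product of metrizable barrelled spaces to conclude that $B$ absorbs a whole basic neighbourhood $\Gamma(\pi(U\times V))$. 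That hypocontinuity-to-continuity step, which uses metrizability and barrelledness of the \emph{factors}, is the missing idea and cannot be replaced by a Baire argument on the completion.
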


\begin{proof}
\,\,\,A proof of this statement can be found in [Gro55], Chapter I, \S 1.3, Proposition 5.1.
\end{proof}

\begin{corollary}\label{barreldness 6}
The tensor product $E\otimes F$ of two Fr\'echet spaces $E$ and $F$ is barrelled. Moreover, in this case, $E\widehat{\otimes}F$ is a Fr\'echet space.
\end{corollary}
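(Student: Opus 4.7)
The plan is to deduce this corollary directly from the two preceding facts: Proposition \ref{barreldness 5} about $F$-spaces and the basic observation (Remark \ref{barreldness 4}) that every Fr\'echet space is an $F$-space. So the whole task is essentially to justify the two words ``hence Fr\'echet'' at the end, since barrelledness of $E\otimes F$ is immediate.

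First, I would observe that both $E$ and $F$, being Fr\'echet, are $F$-spaces: a countable separating family of continuous seminorms on a Fr\'echet space yields a translation-invariant complete metric inducing the original topology (Remark \ref{barreldness 4}). Then Proposition \ref{barreldness 5} applies and gives the two structural conclusions: the projective tensor product $E\otimes F$ is metrizable and barrelled, and its completion $E\widehat{\otimes}F$ is an $F$-space. This already settles the first assertion of the corollary.

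For the second assertion, the only point that has to be added is that the $F$-space $E\widehat{\otimes}F$ is in fact locally convex, because then a complete, metrizable, locally convex space is by definition Fr\'echet. Local convexity of $E\widehat{\otimes}F$ is inherited from $E\otimes F$: the projective tensor product topology is defined as the strongest locally convex topology making the canonical bilinear map $E\times F\to E\otimes F$ continuous (Definition \ref{proj. tensor product III}), so $E\otimes F$ carries a defining family of continuous seminorms, and the completion with respect to a translation-invariant metric compatible with this locally convex topology remains locally convex (the completed absolutely convex zero-neighbourhoods still form a base at zero). Combining this with the $F$-space property yields that $E\widehat{\otimes}F$ is a Fr\'echet space.

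I do not anticipate any serious obstacle here; the proof is essentially a bookkeeping exercise combining Proposition \ref{barreldness 5}, Remark \ref{barreldness 4}, and the definition of the projective tensor product topology. The only mildly delicate point worth double-checking is that the completion step preserves local convexity, but this is standard since the closure of an absolutely convex zero-neighbourhood in the completion is again absolutely convex.
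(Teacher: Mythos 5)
Your proof is correct and follows essentially the same route as the paper: the first assertion is immediate from Remark \ref{barreldness 4} and Proposition \ref{barreldness 5}, and the second comes down to noting that $E\widehat{\otimes}F$ is a complete, metrizable (countably seminormed) locally convex space, hence Fr\'echet. Your extra care about local convexity surviving the completion is a fine (and standard) point that the paper leaves implicit.
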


\begin{proof}
\,\,\,The first assertion directly follows from Remark \ref{barreldness 4} and Proposition \ref{barreldness 5}. The second assertion follows from the fact that $E\widehat{\otimes}F$ is a complete locally convex space whose topology is generated by a countable family of seminorms.
\end{proof}

\begin{remark}{\bf(Pointwise bounded subsets).}
For the following theorem we recall that a subset $H$ of the continuous linear operators $L(E,F)$ between to locally convex spaces $E$ and $F$ is ``pointwise" bounded, if 
\[H(e):=\{h(e):\,h\in H\}
\]is a bounded subset of $F$ for each $e\in E$.
\end{remark}

\begin{theorem}\label{barreldness 7}{\bf(Banach--Steinhaus for barrelled spaces).}\index{Theorem!of Banach--Steinhaus}
Let $E$ be a barrelled space and $F$ an arbitrary locally convex space. If $H\subseteq L(E,F)$ is pointwise bounded, then $H$ is equicontinuous, i.e., for each zero-neighbourhood $V$ of $F$, there is a zero-neighbourhood $U$ of $E$ such that $H(U)\subseteq V$.
\end{theorem}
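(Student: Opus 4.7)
My plan is to reduce the claim to the defining property of barrelledness by constructing, for a given zero-neighbourhood $V$ of $F$, an explicit barrel in $E$ that is contained in the preimage $\bigcap_{h\in H} h^{-1}(V)$.

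First, since $F$ is locally convex, I would replace $V$ by a smaller closed absolutely convex (i.e. disc) zero-neighbourhood $V_0\subseteq V$; this is possible because the closed absolutely convex zero-neighbourhoods form a basis of the filter of zero-neighbourhoods in any locally convex space. I would then set
\[
W:=\bigcap_{h\in H} h^{-1}(V_0)\subseteq E.
\]
The key point is to verify that $W$ is a barrel in $E$ in the sense of Definition \ref{barreldness 1}, i.e., that $W$ is closed and absolutely convex (these two properties follow immediately from the corresponding properties of $V_0$ together with the continuity and linearity of each $h\in H$, since preimages of closed absolutely convex sets under continuous linear maps are closed and absolutely convex, and arbitrary intersections preserve both properties) and, more importantly, that $W$ is absorbent.

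The absorbency of $W$ is precisely where the pointwise boundedness of $H$ enters. Given $e\in E$, the set $H(e)=\{h(e):h\in H\}$ is bounded in $F$ by assumption, hence absorbed by the zero-neighbourhood $V_0$: there exists $\lambda>0$ with $H(e)\subseteq \lambda V_0$. Dividing by $\lambda$ this says $h(e)\in \lambda V_0$ for all $h\in H$, equivalently $e\in \lambda h^{-1}(V_0)$ for every $h\in H$, so $e\in\lambda W$. Thus every point of $E$ is absorbed by $W$, and $W$ is a barrel.

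Now I invoke the barrelledness of $E$: the barrel $W$ must be a zero-neighbourhood of $E$. Taking $U:=W$ we obtain $h(U)\subseteq V_0\subseteq V$ for every $h\in H$, i.e., $H(U)\subseteq V$, which is exactly equicontinuity of $H$. The only mildly delicate point is the verification that $W$ really is closed and absolutely convex under the intersection, but this is routine; the genuine content of the proof is the construction of $W$ and the identification of pointwise boundedness with absorbency, after which barrelledness does all the work.
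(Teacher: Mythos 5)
Your proof is correct and complete: the paper itself only cites [BeNa85] for this statement, and your argument is precisely the standard one — passing to a closed absolutely convex $V_0\subseteq V$, forming the barrel $W=\bigcap_{h\in H}h^{-1}(V_0)$, using pointwise boundedness to get absorbency, and letting barrelledness turn $W$ into a zero-neighbourhood. Nothing is missing.
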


\begin{proof}
\,\,\,A nice proof can be found in [BeNa85], Section 11.6, (11.6.1).
\end{proof}

\begin{theorem}\label{barreldness 8}{\bf(Seminorm criterion for equicontinuity).}
Let $E$ and $F$ be two locally convex spaces. A subset $H$ of $L(E,F)$ is equicontinuous if and only if for each continuous seminorm $q$ on $F$, there is a continuous seminorm $p$ on $E$ such that
\[(q\circ h)(e)\leq p(e)
\]holds for each $h\in H$ and $e\in E$.
\end{theorem}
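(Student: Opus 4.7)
The plan is to prove both implications directly by translating between continuous seminorms on a locally convex space and their associated absolutely convex zero-neighbourhoods (via Minkowski functionals/gauges), which is the standard dictionary in the theory of locally convex spaces.

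For the forward direction, assume $H$ is equicontinuous and let $q$ be a continuous seminorm on $F$. The set $V:=\{f\in F:\,q(f)\leq 1\}$ is an absolutely convex zero-neighbourhood of $F$, so equicontinuity of $H$ yields an absolutely convex zero-neighbourhood $U$ of $E$ with $h(U)\subseteq V$ for every $h\in H$. Let $p$ be the Minkowski functional of $U$, i.e., $p(e):=\inf\{\lambda>0:\,e\in\lambda U\}$; this is a continuous seminorm on $E$ because $U$ is absolutely convex and absorbing. For any $e\in E$ with $p(e)<\lambda$ we have $\lambda^{-1}e\in U$, hence $\lambda^{-1}h(e)\in V$, so $q(h(e))\leq\lambda$; letting $\lambda\searrow p(e)$ gives $q(h(e))\leq p(e)$ for all $h\in H$ (the case $p(e)=0$ is handled by the same argument, forcing $q(h(e))=0$).

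For the reverse direction, assume the seminorm condition and let $V$ be an arbitrary zero-neighbourhood of $F$. Choose a continuous seminorm $q$ on $F$ and some $\epsilon>0$ with $\{f\in F:\,q(f)<\epsilon\}\subseteq V$, which is possible by the definition of the locally convex topology on $F$. By hypothesis there is a continuous seminorm $p$ on $E$ with $q(h(e))\leq p(e)$ for all $h\in H$ and $e\in E$. Then $U:=\{e\in E:\,p(e)<\epsilon\}$ is a zero-neighbourhood of $E$, and for every $e\in U$ and $h\in H$ we obtain $q(h(e))\leq p(e)<\epsilon$, so $h(e)\in V$. Hence $H(U)\subseteq V$, proving equicontinuity of $H$.

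Neither step poses a real obstacle; the only subtle point is to check that $p$ can be chosen so that $q\circ h\leq p$ holds \emph{uniformly} in $h\in H$, and this is precisely what the Minkowski functional of the equicontinuity neighbourhood $U$ delivers, since $U$ was chosen to work simultaneously for all $h\in H$.
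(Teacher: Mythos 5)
Your proof is correct and complete; the paper itself gives no argument here but only cites [BeNa85], Section 9.5, (9.5.3), and your Minkowski-functional translation between absolutely convex zero-neighbourhoods and continuous seminorms is exactly the standard proof that reference supplies. The one subtlety you flag — that the gauge of the single equicontinuity neighbourhood $U$ dominates $q\circ h$ uniformly in $h\in H$ — is handled properly, as is the reduction of an arbitrary zero-neighbourhood of $F$ to a set of the form $\{q<\epsilon\}$ via a finite maximum of seminorms.
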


\begin{proof}
\,\,\,For the proof of this theorem we refer to [BeNa85], Section 9.5, (9.5.3).
\end{proof}

\begin{proposition}\label{barreldness 9}{\bf(Jointly vs. Separately).}
If $G$ is a compact group and $E$ a barrelled space, then each separately continuous action $\alpha:G\times E\rightarrow E$ of $G$ on $E$ by vector space automorphisms is automatically \emph{(}jointly\emph{)} continuous.
\end{proposition}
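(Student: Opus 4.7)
The plan is to reduce joint continuity of $\alpha$ to equicontinuity of the family of linear operators $H := \{\alpha(g,\cdot) : g \in G\} \subseteq L(E,E)$, and then combine this equicontinuity with the separate continuity in the second variable by the standard two-term decomposition. The barrelledness hypothesis enters only through Theorem \ref{barreldness 7} (Banach--Steinhaus for barrelled spaces), and the compactness of $G$ enters only to control the orbits in $E$.

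First I would verify that $H$ is pointwise bounded. For each $e \in E$, the orbit map $\alpha_e : G \to E$, $g \mapsto \alpha(g,e)$ is continuous by separate continuity of $\alpha$. Since $G$ is compact, the orbit $\alpha_e(G) = H(e)$ is compact in $E$ and therefore bounded. Thus $H \subseteq L(E,E)$ is pointwise bounded, and since $E$ is barrelled, Theorem \ref{barreldness 7} yields that $H$ is equicontinuous.

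Second, I would establish joint continuity at an arbitrary point $(g_0, e_0) \in G \times E$. Let $V$ be a balanced convex zero-neighbourhood of $E$; such a base exists by local convexity. Equicontinuity of $H$ provides a zero-neighbourhood $U$ of $E$ with $\alpha(g)(U) \subseteq \tfrac{1}{2} V$ for all $g \in G$. Separate continuity in the first variable provides a neighbourhood $W$ of $g_0$ in $G$ with $\alpha(g, e_0) - \alpha(g_0, e_0) \in \tfrac{1}{2} V$ for all $g \in W$. For $(g,e) \in W \times (e_0 + U)$ the decomposition
\[
\alpha(g,e) - \alpha(g_0,e_0) = \alpha(g)(e - e_0) + \bigl(\alpha(g, e_0) - \alpha(g_0, e_0)\bigr) \in \tfrac{1}{2}V + \tfrac{1}{2}V \subseteq V
\]
shows that $\alpha$ is continuous at $(g_0, e_0)$.

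The only non-formal step is the first one, and it is essentially forced: without compactness of $G$ one cannot guarantee boundedness of the orbits, and without barrelledness of $E$ one cannot upgrade pointwise bounded to equicontinuous. Once equicontinuity is in hand, the second step is the classical trick for passing from separate to joint continuity, and the hypothesis that $G$ acts by vector space automorphisms is used only insofar as each $\alpha(g,\cdot)$ belongs to $L(E,E)$, so that Banach--Steinhaus is applicable.
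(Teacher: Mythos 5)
Your proof is correct and follows essentially the same route as the paper's: compactness of $G$ plus separate continuity gives pointwise boundedness of $\{\alpha(g)\}_{g\in G}$, barrelledness upgrades this to equicontinuity via Banach--Steinhaus, and the standard two-term decomposition then yields joint continuity. The only difference is cosmetic — you phrase the final estimate with zero-neighbourhoods at a general point, while the paper uses the seminorm criterion and sequences.
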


\begin{proof}
\,\,\,Since $G$ is compact and $\alpha$ separately continuous, the set $H:=\{\alpha(g):\,g\in G\}\subseteq L(E)$ is pointwise bounded. Thus, Theorem \ref{barreldness 7} implies that $H$ is equicontinuous and we can apply Theorem \ref{barreldness 8}: We fix $g\in G$, $e\in E$ and choose a continuous seminorm $q$ on $E$. Further, if $(g_n)_{n\in\mathbb{N}}$ is a sequence in $G$ with $g_n\to g$ and $p$ is a continuous seminorm on $E$ satisfying $q\circ\alpha(g)\leq p$ for each $g\in G$, then we obtain
\begin{align}
q(g.e-g_n.e')&\leq q((g-g_n).e)+q(g_n(e-e'))\notag\\
&\leq q((g-g_n).e)+p(e-e')\leq\frac{\epsilon}{2}+\frac{\epsilon}{2}\leq\epsilon\notag
\end{align}
for large $n\in\mathbb{N}$ and $e'$ in a sufficiently small neighbourhood $U$ of $e$.
\end{proof}

\begin{theorem}\label{barreldness 10}{\bf(Extending group actions).}\index{Extending!Group Actions}
If $G$ is a compact group, $A$ a barrelled unital locally convex algebra and $\widehat{A}$ its completion, then each continuous action $\alpha:G\times A\rightarrow A$ of $G$ on $A$ by algebra automorphisms can be extended to a continuous action 
\[\widehat{\alpha}:G\times\widehat{A}\rightarrow\widehat{A}
\]of $G$ on $\widehat{A}$ by algebra automorphisms.
\end{theorem}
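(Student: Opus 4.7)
The plan proceeds in three steps: pointwise extension of each $\alpha(g)$ to $\widehat{A}$, verification that the resulting family is a group action, and finally establishing joint continuity by means of an equicontinuity argument.

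First, for each fixed $g\in G$ the automorphism $\alpha(g):A\to A$ is a continuous algebra isomorphism with continuous inverse $\alpha(g^{-1})$. Applying Proposition \ref{ext of dense cont algebra iso} to the densely defined isomorphism $\alpha(g):A\to A\subseteq\widehat A$ yields a unique continuous algebra automorphism $\widehat\alpha(g):\widehat A\to\widehat A$ extending $\alpha(g)$. The two continuous maps $\widehat\alpha(gh)$ and $\widehat\alpha(g)\circ\widehat\alpha(h)$ agree on the dense subalgebra $A$, so by the principle of extension of identities they coincide on all of $\widehat A$; likewise $\widehat\alpha(1_G)=\id_{\widehat A}$. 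Hence $\widehat\alpha:G\to\Aut(\widehat A)$ is a group homomorphism.

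The key step is joint continuity of $\widehat\alpha$. Since $G$ is compact and $\alpha$ is continuous, for each $a\in A$ the orbit $\{\alpha(g)a:g\in G\}$ is a compact and hence bounded subset of $A$; consequently the family $H:=\{\alpha(g):g\in G\}\subseteq L(A)$ is pointwise bounded. Barreledness of $A$ together with Theorem \ref{barreldness 7} then forces $H$ to be equicontinuous, and Theorem \ref{barreldness 8} provides, for each continuous seminorm $q$ on $A$, a continuous seminorm $p$ on $A$ with $p\geq q$ and $q(\alpha(g)a)\leq p(a)$ for all $g\in G$ and $a\in A$. Both $q$ and $p$ admit unique continuous extensions (denoted by the same symbols) to $\widehat A$, and by density of $A$ in $\widehat A$ the estimate extends to $q(\widehat\alpha(g)\hat a)\leq p(\hat a)$ for all $g\in G$ and $\hat a\in\widehat A$. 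Thus $\{\widehat\alpha(g):g\in G\}$ is an equicontinuous family of endomorphisms of $\widehat A$.

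Fix $(g_0,\hat a_0)\in G\times\widehat A$, a continuous seminorm $q$ on $\widehat A$, and a dominating seminorm $p$ as above. For arbitrary $(g,\hat a)\in G\times\widehat A$ and $a\in A$ we estimate
\[
q\bigl(\widehat\alpha(g)\hat a-\widehat\alpha(g_0)\hat a_0\bigr)\leq p(\hat a-\hat a_0)+q\bigl((\widehat\alpha(g)-\widehat\alpha(g_0))a\bigr)+2p(\hat a_0-a).
\]
Given $\epsilon>0$, first choose $a\in A$ with $p(\hat a_0-a)<\epsilon$; then continuity of the original orbit map $g\mapsto\alpha(g)a$ at $g_0$ makes the middle term smaller than $\epsilon$ on a neighbourhood of $g_0$; finally restricting $\hat a$ to a $p$-neighbourhood of $\hat a_0$ of radius $\epsilon$ controls the first term. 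Hence $\widehat\alpha$ is continuous at $(g_0,\hat a_0)$.

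The main obstacle is that the completion $\widehat A$ of a barrelled space need not itself be barrelled, so one cannot simply invoke Proposition \ref{barreldness 9} on $\widehat A$ to deduce joint from separate continuity. The bootstrap above circumvents this by extracting equicontinuity of $\{\alpha(g):g\in G\}$ from barreledness of $A$ alone and transporting it to $\widehat A$ by density, which is strong enough to yield joint continuity directly.
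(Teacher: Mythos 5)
Your proof is correct, and the continuity step takes a genuinely different route from the paper's. The first step (extending each $\alpha(g)$ via Proposition \ref{ext of dense cont algebra iso} and checking the homomorphism property by extension of identities) coincides with the paper. For continuity, the paper instead observes that $\widehat{A}$ is itself barrelled, invokes Proposition \ref{barreldness 9} to reduce joint continuity to continuity of the orbit maps $g\mapsto\widehat{\alpha}(g)\hat a$, and then proves the latter by showing that the orbit maps of an approximating net $(a_i)\subseteq A$ form a Cauchy net in $C(G,\widehat{A})$ with the uniform topology, so that the limit orbit map is continuous as a uniform limit. You bypass the reduction entirely: you extract equicontinuity of $\{\alpha(g)\}$ from Banach--Steinhaus on $A$, transport the seminorm estimate $q\circ\widehat{\alpha}(g)\le p$ to $\widehat{A}$ by density, and then run a three-term estimate giving joint continuity directly. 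Your argument is slightly more self-contained (it uses only the completeness of seminorm extensions rather than the completeness of $C(G,\widehat{A})$), and the decomposition $\widehat\alpha(g)\hat a-\widehat\alpha(g_0)\hat a_0=\widehat\alpha(g)(\hat a-a)+(\widehat\alpha(g)-\widehat\alpha(g_0))a+\widehat\alpha(g_0)(a-\hat a_0)$ checks out. Note also that both proofs implicitly rest on the same equicontinuity fact: the paper's Cauchy estimate $\sup_g p(\alpha(g)(a_i-a_j))\le q(a_i-a_j)$ is exactly your Banach--Steinhaus step.

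One assertion in your closing remark is false and you should delete it: the completion of a barrelled Hausdorff locally convex space \emph{is} barrelled (if $B$ is a barrel in $\widehat{A}$, then $B\cap A$ is a barrel in $A$, hence contains $V\cap A$ for some open $0$-neighbourhood $V$ of $\widehat{A}$, and density of $A$ gives $V\subseteq\overline{V\cap A}\subseteq B$); this is precisely the fact the paper cites from [NaBe85], Section 11.8. So Proposition \ref{barreldness 9} \emph{is} applicable to $\widehat{A}$, and the paper's route is legitimate. Your proof does not depend on this misstatement, so its correctness is unaffected, but as written it attributes a nonexistent obstacle to the paper's approach.
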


\begin{proof}
\,\,\,The proof of this theorem is divided into four parts:

(i) According to Proposition \ref{ext of dense cont algebra iso}, we can extend each automorphism $\alpha(g):A\rightarrow A$ to an automorphism $\widehat{\alpha}(g):\widehat{A}\rightarrow\widehat{A}$. In particular, we obtain an action $\widehat{\alpha}:G\times\widehat{A}\rightarrow\widehat{A}$ of $G$ on $\widehat{A}$ by algebra automorphisms. 

(ii) To prove its continuity, we proceed as follows: We first note that completions of barrelled spaces are barrelled again (cf. [NaBe85], Section 11.8, (11.8.1)). Thus, Proposition \ref{barreldness 9} and the construction of $\widehat{\alpha}$ imply that it just remains to verify the continuity of the orbit maps 
\[\widehat{\alpha}_a:G\rightarrow\widehat{A},\,\,\,g\mapsto g.a\,\,\,\text{for}\,\,\,a\in\widehat{A}.
\]

(iii) We fix $a\in\widehat{A}$ and let $(a_i)_{i\in I}$ be a net in $A$ with $a_i\to a$. If $\widehat{\alpha}_i:=\widehat{\alpha}_{a_i}$, then we have $\widehat{\alpha}_i(g)\to\widehat{\alpha}_a(g)$ for each $g\in G$, i.e., $\widehat{\alpha}_i\to\widehat{\alpha}_a$ pointwise. To prove the continuity of $\widehat{\alpha}_a$, it is therefore enough to show that the set $(\widehat{\alpha}_i)_{i\in I}$ is a Cauchy net in the space $C(G,\widehat{A})$: Indeed, if $(\widehat{\alpha}_i)_{i\in I}$ is a Cauchy net in the space $C(G,\widehat{A})$, then the completeness of $C(G,\widehat{A})$ implies that $\widehat{\alpha}_i\to\widehat{\alpha}_a$ uniformly in $C(G,\widehat{A})$. Thus, $\widehat{\alpha}_a$ is continuous by the uniform convergence theorem.

(iv) If $p$ is a continuous seminorm on $\widehat{A}$, then $p_G(f):=\sup_{g\in G}p(f(g))$ defines a continuous seminorm on $C(G,\widehat{A})$. Hence, the compactness of $G$ and the continuity of $\alpha$ imply that
\begin{align}
p_G(\widehat{\alpha}_i-\widehat{\alpha}_j)&=\sup_{g\in G}p(g.a_i-g.a_j)=\sup_{g\in G}p(\widehat{\alpha}(g,a_i-a_j)) \notag\\
&=\sup_{g\in G}p(\alpha(g,a_i-a_j))\leq q(a_i-a_j)\leq\epsilon\notag
\end{align}
for some continuous seminorm $q$ on $A$ and $i,j$ large enough. This shows that $(\widehat{\alpha}_i)_{i\in I}$ is a Cauchy net in the space $C(G,\widehat{A})$ and therefore proves the theorem.
\end{proof}

\chapter{An Important Source of NC Spaces: Noncommutative Tori}

A prominent class of noncommutative spaces are the so called ``noncommutative tori" or ``quantum tori".
These spaces will also provide an important class of examples throughout this thesis. For this reason we now provide an overview on the structure of NC tori.

\section{The Noncommutative 2-Torus}

\begin{definition}\label{NC torus-algebra}{\bf(The noncommutative 2-torus).}\index{Noncommutative!2-Torus}
Let $\theta\in\mathbb{R}$ and $\lambda=\exp(2\pi i\theta)$. The \emph{noncommutative 2-torus algebra} $A^2_{\theta}$\sindex[n]{$A^2_{\theta}$} is the \emph{universal} unital $C^*$-algebra generated by unitaries $U$ and $V$ subject to the relation
\[UV=\lambda VU.
\]The universality property here means that given any unital $C^*$-algebra $B$ with two unitaries $u$ and $v$ satisfying $uv=\lambda vu$, then there exists a unique unital $C^*$-morphism $A_{\theta}\rightarrow B$ sending $U\rightarrow u$ and $V\rightarrow v$.
\end{definition}

\begin{remark}\label{The Schrodinger picture}{\bf(The Schr\"odinger picture of $A^2_{\theta}$):}
In the purely algebraic case any set of generators and relations automatically defines a universal algebra. This is not the case for universal $C^*$-algebras. One has to take care in defining a norm satisfying the $C^*$-identity, and in general the universal problem does not have a solution.
For the noncommutative torus algebra it is possible to proceed as follows:

(a) We first consider the unitary operators $U,V:\ell^2(\mathbb{Z})\rightarrow\ell^2(\mathbb{Z})$ defined by
\[(Uf)(n)=\exp(2\pi i\theta n)f(n)\,\,\,\text{and}\,\,\,(Vf)(n)=f(n-1)
\]and note that they satisfy $UV=\lambda VU$. Let $A_{\theta}$ be the unital $C^*$-subalgebra of $B(\ell^2(\mathbb{Z}))$ generated by $U$ and $V$. 

(b) Using the Fourier isomorphism $\ell^2(\mathbb{Z})\cong L^2(\mathbb{T})$, we see that $A^2_{\theta}$ can also be described as the $C^*$-subalgebra of $B(L^2(\mathbb{T}))$ generated by the operators 
\[(Uf)(z)=f(\lambda z)\,\,\,\text{and}\,\,\,(Vf)(z)=zf(z)\,\,\,\text{for}\,\,\,f\in L^2(\mathbb{T}).
\]
\end{remark}

\begin{remark}
When $\theta=0$, any element $a\in A^2_{\theta}=C(\mathbb{T}^2)$ can be developed as a \emph{Fourier series}:
\[a(\phi_1,\phi_2)\backsim\sum_{k,l\in\mathbb{Z}}a_{kl}\exp(2\pi ik\phi_1)\exp(2\pi il\phi_2),
\]where the series converges uniformly to the function $a$ only under certain conditions (say, if $a$ is piecewise $C^1$).
\end{remark}

\section{The Smooth Noncommutative 2-Torus}
To avoid the severe representation problems that arises even in the classical case, it is convenient to restrict the series developments to the pre-$C^*$- algebra of smooth functions on $\mathbb{T}^2$ (whereby the Fourier series converges to $a$ both uniformly and in $L^2$-norm). Now, $a\in C^{\infty}(\mathbb{T}^2)$\sindex[n]{$C^{\infty}(\mathbb{T}^2)$} if and only if the coefficients $a_{kl}$ belong to the space $S(\mathbb{Z}^2)$\sindex[n]{$S(\mathbb{Z}^2)$} of \emph{rapidly decreasing}\index{Rapidly Decreasing!Double Sequences} double sequences, i.e., those that satisfy
\[\sup_{k,l\in\mathbb{Z}}(1+k^2+l^2)^r|a_{kl}|^2<\infty \,\,\,\text{for all}\,\,r\in\mathbb{N}.
\]

\begin{lemma}\label{t^2 on a^2}
The map $\alpha:\mathbb{T}^2\times A^2_{\theta}\rightarrow A^2_{\theta}$ given on generators by 
\[(z_1,z_2).U^kV^l:=z_1^kz_2^l\cdot U^kV^l
\]is continuous and defines an action by algebra automorphisms.
\end{lemma}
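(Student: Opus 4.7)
My plan is to build the action in two stages. First I would construct the automorphisms pointwise via the universal property of $A^2_\theta$, then handle continuity as the nontrivial analytic step.

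For each fixed $(z_1,z_2)\in\mathbb{T}^2$, the elements $u:=z_1 U$ and $v:=z_2 V$ are unitaries in $A^2_\theta$ satisfying
\[
uv \;=\; z_1z_2\,UV \;=\; z_1z_2\,\lambda VU \;=\; \lambda\,(z_2V)(z_1U) \;=\; \lambda\,vu.
\]
By the universal property stated in Definition \ref{NC torus-algebra}, this produces a unique unital $C^*$-homomorphism $\alpha(z_1,z_2):A^2_\theta\to A^2_\theta$ with $U\mapsto z_1U$ and $V\mapsto z_2V$. Applying the same construction to $(\bar z_1,\bar z_2)$ and invoking the uniqueness part of the universal property on compositions, I would show $\alpha(z_1,z_2)\circ\alpha(\bar z_1,\bar z_2)=\id_{A^2_\theta}$; hence each $\alpha(z_1,z_2)$ is an algebra automorphism. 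Uniqueness in the universal property also gives $\alpha((z_1,z_2)(w_1,w_2))=\alpha(z_1,z_2)\circ\alpha(w_1,w_2)$ and $\alpha(1,1)=\id$, so $\alpha$ is a group action by algebra automorphisms.

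The formula $(z_1,z_2).U^kV^l=z_1^kz_2^l\cdot U^kV^l$ is then immediate by induction on $|k|+|l|$, using $\alpha(z_1,z_2)(U^{-1})=\bar z_1 U^{-1}$ (since $\alpha(z_1,z_2)$ is a $*$-homomorphism and $U$ is unitary), and likewise for $V$.

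The main obstacle is joint continuity of $\alpha$. Here I would exploit that each $\alpha(z_1,z_2)$ is a $*$-homomorphism between $C^*$-algebras, so $\|\alpha(z_1,z_2)\|_{\mathrm{op}}\leq 1$; this gives uniform equicontinuity of the family $\{\alpha(z_1,z_2):(z_1,z_2)\in\mathbb{T}^2\}$. Let $\mathcal{P}$ denote the dense unital $*$-subalgebra of $A^2_\theta$ spanned by the monomials $U^kV^l$, $k,l\in\mathbb{Z}$. For a fixed monomial $U^kV^l$ the orbit map $(z_1,z_2)\mapsto z_1^kz_2^l\cdot U^kV^l$ is plainly continuous, hence by linearity the same holds for every $p\in\mathcal{P}$. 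Now fix $a\in A^2_\theta$, $(z_1^0,z_2^0)\in\mathbb{T}^2$ and $\varepsilon>0$. Choose $p\in\mathcal{P}$ with $\|a-p\|<\varepsilon/3$, and a neighbourhood $U$ of $(z_1^0,z_2^0)$ such that $\|\alpha(z_1,z_2)(p)-\alpha(z_1^0,z_2^0)(p)\|<\varepsilon/3$ for $(z_1,z_2)\in U$. Combining with $\|\alpha(z_1,z_2)\|_{\mathrm{op}}\leq 1$, the triangle inequality
\[
\|\alpha(z_1,z_2)(b)-\alpha(z_1^0,z_2^0)(a)\|\leq \|b-a\|+\|a-p\|+\tfrac{\varepsilon}{3}+\|p-a\|
\]
for $b$ in a suitable neighbourhood of $a$ yields joint continuity at $((z_1^0,z_2^0),a)$. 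This density/equicontinuity argument is the essential point; everything else is formal from the universal property.
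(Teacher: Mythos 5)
Your proposal is correct and supplies exactly the details the paper suppresses: the paper's proof of this lemma is literally ``This is just a simple calculation,'' and your route --- the universal property of $A^2_\theta$ to produce the automorphisms and verify the group law, followed by the standard density-plus-equicontinuity ($\|\alpha(z_1,z_2)\|_{\mathrm{op}}\leq 1$ for unital $*$-homomorphisms) argument for joint continuity --- is the intended one. The only cosmetic point is the bookkeeping of the $\varepsilon$'s in the final triangle inequality, which lands at $2\varepsilon$ rather than $\varepsilon$ as written, but this is immaterial.
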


\begin{proof}
\,\,\,This is just a simple calculation.
\end{proof}

\begin{definition}\label{NCQT}{\bf(The smooth noncommutative 2-torus).}\index{Smooth!Noncommutative 2-Torus}
The \emph{smooth noncommutative 2-torus} $\mathbb{T}^2_{\theta}$\sindex[n]{$\mathbb{T}^2_{\theta}$} is the dense (cf. Remark \ref{A infty is dense}) unital $^ *$-subalgebra of smooth vectors for the action of Lemma \ref{t^2 on a^2}. A short observation shows that its elements are given by (norm-convergent) sums
\[a=\sum_{k,l\in\mathbb{Z}}a_{kl}U^kV^l\,\,\,\text{with}\,\,\,(a_{kl})_{k,l\in\mathbb{Z}}\in S(\mathbb{Z}^2)
\](cf. [Tre67], Part III, Chapter 51, Remark 51.1). We conclude from Corollary \ref{automatic smoothness II} that $\mathbb{T}^2_{\theta}$ is a CIA and that the induced action of $\mathbb{T}^2$ on $\mathbb{T}^2_{\theta}$ is smooth.
\end{definition}

\begin{remark}
The algebra $\mathbb{T}^2_{\theta}$ is abelian if and only if $\theta$ is an integer. In this case we may identify $\mathbb{T}^2_{\theta}$ with the algebra $C^{\infty}(\mathbb{T}^2)$ of smooth functions on the 2-torus $\mathbb{T}^2$.
\end{remark}

For rational $\theta$ there is a geometrical description of $\mathbb{T}^2_{\theta}$ in terms of smooth sections on a certain algebra bundle. We first need the following lemma:

\begin{lemma}\label{equivalenz of sections}
Let $(\mathbb{V},M,V,q)$ be a vector or algebra bundle and let $G$ be a finite group which acts freely by diffeomorphisms on $M$. Further, assume that the action of $G$ on $M$ lifts to an action of $G$ on $\mathbb{V}$ by bundle isomorphisms of $\mathbb{V}$. Then the following assertions hold:
\begin{itemize}
\item[\emph{(a)}]
The orbit set $\mathbb{V}/G$ has a unique structure of a vector or algebra bundle over $M/G$ such that 
$$
\begin{xy}
  \xymatrix{
      \mathbb{V} \ar[r] \ar[d]_q    &   \mathbb{V}/G \ar[d]^{\bar{q}}  \\
      M \ar[r]        &   M/G
  }
\end{xy}
$$
commutes and is fibrewise an isomorphism.
\item[\emph{(b)}]
If
\[(\Gamma\mathbb{V})^G:=\{s\in\Gamma\mathbb{V}:\,(\forall g\in G)(\forall m\in M)\,s(m.g)=s(m).g\}
\]denotes the set of $G$-equivariant sections of the bundle, then the map
\[(\Gamma\mathbb{V})^G\rightarrow\Gamma(\mathbb{V}/G),\,\,\,s\mapsto ([m]\mapsto [s(m)])
\]is an isomorphism of $C^{\infty}(M)$-modules.
\end{itemize}
\end{lemma}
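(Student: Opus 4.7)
The plan is to establish part (a) first by exploiting the fact that a free action of a finite group is automatically proper, so that the Quotient Theorem (cf. Remark \ref{principal bundles II}) applies to both $M$ and $\mathbb{V}$ simultaneously. Concretely, I would first observe that the lifted action on $\mathbb{V}$ is itself free: if $v.g = v$ for some $v \in \mathbb{V}$, then applying $q$ yields $q(v).g = q(v)$, whence $g = 1_G$ by freeness on $M$. Finiteness of $G$ then forces both actions to be proper, so that $M/G$ and $\mathbb{V}/G$ carry unique smooth manifold structures turning the orbit maps into smooth covering projections, and $\bar{q}\colon \mathbb{V}/G \to M/G$, $[v] \mapsto [q(v)]$, is smooth and well-defined.

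To equip $\bar{q}$ with the structure of a vector (respectively algebra) bundle, I would argue locally. Around a given point $[m_0] \in M/G$ one can pick a $G$-slice, i.e., an open neighbourhood $U$ of $m_0$ in $M$ such that the translates $U.g$, $g \in G$, are pairwise disjoint; this is possible because $G$ is finite and acts freely. Shrinking $U$, we may further assume that $\mathbb{V}_U \cong U \times V$ as a vector/algebra bundle. Then the canonical map $U \to U/G \subseteq M/G$ is a diffeomorphism onto an open subset, and pushing the trivialization forward yields a bundle chart $\mathbb{V}/G|_{U/G} \cong (U/G) \times V$. Since the lifted $G$-action consists of bundle isomorphisms, the transition functions inherit the fibrewise linear (respectively algebra) structure from $\mathbb{V}$, so $\bar{q}$ is a vector (respectively algebra) bundle whose fibres are canonically identified with those of $\mathbb{V}$; uniqueness is automatic from the requirement that the quotient map be a fibrewise isomorphism.

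For part (b), the map $\Phi\colon (\Gamma\mathbb{V})^G \to \Gamma(\mathbb{V}/G)$, $s \mapsto \bar{s}$ with $\bar{s}([m]):=[s(m)]$, is well-defined by equivariance, is $C^\infty(M)^G = C^\infty(M/G)$-linear by construction, and composes with pullback by $M \to M/G$ to give the claimed $C^\infty(M)$-module structure. To exhibit an inverse, I would use the slices from part (a): given $\tilde{s} \in \Gamma(\mathbb{V}/G)$, the restriction of the covering map $\mathbb{V} \to \mathbb{V}/G$ to $\mathbb{V}_U$ is a diffeomorphism onto $(\mathbb{V}/G)|_{U/G}$, so $\tilde{s}|_{U/G}$ lifts uniquely to a smooth section $s_U \in \Gamma\mathbb{V}_U$. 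Propagating $s_U$ equivariantly by setting $s(m.g) := s_U(m).g$ on each translate $U.g$ produces a well-defined smooth local equivariant section; the uniqueness of the lift ensures that these local pieces agree on overlaps of slice neighbourhoods, yielding a global $s \in (\Gamma\mathbb{V})^G$ with $\Phi(s) = \tilde{s}$.

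The main obstacle I anticipate is not the pointwise or algebraic content — which is essentially bookkeeping — but the verification that the smooth structure on $\mathbb{V}/G$ produced by the Quotient Theorem is genuinely compatible with the slice trivializations, so that $\bar{q}$ really is locally trivial with linear (respectively algebra) transition data. The cleanest way to handle this is to insist from the outset that the slices $U$ in $M$ are taken fine enough to simultaneously trivialize $\mathbb{V}$, so that the commutative square in the statement becomes, in local coordinates, the identity on the fibre $V$; linearity (respectively multiplicativity) of the lifted $G$-action then makes all compatibility checks automatic.
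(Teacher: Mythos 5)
Your proof is correct. The paper in fact leaves this lemma as an exercise to the reader, and your argument supplies exactly the standard proof one would expect: freeness of the lifted action on $\mathbb{V}$ via $q$, properness from finiteness of $G$, slices $U$ with pairwise disjoint translates chosen fine enough to simultaneously trivialize $\mathbb{V}$ (so that $\mathbb{V}_U\rightarrow(\mathbb{V}/G)_{\mid\pi(U)}$ is a diffeomorphism and the fibrewise linear, resp.\ algebra, structure descends because $G$ acts by bundle isomorphisms), and for (b) the unique equivariant propagation of local lifts, which glue since the quotient map is injective on each fibre $\mathbb{V}_m$. The one point worth flagging is that $(\Gamma\mathbb{V})^G$ is a module only over the invariants $C^{\infty}(M)^G\cong C^{\infty}(M/G)$ rather than over all of $C^{\infty}(M)$ as the statement loosely asserts; you read the claim through that identification, which is the correct interpretation.
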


\begin{proof}
\,\,\,The proof of this lemma is left as an easy exercise to the reader.
\end{proof}

\begin{proposition}\label{rational quantumtori}{\bf(Rational quantum tori).}\index{Rational Quantum Tori}
If $\theta$ is rational, $\theta=\frac{n}{m}$, $n\in\mathbb{Z}$, $m\in\mathbb{N}$ relatively prime, then $\mathbb{T}^2_{\theta}$ is isomorphic to the algebra of smooth section of an algebra bundle over $\mathbb{T}^2$ with fibre $M_m(\mathbb{C})$.
\end{proposition}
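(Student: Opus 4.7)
The plan is to realize $\mathbb{T}^2_{\theta}$ as the global smooth sections of an algebra bundle $\mathbb{A}$ associated, by an equivariant quotient construction, to an $(C_m\times C_m)$-covering of $\mathbb{T}^2$ by itself. The starting point is that, because $\lambda^m=1$, the unitaries $U^m$ and $V^m$ are central in $\mathbb{T}^2_{\theta}$; they generate (after completion) a subalgebra isomorphic to $C^{\infty}(\mathbb{T}^2)$ which, by a Fourier-coefficient calculation using $(a_{kl})\in S(\mathbb{Z}^2)$, turns out to be the full centre. This central copy of $C^{\infty}(\mathbb{T}^2)$ is what the base of the bundle should be.

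Next, I would use the covering $p:\mathbb{T}^2\to\mathbb{T}^2$, $p(z_1,z_2):=(z_1^m,z_2^m)$, which is a principal bundle for the free right action $(z_1,z_2).(\zeta^k,\zeta^l):=(\zeta^kz_1,\zeta^lz_2)$ with $\zeta:=\exp(2\pi i/m)$. On the fibre $M_m(\mathbb{C})$ I install the $(C_m\times C_m)$-action of Example \ref{the matrix algebra} (in possibly $n$-twisted form), built from the matrices $R,S$ satisfying $R^m=S^m=\mathbf{1}$ and $RS=\zeta SR$. Applying Lemma \ref{equivalenz of sections} to the diagonal action of $C_m\times C_m$ on $\mathbb{T}^2\times M_m(\mathbb{C})$ then produces an algebra bundle $\mathbb{A}:=(\mathbb{T}^2\times M_m(\mathbb{C}))/(C_m\times C_m)$ over $\mathbb{T}^2$ with fibre $M_m(\mathbb{C})$, together with a canonical identification $\Gamma\mathbb{A}\cong C^{\infty}(\mathbb{T}^2,M_m(\mathbb{C}))^{C_m\times C_m}$ of unital Fr\'echet algebras.

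I would then choose integers $a,b$ with $ab\equiv n\pmod m$ (which requires $\gcd(n,m)=1$) and consider the candidate sections $f_U(z_1,z_2):=z_1R^a$ and $f_V(z_1,z_2):=z_2S^b$. They are pointwise unitary, and from the relation $RS=\zeta SR$ one gets $f_Uf_V=\zeta^{ab}f_Vf_U=\lambda f_Vf_U$. The $(C_m\times C_m)$-action on $M_m(\mathbb{C})$ can be arranged so that both $f_U$ and $f_V$ are equivariant. By the universal property of $A^2_{\theta}$, the assignment $U\mapsto f_U$, $V\mapsto f_V$ extends to a unital $C^{*}$-homomorphism, which, since $f_U$ and $f_V$ are smooth, restricts to a continuous homomorphism of unital Fr\'echet algebras $\Phi:\mathbb{T}^2_{\theta}\to\Gamma\mathbb{A}$.

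The main obstacle is showing that $\Phi$ is a bijection (continuity of the inverse will then be automatic by the open mapping theorem, Proposition \ref{open mapping theorem}). Injectivity should follow from faithfulness of the Schr\"odinger representation (Remark \ref{The Schrodinger picture}), which can be recovered on a single fibre of $\mathbb{A}$. Surjectivity is the delicate point: one decomposes the fibre $M_m(\mathbb{C})$ into the $m^2$ one-dimensional isotypic components for $C_m\times C_m$, each spanned by a monomial $R^cS^d$; the equivariance condition on a section then forces the $R^cS^d$-component of a section to be a specific character of the base $\mathbb{T}^2$ times a smooth function pulled back along $p$. Summing these components gives an expansion of every element of $\Gamma\mathbb{A}$ as a Schwartz-type series $\sum a_{kl}U^kV^l$ with $(a_{kl})\in S(\mathbb{Z}^2)$, matching the defining description of $\mathbb{T}^2_{\theta}$. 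The technical bookkeeping --- identifying which pair $(c,d)$ corresponds to which Fourier index $(k,l)$ --- uses precisely the invertibility of $n$ modulo $m$ and is the part where I expect the most care is required.
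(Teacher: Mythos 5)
Your proposal is correct and follows essentially the same route as the paper: both realize the bundle as the quotient of the trivial bundle $\mathbb{T}^2\times\M_m(\mathbb{C})$ by a free diagonal $C_m\times C_m$-action (via Lemma \ref{equivalenz of sections}) and identify the equivariant sections with $\mathbb{T}^2_{\theta}$ through generators of the form $z_1R^a$, $z_2S^b$, your isotypic-component argument for surjectivity being exactly the paper's Fourier-coefficient equivariance computation. The only cosmetic difference is that you route the isomorphism through the universal property plus an injectivity check, where the paper identifies the equivariant sections with the quantum torus directly.
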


\begin{proof}
\,\,\,We want to apply Lemma \ref{equivalenz of sections} to the trivial algebra bundle
\[\pr_1:\mathbb{T}^2\times M_m(\mathbb{C})\rightarrow\mathbb{T}^2
\]and the group $G=C_m\times C_m$. We proceed as follows:

(i) The group action on $\mathbb{T}^2$ is given by
\[(z_1,z_2).(p,q):=(\exp(2\pi ip\theta)z_1,\exp(2\pi iq\theta)z_2)
\]for $p,q=0,1,\ldots,m-1$. To describe the action of $G$ on $\mathbb{V}:=\mathbb{T}^2\times M_m(\mathbb{C})$ we first define
$$
R:=\begin{pmatrix}
 1      &  &  &  &    \\
         & \exp(2\pi i\theta) &  &  &  \\
         &  & \exp(2\pi i2\theta)  &  &  \\
         &  &  & \ddots &  \\
         &  &  &  &  \exp(2\pi i(m-1)\theta)  
\end{pmatrix}
$$

and

$$
S:=\begin{pmatrix}
 0      & \ldots & \ldots & 0 & 1   \\
 1      & 0 & \ldots & 0 &  0\\
         & \ddots & \ddots & \vdots & \vdots \\
         &  & \ddots& 0 & \vdots \\
 0      &  &  & 1 & 0 
\end{pmatrix}
$$
and note that the matrices $R$ and $S$ are unitary and satisfy the relations $S^m={\bf 1}$, $R^m={\bf 1}$ and $RS=\exp(2\pi i\theta)SR$. Further, they generate a $C^*$-subalgebra which clearly commutes only with the multiples of the identity, so it has to be the full matrix algebra. Then the group action on $\mathbb{T}^2\times M_m(\mathbb{C})$ is given by
\[((z_1,z_2),A).(p,q):=((\exp(2\pi ip\theta)z_1,\exp(2\pi iq\theta)z_2),R^qS^pAS^{-p}R^{-q}).
\]One easily checks the conditions from Lemma \ref{equivalenz of sections}, so that we obtain an algebra bundle \[\mathbb{V}/G\rightarrow\mathbb{T}^2/G\cong\mathbb{T}^2.
\]

(ii) Next, we note that the space of sections of
\[\pr_1:\mathbb{T}^2\times M_m(\mathbb{C})\rightarrow\mathbb{T}^2
\]corresponds to the space 
\[C^{\infty}(\mathbb{T}^2,M_m(\mathbb{C}))\cong C^{\infty}(\mathbb{T}^2)\otimes M_m(\mathbb{C}),
\]which is generated by the elements $u=\exp(2\pi is)$, $v=\exp(2\pi it)$ and $R$, $S$, where the coefficients are again rapidly decreasing with respect to the powers of $u$ and $v$. Again by Lemma \ref{equivalenz of sections}, the sections of the algebra bundle $\mathbb{V}/G\rightarrow \mathbb{T}^2$ correspond to the space $(\Gamma\mathbb{V})^G$ of $G$-equivariant sections of 
\[\pr_1:\mathbb{T}^2\times M_m(\mathbb{C})\rightarrow\mathbb{T}^2
\]

(iii) A section $s:\mathbb{T}^2\rightarrow M_m(\mathbb{C})$,
\[s=\sum_{k,l\in\mathbb{Z}, \atop 0\leq i,j\leq m-1}a_{ijkl}u^kv^lR^iS^j,
\]is $G$-equivariant if and only if the following condition is satisfied:
\[a_{ijkl}\neq 0\,\,\,\text{only if}\,\,\,k\equiv i\,\,\,\text{mod}\,\,m\,\,\,\text{and}\,\,\,l\equiv j\,\,\,\text{mod}\,\,m
\]In this case we may define $a_{kl}:=a_{ijkl}$, where $k\equiv i$ mod $m$ and $l\equiv j$ mod $m$, and the section $s$ can be written as
\[s=\sum_{k,l\in\mathbb{Z}}a_{kl}(uR)^k(vS)^l.
\]

(iv) It remains to note that $U=uR$ and $V=vS$ satisfy only the relations of the noncommutative torus. Hence,
\[\mathbb{T}^2_{\theta}\cong(\Gamma\mathbb{V})^G.
\]
\end{proof}

\begin{corollary}
If $0\neq\theta$ is rational, then $C_{\mathbb{T}^2_{\theta}}$ is isomorphic to $C^{\infty}(\mathbb{T}^2)\cong\mathbb{T}^2_0$.
\end{corollary}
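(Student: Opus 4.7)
My plan is to compute the center directly from the Fourier expansion of $\mathbb{T}^2_\theta$ and then to identify the resulting subalgebra with $C^\infty(\mathbb{T}^2)$ via the bundle description from Proposition \ref{rational quantumtori}. Write an arbitrary element as $a=\sum_{k,l\in\mathbb{Z}}a_{kl}U^kV^l$ with $(a_{kl})\in S(\mathbb{Z}^2)$. From the relation $UV=\lambda VU$ one obtains $V^l U=\lambda^{-l}UV^l$ and $V U^k=\lambda^{-k}U^kV$, so
\[
U\cdot U^kV^l=U^{k+1}V^l,\qquad U^kV^l\cdot U=\lambda^{-l}U^{k+1}V^l,
\]
\[
V\cdot U^kV^l=\lambda^{k}U^kV^{l+1},\qquad U^kV^l\cdot V=U^kV^{l+1}.
\]
Hence $a$ commutes with $U$ if and only if $(1-\lambda^{-l})a_{kl}=0$ for all $k,l$, and with $V$ if and only if $(1-\lambda^{k})a_{kl}=0$ for all $k,l$. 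Since $\theta=n/m$ with $\gcd(n,m)=1$, the condition $\lambda^l=1$ is equivalent to $m\mid l$, and analogously for $k$. I would conclude that $a\in C_{\mathbb{T}^2_\theta}$ precisely when $a_{kl}=0$ unless $m\mid k$ and $m\mid l$.

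Next I would set $X:=U^m$ and $Y:=V^m$. They commute because $XY=\lambda^{m^2}YX$ and $\lambda^{m^2}=\exp(2\pi i\,mn)=1$, and clearly they commute with every $U,V$ by the previous step. Thus every central element has the form $a=\sum_{p,q\in\mathbb{Z}}b_{pq}X^pY^q$ with $b_{pq}:=a_{mp,mq}$, and the map $(a_{kl})\mapsto(b_{pq})$ is an isomorphism between the rapidly decreasing sequences on $m\mathbb{Z}\times m\mathbb{Z}$ and those on $\mathbb{Z}\times\mathbb{Z}$. In particular $(b_{pq})\in S(\mathbb{Z}^2)$ if and only if $(a_{kl})\in S(\mathbb{Z}^2)$ is supported on $m\mathbb{Z}\times m\mathbb{Z}$, so the center is topologically generated, as a unital $^*$-algebra with smooth Fourier coefficients, by the two commuting unitaries $X$ and $Y$.

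To finish the identification I would appeal to the proof of Proposition \ref{rational quantumtori}: under the isomorphism $\mathbb{T}^2_\theta\cong(\Gamma\mathbb{V})^G$ the generators are $U=uR$ and $V=vS$, where $u(z_1,z_2)=z_1$, $v(z_1,z_2)=z_2$, and $R^m=S^m=\mathbf{1}$. Therefore $X=U^m=u^m\cdot\mathbf{1}$ and $Y=V^m=v^m\cdot\mathbf{1}$ are scalar (i.e.\ $M_m(\mathbb{C})$-central) sections that descend through the covering $\mathbb{T}^2\to\mathbb{T}^2/G\cong\mathbb{T}^2$ to the standard coordinate functions on the base $\mathbb{T}^2$. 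Consequently the subalgebra generated by $X,Y$ with smooth coefficients is exactly $C^\infty(\mathbb{T}^2)$ (viewed on the base, which is again a $2$-torus), and comparing with Example \ref{NC n-tori as NCPTB} for $\theta=0$ this is $\mathbb{T}^2_0$.

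The main obstacle I anticipate is the bookkeeping around the two different tori: the base $\mathbb{T}^2/G$ is again diffeomorphic to $\mathbb{T}^2$, but only after the $m$-fold covering, so I must make sure that the rapid-decay condition on $(a_{kl})_{k,l\in m\mathbb{Z}}$ really corresponds to smoothness of the function $\sum b_{pq}\,(u^m)^p(v^m)^q$ on the \emph{quotient} torus (which it does, since relabeling $k=mp$, $l=mq$ is a bijection $m\mathbb{Z}^2\to\mathbb{Z}^2$ preserving the Schwartz condition). Once this bookkeeping is in place, the isomorphism $C_{\mathbb{T}^2_\theta}\cong C^\infty(\mathbb{T}^2)\cong\mathbb{T}^2_0$ follows.
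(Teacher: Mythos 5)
Your proof is correct, but it takes a genuinely different route from the paper's. The paper argues fibrewise through the bundle description of Proposition \ref{rational quantumtori}: a central section $s$ must satisfy $s(z_1,z_2)\in C_{M_m(\mathbb{C})}=\mathbb{C}\cdot{\bf 1}$ in every fibre (since every fibre element extends to a global section), hence $s=f\cdot{\bf 1}$ for some $f\in C^{\infty}(\mathbb{T}^2)$ --- essentially a two-line argument. You instead compute the center directly from the Fourier expansion: commuting with the generators $U$ and $V$ forces $a_{kl}=0$ unless $m\mid k$ and $m\mid l$ (and commuting with $U,V$ does suffice for centrality, by density of the polynomials in $U,V$ and continuity of multiplication), so the center is topologically spanned by the monomials $X^pY^q$ with $X=U^m$, $Y=V^m$ commuting unitaries and Schwartz coefficients. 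This buys you more than the paper's argument: explicit generators of the center, an explicit description of the embedding $C^{\infty}(\mathbb{T}^2)\hookrightarrow\mathbb{T}^2_{\theta}$ as $f\mapsto\sum b_{pq}U^{mp}V^{mq}$, and, as a byproduct, the triviality of the center for irrational $\theta$ (where $\lambda^k=1$ only for $k=0$), which the paper only states as a remark. Two small points: the exponent in $V\cdot U^kV^l$ should be $\lambda^{-k}$ rather than $\lambda^{k}$ (immaterial, since the resulting condition is $\lambda^{k}=1$ either way); and your final identification of the closed span of the $X^pY^q$ with $C^{\infty}(\mathbb{T}^2)$ does not actually need the bundle picture --- the monomials $U^kV^l$ are linearly independent because they span distinct isotypic components for the $\mathbb{T}^2$-action, so your description of the center matches Definition \ref{NCQT} for $\theta=0$ verbatim.
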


\begin{proof}\label{center of rational quantumtori}
\,\,\,By Proposition \ref{rational quantumtori}, $\mathbb{T}^2_{\theta}\cong\Gamma\mathbb{V}$ for some algebra bundle $\mathbb{V}$ over $\mathbb{T}^2$ with fibre $M_m(\mathbb{C})$. If $s\in C_{\mathbb{T}^2_{\theta}}$, then
\[s(z_1,z_2)\in C_{\mathbb{V}_{(z_1,z_2)}}\cong C_{M_m(\mathbb{C})}\cong\mathbb{C}\cdot{\bf 1}
\]Therefore, 
\[s(z_1,z_2)=f(z_1.z_2)\cdot{\bf 1}_{\mathbb{V}_{(z_1,z_2)}}
\]for a smooth function $f\in C^{\infty}(\mathbb{T}^2)$. 
\end{proof}

\begin{remark}
If $\theta$ is irrational, then it is possible to show that $C_{\mathbb{T}^2_{\theta}}\cong\mathbb{C}$. Moreover, in this case, $\mathbb{T}^2_{\theta}$ is a simple algebra. For more details we refer to [GVF01], Chapter $12$.
\end{remark}

Let $D\in\der(\mathbb{T}^2_{\theta})$ be a derivation 
of the smooth noncommutative 2-torus. Then $D$ is uniquely determined by the values
\[D(U)=\sum_{k,l\in\mathbb{Z}}u_{kl}U^kV^l\,\,\,\text{and}\,\,\,D(V)=\sum_{k,l\in\mathbb{Z}}v_{kl}U^kV^l.
\]The relation
\[D(U)V+UD(V)=\lambda D(V)U+\lambda VD(U),
\]by comparison of coefficients, quickly leads to
\[u_{k(l-1)}(1-\lambda^{1-k})+v_{(k-1)1}(1-\lambda^{1-l})=0.
\]
A short analysis leads to the following proposition:

\begin{proposition}\label{derivations of the quantumtorus}{\bf(Derivations of the quantum torus).}\index{Derivations of the Quantum Torus}
For each $\theta\in\mathbb{R}$ the derivations of $\mathbb{T}^2_{\theta}$ split as a semidirect product of inner and outer derivations, i.e.,
\[\der(\mathbb{T}^2_{\theta})\cong\Inn(\der)(\mathbb{T}^2_{\theta})\rtimes\Out(\der)(\mathbb{T}^2_{\theta}).
\]

Moreover, if $\theta$ is
\begin{itemize}
\item
rational, then the outer derivations correspond exactly to the derivations of the center;
\item
irrational, then the outer derivations are linearly generated by the two derivations $D_U$ and $D_V$ defined by
\[D_U(U)=U\,\,\,\text{and}\,\,\,D_U(V)=0
\]and
\[D_V(U)=0\,\,\,\text{and}\,\,\,D_V(V)=V.
\]
\end{itemize}
\end{proposition}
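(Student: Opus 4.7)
The plan is to start from the defining relation of $\mathbb{T}^2_\theta$ and exploit the Fourier expansion of an arbitrary derivation. First I would write $D(U) = \sum u_{kl} U^k V^l$ and $D(V) = \sum v_{kl} U^k V^l$ with $(u_{kl}),(v_{kl}) \in S(\mathbb{Z}^2)$; since $U,V$ topologically generate $\mathbb{T}^2_\theta$, a (continuous) derivation is determined by these data. Applying $D$ to $UV = \lambda VU$ and reordering monomials via $V^l U = \lambda^{-l} UV^l$ and $V U^m = \lambda^{-m} U^m V$, coefficient comparison on $U^a V^b$ yields the constraint recorded in the excerpt,
\[
u_{a,b-1}(1-\lambda^{1-a}) + v_{a-1,b}(1-\lambda^{1-b}) = 0, \qquad a,b \in \mathbb{Z}.
\]
Conversely, any pair $(D(U),D(V))$ satisfying this relation determines a unique derivation of $\mathbb{T}^2_\theta$, so the proof reduces to analysing this linear system.

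Next I would establish the semidirect decomposition in a uniform way by exhibiting a Lie subalgebra $\mathfrak{o} \subseteq \der(\mathbb{T}^2_\theta)$ that is complementary to $\Inn(\der)(\mathbb{T}^2_\theta)$ and normalises it. The key observation is that $D_U$ and $D_V$ are well-defined continuous derivations (they arise as the derived action of a basis of $\mathfrak{t}^2$ on $\mathbb{T}^2_\theta$ from Lemma \ref{t^2 on a^2}, so their smoothness and continuity are automatic), they commute, and the Leibniz calculation
\[
[D_U,\ad(a)] = \ad(D_U(a)), \qquad [D_V,\ad(a)] = \ad(D_V(a))
\]
shows that $\mathrm{span}(D_U,D_V)$ normalises $\Inn(\der)(\mathbb{T}^2_\theta)$. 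For the rational case the same role is played by a lift of $\der(C_{\mathbb{T}^2_\theta})$ constructed via a connection on the algebra bundle of Proposition \ref{rational quantumtori}. Once I exhibit $\mathfrak{o}$ with $\mathfrak{o} \cap \Inn(\der)(\mathbb{T}^2_\theta) = 0$ and $\der(\mathbb{T}^2_\theta) = \Inn(\der)(\mathbb{T}^2_\theta) + \mathfrak{o}$, the semidirect product decomposition is automatic.

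The rational case, $\theta = n/m$ in lowest terms, is then the clean half of the argument. Using Proposition \ref{rational quantumtori} and Corollary \ref{center of rational quantumtori}, $\mathbb{T}^2_\theta \cong \Gamma\mathbb{A}$ with fibre $M_m(\mathbb{C})$ and centre $C^\infty(\mathbb{T}^2)$. Any $D \in \der(\mathbb{T}^2_\theta)$ restricts to a derivation of the centre, giving a surjection $q_{*} : \der(\mathbb{T}^2_\theta) \to \mathcal{V}(\mathbb{T}^2)$. Fibrewise, Lemma \ref{aut=inn} (Skolem--Noether) implies that every derivation vanishing on the centre is inner, so $\ker(q_*) = \Inn(\der)(\mathbb{T}^2_\theta)$, and a horizontal lift provided by any connection on $\mathbb{A}$ produces the splitting $\der(\mathbb{T}^2_\theta) \cong \Inn(\der)(\mathbb{T}^2_\theta) \rtimes \mathcal{V}(\mathbb{T}^2)$.

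For the irrational case, $\lambda^k \neq 1$ for $k \neq 0$, so the constraint immediately forces $v_{0,b}=0$ for $b\neq 1$ and $u_{a,0}=0$ for $a\neq 1$, while coupling $u_{a,b-1}$ and $v_{a-1,b}$ linearly whenever $a,b \neq 1$. The strategy is then to subtract an inner derivation $\ad(a)$, with $a = \sum a_{mn} U^m V^n$, chosen so that the resulting $u$- and $v$-coefficients vanish everywhere except at $(1,0)$ and $(0,1)$; comparing with the formulas $\ad(U^m V^n)(U) = (\lambda^{-n}-1)U^{m+1}V^n$ and $\ad(U^m V^n)(V) = (1-\lambda^{-m})U^m V^{n+1}$, the natural choice is $a_{m,n} = u_{m+1,n}/(\lambda^{-n}-1)$ for $n \neq 0$ and analogously in the $v$-direction. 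The main obstacle, and the technical heart of the argument, is to ensure that $(a_{mn}) \in S(\mathbb{Z}^2)$: the denominators $1-\lambda^{-n}$ can be arbitrarily small (Liouville phenomenon). The rescue comes precisely from the coupling relation above, which enforces that whenever $|1-\lambda^{1-b}|$ is small, the numerator $u_{a,b-1}$ is proportionally small, so the quotient stays rapidly decreasing. Carrying out this cancellation estimate carefully, and verifying that the residual derivation is $u_{1,0}D_U + v_{0,1}D_V$, finishes the proof.
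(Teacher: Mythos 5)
Your setup is correct and matches the only content the paper itself supplies: the coefficient relation $u_{a,b-1}(1-\lambda^{1-a})+v_{a-1,b}(1-\lambda^{1-b})=0$ is exactly right (the paper's displayed version contains a typo, $v_{(k-1)1}$ for $v_{(k-1)l}$), the formulas for $\ad(U^mV^n)$ are correct, the identity $[D,\ad(a)]=\ad(D(a))$ does give the semidirect structure once a complementary Lie subalgebra is exhibited, and the rational case via Proposition \ref{rational quantumtori}, Corollary \ref{center of rational quantumtori} and Skolem--Noether is sound --- with the one caveat that a generic connection will not do: its horizontal lift is not a Lie subalgebra unless the connection is flat, so you should use the canonical flat connection coming from the finite quotient construction (equivalently, take $\mathfrak{o}=C_A D_U+C_A D_V$). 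Note also that the paper gives no proof at all, only a citation to [DKMM02], so the comparison can only be against your own argument.

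The genuine gap is at the point you yourself call the technical heart: the coupling relation does \emph{not} rescue the small-denominator problem. For $m,n\neq 0$ the two formulas it yields for the candidate coefficient agree, $a_{m,n}=u_{m+1,n}/(\lambda^{-n}-1)=v_{m,n+1}/(1-\lambda^{-m})$, so the relation merely exchanges the small denominator $1-\lambda^{-n}$ for the small denominator $1-\lambda^{-m}$; it does not force the numerator to be small relative to either. Concretely, if $\theta$ is a Liouville number, choose $n_j\to\infty$ with $|1-\lambda^{n_j}|<e^{-n_j^{10}}$ and set $u_{n_j+1,n_j}:=|1-\lambda^{n_j}|^{1/2}$, $v_{n_j,n_j+1}:=-u_{n_j+1,n_j}$, all other coefficients zero. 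These sequences lie in $S(\mathbb{Z}^2)$ and satisfy the constraint (the only nontrivial instance is at $(a,b)=(n_j+1,n_j+1)$, where the two factors coincide), hence define a derivation; but $|a_{n_j,n_j}|=|1-\lambda^{n_j}|^{-1/2}$ blows up, so this derivation is not inner, and since it has no $(1,0)$- or $(0,1)$-component it is not congruent to any combination of $D_U,D_V$ modulo inner derivations either. So the cancellation you invoke fails, and in fact the second bullet of the proposition as stated requires a Diophantine condition on $\theta$ (Elliott; Bratteli--Elliott--Jorgensen: in general one only obtains an \emph{approximately} inner part). The repair is either to assume $|1-\lambda^{n}|^{-1}=O(|n|^{k})$ for some $k$, under which your division argument goes through verbatim and $(a_{mn})$ stays in $S(\mathbb{Z}^2)$, or to reinterpret the inner derivations via their closure.
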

\begin{proof}
\,\,\,For the proof we refer to the nice paper [DKMM02], Section 3.4.
\end{proof}

\section{The (Smooth) Noncommutative $n$-Torus}

Again, we write ${\bf k}=(k_1,\ldots,k_n)$ for elements of $\mathbb{Z}^n$.

\begin{definition}\label{appendix E noncommutative n-tori}{\bf(The noncommutative $n$-torus).}\index{Noncommutative $n$-Torus}
If $\theta$ is a real skewsymmetric $n\times n$ matrix with entries $\theta_{ij}$, then the \emph{noncommutative $n$-torus algebra} $A^n_{\theta}$\sindex[n]{$A^n_{\theta}$} is the \emph{universal} unital $C^*$-algebra generated by unitaries $U_1,\ldots,U_n$ subject to the relations
\[U_rU_s=\exp(2\pi i\theta_{rs})U_sU_r\,\,\,\text{for all}\,\,\,1\leq r,s\leq n.
\]
\end{definition}

\begin{lemma}\label{t^n on a^n}
The map $\alpha:\mathbb{T}^n\times A^n_{\theta}\rightarrow A^n_{\theta}$ given on generators by 
\[z.U^{\bf k}=z^{\bf k}\cdot U^{\bf k},
\]where $U^{\bf k}:=U^{k_1}_1\cdots U^{k_n}_n$, is continuous and defines an action by algebra automorphisms.
\end{lemma}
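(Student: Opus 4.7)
The plan is to follow the same strategy as in Example \ref{NC n-tori as NCPTB} (which invokes this lemma) and the $n=2$ case of Lemma \ref{t^2 on a^2}, but to separate carefully the algebraic definition of each $\alpha(z)$ from the joint continuity of $\alpha$.

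First I would define, for each fixed $z=(z_1,\ldots,z_n)\in\mathbb{T}^n$, an automorphism $\alpha(z)\in\Aut(A^n_{\theta})$ by invoking the universal property of Definition \ref{appendix E noncommutative n-tori}. Indeed, the elements $z_1U_1,\ldots,z_nU_n$ are unitaries in $A^n_{\theta}$ satisfying
\[(z_rU_r)(z_sU_s)=z_rz_sU_rU_s=\exp(2\pi i\theta_{rs})(z_sU_s)(z_rU_r),\]
so universality yields a unique unital $^*$-homomorphism $\alpha(z):A^n_{\theta}\rightarrow A^n_{\theta}$ with $\alpha(z)U_r=z_rU_r$. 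Applying the same construction to $z^{-1}=(\overline{z}_1,\ldots,\overline{z}_n)$ produces a two-sided inverse, so $\alpha(z)\in\Aut(A^n_{\theta})$, and on the generating monomials one has $\alpha(z)U^{\mathbf{k}}=z^{\mathbf{k}}U^{\mathbf{k}}$ as desired. The group law $\alpha(zw)=\alpha(z)\circ\alpha(w)$ and $\alpha(1_{\mathbb{T}^n})=\id_{A^n_{\theta}}$ can then be verified on the generators $U_r$ and extend to all of $A^n_{\theta}$ by uniqueness in the universal property.

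Next I would verify the continuity of the action map $\alpha:\mathbb{T}^n\times A^n_{\theta}\rightarrow A^n_{\theta}$. The key observation is that every $\alpha(z)$ is a $^*$-automorphism of the $C^*$-algebra $A^n_{\theta}$, hence an isometry; in particular the family $\{\alpha(z):z\in\mathbb{T}^n\}$ is uniformly equicontinuous. Combined with compactness of $\mathbb{T}^n$, this reduces joint continuity to pointwise continuity on a norm-dense subset. I would take this dense subset to be the $^*$-subalgebra $B$ spanned algebraically by the monomials $U^{\mathbf{k}}$, $\mathbf{k}\in\mathbb{Z}^n$. On such a finite linear combination $a=\sum_{\mathbf{k}\in F}a_{\mathbf{k}}U^{\mathbf{k}}$ with $F\subseteq\mathbb{Z}^n$ finite, one has
\[\alpha(z)a-\alpha(z_0)a=\sum_{\mathbf{k}\in F}a_{\mathbf{k}}(z^{\mathbf{k}}-z_0^{\mathbf{k}})U^{\mathbf{k}},\]
which converges to $0$ in norm as $z\to z_0$ because $F$ is finite and each monomial $z\mapsto z^{\mathbf{k}}$ is continuous. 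A standard $\tfrac{\varepsilon}{3}$-argument using equicontinuity then upgrades this to joint continuity of $\alpha$ on all of $\mathbb{T}^n\times A^n_{\theta}$.

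The only mild obstacle is the passage from the generators to all of $A^n_{\theta}$ in the continuity argument, since $A^n_{\theta}$ is a $C^*$-algebra completion and one does not have an explicit Fourier-type description (this first appears, as a nontrivial fact, in the smooth setting of Definition \ref{NCQT}). But once one exploits that $^*$-automorphisms of $C^*$-algebras are automatically isometric, equicontinuity is free, and the density of the algebraic span of the $U^{\mathbf{k}}$ in $A^n_{\theta}$ makes the reduction to the generators routine. This completes the plan.
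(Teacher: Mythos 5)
Your proposal is correct and simply fills in the details behind the paper's one-line proof (``this is again just a simple calculation''): the universal property gives each $\alpha(z)$ as a $^*$-automorphism, and the isometry of $^*$-automorphisms of $C^*$-algebras together with density of the span of the monomials $U^{\bf k}$ yields joint continuity exactly as you describe. Nothing is missing.
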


\begin{proof}
\,\,\,This is again just a simple calculation.
\end{proof}

\begin{remark}
The \emph{Schwartz space}\index{Schwartz Space} $S(\mathbb{Z}^n)$\sindex[n]{$S(\mathbb{Z}^n)$} is the vector space of rapidly decreasing sequences\index{Rapidly Decreasing!Sequences} sequences indexed by $\mathbb{Z}^n$, i.e., sequences $(a_{\bf k})_{{\bf k}\in\mathbb{Z}^n}$ for which
\[\sup_{{\bf k}\in\mathbb{Z}^n}(1+||{\bf k}||^2)^r|a_{\bf k}|^2<\infty\,\,\,\text{for all}\,\,r\in\mathbb{N}.
\]
\end{remark}

\begin{definition}\label{n-dim QT}{\bf(The smooth noncommutative $n$-torus).}\index{Smooth!Noncommutative $n$-Torus}
The \emph{smooth noncommutative n-torus} $\mathbb{T}^n_{\theta}$\sindex[n]{$\mathbb{T}^n_{\theta}$} is the dense (cf. Remark \ref{A infty is dense}) unital subalgebra of smooth vectors for the action of Lemma \ref{t^n on a^n}. Its elements are given by (norm-convergent) sums
\[a=\sum_{{\bf k}\in\mathbb{Z}^n}a_{\bf k}U^{\bf k},\,\,\,\text{with}\,\,\,(a_{\bf k})_{{\bf k}\in\mathbb{Z}^n}\in S(\mathbb{Z}^n)
\](cf. [Tre67], Part III, Chapter 51, Remark 51.1). We conclude from Corollary \ref{automatic smoothness II} that $\mathbb{T}^n_{\theta}$ is a CIA and that the induced action of $\mathbb{T}^n$ on $\mathbb{T}^n_{\theta}$ is smooth. 
\end{definition}

\subsection*{$K$-Theory of the Smooth Noncommutative $n$-Torus}

At the end of this appendix we ask for the $K$-theory of the noncommutative $n$-torus. It was G. A. Elliot in [Ell84], who was able to compute the $K$-groups by induction on $n$. The surprising result is that the $K$-groups do not depend on the parameter $\theta$ and are isomorphic to the corresponding groups for commutative tori:

\begin{proposition}\label{K-groups of noncommutative n-torus}{\bf($K$-groups of the smooth noncommutative $n$-torus).}\index{$K$-Groups}
\begin{itemize}
\item
$K_0(\mathbb{T}^n_{\theta})\cong K^0(\mathbb{T}^n)=\mathbb{Z}^{2^{n-1}}$,
\item
$K_1(\mathbb{T}^n_{\theta})\cong K^1(\mathbb{T}^n)=\mathbb{Z}^{2^{n-1}}$
\sindex[n]{$K_0(\mathbb{T}^n_{\theta})$}\sindex[n]{$K_1(\mathbb{T}^n_{\theta})$}
\end{itemize}

\begin{flushright}
$\blacksquare$
\end{flushright}
\end{proposition}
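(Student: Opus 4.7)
\,\,\,Here is a sketch of the strategy, following Elliott's original argument. The full details can be found in [Ell84].

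The first step is to reduce the computation from the smooth algebra $\mathbb{T}^n_{\theta}$ to its $C^*$-completion $A^n_{\theta}$. Since $\mathbb{T}^n_{\theta}$ is a Fr\'echet pre-$C^*$-algebra which is stable under the holomorphic functional calculus of $A^n_{\theta}$ (indeed, $\mathbb{T}^n_{\theta}$ is a CIA by Definition \ref{n-dim QT}, and any dense unital spectrally invariant subalgebra of a unital $C^*$-algebra has the same $K$-theory), the inclusion $\mathbb{T}^n_{\theta}\hookrightarrow A^n_{\theta}$ induces isomorphisms $K_i(\mathbb{T}^n_{\theta})\cong K_i(A^n_{\theta})$ for $i=0,1$. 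Thus it suffices to compute $K_i(A^n_{\theta})$.

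The second step is to exhibit $A^n_{\theta}$ as an iterated crossed product. Writing $\theta'$ for the $(n-1)\times (n-1)$ principal submatrix of $\theta$ and letting $\alpha$ be the automorphism of $A^{n-1}_{\theta'}$ determined on generators by $\alpha(U_r):=\exp(2\pi i\theta_{rn})U_r$ for $r=1,\ldots,n-1$, the universal property of $A^n_{\theta}$ yields an isomorphism
\[A^n_{\theta}\cong A^{n-1}_{\theta'}\rtimes_{\alpha}\mathbb{Z}.
\]

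The third and main step is to run the Pimsner--Voiculescu six-term exact sequence
\[\xymatrix{K_0(A^{n-1}_{\theta'})\ar[r]^{1-\alpha_*} & K_0(A^{n-1}_{\theta'})\ar[r] & K_0(A^n_{\theta})\ar[d] \\
K_1(A^n_{\theta})\ar[u] & K_1(A^{n-1}_{\theta'})\ar[l] & K_1(A^{n-1}_{\theta'})\ar[l]^{1-\alpha_*}}
\]
together with an induction on $n$. The base case $n=1$ is immediate: any $1\times 1$ skew-symmetric matrix is zero, so $A^1_{\theta}\cong C(\mathbb{T})$ and $K_0(C(\mathbb{T}))\cong\mathbb{Z}\cong K_1(C(\mathbb{T}))$. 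For the induction step, the critical input is that the automorphism $\alpha$ above is homotopic (through automorphisms of $A^{n-1}_{\theta'}$) to the identity: indeed, the path $\alpha_t(U_r):=\exp(2\pi it\theta_{rn})U_r$ connects $\alpha$ to $\id_{A^{n-1}_{\theta'}}$, and homotopic automorphisms induce the same map on $K$-theory. Consequently $1-\alpha_*=0$ on both $K_0$ and $K_1$, so the Pimsner--Voiculescu sequence breaks into short exact sequences
\[0\longrightarrow K_i(A^{n-1}_{\theta'})\longrightarrow K_i(A^n_{\theta})\longrightarrow K_{i-1}(A^{n-1}_{\theta'})\longrightarrow 0
\]for $i=0,1$ (indices mod $2$). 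By the inductive hypothesis both flanking groups are free abelian of rank $2^{n-2}$, so the sequence splits and we obtain $K_i(A^n_{\theta})\cong\mathbb{Z}^{2^{n-2}+2^{n-2}}=\mathbb{Z}^{2^{n-1}}$, which agrees with $K^i(\mathbb{T}^n)$ as computed, for instance, by the classical K\"unneth formula.

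The main obstacle in this plan is the verification that $\alpha_*=\id$ on $K$-theory; the naive homotopy argument is conceptually transparent, but making it precise requires care about continuity of the path $t\mapsto\alpha_t$ in the pointwise-norm topology and the resulting homotopy invariance for $K$-theory of $C^*$-algebras. Once this input is in place, the induction and the splitting of the Pimsner--Voiculescu sequence proceed routinely.
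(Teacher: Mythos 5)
Your sketch is correct and is precisely the argument the paper delegates to [Ell84]: reduction to $A^n_{\theta}$ by spectral invariance of the dense smooth subalgebra, the crossed-product decomposition $A^n_{\theta}\cong A^{n-1}_{\theta'}\rtimes_{\alpha}\mathbb{Z}$, and the Pimsner--Voiculescu induction using that $\alpha$ is point-norm homotopic to the identity, so $\alpha_*=\id$ and the six-term sequence splits. The only blemish is the phase $\exp(2\pi i\theta_{rn})$ versus $\exp(2\pi i\theta_{nr})$ in the definition of $\alpha$ (conjugation by $U_n$ yields the latter), which is immaterial since $\theta$ is skew-symmetric and the homotopy argument is unaffected.
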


\end{appendix}

\printindex[n] 
\printindex[idx] 

\cleardoublepage

\thispagestyle{empty}
\begin{center}
\Huge{\textbf{Lebenslauf}}
\end{center}
$ $\\
$ $\\
$ $\\
$ $\\
\begin{table}[h]
\begin{tabular}{ll}
Juli 2011 & Promotion an der {\bf Universit\"at Erlangen-N\"urnberg}\\
 & Gesamturteil: {\bf Summa cum laude}\\
 & \\ 
Seit April 2010 & {\bf Universit\"at Erlangen-N\"urnberg}\\
 & Doktorand im Department Mathematik\\
 & Titel der Dissertation: \\
 & \emph{A geometric approach to noncommutative principal bundles}\\
 & \\ 
April 2008- & Promotionsstipendiat der {\bf Studienstiftung des deutschen Volkes}\\
M\"arz 2011 & \\
 & \\
April 2007- & {\bf Technische Universit\"at Darmstadt}\\
M\"arz 2010 & Doktorand am Fachbereich Mathematik\\
 & Titel der Dissertation: \\
 & \emph{A geometric approach to noncommutative principal bundles}\\
 & \\
April 2004- & Stipendiat der {\bf Studienstiftung des deutschen Volkes}\\
M\"arz 2007 & \\
 & \\
Oktober 2002- & {\bf Technische Universit\"at Darmstadt}\\
M\"arz 2007  & Studium der Mathematik mit Nebenfach  ``Theoretische Physik"\\
 & Gesamturteil: {\bf sehr gut (1,20)}\\
 & Titel der Diplomarbeit: \emph{Coverings of non-connected Lie groups}\\
 & \\
 September 2001- & {\bf Zivildienst} in einer Tagesf\"orderst\"atte f\"ur schwer \\
 Juni 2002 & behinderte Menschen der AWO im Hainbachtal (Offenbach)\\
 & \\
 September 1992- & {\bf Leibnizgymnasium Offenbach}\\
Mai 2001 & Allgemeine Hochschulreife\\
 & Abiturnote: {\bf 1,3}\\
 & \\
 
 30.05.1982 & Geburt in {\bf Offenbach am Main}
\end{tabular}
\end{table}

\end{document}